\documentclass{amsart}
\usepackage{amssymb, amsmath,amsthm }
\usepackage{hyperref}
\usepackage{stmaryrd}
\usepackage[all]{xy}
\newtheorem{prop}{Proposition}[section]

\newtheorem{lem}[prop]{Lemma}
\newtheorem{thm}[prop]{Theorem}

\newtheorem{cor}[prop]{Corollary}

\theoremstyle{remark}
\newtheorem{remar}[prop]{Remark}
\theoremstyle{definition}
\newtheorem{defi}[prop]{Definition}

\DeclareMathAlphabet{\mathpzc}{OT1}{pzc}{m}{it}
\DeclareMathOperator{\Aut}{Aut}
\DeclareMathOperator{\End}{End}
\DeclareMathOperator{\wTor}{\widehat{Tor}}
\DeclareMathOperator{\Hom}{Hom}
\DeclareMathOperator{\Mor}{Mor}
\DeclareMathOperator{\Ind}{Ind}
\DeclareMathOperator{\cInd}{c-Ind}
\DeclareMathOperator{\Res}{Res}
\DeclareMathOperator{\Sym}{Sym}

\DeclareMathOperator{\GL}{GL}
\DeclareMathOperator{\SL}{SL}
\DeclareMathOperator{\Ker}{Ker}
\DeclareMathOperator{\Coker}{Coker}
\DeclareMathOperator{\WD}{WD}
\DeclareMathOperator{\Gal}{Gal}
\DeclareMathOperator{\Image}{Im}

\DeclareMathOperator{\soc}{soc}
\DeclareMathOperator{\cosoc}{cosoc}

\DeclareMathOperator{\tr}{tr}

\DeclareMathOperator{\Irr}{Irr}

\DeclareMathOperator{\gr}{gr}
\DeclareMathOperator{\Spec}{Spec}
\DeclareMathOperator{\supp}{Supp}
\DeclareMathOperator{\id}{id}
\DeclareMathOperator{\Mod}{Mod}

\DeclareMathOperator{\val}{val}
\DeclareMathOperator{\Sp}{Sp}
\DeclareMathOperator{\Rep}{Rep}

\DeclareMathOperator{\Ext}{Ext}

\DeclareMathOperator{\dt}{det}
\DeclareMathOperator{\Ban}{Ban}
\DeclareMathOperator{\QBan}{QBan}
\DeclareMathOperator{\ind}{ind}
\DeclareMathOperator{\rad}{rad}
\DeclareMathOperator{\dualcat}{\mathfrak C}
\DeclareMathOperator{\qcat}{\mathfrak Q}

\DeclareMathOperator{\Ord}{Ord}
\DeclareMathOperator{\Def}{Def}
\DeclareMathOperator{\Sets}{Sets}
\DeclareMathOperator{\dInd}{dInd}

\DeclareMathOperator{\MaxSpec}{MaxSpec}

\DeclareMathOperator{\LL}{LL}
\newcommand{\cIndu}[3]{\cInd_{#1}^{#2}{#3}}

\newcommand{\Indu}[3]{\Ind_{#1}^{#2}{#3}}
\newcommand{\dIndu}[3]{\dInd_{#1}^{#2}{#3}}

\newcommand{\pF}{\mathfrak{p}}

\newcommand{\Qp}{\mathbb {Q}_p}
\newcommand{\Zp}{\mathbb{Z}_p}
\newcommand{\Qpbar}{\overline{\mathbb{Q}}_p}

\newcommand{\GG}{\mathcal G}
\newcommand{\HH}{\mathcal H}
\newcommand{\NN}{\mathcal N}
\newcommand{\Eins}{\mathbf 1}
\newcommand{\PP}{\mathcal P}

\newcommand{\FF}{\mathcal F}
\newcommand{\Id}{Id}
\newcommand{\KK}{\mathfrak K}
\newcommand{\ZZ}{\mathbb Z}
\newcommand{\VV}{\mathbf V}

\newcommand{\Aa}{\mathfrak A}
\newcommand{\ev}{\mathrm{ev}}

\newcommand{\Fp}{\mathbb F_p}

\newcommand{\Fbar}{\overline{\mathbb F}_p}
\newcommand{\II}{\mathcal I}
\newcommand{\RR}{\mathbb R}
 
\newcommand{\mm}{\mathfrak m}
\newcommand{\ee}{\mathrm{e}}
\newcommand{\st}{\mathrm{st}}
\newcommand{\Fpbar}{\Fbar}
\newcommand{\pr}{\mathrm{pr}}

\newcommand{\wP}{\widetilde{P}}
\newcommand{\wE}{\widetilde{E}}
\newcommand{\wm}{\widetilde{\mathfrak m}}
\newcommand{\OO}{\mathcal O}
\newcommand{\detr}{\det}
\newcommand{\VVV}{\mathcal V}
\newcommand{\TT}{\mathcal T}

\newcommand{\gal}{\mathcal G_{\Qp}}
\newcommand{\hA}{\widehat{\mathfrak A}}
\newcommand{\ad}[1]{\mathrm {ad}(#1)}
\newcommand{\wM}{\widetilde{M}}
\DeclareMathOperator{\wtimes}{\widehat{\otimes}}
\newcommand{\pL}{\mathfrak p_L}
\newcommand{\cV}{\check{\mathbf{V}}}
\DeclareMathOperator{\Ad}{Ad}

\newcommand{\BB}{\mathfrak B}

\newcommand{\cJ}{\check {\mathcal J}}
\newcommand{\nn}{\mathfrak n}
\newcommand{\wa}{\widetilde{\mathfrak a}}
\newcommand{\md}{\mathrm m}
\newcommand{\rr}{\mathfrak r}
\newcommand{\rG}{\mathrm G}

\newcommand{\rL}{\mathrm L}
\newcommand{\rK}{\mathrm K}
\newcommand{\rel}{\leftrightarrow}
\newcommand{\wZ}{\mathcal{Z}}
\newcommand{\Kr}{\mathcal K}
\newcommand{\Hr}{\mathcal H}
\newcommand{\Nr}{\mathcal N}

\title{The image of Colmez's Montreal functor}
\author{Vytautas Pa\v{s}k\={u}nas}
\date{\today.}
\begin{document} 

\begin{abstract} We prove a conjecture of Colmez concerning the reduction modulo $p$ of 
invariant lattices in irreducible admissible unitary $p$-adic Banach space representations of $\GL_2(\Qp)$ with $p\ge 5$. 
This enables us to restate nicely the $p$-adic local Langlands correspondence for $\GL_2(\Qp)$ 
and deduce a conjecture of Breuil on irreducible admissible unitary completions of locally algebraic representations. 
\end{abstract}
\maketitle
\tableofcontents
\section{Introduction} 
In this paper we study $p$-adic and  mod-$p$ representation theory of $G:=\GL_2(\Qp)$. Our results complement the
work of Berger, Breuil and Colmez on the $p$-adic and mod-$p$ Langlands correspondence for $\GL_2(\Qp)$, see \cite{berbour} for an overview. 
Let $L$ be a finite extension of $\Qp$ with a ring of integers $\OO$, uniformizer $\varpi$  and residue field $k$.  

\begin{thm}\label{I1} Assume $p\ge 5$ and let $\Pi$ be a unitary admissible absolutely irreducible $L$-Banach space representation of $G$ with
a central character and let $\Theta$ be an open bounded $G$-invariant lattice in $\Pi$. Then $\Theta\otimes_{\OO} k$ 
is of finite length. Moreover, one of the following holds: 
\begin{itemize}
\item[(i)] $\Theta\otimes_{\OO} k$ is absolutely irreducible supersingular;
\item[(ii)] $(\Theta\otimes_{\OO} k)^{ss}\subseteq (\Indu{P}{G}{\chi_1\otimes \chi_2 \omega^{-1}})^{ss}\oplus  
(\Indu{P}{G}{\chi_2\otimes \chi_1 \omega^{-1}})^{ss}$ for some smooth characters $\chi_1, \chi_2: \Qp^{\times} \rightarrow k^{\times}$,
where $\omega(x)=x|x|\pmod{p}$.
\end{itemize}
Further, the inclusion in (ii) is not an isomorphism if and only if  $\Pi$ is ordinary.
\end{thm}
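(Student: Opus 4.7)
The plan is to exploit the decomposition of the category of locally admissible smooth representations of $G$ with fixed central character $\zeta$ into blocks (which is available for $p\ge 5$), together with a case-by-case analysis of how an irreducible unitary Banach space representation can sit inside each block.

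The first step is to pass via Schikhof duality to the anti-equivalent category of pseudocompact $\OO[[G]]$-modules with central character $\zeta$: the lattice $\Theta$ becomes a compact $\OO$-flat module $M$ and $\Theta\otimes_{\OO} k$ becomes $M/\varpi M$. Absolute irreducibility of $\Pi$ forces $M$ to be indecomposable and hence to lie in a single block $\mathfrak B$. For $p\ge 5$, the blocks which can contain Banach lifts are (a) singletons $\{\pi\}$ with $\pi$ supersingular, (b) pairs $\{\Indu{P}{G}{\chi_1\otimes \chi_2\omega^{-1}},\, \Indu{P}{G}{\chi_2\otimes \chi_1\omega^{-1}}\}$ with $\chi_1\chi_2^{-1}\notin\{1,\omega,\omega^{-1}\}$, and (c) blocks containing $\chi\circ\det$, $\St\otimes\chi$ and $\Indu{P}{G}{\chi\otimes\chi\omega^{-1}}$. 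Letting $\wP$ be the projective cover (in the block) of the direct sum of its irreducible objects, the endomorphism ring $\wE:=\End(\wP)$ is complete local Noetherian, and $M$ is a quotient of $\wP^{\oplus r}$ for some finite $r$. Finite generation over $\wE$ together with the explicit structure of $\wP/\varpi\wP$ in each block then forces $\Theta\otimes_{\OO} k$ to have finite length.

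Next I would treat the three blocks. In case (a), the only available composition factor is $\pi$, and a rigidity argument using $\End_G(\Pi)=L$ forces $\Theta\otimes_{\OO} k$ to be absolutely irreducible, giving (i). In case (b), only the two principal series are available as composition factors; one proves each appears with multiplicity at most one in $\Theta\otimes_{\OO} k$ by a $\Hom$-computation, using projectivity of $\wP$ and $\End_G(\Pi)=L$, which yields the inclusion (ii). Case (c) is analogous, with composition factors among $\chi\circ\det$, $\St\otimes\chi$, $\Indu{P}{G}{\chi\otimes\chi\omega^{-1}}$, and still fits into (ii) on taking $\chi_1=\chi_2=\chi$, since $(\Indu{P}{G}{\chi\otimes\chi\omega^{-1}})^{ss}$ contains both $\chi\circ\det$ and $\St\otimes\chi$.

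Finally, for the ordinary/non-ordinary dichotomy I would use Colmez's Montreal functor $\cV$. When $\cV(\Pi)$ is absolutely irreducible, any $\gal$-stable lattice in $\cV(\Pi)$ has semisimple mod-$p$ reduction exhibiting \emph{both} characters $\chi_1$ and $\chi_2$, and this translates through $\cV$ into both principal series appearing as composition factors of $\Theta\otimes_{\OO} k$, giving equality in (ii). When $\Pi$ is ordinary, $\cV(\Pi)$ is a nonsplit extension of characters; the reduction of any stable lattice therein exhibits only one character in its socle, and correspondingly only one of the two principal series appears as a subobject of $\Theta\otimes_{\OO} k$, producing the strict inclusion. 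The hardest part will be making this final comparison precise, since it requires matching composition factors on the representation side with the Galois-theoretic socle structure through the detailed behaviour of $\cV$ on ordinary deformations and on nonsplit extensions; this is where I expect the main obstacle to lie.
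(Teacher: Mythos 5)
There is a genuine gap, and it sits exactly where you locate the "routine" part of the argument. You assert that $\wE:=\End_{\dualcat(\OO)}(\wP)$ is "complete local Noetherian" and that the multiplicity bounds follow from a $\Hom$-computation together with $\End_G(\Pi)=L$. Neither is available a priori: $\wE$ is only a pseudo-compact local ring, and the multiplicity of $\pi$ in $\Theta\otimes_{\OO}k$ equals $\dim_L\Hom_{\dualcat(\OO)}(\wP,\Theta^d)\otimes L$, which is an absolutely irreducible right $\wE[1/p]$-module; absolute irreducibility bounds its dimension only once one knows the structure of $\wE$. The paper's entire technical core is the computation of $\wE$ block by block: in the supersingular and generic reducible blocks one shows $\wE$ is \emph{commutative}, isomorphic to a Galois deformation ring, and this needs Colmez's functor, Kisin's density of crystalline points (to get the surjection $\wE^{ab}\twoheadrightarrow R^{\zeta\varepsilon}$), extensive $\Ext^1$/$\Ext^2$ computations, and the commutativity criterion of Theorem \ref{critI}. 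Even the finite-length assertion is a consequence of $\wE$ being finite over a Noetherian centre (Proposition \ref{longproofI}, Corollary \ref{fgZfl}), not of general block theory plus admissibility; "$M$ is a quotient of $\wP^{\oplus r}$ with $r$ finite" is precisely what has to be proved. Your block list is also off: you omit the singleton block $\{\Indu{P}{G}{\chi\otimes\chi\omega^{-1}}\}$, where $\Theta\otimes_{\OO}k$ can be $\pi^{\oplus 2}$, $\wE$ is genuinely noncommutative (isomorphic to $R^{\mathrm{ps},\zeta\varepsilon}[[\gal]]/J$), and your "multiplicity at most one" heuristic would give the wrong answer — there one needs the separate fact that absolutely irreducible $\wE[1/p]$-modules are at most $2$-dimensional. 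Moreover the block of $\chi\circ\det$ contains $(\Indu{P}{G}{\omega\otimes\omega^{-1}})\otimes\chi$, not $\Indu{P}{G}{\chi\otimes\chi\omega^{-1}}$; the latter is irreducible and does not have $\chi\circ\det$ and $\Sp\otimes\chi$ as constituents, and the correct choice in (ii) for that block is $\chi_1=\chi$, $\chi_2=\chi\omega$, not $\chi_1=\chi_2=\chi$.

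The final dichotomy also has a circularity risk. You propose to deduce "inclusion strict $\Leftrightarrow$ $\Pi$ ordinary" from the shape of $\cV(\Pi)$ and its mod-$p$ lattices, but the statement that $\cV(\Pi)$ is $2$-dimensional and irreducible exactly when $\Pi$ is non-ordinary is Corollary \ref{I2}/Theorem \ref{I3}, which the paper deduces \emph{from} Theorem \ref{I1}, not the other way round. The paper argues on the automorphic side: if a constituent is missing, the projective resolutions \eqref{resindordinj2} (and their analogues in the non-generic blocks) force $\Hom_{\dualcat(\OO)}(\wM,\Theta^d)\neq 0$ for $\wM$ the dual of a parabolically induced projective envelope from $T$, whence $\Pi$ embeds into $(\Indu{P}{G}{\psi})_{cont}$ and is ordinary (Proposition \ref{QofM}, Corollaries \ref{cormainGen}, \ref{cormainNGI}, Proposition \ref{boundcases}); conversely the reductions of the explicitly listed ordinary representations are computed directly. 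To repair your proposal you would need to import, not assume, the computation of $\wE$ in each of the four blocks and replace the $\cV$-based argument for the ordinarity criterion by one that does not presuppose the correspondence.
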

We say that $\Pi$ is \textit{ordinary} if it is a subquotient of a parabolic induction of a unitary character, so that
$\Pi$ is either a unitary character $\Pi\cong \eta\circ \det$, 
a twist of the universal unitary completion of the smooth Steinberg representation by a unitary character 
$\Pi\cong \widehat{\Sp}\otimes \eta\circ \det$ or $\Pi$ is a unitary parabolic induction of  a unitary character.
An irreducible 
smooth $k$-representation is \textit{supersingular} if it is not a subquotient of any principal series representation. The theorem answers 
affirmatively for $p\ge 5$ a question of Colmez denoted (Q3) in \cite{colmez}. 

Let $Z$ be the centre of $G$, we fix a continuous character $\zeta: Z\rightarrow \OO^{\times}$. Let $\Mod^{\mathrm{sm}}_G(\OO)$ be the category 
of $\OO$-torsion modules with a continuous $G$-action for the discrete topology on the module, let  $\Mod^{\mathrm{sm}}_{G,\zeta}(\OO)$
be the full subcategory of $\Mod^{\mathrm{sm}}_{G}(\OO)$, consisting of representations on which
the centre $Z$ acts by (the image of) $\zeta$, and let 
$\Mod^{\mathrm{fin}}_{G, \zeta}(\OO)$ be the  full subcategory of $\Mod^{\mathrm{sm}}_{G, \zeta}(\OO)$, consisting of representations, 
which are of finite length as $\OO[G]$-modules.  In his Montreal lecture 
Colmez has defined an exact covariant $\OO$-linear functor $\VV: \Mod^{\mathrm{fin}}_{G, \zeta}(\OO)\rightarrow 
\Mod_{\gal}(\OO)$ to the category of $\OO$-modules with a continuous action of $\gal$, the absolute Galois group of $\Qp$. 
Given $\Pi$ as in Theorem \ref{I1} one may choose an open bounded $G$-invariant lattice $\Theta$ in $\Pi$ and define 
$\VV(\Pi):=L\otimes \underset{\longleftarrow}{\lim}\, \VV(\Theta/\varpi^n \Theta)$. Since all open bounded 
lattices in $\Pi$ are commensurable the definition does not depend on the choice of $\Theta$.

\begin{cor}\label{I2} Let $\Pi$ be a unitary admissible absolutely irreducible $L$-Ba\-na\-ch space representation of $G$ with
a central character then $\dim_L \VV(\Pi)\le 2$. Moreover, $\dim_L  \VV(\Pi)<2$ if and only if $\Pi$ is ordinary. 
\end{cor}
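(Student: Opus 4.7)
The plan is to apply the exact functor $\VV$ to a Jordan--Hölder filtration of $\Theta\otimes_{\OO}k$ supplied by Theorem~\ref{I1}, propagate the length count through the $\varpi$-adic tower $(\Theta/\varpi^n\Theta)_n$, and then invert $\varpi$. Fixing an open bounded $G$-invariant lattice $\Theta\subset \Pi$, multiplication by $\varpi^i$ identifies $\varpi^i\Theta/\varpi^{i+1}\Theta$ with $\Theta\otimes_{\OO}k$, so the $\varpi$-adic filtration of $\Theta/\varpi^n\Theta$ together with exactness of $\VV$ gives
$$\mathrm{length}_{\OO}\, \VV(\Theta/\varpi^n\Theta) \;=\; n\cdot d,\qquad d:=\dim_k \VV(\Theta\otimes_{\OO}k),$$
and shows the transition maps $\VV(\Theta/\varpi^{n+1}\Theta)\twoheadrightarrow \VV(\Theta/\varpi^n\Theta)$ are surjective with kernels of constant length $d$; the inverse limit is therefore $\varpi$-adically complete and finitely generated of $\OO$-rank $d$, so $\dim_L \VV(\Pi)=d$. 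It remains to prove $d\le 2$, with equality precisely in the non-ordinary case.

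To compute $d$ I would invoke Colmez's values of $\VV$ on irreducible smooth mod-$p$ representations of $G$ (zero on one-dimensional representations, one-dimensional on any twist of Steinberg, one-dimensional on any irreducible principal series, and two-dimensional irreducible on any supersingular), combined with Theorem~\ref{I1}. In case (i), $\Theta\otimes_{\OO}k$ is absolutely irreducible supersingular, so $d=2$; since the reduction of any ordinary $\Pi$ has only Jordan--Hölder factors that are subquotients of smooth parabolic inductions, $\Pi$ must be non-ordinary. In case (ii), writing $\pi_1:=\Indu{P}{G}{\chi_1\otimes\chi_2\omega^{-1}}$ and $\pi_2:=\Indu{P}{G}{\chi_2\otimes\chi_1\omega^{-1}}$, the inclusion $(\Theta\otimes_{\OO}k)^{ss}\subseteq \pi_1\oplus\pi_2$ together with the observation that each $\pi_i$ contributes exactly $1$ to $\dim_k \VV$ (either $\pi_i$ is irreducible of $\VV$-dimension $1$, or has length two with Jordan--Hölder factors a character and a twist of Steinberg, contributing $0+1$) yields $d\le 2$. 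Equality forces the inclusion of semisimplifications to be an equality of Jordan--Hölder multisets, which by the final clause of Theorem~\ref{I1} is exactly the non-ordinary case; conversely, for ordinary $\Pi$ the known structure of the reduction (a character, the smooth Steinberg twisted by a character when $\Pi\cong \widehat{\Sp}\otimes\eta\circ\det$, or a smooth principal series when $\Pi$ is a unitary parabolic induction) gives $d\in\{0,1\}$ directly.

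The main obstacle I foresee is the passage to the inverse limit in the first paragraph: one must verify that $\varprojlim_n \VV(\Theta/\varpi^n\Theta)$, a priori only a topological $\OO$-module, is genuinely finitely generated of the predicted $\OO$-rank $d$, without spurious torsion surviving to the limit. The surjectivity and constant-length kernels of the transition maps, combined with $\varpi$-adic completeness, are the key formal inputs. The remaining ingredients -- the values of $\VV$ on irreducible smooth mod-$p$ representations, and the explicit description of the reduction of an ordinary $\Pi$ -- are standard in the $p$-adic local Langlands literature and require no new work here.
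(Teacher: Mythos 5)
Your proposal is correct and follows essentially the same route as the paper's proof of Corollary~\ref{dimVPi}: one reduces to computing $\dim_k\VV(\Theta\otimes_{\OO}k)$ via the observation that $\VV(\Theta)=\varprojlim_n\VV(\Theta/\varpi^n\Theta)$ is a finite free $\OO$-lattice in $\VV(\Pi)$, and then reads off the mod-$p$ dimension from Theorem~\ref{I1} together with Colmez's values of $\VV$ on irreducible smooth $k$-representations. The paper handles the passage to the inverse limit by citing Kisin (or Lemma~\ref{flattofree}) while you sketch it directly, and the paper leaves the case analysis implicit while you spell it out; the one inner claim you should drop is that $d=2$ by itself forces equality of the inclusion of semisimplifications (removing a character, which has $\VV=0$, would not decrease $d$) --- but as you note, the converse direction (``ordinary implies $d\in\{0,1\}$'', verified on each of the three ordinary shapes) is what the argument actually needs, and that part is sound.
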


Once one has this, the  results of Berger-Breuil \cite{bergerbreuil}, Colmez \cite{colmez}  and Kisin \cite{kisin} imply:
\begin{thm}\label{I3} Assume $p\ge 5$, the functor $\VV$ induces a bijection between isomorphism classes of 
\begin{itemize}
\item[(i)] absolutely irreducible admissible unitary non-ordinary $L$-Banach space representations of $G$ with the central character\footnote{Dospinescu and Schraen  have show recently in \cite{DS} that every absolutely irreducible unitary admissible  $L$-Banach representation of a $p$-adic Lie group admits a central character.} $\zeta$, and 
\item [(ii)] absolutely irreducible $2$-di\-men\-sio\-nal continuous $L$-representations of $\gal$ with determinant equal to $\zeta \varepsilon$,
\end{itemize} 
where $\varepsilon$ is the cyclotomic character, and we view $\zeta$ as  a character of $\gal$ via  class field theory.\footnote{We normalize it the same way as Colmez in \cite{colmez}, see \S\ref{repsGL2}, so that the uniformizers correspond to geometric Frobenii.}
\end{thm}

%
In \cite{colmez} Colmez has also defined a characteristic $0$ construction, which to every 
$2$-di\-men\-sio\-nal continuous $L$-representation of $\gal$ associates an admissible  unitary $L$-Banach space representation $\Pi(V)$
of $G$, such that $\VV(\Pi(V))\cong V$. Colmez has calculated\footnote{In \cite{colmez} some cases are conditional on the results of Emerton, 
which have now appeared in \cite[\S7.4]{emlg}.} locally algebraic vectors
in $\Pi(V)$ in terms of $p$-adic Hodge theoretic data attached to $V$. 
As a consequence of Theorem \ref{I3} we know that for every $\Pi$ in (i) there 
exists a unique $V$ such that $\Pi\cong \Pi(V)$.  Using this in \S\ref{U} we determine 
admissible absolutely irreducible completions of absolutely irreducible locally  algebraic representations. In particular, 
we show that $\Sp \otimes |\det|^{k/2} \otimes \Sym^k L^2$ admits precisely $\mathbb{P}^1(L)$ non-isomorphic absolutely 
irreducible admissible unitary completions, where $k$ is a positive integer and $\Sp$ is the smooth Steinberg representation 
of $G$ over $L$. This confirms a conjecture of Breuil. However, our main result can be summed up as:

\subsection{The correspondence is an equivalence of categories} \label{corrisequiv}  
Let $\Ban_{G, \zeta}^{\mathrm{adm}}(L)$ be the category of unitary admissible $L$-Banach space representations
of $G$ with central character $\zeta$ and let $\Ban_{G, \zeta}^{\mathrm{adm.fl}}(L)$ be the full subcategory consisting of objects of finite length. 
Let $\Mod^{\mathrm{lfin}}_{G,\zeta}(\OO)$ be the full subcategory of $\Mod^{\mathrm{sm}}_{G,\zeta}(\OO)$ consisting 
of those objects which are locally of finite length, that is $(\tau, V)$ is an object of $\Mod^{\mathrm{lfin}}_{G,\zeta}(\OO)$ 
if and only if for every $v\in V$ the $\OO[G]$-module $\OO[G] v$ is of finite length. We obtain Bernstein-centre-like \footnote{Since 
we work in the category of locally finite representations, our rings are analogous to the completions of the rings in 
Bernstein's theory \cite{bernstein} at maximal ideals.} results for the categories  
$\Mod^{\mathrm{lfin}}_{G,\zeta}(\OO)$ and $\Ban_{G, \zeta}^{\mathrm{adm.fl}}(L)$. 
That is we decompose them into a direct product of subcategories and show that each subcategory  is  naturally equivalent 
to the category of modules over the rings related to deformation theory of $2$-di\-men\-sio\-nal $\gal$-representations.

To fix ideas let $\Pi$ be as in Theorem \ref{I3} (i) so that $V:=\VV(\Pi)$ is an absolutely irreducible 
$2$-di\-men\-sio\-nal continuous $L$-representation of $\gal$ with determinant $\zeta \varepsilon$. 
Let $R^{\zeta\varepsilon}_V$ be the deformation ring representing the deformation problem of $V$ with determinant 
$\zeta \varepsilon$ to  local artinian $L$-algebras, and let $V^{\mathrm{u}}$ be the universal deformation of  $V$ with the determinant $\zeta\varepsilon$.
Let $\Ban_{G, \zeta}^{\mathrm{adm.fl}}(L)_{\Pi}$ be the full subcategory of $\Ban_{G, \zeta}^{\mathrm{adm.fl}}(L)$ consisting 
of the representations with all irreducible subquotients isomorphic to $\Pi$.

\begin{thm}\label{I6}  The category $\Ban_{G, \zeta}^{\mathrm{adm.fl}}(L)_{\Pi}$
is a direct summand of the category $\Ban_{G, \zeta}^{\mathrm{adm.fl}}(L)$ and it is naturally equivalent to the category of $R^{\zeta\varepsilon}_V$-modules
of finite length. 
\end{thm}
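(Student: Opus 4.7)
The plan is to work on the Pontryagin dual side, convert the statement about admissible Banach representations into one about compact $\OO[[G]]$-modules with central character $\zeta$, and then use the deformation theory of $V$ through Colmez's functor $\VV$ to identify an endomorphism ring of a suitable projective object with $R^{\zeta\varepsilon}_V$.

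First, by Schikhof duality, the category $\Ban_{G,\zeta}^{\mathrm{adm.fl}}(L)$ is anti-equivalent (up to isogeny) to a category of pseudocompact $\OO[[G]]$-modules with central character $\zeta$ and suitable finiteness properties. Under this duality, Theorem \ref{I1} pins down the possible Jordan--H\"older factors of $\Theta/\varpi\Theta$ for any open bounded lattice $\Theta\subset \Pi$: since $\Pi$ is non-ordinary, either $\Theta/\varpi\Theta$ is absolutely irreducible supersingular, or its semisimplification is exhausted by two specific principal series (case (ii) of Theorem \ref{I1}, with equality). This finite, explicit list of irreducibles is the key combinatorial input; it implies that there is a ``block'' in the pseudocompact category in which $\Pi^d$ sits, containing all duals of objects of $\Ban_{G,\zeta}^{\mathrm{adm.fl}}(L)_{\Pi}$ and orthogonal to everything else, which gives the direct summand assertion.

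Next, I would construct a projective envelope $\wP$ (in the relevant category of pseudocompact modules with central character) of the Pontryagin dual of the socle of $(\Theta/\varpi\Theta)^{\vee}$. Because $V$ is absolutely irreducible and $\VV$ is exact, the natural action of $\End(\wP)$ on $\VV(\wP)$ produces a continuous two-dimensional representation deforming $V$ with determinant $\zeta\varepsilon$. The universal property of $R^{\zeta\varepsilon}_V$ then yields a canonical map $R^{\zeta\varepsilon}_V \to \End(\wP)$. The central technical step is to show that this map is an isomorphism, i.e.\ that $\wP$ is pro-represented by deformations of $V$. For existence in the opposite direction one uses Colmez's characteristic-zero construction $V \mapsto \Pi(V)$, reducing modulo $\varpi^n$ and integrating along the universal deformation to build a compatible system of objects mapping to $\wP$.

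Once the identification $R^{\zeta\varepsilon}_V \xrightarrow{\sim} \End(\wP)$ is in hand, the desired equivalence follows by standard Morita-type formalism: the functor $M \mapsto \Hom(\wP, M)$ from the block of the pseudocompact category containing $\wP$ to the category of pseudocompact $\End(\wP)$-modules is an equivalence (because $\wP$ is a projective generator of the block with the correct endomorphism ring), and upon inverting $p$ and dualizing back one obtains the claimed equivalence between $\Ban_{G,\zeta}^{\mathrm{adm.fl}}(L)_{\Pi}$ and finite length $R^{\zeta\varepsilon}_V$-modules.

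The main obstacle is the identification $R^{\zeta\varepsilon}_V\cong \End(\wP)$: this requires showing that $\VV(\wP)$ is free of rank $2$ over $\End(\wP)$, that $\End(\wP)$ is a local pro-artinian $\OO$-algebra with residue field $L$ (or $k$ after reduction), and that the deformation it classifies is indeed \emph{universal}, not merely versal. Surjectivity of $R^{\zeta\varepsilon}_V \to \End(\wP)$ reduces to an $\Ext^1$ computation on the Galois side matching an $\Ext^1$ computation in $\Mod^{\mathrm{sm}}_{G,\zeta}(\OO)$, and here the finer structural information from Theorem \ref{I1} together with the exactness of $\VV$ should be the essential tool.
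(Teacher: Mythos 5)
Your outline reproduces the general shape of the paper's argument (dual side, projective envelope $\wP$, its endomorphism ring, Colmez's functor, a Morita step), but the pivotal identification is stated incorrectly and the direct-summand claim is not actually proved. First, $\wE:=\End_{\dualcat(\OO)}(\wP)$ is a compact local $\OO$-algebra with residue field $k$, and $\cV(\wP)$ is a deformation of the \emph{mod-$p$} representation $\bar\rho=\VV(\overline{\Pi})$, not of $V$; so there is no map $R^{\zeta\varepsilon}_V\to\wE$, and an isomorphism $R^{\zeta\varepsilon}_V\cong\wE$ is impossible ($R^{\zeta\varepsilon}_V$ is an equicharacteristic-zero local $L$-algebra). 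What the paper proves is $\wE\cong R^{\zeta\varepsilon}_{\bar\rho}$ in the supersingular block, while in the reducible blocks $\wE_{\BB}$ is non-commutative, finitely generated over its centre, which is $R^{\mathrm{ps},\zeta\varepsilon}_{\tr\bar\rho}$; the ring $R^{\zeta\varepsilon}_V$ enters only after inverting $p$ and completing at the maximal ideal $\nn$ of the centre determined by $\tr V$, where the $\nn$-adic completion of $\wE_{\BB}[1/p]$ is shown to be a matrix algebra over the completion of $R^{\mathrm{ps},\zeta\varepsilon}_{\tr\bar\rho}[1/p]\cong R^{\zeta\varepsilon}_V$ (this uses that $V$ is absolutely irreducible). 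Your plan to show directly that $\VV(\wP)$ is free of rank $2$ over $\End(\wP)$ and that the deformation is universal also breaks down when $\overline{\Pi}$ lies in the non-generic blocks (iii), (iv) — which can happen even though $V$ is irreducible — since there $\wE$ is genuinely non-commutative and, in case (iv), one must even pass to a quotient category before the formalism applies.

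Second, the direct-summand assertion does not follow from the mod-$p$ block decomposition. That decomposition only places $\Ban_{G,\zeta}^{\mathrm{adm.fl}}(L)_{\Pi}$ inside $\Ban_{G,\zeta}^{\mathrm{adm.fl}}(L)^{\BB}$; a single block contains a large family of pairwise non-isomorphic irreducible Banach space representations with the same reduction, and nothing in your sketch rules out extensions between $\Pi$ and these. The splitting in the paper comes from the Bernstein-centre-like structure: the centre of the category is $R^{\mathrm{ps},\zeta\varepsilon}_{\tr\bar\rho}$ (Theorem \ref{I7}), it acts on $\Theta/\varpi^n\Theta$ and hence on any finite length $\Pi_1$, the module $\md(\Pi_1)$ is finite dimensional because $\wE_{\BB}$ is finite over its noetherian centre (Proposition \ref{longproofI}), and the Chinese remainder theorem applied to $R^{\mathrm{ps},\zeta\varepsilon}_{\tr\bar\rho}[1/p]$ then yields the decomposition into $\nn$-primary pieces, of which $\Ban_{G,\zeta}^{\mathrm{adm.fl}}(L)_{\Pi}$ is the one at the ideal corresponding to $\tr V$. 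Without the centre, the finiteness of $\md(\Pi)$, and the anti-equivalence of $\Ban^{\mathrm{adm.fl}}/\Ker\md$ with finite dimensional $\wE_{\BB}[1/p]$-modules, neither the splitting nor the identification of the irreducible objects of that summand with $\Pi$ alone is established.
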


The first assertion in Theorem \ref{I6} means that any finite length admissible unitary $L$-Banach space representation $\Pi_1$ of $G$ 
with a central character $\zeta$ can be canonically decomposed $\Pi_1\cong \Pi_2\oplus \Pi_3$, 
such that all the irreducible subquotients of $\Pi_2$ are isomorphic to $\Pi$ and none of the irreducible subquotients 
of $\Pi_3$ are isomorphic to $\Pi$. The equivalence of categories in Theorem \ref{I6} is realized as follows: to each 
$\mathrm{B}$ in $\Ban_{G, \zeta}^{\mathrm{adm.fl}}(L)_{\Pi}$ we let $\md(\mathrm B)=\Hom_{\gal}(V^{\mathrm{u}}, \cV(\mathrm B))$, where $\cV(\mathrm B)=\VV(\mathrm B)^*(\varepsilon \zeta)$, and 
then show in Theorem \ref{holiday} that $\cV(\mathrm B)\cong \md(\mathrm B)\otimes_{R^{\zeta\varepsilon}_V} V^{\mathrm u}$.
So at least in some sense we may describe 
what kind of representations of $\gal$ lie in the image of $\VV$, which explains the title of our paper.

We will discuss now what happens with $\Mod^{\mathrm{lfin}}_{G,\zeta}(\OO)$ and recall that we assume $p\ge 5$. We may define 
an equivalence relation on the set of (isomorphism classes of) irreducible objects of $\Mod^{\mathrm{lfin}}_{G,\zeta}(\OO)$, where $\tau \sim \pi$ if and 
only if there exists a sequence of irreducible representations $\tau=\tau_0, \tau_1, \ldots, \tau_n=\pi$ such that $\tau_i=\tau_{i+1}$, 
$\Ext^1_G(\tau_i, \tau_{i+1})\neq 0$ or $\Ext^1_G(\tau_{i+1}, \tau_i)\neq 0$ for each $i$. An equivalence class is called  a block. 
To a block $\BB$ we associate $\pi_{\BB}:=\bigoplus_{\pi\in \BB}\pi $,
$\pi_{\BB}\hookrightarrow J_{\BB}$ an injective envelope of $\pi_{\BB}$ in $\Mod^{\mathrm{lfin}}_{G,\zeta}(\OO)$ and $\wE_{\BB}:=\End_G(J_{\BB})$.
One may show that $\wE_{\BB}$ is naturally a pseudo-compact ring, see \S \ref{zerosec}. By a general result of Gabriel on locally 
finite categories \cite[\S IV]{gab} we have a decomposition of categories:
$$\Mod^{\mathrm{lfin}}_{G,\zeta}(\OO)\cong \prod_{\BB} \Mod^{\mathrm{lfin}}_{G,\zeta}(\OO)^{\BB},$$
where the product is taken over all the blocks $\BB$ and  $\Mod^{\mathrm{lfin}}_{G,\zeta}(\OO)^{\BB}$ denotes a full subcategory 
of $\Mod^{\mathrm{lfin}}_{G,\zeta}(\OO)$ consisting of those representations, such that all the irreducible subquotients 
lie in $\BB$. Moreover, the functor $\tau \mapsto \Hom_G(\tau, J_{\BB})$ induces an anti-equivalence of categories 
between $\Mod^{\mathrm{lfin}}_{G,\zeta}(\OO)^{\BB}$ and the category of compact $\wE_{\BB}$-modules. In this paper we explicitly 
work out the rings $\wE_{\BB}$. 

We are going to describe the blocks. Since we are working over a coefficient field which is not algebraically closed, not 
every irreducible $k$-representation $\tau$ of $G$ is absolutely irreducible. However, we show that given a block 
$\BB$ there exists a finite extension $l$ of $k$ such that  for all $\tau\in \BB$, $\tau\otimes_k l$ is isomorphic to a finite  direct sum
of absolutely irreducible representations. The blocks containing an absolutely irreducible representation are given by:
\begin{itemize}
\item[(i)] $\BB=\{\pi\}$, supersingular;
\item[(ii)] $\BB=\{ \Indu{P}{G}{\chi_1\otimes \chi_2\omega^{-1}}, \Indu{P}{G}{\chi_2\otimes \chi_1\omega^{-1}}\}$, 
$\chi_1\chi_2^{-1}\neq \Eins, \omega^{\pm 1}$;
\item[(iii)] $\BB=\{\Indu{P}{G}{\chi\otimes \chi\omega^{-1}}\}$;
\item[(iv)] $\BB=\{ \eta\circ \det, \Sp\otimes\eta\circ \det, (\Indu{P}{G}{\omega\otimes \omega^{-1}})\otimes\eta\circ \det\}$.
\end{itemize}
To prove this, one has to compute all the $\Ext^1$ groups between irreducible representations of $G$, which have been classified by 
Barthel-Livne \cite{bl} and Breuil \cite{breuil1}. In many cases these computations 
have been dealt with by Breuil and the author \cite{bp}, Colmez \cite{colmez} and Emerton \cite{ord2} by different methods, and 
the computation was completed in \cite{ext2}. To each $\BB$ we may associate a $2$-di\-men\-sio\-nal semi-simple 
$k$-representation $\rho$ of $\gal$ using the semi-simple mod-$p$ correspondence of Breuil, \cite{breuil1}, \cite{breuil2}: (i) $\rho:=\VV(\pi)$ is absolutely irreducible;
(ii) $\rho=\chi_1\oplus \chi_2$; (iii) $\rho= \chi\oplus \chi$; (iv) $\rho:= \eta\oplus \eta \omega$. Let 
$R^{\mathrm{ps}, \zeta\varepsilon}_{\tr \rho}$ be the universal deformation ring representing the deformation problem 
of $2$-di\-men\-sio\-nal pseudocharacters with determinant $\zeta\varepsilon$ lifting the trace of $\rho$, see \S \ref{pseudocharacters}
for a definition. 

\begin{thm}\label{I7} Let $\BB$ be as above then the centre of $\wE_{\BB}$ and hence the centre of the category 
$\Mod^{\mathrm{lfin}}_{G,\zeta}(\OO)^{\BB}$ is naturally isomorphic to $R^{\mathrm{ps}, \zeta\varepsilon}_{\tr \rho}$.
\end{thm}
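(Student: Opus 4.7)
By Gabriel's theorem recalled in the introduction, the centre of the category $\Mod^{\mathrm{lfin}}_{G,\zeta}(\OO)^{\BB}$ coincides with $Z(\wE_{\BB})$, so the task is to produce a natural isomorphism $R^{\mathrm{ps}, \zeta\varepsilon}_{\tr \rho}\cong Z(\wE_{\BB})$. My plan is to construct a canonical map by feeding a projective cover (Pontryagin dual to $J_{\BB}$) through Colmez's functor, to read off a pseudocharacter of $\gal$ with values in $Z(\wE_{\BB})$, and then to verify bijectivity block by block, using Theorem \ref{I3} for the $L$-points and the known $\Ext^1$-computations for the mod $p$ data.

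\textbf{Construction of the map.} I would pass to the Pontryagin dual $\wP$ of $J_{\BB}$, a pro-finite projective $\OG$-module whose top is $\pi_{\BB}^{\vee}$ and which satisfies $\End_{\OG}(\wP)\cong \wE_{\BB}^{\mathrm{op}}$. Applying a pseudo-compactly extended form of the dual Colmez functor $\cV$ produces a pro-finite $\gal$-module $M:=\cV(\wP)$ carrying commuting actions of $\wE_{\BB}$ and $\gal$; by functoriality the residual $\gal$-representation attached to $M$ is $\cV(\pi_{\BB}^{\vee})$, a semisimple $2$-dimensional $\gal$-representation with trace $\tr\rho$ and determinant $\zeta\varepsilon$. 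The assignment $g\mapsto \tr(g\mid M)$ is then a continuous pseudocharacter $\gal\to Z(\wE_{\BB})$ deforming $\tr\rho$ with determinant $\zeta\varepsilon$ --- centrality follows because every endomorphism of $\wP$ commutes with the $\gal$-action by functoriality of $\cV$ --- and the universal property of $R^{\mathrm{ps}, \zeta\varepsilon}_{\tr \rho}$ yields the desired homomorphism.

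\textbf{Bijectivity.} In the supersingular case (i), $\rho$ is absolutely irreducible and pseudocharacters coincide with representations, so $R^{\mathrm{ps}, \zeta\varepsilon}_{\tr \rho}=R^{\zeta\varepsilon}_{\rho}$; combining Colmez's formalism with Theorem \ref{I3} should show that $\cV$ identifies deformations of $\pi_{\BB}^{\vee}$ with deformations of $\rho$, so that $\wE_{\BB}$ itself is $R^{\zeta\varepsilon}_{\rho}$ and the map is tautologically an isomorphism. For the reducible blocks (ii)--(iv) I would argue injectivity and surjectivity separately. Injectivity reduces to the faithfulness of $M$ as an $R^{\mathrm{ps}, \zeta\varepsilon}_{\tr \rho}$-module, which I would prove by exhibiting a dense family of points of $\Spec R^{\mathrm{ps}, \zeta\varepsilon}_{\tr \rho}[1/p]$ realised by the $2$-dimensional Banach representations of Theorem \ref{I3} together with ordinary completions, on each of which the specialised pseudocharacter is non-zero. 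Surjectivity reduces to bounding $\wE_{\BB}$ as a finite module over the image of $R^{\mathrm{ps}, \zeta\varepsilon}_{\tr \rho}$; modulo the Jacobson radical this follows from the explicit structure of $J_{\BB}$ produced by the $\Ext^1$-computations of \cite{bp,colmez,ord2,ext2}, and Nakayama should then finish the job.

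\textbf{Main obstacle.} The principal difficulty is case (iv), the Steinberg block, where $\BB$ has three members, $\rho=\eta\oplus\eta\omega$ is reducible, and $R^{\mathrm{ps}, \zeta\varepsilon}_{\tr \rho}$ admits pseudocharacter deformations that are not traces of genuine $\gal$-representations but only of non-split reducible ones. Realising such pseudo-deformations via $\cV$ forces one to track every non-split extension among the three constituents of $\BB$ on both sides of the correspondence and to show that no central endomorphism of $\wE_{\BB}$ is lost under $\cV$. Case (iii), with $\rho=\chi\oplus\chi$ and $\End_{\gal}(\rho)$ a matrix algebra over $k$, presents a parallel though somewhat lighter problem because of the extra automorphisms of $\rho$. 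Controlling $\cV$ finely enough on these non-semisimple situations is where the bulk of the technical work will lie.
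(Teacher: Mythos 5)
The reduction via Gabriel's theorem to computing $Z(\wE_{\BB})$ matches the paper, and your treatment of the supersingular block (i) is essentially the paper's argument (Corollary~\ref{Csuper} plus Remark~\ref{tracesuper}). However, the core construction of your map $R^{\mathrm{ps},\zeta\varepsilon}_{\tr\rho}\to Z(\wE_{\BB})$ — taking the trace $g\mapsto\tr(g\mid\cV(\wP_{\BB}))$ with values in $Z(\wE_{\BB})$ — does not produce a $2$-dimensional pseudocharacter. The module $\cV(\wP_{\BB})$ is rank $2$ only over the individual diagonal pieces $\wE_{ii}$ (where these are commutative), never over the centre $Z(\wE_{\BB})$. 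Concretely: in block (ii), $\cV(\wP_{\BB})\cong\rho_1^{\mathrm{un}}\oplus\rho_2^{\mathrm{un}}$ has rank $4$; in block (iv), $\cV(\wP_{\pi_\alpha^\vee}\oplus\wP_{\Sp^\vee}\oplus\wP_{\Eins^\vee_G})$ has rank $2+2+3=7$ over $R^\psi$ (using the exact sequences~\eqref{1B},~\eqref{2B}); and in block (iii), $\wE$ is \emph{non-commutative}, $\cV(\wP)$ is free of rank $2$ over $\wE$, and $\wE$ is free of rank $4$ over its centre, so $\cV(\wP)$ has rank $8$ over $Z(\wE)$. Your trace is therefore the trace of an $8$-, $7$-, or $4$-dimensional representation, not a deformation of $\tr\rho$. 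Moreover, $\cV(\wP_{\BB})$ need not even be projective over $Z(\wE_{\BB})$ (in block (iv) the summand $\cV(\wP_{\Sp^\vee})$ is a non-projective torsion-free module), so the trace is not even unambiguously defined as an element of $Z(\wE_{\BB})$.

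The fix — restrict to a single summand $\wP_i$ and take trace over the commutative ring $\wE_{ii}$ — is essentially what the paper does, but then the two crucial steps you elide are (a) that the several diagonal traces are compatible under the natural comparison maps $\wE_{ii}\to\wE_{jj}$ given by restriction along the exact sequences~\eqref{1B},~\eqref{2B}, and (b) that the resulting diagonal element really is central in the full matrix ring $\wE_{\BB}$. Neither is automatic; for block (iv) this is exactly the content of Proposition~\ref{Risomo} and Lemmas~\ref{zcommutes1},~\ref{zcommutes2}, and for block (ii) the content of Proposition~\ref{genrc}. In block (iii) there is no ``diagonal'' because $\wE_{\BB}=\wE$ is already local but non-commutative; the paper's route there is to present $\wE$ as $R^{\mathrm{ps},\zeta\varepsilon}_{2\chi}[[\gal]]/J$ via non-commutative deformation theory and then identify the centre with $R^{\mathrm{ps},\zeta\varepsilon}_{2\chi}$ as the $\ast$-fixed points (Lemma~\ref{fixedpointsinv}, Corollary~\ref{weiterso3}). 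This is not a ``somewhat lighter problem'' than block (iv), and it is not related to $\Aut(\rho)$ being a matrix algebra over $k$ as you suggest. Finally, in block (iv) your plan never confronts the fact that $\cV$ kills modules with trivial $G$-action — so the natural map $\wE_{\BB}\to\End_{\gal}(\cV(\wP_{\BB}))$ is not injective and your density argument for injectivity cannot work as stated; the paper has to pass to the quotient category $\qcat(\OO)^{\BB}$ precisely to deal with this. In short, your proposal is a plausible sketch of the \emph{shape} of the answer, but the construction of the comparison map contains a genuine gap, and bridging it in blocks (ii)--(iv) requires essentially the paper's block-specific explicit computation of $\wE_{\BB}$, not a uniform trace argument.
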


Recall that the centre of an abelian category is the ring of endomorphisms of the identity functor. In particular, it acts naturally on every object in the category.
We also show that $\wE_{\BB}$ is finitely generated as a module over its centre and after localizing away from the 
reducible locus it is isomorphic to a matrix algebra. In cases (i), (ii) we have a nice characterization 
of $\wE_{\BB}$ in terms of the Galois side. 
We may extend $\VV$ to the category $\Mod^{\mathrm{lfin}}_{G,\zeta}(\OO)$ since every object is a union 
of subobjects of finite length. If $\BB=\{\pi\}$ with $\pi$  supersingular then $J_{\BB}$ is simply an injective envelope of $\pi$. 
Let $\rho=\VV(\pi)$, $R^{\zeta \varepsilon}_{\rho}$ be the deformation ring representing the deformation problem of $\rho$ with 
determinant equal to $\zeta\varepsilon$ and let $\rho^{\mathrm{un}}$ be the universal deformation with determinant $\zeta\varepsilon$. 

\begin{thm} If $\BB=\{\pi\}$ with $\pi$  supersingular then $\VV(J_{\BB})^{\vee}(\zeta \varepsilon)\cong \rho^{\mathrm{un}}$
and $\wE_{\BB}\cong R^{\zeta\varepsilon}_{\rho}$, where $\vee$ denotes the Pontryagin dual. 
\end{thm}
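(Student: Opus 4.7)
The plan is to construct a natural continuous $\OO$-algebra map $\alpha\colon R^{\zeta\varepsilon}_\rho \to \wE_\BB$ by exhibiting $M := \VV(J_\BB)^\vee(\zeta\varepsilon)$ as a deformation of $\rho$ of the required type, and then to prove that $\alpha$ is an isomorphism. The identification $\VV(J_\BB)^\vee(\zeta\varepsilon)\cong\rho^{\mathrm{un}}$ will then drop out of the universal property of $R^{\zeta\varepsilon}_\rho$.

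First I would pin down the $\wE_\BB$-module structure of $\VV(J_\BB)^\vee$. Since $\BB=\{\pi\}$ with $\pi$ absolutely irreducible supersingular, the compact dual $J_\BB^\vee$ is the projective cover of $\pi^\vee$ in the pseudo-compact category of $\OO[[G]]$-modules with central character $\zeta$, and $\wE_\BB$ is a local pseudo-compact $\OO$-algebra with residue field $k$; in particular the $\mm_{\wE_\BB}$-torsion of $J_\BB$ coincides with its $G$-socle, which is $\pi$. Exactness and $\wE_\BB$-linearity of $\VV$ then give $\VV(J_\BB)[\mm_{\wE_\BB}]=\VV(\pi)=\rho$. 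Applying $\VV$ to the filtration $J_\BB[\mm_{\wE_\BB}^n]$, dualizing, and combining this with the $\wE_\BB^{\mathrm{op}}$-projectivity of $J_\BB^\vee$, one sees that $\VV(J_\BB)^\vee$ is a free $\wE_\BB$-module of rank $2$ whose reduction modulo $\mm_{\wE_\BB}$ is $\rho^\vee$. Since for a two-dimensional $\rho$ one has $\rho^\vee\otimes\det\rho\cong\rho$ and $\det\rho=\zeta\varepsilon$, the module $M$ is free of rank $2$ over $\wE_\BB$ with reduction $\rho$; compatibility of $\VV$ with central characters (namely that $\VV$ sends central character $\zeta$ to determinant $\zeta\varepsilon$) yields $\det M=\zeta\varepsilon$. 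Thus $M$ is a deformation of $\rho$ to $\wE_\BB$ with the correct determinant, producing $\alpha$ together with a canonical isomorphism $M\cong \rho^{\mathrm{un}}\widehat{\otimes}_{R^{\zeta\varepsilon}_\rho}\wE_\BB$.

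To show that $\alpha$ is an isomorphism I would compare mod-$\mm^2$ tangent spaces. On the Galois side the tangent space of $R^{\zeta\varepsilon}_\rho$ is naturally dual to $H^1(\gal,\mathrm{ad}^0\rho)$; on the $G$-side, projectivity of $J_\BB^\vee$ identifies $\mm_{\wE_\BB}/\mm_{\wE_\BB}^2$ dually with $\Ext^1_{G,\zeta}(\pi,\pi)$. The map $\alpha$ then induces, up to these dualities, the map on $\Ext^1$-groups coming from $\VV$, which for supersingular $\pi$ should be an isomorphism by the $\Ext$-computations carried out in the earlier sections of the paper, and which matches the determinant-preserving conditions on both sides. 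Completeness together with Nakayama then forces $\alpha$ to be surjective. Injectivity follows from the faithfulness of $M$: an element of $\ker\alpha$ would act by zero on $\rho^{\mathrm{un}}$, hence on $M$, and $M$ is free, hence faithful, as an $\wE_\BB$-module.

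The hard step is the $\Ext^1$-comparison in the tangent space argument, namely the assertion that $\VV$ induces an isomorphism $\Ext^1_{G,\zeta}(\pi,\pi)\xrightarrow{\sim}H^1(\gal,\mathrm{ad}^0\rho)$ respecting the determinant-preserving conditions. This is not formal; it relies on the classification of self-extensions of a supersingular $\pi$ and the explicit behaviour of $\VV$ on them, which must be in place before this theorem. Once this input is granted, the remainder is essentially ring-theoretic bookkeeping: propagate a universal deformation through a projective object and invoke Nakayama and universality.
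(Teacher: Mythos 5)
Your plan correctly identifies $M=\cV(\wP)\cong\VV(J_\BB)^\vee(\zeta\varepsilon)$ as a rank-$2$ free $\wE_\BB$-module deforming $\rho$, and correctly uses Colmez's injectivity of $\VV$ on $\Ext^1_{G,\zeta}(\pi,\pi)\hookrightarrow H^1(\gal,\Ad\rho)$ to deduce that the induced map of local rings is surjective on tangent spaces. That much is genuine content and agrees with the paper. But there are two linked gaps that together constitute the hard part of the theorem, and your proposal does not close them.

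First, the construction of $\alpha\colon R^{\zeta\varepsilon}_\rho\to\wE_\BB$ is not available as stated. A priori $\wE_\BB$ is a noncommutative local ring, and the universal property of $R^{\zeta\varepsilon}_\rho$ supplies maps only to commutative $\OO$-algebras. What the deformation $M$ gives you is a map $R^{\zeta\varepsilon}_\rho\to\wE^{ab}_\BB$ (cf.\ Corollary \ref{ftof1} and the proof of Proposition \ref{ftof4}). To upgrade this to the assertion of the theorem you must also prove that $\wE_\BB$ is commutative; your argument silently assumes this. Second, the injectivity argument is circular. You write that an element of $\ker\alpha$ ``would act by zero on $\rho^{\mathrm{un}}$, hence on $M$, and $M$ is free, hence faithful''; but $\rho^{\mathrm{un}}$ is free over $R^{\zeta\varepsilon}_\rho$ and hence tautologically faithful, so no element of $R^{\zeta\varepsilon}_\rho$ at all acts by zero on it. What you actually know is that $\ker\alpha$ acts by zero on $M$, and $M$ is a module over $R^{\zeta\varepsilon}_\rho$ only \emph{through} $\alpha$, so its faithfulness over $R^{\zeta\varepsilon}_\rho$ is literally the injectivity you are trying to prove.

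The paper closes both gaps with two separate inputs beyond the tangent-space comparison. It uses Kisin's density of crystalline points (Proposition \ref{ftof4}, itself resting on \cite[2.3.8]{kisin}) to produce a surjection $\wE_\BB\twoheadrightarrow R^{\zeta\varepsilon}_\rho\cong\OO[[x_1,x_2,x_3]]$ (Proposition \ref{kis}); and then it invokes the commutativity criterion of Theorem \ref{critI} to conclude $\wE_\BB\cong\OO[[x_1,x_2,x_3]]$ (Proposition \ref{superdone}). The criterion requires the $\Ext^1$ bound $\dim\Ext^1_{G,\zeta}(E_\xi,\pi)\le 3$ for self-extensions $E_\xi$ of $\pi$ in a $2$-dimensional subspace of $\Ext^1_{G,\zeta}(\pi,\pi)$ — a computation done in \cite[10.14]{ext2} in the regular case and requiring the entire subsection \S\ref{iwahoricase} in the Iwahori case. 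This is exactly the step your proposal dismisses as ``ring-theoretic bookkeeping''; it is in fact where the content lies, and without it neither the commutativity of $\wE_\BB$ nor the injectivity of your $\alpha$ is established.
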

Thus to every $\tau$ in  $\Mod^{\mathrm{lfin}}_{G,\zeta}(\OO)^{\BB}$ we may associate a compact $\wE_{\BB}$-module 
$\md(\tau):=\Hom_G(\tau, J_{\BB})$ and then $\VV(\tau)^{\vee}(\zeta\varepsilon)\cong \md(\tau)\wtimes_{\wE_{\BB}} \rho^{\mathrm{un}}$.
  
In the generic reducible case (ii), $\Ext^1_{\gal}(\chi_1, \chi_2)$ and $\Ext^1_{\gal}(\chi_2, \chi_1)$ are both $1$-di\-men\-sio\-nal. 
Thus there exists unique up to isomorphism non-split extension $\rho_1$ of $\chi_1$ by $\chi_2$ and $\rho_2$ of $\chi_2$ by $\chi_1$. 
Since $\chi_1\neq \chi_2$ the deformation problems  of $\rho_1$ and $\rho_2$ with determinant equal to $\zeta\varepsilon$ are represented 
by $R^{\zeta\varepsilon}_{\rho_1}$ and $R^{\zeta\varepsilon}_{\rho_2}$ respectively. Let $\rho_1^{\mathrm{un}}$ and $\rho_2^{\mathrm{un}}$ 
be the universal deformation of $\rho_1$ and $\rho_2$ with determinant $\zeta\varepsilon$, respectively.   

\begin{thm} If $\BB$ is as in (ii)  then $\VV(J_{\BB})^{\vee}(\zeta \varepsilon)\cong \rho^{\mathrm{un}}_1\oplus \rho^{\mathrm{un}}_2$
and $\wE_{\BB}\cong \End_{\gal}(\rho^{\mathrm{un}}_1\oplus \rho^{\mathrm{un}}_2)$, where $\vee$ denotes the Pontryagin dual. 
\end{thm}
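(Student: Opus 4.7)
The plan is to break $J_{\BB}$ into a direct sum and analyze each piece separately. Since the block $\BB$ consists of exactly two non-isomorphic irreducibles $\pi_1=\Indu{P}{G}{\chi_1\otimes \chi_2\omega^{-1}}$ and $\pi_2=\Indu{P}{G}{\chi_2\otimes \chi_1\omega^{-1}}$, the injective envelope of $\pi_{\BB}=\pi_1\oplus \pi_2$ decomposes as $J_{\BB}\cong J_{\pi_1}\oplus J_{\pi_2}$, where $J_{\pi_i}$ is an injective envelope of $\pi_i$ in $\Mod^{\mathrm{lfin}}_{G,\zeta}(\OO)^{\BB}$. Setting $M_i:=\VV(J_{\pi_i})^{\vee}(\zeta\varepsilon)$ and $\wE_i:=\End_G(J_{\pi_i})$, it suffices to show $M_i\cong \rho^{\mathrm{un}}_i$ as $\wE_i^{\mathrm{op}}$-modules, since then assembling the four blocks of $\End_G(J_{\BB})=\bigoplus_{i,j}\Hom_G(J_{\pi_i},J_{\pi_j})$ and matching them with $\bigoplus_{i,j}\Hom_{\gal}(\rho^{\mathrm{un}}_i,\rho^{\mathrm{un}}_j)$ yields both statements of the theorem.

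The next step is to endow $M_i$ with a deformation-theoretic meaning. Exactness of $\VV$, together with the mod-$p$ correspondence which sends $\pi_i$ to $\chi_i$, implies that the cosocle of $M_i/\varpi$ is $\chi_i$ and that the first nontrivial layer above it is controlled by $\Ext^1_G(\pi_j,\pi_i)$ for $j\neq i$. Using the compatibility of $\VV$ with $\Ext^1$ and the hypothesis $\chi_1\chi_2^{-1}\neq \Eins,\omega^{\pm 1}$ (which forces the unique nonzero extension class in $\Ext^1_G(\pi_j,\pi_i)$ to map to the nonzero class in $\Ext^1_{\gal}(\chi_j,\chi_i)$), one identifies $M_i\otimes_{\wE_i}k\cong \rho_i$. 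Flatness of $M_i$ over $\wE_i$, which may be deduced from the injective/projective structure of $J_{\pi_i}$ on the $G$-side passing through Colmez's dictionary, together with the determinant computation, shows that $M_i$ is a continuous deformation of $\rho_i$ with determinant $\zeta\varepsilon$ to the pseudo-compact local $\OO$-algebra $\wE_i$. Since $\chi_1\neq \chi_2$, the deformation problem is representable, and universality of $R^{\zeta\varepsilon}_{\rho_i}$ yields a canonical map $\varphi_i\colon R^{\zeta\varepsilon}_{\rho_i}\to \wE_i$.

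The hardest part will be proving that $\varphi_i$ is an isomorphism. My intended route is to first show it is surjective by matching tangent spaces: on the Galois side the mod-$p$ tangent space is computed via local Tate duality as a subspace of $H^1(\gal,\ad(\rho_i))$ cut out by the trace condition, while on the $G$-side the tangent space of $\wE_i$ is read off from the $\Ext^1_G$-groups between irreducibles of $\BB$ classified in \cite{ext2}, and these dimensions must coincide. Injectivity is then obtained by a dimension/Cohen--Macaulay comparison: Theorem~\ref{I7} identifies the centre of $\wE_{\BB}$ with the pseudocharacter deformation ring $R^{\mathrm{ps},\zeta\varepsilon}_{\tr\rho}$, $\wE_i$ is finitely generated over this centre, and matching Krull dimensions on both sides (both equal to $\dim R^{\mathrm{ps},\zeta\varepsilon}_{\tr\rho}$ plus a standard correction from the reducible locus) forces $\ker \varphi_i=0$.

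With $\varphi_i$ an isomorphism and $M_i\cong \rho^{\mathrm{un}}_i$ for $i=1,2$, the first claim follows from $\VV(J_{\BB})^{\vee}(\zeta\varepsilon)\cong M_1\oplus M_2$. For the second claim the diagonal entries give $\wE_i\cong \End_{\gal}(\rho^{\mathrm{un}}_i)$, while the off-diagonal $\Hom_G(J_{\pi_i},J_{\pi_j})$ are computed over the common pseudocharacter base $R^{\mathrm{ps},\zeta\varepsilon}_{\tr\rho}$: both sides are identified with $\Hom_{\gal}(\rho^{\mathrm{un}}_i,\rho^{\mathrm{un}}_j)$ via the universal deformation, and assembling gives $\wE_{\BB}\cong \End_{\gal}(\rho^{\mathrm{un}}_1\oplus \rho^{\mathrm{un}}_2)$, as desired.
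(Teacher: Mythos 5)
Your overall strategy (identify $\cV(\wP_i)=\VV(J_{\pi_i})^{\vee}(\zeta\varepsilon)$ with $\rho_i^{\mathrm{un}}$ and then assemble $\wE_{\BB}$ block by block) has the right shape, but there is a genuine gap at its core: you never prove, and have no mechanism to prove, that $\wE_i=\End_{\dualcat(\OO)}(\wP_i)$ is commutative, and your argument silently presupposes it. The map $\varphi_i\colon R^{\zeta\varepsilon}_{\rho_i}\to\wE_i$ you want from universality does not exist unless the coefficient ring is commutative (a deformation over a possibly non-commutative $\wE_i$ is not classified by the commutative universal ring); the paper sets up the relation the other way round, via the non-commutative formalism of \S\ref{def}: $\wE_i$ pro-represents the deformation functor of $Q=\kappa^{\vee}$, the dual of the \emph{atome automorphe}, not of $\pi_i^{\vee}$ (with $Q=S=\pi_i^{\vee}$ hypothesis (H3) fails, since $\Ext^1_{G,\zeta}(\pi_2,\pi_1)\neq 0$), and it is this choice of $Q$, with (H1)--(H5) verified in Proposition \ref{genHok}, that underwrites the flatness of $\wP_i$ over $\wE_i$ and $k\wtimes_{\wE_i}\wP_i\cong\kappa^{\vee}$ which you only wave at. One then gets a surjection $\wE_i^{ab}\twoheadrightarrow R^{\zeta\varepsilon}_{\rho_i}$ --- not by ``matching tangent space dimensions'', which by itself proves nothing about surjectivity, but from Colmez's injectivity of $\VV$ on $\Ext^1$ together with Kisin's density of irreducible points (Propositions \ref{ftof4}, \ref{surjformsmooth}). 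Even granting that, the possible discrepancy between $\wE_i$ and $R^{\zeta\varepsilon}_{\rho_i}$ is purely non-commutative and is invisible to tangent spaces and to Krull dimension, so your ``dimension/Cohen--Macaulay comparison'' cannot close it; moreover your appeal to Theorem \ref{I7} is circular, since for this block that statement is deduced from the very isomorphism being proved (Corollary \ref{Cgen}). The real content of the paper's proof is the commutativity criterion of Theorem \ref{crit}, whose hypothesis is the bound $\dim\Ext^1_{G,\zeta}(\pi_1,\tau)\le 3$ for every non-split self-extension $\tau$ of $\kappa$, established by the ordinary-parts computation of Proposition \ref{critforgenOK}; this step is absent from your proposal.

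The final assembly is also unsubstantiated: knowing $\wE_i\cong\End_{\gal}(\rho_i^{\mathrm{un}})$ for $i=1,2$ does not identify the off-diagonal blocks $\Hom_{\dualcat(\OO)}(\wP_1,\wP_2)$ with $\Hom_{\gal}(\rho_1^{\mathrm{un}},\rho_2^{\mathrm{un}})$, and ``computed over the common pseudocharacter base'' is not an argument. The paper instead proves that $\cV$ is fully faithful on the whole block $\dualcat(\OO)^{\BB}$ (Lemma \ref{samehoms}, by d\'evissage from Colmez's results on $\Hom$ and $\Ext^1$ between the irreducibles), which gives $\wE_{\BB}\cong\End_{\gal}(\cV(\wP_1)\oplus\cV(\wP_2))$ in one stroke; the structure of $\End_{\gal}(\rho_1^{\mathrm{un}}\oplus\rho_2^{\mathrm{un}})$ over $R^{\mathrm{ps},\zeta\varepsilon}_{\tr\rho}$ is then worked out purely on the Galois side (Propositions \ref{homrank1}, \ref{genrc}). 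If you insist on assembling the four blocks by hand, you would need an independent computation of $\Hom_G(J_{\pi_1},J_{\pi_2})$ as a module over the centre, which is not easier than the full-faithfulness route.
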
 

Again one may describe the image of $\Mod^{\mathrm{lfin}}_{G,\zeta}(\OO)^{\BB}$ under $\VV$ as follows: to 
 every $\tau$ in  $\Mod^{\mathrm{lfin}}_{G,\zeta}(\OO)^{\BB}$ we may associate a compact $\wE_{\BB}$-module 
$\md(\tau):=\Hom_G(\tau, J_{\BB})$ and then $\VV(\tau)^{\vee}(\zeta\varepsilon)\cong \md(\tau)\wtimes_{\wE_{\BB}} 
(\rho^{\mathrm{un}}_1\oplus \rho_2^{\mathrm{un}})$. For non-generic cases, (iii) and (iv) see  the introductions to 
\S \ref{nongenericcaseI} and \S \ref{nongenericcaseII}.

\begin{prop}\label{blockdecompBI} The category $\Ban^{\mathrm{adm}}_{G, \zeta}(L)$ decomposes into a direct sum of 
categories: 
$$\Ban^{\mathrm{adm}}_{G, \zeta}(L)\cong \bigoplus_{\BB} \Ban^{\mathrm{adm}}_{G, \zeta}(L)^{\BB} ,$$
where objects of  $\Ban^{\mathrm{adm}}_{G, \zeta}(L)^{\BB}$ are those $\Pi$ in $\Ban^{\mathrm{adm}}_{G,\zeta}(L)$ such that 
for every open bounded $G$-invariant lattice $\Theta$ in $\Pi$ the irreducible subquotients of $\Theta\otimes_{\OO} k$ 
lie in $\BB$.
\end{prop}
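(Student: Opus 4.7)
The plan is to transfer the Gabriel block decomposition of $\Mod^{\mathrm{lfin}}_{G,\zeta}(\OO)$ recalled in \S\ref{corrisequiv} to the Banach category by passing to mod-$\varpi^n$ reductions of open bounded lattices. For any $\Pi$ in $\Ban^{\mathrm{adm}}_{G, \zeta}(L)$ with open bounded $G$-invariant lattice $\Theta$, each quotient $\Theta/\varpi^n\Theta$ is a smooth admissible $G$-representation on which $Z$ acts by $\zeta$ modulo $\varpi^n$; since admissible smooth $G$-representations with a central character are locally of finite length, $\Theta/\varpi^n\Theta$ is an object of $\Mod^{\mathrm{lfin}}_{G,\zeta}(\OO)$, so Gabriel's theorem yields a canonical and functorial decomposition $\Theta/\varpi^n\Theta = \bigoplus_\BB (\Theta/\varpi^n\Theta)^\BB$.

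Setting $\Theta^\BB := \varprojlim_n (\Theta/\varpi^n\Theta)^\BB$ and $\Pi^\BB := \Theta^\BB \otimes_\OO L$, I would verify that $\Theta^\BB$ is a closed $G$-stable $\OO$-submodule of $\Theta$, realised as an open bounded lattice in an admissible Banach subrepresentation $\Pi^\BB$ belonging to $\Ban^{\mathrm{adm}}_{G,\zeta}(L)^\BB$, and that the natural map $\Pi \to \prod_\BB \Pi^\BB$ is a topological isomorphism (being compatible at each finite level with a genuine direct sum decomposition there). Independence of the lattice choice is then immediate from commensurability: two open bounded lattices $\Theta, \Theta' \subset \Pi$ satisfy $\varpi^m \Theta \subset \Theta' \subset \varpi^{-m}\Theta$ for some $m$, so yield the same decomposition after inverting $p$. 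Functoriality follows because any continuous $G$-equivariant map $\Pi \to \Pi'$ induces, up to rescaling, a map of lattices whose mod-$\varpi^n$ reductions intertwine the Gabriel decompositions by naturality; the orthogonality $\Hom_G(\Pi^\BB, \Pi^{\BB'}) = 0$ for $\BB \neq \BB'$ then reduces to the analogous vanishing in $\Mod^{\mathrm{lfin}}_{G,\zeta}(\OO)$.

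The main obstacle I anticipate is the passage from the Gabriel direct sums at the finite levels $\Theta/\varpi^n\Theta$ to a decomposition of the $\varpi$-adically complete lattice $\Theta$, since an inverse limit of direct sums is not in general a direct sum, and $\Pi$ may a priori have irreducible subquotients in infinitely many blocks. The cleanest way around this is to work on the dual side via Schikhof / Pontryagin duality between admissible unitary $L$-Banach representations of $G$ with central character $\zeta$ and suitable pseudocompact $\OO[[G]]$-modules: on the pseudocompact side the Gabriel idempotents assemble into a genuine product decomposition into closed submodules, which dualises back to the stated decomposition of $\Ban^{\mathrm{adm}}_{G,\zeta}(L)$. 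The $\bigoplus$ in the statement is then to be read in the standard categorical sense of a block decomposition of an abelian category, where $\Hom$ groups decompose pairwise, even though a single object may have nonzero components in infinitely many blocks.
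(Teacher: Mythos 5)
Your route --- pass to the Schikhof dual, invoke the Gabriel block decomposition of $\dualcat(\OO)$ (equivalently of $\Mod^{\mathrm{lfin}}_{G,\zeta}(\OO)$, which applies to $\Theta^d$ because for $G=\GL_2(\Qp)$ admissible torsion representations with a central character are locally of finite length, cf.\ Lemma~\ref{contextGL2}), and dualise back, with lattice-independence by commensurability as in Lemma~\ref{commen} --- is the same as the paper's. But there is a genuine gap at the decisive point: you end by declaring that the $\bigoplus$ ``is to be read \dots even though a single object may have nonzero components in infinitely many blocks.'' That is not the statement being proved. The notation $\bigoplus_{\BB}$ (as opposed to the $\prod_{\BB}$ used for $\Mod^{\mathrm{lfin}}_{G,\zeta}(\OO)$) asserts precisely that every $\Pi$ in $\Ban^{\mathrm{adm}}_{G,\zeta}(L)$ is a \emph{finite} direct sum $\Pi\cong \Pi^{\BB_1}\oplus\dots\oplus\Pi^{\BB_m}$ with $\Pi^{\BB_i}$ in $\Ban^{\mathrm{adm}}_{G,\zeta}(L)^{\BB_i}$, and this finiteness is the only non-formal content of the proposition. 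Moreover, without it your ``dualise back'' step does not work either: if infinitely many blocks contributed to the product decomposition of $\Theta^d$ coming from Corollary~\ref{blockdecompD}, its continuous dual would not be a direct sum of admissible Banach subrepresentations (an infinite product or completed sum leaves the category), so you would not even recover a product-type decomposition of $\Ban^{\mathrm{adm}}_{G,\zeta}(L)$, let alone the stated one. Reinterpreting $\bigoplus$ does not close this hole; it changes the proposition.

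What is missing is an argument that only finitely many blocks occur in $\Theta\otimes_{\OO}k$, and this is exactly where admissibility of $\Pi$ must be used. The paper splits off one block at a time, producing closed $G$-invariant subspaces $\Pi=\Pi_0\supseteq\Pi_1\supseteq\dots$ with $\Pi_i\cong \Pi^{\BB_i}\oplus\Pi_{i+1}$ and $\Pi^{\BB_i}\neq 0$ whenever $\Pi_i\neq 0$, and then invokes the descending chain condition for closed $G$-invariant subspaces of an admissible Banach space representation (\cite[Lem.~5.8]{comp}) to force the process to terminate; the decomposition is therefore finite. Alternatively, staying on your dual side, one can note that $\Theta^d$ is a finitely generated $\OO[[H]]$-module for an open pro-$p$ subgroup $H$, so by (topological) Nakayama any decomposition of $\Theta^d$ in $\dualcat(\OO)$ has at most $\dim_k \Theta^d/\mathfrak{m}_{\OO[[H]]}\Theta^d<\infty$ nonzero factors. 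Either way, the finiteness has to be proved rather than defined away; once it is, the rest of your argument (orthogonality of the block subcategories and independence of the chosen lattice) goes through as you indicate.
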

     
Let $\Ban^{\mathrm{adm. fl}}_{G,\zeta}(L)^{\BB}$ be the full subcategory of $\Ban^{\mathrm{adm}}_{G, \zeta}(L)^{\BB}$ consisting of objects of finite length.

\begin{cor} Suppose that $\BB$ contains an absolutely irreducible representation. We have a natural equivalence of categories 
$$\Ban^{\mathrm{adm. fl}}_{G,\zeta}(L)^{\BB}\cong 
\bigoplus_{\nn\in \MaxSpec R_{\tr \rho}^{\mathrm{ps},\zeta \varepsilon}[1/p]}\Ban^{\mathrm{adm. fl}}_{G,\zeta}(L)^{\BB}_{\nn}.$$
The category $\Ban^{\mathrm{adm. fl}}_{G, \zeta}(L)^{\BB}_{\nn}$ is anti-equivalent to the category 
of modules of finite length over the  $\nn$-adic completion of $\wE_{\BB}[1/p]$.
\end{cor}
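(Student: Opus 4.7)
The plan is to build an explicit functor
$$\md : \Ban^{\mathrm{adm. fl}}_{G,\zeta}(L)^{\BB} \longrightarrow \Mod^{\mathrm{fl}}(\wE_{\BB}[1/p])$$
and read off both assertions from its properties. Given $\Pi$ in $\Ban^{\mathrm{adm. fl}}_{G,\zeta}(L)^{\BB}$, choose any open bounded $G$-invariant lattice $\Theta\subset \Pi$; by Proposition \ref{blockdecompBI} each reduction $\Theta/\varpi^n\Theta$ is an object of $\Mod^{\mathrm{lfin}}_{G,\zeta}(\OO)^{\BB}$, so the Gabriel anti-equivalence $\tau\mapsto \Hom_G(\tau,J_{\BB})$ produces a compact $\wE_{\BB}$-module at each level. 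Set
$$\md(\Pi) := \bigl(\varprojlim_n \Hom_G(\Theta/\varpi^n\Theta, J_{\BB})\bigr)[1/p].$$
Commensurability of open bounded lattices makes this independent of $\Theta$ after inverting $p$.

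The first substantive step is to show $\md(\Pi)$ has finite length over $\wE_{\BB}[1/p]$. A dévissage along a Jordan--Hölder filtration of $\Pi$ reduces this to the case when $\Pi$ is absolutely irreducible; there a Schikhof duality argument identifies $\varprojlim_n (\Theta/\varpi^n\Theta)^{\vee}$ with a compact cyclic module over a local quotient of $\wE_{\BB}$, and inverting $p$ produces a finite-dimensional $L$-vector space.

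The decomposition is then purely algebraic. By Theorem \ref{I7} the centre of $\wE_{\BB}$ is $R^{\mathrm{ps},\zeta\varepsilon}_{\tr\rho}$, and $\wE_{\BB}$ is finitely generated over this noetherian centre (a structural property asserted in the introduction and verified block-by-block). Any finite length module over $\wE_{\BB}[1/p]$ is therefore supported at a finite subset of $\MaxSpec R^{\mathrm{ps},\zeta\varepsilon}_{\tr\rho}[1/p]$ and splits canonically as the direct sum of its localisations at those maximal ideals. Transporting this decomposition back through $\md$, and defining $\Ban^{\mathrm{adm. fl}}_{G,\zeta}(L)^{\BB}_{\nn}$ as the full subcategory of $\Pi$ with $\md(\Pi)$ supported at $\nn$, produces the direct sum appearing in the statement.

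It remains to prove that $\md$ restricts to an anti-equivalence on each block summand. Full faithfulness follows by applying the Gabriel anti-equivalence level-by-level to the $\Theta/\varpi^n\Theta$, passing to the inverse limit, and inverting $p$. The main obstacle is essential surjectivity: given a finite length module $M$ over the $\nn$-adic completion of $\wE_{\BB}[1/p]$, one must realise $M$ as $\md(\Pi)$ for an honest admissible Banach representation. The recipe is to write $M = N[1/p]$ for a finitely generated $\wE_{\BB}$-module $N$ supported above $\nn$, invoke Gabriel duality to obtain a pro-object $\tau$ in $\Mod^{\mathrm{lfin}}_{G,\zeta}(\OO)^{\BB}$, take its continuous Pontryagin dual as a $G$-stable $\OO$-lattice, and tensor with $L$ to build $\Pi$. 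The delicate point is verifying that $\Pi$ is admissible and of finite length, which reduces to showing that $\tau$ is finitely generated as a module over the completed group ring of a compact open subgroup; this rests on the structural description of $J_{\BB}$ and on $\wE_{\BB}$ being module-finite over its noetherian centre, both assembled earlier in the paper.
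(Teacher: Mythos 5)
Your proposal is essentially the paper's own route: the corollary is obtained by applying the general Theorem \ref{furtherDBan} (which rests on the anti-equivalence of Theorem \ref{antiequiv} together with the Chinese remainder theorem over the noetherian centre $\wZ = R^{\mathrm{ps},\zeta\varepsilon}_{\tr\rho}$, and whose hypotheses are checked block-by-block) to the block subcategory $\dualcat(\OO)^{\BB}$, where $\Ker\md = 0$ because $\wP_{\BB}$ is the full block projective. One small imprecision worth noting: a Jordan--H\"older filtration of $\Pi$ has irreducible but not necessarily \emph{absolutely} irreducible subquotients, so the finiteness of $\md(\Pi)$ cannot be reduced to the absolutely irreducible case as you state; the paper's Proposition \ref{longproof} instead treats irreducible $\Pi$ directly, using that $\End^{cont}_{L[G]}(\Pi)$ is a skew field, that $\End_{\dualcat(\OO)}(\Xi^d)$ is an $\OO$-order in it, and a commutative-algebra argument showing the centre is $\OO$-finite.
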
 
   
To explain the last equivalence let $\Pi$ be an object of $\Ban^{\mathrm{adm. fl}}_{G,\zeta}(L)^{\BB}$ and choose an 
open bounded lattice $\Theta$ in $\Pi$. For each $n\ge 1$, $\Theta/\varpi^n \Theta$ is an object of 
$\Mod^{\mathrm{l fin}}_{G, \zeta}(\OO)$. Since $R_{\tr\rho}^{\mathrm{ps},\zeta \varepsilon}$ is  naturally isomorphic to the centre of the category $\Mod^{\mathrm{l fin}}_{G, \zeta}(\OO)$,
it acts on $\Theta/\varpi^n \Theta$. By passing to the limit and inverting $p$ we obtain an action 
of  $R_{\tr \rho}^{\mathrm{ps},\zeta \varepsilon}[1/p]$ on $\Pi$. By definition $\Pi$ is an object 
of  $\Ban^{\mathrm{adm. fl}}_{G, \zeta}(L)^{\BB}_{\nn}$ if and only if it is killed by a power of the maximal ideal 
$\nn$. The corollary is essentially an application of the Chinese remainder theorem. If $\nn$ corresponds to the trace of an absolutely
irreducible representation, defined over the residue field of $\nn$, then  one may show that the $\nn$-adic completion of $\wE_{\BB}[1/p]$ is 
isomorphic to a matrix algebra over the $\nn$-adic completion of $R_{\tr \rho}^{\mathrm{ps},\zeta \varepsilon}[1/p]$.
We obtain:

\begin{thm} Let $\nn$ be a maximal ideal of $R_{\tr \rho}^{\mathrm{ps},\zeta \varepsilon}[1/p]$ with residue field 
$L$ and suppose that the corresponding pseudocharacter $T_{\nn}$ is the trace of an absolutely irreducible representation $V$, defined over $L$. Then the category 
$\Ban^{\mathrm{adm. fl}}_{G, \zeta}(L)^{\BB}_{\nn}$ is naturally  
equivalent to the category of modules of finite length over the $\nn$-adic completion of $R_{\tr \rho}^{\mathrm{ps},\zeta \varepsilon}[1/p]$.
In particular it contains only one irreducible object $\Pi_{\nn}$. The Banach space representation $\Pi_{\nn}$ 
is non-ordinary, and is the unique irreducible admissible unitary $L$-Banach space representation of $G$ with 
a central character satisfying $\VV(\Pi_{\nn})\cong V$.
\end{thm}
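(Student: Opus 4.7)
My plan is to combine the preceding corollary with the matrix-algebra description of the $\nn$-adic completion of $\wE_{\BB}[1/p]$ stated just before the theorem. The corollary provides an anti-equivalence between $\Ban^{\mathrm{adm.fl}}_{G,\zeta}(L)^{\BB}_{\nn}$ and finite length modules over $\wE_{\BB,\nn}:=\wE_{\BB}[1/p]^{\wedge}_{\nn}$. Under the absolute irreducibility hypothesis on $T_{\nn}$, this ring is isomorphic to a matrix algebra over the corresponding completion $R_{\nn}$ of $R^{\mathrm{ps},\zeta\varepsilon}_{\tr\rho}[1/p]$, so Morita equivalence identifies finite length $\wE_{\BB,\nn}$-modules with finite length $R_{\nn}$-modules. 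Since $R_{\nn}$ is a complete noetherian local commutative $L$-algebra with residue field $L$, its category of finite length modules is stable under Matlis duality; composing this with the anti-equivalence furnished by the corollary produces the natural equivalence asserted in the first sentence of the theorem.

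The local ring $R_{\nn}$ possesses, up to isomorphism, a single simple module, namely $L$ itself, so the equivalence produces a unique irreducible object $\Pi_{\nn}$ in $\Ban^{\mathrm{adm.fl}}_{G,\zeta}(L)^{\BB}_{\nn}$.

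For the last two claims I pass to the Galois side. Since $\tr\VV(\Pi_{\nn})=T_{\nn}$ is $2$-dimensional and absolutely irreducible, $\VV(\Pi_{\nn})$ is a $2$-dimensional absolutely irreducible continuous $L$-representation of $\gal$, and Corollary \ref{I2} then forces $\Pi_{\nn}$ to be non-ordinary. Conversely, suppose $\Pi'$ is any irreducible admissible unitary $L$-Banach representation of $G$ with central character $\zeta$ and $\tr\VV(\Pi')=T_{\nn}$. The mod-$\varpi$ reduction of an invariant lattice in $\Pi'$ has irreducible subquotients whose semisimplification corresponds on the Galois side to $\overline{T_{\nn}}=\tr\rho$, so by Proposition \ref{blockdecompBI} it lies in $\Ban^{\mathrm{adm}}_{G,\zeta}(L)^{\BB}$; the induced action of $R^{\mathrm{ps},\zeta\varepsilon}_{\tr\rho}[1/p]$ factors through $\nn$, placing $\Pi'$ in $\Ban^{\mathrm{adm.fl}}_{G,\zeta}(L)^{\BB}_{\nn}$, where the first part of the theorem identifies the unique irreducible as $\Pi_{\nn}$. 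Hence $\Pi'\cong\Pi_{\nn}$.

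The principal obstacle is the matrix-algebra description of $\wE_{\BB,\nn}$ in the absolutely irreducible case, which is exactly the bridge between the noncommutative endomorphism ring of the injective envelope $J_{\BB}$ and its commutative centre $R^{\mathrm{ps},\zeta\varepsilon}_{\tr\rho}$. Granted that input, the remainder is a formal composition of Morita equivalence, Matlis duality on finite length modules over a complete local noetherian ring, and the previously established Galois-side correspondence of Theorem \ref{I3}.
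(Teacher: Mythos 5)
Your reduction of the first assertion to Morita equivalence (via the stated matrix-algebra description of the $\nn$-adic completion of $\wE_{\BB}[1/p]$) followed by Matlis duality on finite-length modules over a commutative local noetherian ring is correct and is exactly what the paper does in its body-text corollaries (which are phrased as \emph{anti}-equivalences; the introduction's ``equivalent'' is reconciled precisely by the self-duality you invoke). The uniqueness of the irreducible object $\Pi_{\nn}$ also follows as you say.

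There is, however, a genuine gap in the last paragraph. You write ``Since $\tr\VV(\Pi_{\nn})=T_{\nn}$ is $2$-dimensional and absolutely irreducible\ldots'' and, in the converse direction, ``the induced action of $R^{\mathrm{ps},\zeta\varepsilon}_{\tr\rho}[1/p]$ factors through $\nn$.'' Both of these are the \emph{same} compatibility statement: that the abstract action of the centre $R^{\mathrm{ps},\zeta\varepsilon}_{\tr\rho}$ of $\dualcat(\OO)^{\BB}$ on the dual of a lattice in a Banach representation $\Pi$ is realized, under $\VV$, by the point of $\Spec R^{\mathrm{ps},\zeta\varepsilon}_{\tr\rho}[1/p]$ determined by $\tr\VV(\Pi)$. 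You use this compatibility in both directions (to deduce $\tr\VV(\Pi_{\nn})=T_{\nn}$ and to deduce that $\Pi'$ lies in the $\nn$-component) but never establish it. The paper supplies this link via the ``killing'' corollaries (that for $N$ in $\dualcat(\OO)^{\BB}$ the Galois module $\cV(N)$ is annihilated by $g^2-T(g)g+\zeta\varepsilon(g)$ for all $g$, where $T$ is the universal pseudocharacter), combined with Taylor's theorem and the Boston--Lenstra--Ribet result that a $2$-dimensional module over the Cayley--Hamilton quotient attached to an absolutely irreducible $2$-dimensional pseudocharacter must realize that pseudocharacter on the nose; this is packaged as Proposition \ref{welldefined}. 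Without that input, your invocation of Corollary \ref{I2} to conclude non-ordinariness is circular, since you haven't independently established that $\VV(\Pi_{\nn})$ is the $2$-dimensional absolutely irreducible representation with trace $T_{\nn}$. Note also that the paper's alternative route for non-ordinariness in the hardest block (Proposition \ref{NgIIred}(iii)) sidesteps this by an exhaustion argument: all ordinary representations occur at reducible points $\nn$, where $|\Irr(\nn)|>1$, so the unique irreducible at an irreducible point cannot be ordinary. Either method works, but the compatibility of the central action with $\tr\VV$ still needs to be proved for the final uniqueness-via-trace claim.
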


\begin{thm} Let $\nn$ be a maximal ideal of $R_{\tr \rho}^{\mathrm{ps},\zeta \varepsilon}[1/p]$ with residue field 
$L$ and suppose that the corresponding pseudocharacter is equal to $\psi_1+\psi_2$, where $\psi_1, \psi_2: \gal\rightarrow L^{\times}$
are continuous group homomorphisms. Then the irreducible objects of  $\Ban^{\mathrm{adm. fl}}_{G, \zeta}(L)^{\BB}_{\nn}$
are subquotients of $(\Indu{P}{G}{\psi_1\otimes \psi_2\varepsilon^{-1}})_{cont}$ and 
$(\Indu{P}{G}{\psi_2\otimes \psi_1\varepsilon^{-1}})_{cont}$, where we consider $\psi_1, \psi_2$ as characters 
of $\Qp^{\times}$ via the class field theory and $\varepsilon(x):= x |x|$, for all $x\in \Qp^{\times}$.
\end{thm}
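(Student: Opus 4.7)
The plan is to show that any irreducible $\Pi \in \Ban^{\mathrm{adm.fl}}_{G,\zeta}(L)^{\BB}_{\nn}$ must be ordinary, and then to identify the possible ordinary $\Pi$ with the irreducible subquotients of one of the two continuous principal series appearing in the statement. The starting point is the action of the centre: from the block decomposition established above, $R^{\mathrm{ps},\zeta\varepsilon}_{\tr\rho}[1/p]$ acts on every object of $\Ban^{\mathrm{adm.fl}}_{G,\zeta}(L)^{\BB}$, and an object lies in the $\nn$-summand precisely when it is killed by a power of $\nn$. On any irreducible $\Pi$ this action factors through the residue field $L$ of $\nn$, so the centre acts via evaluation of the universal pseudocharacter at $T_{\nn} = \psi_1 + \psi_2$.

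Next I would rule out the non-ordinary case. If $\Pi$ were non-ordinary, Theorem \ref{I3} would produce an absolutely irreducible $2$-dimensional continuous $L$-representation $V := \VV(\Pi)$ of $\gal$, and the action of the centre on $\Pi$ would factor through $\tr V$. Hence $\tr V = \psi_1 + \psi_2$. But a $2$-dimensional $L$-representation is determined up to semisimplification by its trace, so $V^{ss} \cong \psi_1 \oplus \psi_2$, contradicting the absolute irreducibility of $V$. Thus every irreducible $\Pi$ in our category is ordinary.

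I would then apply the classification of ordinary irreducible admissible unitary Banach representations recalled after Theorem \ref{I1}: any such $\Pi$ is an irreducible subquotient of a continuous unitary parabolic induction $(\Indu{P}{G}{\chi_1 \otimes \chi_2 \varepsilon^{-1}})_{cont}$ for some unitary characters $\chi_1, \chi_2 : \Qp^{\times} \to L^{\times}$, with $\chi_1\chi_2 = \zeta\varepsilon$ forced by the central-character condition. The centre acts on this induction, and hence on its subquotient $\Pi$, through the pseudocharacter $\chi_1 + \chi_2$ (viewing the $\chi_i$ as characters of $\gal$ via class field theory). Comparing with $T_{\nn} = \psi_1+\psi_2$ forces $\{\chi_1,\chi_2\} = \{\psi_1,\psi_2\}$, so $\Pi$ is an irreducible subquotient of one of the two displayed parabolic inductions. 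Conversely, every irreducible subquotient of these two inductions has central character $\zeta$, is annihilated by $\nn$, and has all of its mod-$\varpi$ Jordan-Hölder factors in $\BB$, hence lies in $\Ban^{\mathrm{adm.fl}}_{G,\zeta}(L)^{\BB}_{\nn}$.

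The principal difficulty is the claim that $R^{\mathrm{ps},\zeta\varepsilon}_{\tr\rho}[1/p]$ acts on the continuous principal series $(\Indu{P}{G}{\chi_1 \otimes \chi_2 \varepsilon^{-1}})_{cont}$ through $\chi_1+\chi_2$. This is delicate in the non-generic configurations of blocks (iii) and (iv), where $\psi_1\psi_2^{-1} \in \{\Eins, \varepsilon^{\pm 1}\}$ and the induction is reducible: one must check that each Jordan-Hölder factor — including unitary characters and twists of $\widehat{\Sp}$, on which $\VV$ is itself only one-dimensional — inherits the same pseudocharacter from the ambient induction rather than from its own Colmez image. This follows from the explicit construction of $\VV$ on the reducible locus together with the identification of the centre in the non-generic cases, which I would invoke rather than redo.
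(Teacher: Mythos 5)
Your argument for the containment is circular relative to the paper's logical structure. The step that does all the work---excluding a non-ordinary irreducible $\Pi$ from the $\nn$-component---invokes Theorem \ref{I3}, i.e.\ the assertion that for non-ordinary $\Pi$ the representation $\VV(\Pi)$ is absolutely irreducible and that the centre acts on $\Pi$ through $\tr\VV(\Pi)$. But Theorem \ref{I3} is Theorem \ref{Lbij}, whose proof rests on Proposition \ref{welldefined}, and Proposition \ref{welldefined} establishes the absolute irreducibility of $V_x$ for non-ordinary $\Pi$ precisely by citing Corollaries \ref{genred}, \ref{NgIred} and Proposition \ref{NgIIred}---the block-by-block versions of the very statement you are proving. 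What you may use without circularity are Corollary \ref{I2} (so $\dim_L\VV(\Pi)=2$ when $\Pi$ is non-ordinary) and the Cayley--Hamilton corollaries \ref{superkill}, \ref{genkill}, \ref{NgIkill}, \ref{NgIIkill}; from these one does deduce $\tr\VV(\Pi)=\psi_1+\psi_2$ and hence $\VV(\Pi)^{ss}\cong\psi_1\oplus\psi_2$, but this yields no contradiction: at this stage nothing forbids a non-ordinary irreducible $\Pi$ whose $\VV(\Pi)$ is a two-dimensional reducible (possibly non-split) extension of the two characters. Ruling out exactly this possibility is the hard content of the theorem, and your proposal assumes it rather than proves it.

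The paper argues in the opposite direction, through the module category rather than through the correspondence: by Theorem \ref{furtherDBan} the component $\Ban^{\mathrm{adm. fl}}_{G,\zeta}(L)^{\BB}_{\nn}$ is anti-equivalent to finite-length modules over the $\nn$-adic completion of $\wE_{\BB}[1/p]$; one exhibits the subquotients of the two continuous inductions as objects of this component (via the computation of the centre's action on $(\Indu{P}{G}{\psi})_{cont}$ through the induced modules $\wM$, cf.\ Lemma \ref{thruOrd}, Proposition \ref{gohome4} and the corollaries above), and then bounds the number of irreducible modules of $\wE_{\BB}[1/p]/\nn$ using the explicit ring structure: Proposition \ref{genrc} in case (ii), Corollary \ref{weiterso5} together with a separate argument via Corollary \ref{cormainNGI} when $\psi_1=\psi_2$ in case (iii), and Lemma \ref{nrmod} plus the thick subcategory of representations with trivial $\SL_2(\Qp)$-action in case (iv). The ``delicate'' compatibility you propose to invoke rather than redo---that the centre acts on these inductions and on their ordinary subquotients through $\psi_1+\psi_2$ in the non-generic blocks---belongs to this same package; so as written, both the exclusion of non-ordinary objects and the identification of the ordinary ones are deferred to results that are either downstream of, or constitutive of, the statement itself.
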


\subsection{A sketch  of  proof}\label{sketch}
Let $G$ be any $p$-adic analytic group. Let $\Mod^{\mathrm{sm}}_G(\OO)$ be the category of smooth 
representations of $G$ on $\OO$-torsion modules and let $\Mod^{\mathrm{lfin}}_G(\OO)$ be the full subcategory 
of $\Mod^{\mathrm{sm}}_G(\OO)$ consisting of locally finite representations. Let $\Mod^{?}_{G}(\OO)$ be 
a full subcategory of $\Mod^{\mathrm{lfin}}_G(\OO)$ closed under arbitrary direct sums 
and subquotients in $\Mod^{\mathrm{lfin}}_G(\OO)$.  An example of such category  $\Mod^?_{G}(\OO)$
can be  $\Mod^{\mathrm{lfin}}_G(\OO)$ itself, or $\Mod^{\mathrm{lfin}}_G(k)$ the full subcategory 
consisting of objects killed by $\varpi$, or $\Mod^{\mathrm{lfin}}_{G, \zeta}(\OO)$ the full 
subcategory consisting of objects on which $Z$, the centre of $G$, acts by a fixed character $\zeta$, but
there are lots of such categories. It follows from \cite[\S II]{gab} that every 
object in $\Mod^?_{G}(\OO)$ has an injective envelope.

Instead of working with torsion modules we prefer to work dually with compact modules.
Let $H$ be a compact open subgroup of $G$ and  let 
$\Mod^{\mathrm{pro \, aug}}_G(\OO)$ be the category of profinite $\OO[[H]]$-modules with an action 
of $\OO[G]$ such that the two actions are the same when restricted to $\OO[H]$. This category 
has been introduced by Emerton in \cite{ord1}. Sending $\pi$ to its Pontryagin dual $\pi^{\vee}$, see \S\ref{zerosec}, 
  induces an anti-equivalence of categories between 
$\Mod^{\mathrm{sm}}_G(\OO)$ and $\Mod^{\mathrm{pro \, aug}}_G(\OO)$. Let $\dualcat(\OO)$ be the full 
subcategory of $\Mod^{\mathrm{pro \, aug}}_G(\OO)$ anti-equivalent to $\Mod^?_{G}(\OO)$ via  Pontryagin duality 
and let $\dualcat(k)$ be the full subcategory consisting of objects killed by $\varpi$. Since an anti-equivalence 
reverses the arrows every object in $\dualcat(\OO)$ has a projective envelope. 

Let $\pi$ be an irreducible object of $\Mod^?_{G}(\OO)$ such that $\End_G(\pi)=k$. Let $S:=\pi^{\vee}$ 
and  $\wP\twoheadrightarrow S$ a projective envelope of $S$ in $\dualcat(\OO)$.
We assume the existence  of an object $Q$ in $\dualcat(k)$ of finite length, satisfying the following hypotheses:
\begin{itemize}
\item[(H1)] $\Hom_{\dualcat(k)}(Q, S')=0$,  $\forall S'\in \Irr(\dualcat(k))$,  $S\not\cong S'$;
\item[(H2)] $S$ occurs as a subquotient in $Q$ with multiplicity $1$;
\item[(H3)] $\Ext^1_{\dualcat(k)}(Q, S')=0$, $\forall S'\in \Irr(\dualcat(k))$,  $S\not\cong S'$;
\item[(H4)] $\Ext^1_{\dualcat(k)}(Q, S)$ is finite di\-men\-sio\-nal;
\item[(H5)] $\Ext^2_{\dualcat(k)}(Q, R)=0$, where $R=\rad Q$ is the maximal proper subobject of $Q$;
\item[(H0)] $\Hom_{\dualcat(k)}(\wP[\varpi], R)=0$,
\end{itemize}
where $\Irr(\dualcat(k))$ denotes the set of irreducible objects in $\dualcat(k)$ (equivalently in $\dualcat(\OO)$), 
and $\wP[\varpi]$ denotes the kernel of multiplication by $\varpi$. We encourage the reader for the sake 
of this introduction to assume that $Q=S$ then the only real hypotheses 
are (H3) and (H4). As an example one could take $G$ a pro-$p$ group and $\pi$ the trivial representation, or 
$G=\Qp^{\times}$ and $\pi$ a continuous character from $G$ to $k^{\times}$ and $\Mod^?_{G}(\OO)=\Mod^{\mathrm{lfin}}_G(\OO)$.

The ring  $\wE:=\End_{\dualcat(\OO)}(\wP)$ can be naturally equipped with a topology with respect to which 
it is a pseudo-compact ring. It can be shown, see \S \ref{zerosec}, that $\wE$ is a local (possibly non-commutative) ring with
residue field $\End_{\dualcat(k)}(S)=k$. Since $k$ is assumed to be finite $\wE$ is in fact compact. In Proposition \ref{filtdone}
and its Corollaries we show:

\begin{prop}\label{flatpropI} If the hypotheses are satisfied then the natural topology on $\wE$ coincides with 
the topology defined by the maximal ideal; $\wP$ is a flat $\wE$-module and $k\wtimes_{\wE} \wP\cong Q$.
\end{prop}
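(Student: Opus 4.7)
I would build inductively a decreasing filtration $\wP = M_0 \supseteq M_1 \supseteq \cdots$ of $\wP$ by subobjects whose successive quotients are direct sums of copies of $Q$, and then match this filtration with the $\mm$-adic filtration coming from the $\wE$-action. This simultaneously establishes coincidence of topologies, flatness of $\wP$ as an $\wE$-module, and the isomorphism $k\wtimes_\wE \wP \cong Q$.

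\textbf{Construction of the filtration.} Hypotheses (H1)--(H2) force the cosocle $Q/\rad Q$ to be simple and isomorphic to $S$, so projectivity of $\wP$ lifts $\wP\twoheadrightarrow S$ along $Q\twoheadrightarrow S$, and Nakayama (for compact $\dualcat(\OO)$-objects) makes the lift surjective, giving $\varphi_0:\wP\twoheadrightarrow Q$; set $M_1:=\Ker\varphi_0$. For the inductive step, assume $\wP\twoheadrightarrow \wP/M_n$ is constructed and that $\wP/M_n$ is a finite iterated extension of $Q$'s. Using (H3) and long exact sequences along the inductive filtration, show that extensions of $Q$ by $\wP/M_n$ in $\dualcat(k)$ are controlled entirely by $\Ext^1_{\dualcat(k)}(Q,S)$, which is finite dimensional of dimension $d$ by (H4). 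Hypothesis (H5), together with the $Q$-filtered structure of $\wP/M_n$ and the propagation of $\Ext^2(Q,-)=0$ along it, gives vanishing of the natural obstruction, so I define $M_{n+1}\subseteq M_n$ as the kernel of a universal extension $\wP/M_{n+1}\twoheadrightarrow \wP/M_n$ with $M_n/M_{n+1}$ a direct sum of finitely many copies of $Q$.

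\textbf{Identification with the $\mm$-adic filtration and conclusion.} Functoriality of the construction makes $\wE$ act on each $\wP/M_n$, and the quotient $\wE\twoheadrightarrow \End_{\dualcat(\OO)}(\wP/M_n)/(\text{factor through }\wP/M_{n-1})$ identifies a two-sided ideal $I_n$ annihilating $\wP/M_n$. Hypothesis (H0), which precludes morphisms from $\wP[\varpi]$ landing in $R=\rad Q$, is precisely what is needed to match $I_n$ with $\mm^n$: the universal-extension description of $M_n$ corresponds exactly to the presentation of $\mm^n$ as $n$-fold compositions of non-invertible endomorphisms of $\wP$ lifting generators of $\Ext^1_{\dualcat(k)}(Q,S)$. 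Thus $M_n = \mm^n\wP$, the natural topology on $\wE$ coincides with the $\mm$-adic one, and $\gr_\mm\wP\cong Q\wtimes_k\gr_\mm\wE$ is free as a graded $\gr_\mm\wE$-module. Freeness of the associated graded, together with completeness of $\wP$ as a pseudo-compact $\wE$-module, yields pseudo-compact flatness of $\wP$ over $\wE$. The final assertion $k\wtimes_\wE\wP = \wP/\mm\wP = M_0/M_1 \cong Q$ is then the degree-zero case.

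\textbf{Main obstacle.} The delicate point is the inductive construction of the $Q$-filtration, specifically ensuring that at each level no new simple $S'\not\cong S$ enters the picture and that the extensions remain unobstructed. This demands careful interplay between (H1), (H3), and (H5) propagated along the tower of iterated $Q$-extensions, since $\Ext^i(Q,-)$ must be controlled not only against $Q$ but against all finite iterated extensions of $Q$ by itself. Hypothesis (H0), which looks technical at first sight, plays the essential role of reconciling $\varpi$-torsion in $\wP$ with $\mm$-torsion in $\wE$, which is what allows the two filtrations to be identified rather than merely be shown to be compatible up to $\varpi$.
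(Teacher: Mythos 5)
Your overall architecture is the paper's own: the paper constructs exactly this filtration (the next term is the intersection of the kernels of all maps from the current term to $Q$; equivalently each step is the universal extension of the previous quotient by finitely many copies of $Q$), proves that this filtration equals the $\wm$-adic one, and deduces from the graded pieces that $\wP/\wm^n\wP\cong\prod_{i\in I}\wE/\wm^n$ and hence $\wP\cong\prod_{i\in I}\wE$ is topologically free, which gives flatness and $k\wtimes_{\wE}\wP\cong\wP/\wm\wP\cong Q$. The genuine problem with your write-up is where you put (H0). The identification of your $M_n$ with $\wm^n\wP$ uses nothing about $\varpi$: it is a formal consequence of the construction in any category in which (H1)--(H5) hold, so your claim that (H0) is ``precisely what is needed to match'' the two filtrations is not what happens, and as used there it does no work. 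What (H0) actually buys is the passage from $\dualcat(k)$ to $\dualcat(\OO)$: one has $\Hom_{\dualcat(\OO)}(\wP[\varpi],R)\cong\Ext^2_{\dualcat(\OO)}(\wP/\varpi\wP,R)$, and together with $\Ext^2_{\dualcat(k)}(Q,R)=0$ this yields $\Ext^2_{\dualcat(\OO)}(Q,R)=0$, while (H1)--(H4) transfer from $k$ to $\OO$ automatically. This $\OO$-level vanishing is exactly what your inductive step needs, because your filtration lives on $\wP$ in $\dualcat(\OO)$ whereas the hypotheses you invoke there ((H3)--(H5)) are statements about $\Ext$ groups in $\dualcat(k)$; the devissage giving $\Ext^1_{\dualcat(\OO)}(\wP/M_n,Q)\cong\Ext^1_{\dualcat(\OO)}(\wP/M_n,S)$ and the vanishing of the obstruction are unjustified over $\OO$ as you have written them. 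The fix is simply to move (H0) to the front, as the paper does, rather than deploying it at the filtration-matching step.

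A smaller point: deducing flatness from ``freeness of the associated graded plus completeness'' is not something you can merely invoke here, since $\wE$ is not assumed noetherian and $Q$ is in general an infinite-dimensional profinite $k$-module, so ``free'' must mean topologically free (completed products, not direct sums). The paper's route is to lift a topological $k$-basis of $Q\cong\wP/\wm\wP$ and prove by induction, using the graded pieces $\wm^{n-1}\wP/\wm^n\wP\cong Q^{\oplus d}$, that $\wP/\wm^n\wP\cong\prod_{i\in I}\wE/\wm^n$ for every $n$, then pass to the limit; topological freeness then gives flatness of $\wP$ and of all base changes. Your graded computation is the same one, but it should be packaged as this lifting argument rather than as a local-criterion-for-flatness slogan.
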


\begin{remar}\label{Qject}
Let us comment on the rigidity of the setup. There always exists an object of $\dualcat(\OO)$ satisfying (H1), (H2) and (H3). 
Moreover, it is uniquely determined up to isomorphism and is isomorphic to  $k\wtimes_{\wE} \wP$, which is the maximal 
quotient of $\wP$ containing $S$ with multiplicity one. So once we impose (H1), (H2) and (H3) we have no flexibility about
(H4) and (H5), moreover $k\wtimes_{\wE} \wP$ need not be of finite length in general. If either (H4) or (H5) is not satisfied, 
one might try and replace $\dualcat(\OO)$ by a different category, for example a full subcategory or, as we do in \S\ref{nongenericcaseII},
by a quotient category and hope that the hypotheses hold there.
\end{remar}

Using  Proposition \ref{flatpropI} one can do deformation theory with non-commutative coefficients.  Let 
$\mathfrak A$ be the category of finite local (possibly non-com\-mu\-ta\-tive) artinian augmented  
$\OO$-algebras with residue field $k$. The ring $\wE$ is a pro-object 
in this category. A deformation of $Q$ to $A$ is a 
pair $(M, \alpha)$, where  $M$  is an  object of $\dualcat(\OO)$ together with the map of $\OO$-algebras $A\rightarrow \End_{\dualcat(\OO)}(M)$,
which makes $M$ into a  flat  $A$-module and $\alpha: k\wtimes_A M \cong Q$ is an isomorphism in $\dualcat(k)$.  
Let $\Def_Q:\mathfrak A \rightarrow \mathrm{Sets}$ be the functor associating to $A$ the set of isomorphism 
classes of deformations of $Q$ to $A$. We show in Theorem \ref{repnonC} that:

\begin{thm}\label{repnonCI} If the hypotheses are satisfied then
the  map which sends $\varphi: \wE\rightarrow A$ to 
$A\wtimes_{\wE, \varphi} \wP$ induces a bijection between $A^{\times}$-conjugacy classes of $\Hom_{\hA}(\wE, A)$ and 
$\Def_Q(A)$.
\end{thm}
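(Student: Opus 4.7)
The strategy is to construct an explicit inverse and check both compositions. For well-definedness, set $M_\varphi:=A\wtimes_{\wE,\varphi}\wP$. Proposition~\ref{flatpropI} gives flatness of $\wP$ over $\wE$, hence flatness of $M_\varphi$ over $A$, and $k\wtimes_A M_\varphi\cong k\wtimes_\wE\wP\cong Q$ supplies the rigidification $\alpha_\varphi$. For $u\in A^\times$, multiplication by $u^{-1}$ on the $A$-factor induces an isomorphism $M_\varphi\xrightarrow{\sim}M_{u\varphi u^{-1}}$, so the assignment $\varphi\mapsto M_\varphi$ descends to $A^\times$-conjugacy classes.

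The crux is to show that for any deformation $(M,\alpha)\in\Def_Q(A)$ the $A$-module $N:=\Hom_{\dualcat(\OO)}(\wP,M)$ is free of rank one. Projectivity of $\wP$ makes $\Hom_{\dualcat(\OO)}(\wP,-)$ exact, so filtering $M$ by $\mathfrak{m}_A^iM$, whose graded pieces are $\gr^i_{\mathfrak{m}_A}A\otimes_k Q$ by flatness of $M$ over $A$, reduces everything to the computation of $\Hom_{\dualcat(\OO)}(\wP,Q)$. Because $\wP\twoheadrightarrow S$ is a projective cover, $\Hom(\wP,S')=\Hom(S,S')$ vanishes for irreducible $S'\not\cong S$ and equals $k$ for $S'\cong S$; hypothesis~(H2) then forces $\dim_k\Hom(\wP,Q)=1$. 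Summing over the filtration yields $\dim_k N=\dim_k A$, while any lift $\beta:\wP\to M$ of $\wP\twoheadrightarrow Q$ (provided by projectivity of $\wP$) generates $N/\mathfrak{m}_A N\cong k$; the dimension count upgrades this to the freeness $N=A\beta$.

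For surjectivity, fix $\beta$. For each $e\in\wE$ the element $\beta\circ e\in N=A\beta$ equals $\varphi(e)\beta$ for a unique $\varphi(e)\in A$, and uniqueness makes $\varphi:\wE\to A$ into a continuous $\OO$-algebra homomorphism via $\varphi(e_1e_2)\beta=\beta\circ(e_1e_2)=\varphi(e_1)(\beta\circ e_2)=\varphi(e_1)\varphi(e_2)\beta$. The induced $A$-linear map $\tilde\beta:M_\varphi\to M$, $a\otimes p\mapsto a\beta(p)$, is well defined precisely by the relation defining $\varphi$, reduces to $\alpha$ modulo $\mathfrak{m}_A$, and is therefore an isomorphism of deformations by Nakayama applied to the two flat $A$-modules $M_\varphi$ and $M$ with the same reduction.

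For injectivity, an isomorphism $f:M_{\varphi_1}\xrightarrow{\sim}M_{\varphi_2}$ produces two elements $\beta_2:=f\circ\iota_{\varphi_1}$ and $\iota_{\varphi_2}$ of $\Hom_{\dualcat(\OO)}(\wP,M_{\varphi_2})\cong A$, both non-vanishing modulo $\mathfrak{m}_A$ and hence both generators, so $\beta_2=c\,\iota_{\varphi_2}$ for a unique $c\in A^\times$. Expanding $\beta_2(ep)$ using $\iota_{\varphi_j}(ep)=\varphi_j(e)\iota_{\varphi_j}(p)$ for $j=1,2$ yields $\varphi_1(e)c=c\,\varphi_2(e)$, whence $\varphi_2=c^{-1}\varphi_1 c$. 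The main obstacle is the freeness of $N$ in the pseudo-compact non-commutative setting; once that is in hand the remaining steps are formal, and all uses of the hypotheses $(H0)$--$(H5)$ have been packaged into Proposition~\ref{flatpropI}.
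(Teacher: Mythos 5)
Your proof follows the paper's route step-for-step (Lemmas \ref{length}, \ref{surjectDef}, \ref{ohyeah}): reduce to the freeness of $N:=\Hom_{\dualcat(\OO)}(\wP,M)$ as a rank-one $A$-module with a generator $\beta$ lifting $\wP\twoheadrightarrow Q$, extract $\varphi$ from the $(A,\wE)$-bimodule structure, and compare generators for injectivity. There is, however, a genuine gap in the freeness argument. You write that $\beta$ ``generates $N/\mm_A N\cong k$'' and that the length count $\ell_{\OO}(N)=\ell_{\OO}(A)$ then gives $N=A\beta$ by Nakayama. But the length equality together with $\beta$ having nonzero image in $N/\Hom_{\dualcat(\OO)}(\wP,\mm_A M)\cong\Hom_{\dualcat(\OO)}(\wP,Q)\cong k$ only shows $\mm_A N\subseteq\Hom_{\dualcat(\OO)}(\wP,\mm_A M)\subsetneq N$; a priori $\mm_A N$ could be strictly smaller than $\Hom_{\dualcat(\OO)}(\wP,\mm_A M)$, in which case $\dim_k(N/\mm_A N)>1$ and Nakayama gives nothing. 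The paper closes exactly this hole in the proof of Lemma \ref{surjectDef} by proving directly that $a\mapsto a\circ\beta$ is \emph{injective}: for $a\in\mm_A^i\setminus\mm_A^{i+1}$, the image of $a\beta(v)$ in $\mm_A^i M/\mm_A^{i+1}M\cong(\mm_A^i/\mm_A^{i+1})\otimes_k Q$ (Lemma \ref{grading}) is nonzero whenever $v$ has nonzero image in $Q$. Injectivity together with the length equality then forces $A\cong N$, from which $N/\mm_A N\cong k$ follows as a \emph{consequence} rather than serving as an input. You need to supply this step.

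A smaller issue in the well-definedness paragraph: the map $a\wtimes v\mapsto au^{-1}\wtimes v$ is an $A$-module isomorphism $M_\varphi\cong M_{u\varphi u^{-1}}$, but its reduction modulo $\mm_A$ is multiplication by $\bar u^{-1}$ on $Q$, so it is an isomorphism of \emph{deformations} (compatible with the rigidifications $\alpha$) only when $u\equiv 1\pmod{\mm_A}$. As the paper does in the converse direction of the proof of Theorem \ref{repnonC}, one should first use the surjection $\OO\twoheadrightarrow A/\mm_A$ and the centrality of $\OO$ in $A$ to replace $u$ by $\lambda^{-1}u\in 1+\mm_A$ for a suitable $\lambda\in\OO^\times$; this realizes the same conjugation by a unit of $1+\mm_A$, after which your map does respect $\alpha$.
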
 

If we restrict the functor $\Def_Q$ to $\mathfrak A^{ab}$, the full subcategory of $\mathfrak A$ consisting of commutative 
algebras, then we recover  the usual deformation theory with commutative coefficients. 

\begin{cor}\label{representab} $\Def^{ab}_Q(A)=\Hom_{\hA}(\wE^{ab}, A)$, where $\wE^{ab}$ is 
the maximal commutative quotient of $\wE$.
\end{cor}

Let $\Pi$ be an admissible unitary Banach space representation of $G$ in the sense of Schneider-Teitelbaum \cite{iw}
 and let $\Theta$ be an open bounded $G$-invariant lattice in $\Pi$.  We denote by $\Theta^d$  its Schikhof dual,  
$\Theta^d:=\Hom_{\OO}(\Theta, \OO)$ equipped with the topology of pointwise convergence. We have shown in \cite{comp} that 
there exists a natural topological isomorphism $\Theta^d\cong \underset{\longleftarrow} {\lim}\, (\Theta/\varpi^n \Theta)^{\vee}$. 
Thus $\Theta^d$ is an object of $\Mod^{\mathrm{pro \, aug}}_G(\OO)$. Let $\Ban^{\mathrm{adm}}_{\dualcat(\OO)}$ denote the full 
subcategory of the category of admissible Banach space representations of $G$, such that for some (equivalently every) 
open bounded $G$-invariant lattice $\Theta$, $\Theta^d$ is an object of $\dualcat(\OO)$. One may show that, since 
$\dualcat(\OO)$ is assumed to be closed under subquotients in $\Mod^{\mathrm{pro \, aug}}_G(\OO)$, the 
category $\Ban^{\mathrm{adm}}_{\dualcat(\OO)}$ is abelian. The idea is instead of studying Banach space representations
study $\wE$-modules $\Hom_{\dualcat(\OO)}(\wP, \Theta^d)$ and $\md(\Pi):=\Hom_{\dualcat(\OO)}(\wP, \Theta^d)\otimes_{\OO} L$.
 
\begin{lem}\label{multdimI} The (possibly infinite) dimension of  $\md(\Pi)$ is equal to the multiplicity with which $\pi$ occurs 
in $\Theta\otimes_{\OO} k$.
\end{lem}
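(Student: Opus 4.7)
The plan is to reduce the computation of $\dim_L \md(\Pi)$ to a computation modulo $\varpi$ using the projectivity of $\wP$, and then to apply the defining property of the projective envelope.

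First I would exploit that, since $\wP$ is projective in $\dualcat(\OO)$, the functor $\Hom_{\dualcat(\OO)}(\wP,-)$ is exact. Applying it to $0\to \Theta^d\xrightarrow{\varpi}\Theta^d\to \Theta^d/\varpi\Theta^d\to 0$ shows that $M:=\Hom_{\dualcat(\OO)}(\wP,\Theta^d)$ is $\OO$-torsion-free and produces a natural isomorphism $M/\varpi M\cong \Hom_{\dualcat(\OO)}(\wP,\Theta^d/\varpi\Theta^d)$. Combining with the standard Schikhof-Pontryagin identification $\Theta^d/\varpi\Theta^d\cong (\Theta/\varpi\Theta)^{\vee}$, and the fact that any morphism from $\wP$ into a $\varpi$-killed object factors through $\wP/\varpi\wP$, I obtain
\[
M/\varpi M \;\cong\; \Hom_{\dualcat(k)}\bigl(\wP/\varpi\wP,\,(\Theta/\varpi\Theta)^{\vee}\bigr).
\]
Since $M$ is a pseudo-compact $\OO$-torsion-free module, being the inverse limit of the $\OO/\varpi^n$-modules $\Hom(\wP,\Theta^d/\varpi^n\Theta^d)$, it is topologically free, so $\dim_L\md(\Pi)=\dim_L(M\otimes_{\OO}L)$ coincides with $\dim_k(M/\varpi M)$ as (possibly infinite) cardinals.

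Next I would compute the right-hand side. Because any morphism from $\wP$ to an object killed by $\varpi$ factors through $\wP/\varpi\wP$, the quotient $\wP/\varpi\wP$ is projective in $\dualcat(k)$, and its cosocle equals that of $\wP$, namely $S$; hence $\wP/\varpi\wP$ is the projective envelope of $S$ in $\dualcat(k)$. Since $\End_{\dualcat(k)}(S)=k$, a straightforward devissage (using the exactness of $\Hom(\wP/\varpi\wP,-)$ together with the vanishing $\Hom(\wP/\varpi\wP,S')=0$ for irreducibles $S'\not\cong S$) gives $\dim_k\Hom_{\dualcat(k)}(\wP/\varpi\wP,N)=[N:S]$ for any finite length $N$ in $\dualcat(k)$. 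Because $\Theta/\varpi\Theta$ lies in $\Mod^?_{G}(\OO)$ it is locally of finite length, so $(\Theta/\varpi\Theta)^{\vee}$ is the inverse limit of its finite length quotients; pulling $\Hom(\wP,-)$ through this limit and summing multiplicities identifies the $k$-dimension of the right-hand side above with the multiplicity of $\pi$ in $\Theta/\varpi\Theta=\Theta\otimes_{\OO}k$, as desired.

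The main obstacle, in my view, is the infinite-multiplicity case: one must ensure that $\dim_L(M\otimes_{\OO}L)$ and the multiplicity on the right agree as cardinals, and that the $\Hom$-identifications behave correctly under the pseudo-compact topology. Both reduce to the topological freeness of pseudo-compact $\OO$-torsion-free modules together with the compactness of $\wP$, which makes $\Hom_{\dualcat(\OO)}(\wP,-)$ commute with filtered inverse limits in $\dualcat(\OO)$.
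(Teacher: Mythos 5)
Your proof is correct and is essentially the argument the paper uses (Lemmas \ref{mult=dim} and \ref{mult=rank0}): exactness of $\Hom_{\dualcat(\OO)}(\wP,-)$ plus the identification $\Theta^d\otimes_{\OO}k\cong(\Theta\otimes_{\OO}k)^{\vee}$ reduces everything to counting $\dim_k\Hom_{\dualcat(k)}(\wP/\varpi\wP,\cdot)$, which equals the multiplicity of $\pi$ by the projective-envelope property, and then one compares with $\md(\Pi)$ via torsion-freeness (the paper uses Nakayama in the finite-length case, you use topological freeness, which also covers the infinite case that the paper handles by citing Serre's cde-triangle).
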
 

This is the $cde$-triangle of Serre, see \S 15 of \cite{serre}.

\begin{prop}\label{longproofI} Suppose that  the centre $\mathcal Z$ of $\wE$ is noetherian and $\wE$ is a finitely generated
 $\mathcal Z$-module. If $\Pi$ in $\Ban^{\mathrm{adm}}_{\dualcat(\OO)}$ is irreducible then $\md(\Pi)$ is finite di\-men\-sio\-nal.
\end{prop}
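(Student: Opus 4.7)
The strategy is to show that $\md(\Pi)$ is a simple $\wE[1/p]$-module using irreducibility of $\Pi$, and then to deduce finite-dimensionality from the finiteness of $\wE$ over its noetherian centre.

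I would first show that $\md(\Pi)$ is either zero or a simple $\wE[1/p]$-module. By the adjunction
\[\Hom_{\wE}\bigl(N, \Hom_{\dualcat(\OO)}(\wP, \Theta^d)\bigr) \;=\; \Hom_{\dualcat(\OO)}\bigl(N\wtimes_{\wE}\wP,\, \Theta^d\bigr),\]
any $\wE[1/p]$-submodule $N \subseteq \md(\Pi)$ produces an evaluation map $N\wtimes_{\wE[1/p]}\wP[1/p] \to \Theta^d[1/p]$, whose image is a closed subobject of $\Theta^d[1/p]$; its Pontryagin dual is a closed quotient of $\Pi$. Irreducibility of $\Pi$ forces this image to be $0$ or all of $\Theta^d[1/p]$. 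The zero case yields $N = 0$; in the surjective case, applying the exact functor $\Hom_{\dualcat(\OO)}(\wP, -)$ to the surjection and combining with the natural identification $\Hom_{\dualcat(\OO)}(\wP, N\wtimes_{\wE}\wP) \cong N$ recovers $N = \md(\Pi)$.

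Assuming $\md(\Pi) \neq 0$, I would then deduce finite-dimensionality using the hypothesis that $\wE$ is finitely generated over $\mathcal{Z}$. This makes $\wE[1/p]$ a polynomial-identity ring finite over the noetherian commutative ring $\mathcal{Z}[1/p]$; by Kaplansky's theorem, the primitive quotient $\wE[1/p]/\mathrm{ann}_{\wE[1/p]}(\md(\Pi)) \cong M_n(D)$ for some division $L$-algebra $D$, whose centre $F$ is a quotient of $\mathcal{Z}[1/p]$ by a maximal ideal. Since $\mathcal{Z}$ is a complete noetherian local $\OO$-algebra with finite residue field, every maximal ideal of $\mathcal{Z}[1/p]$ has residue field finite over $L$, so $F$ is finite-dimensional over $L$; by PI theory so is $D$. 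Then $\md(\Pi) \cong D^n$ as a left $M_n(D)$-module, and hence is finite-dimensional over $L$.

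The main obstacle is the identification $\Hom_{\dualcat(\OO)}(\wP, N\wtimes_{\wE}\wP) \cong N$ used in the simplicity argument. This is tautological for $N = \wE$ and extends to finitely generated $N$ by right-exactness; handling arbitrary $N$ requires compatibility of the completed tensor product $\wtimes_{\wE}$ with filtered colimits in the pseudo-compact setting, for which the flatness of $\wP$ over $\wE$ provided by Proposition \ref{flatpropI} is essential. A secondary technical point is ensuring that submodules of $\md(\Pi)$ really do correspond to closed (not merely algebraic) subobjects of $\Theta^d[1/p]$ under the evaluation map, which requires tracking the natural pseudo-compact topology on $M = \Hom_{\dualcat(\OO)}(\wP, \Theta^d)$ carefully.
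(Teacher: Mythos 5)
Your argument is essentially correct and takes a genuinely different route from the paper. The paper (Proposition \ref{longproof}) starts from Schneider--Teitelbaum's theorem that $\End^{cont}_{L[G]}(\Pi)$ is a skew field for $\Pi$ irreducible admissible, identifies $B := \End_{\dualcat(\OO)}(\Xi^d)$ with $\End_{\wE}(\md)$ via Proposition \ref{ringsareiso}, and shows the centre $R$ of $B$ is a complete noetherian local domain with $R[1/p]$ a field; Kaplansky's Theorem 146 together with $p$-adic completeness of $B$ then force $R$ to be a finite $\OO$-module, and $\md$ is finitely generated over $R$. You instead take simplicity of $\md$ as a starting point (this is Proposition \ref{modulequotientnew}(ii), which the paper establishes more directly via commensurability of lattices in $\Pi$ rather than the topological argument you sketch), pass to the primitive quotient $\wE[1/p]/\mathrm{ann}(\md)$, apply Kaplansky's theorem on primitive PI rings to write it as a matrix algebra over a division algebra $D$ finite over its centre $F$, and then appeal to the fact that every residue field of $\mathcal Z[1/p]$ at a maximal ideal is finite over $L$. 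The two proofs hit the same bottleneck (Kaplansky Theorem 146 applied to a complete noetherian local domain whose localisation at $p$ is a field) at different points: you bury it inside the cited residue-field fact about $\mathcal Z[1/p]$, the paper applies it directly to the centre of $B$. Your route is more modular and sidesteps Schneider--Teitelbaum at this step; the paper's avoids PI theory and stays self-contained within its framework.

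Two small points should be tidied. First, the PI assertion should be made for the primitive quotient $A := \wE[1/p]/\mathrm{ann}(\md)$ rather than for $\wE[1/p]$ itself: $A$ is prime and module-finite over the central noetherian domain $C$ given by the image of $\mathcal Z[1/p]$ (a domain, since the contraction of the primitive ideal to $\mathcal Z[1/p]$ is prime), hence torsion-free over $C$, and therefore embeds into the finite-dimensional algebra $A \otimes_C \mathrm{Frac}(C)$ over a field, so $A$ is PI. Kaplansky's theorem on primitive PI rings then gives $A \cong M_n(D)$ finite over its centre $F$; since $F$ is a field module-finite over $C$, the domain $C$ is itself a field, so $C = \mathcal Z[1/p]/\nn$ for a maximal ideal $\nn$, and $F$ is a finite extension of $C$. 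Second, your appeal to ``$\mathcal Z$ is a complete noetherian local $\OO$-algebra with finite residue field'' requires either the reduction to $S$ irreducible that the paper carries out at the start of its proof (so that Corollary \ref{Zloc} applies), or a remark that $\mathcal Z$ is a finite product of such local rings.
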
 

Let $\Ban^{\mathrm{adm. fl}}_{\dualcat(\OO)}$ denote the full subcategory of $\Ban^{\mathrm{adm}}_{\dualcat(\OO)}$ consisting 
of objects of finite length. Let $\Ker \md$ be the full subcategory of  $\Ban^{\mathrm{adm. fl}}_{\dualcat(\OO)}$ consisting 
of those $\Pi$ such that $\md(\Pi)=0$. It follows from Lemma \ref{multdimI} that $\Pi$ is an object of 
$\Ker \md$ if and only if $\pi$ does not appear as a subquotient of the reduction of $\Theta$ modulo $\varpi$.
Since $\wP$ is projective one may show that the functor $\md$ is exact and so $\Ker \md$ is a thick 
subcategory. We denote the quotient category by $\Ban^{\mathrm{adm. fl}}_{\dualcat(\OO)}/\Ker \md$.

\begin{thm}\label{banI} Suppose that the hypotheses (H0)-(H5) hold and $Q$ is a finitely generated $\OO[[H]]$-module for 
an open compact subgroup $H$ of $G$. Assume further that the centre of $\wE$ is noetherian and $\wE$ is a finitely 
generated module over its centre. Then the functor $\md$ induces an anti-equivalence of categories 
between  $\Ban^{\mathrm{adm. fl}}_{\dualcat(\OO)}/\Ker \md$ and the category of finite dimensional $L$-vector spaces
with a right $\wE[1/p]$-action.    
\end{thm}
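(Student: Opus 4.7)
The plan is to build an explicit quasi-inverse $\mathbf T$ to $\md$ using the flatness statement of Proposition \ref{flatpropI}, namely $\mathbf T(M) := (M^0 \wtimes_{\wE} \wP)^{\vee}$ (Schikhof dual), where $M$ is a finite-dimensional right $\wE[1/p]$-module and $M^0 \subset M$ is an $\wE$-stable $\OO$-lattice. First I note that the Schikhof dual identifies $\Ban^{\mathrm{adm}}_{\dualcat(\OO)}$ with a full subcategory of $\dualcat(\OO)[1/p]$; because $\wP$ is projective in $\dualcat(\OO)$, the functor $\Theta^d \mapsto \Hom_{\dualcat(\OO)}(\wP, \Theta^d)$ is exact, so after inverting $p$ the functor $\md$ is exact on $\Ban^{\mathrm{adm. fl}}_{\dualcat(\OO)}$. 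By its very definition $\md$ kills precisely $\Ker \md$, so it descends to an exact functor on the quotient category.

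For the construction of $\mathbf T$, one first uses the hypothesis that $\wE$ is finitely generated over its noetherian centre $\mathcal Z$ to produce, for any finite-dimensional right $\wE[1/p]$-module $M$, an $\wE$-stable $\OO$-lattice $M^0 \subset M$ which is a finitely generated $\wE$-module. Set $N := M^0 \wtimes_{\wE} \wP$. By Proposition \ref{flatpropI}, $\wP$ is flat over $\wE$, so $N$ is an object of $\dualcat(\OO)$. The crucial point is to verify that $N$ is finitely generated over $\OO[[H]]$: reducing modulo $\varpi$ and then modulo the maximal ideal of $\wE$, Proposition \ref{flatpropI} gives $k \wtimes_{\wE} N \cong (M^0/\mm_{\wE} M^0) \otimes_k Q$, which is a finite direct sum of copies of $Q$ and hence finitely generated over $\OO[[H]]$ by hypothesis; a topological Nakayama argument on the compact $\OO[[H]]$-module $N$ then promotes this to finite generation of $N$. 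Thus $\mathbf T(M)$ is an admissible unitary $L$-Banach space representation, and its finite length is forced by $\dim_L M < \infty$ together with Lemma \ref{multdimI}.

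To see that $\mathbf T$ is quasi-inverse to $\overline{\md}$ (in the sense of anti-equivalence), flatness gives $\Hom_{\dualcat(\OO)}(\wP, N) = M^0$ as right $\wE$-modules, hence $\md(\mathbf T(M)) = M$ functorially. Conversely, for $\Pi \in \Ban^{\mathrm{adm. fl}}_{\dualcat(\OO)}$ with dual $\Theta^d$, the evaluation map
\[
\Hom_{\dualcat(\OO)}(\wP, \Theta^d) \wtimes_{\wE} \wP \longrightarrow \Theta^d
\]
is a morphism in $\dualcat(\OO)$; applying $\md$ and using flatness again shows it becomes an isomorphism after tensoring with $L$ and applying $\md$, so by exactness its kernel and cokernel lie in $\Ker \md$. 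Consequently this map, interpreted as a morphism $\mathbf T(\md(\Pi)) \to \Pi$ of Banach representations after inverting $p$, becomes an isomorphism in $\Ban^{\mathrm{adm. fl}}_{\dualcat(\OO)}/\Ker \md$. Together with $\md \circ \mathbf T = \id$, this yields the claimed anti-equivalence; fully faithfulness is automatic from the calculus of fractions once the unit and counit are isomorphisms.

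The main obstacle is Step 2, namely verifying that $N = M^0 \wtimes_{\wE} \wP$ really is finitely generated over $\OO[[H]]$, so that $\mathbf T(M)$ is genuinely admissible and not merely a pseudocompact object. This is the unique place where the assumption on $Q$ being finitely generated over $\OO[[H]]$ and the noetherian/finite-generation hypothesis on $\wE$ are used in an essential analytic way; all subsequent steps are formal consequences of projectivity, flatness, and the exactness of $\md$. A secondary, purely formal check is independence of the choice of lattice $M^0$: any two such lattices are commensurable over $\wE$, producing commensurable open bounded lattices in $\mathbf T(M)$, and hence the same Banach space representation.
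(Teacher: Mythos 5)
Your overall architecture is close to the paper's (which proves this as Theorem \ref{antiequiv}, after Proposition \ref{moregen}, Lemma \ref{headS0}, Lemma \ref{getback}, and Definition \ref{defPimd}), and the use of the evaluation map together with $\Hom_{\dualcat(\OO)}(\wP,\ev)=\id$ and exactness of $\md$ to kill kernel and cokernel in the quotient category is a correct, and in fact slightly slicker, version of the paper's devissage through the irreducible case. However, there is a genuine gap, and it is not where you locate it.

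You say the ``unique place'' where the noetherian centre and finite generation of $\wE$ over its centre enter is Step 2 (finite generation of $N=M^0\wtimes_{\wE}\wP$ over $\OO[[H]]$). That step, correctly carried out, never needs those hypotheses: one filters the finite-dimensional $\wE$-module $M^0\otimes_{\OO}k$ by the $\mm_{\wE}$-adic filtration, notes that every graded piece is a $k$-module on which $\wE$ acts through $\wE/\rad\wE\cong k$, so tensoring against $\wP$ gives finite direct sums of $Q\cong(\wE/\rad\wE)\wtimes_{\wE}\wP$, and concludes by exactness of $\wtimes_{\wE}\wP$ and topological Nakayama. (Your displayed identification $k\wtimes_{\wE}N\cong(M^0/\mm_{\wE}M^0)\otimes_k Q$ is not correct as written -- $N$ is not an $\wE$-module, and $M^0\otimes_{\OO}k$ does not in general equal $M^0/\mm_{\wE}M^0$ -- but the devissage makes the intended argument rigorous, and this is exactly Proposition \ref{moregen}.)

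What is actually missing is the other direction: you never establish that $\md(\Pi)$ is a \emph{finite-dimensional} $L$-vector space for $\Pi$ of finite length. Without this, $\md$ does not land in the stated target category, and more concretely your unit map $\Pi\to\mathbf T(\md(\Pi))$ is only defined if $\md(\Pi)$ is finite-dimensional (otherwise $\Hom_{\dualcat(\OO)}(\wP,\Theta^d)\wtimes_{\wE}\wP$ need not be finitely generated over $\OO[[H]]$, hence $\mathbf T(\md(\Pi))$ need not be an admissible Banach representation at all). This is Proposition \ref{longproof} in the paper (Proposition \ref{longproofI} in the introduction), and \emph{that} is where the hypothesis that the centre $\mathcal Z$ of $\wE$ is noetherian and $\wE$ is finitely generated over $\mathcal Z$ is genuinely used: one identifies $\End^{cont}_{L[G]}(\Pi)$ with a skew field containing a noetherian integral domain $R$ with $R[1/p]$ a field, invokes Kaplansky's theorem to deduce $R/pR$ is artinian, and ultimately that $R$ is a finitely generated $\OO$-module, from which finite-dimensionality of $\md(\Pi)$ follows. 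By exactness of $\md$ it suffices to treat irreducible $\Pi$, but the irreducible case itself is a substantive analytic argument that your proposal replaces with nothing. Until this is supplied, the quasi-inverse $\mathbf T$ is only defined on the target category, not on the essential image of $\md$, and the claimed anti-equivalence does not follow.
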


\begin{cor}\label{redmultI1} Under the assumptions of Theorem \ref{banI} the functor $\md$ induces a bijection between isomorphism classes of: 
\begin{itemize}
\item[(i)] irreducible  right $\wE[1/p]$-modules, finite dimensional over $L$;
\item[(ii)] irreducible $\Pi$ in $\Ban^{\mathrm{adm}}_{\dualcat(\OO)}$ such that $\pi$ occurs as a subquotient 
of $\Theta/\varpi \Theta$ for some open bounded $G$-invariant lattice $\Theta$ in $\Pi$.
\end{itemize}
Moreover, $\Pi$ is absolutely irreducible if and only if $\md(\Pi)$ is absolutely irreducible as $\wE[1/p]$-module.
\end{cor}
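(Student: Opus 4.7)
The plan is to deduce the corollary directly from Theorem~\ref{banI} together with Lemma~\ref{multdimI}, using the standard fact that an anti-equivalence of abelian categories induces a bijection on isomorphism classes of simple objects, and that for a thick subcategory $\mathcal K$ of an abelian category $\mathcal C$ the simple objects of $\mathcal C/\mathcal K$ are in bijection with the simple objects of $\mathcal C$ not lying in $\mathcal K$.

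\textbf{Step 1 (reduction to the quotient category).} I would first observe that any irreducible $\Pi$ in $\Ban^{\mathrm{adm}}_{\dualcat(\OO)}$ has length one, hence lies in $\Ban^{\mathrm{adm.\, fl}}_{\dualcat(\OO)}$. By Lemma~\ref{multdimI} the condition that $\pi$ appears as a subquotient of $\Theta/\varpi\Theta$ for some (equivalently every, since any two open bounded lattices are commensurable) open bounded $G$-invariant lattice $\Theta \subset \Pi$ is equivalent to $\md(\Pi)\neq 0$, i.e.\ to $\Pi \notin \Ker\md$. Thus the irreducible $\Pi$ satisfying (ii) are precisely the simple objects of $\Ban^{\mathrm{adm.\, fl}}_{\dualcat(\OO)}$ whose image in the Serre quotient $\Ban^{\mathrm{adm.\, fl}}_{\dualcat(\OO)}/\Ker\md$ is nonzero, and these images are exactly the simple objects of the quotient.

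\textbf{Step 2 (application of the anti-equivalence).} By Theorem~\ref{banI} the functor $\md$ induces an anti-equivalence between $\Ban^{\mathrm{adm.\, fl}}_{\dualcat(\OO)}/\Ker\md$ and the category of finite-dimensional right $\wE[1/p]$-modules. Anti-equivalences preserve the lattice of subobjects (reversing inclusions), so they carry simples to simples and induce a bijection on isomorphism classes of simples. Combining with Step 1 gives the bijection between (i) and (ii).

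\textbf{Step 3 (absolute irreducibility).} For the final assertion I would argue by base change. Let $L'/L$ be a finite extension with ring of integers $\OO'$ and residue field $k'$. The formation of $\wP$, $\wE$ and of $\md$ is compatible with extension of scalars $\OO\to \OO'$: one has $\wP_{\OO'} \cong \wP\wtimes_{\OO}\OO'$, $\wE_{\OO'}\cong \wE\wtimes_{\OO}\OO'$, the hypotheses (H0)--(H5) and the noetherian/finiteness assumptions on the centre are preserved, and $\md(\Pi\wtimes_L L')\cong \md(\Pi)\otimes_L L'$ as $\wE[1/p]\otimes_L L'$-modules. Applying Step 2 over $L'$ shows that $\Pi\wtimes_L L'$ is irreducible in the Banach category with $L'$-coefficients if and only if $\md(\Pi)\otimes_L L'$ is an irreducible $\wE[1/p]\otimes_L L'$-module; taking $L'$ to run through the finite subextensions of an algebraic closure of $L$ yields the claimed equivalence of absolute irreducibility.

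The routine part is the formal categorical bookkeeping in Steps 1 and 2; the main obstacle is Step 3, namely verifying that the entire construction---the projective envelope $\wP$, its endomorphism ring $\wE$, the hypotheses (H0)--(H5), and the functor $\md$---is compatible with a finite extension of scalars, so that Theorem~\ref{banI} can be invoked over $L'$ and combined with the identification $\md(\Pi\wtimes_L L')\cong \md(\Pi)\otimes_L L'$.
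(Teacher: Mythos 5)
Your Steps 1 and 2 are fine: reducing (ii) to "simple objects of $\Ban^{\mathrm{adm.\,fl}}_{\dualcat(\OO)}$ not in $\Ker\md$" via Lemma \ref{multdimI}, and then using that the anti-equivalence of Theorem \ref{banI} matches simple objects of the quotient category with irreducible finite dimensional $\wE[1/p]$-modules, is exactly the intended deduction of the bijection (the body of the paper proves the same bijection more directly as Theorem \ref{bijBM}, by constructing the inverse $\md_L\mapsto$ unique irreducible closed subspace of $\Pi(\md)$ via Proposition \ref{modulequotientnew} and Corollary \ref{Piexistsuniq}).

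Step 3, however, has a genuine gap, in two places. First, the compatibility of the whole apparatus with a finite extension of scalars is asserted but not proved, and it is not formal in the generality in which Theorem \ref{banI} is stated: for an arbitrary $p$-adic analytic $G$ and an arbitrary subcategory $\Mod^{?}_G(\OO)$ it is not even clear how to define the base-changed category $\dualcat(\OO')$, nor that $\wP\otimes_{\OO}\OO'$ remains a projective envelope, that (H0)--(H5) persist, or that the centre stays noetherian; the paper establishes statements of this kind only later and only for $G=\GL_2(\Qp)$ (Corollaries \ref{bcisok1}, \ref{bcisok2}), using input specific to that group (Propositions \ref{defoverk}, \ref{bcEXT}, Corollary \ref{bcinjenv}). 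Second, even granting all of this, the inference "$\md(\Pi)\otimes_L L'$ irreducible $\Rightarrow$ $\Pi\otimes_L L'$ irreducible" does not follow from the bijection of Step 2: that bijection compares isomorphism classes of irreducibles, but it does not let you test irreducibility of the given object $\Pi\otimes_L L'$ from its image, because $\Pi\otimes_L L'$ could a priori have a nonzero sub- or quotient object lying in $\Ker\md$ (i.e.\ whose reduction does not contain the relevant mod-$p$ representation), in which case $\md(\Pi\otimes_L L')$ could be irreducible while $\Pi\otimes_L L'$ is not; ruling this out requires an additional argument (e.g.\ a Galois-translate argument as in the proof of Lemma \ref{eirrb}). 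The paper avoids both problems entirely: by Proposition \ref{ringsareiso} one has $\End^{cont}_{L[G]}(\Pi)\cong\End_{\wE_L}(\md(\Pi))^{op}$, this algebra is finite dimensional since $\md(\Pi)$ is, and then Lemmas \ref{endoirrban} and \ref{eirrb} show that $\Pi$ is absolutely irreducible if and only if this endomorphism algebra is $L$, which for the finite dimensional module $\md(\Pi)$ is the usual characterization of absolute irreducibility (this is Corollary \ref{absirre}). I would recommend replacing your Step 3 by that endomorphism-ring argument, which needs no base change of the category at all.
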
 

The inverse functor to $\md$ in Theorem \ref{banI} is constructed as follows. Let $\md$ be a finite dimensional 
$\wE[1/p]$-module. Let $\md^0$ be any finitely generated $\wE$-submodule of $\md$, which contains an $L$-basis of $\md$. 
Our assumptions imply that $\wE$ is compact and noetherian, thus $\md^0$ is an open  bounded $\OO$-lattice in $\md$. 
Since $\wP$ is a flat $\wE$-module by Proposition \ref{flatpropI} we deduce that $\md^0\wtimes_{\wE}\wP$ is 
$\OO$-torsion free. Let $\Pi(\md):=\Hom_{\OO}^{cont}(\md^0\wtimes_{\wE} \wP, L)$ with the topology induced by the supremum norm. 
One may show that the natural map $\Pi \rightarrow \Pi(\md(\Pi))$ 
in $\Ban^{\mathrm{adm. fl}}_{\dualcat(\OO)}$ is an isomorphism in the quotient category.  If $\Pi$ is irreducible 
and $\md(\Pi)\neq 0$ we deduce that the natural map $\Pi \rightarrow \Pi(\md(\Pi))$ is an injection. 
Let $m$ be the multiplicity with which $\pi$ occurs as a subquotient of $\Theta/\varpi \Theta$. Lemma \ref{multdimI}
says that $\dim_L \md(\Pi)= m$ and thus $\md^0$ is a free $\OO$-module of rank $m$ and so $\md^0\otimes_{\OO} k$ is an 
$m$-di\-men\-sio\-nal $k$-vector space. It follows from the Proposition \ref{flatpropI} that the semisimplification of  
$(\md^0\wtimes_{\wE} \wP)\otimes_{\OO} k \cong  (\md^0\otimes_{\OO} k)\wtimes_{\wE} \wP$ is isomorphic to the semisimplification 
of $Q^{\oplus m}$. Using this we obtain:

\begin{cor}\label{redmultI2} Suppose the assumptions of Theorem \ref{banI} are satisfied. Let $\Pi$ in $\Ban^{\mathrm{adm}}_{\dualcat(\OO)}$
be irreducible and suppose that $\pi$ occurs as a subquotient of $\Theta/\varpi \Theta$ then 
$$ \overline{\Pi}\subseteq ((Q^{\oplus m})^{\vee})^{ss},$$
where $\overline{\Pi}$ denotes the semi-simplification of $\Theta/\varpi \Theta$ and $m$ the multiplicity 
with which $\pi$ occurs in $\overline{\Pi}$.
\end{cor}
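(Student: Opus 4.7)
The strategy is to make explicit the inverse-functor construction sketched in the paragraph preceding the corollary, and to exploit flatness of $\wP$ over $\wE$ (Proposition \ref{flatpropI}) to read off the reduction mod $\varpi$. By Lemma \ref{multdimI} the hypothesis on $\pi$ translates into $\dim_L \md(\Pi)=m\neq 0$, and Theorem \ref{banI} makes the unit map $\Pi\to \Pi(\md(\Pi))$ an isomorphism in $\Ban^{\mathrm{adm. fl}}_{\dualcat(\OO)}/\Ker \md$; irreducibility of $\Pi$ together with $\md(\Pi)\neq 0$ upgrades this to an honest injection of admissible unitary Banach space representations. It therefore suffices to bound the semisimplification of the reduction of any lattice in $\Pi$ by that of a well-chosen lattice in $\Pi(\md(\Pi))$.

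Fix a finitely generated $\wE$-submodule $\md^0\subset \md(\Pi)$ containing an $L$-basis, as in the paragraph before the corollary; $\md^0$ is then an open bounded $\OO$-lattice in $\md(\Pi)$, hence a free $\OO$-module of rank $m$, so $\md^0/\varpi\md^0$ is $m$-dimensional over $k$. Since $\wE$ is local with residue field $k$, every $\wE$-composition series of $\md^0/\varpi\md^0$ has exactly $m$ graded pieces, each isomorphic to $k$. By flatness of $\wP$ over $\wE$ (Proposition \ref{flatpropI}), tensoring such a filtration with $\wP$ yields a filtration of
\[
(\md^0\wtimes_{\wE}\wP)/\varpi\cong (\md^0/\varpi\md^0)\wtimes_{\wE}\wP
\]
whose graded pieces are all isomorphic to $k\wtimes_{\wE}\wP\cong Q$; its semisimplification is therefore $(Q^{\oplus m})^{ss}$.

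Now let $\Theta'$ be the unit-ball lattice in $\Pi(\md(\Pi))=\Hom_{\OO}^{cont}(\md^0\wtimes_{\wE}\wP, L)$, so that $(\Theta')^d\cong \md^0\wtimes_{\wE}\wP$. Applying Pontryagin duality to the standard identification $\Theta^d/\varpi\Theta^d\cong (\Theta/\varpi\Theta)^{\vee}$, valid for any open bounded lattice in an admissible Banach space, gives $(\Theta'/\varpi\Theta')^{ss}\cong ((Q^{\oplus m})^{\vee})^{ss}$. Finally, set $\Theta:=\Theta'\cap\Pi$; the equality $\varpi\Theta=\Theta\cap\varpi\Theta'$, which is immediate from $\Pi$ being an $L$-subspace of $\Pi(\md(\Pi))$, makes the natural map $\Theta/\varpi\Theta\hookrightarrow \Theta'/\varpi\Theta'$ injective. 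Since any two open bounded $G$-invariant lattices in $\Pi$ yield the same mod-$\varpi$ reduction up to semisimplification, we conclude $\overline{\Pi}\subseteq ((Q^{\oplus m})^{\vee})^{ss}$. The main technical point is that the $\wE$-length of $\md^0/\varpi\md^0$ matches its $k$-dimension, so that flatness of $\wP$ together with Schikhof duality cleanly converts this length into the correct number of copies of $Q^{\vee}$; once Proposition \ref{flatpropI} is in hand, the rest is routine lattice manipulation.
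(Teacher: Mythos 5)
Your proof is correct and follows essentially the same route as the paper's own sketch, which appears in the paragraph immediately preceding the corollary: $\dim_L\md(\Pi)=m$ gives $\md^0\cong\OO^m$, flatness of $\wP$ over $\wE$ (Proposition \ref{flatpropI}) turns a composition series of $\md^0\otimes_\OO k$ (length $m$, all factors $k$) into a filtration of $(\md^0\wtimes_{\wE}\wP)\otimes_\OO k$ with graded pieces $Q$, and the injection $\Pi\hookrightarrow\Pi(\md(\Pi))$ plus Schikhof/Pontryagin duality and lattice-independence of $\overline{\Pi}$ (Lemma \ref{commen}) yield $\overline{\Pi}\subseteq((Q^{\oplus m})^\vee)^{ss}$. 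The only cosmetic difference is that you realize the comparison of reductions via the lattice $\Theta'\cap\Pi$ inside $\Pi(\md(\Pi))$, whereas the paper (cf.\ the proof of Corollary \ref{commutativeOK}) dualizes the surjection $\md^0\wtimes_{\wE}\wP\twoheadrightarrow\Xi^d$ from Proposition \ref{modulequotientnew}; both are equivalent.
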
  

From the hypotheses one may deduce that $\Ext^1_{\dualcat(k)}(Q,Q)$ is finite dimensional and  
so Corollary \ref{representab} implies that the tangent space of $\wE$ is finite dimensional.
Thus if $\wE$ is commutative then it is noetherian. The irreducible modules of $\wE[1/p]$ correspond 
to the maximal ideals and the absolutely irreducible modules correspond to the maximal ideals of $\wE[1/p]$
with residue field $L$. In particular,  the absolutely irreducible modules are $1$-di\-men\-sio\-nal. Hence, we obtain:   

\begin{cor}\label{commI} Suppose that the hypotheses (H0)-(H5) hold and $Q$ is a finitely generated $\OO[[H]]$-module for 
an open compact subgroup $H$ of $G$ and $\wE$ is commutative. Then for every absolutely irreducible $\Pi$  in $\Ban^{\mathrm{adm}}_{\dualcat(\OO)}$
such that $\pi$ is a subquotient in $\Theta/\varpi \Theta$ we have $ \overline{\Pi}\subseteq (Q^{\vee})^{ss}$.
\end{cor}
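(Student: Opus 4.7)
The plan is to verify the assumptions of Theorem \ref{banI} and then derive the conclusion from Corollary \ref{redmultI2} by showing that $\pi$ must occur with multiplicity one in $\Theta/\varpi\Theta$.

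First I would establish that $\wE$ is noetherian. Using (H2), which says $S$ appears once in $Q$, together with (H3) and (H4), a devissage along a composition series of $Q$ yields that $\Ext^1_{\dualcat(k)}(Q,Q)$ is finite dimensional over $k$: every composition factor $S' \not\cong S$ contributes nothing to $\Ext^1_{\dualcat(k)}(Q,-)$, while the unique copy of $S$ contributes the finite dimensional space from (H4). Corollary \ref{representab} (applied to infinitesimal deformations, i.e.~$A = k[\epsilon]$) identifies the tangent space $\Hom_{\hA}(\wE^{ab}, k[\epsilon])$ with a subspace of this $\Ext^1$, hence $\mm/\mm^2$ is finite dimensional, where $\mm$ denotes the maximal ideal of $\wE$. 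Since Proposition \ref{flatpropI} ensures the pseudo-compact topology on $\wE$ agrees with the $\mm$-adic one, and $\wE$ is commutative with residue field $k$, Cohen's structure theorem exhibits $\wE$ as a quotient of $\OO[[x_1,\ldots,x_d]]$ with $d = \dim_k \mm/\mm^2$; in particular $\wE$ is noetherian, and it is trivially finitely generated over its own centre.

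Next I would pass to $\wE[1/p]$, a commutative noetherian $L$-algebra. Any irreducible right $\wE[1/p]$-module is of the form $\wE[1/p]/\nn$ for some maximal ideal $\nn$, hence is a finite field extension of $L$; it is absolutely irreducible precisely when this residue field equals $L$, in which case it is one dimensional. Since $\Pi$ is absolutely irreducible, Corollary \ref{redmultI1} forces $\md(\Pi)$ to be absolutely irreducible over $\wE[1/p]$, so $\dim_L \md(\Pi) = 1$. By Lemma \ref{multdimI} this dimension equals the multiplicity $m$ of $\pi$ in $\Theta/\varpi \Theta$, so $m = 1$, and Corollary \ref{redmultI2} then delivers $\overline{\Pi} \subseteq ((Q^{\oplus 1})^{\vee})^{ss} = (Q^{\vee})^{ss}$.

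The main obstacle is the noetherianness step: verifying that the $\Ext^1$ computation actually controls the full tangent space (not merely an abelianized version that could be strictly larger), and confirming that the pseudo-compact topology matches the $\mm$-adic topology so that Cohen's theorem genuinely applies. Once these are in place everything else is bookkeeping on top of Theorem \ref{banI} and its corollaries.
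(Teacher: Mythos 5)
Your proposal is correct and is essentially the paper's own derivation: the paper likewise deduces noetherianness of $\wE$ from finite-dimensionality of the tangent space (Lemma \ref{tangentspace} together with Lemma \ref{madictop}, which plays the role of your Cohen-theorem step and also disposes of your worry about the abelianization, since $k[\varepsilon]$ is commutative so $\Hom_{\hA}(\wE,k[\varepsilon])=\Hom_{\hA}(\wE^{ab},k[\varepsilon])\cong\Ext^1_{\dualcat(k)}(Q,Q)$), and then uses commutativity plus absolute irreducibility to force $\md(\Pi)$ to be one-dimensional, so the multiplicity is one and Corollary \ref{redmultI2} gives $\overline{\Pi}\subseteq (Q^{\vee})^{ss}$. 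The only cosmetic difference is that the detailed proof in the body (Corollary \ref{commutativeOK}) shortcuts the final step via Corollary \ref{theimageofcenter}, obtaining $\Hom_{\dualcat(\OO)}(\wP,\Xi^d)\cong\OO$ directly and tensoring the surjection $\OO\wtimes_{\wE}\wP\twoheadrightarrow\Xi^d$ with $k$, rather than passing through Corollary \ref{redmultI1}.
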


In Theorem \ref{crit} we devise a criterion for commutativity of $\wE$. 

\begin{thm}\label{critI} Let $d:=\dim \Ext^1_{\dualcat(k)}(Q, Q)$ and $r=\lfloor \frac{d}{2}\rfloor$. 
Suppose that the hypotheses (H0)-(H5) 
are satisfied and there exists a surjection
$\wE\twoheadrightarrow \OO[[x_1,\ldots, x_d]]$.  Further, suppose that for every exact sequence 
\begin{equation}
0\rightarrow Q^{\oplus r} \rightarrow  T\rightarrow Q\rightarrow 0
\end{equation}
with $\dim \Hom_{\dualcat(k)}(T, S)=1$ we have $\dim \Ext^1_{\dualcat(k)}(T, S)\le \frac{r(r-1)}{2}+ d$ then $\wE\cong \OO[[x_1,\ldots, x_d]]$.
\end{thm}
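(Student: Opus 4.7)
The plan is to split the argument into two steps: identify the maximal commutative quotient $\wE^{ab}$ with $R := \OO[[x_1,\ldots,x_d]]$ using a tangent-space calculation, and then use the $\Ext^1$-bound hypothesis to force $\wE$ to coincide with $\wE^{ab}$.

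\textbf{Identification of $\wE^{ab}$.}  By Corollary \ref{representab}, the pseudo-compact ring $\wE^{ab}$ pro-represents the commutative deformation functor $\Def_Q^{ab}$, whose tangent space $\Def_Q^{ab}(k[\epsilon])$ is canonically identified with $\Ext^1_{\dualcat(k)}(Q,Q)$ and so has $k$-dimension $d$.  Because $R$ is commutative, the given surjection $\wE \twoheadrightarrow R$ factors as $\wE \twoheadrightarrow \wE^{ab} \twoheadrightarrow R$.  Lifting $x_1,\ldots,x_d$ to elements $t_1,\ldots,t_d \in \wE^{ab}$ and invoking Nakayama for pseudo-compact $\OO$-algebras produces a surjection $R \twoheadrightarrow \wE^{ab}$; composing with $\wE^{ab} \twoheadrightarrow R$ yields a surjective endomorphism of the Noetherian ring $R$, hence an isomorphism.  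Both intermediate arrows must therefore be isomorphisms and $\wE^{ab} \cong R$.

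\textbf{Reduction to commutativity.}  Let $I \subseteq \wE$ be the kernel of $\wE \twoheadrightarrow \wE^{ab} \cong R$; this is the closure of the two-sided commutator ideal and we must show $I = 0$.  Suppose $I \neq 0$ and let $n \geq 2$ be maximal with $I \subseteq \mm^n$, where $\mm$ is the maximal ideal of $\wE$ (the $\mm$-adic topology agrees with the natural one by Proposition \ref{flatpropI}).  The image of $I$ in $\mm^n/\mm^{n+1}$ is then a nonzero subspace.  By flatness of $\wP$ over $\wE$, for any Artinian quotient $A$ of $\wE$ the $A$-module $M := A \wtimes_\wE \wP$ is flat, with $\mm$-adic graded pieces $\gr_\mm^i(A) \otimes_k Q$; in particular $M/\mm M = Q$, so $\dim_k \Hom_{\dualcat(k)}(M,S) = 1$ and $M$ is an iterated extension of copies of $Q$.

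\textbf{Producing the contradiction.}  Taking $A = \wE/(\varpi, \mm^{n+1})$, the plan is to extract a subquotient $T$ of $M$ fitting into
\[
0 \longrightarrow Q^{\oplus r} \longrightarrow T \longrightarrow Q \longrightarrow 0
\]
with $\dim_k \Hom_{\dualcat(k)}(T,S) = 1$ and $\dim_k \Ext^1_{\dualcat(k)}(T,S)$ strictly greater than $\frac{r(r-1)}{2} + d$, contradicting the hypothesis.  Heuristically, $\frac{r(r-1)}{2}$ counts extensions among $r$ pairwise commuting tangent directions (the quadratic relations of a commutative power series ring in $r$ variables lifted to $\wE$), while $d$ counts the tangent directions themselves; one selects $r$ directions in $\mm/(\mm^2+\varpi)$ whose chosen lifts in $\wE$ commute pairwise, uses the complementary $d - r$ directions and the nonzero image of $I$ in $\mm^n/\mm^{n+1}$ to produce an additional nontrivial class in $\Ext^1_{\dualcat(k)}(T,S)$, and computes that this class is linearly independent from the $\frac{r(r-1)}{2} + d$ classes of commutative origin.

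\textbf{Main obstacle.}  The technical heart of the argument is the explicit construction of $T$ and the delicate accounting of $\dim_k \Ext^1_{\dualcat(k)}(T,S)$.  The threshold $\frac{r(r-1)}{2} + d$ and the choice $r = \lfloor d/2\rfloor$ are sharp in the commutative case, so any strict excess forces non-commutativity; arranging that $T$ has cosocle $S$ of multiplicity exactly one (so the hypothesis applies) while isolating the commutator contribution as a genuinely new $\Ext^1$-class requires a careful interplay between the flatness of $\wP$, the filtration of $M$ coming from the $\mm$-adic filtration of $A$, and the combinatorics of selecting the commuting subspace inside the tangent space.
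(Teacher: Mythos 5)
Your high-level plan — identify $\wE^{ab}$ with $R:=\OO[[x_1,\ldots,x_d]]$ by a tangent-space argument and then force $\wE=\wE^{ab}$ from the $\Ext^1$-bound — is the same basic strategy the paper uses. Your first step is sound and closely parallels Lemma~\ref{graded}. However, your step 3, the part you honestly flag as the ``main obstacle,'' is precisely where the substance of the theorem lies (in the paper this is Lemmas~\ref{AW} and \ref{commut}), and the proposal as written does not close that gap. Two concrete problems:

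First, the $\Ext^1$-bound hypothesis only probes degree~$2$. Modules $T$ of the prescribed shape with $\dim\Hom_{\dualcat(k)}(T,S)=1$ are exactly the $P/(W+\mm^2)P$ for $(d-r)$-dimensional subspaces $W\subseteq\mm/\mm^2$, and Lemma~\ref{AW} computes $\dim\Ext^1_{\dualcat(k)}(T,S)=\dim\frac{W+\mm^2}{W\mm+\mm^3}$; so the hypothesis bounds $\dim\frac{\mm^2}{W\mm+\mm^3}$ and nothing deeper. In your reduction you take $n$ maximal with $I\subseteq\mm^n$, but for $n>2$ no such $T$ can see $I$, and your outline offers no way to rule $n>2$ out. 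The paper handles this via Lemma~\ref{graded}: since $\gr^\bullet_\mm(E)$ is topologically generated in degree~$1$, commutativity of $E/\mm^3$ already forces $\gr^\bullet_\mm(E)$ to be commutative, and then the surjection $\wE\twoheadrightarrow\OO[[x_1,\ldots,x_d]]$ is an isomorphism. This is how the whole problem collapses to degree $2$, which your sketch does not achieve.

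Second, the combinatorial core — where the sharp threshold $\frac{r(r-1)}{2}+d$ actually gets used — is absent. In Lemma~\ref{commut} the paper reduces to $\dim\mathfrak a=1$, defines an alternating form $\kappa$ on $\mm/\mm^2$ by $ab-ba=\kappa(a,b)\,t$, puts $\kappa$ in symplectic normal form, and chooses $W$ spanned by an isotropic set of basis vectors, so that the relevant monomials inject into the commutative quotient and can be counted exactly; this is what produces $\frac{r(r+1)}{2}$, equivalently $\frac{r(r-1)}{2}+d$ after accounting for $\dim(W+\mm^2)/\mm^2$. Your ``heuristic'' mentions selecting $r$ commuting directions and ``isolating the commutator contribution,'' which gestures towards this picture, but without the alternating form and the symplectic basis you have neither a concrete choice of $W$ nor a dimension count, so the contradiction is not actually produced. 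You would also need the translation of Lemma~\ref{AW} (via the projective cover $P$ and the identification $\mm^nP=P^n$ from Proposition~\ref{filtdone}) to even state the bound ring-theoretically.
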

 
Recall that up to now $G$ was an arbitrary $p$-adic analytic group and the category $\Mod^{?}_G(\OO)$ 
was any full subcategory of $\Mod^{\mathrm{lfin}}_G(\OO)$ closed under arbitrary direct sums 
and subquotients in $\Mod^{\mathrm{lfin}}_G(\OO)$.  Now we apply the
 formalism to $G=\GL_2(\Qp)$ and $\Mod^{?}_G(\OO)=\Mod^{\mathrm{lfin}}_{G, \zeta}(\OO)$, where $\zeta$ is a fixed central character. 
We show in Proposition \ref{injgoinj} that injective objects in $\Mod^{\mathrm{lfin}}_{G, \zeta}(\OO)$ are also injective 
in $\Mod^{\mathrm{sm}}_{G, \zeta}(\OO)$ and this implies that they are $p$-divisible and hence 
projective objects in $\dualcat(\OO)$ are $\OO$-torsion free. Thus $\wP[\varpi]=0$ and so the hypothesis (H0) is satisfied. 
Results of Breuil \cite{breuil1} and Barthel-Livne \cite{bl} imply that any object of finite length in $\Mod^{\mathrm{sm}}_{G, \zeta}(\OO)$
is admissible, dually this means that every object of finite length in $\dualcat(\OO)$ is a finitely generated $\OO[[H]]$-module, 
where $H$ is any open compact subgroup of $G$. Thus to make the formalism work we only need to find $Q$ and be able to compute 
$\Ext$-groups. 

One has to consider four separate cases corresponding to the shape of the block $\BB$ described in \S \ref{corrisequiv}. 
In the generic cases (i) and (ii), $Q$ is the Pontryagin dual of what Colmez calls \textit{atome automorphe}, 
that is in case (i) $Q=S=\pi^{\vee}$, in case (ii) $Q=\kappa^{\vee}$ where $\kappa$ is the unique non-split extension between 
the two distinct principal series representations which lie in the block $\BB$ and $S$ is the cosocle of $Q$.   
In \S\ref{supersingularreps} and \S\ref{genericcase} we verify that the hypotheses (H1)-(H5) are satisfied. Thus by Theorem  
\ref{repnonCI} the endomorphism ring $\wE$ of a projective envelope $\wP$ of $S$ in $\dualcat(\OO)$ represents a 
deformation problem of $Q$ with non-commutative coefficients. Using the results  of Kisin \cite{kisin} we show that the functor 
$\cV:\dualcat(\OO)\rightarrow \Rep_{\gal}(\OO)$, $M\mapsto \VV(M^{\vee})^{\vee}(\zeta \varepsilon)$ induces 
a morphism of deformation functors of $Q$ and $\cV(Q)$ and a surjection $\wE^{ab}\twoheadrightarrow R^{\varepsilon \zeta}_{\cV(Q)}$, 
where  $R^{\varepsilon \zeta}_{\cV(Q)}$ is the ring representing a deformation problem of $\cV(Q)$ with commutative coefficients 
and determinant equal to $\varepsilon \zeta$. This argument uses the density of crystalline points in the deformation space and 
essentially  is the same as in \cite{kisin}, except that Kisin deforms objects in $\Mod^{\mathrm{lfin}}_{G, \zeta}(\OO)$
and we deform objects in the dual category $\dualcat(\OO)$. In the generic cases the ring $R^{\varepsilon \zeta}_{\cV(Q)}$ is 
formally smooth and thus a further $\Ext$ computation enables 
us to deduce from Theorem  \ref{critI} that $\cV$ induces an isomorphism $\wE\cong  R^{\varepsilon \zeta}_{\cV(Q)}$. In particular, 
$\wE$ is commutative and  Corollary \ref{commI} applies.

The non-generic cases are much more involved. Let $\pi=\Indu{B}{G}{\chi \otimes\chi \omega^{-1}}$. 
In case (iii),  $\BB=\{\pi\}$ and we show in 
\S\ref{nongenericcaseI} that the hypotheses (H1)-(H5) are satisfied with $Q=S=\pi^{\vee}$. 
One may further show that the dimension of $\Ext^1_{\dualcat(k)}(Q, Q)$ is $2$ and there exists a surjection $\wE\twoheadrightarrow
\OO[[x,y]]$, but the last condition in Theorem \ref{critI} fails. However, we can still compute $\wE$ using the fact that 
$\cV$ induces a morphism of deformation functors. 
 Let $R^{\mathrm{ps}, \zeta \varepsilon}_{2\chi}$ be the universal 
deformation ring parameterising $2$-di\-men\-sio\-nal pseudocharacters of $\gal$ with determinant $\zeta\varepsilon$ 
lifting $\chi+\chi$ and let $T:\gal\rightarrow  R^{\mathrm{ps}, \zeta \varepsilon}_{2\chi}$ be the universal pseudocharacter. 
We show that $\wE$ is naturally isomorphic to $R^{\mathrm{ps}, \zeta \varepsilon}_{2\chi}[[\gal]]/J$, where 
$J$ is a closed two-sided ideal generated by $g^2-T(g)g+\zeta\varepsilon(g)$ for all $g\in \gal$. This time we use in an essential way 
that we allow the coefficients in our deformation theory to be non-commutative. We then show that the absolutely irreducible modules of $\wE[1/p]$ are 
at most $2$-di\-men\-sio\-nal, thus using Lemma \ref{multdimI} and Corollaries \ref{redmultI1}, \ref{redmultI2} we obtain that 
if $\Pi$ is absolutely irreducible and $\overline{\Pi}$ contains $\pi$ then $\overline{\Pi}\subseteq \pi^{\oplus 2}$.
The idea to look for $\wE$ of this shape was inspired by \cite{boston}.      

The last case when the block contains $3$ distinct irreducible representations is the hardest one. The new feature here 
is that we need to pass to a certain quotient category for the formalism to work. This reflects that 
the deformation ring on the Galois side is not formally smooth. We invite the reader to look at the introduction to \S \ref{nongenericcaseII} 
for more details.

If $\Theta$ is an open bounded $G$-invariant lattice in an admissible unitary $L$-Banach space representation $\Pi$ of $G$
with a central character $\zeta$ then $\Theta/\varpi \Theta$ is an admissible $k$-representation of $G$ and thus contains 
an irreducible subquotient. After replacing $L$ with a finite extension we may assume that the subquotient is absolutely irreducible
and thus lies in one of the blocks considered above.

A large part of this paper is devoted to calculations of $\Ext$ groups between smooth $k$-representations
of $\GL_2(\Qp)$. These calculations enable us to apply a general formalism developed in \S \ref{firstsec} 
and \S \ref{banach}. This is the technical heart of the paper and where the restrictions on the residual characteristic appear. 
We also use in an essential way that the group is $\GL_2(\Qp)$. There are two $E_2$-spectral sequences at our disposal. One
is obtained from the work of Ollivier \cite{o2} and Vign\'eras \cite{vig} on the functor of 
invariants of the pro-$p$ Iwahori subgroup of $G$, see \S\ref{hecke}. The other is 
due to Emerton \cite{ord2} and is induced by his functor of ordinary parts, see \S \ref{ordinaryparts}.

\subsection{Organization}
The paper essentially consists of two parts: in \S \ref{zerosec}, \S\ref{firstsec} and \S \ref{banach} we develop
a theory which works for any $p$-adic analytic group $G$ provided certain conditions are satisfied; 
in the rest of the paper we show that these conditions are satisfied when $G=\GL_2(\Qp)$ and $p\ge 5$. The appendix 
contains some results on deformation theory of $2$-di\-men\-sio\-nal $\gal$-representations. 

We will now review the sections in more detail. In \S \ref{zerosec} we introduce and recall some facts 
about locally finite categories. In \S \ref{firstsec} we set up a formalism with which we do 
deformation theory with non-commutative coefficients in \S \ref{def}. In section \ref{critcomm} 
we devise a criterion with the help of which one may show that the deformation rings we obtain in \S \ref{def}
are in fact commutative. This criterion will be applied in the generic cases when  $G=\GL_2(\Qp)$, that is when the deformation
ring on the Galois side is formally smooth. In \S \ref{banach} we work out a theory of blocks for admissible
unitary Banach space representations of a $p$-adic analytic group $G$. Using the work of Schneider-Teitelbaum \cite{iw} 
(and Lazard \cite{laz}) one can forget all the functional analytic problems and
the theory works essentially the same way as if $G$ was a finite group. 
This section up to \S \ref{reldef} is independent of \S \ref{firstsec} and the results are somewhat more general 
than outlined in \S \ref{sketch}. In \S\ref{reldef}
we establish a relationship between Banach space representations and the generic fibre of a (possibly non-commutative) ring
$\wE$ representing a deformation problem of \S \ref{def}. In the applications the ring $\wE$ turns out to be a finitely generated 
module over its centre and the centre is a noetherian ring. We show in \S \ref{banach} that when these conditions are satisfied 
we obtain nice finiteness conditions on Banach space representations. Starting from \S \ref{repsGL2},  $G=\GL_2(\Qp)$ and $p\ge 5$.
The sections \ref{supersingularreps}, \ref{nonsupersingularreps}, \ref{nongenericcaseI}, \ref{nongenericcaseII} correspond 
to $\BB$ being as in  the cases (i), (ii), (iii) and (iv) of \S \ref{corrisequiv}. The argument in the generic cases 
is outlined in \S\ref{strat}.

\subsection{A speculation}
It is  known, see for example \cite{bp}, \cite{hu}, \cite{comp},  that if $G\neq \GL_2(\Qp)$ then there are too many representations
of $G$ to have a correspondence with Galois representations. One possible purely speculative scenario to remedy this, would be that 
a global setting, for example a Shimura curve, cuts out a full subcategory  $\Mod^{?}_{G}(\OO)$ of $\Mod^{\mathrm{lfin}}_G(\OO)$, 
closed under direct sums and subquotients and for this subcategory results similar to those described in \S \ref{corrisequiv} 
hold. Moreover, different global settings with the same group $G$ at $p$ would give rise to different subcategories  
$\Mod^{?}_{G}(\OO)$. For this reason 
we have taken great care in \S\ref{firstsec} and \S\ref{banach} to work with an arbitrary $p$-adic analytic group $G$ and 
arbitrary full subcategory $\Mod^{?}_{G}(\OO)$ of $\Mod^{\mathrm{lfin}}_G(\OO)$, closed under direct sums and subquotients.

\textit{Acknowledgements.} I thank Eike Lau, Michael Spie\ss\ and Thomas Zink for numerous discussions on various aspects of this paper.
I thank Ehud de Shalit and Peter Schneider  for inviting me to present some preliminary results at Minerva School on $p$-adic 
methods in Arithmetic Algebraic Geometry, in April 2009 in Jerusalem.  I have benefited from the correspondence with 
Ga\"etan Chenevier on deformation theory of Galois representations and pseudocharacters, and I thank him especially for pointing 
out \cite{joel}. I would like to thank Pierre Colmez for encouraging me to work on this problem. Parts of the paper were written 
at the Isaac Newton Institute during the programme ``Non-Abelian Fundamental Groups in Arithmetic Geometry'' and at the IHP
during the Galois semester. I am indebted to Kevin Buzzard, Toby Gee, Stefano Morra, Peter Schneider and the anonymous referees for pointing out several blunders and 
for their suggestions regarding the exposition.

\section{Notation and Preliminaries}\label{zerosec}
Let $L$ be a finite extension of $\Qp$, with the ring of integers $\OO$, uniformizer $\varpi$, and $k=\OO/\varpi\OO$. Let $G$ be a topological group which is locally pro-$p$. Later on we will 
assume that $G$ is  $p$-adic analytic and the main application will be to $G=\GL_2(\Qp)$ with $p\ge 5$.

Let $(A, \mm)$ be a complete local noetherian $\OO$-algebra  with residue field $k$. We denote 
by $\Mod_G(A)$ the category of $A[G]$-modules, $\Mod^{\mathrm{sm}}_G(A)$ the full subcategory with 
objects $V$ such that 
$$ V=\bigcup_{H, n} V^H[\mm^n],$$
where the union is taken over all open subgroups of $G$ and integers $n\ge 1$ and $V[\mm^n]$ denotes elements of $V$ killed by all elements of $\mm^n$. We will call such representations \textit{smooth}.
Let $\Mod^{\mathrm{l\, fin}}_G(A)$ be a  full subcategory of $\Mod^{\mathrm{sm}}_G(A)$
with objects smooth $G$-representation which are \textit{locally of finite length}, this means 
for every $v\in \pi$ the smallest $A[G]$-submodule of  $\pi$ containing  $v$ is of finite length.
These categories are abelian and  are closed under direct sums, direct limits  and subquotients in $\Mod_G(A)$, that is
if we have an exact sequence $0\rightarrow \pi_1\rightarrow \pi_2\rightarrow \pi_3\rightarrow 0$
in $\Mod_G(A)$ with $\pi_2$ an object of  $\Mod^{\mathrm{l\, fin}}_G(A)$ then $\pi_1$ and $\pi_3$ 
are objects of $\Mod^{\mathrm{l\, fin}}_G(A)$. It is useful to observe:

\begin{lem}\label{useful} Let $\tau$ be an object of $\Mod^{\mathrm{l\, fin}}_G(A)$ and 
$\Hom_{A[G]}(\pi, \tau)=0$ for all irreducible $\pi$ in  $\Mod^{\mathrm{l\, fin}}_G(A)$ then $\tau$ is zero.  
\end{lem}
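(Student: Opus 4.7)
The plan is to argue by contradiction: if $\tau$ is nonzero, I will produce a nonzero map from some irreducible object of $\Mod^{\mathrm{l\,fin}}_G(A)$ into $\tau$.

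First I would pick a nonzero element $v \in \tau$ and consider the $A[G]$-submodule $\tau_0 := A[G]v \subseteq \tau$. By the very definition of $\Mod^{\mathrm{l\,fin}}_G(A)$, the module $\tau_0$ is of finite length. Since $\Mod^{\mathrm{l\,fin}}_G(A)$ is closed under subquotients in $\Mod_G(A)$, the object $\tau_0$ lies in $\Mod^{\mathrm{l\,fin}}_G(A)$, and every one of its subquotients does as well.

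Next, because $\tau_0$ is nonzero and of finite length, it admits a Jordan–Hölder filtration; the bottom term of such a filtration is a nonzero irreducible subobject $\pi \hookrightarrow \tau_0 \hookrightarrow \tau$, and $\pi$ lies in $\Mod^{\mathrm{l\,fin}}_G(A)$ by the previous step. The inclusion $\pi \hookrightarrow \tau$ is then a nonzero element of $\Hom_{A[G]}(\pi, \tau)$, contradicting the hypothesis. Hence $\tau = 0$.

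The argument is essentially formal and there is no real obstacle: the only thing to check is that a nonzero locally finite module contains an irreducible submodule, which is immediate from the existence of a composition series for the finitely generated submodule $A[G]v$.
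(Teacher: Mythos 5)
Your argument is correct. The paper actually states Lemma~\ref{useful} without a proof (it is introduced merely with ``It is useful to observe''), so there is no proof in the source to compare against; the argument you give — pass to the finitely generated submodule $A[G]v$, which is of finite length by the very definition of $\Mod^{\mathrm{l\,fin}}_G(A)$, and extract a simple submodule from its composition series — is exactly the standard one the authors are implicitly invoking. The one point worth being explicit about, which you handle correctly, is that $\Mod^{\mathrm{l\,fin}}_G(A)$ is closed under subquotients in $\Mod_G(A)$, so the simple submodule of $A[G]v$ genuinely lives in the category being quantified over in the lemma. Your closing sentence also implicitly explains the paper's remark that the lemma fails in $\Mod^{\mathrm{sm}}_G(k)$: there, $A[G]v$ need not have finite length, so a nonzero object can fail to have any irreducible subobject.
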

We note that the lemma fails in $\Mod^{\mathrm{sm}}_G(k)$, for example $\cIndu{\GL_2(\Zp)}{\GL_2(\Qp)}{\Eins}$ does not contain 
any irreducible subrepresentations. In practice, we will work with a variant of the above 
categories by fixing a central character. Let $Z$ be the centre of $G$ and $\zeta: Z\rightarrow A^{\times}$ a continuous character. We will denote by 
$\Mod^{?}_{G,\zeta}(A)$ the full subcategory of $\Mod^{?}_{G}(A)$ consisting of those objects
on which $Z$ acts by a character $\zeta$. If we have a subgroup $H$ of $G$ then the subscript $\zeta$ in 
$\Mod^{?}_{H, \zeta} (A)$ will indicate that  $\Mod^{?}_{H, \zeta} (A)$ is a full 
subcategory of $\Mod^{?}_{H} (A)$  with objects precisely those $\pi$ such that  
$zv=\zeta(z)v$ for all $z\in Z\cap H$ and all $v\in \pi$.

We recall some standard facts about injective and projective envelopes, see \cite[\S II.5]{gab}.
 Let $\mathcal A$ be an abelian category. A monomorphism $\iota:N\hookrightarrow M$ is essential if for every non-zero subobject $M'$ of $M$
we have $\iota(N)\cap M'$ is non-zero. An injective envelope of  an object $M$ in $\mathcal A$ is 
an essential monomorphism $\iota: M\hookrightarrow I$ with $I$ an injective object of $\mathcal A$.  An epimorphism
$q: M\twoheadrightarrow N$ in $\mathcal A$ is essential if for every morphism $s:P\rightarrow M$ 
in $\mathcal A$ the assertion "$qs$ is an epimorphism" implies that $s$ is an epimorphism.   
A projective envelope of an object $N$ of $\mathcal A$ is an essential epimorphism $q: P\twoheadrightarrow N$
with $P$ a projective object in $\mathcal A$. 
If an injective or projective envelope exists then it is unique up to (non-unique) isomorphism. So by abuse 
of language we will forget the morphism and just say $I$ is an injective envelope of $M$ or $P$ is 
a projective envelope of $M$. 

\begin{lem}\label{genind} The categories $\Mod^{\mathrm{sm}}_G(A)$, $\Mod^{\mathrm{sm}}_{G, \zeta}(A)$, 
$\Mod^{\mathrm{lfin}}_{G}(A)$, $\Mod^{\mathrm{lfin}}_{G, \zeta}(A)$ have generators and 
exact inductive limits. 
\end{lem}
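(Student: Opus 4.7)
The plan is to handle the two assertions (generators and exact inductive limits) separately, exploiting the fact that each of the four categories is a full subcategory of $\Mod_G(A)$ cut out by conditions that can be checked on elements.

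For exactness of inductive limits, I would compute the colimit of a filtered system $\{V_i\}$ in $\Mod_G(A)$, where it agrees with the colimit of underlying $A$-modules and is therefore exact. It then suffices to check that each defining condition is preserved. If every $V_i$ is smooth, then any $v\in V:=\varinjlim V_i$ lifts to some $v_i\in V_i$; choosing $H$ open and $n$ with $v_i\in V_i^H[\mm^n]$ shows $v\in V^H[\mm^n]$, so $V$ is smooth. Central characters are preserved trivially. For local finiteness, $A[G]v$ is a quotient of $A[G]v_i$ and inherits finite length, so $V$ lies in $\Mod^{\mathrm{l\,fin}}_G(A)$.

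For the generator of $\Mod^{\mathrm{sm}}_G(A)$, the plan is to set
$$U := \bigoplus_{(H,n)} \cInd_H^G(A/\mm^n),$$
where $H$ ranges over the (set-sized) collection of open subgroups of $G$ and $A/\mm^n$ carries the trivial $H$-action. Frobenius reciprocity gives $\Hom_{A[G]}(\cInd_H^G(A/\mm^n),V)=V^H[\mm^n]$, and the smoothness condition guarantees $V=\bigcup_{H,n}V^H[\mm^n]$; hence any nonzero morphism $V\to V'$ remains nonzero after precomposition with some map out of a summand of $U$, which is the universal property of a generator. For $\Mod^{\mathrm{sm}}_{G,\zeta}(A)$ I would replace each summand by $\cInd_{ZH}^G(A/\mm^n)$ where $A/\mm^n$ is twisted so that $Z$ acts through $\zeta\bmod\mm^n$ (which factors once $n$ is large enough), and apply Frobenius reciprocity as before.

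The locally finite cases require a modification because $\cInd_H^G(A/\mm^n)$ is almost never locally of finite length. The plan here is to observe that every object $V$ of $\Mod^{\mathrm{l\,fin}}_G(A)$ is the filtered union of the cyclic submodules $A[G]v$, each of finite length; such finite-length objects are finitely generated smooth modules and so are quotients of finite direct sums of summands of the $U$ above, which shows that the collection of isomorphism classes of finite-length objects in $\Mod^{\mathrm{l\,fin}}_G(A)$ forms a \emph{set}. Taking $U'$ to be the direct sum of one representative from each isomorphism class furnishes a generator, by Lemma \ref{useful} applied to the quotient $V/V'$ of any proper inclusion $V'\subsetneq V$. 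The same device, restricted to objects with central character $\zeta$, handles $\Mod^{\mathrm{l\,fin}}_{G,\zeta}(A)$. The main point requiring care is this set-theoretic step for the locally finite categories; once one knows that finite-length objects form a set the rest is formal, and this boundedness ultimately comes from the existence of a small generating family in the ambient category $\Mod^{\mathrm{sm}}_G(A)$.
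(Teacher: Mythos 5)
Your proposal follows essentially the same route as the paper: the paper takes $X=\bigoplus_{\mathcal P,n}\cIndu{\mathcal P}{G}{A/\mm^n}$ (sum over open pro-$p$ subgroups and $n\ge 1$), uses Frobenius reciprocity $\Hom_{A[G]}(\cIndu{\mathcal P}{G}{A/\mm^n},V)\cong V^{\mathcal P}[\mm^n]$ together with smoothness, twists by $\zeta_n$ on $Z\mathcal P$ for the central-character case, takes for the locally finite categories the \emph{set} of finite-length quotients of $X$ (resp. $X_\zeta$) as generators, and quotes Gabriel for exactness of inductive limits; your direct verification of exactness and your generator $U$ are the same argument in only slightly different clothing.

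Two local points need repair, though neither changes the structure. First, in the twisted summands the parenthetical ``which factors once $n$ is large enough'' has the dependence backwards: for fixed $n$ the condition is that $\zeta\bmod\mm^n$ be trivial on $Z\cap H$, and this is achieved by \emph{shrinking} $H$ (continuity of $\zeta$), not by enlarging $n$; so one should sum only over the compatible pairs $(H,n)$, exactly as the paper does, and smoothness still supplies enough such pairs for any given vector. Second, in the locally finite case the appeal to Lemma \ref{useful} does not do what you need: it produces a nonzero map from an irreducible into $V/V'$, but such a map need not lift to $V$, so it does not by itself yield a morphism $U'\to V$ whose image escapes $V'$. The correct argument is already contained in your preceding sentence: given $V'\subsetneq V$, pick $v\in V\setminus V'$; then $A[G]v$ has finite length, hence is isomorphic to one of your chosen representatives, and the resulting map $U'\to A[G]v\hookrightarrow V$ has image not contained in $V'$. (The paper's variant avoids the set-theoretic discussion altogether by taking the generators to be the finite-length quotients of the fixed object $X$, which automatically form a set.)
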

\begin{proof} Let 
$X:=\bigoplus_{\mathcal P, n} \cIndu{\mathcal P}{G}{A/\mm^n}$, 
where the sum is taken over all open pro-$p$ groups of $G$ and positive integers $n$ then
for $V$ in $\Mod^{\mathrm{sm}}_G(A)$ we have 
$$\Hom_{A[G]}(\cIndu{\mathcal P}{G}{A/\mm^n}, V)\cong V^{\mathcal P}[\mm^n]$$ 
Hence, 
$\Hom_{A[G]}(X, V)\cong \prod_{\mathcal{P}, n} V^{\mathcal P}[\mm^n].$ Since $V$ is a smooth representation the above isomorphism implies that $X$ 
is a generator for  $\Mod^{\mathrm{sm}}_G(A)$. 

Let $\zeta: Z\rightarrow A^{\times}$ be a continuous character and let 
$\zeta_n: Z\rightarrow (A/\varpi^n A)^{\times}$ be the reduction of $\zeta$ modulo $\mm^n$. 
Since $\zeta$ is continuous given an open pro-$p$ group $\mathcal P$ of $G$ we may find 
an open subgroup $\mathcal P'$ of $\mathcal P$ such that $\zeta_n$ is trivial on $\mathcal P'\cap Z$. 
In this case it makes sense to consider $\zeta_n$ as a character of $Z \mathcal P'$. Let 
$X_{\zeta}:=\bigoplus_{\mathcal P, n} \cIndu{Z \mathcal P}{G}{\zeta_n}$ where the sum is
taken over all $n\ge 1$ and all open pro-$p$ groups $\mathcal P$ of $G$ such that $\zeta_n$ is trivial on
$\mathcal P \cap Z$. Then the same argument as above gives that $X_{\zeta}$  is a generator in 
$\Mod^{\mathrm{sm}}_{G, \zeta}(A)$.

Let $\FF$ (resp. $\FF_{\zeta}$) be the set of quotients of $X$  (resp. $X_{\zeta}$) of finite length. 
Then  $\FF$ (resp. $\FF_{\zeta}$) is a set of generators in $\Mod^{\mathrm{l\, fin}}_{G}(A)$, 
(resp. $\Mod^{\mathrm{l\, fin}}_{G, \zeta}(A)$). 

It is clear that all the categories have inductive limits. 
The exactness of inductive limits follows from \cite{gab} Proposition I.6 (b). 
\end{proof}

\begin{cor}\label{enoughinj}The categories in Lemma \ref{genind} have injective envelopes.
\end{cor}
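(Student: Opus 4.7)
The plan is to quote Gabriel's fundamental theorem on the existence of injective envelopes in Grothendieck categories. Recall that this theorem asserts: any abelian category $\mathcal A$ which admits a set of generators and in which filtered inductive limits are exact (axiom AB5) has enough injectives, and every object of $\mathcal A$ possesses an injective envelope; this is the main result of \cite[Ch.~II, \S5]{gab}.

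Lemma \ref{genind} provides exactly the hypotheses needed for each of the four categories $\Mod^{\mathrm{sm}}_G(A)$, $\Mod^{\mathrm{sm}}_{G, \zeta}(A)$, $\Mod^{\mathrm{l\, fin}}_{G}(A)$ and $\Mod^{\mathrm{l\, fin}}_{G, \zeta}(A)$: it exhibits explicit generators ($X$ and $X_\zeta$ in the smooth case, and the finite-length quotient families $\FF$, $\FF_\zeta$ in the locally finite case) and it records exactness of filtered inductive limits via \cite[Prop.~I.6(b)]{gab}. Hence the corollary is immediate: fix an object $M$ of one of the four categories, apply the theorem, and obtain an essential monomorphism $M\hookrightarrow I$ with $I$ injective in the ambient category.

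The only minor point requiring attention concerns the locally finite categories, where Lemma \ref{genind} supplies a \emph{set} of generators rather than a single generator (the direct sum of all objects in $\FF$ would a priori fail to be locally of finite length and therefore need not lie in $\Mod^{\mathrm{l\, fin}}_G(A)$). This is harmless: Gabriel's theorem is formulated for categories admitting a set of generators, not just a single one, and applies directly. Alternatively one may first embed $M$ into an injective object $J$ of $\Mod^{\mathrm{sm}}_G(A)$ and then take the largest subobject of $J$ which still lies in $\Mod^{\mathrm{l\,fin}}_G(A)$; since $\Mod^{\mathrm{l\,fin}}_G(A)$ is closed under subquotients and essential extensions inside $\Mod^{\mathrm{sm}}_G(A)$, this subobject is the desired injective envelope. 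I anticipate no substantive obstacle; the entire content of the proof has effectively been packaged into the preceding lemma, and this corollary is simply the formal harvest.
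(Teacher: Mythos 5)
Your proof is correct and matches the paper's own argument: the paper likewise invokes Gabriel's theorem (there cited as Theorem~2 of \cite[\S II.6]{gab}) together with Lemma~\ref{genind}, which exhibits a set of generators and exactness of inductive limits for each of the four categories. The side remark about the alternative embedding argument for the locally finite case is a nice addition but unnecessary, since Gabriel's theorem indeed requires only a \emph{set} of generators, not a single generating object.
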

\begin{proof} Every object in a category with generators and exact inductive limits 
has an injective envelope, see Theorem 2 in \cite[\S II.6]{gab}.
\end{proof}

\begin{lem} The categories  $\Mod^{\mathrm{lfin}}_G(A)$ and $\Mod^{\mathrm{lfin}}_{G, \zeta}(A)$
are  locally finite. 
\end{lem}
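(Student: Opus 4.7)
The plan is to verify Gabriel's criterion for local finiteness, namely that the category admits a set of generators consisting of objects of finite length, or equivalently that every object is the directed union of its subobjects of finite length. I will focus on $\Mod^{\mathrm{l\,fin}}_G(A)$; the argument for $\Mod^{\mathrm{l\,fin}}_{G,\zeta}(A)$ is identical once one works with the generator $X_\zeta$ in place of $X$.

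First I would observe that the "directed union" property is essentially built into the definition. If $\pi$ is an object of $\Mod^{\mathrm{l\,fin}}_G(A)$, then $\pi = \bigcup_{v \in \pi} A[G]v$, and by hypothesis each $A[G]v$ is a subobject of finite length. The partially ordered set of finite-length subobjects of $\pi$ is directed (a finite sum of finite-length subobjects is again of finite length, since it is a quotient of their direct sum), so $\pi$ is the filtered colimit of its finite-length subobjects inside $\Mod^{\mathrm{l\,fin}}_G(A)$.

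Next I would exhibit a set of generators of finite length. The previous lemma already provided the generator $X = \bigoplus_{\mathcal P, n} \cIndu{\mathcal P}{G}{A/\mm^n}$ of $\Mod^{\mathrm{sm}}_G(A)$, together with the set $\FF$ of its finite-length quotients. I claim $\FF$ is a set of generators of $\Mod^{\mathrm{l\,fin}}_G(A)$. Indeed, given a nonzero $\pi$ in this category, choose $0 \neq v \in \pi$ and an open pro-$p$ subgroup $\mathcal P$ and $n \geq 1$ with $v \in \pi^{\mathcal P}[\mm^n]$. Then $v$ corresponds to a nonzero map $\cIndu{\mathcal P}{G}{A/\mm^n} \to \pi$ whose image $A[G]v$ is of finite length by assumption; this map therefore factors through a finite-length quotient of $\cIndu{\mathcal P}{G}{A/\mm^n}$, which lies in $\FF$. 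Hence $\FF$ generates.

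The one point that could conceivably be an obstacle is whether Gabriel's precise definition of "locally finite" (as used in the sequel) is the one I have just verified or instead something slightly stronger — for instance, a requirement on $\Hom$ groups, or on decompositions of injectives. If the stronger form is intended, the extra ingredient needed is that each object of finite length has finite-dimensional $\Hom$s from the generators, which in this setting amounts to the standard admissibility-type statement that $\Hom_{A[G]}(\cIndu{\mathcal P}{G}{A/\mm^n}, \pi) = \pi^{\mathcal P}[\mm^n]$ is a finitely generated $A$-module whenever $\pi$ has finite length (and is annihilated by some power of $\mm$). That reduces to the fact that a representation of finite length in $\Mod^{\mathrm{sm}}_G(A)$ is a finite successive extension of irreducible smooth $G$-representations over $k$, each of which has finite-dimensional $\mathcal P$-invariants because $\mathcal P$ is pro-$p$ and has some open subgroup acting trivially on the irreducible representation. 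Combining the directed-union property with the existence of a generating set of finite length (and, if required, the finiteness of $\Hom$s just discussed) yields local finiteness.
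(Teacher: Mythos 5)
Your proof is correct and follows essentially the same route as the paper: the paper also invokes the finite-length quotients $\FF$ (resp.\ $\FF_{\zeta}$) of the generator $X$ (resp.\ $X_{\zeta}$) constructed in the preceding lemma as a generating set of finite-length objects, and then appeals to Gabriel \S II.4, where ``locally finite'' means exactly what you verified (a Grothendieck category in which every object is the filtered union of its finite-length subobjects), so your cautionary last paragraph about a possible Hom-finiteness requirement is not needed.
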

\begin{proof} Both categories have a set of generators which are of finite length, namely 
$\FF$ and $\FF_{\zeta}$ constructed in the proof of Lemma \ref{genind}. Hence they are locally finite,
see \S II.4 in \cite{gab} for details. 
\end{proof} 

An object $V$ of $\Mod^{\mathrm{sm}}_G(A)$ is called \textit{admissible} if $V^H[\mm^i]$ is a finitely generated  
$A$-module for every open subgroup $H$ of $G$ and every $i\ge 1$; $V$ is called \textit{locally admissible} if 
for every $v\in V$ the smallest $A[G]$-submodule of $V$ containing $v$ is admissible. Let $\Mod^{\mathrm{l\, adm}}_G(A)$ be a full subcategory of $\Mod^{\mathrm{sm}}_G(A)$ 
consisting of locally admissible representations. 
Emerton in \cite{ord1} shows that if $G$ is $p$-adic analytic then 
$\Mod^{\mathrm{l\, adm}}_G(A)$ is abelian. Moreover, it follows from \cite[Thm 2.3.8]{ord1} 
that if $G=\GL_2(\Qp)$ or $G$ is a torus then $\Mod^{\mathrm{lfin}}_{G,\zeta}(A)=\Mod^{\mathrm{l\, adm}}_{G, \zeta}(A)$.
If the conjecture \cite[ 2.3.7]{ord1} holds then we would obtain this result in general.

Let $H$ be a compact open subgroup of $G$ and $A[[H]]$ the completed group algebra of $H$. Let 
$\Mod^{\mathrm{pro \, aug}}_G(A)$ be  the category of profinite linearly topological 
$A[[H]]$-modules with an action 
of $A[G]$ such that the two actions are the same when restricted to $A[H]$ with morphisms  
$G$-equivariant continuous homomorphisms of topological $A[[H]]$-modules. Since any two compact open 
subgroups of $G$ are commensurable the definition does not depend on the choice of $H$. Taking  Pontryagin duals induces an anti-equivalence of categories between $\Mod^{\mathrm{sm}}_G(A)$
and $\Mod^{\mathrm{pro \, aug}}_G(A)$, see Lemma 2.2.7 in \cite{ord1}. By Pontryagin dual we mean 
$$M^{\vee}:=\Hom^{cont}_{\OO}(M, L/\OO),$$
where $L/\OO$ carries discrete topology and $M^{\vee}$ is equipped with compact open topology. 
We have a canonical isomorphism $M^{\vee \vee}\cong M$. 
  
We note that the duality reverses the arrows, and so if $\Mod^?_{G}(A)$ is a full abelian subcategory 
of $\Mod^{\mathrm{sm}}_G(A)$  then we may define a full subcategory $\dualcat(A)$ of 
$\Mod^{\mathrm{pro \, aug}}_G(A)$ by taking the objects to be all $M$ isomorphic 
to $\pi^{\vee}$ for some object $\pi$ of $\Mod^?_{G}(A)$. The category $\dualcat(A)$ is abelian and 
if $\Mod^?_{G}(A)$ has exact inductive limits and injective envelopes then 
 $\dualcat(A)$ has exact projective limits and projective envelopes.  

Let $\Mod^?_{G}(A)$ be a full subcategory of $\Mod^{\mathrm{l\, fin}}_G(A)$ closed under arbitrary direct sums and 
subquotients in $\Mod^{\mathrm{l\, fin}}_G(A)$. Since $\Mod^{\mathrm{l\, fin}}_G(A)$ has exact inductive limits so does 
$\Mod^?_{G}(A)$. Moreover, $\Mod^?_{G}(A)$ has a set of generators of finite length, one may just take a subset of 
$\FF$ constructed in the proof of Lemma \ref{genind} consisting of objects that lie in $\Mod^?_{G}(A)$. Hence, 
$\Mod^?_{G}(A)$ is locally finite and has injective envelopes. We may define a full subcategory $\dualcat(A)$ of 
$\Mod^{\mathrm{pro \, aug}}_G(A)$ by taking the objects to be all $M$ isomorphic 
to $\pi^{\vee}$ for some object $\pi$ of $\Mod^?_{G}(A)$. The category $\dualcat(A)$ is anti-equivalent 
to $\Mod^?_{G}(A)$. In particular, it is abelian, has exact projective limits and projective envelopes. 

Let $\pi_1, \ldots, \pi_n$ be  irreducible, pairwise non-isomorphic objects in $\Mod^?_{G}(A)$ and let $\iota: \pi_i\hookrightarrow J_i$ 
be an injective envelope
of $\pi_i$ in $\Mod^?_{G}(A)$. Let $S_i:=\pi_i^{\vee}$, $P_i:=J_i^{\vee}$ and $\kappa:= \iota^{\vee}$ then $\kappa: P_i\twoheadrightarrow S_i$ 
is a projective envelope of $S_i$ in $\dualcat(A)$. We put $\pi:=\oplus_{i=1}^n \pi_i$ then $J:=\oplus_{i=1}^n J_i$ is an injective envelope
of $\pi$ and $P:=J^{\vee}$ is a projective envelope of $S:=\pi^{\vee}\cong \oplus_{i=1}^n S_i$ in $\dualcat(A)$.
Let 
$$ E:=\End_{\dualcat(A)}(P).$$
Each quotient $q:P\twoheadrightarrow M$  defines a right ideal of $E$:
\begin{equation}\label{defrM}
\mathfrak r(M):=\{\phi\in E: q\circ \phi=0\}.
\end{equation}  
We define the natural topology on $E$ by taking $\mathfrak r (M)$ with $M$ of finite length to be a basis of open neighbourhoods 
of $0$ in $E$.  With respect to the natural topology $E$ is a pseudo-compact ring, see Proposition 13 in \cite[\S IV.4]{gab}.
Moreover, $\mm:= \mathfrak r(S)$ is the Jacobson radical of $E$ and 
\begin{equation}\label{goodpoint}
E/\mm\cong \End_{\dualcat(A)}(S)\cong \prod_{i=1}^n \End_{\dualcat(A)}(S_i),
\end{equation}
see Proposition 12 in \cite[\S IV.4]{gab} for the first isomorphism; the second holds since $\pi_i$ are irreducible and distinct. 
Since $\pi_i$ is  irreducible  $\End_{\dualcat(A)}(S_i)$
is a skew field over $k$. We assume for simplicity that $\End_G(\pi_i)$ is finite dimensional for $1\le i \le n$.
This holds if $\pi_i$ are admissible. Since $k$ is a finite field, $k_i:=\End_{\dualcat(A)}(S_i)$ is a finite field extension of $k$. 
Hence, $E/\mm$ is a finite dimensional $k$-vector space  and, since 
$k$ is assumed to be finite, $E$ is a compact ring. Thus all the pseudo-compact modules of $E$ will be in fact compact.

\begin{cor}\label{unitsE} If $S$ is irreducible then every $\alpha\in E$,  $\alpha \not\in \mm$ is a unit in $E$.
\end{cor}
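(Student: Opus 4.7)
The approach is to exploit two facts stated just before the corollary: first, $\mm = \mathfrak{r}(S)$ is the Jacobson radical of $E$; second, when $S$ is irreducible, $E/\mm \cong \End_{\dualcat(A)}(S)$ is a (finite) skew field rather than merely a finite product of skew fields. Together these reduce the statement to a completely formal argument, of the kind familiar from the theory of local (possibly non-commutative) rings.

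Concretely, the plan is as follows. Given $\alpha \in E$ with $\alpha \notin \mm$, the image $\bar\alpha$ of $\alpha$ in $E/\mm$ is nonzero, hence a unit in the skew field $E/\mm$. Lift the inverse of $\bar\alpha$ to some $\beta \in E$. Then
\[
\alpha\beta = 1 + x, \qquad \beta\alpha = 1 + y, \qquad x, y \in \mm.
\]
It remains to verify that elements of the form $1 + z$ with $z \in \mm$ are units in $E$. This is a standard consequence of $\mm$ being the Jacobson radical: by the very definition of the Jacobson radical in a ring, $1 + z$ must be a unit for every $z$ in it. Alternatively, one can argue topologically, using that $E$ is a pseudo-compact ring whose natural topology has a basis of neighbourhoods of $0$ given by powers of $\mm$ (as follows from Proposition 13 in \cite[\S IV.4]{gab}, combined with the fact that $E/\mm$ is a finite-dimensional $k$-vector space so $\mm$ is finitely generated and hence the natural and $\mm$-adic topologies coincide); then the geometric series $\sum_{n\ge 0}(-z)^n$ converges in $E$ to a two-sided inverse of $1+z$.

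Once this is known, $\alpha\beta$ is a unit, whence $\alpha$ has a right inverse $\beta(1+x)^{-1}$; and $\beta\alpha$ is a unit, whence $\alpha$ has a left inverse $(1+y)^{-1}\beta$. An element with both a left and right inverse is a unit, completing the argument.

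The proof is essentially formal; the only mildly delicate point is the assertion that $1 + z$ is a unit for $z \in \mm$, which in our non-commutative pseudo-compact setting is ultimately a consequence of the quoted results of Gabriel rather than of any concrete calculation. Apart from this, the argument rests entirely on Schur's lemma for $S$ and the identification of $\mm$ with the Jacobson radical that has already been established.
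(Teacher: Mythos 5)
Your proof is correct, but it takes a genuinely different route from the paper. Both arguments rest on the same base fact, that every element of $1+\mm$ is a unit; the paper deduces this from the convergence of the geometric series $\sum_{i\ge 0}x^i$ for $x\in\mm$, citing the proof of Proposition 12 in \cite[\S IV.4]{gab}, which is the same input that underlies your Jacobson-radical argument (the identification $\mm=\rad E$ having already been recorded). Where you diverge is in the reduction: you lift the inverse of $\bar\alpha$ from the residue division ring $E/\mm\cong\End_{\dualcat(A)}(S)$ and conclude by the left-inverse/right-inverse trick, a completely general local-ring argument that only uses that $E/\mm$ is a division ring. The paper instead exploits the finiteness of $\End_{\dualcat(A)}(S)$ over $k$: it produces an equation $\alpha^n+a_{n-1}\alpha^{n-1}+\cdots+a_0+\beta=0$ with $a_i\in\OO$, $a_0\in\OO^{\times}$, $\beta\in\mm$, and then exhibits the inverse of $\alpha$ explicitly as $(\alpha^{n-1}+\cdots+a_1)(-a_0-\beta)^{-1}$, having first noted that $\OO^{\times}+\mm$ consists of units. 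Your version is slightly more general and arguably cleaner, since it never uses that the residue skew field is finite over $k$; the paper's version gives a concrete formula for the inverse in terms of a lifted minimal polynomial.

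One caveat on your \emph{alternative} justification of the invertibility of $1+z$: the claim that finiteness of $E/\mm$ forces $\mm$ to be finitely generated is not valid, and the coincidence of the natural and $\mm$-adic topologies on $E$ is not available at this point of the paper — it is precisely part of what is proved later (Proposition \ref{filtdone} and its corollaries) under the hypotheses (H1)--(H5). The convergence of $\sum_{n\ge 0}(-z)^n$ should instead be taken, as in the paper, directly from the proof of Proposition 12 in \cite[\S IV.4]{gab}. Since your primary argument via the Jacobson radical does not need this, the proof as a whole is unaffected.
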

\begin{proof} Since $S$ is irreducible, it follows from \eqref{goodpoint} that $\mm$ is maximal. 
On the other hand, $\mm$  is also the Jacobson radical of $E$ by Proposition 12 in \cite[\S IV.4]{gab}.
Hence, $E$ is a local ring.
\end{proof} 

\begin{cor}\label{Zloc} If $S$ is irreducible then the centre $\mathcal Z$ of $E$ is a local ring with residue field a finite extension of $k$.
\end{cor}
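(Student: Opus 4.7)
The plan is to exhibit the unique maximal ideal of $\mathcal Z$ as $\mathfrak m_{\mathcal Z}:=\mathcal Z\cap \mm$ and then identify its residue field with a subfield of $E/\mm$.

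First I would check that $\mathfrak m_{\mathcal Z}$ is an ideal of $\mathcal Z$: this is immediate because $\mm$ is a two-sided ideal of $E$ and $\mathcal Z\subseteq E$ is a subring. Next I would verify that every $z\in \mathcal Z$ outside $\mathfrak m_{\mathcal Z}$ is a unit in $\mathcal Z$. Since $z\notin \mm$, Corollary \ref{unitsE} gives a two-sided inverse $z^{-1}\in E$. To see that this inverse actually lies in $\mathcal Z$, for any $y\in E$ the centrality of $z$ yields $zy=yz$, whence multiplying both sides by $z^{-1}$ on the left and right gives $yz^{-1}=z^{-1}y$. Thus $z^{-1}\in\mathcal Z$, so $z$ is a unit in $\mathcal Z$. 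It follows that $\mathfrak m_{\mathcal Z}$ consists of all non-units of $\mathcal Z$, and in any commutative ring this forces $\mathcal Z$ to be local with maximal ideal $\mathfrak m_{\mathcal Z}$.

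It remains to identify the residue field. The inclusion $\mathcal Z\hookrightarrow E$ composed with the projection $E\twoheadrightarrow E/\mm$ has kernel exactly $\mathfrak m_{\mathcal Z}$, so it induces an injective ring homomorphism
$$\mathcal Z/\mathfrak m_{\mathcal Z}\hookrightarrow E/\mm\cong \End_{\dualcat(A)}(S).$$
Since $S$ is irreducible, $\End_{\dualcat(A)}(S)$ is a skew field over $k$, and by the assumption that $\End_G(\pi)$ is finite-dimensional this skew field is a finite (commutative) extension of $k$ in our setting. Because $\mathcal Z/\mathfrak m_{\mathcal Z}$ is a field (being the quotient of a commutative local ring by its maximal ideal), its image is a subfield of a finite extension of $k$, hence itself a finite extension of $k$.

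The only subtle point is the passage from ``central element not in $\mm$'' to ``central unit'', which just requires the observation that the two-sided inverse supplied by Corollary \ref{unitsE} automatically commutes with everything that $z$ commutes with. There is no real obstacle here; the argument is the standard one showing that the centre of a (non-commutative) local ring is local.
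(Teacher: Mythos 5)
Your proof is correct and takes essentially the same route as the paper: show that any central element outside $\mm$ has its (two-sided) inverse from Corollary \ref{unitsE} again central, so $(\mathcal Z,\mathcal Z\cap\mm)$ is local, and then embed the residue field into $E/\mm\cong\End_{\dualcat(A)}(S)$, which is a finite extension of $k$. The only point worth tightening is your last step, ``a subfield of a finite extension of $k$ is itself a finite extension of $k$'': this needs the (easy) observation that the image of $\OO$ is central, so $k=\OO/\varpi\OO$ injects into $\mathcal Z/(\mathcal Z\cap\mm)$ — which is exactly the remark the paper makes at the end of its proof.
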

\begin{proof} Let $\mm$ be the maximal ideal of $E$. Let 
$a\in \mathcal Z$ such that  $a\not\in \mm$ . It follows from Lemma \ref{unitsE} that $a$ is a unit in $E$. However, 
this implies that $a$ is a unit in $\mathcal Z$ as for any $c\in E$ we have 
$$ a^{-1}c-ca^{-1}= (a^{-1}c -c a^{-1}) a a^{-1}= ( c - c ) a^{-1}= 0,$$
as $a$ lies in the centre of $E$. Hence $(\mathcal Z, \mathcal Z \cap \mm)$ is a local ring. 
The last assertion follows since $\End_{\dualcat(A)}(S)$ is a finite extension of $k$
and we have injections $\OO/\varpi \OO\hookrightarrow \mathcal Z/(\mathcal Z \cap \mm)\hookrightarrow E/\mm$.
\end{proof}

\begin{lem} $P$ is a left pseudo-compact $E$-module. 
\end{lem}
\begin{proof} We will show that there exists a basis of open neighbourhoods of $0$ in $P$ consisting of 
left $E$-submodules, such that the quotient is an $E$-module of finite length. Now $P$ is a pseudo-compact $A$-module, since it is 
a Pontryagin dual of a discrete $A$-module, thus it is enough to show that every open $A$-submodule $M$ of $P$
contains an open left $E$-submodule.  Since $M$ is open, the quotient  $P/M$ is an $A$-module of finite
length, and hence 
$$\tau:= A[G]\centerdot (P/M)^{\vee}\subset P^{\vee}$$ 
is a smooth representation of $G$ of finite length. Dualizing back, we obtain a factorisation 
$P\twoheadrightarrow \tau^{\vee}\twoheadrightarrow P/M$. Then $\mathfrak r(\tau^{\vee})$ is an open right ideal in $E$ 
for the natural topology. 

Since $E$ with  the natural topology is a pseudo-compact ring, $E/\mathfrak r(\tau^{\vee})$ is a right $E$-module 
of finite length. Since $E$ modulo its Jacobson radical is a finite dimensional $k$-vector space by assumption, we may choose
$\phi_1, \ldots, \phi_m\in E$ such that $\phi_1+ \mathfrak r(\tau^{\vee}), \ldots,  \phi_m+ \mathfrak r(\tau^{\vee})$ 
generate $E/\mathfrak r(\tau^{\vee})$ as an $A$-module. We may assume that $\phi_1$ is the identity map. 
We claim that 
\begin{equation}\label{showopen}
\{v\in P: \phi(v)\in M, \forall \phi\in E\}=\bigcap_{i=1}^m \phi_i^{-1}(M).
\end{equation}
The left hand side of \eqref{showopen} is equal to $\bigcap_{\phi\in E} \phi^{-1}(M)$ and so is contained in the 
right hand side. Since $P\rightarrow P/M$ factors through $q: P\rightarrow \tau^{\vee}$ the kernel of $q$ is 
contained in $M$. Hence for all $\psi\in \mathfrak r(\tau^{\vee})$ and all $v\in P$ we have $\psi(v)\in M$. 
Since $M$ is an $A$-module and every $\phi\in E$ may be written as $\phi=\sum_{i=1}^m \lambda_i \phi_i + \psi$, where
$\lambda_i\in A$ and $\psi\in \mathfrak r(\tau^{\vee})$, we get the opposite inclusion. 

The right hand side of \eqref{showopen} is an open $A$-submodule of $M$ and the left hand side is a left $E$-module.
Hence, $P$ is a pseudo-compact $E$-module.
\end{proof}    

Let $\md$ be a right pseudo-compact $E$-module, for definition and properties see \S IV.3 of \cite{gab}.
Let $\{\md_i\}_{i\in I}$ be a basis of open neighbourhoods of $0$ in $\md$ consisting 
of right $E$-modules and let $\{P_j\}_{j\in J}$ be a basis of open neighbourhoods of $0$ in $P$ consisting 
of left $E$-modules. We define the completed tensor product
\begin{equation}\label{mwt}
 \md\wtimes_E P:= \underset{\longleftarrow}{\lim}\, (\md/\md_i)\otimes_E (P/P_j),
\end{equation}
where the limit is taken over $I\times J$. Since $\md/\md_i$ and $P/P_j$ are $E$-modules of finite length and 
$E$ modulo its Jacobson radical is a finite dimensional $k$-vector space, $\md/\md_i$ and $P/P_j$ are $A$-modules of finite length and
 hence the limit exists in the category of pseudo-compact $A$-modules. By the universality of tensor product we have a natural 
map $\md\otimes_E P\rightarrow \md\wtimes_E P$, and we denote the image of $m\otimes v$ by $m\wtimes v$.

\begin{lem} $\md\wtimes_E P$ is an object of $\dualcat(A)$. 
\end{lem}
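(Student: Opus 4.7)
The plan is to exhibit $\md\wtimes_E P$ as a cofiltered projective limit of quotients of finite direct sums of $P$, and then to invoke the closure properties of $\dualcat(A)$. Recall that since $\dualcat(A)$ is anti-equivalent via Pontryagin duality to $\Mod^?_G(A)$, and the latter is closed under arbitrary direct sums and subquotients in $\Mod^{\mathrm{l\,fin}}_G(A)$, the category $\dualcat(A)$ is closed in $\Mod^{\mathrm{pro\,aug}}_G(A)$ under direct products, subobjects and quotients, and admits all cofiltered projective limits.

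First I would interchange the double inverse limit to write
$$\md\wtimes_E P \cong \underset{\longleftarrow}{\lim}\,(\md/\md_i)\wtimes_E P,$$
where the limit is taken over $i$. This is legitimate because the transition maps in the doubly-indexed system are surjective and each $(\md/\md_i)\otimes_E (P/P_j)$ is an $\OO$-module of finite length, so Mittag--Leffler is trivial and limits over $j$ can be pulled out.

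Next, I would treat the finite-length case. Fix $i$ and set $N:=\md/\md_i$, a pseudo-compact right $E$-module of finite length. Because $E/\rad E$ is a finite-dimensional $k$-vector space, $N$ is finitely generated and admits a continuous presentation $E^{\oplus s}\twoheadrightarrow N$ by a finitely generated free right $E$-module. Applying $-\wtimes_E P$, and using $E\wtimes_E P\cong P$ together with compatibility of $\wtimes_E$ with finite direct sums in the first variable, I obtain a continuous surjection $P^{\oplus s}\twoheadrightarrow N\wtimes_E P$ in $\Mod^{\mathrm{pro\,aug}}_G(A)$. Since $P\in\dualcat(A)$, so is $P^{\oplus s}$, and hence so is the quotient $N\wtimes_E P$ by the closure properties recalled above.

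Finally, $\md\wtimes_E P$ is a cofiltered projective limit of the objects $(\md/\md_i)\wtimes_E P\in\dualcat(A)$ with surjective transition maps, and so lies in $\dualcat(A)$. The main obstacle is the second step: one must verify that $-\wtimes_E P$ really carries the free presentation of the finite-length module $N$ to a continuous surjection equipped with the correct compact topology and $A[[H]]$-module structure, compatibly with the $G$-action on $P$. This comes down to the right exactness of the completed tensor product applied to a surjective (hence Mittag--Leffler) inverse system, together with the fact, established earlier, that the $E$-action on $P$ is by continuous $A[[H]]$-linear maps.
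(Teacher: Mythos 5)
Your proof is correct but takes a genuinely different route from the paper's. The paper's argument is shorter: it uses that the completed tensor product $\wtimes_E P$ is right exact and commutes with arbitrary direct products, so $\bigl(\prod_{i\in I} E\bigr)\wtimes_E P\cong \prod_{i\in I} P\in\dualcat(A)$; since any pseudo-compact $E$-module $\md$ has a presentation $\prod_{i\in I}E\rightarrow\prod_{j\in J}E\rightarrow\md\rightarrow 0$ by topologically free modules, $\md\wtimes_E P$ is then a cokernel of a map between objects of $\dualcat(A)$, hence lies in $\dualcat(A)$. You instead reduce first to the finite-length quotients $\md/\md_i$ by splitting the double limit defining $\wtimes_E$, and then present each $\md/\md_i$ by a \emph{finite} free module $E^{\oplus s}$; this avoids the paper's appeal to compatibility of $\wtimes_E$ with infinite products, at the cost of using closure of $\dualcat(A)$ under cofiltered projective limits (which the paper does establish in \S2, since $\dualcat(A)$ is anti-equivalent to a Grothendieck-type category with exact inductive limits). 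Both routes need right exactness of $\wtimes_E P$ at the decisive step, as you flag. One small inaccuracy: in justifying the interchange $\underset{\longleftarrow}{\lim}_{I\times J}=\underset{\longleftarrow}{\lim}_{I}\underset{\longleftarrow}{\lim}_{J}$ you invoke Mittag--Leffler, but Mittag--Leffler is about exactness of $\underset{\longleftarrow}{\lim}$, not about iterating it; the interchange is the categorical fact that limits commute with limits and holds unconditionally, so your conclusion is fine but the reason given is the wrong one.
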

\begin{proof} It follows directly from \eqref{mwt} that $\md\wtimes_E P$ is  a pseudo-compact $A[[H]]$-module. Since $G$ acts on $P$ by continuous $E$-linear homomorphisms, it 
follows from the universal property of the completed tensor product, see \cite[\S2]{bru}, that for
each pseudo-compact right $E$-module $\md$ we obtain an action of $G$ on $\md\wtimes_E P$ by continuous, $A$-linear  homomorphisms. Moreover, the action of 
$A[G]$ and $A[[H]]$ induce the same action of $A[H]$. Hence, $\md\wtimes_E P$ is an object of $\Mod^{\mathrm{pro\,aug}}_G(A)$.

If $\md=E$ then $E\wtimes_E P\cong P$ and so $\md\wtimes_E P$ is an object of $\dualcat(A)$.
The functor $\wtimes_E P$ is right exact and commutes with direct products, see  
\cite[Exp $VII_B$]{SGA3}, \cite[Lem.\ A.4]{bru}. Hence, if $\md\cong \prod_{i\in I} E$ for some set $I$ then 
$\md\wtimes_E P\cong \prod_{i\in I} P$. Since direct products exist in $\dualcat(A)$ we deduce 
that $\md\wtimes_E P$ is an object of $\dualcat(A)$. In general, we have an exact sequence of $E$-modules  
$\prod_{i\in I} E \rightarrow \prod_{j\in J} E\rightarrow \md\rightarrow 0$
for some sets $I$ and $J$. Since $\wtimes_E P$ is right exact we deduce that $\md\wtimes_E P$ is the cokernel 
of $\prod_{i\in I} P\rightarrow \prod_{j\in J} P$ and hence is an object of $\dualcat(A)$.   
\end{proof}

Since $P\twoheadrightarrow S$ is essential we have $\Hom_{\dualcat(A)}(P, S')=0$ for all irreducible objects
of $\dualcat(A)$ not isomorphic to $S_i$ for $1\le i\le n$, and 
$$\Hom_{\dualcat(A)}(P, S)\cong \End_{\dualcat(A)}(S)\cong \prod_{i=1}^n k_i.$$
Thus if $M$ is an object of $\dualcat(A)$ of finite length then $\Hom_{\dualcat(A)}(P, M)$ 
is a right $E$-module of finite length. If $M$ is any object of $\dualcat(A)$ then 
we may write $M= \underset{\longleftarrow}{\lim}\, M_i$ with $M_i$ of finite length, and 
$\Hom_{\dualcat(A)}(P, M)\cong \underset{\longleftarrow}{\lim} \,\Hom_{\dualcat(A)}(P, M_i)$ is a pseudo-compact  $E$-module.
Let us also note that, since $E\otimes_E P\cong P \cong E\wtimes_E P$, for any finitely presented right pseudo-compact 
$E$-module $\md$ we have an isomorphism $\md\otimes_E P\cong \md\wtimes_E P$.

\begin{lem}\label{headS0} If $\md$ is a pseudo-compact right $E$-module then 
$$\Hom_{\dualcat(A)}(P, \md \wtimes_E P)\cong \md.$$
\end{lem}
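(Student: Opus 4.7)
The plan is to show that the natural map
\[
\md \longrightarrow \Hom_{\dualcat(A)}(P, \md \wtimes_E P), \qquad m \longmapsto \bigl(v \mapsto m \wtimes v\bigr),
\]
is an isomorphism. I would establish this first on a class of ``free'' pseudo-compact modules and then reduce the general statement to that case using a presentation together with the exactness of the two functors involved.

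The base case is $\md = E$: since $E \wtimes_E P \cong E \otimes_E P \cong P$, one has $\Hom_{\dualcat(A)}(P, E \wtimes_E P) \cong \End_{\dualcat(A)}(P) = E$, and tracing through the definitions the natural map is the identity. I would then upgrade this to arbitrary products $\md = \prod_{i \in I} E$. The argument already given in the proof of the preceding lemma shows that $-\wtimes_E P$ commutes with products, so $\md \wtimes_E P \cong \prod_{i \in I} P$. Since $\Hom_{\dualcat(A)}(P,-)$ commutes with products for trivial formal reasons, we obtain $\Hom_{\dualcat(A)}(P, \md\wtimes_E P) \cong \prod_{i\in I} E = \md$, and again the natural map is the identity.

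For a general compact right $E$-module $\md$, the pseudo-compactness of $E$ (together with Gabriel, \cite[\S IV.3]{gab}) provides a presentation
\[
\prod_{i \in I} E \longrightarrow \prod_{j \in J} E \longrightarrow \md \longrightarrow 0.
\]
Applying the right exact functor $-\wtimes_E P$ yields
\[
\prod_{i\in I} P \longrightarrow \prod_{j\in J} P \longrightarrow \md \wtimes_E P \longrightarrow 0.
\]
Since $P$ is projective in $\dualcat(A)$, the functor $\Hom_{\dualcat(A)}(P,-)$ is exact, and therefore right exact; applying it gives an exact sequence whose first two terms are $\prod_I E$ and $\prod_J E$ by the case just handled. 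By naturality of the map $\md \to \Hom_{\dualcat(A)}(P, \md \wtimes_E P)$, we obtain a commutative diagram with exact rows in which the first two vertical arrows are isomorphisms, and the five lemma gives the result.

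The only slightly delicate point is to make sure of the presentation step and of the compatibility of $-\wtimes_E P$ with (possibly infinite) products of copies of $E$, but the former is a standard fact about pseudo-compact modules and the latter is exactly what the previous lemma's proof records. So I do not expect any substantive obstacle, only routine verification that the natural map really does fit into the diagram used for the five lemma.
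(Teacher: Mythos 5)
Your proof is correct and follows essentially the same route as the paper: verify the isomorphism for products $\prod_{i\in I}E$ using that $\wtimes_E P$ commutes with products and that $\Hom_{\dualcat(A)}(P,-)$ turns them into products of $E$, then choose a presentation $\prod_I E\to\prod_J E\to\md\to 0$ and conclude by right-exactness of $\wtimes_E P$ and exactness of $\Hom_{\dualcat(A)}(P,-)$ ($P$ being projective). The paper leaves the final diagram chase implicit, which you spell out via the five lemma; there is no substantive difference.
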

\begin{proof} If $\md\cong \prod_{i\in I} E$ for some set $I$ then $\md \wtimes_E P\cong \prod_{i\in I} P$ 
and hence 
$$\Hom_{\dualcat(A)}(P, \md \wtimes_E P)\cong \Hom_{\dualcat(A)}(P, \prod_{i\in I} P)\cong \prod_{i\in I} E\cong \md.$$
In general, we have an exact sequence of $E$-modules  $\prod_{i\in I} E \rightarrow \prod_{j\in J} E\rightarrow \md\rightarrow 0$
for some sets $I$ and $J$. Applying $\wtimes_E P$ and then $\Hom_{\dualcat(A)}(P, \ast)$ to it we get the assertion.
\end{proof}

\begin{lem}\label{headS} If $M$ is in 
$\dualcat(A)$ such that $\Hom_{\dualcat(A)}(M, S')=0$ for all irreducible $S'$ not isomorphic to $S_i$ for $1\le i \le n$, 
then the natural map 
\begin{equation} 
\Hom_{\dualcat(A)}(P, M)\widehat{\otimes}_E P\rightarrow M
\end{equation}
is surjective.
\end{lem}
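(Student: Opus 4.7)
The natural strategy is proof by contradiction: let $N$ denote the image of the natural map $\Hom_{\dualcat(A)}(P,M)\wtimes_E P \rightarrow M$ in the abelian category $\dualcat(A)$, and suppose for contradiction that the cokernel $C := M/N$ is nonzero. The goal is to exhibit a morphism $\phi \in \Hom_{\dualcat(A)}(P, M)$ whose image is not contained in $N$, directly contradicting the definition of $N$.

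First I would verify that $C$ admits an irreducible quotient in $\dualcat(A)$. Since $\Mod^?_G(A)$ is closed under subquotients in $\Mod^{\mathrm{l\,fin}}_G(A)$, so is $\dualcat(A)$ under Pontryagin duality, whence $C \in \dualcat(A)$. Its Pontryagin dual $C^\vee$ is then a nonzero object of $\Mod^?_G(A)$; choosing any $0 \neq v \in C^\vee$, the cyclic $A[G]$-submodule $A[G]v$ has finite length and therefore contains an irreducible subobject $\pi'$. Dualizing yields an irreducible quotient $C \twoheadrightarrow S'$ in $\dualcat(A)$ with $S' := (\pi')^\vee$, where irreducibility of $S'$ follows from the fact that Pontryagin duality is an anti-equivalence and so sends simple objects to simple objects.

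Next I would combine this with the hypothesis and the projectivity of $P$. The composition $M \twoheadrightarrow C \twoheadrightarrow S'$ is a nonzero element of $\Hom_{\dualcat(A)}(M, S')$, so by hypothesis $S' \cong S_i$ for some $i \in \{1, \ldots, n\}$. Using the decomposition $P = \bigoplus_{j=1}^n P_j$ and the envelope projection $P_i \twoheadrightarrow S_i$, we obtain a surjection $P \twoheadrightarrow S_i \cong S'$. By projectivity of $P$ this lifts along $M \twoheadrightarrow S'$ to a morphism $\phi : P \to M$. By construction the composition $P \xrightarrow{\phi} M \twoheadrightarrow C \twoheadrightarrow S'$ is surjective, hence nonzero, so $\phi(P) \not\subseteq N$. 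On the other hand, for every $v \in P$ the element $\phi(v)$ is the image of $\phi \wtimes v$ under the natural map and so lies in $N$; thus $\phi(P) \subseteq N$. This contradiction forces $C = 0$, proving surjectivity.

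The main (rather mild) point to get right is the meaning of \emph{image} in the abelian category $\dualcat(A)$: one must identify it with the smallest subobject of $M$ through which the natural map factors, equivalently the closure of $\sum_{\phi} \phi(P)$ as $\phi$ ranges over $\Hom_{\dualcat(A)}(P, M)$. Once this is set up, the argument is the standard one that a projective cover of $S$ detects every irreducible quotient compatible with the block $\{S_1, \ldots, S_n\}$, and the hypothesis on $M$ is precisely what is needed to keep every irreducible quotient of the cokernel inside this block.
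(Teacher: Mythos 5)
Your argument is correct and is essentially the paper's proof, reorganized as a direct contradiction: the observation that $\phi(v)$ is the image of $\phi\wtimes v$, so that any $\phi\in\Hom_{\dualcat(A)}(P,M)$ has image inside $N$, is exactly the content the paper extracts from Lemma \ref{headS0} to conclude $\Hom_{\dualcat(A)}(P,C)=0$, and the rest matches step for step.
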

\begin{proof} Let $C$ be the cokernel. Lemma \ref{headS0} and the projectivity of $P$ implies that $\Hom_{\dualcat(A)}(P, C)=0$. 
The exactness of $\Hom_{\dualcat(A)}(P, \ast)$  implies that $\Hom_{\dualcat(A)}(C, S)=0$. Since $C$ is a quotient of $M$
this implies that $\Hom_{\dualcat(A)}(C, S')=0$ for all irreducible objects of $\dualcat(A)$. This implies  
$C=0$ by Lemma \ref{useful}.
\end{proof}

\begin{lem}\label{reduceprojenv} Let $\dualcat(\OO/\varpi^n \OO)$ be the full subcategory of $\dualcat(\OO)$ consisting of objects 
killed by $\varpi^n$, let $M$ be an object of $\dualcat(\OO/\varpi^n \OO)$ and let $q:\wP\twoheadrightarrow M$ 
be a projective envelope of $M$ in $\dualcat(\OO)$, then $\wP/\varpi^n\wP\twoheadrightarrow M$ is 
a projective envelope of $M$ in $\dualcat(\OO/\varpi^n \OO)$.
\end{lem}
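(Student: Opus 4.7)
The plan is to verify the two defining properties of a projective envelope separately. First observe that $\dualcat(\OO/\varpi^n\OO)$ may be identified with the full subcategory of $\dualcat(\OO)$ consisting of objects annihilated by $\varpi^n$, since Pontryagin duality sends $\varpi^n$-torsion smooth representations to $\varpi^n$-annihilated pseudo-compact modules. In particular, the composition $\wP \twoheadrightarrow \wP/\varpi^n\wP \twoheadrightarrow M$ (which makes sense because $M$ lies in $\dualcat(k) \subset \dualcat(\OO/\varpi^n\OO)$) is a morphism in $\dualcat(\OO)$.

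For projectivity, I would argue as follows. Let $\alpha: N_1 \twoheadrightarrow N_2$ be an epimorphism in $\dualcat(\OO/\varpi^n\OO)$ and $f: \wP/\varpi^n\wP \to N_2$ any morphism. Composing with the quotient $\wP \twoheadrightarrow \wP/\varpi^n\wP$ gives a morphism $\wP \to N_2$ in $\dualcat(\OO)$; since $\wP$ is projective in $\dualcat(\OO)$, this lifts to a morphism $\tilde f: \wP \to N_1$. Because $N_1$ is annihilated by $\varpi^n$, the morphism $\tilde f$ factors through $\wP/\varpi^n\wP$, yielding the desired lift.

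For essentiality, note that since $M$ is killed by $\varpi$, we have $\varpi\wP \subseteq \ker q$, hence $\varpi^n\wP \subseteq \ker q$; denote by $K := \ker q / \varpi^n\wP$ the kernel of the induced surjection $\wP/\varpi^n\wP \twoheadrightarrow M$. Suppose $N \subseteq \wP/\varpi^n\wP$ is a subobject in $\dualcat(\OO/\varpi^n\OO)$ with $N + K = \wP/\varpi^n\wP$. Let $N' \subseteq \wP$ be its preimage under $\wP \twoheadrightarrow \wP/\varpi^n\wP$; then $N' + \ker q + \varpi^n\wP = \wP$, and since $\varpi^n\wP \subseteq \ker q$ we get $N' + \ker q = \wP$. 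Essentiality of the original projective envelope $\wP \twoheadrightarrow M$ in $\dualcat(\OO)$ forces $N' = \wP$, whence $N = \wP/\varpi^n\wP$.

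The argument is entirely formal and there is no substantive obstacle; the two nontrivial inputs are the identification of $\dualcat(\OO/\varpi^n\OO)$ as the full $\varpi^n$-torsion subcategory of $\dualcat(\OO)$ (which one might prefer to record as a separate remark), and the key inclusion $\varpi^n\wP \subseteq \ker q$ that makes essentiality pass to the quotient. The projectivity of $\wP/\varpi^n\wP$ is a completely general fact: for any projective $P$ and any ideal $I$ in the base, $P/IP$ is projective in the category of $I$-torsion objects.
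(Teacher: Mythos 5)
Your proof is correct, but it follows a genuinely different route from the paper's. The paper begins by invoking the \emph{existence} of a projective envelope $q_n\colon P\twoheadrightarrow M$ in $\dualcat(\OO/\varpi^n\OO)$; it then lifts $q$ through $q_n$ to get $\phi\colon\wP\to P$, observes $\phi$ factors through $\wP/\varpi^n\wP$, uses projectivity of $P$ to split this into $\wP/\varpi^n\wP\cong P\oplus N$, and finally shows $N=0$ by noting that $\Ker(\wP\to N)$ already surjects onto $M$, contradicting essentiality of $q$ unless $N=0$. You instead verify the two defining properties of a projective envelope directly: projectivity of $\wP/\varpi^n\wP$ by lifting through the epimorphism $\wP\twoheadrightarrow\wP/\varpi^n\wP$ and using that $\varpi^n$ kills the target, and essentiality by passing a subobject test back up along $\wP\twoheadrightarrow\wP/\varpi^n\wP$ via the key inclusion $\varpi^n\wP\subseteq\Ker q$. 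Both arguments are sound. Yours is slightly more self-contained, since it does not require knowing beforehand that $\dualcat(\OO/\varpi^n\OO)$ has projective envelopes; the paper's is a little shorter once that existence is granted (it comes for free from local finiteness of $\Mod^{?}_{G}(\OO/\varpi^n\OO)$). Both rely on the same background identification of $\dualcat(\OO/\varpi^n\OO)$ as the full subcategory of $\varpi^n$-torsion objects of $\dualcat(\OO)$, closed under subquotients, which you correctly flag as the formal input.
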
 
\begin{remar} We note that Pontryagin duality identifies $\dualcat(\OO/\varpi^n\OO)$ with the full subcategory of $\Mod^?_G(\OO)$ consisting of objects 
killed by $\varpi^n$.
\end{remar}
\begin{proof} Let $q_n:P\twoheadrightarrow M$ be a projective envelope of $M$ in $\dualcat(\OO/\varpi^n\OO)$.
Since $\wP$ is projective and $q_n$ is essential there exists $\phi: \wP\rightarrow P$ such that $q=q_n \circ \phi$.
Since $\varpi^n$ kills $P$, $\phi$ factors through $\wP/\varpi^n \wP$, which lies in $\dualcat(\OO/\varpi^n \OO)$.
Since $P$ is projective in this category, the surjection splits and we have $\wP/\varpi^n \wP\cong P\oplus N$. Let 
$\psi: \wP\rightarrow N$ be the natural map, then the composition 
$\Ker \psi\hookrightarrow  \wP\twoheadrightarrow M$ is surjective. As $q$ is essential we get $\Ker \psi=\wP$ and 
hence $N=0$, which gives the claim.
\end{proof}     

\section{The formalism}\label{firstsec}
Let $\dualcat$ be a full abelian subcategory of $\Mod^{\mathrm{pro\, aug}}_G(\OO)$  closed under 
direct products and subquotients in $\Mod^{\mathrm{pro\, aug}}_G(\OO)$. Note that this implies that 
$\dualcat$ is closed under projective limits. We further assume that every object $M$ of 
$\dualcat$ can be written as $M\cong \underset{\longleftarrow}{\lim} \, M_i$, where the 
limit is taken over all the quotients of finite length. In the sequel $\dualcat$ will be either 
the category $\dualcat(\OO)$ or its full subcategory $\dualcat(k)$, introduced in \S \ref{zerosec}.

Dually this means that $M^{\vee}$ 
is an object of $\Mod^{\mathrm{l\, fin}}_G(\OO)$. We denote by $\Irr(\dualcat)$ the set of equivalence classes of irreducible 
objects in $\dualcat$ and note that the last assumption implies that if $M$ is an object of $\dualcat$ 
and $\Hom_{\dualcat}(M, S')=0$ for all $S'\in\Irr(\dualcat)$ then $M$ is zero. We denote by $\rad M$ the intersection of all maximal proper subobjects of $M$.

Let $S$ be an irreducible object of $\dualcat$ with $\End_{\dualcat}(S)=k$.  We assume the existence  of an object $Q$ in $\dualcat$ of finite length, satisfying the following hypotheses:
\begin{itemize}
\item[(H1)] $\Hom_{\dualcat}(Q, S')=0$,  $\forall S'\in \Irr(\dualcat)$,  $S\not\cong S'$;
\item[(H2)] $S$ occurs as a subquotient in $Q$ with multiplicity $1$;
\item[(H3)] $\Ext^1_{\dualcat}(Q, S')=0$, $\forall S'\in \Irr(\dualcat)$,  $S\not\cong S'$;
\item[(H4)] $\Ext^1_{\dualcat}(Q, S)$ is finite dimensional;
\item[(H5)] $\Ext^2_{\dualcat}(Q, R)=0$, where $R=\rad Q$ is the maximal proper subobject of $Q$.
\end{itemize}

We note that we will introduce an additional hypothesis (H0) in Proposition \ref{H00} below.  
Hypotheses (H1) and (H2) imply that $\Hom_{\dualcat}(Q, S)$ is one dimensional and that 
 $Q$ has a unique maximal subobject and if we choose 
a non-zero $\varphi: Q\rightarrow S$ then we obtain an exact sequence:
\begin{equation}\label{R}
0\rightarrow R \rightarrow Q\overset{\varphi}{\rightarrow} S\rightarrow 0.
\end{equation}  
We note that $\varphi$ is essential. Since if we have $\psi: A\rightarrow Q$ such that 
$\varphi\circ \psi$ is surjective, then $\Hom_{\dualcat}(\Coker \psi, S')=0$ for all 
$S'\in \Irr(\dualcat)$ and so $\Coker \psi =0$. 

\begin{lem}\label{first} Equation \eqref{R} induces an isomorphism
\begin{equation}\label{iso1}
\Ext^1_{\dualcat}(Q, Q)\cong \Ext^1_{\dualcat}(Q, S)
\end{equation}
and an injection
\begin{equation}\label{iso2}
\Ext^2_{\dualcat}(Q, Q)\hookrightarrow \Ext^2_{\dualcat}(Q, S).
\end{equation}
\end{lem}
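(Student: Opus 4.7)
The plan is to apply $\Hom_{\dualcat}(Q,-)$ to the short exact sequence \eqref{R} to obtain the long exact sequence
\begin{equation*}
\cdots \to \Ext^1_{\dualcat}(Q,R) \to \Ext^1_{\dualcat}(Q,Q) \to \Ext^1_{\dualcat}(Q,S) \to \Ext^2_{\dualcat}(Q,R) \to \Ext^2_{\dualcat}(Q,Q) \to \Ext^2_{\dualcat}(Q,S)
\end{equation*}
and then read off both assertions once the flanking terms are controlled.

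For the injection \eqref{iso2} essentially no work is needed: (H5) gives $\Ext^2_{\dualcat}(Q,R)=0$, and the segment $\Ext^2_{\dualcat}(Q,R) \to \Ext^2_{\dualcat}(Q,Q) \to \Ext^2_{\dualcat}(Q,S)$ of the long exact sequence immediately yields the injectivity of the natural map.

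For the isomorphism \eqref{iso1} I need, in addition to $\Ext^2_{\dualcat}(Q,R)=0$ (which makes the map $\Ext^1_{\dualcat}(Q,Q)\to \Ext^1_{\dualcat}(Q,S)$ surjective), the vanishing $\Ext^1_{\dualcat}(Q,R)=0$ to obtain injectivity. This is the only real point of the proof, and it reduces to a dévissage on the composition series of $R$: since $S$ appears as a quotient in \eqref{R}, hypothesis (H2) forces every irreducible subquotient $S'$ of $R$ to satisfy $S'\not\cong S$; then (H3) gives $\Ext^1_{\dualcat}(Q,S')=0$ for each such $S'$. Applying $\Hom_{\dualcat}(Q,-)$ to a composition series of $R$ (which is finite, as $Q$, and hence $R$, has finite length) and using the associated long exact sequences inductively produces $\Ext^1_{\dualcat}(Q,R)=0$.

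The only mildly delicate point is verifying that the composition factors of $R$ are genuinely distinct from $S$; this uses (H2) in an essential way, together with the observation that $R$ is the unique maximal subobject of $Q$ (so that $S$ is the cosocle of $Q$ and its multiplicity in $R$ must be zero). Once this is in hand the rest is a mechanical unwinding of the long exact sequence, and no further obstacle is expected.
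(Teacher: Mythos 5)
Your proof is correct and follows essentially the same route as the paper: apply $\Hom_{\dualcat}(Q,-)$ to \eqref{R}, invoke (H5) for the vanishing of $\Ext^2_{\dualcat}(Q,R)$, and establish $\Ext^1_{\dualcat}(Q,R)=0$ by dévissage using (H2) (to see $S$ is not a composition factor of $R$) together with (H3). The paper states this last step as the slightly more general claim that $\Ext^1_{\dualcat}(Q,R')=0$ whenever $R'$ has finite length and $S$ is not a subquotient of $R'$, but the induction is identical to yours.
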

\begin{proof} We apply $\Hom_{\dualcat}(Q, \ast)$ to \eqref{R}. The injectivity of \eqref{iso2} and
surjectivity of \eqref{iso1} follows from (H5). To show the injectivity of \eqref{iso1} it is enough 
to show that $\Ext^1_{\dualcat}(Q, R)=0$. However, a more general statement follows from (H3). Namely, 
if $R'$ is of finite length and $S$ is not a subquotient of $R'$ then $\Ext^1_{\dualcat}(Q, R')=0$. 
One argues by induction on the number of irreducible subquotients of $R'$.
\end{proof}

\begin{lem}\label{T} Let $T\in \dualcat$ be of finite length and suppose that $T$ has a filtration by subobjects $T^i$, 
such that $T^0=T$ and $T^i/T^{i+1}\cong Q^{\oplus n_i}$, for $i\ge 0$. Then \eqref{R} induces an isomorphism:
\begin{equation}\label{iso3}
\Ext^1_{\dualcat}(T, Q)\cong \Ext^1_{\dualcat}(T, S).
\end{equation}
Moreover, $\Ext^1_{\dualcat}(T, S')=0$ for all $S'\in \Irr(\dualcat)$, $S'\not\cong S$.
\end{lem}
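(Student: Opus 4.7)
The plan is to reduce the isomorphism statement to two vanishing results via the defining sequence \eqref{R}, and to obtain both vanishings plus the last assertion of the lemma by devissage along the filtration $T^i$.

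First I would apply $\Hom_{\dualcat}(T,\ast)$ to \eqref{R} to obtain the long exact sequence
\begin{equation*}
\Ext^1_{\dualcat}(T,R)\to \Ext^1_{\dualcat}(T,Q)\to \Ext^1_{\dualcat}(T,S)\to \Ext^2_{\dualcat}(T,R).
\end{equation*}
So it suffices to show that $\Ext^1_{\dualcat}(T,R)=0$ and $\Ext^2_{\dualcat}(T,R)=0$. Observe first that $R=\rad Q$ has no subquotient isomorphic to $S$: indeed $S$ occurs in $Q$ with multiplicity one by (H2) and is a quotient of $Q$ by (H1) together with \eqref{R}, so $S$ must be used up as the cosocle. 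Consequently, by (H3) and a trivial induction on the length of $R$, we obtain $\Ext^1_{\dualcat}(Q,R)=0$; combined with (H5), this gives both $\Ext^1_{\dualcat}(Q,R)=0$ and $\Ext^2_{\dualcat}(Q,R)=0$, and the same for finite direct sums $Q^{\oplus n_i}$.

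Next I would run downward induction on the filtration $T=T^0\supseteq T^1\supseteq\cdots$, which terminates since $T$ has finite length. Applying $\Hom_{\dualcat}(\ast,R)$ to
\begin{equation*}
0\to T^{i+1}\to T^i\to Q^{\oplus n_i}\to 0
\end{equation*}
yields exact sequences
\begin{equation*}
\Ext^j_{\dualcat}(Q^{\oplus n_i},R)\to \Ext^j_{\dualcat}(T^i,R)\to \Ext^j_{\dualcat}(T^{i+1},R)
\end{equation*}
for $j=1,2$, so the induction hypothesis and the vanishing of the outer terms established above propagate the vanishing from $T^{i+1}$ to $T^i$. This gives $\Ext^1_{\dualcat}(T,R)=\Ext^2_{\dualcat}(T,R)=0$ and hence the desired isomorphism \eqref{iso3}. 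For the final assertion, the identical inductive argument with $R$ replaced by $S'\not\cong S$ works, using (H3) directly to see that $\Ext^1_{\dualcat}(Q^{\oplus n_i},S')=0$, thereby propagating $\Ext^1_{\dualcat}(\ast,S')=0$ up the filtration.

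I do not expect any serious obstacle; the only mildly delicate point is the observation that $S$ does not appear as a subquotient of $R$, which is what allows one to invoke (H3) to kill $\Ext^1_{\dualcat}(Q,R)$ by devissage. Everything else is formal manipulation of long exact sequences combined with induction on the length of the filtration.
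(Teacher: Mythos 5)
Your proposal is correct and follows essentially the same route as the paper: the key observation that (H2) forces $S$ not to occur in $R=\rad Q$, devissage on $R$ via (H3) to get $\Ext^1_{\dualcat}(Q,R)=0$, then devissage along the filtration of $T$ (using (H3) and (H5) on the graded pieces $Q^{\oplus n_i}$) to kill $\Ext^1_{\dualcat}(T,R)$, $\Ext^2_{\dualcat}(T,R)$ and $\Ext^1_{\dualcat}(T,S')$, and finally the long exact sequence from \eqref{R}. No gaps.
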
 
\begin{proof}  By devissage and (H3) we have $\Ext^1_{\dualcat}(T, S')=0$ for all $S'\in \Irr(\dualcat)$, $S'\not\cong S$.
Since (H2) implies that $S$ is not a subquotient of $R$, we deduce by devissage that $\Ext^1_{\dualcat}(T, R)=0$. 
Further, devissage and (H5) imply that $\Ext^2_{\dualcat}(T, R)=0$. Thus applying $\Hom_{\dualcat}(T, \ast)$ to 
to \eqref{R} we obtain the isomorphism \eqref{iso3}.
\end{proof}
 
Let $P\overset{\kappa}{\twoheadrightarrow} S$ be a projective envelope of $S$ in $\dualcat$. Note that
since $\kappa$ is essential we have $\Hom_{\dualcat}(P, S')=0$ for all $S'\in \Irr(\dualcat)$, $S'\not\cong S$, and
$\dim \Hom_{\dualcat}(P, S)=1$. Since $P$ is projective the functor $\Hom_{\dualcat}(P, \ast)$ is exact, and thus we get:

\begin{lem}\label{mult=dim} Let $T\in \dualcat$ be of finite length. Then the length of $\Hom_{\dualcat}(P, T)$ as an 
$\OO$-module is equal to the multiplicity, with which $S$ occurs as a subquotient of $T$.
\end{lem}

We note that since $Q/\rad Q\cong S$ is irreducible and $S$ occurs in $Q$ with multiplicity $1$, 
every non-zero  $\phi\in \Hom_{\dualcat}(Q, Q)$ is an isomorphism. This implies that $Q$ is killed by $\varpi$.  
It follows from Lemma \ref{mult=dim} that (H2) could be reformulated as 
\begin{itemize} 
\item[(H2')] $\dim \Hom_{\dualcat}(P, Q)=1$.
\end{itemize} 

Since 
$\varphi: Q\twoheadrightarrow S$ is surjective and $P$ is projective, 
there exists $\theta: P\rightarrow Q$ such that $\varphi\circ \theta=\kappa$. Moreover, since $\varphi$ is essential, 
$\theta$ is surjective.

\begin{lem}\label{filtrationP} There exists a unique decreasing filtration of $P$ by subobjects $P^i$ such that $P=P^0$, 
$P^i/P^{i+1}\cong Q^{\oplus n_i}$, where $n_i\ge 1$, for all $i\ge 0$, and every $\phi: P^i\rightarrow Q$ 
factors through $P^i/P^{i+1}$.
\end{lem}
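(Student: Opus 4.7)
The plan is to construct the filtration recursively, setting $P^0:=P$ and $P^{i+1}:=\Ker\Theta_i$ for a canonical map $\Theta_i:P^i\to Q^{\oplus n_i}$ assembled from a basis of $\Hom_{\dualcat}(P^i,Q)$. The latter carries a $k$-structure because $\End_{\dualcat}(Q)=k$: applying $\Hom_{\dualcat}(-,Q)$ to the surjection $P\twoheadrightarrow Q$ embeds $\End_{\dualcat}(Q)$ into $\Hom_{\dualcat}(P,Q)$, which is one-dimensional by (H2'). Uniqueness of the filtration follows formally from the factorization property: inductively, if $\widetilde P^\bullet$ also satisfies the stated conditions, then every $\phi:P^i\to Q$ factors through $P^i/\widetilde P^{i+1}$, so vanishes on $\widetilde P^{i+1}$ and yields $\widetilde P^{i+1}\subseteq P^{i+1}$; conversely, the projections $P^i\twoheadrightarrow P^i/\widetilde P^{i+1}\cong Q^{\oplus\widetilde n_i}\twoheadrightarrow Q$ onto each summand vanish on $P^{i+1}$ and have joint kernel $\widetilde P^{i+1}$, giving $P^{i+1}\subseteq\widetilde P^{i+1}$.

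The core inductive input is the simultaneous vanishing $\Hom_{\dualcat}(P^i,R)=0=\Ext^1_{\dualcat}(P^i,R)$. The base case is projectivity of $P$ (for $\Ext^1$) together with the long exact sequence of $\Hom_{\dualcat}(P,-)$ applied to \eqref{R}, which forces $\Hom_{\dualcat}(P,R)=0$ since both $\Hom_{\dualcat}(P,Q)$ and $\Hom_{\dualcat}(P,S)$ are one-dimensional. The inductive step applies $\Hom_{\dualcat}(-,R)$ to $0\to P^{i+1}\to P^i\to Q^{\oplus n_i}\to 0$ and uses $\Ext^j_{\dualcat}(Q,R)=0$ for $j=1,2$: the $j=1$ vanishing comes from Lemma~\ref{first}'s argument (devissage on $R$ using (H3) and the fact that $S$ is not a subquotient of $R$ by (H2)), the $j=2$ vanishing is (H5), and $\Ext^*$ commutes with direct sums in the first variable so potentially infinite $n_i$ pose no problem. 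Given this vanishing, the long exact sequence for $\Hom_{\dualcat}(P^i,-)$ applied to \eqref{R} gives an isomorphism $\Hom_{\dualcat}(P^i,Q)\cong\Hom_{\dualcat}(P^i,S)$ of $k$-vector spaces; let $n_i$ be its dimension. That $n_i\geq 1$ whenever $P^i\neq 0$ follows from $\Hom_{\dualcat}(P^i,S')=0$ for $S'\not\cong S$ (proven inductively using (H1) and (H3), starting from $P$ being the projective envelope of $S$).

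The main obstacle will be showing the surjectivity of $\Theta_i$, or equivalently that $R^{\oplus n_i}$ is a superfluous subobject of $Q^{\oplus n_i}$ in $\dualcat$. By construction, $\Theta_i$ composed with $Q^{\oplus n_i}\twoheadrightarrow S^{\oplus n_i}$ is an epimorphism (realizing a basis of $\Hom_{\dualcat}(P^i,S)$), so the image of $\Theta_i$ together with $R^{\oplus n_i}$ generates $Q^{\oplus n_i}$; what is needed is that this forces the image to be all of $Q^{\oplus n_i}$. This does not follow from the essentiality of $\varphi:Q\to S$ by pure abstract nonsense, as direct sums of essential epimorphisms need not be essential in general abelian categories; one must exploit the pro-augmented structure of $\dualcat$ by reducing to finite-length quotients and invoking that $Q$ has a unique maximal subobject via a Nakayama-type argument over the pseudo-compact endomorphism ring $\End_{\dualcat}(P)$. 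Once $\Theta_i$ is known to be surjective, setting $P^{i+1}:=\Ker\Theta_i$ gives $P^i/P^{i+1}\cong Q^{\oplus n_i}$, and every $\phi:P^i\to Q$ is a $k$-linear combination of the basis lifts $\tilde\phi_j$ and thus factors through $\Theta_i$ by construction.
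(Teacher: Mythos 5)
Your overall skeleton is the right one and matches the paper's: build $P^{i+1}$ as the kernel of a map $\Theta_i:P^i\to Q^{\oplus n_i}$ assembled from a basis, establish $\Hom_{\dualcat}(P^i,Q)\cong\Hom_{\dualcat}(P^i,S)$, and deduce uniqueness from the factorisation property. Your way of organising the $\Ext$ input (inductively proving $\Hom_{\dualcat}(P^i,R)=\Ext^1_{\dualcat}(P^i,R)=0$ directly) is a legitimate variant of the paper's route, which instead uses $\Hom_{\dualcat}(P^n,S')\cong\Ext^1_{\dualcat}(P/P^n,S')$ and applies Lemma~\ref{T} to the finite-length object $P/P^n$; both get to the same isomorphism.

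There are, however, two genuine gaps. First, you nowhere prove that $n_i$ is finite, and you cannot skip this: the category $\dualcat$ is assumed closed under direct \emph{products}, not arbitrary coproducts, so ``$Q^{\oplus n_i}$'' with $n_i$ infinite need not even be an object of $\dualcat$, and the remark that ``potentially infinite $n_i$ pose no problem'' is not sound in this setting. Finiteness of $n_i$ is precisely where hypothesis (H4) enters -- a hypothesis you never invoke -- by d\'evissage: $\dim\Hom_{\dualcat}(P^i,S)=\dim\Ext^1_{\dualcat}(P/P^i,S)$ (projectivity of $P$), and $P/P^i$ has a finite filtration with graded pieces $Q^{\oplus n_j}$, $j<i$, each finite by induction, so (H4) bounds each $\Ext^1_{\dualcat}(Q^{\oplus n_j},S)$ and hence the whole thing.

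Second, you correctly flag the surjectivity of $\Theta_i$ as the crux, but you then leave it unresolved, gesturing at ``reducing to finite-length quotients'' and a ``Nakayama-type argument over $\End_{\dualcat}(P)$'', which is not how this step closes. The actual argument is short once $n_i$ is known to be finite: $Q$ has finite length, so $Q^{\oplus n_i}$ has finite length; its radical is $R^{\oplus n_i}$ (since $Q/R\cong S$ simple and $S$ has multiplicity $1$ in $Q$, so $\rad Q=R$, and $\rad$ of a finite direct sum is the direct sum of the radicals); for a finite-length object the radical is superfluous, so $\varphi^{\oplus n_i}:Q^{\oplus n_i}\twoheadrightarrow S^{\oplus n_i}$ is an essential epimorphism. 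Since by construction $\varphi^{\oplus n_i}\circ\Theta_i$ is the (surjective) head map $P^i\twoheadrightarrow P^i/\rad P^i\cong S^{\oplus n_i}$, essentiality forces $\Theta_i$ to be surjective. Your concern that ``direct sums of essential epimorphisms need not be essential in general abelian categories'' is dissolved by finiteness; it is not a feature of the pro-structure or of $\End_{\dualcat}(P)$.
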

\begin{proof} If such filtration exists then it is unique as $P^0=P$ and $P^{i+1}=\bigcap_{\phi}\Ker \phi$, 
where the intersection is taken over all $\phi\in \Hom_{\dualcat}(P^i, Q)$. Since $\Hom_{\dualcat}(P, Q)$ 
is $1$-di\-men\-sio\-nal we get that  $P^1:=\Ker \theta$ satisfies the conditions. Suppose that we have defined $P^i$, 
for $0\le i\le n$. Consider the exact sequence:
\begin{equation}\label{defpn}
0\rightarrow P^n \rightarrow P\rightarrow P/P^n\rightarrow 0
\end{equation} 
Let $S'\in \Irr(\dualcat)$, we apply $\Hom_{\dualcat}(\ast, S')$ to \eqref{defpn} to get an isomorphism
$\Hom_{\dualcat}(P^n, S')\cong \Ext^1_{\dualcat}(P/P^n, S')$. Since $P$ is projective 
$\Ext^1_{\dualcat}(P, \ast)=0$. We may apply Lemma \ref{T} to $T=P/P^n$ and $T^i= P^i/P^n$. We get 
\begin{equation}\label{headpn} 
\Hom_{\dualcat}(P^n, S')=0, \quad \forall S'\in \Irr(\dualcat),\quad  S'\not\cong S.
\end{equation} 
Moreover, (H4) implies that $d:=\dim \Hom_{\dualcat}(P^n, S)=\dim \Ext^1_{\dualcat}(P/P^n, S)$ is finite. Hence, 
\begin{equation}\label{realhead}
P^n/\rad P^n \cong S^{\oplus d}.
\end{equation}
 We define  
$\phi_i: P^n\rightarrow P^n/\rad P^n \rightarrow S$, where the last map is projection to the $i$-th  component.
So $\phi_i$ form a basis of $\Hom_{\dualcat}(P^n, S)$. We apply 
$\Hom_{\dualcat}(\ast, Q)$ and $\Hom_{\dualcat}(\ast, S)$ to \eqref{defpn} to  get a commutative diagram
\begin{displaymath}
\xymatrix@1{ \Hom_{\dualcat}(P^n, Q)\ar[d]\ar[r]^-{\cong} & \Ext^1_{\dualcat}(P/P^n, Q)\ar[d]^{\cong}_{\eqref{iso3}}\\
             \Hom_{\dualcat}(P^n, S) \ar[r]^-{\cong} &  \Ext^1_{\dualcat}(P/P^n, S).}
\end{displaymath}
The second vertical arrow is an isomorphism by Lemma \ref{T}. Hence the first vertical arrow is an isomorphism. 
Hence there exists $\psi_i\in \Hom_{\dualcat}(P^n, Q)$, such that $\varphi\circ \psi_i= \phi_i$. Then $\psi_i$ form 
a basis of $\Hom_{\dualcat}(P^n, Q)$. Let $\theta_n: P^n\rightarrow Q^{\oplus d}$ be the map $v\mapsto (\psi_1(v), \ldots, \psi_d(v))$. 
We have a commutative diagram:
   
\begin{displaymath}
\xymatrix@1{   & Q^{\oplus d}\ar@{->>}[d]^-{\varphi^{\oplus d}}\\
              P^n  \ar[ur]^-{\theta_n}\ar@{->>}[r] &  P^n /\rad P^n.}
\end{displaymath}
Since the vertical arrow is essential, we get that $\theta_n$ is surjective, and define $P^{n+1}:= \Ker \theta_n$. Then 
\begin{equation}\label{pn1}
P^n/P^{n+1}\cong  Q^{\oplus d}
\end{equation}
where $d=\dim \Ext^1_{\dualcat}(P/P^n, S)$. Moreover, we have 
$$ P^{n+1}=\bigcap_{i=1}^d \Ker \psi_i= \bigcap_{\psi\in \Hom_{\dualcat}(P^{n}, Q)} \Ker \psi, $$
since $\psi_i$ form a basis of $\Hom_{\dualcat}(P^{n}, Q)$.
\end{proof}  

\begin{lem}\label{Plimit} The natural map $P\rightarrow \underset{\longleftarrow}{\lim}\, P/P^n$ is an isomorphism.
\end{lem}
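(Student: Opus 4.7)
The plan is to reduce the statement to a cofinality claim and then argue by induction on length. By the running assumption on $\dualcat$, every object is the inverse limit of its finite-length quotients, so in particular $P \cong \varprojlim_M P/M$, where $M$ ranges over subobjects of $P$ with $P/M$ of finite length. Consequently, to prove that the natural map $P \to \varprojlim_n P/P^n$ is an isomorphism it suffices to show that the descending chain $\{P^n\}$ is cofinal in the system $\{M\}$: namely, that for every surjection $q \colon P \twoheadrightarrow T$ with $T$ of finite length, there exists an integer $n$ with $P^n \subseteq \Ker q$.

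I would prove this cofinality by induction on $\ell(T)$, the length of $T$. The case $\ell(T) = 0$ is vacuous. For the induction step, choose an irreducible subobject $S' \hookrightarrow T$, and apply the inductive hypothesis to $P \twoheadrightarrow T \twoheadrightarrow T/S'$ to obtain an integer $m$ with $P^m \subseteq \Ker(P \to T/S')$. The restriction of $q$ to $P^m$ then factors through the inclusion $S' \hookrightarrow T$, yielding a morphism $\phi \colon P^m \to S'$.

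Two cases arise. If $S' \not\cong S$, then \eqref{headpn} gives $\Hom_{\dualcat}(P^m, S') = 0$, so $\phi = 0$ and $P^m \subseteq \Ker q$, finishing the step. If $S' \cong S$, I would invoke the construction carried out in the proof of the preceding lemma: it produced a basis $\phi_1, \ldots, \phi_d$ of the $k$-vector space $\Hom_{\dualcat}(P^m, S)$ together with lifts $\psi_i \in \Hom_{\dualcat}(P^m, Q)$ satisfying $\varphi \circ \psi_i = \phi_i$, and defined $P^{m+1} = \bigcap_i \Ker \psi_i$. Writing $\phi$ as a $k$-linear combination of the $\phi_i$'s shows that $\phi$ factors through $\varphi$ and therefore vanishes on $P^{m+1}$, so $P^{m+1} \subseteq \Ker q$. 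This closes the induction and yields the desired isomorphism.

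The argument is essentially built into the recursive definition of $\{P^n\}$, which was engineered precisely so that $P^{m+1}$ kills every $Q$-valued, and \emph{a fortiori} every $S$-valued, morphism out of $P^m$. The only point requiring care is the legitimacy of the cofinality reduction, but this is immediate from the assumption that every object of $\dualcat$ is the inverse limit of its finite-length quotients. I therefore do not anticipate any serious obstacle.
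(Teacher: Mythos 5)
Your proof is correct and follows essentially the same route as the paper: reduce to showing that $\{P^n\}$ is cofinal among kernels of finite-length quotients, then peel off an irreducible subobject $S'$ from a finite-length target and use the recursive construction of $P^{n+1}$ (the paper phrases this as a minimal-counterexample argument, you as induction on length, which is the same thing). A minor simplification available to you: both your cases can be handled uniformly by observing that any map $P^m \to S'$ with $S'$ irreducible kills $\rad P^m$, and by construction $P^{m+1} \subseteq \rad P^m$ — which is exactly what the paper does.
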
 
\begin{proof} Let $\FF$ be a set of quotients of $P$ in $\dualcat$ of finite length. 
Since $P$ is an object of $\dualcat$, we have $P\cong \underset{\longleftarrow}{\lim}\, N$, where the limit is 
taken over all $N\in \FF$.
Since $P/P^n$ are of finite length, it is enough 
to show that for every quotient $q:P\twoheadrightarrow N$ of finite length there exists $n$ such that $P^n$ is contained in the 
kernel of $q$. Let $N$ be a counterexample of minimal length $m$. If $N$ is irreducible then,  as 
$\kappa: P\twoheadrightarrow S$ is essential, we get that $N\cong S$ and $q=\lambda \kappa$ for some $\lambda\in k$. But then 
$P^1$ is contained in the kernel of $q$. Hence, $m>1$ and we may consider an exact sequence 
$0\rightarrow S' \rightarrow N\rightarrow N'\rightarrow 0$, with $S'$ irreducible and $N'$ non-zero. The minimality 
of $m$ implies that there exists  $n$ such that  $P^n$ is contained in the kernel of $q':P\rightarrow N\rightarrow N'$. 
Since by assumption $P^n$ is not contained in the kernel of $q$, we obtain a non-zero map $q: P^n \rightarrow S'$. Since $S'$ is 
irreducible, $\rad P^n$ is contained in the kernel. As by construction $P^{n+1}$ is contained in $\rad P^n$ we obtain 
a contradiction.
\end{proof}

\begin{defi}\label{Em} Let $P\overset{\kappa}{\twoheadrightarrow} S$ be a projective envelope of $S$ in $\dualcat$.
We let 
$$E:= \End_{\dualcat}(P), \quad  \mm:= \{\phi\in E: \kappa\circ \phi=0\}.$$
\end{defi}
A priori $\mm$ is only a right ideal of $E$.  
Since $P$ is projective we get  a surjection $\Hom_{\dualcat}(P, P)\twoheadrightarrow \Hom_{\dualcat}(P, S)$. Now 
$\dim \Hom_{\dualcat}(P, S)=1$, and hence any $\phi\in E$ may be written as $\phi=\lambda +\psi$, where 
$\lambda\in \OO$ and $\psi\in \mm$. Since the image of $\OO$ lies in the centre of $E$, this implies that 
$\mm$ is a two-sided ideal and $E/\mm \cong \End_{\dualcat}(S)\cong k$.

\begin{lem}\label{inclusion} We have $\mm^n P \subseteq P^n$ for $n\ge 0$ and  $\mm P=P^1$, so that $P/\mm P\cong Q$.
\end{lem}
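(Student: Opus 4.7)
The plan is to first identify $\mm P$ with $P^1$, which immediately yields $P/\mm P \cong P/P^1 \cong Q$, and then bootstrap the general inclusion $\mm^n P \subseteq P^n$ by induction.

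For $\mm P \subseteq P^1$: given $\phi \in \mm$, the composition $\theta \circ \phi$ lies in $\Hom_\dualcat(P, Q)$, which is one-di\-men\-sio\-nal and spanned by $\theta$. Indeed, applying $\Hom_\dualcat(P, -)$ to $0 \to R \to Q \to S \to 0$ gives $\Hom_\dualcat(P, R) = 0$ (any nonzero map $P \to R$ would have image with cosocle $S$, forcing $S$ to be a subquotient of $R$, contradicting (H2)) and $\Hom_\dualcat(P, S) = k$. Writing $\theta \circ \phi = \lambda \theta$ and composing with $\varphi$ yields $0 = \kappa \circ \phi = \lambda \kappa$, so $\lambda = 0$ and $\phi(P) \subseteq \Ker \theta = P^1$. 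For the reverse inclusion $P^1 \subseteq \mm P$: by \eqref{realhead} applied with $n = 1$, the cosocle of $P^1$ is $S^{\oplus d_1}$, so lifting the projective cover $P^{\oplus d_1} \twoheadrightarrow S^{\oplus d_1}$ through the essential surjection $P^1 \twoheadrightarrow P^1/\rad P^1$ produces a surjection $P^{\oplus d_1} \twoheadrightarrow P^1$ whose components $\psi_i \colon P \to P^1 \hookrightarrow P$ satisfy $\sum_i \psi_i(P) = P^1$. Each $\psi_i$ lies in $\mm$, since its image is contained in $P^1 = \Ker \theta \subseteq \Ker \kappa$, and hence $P^1 \subseteq \mm P$.

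For the general inclusion I would prove by induction on $k \ge 0$ the stronger claim that $\phi(P^k) \subseteq P^{k+1}$ for every $\phi \in \mm$; the base case $k = 0$ is exactly $\mm P \subseteq P^1$. For the inductive step, assume $\phi(P^{k-1}) \subseteq P^k$. Then $\phi(P^k) \subseteq \phi(P^{k-1}) \subseteq P^k$, so $\phi$ restricts to a map $\phi|_{P^k} \colon P^k \to P^k$. For any $\psi \in \Hom_\dualcat(P^k, Q)$, the composite $\psi \circ \phi|_{P^{k-1}} \colon P^{k-1} \to Q$ is well defined by the inductive hypothesis, and thus lies in $\Hom_\dualcat(P^{k-1}, Q)$; since $P^k = \bigcap_{\psi' \in \Hom_\dualcat(P^{k-1}, Q)} \Ker \psi'$, this composite must vanish on $P^k$, giving $\psi(\phi(P^k)) = 0$. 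Intersecting over all such $\psi$ yields $\phi(P^k) \subseteq P^{k+1}$.

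The lemma then follows by iteration: for $\phi_1, \ldots, \phi_n \in \mm$ one has $\phi_n(P) \subseteq P^1$, then $\phi_{n-1}\phi_n(P) \subseteq \phi_{n-1}(P^1) \subseteq P^2$, and continuing gives $\phi_1 \cdots \phi_n(P) \subseteq P^n$; summing over all such products yields $\mm^n P \subseteq P^n$. The main obstacle I anticipate is the inductive step: one must recognise that $\phi|_{P^{k-1}}$, supplied by the inductive hypothesis, is precisely what extends $\psi \circ \phi|_{P^k}$ from $P^k$ to an element of $\Hom_\dualcat(P^{k-1}, Q)$; once this is observed, the defining property of $P^k$ as an intersection of kernels of maps out of $P^{k-1}$ finishes the argument.
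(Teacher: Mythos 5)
Your proof is correct. The argument for $\mm P = P^1$ is essentially the paper's: to show $\mm P \subseteq P^1$ you observe that $\theta \circ \phi \in \Hom_{\dualcat}(P,Q)$ must vanish (the paper shortcuts by noting that $\theta\circ\phi$ maps $P$ into $R=\rad Q$ and invoking $\Hom_{\dualcat}(P,R)=0$ directly, whereas you establish the same vanishing en route through the one-dimensionality of $\Hom_{\dualcat}(P,Q)$); the converse $P^1\subseteq\mm P$ via the surjection $P^{\oplus d}\twoheadrightarrow P^1$ coming from \eqref{realhead}, with the observation that the components lie in $\mm$, is identical to the paper's final paragraph.

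For the general inclusion $\mm^n P\subseteq P^n$ your induction genuinely differs from the paper's. The paper fixes $n$ and inducts on $i\le n$, proving that $\psi\circ\phi=0$ for all $\psi\in\Hom_{\dualcat}(P,P/P^i)$ and $\phi\in\mm^n$ by writing $\phi$ as a sum of compositions $\phi_1\circ\phi_2$ with $\phi_1\in\mm^{n-1}$, $\phi_2\in\mm$, and chasing the short exact sequence \eqref{sequenceyeah}; read carefully this is really an induction on the pair $(i,n)$, since the inductive step invokes the hypothesis with $\phi_1\in\mm^{n-1}$. Your route instead isolates the filtration-shifting property $\phi(P^k)\subseteq P^{k+1}$ for a single $\phi\in\mm$, proved by a clean single induction on $k$ using only the characterization $P^k=\bigcap_{\chi\in\Hom_{\dualcat}(P^{k-1},Q)}\Ker\chi$ and the observation that the inductive hypothesis lets $\phi$ restrict to $P^k$; then you iterate over products $\phi_1\cdots\phi_n$. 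This buys tidier bookkeeping (no simultaneous tracking of $i$ versus $n$ versus which power of $\mm$ is in play) and makes the underlying structural fact explicit — that each element of $\mm$ pushes the filtration one step deeper — which is arguably the cleaner conceptual formulation. Both arguments are valid.
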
 
\begin{proof} Recall that $\kappa:P\twoheadrightarrow S$ factors through $\theta: P\twoheadrightarrow Q$. If $\phi\in \mm$ then 
$\theta\circ \phi$ maps $P$ to $R=\rad Q$.  Since $\Hom_{\dualcat}(P, R)=0$, we obtain
$\theta\circ \phi=0$. Thus 
\begin{equation}\label{mtheta}
\mm=\{\phi\in E: \theta\circ \phi=0\}.
\end{equation}
Hence, $\mm P\subseteq P^1=\ker \theta$. We fix $n\ge 1$ and we claim that if  $i\le n$ then for all 
 $\psi\in \Hom_{\dualcat}(P, P/P^i)$ and $\phi\in \mm^n$ we have $\psi\circ \phi=0$. The claim 
applied to the natural map $P\rightarrow P/P^n$ implies $\mm^n P\subseteq P^n$.
We argue by induction on $i$. If $i=1$ then $P/P^1\cong Q$, $\Hom_{\dualcat}(P, Q)$ is $1$-di\-men\-sio\-nal, spanned by $\theta$, and 
so the claim follows from \eqref{mtheta}. In general we have an exact sequence:
\begin{equation}\label{sequenceyeah}
0\rightarrow \Hom_{\dualcat}(P, P^{i-1}/P^{i})\rightarrow \Hom_{\dualcat}(P, P/P^{i})\rightarrow \Hom_{\dualcat}(P, P/P^{i-1})\rightarrow 0.
\end{equation}
Let $\psi\in \Hom_{\dualcat}(P, P/P^{i})$, $\phi_1\in\mm^{n-1}$ and  $\phi_2\in \mm$. The image of $\psi\circ \phi_1$ in 
$\Hom_{\dualcat}(P, P/P^{i-1})$ is zero 
by the induction hypothesis. Hence, $\psi\circ \phi_1$ induces a map from $P$ to $P^{i-1}/P^i\cong Q^{\oplus d}$. Hence, 
$\psi\circ\phi_1\circ \phi_2=0$, as $\phi_2\in \mm$. Now any $\phi\in \mm^n$ can be written as a linear combination of $\phi_1\circ \phi_2$ 
with $\phi_1\in\mm^{n-1}$ and $\phi_2\in \mm$. Hence, $\psi\circ\phi=0$. 

We know, see \eqref{realhead}, that $P^1/\rad P^1 \cong S^{\oplus d}$. Hence, there exists a surjection 
$P^{\oplus d}\twoheadrightarrow P^1$. For $1\le i\le d$ let $X_i: P\rightarrow P^{\oplus d}\rightarrow P^1\hookrightarrow P$ 
denote the composition, where the first map is inclusion to the $i$-th component. Then $X_i\in E$ and $\kappa\circ X_i=0$. 
So $X_i\in \mm$ and $P^1=\sum_{i=1}^d X_i P\subseteq \mm P$.  
\end{proof}

\begin{prop}\label{filtdone} For $n\ge 0$ we have:
\begin{itemize}
\item[(i)] $\mm^n P=P^n$;
\item[(ii)] the natural map  $\mm^n \rightarrow \Hom_{\dualcat}(P, \mm^nP)$ is an isomorphism;
\item[(iii)] $\dim \mm^{n+1}/\mm^{n+2}=\dim \Hom_{\dualcat}(P^{n+1}, S)=\dim \Ext^1_{\dualcat}(P/P^{n+1}, S)$;
\item[(iv)] the natural map $\mm^n/\mm^{n+1}\wtimes_E P\rightarrow \mm^n P/\mm^{n+1}P$ is an isomorphism. 
\end{itemize}
 \end{prop}
\begin{proof} We prove (i) and (ii) by induction on $n$, and obtain (iii) and (iv) as by-products
of the proof. We note (i) and (ii) hold trivially for $n=0$. Suppose that (i) and (ii) hold for $n$.
Let $d:= \dim \Hom_{\dualcat}(P^n, S)$ then $P^n/\rad P^n \cong S^{\oplus d}$, see \eqref{realhead}. Since 
$\mm^n P=P^n$ we get a surjection $P^{\oplus d} \twoheadrightarrow \mm^n P$. For $1\le i\le d$ let 
$$X_i: P\rightarrow P^{\oplus d}\rightarrow \mm^nP\hookrightarrow P$$ 
denote the composition, where the first map is the inclusion to the $i$-th component. Then $X_i\in E$ and 
(ii) implies that $X_i\in\mm^n$. Suppose that $\phi\in \mm$ then $X_i\circ\phi \in \mm^{n+1}$ so the surjection 
$P^{\oplus d}\twoheadrightarrow \mm^nP\twoheadrightarrow \mm^n P/ \mm^{n+1}P$ factors through 
\begin{equation}\label{1surj}
 Q^{\oplus d}\cong (P/\mm P)^{\oplus d}\twoheadrightarrow \mm^n P/\mm^{n+1}P
\end{equation}
where the first isomorphism follows from Lemma \ref{inclusion}. On the other hand Lemma \ref{inclusion} 
gives $\mm^{n+1}P\subseteq P^{n+1}$ and since $\mm^n P=P^n$ we have a surjection
\begin{equation}\label{2surj}
 \mm^n P/\mm^{n+1}P\twoheadrightarrow P^n/P^{n+1}\cong Q^{\oplus d}
\end{equation}
where the last isomorphism is \eqref{pn1}. Since $Q$ is of finite length the composition of \eqref{1surj}
and \eqref{2surj} is an isomorphism. Thus $\mm^n P/\mm^{n+1}P\cong P^n/P^{n+1}$ and since 
$\mm^nP= P^n$ we get $\mm^{n+1}P=P^{n+1}$.

It remains to show that the map $\mm^{n+1}\rightarrow \Hom_{\dualcat}(P, \mm^{n+1} P)$ is an isomorphism. We 
apply $\Hom_{\dualcat}(P,\ast)$ to the surjection $P^{\oplus d}\twoheadrightarrow \mm^n P$ to obtain a surjection:
\begin{equation}\label{mnfingen}
E^{\oplus d}\twoheadrightarrow \Hom_{\dualcat}(P, \mm^n P)\cong \mm^n,
\end{equation}
where $(\phi_1,\ldots, \phi_d)\mapsto \sum_{i=1}^d X_i\circ \phi_i$. So every $\psi\in \mm^n$ may be written as
$\psi=\sum_{i=1}^d X_i\circ \phi_i$, with $\phi_i\in E$. Let $\lambda_i$ be the image of $\phi_i$ in $E/\mm\cong k$, 
then $\phi_i-\lambda_i\in \mm$ and so $\psi\in \sum_{i=1}^d \lambda_i X_i +\mm^{n+1}$. Hence, $\dim \mm^n/\mm^{n+1}\le d$.
We apply  $\Hom_{\dualcat}(P,\ast)$ to the surjection $\mm^n P\twoheadrightarrow \mm^n P/\mm^{n+1} P$ to obtain a surjection:
\begin{equation}\label{3surj}
\mm^n\twoheadrightarrow \Hom_{\dualcat}(P,\mm^nP)\twoheadrightarrow \Hom_{\dualcat}(P, \mm^n P/\mm^{n+1}P).
\end{equation}
Now $\mm^nP /\mm^{n+1}P\cong Q^{\oplus d}$ and so $\dim  \Hom_{\dualcat}(P, \mm^n P/\mm^{n+1}P)=d$. The composition 
in \eqref{3surj} factors through $\mm^n/\mm^{n+1}\twoheadrightarrow \Hom_{\dualcat}(P, \mm^n P/\mm^{n+1}P)$. So 
$\dim \mm^n/\mm^{n+1}\ge d$. Hence, $\dim  \mm^n/\mm^{n+1}= d$ and the surjection is an isomorphism. The commutative diagram 
with exact rows:
 \begin{displaymath}
\xymatrix@1{ 0\ar[r] & \mm^{n+1}\ar[d]\ar[r] & \mm^n \ar[r]\ar[d]^{\cong} & \mm^n/\mm^{n+1}\ar[r]\ar[d]^{\cong} & 0\\
             0 \ar[r]& \Hom(P, \mm^{n+1} P)\ar[r]&\Hom(P, \mm^n P)\ar[r] &  \Hom(P,\mm^nP/\mm^{n+1}P)\ar[r] & 0.}
\end{displaymath}
implies that $\mm^{n+1}\rightarrow \Hom_{\dualcat}(P, \mm^{n+1} P)$ is an isomorphism. We have shown that the image 
of $\{X_1, \ldots, X_d\}$ in $\mm^n/\mm^{n+1}$ is a basis of $\mm^n/\mm^{n+1}$ as a $k$-vector space. Thus \eqref{1surj}
may be interpreted as an isomorphism $\mm^n/\mm^{n+1}\wtimes_E P\overset{\cong}{\rightarrow} \mm^n P/\mm^{n+1} P$, which 
proves (iv).
\end{proof}

\begin{cor}\label{mnfingen1} The ideals $\mm^n$ are finitely generated  right $E$-modules.
\end{cor}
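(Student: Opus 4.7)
The plan is to simply extract and repackage what was already established inside the proof of Proposition \ref{filtdone}. There, in order to compute $\dim \mm^n/\mm^{n+1}$, one produced elements $X_1, \ldots, X_d \in \mm^n$ such that the map
\begin{equation*}
E^{\oplus d} \to \mm^n, \qquad (\phi_1, \ldots, \phi_d) \mapsto \sum_{i=1}^d X_i \circ \phi_i
\end{equation*}
is surjective, see \eqref{mnfingen}. This surjection already exhibits $\mm^n$ as a finitely generated right $E$-module, provided we know that $d$ is finite.

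So the only thing to check is the finiteness of $d = \dim \Hom_{\dualcat}(P^n, S)$. I would argue by induction on $n$. For $n=0$ this is $\dim \Hom_{\dualcat}(P, S) = 1$. Assuming the claim for $n$, the filtration gives $P^i/P^{i+1} \cong Q^{\oplus n_i}$ with $n_i$ finite for $i < n$, so $P/P^n$ is an object of finite length built from finitely many copies of $Q$. Applying Lemma \ref{T} identifies
\begin{equation*}
\Hom_{\dualcat}(P^n, S) \cong \Ext^1_{\dualcat}(P/P^n, S),
\end{equation*}
where the isomorphism comes from the long exact sequence associated to $0 \to P^n \to P \to P/P^n \to 0$ together with projectivity of $P$. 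By devissage along the filtration of $P/P^n$ and hypothesis (H4), the right-hand side is finite dimensional, completing the induction.

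Hence $\mm^n$ is a quotient of $E^{\oplus d}$ with $d < \infty$, which proves the corollary. There is no real obstacle here; the content was essentially already present in the previous proposition, and this corollary merely records the finite generation that falls out of the construction of the generators $X_i$.
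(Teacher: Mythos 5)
Your proof is correct and takes essentially the same route as the paper, which simply says "this follows from \eqref{mnfingen}"; the surjection $E^{\oplus d}\twoheadrightarrow \mm^n$, $(\phi_1,\dots,\phi_d)\mapsto\sum X_i\circ\phi_i$, is visibly right-$E$-linear, so finite generation is immediate. Your extra verification that $d=\dim\Hom_{\dualcat}(P^n,S)$ is finite is harmless but redundant: that finiteness was already established in the lemma constructing the filtration $P^{\bullet}$, by exactly the devissage argument via Lemma~\ref{T} and (H4) that you reproduce.
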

\begin{proof} This follows from \eqref{mnfingen}.
\end{proof}

\begin{cor}\label{Emnetc} We have an isomorphism of $\OO$-modules:
\begin{equation}\label{isomo1}
\Hom_{\dualcat}(P/\mm^n P, P/\mm^n P)\cong \Hom_{\dualcat}(P, P/\mm^n P)
\end{equation}
and an isomorphism of rings:
\begin{equation}\label{isomo2}
E/\mm^n\cong \End_{\dualcat}(P/\mm^n P).
\end{equation}
\end{cor}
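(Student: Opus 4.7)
The plan is to exploit the fact that $\mm$ (and hence every power $\mm^n$) is a two-sided ideal of $E$, together with the projectivity of $P$, to deduce both isomorphisms directly from Proposition \ref{filtdone}.

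First I would establish that every endomorphism $\tilde\phi \in E$ preserves the subobject $\mm^n P$ of $P$. Since $\mm^n P$ is (the image of) the set of finite sums $\sum f_i(v_i)$ with $f_i \in \mm^n$ and $v_i \in P$, and $\tilde\phi(f_i(v_i)) = (\tilde\phi \circ f_i)(v_i)$, the two-sidedness $E \cdot \mm^n \subseteq \mm^n$ shows $\tilde\phi \circ f_i \in \mm^n$, so $\tilde\phi(\mm^n P) \subseteq \mm^n P$.

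For the first isomorphism, let $q: P \twoheadrightarrow P/\mm^n P$ denote the quotient. Applying $\Hom_{\dualcat}(-, P/\mm^n P)$ to $q$ yields an injection $\Hom_{\dualcat}(P/\mm^n P, P/\mm^n P) \hookrightarrow \Hom_{\dualcat}(P, P/\mm^n P)$. For surjectivity, given $\phi: P \to P/\mm^n P$, projectivity of $P$ provides a lift $\tilde\phi \in E$ with $q \circ \tilde\phi = \phi$; by the previous paragraph $\tilde\phi(\mm^n P) \subseteq \mm^n P$, so $\phi$ kills $\mm^n P$ and factors through $P/\mm^n P$. This gives the isomorphism \eqref{isomo1}.

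For the second isomorphism, the same stability property produces a ring homomorphism $E \to \End_{\dualcat}(P/\mm^n P)$, $\tilde\phi \mapsto \overline{\tilde\phi}$. Its kernel consists of those $\tilde\phi \in E$ whose image lies in $\mm^n P$, i.e.\ $\Hom_{\dualcat}(P, \mm^n P)$, and Proposition \ref{filtdone}(ii) identifies this with $\mm^n$. Surjectivity follows by composing with \eqref{isomo1}: the composite $E \to \End_{\dualcat}(P/\mm^n P) \xrightarrow{\sim} \Hom_{\dualcat}(P, P/\mm^n P)$ is simply $\tilde\phi \mapsto q \circ \tilde\phi$, which is surjective because $P$ is projective and $q$ is surjective. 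Hence $E/\mm^n \cong \End_{\dualcat}(P/\mm^n P)$. I do not anticipate any substantive obstacle: everything is a formal consequence of the two-sidedness of $\mm^n$, the projectivity of $P$, and the content of Proposition \ref{filtdone}.
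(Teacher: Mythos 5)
Your argument is correct and follows essentially the same route as the paper: both proofs hinge on the observation that two-sidedness of $\mm^n$ forces every $\tilde\phi\in E$ to preserve $\mm^n P$, apply $\Hom_{\dualcat}(\ast,P/\mm^nP)$ and $\Hom_{\dualcat}(P,\ast)$ to the short exact sequence $0\to\mm^nP\to P\to P/\mm^nP\to 0$ using projectivity of $P$, and then invoke Proposition \ref{filtdone}(ii) to identify the kernel $\Hom_{\dualcat}(P,\mm^nP)$ with $\mm^n$. Your write-up merely spells out the surjectivity of the ring map and the kernel computation slightly more explicitly than the paper does.
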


\begin{proof} Application of $\Hom_{\dualcat}(P, \ast)$ and $\Hom_{\dualcat}(\ast, P/\mm^n P)$ to 
$0\rightarrow \mm^n P\rightarrow P\rightarrow P/\mm^nP\rightarrow 0$ gives exact sequences
\begin{equation}\label{stub1}  
0\rightarrow \Hom_{\dualcat}(P, \mm^n P)\rightarrow \Hom_{\dualcat}(P, P)\rightarrow \Hom_{\dualcat}(P, P/\mm^n P)\rightarrow 0
\end{equation}
\begin{equation}\label{stub2}
0\rightarrow \Hom_{\dualcat}(P/\mm^n P, P/\mm^n P)\rightarrow \Hom_{\dualcat}(P, P/\mm^n P)\rightarrow 
\Hom_{\dualcat}(\mm^n P,  P/\mm^n P)
\end{equation} 
Let $\phi\in \Hom_{\dualcat}(P, P/\mm^n P)$. We may lift it to $\tilde{\phi}\in E$ using \eqref{stub1}. Since $\mm^n$ is a 
two-sided ideal of $E$, we have $\tilde{\phi}(\mm^n P) \subseteq \mm^n P$. Hence,  $\phi$ maps to zero in \eqref{stub2}, 
which implies \eqref{isomo1}. The last assertion follows from Proposition \ref{filtdone} (ii). 
\end{proof}

\begin{cor}\label{Einvlim} We have $E\cong \underset{\longleftarrow}{\lim}\, E/\mm^n$. The $\mm$-adic topology on $E$ 
coincides with the natural one, defined in \S\ref{zerosec}.
\end{cor}
\begin{proof} Lemma \ref{Plimit} and Proposition \ref{filtdone} (i) imply that $P\cong \underset{\longleftarrow} \lim\, P/\mm^n P$.
Thus
\begin{equation}
 E\cong \Hom_{\dualcat}(P, \underset{\longleftarrow}{\lim}\, P/\mm^n P)\cong  \underset{\longleftarrow}{\lim}\, 
\Hom_{\dualcat}(P,P/\mm^n P)\cong \underset{\longleftarrow}{\lim}\, E/\mm^n
\end{equation}
where the last isomorphism follows from Corollary \ref{Emnetc}. It follows from Proposition \ref{filtdone} (i) and Lemma \ref{filtrationP}
that $P/\mm^n P$ is of finite length, hence the ideal $\mathfrak r(P/\mm^n P)$, defined in \eqref{defrM}, is an open ideal of $E$. 
It follows from Proposition \ref{filtdone} (iii), that $\mathfrak r(P/\mm^n P)=\mm^n$. Conversely, if $\mathfrak r$ is an open ideal 
of $E$ then, $E/\mathfrak r$ is an $E$-module of finite length, and so will be annihilated by some power of $\mm$, which implies that 
$\rr$ is open in the $\mm$-adic topology. Hence the two topologies coincide. 
\end{proof}

\begin{cor}\label{PisEflat} The functor $\wtimes_E P$ is exact. 
\end{cor}
\begin{proof} We will show that if  $0\rightarrow \md_1\rightarrow \md_2\rightarrow \md_3\rightarrow 0$ is an exact sequence
of right pseudo-compact $E$-modules then $0\rightarrow \md_1\wtimes_E P\rightarrow \md_2\wtimes_E P\rightarrow \md_3\wtimes_E P\rightarrow 0$
is an exact sequence in $\dualcat$. Since projective limits commute with the completed tensor product and are exact in $\dualcat$, 
we may assume that $\md_1$, $\md_2$ and $\md_3$ are of finite length. The functor $\wtimes_E P$ is right exact, let $\wTor^i_E(\ast, P)$ 
be the $i$-th left derived functor of $\wtimes_E P$. It is enough to show that $\wTor^1_E(k, P)=0$, since by devissage this implies that 
$\wTor^1_E(\md, P)=0$ for all pseudo-compact $E$-modules $\md$, which are of finite length. We apply $\wtimes_E P$ to 
the exact sequence $0\rightarrow \mm \rightarrow E \rightarrow k\rightarrow 0$ to  obtain an exact sequence:
\begin{equation}\label{Tor}
0\rightarrow \wTor^1_E(k, P)\rightarrow \mm\wtimes_E P\rightarrow P\rightarrow P/\mm P\rightarrow 0.
\end{equation}
It is enough to show that the natural map $\mm\wtimes_E P\rightarrow P$ is injective. 
Proposition \ref{filtdone} (iv) says that the natural map $\mm^n/\mm^{n+1}\wtimes_E P\rightarrow \mm^n P/\mm^{n+1} P$ is an isomorphism 
for all $n\ge 0$. By devissage, we obtain that the natural map $\mm/\mm^n \wtimes_E P\rightarrow \mm P/\mm^n P$ is an isomorphism.
Passing to the limit we obtain that the natural map $\mm\wtimes_E P\rightarrow \mm P$ is an isomorphism. 
\end{proof}

We will refer to the result of the Corollary \ref{PisEflat} as ``$P$ is $E$-flat''.  

\begin{cor}\label{flatA} Let $\varphi: E\rightarrow A$ be a map of pseudo-compact rings,
which makes $A$ into a pseudo-compact $E$-module then $A\wtimes_{E, \varphi} P$ is 
$A$-flat.
\end{cor}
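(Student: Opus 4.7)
My plan is to reduce flatness of $A\wtimes_{E,\varphi} P$ to flatness of $P$ itself as a topological $E$-module, and then exploit the topological freeness established in Corollary \ref{topfree}.

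First, I would show that $P$ is flat over $E$ in the pseudo-compact sense, i.e.\ that $\wtimes_E P$ is an exact functor on pseudo-compact right $E$-modules. By Corollary \ref{topfree} there is an isomorphism $P \cong \prod_{i\in I} E$ of topological $E$-modules. The functor $\wtimes_E P$ is right exact and, as already cited in the proof of the earlier lemma about $\md\wtimes_E P$ (referencing \cite{SGA3} and \cite{bru}), it commutes with direct products. Thus for a pseudo-compact right $E$-module $M$ we have a natural identification $M\wtimes_E P \cong \prod_{i\in I} M$. Given a short exact sequence $0\to M'\to M''\to M'''\to 0$ of pseudo-compact right $E$-modules, applying $\wtimes_E P$ yields the sequence $0\to \prod_I M'\to \prod_I M''\to \prod_I M'''\to 0$, which is exact because arbitrary products are an exact functor on pseudo-compact modules. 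Hence $\wtimes_E P$ is exact.

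Second, I would transfer this to flatness over $A$ via associativity of the completed tensor product. For any pseudo-compact right $A$-module $M$, viewing $M$ as a right $E$-module via $\varphi$ gives a natural isomorphism
\begin{equation*}
M\wtimes_A (A\wtimes_{E,\varphi} P) \cong M\wtimes_{E,\varphi} P.
\end{equation*}
By the first step the right-hand side is an exact functor in $M$, so the left-hand side is as well, proving $A\wtimes_{E,\varphi} P$ is $A$-flat.

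The only genuinely delicate point is the first step: verifying that the identification $M\wtimes_E P \cong \prod_{i\in I} M$ really is natural and that exactness of products in the category of pseudo-compact $\OO$-modules applies, but both are standard consequences of the cited commutativity of $\wtimes_E$ with direct products together with the fact that pseudo-compact modules form an abelian category in which products are exact. Everything else is essentially formal manipulation, so I do not expect any substantial obstacle beyond citing the right structural facts.
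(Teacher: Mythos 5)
Your argument is correct and reaches the paper's conclusion, but by a slightly longer route. The paper's own proof is a one-liner: from $P\cong\prod_{i\in I}E$ (Corollary \ref{topfree}) and commutativity of $\wtimes_E$ with products one has $A\wtimes_{E}P\cong\prod_{i\in I}A$, which is topologically free as an $A$-module and hence $A$-flat by the cited facts from \cite[Exp.\ $VII_B$, 0.3.6, 0.3.8]{SGA3}. You instead first establish that $P$ is flat as a pseudo-compact $E$-module (i.e.\ $\wtimes_E P$ is exact), and then transfer this to $A$-flatness of $A\wtimes_E P$ via associativity/base change of $\wtimes$. Both routes rest on exactly the same ingredient, namely $P\cong\prod_I E$ together with commutativity of the completed tensor with direct products; yours simply re-derives the ``topologically free implies flat'' step by hand inside the argument, while the paper outsources it to SGA3. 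There is nothing wrong with that, though the paper's version is more economical and records the topological freeness of $A\wtimes_E P$ itself, which is a slightly stronger and reusable statement.

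One small caveat worth fixing: the identification $M\wtimes_E P\cong\prod_{i\in I}M$ does not actually follow from the version of product-commutativity you quote, which is that $(-)\wtimes_E P$ commutes with products in the $M$-variable, i.e.\ $(\prod_j M_j)\wtimes_E P\cong\prod_j(M_j\wtimes_E P)$. What you need is commutativity in the other variable, $M\wtimes_E(\prod_{i\in I}E)\cong\prod_{i\in I}(M\wtimes_E E)$, applied to the isomorphism $P\cong\prod_I E$. The completed tensor product of pseudo-compact modules does commute with direct products in both variables (this is what the references to \cite{SGA3} and \cite{bru} give), so the step is fine, but you should invoke the correct form rather than the one used earlier for $\md\wtimes_E P$.
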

\begin{proof} Since $A$ is a pseudo-compact $E$-module, every pseudo-compact $A$-module 
$\md$ is also a pseudo-compact $E$-module via $\varphi$. The assertion follows from the 
isomorphism $\md \wtimes_A (A \wtimes_E P)\cong \md\wtimes_E P$ and Corollary \ref{PisEflat}.
\end{proof}

\begin{cor}\label{flatmodule} Let $\md$ be an $\OO$-torsion free, pseudo-compact $E$-module. Then $\md\wtimes_{E} P$ is $\OO$-torsion 
free.
\end{cor}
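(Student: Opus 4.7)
The plan is to reduce to the explicit description of $P$ given in Corollary~\ref{topfree}, which provides a topological isomorphism of $E$-modules $P \cong \prod_{i\in I} E$. The key claim I would establish is the computation
\begin{equation}
\md \wtimes_E P \;\cong\; \md \wtimes_E \prod_{i\in I} E \;\cong\; \prod_{i\in I} \md.
\end{equation}
Once this identification is in place, the corollary is immediate: by hypothesis multiplication by $\varpi$ is injective on $\md$, hence injective on each factor, and therefore injective on the product $\prod_{i\in I}\md$, which is what we want.

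To justify the displayed isomorphism I would argue directly from the definition of $\wtimes_E$. A cofinal system of open submodules of $P = \prod_{i\in I}E$ is given by
\begin{equation}
P_{J,\underline n} \;=\; \prod_{i\notin J} E \;\times\; \prod_{i\in J} \mm^{n_i},
\end{equation}
where $J\subseteq I$ is a finite subset and $\underline n = (n_i)_{i\in J}$ is a tuple of positive integers. The quotient $P/P_{J,\underline n} \cong \prod_{i\in J} E/\mm^{n_i}$ is a \emph{finite} product of $E$-modules of finite length, so ordinary distributivity of $\otimes_E$ over finite products yields
\begin{equation}
(\md/\md_\alpha) \otimes_E (P/P_{J,\underline n}) \;\cong\; \prod_{i\in J}(\md/\md_\alpha)\otimes_E E/\mm^{n_i},
\end{equation}
for any open $E$-submodule $\md_\alpha$ of $\md$. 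Passing to the inverse limit first in $\alpha$ (using $\md\cong \underset{\longleftarrow}{\lim}\,\md/\md_\alpha$ together with the fact that $\mm^{n_i}$ is a finitely generated right $E$-module by Corollary~\ref{mnfingen1}, so that completed and ordinary tensor products agree on $E/\mm^{n_i}$) turns each factor into $\md/\md\mm^{n_i}$, and then passing to the limit in $(J,\underline n)$ recovers $\prod_{i\in I}\md$.

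The only point requiring care is the interchange of the inverse limit with the finite product in the second step, which is routine since inverse limits commute with finite products, and the verification that the resulting system of neighbourhoods is cofinal in the topology of $\md\wtimes_E P$; both are bookkeeping rather than genuine obstacles. The same kind of calculation was already carried out implicitly in the proof of Corollary~\ref{flatA}, where the case $\md = A$ a pseudo-compact ring was treated, and the argument here is the module-theoretic analogue.
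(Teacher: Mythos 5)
Your argument is correct and is essentially the paper's own: the proof of Corollary \ref{flatmodule} simply says that the argument of Corollary \ref{flatA} (via $P\cong\prod_{i\in I}E$ from Corollary \ref{topfree} and the fact that $\wtimes_E$ commutes with products) gives $\md\wtimes_E P\cong\prod_{i\in I}\md\cong\prod_{j\in J}\OO$, which is $\OO$-torsion free. Your explicit cofinal-system computation just spells out this commutation with products (with the harmless imprecision that $\md/\md\mm^{n_i}$ should really be the quotient by the closure of $\md\mm^{n_i}$, which washes out in the final limit), and you conclude torsion-freeness directly from the product rather than via $\md\cong\prod_{j}\OO$.
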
 
\begin{proof} Since $\md$ is $\OO$-torsion free multiplication by $\varpi$ is injective. Since $\wtimes_E P$ is exact 
it remains injective.
\end{proof}

\begin{remar}
Let us point out a special case, where our results are particularly easy to prove, and which was the motivation for the 
formalism.  If $\Ext^1_{\dualcat}(S, S')=0$ for all irreducible $S'$ non-isomorphic to $S$, and 
$\Ext^1_{\dualcat}(S, S)$ is finite dimensional, then the hypotheses (H1)-(H5) are satsified with $Q=S$. 
The filtration in Lemma \ref{filtrationP} is simply the radical filtration, which 
is exhaustive, as by assumption $P$ can be written as projective limit taken over all the quotients of $P$ of finite length.
Hence, all the irreducible subquotients of $P$ are isomorphic to $S$. If $\md$ is a pseudo-compact $E$-module, then 
$\md\wtimes_E P$ is a quotient of $\prod_I P$ for some set $I$, thus all the irreducible subquotients 
of $\md\wtimes_E P$ are isomorphic to $S$. Let $\md_1\hookrightarrow \md_2$ be an injection of pseudo-compact $E$-modules, and 
let $K$ be the kernel of the induced map $\md_1\wtimes_E P\rightarrow \md_2\wtimes_E P$. All the irreducible subquotients of 
$K$ are isomorphic to $S$, but Lemma \ref{headS0} implies that $\Hom_{\dualcat}(P, K)=0$. Hence, $K$ is zero and $P$ is $E$-flat. 
\end{remar}

\subsection{Deformations}\label{def}

Let $\dualcat(\OO)$ be a full abelian subcategory of $\Mod^{\mathrm{pro\, aug}}_G(\OO)$ closed under 
direct products and subquotients in $\Mod^{\mathrm{pro\, aug}}_G(\OO)$. We further assume that 
every object $M$ of $\dualcat(\OO)$ can be written as $M\cong \underset{\longleftarrow}{\lim} \, M_i$, where the limit is taken over all the quotients of finite length. Let $\dualcat(k)$ be  a full subcategory 
of $\dualcat(\OO)$ consisting of the objects which are killed by $\varpi$. 

Let $S$ and $Q$ be as in the previous section with $\dualcat=\dualcat(k)$. We assume that hypotheses (H1)-(H5) are satisfied in 
$\dualcat=\dualcat(k)$. Let $P\twoheadrightarrow S$ be a projective envelope of $S$ in $\dualcat(k)$, $E=\End_{\dualcat(k)}(P)$ 
and $\mm$ the maximal ideal of $E$ defined by  \ref{Em}.

Let $\wP\twoheadrightarrow S$ be a projective envelope of $S$ in $\dualcat(\OO)$, 
$\wE:=\End_{\dualcat(\OO)}(\wP)$ and $\wm$ two sided ideal of $\wE$ defined by \ref{Em}.  
For every $M$ in $\dualcat(k)$ we have 
$\Hom_{\dualcat(\OO)}(\wP, M)\cong \Hom_{\dualcat(k)}(\wP/\varpi \wP, M)$ thus 
$\wP/\varpi \wP$ is projective in $\dualcat(k)$, and the map $\wP/\varpi \wP\rightarrow S$ 
is essential. Since projective envelopes are unique up to isomorphism, we obtain $P\cong \wP/\varpi \wP$. Thus we have an
exact sequence: 
\begin{equation}\label{tildeseq}
0\rightarrow \wP[\varpi]\rightarrow \wP\overset{\varpi}{\rightarrow}\wP\rightarrow P\rightarrow 0.
\end{equation}
Since $\wP$ is projective applying $\Hom_{\dualcat(\OO)}(\wP,\ast)$ to \eqref{tildeseq} we obtain an exact sequence 
\begin{equation}\label{tildeseq1}
0\rightarrow \Hom_{\dualcat(\OO)}(\wP, \wP[\varpi])\rightarrow \wE\overset{\varpi}{\rightarrow}\wE\rightarrow E\rightarrow 0.
\end{equation}
\begin{lem}\label{lemok0} Let $A$ and $B$ be objects of $\dualcat(k)$  then there exists an exact sequence
\begin{equation}
0\rightarrow \Ext^1_{\dualcat(k)}(A, B) \rightarrow \Ext^1_{\dualcat(\OO)}(A,B)\rightarrow \Hom_{\dualcat(k)}(A,B)
\end{equation}
\end{lem}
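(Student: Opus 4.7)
My plan is to work directly with the Yoneda description of $\Ext^1$ as equivalence classes of length-one extensions, and to produce both maps together with the verification of exactness by hand. The ambient category $\dualcat(\OO)$ comes with its natural self-map ``multiplication by $\varpi$'' on every object, and $\dualcat(k)$ is precisely the full subcategory of objects on which this map is zero. This full subcategory is closed under subobjects and quotients in $\dualcat(\OO)$, so the inclusion is exact.

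\textbf{Construction of the two arrows.} Exactness of the inclusion implies that any short exact sequence in $\dualcat(k)$ remains one in $\dualcat(\OO)$, and equivalent extensions stay equivalent; this gives the first map $\alpha\colon \Ext^1_{\dualcat(k)}(A,B)\to \Ext^1_{\dualcat(\OO)}(A,B)$. For the second map, take an extension $e\colon 0\to B\xrightarrow{i} X\xrightarrow{p} A\to 0$ in $\dualcat(\OO)$. Since $\varpi A=0$, multiplication by $\varpi$ on $X$ lands in $i(B)$, and since $\varpi B=0$, it vanishes on $i(B)$; hence $\varpi_X=i\circ\bar\varpi_e\circ p$ for a unique $\bar\varpi_e\in\Hom_{\dualcat(\OO)}(A,B)$, and by fullness this lies in $\Hom_{\dualcat(k)}(A,B)$. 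Defining $\beta(e):=\bar\varpi_e$ produces the second map, provided it depends only on the equivalence class of $e$.

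\textbf{Exactness.} Injectivity of $\alpha$: given a splitting $s\colon A\to X$ in $\dualcat(\OO)$ of an extension lying in $\dualcat(k)$, fullness of $\dualcat(k)\hookrightarrow \dualcat(\OO)$ means $s$ is already a morphism in $\dualcat(k)$, so the extension splits there. For $\beta\circ\alpha=0$: if $X$ lies in $\dualcat(k)$ then $\varpi_X=0$, and the uniqueness clause above forces $\bar\varpi_e=0$. Conversely, if $\bar\varpi_e=0$ then $\varpi_X = i\circ 0\circ p=0$, so $X$ is killed by $\varpi$ and hence lies in $\dualcat(k)$; the extension $e$ is then in the image of $\alpha$, finishing exactness in the middle.

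\textbf{The main (mild) obstacle.} The only point that is not immediate is the well-definedness of $\beta$ on equivalence classes. If $f\colon X\to X'$ is an isomorphism of extensions with $f\circ i=i'$ and $p'\circ f=p$, then using that $f$ is $\OO$-linear (so commutes with multiplication by $\varpi$) and the uniqueness in the factorisation $\varpi_{X'}=i'\circ\bar\varpi_{e'}\circ p'$, one obtains $i'\circ\bar\varpi_{e'}\circ p = f\circ\varpi_X = i'\circ\bar\varpi_e\circ p$ and hence $\bar\varpi_e=\bar\varpi_{e'}$. Additivity of $\beta$ on Baer sums follows from the corresponding additivity of $X\mapsto\varpi_X$, so no further work is needed.
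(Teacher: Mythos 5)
Your proof is correct and takes essentially the same route as the paper: the map $\beta$ you construct from the factorisation $\varpi_X = i\circ\bar\varpi_e\circ p$ is precisely the connecting homomorphism $\partial\colon A[\varpi]\to B/\varpi B$ (under the identifications $A=A[\varpi]$, $B=B/\varpi B$) that the paper obtains from the snake lemma applied to multiplication by $\varpi$ on the extension. You spell out the well-definedness and exactness checks the paper leaves implicit, but the underlying idea is the same.
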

\begin{proof} Let   
 $0\rightarrow B \rightarrow C \rightarrow A\rightarrow 0$ be an extension in $\dualcat(\OO)$. Multiplication 
by $\varpi$ induces an exact sequence 
\begin{equation}
0\rightarrow B[\varpi]\rightarrow C[\varpi] \rightarrow A[\varpi]\overset{\partial}{\rightarrow} B/\varpi B.
\end{equation} 
Since $A$ and $B$ are in $\dualcat(k)$ we have $B=B[\varpi]=B/\varpi B$ and $A=A[\varpi]$ 
 so  $\partial$ defines an element of $\Hom_{\dualcat(k)}(A, B)$, which  
depends only on the  class of the extension in $\Ext^1_{\dualcat(\OO)}(A, B)$. Now $\partial=0$ if and only if $C=C[\varpi]$, which 
means if and only if the extension lies in $\dualcat(k)$. 
\end{proof}

We note that since $\dualcat(k)$ is a full subcategory of $\dualcat(\OO)$, (H1) and (H2) for $\dualcat(k)$ trivially 
imply  (H1) and (H2) for $\dualcat(\OO)$. It follows from the  Nakayama's lemma that $\dualcat(k)$ and $\dualcat(\OO)$ have the same irreducible objects. Further, it follows  from Lemma \ref{lemok0} and (H1) for $\dualcat(k)$ that 
(H3) and (H4) for $\dualcat(k)$ imply (H3) and (H4) for $\dualcat(\OO)$.

\begin{prop}\label{H00} Suppose that the following hypothesis holds: 
\begin{itemize} 
\item[(H0)] $\Hom_{\dualcat(\OO)}(\wP[\varpi], \rad Q)=0$.
\end{itemize}  
then (H5) for $\dualcat(k)$ implies (H5) for $\dualcat(\OO)$. 
\end{prop}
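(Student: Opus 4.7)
The plan is to reduce the vanishing of $\Ext^2_{\dualcat(\OO)}(Q, R)$ to two computations that are accessible through the projective cover $\wP$. Since $\wP$ is projective in $\dualcat(\OO)$ and the surjection $P \twoheadrightarrow Q$ lifts to a surjection $\wP \twoheadrightarrow Q$, setting $\widetilde{K} := \ker(\wP \twoheadrightarrow Q)$ the long exact sequence yields $\Ext^2_{\dualcat(\OO)}(Q, R) \cong \Ext^1_{\dualcat(\OO)}(\widetilde{K}, R)$. Since $\widetilde{K}/\varpi\wP \cong K := \ker(P \twoheadrightarrow Q)$, there is a natural short exact sequence $0 \to \varpi\wP \to \widetilde{K} \to K \to 0$, and applying $\Hom_{\dualcat(\OO)}(-, R)$ reduces the proof to showing (i) $\Ext^1_{\dualcat(\OO)}(\varpi\wP, R) = 0$ and (ii) the connecting map $\delta \colon \Hom_{\dualcat(\OO)}(\varpi\wP, R) \to \Ext^1_{\dualcat(\OO)}(K, R)$ is surjective.

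Point (i) is where (H0) enters. From the short exact sequence $0 \to \wP[\varpi] \to \wP \to \varpi\wP \to 0$ and the projectivity of $\wP$ I identify $\Ext^1_{\dualcat(\OO)}(\varpi\wP, R)$ with the cokernel of $\Hom_{\dualcat(\OO)}(\wP, R) \to \Hom_{\dualcat(\OO)}(\wP[\varpi], R)$, and by (H0) the target vanishes.

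The main obstacle is (ii). By Lemma \ref{lemok0} applied to $K, R \in \dualcat(k)$, combined with $\Ext^1_{\dualcat(k)}(K, R) \cong \Ext^2_{\dualcat(k)}(Q, R) = 0$ (the first isomorphism from the projectivity of $P$ in $\dualcat(k)$ and the second from (H5) for $\dualcat(k)$), the natural map $\Ext^1_{\dualcat(\OO)}(K, R) \hookrightarrow \Hom_{\dualcat(k)}(K, R)$ is injective. Given a class $\xi$ represented by $0 \to R \to X \to K \to 0$ in $\dualcat(\OO)$, I form the pushout $X' := X \cup_K P$, producing an extension $0 \to R \to X' \to P \to 0$ in $\dualcat(\OO)$; projectivity of $\wP$ furnishes a lift $\alpha \colon \wP \to X'$ of $\wP \twoheadrightarrow P$, and since the composite $\varpi\wP \hookrightarrow \wP \xrightarrow{\alpha} X' \twoheadrightarrow P$ is zero, the restriction $\phi := \alpha|_{\varpi\wP}$ defines an element of $\Hom_{\dualcat(\OO)}(\varpi\wP, R)$. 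A direct unwinding of the boundary map in Lemma \ref{lemok0} shows that the images of $\xi$ and of $\delta\phi$ in $\Hom_{\dualcat(k)}(K, R)$ both equal the function $k \mapsto \phi(\varpi \tilde k)$, where $\tilde k$ is any lift of $k$ to $\widetilde{K}$. The injectivity established above then forces $\xi = \delta\phi$, so $\delta$ is surjective and therefore $\Ext^2_{\dualcat(\OO)}(Q, R) = 0$.
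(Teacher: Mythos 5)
Your reduction of $\Ext^2_{\dualcat(\OO)}(Q,R)$ to $\Ext^1_{\dualcat(\OO)}(\widetilde K, R)$ via the projectivity of $\wP$ is clean, and your handling of point (i) is correct: the identification of $\Ext^1_{\dualcat(\OO)}(\varpi\wP,R)$ with the cokernel of $\Hom_{\dualcat(\OO)}(\wP,R)\to\Hom_{\dualcat(\OO)}(\wP[\varpi],R)$ is exactly right, and (H0) kills the target. The trouble is entirely in point (ii).

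The object $X'=X\cup_K P$ is not well-defined: there is no natural morphism $K\to X$ (you only have the surjection $X\twoheadrightarrow K$), so the pushout of $X\leftarrow K\to P$ cannot be formed. The natural reading of what you want --- an extension $0\to R\to X'\to P\to 0$ whose pullback along $K\hookrightarrow P$ recovers $\xi$ --- is exactly the statement that $\xi$ lies in the image of $\Ext^1_{\dualcat(\OO)}(P,R)\to\Ext^1_{\dualcat(\OO)}(K,R)$. But from $0\to K\to P\to Q\to 0$, once one knows $\Ext^1_{\dualcat(\OO)}(P,R)=\Ext^2_{\dualcat(\OO)}(P,R)=0$ (and this vanishing is precisely what the paper extracts from (H0) together with $\Hom_{\dualcat(\OO)}(\wP,R)=0$), the connecting map $\Ext^1_{\dualcat(\OO)}(K,R)\to\Ext^2_{\dualcat(\OO)}(Q,R)$ is an isomorphism; so $\xi$ extends over $P$ if and only if its image in $\Ext^2_{\dualcat(\OO)}(Q,R)$ vanishes, which is what you are trying to prove. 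The step is circular.

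What is missing is a direct vanishing statement, and it is available: by (H1) and (H2), $S$ is not a subquotient of $R=\rad Q$, so $\Hom_{\dualcat(k)}(P,R)=0$ by Lemma~\ref{mult=dim}; and since $K=\mm P$ is a quotient of $P^{\oplus d}$, this forces $\Hom_{\dualcat(k)}(K,R)=0$. Combined with the injection $\Ext^1_{\dualcat(\OO)}(K,R)\hookrightarrow\Hom_{\dualcat(k)}(K,R)$ that you correctly obtained from Lemma~\ref{lemok0} and (H5) for $\dualcat(k)$, this gives $\Ext^1_{\dualcat(\OO)}(K,R)=0$ outright. With that, your connecting map $\delta$ is surjective for free (the target is zero), and the pushout construction is superfluous. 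This is in essence the paper's argument, phrased with $K=\mm P$ rather than $\widetilde K$; the only genuine input beyond (H0) and (H5) for $\dualcat(k)$ is the observation $\Hom_{\dualcat(k)}(\mm P, R)=0$, and the proposal omits it.
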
 
\begin{proof} It follows from (H1) and (H2) that $S$ is not a subquotient of $R=\rad Q$. Thus  
$\Hom_{\dualcat(\OO)}(\wP, R)=\Hom_{\dualcat(k)}(P, R)=0$, by Lemma \ref{mult=dim}.
Since $\wP$ is projective  using \eqref{tildeseq} we get $\Ext^1_{\dualcat(\OO)}(P, R)=0$ and 
\begin{equation} 
\Hom_{\dualcat(\OO)}(\wP[\varpi], R)\cong \Ext^1_{\dualcat(\OO)}(\varpi \wP, R)\cong \Ext^2_{\dualcat(\OO)}(P, R).
\end{equation} 
Thus (H0) is equivalent to $\Ext^2_{\dualcat(\OO)}(P, R)=0$. 
We apply $\Hom_{\dualcat(\OO)}(\ast, R)$ to 
$0\rightarrow \mm P \rightarrow P\rightarrow Q\rightarrow 0$ to get an isomorphism 
\begin{equation}\label{reducetoext1} 
\Ext^1_{\dualcat(\OO)}(\mm P, R)\cong \Ext^2_{\dualcat(\OO)}(Q, R).
\end{equation}   
Since $P$ is projective in $\dualcat(k)$ we also have $\Ext^1_{\dualcat(k)}(\mm P, R)\cong \Ext^2_{\dualcat(k)}(Q, R)=0$ by 
(H5) for $\dualcat(k)$. Since $\mm P$ is a quotient of $P^{\oplus d}$ and $\Hom_{\dualcat(k)}(P, R)=0$, we 
get $\Hom_{\dualcat(k)}(\mm P, R)=0$. Lemma \ref{lemok0} implies  $\Ext^1_{\dualcat(\OO)}(\mm P, R)=0$ and the assertion follows from
\eqref{reducetoext1}.
\end{proof}
For the rest of the section we assume (H1)-(H5) for $\dualcat(k)$ and (H0). It follows from the Proposition 
that (H1)-(H5) also hold for $\dualcat(\OO)$. Hence, the results of \S\ref{first} apply to $\wP$, $\wE$ and $\wm$. 

\begin{remar} In the application to $G=\GL_2(\Qp)$ we will show that $\wP$ is in fact $\OO$-torsion free,
so (H0) will be satisfied. 
\end{remar}

\begin{defi}\label{deficatA} Let $\mathfrak A$ be the category of finite local (possibly non-com\-mu\-ta\-tive) artinian 
$\OO$-algebras $(A, \mm_A)$ such that 
the image of $\OO$ under the structure morphism $\sigma: \OO\rightarrow A$ lies in the centre of $A$, and 
$\sigma$ induces an isomorphism $\OO/\varpi \OO\cong A/\mm_A$.  We denote by $\mathfrak A^{ab}$ the full subcategory of 
$\mathfrak A$ consisting of commutative algebras.
\end{defi}

\begin{remar} The category $\mathfrak A$ contains genuinely non-commutative rings: for example, every group algebra of a  finite $p$-group 
over $\OO/(\varpi^n)$ is in $\mathfrak A$.
\end{remar}

We refer the reader to \cite[\S19]{lam} for basic facts about non-commutative local rings.
Let $\hA$ denote the category of local $\OO$-algebras $(R, \mm_R)$ such that for every $n\ge 1$, 
$R/\mm_R^n$ is an object of $\Aa$ and $R\cong \underset{\longleftarrow}{\lim}\, R/\mm^n_R$ and 
morphisms are given by $\Hom_{\hA}(R, S)=\underset{\longleftarrow}{\lim} \, \Hom_{\hA}(R, S/\mm^m_S)= \underset{\longleftarrow}{\lim} \, \Hom_{\Aa}(R/\mm_R^m, S/\mm^m_S)$, 
where the limit is taken over all $m\ge 1$.

\begin{defi}\label{defiDef} Let $(A, \mm_A)$ be an object of $\mathfrak A$. A deformation of $Q$ to $A$ is a 
pair $(M, \alpha)$, where  $M$   is an object of $\dualcat(\OO)$ together with the map of $\OO$-algebras $A\rightarrow \End_{\dualcat(\OO)}(M)$,
which makes $M$ into a flat $A$-module and $\alpha: k\wtimes_A M \cong Q$ is an isomorphism in $\dualcat(k)$.
\end{defi}
  
Let $(A, \mm_A)\in \mathfrak A$, let $n$ be the largest integer such that $\mm^n_A\neq 0$  and $(M, \alpha)$ a 
deformation of $Q$ to  $A$. We note that $A$ is finite (as a set). In particular, every finitely generated $A$-module 
$N$ is also finitely presented, and for such $N$  we have 
\begin{equation}\label{wtimeso}
N\wtimes_A M\cong N\otimes_A M.
\end{equation}

\begin{lem}\label{grading} For $0\le i\le n$ we have  
\begin{equation}\label{artin}
\mm^i_A/\mm^{i+1}_A \wtimes_A M\cong \mm^i_A M/\mm^{i+1}_A M \cong Q^{\oplus d_i},
\end{equation}
where $d_i= \dim  \mm^i_A/\mm^{i+1}_A$.
\end{lem}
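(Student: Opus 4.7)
\emph{Proof proposal.}

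The plan is to combine flatness of $M$ over $A$ with the finiteness of $A$ (so that the completed tensor product coincides with the ordinary one). First I would recall the observation \eqref{wtimeso}: because $A$ is finite, every finitely generated $A$-module is finitely presented, and hence the completed tensor product $(\ast)\wtimes_A M$ agrees with $(\ast)\otimes_A M$ on such modules. Applying this to the finitely generated $A$-module $\mm_A^i/\mm_A^{i+1}$ reduces both isomorphisms to statements about ordinary tensor products, which is the setting in which one has a chance to use flatness directly.

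Next I would establish the left isomorphism $\mm_A^i/\mm_A^{i+1}\otimes_A M\cong \mm_A^i M/\mm_A^{i+1}M$. Flatness of $M$ applied to the short exact sequence
\begin{equation*}
0\to \mm_A^{i+1}\to \mm_A^i\to \mm_A^i/\mm_A^{i+1}\to 0
\end{equation*}
gives that $\mm_A^j\otimes_A M\to M$ is injective with image $\mm_A^j M$ for $j=i,i+1$. The snake lemma then identifies $\mm_A^i/\mm_A^{i+1}\otimes_A M$ with $\mm_A^iM/\mm_A^{i+1}M$.

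For the right isomorphism, observe that $\mm_A^i/\mm_A^{i+1}$ is killed by $\mm_A$, so it is naturally a $k$-vector space of dimension $d_i$ by the definition of $d_i$. Writing $\mm_A^i/\mm_A^{i+1}\cong k^{\oplus d_i}$ as $k$-modules, one obtains
\begin{equation*}
(\mm_A^i/\mm_A^{i+1})\otimes_A M\cong (\mm_A^i/\mm_A^{i+1})\otimes_k(k\otimes_A M)\cong k^{\oplus d_i}\otimes_k Q\cong Q^{\oplus d_i},
\end{equation*}
where the identification $k\otimes_A M\cong Q$ uses the isomorphism $\alpha$ together with \eqref{wtimeso} applied to the finitely presented $A$-module $k$.

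I do not expect real obstacles here: all the tensor products involved are with finitely presented $A$-modules, so the distinction between $\otimes_A$ and $\wtimes_A$ evaporates, and the statement becomes a direct consequence of flatness and the definition of a deformation. The only mild point to check is that the identifications are compatible, i.e.\ that $k^{\oplus d_i}\otimes_k Q$ genuinely denotes a direct sum of $d_i$ copies of $Q$ inside $\dualcat(k)$; this holds because direct sums exist in $\dualcat(k)$ (being coproducts of quotients of $k$) and $k\otimes_k Q\cong Q$.
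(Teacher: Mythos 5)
Your proof is correct and follows essentially the same route as the paper: reduce $\wtimes_A$ to $\otimes_A$ via \eqref{wtimeso}, use flatness of $M$ over $A$ for the identification $\mm^i_A/\mm^{i+1}_A\otimes_A M\cong \mm^i_A M/\mm^{i+1}_A M$, and use $\mm^i_A/\mm^{i+1}_A\cong k^{\oplus d_i}$ together with $k\otimes_A M\cong Q$ for the second isomorphism. The only cosmetic difference is that you tensor $0\rightarrow \mm^{i+1}_A\rightarrow \mm^i_A\rightarrow \mm^i_A/\mm^{i+1}_A\rightarrow 0$ directly, whereas the paper tensors $0\rightarrow \mm^i_A/\mm^{i+1}_A\rightarrow A/\mm^{i+1}_A\rightarrow A/\mm^i_A\rightarrow 0$ and argues by induction on $i$; your variant dispenses with the induction.
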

\begin{proof} We argue by induction on $i$. The statement is true if $i=0$. In general, by 
applying $\wtimes_A M$ to  $0\rightarrow \mm^i_A/\mm^{i+1}_A\rightarrow A/\mm^{i+1}_A\rightarrow A/\mm^i_A\rightarrow 0$,
and using flatness of $M$ and \eqref{wtimeso} we get an isomorphism $\mm^i_A/\mm^{i+1}_A \wtimes_A M\cong \mm^i_A M/\mm^{i+1}_A M$. Now, 
$\mm^i_A/\mm^{i+1}_A\cong k^{\oplus d_i}$ as an $A$-module, since $k\wtimes_A M\cong Q$ we obtain the last assertion.
\end{proof}

Given an $\OO$-module of finite length, we denote by $\ell_{\OO}$ its length.

\begin{lem}\label{length} We have $\ell_{\OO} ( \Hom_{\dualcat(\OO)}(\wP, M))=\ell_{\OO}(A)$.
\end{lem}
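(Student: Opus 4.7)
The plan is to reduce to computing $\Hom_{\dualcat(\OO)}(\wP, Q)$ and then use the $\mm_A$-adic filtration on $M$ together with the projectivity of $\wP$.

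First I would compute the base case $\ell_{\OO}(\Hom_{\dualcat(\OO)}(\wP, Q))=1$. Since $Q$ is killed by $\varpi$, adjunction gives $\Hom_{\dualcat(\OO)}(\wP, Q)\cong \Hom_{\dualcat(k)}(\wP/\varpi\wP, Q)=\Hom_{\dualcat(k)}(P, Q)$. By Lemma \ref{mult=dim} the latter has $q^m$ elements, where $m$ is the multiplicity of $S$ as a subquotient of $Q$; hypothesis (H2) gives $m=1$, so this is a $k$-vector space of dimension $1$, of $\OO$-length $1$. By additivity $\ell_{\OO}(\Hom_{\dualcat(\OO)}(\wP, Q^{\oplus d}))=d$ for any $d\ge 0$.

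Next, I would run induction on the length of $A$, using the filtration $A\supseteq \mm_A\supseteq \mm_A^2\supseteq\cdots\supseteq \mm_A^{n+1}=0$ transported to $M$. Lemma \ref{grading} supplies short exact sequences
\begin{equation}
0\rightarrow \mm_A^{i+1}M\rightarrow \mm_A^i M\rightarrow Q^{\oplus d_i}\rightarrow 0
\end{equation}
in $\dualcat(\OO)$, with $d_i=\dim_k \mm_A^i/\mm_A^{i+1}$. Since $\wP$ is projective in $\dualcat(\OO)$, the functor $\Hom_{\dualcat(\OO)}(\wP,\ast)$ is exact, and it sends objects of finite $\OO$-length to $\OO$-modules of finite length additively. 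Thus
\begin{equation}
\ell_{\OO}\bigl(\Hom_{\dualcat(\OO)}(\wP, \mm_A^i M)\bigr)=\ell_{\OO}\bigl(\Hom_{\dualcat(\OO)}(\wP, \mm_A^{i+1}M)\bigr)+d_i,
\end{equation}
and iterating from $i=n$ down to $i=0$ gives $\ell_{\OO}(\Hom_{\dualcat(\OO)}(\wP,M))=\sum_{i=0}^{n}d_i$.

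Finally, the same $\mm_A$-adic filtration on $A$ itself shows $\ell_{\OO}(A)=\sum_{i=0}^{n}\dim_k \mm_A^i/\mm_A^{i+1}=\sum_i d_i$, because each $\mm_A^i/\mm_A^{i+1}$ is a $k$-vector space of dimension $d_i$ and $\ell_{\OO}(k)=1$. Combining the two equalities gives the claim. I do not expect any real obstacle here; the only small point to verify along the way is the identification $\Hom_{\dualcat(\OO)}(\wP, Q^{\oplus d_i})\cong \Hom_{\dualcat(k)}(P, Q^{\oplus d_i})$, but this is immediate from $\wP/\varpi\wP\cong P$ and the fact that $Q$ is $\varpi$-torsion, and everything else is just exactness of $\Hom_{\dualcat(\OO)}(\wP,\ast)$ applied to Lemma \ref{grading}.
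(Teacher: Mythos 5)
Your argument is correct and follows the paper's own proof in essence: the paper likewise uses projectivity of $\wP$ to get exactness of $\Hom_{\dualcat(\OO)}(\wP,\ast)$, the identification $\dim\Hom_{\dualcat(\OO)}(\wP,Q)=\dim\Hom_{\dualcat(k)}(P,Q)=1$ from (H2), and the filtration of Lemma \ref{grading} to conclude $\ell_{\OO}(\Hom_{\dualcat(\OO)}(\wP,M))=\sum_i d_i=\ell_{\OO}(A)$. Your write-up just spells out the induction and the base case more explicitly.
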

\begin{proof} Since $\wP$ is projective, $\Hom_{\dualcat(\OO)}(\wP, \ast)$ is exact and 
$\dim \Hom_{\dualcat(\OO)}(\wP, Q)= \dim \Hom_{\dualcat(k)}(P, Q)=1$. Hence, 
$\ell_{\OO} ( \Hom_{\dualcat(\OO)}(\wP, M))=\sum_{i=0}^n d_i= \ell_{\OO}(A).$ 
\end{proof}

\begin{lem}\label{ohyeah} The natural map 
\begin{equation}\label{naturalmap} 
\Hom_{\dualcat(\OO)}(\wP, M)\wtimes_{\wE} \wP\rightarrow M
\end{equation}
is an isomorphism of (left) $A$-modules. 
\end{lem}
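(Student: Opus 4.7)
The plan is to argue by induction on the $\OO$-length of $A$, using a short exact sequence given by a minimal two-sided ideal inside the socle.

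For the base case $A=k$, one has $M\cong Q$ via $\alpha$, and by (H2') combined with the fact that $\dualcat(k)$ and $\dualcat(\OO)$ have the same irreducible objects one gets $\Hom_{\dualcat(\OO)}(\wP,Q)\cong \Hom_{\dualcat(k)}(P,Q)\cong k$. Applying Lemma \ref{inclusion} to $\wP$ (the results of \S\ref{firstsec} apply, since (H0)--(H5) hold in $\dualcat(\OO)$), $\wP/\wm\wP\cong Q$, hence $k\wtimes_{\wE}\wP\cong Q$, and the evaluation map \eqref{naturalmap} is the composition of these identifications, an isomorphism.

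For the inductive step, let $n$ be the largest integer with $\mm_A^n\neq 0$ and set $I:=\mm_A^n$. Then $I$ is a two-sided ideal with $\mm_A I=I\mm_A=0$, so $I$ is a $k$-bimodule of some finite dimension $d$, and in particular $I\cong k^{\oplus d}$ as a right $A$-module. Flatness of $M$ over $A$ yields a short exact sequence $0\to I\otimes_A M\to M\to (A/I)\otimes_A M\to 0$ in $\dualcat(\OO)$, with $IM:=I\otimes_A M\cong Q^{\oplus d}$ (using $\alpha$) and $M':=(A/I)\otimes_A M$ a deformation of $Q$ to $A/I$ (flatness is preserved by base change and $k\otimes_{A/I}M'\cong k\wtimes_A M\cong Q$). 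Since $\wP$ is projective, applying $\Hom_{\dualcat(\OO)}(\wP,-)$ yields a short exact sequence of pseudo-compact $\wE$-modules, and these modules are in fact of finite $\OO$-length by Lemma \ref{length}; applying $-\wtimes_{\wE}\wP$ produces a commutative diagram
\begin{equation*}
\xymatrix@C=12pt{ 0\ar[r] & \Hom(\wP,IM)\wtimes_{\wE}\wP\ar[d]^{\mu_{IM}}\ar[r] & \Hom(\wP,M)\wtimes_{\wE}\wP\ar[d]^{\mu_M}\ar[r] & \Hom(\wP,M')\wtimes_{\wE}\wP\ar[d]^{\mu_{M'}}\ar[r] & 0 \\ 0\ar[r] & IM\ar[r] & M\ar[r] & M'\ar[r] & 0.}
\end{equation*}
The map $\mu_{M'}$ is an isomorphism by the inductive hypothesis applied to $M'$ as a deformation to $A/I$, and $\mu_{IM}$ is an isomorphism by the base case applied to each of the $d$ copies of $Q$ inside $IM$. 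By the five lemma $\mu_M$ is an isomorphism; $A$-linearity is immediate since $A$ acts on $\Hom(\wP,M)\wtimes_{\wE}\wP$ through its action on $M$ via $\Hom(\wP,M)$, and evaluation respects this.

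The main obstacle is establishing exactness of the top row, i.e.\ that $-\wtimes_{\wE}\wP$ is exact on the short exact sequence obtained from $\Hom_{\dualcat(\OO)}(\wP,-)$. Right exactness is formal, so what needs checking is left exactness (injectivity of the leftmost arrow). This is where Corollary \ref{topfree} is crucial: it provides a topological isomorphism $\wP\cong\prod_{i\in I}\wE$, so $\wP$ is topologically free, hence flat over $\wE$ in the pseudo-compact sense (see Corollary \ref{flatA}). Since the $\wE$-modules in question have finite $\OO$-length and are therefore finitely presented, the completed and ordinary tensor products agree and ordinary flatness gives the required injectivity. Once this is in place, the five-lemma argument goes through without further difficulty.
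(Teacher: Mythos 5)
Your proof is correct and takes essentially the same route as the paper: both arguments rest on the exactness of the functor $N\mapsto \Hom_{\dualcat(\OO)}(\wP,N)\wtimes_{\wE}\wP$ (projectivity of $\wP$ together with its $\wE$-flatness coming from Corollary \ref{topfree}/\ref{flatA}), the identification $k\wtimes_{\wE}\wP\cong Q$, and devissage along the $\mm_A$-adic filtration whose graded pieces are copies of $Q$ by Lemma \ref{grading}. Your induction on the length of $A$ via the ideal $\mm_A^n$ together with the five lemma is just a repackaging of the paper's argument, which filters $M$ itself by $\mm_A^iM$ and concludes from the exactness of the functor.
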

\begin{proof} Since $\wP$ is projective and $\wE$-flat by Corollary \ref{PisEflat}, the functor $F: \dualcat(\OO)\rightarrow \dualcat(\OO)$, 
$F(N):=\Hom_{\dualcat(\OO)}(\wP, N)\wtimes_{\wE}{ \wP}$ is exact. Moreover, if $N$ is of finite length 
in $\dualcat(\OO)$ then  $\Hom_{\dualcat(\OO)}(\wP, N)$ is an $\OO$-module of finite length, and 
so the completed and the usual tensor products coincide. Further, we have $F(Q)\cong k \otimes_{\wE} \wP\cong Q$
and \eqref{artin} gives $F(\mm^i M/\mm^{i+1}M)\cong \mm^i M/\mm^{i+1}M$, $0\le i\le n$. The exactness of $F$ implies  
$F(M)\cong M$. Since the map $F(N)\rightarrow N$ is functorial, we obtain that the isomorphism 
in $\dualcat(\OO)$ is also  an isomorphism of $A$-modules.
\end{proof} 

Recall that the map $\wP\twoheadrightarrow S$ factors through $\theta: \wP\rightarrow Q$, 
which induces an isomorphism $\alpha^{univ}: k\wtimes_{\wE} \wP \cong Q$, $\lambda\wtimes v \mapsto \lambda \theta(v)$.
We will think of $(\wP, \alpha^{univ})$ as the universal deformation of $Q$.

\begin{lem}\label{surjectDef} Let $(M, \alpha)$ be a deformation of $Q$ to $A$. There exists $\varphi \in \Hom_{\hA}(\wE, A)$ and  
 $\iota: M\cong A\wtimes_{\varphi, \wE} \wP$
such that $\alpha= \alpha^{univ}\circ (k\wtimes_A\iota)$.  
\end{lem}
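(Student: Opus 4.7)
The plan is to produce the ring map $\varphi$ from the natural $(A,\wE)$-bimodule structure on $\md := \Hom_{\dualcat(\OO)}(\wP, M)$, after identifying $\md$ with $A$ as a left $A$-module, and then obtain $\iota$ from Lemma \ref{ohyeah}. The left $A$-action on $\md$ is by post-composition and the right $\wE$-action by pre-composition, and the two commute. Since $\wP$ is projective, $\Hom_{\dualcat(\OO)}(\wP,\cdot)$ is exact; moreover $\mm_A$ is finitely generated as a left ideal of the finite ring $A$, so a standard computation gives $\Hom_{\dualcat(\OO)}(\wP, \mm_A M) = \mm_A \md$. Hence
$$\md/\mm_A \md \;\cong\; \Hom_{\dualcat(\OO)}(\wP, M/\mm_A M) \;\cong\; \Hom_{\dualcat(k)}(P, Q),$$
which is one-dimensional over $k$ by hypothesis (H2).

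Using projectivity of $\wP$, I would lift $\theta: \wP \to Q$ to a morphism $\tilde\theta: \wP \to M$, chosen so that the composition with $M \twoheadrightarrow M/\mm_A M \overset{\alpha}{\cong} Q$ equals $\theta$. The class of $\tilde\theta$ then generates $\md/\mm_A \md$, so by Nakayama's lemma for modules over the (possibly non-commutative) local ring $A$, the module $\md$ is cyclic. The surjection $A \twoheadrightarrow \md$, $a \mapsto a\tilde\theta$, is a map between left $A$-modules of the same finite $\OO$-length by Lemma \ref{length}, hence an isomorphism $\md \cong A$ of left $A$-modules.

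Transporting the right $\wE$-action across $\md \cong A$ produces a ring map $\varphi: \wE \to A$ characterised by $\tilde\theta \circ e = \varphi(e)\,\tilde\theta$; it is $\OO$-linear since $\OO$ is central in both rings. To check that $\varphi$ is local, take $e \in \wm$. Then $\kappa \circ e = 0$ and, since $\Hom_{\dualcat(\OO)}(\wP, R) = 0$ by (H1)--(H2), also $\theta \circ e = 0$. Thus $\tilde\theta \circ e$ factors through $\mm_A M$, which forces $\varphi(e)\tilde\theta \in \mm_A \md$, i.e.\ $\varphi(e) \in \mm_A$. Continuity is automatic because $\mm_A$ is nilpotent, so $\varphi$ factors through some $\wE/\wm^n$ and defines an element of $\Hom_{\hA}(\wE, A)$.

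Finally, Lemma \ref{ohyeah} combined with $\md \cong A$ produces the chain
$$M \;\cong\; \md \wtimes_{\wE} \wP \;\cong\; A \wtimes_{\varphi, \wE} \wP,$$
which I take as $\iota$. The inverse sends $1\wtimes v \mapsto \tilde\theta(v)$, so reducing modulo $\mm_A$ and applying $\alpha$ yields $\theta(v) = \alpha^{univ}(1 \wtimes v)$ by the choice of $\tilde\theta$; unwinding this identifies $\alpha$ with $\alpha^{univ}\circ(k\wtimes_A \iota)$. The main obstacle is the Nakayama step together with the length comparison over the possibly non-commutative ring $A$, so as to conclude that $\md$ is free of rank one as a left $A$-module with $\tilde\theta$ a free generator; once this is in hand, the construction of $\varphi$ and the compatibility with $\alpha$ are straightforward consequences of the formalism developed in \S\ref{firstsec}.
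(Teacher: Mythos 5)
Your proof is correct and follows essentially the same route as the paper: lift $\theta$ to a map $\wP\to M$ by projectivity, show that $a\mapsto a\circ\tilde\theta$ identifies $A$ with $\Hom_{\dualcat(\OO)}(\wP,M)$ as a left $A$-module, read off $\varphi$ from the bimodule relation $\tilde\theta\circ b=\varphi(b)\tilde\theta$, and conclude with Lemma \ref{ohyeah}. The only (harmless) divergence is that you obtain bijectivity of $A\to\Hom_{\dualcat(\OO)}(\wP,M)$ via Nakayama plus the length count of Lemma \ref{length}, whereas the paper proves injectivity directly using the graded isomorphism \eqref{artin} of Lemma \ref{grading} and then invokes the same length count.
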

\begin{proof} Since $\wP$ is projective, there exists $\psi: \wP\rightarrow M$ making the diagram:
\begin{displaymath}
\xymatrix@1{  M\ar@{->>}[r] & k \wtimes_A  M \ar[d]^-{\cong}_{\alpha}\\
              \wP  \ar[u]^-{\psi}\ar@{->>}[r] &  Q}
\end{displaymath}
commute. We claim that  the map $A\rightarrow \Hom_{\dualcat(\OO)}(\wP, M)$, $a\mapsto a \circ \psi$ induces 
an isomorphism of $A$-modules. Lemma \ref{length} says that it is enough to prove that the map is injective. 
Choose $v\in \wP$, such that the image of $v$ in $Q$ is non-zero. 
Suppose $a\in \mm^{i}_A$ and $a\not\in \mm^{i+1}_A$ then \eqref{artin} gives an isomorphism:
$$ \mm^i_A/\mm^{i+1}_A \wtimes_k M/\mm_A M \cong \mm^{i}_{A} M/\mm^{i+1}_A M.$$
Since $(a +\mm^{i+1})\wtimes (\psi(v)+ \mm_A M)$ is non-zero, we also obtain $a(\psi(v))$ is non-zero. Hence $a\circ \psi=0$ if and 
only if $a=0$ and so the map is injective. This means that for every $b\in \wE$ there exists a unique $\varphi(b)\in A$ 
such that $\varphi(b) \circ \psi = \psi \circ b$. Uniqueness implies that $\varphi$ is a homomorphism 
of algebras. The assertion follows from Lemma \ref{ohyeah}.
\end{proof}

Let $\Def_Q:\mathfrak A \rightarrow \mathrm{Sets}$ be the functor associating to $A$ the set of isomorphism 
classes of deformations of $Q$ to $A$. We denote by $\Def_Q^{ab}$ the restriction of $\Def_Q$ to $\mathfrak A^{ab}$.
Let $(A, \mm_A)$ be in $\mathfrak A$, then  to $\varphi\in \Hom_{\hA}(\wE, A)$ we may associate an isomorphism 
class of $(A\wtimes_{\wE, \varphi} \wP, \alpha_{\varphi})$, where $\alpha_{\varphi}$ is the composition of 
$A\wtimes_{\wE, \varphi} \wP\rightarrow k\wtimes_{\wE, \varphi} \wP$ with $\alpha^{univ}$. By Corollary \ref{flatA} this 
gives us a point in $\Def_Q(A)$. 

\begin{thm}\label{repnonC} The above map induces a bijection between $\Def_Q(A)$ and $A^{\times}$-conjugacy classes of $\Hom_{\hA}(\wE, A)$.
\end{thm}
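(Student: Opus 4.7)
The map $\Phi : \Hom_{\hA}(\wE, A) \to \Def_Q(A)$ sending $\varphi$ to $[(A\wtimes_{\wE,\varphi}\wP, \alpha_\varphi)]$ factors through $A^\times$-conjugacy classes (this needs to be checked but is easy given the constructions below). I will show this factored map is a bijection. Surjectivity is essentially Lemma \ref{surjectDef}: given any deformation $(M,\alpha)$, that lemma provides $\varphi \in \Hom_{\hA}(\wE,A)$ together with an isomorphism $M \cong A\wtimes_{\wE,\varphi}\wP$ compatible with $\alpha$ and $\alpha^{univ}$, so $[M,\alpha]$ lies in the image of $\Phi$.

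For injectivity, suppose $\varphi_1,\varphi_2 : \wE \to A$ give isomorphic deformations, and fix an isomorphism $\iota : A\wtimes_{\wE,\varphi_1}\wP \overset{\sim}{\to} A\wtimes_{\wE,\varphi_2}\wP$ of $A$-modules in $\dualcat(\OO)$ intertwining $\alpha_{\varphi_1}$ and $\alpha_{\varphi_2}$. Let $\psi_i : \wP \to A\wtimes_{\wE,\varphi_i}\wP$, $v \mapsto 1\wtimes v$. By the argument in the proof of Lemma \ref{surjectDef}, the map $A \to \Hom_{\dualcat(\OO)}(\wP,\, A\wtimes_{\wE,\varphi_2}\wP)$, $a \mapsto a\cdot \psi_2$, is an isomorphism of $A$-modules, so there is a unique $u \in A$ with $\iota \circ \psi_1 = u\cdot \psi_2$.

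Next I would compute the image $\bar u \in k$ by pushing along $\alpha_{\varphi_2}$: since $\alpha_{\varphi_i}(1\wtimes \psi_i(v)) = \theta(v)$ (unraveling the definition of $\alpha_{\varphi_i}$ via $\alpha^{univ}$), the compatibility $\alpha_{\varphi_2} \circ (k\wtimes_A \iota) = \alpha_{\varphi_1}$ forces $\bar u \cdot \theta(v) = \theta(v)$ for all $v \in \wP$; since $\theta$ is surjective onto $Q\neq 0$, we get $\bar u = 1$, hence $u \in 1 + \mm_A \subseteq A^\times$. Then, for each $b \in \wE$, the identity $\psi_i \circ b = L_{\varphi_i(b)}\circ \psi_i$ (where $L_a$ denotes left multiplication by $a$) is built into the tensor product, and $A$-linearity of $\iota$ yields
\begin{equation}
L_{\varphi_1(b)\, u}\circ \psi_2 \;=\; L_{\varphi_1(b)}\circ \iota \circ \psi_1 \;=\; \iota \circ \psi_1 \circ b \;=\; L_u\circ \psi_2\circ b \;=\; L_{u\,\varphi_2(b)}\circ \psi_2.
\end{equation}
Injectivity of $a \mapsto L_a\circ \psi_2$ then gives $\varphi_1(b)\, u = u\, \varphi_2(b)$ for all $b$, i.e.\ $\varphi_2 = u^{-1}\varphi_1 u$.

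Conversely, given $u \in A^\times$ and setting $\varphi_2 := u^{-1}\varphi_1 u$, the map $a\wtimes v \mapsto au\wtimes v$ is a well-defined $A$-linear isomorphism $A\wtimes_{\wE,\varphi_2}\wP \to A\wtimes_{\wE,\varphi_1}\wP$ (well-defined by the conjugation relation), and since $\OO^\times$ is central in $A$, replacing $u$ by a scalar multiple if necessary we may assume $u \in 1 + \mm_A$, which makes it compatible with the $\alpha$'s. Combining the two directions gives the bijection. The only delicate point is bookkeeping of the left/right module conventions and verifying compatibility with the $\alpha$'s, which is where the reduction $u \in 1+\mm_A$ (modulo the central action of $\OO^\times$) enters; the essential input from the rest of the section is Lemma \ref{ohyeah} (i.e.\ that $\Hom_{\dualcat(\OO)}(\wP,-)$ recovers the $A$-module structure) together with Corollary \ref{flatA} (flatness, so that $\Phi$ lands in $\Def_Q(A)$ in the first place).
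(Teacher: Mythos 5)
Your proof is correct and follows the paper's argument closely: set $\psi_i(v)=1\wtimes v$, extract $u\in A$ with $\iota\circ\psi_1 = u\psi_2$, and chase $\wE$-linearity to obtain the conjugation identity; your route to $u\in A^{\times}$ via $\bar u=1$ (using $\alpha$-compatibility) is a fine variant of the paper's, which instead observes that $\iota_*$ is an isomorphism of free rank-$1$ $A$-modules and hence sends the generator $\psi_1$ to a generator. One bookkeeping slip in the converse: with your convention $\varphi_2=u^{-1}\varphi_1 u$, the assignment $a\wtimes v\mapsto au\wtimes v$ is well-defined as a map $A\wtimes_{\wE,\varphi_1}\wP\to A\wtimes_{\wE,\varphi_2}\wP$ (since $au\wtimes bv = au\varphi_2(b)\wtimes v = a\varphi_1(b)u\wtimes v$), not the reverse as you wrote — this is precisely why the paper states the converse with $\varphi_2=u\varphi_1 u^{-1}$ when using $a\wtimes v\mapsto au\wtimes v$ in the direction $A\wtimes_{\wE,\varphi_2}\wP\to A\wtimes_{\wE,\varphi_1}\wP$; either swap source and target or replace $u$ by $u^{-1}$ in your formula.
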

\begin{proof} Lemma \ref{surjectDef} says that the map $\Hom_{\hA}(\wE, A)\rightarrow \Def_Q(A)$ is surjective. Suppose 
we have $\varphi_1, \varphi_2\in \Hom_{\hA}(\wE, A)$ and an isomorphism $\beta: A\wtimes_{\wE, \varphi_1} \wP\cong
 A\wtimes_{\wE, \varphi_2} \wP$ in $\dualcat(A)$ such that the diagram 
\begin{displaymath}
\xymatrix@1{  A\wtimes_{\wE, \varphi_1} \wP \ar@{->>}[r] & Q \\
             A\wtimes_{\wE, \varphi_2} \wP    \ar[u]^-{\beta}_{\cong} \ar@{->>}[ur] &}
\end{displaymath}
commutes. For $i\in\{1,2\}$ define  $\psi_i:\wP\rightarrow   A\wtimes_{\wE, \varphi_i} \wP$ by  $\psi_i(v):=1\wtimes v$. 
It follows from the proof of Lemma \ref{surjectDef} that $\Hom_{\dualcat(\OO)}(\wP, A\wtimes_{\wE, \varphi_i} \wP)$ is a free $A$-module 
of rank $1$, and $\psi_i$ is a generator. Since $\beta$ is an isomorphism, $\beta_{*}:= \Hom_{\dualcat(\OO)}(\wP, \beta)$  is also an isomorphism. Hence 
there exists $u\in A^{\times}$ such that $u \psi_1=\beta_{*}(\psi_2)$. Since $\beta_{*}$ is $A$-linear, we obtain
\begin{equation}\label{wishover} 
\beta(a\wtimes v)= \beta( a (1\wtimes v))= [a \beta_{*}(\psi_2)](v)= a u \psi_1(v)= au\wtimes v
\end{equation}
So for all $b\in \wE$, \eqref{wishover} gives 
\begin{equation}\label{wishover1}
\beta(1 \wtimes b v)= \beta( \varphi_2(b) \wtimes v) = \varphi_2(b) u \wtimes v
\end{equation}  
\begin{equation}\label{wishover2}
\beta(1 \wtimes b v)= u \wtimes b v = u \varphi _1(b) \wtimes v
\end{equation} 
It follows from \eqref{wishover1} and \eqref{wishover2} that $(u \varphi_1(b) - \varphi_2(b) u) \psi_1= 0$. Hence, 
$\varphi_2(b)= u \varphi_1(b) u^{-1}$ for all $b\in \wE$. 

Conversely, suppose that $\varphi_1$ and $\varphi_2$ lie in the same $A^{\times}$-conjugacy class.
Since $\OO\rightarrow A/\mm_A$ is  surjective and the image of $\OO$ in $A$ is contained in the centre, there 
exists $u\in 1+\mm_A$ such that $\varphi_2 = u \varphi_1 u^{-1}$. An easy check shows that 
$\beta: A\wtimes_{\wE, \varphi_2} \wP \rightarrow A \wtimes_{\wE, \varphi_1} \wP$, $a\wtimes v\mapsto au \wtimes v$ is the 
required isomorphism of deformations.    
\end{proof}

\begin{cor}\label{defab} $\Def^{ab}_Q(A)=\Hom_{\hA}(\wE^{ab}, A)$, where $\wE^{ab}$ is the maximal commutative quotient of $\wE$. 
\end{cor}
\begin{proof} Since $A$ is commutative, every $A^{\times}$-conjugacy class consists of one element. Thus 
$\Def^{ab}_Q(A)= \Def_Q(A)= \Hom_{\hA}(\wE, A)= \Hom_{\hA}(\wE^{ab}, A)$. The last equality follows from the universal property 
of $\wE^{ab}$.
\end{proof} 

\begin{remar} If $R$ is  an arbitrary  non-commutative topological ring  then $R^{ab}$ might be the zero ring. This is 
not the case here, since $\wE/\wm\cong k$ is commutative.
\end{remar}

\begin{lem}\label{tangentspace} Let $k[\varepsilon]$ be the ring of dual numbers so that $\varepsilon^2=0$. Then we have natural isomorphisms
\begin{equation} 
\Ext^1_{\dualcat(k)}(Q, Q)\cong \Hom_{\hA}(\wE^{ab}, k[\varepsilon])\cong \Hom_{\hA}(\wE, k[\varepsilon])\cong (\mm /\mm^2)^*,
\end{equation}
where $*$ denotes $k$-linear dual. 
\end{lem}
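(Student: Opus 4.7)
The plan is to chain together bijections from Theorem \ref{repnonC}, the standard identification of deformations to dual numbers with $\Ext^1$, and the classical tangent-space formula. First I would apply Theorem \ref{repnonC} with $A = k[\varepsilon]$. Since $k[\varepsilon]$ is commutative, $A^\times$-conjugacy acts trivially, so we obtain $\Hom_{\hA}(\wE, k[\varepsilon]) \cong \Def_Q(k[\varepsilon])$. The corollary to Theorem \ref{repnonC} gives $\Hom_{\hA}(\wE, k[\varepsilon]) \cong \Hom_{\hA}(\wE^{ab}, k[\varepsilon])$, since any homomorphism into a commutative ring factors through the maximal commutative quotient.

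Next, I would identify $\Def_Q(k[\varepsilon])$ with $\Ext^1_{\dualcat(k)}(Q, Q)$. Given a deformation $(M, \alpha)$, Lemma \ref{grading} applied with $A = k[\varepsilon]$ (taking $i = 0, 1$) yields a short exact sequence $0 \to \varepsilon M \to M \to M/\varepsilon M \to 0$ in $\dualcat(k)$ with both ends isomorphic to $Q$, hence an extension class in $\Ext^1_{\dualcat(k)}(Q, Q)$. Conversely, given an extension $0 \to Q \to M \to Q \to 0$ in $\dualcat(k)$, one defines a $k[\varepsilon]$-action on $M$ by letting $\varepsilon$ act as the composition $M \twoheadrightarrow Q \hookrightarrow M$; since $\varepsilon^2 = 0$ this is well defined, and the resulting object is flat over $k[\varepsilon]$. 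Isomorphism of deformations translates into equivalence of extensions.

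Finally, I would compute $\Hom_{\hA}(\wE, k[\varepsilon])$ directly. Any such morphism $\varphi$ decomposes as $\varphi(x) = \overline{\lambda(x)} + f(x)\varepsilon$, where $\lambda: \wE \to k$ is the residue projection (forced by locality and $\OO$-linearity) and $f: \wE \to k$. Multiplicativity of $\varphi$ combined with $\varepsilon^2 = 0$ yields the derivation identity $f(xy) = \lambda(x)f(y) + \lambda(y)f(x)$. Hence $f$ vanishes on $\wm^2$ (as $\lambda|_{\wm} = 0$) and on $\OO$ (as $\varphi|_{\OO}$ equals the structure map to $k \subset k[\varepsilon]$), so $f$ descends to a $k$-linear form on $\wm/\wm^2$. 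Conversely every such form extends uniquely to a continuous $\OO$-algebra homomorphism, completing the bijection with $(\wm/\wm^2)^*$.

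The only delicate point is the second step: verifying that flatness over $k[\varepsilon]$ of a profinite object $M$ in $\dualcat(\OO)$ is exactly matched by the extension structure on $M$, and that isomorphism classes correspond. This is essentially encoded in Lemma \ref{grading}, which was proved for this purpose. The non-commutativity of $\wE$ causes no trouble since the target $k[\varepsilon]$ is commutative, so the whole computation reduces to $\wE^{ab}$ and the classical tangent-space formula applies.
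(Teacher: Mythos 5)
Your proof is correct and follows essentially the same route as the paper: the paper's proof simply declares the two outer isomorphisms "classical" (first-order deformations over $k[\varepsilon]$ identified with $\Ext^1_{\dualcat(k)}(Q,Q)$ via Theorem \ref{repnonC}/Corollary \ref{representab}, and $\Hom_{\hA}(\wE,k[\varepsilon])$ computed as the tangent space) and notes that the middle one holds because $k[\varepsilon]$ is commutative, which is exactly what you spell out in detail. One cosmetic remark: since any $\varphi\in\Hom_{\hA}(\wE,k[\varepsilon])$ kills $\varpi$, your functional $f$ descends not just to $\wm/\wm^2$ but to $\wm/(\wm^2+\varpi\wE)\cong\mm/\mm^2$, which is the space appearing in the statement.
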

\begin{proof} The first isomorphism is classical. The second follows from the fact that $k[\varepsilon]$ is commutative. The 
third is again classical.
\end{proof}

Let $(A, \mm_A)$ be in $\Aa$ and let $F:\Aa\rightarrow \Sets$ be a covariant functor.  For each $u\in A^{\times}$, 
$\ad{u}: A\rightarrow A$, $a \mapsto u a u^{-1}$ is a morphism in $\Aa$, and hence induces a morphism of sets 
$F(\ad{u}): F(A)\rightarrow F(A)$. We say that the functor $F$ is \textit{stable under conjugation} if 
$F(\ad{u})= \id_{F(A)}$ for all objects $A$ of $\Aa$ and all $u\in A^{\times}$. For $R$ in $\hA$ we denote 
$h_R:\Aa\rightarrow \Sets$ and $F_R: \Aa\rightarrow \Sets$ the  functors   $h_R(A):= \Hom_{\hA}(R, A)$ and 
$F_R(A)$ the set of $A^{\times}$-conjugacy classes in $h_R(A)$. We have a variant of Yoneda's lemma.

\begin{lem}\label{yoneda} Let $F:\Aa\rightarrow \Sets$ be a covariant functor stable under conjugation then 
the map $\eta \mapsto  \eta_R(\{\id_R\})$ induces a bijection between the set of natural transformations 
$\Mor(F_R, F)$ and $F(R):=\underset{\longleftarrow}{\lim}\,  F(R/\mm_R^n)$.
\end{lem}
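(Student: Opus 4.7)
The strategy is the usual Yoneda argument, twisted by the conjugation equivalence relation. The key input is that every morphism $\varphi: R\to A$ in $\hA$ with $A\in \Aa$ factors through $R_n$ for some $n$, since $A$ is artinian, and the ambiguity in this factorization (different $n$'s) is controlled by the compatibility of the tower $\{R_n\}$. Conjugation-stability of $F$ will then guarantee that the construction descends from $\Hom_{\hA}(R,A)$ to its $A^\times$-conjugacy classes.

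First I would define the inverse map $\Psi: F(R)\to \Mor(F_R, F)$ as follows. Given $x=(x_n)\in \underset{\longleftarrow}{\lim}\, F(R_n)$ and an object $A$ of $\Aa$ with a class $[\varphi]\in F_R(A)$ represented by $\varphi\in \Hom_{\hA}(R,A)$, pick $n$ large enough so that $\mm_A^n=0$; then $\varphi$ factors uniquely as $\bar\varphi\circ \pi_n$ with $\pi_n: R\twoheadrightarrow R_n$ and $\bar\varphi\in\Hom_{\Aa}(R_n,A)$. Set
\begin{equation}
\Psi(x)_A([\varphi]):= F(\bar\varphi)(x_n).
\end{equation}
This is independent of $n$ by compatibility of the tower $(x_n)$, and independent of the representative $\varphi$ in its conjugacy class by stability of $F$ under conjugation: if $\varphi_2 = \ad{u}\circ \varphi_1$ with $u\in A^\times$, then the induced maps on $R_n$ differ by $\ad{u}$ as well, so $F(\bar\varphi_2)(x_n)=F(\ad{u})(F(\bar\varphi_1)(x_n))=F(\bar\varphi_1)(x_n)$. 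Naturality of $\Psi(x)$ in $A$ is immediate from the naturality of the factorization through $R_n$.

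Next I would define $\Phi:\Mor(F_R,F)\to F(R)$ by $\Phi(\eta)=(\eta_{R_n}([\pi_n]))_n$, where $[\pi_n]\in F_R(R_n)$ is the conjugacy class of the canonical projection $\pi_n:R\twoheadrightarrow R_n$. These classes are compatible under the transition maps $R_{n+1}\twoheadrightarrow R_n$, and the naturality of $\eta$ shows that $\Phi(\eta)$ lies in the inverse limit $F(R)$. Under the notational convention $\eta_R([\id_R]):=\Phi(\eta)$, this matches the statement of the lemma.

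Finally I would check that $\Phi\circ\Psi = \id$ and $\Psi\circ\Phi=\id$. For the first, evaluating $\Psi(x)_{R_n}([\pi_n])$ amounts to factoring $\pi_n$ as $\id_{R_n}\circ \pi_n$, so the result is $F(\id_{R_n})(x_n)=x_n$. For the second, let $\eta\in\Mor(F_R,F)$ and set $x=\Phi(\eta)$; for $[\varphi]\in F_R(A)$ with factorization $\varphi=\bar\varphi\circ\pi_n$, naturality of $\eta$ applied to the morphism $\bar\varphi:R_n\to A$ gives
\begin{equation}
\eta_A([\varphi]) = \eta_A(F_R(\bar\varphi)([\pi_n])) = F(\bar\varphi)(\eta_{R_n}([\pi_n])) = F(\bar\varphi)(x_n) = \Psi(x)_A([\varphi]),
\end{equation}
so $\Psi(\Phi(\eta))=\eta$. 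The only mildly delicate point, which is the main thing to be careful about, is the independence of $\Psi(x)_A([\varphi])$ on the representative $\varphi$; this is exactly where the conjugation-stability hypothesis is used, and it is the feature that distinguishes this statement from the ordinary Yoneda lemma for $h_R$.
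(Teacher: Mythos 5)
Your proof is correct, and the key insight (using conjugation-stability of $F$ to get the construction to descend to conjugacy classes) is exactly the one the paper needs. The presentation differs, however: the paper introduces the natural transformation $\kappa\colon h_R\to F_R$ sending a homomorphism to its conjugacy class, observes that post-composition $\eta\mapsto\eta\circ\kappa$ gives an injection $\Mor(F_R,F)\hookrightarrow\Mor(h_R,F)$ (since $\kappa$ is objectwise surjective), and then shows this injection is a bijection because any $\xi\colon h_R\to F$ is conjugation-invariant, hence factors through $\kappa$. After that, the paper simply cites the usual (pro-)Yoneda lemma $\Mor(h_R,F)\cong\varprojlim F(R_n)$. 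Your argument, by contrast, constructs the bijection $F(R)\leftrightarrow\Mor(F_R,F)$ directly, effectively unwinding the usual Yoneda argument for the pro-represented functor $h_R$ and verifying at each step that the construction passes to $F_R$. This is more self-contained but re-proves what the paper cites; the paper's approach is more economical and makes transparent that the only new content beyond ordinary Yoneda is the factorization of $\xi$ through $\kappa$. One minor point in your write-up: the claim that naturality of $\Psi(x)$ is ``immediate from the naturality of the factorization through $R_n$'' glosses over the fact that for a morphism $f\colon A\to B$, the index $n$ with $\mm_A^n=0$ need not satisfy $\mm_B^n=0$; one has to pass to a common $m\ge n$ and use compatibility of the tower $(x_n)$. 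The statement is nevertheless correct.
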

\begin{proof} Mapping a homomorphism to its conjugacy class gives rise to a natural transformation 
of functors $\kappa: h_R\rightarrow F_R$ and hence a map  $\Mor(F_R, F)\rightarrow \Mor(h_R, F)$, 
$\eta\mapsto \eta\circ \kappa$, which is clearly injective. We claim that it is also surjective. 
Let $\xi: h_R\rightarrow F$ be a natural transformation, $A$ an object of $\Aa$ and $u\in A^{\times}$. Then we have 
$$ \xi_A \circ h_R(\ad{u})= F(\ad{u}) \circ \xi_A= \id_{F(A)} \circ \xi_A =\xi_A.$$
Thus $\xi$ factors through $\kappa$ and hence the map is surjective. The assertion follows from the usual Yoneda's lemma.
\end{proof}

\begin{lem}\label{isoFisoR} Let $R$ and $S$ be in $\hA$ and suppose that $\eta: F_R\rightarrow F_S$ is an isomorphism of functors then 
the rings $R$ and $S$ are isomorphic. Moreover, $\eta$ determines the isomorphism up to  conjugation.
\end{lem}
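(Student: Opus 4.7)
The plan is to pass from the isomorphism $\eta\colon F_R\to F_S$ back to an actual ring isomorphism by means of the Yoneda-type bijection of Lemma \ref{yoneda}. Since $F_S$ is manifestly stable under conjugation, applying Lemma \ref{yoneda} with $F=F_S$ converts $\eta$ into an element of $F_S(R)=\varprojlim_n F_S(R/\mm_R^n)$, that is, a compatible family of $R_n^\times$-conjugacy classes of ring homomorphisms $\varphi_n\colon S\to R_n$. I would next lift this family to an honest homomorphism $\varphi\colon S\to R$ by an inductive construction: having fixed a representative $\tilde\varphi_n$ of $[\varphi_n]$, pick any representative $\tilde\varphi'_{n+1}$ of $[\varphi_{n+1}]$, observe that $\pi\circ\tilde\varphi'_{n+1}$ differs from $\tilde\varphi_n$ by an inner automorphism $\ad{u}$ with $u\in R_n^\times$, lift $u$ to $\tilde u\in R_{n+1}^\times$ using surjectivity of $R_{n+1}^\times\twoheadrightarrow R_n^\times$ (true because both are local rings with residue field $k$), and replace $\tilde\varphi'_{n+1}$ by $\ad{\tilde u^{-1}}\circ\tilde\varphi'_{n+1}$. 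Symmetrically, $\eta^{-1}$ yields a homomorphism $\psi\colon R\to S$.

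The next step is to extract the compositional identities. Unwinding the bijection of Lemma \ref{yoneda}, the natural transformation attached to $\varphi$ acts by $\eta_A([f])=[f\circ\varphi]$ for $f\colon R\to A$, so composition of natural transformations corresponds to composition of homomorphisms in the reverse order. Evaluating $\eta^{-1}\circ\eta=\id_{F_R}$ on $[\id_R]\in F_R(R)$ then gives $[\varphi\circ\psi]=[\id_R]$ in $F_R(R)$, and symmetrically $[\psi\circ\varphi]=[\id_S]$ in $F_S(S)$. Being conjugate to the identity forces $\varphi\circ\psi$ and $\psi\circ\varphi$ to be inner automorphisms $r\mapsto u r u^{-1}$, hence in particular genuine automorphisms. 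It follows formally that $\varphi$ is both injective and surjective, i.e.\ an isomorphism of rings in $\hA$. The second assertion of the lemma --- that $\eta$ determines this isomorphism up to conjugation --- is now the injectivity half of Lemma \ref{yoneda}: two choices of $\varphi$ fitting the same $\eta$ represent the same class in $F_S(R)$, hence differ by inner conjugation at each finite level.

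The main obstacle is the inverse-limit lifting step in the first paragraph: the data produced directly by Lemma \ref{yoneda} is a compatible system of conjugacy classes, not an honest inverse system of homomorphisms. Everything rests on aligning chosen representatives compatibly, for which the key input is that inner automorphisms lift through the tower $\{R_n\}$ thanks to the surjectivity $R_{n+1}^\times\twoheadrightarrow R_n^\times$. Once this Mittag--Leffler-type construction is carried out and $\varphi,\psi$ exist as actual ring homomorphisms, the remainder of the proof is a purely formal manipulation via the Yoneda dictionary.
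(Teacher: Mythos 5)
Your proof is correct and follows essentially the same route as the paper: convert $\eta$ via Lemma \ref{yoneda} into a ring homomorphism $\varphi\colon S\to R$, then use composability with $\eta^{-1}$ to conclude $\varphi$ is an isomorphism. Two small places where you are somewhat more careful than the paper's compressed write-up are worth noting. First, the paper asserts $\Mor(F_R,F_S)\cong \Hom_{\hA}(S,R)/R^\times$ as an immediate consequence of Lemma \ref{yoneda}, but that lemma only delivers $\varprojlim_n \bigl(\Hom(S,R_n)/R_n^\times\bigr)$; identifying this with $\Hom_{\hA}(S,R)/R^\times$ is precisely the compatible-representatives argument you make using surjectivity of $R_{n+1}^\times\to R_n^\times$, which is indeed the right fix. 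Second, the paper extracts only $[\psi\circ\varphi]=[\id_S]$ and declares $\varphi$ an isomorphism; on its own this gives only that $\varphi$ is a split injection. Your use of both compositional identities $[\varphi\circ\psi]=[\id_R]$ and $[\psi\circ\varphi]=[\id_S]$ (equivalently, one could instead invoke injectivity of $\eta_R$ to deduce the second from the first) is what actually forces both compositions to be inner automorphisms and hence makes $\varphi$ bijective.
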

\begin{proof} It follows from Lemma \ref{yoneda} that $\Mor(F_R, F_S)\cong \Hom_{\hA}(S, R)/R^{\times}$. Thus we may find 
$\varphi: S\rightarrow R$ such that for each $A$ in $\Aa$, $\eta_A: F_R(A)\rightarrow F_S(A)$ sends the conjugacy class
of $\psi$ to the conjugacy class of $ \psi\circ \varphi$. Since, $\eta$ is  a bijection for all $A$, we may find 
$c\in F_R(S)$ such that $\eta_R(c)=\{\id_S\}$. Choose any $\psi\in c$ then the last equality reads $\psi\circ\varphi=\id_S$,
which implies that $\varphi$ is an isomorphism. The last assertion follows from Lemma \ref{yoneda}.
\end{proof}

\subsection{Examples}
We give some examples of deformations with possibly non-com\-mu\-ta\-ti\-ve coefficients. Our coefficients are objects of the 
category $\Aa$ defined in \ref{deficatA}.

\begin{lem}\label{1ex} Let $\GG$ be a finitely generated pro-finite group and $Q=\Eins$ the trivial representation. 
Then $\Def_{\Eins}(A)= \Hom_{\hA}(\OO[[\GG(p)]]^{op}, A)/\sim$, where $\GG(p)$ is the maximal pro-$p$ quotient 
of $\GG$, and $\sim$ denotes the equivalence under conjugation by $A^{\times}$. Moreover, 
$\Def^{ab}_{\Eins}(A)= \Hom_{\hA}(\OO[[\GG(p)^{ab}]], A)$. 
\end{lem}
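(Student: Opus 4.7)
The plan is to describe $\Def_{\Eins}(A)$ directly; this both yields the claimed presentation and, read backwards through Theorem \ref{repnonC}, identifies $\wE\cong\OO[[\GG(p)]]^{op}$. First I would verify that any deformation $(M,\alpha)$ of $\Eins$ to $A$ is topologically free of rank one over $A$: since $M$ is $A$-flat and $k\wtimes_A M\cong k$, lifting a generator and applying the pseudo-compact Nakayama argument behind Corollary \ref{topfree} gives an $A$-linear isomorphism $M\cong A$.

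Fix such an identification and let $v\in M$ correspond to $1\in A$. Because the $G$-action on $M$ commutes with the $A$-action, there is a unique continuous function $\rho\colon G\to A$ with $g\cdot v=v\,\rho(g)$, and hence $g\cdot a=a\,\rho(g)$ for every $a\in A$. Reducing modulo $\mm_A$ recovers the trivial action, so $\rho$ takes values in the pro-$p$ subgroup $1+\mm_A$ of $A^\times$. The computation
\[
v\,\rho(gh)=(gh)\cdot v=g\cdot(v\,\rho(h))=\rho(h)(g\cdot v)=v\,\rho(h)\rho(g)
\]
shows $\rho(gh)=\rho(h)\rho(g)$, so $\rho$ is an \emph{anti}-homomorphism $G\to A^\times$, equivalently a continuous homomorphism $G\to (A^{op})^\times$. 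Because the target is pro-$p$ and $\GG$ is finitely generated pro-finite, $\rho$ factors uniquely through $\GG(p)$; by the universal property of the completed group algebra such $\rho$ correspond bijectively to continuous $\OO$-algebra maps $\OO[[\GG(p)]]^{op}\to A$.

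Next, an isomorphism of deformations $\beta\colon(M_1,\alpha_1)\to(M_2,\alpha_2)$ becomes, under the identifications above, right multiplication by some $u\in A^\times$. Compatibility with the $\alpha_i$ forces $u\in 1+\mm_A$, and compatibility with the two $G$-actions unwinds to $\rho_2(g)=u^{-1}\rho_1(g)u$. Since $A^\times=\OO^\times(1+\mm_A)$ and the image of $\OO$ is central in $A$ by the definition of $\Aa$, this equivalence relation coincides with full $A^\times$-conjugation. Combining these steps yields $\Def_{\Eins}(A)=\Hom_{\hA}(\OO[[\GG(p)]]^{op},A)/{\sim}$.

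For the abelian statement, restrict to commutative $A$. Then $A^\times$-conjugation is trivial, and every homomorphism $\OO[[\GG(p)]]^{op}\to A$ factors through its maximal commutative quotient; this quotient is $\OO[[\GG(p)^{ab}]]$, because abelianization commutes both with passing to the opposite ring and with forming the completed group algebra. The hardest step is the topological freeness of $M$ in the first paragraph; once this is available in the pseudo-compact setting, the remaining work is just careful bookkeeping about the commuting $A$- and $G$-actions.
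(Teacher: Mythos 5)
Your argument is correct and follows essentially the same route as the paper: identify a deformation with the free rank-one module $A$ itself, read off the $G$-action as an anti-homomorphism $\GG\to 1+\mm_A$ (hence a map $\OO[[\GG(p)]]^{op}\to A$), and observe that change of basis, respectively commutativity of $A$, accounts for the $A^{\times}$-conjugation and the passage to $\GG(p)^{ab}$. The only cosmetic difference is that you invoke the universal property of the completed group algebra for the converse, where the paper exhibits the deformation $A\wtimes_{\OO[[\GG(p)]]^{op},\varphi}\OO[[\GG(p)]]$ explicitly; both are fine.
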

\begin{proof} Let $(M, \alpha)$ be an $A$-deformation. Since $M$ is $A$-flat and $k \wtimes_A M \cong k$ we get 
that $M$ is a free $A$-module of rank $1$. Choose $v\in M$, such that $\alpha( 1\otimes v)=1$. Then $v$ is a basis vector 
of $M$ and for every $g\in \GG$ we obtain a unique $a_g\in A$ such that $g v= a_g v$. Now 
$$ a_{gh} v= (gh) v= g (h v) = g a_h v= a_h g v= a_h a_g v.$$
Hence, we get a group homomorphism $\GG^{op}\rightarrow 1+\mm_A$, $g \mapsto a_g$. Since $1+\mm_A$ is a finite group 
of $p$-power order, the map factors through $\GG(p)^{op}$. By extending $\OO$-linearly we obtain a homomorphism 
$\OO[[\GG(p)]]^{op}\rightarrow A$. A different choice of $v$ would conjugate the homomorphism by $u\in 1+\mm_A$. 

Conversely, $\OO[[\GG(p)]]$ is a free right $\OO[[\GG(p)]]^{op}=\End_{\dualcat(\OO)}(\OO[[\GG(p)]])$ module, with the action $b\centerdot a:= ab$. Thus 
every $\varphi\in \Hom_{\hA}(\OO[[\GG(p)]]^{op}, A)$ defines a deformation $A\wtimes_{\OO[[\GG(p)]]^{op}, \varphi} \OO[[\GG(p)]]$.

If $A$ is commutative then the map $\GG\rightarrow \GG(p) \rightarrow 1+\mm_A$ must further factor through $\GG(p)^{ab}$, 
and the same argument gives the claim.
\end{proof}  

\begin{lem}\label{2ex} Let $G=\Qp^{\times}$ and $\chi: \Qp^{\times}\rightarrow k^{\times}$ a continuous character. If $p\neq 2$ then 
$\Def_{\chi}(A)= \Hom_{\hA}(\OO[[x,y]], A)/A^{\times}$, where $\OO[[x,y]]$ denotes the ring of formal (commutative) power series.
\end{lem}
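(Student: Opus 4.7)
The plan is to mimic the argument of Lemma \ref{1ex}, using the explicit structure of $\Qp^{\times}$ when $p\neq 2$ to reduce the problem to continuous homomorphisms out of an abelian pro-$p$ group. First I would note that for any deformation $(M,\alpha)$ of $\chi$ to $A\in\Aa$, flatness of $M$ over $A$ together with $k\wtimes_A M\cong\chi$ being one-dimensional forces $M$ to be free of rank one over $A$. Choosing a basis $v$ with $\alpha(1\wtimes v)$ equal to a fixed nonzero vector in $\chi$ (any two such $v$ differing by $u\in 1+\mm_A$), the commuting $A$- and $G$-actions produce a continuous group homomorphism $\tilde\chi:\Qp^{\times}\to A^{\times}$ defined by $gv=\tilde\chi(g)v$; a change of basis by $u$ replaces $\tilde\chi$ by $u\tilde\chi u^{-1}$.

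Next I would split off the Teichm\"uller part. Since $k$ is of characteristic $p$, the group $k^{\times}$ has prime-to-$p$ order, so $\chi$ admits a unique Teichm\"uller lift $[\chi]:\Qp^{\times}\to\OO^{\times}\subseteq A^{\times}$, whose image is central in $A$. Hence $\psi:=\tilde\chi\cdot[\chi]^{-1}$ is a continuous homomorphism $\Qp^{\times}\to 1+\mm_A$, a finite $p$-group, and therefore factors through the maximal pro-$p$ quotient of $\Qp^{\times}$. Here the hypothesis $p\neq 2$ enters: the decomposition $\Qp^{\times}\cong p^{\ZZ}\times\mu_{p-1}\times(1+p\Zp)$ shows that the pro-$p$ completion is $\Zp\times\Zp\cong\Zp^{2}$ (the $\mu_{p-1}$ factor dies).

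The final step identifies continuous homomorphisms $\Zp^{2}\to 1+\mm_A$ with $\Hom_{\hA}(\OO[[x,y]],A)$: picking topological generators $\gamma_{1},\gamma_{2}$ of $\Zp^{2}$, such a $\psi$ is determined by two commuting elements $\psi(\gamma_{1}),\psi(\gamma_{2})\in 1+\mm_A$ (commutativity coming from $\Zp^{2}$ being abelian), which corresponds under $\OO[[\Zp^{2}]]\cong \OO[[x,y]]$ to the $\hA$-map sending $x,y$ to $\psi(\gamma_1)-1,\psi(\gamma_2)-1$. Conversely, any $\varphi\in\Hom_{\hA}(\OO[[x,y]],A)$ gives back a continuous homomorphism and hence a deformation $M=A\cdot v$ with $gv=[\chi(g)]\cdot(1+\varphi(\text{pro-}p\text{ part of }g))\cdot v$.

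The main obstacle is to check that this construction descends to a bijection on isomorphism classes of deformations rather than merely on lifts $\tilde\chi$. The $(1+\mm_A)$-indeterminacy in the choice of $v$ replaces $\tilde\chi$ by a conjugate; one must argue that the resulting element of $\Hom_{\hA}(\OO[[x,y]],A)$ is unchanged. The centrality of the Teichm\"uller part $[\chi]$ together with the abelian nature of $\Zp^{2}$ ensures that the relevant image automatically lies in a commutative subring of $A$ and that the conjugation action on $\Hom_{\hA}(\OO[[x,y]],A)$ induced by $1+\mm_A$ is trivial on the orbit of each lift, giving the clean identification $\Def_{\chi}(A)=\Hom_{\hA}(\OO[[x,y]],A)$ without any further quotient.
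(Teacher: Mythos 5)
Your route is essentially the paper's: the paper picks a character $\tilde{\chi}:\Qp^{\times}\rightarrow\OO^{\times}$ lifting $\chi$, twists to reduce to the trivial character, observes via the proof of Lemma \ref{1ex} that a deformation to $A$ is a free rank-one module, so the problem only sees the maximal pro-$p$ quotient $\widehat{G}\cong\Zp^{2}$ (here $p\neq 2$ enters exactly as in your decomposition), and concludes from Lemma \ref{1ex} because $\OO[[\Zp^{2}]]\cong\OO[[x,y]]$. Splitting off the Teichm\"uller part multiplicatively instead of twisting the deformation problem is the same manoeuvre, and all of your steps up to the last paragraph are fine.

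The gap is the final claim that the $(1+\mm_A)$-conjugation action on $\Hom_{\hA}(\OO[[x,y]],A)$ is trivial. Neither the centrality of $[\chi]$ nor the commutativity of the image of $\psi$ gives this: conjugation by $u$ carries the commutative subring generated by $\psi(\gamma_1),\psi(\gamma_2)$ onto another commutative subring, it does not fix its elements. Concretely, take $A=k\langle X,Y\rangle/(X^{2},Y^{2},XY)$ (so $YX\neq 0$), an object of $\Aa$; the homomorphisms $\varphi,\varphi'$ with $\varphi(x)=X$, $\varphi'(x)=X+YX$ and $\varphi(y)=\varphi'(y)=0$ are distinct but conjugate by $u=1+Y$, hence define isomorphic deformations, so the natural map $\Hom_{\hA}(\OO[[x,y]],A)\rightarrow\Def_{\chi}(A)$ is not injective and your ``clean identification without any further quotient'' fails for noncommutative $A$. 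What the reduction to Lemma \ref{1ex} actually yields, and what you should state, is $\Def_{\chi}(A)=\Hom_{\hA}(\OO[[x,y]],A)/\!\sim$, i.e.\ $\Def_{\chi}\cong F_{\OO[[x,y]]}$ in the notation preceding Lemma \ref{yoneda}; this is how the printed identity should be read, it is what the paper's one-line proof delivers, and it is all that is used downstream, since Theorem \ref{repnonC} identifies $\Def_{\chi}$ with $F_{\wE}$ and Lemma \ref{isoFisoR} then gives $\wE\cong\OO[[x,y]]$ in Proposition \ref{projTistfree} (for commutative $A$ the quotient disappears, as in Corollary \ref{representab}). So keep the conjugacy classes rather than argue them away.
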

\begin{proof} We may choose a character  $\tilde{\chi}: \Qp^{\times}\rightarrow \OO^{\times}$ lifting $\chi$. 
After  twisting with $\tilde{\chi}$ we may assume that $\chi$ is the trivial character.  It follows from the 
proof of Lemma \ref{1ex} that the deformation problem does not change if we replace $G$ with its pro-$p$ completion
$\widehat{G}$. Since $p\neq 2$ we have  $G\cong \ZZ\oplus \ZZ/(p-1) \oplus \Zp$ and hence $\widehat{G}\cong \Zp^2$. Thus
$\OO[[\widehat{G}]]\cong \OO[[x, y]]$ and the assertion follows from the Lemma \ref{1ex}.
\end{proof}

\begin{prop}\label{projTistfree} 
Let $G=\Qp^{\times}$ and $\chi: \Qp^{\times}\rightarrow k^{\times}$ a continuous character and let $S:=\chi^{\vee}$, 
then if $p\neq 2$ then $\wE\cong \OO[[x,y]]$ and $E\cong k[[x,y]]$. Moreover, $\wP$ is a free $\wE$-module of rank $1$ and 
in particular it is $\OO$-torsion free.
\end{prop}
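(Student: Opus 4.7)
The plan is as follows. Take $Q = S$, so that $R = \rad Q = 0$; then (H0), (H1), (H2), and (H5) hold trivially, while (H3) and (H4) translate, under Pontryagin duality, into statements about $\Ext^1_G(\chi', \chi)$ for smooth characters $\chi'$ of $G = \Qp^\times$. For (H4) I would compute $\Ext^1_G(\chi, \chi) \cong \Hom^{cont}(\Qp^\times, k)$; since $\Qp^\times \cong \ZZ \oplus \mu_{p-1} \oplus \Zp$ (using $p \ne 2$), the prime-to-$p$ factor $\mu_{p-1}$ contributes nothing while each of the other two factors contributes a copy of $k$, yielding a $2$-dimensional space. For (H3) with $\chi' \neq \chi$, set $\eta = \chi\chi'^{-1}$ (nontrivial) and analyze a smooth $1$-cocycle $c: G \to k$ satisfying $c(gh) = c(g) + \eta(g) c(h)$: using commutativity of $G$, the identity $c(gh) = c(hg)$ forces $(\eta(g) - 1) c(h) = 0$ for all $h \in H := \ker \eta$ and all $g$, so $c|_H = 0$; then $c$ factors through $G/H$, a finite group of order coprime to $p$, so $c$ is a coboundary. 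This yields $\Ext^1_G(\Eins, \eta) = 0$ and hence (H3).

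With the hypotheses verified, Theorem \ref{repnonC} identifies $\Def_Q$ with the functor sending $A$ to $A^\times$-conjugacy classes of $\Hom_{\hA}(\wE, A)$. Pontryagin duality carries $\Def_Q$ on the dual side to $\Def_\chi$ on the smooth side; the $A$ versus $A^{op}$ subtlety is harmless because $G = \Qp^\times$ is abelian, so $\OO[[\widehat{G}_p]] \cong \OO[[x,y]]$ is commutative and every homomorphism from it to $A$ (in either direction) lands in a commutative subalgebra of $A$. Lemma \ref{2ex} then identifies the relevant functor with $A \mapsto \Hom_{\hA}(\OO[[x,y]], A)$ modulo conjugation. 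Applying Lemma \ref{isoFisoR} to these two representations of the same conjugacy-class functor yields an isomorphism $\wE \cong \OO[[x, y]]$, and reducing modulo $\varpi$ gives $E \cong k[[x, y]]$.

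For the final statement, Corollary \ref{topfree} gives an isomorphism $\wP \cong \prod_{i \in I} \wE$ of topological $\wE$-modules, with $|I| = \dim_k(P/\mm P) = \dim_k Q = 1$ since $\chi$, and hence $Q = \chi^\vee$, is one-dimensional. Thus $\wP \cong \wE$ is a free $\wE$-module of rank $1$, and in particular $\OO$-torsion free because $\wE \cong \OO[[x,y]]$ is.

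The main obstacle I anticipate is not conceptual but bookkeeping: one must check that the Pontryagin-dual translation of Lemma \ref{2ex}'s representing ring $\OO[[x,y]]$ (originally for smooth-side deformations of $\chi$) correctly produces the dual-side universal deformation ring for $Q = \chi^\vee$, and that the conjugacy-class functors match on the nose on all of $\Aa$, not just $\Aa^{ab}$. The commutativity of $\Qp^\times$ reduces this to a formality, but it is the one place where care is required.
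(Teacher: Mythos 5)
Your proposal is correct and takes essentially the same route as the paper. The only genuine variation is the verification of (H3): you argue via a crossed-homomorphism computation, showing a $1$-cocycle $c$ for the twist $\eta=\chi\chi'^{-1}$ must vanish on $\ker\eta$ by commutativity and then descends to the finite prime-to-$p$ quotient where $H^1$ vanishes, whereas the paper observes directly that for a non-split extension $0\rightarrow\chi\rightarrow\epsilon\rightarrow\tau\rightarrow0$ the $G$-equivariant maps $\phi_g(v)=gv-\chi(g)v$ are either all zero (forcing a splitting) or give an isomorphism $\tau\cong\chi$; both arguments hinge on commutativity of $G$ and are of comparable length. Your careful remark about the $\sim$ being implicit in Lemma~\ref{2ex} and the $A$ versus $A^{op}$ bookkeeping collapsing because $\Qp^\times$ is abelian is exactly right and matches the intent of the paper's appeal to Lemmas~\ref{2ex} and~\ref{isoFisoR}.
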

\begin{proof} We claim that the hypotheses (H1)-(H5) are satisfied for $Q=S=\chi^{\vee}$ and note that since in this case $R=0$
the hypothesis (H0) is  satisfied. Since $\Ext^1_{G}(\chi, \chi)\cong \Hom^{cont}(G, k)$ is $2$-di\-men\-sio\-nal,  (H4) holds. 
Consider a non-split extension $0\rightarrow\chi\rightarrow \epsilon \rightarrow \tau\rightarrow 0$ in $\Mod^{\mathrm{sm}}_G(k)$
with $\tau$ irreducible. Since $G$ is commutative for each $g\in G$ the map $\phi_g: \epsilon\rightarrow \epsilon$, 
$v\mapsto gv -\chi(g) v$ is $G$-equivariant. If $\phi_g$ is non-zero for some $g$ then it induces an isomorphism between 
$\tau$ and $\chi$, if $\phi_g$  is zero for all $g$ then any $k$-vector space splitting of the sequence is $G$-equivariant. 
Hence, (H3) is satisfied and all the other 
hypotheses hold trivially, since $R=0$. It follows from Lemma \ref{2ex} and Lemma \ref{isoFisoR} that $\wE\cong \OO[[x,y]]$ and 
hence $E\cong \wE\otimes_{\OO} k\cong k[[x,y]]$. Since $\wP$ is flat over $\wE\cong\OO[[x,y]]$ by Corollary \ref{PisEflat} 
and $k\wtimes_{\wE} \wP\cong \chi^{\vee}$ is one dimensional, $\wP$ is a free $\wE$-module of rank $1$ and in particular 
it is also $\OO$-torsion free.   
\end{proof} 

\begin{cor}\label{extToruschar}$\dim \Ext^1_{\dualcat(k)}(\chi^{\vee}, \chi^{\vee})=2$, $\dim \Ext^2_{\dualcat(k)}(\chi^{\vee}, \chi^{\vee})=1$. Moreover,  
 $\Ext^i_{\dualcat(k)}(\chi^{\vee}, \chi^{\vee})=0$ 
for $i\ge 3$ and $\Ext^i_{\dualcat(k)}(\chi^{\vee}, S')=0$ for all $i\ge 0$ and all ireducible $S'\in \dualcat(k)$ not isomorphic to 
$\chi^{\vee}$.
\end{cor}
\begin{proof} Since $E\cong k[[x,y]]$ we apply  $\wtimes_{E} P$ to the exact sequence
$$0\rightarrow k[[x,y]]\rightarrow k[[x,y]]\oplus k[[x,y]]\rightarrow k[[x,y]]\rightarrow k\rightarrow 0$$
where the first arrow is $f \mapsto (xf, yf)$, the second is $(f,g)\mapsto yf-xg$ to get a projective resolution 
of $\chi^{\vee}\cong P/\mm P$: 
$$ 0\rightarrow P\rightarrow P^{\oplus 2} \rightarrow P\rightarrow \chi^{\vee}\rightarrow 0.$$
The assertions follow from a calculation with this projective resolution. 
\end{proof}

\subsection{Criterion for commutativity}\label{critcomm}

In this section we devise a criterion, see Theorem \ref{crit}, for the ring $\wE$ to be commutative. 
When $G=\GL_2(\Qp)$ we will show that this criterion is satisfied in the generic cases, see \S \ref{strat}, 
and it will enable us to apply Corollary \ref{commutativeOK}. We use the notation of \S \ref{def}, we assume 
the hypotheses (H1)-(H5) for $\dualcat(\OO)$ or equivalently (H0) and (H1)-(H5) for $\dualcat(k)$.

\begin{lem}\label{graded} If there exists a surjection
$\wE\twoheadrightarrow \OO[[x_1,\ldots, x_d]]$, with $d=\dim \mm/\mm^2$, 
and the graded ring $\gr^{\bullet}_{\mm}(E)$ is commutative then 
$\wE\cong \OO[[x_1,\ldots, x_d]]$.
\end{lem}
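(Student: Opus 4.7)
The plan is to show that the given surjection $\varphi : \wE \twoheadrightarrow \OO[[x_1, \ldots, x_d]]$ is an isomorphism by first identifying the mod-$\varpi$ reduction with a power series ring via graded-ring arguments, and then lifting this identification to $\wE$ using $\varpi$-adic separatedness.

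First I would reduce modulo $\varpi$. The composition $\wE \overset{\varphi}{\twoheadrightarrow} \OO[[x_1, \ldots, x_d]] \twoheadrightarrow k[[x_1, \ldots, x_d]]$ kills $\varpi \wE$, and in the relevant setup (where $\wP$ is $\OO$-torsion free, so that $\wE/\varpi\wE \cong E$ by \eqref{tildeseq1}) descends to a surjection $\bar\varphi : E \twoheadrightarrow k[[x_1, \ldots, x_d]]$ of complete local $k$-algebras. This map sends $\mm$ onto the maximal ideal of the target and hence induces a surjection $\mm/\mm^2 \twoheadrightarrow k^d$, which is forced to be an isomorphism by the hypothesis $\dim \mm/\mm^2 = d$.

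Next I would compare associated graded rings. Choose $t_1, \ldots, t_d \in \mm$ lifting $\bar x_1, \ldots, \bar x_d$; by topological Nakayama (Corollary \ref{Einvlim}) they generate $\mm$, so $\gr^{\bullet}_{\mm}(E)$ is generated in degree one by $\mm/\mm^2$. Combined with the commutativity hypothesis, this produces a surjective graded $k$-algebra homomorphism $k[X_1, \ldots, X_d] \twoheadrightarrow \gr^{\bullet}_{\mm}(E)$, $X_i \mapsto \bar t_i$. Composing with the graded surjection $\gr(\bar\varphi) : \gr^{\bullet}_{\mm}(E) \twoheadrightarrow k[\bar x_1, \ldots, \bar x_d]$ gives a graded endomorphism of the polynomial ring sending $X_i \mapsto \bar x_i$, which is an isomorphism. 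Hence both surjections must already be isomorphisms, and $\gr^{\bullet}_{\mm}(E) \cong k[\bar x_1, \ldots, \bar x_d]$. The standard lifting, using that $E$ is $\mm$-adically complete, then yields $\bar\varphi : E \cong k[[x_1, \ldots, x_d]]$.

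Finally I would lift back to $\wE$. Let $I := \ker \varphi$. The mod-$\varpi$ isomorphism forces $I \subseteq \varpi \wE$; and if $\varpi b \in I$, then $\varpi \varphi(b) = 0$ in the $\OO$-torsion-free ring $\OO[[x_1, \ldots, x_d]]$, so $\varphi(b) = 0$, i.e.\ $b \in I$. Hence $I = \varpi I$, and iterating gives $I \subseteq \bigcap_n \varpi^n \wE$. Since $\varpi \in \wm$ we have $\varpi^n \wE \subseteq \wm^n$, so $\bigcap_n \varpi^n \wE \subseteq \bigcap_n \wm^n = 0$ by $\wm$-adic separatedness of $\wE$ (Corollary \ref{Einvlim} applied to $\wE$). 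Thus $I = 0$ and $\varphi$ is the desired isomorphism. The main obstacle will be the graded-ring step, namely using commutativity of $\gr^{\bullet}_{\mm}(E)$ together with the tangent-space equality $\dim \mm/\mm^2 = d$ to force $\gr^{\bullet}_{\mm}(E)$ to be the polynomial ring in $d$ variables, and then lifting this rigidification through $\mm$-adic completion; the passage from $E$ back to $\wE$ is essentially formal once that is in place.
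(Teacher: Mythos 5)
Your proposal is correct and follows essentially the same route as the paper: reduce mod $\varpi$ to a surjection $E\twoheadrightarrow k[[x_1,\ldots,x_d]]$, use commutativity of $\gr^{\bullet}_{\mm}(E)$ together with $\dim\mm/\mm^2=d$ to identify $\gr^{\bullet}_{\mm}(E)$ with the polynomial ring and hence $E$ with $k[[x_1,\ldots,x_d]]$ by completeness, and then kill the kernel of $\wE\twoheadrightarrow\OO[[x_1,\ldots,x_d]]$ using $\OO$-torsion-freeness of the target. Your last step just makes explicit (via $I=\varpi I$ and separatedness) the paper's one-line Nakayama-style conclusion, so there is no essential difference.
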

\begin{proof} Let $R:=k[[x_1, \ldots, x_d]]$ and $\mm_1=(x_1,\ldots x_d)$ be the maximal ideal of $R$.
Applying $\otimes_{\OO} k$ we obtain a surjection $E\twoheadrightarrow R$, thus a surjection of 
graded rings
\begin{equation}\label{graded1}
 \gr^{\bullet}_{\mm}(E)\twoheadrightarrow \gr^{\bullet}_{\mm_1}(R)\cong k[x_1, \ldots x_d].
\end{equation} 
Since $\gr^{\bullet}_{\mm}(E)$ is commutative and $\dim \mm/\mm^2=d$, there exists a surjection
\begin{equation}\label{graded2}
  k[x_1, \ldots x_d]\twoheadrightarrow \gr^{\bullet}_{\mm}(E).
\end{equation}
It follows from \eqref{graded1} and \eqref{graded2} that $\gr^{\bullet}_{\mm}(E)\cong  \gr^{\bullet}_{\mm_1}(R)$.
Hence $\mm^n/\mm^{n+1}\cong \mm^n_1/\mm^{n+1}_1$ for all $n\ge 1$. By induction we get that $E/\mm^n \cong R/\mm_1^n$ 
for all $n\ge 1$. Since both rings are complete we get $E\cong R$.  
Let $K$ be the kernel of $\wE\twoheadrightarrow \OO[[x_1,\ldots, x_d]]$. Since $\OO[[x_1,\ldots, x_d]]$
is $\OO$-flat, we have $K\otimes_{\OO} k=0$ and hence $K=0$, by Nakayama's lemma for compact $\OO$-modules, \cite{SGA3} Exp. $VII_B$ (0.3.3).
\end{proof}

Let $d$ be the dimension of $\mm/\mm^2$  as a $k$-vector space and let $W$ be a $(d-r)$-dimensional $k$-subspace of 
$\mm/\mm^2$ then $W+\mm^2$ is a $2$-sided ideal of $E$ and 
the exact sequence of $E$-modules $0\rightarrow \mm/(W+\mm^2)\rightarrow E/(W+\mm^2)\rightarrow k\rightarrow 0$ leads
by tensoring with $P$  to an exact sequence of $G$-representations
\begin{equation}\label{defAr}
0\rightarrow Q^{\oplus r} \rightarrow T\rightarrow Q\rightarrow 0
\end{equation}
with $T\cong P/(W+\mm^2)P$. Conversely, any $T$ in \eqref{defAr}, such that  $ \Hom_{\dualcat(k)}(T, S)$ is one dimensional, 
is a quotient $\psi: P\twoheadrightarrow T$, as the cosocle of $T$ is isomorphic to $S$,  and defines a $(d-r)$-dimensional subspace 
\begin{equation}\label{defW}
 W:=  \{a\in \mm : \psi\circ a=0\}/\mm^2\subseteq \mm/\mm^2.
\end{equation}

\begin{lem}\label{AW} Let $T$ and $W$ be as above then 
\begin{equation}\label{dimextAW}
\dim \Ext^1_{\dualcat(k)}(T, S)= \dim \frac{W+\mm^2}{W\mm +\mm^3}=\dim W+\dim \frac{\mm^2}{W \mm +\mm^3}.
\end{equation}
\end{lem}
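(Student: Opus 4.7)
The plan is to identify $\Ext^1_{\dualcat(k)}(T, S)$ with $\Hom_E(I, k)$ via the tensor-hom adjunction between $-\wtimes_E P$ and $\Hom_{\dualcat(k)}(P, -)$, writing $I := W + \mm^2$ so that $T \cong P/IP$.

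First I would apply $\Hom_{\dualcat(k)}(-, S)$ to the short exact sequence $0 \to IP \to P \to T \to 0$. Projectivity of $P$ makes $\Ext^1_{\dualcat(k)}(P, S)$ vanish, and the restriction map $\Hom(P, S) \to \Hom(IP, S)$ is zero because every homomorphism $P \to S$ factors through $P/\mm P = Q$ (as $\mm$ annihilates $S$) while $IP \subseteq \mm P$. This yields
\[
\Ext^1_{\dualcat(k)}(T, S) \cong \Hom_{\dualcat(k)}(IP, S).
\]

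Next, Corollary \ref{topfree} exhibits $P$ as a topological product of copies of $E$, hence $P$ is flat over $E$ and $IP$ identifies with $I \wtimes_E P$. The tensor-hom adjunction then supplies an isomorphism
\[
\Hom_{\dualcat(k)}(I \wtimes_E P, S) \cong \Hom_E(I, \Hom_{\dualcat(k)}(P, S)) = \Hom_E(I, k),
\]
where the right $E$-action on $k = \Hom_{\dualcat(k)}(P, S)$ factors through $E \twoheadrightarrow E/\mm$ by Definition \ref{Em}. Since $\mm$ acts as zero from the right, every right $E$-linear map $I \to k$ vanishes on $I\mm$, yielding $\Hom_E(I, k) \cong (I/I\mm)^{*}$ of $k$-dimension $\dim_k I/I\mm$.

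Finally, the elementary expansion $I\mm = (W + \mm^2)\mm = W\mm + \mm^3$ converts $\dim_k I/I\mm$ into the claimed formula. The only slightly delicate step is verifying the tensor-hom adjunction in the pseudo-compact setting; this reduces to the standard discrete adjunction at each finite truncation $P/\mm^n P$ by continuity, using that $I$ is finitely generated over $E$ (cf.\ Corollary \ref{mnfingen1}) and $S$ is discrete.
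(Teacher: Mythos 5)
Your proof is correct and essentially coincides with the paper's: the paper likewise reduces, via projectivity of $P$ and the fact that the one-dimensional space $\Hom_{\dualcat(k)}(P,S)$ kills $\mm P$, to computing $\Hom_{\dualcat(k)}((W+\mm^2)\wtimes_E P, S)$, and then shows every such map $\psi$ vanishes on $(W\mm+\mm^3)\wtimes_E P$ because $v\mapsto\psi(a\wtimes v)$ is a multiple of $\kappa$ and hence $\psi(ab\wtimes v)=\psi(a\wtimes bv)=0$ for $b\in\mm$ — which is exactly your adjunction isomorphism $\Hom_{\dualcat(k)}(I\wtimes_E P,S)\cong\Hom_E(I,k)\cong (I/I\mm)^{*}$ carried out by hand, the paper phrasing the final count through $I/I\mm\wtimes_E P\cong Q^{\oplus n}$ and $\dim\Hom_{\dualcat(k)}(Q,S)=1$. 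The only slip is in your aside on verifying the adjunction: $S=\pi^{\vee}$ is profinite, not discrete, but this is harmless since the instance you need follows from the one-dimensionality of $\Hom_{\dualcat(k)}(P,S)$ (every relevant map factors through the finite quotient $I/I\mm$), not from discreteness of $S$.
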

\begin{proof}We have an exact sequence:
\begin{equation}\label{dimextAW1}
0\rightarrow (W+\mm^2)\wtimes_E P\rightarrow P \rightarrow T\rightarrow 0.
\end{equation}
Since $\dim \Hom_{\dualcat(k)}(T, S)=\dim \Hom_{\dualcat(k)}(P, S)=1$ and $P$ is projective, by 
applying $\Hom_{\dualcat(k)}(\ast, S)$ to \eqref{dimextAW1} we obtain an isomorphism 
\begin{equation}\label{dimextAW2}
\Hom_{\dualcat(k)}((W+\mm^2)\wtimes_E P, S)\cong \Ext^1_{\dualcat(k)}(T,S).
\end{equation}
Let $n$ be the dimension of $ (W+\mm^2)/ (W\mm +\mm^3)$ then the exact sequence of right $E$-modules 
$0\rightarrow W\mm +\mm^3\rightarrow W+\mm^2\rightarrow k^{\oplus n}\rightarrow 0$ leads to an exact 
sequence of $G$-representations: 
\begin{equation}\label{dimextAW3}
0\rightarrow (W\mm+\mm^3)\wtimes_E P\rightarrow (W +\mm^2)\wtimes_E P \rightarrow Q^{\oplus n}\rightarrow 0.
\end{equation}
So for the first equality it is enough to show that any $\psi: (W +\mm^2)\wtimes_E P\rightarrow Q$ is zero 
on $(W\mm+ \mm^3)\wtimes_E P$. Suppose that $\psi(a\wtimes v)\neq 0$ for some 
$a\in W+\mm^2$ and $v\in P$ then the composition  $\varphi:P\rightarrow (W +\mm^2)\wtimes_E P\rightarrow Q$, 
$v\mapsto \psi(a\wtimes v)$ is non-zero. Since $\Hom_{\dualcat(k)}(P, Q)$ 
is one dimensional, $\varphi$ is is trivial on $\mm P$ and so for all $b\in \mm$ we have 
$$0=\psi(a\wtimes b v)= \psi(ab \wtimes v).$$  
Hence, $\psi$ is trivial on $(W+\mm^2)\mm \wtimes_E P= (W\mm +\mm^3)\wtimes_E P$. The last equality 
follows from the exact sequence $0\rightarrow \frac{\mm^2}{W\mm+\mm^3}\rightarrow \frac{W+\mm^2}{W \mm +\mm^3}\rightarrow W\rightarrow 0$.
\end{proof}

\begin{lem}\label{commut} Let $(R, \mm)$ be a local $k$-algebra with $R/\mm\cong k$ and $\mm^3=0$. Suppose  there exists a surjection 
\begin{equation}\label{surjection}
\varphi: R\twoheadrightarrow k[[x_1, \ldots, x_d]]/(x_1,\ldots, x_d)^3,
\end{equation}
where $d=\dim \mm/\mm^2$. Let $r=\lfloor \frac{d}{2}\rfloor$ and further suppose that for every  
$d-r$ dimensional $k$-subspace $W$ of $\mm/\mm^2$ we have 
\begin{equation}\label{ineqf}
\dim \frac{\mm^2}{W\mm}\le \frac{r(r+1)}{2}
\end{equation}
then \eqref{surjection} is an isomorphism. In particular, $R$ is commutative.  
\end{lem}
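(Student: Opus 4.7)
The plan is to reformulate the hypothesis as a condition on a subspace $I_2 \subseteq \wedge^2 V$, where $V := \mm/\mm^2$, and then eliminate $I_2$ via the theory of alternating forms. Since $\mm^3 = 0$, any $k$-linear splitting $V \hookrightarrow \mm$ of the projection identifies $R$ with $T(V)/(V^{\otimes 3} + I_2)$ for a unique subspace $I_2 \subseteq V^{\otimes 2}$. The existence of the surjection $\varphi$ onto the truncated symmetric algebra $R' := k[[x_1,\ldots,x_d]]/(x_1,\ldots,x_d)^3$ forces $I_2 \subseteq \wedge^2 V$, where $\wedge^2 V$ is embedded in $V^{\otimes 2}$ by antisymmetrization. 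Since $\dim R - \dim R' = \binom{d}{2} - \dim I_2$, proving $I_2 = \wedge^2 V$ will imply that $\varphi$ is an isomorphism, and commutativity of $R$ will then follow from that of $R'$.

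I would next translate the hypothesis. Fix $W \subseteq V$ of dimension $d - r$. Under the above identification $\mm^2 = V^{\otimes 2}/I_2$ and $W\mm = (W \otimes V + I_2)/I_2$, so $\mm^2/W\mm \cong V^{\otimes 2}/(W \otimes V + I_2)$. Using $I_2 \subseteq \wedge^2 V$ together with the elementary identity $\wedge^2 V \cap (W \otimes V) = \wedge^2 W$ inside $V^{\otimes 2}$, a dimension count yields
\begin{equation*}
\dim \mm^2/W\mm \;=\; rd - \dim I_2 + \dim(I_2 \cap \wedge^2 W).
\end{equation*}
The trivial inequality $\dim(I_2 \cap \wedge^2 W) \ge \dim I_2 + \binom{d-r}{2} - \binom{d}{2}$, coming from $I_2, \wedge^2 W \subseteq \wedge^2 V$, forces $\dim \mm^2/W\mm \ge r(r+1)/2$ with no further input, so the assumed upper bound is tight. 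This tightness is equivalent to the equality $I_2 + \wedge^2 W = \wedge^2 V$ holding for every $(d-r)$-dimensional $W \subseteq V$.

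For the conclusion I argue by contradiction: if $I_2 \subsetneq \wedge^2 V$, choose a nonzero linear functional $\xi \in (\wedge^2 V)^*$ vanishing on $I_2$ and view it as a nonzero alternating bilinear form on $V$. Its rank is some even integer $2k$ with $k \le \lfloor d/2 \rfloor = r$, and the standard classification of alternating forms produces an isotropic subspace of dimension $d - k \ge d - r$; any $(d-r)$-dimensional subspace $W$ of it is again isotropic, so $\xi|_{\wedge^2 W} = 0$. Combined with $\xi|_{I_2} = 0$, the equality from the previous step gives $\xi|_{\wedge^2 V} = \xi|_{I_2 + \wedge^2 W} = 0$, contradicting $\xi \ne 0$. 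Hence $I_2 = \wedge^2 V$ and we are done. The main subtlety is the translation: one must recognise that the hypothesised upper bound $r(r+1)/2$ coincides with a universal lower bound forced by $I_2 \subseteq \wedge^2 V$, so that the hypothesis is equivalent to the clean equality $I_2 + \wedge^2 W = \wedge^2 V$; the alternating-form step is then standard and works uniformly in $d$ over any field.
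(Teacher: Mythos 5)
Your proof is correct and follows essentially the same route as the paper's: in both arguments the key step is to extract a nonzero alternating form on $V = \mm/\mm^2$ from the failure of $\varphi$ to be an isomorphism, and then apply the hypothesis to an isotropic subspace $W$ of dimension $d-r$ for that form to reach a contradiction. The main difference is bookkeeping: the paper first quotients by a hyperplane in $\Ker\varphi$ to reduce to a one-dimensional kernel and works directly with the commutator pairing $ab - ba = \kappa(a,b)t$, whereas you keep the full kernel $I_2 \subseteq \wedge^2 V$ of the multiplication map intact and observe that the hypothesis is precisely the condition $I_2 + \wedge^2 W = \wedge^2 V$ for every such $W$ — a cleaner reformulation that replaces the paper's dimension comparison between $W\mm$ and $\varphi(W\mm)$ by a single functional vanishing argument.
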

\begin{proof} Any commutative local $k$-algebra $(A, \mm_{A})$ with  $A/\mm_A= k$, $\mm^{3}_A=0$   and $\dim \mm_A/\mm_A^2\le d$ 
 is a quotient of  $k[[x_1, \ldots, x_d]]/(x_1,\ldots, x_d)^3$. 
Hence, 
$$R^{ab}\cong k[[x_1, \ldots, x_d]]/(x_1,\ldots, x_d)^3,$$
 where $R^{ab}$ is the maximal 
commutative quotient of $R$. Let $\mathfrak a $ be the kernel of $\varphi$.  
Since 
$\dim \mm/\mm^2=\dim \varphi(\mm)/\varphi(\mm)^2=d$, we get that $\mathfrak a$ is contained in $\mm^2$.  
Since $\mm^3=0$, any $k$-subspace $V$ of $\mathfrak a$ is also a two-sided 
ideal of $R$. Suppose that $\mathfrak a\neq 0$ and let $V\subset \mathfrak a $ be any $k$-subspace 
such that the quotient $\mathfrak a/ V$ is one dimensional. The surjection $\mm^2_R\twoheadrightarrow \mm^2_{R/V}$ induces a surjection
$\mm^2_R/ W\mm_{R}\twoheadrightarrow \mm^2_{R/V} /W\mm_{R/V}$. Hence, by  replacing  $R$ with $R/V$ we may assume that $\mathfrak a$ is a one dimensional $k$-vector space. We let $t$ be a basis 
vector of $\mathfrak a$.  
 
 If $a, b\in \mm$ then  the image of $ab-ba$ in $R^{ab}$ is zero. Thus there exists $\kappa(a, b)\in k$ such that
$ab-ba= \kappa(a,b) t$. If $a\in \mm^2$ or $b\in \mm^2$ then $\kappa(a, b)=0$, as $\mm^3=0$. Hence, $\kappa$ defines an alternating bilinear form on 
$\mm/\mm^2$.  

We may choose a basis
 $\mathcal B=\{x_1,\ldots, x_d\}$ of $\mm/\mm^2$  
such that for any two $a, b\in \mathcal B$ we have $\kappa(a, b)=0$, except $\kappa(x_i, x_{d-i+1})=- \kappa(x_{d-i+1}, x_i)=1$,
 $1\le i\le s$, where $d-2s$ is the dimension of $\{a\in \mm/\mm^2: \kappa(a,b)=0, \forall b\in \mm/\mm^2\}$.  Let $W$ be the linear span of $\mathcal S=\{x_1, \ldots, x_{d-r}\}$. The
$k$-subspace of $\mm^2$ spanned by the set 
$\mathcal S\centerdot\mathcal B:=\{ a b : a\in \mathcal S,  b\in \mathcal B\}$ is equal to  $W\mm$. 
The set $\mathcal S\centerdot\mathcal B$ consists of monomials $x^2_i$, $1\le i\le d-r$ and $x_i x_j$ with 
$1\le i\le d-r$, $1\le j\le d$ and $i<j$, since by construction $ab=ba$ for all $a,b \in \mathcal S$.  We note that $d-r\le s$, as $d-2s \ge 0$. 
It follows from \eqref{surjection}
that $\varphi$ induces a bijection between the sets $\mathcal S\centerdot\mathcal B$ and $\varphi(\mathcal S)\centerdot\varphi(\mathcal B)$. 
Since distinct monomials are linearly independent in $R^{ab}$ the set
$\varphi(\mathcal S)\centerdot\varphi(\mathcal B)$ is a basis of $\varphi(W \mm)=\varphi(W)\varphi(\mm)$. Hence, 
the dimension of $W\mm$  is equal to the dimension of $\varphi (W\mm)$,  which is equal to the cardinality of 
of the set $\varphi(\mathcal S)\centerdot\varphi(\mathcal B)$. The latter can be calculated as 
$|\mathcal B| + (|\mathcal B|-1)+\ldots+ (|\mathcal B|- |\mathcal S|+1)$. Since the dimension of $\varphi(\mm)^2$ is equal to 
$|\mathcal B|+(|\mathcal B|-1)+\ldots+1$, we deduce that the dimension of $\varphi(\mm)^2/\varphi(W \mm)$ is equal to 
$1+2+\ldots+ (|\mathcal B|-|\mathcal S|)= \frac{r(r+1)}{2}$.  Since we have assumed $\mathfrak a\neq 0$ we have $\dim \mm^2 > \dim \varphi(\mm)^2$ and hence $\dim \mm^2/ W\mm > \dim \varphi(\mm)^2/\varphi(W\mm)$. This contradicts
\eqref{ineqf}.
\end{proof}

\begin{thm}\label{crit} Let $d:=\dim \mm/\mm^2$ and $r=\lfloor \frac{d}{2}\rfloor$ and  suppose that there exists a surjection
$\wE\twoheadrightarrow \OO[[x_1,\ldots, x_d]]$.  Further, suppose that for every exact sequence 
\begin{equation}\label{crit1}
0\rightarrow Q^{\oplus r} \rightarrow  T\rightarrow Q\rightarrow 0
\end{equation}
with $\dim \Hom_{\dualcat(k)}(T, S)=1$ we have $\dim \Ext^1_{\dualcat(k)}(T, S)\le \frac{r(r-1)}{2}+ d$ then $\wE\cong \OO[[x_1,\ldots, x_d]]$.
\end{thm}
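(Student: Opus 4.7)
My plan is to combine Lemma \ref{graded} with Lemma \ref{commut} applied to the truncation $R:=E/\mm^3$. Reducing the assumed surjection $\wE\twoheadrightarrow \OO[[x_1,\ldots,x_d]]$ modulo $\varpi$ (using $E\cong \wE/\varpi\wE$ from \eqref{tildeseq1}) and then modulo the cube of the maximal ideal produces a surjection
\begin{equation*}
R\twoheadrightarrow k[[x_1,\ldots,x_d]]/(x_1,\ldots,x_d)^3,
\end{equation*}
and by hypothesis $\dim \mm/\mm^2 = d$. By Lemma \ref{graded}, once $\gr^{\bullet}_{\mm}(E)$ is known to be commutative the conclusion follows, and I will derive that commutativity from commutativity of $R$. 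Lemma \ref{commut} will give the latter as soon as I verify the estimate $\dim \mm^2/(W\mm + \mm^3)\le r(r+1)/2$ (computed inside $E$) for every $(d-r)$-dimensional $k$-subspace $W\subset \mm/\mm^2$.

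To verify this estimate, fix such a $W$ and form $T:=P/(W+\mm^2)P$. As in the discussion preceding Lemma \ref{AW}, tensoring the short exact sequence $0\to \mm/(W+\mm^2) \to E/(W+\mm^2)\to k\to 0$ of right $E$-modules with $P$ over $E$ yields
\begin{equation*}
0\rightarrow Q^{\oplus r}\rightarrow T\rightarrow Q\rightarrow 0,
\end{equation*}
the $r$ appearing because $\dim \mm/(W+\mm^2)=r$. The essential surjection $\kappa\colon P\twoheadrightarrow S$ kills $(W+\mm^2)P\subseteq \mm P$, hence factors through $T$, giving $\dim \Hom_{\dualcat(k)}(T,S)=1$. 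The hypothesis of the theorem then gives $\dim \Ext^1_{\dualcat(k)}(T,S)\le r(r-1)/2 + d$, and by Lemma \ref{AW} the left-hand side equals $\dim(W+\mm^2)/(W\mm + \mm^3)$. Splitting this dimension along
\begin{equation*}
0\rightarrow \mm^2/(W\mm+\mm^3)\rightarrow (W+\mm^2)/(W\mm+\mm^3)\rightarrow (W+\mm^2)/\mm^2\rightarrow 0,
\end{equation*}
whose right-hand term has dimension $d-r$, extracts $\dim \mm^2/(W\mm+\mm^3)\le r(r-1)/2 + d - (d-r) = r(r+1)/2$, exactly as required. Lemma \ref{commut} therefore forces $R\cong k[[x_1,\ldots,x_d]]/(x_1,\ldots,x_d)^3$; in particular $R$ is commutative.

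To transfer commutativity of $R$ to commutativity of $\gr^{\bullet}_{\mm}(E)$, pick $X_1,\ldots,X_d\in \mm$ whose images form a basis of $\mm/\mm^2$. By Nakayama they generate $\mm$ as a right $E$-module, and an immediate induction shows that $\mm^n/\mm^{n+1}$ is spanned by the degree-$n$ monomials in the $X_i$, so $\gr^{\bullet}_{\mm}(E)$ is generated as a $k$-algebra by $\bar X_1,\ldots,\bar X_d\in \gr^1_{\mm}(E)$. Commutativity of $R=E/\mm^3$ means precisely that $X_iX_j-X_jX_i\in \mm^3$ for all $i,j$, i.e.\ $\bar X_i\bar X_j=\bar X_j\bar X_i$ in $\gr^2_{\mm}(E)$; since the generators pairwise commute, $\gr^{\bullet}_{\mm}(E)$ is commutative, and Lemma \ref{graded} delivers $\wE\cong \OO[[x_1,\ldots,x_d]]$. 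The only non-formal step is the dimension count in the middle paragraph; the rest is bookkeeping.
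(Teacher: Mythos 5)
Your proof is correct and takes the same route as the paper's: reduce mod $\varpi$ and $\mm^3$ to apply Lemma \ref{commut}, verifying its dimension estimate via Lemma \ref{AW} and the theorem's $\Ext^1$ bound, then pass from commutativity of $E/\mm^3$ to commutativity of $\gr^\bullet_\mm(E)$ and conclude with Lemma \ref{graded}. The paper's proof is just this argument stated tersely; you have filled in the dimension count and the graded-ring step explicitly, both of which check out.
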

\begin{proof} The bound on $\dim \Ext^1_{\dualcat(k)}(T, S)$ and Lemmas \ref{AW}, \ref{commut} imply that $E/\mm^3$ is commutative. Hence, the commutator 
of any two elements in $\gr^1_{\mm} E$ is zero in $\gr^{\bullet}_{\mm} E$. Thus the graded ring $\gr^{\bullet}_{\mm}(E)$ is commutative,
 as it is generated as a ring by $\gr^1_{\mm} E$ over $\gr^0_{\mm} E\cong k$, and the result follows from  Lemma \ref{graded}.
\end{proof} 

In the applications to $G=\GL_2(\Qp)$, $r$ will turn out to be equal to $1$. We finish 
the section with lemmas of technical nature tailored for this situation. 

Let $a, b \in \Ext^1_{\dualcat(k)}(Q, Q)$ be equivalence classes of extensions of 
$0\rightarrow Q\rightarrow A\overset{\alpha}{\rightarrow}Q\rightarrow  0$ and 
$0\rightarrow Q\overset{\beta}{\rightarrow} B\rightarrow Q \rightarrow 0$, respectively. 
We denote by $a\circ b\in \Ext^2_{\dualcat(k)}(Q, Q)$ the equivalence class of 
$0\rightarrow Q\rightarrow A \overset{\beta\circ\alpha}{\rightarrow} B\rightarrow Q\rightarrow  0$. 
Applying $\Hom_{\dualcat(k)}(Q, \ast)$ we get an exact sequence:
\begin{equation}\label{delta1}
 \Ext^1_{\dualcat(k)}(Q, A)\rightarrow \Ext^1_{\dualcat(k)}(Q, Q)\overset{\partial_1}\rightarrow \Ext^2_{\dualcat(k)}(Q, Q).
\end{equation}
Applying $\Hom_{\dualcat(k)}(\ast, Q)$ we get an exact sequence
\begin{equation}\label{delta2}
 \Ext^1_{\dualcat(k)}(A, Q)\rightarrow \Ext^1_{\dualcat(k)}(Q, Q)\overset{\partial_2}\rightarrow \Ext^2_{\dualcat(k)}(Q, Q).
\end{equation}
Then $\partial_1(b)= a\circ b$ and $\partial_2(b)= b\circ a$, \cite[\S7.6 Prop 5]{bour}.

\begin{lem}\label{alter} The following are equivalent:
\begin{itemize}
\item[(i)]  $\Hom(E, k[x]/(x^3))\rightarrow \Hom(E, k[x]/(x^2))$ is surjective;
\item[(ii)] $a\circ a=0$ for all $a\in \Ext^1_{\dualcat(k)}(Q, Q)$.
\end{itemize}
\end{lem}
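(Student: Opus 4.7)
\emph{Plan.} My strategy is to use Theorem \ref{repnonC} and Lemma \ref{tangentspace} to reformulate (i) as a lifting problem for first-order deformations of $Q$, and then read off the obstruction directly from the long exact sequence (\ref{delta1}).

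Since $k[x]/(x^2)$ and $k[x]/(x^3)$ are commutative objects of $\Aa$ on which $\varpi$ acts trivially, conjugation by units is trivial and $\Hom_{\hA}(\wE,k[x]/(x^i))=\Hom_{\hA}(E,k[x]/(x^i))$; Theorem \ref{repnonC} identifies these sets with $\Def_Q(k[x]/(x^i))$ for $i=2,3$, and Lemma \ref{tangentspace} identifies $\Def_Q(k[\varepsilon])$ with $\Ext^1_{\dualcat(k)}(Q,Q)$. The restriction map in (i) corresponds to reducing a flat $k[x]/(x^3)$-deformation modulo $x^2$. Hence (i) asserts that every first-order deformation $A$ of $Q$, with class $a=[0\to Q\overset{\iota}{\to}A\overset{\alpha}{\to}Q\to 0]\in\Ext^1_{\dualcat(k)}(Q,Q)$, extends to a flat $k[x]/(x^3)$-deformation $N$ with $N/x^2N\cong A$.

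The key intermediate claim is that the isomorphism classes of such lifts $N$ are in bijection with extensions $[N]\in\Ext^1_{\dualcat(k)}(Q,A)$ whose pushout along $\alpha$ equals $a$. Given a lift $N$, flatness over $k[x]/(x^3)$ forces the filtration $0\subset x^2N\subset xN\subset N$ to have successive quotients all isomorphic to $Q$, and multiplication by $x$ induces an isomorphism $N/x^2N\overset{\sim}{\to}xN$ identifying $xN$ with $A$; the sequence $0\to A\cong xN\to N\to N/xN\cong Q\to 0$ then lives in $\Ext^1_{\dualcat(k)}(Q,A)$, and its pushout along $\alpha$ equals $N/x^2N\cong A$, i.e.\ $a$. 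Conversely, given an extension $0\to A\to N\to Q\to 0$ with pushout $a$, the hypothesis supplies an isomorphism $\phi\colon N/\iota(Q)\overset{\sim}{\to}A$ of extensions of $Q$ by $Q$; defining $x\colon N\to N$ as the composite $N\twoheadrightarrow N/\iota(Q)\overset{\phi}{\to}A\hookrightarrow N$, a straightforward filtration-level diagram chase gives $x^3=0$, $xN=A$, $x^2N=\iota(Q)\cong Q$, and $N/xN\cong Q$, and shows that the induced $k[x]/(x^2)$-structure on $N/x^2N$ recovers $A$ as a deformation.

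Finally, by exactness of (\ref{delta1}) the class $a$ lies in the image of $\alpha_{\ast}\colon\Ext^1_{\dualcat(k)}(Q,A)\to\Ext^1_{\dualcat(k)}(Q,Q)$ if and only if $\partial_1(a)=0$; since the text following (\ref{delta2}) recalls $\partial_1(b)=a\circ b$, this reads $a\circ a=0$. Combined with the previous paragraphs, (i) is equivalent to $a\circ a=0$ for every $a\in\Ext^1_{\dualcat(k)}(Q,Q)$, which is (ii). The main technical point will be the converse direction of the bijection in the third paragraph: verifying that the prescribed $x$-action turns $N$ into a genuine flat $k[x]/(x^3)$-deformation whose reduction modulo $x^2$ is isomorphic to $A$ as a deformation, not merely as an object of $\dualcat(k)$.
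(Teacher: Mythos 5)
Your proof is correct, and at the structural level it follows the same strategy as the paper: translate (i) into a deformation-lifting problem via Theorem \ref{repnonC} and Lemma \ref{tangentspace}, realize the length-$3$ deformation as a length-$3$ filtered object of $\dualcat(k)$, and read the obstruction off the long exact sequences \eqref{delta1}--\eqref{delta2}.

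There are two mild (and valid) departures worth noting. First, you work with the pushout sequence \eqref{delta1} (Hom in the first variable, $\partial_1$), regarding the lift $N$ as an element of $\Ext^1_{\dualcat(k)}(Q,A)$ via $0\to xN\to N\to N/xN\to 0$; the paper instead works with the pullback sequence \eqref{delta2} (Hom in the second variable, $\partial_2$), regarding the same object $B=N$ as an element of $\Ext^1_{\dualcat(k)}(A,Q)$ via $0\to Q\to B\to A\to 0$. Since $\partial_1(a)=a\circ a=\partial_2(a)$, this is immaterial: the two exact sequences produce the same 9-lemma square and the same object in the middle. Second, and more substantively, in the converse direction the paper goes back up to the ring $E$: it lifts $\varphi\colon P\twoheadrightarrow A$ to $\psi\colon P\twoheadrightarrow B$ and shows that the right annihilator $\mathfrak a=\{b\in\mm:\psi\circ b=0\}$ is a two-sided ideal, so that $E\to E/\mathfrak a$ is a ring homomorphism giving the desired lift; you instead stay on the module side and build the operator $x\colon N\to N$ explicitly as $N\twoheadrightarrow N/j\iota(Q)\xrightarrow{\;\phi\;}A\hookrightarrow N$, then verify by a filtration chase that this endows $N$ with the structure of a free $k[x]/(x^3)$-module reducing to $A$. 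That chase does check out: one has $xN=j(A)$, $x^2N=j\iota(Q)$, $x^3=0$, the two multiplication maps $N/xN\to xN/x^2N$ and $xN/x^2N\to x^2N$ are isomorphisms (hence $N$ is free, hence flat), and since $\phi$ is an isomorphism of extensions one gets $\phi\circ(\iota'\alpha')=\iota\alpha\circ\phi$, i.e.\ $\phi$ is $k[x]/(x^2)$-linear, so $N/x^2N\cong A$ as deformations and not merely as objects of $\dualcat(k)$. Your version of the converse is arguably more transparent, because it makes the $k[x]/(x^3)$-structure visible directly rather than routing the flatness and reduction claims through the two-sidedness of $\mathfrak a$ and Corollary \ref{flatA}; the price is that the chase you defer as ``the main technical point'' really does have to be spelled out, as it is doing the work that Theorem \ref{repnonC} does automatically once one has a ring homomorphism in hand.
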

\begin{proof} By $\Hom$ in (i) we mean  homomorphisms of local $k$-algebras. We will show that (i) implies (ii).
An extension $a$ may be considered as a deformation of $Q$ to $k[x]/(x^2)$ and hence 
as $\varphi\in \Hom(E, k[x]/(x^2))$ by Theorem \ref{repnonC}. More precisely, $a$ is the equivalence class of 
\begin{equation}\label{class1} 
0\rightarrow k \wtimes_{E}P\rightarrow k[x]/(x^2)\wtimes_{E, \varphi} P\rightarrow  k \wtimes_{E}P\rightarrow 0
\end{equation}
By assumption there exists $\psi\in \Hom(E, k[x]/(x^3))$ lifting $\varphi$. This gives an extension 
 \begin{equation}\label{class2} 
0\rightarrow k \wtimes_{E}P\rightarrow k[x]/(x^3)\wtimes_{E, \psi} P\rightarrow  k[x]/(x^2) \wtimes_{E, \varphi}P\rightarrow 0.
\end{equation}
The image of \eqref{class2} in $\Ext^1_{\dualcat(k)}(Q,Q)$ via \eqref{delta2} is the extension class of 
\begin{equation}\label{class3} 
0\rightarrow k \wtimes_{E}P\rightarrow (x)/(x^3)\wtimes_{E, \psi} P\rightarrow k \wtimes_{E}P\rightarrow 0
\end{equation}
and is equal to $a$. Hence, $a$ lies in the kernel of $\partial_2$ and so $a\circ a=0$. 

Conversely suppose that $a\circ a=0$ then since $a\circ a=\partial_2(a)$ there exists a commutative diagram:

\begin{displaymath}\label{class4}
\xymatrix@1{ 0 \ar[r] &Q\ar[d]^{=}\ar[r] & A \ar@{^(->}[d]\ar[r]& Q \ar[r]\ar@{^(->}[d] & 0  \\ 
0 \ar[r] & Q\ar[r] & B \ar[r]\ar@{->>}[d]& A \ar[r]\ar@{->>}[d] & 0\\
& & Q\ar[r]^=& Q}
\end{displaymath}
Since $P$ is projective and $a$ is non-split there exists a surjection $\psi: P\twoheadrightarrow B$ lifting 
$\varphi: P\twoheadrightarrow A$. It is enough to show that  $\mathfrak a:=\{b\in \mm: \psi\circ b=0\}$ is a two-sided ideal of 
$E$. Since the composition $P\overset{\psi}{\rightarrow} B\twoheadrightarrow Q$ is trivial on $\mm P$,  
the image of $\psi\circ b : P\rightarrow B$ is contained in $A\cong \Ker(B\twoheadrightarrow Q)$ for all $b\in \mm$. Now $ \Hom_{\dualcat(k)}(P, A)$ 
is $2$-di\-men\-sio\-nal with basis $\varphi$, $\varphi_1: P\twoheadrightarrow Q\hookrightarrow A$.
For a fixed $b\in \mm$ we may write $\psi\circ b = \lambda \varphi +\mu \varphi_1$. For all $c\in E$ we have 
$\varphi\circ  c \equiv \psi\circ c \pmod {Q}$ and hence $\varphi\circ c =0$ if $c\in \mathfrak a$. Thus we obtain 
$\psi\circ b\circ c  = \lambda \varphi\circ c +\mu \varphi_1\circ c=0$ for all $c\in \mathfrak a$. Hence, 
$\mathfrak a$ is a two sided ideal.  
 \end{proof} 

\begin{lem}\label{alter1} Let $(R, \mm)$ be a commutative local artinian $k$-algebra  with $\mm^3=0$
and $R/\mm=k$.  Let $d$ be the dimension of $\mm/\mm^2$ as a $k$-vector space. Then the following 
are equivalent:
\begin{itemize}
\item[(i)] $\Hom(R, k[x]/(x^3))\rightarrow \Hom(R, k[x]/(x^2))$ is surjective;
\item[(ii)] $R\cong k[[x_1,\ldots, x_d]]/(x_1, \ldots, x_d)^3$.
\end{itemize}
\end{lem}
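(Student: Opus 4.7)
The plan is to convert the lifting condition in (i) into a vanishing condition on an ideal of symmetric tensors, reducing both implications to linear algebra over $V := \mm/\mm^2$. The direction (ii) $\Rightarrow$ (i) is direct: setting $S := k[[x_1,\ldots,x_d]]/(x_1,\ldots,x_d)^3$ and assuming $R = S$, any $k$-algebra map $\varphi : S \to k[x]/(x^2)$ is determined by the $d$ values $\varphi(x_i) \in (x)/(x^2)$, and I would lift by choosing arbitrary representatives $\tilde\varphi(x_i) \in (x)/(x^3)$; the defining cubic relations are automatically respected since every triple product in $k[x]/(x^3)$ vanishes.

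For the harder direction (i) $\Rightarrow$ (ii), as in the first lines of the proof of Lemma \ref{commut}, there is a surjection $\pi : S \twoheadrightarrow R$, and its kernel $I$ sits in $\mm_S^2$ because $\dim_k \mm_S/\mm_S^2 = d = \dim_k V$. Commutative multiplication induces an isomorphism $\Sym^2(V) \cong \mm_S^2$, so I would identify $I$ with the kernel of the induced surjection $\bar\mu : \Sym^2(V) \twoheadrightarrow \mm^2$. Proving (ii) then amounts to showing $I = 0$.

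The main step is to parameterize algebra maps explicitly. A $k$-algebra map $\varphi : R \to k[x]/(x^2)$ has the form $a \mapsto \bar a + D_1(a)\, x$ with $D_1 \in V^*$, and every $D_1$ arises. A lift $\tilde\varphi : R \to k[x]/(x^3)$ has the form $a \mapsto \bar a + D_1(a)\, x + D_2(a)\, x^2$, and multiplicativity on $\mm \times \mm$ forces
\[
D_2(ab) \;=\; D_1(a)\, D_1(b), \qquad a,b \in \mm.
\]
Hence $D_2$ on a complement of $\mm^2$ in $\mm$ is free, but $D_2|_{\mm^2}$ is prescribed by the formula $\sum a_i b_i \mapsto \sum D_1(a_i) D_1(b_i)$; the only obstruction to the existence of a lift is that this prescription be well-defined, equivalently that the symmetric square $D_1^{\otimes 2} \in \Sym^2(V^*)$ vanish on $I \subseteq \Sym^2(V)$. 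Thus (i) translates into the statement that every rank-one symmetric square $D_1^{\otimes 2}$ annihilates $I$.

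The endgame uses polarization. Since $p \neq 2$, the identity
\[
2\, D_1 \cdot D_1' \;=\; (D_1+D_1')^{\otimes 2} - D_1^{\otimes 2} - (D_1')^{\otimes 2}
\]
shows that the symmetric squares $\{D_1^{\otimes 2} : D_1 \in V^*\}$ span $\Sym^2(V^*)$; via the perfect pairing $\Sym^2(V^*) \times \Sym^2(V) \to k$ this forces $I = 0$, so $\pi$ is an isomorphism. The main obstacle, and the place to proceed carefully, is the explicit analysis of the obstruction: one must verify both that $D_2|_{\mm^2}$ is fully prescribed by $D_1$ through the multiplicativity relation and that the datum of $D_2$ on a complement of $\mm^2$ can be chosen freely, so that the sole constraint to lifting is the well-definedness of the symmetric bilinear form $D_1 \otimes D_1$ on $\mm^2$.
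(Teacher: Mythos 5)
Your proof is correct and essentially matches the paper's: both construct a surjection $S := k[[x_1,\ldots,x_d]]/\mm_S^3 \twoheadrightarrow R$, locate its kernel $I$ inside $\mm_S^2 \cong \Sym^2(\mm/\mm^2)$, and deduce $I=0$ from the lifting hypothesis. The only divergence is at the last step: you phrase the obstruction abstractly (each $D_1^{\otimes 2}$ must annihilate $I$) and close by polarization, whereas the paper reads off the coefficients $a_{ij}$ of an element of $I$ directly from the explicit test lifts $\psi_{ii},\psi_{jj},\psi_{ij}$ -- the latter happens not to require $2\in k^{\times}$, though that is immaterial under the running assumption $p\ge 5$.
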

\begin{proof} Let $S:=k[[x_1,\ldots, x_d]]/(x_1, \ldots, x_d)^3$ and $\mm_S$ be the maximal ideal of $S$. Since 
$R$ is commutative and $\dim \mm/\mm^2=\dim \mm_S/\mm_S^2$ there exists a surjection 
$\varphi: S\twoheadrightarrow R$, inducing an isomorphism $S/\mm^2_S\cong R/\mm^2$. For $1\le i\le j\le d$ define
$\varphi_{ij}: R\rightarrow R/\mm^2\cong S/\mm_S^2\rightarrow k[x]/(x^2)$, where the last arrow is given by 
sending $x_i\mapsto x$, $x_j\mapsto x$ and $x_k\mapsto 0$, if $k\neq i$ and $k\neq j$. By assumption there exists 
$\psi_{ij}: R\rightarrow k[x]/(x^3)$ lifting $\varphi_{ij}$. Let $\kappa$ be the composition
$$ S\overset{\varphi}{\twoheadrightarrow} R\overset{\prod \psi_{ij}}{\longrightarrow} \prod_{1\le i\le j\le d} k[x]/(x^3).$$ 
The kernel of $\kappa$ is contained in $\mm^2_S$.
Any element $y\in\mm_S^2$ maybe written as $y=\sum_{1\le i\le j\le d} a_{ij} x_i x_j$, and 
$\psi_{ii}(\varphi(y))= a_{ii} x^2$ and $\psi_{ij}(\varphi(y))= (a_{ii}+a_{ij} +a_{jj})x^2$, if $i<j$. Hence, $\kappa$ is 
injective and so $\varphi$ is injective. The other implication is trivial. 
     
\end{proof}

\begin{lem}\label{equivalentcond} Assume that $\Hom(E, k[x]/(x^3))\rightarrow \Hom(E, k[x]/(x^2))$ is surjective and 
let $a$ be a non-zero  extension class of $0\rightarrow Q\rightarrow T\overset{\alpha}{\rightarrow}Q\rightarrow  0$.
Then the following are equivalent: 
\begin{itemize} 
\item[(i)] the kernel of $\Ext^1_{\dualcat(k)}(Q, Q)\rightarrow \Ext^2_{\dualcat(k)}(Q, Q)$, $b\mapsto b\circ a$ is at most
$1$-di\-men\-sio\-nal;
\item[(ii)] $\dim \Ext^1_{\dualcat(k)}(T, Q)=\dim \Ext^1_{\dualcat(k)}(T, S)\le\dim \mm/\mm^2$;
\item[(iii)] the kernel of $\Ext^1_{\dualcat(k)}(Q, Q)\rightarrow \Ext^2_{\dualcat(k)}(Q, Q)$, $b\mapsto a\circ b$ is  at most $1$-di\-men\-sio\-nal;
\item[(iv)] $\dim \Ext^1_{\dualcat(k)}( Q, T)\le \dim \mm/\mm^2$.
\end{itemize}
If the conditions hold then  all the inequalities above are in fact equalities.
\end{lem}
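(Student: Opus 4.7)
The plan is to analyse two long exact sequences obtained by applying $\Hom_{\dualcat(k)}(-,Q)$ and $\Hom_{\dualcat(k)}(Q,-)$ to the extension $0\to Q\to T\to Q\to 0$ representing $a$, and then to relate the two via the polarisation of the condition $a\circ a=0$ coming from Lemma~\ref{alter}. Throughout I will use that $\End_{\dualcat(k)}(Q)=k$: indeed $\End(Q)$ is a division ring (every nonzero endomorphism of $Q$ is an isomorphism, since $Q$ has a unique maximal subobject and $S$ appears in $Q$ with multiplicity one), and the natural map $\End(Q)\to \Hom(Q,S)=k$ is injective by (H1) and (H2). Also $\dim_k\Ext^1_{\dualcat(k)}(Q,Q)=\dim_k\mm/\mm^2$ by Lemma~\ref{tangentspace}; denote this common value by $e$.

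First I would prove (i)$\Leftrightarrow$(ii). Applying $\Hom_{\dualcat(k)}(-,Q)$ to $0\to Q\to T\to Q\to 0$ gives the exact sequence
\begin{equation*}
0\to \End(Q)\to \Hom(T,Q)\to \End(Q)\xrightarrow{\delta}\Ext^1(Q,Q)\to \Ext^1(T,Q)\to \Ext^1(Q,Q)\xrightarrow{\partial_2}\Ext^2(Q,Q),
\end{equation*}
where by \eqref{delta2} one has $\partial_2(b)=b\circ a$ and $\delta(\id_Q)=a$. Since $\End(Q)=k$ and $a\neq 0$, $\delta$ is injective, and the sequence yields
\begin{equation*}
\dim\Ext^1(T,Q)=(e-1)+\dim\ker\partial_2.
\end{equation*}
Since $T$ admits a filtration with graded pieces isomorphic to $Q$, Lemma~\ref{T} gives $\Ext^1(T,Q)\cong \Ext^1(T,S)$. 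Therefore (ii), which asks $\dim\Ext^1(T,S)\le e$, is equivalent to $\dim\ker\partial_2\le 1$, which is (i). An entirely symmetric application of $\Hom_{\dualcat(k)}(Q,-)$, using \eqref{delta1} so that the connecting map is $b\mapsto a\circ b$, shows (iii)$\Leftrightarrow$(iv) with
\begin{equation*}
\dim\Ext^1(Q,T)=(e-1)+\dim\ker\partial_1.
\end{equation*}

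Next I would bridge the two sides by polarisation. By the standing assumption and Lemma~\ref{alter}, $c\circ c=0$ for every $c\in\Ext^1(Q,Q)$. Expanding $0=(a+b)\circ(a+b)$ yields $a\circ b+b\circ a=0$, so the Yoneda product is alternating on $\Ext^1(Q,Q)$; in particular $\ker(\cdot\circ a)=\ker(a\circ\cdot)$, which gives (i)$\Leftrightarrow$(iii).

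Finally, $a\neq 0$ together with $a\circ a=0$ shows that $a$ lies in both kernels, so each is at least one–dimensional. If the equivalent conditions (i)--(iv) hold, then both kernels are exactly one–dimensional, and the two displayed formulas above force $\dim\Ext^1(T,Q)=\dim\Ext^1(T,S)=e=\dim\mm/\mm^2$ and likewise $\dim\Ext^1(Q,T)=e$, so all inequalities are equalities. The only slightly delicate point is the injectivity $\End(Q)\hookrightarrow k$, after which the whole argument is a routine diagram chase combined with the polarisation identity.
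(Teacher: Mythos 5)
Your proof is correct and follows essentially the same route as the paper: the long exact sequences from applying $\Hom_{\dualcat(k)}(-,Q)$ and $\Hom_{\dualcat(k)}(Q,-)$ to the extension (with $\dim\End_{\dualcat(k)}(Q)=1$ and the connecting map hitting $a$), Lemma \ref{T} to pass from $\Ext^1(T,Q)$ to $\Ext^1(T,S)$, the identity $a\circ b=-b\circ a$ from Lemma \ref{alter} for (i)$\Leftrightarrow$(iii), and $a\circ a=0$ to force the kernels to be at least one-dimensional, hence the equalities. The paper's proof packages the same dimension count as a four-term exact sequence $\Hom(Q,Q)\hookrightarrow\Ext^1(Q,Q)\to\Ext^1(T,Q)\twoheadrightarrow\Upsilon$, but the argument is the same.
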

\begin{proof} Since $\circ$ is bilinear, Lemma \ref{alter} gives $a\circ b= - b\circ a$. Thus (i) is equivalent to (iii). 
We show the equivalence of (i) and (ii).  Let 
$\Upsilon$ be the kernel  of  $\Ext^1_{\dualcat(k)}(Q, Q)\rightarrow \Ext^2_{\dualcat(k)}(Q, Q)$. Since $Q/\rad Q\cong S$ 
is irreducible and occurs with multiplicity $1$ we have $\dim \Hom_{\dualcat(k)}(Q, Q)=1$. Since $a$ is non-split,
we also have $\dim \Hom_{\dualcat(k)}(Q, T)=\dim \Hom_{\dualcat(k)}(T, Q)=1$. Since $\dim \Ext^1_{\dualcat(k)}(Q, Q)=\dim \mm/\mm^2$ 
the exact sequence
$$\Hom_{\dualcat(k)}(Q,Q)\hookrightarrow \Ext^1_{\dualcat(k)}(Q, Q)\rightarrow \Ext^1_{\dualcat(k)}(T, Q)\twoheadrightarrow \Upsilon$$
gives $\dim \Ext^1_{\dualcat(k)}(T, Q)= \dim \mm/\mm^2 + \dim \Upsilon -1$. Lemma \ref{T} implies  
$\Ext^1_{\dualcat(k)}(T, Q)$ and $ \Ext^1_{\dualcat(k)}(T, S)$ have the same dimension, so (i) is equivalent to (ii).  It follows from Lemma \ref{alter} that $a\circ a=0$ and so 
$a\in \Upsilon$, which  implies that $\dim \Upsilon\ge 1$ and so  $\dim \Ext^1_{\dualcat(k)}(T, Q)=\dim \Ext^1_{\dualcat(k)}(T, S)\ge\dim \mm/\mm^2$.
This implies that if (i) or (ii) hold then the inequalities are in fact equalities. The same proof  shows that (iii) is equivalent to (iv).
\end{proof}

\begin{lem}\label{enoughisenough} Assume that $\Hom(E, k[x]/(x^3))\rightarrow \Hom(E, k[x]/(x^2))$ is surjective and
 that there exists a $(d-1)$-di\-men\-sio\-nal subspace $V$ of $\Ext^1_{\dualcat(k)}(Q, Q)$ such that 
the equivalent conditions of Lemma \ref{equivalentcond} hold for every non-zero $a\in V$. Then they hold
for every non-zero $a\in \Ext^1_{\dualcat(k)}(Q, Q)$. 
\end{lem}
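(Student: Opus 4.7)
The plan is to exploit the antisymmetry of the Yoneda product $\circ$ on $\Ext^1_{\dualcat(k)}(Q,Q)$. First I would invoke Lemma \ref{alter}: the hypothesis that $\Hom(E,k[x]/(x^3))\to \Hom(E,k[x]/(x^2))$ is surjective forces $a\circ a=0$ for every $a\in \Ext^1_{\dualcat(k)}(Q,Q)$, and polarising $(a+b)\circ(a+b)=0$ then yields $a\circ b=-b\circ a$. Next I would set $K_a:=\{b\in \Ext^1_{\dualcat(k)}(Q,Q):a\circ b=0\}$; by antisymmetry this coincides with the kernel of $b\mapsto b\circ a$, so in view of Lemma \ref{equivalentcond} it suffices to show that $K_a=\langle a\rangle$ for every nonzero $a$, the inclusion $\langle a\rangle\subseteq K_a$ being automatic from $a\circ a=0$.

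By Lemmas \ref{first} and \ref{tangentspace} one has $\dim_k \Ext^1_{\dualcat(k)}(Q,Q)=d$, so $V$ is a hyperplane. The main step is then a one-line dimension count: for any $a_0\in \Ext^1_{\dualcat(k)}(Q,Q)\setminus V$, if one had $\dim K_{a_0}\ge 2$ then
\begin{equation*}
\dim(V\cap K_{a_0})\ge \dim V+\dim K_{a_0}-d\ge (d-1)+2-d=1,
\end{equation*}
and I could pick a nonzero $b\in V\cap K_{a_0}$. From $a_0\circ b=0$ and antisymmetry I would deduce $b\circ a_0=0$, i.e.\ $a_0\in K_b$. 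Since $b\in V\setminus\{0\}$, the hypothesis gives $K_b=\langle b\rangle$, forcing $a_0\in\langle b\rangle\subseteq V$, which contradicts $a_0\notin V$. Hence $\dim K_{a_0}\le 1$, and the equivalent conditions of Lemma \ref{equivalentcond} propagate from $V$ to all of $\Ext^1_{\dualcat(k)}(Q,Q)\setminus\{0\}$.

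The main conceptual point is that antisymmetry is available under the stated hypothesis; this is precisely Lemma \ref{alter}. After that, the argument is purely linear-algebraic on the $d$-dimensional space $\Ext^1_{\dualcat(k)}(Q,Q)$ equipped with the alternating pairing $\circ$ taking values in $\Ext^2_{\dualcat(k)}(Q,Q)$, and no further input from the category $\dualcat(k)$ is required.
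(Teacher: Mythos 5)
Your proof is correct and follows essentially the same route as the paper: both rely on Lemma \ref{alter} plus bilinearity to get antisymmetry of $\circ$, and then show for $a_0\notin V$ that $\Ker(b\mapsto b\circ a_0)$ meets $V$ only in $0$ (the paper phrases this as injectivity of $\varphi_{a_0}|_V$, you as a dimension count with the hyperplane $V$), which forces the kernel to be at most one-dimensional.
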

\begin{proof} Let $\varphi_a: \Ext^1_{\dualcat(k)}(Q, Q)\rightarrow \Ext^2_{\dualcat(k)}(Q, Q)$ be the map 
$b\mapsto b\circ a$.  Lemma \ref{alter}  implies that $a$ lies in $\Ker \varphi_a$. Thus (i) in Lemma \ref{equivalentcond}
holds if and only if $a$ spans $\Ker \varphi_a$. If $a\in V$ then the conditions hold by assumption and so $\Ker \varphi_a= \langle a\rangle$. If $a\not\in V$ then  
using $\varphi_a(b)=-\varphi_b(a)$ we deduce that the restriction of $\varphi_a$ to $V$ is injective. Thus the image of $\varphi_a$ is at least $d-1$ 
dimensional, and so the kernel is at most $1$-dimensional. Hence, the conditions of Lemma \ref{equivalentcond} 
hold for $a$. 
\end{proof}

\section{Banach space representations}\label{banach}
From now on we assume that $G$ is a $p$-adic analytic group. The following fact is essential: for every  
compact  open subgroup $H$ of $G$ the completed group ring $\OO[[H]]$ is noetherian. 
An \textit{$L$-Banach space representation} $\Pi$ of $G$ is an $L$-Banach space $\Pi$ together with a $G$-action 
by continuous linear automorphisms such that the map $G\times \Pi\rightarrow \Pi$ describing the action is continuous.
A Banach space representation $\Pi$ is called \textit{unitary}, if there exists a 
$G$-invariant norm defining the topology on $\Pi$. The existence of such norm is equivalent to the existence of 
an open  bounded $G$-invariant $\OO$-lattice $\Theta$ in $\Pi$. A unitary $L$-Banach space representation 
is \textit{admissible} if $\Theta\otimes_{\OO} k$ is an admissible (smooth) representation of $G$, this means 
that the space of invariants  $(\Theta\otimes_{\OO} k)^H$ is finite dimensional for every open subgroup $H$ of $G$.
We note that it is enough to check this for a single open pro-$p$ subgroup of $G$, see for example \cite[6.3.2]{coeff}.
Our definition of admissibility does not depend on the choice of $\Theta$.  Moreover, it is equivalent to that of \cite{iw}, see \cite[6.5.7]{em1}, which requires $\Theta^d:=\Hom_{\OO}(\Theta, \OO)$ 
to be a finitely generated module over  $\OO[[H]]$.
We say that an $L$-Banach space representation $\Pi$ is \textit{irreducible}, if it does not contain 
a proper closed $G$-invariant subspace. We say that $\Pi$ is \textit{absolutely irreducible} if $\Pi\otimes_L L'$ is 
irreducible for every finite extension $L'$ of $L$. 

\begin{lem}\label{endoirrban} Let $\Pi$ be an absolutely irreducible and admissible unitary $L$-Banach space representation of $G$ and let $\phi\in \End_{L[G]}^{cont}(\Pi)$. 
If the algebra $L[\phi]$ is finite dimensional over $L$ then 
$\phi\in L$. 
\end{lem}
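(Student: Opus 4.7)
The plan is to reduce to the case where the minimal polynomial of $\phi$ splits, perform a generalized eigenspace decomposition, and then use absolute irreducibility to force $\phi$ to be scalar.

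First, I would exploit the hypothesis that $L[\phi]$ is finite-dimensional over $L$: this means $\phi$ satisfies a polynomial $f(X) \in L[X]$, which we can take to be the minimal polynomial. Choose a finite extension $L'/L$ over which $f$ splits as $f(X) = \prod_{i=1}^r (X - \lambda_i)^{m_i}$ with distinct $\lambda_i \in L'$. Form $\Pi' := \Pi \otimes_L L'$, which is naturally an admissible unitary $L'$-Banach space representation of $G$ (concretely, it is a finite direct sum of copies of $\Pi$ with the obvious $L'$-structure and supremum norm), and is irreducible by the hypothesis of absolute irreducibility.

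Next, by the Chinese Remainder Theorem applied to $L'[X]/(f)$, the generalized eigenspace projectors $e_i \in L'[\phi]$ are polynomials in $\phi$, hence continuous, and they give a topological direct sum decomposition
\begin{equation*}
\Pi' = \bigoplus_{i=1}^r \Pi'_i, \qquad \Pi'_i := \ker (\phi - \lambda_i)^{m_i}.
\end{equation*}
Each $\Pi'_i$ is closed (as the kernel of a continuous operator) and $G$-invariant (since $\phi$ commutes with $G$). By the irreducibility of $\Pi'$ exactly one $\Pi'_i$ is nonzero, so $\phi$ has a single generalized eigenvalue $\lambda := \lambda_i \in L'$, and $(\phi - \lambda)^{m_i}$ acts as zero on $\Pi'$.

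If $m_i \geq 1$ with $\phi \neq \lambda$, then $\phi - \lambda$ is a nonzero nilpotent continuous operator, so $\ker(\phi-\lambda)$ is a nonzero closed $G$-invariant subspace of $\Pi'$, contradicting irreducibility. Hence $\phi = \lambda$ on $\Pi'$, and since $\phi$ is $L$-linear and $\Pi \subset \Pi'$, we conclude $\lambda \in L$ and $\phi \in L$. The only conceptual point to verify carefully is that $\Pi \otimes_L L'$ is still admissible, unitary, and irreducible as an $L'$-Banach space representation, which is immediate from the definition of absolute irreducibility; everything else is elementary linear algebra combined with the continuity/closedness of the operators involved.
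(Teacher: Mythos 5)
Your proof is correct and follows essentially the same route as the paper: pass to the splitting field $L'$ of the minimal polynomial, use absolute irreducibility to keep $\Pi_{L'}$ irreducible, conclude that $\phi$ must act by a scalar $\lambda\in L'$, and then observe that $\lambda$ must lie in $L$ because $\phi$ preserves the $L$-form $\Pi$. The only cosmetic difference is that where you invoke the CRT generalized-eigenspace decomposition followed by a nilpotent-kernel argument, the paper simply cites the Banach-space Schur lemma from Schneider--Teitelbaum (\cite[Cor.\ 3.7]{iw}) to cancel invertible factors $\phi-\lambda_i$ until one is forced to vanish; both are packagings of the same idea.
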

\begin{proof} Let $f\in L[X]$ be the minimal polynomial of $\phi$ over $L$, and let $L'$ be the splitting field of $f$. If $M$ is a finitely generated $L[[H]]:= L\otimes \OO[[H]]$ module, then $M_{L'}$ is a finitely generated 
$L'[[H]]$-module. Thus, it follows from \cite[Thm 3.5]{iw} that $\Pi_{L'}$ is an admissible unitary $L'$-Banach space representation 
of $G$. Since by assumption $\Pi_{L'}$ is irreducible, it follows from the proof of \cite[Cor. 3.7]{iw} 
that any non-zero continuous linear $G$-equivariant map $\psi: \Pi_{L'}\rightarrow \Pi_{L'}$ is an isomorphism. 
Since $f(\phi)=0$ using this we may find  $\lambda\in L'$ such that $f(\lambda)=0$ and $\phi\otimes \id -\lambda$ kills 
$\Pi_{L'}$.  Now $\Gal(L'/L)$ acts on $\Pi_L'$ via   $\sigma(v\otimes\mu)= v\otimes \sigma(\mu)$ for all $\mu\in L'$.  
Choose a non-zero $v\in \Pi$, then $\phi(v)\in \Pi$, and hence $\sigma(\lambda) v = \lambda v$ for all $\sigma\in \Gal(L'/L)$. 
This implies $\lambda\in L$, and hence $\phi=\lambda$. 
\end{proof}

\begin{lem}\label{eirrb} Let $\Pi$ be an irreducible admissible unitary $L$-Banach space representation of $G$. 
If $\End^{cont}_{L[G]}(\Pi)=L$ then $\Pi$ is absolutely irreducible.
\end{lem}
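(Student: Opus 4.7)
The plan is to exploit the isomorphism $\Pi_{L'}\cong \Pi^{\oplus n}$ of admissible $L[G]$-Banach representations, where $n=[L':L]$, together with a centralizer calculation and the Schur-type result \cite[Cor.~3.7]{iw}. Let $L'/L$ be a finite extension of degree $n$; choosing an $L$-basis of $L'$ identifies $\Pi_{L'}=\Pi\,\wtimes_L L'$ with $\Pi^{\oplus n}$ as an admissible $L$-Banach representation of $G$, and expresses the $L'$-action through the regular embedding $L'\hookrightarrow \End_L(L^n)=M_n(L)$. Because $\Pi$ is irreducible, $\Pi^{\oplus n}$ has $L[G]$-length exactly $n$, so every chain of closed $L'[G]$-invariant subspaces of $\Pi_{L'}$ has length at most $n$; in particular $\Pi_{L'}$ admits a Jordan--H\"older filtration over $L'[G]$.

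Let $\pi$ be any irreducible $L'[G]$-subquotient occurring in this filtration. Viewed as an $L[G]$-Banach representation, $\pi$ is admissible and is a subquotient, in the abelian category of admissible $L[G]$-Banach representations, of the semisimple object $\Pi^{\oplus n}$; hence it is itself semisimple, isomorphic to $\Pi^{\oplus m}$ for some $1\le m\le n$. Consequently $\End^{cont}_{L[G]}(\pi)=M_m(L)=\End_L(L^m)$, and the $L'$-action embeds $L'$ as a subfield of this matrix algebra. By the double-centralizer theorem, $L^m$ becomes an $L'$-vector space of dimension $m/n$ (so in particular $n\mid m$), and the centralizer of $L'$ in $M_m(L)$ is $\End_{L'}(L^m)=M_{m/n}(L')$. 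Since continuous $L'[G]$-equivariant endomorphisms of $\pi$ are precisely the continuous $L[G]$-equivariant endomorphisms commuting with $L'$, this centralizer coincides with $\End^{cont}_{L'[G]}(\pi)$.

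Now \cite[Cor.~3.7]{iw}, applied to the irreducible admissible $L'$-Banach representation $\pi$, asserts that $\End^{cont}_{L'[G]}(\pi)$ is a division algebra over $L'$. A matrix algebra $M_k(L')$ is a division algebra only when $k=1$, forcing $m=n$. Therefore a single irreducible subquotient already exhausts the $L[G]$-length of $\Pi_{L'}$, the Jordan--H\"older filtration is trivial, and $\Pi_{L'}=\pi$ is irreducible. As $L'$ was an arbitrary finite extension, $\Pi$ is absolutely irreducible.

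The main point requiring care is the step ``$\pi$ is a subquotient of $\Pi^{\oplus n}$ as an $L[G]$-module, hence semisimple'': this relies on the fact that admissibility is preserved under restriction of scalars from $L'$ to $L$, and on the general fact that in the abelian category of admissible $L[G]$-Banach representations subquotients of a semisimple object are semisimple. Once these essentially formal points are set up, the centralizer computation and \cite[Cor.~3.7]{iw} deliver the irreducibility immediately.
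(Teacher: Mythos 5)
Your proof is correct, and it takes a genuinely different route from the paper's. The paper argues by contradiction with a finite \emph{Galois} extension $L'/L$ and an irreducible closed subspace $\Sigma\subset\Pi_{L'}$: the Galois group $\Gamma=\Gal(L'/L)$ acts on $\Pi_{L'}$ by $G$-equivariant $L$-linear isometries, the sum $\Upsilon$ of the Galois translates $r_\gamma(\Sigma)$ is a closed $\Gamma$-stable $G$-subspace, its $\Gamma$-invariants give a nonzero closed $G$-invariant $L$-subspace of $\Pi$ (hence all of $\Pi$), whence $\Pi_{L'}$ is a quotient of a finite direct sum of irreducible admissibles and is therefore semisimple; a nontrivial idempotent in $\End^{cont}_{L'[G]}(\Pi_{L'})\cong L'$ then contradicts that $L'$ is a field. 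Your argument instead works entirely by restriction of scalars: identifying $\Pi_{L'}\cong\Pi^{\oplus n}$ as admissible $L[G]$-Banach representations forces any $L'[G]$-irreducible subquotient $\pi$ to be $\cong\Pi^{\oplus m}$ over $L[G]$, and then the embedding $L'\hookrightarrow\End^{cont}_{L[G]}(\pi)=M_m(L)$ together with a centralizer computation pins $m$ down. This buys you a proof that works for an arbitrary finite extension $L'$ (no need to pass to a Galois closure), replacing the Galois-descent step by the double-centralizer theorem. One small remark: the appeal to \cite[Cor.~3.7]{iw} at the end is actually superfluous in your argument. The embedding $L'\hookrightarrow M_m(L)$ already forces $L^m$ to be an $L'$-vector space, so $n\mid m$; combined with $m\le n$ this gives $m=n$ directly, without needing to know that $\End^{cont}_{L'[G]}(\pi)$ is a division algebra.
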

\begin{proof} Suppose that $\Pi$ is not absolutely irreducible. Then there exists a finite Galois extension $L'$ of $L$ such that 
$\Pi_{L'}$ contains a closed proper $G$-invariant subspace $\Sigma$.  Since $L'$ is a finite extension of $L$ we have isomorphisms:
$$ \End_{L'[G]}^{cont}(\Pi_{L'})\cong \Hom^{cont}_{L[G]}(\Pi, \Pi_{L'})\cong \End_{L[G]}^{cont}(\Pi)_{L'}\cong L'.$$
Hence, it is enough to show that $\End_{L'[G]}^{cont}(\Pi_{L'})$ contains a non-trivial idempotent.

As observed in the proof of Lemma \ref{endoirrban},  $\Pi_{L'}$
is admissible. This implies that any descending chain of closed $G$-invariant subspaces must become constant. 
Hence we may assume that $\Sigma$ is irreducible (and admissible).  The group $\Gamma:=\Gal(L'/L)$ 
acts on $\Pi_{L'}$ by $G$-equivariant, $L$-linear isometries 
$$r_{\gamma}: \Pi_{L'}\rightarrow \Pi_{L'},\quad v\otimes \lambda\mapsto v\otimes \gamma(\lambda),\quad  \forall \gamma\in \Gamma.$$ 
In particular,
$r_{\gamma}$ is continuous and $r_{\gamma}(\Sigma)$ is a closed $G$-invariant $L'$-subspace of $\Pi_{L'}$. 
Since $\Sigma$ is an irreducible  admissible unitary $L'$-Banach space representation of $G$, 
so are $r_{\gamma}(\Sigma)$ for all $\gamma\in \Gamma$. Let $\Upsilon$ be the image of 
the natural map
\begin{equation}\label{defUpsi}
\bigoplus_{\gamma\in \Gamma} r_{\gamma}(\Sigma) \rightarrow \Pi_{L'}.
\end{equation}
Since both representations are admissible $\Upsilon$ is a closed $G$-invariant subspace of $\Pi_{L'}$.
 Now $\Upsilon$ is $\Gamma$-invariant and $\Upsilon^{\Gamma}= \Upsilon \cap \bigcap_{\gamma\in \Gamma} \Ker(r_{\gamma}-1)$ is a closed
$G$-invariant $L$-subspace  of $\Pi_{L'}^{\Gamma}=\Pi$. Linear independence of characters implies that 
if $v\in \Upsilon$ is non-zero then there exists 
$\lambda\in L'$ such that $\sum_{\gamma\in \Gamma} r_{\gamma}(\lambda v)\neq 0$. Hence, $\Upsilon^{\Gamma}$ is non-zero.
Since $\Pi$ is irreducible we deduce that $\Upsilon^{\Gamma}=\Pi$ and hence \eqref{defUpsi} is surjective.

 Now any non-zero continuous $G$-equivariant $L$-linear map between two admissible irreducible unitary $L$-Banach space
representations of $G$ is an isomorphism. Using this fact and arguing by induction on $n$ one may show that 
any quotient of $\bigoplus_{i=1}^n \Pi_i$, where $\Pi_i$ are admissible and irreducible, is semi-simple. Hence, $\Pi_{L'}$
is semi-simple. As we have assumed that $\Pi_{L'}$ is not irreducible $\End_{L'[G]}^{cont}(\Pi_{L'})$ contains a 
non-trivial idempotent.
\end{proof}

\begin{lem}\label{commen} Let $\Pi$ be a unitary $L$-Banach space representation of $G$,  let $\Theta$ and $\Xi$ be open bounded $G$-invariant lattices
in $\Pi$, and let $\pi$ be an irreducible smooth $k$-representation of $G$. Then $\pi$ is a subquotient of $\Theta\otimes_{\OO} k$ 
if and only if it is a subquotient of $\Xi\otimes_{\OO} k$. Moreover, if $\Theta\otimes_{\OO} k$ is a $G$-representation of finite length then 
so is $\Xi\otimes_{\OO} k$, and their semi-simplifications are isomorphic.
\end{lem}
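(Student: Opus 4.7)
The plan is to exploit commensurability of open bounded lattices to reduce the comparison to a single ``small'' step, then iterate. Since $\Xi$ and $\Theta$ are both open and bounded in $\Pi$, there exists $N \ge 0$ with $\varpi^N \Xi \subseteq \Theta$; replacing $\Xi$ by $\varpi^N \Xi$ (whose reduction modulo $\varpi$ is canonically isomorphic to $\Xi \otimes_{\OO} k$ as a $G$-representation, via multiplication by $\varpi^{-N}$) I may assume $\Xi \subseteq \Theta$. Again, since $\Xi$ is open and $\Theta$ bounded, there exists $n \ge 0$ with $\varpi^n \Theta \subseteq \Xi$, so it suffices to treat this comparable situation.

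I would next handle the base case $n=1$, i.e.\ $\varpi\Theta \subseteq \Xi \subseteq \Theta$. Set $A := \Theta/\Xi$ and $B := \Xi/\varpi\Theta$. Multiplication by $\varpi$ gives an isomorphism $\Theta/\Xi \cong \varpi\Theta/\varpi\Xi$, whence two short exact sequences
\begin{equation*}
0 \to B \to \Theta/\varpi\Theta \to A \to 0, \qquad 0 \to A \to \Xi/\varpi\Xi \to B \to 0.
\end{equation*}
An elementary argument shows that an irreducible subquotient of the middle term of a short exact sequence $0 \to X \to Y \to Z \to 0$ of smooth $k[G]$-modules is automatically a subquotient of $X$ or of $Z$ (given $U' \subseteq U \subseteq Y$ with $U/U'$ irreducible, exactly one of $(U \cap X + U')/U'$ and $U/(U \cap X + U')$ is non-zero, and these are subquotients of $X$ and of $Z$ respectively). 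Applying this to both sequences above gives the equivalence claimed in the lemma when $n=1$. For the finite length assertion, if $\Theta/\varpi\Theta$ has finite length, then so do the subquotients $A$ and $B$, hence also the extension $\Xi/\varpi\Xi$; and comparing the two sequences yields $(\Xi/\varpi\Xi)^{\mathrm{ss}} \cong A^{\mathrm{ss}} \oplus B^{\mathrm{ss}} \cong (\Theta/\varpi\Theta)^{\mathrm{ss}}$.

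For the general case I would introduce the filtration $\Xi_i := \Xi + \varpi^i \Theta$ for $0 \le i \le n$, so that $\Theta = \Xi_0 \supseteq \Xi_1 \supseteq \cdots \supseteq \Xi_n = \Xi$ and
\begin{equation*}
\varpi \Xi_i \;=\; \varpi\Xi + \varpi^{i+1}\Theta \;\subseteq\; \Xi + \varpi^{i+1}\Theta \;=\; \Xi_{i+1}.
\end{equation*}
Each consecutive pair $(\Xi_{i+1}, \Xi_i)$ is in the situation of the base case, and iterating gives the statement for $(\Xi, \Theta)$. There is no real obstacle here; the only thing to be careful about is setting up the filtration $\Xi_i$ so that the one-step lemma applies to each step, and separating the ``subquotient'' statement (which needs only the elementary decomposition of subquotients in a short exact sequence) from the ``semi-simplification'' statement (which needs finite length as input).
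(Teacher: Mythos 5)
Your argument is correct. The reduction via commensurability, the one-step case $\varpi\Theta\subseteq\Xi\subseteq\Theta$ with the two exact sequences relating $A=\Theta/\Xi$ and $B=\Xi/\varpi\Theta$, the elementary fact that an irreducible subquotient of the middle of a short exact sequence is a subquotient of one of the ends, and the chain $\Xi_i=\Xi+\varpi^i\Theta$ all check out, and the finite-length/semi-simplification claim propagates correctly along the chain.

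The route is, however, not the one the paper takes, though both ultimately rest on the same classical lattice-chain idea. The paper first fixes an injective envelope $\pi\hookrightarrow J$ in $\Mod^{\mathrm{sm}}_G(k)$ and translates the lemma into statements about $\Hom_G(\Theta\otimes_{\OO}k, J)$ and $\Hom_G(\Xi\otimes_{\OO}k, J)$: non-vanishing of this $\Hom$ detects $\pi$ occurring as a subquotient (by exactness of $\Hom_G(-,J)$ and essentiality of $\pi\hookrightarrow J$), and in the finite length case its dimension computes the multiplicity of $\pi$. It then invokes the proof of Theorem 32 in \S 15.1 of Serre's book, adapted by replacing composition multiplicities with $\dim\Hom_G(-,J)$ -- that cited proof is essentially your filtration-and-two-exact-sequences argument, which the paper does not write out. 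What the injective-envelope formulation buys is a uniform, functorial notion of ``multiplicity'' that works without any finiteness hypothesis and is reused elsewhere in the paper (e.g.\ in the analogue of Lemma \ref{mult=dim} and in the block decomposition arguments); what your version buys is self-containedness: you observe that the subquotient assertion needs no multiplicities at all, and that the semi-simplification assertion is only made under a finite length hypothesis, so the elementary decomposition of subquotients in an extension suffices and no injective envelopes are needed.
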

\begin{proof} Let $\pi\hookrightarrow J$ be an injective envelope of $\pi$ in $\Mod^{\mathrm{sm}}_G(k)$ the category of smooth $k$-representations 
of $G$. Since $J$ is injective, $\Hom_G(\ast, J)$ is exact, thus  if $\pi$ occurs as a subquotient of some smooth $k$-representation 
$\kappa$, then $\Hom_{G}(\kappa, J)\neq 0$. Conversely, if there exists  some non-zero $\varphi: \kappa \rightarrow J$,  then the image of $\varphi$
must contain $\pi$, as $\pi\hookrightarrow J$ is essential. Further, if $\kappa$ is of finite length the same argument shows 
that $\pi$ occurs in $\kappa$ with multiplicity $\dim \Hom_G(\kappa, J)$. Since $\Theta\otimes_{\OO} k$ and $\Xi \otimes_{\OO} k$ 
are smooth representations of $G$, the assertion of the lemma is equivalent to $\Hom_G(\Theta\otimes_{\OO} k, J)\neq 0$ 
if and only if $\Hom_G(\Xi\otimes_{\OO} k , J)\neq 0$;  $\Theta\otimes_{\OO} k$ is of finite length 
if and only if $\Xi\otimes_{\OO} k$ is of finite length, in which case 
$$\dim \Hom_G(\Xi\otimes_{\OO} k , J)=\dim \Hom_G(\Theta\otimes_{\OO} k , J).$$
Since any two open bounded lattices in $\Pi$ are commensurable, 
one can show this by adapting the proof of analogous statement for finite groups, see the 
proof of Theorem 32 in \S15.1 of \cite{serre} 
and use the exactness of $\Hom_G(\ast, J)$. 
\end{proof} 

Let $\Pi$ be a unitary $L$-Banach space representation of $G$ and $\Theta$ an open bounded $G$-invariant lattice in $\Pi$. We denote 
by $\Theta^d$ its Schikhof dual
$$ \Theta^d:=\Hom_{\OO}(\Theta, \OO)$$
equipped with the topology of pointwise convergence. If $\Theta\otimes_{\OO} k$ is a $G$-rep\-re\-sen\-ta\-tion of finite length, then we denote by 
$\overline{\Pi}$ its semi-simplification
$$\overline{\Pi}:=(\Theta\otimes_{\OO} k)^{ss}.$$
Lemma \ref{commen} shows that $\overline{\Pi}$ does not depend on the choice of $\Theta$. 
 
\begin{lem} $\Theta^d$ is an object of $\Mod_G^{\mathrm{pro\, aug}}(\OO)$. 
\end{lem}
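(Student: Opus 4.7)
The plan is to identify $\Theta^d$ with a projective limit of Pontryagin duals of smooth $\OO$-torsion $G$-representations; by Lemma 2.2.7 of \cite{ord1} each factor lies in $\Mod^{\mathrm{pro\, aug}}_G(\OO)$, and the category is closed under such limits, so the lemma will follow.

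First I would check that for every $n\ge 1$ the quotient $\Theta/\varpi^n \Theta$ carries a smooth $\OO[G]$-action. Since $\Theta$ is an open bounded $G$-invariant lattice in $\Pi$ and the $G$-action on $\Pi$ is continuous, the map $G\times \Theta\to \Theta$ is continuous. Composing with the projection $\Theta\to \Theta/\varpi^n\Theta$ onto a discrete module, one sees that the stabilizer of every $v+\varpi^n \Theta$ is open in $G$. Hence each $\Theta/\varpi^n \Theta$ lies in $\Mod^{\mathrm{sm}}_G(\OO)$, and its Pontryagin dual $(\Theta/\varpi^n\Theta)^{\vee}$ lies in $\Mod^{\mathrm{pro\, aug}}_G(\OO)$.

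Next I would invoke the natural topological isomorphism
\begin{equation}
\Theta^d\;\cong\;\underset{\longleftarrow}{\lim}_n\,(\Theta/\varpi^n \Theta)^{\vee},
\end{equation}
established in \cite{comp} and already recalled in the discussion preceding Theorem \ref{repnonCI}. Here one uses that $\Theta$ is $\varpi$-adically complete (being closed and bounded in the $L$-Banach space $\Pi$), so $\Theta\cong \underset{\longleftarrow}{\lim}_n \Theta/\varpi^n \Theta$, and that a continuous $\OO$-linear map $\Theta\to \OO$ factors through the $\varpi$-adic quotients on both sides, identifying it with a compatible family of maps $\Theta/\varpi^n \Theta\to \OO/\varpi^n\OO = (\OO/\varpi^n\OO)^{\vee}[\varpi^n]$.

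Finally, since Pontryagin duality gives an anti-equivalence between $\Mod^{\mathrm{sm}}_G(\OO)$ and $\Mod^{\mathrm{pro\, aug}}_G(\OO)$, and $\Mod^{\mathrm{sm}}_G(\OO)$ has exact inductive limits by Lemma \ref{genind}, the category $\Mod^{\mathrm{pro\, aug}}_G(\OO)$ has exact projective limits; in particular the projective limit above is an object of $\Mod^{\mathrm{pro\, aug}}_G(\OO)$, proving the lemma. The only nontrivial input is the topological identification from \cite{comp}, which I would simply cite; everything else is a formal check. There is no real obstacle here, which is why this serves as a short bridging lemma rather than a self-contained argument.
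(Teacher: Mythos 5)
Your proof is correct and follows essentially the same route as the paper's: checking that each $\Theta/\varpi^n\Theta$ is a smooth $\OO$-torsion representation (hence its Pontryagin dual lies in $\Mod_G^{\mathrm{pro\, aug}}(\OO)$), citing the topological identification $\Theta^d\cong\underset{\longleftarrow}{\lim}\,(\Theta/\varpi^n\Theta)^{\vee}$ from \cite{comp}, and concluding by closure of the category under projective limits. The only cosmetic difference is that you spell out the closure under projective limits via the duality with $\Mod^{\mathrm{sm}}_G(\OO)$, which the paper treats as already known.
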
 
\begin{proof} For every $n\ge 1$, $\Theta/\varpi^n \Theta$ is a smooth representation of $G$ 
on an $\OO$-torsion module, thus $(\Theta/\varpi^n \Theta)^{\vee}$ is an object of $\Mod_G^{\mathrm{pro\, aug}}(\OO)$. 
It follows from the proof of \cite[Lem. 5.4]{comp} that we have a topological 
isomorphism: 
\begin{equation}\label{Thetadnew}
\Theta^d \otimes_{\OO} \OO/\varpi^n \OO\cong (\Theta/\varpi^n \Theta)^{\vee}
\end{equation} 
Thus $\Theta^d\cong \underset{\longleftarrow}{\lim}\, \Theta^d/\varpi^n \Theta^d\cong 
\underset{\longleftarrow}{\lim}\, (\Theta/\varpi^n \Theta)^{\vee}$ is an object of $\Mod_G^{\mathrm{pro\, aug}}(\OO)$.
\end{proof}

\begin{lem}\label{imageofphi} Suppose that $\Pi$ is irreducible and admissible and let $\phi: M\rightarrow \Theta^d$ be a non-zero morphism 
in $\Mod_G^{\mathrm{pro\, aug}}(\OO)$, then there exists an open bounded $G$-invariant lattice $\Xi$ in $\Pi$ such that 
$\Xi^d=\phi(M)$.
\end{lem}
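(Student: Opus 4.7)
The plan is to define
\[
\Xi \;:=\; \{v \in \Pi : \ell(v) \in \OO \text{ for all } \ell \in N\},
\]
where $N := \phi(M) \subseteq \Theta^d$, and to show that this is the desired lattice. Since $\phi$ is a morphism in $\Mod^{\mathrm{pro\,aug}}_G(\OO)$, the image $N$ is a closed $G$-invariant $\OO[[H]]$-submodule of $\Theta^d$ for any compact open $H\subseteq G$. Admissibility of $\Pi$, together with the identification $\Theta^d/\varpi\Theta^d \cong (\Theta/\varpi\Theta)^{\vee}$, gives via Nakayama that $\Theta^d$ is a finitely generated module over the noetherian ring $\OO[[H]]$; so $N$ is as well.

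The crucial step is to show that $\varpi^n\Theta^d \subseteq N$ for some $n\ge 0$. For this I would consider the closed $G$-invariant $L$-subspace
\[
W \;:=\; \{v\in \Pi : \ell(v)=0 \text{ for all } \ell\in N\}\subseteq \Pi.
\]
Since $\phi\ne 0$, there is a nonzero $\ell\in N$, which as a nonzero continuous functional on $\Pi$ does not vanish identically, so $W\ne \Pi$. Irreducibility then forces $W=0$. Passing to $N_L := N\otimes_\OO L \subseteq \Pi^*:= \Theta^d\otimes_\OO L$, the $L[[H]]$-submodule $N_L$ is finitely generated, hence closed in the natural Banach topology on $\Pi^*$. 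Its annihilator in $\Pi$ is exactly $W=0$, so the Schneider--Teitelbaum reflexivity of admissible unitary Banach representations yields $N_L = \Pi^*$. Consequently $\Theta^d/N$ is a finitely generated $\OO[[H]]$-module that is $\OO$-torsion, and hence killed by some $\varpi^n$.

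Granting this, the rest is routine. The inclusion $N\subseteq \Theta^d$ gives $\Theta\subseteq \Xi$ (so $\Xi$ is open), while $\varpi^n\Theta^d\subseteq N$ combined with the Schikhof characterization $\Theta = \{v\in\Pi : \ell(v)\in\OO \text{ for all } \ell\in\Theta^d\}$ gives $\Xi\subseteq\varpi^{-n}\Theta$ (so $\Xi$ is bounded); $G$-invariance of $\Xi$ is inherited from $N$. The equality $\Xi^d = N$ is then the Schikhof bipolar theorem, applicable because $N$ is an open submodule of $\Theta^d$, so $N^{\circ\circ}=N$. The main obstacle is the functional-analytic step $N_L = \Pi^*$: it relies on the Schneider--Teitelbaum reflexivity of admissible unitary $L$-Banach representations, while everything else is formal manipulation with polars and Schikhof duality.
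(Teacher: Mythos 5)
Your proof is correct and follows essentially the same route as the paper: both arguments use the Schneider--Teitelbaum theory (\cite[Thm.~3.5]{iw}) together with irreducibility to see that the nonzero finitely generated $G$-stable submodule $\phi(M)\subseteq\Theta^d$ spans after inverting $p$ (your $\varpi^n\Theta^d\subseteq\phi(M)$), and then Schikhof duality to conclude $\Xi^d=\phi(M)$, the paper's lattice $\Xi=\Hom_{\OO}^{cont}(\phi(M),\OO)$ being exactly your polar. One small caution: the bipolar identity $N^{\circ\circ}=N$ is not a consequence of $N$ being an \emph{open} submodule of $\Theta^d$ (an $\OO$-submodule squeezed between $\varpi^n\Theta^d$ and $\Theta^d$ need not be closed for the weak topology, and then the bipolar is strictly larger); what makes the step legitimate is the compactness, hence weak closedness, of $\phi(M)=N$ as the continuous image of the profinite module $M$, which you established at the outset and should be the stated justification.
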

\begin{proof} Let $H$ be an open  $p$-adic analytic pro-$p$ subgroup of $G$. 
The completed group algebra $\OO[[H]]$ is noetherian. The admissibility 
of $\Pi$ is equivalent to $\Theta^d$ being a finitely generated $\OO[[H]]$-module. Hence,  
$\phi(M)$ is a finitely generated $\OO[[H]]$-submodule of $\Theta^d$ and is $\OO$-torsion free. 
Hence, there
exist a unique Hausdorff topology on $\phi(M)$ such that $\OO[[H]]\times \phi(M)\rightarrow \phi(M)$ is continuous, 
\cite[Prop 3.1 (i)]{iw}, and $\phi(M)$ is a closed submodule of $\Theta^d$ with respect to this topology, \cite[Prop 3.1 (ii)]{iw}.
The uniqueness of the topology on $\phi(M)$ implies that the submodule topology coincides with the quotient topology.
Since $\phi(M)$ is $G$-invariant and non-zero and $\Pi$ is irreducible it follows from \cite[Thm 3.5]{iw}, 
that $\Pi$  is naturally isomorphic to the Banach space representation 
$\Hom^{cont}_{\OO}(\phi(M), L)$ with the topology induced by the supremum norm.  
If we let $\Xi:= \Hom^{cont}_{\OO}(\phi(M), \OO)$ then $\Xi$ will 
be an open bounded $G$-invariant lattice in $\Pi$ and it follows from the proof of \cite[Thm 1.2]{iw} 
that $\Xi^d=\phi(M)$. 
\end{proof}

Let $\Mod^?_{G}(\OO)$ be  a full subcategory of $\Mod^{\mathrm{l\, fin}}_G(\OO)$ closed under subquotients and 
arbitrary direct sums in $\Mod^{\mathrm{l\, fin}}_G(\OO)$.  Let $\dualcat(\OO)$ be a full subcategory of $\Mod_G^{\mathrm{pro\, aug}}(\OO)$ 
anti-equivalent to $\Mod^?_{G}(\OO)$ via Pontryagin duality. We note that $\Mod^?_G(\OO)$ has injective envelopes and 
so $\dualcat(\OO)$ has projective envelopes, see \S \ref{zerosec}. 

\begin{lem}\label{obCO} For an  admissible unitary $L$-Banach space representation $\Pi$ of $G$ the following are equivalent:
\begin{itemize}
\item[(i)] there exists an open bounded $G$-invariant lattice $\Theta$ in $\Pi$ such that 
$\Theta^d$ is an object of $\dualcat(\OO)$;
\item[(ii)] $\Theta^d$ is an object of $\dualcat(\OO)$ for every open bounded $G$-invariant lattice $\Theta$ in $\Pi$.
\end{itemize}
\end{lem}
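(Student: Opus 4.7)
The implication (ii) $\Rightarrow$ (i) is immediate, since unitarity of $\Pi$ guarantees the existence of at least one open bounded $G$-invariant lattice. For (i) $\Rightarrow$ (ii), the plan is to realise $\Theta^d$ as a closed submodule of a suitable scaling of $\Theta_0^d$, and then appeal to closure of $\dualcat(\OO)$ under subobjects.

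Let $\Theta_0$ be a lattice as in (i) and $\Theta$ an arbitrary open bounded $G$-invariant lattice in $\Pi$. Since $\Theta$ is absorbent and $\Theta_0$ is bounded, there exists an integer $m\ge 0$ with $\varpi^m \Theta_0 \subseteq \Theta$. I would then consider the restriction-of-functionals map $\Theta^d \to (\varpi^m \Theta_0)^d$: it is injective, because a continuous $L$-linear functional on $\Pi$ is determined by its values on any open lattice, and it is continuous for the topologies of pointwise convergence. Since $\Theta^d$ is compact and $(\varpi^m \Theta_0)^d$ is Hausdorff, this realises $\Theta^d$ as a closed subobject of $(\varpi^m \Theta_0)^d$ in $\Mod^{\mathrm{pro\,aug}}_G(\OO)$. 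The key observation is that multiplication by $\varpi^m$ is a $G$-equivariant isomorphism of topological $\OO$-modules $\Theta_0 \xrightarrow{\sim} \varpi^m \Theta_0$, which dualises to an isomorphism $(\varpi^m \Theta_0)^d \cong \Theta_0^d$ in $\Mod^{\mathrm{pro\,aug}}_G(\OO)$; hence $(\varpi^m \Theta_0)^d$ belongs to $\dualcat(\OO)$ by hypothesis.

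To finish I would invoke the fact that $\dualcat(\OO)$ is closed under subobjects in $\Mod^{\mathrm{pro\,aug}}_G(\OO)$. This follows formally from the setup: if $N$ is a closed submodule of some $M\in\dualcat(\OO)$, then by exactness of Pontryagin duality $N^{\vee}$ is a quotient of $M^{\vee}\in \Mod^?_G(\OO)\subseteq \Mod^{\mathrm{l\,fin}}_G(\OO)$, hence $N^{\vee}$ is locally of finite length, and closure of $\Mod^?_G(\OO)$ under subquotients inside $\Mod^{\mathrm{l\,fin}}_G(\OO)$ then places $N^{\vee}$ in $\Mod^?_G(\OO)$, so $N\in\dualcat(\OO)$. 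Applying this to the embedding $\Theta^d\hookrightarrow (\varpi^m \Theta_0)^d$ yields $\Theta^d\in\dualcat(\OO)$, as required.

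No substantial obstacle arises. The only point deserving some care is verifying that $\Theta^d \hookrightarrow (\varpi^m \Theta_0)^d$ is genuinely a morphism in $\Mod^{\mathrm{pro\,aug}}_G(\OO)$ with closed image, rather than merely an inclusion of underlying $\OO[G]$-modules; but this is automatic from compactness of Schikhof duals and continuity of restriction for pointwise convergence.
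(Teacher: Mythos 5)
Your proof is correct and follows the same route as the paper's (one-line) proof: both reduce to commensurability of open bounded lattices together with closure of $\dualcat(\OO)$ under subquotients in $\Mod^{\mathrm{pro\,aug}}_G(\OO)$. You simply spell out the details — the inclusion $\varpi^m\Theta_0\subseteq\Theta$, the dualised closed embedding $\Theta^d\hookrightarrow(\varpi^m\Theta_0)^d\cong\Theta_0^d$, and the subobject closure — all of which are sound.
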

\begin{proof} Clearly (ii) implies (i). The converse holds because any two open bounded lattices 
are commensurable and $\dualcat(\OO)$ is closed under subquotients.
\end{proof}

\begin{defi} Let $\Ban_G^{\mathrm{adm}}(L)$ be  the category of admissible 
unitary $L$-Banach space representations of $G$ with morphisms continuous $G$-equivariant $L$-linear homomorphisms. 
Let $\Ban^{\mathrm{adm}}_{\dualcat(\OO)}$ be the full subcategory of $\Ban_G^{\mathrm{adm}}(L)$  with objects admissible 
unitary $L$-Banach space representations of $G$ satisfying the conditions of Lemma \ref{obCO}.
\end{defi}

\begin{lem} $\Ban^{\mathrm{adm}}_{\dualcat(\OO)}$ is closed under subquotients in $\Ban^{\mathrm{adm}}_{G}(L)$. In particular, 
it is abelian. 
\end{lem}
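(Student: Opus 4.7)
The plan is to deduce closure under subquotients directly from the Pontryagin-dual picture, after which abelianness of $\Ban^{\mathrm{adm}}_{\dualcat(\OO)}$ follows formally from the abelianness of the ambient category. First I would invoke Schneider--Teitelbaum \cite{iw}: since $\OO[[H]]$ is noetherian for $H$ open compact in $G$, admissibility of a unitary $L$-Banach space representation is equivalent to finite generation of the Schikhof dual over $\OO[[H]]$, and the latter property is stable under submodules and quotients. Hence $\Ban_G^{\mathrm{adm}}(L)$ is abelian, so for a morphism $f \colon \Pi_1 \to \Pi_2$ in $\Ban^{\mathrm{adm}}_{\dualcat(\OO)}$ its kernel and cokernel lie a priori in $\Ban_G^{\mathrm{adm}}(L)$; what remains is to verify the condition of Lemma \ref{obCO} for them.

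The central step is to produce a dual short exact sequence of Schikhof duals. Let $\Pi \in \Ban^{\mathrm{adm}}_{\dualcat(\OO)}$, $\Sigma \subseteq \Pi$ a closed $G$-invariant subspace, and $\Xi := \Pi/\Sigma$; choose an open bounded $G$-invariant lattice $\Theta \subseteq \Pi$ and set $\Theta_\Sigma := \Theta \cap \Sigma$, $\Theta_\Xi := \Theta/\Theta_\Sigma$. These are open bounded $G$-invariant lattices in $\Sigma$ and $\Xi$ respectively. The sequence
\[ 0 \to \Theta_\Sigma \to \Theta \to \Theta_\Xi \to 0 \]
is short exact with $\OO$-flat quotient, hence remains short exact after reduction modulo $\varpi^n$. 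Pontryagin dualizing (exact on $\OO$-torsion smooth representations), passing to the inverse limit over $n$ (exact on the resulting system of compact $\OO$-modules), and invoking the identification \eqref{Thetadnew} I obtain a short exact sequence
\[ 0 \to \Theta_\Xi^d \to \Theta^d \to \Theta_\Sigma^d \to 0 \]
in $\Mod_G^{\mathrm{pro\, aug}}(\OO)$.

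To conclude I would observe that $\Mod^{\mathrm{l\, fin}}_G(\OO)$ is closed under subquotients in $\Mod^{\mathrm{sm}}_G(\OO)$ (local finiteness passes to subobjects and quotients of any cyclic piece) and that $\Mod^?_G(\OO)$ is by hypothesis closed under subquotients in $\Mod^{\mathrm{l\, fin}}_G(\OO)$. The Pontryagin anti-equivalence then transports this into the statement that $\dualcat(\OO)$ is closed under subquotients in $\Mod_G^{\mathrm{pro\, aug}}(\OO)$, so $\Theta_\Sigma^d$ and $\Theta_\Xi^d$ both lie in $\dualcat(\OO)$, and Lemma \ref{obCO} places $\Sigma$ and $\Xi$ in $\Ban^{\mathrm{adm}}_{\dualcat(\OO)}$. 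Closure under finite direct sums is automatic since $\dualcat(\OO)$ is closed under them, and a full subcategory of an abelian category closed under kernels, cokernels, and finite direct sums is itself abelian, which yields the in-particular clause. The only delicate point, and the one I would be most careful to check, is the exactness of the dualized lattice sequence: one needs both that reduction mod $\varpi^n$ remains short exact (from $\OO$-flatness of $\Theta_\Xi$) and that the inverse limit of a system of short exact sequences of finite length smooth representations is short exact (a Mittag--Leffler-type fact in the compact setting); everything else is a formal consequence of the Pontryagin anti-equivalence.
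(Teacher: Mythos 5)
Your proof is correct and takes essentially the same approach as the paper: pass to Schikhof duals, observe that $\dualcat(\OO)$ is closed under subquotients in $\Mod_G^{\mathrm{pro\, aug}}(\OO)$, and invoke Lemma \ref{obCO}. You are more explicit about the dualized short exact sequence of lattices (the $\OO$-flatness of $\Theta_\Xi$, exactness mod $\varpi^n$, and the inverse-limit step), which precisely fills in the paper's terse ``dually'' and is exactly the content that later appears in the proof of Lemma \ref{PitomPI}.
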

\begin{proof} We note that it follows from \cite{iw} and \cite[6.2.16]{em1} that  $\Ban^{\mathrm{adm}}_{G}(L)$ is an abelian category.
Let $\Pi$ be an object of $\Ban^{\mathrm{adm}}_{\dualcat(\OO)}$ and let $\Theta$ be an open bounded $G$-invariant lattice in $\Pi$. 
Then $\Theta^d$ is an object of $\dualcat(\OO)$ and  any subquotient of $\Theta^d$ in $\Mod_G^{\mathrm{pro\, aug}}(\OO)$ lies in $\dualcat(\OO)$,
since  $\dualcat(\OO)$ is a full subcategory of $\Mod_G^{\mathrm{pro\, aug}}(\OO)$ closed under subquotients. Dually this implies that any 
subquotient of $\Pi$ in $\Ban^{\mathrm{adm}}_{G}(L)$ lies in  $\Ban^{\mathrm{adm}}_{\dualcat(\OO)}$. Hence $\Ban^{\mathrm{adm}}_{\dualcat(\OO)}$
is abelian.
\end{proof}

\begin{lem}\label{PitomPI} Let $\wP$ be a projective object in $\dualcat(\OO)$ and let $\wE:=\End_{\dualcat(\OO)}(\wP)$. Let $\Pi$ be in 
$\Ban^{\mathrm{adm}}_{\dualcat(\OO)}$, choose an open bounded $G$-invariant lattice $\Theta$ in $\Pi$ and 
put $\md(\Pi):=\Hom_{\dualcat(\OO)}(\wP, \Theta^d) \otimes_{\OO} L$. Then $\Pi \mapsto \md(\Pi)$ defines 
an exact functor from $\Ban^{\mathrm{adm}}_{\dualcat(\OO)}$ to the category of right $\wE[1/p]$-modules.  
\end{lem}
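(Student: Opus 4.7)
I will verify in turn that $\md$ is well-defined on objects, extends to morphisms, takes values in right $\wE[1/p]$-modules, and carries short exact sequences to short exact sequences. Since $\Theta \mapsto \Theta^d$ is contravariant, $\md$ will naturally be a \emph{contravariant} exact functor (equivalently, an exact covariant functor on the opposite category), consistent with the anti-equivalence appearing in Theorem~\ref{banI}.

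\textbf{Well-definedness and morphisms.} Any two open bounded $G$-invariant lattices $\Theta,\Theta'\subset\Pi$ are commensurable, so after choosing $n\ge 0$ with $\varpi^n\Theta\subset\Theta'\subset\varpi^{-n}\Theta$, Schickhof duality identifies $\Theta^d$ and $(\Theta')^d$ inside $\Theta^d\otimes_{\OO}L$ up to multiplication by $\varpi^n$; applying $\Hom_{\dualcat(\OO)}(\wP,-)\otimes_{\OO}L$ kills this discrepancy, giving a canonical isomorphism between the two candidate definitions of $\md(\Pi)$. For a continuous $G$-equivariant map $\phi:\Pi_1\to\Pi_2$, continuity forces $\phi(\Theta_1)\subset\varpi^{-n}\Theta_2$ for some $n\ge 0$, so $\varpi^n\phi|_{\Theta_1}:\Theta_1\to\Theta_2$ is an $\OO[G]$-module map of open bounded lattices. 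Its Schickhof dual is a morphism $\Theta_2^d\to\Theta_1^d$ in $\dualcat(\OO)$, and post-composition induces $\Hom_{\dualcat(\OO)}(\wP,\Theta_2^d)\to\Hom_{\dualcat(\OO)}(\wP,\Theta_1^d)$; after inverting $p$ the factor of $\varpi^n$ becomes invisible, so we obtain a well-defined $\md(\phi):\md(\Pi_2)\to\md(\Pi_1)$, and functoriality in $\phi$ is immediate.

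\textbf{Module structure and exactness.} The right $\wE$-action on $\Hom_{\dualcat(\OO)}(\wP,\Theta^d)$ is by pre-composition with elements of $\wE=\End_{\dualcat(\OO)}(\wP)$, while $\md(\phi)$ is by post-composition with a morphism in $\dualcat(\OO)$; the two commute, so $\md(\phi)$ is $\wE[1/p]$-linear. For exactness, given a short exact sequence $0\to\Pi_1\to\Pi_2\to\Pi_3\to 0$ in $\Ban^{\mathrm{adm}}_{\dualcat(\OO)}$, choose an open bounded $G$-invariant lattice $\Theta_2\subset\Pi_2$ and set $\Theta_1:=\Theta_2\cap\Pi_1$ and $\Theta_3$ to be the image of $\Theta_2$ in $\Pi_3$. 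Since $\Pi_1\hookrightarrow\Pi_2$ is a closed embedding and $\Pi_2\twoheadrightarrow\Pi_3$ is open (open mapping theorem), both $\Theta_1$ and $\Theta_3$ are open bounded $G$-invariant lattices, and by construction $0\to\Theta_1\to\Theta_2\to\Theta_3\to 0$ is a short exact sequence of $\OO[G]$-modules with $\Theta_3$ torsion-free. Reducing modulo $\varpi^m$ gives exact sequences of smooth $\OO[G]$-modules, taking Pontryagin duals preserves exactness, and passing to the inverse limit via the natural topological isomorphism $\Theta^d\otimes_{\OO}\OO/\varpi^m\cong(\Theta/\varpi^m\Theta)^{\vee}$ of \eqref{Thetadnew} produces a short exact sequence $0\to\Theta_3^d\to\Theta_2^d\to\Theta_1^d\to 0$ in $\dualcat(\OO)$ (the $\underset{\longleftarrow}{\lim}^1$ vanishes because the transition maps between finite modules are surjective). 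Projectivity of $\wP$ then makes $\Hom_{\dualcat(\OO)}(\wP,-)$ exact, and $-\otimes_{\OO}L$ is flat, so we obtain the desired short exact sequence $0\to\md(\Pi_3)\to\md(\Pi_2)\to\md(\Pi_1)\to 0$ of right $\wE[1/p]$-modules.

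\textbf{Main obstacle.} The only non-formal point is the exactness of Schickhof duality on the chosen lattice sequence; this is the step requiring $\Theta_3$ to be $\OO$-torsion-free (which holds because it is a lattice in a Banach space) and the Mittag--Leffler argument above. The construction of compatible lattices is straightforward, and once the dual short exact sequence lives in $\dualcat(\OO)$ everything else is a routine consequence of projectivity of $\wP$ and the commensurability of open bounded lattices.
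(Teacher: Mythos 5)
Your proof is correct and follows the same route as the paper: construct compatible open bounded $G$-invariant lattices $\Theta_1=\Theta_2\cap\Pi_1$ and $\Theta_3=\mathrm{image}(\Theta_2)$, dualize the resulting short exact sequence of lattices to get an exact sequence in $\dualcat(\OO)$, and conclude using projectivity of $\wP$ and flatness of $L$ over $\OO$. Your additional remarks on contravariance, $\wE[1/p]$-linearity of $\md(\phi)$, and the Mittag--Leffler verification that Schikhof duality is exact merely make explicit points the paper leaves implicit.
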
 
\begin{proof} We note that since any two open bounded lattices in $\Pi$ are commensurable the definition 
of $\md(\Pi)$ does not depend on the choice of $\Theta$. Let $0\rightarrow \Pi_1\rightarrow \Pi_2\rightarrow \Pi_3\rightarrow 0$ 
be an exact sequence in $\Ban^{\mathrm{adm}}_{\dualcat(\OO)}$. Let $\Theta$ be an open bounded $G$-invariant lattice in $\Pi_2$. 
Since all the Banach space representations are admissible, $\Pi_1\cap \Theta$ is an open bounded $G$-invariant 
lattice in $\Pi_1$ and the image of $\Theta$ in $\Pi_3$ is an open bounded $G$-invariant lattice in $\Pi_3$. So we have an 
exact sequence $0\rightarrow \Theta_1 \rightarrow \Theta_2\rightarrow \Theta_3\rightarrow 0$ with $\Theta_i$ an open bounded 
$G$-invariant lattice in $\Pi_i$. Dually this gives an exact sequence  
$0\rightarrow \Theta_3^d \rightarrow \Theta_2^d\rightarrow \Theta_1^d\rightarrow 0$ 
in $\dualcat(\OO)$. Since $\wP$ is projective in $\dualcat(\OO)$ we obtain an exact sequence of right  $\wE$-modules:
$$0\rightarrow \Hom_{\dualcat(\OO)}(\wP, \Theta_3^d)\rightarrow \Hom_{\dualcat(\OO)}(\wP, \Theta_2^d)\rightarrow\Hom_{\dualcat(\OO)}(\wP, \Theta_1^d)
\rightarrow 0.$$ 
 The sequence remains exact after tensoring with $L$.
\end{proof} 

\begin{cor}\label{smallest} Let $\wP$ be a projective object in $\dualcat(\OO)$ and let $\Pi$ be in $\Ban^{\mathrm{adm}}_{\dualcat(\OO)}$ then there
exists a smallest  closed $G$-invariant subspace of $\Pi_1$ of $\Pi$ such that $\md(\Pi/\Pi_1)$ is zero.
\end{cor}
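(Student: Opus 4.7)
Let $\mathcal{S}$ denote the collection of closed $G$-invariant subspaces $\Pi'$ of $\Pi$ with $\md(\Pi/\Pi') = 0$. This is non-empty since $\Pi \in \mathcal{S}$. The plan is to produce a smallest element of $\mathcal{S}$ by combining stability under finite intersections (via exactness of $\md$) with the descending chain condition for closed $G$-invariant subspaces of $\Pi$ (via admissibility), and then invoking Zorn's lemma.

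First I would show $\mathcal{S}$ is stable under finite intersections. Given $\Pi', \Pi'' \in \mathcal{S}$, the natural map $\Pi/(\Pi' \cap \Pi'') \to \Pi/\Pi' \oplus \Pi/\Pi''$ sending $v$ to $(v+\Pi', v+\Pi'')$ is a closed embedding in $\Ban^{\mathrm{adm}}_{\dualcat(\OO)}$. Applying the exact functor $\md$ of Lemma \ref{PitomPI} yields an injection $\md(\Pi/(\Pi' \cap \Pi'')) \hookrightarrow \md(\Pi/\Pi') \oplus \md(\Pi/\Pi'') = 0$, so $\Pi' \cap \Pi'' \in \mathcal{S}$.

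Next, the descending chain condition: I would fix a compact open subgroup $H$ of $G$ and use that $\OO[[H]]$ is noetherian (since $G$ is $p$-adic analytic). Admissibility of $\Pi$ implies $\Theta^d$ is finitely generated, hence noetherian, as an $\OO[[H]]$-module. A descending chain $\Pi_1 \supseteq \Pi_2 \supseteq \ldots$ of closed $G$-invariant subspaces of $\Pi$ induces an ascending chain of closed subobjects $(\Theta/\Theta_i)^d \subseteq \Theta^d$ in $\dualcat(\OO)$, where $\Theta_i = \Theta \cap \Pi_i$; the inclusion-reversing comparison follows by Schikhof-dualizing the quotient maps $\Theta/\Theta_{i+1} \twoheadrightarrow \Theta/\Theta_i$. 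Noetherianness of $\Theta^d$ forces this ascending chain to stabilize, and the faithfulness of $\Theta \mapsto \Theta^d$ on admissible representations (as used implicitly throughout the section) forces the original descending chain to stabilize as well.

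Combining these ingredients: every totally ordered subfamily of $\mathcal{S}$ is eventually constant by the DCC and so has a lower bound in $\mathcal{S}$, and by Zorn's lemma $\mathcal{S}$ contains a minimal element $\Pi_1$. For any $\Pi' \in \mathcal{S}$, stability under finite intersections gives $\Pi_1 \cap \Pi' \in \mathcal{S}$; minimality forces $\Pi_1 \cap \Pi' = \Pi_1$, i.e.\ $\Pi_1 \subseteq \Pi'$, so $\Pi_1$ is the desired smallest element. The main technical obstacle I anticipate is making precise the inclusion-reversing bijection between closed $G$-invariant subspaces of $\Pi$ and closed subobjects of $\Theta^d$ inside $\dualcat(\OO)$; this is formal given Schneider--Teitelbaum duality together with admissibility of $\Pi$, but deserves to be checked explicitly since it is what converts the noetherianness of $\Theta^d$ into a statement about subspaces of $\Pi$.
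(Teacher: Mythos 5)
Your argument is correct and is essentially the paper's own proof: the paper simply invokes the descending chain condition on closed $G$-invariant subspaces of an admissible Banach space representation (citing \cite[Lem.5.8]{comp}, which is proved exactly by the noetherianness-of-$\Theta^d$ argument you spell out) together with the exactness of $\md$, which is what your intersection-stability step uses. Your extra Zorn/minimality bookkeeping is fine but not needed beyond the minimal-element formulation of DCC.
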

\begin{proof} Since $\Pi$ is admissible any descending chain of closed $G$-invariant subspaces must become stationary, \cite[Lemma 5.8]{comp}.
The assertion follows from the exactness of $\md$. 
\end{proof}

In the application we will be in the following situation.

\begin{lem}\label{contextGL2} Let $G=\GL_2(\Qp)$, $\zeta: Z\rightarrow \OO^\times$ be a continuous character of 
the centre of $G$ and let $\dualcat(\OO)$ be the full subcategory of $\Mod^{\mathrm{pro\, aug}}_G(\OO)$ anti-equivalent 
to $\Mod^{\mathrm{lfin}}_{G,\zeta}(\OO)$ by Pontryagin duality, see \S\ref{zerosec}.
 Let $\Pi$ be an admissible $L$-Banach space representation of $G$ with a central 
character $\zeta$  and let $\Theta$ be an open bounded $G$-invariant lattice in $\Pi$. Then 
$\Theta^d$ is an object of $\dualcat(\OO)$. In particular, $\Ban^{\mathrm{adm}}_{\dualcat(\OO)}=\Ban^{\mathrm{adm}}_{G, \zeta}(L)$ the 
category of admissible unitary $L$-Banach space representations of $G$ on which $Z$ acts by the character $\zeta$.
\end{lem}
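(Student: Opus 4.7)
The plan is to prove $\Theta^d \in \dualcat(\OO)$ by showing that its Pontryagin dual $\pi := (\Theta^d)^\vee$ belongs to $\Mod^{\mathrm{l\, fin}}_{G,\zeta}(\OO)$. Applying Pontryagin duality term by term to \eqref{Thetadnew} and passing to the direct limit gives
\[ \pi \cong \underset{\longrightarrow}{\lim}\, \Theta/\varpi^n\Theta, \]
with transition maps induced by multiplication by $\varpi$; equivalently, $\pi \cong \Theta \otimes_\OO (L/\OO)$ as $\OO[G]$-modules. Because $Z$ acts on $\Theta$ via $\zeta$, it acts on each $\Theta/\varpi^n\Theta$, and hence on $\pi$, via $\zeta$, so $\pi$ lies in $\Mod^{\mathrm{sm}}_{G,\zeta}(\OO)$.

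The heart of the proof is to verify that $\pi$ is locally of finite length. First I would show that each truncation $\Theta/\varpi^n\Theta$ is admissible as a smooth $\OO$-torsion representation of $G$. For $n=1$ this is precisely the definition of admissibility of $\Pi$. For the inductive step, the short exact sequence
\[ 0 \rightarrow \varpi^{n-1}\Theta/\varpi^n\Theta \rightarrow \Theta/\varpi^n\Theta \rightarrow \Theta/\varpi^{n-1}\Theta \rightarrow 0, \]
with the left-hand term identified via multiplication by $\varpi^{n-1}$ with the admissible representation $\Theta/\varpi\Theta$, closes the induction: because taking $H$-invariants is left exact, $|V^H|$ is bounded by the product of the sizes of the $H$-invariants of the outer terms, so extensions of admissible representations are admissible.

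With admissibility of each $\Theta/\varpi^n\Theta$ in hand, the key external input is the theorem of Emerton recalled in Section \ref{zerosec}: for $G = \GL_2(\Qp)$ one has $\Mod^{\mathrm{l\, adm}}_{G,\zeta}(\OO) = \Mod^{\mathrm{l\, fin}}_{G,\zeta}(\OO)$. This forces each $\Theta/\varpi^n\Theta$ to be locally of finite length. Any $v \in \pi$ is killed by some $\varpi^n$ and hence comes from $\Theta/\varpi^n\Theta \subseteq \pi$, so $\OO[G]v$ has finite length there and therefore in $\pi$. Thus $\pi \in \Mod^{\mathrm{l\, fin}}_{G,\zeta}(\OO)$ and $\Theta^d \in \dualcat(\OO)$, which is the main assertion. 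The ``in particular'' is then essentially formal: for any $\Pi \in \Ban^{\mathrm{adm}}_{\dualcat(\OO)}$, $Z$ acts on $(\Theta^d)^\vee \cong \Pi/\Theta$ by $\zeta$, and $\varpi$-adic separatedness of $\Theta$ lifts this to give that $Z$ acts on $\Pi$ itself by $\zeta$. The only nontrivial external input is Emerton's local-finiteness theorem for $\GL_2(\Qp)$.
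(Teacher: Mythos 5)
Your proof is correct and follows essentially the same route as the paper: the key input in both is Emerton's theorem that $\Mod^{\mathrm{l\,adm}}_{G,\zeta}=\Mod^{\mathrm{l\,fin}}_{G,\zeta}$ for $\GL_2(\Qp)$, applied to the admissible truncations $\Theta/\varpi^n\Theta$, combined with the identification $\Theta^d\cong\underset{\longleftarrow}{\lim}\,(\Theta/\varpi^n\Theta)^{\vee}$ of \eqref{Thetadnew}. The only differences are cosmetic: you verify local finiteness of the Pontryagin dual $(\Theta^d)^{\vee}\cong\underset{\longrightarrow}{\lim}\,\Theta/\varpi^n\Theta$ directly and spell out the devissage giving admissibility of each $\Theta/\varpi^n\Theta$, which the paper asserts without comment.
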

\begin{proof} Recall that an object $M$ of $\Mod^{\mathrm{pro\, aug}}_G(\OO)$ is an object of $\dualcat(\OO)$ 
if and only if $M\cong \underset{\longleftarrow}{\lim}\, M_i$ where the limit is taken over all the quotients
in $\Mod^{\mathrm{pro\, aug}}_G(\OO)$ of finite length and $Z$ acts on $M$ via $\zeta^{-1}$. 

Since $\Pi$ is admissible $\Theta/\varpi^n \Theta$ is an admissible smooth 
representation of $G$ for all $n\ge 1$. Since $Z$ acts on $\Theta/\varpi^n \Theta$ by a character $\zeta$
\cite[Thm 2.3.8]{ord1} says that any finitely generated subrepresentation of $\Theta/\varpi^n \Theta$ 
is of finite length. Hence by definition $(\Theta/\varpi^n \Theta)^{\vee}$ is an object of $\dualcat(\OO)$.
The assertion follows from \eqref{Thetadnew}.
\end{proof} 
\begin{remar} If the Conjecture formulated by Emerton  in \cite[2.3.7]{ord1} holds then the proof of Lemma \ref{contextGL2} goes through unchanged for a $p$-adic reductive 
group $G$.
\end{remar}

\begin{lem}\label{pisub} Let $\wP$ be a a projective envelope of an irreducible object $S$ in $\dualcat(\OO)$, 
$\pi:=S^{\vee}$ a smooth irreducible $k$-representation of $G$, $\Pi$ an object of $\Ban^{\mathrm{adm}}_{\dualcat(\OO)}$ and $\Theta$ 
an open bounded $G$-invariant lattice in $\Pi$. Then the following are equivalent:
\begin{itemize}
\item[(i)] $\pi$ is a subquotient of $\Theta\otimes_{\OO} k$;
\item[(ii)] $S$ is a subquotient of $\Theta^d\otimes_{\OO} k$;
\item[(iii)] $\Hom_{\dualcat(\OO)}(\wP, \Theta^d\otimes_{\OO} k)\neq 0$;
\item[(iv)] $\Hom_{\dualcat(\OO)}(\wP, \Theta^d)\neq 0$.
\end{itemize} 
\end{lem}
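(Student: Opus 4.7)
The plan is to prove the equivalences by routing (i) $\Leftrightarrow$ (ii) and (iii) $\Leftrightarrow$ (iv) separately, then bridging them via (ii) $\Leftrightarrow$ (iii). For (i) $\Leftrightarrow$ (ii) I would invoke the isomorphism $\Theta^{d}\otimes_{\OO}k\cong(\Theta/\varpi\Theta)^{\vee}$ furnished by the $n=1$ case of \eqref{Thetadnew}, together with the exactness of Pontryagin duality: any filtration $A\subseteq B\subseteq\Theta/\varpi\Theta$ with $B/A\cong\pi$ dualises to a filtration realising $S=\pi^{\vee}$ as a subquotient of $\Theta^{d}\otimes_{\OO}k$, and conversely.

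For (iii) $\Leftrightarrow$ (iv) I would exploit that $\Theta^{d}$ is $\OO$-torsion free (as the Schikhof dual of the $\OO$-flat module $\Theta$) and that $\wP$ is projective. Applying $\Hom_{\dualcat(\OO)}(\wP,-)$ to the short exact sequence
\begin{equation*}
0\to\Theta^{d}\xrightarrow{\varpi}\Theta^{d}\to\Theta^{d}\otimes_{\OO}k\to 0
\end{equation*}
yields a surjection $\Hom_{\dualcat(\OO)}(\wP,\Theta^{d})\twoheadrightarrow\Hom_{\dualcat(\OO)}(\wP,\Theta^{d}\otimes_{\OO}k)$ with kernel $\varpi\Hom_{\dualcat(\OO)}(\wP,\Theta^{d})$, giving (iii) $\Rightarrow$ (iv) at once. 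For the converse, the analogous argument applied to $\Theta^{d}\cong \underset{\longleftarrow}{\lim}\,\Theta^{d}/\varpi^{n}\Theta^{d}$ combined with the fact that $\Hom$ commutes with inverse limits identifies $\Hom_{\dualcat(\OO)}(\wP,\Theta^{d})$ with its own $\varpi$-adic completion, hence it is $\varpi$-adically separated; therefore any nonzero element, after being divided by a maximal power of $\varpi$, produces a nonzero element modulo $\varpi$, i.e.\ a nonzero element of $\Hom_{\dualcat(\OO)}(\wP,\Theta^{d}\otimes_{\OO}k)$.

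The remaining equivalence (ii) $\Leftrightarrow$ (iii) is where the essentiality of the projective envelope enters. If $S$ is a subquotient of $\Theta^{d}\otimes_{\OO}k$, with $A\subseteq B\subseteq\Theta^{d}\otimes_{\OO}k$ and $B/A\cong S$, then projectivity of $\wP$ lifts the cover $\wP\twoheadrightarrow S$ to a morphism $\wP\to B$, whose postcomposition with the inclusion into $\Theta^{d}\otimes_{\OO}k$ is nonzero, proving (iii). Conversely, given nonzero $\phi:\wP\to\Theta^{d}\otimes_{\OO}k$, the image $\phi(\wP)$ lies in $\dualcat(\OO)$ since that category is closed under subquotients, and the standing hypothesis that every object of $\dualcat(\OO)$ is an inverse limit of finite-length quotients produces a nonzero finite-length quotient of $\phi(\wP)$, hence an irreducible quotient $S'$. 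The composition $\wP\twoheadrightarrow\phi(\wP)\twoheadrightarrow S'$ is nonzero, while essentiality of $\wP\twoheadrightarrow S$ makes $\Hom_{\dualcat(\OO)}(\wP,S')=0$ whenever $S'\not\cong S$; thus $S'\cong S$ and $S$ appears as a subquotient. No step looks like a serious obstacle; the only mildly delicate point is the $\varpi$-adic completeness argument underlying (iv) $\Rightarrow$ (iii), which rests squarely on $\Theta^{d}$ being $\OO$-torsion free and $\wP$ being projective, both of which are built into the setup.
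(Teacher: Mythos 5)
Your proof is correct and follows essentially the same route as the paper: (i)$\Leftrightarrow$(ii) via the $n=1$ case of \eqref{Thetadnew} and exactness of Pontryagin duality, (ii)$\Leftrightarrow$(iii) via projectivity of $\wP$ and essentiality of $\wP\twoheadrightarrow S$, and (iii)$\Leftrightarrow$(iv) resting on $\OO$-torsion-freeness of $\Theta^d$ and the presentation $\Theta^d\cong\underset{\longleftarrow}{\lim}\,\Theta^d/\varpi^n\Theta^d$. The only stylistic difference is in the last equivalence: for (iii)$\Rightarrow$(iv) you apply exactness of $\Hom_{\dualcat(\OO)}(\wP,-)$ to the single sequence $0\to\Theta^d\xrightarrow{\varpi}\Theta^d\to\Theta^d\otimes_{\OO}k\to 0$, whereas the paper identifies $\Hom_{\dualcat(\OO)}(\wP,\Theta^d)$ with an inverse limit along surjective transition maps; and for (iv)$\Rightarrow$(iii) you phrase the argument via $\varpi$-adic separatedness of $\Hom_{\dualcat(\OO)}(\wP,\Theta^d)$, whereas the paper runs the contrapositive by devissage through $\varpi^n\Theta^d/\varpi^{n+1}\Theta^d\cong\Theta^d/\varpi\Theta^d$. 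These are equivalent packagings of the same ingredients, and your reasoning is sound throughout.
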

\begin{proof} It follows from 
\eqref{Thetadnew} that (i) is equivalent to (ii). Since $\wP\twoheadrightarrow S$ is essential (iii) implies (ii).
 Since $\dualcat(\OO)$ is closed under subquotients and $\Hom_{\dualcat(\OO)}(\wP, \ast)$
is exact (ii) implies (iii). We have isomorphisms:
\begin{equation} 
\Hom_{\dualcat(\OO)}(\wP, \Theta^d)\cong \Hom_{\dualcat(\OO)}(\wP, \underset{\longleftarrow}{\lim} \, \Theta^d/\varpi^n \Theta^d)\cong 
\underset{\longleftarrow}{\lim} \, \Hom_{\dualcat(\OO)}(\wP, \Theta^d/\varpi^n \Theta^d).
\end{equation}
The transition maps are surjective since $\wP$ is projective. Hence (iii) implies (iv).  
Since $\Theta^d$ is $\OO$-torsion free multiplication by $\varpi^n$ induces isomorphism $\Theta^d/\varpi \Theta^d\cong \varpi^n 
\Theta^d/\varpi^{n+1} \Theta^d$. If $\Hom_{\dualcat(\OO)}(\wP, \Theta^d/\varpi \Theta^d)=0$ then by considering short exact sequences we obtain 
$\Hom_{\dualcat(\OO)}(\wP,  \Theta^d/\varpi^n \Theta^d)=0$ for all $n\ge 1$ and so (iv) implies (iii).
\end{proof}  

\begin{lem}\label{whyfinite?} Let $\wP$, $S$, $\pi$ and $\Theta$ be as in Lemma \ref{pisub}. If $\Hom_{\dualcat(\OO)}(\wP, \Theta^d)\neq 0$ then 
$\pi$ is an admissible representation of $G$. In particular, $\End_{\dualcat(\OO)}(S)\cong \End_G(\pi)$ is a finite field extension of $k$.
\end{lem}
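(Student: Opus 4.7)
The plan is to translate the statement to the dual side and exploit that the completed group algebra $\OO[[H]]$ is noetherian for an open compact subgroup $H$ of the $p$-adic analytic group $G$. First I would invoke Lemma \ref{pisub}: the hypothesis $\Hom_{\dualcat(\OO)}(\wP,\Theta^d)\neq 0$ gives (by equivalence (iv)$\Rightarrow$(ii) there) that $S$ occurs as a subquotient of $\Theta^d/\varpi\Theta^d$ in $\dualcat(\OO)$. Recall also from \eqref{Thetadnew} the identification $\Theta^d/\varpi\Theta^d\cong(\Theta/\varpi\Theta)^{\vee}$.

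Next I would use admissibility of $\Pi$, which (as recalled in the paper's definitions and in \cite[Thm 3.5]{iw}) is equivalent to $\Theta^d$ being a finitely generated $\OO[[H]]$-module. Consequently $\Theta^d/\varpi\Theta^d$ is a finitely generated $k[[H]]$-module, and since $k[[H]]$ is noetherian, any subquotient in $\dualcat(\OO)$ (which is by construction a subquotient as a topological $\OO[[H]]$-module) is again finitely generated. Applying this to $S$ and taking Pontryagin duals, $\pi=S^{\vee}$ has $\pi^{H'}$ finite dimensional for every open subgroup $H'\subseteq H$, i.e.\ $\pi$ is admissible.

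For the final assertion, Pontryagin duality gives an isomorphism $\End_{\dualcat(\OO)}(S)\cong \End_G(\pi)^{op}$, so it suffices to see $\End_G(\pi)$ is a finite field extension of $k$. By Schur applied to the irreducible $\pi$, $\End_G(\pi)$ is a skew field containing $k$ in its centre. Choose an open compact subgroup $H$ with $\pi^H\neq 0$ and a nonzero $v\in\pi^H$; evaluation $\phi\mapsto\phi(v)$ embeds $\End_G(\pi)$ into $\pi^H$, because the kernel is a $G$-stable subspace of the irreducible $\pi$ not containing $v$, hence zero. By admissibility just proved, $\pi^H$ is finite dimensional over $k$, so $\End_G(\pi)$ is a finite skew field extension of $k$; since $k$ is finite, Wedderburn's little theorem forces it to be a field, and therefore a finite field extension of $k$. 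No step here is really an obstacle; the only thing to be careful about is that ``subquotient in $\dualcat(\OO)$'' really means subquotient of the underlying topological $\OO[[H]]$-module, which is why noetherianity of $k[[H]]$ transfers finite generation from $\Theta^d/\varpi\Theta^d$ down to $S$.
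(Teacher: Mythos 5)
Your argument is correct and follows the same route as the paper: use admissibility of $\Pi$ to get $\Theta^d$ finitely generated over $\OO[[H]]$, pass to the subquotient $S$ via noetherianity, dualize to obtain $\dim_k\pi^H<\infty$, then observe $\End_G(\pi)$ is a finite-dimensional skew field over the finite field $k$ (the paper embeds $\End_G(\pi)$ into $\End_k(\pi^H)$, you use evaluation at $v\in\pi^H$ instead, but both yield finite dimensionality, and both then implicitly or explicitly invoke Wedderburn). One sentence in your write-up is garbled --- it is $\ker\phi\subseteq\pi$ that contains $v$ and hence equals $\pi$, not that it avoids $v$ --- but the intended argument is standard and correct.
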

\begin{proof} Let $H$ be an open $p$-adic analytic pro-$p$ subgroup of $G$. Since $\Pi$ is admissible 
$\Theta^d$ is finitely generated over $\OO[[H]]$. Since $\OO[[H]]$ is noetherian it follows from Lemma \ref{pisub}
that $S$ is a finitely generated $\OO[[H]]$-module. Since $H$ is pro-$p$, dually this implies that  $\pi^H$ 
is finite dimensional. Since $\pi$ is irreducible, $\End_G(\pi)$ is a skew field over $k$ contained in 
$\End_k(\pi^H)$. Since $\pi^H$ is finite dimensional $\End_G(\pi)$ is finite dimensional. Since $k$ is a 
finite field $\End_G(\pi)$ is a finite field extension of $k$.
\end{proof}

\begin{lem}\label{mult=rank0} Let $\wP$ be a projective envelope of an irreducible object $S$ in $\dualcat(\OO)$
with $d:=\dim_k \End_{\dualcat(\OO)}(S)$ finite.  
Let $M$ be in $\dualcat(\OO)$, $\OO$-torsion free and such that $M_k:=M\otimes_{\OO} k$ is of finite length in 
$\dualcat(\OO)$. Then $\Hom_{\dualcat(\OO)}(\wP, M)$ is a free $\OO$-module of rank equal to the multiplicity with which $S$ occurs as 
a subquotient of $M_k$ multiplied by $d$. 
\end{lem}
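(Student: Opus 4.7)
The plan is to exploit projectivity of $\wP$ together with $\OO$-flatness of $M$ to reduce the statement to a computation mod $\varpi^n$, and then to upgrade freeness mod $\varpi^n$ to freeness over $\OO$ using pseudo-compactness.

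Set $H := \Hom_{\dualcat(\OO)}(\wP, M)$ and let $m$ denote the multiplicity of $S$ in $M_k$. Since $M$ is $\OO$-torsion free, we have a short exact sequence $0\to M\xrightarrow{\varpi^n} M\to M/\varpi^n M\to 0$ in $\dualcat(\OO)$; projectivity of $\wP$ yields
\begin{equation*}
0\to H\xrightarrow{\varpi^n} H\to \Hom_{\dualcat(\OO)}(\wP, M/\varpi^n M)\to 0.
\end{equation*}
So the task is to analyze $\Hom_{\dualcat(\OO)}(\wP, M/\varpi^n M)$.

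For $n=1$, $\Hom_{\dualcat(\OO)}(\wP, M_k)=\Hom_{\dualcat(k)}(P,M_k)$, where $P=\wP/\varpi\wP$ is a projective envelope of $S$ in $\dualcat(k)$. Since $P\twoheadrightarrow S$ is essential, exactness of $\Hom_{\dualcat(k)}(P,\cdot)$ together with $\Hom_{\dualcat(k)}(P,S)\cong \End_{\dualcat(\OO)}(S)$ and devissage along a composition series of $M_k$ gives $\dim_k \Hom_{\dualcat(k)}(P,M_k)=md$ (each of the $m$ copies of $S$ contributes $d$, other composition factors contribute $0$). For general $n$, the $\OO$-flatness of $M$ identifies $\varpi^{n-1}M/\varpi^n M$ with $M_k$; applying $\Hom_{\dualcat(\OO)}(\wP,\cdot)$ to $0\to M_k\xrightarrow{\varpi^{n-1}} M/\varpi^n M\to M/\varpi^{n-1} M\to 0$ and inducting shows that $\Hom_{\dualcat(\OO)}(\wP,M/\varpi^n M)$ is an $\OO/\varpi^n$-module of $\OO$-length $nmd$. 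Since its reduction mod $\varpi$ has $k$-dimension $md$ (the $n=1$ case), it is generated by $md$ elements over $\OO/\varpi^n$; combined with length $nmd$, the structure theorem forces it to be \emph{free} of rank $md$ over $\OO/\varpi^n$. In particular, $H/\varpi^n H$ is free of rank $md$ over $\OO/\varpi^n$ and $H$ is $\OO$-torsion free.

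It remains to pass to the limit. Writing $M\cong \underset{\longleftarrow}{\lim}\,M_i$ over its quotients of finite length, $H\cong \underset{\longleftarrow}{\lim}\,\Hom_{\dualcat(\OO)}(\wP,M_i)$ exhibits $H$ as a pseudo-compact $\OO$-module. Pick lifts $v_1,\dots,v_{md}\in H$ of a $k$-basis of $H/\varpi H$; the induced map $\varphi:\OO^{md}\to H$ is injective, because $H$ is $\OO$-torsion free and the $v_i$ are linearly independent modulo $\varpi$. The cokernel $C:=H/\varphi(\OO^{md})$ satisfies $C=\varpi C$; pseudo-compactness then forces $C=0$: for any open $\OO$-submodule $U\subseteq C$ the quotient $C/U$ is a finite $\OO$-module, so some power $\varpi^N$ annihilates $C/U$, giving $C=\varpi^N C\subseteq U$, and hence $C\subseteq \bigcap_U U=0$. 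Thus $H\cong \OO^{md}$ is free of rank $md$, as required. The one technical subtlety is the last pseudo-compact Nakayama step, but it follows cleanly from the fact that $\Hom_{\dualcat(\OO)}(\wP,\cdot)$ applied to the filtration by finite-length quotients produces a pseudo-compact module.
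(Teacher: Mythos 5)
Your argument is correct and follows essentially the same route as the paper: reduce modulo $\varpi$ using projectivity of $\wP$ and torsion-freeness of $M$, compute $\dim_k\Hom_{\dualcat(\OO)}(\wP,M_k)=md$ by devissage (the paper cites its Lemma \ref{mult=dim} for this), and conclude by Nakayama's lemma for the pseudo-compact module $\Hom_{\dualcat(\OO)}(\wP,M)$. Your mod-$\varpi^n$ analysis and the explicit topological Nakayama step are just an expanded version of what the paper compresses into ``The assertion follows from Nakayama's lemma.''
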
 
\begin{proof} Since $M$ is $\OO$-torsion free so is $\Hom_{\dualcat(\OO)}(\wP, M)$. 
Let $m$ be the multiplicity with which $S$ occurs as a subquotient of $M_k$. It follows from Lemma 
\ref{mult=dim} that 
$\Hom_{\dualcat(\OO)}(\wP, M)_k \cong\Hom_{\dualcat(\OO)}(\wP, M_k)$
is an $md$-dimensional $k$-vector space. Since $\Hom_{\dualcat(\OO)}(\wP, M)$ is a compact $\wE$, and hence $\OO$-module, the assertion follows from Nakayama's lemma.
\end{proof}

From now on we assume (unless it is stated otherwise) the following setup. Let $S_1, \ldots, S_n$ be irreducible 
pairwise non-isomorphic objects of $\dualcat(\OO)$ such that $\End_{\dualcat(\OO)}(S_i)$ is finite dimensional over $k$ 
for $1\le i \le n$. Let $\wP$ be a projective envelope of $S:=\oplus_{i=1}^n S_i$ and let $\wE=\End_{\dualcat(\OO)}(\wP)$. 
Recall from \S\ref{zerosec}, that $\wE$ is a compact ring and $\wE/\rad \wE \cong \prod_{i=1}^n \End_{\dualcat(\OO)}(S_i)$, 
where $\rad \wE$ is the Jacobson radical of $\wE$. Moreover, uniqueness of projective envelopes implies that 
$\wP\cong \oplus_{i=1}^n \wP_i$, where $\wP_i$ is a projective envelope of $S_i$ in $\dualcat(\OO)$.  
For $1\le i \le n$ let $\pi_i:=S^{\vee}_i$, so that $\pi_i$ is a smooth 
irreducible $k$-representation of $G$ and $\pi:=\oplus_{i=1}^n \pi_i \cong S^{\vee}$.

\begin{remar} The assumption on the finite dimensionality of $\End_{\dualcat(\OO)}(S_i)$  holds if $\pi_i$ is a subquotient of the reduction modulo $\varpi$ of admissible Banach space representations, see Lemma \ref{whyfinite?}.
\end{remar}   
 
\begin{prop}\label{admfg} Let $\Pi$ be in $\Ban^{\mathrm{adm}}_{\dualcat(\OO)}$ and let $\Theta$ be an open bounded $G$-invariant lattice in $\Pi$. Then 
$\Hom_{\dualcat(\OO)}(\wP, \Theta^d)$ is a finitely generated module over $\wE$.
\end{prop}
\begin{proof}
Let $\md= \Hom_{\dualcat(\OO)}(\wP, \Theta^d)$ and let $M\subseteq \Theta^d$ be the image of the natural map
$\md\wtimes_{\wE} \wP\rightarrow \Theta^d$. We may assume that $M\neq 0$, since otherwise $\md=0$ is finitely generated. 
We apply $\Hom_{\dualcat(\OO)}(\wP, \ast)$ to $\md\wtimes_{\wE} \wP\twoheadrightarrow M\hookrightarrow \Theta^d$ and 
use Lemma \ref{headS0} to obtain $\Hom_{\dualcat(\OO)}(\wP, M)\cong \md$.  Since $\Pi$ is admissible, $\Theta^d$ is a finitely generated $\OO[[H]]$-module, 
which implies that $((\Theta)^d)^{\vee}$ is admissible-smooth. Since the quotients of admissible representations are admissible, we deduce that 
$M^{\vee}$ is admissible-smooth. The  $G$-socle  of $M^{\vee}$ is a finite direct sum of irreducible representations, because every summand 
contributes to invariants by a pro-$p$ subgroup of $G$.
 Hence, $\Hom_G(\pi_i, M^{\vee})$ is a finite dimensional $k$-vector space, for $1\le i\le n$. 
Dually, we obtain that $\Hom_{\dualcat(\OO)}( M, S_i)$ is a finite dimensional $k$-vector space of dimension $d_i$ (say), for all $1\le i \le n$. 
Since $M$ is a quotient of $\md\wtimes_{\wE} \wP$, all the irreducible summands appearing in its cosocle are isomorphic to $S_i$ for some $1\le i\le n$. 
Hence, $\cosoc M\cong \oplus_{i=1}^n S_i^{\oplus n_i}$, with $n_i$ equal to $d_i$ divided by the dimension of $\End_{\dualcat(\OO)}(S_i)$.  We may choose 
a surjection $a: \wP^{\oplus m}\twoheadrightarrow \cosoc M$ for some integer $m$. Since $\wP$ is projective, $a$ factors through $b: \wP^{\oplus m} \rightarrow M$. 
Since $M\twoheadrightarrow \cosoc M$ is an essential epimorphism, $b$ is surjective. We apply $\Hom_{\dualcat(\OO)}(\wP, \ast)$ to $b: \wP^{\oplus m} \twoheadrightarrow M$ 
to obtain a surjection $\wE^{\oplus m} \twoheadrightarrow \md$. 
\end{proof}

\begin{prop}\label{modulequotientnew} Let $\Pi$ be in $\Ban^{\mathrm{adm}}_{\dualcat(\OO)}$ and let $\Theta$ be an open bounded
$G$-invariant lattice in $\Pi$. Suppose that $\Pi$ is irreducible and $\Theta\otimes_{\OO} k$ contains $\pi_i$ as a subquotient 
for some $i$. Let $\phi\in \Hom_{\dualcat(\OO)}(\wP, \Theta^d)$ be non-zero 
and  let
$\mathfrak a:=\{a\in \wE: \phi\circ a=0\}$. There exists an open bounded $G$-invariant 
$\OO$-lattice $\Xi$ in $\Pi$ such that $\phi(\wP)= \Xi^d$.
Moreover, 
\begin{itemize}
\item[(i)] $\Hom_{\dualcat(\OO)}(\wP, \Xi^d)\cong\wE/\mathfrak a$ as a right $\wE$-module;
\item[(ii)] $\Hom_{\dualcat(\OO)}(\wP, \Xi^d)_L$ is an irreducible right $\wE_L$-module;
\item[(iii)] the natural map $\Hom_{\dualcat(\OO)}(\wP, \Xi^d)\wtimes_{\wE} \wP\rightarrow \Xi^d$ is surjective.
\end{itemize}
\end{prop}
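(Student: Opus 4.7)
The existence of $\Xi$ will follow directly from Lemma \ref{imageofphi} applied to the non-zero morphism $\phi\colon \wP \to \Theta^d$: since $\Pi$ is irreducible and admissible, we obtain an open bounded $G$-invariant lattice $\Xi \subseteq \Pi$ with $\phi(\wP) = \Xi^d$, and henceforth we view $\phi$ as the resulting surjection $\wP \twoheadrightarrow \Xi^d$. For (i), I will consider the map $\Phi\colon \wE \to \Hom_{\dualcat(\OO)}(\wP, \Xi^d)$, $a \mapsto \phi \circ a$; its kernel is $\mathfrak{a}$ by definition and it is right $\wE$-linear. Since $\wP$ is projective and $\phi$ is surjective, every $\psi \in \Hom_{\dualcat(\OO)}(\wP, \Xi^d)$ lifts along $\phi$ to some $\tilde\psi \in \wE$, so $\Phi$ is surjective, giving the isomorphism in (i). Part (iii) is then formal: the evaluation $\Hom_{\dualcat(\OO)}(\wP, \Xi^d)\wtimes_{\wE}\wP \to \Xi^d$ precomposed with the canonical $\wP \cong \wE\wtimes_{\wE}\wP \twoheadrightarrow \Hom_{\dualcat(\OO)}(\wP, \Xi^d)\wtimes_{\wE}\wP$ recovers $\phi$, which is surjective, so the evaluation is surjective as well.

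The real content lies in (ii). The plan is to take a non-zero right $\wE$-submodule $N \subseteq \Hom_{\dualcat(\OO)}(\wP, \Xi^d)$ (it suffices to treat integral submodules since every $\wE_L$-submodule arises from one after inverting $p$), to pick $\psi \in N$ non-zero, and to apply Lemma \ref{imageofphi} once more to $\psi\colon \wP \to \Theta^d$. This produces an open bounded $G$-invariant lattice $\Xi' \subseteq \Pi$ with $\psi(\wP) = \Xi'^d$. Since any two open bounded lattices in $\Pi$ are commensurable, there exists $n \geq 0$ with $\varpi^n\Xi^d \subseteq \Xi'^d = \psi(\wP)$. Hence $\varpi^n\phi\colon \wP \to \Xi^d$ factors through $\psi(\wP)$, and the projectivity of $\wP$ supplies $a \in \wE$ with $\psi \circ a = \varpi^n\phi$. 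Thus $\varpi^n\phi = \psi\cdot a \in N$, so $\phi \in N_L$; since $\phi$ corresponds to $1 + \mathfrak{a}$ under the isomorphism of (i), it generates the entire module as a right $\wE$-module, yielding $N_L = \Hom_{\dualcat(\OO)}(\wP, \Xi^d)_L$, which is the desired irreducibility.

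The main obstacle is (ii), and more precisely the construction of the element $a \in \wE$ satisfying $\psi \circ a = \varpi^n\phi$. This reduction step packages together the two inputs that make the statement work: the commensurability of open bounded lattices in $\Pi$, which is a consequence of the irreducibility and admissibility of $\Pi$ via Lemma \ref{imageofphi}, and the projectivity of $\wP$ in $\dualcat(\OO)$. It is precisely at this point that the assumption that $\Pi$ is irreducible is essentially used, and this explains why the analogous integral statement $N = \Hom_{\dualcat(\OO)}(\wP, \Xi^d)$ fails (one only recovers $\phi$ after inverting $\varpi$).
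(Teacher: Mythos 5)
Your proposal is correct and follows essentially the same route as the paper: existence of $\Xi$ and part (i) via Lemma \ref{imageofphi} and projectivity of $\wP$ (so $\Hom_{\dualcat(\OO)}(\wP,\Xi^d)=\phi\circ\wE\cong\wE/\mathfrak a$), part (ii) by applying Lemma \ref{imageofphi} to an arbitrary non-zero $\psi$ and using commensurability of open bounded lattices together with projectivity to write $\varpi^n\phi=\psi\circ a$, and part (iii) by observing that the image of the evaluation map contains $\phi(\wP)=\Xi^d$. The only cosmetic difference is that the paper phrases the key step as $\varpi^n\psi=\phi\circ a$ with the symmetry in $\phi,\psi$ left implicit, whereas you spell out the direction actually needed for irreducibility.
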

\begin{proof} Since by assumption $\Theta\otimes_{\OO} k$ contains $\pi_i$ as a subquotient, 
Lemma \ref{pisub} implies that $\Hom_{\dualcat(\OO)}(\wP, \Theta^d)$ is non-zero.
Let $\phi,\psi\in \Hom_{\dualcat(\OO)}(\wP, \Theta^d)$ be non-zero then 
by Lemma \ref{imageofphi} there exists an open bounded $G$-invariant 
lattice $\Xi$ in $\Pi$ such that $\phi(\wP)=\Xi^d$. 
By applying $\Hom_{\dualcat(\OO)}(\wP, \ast)$ to the exact sequence 
$0\rightarrow \Ker \phi\rightarrow \wP \rightarrow \phi(\wP)\rightarrow 0$
we obtain that 
\begin{equation}\label{determineHom}
\Hom_{\dualcat(\OO)}(\wP, \phi(\wP))= \phi \wE\cong \wE/\mathfrak a.
\end{equation} 
Lemma \ref{imageofphi} implies that  $\psi(\wP)$ is 
commensurable with $\phi(\wP)$. Thus for some $n\ge 0$,  
$\varpi^n \psi(\wP)\subseteq \phi(\wP)$, and hence 
$\varpi^n \psi \in \Hom_{\dualcat(\OO)}(\wP, \phi(\wP))$. It follows from \eqref{determineHom} 
that $\varpi^n \psi= \phi \circ a$ for some $a\in \wE$. Hence,
$\Hom_{\dualcat(\OO)}(\wP, \Xi^d)\otimes_{\OO} L$ is an irreducible $\wE\otimes_{\OO} L$-module. 
The image of the natural map $\ev: \Hom_{\dualcat(\OO)}(\wP, \Xi^d)\wtimes_{\wE}\wP \rightarrow \Xi^d$ will
 contain $\phi(\wP)$, and hence $\ev$ is surjective.
\end{proof} 

\begin{prop}\label{ringsareiso} Let $\Xi$ be as in Proposition \ref{modulequotientnew}  then we have natural isomorphisms of rings:
$$ \End_{\dualcat(\OO)}(\Xi^d)\cong \End_{\wE}(\md)\cong \End_{\dualcat(\OO)}(\md\wtimes_{\wE} \wP),$$
where $\md:=\Hom_{\dualcat(\OO)}(\wP, \Xi^d)$.
\end{prop}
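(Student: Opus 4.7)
The strategy is to build two natural ring homomorphisms and show each is bijective. Set $N:=\md\wtimes_{\wE}\wP$ and write $\pi: N\twoheadrightarrow\Xi^{d}$ for the surjection of Proposition~\ref{modulequotientnew}(iii). I define
\[
\alpha: \End_{\dualcat(\OO)}(\Xi^{d})\to\End_{\wE}(\md),\quad g\mapsto\bigl(\psi\mapsto g\circ\psi\bigr),
\]
and
\[
\beta: \End_{\wE}(\md)\to\End_{\dualcat(\OO)}(N),\quad f\mapsto f\wtimes_{\wE}\id_{\wP}.
\]
Each is readily checked to be a ring homomorphism, and I will verify that both are bijective.

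The isomorphism $\beta$ will follow from a tensor--Hom adjunction: for any compact right $\wE$-module $\md'$ and any $N'$ in $\dualcat(\OO)$,
\[
\Hom_{\dualcat(\OO)}(\md'\wtimes_{\wE}\wP,N')\cong\Hom_{\wE}\bigl(\md',\Hom_{\dualcat(\OO)}(\wP,N')\bigr),
\]
natural in both arguments. The plan to prove this is to present $\md'$ as a cokernel of a morphism $\prod_{I}\wE\to\prod_{J}\wE$ and reduce to the case $\md'=\prod_{I}\wE$, where the statement is tautological; here I exploit that $-\wtimes_{\wE}\wP$ is right exact and commutes with products (as in the proof of Lemma~\ref{headS0}) and that $\Hom_{\dualcat(\OO)}(\wP,-)$ is left exact and commutes with products. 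Specializing $\md'=\md$ and $N'=N$ and invoking Lemma~\ref{headS0} on the right identifies the adjunction with $\End_{\dualcat(\OO)}(N)\cong\End_{\wE}(\md)$; tracing the construction shows this is the inverse of $\beta$.

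For $\alpha$, injectivity is immediate: if $g\circ\psi=0$ for all $\psi\in\md$, take $\psi=\phi$ (the surjection $\wP\twoheadrightarrow\Xi^{d}$ of Proposition~\ref{modulequotientnew}) to force $g=0$. For surjectivity I use $\md\cong\wE/\mathfrak{a}$ from Proposition~\ref{modulequotientnew}(i), which identifies
\[
\End_{\wE}(\md)\cong\{x\in\wE:x\mathfrak{a}\subseteq\mathfrak{a}\}/\mathfrak{a},
\]
together with projectivity of $\wP$: lifting endomorphisms of $\Xi^{d}=\wP/\Ker\phi$ to $\wE$ and noting that two lifts differ by an element of $\mathfrak{a}$ yields
\[
\End_{\dualcat(\OO)}(\Xi^{d})\cong\{x\in\wE:x(\Ker\phi)\subseteq\Ker\phi\}/\mathfrak{a}.
\]
Under these descriptions, $\alpha$ is the map induced by the inclusion $\{x:x(\Ker\phi)\subseteq\Ker\phi\}\subseteq\{x:x\mathfrak{a}\subseteq\mathfrak{a}\}$, which itself comes from $\overline{\mathfrak{a}\wP}\subseteq\Ker\phi$. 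Surjectivity of $\alpha$ is therefore equivalent to the surjection $\pi: N=\wP/\overline{\mathfrak{a}\wP}\twoheadrightarrow\wP/\Ker\phi=\Xi^{d}$ being an isomorphism.

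The hard part is to prove $\Ker\pi=0$. Applying $\Hom_{\dualcat(\OO)}(\wP,-)$ to $0\to\Ker\pi\to N\to\Xi^{d}\to 0$, using projectivity of $\wP$ and Lemma~\ref{headS0}, gives an exact sequence $0\to\Hom_{\dualcat(\OO)}(\wP,\Ker\pi)\to\md\to\md$, and a direct verification shows the right-hand map is the identity; hence $\Hom_{\dualcat(\OO)}(\wP,\Ker\pi)=0$, so no $S_{i}$ appears as a subquotient of $\Ker\pi$. To upgrade this to $\Ker\pi=0$ I plan to exploit the irreducibility of $\md_{L}$ as $\wE_{L}$-module from Proposition~\ref{modulequotientnew}(ii): since $N$ is the cyclic quotient $\wP/\overline{\mathfrak{a}\wP}$, a nonzero subobject of $N$ would, via Lemma~\ref{headS0} applied to truncations and the cyclic structure, produce a nonzero $\wE_{L}$-submodule of $\md_{L}$ distinct from all of $\md_{L}$, contradicting irreducibility, once one uses that $\Xi^{d}$ is $\OO$-torsion free (being the Schikhof dual of an open lattice) to rule out the $\varpi$-torsion case.
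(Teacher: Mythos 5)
Your proposal diverges from the paper's proof at the crucial step, and the divergence introduces a genuine gap. The paper does \emph{not} prove that the evaluation map $\pi\colon N=\md\wtimes_{\wE}\wP\twoheadrightarrow\Xi^{d}$ is an isomorphism, and in the generality of \S\ref{banach} it need not be. What the paper proves instead is the weaker (and sufficient) statement $\Hom_{\dualcat(\OO)}(K,\Xi^{d})=0$, where $K=\Ker\pi$. This is what guarantees that every $\phi\in\End_{\dualcat(\OO)}(N)$ maps $K$ into $K$ and therefore descends to an endomorphism of $\Xi^{d}$, giving a third map $\End_{\dualcat(\OO)}(N)\to\End_{\dualcat(\OO)}(\Xi^{d})$; one then observes that the composition of any three consecutive arrows in the resulting circular chain is the identity, which forces all three to be isomorphisms. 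The vanishing $\Hom_{\dualcat(\OO)}(K,\Xi^{d})=0$ is not formal: it uses Lemma~\ref{imageofphi} (a consequence of Schneider--Teitelbaum's anti-equivalence for admissible Banach space representations, together with irreducibility of $\Pi$) to conclude that any nonzero $K\to\Xi^{d}$ would have image commensurable with $\Xi^{d}$, whence $\Hom_{\dualcat(\OO)}(\wP,\phi(K))\neq 0$ and, by projectivity of $\wP$, $\Hom_{\dualcat(\OO)}(\wP,K)\neq 0$, a contradiction.

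Your plan to close the gap --- arguing that irreducibility of $\md_{L}$ as $\wE_{L}$-module plus $\OO$-torsion-freeness of $\Xi^{d}$ forces $\Ker\pi=0$ --- does not work as stated. The correct and only conclusion you can extract from $\Hom_{\dualcat(\OO)}(\wP,\Ker\pi)=0$ is that none of the $S_{i}$ occur as subquotients of $\Ker\pi$; but $\dualcat(\OO)$ in this general setup can contain irreducible objects not isomorphic to any $S_{i}$, and such a $\Ker\pi$ would be entirely invisible to the functor $\Hom_{\dualcat(\OO)}(\wP,-)$ and hence to $\md_{L}$. Irreducibility of $\md_{L}$ gives no constraint on those hidden subquotients. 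You need to argue on the \emph{target} side, not the source side: the key input is that $\Xi^{d}$ itself has no subobjects invisible to $\wP$, which is what Lemma~\ref{imageofphi} supplies. Your first two maps $\alpha,\beta$ and the injectivity of $\alpha$ are fine (and the tensor--Hom adjunction you set up for $\beta$ is essentially a repackaging of Lemma~\ref{headS0}), but the surjectivity of $\alpha$ cannot be reduced to $\Ker\pi=0$; it must be proved through $\Hom_{\dualcat(\OO)}(K,\Xi^{d})=0$.
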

\begin{proof} We note that $\md$ is a compact right $\wE$-module and $\End_{\wE}(\md)$ denotes continuous $\wE$-linear 
endomorphisms of $\md$. We have natural maps 
$$ \End_{\dualcat(\OO)}(\Xi^d)\rightarrow \End_{\wE}(\md)\rightarrow\End_{\dualcat(\OO)}(\md\wtimes_{\wE} \wP),$$
where the first one sends $\phi$ to $\psi\mapsto \phi\circ \psi$, the second one sends 
$\phi$ to $\psi\wtimes v \mapsto \phi(\psi)\wtimes v$. The natural 
map $\ev: \md\wtimes_{\wE}\wP \rightarrow \Xi^d$ is surjective by Proposition \ref{modulequotientnew} (iii), let $K$ be its kernel.  By applying $\Hom_{\dualcat(\OO)}(\wP, \ast)$
to the exact sequence $0\rightarrow K\rightarrow   \md\wtimes_{\wE}\wP \rightarrow \Xi^d\rightarrow 0$ and 
using  Lemma \ref{headS0} we deduce that $\Hom_{\dualcat(\OO)}(\wP, K)=0$.

We claim that $\Hom_{\dualcat(\OO)}(K, \Xi^d)=0$. Suppose we have a non-zero morphism 
$\phi: K\rightarrow \Xi^d$ in $\dualcat(\OO)$. It follows from Lemma \ref{imageofphi} that 
$\phi(K)$ contains $\varpi^n \Xi^d$ for some $n\ge 1$. This implies that 
$\Hom_{\dualcat(\OO)}(\wP, \phi(K))\neq 0$. Since $\wP$ is projective we get  
$\Hom_{\dualcat(\OO)}(\wP, K)\neq 0$, which is a contradiction. The claim implies that 
every $\phi\in \End_{\dualcat(\OO)}(\md\wtimes_{\wE} \wP)$ maps $K$ to itself. Hence we
obtain a well defined map $\End_{\dualcat(\OO)}(\md\wtimes_{\wE} \wP)\rightarrow \End_{\dualcat(\OO)}(\Xi^d)$, which sends $\phi$ to $ \psi\wtimes v+ K \mapsto \phi(\psi\wtimes v)+ K$. The composition of any three
consecutive arrows is the identity map, hence all the maps are isomorphisms.
\end{proof}  

\begin{prop}\label{longproof} Let $\Pi \in \Ban^{\mathrm{adm}}_{\dualcat(\OO)}$ be irreducible and let $\Theta$ be an open bounded
$G$-invariant lattice in $\Pi$. Suppose that $\Theta\otimes_{\OO} k$ contains $\pi_i$ as a subquotient 
for some $i$.  If the centre $\mathcal Z$ of $\wE$ is noetherian and $\wE$ is a finitely generated
 $\mathcal Z$-module then $\Hom_{\dualcat(\OO)}(\wP, \Theta^d)_L$ is finite dimensional over $L$.
\end{prop} 
\begin{proof} Let $\Xi$ be an open bounded $G$-invariant lattice constructed in Proposition \ref{modulequotientnew}. Since $\Theta$ and $\Xi$ are commensurable, 
$\Theta\otimes_{\OO} k$ and $\Xi\otimes_\OO k$ have the same irreducible subquotients by Lemma \ref{commen} and $\Hom_{\dualcat(\OO)}(\wP, \Theta^d)_L\cong \Hom_{\dualcat(\OO)}(\wP, \Xi^d)_L$ as $\wE[1/p]$-modules.

Since $\Pi$ is admissible and irreducible it follows from \cite[Thm.3.5]{iw} that
 the ring $D:=\End^{cont}_{L[G]}(\Pi)$ is a skew field. Since $\Xi$ is an open bounded $G$-invariant
lattice  in $\Pi$, \cite[Prop.3.1]{nfa} implies that $\End_{\OO[G]}(\Xi)$ is an $\OO$-order in $D$. It 
follows from the anti-equivalence of categories established in \cite[Thm 3.5]{iw} that sending $f$ to its Schikhof dual $f^d$ induces an isomorphism  
$B:=\End_{\dualcat(\OO)}(\Xi^d)\cong \End_{\OO[G]}(\Xi)^{op}$ and $B[1/p]\cong D^{op}$. Hence $B[1/p]$ is a skew field and 
since $\Xi$ is $\OO$-torsion free so is 
$B$ and we have an injection $B\hookrightarrow B[1/p]$. 

Let $R$ be the centre of $B$. Since $R$  is contained in a skew field $B[1/p]$ it is an integral domain
and $B[1/p]$ contains the quotient field $K$ of $R$. Let $s \in K\cap B$ be non-zero, then we may 
find non-zero $a, b\in R$ such that $ as =b$. For all $t\in B$ we have 
$ (st-ts) a = b t-tb =0$ as $a$ and $b$ are central. Since $B$ is contained in a skew field we deduce that 
$ st=ts$ for all $t$ and hence $B\cap K= R$. Since $K$ is contained in $B[1/p]$ we deduce that for 
every $x\in K$ there exists $n\ge 0$ such that $p^n x\in R$ and so $K=R[1/p]$. 

Without loss of generality we may assume that $n=1$, so that $S$ is irreducible. This may be seen as follows: since $\wP\cong \oplus_{i=1}^n \wP_i$ and 
thus $\Hom_{\dualcat(\OO)}(\wP_i, \Xi^d)\neq 0$ for some $i$, and if $e\in \wE$ denotes the idempotent such that
$e \wP= \wP_i$, then $\End_{\dualcat(\OO)}(\wP_i)= e \wE e$ is a finitely generated $e \mathcal Z e$-module and $e \mathcal Z e$ is 
contained in the centre of $e\wE e$.  Moreover, since $\mathcal Z$ is noetherian so is $e \mathcal Z e$ and this implies that the centre of 
$e \wE e$ is noetherian.

Let $\md:=\Hom_{\dualcat(\OO)}(\wP, \Xi^d)$. We have a natural map $\mathcal Z\rightarrow 
\End_{\wE}(\md)$, which sends  $z$ to $\psi\mapsto \psi\circ z$. Let $\phi$ and $\mathfrak a$ 
be as in Proposition \ref{modulequotientnew}. It follows from Proposition \ref{modulequotientnew} (i) that 
for every $\alpha\in \End_{\wE}(\md)$ there exists $\beta\in \wE$ such that 
$\alpha(\phi)=\phi\circ \beta$ and the map $\alpha\mapsto \beta+\mathfrak a$ is 
an injection of $\mathcal Z$-modules $\End_{\wE}(\md)\hookrightarrow \wE/\mathfrak a$.
Since by assumption $\wE$ is finitely generated over $\mathcal Z$ and $\mathcal Z$ is noetherian we deduce 
that $\End_{\wE}(\md)$ is finitely generated over $\mathcal Z$. The image of $\mathcal Z$ in $\End_{\wE}(\md)$ is contained in the centre. 
We identify $\End_{\wE}(\md)$ with $B$ using 
Proposition \ref{ringsareiso}. Then the image of $\mathcal Z$ in $B$ is contained in $R$, hence $R$ is a 
$\mathcal Z$-submodule of a finitely generated $\mathcal Z$-module $B$. We deduce that  $R$
is a noetherian ring.

Since $R[1/p]$ is a field and $R$ is a noetherian integral domain, Theorem 146 in \cite{kapa} implies that $R/pR$ is artinian. 
Hence, $R/pR\cong \prod_{i=1}^n (A_i, \nn_i)$, where $(A_i, \nn_i)$ are artinian local rings.   
Let $\bar{\mathcal Z}_i$ be the image of $\mathcal Z$ in $A_i/\nn_i$ via $R/pR\rightarrow A_i\rightarrow A_i/\nn_i$. Since $R$ is a 
finitely generated  
$\mathcal Z$-module, $A_i/\nn_i$ is a finitely generated $\bar{\mathcal Z}_i$-module. Since $A/\nn_i$ is a field we deduce that $\bar{\mathcal Z}_i$ 
is a field. Since $\mathcal Z$ is a local ring with residue field a finite extension of $k$,  Corollary \ref{Zloc}, we deduce that 
$\bar{\mathcal Z}_i$ and hence $A_i/\nn_i$ is a finite extension of $k$. Since $A_i$ is an artinian local ring, $A_i$ is an $A_i$-module 
of finite length with irreducible subquotients isomorphic to $A_i/\nn_i$. Hence $A_i$ is a finitely generated $\OO$-module and so $R/pR$ 
is a finitely generated $\OO$-module. As $\Xi^d$ is $p$-adically 
complete, so is $B$ and hence so is $R$. Thus $R$ is a finitely generated $\OO$-module. Since by assumption
$\wE$ is a finitely generated $\mathcal Z$-module we deduce from Proposition \ref{modulequotientnew} that 
$\md$ is a finitely generated $\mathcal Z$-module and hence $\md $ is a finitely generated $R$-module and 
so a finitely generated $\OO$-module. Thus $\md\otimes_{\OO} L$ is finite dimensional over $L$. 
\end{proof}

\begin{cor}\label{BCBan} Let $\Pi$ be an irreducible admissible unitary $L$-Banach space representation of $G$, let $\Theta$  be an open bounded $G$-invariant lattice in $\Pi$.
If the conditions  of Proposition \ref{longproof} are satisfied then there exists a finite extension $L'$ of $L$ such that 
$\Pi_{L'}$ is isomorphic to a finite direct sum of absolutely irreducible unitary $L'$-representations. 
\end{cor}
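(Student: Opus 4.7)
The plan is to exploit the fact, established in the proof of Proposition \ref{longproof}, that $D := \End^{cont}_{L[G]}(\Pi)$ is a \emph{finite dimensional} skew field over $L$, and then to decompose $\Pi_{L'}$ by Galois descent after base change to a splitting field of $D$. Irreducibility and admissibility of $\Pi$ already imply (via \cite[Thm 3.5]{iw}) that $D$ is a skew field. From the proof of Proposition \ref{longproof} we have $D^{op}\cong B[1/p]$ with $B\cong \End_{\wE}(\md)$ by Proposition \ref{ringsareiso}, where $\md := \Hom_{\dualcat(\OO)}(\wP,\Xi^d)$ is shown there to be a finitely generated $\OO$-module. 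Since $\md$ is moreover $\OO$-torsion free and $B$ embeds into $\End_\OO(\md)$, $B$ is finitely generated over $\OO$, and hence $D$ has finite $L$-dimension.

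Let $K$ be the centre of $D$; then $K/L$ is a finite extension and $D$ is a central division algebra over $K$. Choose a finite Galois extension $L'/L$ containing $K$ together with a maximal subfield of $D$, so that $L'$ splits $D/K$. Then
\begin{equation*}
D\otimes_L L' \;\cong\; D\otimes_K (K\otimes_L L') \;\cong\; D\otimes_K (L')^{[K:L]} \;\cong\; M_n(L')^{[K:L]},
\end{equation*}
where $n^2=\dim_K D$.

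The third step is to decompose $\Pi_{L'}$ along the lines of the proof of Lemma \ref{eirrb}. The base change $\Pi_{L'}$ is admissible, so any descending chain of closed $G$-invariant subspaces stabilises and $\Pi_{L'}$ contains an irreducible admissible closed $G$-invariant subspace $\Sigma$. Letting $\Gamma := \Gal(L'/L)$ act by $G$-equivariant $L$-linear isometries $r_\gamma$, the image $\Upsilon$ of $\bigoplus_{\gamma\in\Gamma} r_\gamma(\Sigma)\to \Pi_{L'}$ is a closed $\Gamma$- and $G$-stable subspace; linear independence of characters gives $\Upsilon^\Gamma\neq 0$, and since $\Upsilon^\Gamma\subseteq \Pi_{L'}^\Gamma=\Pi$ is irreducible over $L$, we obtain $\Upsilon^\Gamma=\Pi$ and hence $\Upsilon=\Pi_{L'}$. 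The semisimplicity argument invoked in the proof of Lemma \ref{eirrb} (every quotient of a finite direct sum of admissible irreducibles is semisimple) then yields a finite decomposition $\Pi_{L'}\cong \bigoplus_{i=1}^r \Pi_i$ into admissible irreducible unitary $L'$-Banach representations.

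Finally, grouping summands by isomorphism class, write $\Pi_{L'}\cong \bigoplus_j \Pi_{I_j}^{\oplus m_j}$ with $\Pi_{I_j}$ pairwise non-isomorphic and $D_j := \End^{cont}_{L'[G]}(\Pi_{I_j})$ a skew field; then
\begin{equation*}
\prod_j M_{m_j}(D_j)\;\cong\;\End^{cont}_{L'[G]}(\Pi_{L'})\;\cong\;D\otimes_L L'\;\cong\;M_n(L')^{[K:L]}.
\end{equation*}
Wedderburn's theorem forces $D_j = L'$ for every $j$, and Lemma \ref{eirrb} then shows every $\Pi_{I_j}$ is absolutely irreducible, proving the corollary. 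The main obstacle is obtaining semisimplicity of $\Pi_{L'}$ together with the identification $\End^{cont}_{L'[G]}(\Pi_{L'}) \cong D \otimes_L L'$ (needed so that the algebra structure controls the Banach decomposition); both are available essentially verbatim from the proof of Lemma \ref{eirrb}, after which the Wedderburn matching is routine.
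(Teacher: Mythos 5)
Your proof is correct, and it reaches the conclusion by a route that differs from the paper's in a meaningful way. Both proofs start identically: finite-dimensionality of $D:=\End^{cont}_{L[G]}(\Pi)$ from Propositions \ref{modulequotientnew}, \ref{ringsareiso}, \ref{longproof}, then choice of a finite Galois extension $L'/L$ splitting $D$, and the base-change identification $\End^{cont}_{L'[G]}(\Pi_{L'})\cong D\otimes_L L'$ (which is a product of matrix algebras over $L'$, a point you state more precisely than the paper, which simply calls it ``a matrix algebra''). From here the paper decomposes $\Pi_{L'}$ via a complete set of primitive orthogonal idempotents $e_i\in D_{L'}$, setting $\Pi_i := e_i(\Pi_{L'})$, and then argues each $\Pi_i$ is irreducible by contradiction: a proper closed $G$-invariant subspace of $\Pi_i$ would give, after summing Galois translates, a proper $\Gamma$-stable closed $G$-invariant subspace of $\Pi_{L'}$, contradicting irreducibility of $\Pi$. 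You instead establish semisimplicity of $\Pi_{L'}$ first --- by showing $\Pi_{L'}$ is a quotient of the finite direct sum $\bigoplus_\gamma r_\gamma(\Sigma)$ of irreducibles, hence semisimple by the argument already present in the proof of Lemma \ref{eirrb} --- and only then invoke Wedderburn uniqueness to match $\prod_j M_{m_j}(D_j)$ against $D\otimes_L L'$ and force $D_j=L'$, after which Lemma \ref{eirrb} gives absolute irreducibility. Your organization buys a bit of rigor: it makes the semisimplicity of $\Pi_{L'}$ explicit and avoids the slightly delicate step in the paper of checking that the sum of Galois translates of a proper subspace of a single $\Pi_i$ is again proper (the Galois action on the set of primitive idempotents need not be free, so several translates can land in the same $\Pi_j$); the paper's idempotent route is shorter but leans on that step. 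Both are valid; yours is arguably the cleaner write-up of the same circle of ideas.
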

\begin{proof} It follows from Propositions \ref{modulequotientnew}, \ref{ringsareiso} and \ref{longproof} that 
$\End^{cont}_G(\Pi)$ is a skew field, finite dimensional over $L$. Let $L'$ be a finite Galois extension of $L$ splitting 
$\End^{cont}_G(\Pi)$. Then $\End^{cont}_G(\Pi_{L'})\cong \End^{cont}_{G}(\Pi)_{L'}$ is a matrix algebra over $L'$. 
Let $\{e_i\}_{1\le i\le n}$ be the complete set of orthogonal idempotents and let $\Pi_i:= e_i(\Pi_{L'})$. Then 
$\Pi_{L'}\cong \oplus_{i=1}^n \Pi_i$ and $\End^{cont}_G(\Pi_i)= e_i \End^{cont}_G(\Pi_{L'}) e_i = L'$. 
It follows from the proof of Lemma \ref{eirrb} that, after possibly enlarging $L'$, we may assume that $\Pi_i$  is semi-simple.
Since $\End^{cont}_G(\Pi_i)\cong L'$, we deduce that $\Pi_i$ is irreducible and hence absolutely irreducible by Lemma \ref{eirrb}.
\end{proof}

We equip every  finitely generated $\OO$-module (resp. every finite dimensional $L$-vector space) with the $p$-adic topology.

\begin{lem}\label{toponmd2} If $\wE$ is right noetherian then any $\OO$-linear right action of $\wE$ on a finite dimensional 
$L$-vector space is continuous. 
\end{lem}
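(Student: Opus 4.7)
The plan is to reduce Lemma \ref{toponmd2} to the preceding Lemma \ref{toponmd}. Specifically, I would construct a $\wE$-stable $\OO$-lattice $\md$ inside $V$; once such a $\md$ exists, Lemma \ref{toponmd} gives continuity of the action on $\md$, and since $V=L\otimes_{\OO}\md$ with its canonical topology is the colimit of the natural topologies on $\varpi^{-n}\md$, continuity of the action on $V$ follows by extension of scalars.

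To construct $\md$, fix an $L$-basis $v_1,\ldots,v_n$ of $V$ and set $\md:=\sum_{i=1}^n v_i\wE\subset V$. Since $\wE$ is right noetherian, $\md$ is a finitely generated right $\wE$-module; as a submodule of the $L$-vector space $V$ it is $\OO$-torsion free, it contains the standard $\OO$-lattice $\sum_i v_i\OO$, and clearly $\md\otimes_{\OO}L=V$. The central task is therefore to upgrade this to finite generation over $\OO$, or equivalently to show that the image $R$ of $\wE$ in $\End_L(V)$ under the action map is a finitely generated $\OO$-module, i.e.\ an $\OO$-order in the finite-dimensional $L$-subalgebra $R\otimes_{\OO}L\subseteq\End_L(V)$.

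For this I would exploit that $\wE$ is compact, being pseudo-compact with finite residue ring $\wE/\wm$. The key point is that every element $e\in\wm$ satisfies $e^n\in\wm^n\to 0$, and by passing through the finite quotients $\wE/(\wm^n+\ker\phi)$, where $\phi:\wE\to\End_L(V)$ is the action map, one forces the image $\phi(e)$ to be topologically nilpotent in $\End_L(V)$; Cayley--Hamilton then makes $\OO[\phi(e)]$ a finitely generated $\OO$-submodule of $\End_L(V)$. Combined with noetherianness of $\wE$ (so that $\wm$ is finitely generated, and hence $\wE$ is topologically generated over $\OO$ by finitely many elements together with a single generator $\wm$ modulo $\wm^2$ via Nakayama), this forces $R$ to be bounded in $\End_L(V)$, and thus $\OO$-finitely generated. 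The main obstacle is making rigorous the step from ``$e^n\to 0$ in $\wE$'' to boundedness of $\phi(e)$ in $\End_L(V)$ without presupposing the very continuity we wish to conclude: the resolution lies in working with the finite quotients $\wE/(\wm^n+\ker\phi)$, whose images inside $\End_L(V)$ are genuinely $\OO$-modules of finite length and therefore bounded, with bounds compatible in $n$. Once $\md$ is identified as an $\OO$-lattice of rank $n=\dim_L V$, Lemma \ref{toponmd} completes the proof.
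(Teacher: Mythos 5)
Your overall frame is right — set $\md:=v_1\wE+\dots+v_n\wE$, show $\md$ is a finitely generated $\OO$-module, and then invoke Lemma \ref{toponmd} to get an open bounded $\wE$-stable lattice and hence continuity on $V$ — but the step where you establish $\OO$-finiteness of $\md$ (equivalently, that the image $R$ of $\wE$ in $\End_L(V)$ is an $\OO$-order) has a genuine gap. The argument that $\phi(e)$ is topologically nilpotent for $e\in\wm$ transfers topological information ($e^m\in\wm^m\to 0$) through the map $\phi$, whose continuity is exactly what is in question, and the proposed fix does not repair this: the rings $\wE/(\wm^m+\Ker\phi)$ are \emph{quotients} of $R$, not $\OO$-submodules of finite length sitting inside $\End_L(V)$, and finiteness of a quotient of $R$ gives no bound whatsoever on the elements of $R$ itself (compare $R=L\subset L$, all of whose proper quotients vanish). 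Moreover, even if one knew each $\phi(e)$, $e\in\wm$, were power-bounded, in a noncommutative subring of $\End_L(V)$ sums and products of power-bounded elements need not be power-bounded, and the reduction to finitely many topological generators of $\wE$ again presupposes continuity of $\phi$ (the image of a limit of polynomial expressions in the generators is not the limit of the images). So the central difficulty — passing from an abstract $\OO$-linear action to a boundedness statement — is not resolved.

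The paper closes exactly this gap by a different mechanism which avoids any statement about individual elements: since $\wE$ is right noetherian, the kernel of the surjection $\wE^{\oplus n}\twoheadrightarrow \md$, $(a_i)\mapsto \sum v_i a_i$, is a finitely generated right $\wE$-module, and finitely generated submodules of the compact module $\wE^{\oplus n}$ are closed (this is the same result of Gabriel already used in Lemma \ref{madictop}). Hence the quotient topology on $\md$ is Hausdorff, so $\md$ is a compact $\OO$-torsion free module with continuous $\OO$-action, whence $\md\cong\prod_{i\in I}\OO$; since $\md$ embeds into the finite-dimensional $L$-vector space $V$, the index set $I$ is finite and $\md$ is a finitely generated $\OO$-module. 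From there your concluding step is the same as the paper's: by Lemma \ref{toponmd} the quotient topology coincides with the canonical one, $\md$ is an open bounded $\OO$-lattice in $V$, and the action of $\wE$ on $V$ is continuous. If you want to salvage your order-theoretic route, you would need an argument for boundedness of $R$ that never uses the topology of $\wE$ on the source side; as written, the proposal does not provide one.
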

\begin{proof} Let $\md_L$ be a finite dimensional $L$-vector space with an $\OO$-linear right $\wE$-action. 
Choose a basis $\{v_1, \ldots, v_n\}$ of $\md_L$ and let $\md:= v_1 \wE+\ldots+v_n \wE$. Since $\wE$ is right noetherian 
the kernel of $\wE^{\oplus n}\twoheadrightarrow \md$ is finitely generated as a right $\wE$-module and, since $\wE$ is compact, the
kernel is a closed submodule of $\wE^{\oplus n}$. Thus the quotient topology on $\md$ is Hausdorff and it has a system of open neighborhoods of $0$ 
consisting of $\wE$-modules, and in particular of $\OO$-modules. The action of $\OO$ on $\md$ via 
$\OO\rightarrow \wE$ on $\md$ is continuous for the quotient topology. Any compact linear-topological $\OO$-torsion free $\OO$-module is isomorphic to
$\prod_{i\in  I} \OO$ for some 
set $I$, see Remark 1.1 in \cite{iw}. Since $\md$ is contained in a finite dimensional $L$-vector space  we deduce that 
$\md$ is an $\OO$-module of finite rank. Thus, for each $n\ge 1$ the quotient topology on $\md/\varpi^n \md$ is discrete, as it is Hausdorff and the underlying set 
is finite. In particular, the sets $\varpi^n \md$ are open in the quotient topology on $\md$, for all $n\ge 0$. Since $\md$ is $\varpi$-adically complete, we deduce that 
the sets $\varpi^n \md$ for $n\ge 0$ build a system of open neighborhoods of $0$ in the quotient topology on $\md$. In particular, the quotient topology and the $p$-adic topology on $\md$ coincide. 
Let $n\ge 0$, and let $v\in \md_L$. The same argument as above shows that $(v\wE  +\varpi^n \md)/\varpi^n  \md$ with the discrete topology is a topological $\wE$-module. This implies that the set 
$\mathfrak a(v, n):= \{a\in \wE: va\in \varpi^n \md\}$ is open in $\wE$. Let $U$ be the preimage in $\wE\times \md_L$ of $v+ \varpi^n \md$. If  $(a, w)\in U$, then 
$(a + \mathfrak a(w, n), w +\varpi^n \md)$ is  open in $\wE\times \md_L$, and is a subset of $U$ containing $(a, w)$. In particular, $U$ is open and hence the action of $\wE$ on $\md_L$ is continuous. 
\end{proof}

\begin{prop}\label{moregen} 
Let $\md$ be a compact right $\wE$-module, free of finite rank  over $\OO$. Assume that $(\wE/\rad \wE) \wtimes_{\wE} \wP$ is of finite length
in $\dualcat(\OO)$ and is a finitely generated $\OO[[H]]$-module, where $\rad \wE$ is the Jacobson radical of $\wE$. Then $\md\wtimes_{\wE}\wP$ is finitely generated over $\OO[[H]]$ and 
 $(\md \wtimes_{\wE} \wP)\otimes_\OO k$ is of finite length in $\dualcat(\OO)$. 
\end{prop}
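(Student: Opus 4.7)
The plan is to first reduce everything modulo $\varpi$, use the hypothesis to handle the reduction by a composition-series argument, and then lift back to $\wP$ by topological Nakayama.

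First I would observe that since $\md$ is free of finite rank over $\OO$ and $\OO$ maps to the centre of $\wE$ (multiplication by $\varpi$ is a central endomorphism in $\dualcat(\OO)$), Lemma \ref{toponmd} equips $\md$ with its canonical topology, making it a compact right $\wE$-module. In particular, $\md \wtimes_{\wE} \wP$ is defined and lies in $\dualcat(\OO)$, and by Corollary \ref{flatmodule} it is $\OO$-torsion free. Set $E := \wE/\varpi \wE$ and $P := \wP/\varpi \wP$.

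Next, I would reduce modulo $\varpi$. Using that $\wP$ is topologically free over $\wE$ (Corollary \ref{topfree}) and that $\md$ is a finitely presented $\wE$-module, the natural map gives an isomorphism
\begin{equation*}
(\md \wtimes_{\wE} \wP) \otimes_{\OO} k \cong \md_k \wtimes_E P,
\end{equation*}
where $\md_k := \md \otimes_{\OO} k$ is finite as a set. As a right $E$-module $\md_k$ therefore admits a finite composition series, and each simple subquotient is annihilated by $\rad E$, hence is a module (in fact a direct summand) of $E/\rad E \cong \prod_{i=1}^n k_i$. By the right-exactness and direct-product-preservation of $\wtimes_E P$, each such simple subquotient tensored with $P$ is a direct summand of $(E/\rad E) \wtimes_E P = (\wE/\rad \wE) \wtimes_{\wE} \wP$, which by hypothesis is of finite length in $\dualcat(\OO)$ and finitely generated over $\OO[[H]]$. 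Since the classes of ``finite length objects'' and ``finitely generated $\OO[[H]]$-modules'' are each closed under subquotients and extensions, an induction on the length of the composition series of $\md_k$ shows that $\md_k \wtimes_E P$ is of finite length in $\dualcat(k) \subseteq \dualcat(\OO)$ and finitely generated over $k[[H]]$.

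Finally, I would lift finite generation from the reduction back to $\md \wtimes_{\wE} \wP$ by topological Nakayama. Being an object of $\dualcat(\OO)$, $N := \md \wtimes_{\wE} \wP$ is a compact (hence $\mathfrak{m}$-adically complete) module over $\OO[[H]]$, where $\mathfrak{m}$ is the maximal ideal of $\OO[[H]]$. Since $\varpi \in \mathfrak{m}$, the quotient $N/\mathfrak{m}N$ is a quotient of $N/\varpi N \cong \md_k \wtimes_E P$, which was just shown to be finitely generated over $k[[H]]$; reducing further modulo the augmentation ideal of $k[[H]]$ yields a finite-dimensional $k$-vector space. Lifting a finite set of generators of $N/\mathfrak{m}N$ to elements of $N$ and invoking the standard topological Nakayama lemma for compact modules over the complete noetherian local ring $\OO[[H]]$ gives that these elements generate $N$ as an $\OO[[H]]$-module, completing the proof.

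There is no serious obstacle; the only point requiring a bit of care is checking that one may legitimately pass from a continuous action of $\wE$ on $\md$ to the identification $(\md \wtimes_{\wE} \wP) \otimes_{\OO} k \cong \md_k \wtimes_E P$, but this follows from the topological freeness of $\wP$ (Corollary \ref{topfree}) and the fact that $\md$, being of finite $\OO$-length modulo any power of $\varpi$, is finitely presented as a right $\wE$-module so that $\wtimes_{\wE}$ agrees with $\otimes_{\wE}$ on it.
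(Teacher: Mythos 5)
Your proof follows essentially the same route as the paper's: reduce modulo $\varpi$ via right-exactness, handle the reduction by a composition-series argument using the hypothesis on $(\wE/\rad\wE)\wtimes_{\wE}\wP$, and lift finite generation via topological Nakayama for compact $\OO[[H]]$-modules. One caveat: Corollaries \ref{topfree} and \ref{flatmodule} are proved only under hypotheses (H1)--(H5) of \S\ref{firstsec}, which are not part of the standing assumptions of \S\ref{banach} where this proposition lives, so the appeals to topological freeness and to $\OO$-torsion-freeness of $\md\wtimes_{\wE}\wP$ are not justified here (the latter is an unused aside anyway). The identification $(\md\wtimes_{\wE}\wP)\otimes_{\OO}k\cong \md_k\wtimes_{\wE}\wP$ needs no more than right-exactness of $\wtimes_{\wE}\wP$ applied to $\md\overset{\varpi}{\rightarrow}\md\rightarrow\md_k\rightarrow 0$, which is exactly what the paper uses and is available in this generality; with that substitution your argument is sound.
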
 
\begin{proof} 
Let $\mathrm n$ be a finite dimensional $k$-vector space with a continuous $\wE$-action. If $\mathrm n$ is an irreducible 
$\wE$-module then it is killed by $\rad \wE$, and hence 
$$ \mathrm n \wtimes_{\wE} \wP \cong \mathrm n \wtimes_{\wE/\rad \wE} ((\wE/\rad \wE) \wtimes_{\wE} \wP).$$ 
Thus it follows from our assumptions that $ \mathrm n \wtimes_{\wE} \wP$ is of finite length in $\dualcat(\OO)$ and 
is a finitely generated $\OO[[H]]$-module. In general, arguing inductively on the dimension of $\mathrm n$ we deduce 
that $\mathrm n \wtimes_{\wE} \wP$ is of finite length in $\dualcat(\OO)$ and 
is a finitely generated $\OO[[H]]$-module. Applying $\wtimes_{\wE} \wP$ to the exact sequence  
$\md\overset{\varpi}{\rightarrow} \md \rightarrow \md_k \rightarrow 0$ we get 
$$(\md\wtimes_{\wE} \wP)\otimes_{\OO} k \cong \md_k \wtimes_{\wE} \wP.$$ Nakayama's lemma for compact $\OO[[H]]$-modules 
implies that $\md\wtimes_{\wE} \wP$ is a finitely generated $\OO[[H]]$-module, see \cite[Cor 1.5]{bru}.
\end{proof}

\begin{remar} If $S$ is an irreducible object in $\dualcat(\OO)$ and $\kappa: \wP\twoheadrightarrow S$ is its projective envelope, then 
$\rad \wE=\{ \phi\in \wE: \kappa\circ \phi=0\}$, which is the ideal $\mm$ defined in Definition \ref{Em}, and $(\wE/\rad \wE )\wtimes_{\wE} \wP$ is the 
object $Q$ considered in \S \ref{firstsec}, see Remark \ref{Qject} and Lemma \ref{inclusion}.
\end{remar}

\begin{lem}\label{getback} Under the hypotheses of Proposition \ref{moregen} the maximal 
$\OO$-torsion free quotient $(\md\wtimes_{\wE}\wP)_{\mathrm{tf}}$
of $\md\wtimes_{\wE}\wP$ is an object of $\dualcat(\OO)$. Moreover, 
$$\Hom_{\dualcat(\OO)}(\wP, (\md\wtimes_{\wE}\wP)_{\mathrm{tf}})\cong \md.$$
\end{lem}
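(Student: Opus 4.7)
The plan is to observe that under the hypotheses, $\md\wtimes_{\wE}\wP$ is itself already $\OO$-torsion free, so the maximal torsion-free quotient in the statement coincides with $\md\wtimes_{\wE}\wP$, and both claims then reduce to results already established. Since $\md$ is a free $\OO$-module of finite rank, it is compact, and by Lemma \ref{toponmd} the right $\wE$-action on $\md$ is continuous; thus $\md$ is a compact right $\wE$-module in the sense of Section \ref{firstsec}. Hypotheses (H1)--(H5) were shown to carry over from $\dualcat(k)$ to $\dualcat(\OO)$, so Corollary \ref{topfree} applies to $\wP$: there is a topological isomorphism $\wP\cong \prod_{i\in I}\wE$ of left $\wE$-modules. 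Applying Corollary \ref{flatmodule} (or, more concretely, noting that $\md\wtimes_{\wE}\wP\cong\prod_{i\in I}\md$), we conclude that $\md\wtimes_{\wE}\wP$ is $\OO$-torsion free, so $(\md\wtimes_{\wE}\wP)_{\mathrm{tf}}=\md\wtimes_{\wE}\wP$.

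Next I would check that $\md\wtimes_{\wE}\wP$ actually lies in $\dualcat(\OO)$, not merely in $\Mod^{\mathrm{pro\,aug}}_G(\OO)$. Pick any presentation of $\md$ as a compact $\wE$-module, $\prod_{I}\wE\to\prod_{J}\wE\to \md\to 0$, and apply $\wtimes_{\wE}\wP$; since this functor is right exact and commutes with direct products, one obtains an exact sequence $\prod_{I}\wP\to\prod_{J}\wP\to \md\wtimes_{\wE}\wP\to 0$. As $\wP$ is an object of $\dualcat(\OO)$ and $\dualcat(\OO)$ is closed under direct products and subquotients inside $\Mod^{\mathrm{pro\,aug}}_G(\OO)$, both $\prod_{I}\wP$ and $\prod_{J}\wP$ lie in $\dualcat(\OO)$, and hence so does the cokernel $\md\wtimes_{\wE}\wP=(\md\wtimes_{\wE}\wP)_{\mathrm{tf}}$.

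For the Hom computation I would invoke Lemma \ref{headS0} applied in $\dualcat(\OO)$, with compact $\wE$-module $\md$: this yields the natural isomorphism $\Hom_{\dualcat(\OO)}(\wP,\md\wtimes_{\wE}\wP)\cong \md$, which by the first paragraph is the same as $\Hom_{\dualcat(\OO)}(\wP,(\md\wtimes_{\wE}\wP)_{\mathrm{tf}})\cong \md$.

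There is no serious obstacle; the result is essentially a bookkeeping exercise. The only subtlety worth flagging is conceptual: the notation $(\cdot)_{\mathrm{tf}}$ in the statement is cosmetic under the hypotheses of Proposition \ref{moregen}, because freeness of $\md$ over $\OO$ together with the topological freeness of $\wP$ over $\wE$ already forces $\md\wtimes_{\wE}\wP$ to be $\OO$-torsion free. One should, however, be careful to use the version of Corollaries \ref{topfree} and \ref{flatmodule} applied to $\wP$ in $\dualcat(\OO)$ rather than to $P$ in $\dualcat(k)$, which is permitted precisely because the hypotheses (H0) and (H1)--(H5) have been transferred from $\dualcat(k)$ to $\dualcat(\OO)$ earlier in Section \ref{def}.
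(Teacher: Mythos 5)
There is a genuine gap: your argument imports hypotheses that are not part of the statement. Lemma \ref{getback} sits in the general setup of \S\ref{banach} (the paragraph before Proposition \ref{modulequotientnew}): $\wP$ is a projective envelope of a finite direct sum of irreducibles $S_1\oplus\dots\oplus S_n$ and the only assumptions are those of Proposition \ref{moregen}, namely that $(\wE/\rad\wE)\wtimes_{\wE}\wP$ is of finite length and finitely generated over $\OO[[H]]$. The hypotheses (H0)--(H5) of \S\ref{firstsec}--\S\ref{def} are \emph{not} in force here, so Corollary \ref{topfree} ($\wP\cong\prod_{i\in I}\wE$) and Corollary \ref{flatmodule} are not available, and your key claim that $\md\wtimes_{\wE}\wP$ is automatically $\OO$-torsion free is unjustified. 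This is not a cosmetic point: even in \S\ref{reldef} only (H1)--(H4) are assumed (not (H5), on which the topological-freeness machinery rests), and in the applications the lemma is used, via Definition \ref{defPimd}, Lemma \ref{getback2} and Theorems \ref{antiequiv}, \ref{furtherDBan}, for $\wP_{\BB}$ in blocks such as the one of \S\ref{nongenericcaseII}, where the paper states explicitly that the formalism of \S\ref{firstsec} fails. The paper's own remark after the proposition on $\wP/\nn_0\wP$ makes clear that torsion-freeness of such tensor products requires \emph{additional} flatness assumptions. There is also a second issue your route hides: without knowing the module is torsion free one must check that the torsion submodule is \emph{closed}, since otherwise the ``maximal $\OO$-torsion free quotient'' is not even obviously an object of $\Mod^{\mathrm{pro\,aug}}_G(\OO)$, let alone of $\dualcat(\OO)$.

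The intended argument uses the conclusion of Proposition \ref{moregen} rather than flatness: $\md\wtimes_{\wE}\wP$ is finitely generated over the noetherian ring $\OO[[H]]$, so its torsion submodule is finitely generated and hence equals the kernel of multiplication by $\varpi^n$ for $n$ large; this kernel is closed, so both $(\md\wtimes_{\wE}\wP)_{\mathrm{tors}}$ and $(\md\wtimes_{\wE}\wP)_{\mathrm{tf}}$ are subquotients of $\md\wtimes_{\wE}\wP$ in $\Mod^{\mathrm{pro\,aug}}_G(\OO)$ and therefore objects of $\dualcat(\OO)$. For the Hom statement, Lemma \ref{headS0} (which, unlike Corollaries \ref{topfree} and \ref{flatmodule}, is proved in the general setting of \S\ref{zerosec}) gives $\Hom_{\dualcat(\OO)}(\wP,\md\wtimes_{\wE}\wP)\cong\md$, which is $\OO$-torsion free; since $\Hom_{\dualcat(\OO)}(\wP,(\md\wtimes_{\wE}\wP)_{\mathrm{tors}})$ injects into it and is killed by $\varpi^n$, it vanishes, and exactness of $\Hom_{\dualcat(\OO)}(\wP,-)$ (projectivity of $\wP$) then yields $\Hom_{\dualcat(\OO)}(\wP,(\md\wtimes_{\wE}\wP)_{\mathrm{tf}})\cong\md$. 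Your final step via Lemma \ref{headS0} is fine; it is the reduction to the torsion-free case that needs to be replaced by this argument.
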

\begin{proof} Since $\OO[[H]]$ is noetherian and  $\md\wtimes_{\wE}\wP$ is finitely generated, the torsion submodule 
$(\md\wtimes_{\wE}\wP)_{\mathrm{tors}}$ is finitely generated, and hence is equal to the kernel of multiplication by $\varpi^n$ for $n$ large enough.  
So $(\md\wtimes_{\wE}\wP)_{\mathrm{tors}}$ and $(\md\wtimes_{\wE}\wP)_{\mathrm{tf}}$ are both objects of $\dualcat(\OO)$.
Now $\Hom_{\dualcat(\OO)}(\wP, \md\wtimes_{\wE} \wP)\cong \md$, see Lemma \ref{headS0}, is $\OO$-torsion free. 
Hence, 
$\Hom_{\dualcat(\OO)}(\wP, (\md\wtimes_{\wE} \wP)_{\mathrm{tors}})=0$.
Since $\wP$ is projective we obtain an isomorphism :
$$\md\cong \Hom_{\dualcat(\OO)}(\wP, \md\wtimes_{\wE} \wP)\cong \Hom_{\dualcat(\OO)}(\wP, (\md\wtimes_{\wE} \wP)_{\mathrm{tf}}).$$
\end{proof}

\begin{defi}\label{defPimd} Under the hypotheses of Proposition \ref{moregen} to a right $\wE$-module $\md$ free of 
finite rank over $\OO$ we associate an admissible unitary $L$-Banach space representation of $G$:
$$ \Pi(\md):= \Hom^{cont}_{\OO}((\md\wtimes_{\wE}\wP)_{\mathrm{tf}}, L)$$
with the topology induced by the supremum norm.
\end{defi}

\begin{remar}\label{killTorsion}We define $\Pi(\md)$ in terms of the maximal torsion free quotient of $\md\wtimes_{\wE}\wP$, as this allows us to appeal to the results
of \cite{iw}. Since any $\OO$-linear homomorphism to $L$ kills off the $\OO$-torsion, we have $\Pi(\md)\cong \Hom^{cont}_{\OO}(\md\wtimes_{\wE}\wP, L)$.
\end{remar}

A  continuous homomorphism of compact $\wE$-modules $\md_1 \rightarrow \md_2$ induces a morphism
$\md_1\wtimes_{\wE}\wP\rightarrow  \md_2\wtimes_{\wE}\wP$  in $\dualcat(\OO)$   and hence $\md \mapsto \Pi(\md)$ defines a contravariant functor from the category of compact right $\wE$-modules, free of finite rank over $\OO$ 
to $\Ban^{\mathrm{adm}}_{\dualcat(\OO)}$. Since $\varpi$ is invertible in $\Ban^{\mathrm{adm}}_{\dualcat(\OO)}$
the functor factors through the category of finite dimensional $L$-vector spaces with continuous $\wE$-action (note that $\wE$
is compact). 

\begin{lem}\label{getback2} Let $\md_L$ be a finite dimensional $L$-vector space with continuous $\wE$-action. Assume that $(\wE/\rad \wE) \wtimes_{\wE} \wP$ is of finite length
in $\dualcat(\OO)$ and is a finitely generated $\OO[[H]]$-module. Then 
$$\md(\Pi(\md_L))\cong \md_L$$
where $\md$ is the functor defined in Lemma \ref{PitomPI}.
\end{lem}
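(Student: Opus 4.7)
The plan is to unravel both sides directly using the constructions already in place. Given a finite dimensional $\wE$-vector space $\md_L$ with continuous $\wE$-action, I first need an $\wE$-stable $\OO$-lattice $\md \subset \md_L$ which is free of finite rank over $\OO$. Picking any $\OO$-lattice $\md_0$ in $\md_L$ that $L$-spans $\md_L$, the subset $\md := \md_0 \cdot \wE$ is the continuous image of the compact set $\md_0 \times \wE$ under the action map, hence is compact in $\md_L$, hence is a bounded $\OO$-lattice in $\md_L$. By construction it is stable under $\wE$ and it is free of finite rank over $\OO$, so Definition~\ref{defPimd} applies to $\md$.

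Next I would identify the Schikhof dual of the natural open bounded lattice in $\Pi(\md_L)$. By Proposition~\ref{moregen}, $M := (\md \wtimes_{\wE} \wP)_{\mathrm{tf}}$ is an object of $\dualcat(\OO)$, compact, Hausdorff, $\OO$-torsion free, and finitely generated over $\OO[[H]]$. Setting $\Theta := \Hom^{cont}_{\OO}(M, \OO)$ equipped with the supremum norm topology, $\Theta$ is tautologically the open unit ball of $\Pi(\md_L)$, so $\Theta$ is an open bounded $G$-invariant $\OO$-lattice. The Schikhof biduality for such $M$ (the same result already invoked earlier via \cite{comp} in the discussion of $\Theta^d$) gives a canonical $G$-equivariant topological isomorphism $\Theta^d \cong M = (\md \wtimes_{\wE} \wP)_{\mathrm{tf}}$.

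Applying $\Hom_{\dualcat(\OO)}(\wP, -)$ and using Lemma~\ref{getback}, we obtain
\begin{equation*}
\Hom_{\dualcat(\OO)}(\wP, \Theta^d) \cong \Hom_{\dualcat(\OO)}(\wP, (\md \wtimes_{\wE} \wP)_{\mathrm{tf}}) \cong \md
\end{equation*}
as right $\wE$-modules. Tensoring with $L$ and using the definition of $\md(\Pi(\md_L))$ from Lemma~\ref{PitomPI} yields $\md(\Pi(\md_L)) \cong \md \otimes_{\OO} L = \md_L$, and the isomorphism is natural in $\md_L$ since each step in the chain (choice of a functorial $\wE$-stable lattice $\md$, the formation of $(\md \wtimes_{\wE}\wP)_{\mathrm{tf}}$, Schikhof biduality, and the isomorphism in Lemma~\ref{getback}) is functorial, and after inverting $p$ the dependence on the choice of $\md$ disappears because any two $\wE$-stable $\OO$-lattices in $\md_L$ are commensurable.

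The only technical point requiring care is the Schikhof biduality $M \cong \Hom^{cont}_{\OO}(\Hom^{cont}_{\OO}(M, \OO), \OO)$ for the compact $\OO$-torsion free module $M = (\md \wtimes_{\wE} \wP)_{\mathrm{tf}}$; this is the same ingredient as in the topological isomorphism $\Theta^d \cong \underset{\longleftarrow}{\lim}\, (\Theta/\varpi^n \Theta)^{\vee}$ used earlier, applied to $M$ in place of $\Theta$. Everything else reduces to Lemma~\ref{getback} and the tautological identification of $\Theta$ as the unit ball of $\Pi(\md_L)$, so no further hard input is needed.
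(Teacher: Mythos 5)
Your proposal is correct and takes essentially the same route as the paper: choose an $\wE$-stable $\OO$-lattice $\md$ in $\md_L$, identify the Schikhof dual of the unit ball of $\Pi(\md_L)$ with $(\md\wtimes_{\wE}\wP)_{\mathrm{tf}}$, apply Lemma~\ref{getback}, and invert $p$. One small imprecision: you define $\md := \md_0\cdot\wE$ as the continuous image of $\md_0\times\wE$ under the action map, but the set of products $\{m\cdot e : m\in\md_0,\ e\in\wE\}$ need not be closed under addition; what you want is the $\wE$-submodule $v_1\wE+\cdots+v_n\wE$ generated by a basis $\{v_1,\ldots,v_n\}$ of $\md_0$, which is the continuous image of $\wE^n$ under $(e_1,\ldots,e_n)\mapsto\sum v_ie_i$ and hence compact (this is exactly the construction used in Lemma~\ref{toponmd2}). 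With that correction the argument goes through, and your explicit justification of the Schikhof biduality step (which the paper treats as obvious) is welcome detail.
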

\begin{proof} Since $\wE$ is compact and the action is continuous there exists an open bounded $\OO$-lattice 
$\md$ in $\md_L$ which is $\wE$-stable. Then 
$$\Pi(\md_L)= \Hom^{cont}_{\OO} ( (\md\wtimes_{\wE} \wP)_{\mathrm{tf}}, L)$$ 
and let $\Pi(\md_L)^0$ be the unit 
ball in $\Pi(\md_L)$ with respect to the supremum norm, so that $\Pi(\md_L)^0=\Hom_{\OO}^{cont}((\md\wtimes_{\wE} \wP)_{\mathrm{tf}}, \OO)$. Then $(\Pi(\md_L)^0)^d\cong (\md\wtimes_{\wE} \wP)_{\mathrm{tf}}$ 
and $\Hom_{\dualcat(\OO)}(\wP, (\Pi(\md_L)^0)^d)\cong \md$ by Lemma \ref{getback}. Since $\md$ is an open 
$\OO$-lattice in $\md_L$ we get $\md(\Pi(\md_L))\cong \md_L$.
\end{proof}

\begin{lem}\label{getback3} Let $\Pi \in \Ban^{\mathrm{adm}}_{\dualcat(\OO)}$ be irreducible, and let $\md_L:=\md(\Pi)$, where $\md$ is the functor defined in Lemma \ref{PitomPI}. 
If $\md_L$ is a non-zero finite dimensional $L$-vector space then $\Pi$ is isomorphic to a closed subspace of $\Pi(\md_L)$.
\end{lem}
\begin{proof} Let $\Theta$ be an open bounded $G$-invariant lattice in $\Pi$. The evaluation map $\Hom_{\dualcat(\OO)}(\wP, \Theta^d)\wtimes_{\wE} \wP\rightarrow \Theta^d$ induces 
a non-zero continuous, $G$-equivariant map $\Pi\rightarrow \Pi(\md_L)$. Since $\Pi$ is irreducible, and both representations are admissible, the map induces an isomorphism between 
$\Pi$ and a closed subspace of $\Pi(\md_L)$.
\end{proof} 

\begin{lem}\label{obviousexact} Assume that $(\wE/\rad \wE) \wtimes_{\wE} \wP$ is of finite length
in $\dualcat(\OO)$ and is a finitely generated $\OO[[H]]$-module. The functor $\md_L\mapsto \Pi(\md_L)$ is left exact.
\end{lem}
\begin{proof}  This follows from the right exactness of $\wtimes_{\wE}\wP$, left exactness of $\Hom^{cont}_{\OO}(\ast, L)$ and Remark \ref{killTorsion}.
\end{proof}

\begin{lem}\label{redfinlen} Assume that $(\wE/\rad \wE) \wtimes_{\wE} \wP$ is of finite length in $\dualcat(\OO)$ and is a finitely generated $\OO[[H]]$-module. 
Let $\md_L$ be a finite dimensional $L$-vector space with a continuous $\wE$-action. Then $\overline{\Pi(\md_L)}$ is an admissible smooth, finite length 
representation of $G$.
\end{lem}
\begin{proof} The assertion follows from Proposition \ref{moregen} together with  \eqref{Thetadnew}.
\end{proof}

\begin{prop}\label{closedred} Assume that $(\wE/\rad \wE) \wtimes_{\wE} \wP$ is of finite length
in $\dualcat(\OO)$ and is a finitely generated $\OO[[H]]$-module.
Let $\md_L$ be a finite dimensional $L$-vector space with continuous $\wE$-action and 
let $\Pi$ be a  closed non-zero $G$-invariant subspace of $\Pi(\md_L)$.
Suppose that $\md_L$ is an irreducible right $\wE_L$-module, then $\md( \Pi(\md_L)/\Pi)=0$
and $\md(\Pi)\cong \md(\Pi(\md_L))\cong \md_L$.  
In particular, each $\pi_i$ occurs in $\overline{\Pi}$ with the same (finite) multiplicity as in $\overline{\Pi(\md_L)}$. 
Further, if $S$ is irreducible and $\End_{\dualcat(\OO)}(S)=k$ then $\pi=S^{\vee}$ occurs in $\overline{\Pi}$ 
with multiplicity $\dim_L \md_L$.
\end{prop}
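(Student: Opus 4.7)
My plan is to use the exactness of $\md$ from Lemma \ref{PitomPI} together with the irreducibility of $\md_L$, reducing the statement (which implicitly requires $\Pi \neq 0$) to showing that every non-zero closed $G$-invariant subspace $\Pi$ of $\Pi(\md_L)$ satisfies $\md(\Pi) \neq 0$. Applying $\md$ to the exact sequence $0 \to \Pi \to \Pi(\md_L) \to \Pi(\md_L)/\Pi \to 0$ in $\Ban^{\mathrm{adm}}_{\dualcat(\OO)}$ gives, by Lemma \ref{PitomPI}, a short exact sequence
\[
0 \longrightarrow \md(\Pi(\md_L)/\Pi) \longrightarrow \md(\Pi(\md_L)) \longrightarrow \md(\Pi) \longrightarrow 0
\]
of right $\wE_L$-modules. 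By Lemma \ref{getback2} the middle term is $\md_L$, which is irreducible by hypothesis, so each of $\md(\Pi)$ and $\md(\Pi(\md_L)/\Pi)$ is either $0$ or $\md_L$.

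The structural step is to identify the composition factors of $\overline{\Pi(\md_L)}$. Fix a $\wE$-stable open bounded $\OO$-lattice $\md \subset \md_L$; the unit ball $\Theta$ of $\Pi(\md_L)$ satisfies $\Theta^d = (\md \wtimes_\wE \wP)_{\mathrm{tf}}$, and Proposition \ref{moregen} gives $(\md \wtimes_\wE \wP)\otimes_\OO k \cong \md_k \wtimes_\wE \wP$, of finite length in $\dualcat(\OO)$. Any irreducible subquotient $\mathrm n$ of the finite-length $\wE$-module $\md_k$ has $\wE$-action factoring through $\wE/\rad\wE \cong \prod_i \End_{\dualcat(\OO)}(S_i)$, so $\mathrm n \wtimes_\wE \wP$ is a direct sum of copies of some $S_i$; hence all composition factors of $\md_k \wtimes_\wE \wP$ lie in $\{S_1, \ldots, S_n\}$, and the same holds for its quotient $(\md \wtimes_\wE \wP)_{\mathrm{tf}} \otimes_\OO k$ (the torsion submodule of $\md \wtimes_\wE \wP$ being killed by a power of $\varpi$ since $\OO[[H]]$ is noetherian). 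Dually, all composition factors of $\overline{\Pi(\md_L)}$ lie in $\{\pi_1, \ldots, \pi_n\}$.

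Given $\Pi \neq 0$, the $\OO$-saturation of $\Pi$ in $\Pi(\md_L)$ yields $\Theta_\Pi \otimes_\OO k \hookrightarrow \Theta \otimes_\OO k$ for $\Theta_\Pi := \Theta \cap \Pi$, so $\overline\Pi \neq 0$ by Nakayama, and some $\pi_i$ must appear in $\overline\Pi$; Lemma \ref{pisub} then gives $\md(\Pi) \neq 0$, forcing $\md(\Pi) \cong \md_L$ and $\md(\Pi(\md_L)/\Pi) = 0$. The multiplicity equality for each $\pi_i$ follows from additivity of $[\pi_i : -]$ obtained by reducing the exact sequence of $\OO$-flat lattices $0 \to \Theta_\Pi \to \Theta \to \Theta_{\Pi(\md_L)/\Pi} \to 0$ modulo $\varpi$, combined with Lemma \ref{pisub} applied to $\Pi(\md_L)/\Pi$, whose $\md$ now vanishes. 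For the last assertion, when $S$ is irreducible with $\End_{\dualcat(\OO)}(S) = k$, Lemma \ref{mult=rank0} applied to $M = (\md \wtimes_\wE \wP)_{\mathrm{tf}}$ together with Lemma \ref{getback} give
\[
[\pi : \overline{\Pi(\md_L)}] = \rank_\OO \Hom_{\dualcat(\OO)}(\wP, M) = \rank_\OO \md = \dim_L \md_L,
\]
which combined with the multiplicity equality finishes the proof. The principal obstacle is the composition-factor analysis in the middle paragraph; everything else is formal bookkeeping with exact sequences once Lemmas \ref{PitomPI}, \ref{getback2}, \ref{mult=rank0}, \ref{getback} and Proposition \ref{moregen} are in hand.
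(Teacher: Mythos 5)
Your opening and closing steps are sound — setting up the short exact sequence via the exactness of $\md$ from Lemma \ref{PitomPI}, identifying the middle term via Lemma \ref{getback2}, invoking irreducibility of $\md_L$, and the final multiplicity count via Lemmas \ref{mult=rank0} and \ref{getback}. But the middle "structural" paragraph contains a genuine gap. You assert that for an irreducible $\wE/\rad\wE$-module $\mathrm{n}$, the object $\mathrm{n}\wtimes_{\wE}\wP$ is a direct sum of copies of some $S_i$, hence that every composition factor of $\overline{\Pi(\md_L)}$ lies in $\{\pi_1,\dots,\pi_n\}$. This is false in general: $(\wE/\rad\wE)\wtimes_{\wE}\wP$ is $\wP/(\rad\wE)\wP$, and the hypotheses in force (those of Proposition \ref{moregen}) only say that this object has finite length and is finitely generated over $\OO[[H]]$ — nothing forces it to be semisimple. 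In the setting of \S\ref{reldef} where $\wP$ is a projective envelope of a single irreducible $S$ and $(\wE/\rad\wE)\wtimes_{\wE}\wP\cong Q$ (Lemma \ref{inclusion}), the object $Q$ typically has composition factors other than $S$ (e.g.\ $Q=\kappa^{\vee}$ in \S\ref{genericcase} contains both $\pi_1^{\vee}$ and $\pi_2^{\vee}$, while $n=1$). So $\overline{\Pi(\md_L)}$ can and does contain irreducibles outside $\{\pi_1,\dots,\pi_n\}$, and your step "$\overline{\Pi}\neq 0$, hence some $\pi_i$ appears in $\overline{\Pi}$" does not follow; a non-zero $\Pi$ could a priori have $\overline{\Pi}$ built entirely from those extra constituents.

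The paper avoids this entirely with a direct argument: set $\Theta:=\Pi\cap\Pi(\md_L)^0$ and use the Schneider--Teitelbaum duality result to get a surjection $\sigma:(\md\wtimes_{\wE}\wP)_{\mathrm{tf}}\cong(\Pi(\md_L)^0)^d\twoheadrightarrow\Theta^d$ in $\dualcat(\OO)$. Since $\Pi\neq 0$ this $\sigma$ is non-zero, so $\sigma(n\wtimes v)\neq 0$ for some $n\in\md$, $v\in\wP$, and then $v'\mapsto\sigma(n\wtimes v')$ is a non-zero element of $\Hom_{\dualcat(\OO)}(\wP,\Theta^d)$. This gives $\md(\Pi)\neq 0$ with no need to control composition factors; the rest of your bookkeeping with the exact sequence and the final multiplicity computation then go through as you wrote them.
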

\begin{proof} Let $\Pi(\md_L)^0$ be the unit ball in $\Pi(\md_L)$ with respect to the supremum norm and let 
$\Theta:= \Pi\cap \Pi(\md_L)^0$. Then $\Theta$ is an open bounded $G$-invariant lattice in $\Pi$. By \cite[Prop 1.3.iii]{iw} we have a surjection 
$\sigma: (\md\wtimes_{\wE} \wP)_{\mathrm{tf}}\cong (\Pi(\md_L)^0)^d\twoheadrightarrow \Theta^d$.  As $\sigma$ is non-zero,
there exists $n\in \md$ and $v\in \wP$ such that $\sigma(n\wtimes v)$ is non-zero. Then $\psi: \wP\rightarrow \Theta^d$, 
$v\mapsto \sigma(n\wtimes v)$ is a non-zero element of $\Hom_{\dualcat(\OO)}(\wP, \Theta^d)$. Thus $\md(\Pi)\neq 0$.
Since the functor $\md$ is exact and contravariant and $\md_L$ is an irreducible $\wE$-module we deduce that 
$\md(\Pi(\md_L))\cong \md(\Pi)$ and $\md(\Pi(\md_L)/\Pi)=0$, which is a contradiction. The rest follows from Lemma \ref{mult=rank0} and \eqref{Thetadnew}.
\end{proof}

\begin{cor}\label{Piexistsuniq} Assume the setup of Proposition \ref{closedred} then $\Pi(\md_L)$ contains 
 a unique irreducible non-zero closed $G$-in\-va\-riant subspace $\Pi$. Moreover, for 
any $\phi: \Pi(\md_L) \rightarrow \Pi(\md_L)$ continuous and $G$-equivariant we have $\phi(\Pi)\subseteq \Pi$.
\end{cor}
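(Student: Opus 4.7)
The plan is to prove existence first, then uniqueness, then deduce the stability property.

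For existence, I would exploit admissibility. The Banach space representation $\Pi(\md_L)$ is admissible by construction (Definition \ref{defPimd}), so for any compact open subgroup $H$ of $G$ its Schikhof dual $(\md\wtimes_{\wE}\wP)_{\mathrm{tf}}$ is a finitely generated $\OO[[H]]$-module. Since $\OO[[H]]$ is noetherian, this module is noetherian, so every ascending chain of subobjects in $\dualcat(\OO)$ stabilizes. Dualizing, the set of closed $G$-invariant subspaces of $\Pi(\md_L)$ satisfies the descending chain condition, so any non-zero intersection argument produces a minimal non-zero closed $G$-invariant subspace $\Pi$, which is automatically irreducible.

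For uniqueness, suppose $\Pi_1$ and $\Pi_2$ are two non-zero irreducible closed $G$-invariant subspaces. If $\Pi_1\cap\Pi_2\neq 0$, then by irreducibility $\Pi_1=\Pi_1\cap\Pi_2=\Pi_2$. Otherwise $\Pi_1\cap \Pi_2=0$, and since $\Ban^{\mathrm{adm}}_{\dualcat(\OO)}$ is abelian the sum $\Pi_1+\Pi_2$ is again a closed admissible $G$-invariant subspace of $\Pi(\md_L)$, fitting into an exact sequence
\[0\to \Pi_1\to \Pi_1+\Pi_2\to \Pi_2\to 0.\]
Applying the exact contravariant functor $\md$ of Lemma \ref{PitomPI} yields an exact sequence $0\to \md(\Pi_2)\to \md(\Pi_1+\Pi_2)\to \md(\Pi_1)\to 0$ of right $\wE[1/p]$-modules. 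By Proposition \ref{closedred} applied to each of the three non-zero closed $G$-invariant subspaces, all three $\md$-values are isomorphic to $\md_L$, forcing $2\dim_L\md_L=\dim_L\md_L$. Since $\md_L\neq 0$ (as $\md(\Pi_1)\neq 0$), this is a contradiction.

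For the stability statement, let $\phi\colon \Pi(\md_L)\to\Pi(\md_L)$ be continuous and $G$-equivariant, and consider its restriction $\phi|_\Pi\colon\Pi\to\Pi(\md_L)$. In the abelian category $\Ban^{\mathrm{adm}}_{\dualcat(\OO)}$ the kernel $\ker(\phi|_\Pi)$ is a closed $G$-invariant subspace of the irreducible representation $\Pi$, so it is either $\Pi$ or zero. In the first case $\phi(\Pi)=0\subseteq\Pi$. In the second $\phi|_\Pi$ is a monomorphism in the abelian category, and its image is a closed $G$-invariant subspace of $\Pi(\md_L)$ isomorphic to $\Pi$, hence irreducible; by the uniqueness just established, this image must coincide with $\Pi$.

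The only real subtlety is to make sure that ``image of $\phi|_\Pi$'' is genuinely a closed $G$-invariant subspace of $\Pi(\md_L)$ rather than a dense subspace of one; this follows from the fact that $\Ban^{\mathrm{adm}}_{\dualcat(\OO)}$ is abelian (indeed, it is closed under subquotients in the abelian category $\Ban^{\mathrm{adm}}_G(L)$ of Schneider--Teitelbaum), so every morphism has a well-defined categorical image, which under the Schikhof duality with $\dualcat(\OO)$ is a closed admissible subrepresentation.
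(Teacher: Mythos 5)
Your proof is correct. The key ingredient in both your argument and the paper's is Proposition \ref{closedred} — specifically the two facts $\md(\Pi(\md_L)/\Pi)=0$ and $\md(\Pi)\cong\md_L$ for any non-zero closed $G$-invariant subspace $\Pi$ — but the organization differs. You prove uniqueness first by a dimension count: if $\Pi_1\cap\Pi_2=0$, then the exactness of $\md$ on $0\to\Pi_1\to\Pi_1+\Pi_2\to\Pi_2\to 0$ forces $\dim\md_L = 2\dim\md_L$; then you deduce the stability $\phi(\Pi)\subseteq\Pi$ \emph{from} uniqueness via the image argument. The paper inverts this order: it proves directly that for any continuous $G$-equivariant $\phi$ and any two irreducible closed subspaces $\Pi_1,\Pi_2$ one has $\phi(\Pi_1)\subseteq\Pi_2$ — the point being that a non-zero composition $\Pi_1\to\Pi(\md_L)\xrightarrow{\phi}\Pi(\md_L)\to\Pi(\md_L)/\Pi_2$ would, by irreducibility, embed $\Pi_1$ into the quotient and force $0=\md(\Pi(\md_L)/\Pi_2)\twoheadrightarrow\md(\Pi_1)\cong\md_L$ — and then obtains uniqueness as the special case $\phi=\id$. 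The paper's version is a single unified claim from which both conclusions drop out; your version is more modular and makes the uniqueness step self-contained, at the cost of needing the intermediate observation that $\Pi_1+\Pi_2$ is a closed admissible subspace. That observation, and the one about the image of $\phi|_\Pi$ being closed, are both correct and you rightly flag them — they follow from the strictness of morphisms of admissible Banach space representations, which is implicit in the anti-equivalence with finitely generated $\OO[[H]]$-modules of Schneider--Teitelbaum.
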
 
\begin{proof} This is immediate from Corollary \ref{smallest} and Proposition \ref{closedred}.
\end{proof}  

Let $\Ban^{\mathrm{adm. fl}}_{\dualcat(\OO)}$ be the full subcategory of $\Ban^{\mathrm{adm}}_{\dualcat(\OO)}$ consisting 
of objects of finite length. Let $\Ker \md$ be the full subcategory of $\Ban^{\mathrm{adm. fl}}_{\dualcat(\OO)}$
consisting of those $\Pi$ such that $\md(\Pi)=0$. Since $\md$ is an exact functor, $\Ker \md$ is a thick subcategory of 
$\Ban^{\mathrm{adm. fl}}_{\dualcat(\OO)}$ and hence we may build a quotient category $\Ban^{\mathrm{adm. fl}}_{\dualcat(\OO)}/ \Ker \md$,
see \cite[\S III.1]{gab}. 

\begin{thm}\label{antiequiv} Let $\wP$ and $\wE$ be as in the setup described before Proposition \ref{modulequotientnew}. Assume that 
\begin{itemize} 
\item[(i)] $(\wE/\rad \wE)\wtimes_{\wE} \wP$ is a finitely generated $\OO[[H]]$-module and is of finite length in $\dualcat(\OO)$;
\item[(ii)] For every irreducible $\Pi$  in $\Ban^{\mathrm{adm}}_{\dualcat(\OO)}$, $\md(\Pi)$ is finite dimensional.
\end{itemize} 
Then the functors $\md_L \mapsto \Pi(\md_L)$ and $\Pi\mapsto \md(\Pi)$ induce an anti-equivalence of categories between 
$\Ban^{\mathrm{adm. fl}}_{\dualcat(\OO)}/ \Ker \md$ and the category of finite dimensional $L$-vector spaces with continuous right $\wE$-action.
\end{thm}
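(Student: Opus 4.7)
The plan is as follows. First I would verify that $\md$ descends to a well-defined, exact functor on the Serre quotient $\Ban^{\mathrm{adm.\,fl}}_{\dualcat(\OO)}/\Ker\md$: exactness of $\md$ is Lemma \ref{PitomPI}, and $\Ker\md$ is thick by that exactness, so the descent is automatic. Hypothesis (ii) together with exactness and induction on length shows that $\md(\Pi)$ is a finite dimensional $L$-vector space for every $\Pi$ of finite length in $\Ban^{\mathrm{adm.\,fl}}_{\dualcat(\OO)}$; the right $\wE$-action is continuous since $\Hom_{\dualcat(\OO)}(\wP, \Theta^d)$ is naturally pseudo-compact.

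Next I would check that $\md_L \mapsto \Pi(\md_L)$ takes finite dimensional continuous right $\wE$-modules into $\Ban^{\mathrm{adm.\,fl}}_{\dualcat(\OO)}$. Given such a $\md_L$, the image of $\wE$ in $\End_L(\md_L)$ is a closed subring that is an $\OO$-order in a finite dimensional $L$-algebra and hence finitely generated over $\OO$; this allows me to pick an open bounded $\wE$-stable $\OO$-lattice $\md \subset \md_L$ that is finitely generated, hence free, over $\OO$. Proposition \ref{moregen} (which uses hypothesis (i)) then gives that $(\md\wtimes_{\wE}\wP)\otimes_{\OO}k$ is of finite length in $\dualcat(\OO)$; the same holds for its torsion-free quotient, and dualizing shows that $\Pi(\md_L)^0/\varpi\Pi(\md_L)^0$ is a smooth admissible $k$-representation of $G$ of finite length, which bounds the length of $\Pi(\md_L)$.

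One composition is handled by Lemma \ref{getback2}. For the other, I would construct a natural morphism $\Pi \to \Pi(\md(\Pi))$ by dualizing the evaluation map
\[
\md(\Pi)\wtimes_{\wE}\wP \longrightarrow \Theta^d,
\]
which factors through the torsion-free quotient since $\Theta^d$ is $\OO$-torsion free. The heart of the argument is to show that in the resulting four-term sequence
\[
0 \to K/T \to (\md(\Pi)\wtimes_{\wE}\wP)_{\mathrm{tf}} \to \Theta^d \to C \to 0
\]
both end terms $K/T$ and $C$ are annihilated by $\Hom_{\dualcat(\OO)}(\wP, -)$, so that the dual map becomes an isomorphism in the Serre quotient. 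Splitting through the image, using projectivity of $\wP$, Lemma \ref{headS0} and Lemma \ref{getback} identifies $\Hom_{\dualcat(\OO)}(\wP, (\md(\Pi)\wtimes_{\wE}\wP)_{\mathrm{tf}})$ with $\md(\Pi)$, and the composite endomorphism of $\md(\Pi)$ arising from the sequence is the identity by construction; this forces the vanishing at both ends.

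The main obstacle I expect is the bookkeeping in this last step, in particular controlling how the $\OO$-torsion of $\md(\Pi)\wtimes_{\wE}\wP$ interacts with the kernel of the evaluation map, and ensuring that the natural map on $\md(\Pi)$ really is the identity. Once the two natural isomorphisms are established, the universal property of the Serre quotient by a thick subcategory packages them into the desired anti-equivalence, and the fact that $\md$ is an equivalence onto finite dimensional continuous $\wE$-modules follows since it has both a left and a right quasi-inverse given by $\Pi$.
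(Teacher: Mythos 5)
Your proposal is correct, but it takes a genuinely different route from the paper in the crucial step. Both proofs begin by dualizing the evaluation map to obtain the natural morphism $\Pi\rightarrow\Pi(\md(\Pi))$, and both handle the composition $\md\circ\Pi(-)$ via Lemma \ref{getback2}. Where you diverge is in showing that $\TT(\Pi)\rightarrow\TT(\Pi(\md(\Pi)))$ is an isomorphism. The paper reduces to irreducible $\Pi$ by induction on length and a five-lemma diagram chase, then invokes Proposition \ref{modulequotientnew}\,(ii),(iii) to produce an \emph{injection} $\Pi\hookrightarrow\Pi(\md(\Pi))$ and Proposition \ref{closedred} to kill the cokernel. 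You instead work directly on the compact side for arbitrary finite-length $\Pi$: apply the exact functor $\Hom_{\dualcat(\OO)}(\wP,-)$ to the four-term sequence, identify the middle arrow $\md\rightarrow\md$ with the identity (the unit--counit identity coming from Lemma \ref{headS0} and Lemma \ref{getback}), and conclude that $\Hom_{\dualcat(\OO)}(\wP,K/T)=0$ and $\Hom_{\dualcat(\OO)}(\wP,C)=0$. This is clean and avoids both the induction and the case distinction on irreducibility; it also does not need the surjectivity statement of Proposition \ref{modulequotientnew}\,(iii), since you only need the cokernel of evaluation to be invisible to $\Hom_{\dualcat(\OO)}(\wP,-)$, not to vanish.

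The one place where you should be more explicit, and which you rightly flag, is the passage from the compact-side vanishing of $\Hom_{\dualcat(\OO)}(\wP,K/T)$ and $\Hom_{\dualcat(\OO)}(\wP,C)$ to the Banach-side statement $\md(\Ker\alpha)=\md(\Coker\alpha)=0$ for $\alpha:\Pi\rightarrow\Pi(\md(\Pi))$. This requires using the Schneider--Teitelbaum anti-equivalence to match the Schikhof duals of suitable lattices in $\Ker\alpha$ and $\Coker\alpha$ with $C$ and $K/T$ respectively. Since all four terms in your compact-side sequence are finitely generated over the noetherian ring $\OO[[H]]$ (by admissibility and Proposition \ref{moregen}), the lattices you need exist and the duality behaves, so this does close up; but it is the part that must be spelled out, not merely asserted. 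Once done, the Serre quotient criterion (Lemme~4 of \cite[\S III.1]{gab}) completes the argument exactly as in the paper.
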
 
\begin{proof} Let $\TT: \Ban^{\mathrm{adm. fl}}_{\dualcat(\OO)}\rightarrow \Ban^{\mathrm{adm. fl}}_{\dualcat(\OO)}/ \Ker \md$ be the natural 
functor. Recall that a morphism $\phi: \Pi_1\rightarrow \Pi_2$ in $\Ban^{\mathrm{adm. fl}}_{\dualcat(\OO)}$ induces an 
isomorphism $\TT(\phi)$ in the quotient category if and only $\Ker \phi$ and $\Coker \phi$ lie 
in $\Ker \md$ (that is $\md(\Ker \phi)=0$ and $\md(\Coker \phi)=0$), see Lemme 4 in \cite[\S III.1]{gab}.

Since $\md$ is exact  assumption (ii) implies that $\md(\Pi)$ is finite dimensional for all 
$\Pi$ in $\Ban^{\mathrm{adm. fl}}_{\dualcat(\OO)}$. Let $\Pi$ be in $\Ban^{\mathrm{adm. fl}}_{\dualcat(\OO)}$ and 
$\Theta$ be an open bounded $G$-invariant lattice in $\Pi$ and let $\md:=\Hom_{\dualcat(\OO)}(\wP, \Theta^d)$. 
Evaluation induces a morphism $\md \wtimes_{\wE} \wP \rightarrow \Theta^d$ in $\dualcat(\OO)$ and dually 
we obtain a morphism $L$-Banach spaces $\Pi\rightarrow \Pi(\md(\Pi))$. We claim that the map 
$\TT(\Pi)\rightarrow \TT(\Pi(\md(\Pi))$ is an isomorphism. It is enough to prove the claim for irreducible $\Pi$, since then 
we get the rest by induction on the length of $\Pi$. The diagram:
\begin{displaymath}
\xymatrix@1{ 0\ar[r] & \TT(\Pi_1)\ar[d]^{\cong}\ar[r] & \TT(\Pi_2) \ar[d] \ar[r]& \TT(\Pi_3)\ar[d]^{\cong} \ar[r] & 0\\
            0\ar[r] & \TT(\Pi(\md_1))\ar[r] & \TT(\Pi(\md_2))  \ar[r]& \TT(\Pi(\md_3))}
\end{displaymath}
where $\md_i:=\md(\Pi_i)$ gives the induction step. We note that $\TT$ is exact by Proposition 1 in \cite[\S III.1]{gab} and 
hence the rows are exact.      

Suppose that $\Pi$ in $\Ban^{\mathrm{adm}}_{\dualcat(\OO)}$ is irreducible. 
If $\md(\Pi)=0$ then $\Pi\cong 0$ in the quotient category  $\Ban^{\mathrm{adm. fl}}_{\dualcat(\OO)}/ \Ker \md$ and hence 
$\TT(\Pi) \cong \TT(\Pi(\md(\Pi)))$. Suppose that $\md(\Pi)\neq 0$ then $\md(\Pi)$ is an irreducible right 
$\wE$-module by Proposition \ref{modulequotientnew} (ii). By dualizing Proposition \ref{modulequotientnew} (iii)
we obtain an injection $\iota: \Pi\hookrightarrow \Pi(\md(\Pi))$ and it follows from Proposition \ref{closedred} that 
$\md( \Pi(\md(\Pi))/\Pi)=0$. Hence, $\TT(\iota)$ is an isomorphism  between $\TT(\Pi)$ and $\TT(\Pi(\md(\Pi)))$. 
For the other composition we observe that $\md$ factors through the quotient category, see \cite[\S III.1 Cor.2]{gab}, 
so $\md(\TT (\Pi(\md_L)))\cong \md(\Pi(\md_L))\cong \md_L$, where the last assertion is given by  
Lemma \ref{getback2}. 
\end{proof}

\begin{remar} We note that since we assume that  $(\wE/\rad \wE)\wtimes_{\wE} \wP$ is of finite length in $\dualcat(\OO)$, 
Lemma \ref{redfinlen} implies that $\overline{\Pi(\md_L)}$ is of finite length. 
The statement of Theorem \ref{antiequiv} holds if  
instead of making the assumption (ii)  we  replace $\Ban^{\mathrm{adm. fl}}_{\dualcat(\OO)}$  by a smaller category. Namely a full subcategory 
of $\Ban^{\mathrm{adm}}_{\dualcat(\OO)}$ with objects $\Pi$ such that $\Theta\otimes_{\OO} k$ is of finite length where 
$\Theta$ is an open bounded $G$-invariant lattice in $\Pi$. Such $\Pi$ are of finite length and 
it follows from Lemma \ref{mult=rank0} that  $\md(\Pi)$ is 
finite dimensional. However, in the application to $\GL_2(\Qp)$-representations we will verify that the assumption (ii) 
is satisfied using Proposition \ref{longproof}.
\end{remar} 

\begin{thm}\label{furtherDBan} Let $\wP$ and $\wE$ be as in the setup described before Proposition \ref{modulequotientnew}. Assume that 
\begin{itemize} 
\item[(i)] $(\wE/\rad \wE)\wtimes_{\wE} \wP$ is a finitely generated $\OO[[H]]$-module and is of finite length in $\dualcat(\OO)$;
\item[(ii)] the centre $\wZ$ of $\wE$ is noetherian and $\wE$ is a finitely generated $\wZ$-module.
\end{itemize} 
Then 
$$\Ban^{\mathrm{adm. fl}}_{\dualcat(\OO)}/\Ker \md  \cong \bigoplus_{\nn\in \MaxSpec \wZ[1/p]} (\Ban^{\mathrm{adm. fl}}_{\dualcat(\OO)}/\Ker \md)_{\nn},$$
where the direct sum is taken over all the maximal ideals of $\wZ[1/p]$, and  for  a maximal ideal $\nn$ of $\wZ[1/p]$, $(\Ban^{\mathrm{adm. fl}}_{\dualcat(\OO)}/\Ker \md)_{\nn}$  
is the full subcategory of $\Ban^{\mathrm{adm. fl}}_{\dualcat(\OO)}/\Ker \md$, consisting of all Banach spaces which are killed by a power of $\nn$.

Further, the functor $\md \mapsto \Pi(\md)$ induces an anti-equivalence of categories between the category of 
modules of finite length of the $\nn$-adic completion of $\wE[1/p]$ and 
$(\Ban^{\mathrm{adm. fl}}_{\dualcat(\OO)}/\Ker \md)_{\nn}$.
\end{thm}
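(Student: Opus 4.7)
The strategy is to invoke Theorem \ref{antiequiv} to reduce the statement to a purely module-theoretic assertion about the finite dimensional $\wE[1/p]$-modules, and then use the Chinese Remainder Theorem argument supplied by the centre.

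First I would verify that the hypotheses of Theorem \ref{antiequiv} hold in our setting. Hypothesis (i) is the same as hypothesis (i) here. For hypothesis (ii) I need to check that $\md(\Pi)$ is finite dimensional for every irreducible $\Pi$ in $\Ban^{\mathrm{adm}}_{\dualcat(\OO)}$. If $\md(\Pi)=0$ this is trivial; otherwise Proposition \ref{modulequotientnew}(i) identifies $\md(\Pi)$ with $(\wE/\mathfrak{a})\otimes_{\OO} L$ for some right ideal $\mathfrak{a}$, and Proposition \ref{longproof} --- whose hypotheses are precisely our hypothesis (ii) --- guarantees this is finite dimensional over $L$. Thus Theorem \ref{antiequiv} applies and gives an anti-equivalence between $\Ban^{\mathrm{adm.fl}}_{\dualcat(\OO)}/\Ker\md$ and the category of finite dimensional $L$-vector spaces equipped with a continuous right $\wE$-action.

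Next I would transport the desired decomposition across this anti-equivalence. Let $M$ be a finite dimensional $L$-vector space with continuous $\wE$-action; equivalently, $M$ is a finite $L$-dimensional right module over $\wE[1/p]$. Because $\wE$ is a finitely generated $\wZ$-module and $\wZ$ is noetherian, $\wE[1/p]$ is module-finite over the commutative noetherian ring $\wZ[1/p]$, so the image of $\wZ[1/p]$ in $\End_L(M)$ is a finite dimensional commutative $L$-algebra, hence a product of local artinian rings. The corresponding maximal ideals of $\wZ[1/p]$ form a finite set $\{\nn_1,\ldots,\nn_r\}$ --- the support of $M$ --- and the Chinese Remainder Theorem yields a canonical decomposition $M\cong\bigoplus_{i=1}^{r}M_{\nn_i}$, where $M_{\nn_i}$ is the summand on which $\nn_i$ acts nilpotently. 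This decomposition is functorial in $M$, since any $\wE[1/p]$-linear map respects the central action of $\wZ[1/p]$ and hence preserves $\nn_i$-primary components. Defining $(\Ban^{\mathrm{adm.fl}}_{\dualcat(\OO)}/\Ker\md)_{\nn}$ as the full subcategory corresponding under the anti-equivalence to those $M$ on which some power of $\nn$ acts as zero gives the direct sum decomposition stated in the theorem.

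Finally I would identify each summand with modules over the completion. For a fixed maximal ideal $\nn$, a finite $L$-dimensional right $\wE[1/p]$-module $M$ killed by $\nn^N$ is naturally a module over the quotient $\wE[1/p]/\nn^N\wE[1/p]$, hence over the $\nn$-adic completion $\widehat{\wE[1/p]}_{\nn}=\varprojlim_N \wE[1/p]/\nn^N\wE[1/p]$. Conversely, any $\widehat{\wE[1/p]}_{\nn}$-module $N$ of finite length is automatically killed by some power of the Jacobson radical of $\widehat{\wZ[1/p]}_{\nn}$, and its composition factors are quotients by maximal right ideals, whose underlying $L$-vector spaces are finite dimensional since $\wZ[1/p]/\nn$ is a finite extension of $L$ (the residue field of a maximal ideal in a complete noetherian local $\OO$-algebra after inverting $p$). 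Thus such an $N$ has finite $L$-dimension and defines a finite dimensional $\wE[1/p]$-module with $\nn$-primary support, giving an equivalence between $\widehat{\wE[1/p]}_{\nn}$-modules of finite length and the $\nn$-primary finite dimensional $\wE[1/p]$-modules. Combined with the anti-equivalence from Theorem \ref{antiequiv}, this yields the claimed anti-equivalence for each $\nn$.

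The main obstacle is bookkeeping rather than conceptual: one must check that the $\nn$-primary decomposition of $M$ as a $\wZ[1/p]$-module is in fact an $\wE[1/p]$-submodule decomposition (which is automatic because $\wZ$ is central in $\wE$), and that all the finiteness statements remain valid in the non-commutative setting where $\wE[1/p]$ need not be commutative but is finite over its centre $\wZ[1/p]$.
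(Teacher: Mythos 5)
Your plan follows the paper's own route: verify the hypotheses of Theorem \ref{antiequiv}, obtain the anti-equivalence with finite-dimensional $L$-vector spaces carrying a continuous right $\wE$-action, decompose by the Chinese Remainder Theorem over $\wZ[1/p]$, and identify each piece with modules of finite length over the $\nn$-adic completion. Two steps need tightening.

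First, your parenthetical justification that $\wZ[1/p]/\nn$ is finite over $L$ --- ``the residue field of a maximal ideal in a complete noetherian local $\OO$-algebra after inverting $p$'' --- treats $\wZ$ as local, but in the setup $\wP=\bigoplus_{i=1}^n \wP_i$ is a projective envelope of a sum of several irreducibles, so $\wE$ contains nontrivial orthogonal idempotents $e_i$ and $\wZ$ is in general not local. The paper handles this by embedding $\wZ$ into $\prod_{i=1}^n \wZ_i$, where $\wZ_i$ is the centre of $\End_{\dualcat(\OO)}(\wP_i)$ (local with residue field a finite extension of $k$ by Corollary \ref{Zloc}), observing that $\wZ[1/p]/\nn$ is a subfield of some $\wZ_i[1/p]/\nn_i$, and extracting finiteness from the analysis in the proof of Proposition \ref{longproof}. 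You need to make this idempotent reduction explicit (or argue that a noetherian compact commutative $\OO$-algebra is a finite product of complete noetherian local $\OO$-algebras with finite residue fields), rather than asserting localness of $\wZ$.

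Second, Theorem \ref{antiequiv} identifies the quotient category with finite-dimensional $L$-vector spaces equipped with a \emph{continuous} right $\wE$-action. When you go from a finite-length module $N$ over the $\nn$-adic completion of $\wE[1/p]$ back to an object on the Banach side, you first obtain a finite-dimensional $\wE[1/p]$-module; to invoke Theorem \ref{antiequiv} you must also know that the resulting $\wE$-action is continuous. This is precisely Lemma \ref{toponmd2}: since $\wZ$ is noetherian and $\wE$ is a finitely generated $\wZ$-module, $\wE$ is left and right noetherian, and any $\OO$-linear right $\wE$-action on a finite-dimensional $L$-vector space is automatically continuous. Without citing this lemma the dictionary between the two module categories is not closed.
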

\begin{proof} We claim that  $\wZ[1/p]/\nn$ is a finite extension of $L$ for every 
maximal ideal $\nn$. For $1\le i \le n$ let $e_i\in \wE$ be orthogonal idempotents such that $e_i \wP= \wP_i$ and let $\wZ_i$ 
be the centre of $\End_{\dualcat(\OO)}(\wP_i)$. Since $\wZ\subset \prod_{i=1}^n e_i \wZ e_i \subset \prod_{i=1}^n \wZ_i$, $\wZ[1/p]/\nn$
will be a subfield of  $\wZ_i[1/p]/\nn_i$ for some $1\le i\le n$ and some maximal ideal $\nn_i$ of $\wZ_i[1/p]$. It follows from the 
proof of Proposition \ref{longproof} that $\wZ_i[1/p]/\nn_i$ is a finite extension of $L$. Since $\wE$ is a finitely generated $\wZ$-module 
the claim implies that every irreducible $\wE[1/p]$-module is finite dimensional over $L$, see the proof of Proposition \ref{longproof}. 
 
Proposition \ref{longproof} says that the assumption (ii) in Theorem \ref{antiequiv} is satisfied. Moreover, 
since $\wZ$ is noetherian and $\wE$ is a finitely generated $\wZ$-module we deduce that $\wE$ is left and right noetherian 
and hence any $\OO$-linear action of $\wE$ on a finite dimensional $L$-vector space is automatically continuous by Lemma \ref{toponmd2}.
Thus it follows from Theorem \ref{antiequiv} and the claim that the functor $\md \mapsto \Pi(\md)$ induces an anti-equivalence of categories between 
the category of  $\wE[1/p]$-modules of finite length and $\Ban^{\mathrm{adm. fl}}_{\dualcat(\OO)}/\Ker \md$. 

Let $\nn$ be a maximal ideal of $\wZ[1/p]$ and let $\md$ be an $\wE$-module of finite length. It follows from the anti-equivalence that $\Pi(\md)$ is an object of  $(\Ban^{\mathrm{adm. fl}}_{\dualcat(\OO)}/\Ker \md)_{\nn}$ if and only if $\md$ is annihilated by a power of $\nn$, and, since $\md$ is of finite length and $\nn$ is maximal, this is equivalent to $\md= \md_{\nn}$, the localization of $\md$ at $\nn$. As already observed, $\md$ is a finite dimensional 
$L$-vector space. Hence, the image of $\wZ[1/p]$ in $\End_L(\md)$ is a finite dimensional $L$-algebra, which implies via  the Chinese remainder theorem, that $\md\cong \oplus_{\nn} \md_{\nn}$, where the sum is taken over all the maximal 
ideals of $\wZ[1/p]$, and $\md_{\nn}=0$ for almost all $\nn$. Applying the functor $\Pi$ we deduce the last assertion.
\end{proof}  

\begin{prop} We assume the hypotheses of Theorem \ref{furtherDBan} and let $\nn$ be a maximal ideal of $\wZ[1/p]$ and 
$\nn_0:= \varphi^{-1}(\nn)$, where $\varphi: \wZ\rightarrow \wZ[1/p]$. The irreducible objects of 
$(\Ban^{\mathrm{adm. fl}}_{\dualcat(\OO)}/\Ker \md)_{\nn}$ are precisely the irreducible Banach subrepresentations  
of $\Hom_{\OO}^{cont}((\wP/\nn_0 \wP)_{\mathrm{tf}}, L)$.
\end{prop}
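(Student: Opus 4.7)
The plan is to identify the ambient Banach space $\Xi := \Hom^{cont}_{\OO}((\wP/\nn_0\wP)_{\mathrm{tf}}, L)$ with $\Pi(\md_L)$ for $\md_L := (\wE/\nn_0\wE)[1/p]$, and then invoke the anti-equivalence of Theorem \ref{furtherDBan} together with Corollary \ref{Piexistsuniq}. By Theorem \ref{furtherDBan} the irreducible objects of $(\Ban^{\mathrm{adm. fl}}_{\dualcat(\OO)}/\Ker\md)_{\nn}$ correspond under $\md$ to the simple $\wE[1/p]$-modules annihilated by $\nn$. As in the proof of Theorem \ref{furtherDBan}, $\wZ[1/p]/\nn$ is a finite extension of $L$, so $\wZ/\nn_0$ is a finitely generated $\OO$-module; since $\wE$ is finitely generated over $\wZ$, the algebra $\md_L \cong \wE[1/p]/\nn\wE[1/p]$ is finite-dimensional over $L$, and its simple modules are exactly the simple $\wE[1/p]$-modules with central character $\nn$, each arising as a quotient of $\md_L$ as a module over itself.

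To identify $\Xi$ with $\Pi(\md_L)$, note that $\nn_0$ is central in $\wE$, so $\wP/\nn_0\wP \cong (\wE/\nn_0\wE)\wtimes_{\wE}\wP$. Applying $\wtimes_{\wE}\wP$ to $0\to T \to \wE/\nn_0\wE \to (\wE/\nn_0\wE)_{\mathrm{tf}} \to 0$, where $T$ is killed by a power of $\varpi$, the image of $T\wtimes_{\wE}\wP$ in $\wP/\nn_0\wP$ is $\OO$-torsion, hence
\[
 (\wP/\nn_0\wP)_{\mathrm{tf}} \;\cong\; \bigl((\wE/\nn_0\wE)_{\mathrm{tf}}\wtimes_{\wE}\wP\bigr)_{\mathrm{tf}}.
\]
Taking $(\wE/\nn_0\wE)_{\mathrm{tf}}$ as our $\wE$-stable $\OO$-lattice in $\md_L$, this reads $\Xi = \Pi(\md_L)$ in the sense of Definition \ref{defPimd}, and Lemma \ref{getback2} gives $\md(\Xi)\cong \md_L$.

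For the forward direction, let $X$ be an irreducible object of $(\Ban^{\mathrm{adm. fl}}_{\dualcat(\OO)}/\Ker\md)_{\nn}$ and let $\md_X$ be the corresponding simple $\wE[1/p]$-module, realised as a quotient $\md_L \twoheadrightarrow \md_X$. Contravariant left-exactness of $\Pi$ (Lemma \ref{obviousexact}) yields a closed embedding $\Pi(\md_X) \hookrightarrow \Pi(\md_L) = \Xi$. Since $\md_X$ is irreducible, Corollary \ref{Piexistsuniq} produces a unique non-zero irreducible closed $G$-invariant subspace $\Pi \subseteq \Pi(\md_X)\subseteq \Xi$, and Proposition \ref{closedred} gives $\md(\Pi)\cong \md_X$; so $\Pi$ is an irreducible Banach subrepresentation of $\Xi$ representing $X$. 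Conversely, if $\Pi\subseteq\Xi$ is an irreducible Banach subrepresentation, contravariant exactness of $\md$ yields a surjection $\md_L \twoheadrightarrow \md(\Pi)$; provided $\md(\Pi)\neq 0$, Proposition \ref{modulequotientnew}(ii) forces $\md(\Pi)$ to be simple and necessarily annihilated by $\nn$, so $\Pi$ determines an irreducible object of the $\nn$-summand. The only delicate point is excluding the case $\md(\Pi)=0$ for an irreducible subrep $\Pi\subseteq\Xi$; this is automatic in the block setting where $\wP$ is the projective envelope of all irreducibles in a block $\BB$, since then every irreducible subquotient of $(\wP/\nn_0\wP)_{\mathrm{tf}}$ lies in $\BB$ and hence is detected by $\Hom_{\dualcat(\OO)}(\wP,-)$.
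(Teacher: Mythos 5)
Your strategy — identify $\Xi := \Hom^{cont}_{\OO}((\wP/\nn_0\wP)_{\mathrm{tf}}, L)$ with $\Pi(\md_L)$ for $\md_L = (\wE/\nn_0\wE)[1/p]$, then invoke the anti-equivalence of Theorem \ref{furtherDBan} together with Corollary \ref{Piexistsuniq} — is in the same spirit as the paper's proof and the identification step is correct. The paper works somewhat more directly with Schikhof duals: for the direction you call the converse of the forward direction, it notes that $\nn$ killing $\md(\Pi_1)$ forces $\nn_0$ to act trivially on $\Hom_{\dualcat(\OO)}(\wP,\Theta^d)$ by $\OO$-torsion-freeness, whence $\Hom_{\dualcat(\OO)}((\wP/\nn_0\wP)_{\mathrm{tf}},\Theta^d)\cong \Hom_{\dualcat(\OO)}(\wP,\Theta^d)\neq 0$ and dually $\Pi_1$ embeds as a closed subspace of $\Xi$; your route through $\Pi(\md_X)\hookrightarrow\Pi(\md_L)$ and Corollary \ref{Piexistsuniq} achieves the same thing.

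There is one genuine gap, at the point you flag as the ``delicate point.'' You need that an irreducible closed $G$-invariant subspace $\Pi\subseteq\Xi$ has $\md(\Pi)\neq 0$, and you argue this is ``automatic in the block setting where $\wP$ is the projective envelope of all irreducibles in a block $\BB$.'' But the Proposition only assumes the hypotheses of Theorem \ref{furtherDBan}, and in the setup preceding Proposition \ref{modulequotientnew} the $S_1,\dots,S_n$ are merely some finite collection of pairwise non-isomorphic irreducibles with finite-dimensional endomorphism rings; nothing there says they exhaust a block, so your claim is not grounded in the stated hypotheses. The remedy is much simpler and needs no block hypothesis at all: since $\Pi\subseteq\Xi$, dualizing gives a surjection $(\Xi^0)^d\cong (\wP/\nn_0\wP)_{\mathrm{tf}}\twoheadrightarrow(\Pi^0)^d$, and since $(\wP/\nn_0\wP)_{\mathrm{tf}}$ is itself a quotient of $\wP$, composing yields a surjection $\wP\twoheadrightarrow(\Pi^0)^d$, i.e.\ a nonzero element of $\Hom_{\dualcat(\OO)}(\wP,(\Pi^0)^d)$, so $\md(\Pi)\neq 0$. (Applying the exact functor $\Hom_{\dualcat(\OO)}(\wP,-)$ to this surjection also shows directly that $\nn$ kills $\md(\Pi)$, since $(\wE/\nn_0\wE)_{\mathrm{tf}}\cong\Hom_{\dualcat(\OO)}(\wP,(\wP/\nn_0\wP)_{\mathrm{tf}})$ maps onto $\Hom_{\dualcat(\OO)}(\wP,(\Pi^0)^d)$.) With that correction the rest of your argument goes through.
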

\begin{proof} Since $\wZ$ is noetherian $\nn_0$ is finitely generated and hence $\nn_0 \wP$ is closed in $\wP$. Thus 
$\wE/\nn_0 \wE \wtimes_{\wE} \wP \cong \wP/\nn_0 \wP$. Since $\wE$ is a finitely generated $\wZ$-module 
$\wE/\nn_0 \wE$ is a finitely generated $\wZ/\nn_0$-module and so $(\wE/\nn_0 \wE)_{\mathrm{tf}}$ is a finitely generated 
$(\wZ/\nn_0)_{\mathrm{tf}}$-module. Now, $(\wZ/\nn_0)_{\mathrm{tf}}$ is equal to the image of $\wZ$ in $\wZ[1/p]/\nn$ and 
hence is a finitely generated $\OO$-module. We deduce that $(\wE/\nn_0 \wE)_{\mathrm{tf}}$ is a free $\OO$-module of 
finite rank. It follows from Lemma \ref{getback} that $((\wE/\nn_0 \wE)_{\mathrm{tf}}\wtimes_{\wE}\wP)_{\mathrm{tf}}$ 
is an $\OO$-torsion free object of $\dualcat(\OO)$ and from Proposition \ref{moregen} that it is finitely generated 
over $\OO[[H]]$. It is immediate that any $\OO$-linear homomorphism from 
$\wE/\nn_0 \wE \wtimes_{\wE} \wP$ to a torsion free $\OO$-module must factor through 
$(\wE/\nn_0 \wE)_{\mathrm{tf}}\wtimes_{\wE}\wP$ and then through $((\wE/\nn_0 \wE)_{\mathrm{tf}}\wtimes_{\wE}\wP)_{\mathrm{tf}}$. 
We deduce that $(\wP/\nn_0 \wP)_{\mathrm{tf}}\cong((\wE/\nn_0 \wE)_{\mathrm{tf}}\wtimes_{\wE}\wP)_{\mathrm{tf}}$ 
is a finitely generated $\OO[[H]]$-module and is $\OO$-torsion free, and so 
the Banach space representation $\Pi:=\Hom_{\OO}^{cont}((\wP/\nn_0 \wP)_{\mathrm{tf}}, L)$ is admissible.

Let $\Pi_1$ be a closed non-zero subspace of $\Pi$, irreducible as a Banach space representation of $G$. Let $\Pi^0$ be the unit ball  
in $\Pi$ with respect to the supremum norm and let $\Pi_1^0:=\Pi_1\cap \Pi^0$. Dually we obtain a surjection 
$\psi: (\wP/\nn_0 \wP)_{\mathrm{tf}}\cong (\Pi^0)^d \twoheadrightarrow (\Pi_1^0)^d$. Composing $\psi$ with the natural 
map $\wP\twoheadrightarrow  (\wP/\nn_0 \wP)_{\mathrm{tf}}$ we deduce that $\md(\Pi_1)\neq 0$ and hence $\Pi_1$ is non-zero in the quotient 
category.  Since $\wP$ is projective we get a surjection of $\wE$-modules:
$$(\wE/\nn_0 \wE)_{\mathrm{tf}}\cong \Hom_{\dualcat(\OO)}(\wP,(\wP/\nn_0 \wP)_{\mathrm{tf}})\twoheadrightarrow 
\Hom_{\dualcat(\OO)}(\wP, (\Pi_1^0)^d),$$ 
where the first isomorphism follows from Lemma \ref{getback}. Hence, $\nn$ kills $\md(\Pi_1)$ and so $\Pi_1$ is an object 
of $(\Ban^{\mathrm{adm. fl}}_{\dualcat(\OO)}/\Ker \md)_{\nn}$.

Conversely, let $\Pi_1\in\Ban^{\mathrm{adm}}_{\dualcat(\OO)}$ be  irreducible  with  $\md(\Pi_1)\neq 0$.
Then $\Pi_1$ is non-zero in the quotient category. Suppose  $\Pi_1$ is an object of 
$(\Ban^{\mathrm{adm. fl}}_{\dualcat(\OO)}/\Ker \md)_{\nn}$. Since $\md(\Pi_1)$ is an irreducible $\wE[1/p]$-module by 
Proposition \ref{modulequotientnew}, $\nn$ kills $\md(\Pi_1)$. Let $\Theta$ be an open bounded $G$-invariant lattice in 
$\Pi_1$, every $\psi\in \nn_0$ induces a map $\psi^*: \Hom_{\dualcat(\OO)}(\wP, \Theta^d)\rightarrow \Hom_{\dualcat(\OO)}(\wP, \Theta^d)$, 
which is zero after inverting $p$. Since $\Theta^d$ is  $\OO$-torsion free, so is  
$\Hom_{\dualcat(\OO)}(\wP, \Theta^d)$ and hence $\psi^*$ is zero. We deduce that 
$$\Hom_{\dualcat(\OO)}((\wP/\nn_0\wP)_{\mathrm{tf}}, \Theta^d)\cong \Hom_{\dualcat(\OO)}(\wP/\nn_0\wP, \Theta^d)\cong 
\Hom_{\dualcat(\OO)}(\wP, \Theta^d).$$
Since $\md(\Pi_1)\neq 0$, $\Hom_{\dualcat(\OO)}((\wP/\nn_0\wP)_{\mathrm{tf}}, \Theta^d)\neq 0$ and dually 
$\Hom^{cont}_G(\Pi_1, \Pi)\neq 0$. As both spaces are admissible and $\Pi_1$ is irreducible any such 
non-zero homomorphism induces an isomorphism between $\Pi_1$ and a closed subspace of $\Pi$.
\end{proof} 

\begin{remar} If  we assume that  $\wZ$ is noetherian, $\wE$ is $\OO$-torsion free 
and is a free module of finite rank over $\wZ$ then $\wZ$ is 
$\OO$-torsion free, thus $\nn_0=\wZ\cap \nn$ and so $\wZ/\nn_0$ is a free $\OO$-module of finite 
rank, which implies $\wE/\nn_0 \wE$ is a free $\OO$-module of finite rank. If additionally we 
assume that $\wP$ is flat over $\wE$ then Corollary \ref{flatmodule} implies that 
$\wP/\nn_0\wP$ is $\OO$-torsion free. This situation will arise in the applications to $\GL_2(\Qp)$.
\end{remar}

\subsection{Relation to the deformation theory}\label{reldef}

In this subsection we assume a more restrictive setup which will be used in the applications. Let 
$\wP$ be a projective envelope of an irreducible object $S$ in 
$\dualcat(\OO)$ such that $\End_{\dualcat(\OO)}(S)=k$. Let $\wE:=\End_{\dualcat(\OO)}(\wP)$ and $\pi:=S^{\vee}$. Assume that there exists $Q$ 
in $\dualcat(k)$ of finite length in $\dualcat(k)$, a finitely generated $\OO[[H]]$-module satisfying 
hypotheses (H1)-(H4) made in \S\ref{firstsec}, (we do not assume (H5)). Then it follows from Lemma \ref{inclusion} that 
$(\wE/\rad \wE) \wtimes_{\wE} \wP\cong   \wP/(\rad \wE) \wP\cong Q$ and hence the hypothesis (i) in Theorem \ref{antiequiv} is satisfied. 

\begin{remar} If $G=\GL_2(\Qp)$ then it follows from the classification in \cite{bl} and \cite{breuil1} that every smooth  irreducible $k$-representation of $G$ 
with a central character is admissible and hence any smooth 
finite length $k$-representation of $G$ with a central character is admissible. So the assumption 
that $Q$ is finitely generated over $\OO[[H]]$ will be automatically satisfied.
\end{remar}

\begin{thm}\label{bijBM} There exists a natural bijection between isomorphism classes of 
\begin{itemize}
\item[(i)] irreducible topological right $\wE_L$-modules, finite dimensional over $L$, and 
\item[(ii)] irreducible admissible  unitary $L$-Banach space representations $\Pi$ of $G$ containing 
an open bounded $G$-invariant lattice $\Theta$ such that 
\begin{itemize}
\item[(a)]  $\Theta\otimes_{\OO} k $ is of finite length;
\item[(b)] $\Theta\otimes_{\OO} k$   contains $\pi$ as a subquotient ;
\item[(c)]$\Theta^d$ is an object of $\dualcat(\OO)$.
\end{itemize} 
\end{itemize} 
\end{thm}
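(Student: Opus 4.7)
The plan is to verify that the functors $\Pi \mapsto \md(\Pi)$ from Lemma \ref{PitomPI} and $\md_L \mapsto $ (unique irreducible closed subspace of $\Pi(\md_L)$) from Corollary \ref{Piexistsuniq} induce mutually inverse bijections on isomorphism classes. The hypotheses (H1)--(H4) on $Q$, together with $Q$ being of finite length in $\dualcat(k)$ and finitely generated over $\OO[[H]]$, combined with the identification $(\wE/\rad\wE)\wtimes_{\wE}\wP \cong Q$ coming from Lemma \ref{inclusion}, make available throughout the hypotheses of Propositions \ref{moregen}, \ref{closedred} and Corollary \ref{Piexistsuniq}.

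For (ii) $\to$ (i): given $\Pi$ satisfying (a)--(c) and an open bounded $G$-invariant lattice $\Theta$ with $\Theta^d$ in $\dualcat(\OO)$, condition (b) combined with Lemma \ref{pisub} shows that $\md(\Pi) := \Hom_{\dualcat(\OO)}(\wP,\Theta^d)\otimes_\OO L$ is non-zero. Passing to a commensurable lattice $\Xi$ as in Proposition \ref{modulequotientnew}, part (ii) of that proposition makes $\md(\Pi)$ irreducible over $\wE_L$, while Lemma \ref{mult=rank0} applied to $\Theta^d$ bounds its $\OO$-rank by the length of $\Theta^d\otimes_\OO k \cong (\Theta\otimes_\OO k)^\vee$, which is finite by (a); hence $\md(\Pi)$ is finite-dimensional over $L$. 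Continuity of the action is automatic by Lemma \ref{toponmd} since $\md(\Pi)$ arises from a finitely generated $\OO$-module.

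For (i) $\to$ (ii): given an irreducible finite-dimensional topological $\wE_L$-module $\md_L$, continuity of the action together with compactness of $\wE$ yields that the image of $\wE$ in $\End_L(\md_L)$ is a compact, hence finitely generated, $\OO$-subalgebra, and therefore $\md_L$ contains a $\wE$-stable open bounded $\OO$-lattice $\md$, which is $\OO$-free of finite rank. Form $\Pi(\md_L)$ as in Definition \ref{defPimd}; by Proposition \ref{moregen} and Lemma \ref{getback}, $(\md\wtimes_{\wE}\wP)_{\mathrm{tf}}$ is an object of $\dualcat(\OO)$, finitely generated over $\OO[[H]]$, with reduction mod $\varpi$ of finite length, so $\Pi(\md_L)$ is an admissible unitary $L$-Banach space representation in $\Ban^{\mathrm{adm}}_{\dualcat(\OO)}$ whose semisimplification has finite length. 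Corollary \ref{Piexistsuniq} then supplies a unique non-zero irreducible closed $G$-invariant subspace $\Pi$ of $\Pi(\md_L)$. I claim $\Pi$ satisfies (a)--(c): (c) because $\Ban^{\mathrm{adm}}_{\dualcat(\OO)}$ is closed under subquotients; (a) because $\overline{\Pi}$ is a subquotient of the finite-length $\overline{\Pi(\md_L)}$; and (b) by Proposition \ref{closedred}, which says $\pi$ occurs in $\overline{\Pi}$ with multiplicity $\dim_L \md_L > 0$.

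Bijectivity then proceeds as follows. In one direction, Proposition \ref{closedred} gives $\md(\Pi) \cong \md(\Pi(\md_L))$, and Lemma \ref{getback2} identifies the latter with $\md_L$. In the other direction, starting from $\Pi$ in (ii), Proposition \ref{modulequotientnew}(iii) provides a surjection $\md(\Pi)\wtimes_{\wE}\wP \twoheadrightarrow \Xi^d$ for a suitable open bounded lattice $\Xi \subset \Pi$; since $\Xi^d$ is $\OO$-torsion free, this factors through $(\md(\Pi)\wtimes_{\wE}\wP)_{\mathrm{tf}}$, and Schikhof-dualizing yields a closed $G$-equivariant embedding $\Pi \hookrightarrow \Pi(\md(\Pi))$, whose image must coincide with the distinguished irreducible subspace by the uniqueness clause of Corollary \ref{Piexistsuniq}. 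The main technical obstacle is step 2: producing the $\wE$-stable lattice inside $\md_L$ without a priori noetherianness of $\wE$ (handled by the compactness argument above), and then ensuring that the careful bookkeeping of torsion-free quotients, commensurable lattices, and the uniqueness of the irreducible closed subspace all fit together so that both composite functors genuinely yield the identity.
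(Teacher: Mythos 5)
Your proposal is correct and follows essentially the same route as the paper: lattice-independence plus Proposition \ref{modulequotientnew} and Lemma \ref{mult=rank0} in one direction, and in the other the choice of an $\wE$-stable lattice (via compactness of $\wE$), the construction $\Pi(\md_L)$ of Definition \ref{defPimd}, and the unique irreducible closed subspace from Corollary \ref{Piexistsuniq}, with Propositions \ref{closedred}, \ref{moregen} (together with Lemmas \ref{getback}, \ref{getback2}) showing the two maps are mutually inverse. Your write-up merely spells out details the paper leaves implicit (continuity via Lemma \ref{toponmd}, verification of (a)--(c), and the factorization through the torsion-free quotient giving the embedding $\Pi\hookrightarrow\Pi(\md(\Pi))$), which is consistent with the paper's argument.
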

\begin{proof} We recall if the conditions are satisfied for one open bounded $G$-invariant lattice
$\Theta$ then by Lemmas  \ref{commen} and \ref{obCO} they are satisfied for all such lattices 
inside $\Pi$. 

Suppose we are given $\Pi$, containing such $\Theta$, then 
$\md(\Pi):=\Hom_{\dualcat(\OO)}(\wP, \Theta^d)_L$
does not depend on the choice of $\Theta$ and it follows from 
Proposition \ref{modulequotientnew} that it is  an irreducible $\wE_L$-module and from Lemma \ref{mult=rank0} that it is finite dimensional. 

Given an irreducible 
$\wE_L$-module, finite dimensional over $L$, we may choose an $\wE$-invariant $\OO$-lattice $\md$ inside it as $\wE$ is compact. 
Let $\Pi(\md)$ be the admissible unitary $L$-Banach space representation of $G$ defined in \ref{defPimd}.
By Corollary \ref{Piexistsuniq} $\Pi(\md)$ contains a unique closed irreducible $G$-invariant subspace of $\Pi$. Lemma \ref{redfinlen}
implies that $\overline{\Pi}$ is a $G$-representation of finite length.

It is shown at the end of the proof of Theorem \ref{antiequiv} that we have a natural injection 
$\Pi\hookrightarrow \Pi(\md(\Pi))$. This fact together with Propositions \ref{closedred} implies that the two maps are mutually inverse.
\end{proof} 

\begin{cor}\label{fgZfl} Let $\Pi$ be an irreducible admissible unitary $L$-Banach space representation of $G$
containing an open bounded $G$-invariant lattice $\Theta$ such that $\Theta^d$ is an object 
of $\dualcat(\OO)$ and $\pi$ is a subquotient of $\Theta\otimes_{\OO} k$. If the centre 
$\mathcal Z$ of $\wE$ is noetherian and $\wE$ is a finitely generated $\mathcal Z$-module then 
$\Theta\otimes_{\OO} k$ is of finite length as a $G$-representation. 
\end{cor}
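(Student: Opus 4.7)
The plan is to realize $\Pi$ as a subrepresentation of some $\Pi(\md_L)$ with $\md_L$ finite-dimensional, and then transfer a finite-length statement about the unit ball of $\Pi(\md_L)$ down to $\Theta$.

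First, since $\pi$ is a subquotient of $\Theta\otimes_{\OO}k$, Lemma \ref{pisub} gives $\md(\Pi):=\Hom_{\dualcat(\OO)}(\wP,\Theta^d)_L\neq 0$. By Proposition \ref{modulequotientnew} the $\wE_L$-module $\md(\Pi)$ is irreducible, and by Proposition \ref{longproof} (whose hypotheses are exactly our assumptions on $\mathcal Z$ and $\wE$) it is finite-dimensional over $L$. Set $\md_L:=\md(\Pi)$ and choose an $\wE$-stable $\OO$-lattice $\md^0\subset\md_L$; this is possible because $\wE$ is compact. Then $\md^0$ is free of finite rank over $\OO$, and since (H1)--(H4) together with Lemma \ref{inclusion} give $(\wE/\rad\wE)\wtimes_{\wE}\wP\cong Q$, which is finitely generated over $\OO[[H]]$, we may apply Proposition \ref{moregen} to conclude that $\md^0\wtimes_{\wE}\wP$ is a finitely generated $\OO[[H]]$-module and that $(\md^0\wtimes_{\wE}\wP)\otimes_{\OO}k$ is of finite length in $\dualcat(\OO)$.

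Set $M:=\md^0\wtimes_{\wE}\wP$ and $\Pi':=\Pi(\md_L)=\Hom_{\OO}^{cont}(M_{\mathrm{tf}},L)$ (Definition \ref{defPimd}); let $\Theta':=\Hom_{\OO}^{cont}(M_{\mathrm{tf}},\OO)$, an open bounded lattice in $\Pi'$ with $(\Theta')^d=M_{\mathrm{tf}}$. Since $M$ is a finitely generated $\OO[[H]]$-module and $\OO[[H]]$ is noetherian, $M_{\mathrm{tors}}$ is killed by some power of $\varpi$, and the snake lemma applied to multiplication by $\varpi$ on $0\to M_{\mathrm{tors}}\to M\to M_{\mathrm{tf}}\to 0$ yields a surjection $M\otimes_{\OO}k\twoheadrightarrow M_{\mathrm{tf}}\otimes_{\OO}k$. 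Thus $M_{\mathrm{tf}}\otimes_{\OO}k$ is of finite length in $\dualcat(\OO)$, and via the topological isomorphism \eqref{Thetadnew} this means $\Theta'\otimes_{\OO}k$ is a smooth $G$-representation of finite length.

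By Proposition \ref{modulequotientnew}(iii) (dualized) there is an injection $\iota:\Pi\hookrightarrow\Pi'$. Set $\Xi:=\iota^{-1}(\Theta')$; this is an open bounded $G$-invariant lattice in $\Pi$, and the quotient $\Theta'/\Xi$ is an open bounded lattice in $\Pi'/\iota(\Pi)$, hence $\OO$-torsion free. Applying the snake lemma to multiplication by $\varpi$ on $0\to\Xi\to\Theta'\to\Theta'/\Xi\to 0$ and using $(\Theta'/\Xi)[\varpi]=0$ produces a short exact sequence
\begin{equation*}
0\to\Xi\otimes_{\OO}k\to\Theta'\otimes_{\OO}k\to(\Theta'/\Xi)\otimes_{\OO}k\to 0.
\end{equation*}
Thus $\Xi\otimes_{\OO}k$ embeds into the finite-length representation $\Theta'\otimes_{\OO}k$ and so is itself of finite length. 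Since $\Xi$ and $\Theta$ are two open bounded $G$-invariant lattices in $\Pi$, Lemma \ref{commen} gives that $\Theta\otimes_{\OO}k$ is also of finite length (with the same semisimplification as $\Xi\otimes_{\OO}k$), which is what we wanted.

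The main point where the hypothesis of the corollary enters is Step~1, in invoking Proposition \ref{longproof} to make $\md(\Pi)$ finite-dimensional; the rest is a careful passage between the lattice $\Theta'$ coming from the universal $\Pi(\md_L)$ and the given lattice $\Theta$ in $\Pi$.
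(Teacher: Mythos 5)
Your proof is correct and follows essentially the same route as the paper: the paper's own argument is the one-liner that Proposition \ref{longproof} makes $\md(\Pi)$ finite-dimensional and then invokes Theorem \ref{bijBM}. What you have done is inline the relevant half of the proof of Theorem \ref{bijBM} (via Proposition \ref{moregen}, Definition \ref{defPimd}, the dual of Proposition \ref{modulequotientnew}(iii), and Lemma \ref{commen}) rather than citing the theorem, so the substance is identical even though your write-up is more explicit.
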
 
\begin{proof} Proposition \ref{longproof} implies that $\Hom_{\dualcat(\OO)}(\wP, \Theta^d)_L$ is
finite dimensional over $L$ and the assertion follows from Theorem \ref{bijBM}.
\end{proof}

\begin{cor}\label{absirre} Let $\Pi$ be as in Theorem \ref{bijBM} and  $\md:=\Hom_{\dualcat(\OO)}(\wP, \Theta^d)$, 
where $\Theta$ is an open bounded $G$-invariant lattice in $\Pi$,
then the following are equivalent:
\begin{itemize}
\item[(i)] $\End^{cont}_{L[G]}(\Pi)= L$;
\item[(ii)] $\End_{\wE_L}(\md_L)=L$;
\item[(iii)] $\md_L$ is an absolutely irreducible right $\wE_L$-module;
\item[(iv)] $\Pi$ is an absolutely irreducible $L$-Banach space representation of $G$.
\end{itemize}
\end{cor}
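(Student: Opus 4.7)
\textbf{Proof plan for Corollary \ref{absirre}.}

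The plan is to establish the equivalences in the cycle (i) $\Leftrightarrow$ (ii) $\Leftrightarrow$ (iii) and (i) $\Leftrightarrow$ (iv), using Proposition \ref{ringsareiso} as the bridge between the Banach and module sides, together with Lemmas \ref{endoirrban} and \ref{eirrb} for the absolute irreducibility claims. Throughout I use that, since $\Pi$ is as in Theorem \ref{bijBM}, condition (a) there combined with Lemma \ref{mult=rank0} makes $\md$ a free $\OO$-module of finite rank, so $\md_L$ is finite-dimensional over $L$.

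First, I would treat (i) $\Leftrightarrow$ (ii). Fix an open bounded $G$-invariant lattice $\Theta\subset\Pi$ and apply Proposition \ref{modulequotientnew} to obtain a $\Xi$ commensurable with $\Theta$. Schikhof duality gives a natural isomorphism $\End^{\mathrm{cont}}_{\OO[G]}(\Xi)^{op}\cong \End_{\dualcat(\OO)}(\Xi^d)$, and Proposition \ref{ringsareiso} identifies this with $\End_{\wE}(\md)$. Inverting $p$ yields $\End^{\mathrm{cont}}_{L[G]}(\Pi)^{op}\cong \End_{\wE_L}(\md_L)$. Since a (possibly non-commutative) ring equals $L$ precisely when its opposite equals $L$, the equivalence (i) $\Leftrightarrow$ (ii) follows.

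Next, (ii) $\Leftrightarrow$ (iii) is a standard statement about simple modules over an $L$-algebra, which I would justify as follows. By Proposition \ref{modulequotientnew}(ii), $\md_L$ is an irreducible $\wE_L$-module; as it is also finite-dimensional over $L$, Schur's lemma makes $D:=\End_{\wE_L}(\md_L)$ a finite-dimensional division algebra over $L$. Applying Jacobson density, $\md_L$ is a faithful simple module over its image in $\End_L(\md_L)$, and this image is $\End_D(\md_L)$, a matrix algebra over $D$. For any finite field extension $L'/L$, the base change $\md_L\otimes_L L'$ is a module over $\End_D(\md_L)\otimes_L L'\cong \End_{D\otimes_L L'}(\md_L\otimes_L L')$, whose simple modules are classified by simple modules of $D\otimes_L L'$. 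Hence $\md_L$ remains simple after every base change if and only if $D\otimes_L L'$ has no nontrivial quotients for any $L'$, which happens if and only if $D=L$.

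Finally, I would close the loop via the Banach side. For (i) $\Rightarrow$ (iv), apply Lemma \ref{eirrb} directly. For (iv) $\Rightarrow$ (i), note that by the isomorphism $\End^{\mathrm{cont}}_{L[G]}(\Pi)^{op}\cong \End_{\wE_L}(\md_L)$ established above, the ring $\End^{\mathrm{cont}}_{L[G]}(\Pi)$ is a subring of $\End_L(\md_L)$ and is therefore finite-dimensional over $L$; in particular, for every $\phi\in \End^{\mathrm{cont}}_{L[G]}(\Pi)$ the subalgebra $L[\phi]$ is finite-dimensional, and Lemma \ref{endoirrban} forces $\phi\in L$. The only point demanding genuine care is the base-change step in (ii) $\Leftrightarrow$ (iii); the other implications are formal consequences of the functorial dictionary already set up and of the two lemmas on Schur-type statements for admissible Banach representations.
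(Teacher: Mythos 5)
Your proposal is correct and follows essentially the same route as the paper: Proposition \ref{ringsareiso} (via Schikhof duality) gives (i) $\Leftrightarrow$ (ii), finite-dimensionality of $\md_L$ from Theorem \ref{bijBM} reduces (ii) $\Leftrightarrow$ (iii) to the standard Schur--Wedderburn fact you spell out, and Lemmas \ref{eirrb} and \ref{endoirrban} handle (i) $\Rightarrow$ (iv) and (iv) $\Rightarrow$ (i) exactly as in the paper. The only difference is that you expand the base-change argument for (ii) $\Leftrightarrow$ (iii), which the paper leaves implicit.
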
 

\begin{proof} It follows from Proposition \ref{ringsareiso} that $\End^{cont}_{L[G]}(\Pi)\cong \End_{\wE_L}(\md_L)^{op}$. Hence 
(i) is equivalent to (ii). The assumptions on $\Pi$ made in Theorem \ref{bijBM} imply that $\md_L$ is finite dimensional. Hence
(ii) is equivalent to (iii), see \cite[Cor. 12.4]{bourbakialg8}. Moreover, we deduce that $\End^{cont}_{L[G]}(\Pi)$ is finite dimensional over $L$ and so we deduce
from Lemma \ref{endoirrban} that (iv) implies (i). Finally Lemma \ref{eirrb} says that (i) implies (iv).
\end{proof}

\begin{cor}\label{theimageofcenter}  Let $\Pi$ and $\Theta$ be as in Theorem \ref{bijBM}. If $\Pi$ is absolutely irreducible then 
the image of the centre of $\wE$ in $\End_{\OO}(\Hom_{\dualcat(\OO)}(\wP, \Theta^d))$ is equal to $\OO$. 
\end{cor}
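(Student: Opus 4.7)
The plan is to trace the image of $\mathcal{Z}$ through the chain of isomorphisms provided by the preceding results and recognise the target as an $\OO$-order in a one-dimensional $L$-algebra. Set $\md := \Hom_{\dualcat(\OO)}(\wP, \Theta^d)$. Since $\wE$ is an $\OO$-algebra with $\OO$ mapping into the centre, the image of $\OO$ in $\End_{\OO}(\md)$ is simply the scalar action, so the image of $\mathcal{Z}$ automatically contains $\OO$. It therefore suffices to show the image is contained in $\OO$.

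First I would observe that because $\mathcal{Z}$ is the centre of $\wE$, right multiplication by any element of $\mathcal{Z}$ commutes with the right $\wE$-action on $\md$. Hence the structural map factors as
\begin{equation*}
\mathcal{Z} \longrightarrow \End_{\wE}(\md) \hookrightarrow \End_{\OO}(\md),
\end{equation*}
and the task reduces to identifying $\End_{\wE}(\md)$ with $\OO$.

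Next I would invoke Proposition \ref{ringsareiso}, which gives $\End_{\wE}(\md) \cong \End_{\dualcat(\OO)}(\Theta^d)$. Schikhof duality (as used in the proof of Proposition \ref{longproof}) identifies the latter with $\End^{cont}_{\OO[G]}(\Theta)^{op}$, which is an $\OO$-order in $\End^{cont}_{L[G]}(\Pi)^{op}$. By hypothesis $\Pi$ is absolutely irreducible, so Corollary \ref{absirre} yields $\End^{cont}_{L[G]}(\Pi) = L$; hence $\End^{cont}_{\OO[G]}(\Theta)$ is a bounded $\OO$-subring of $L$. Any subring of $L$ consisting of bounded elements must be contained in the valuation ring $\OO$ (an element of absolute value $>1$ produces an unbounded sequence of powers), and it contains $\OO$ via the scalar maps, so $\End^{cont}_{\OO[G]}(\Theta) = \OO$.

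Combining these identifications, the image of $\mathcal{Z}$ in $\End_{\OO}(\md)$ lies in $\End_{\wE}(\md) \cong \OO$, and contains $\OO$, hence equals $\OO$. There is no serious obstacle in this argument; it is a clean assembly of Proposition \ref{ringsareiso}, Schikhof duality, and the characterisation of absolute irreducibility in Corollary \ref{absirre}, with the small observation that a bounded subring of $L$ must lie in $\OO$.
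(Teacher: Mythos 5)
Your proposal is in substance the paper's own proof, which is a one-liner: the image of the centre contains $\OO$ and is contained in $\End_{\wE}(\md)$, which is then identified with $\OO$ via Corollary \ref{absirre}; your factorisation of the action of $\mathcal Z$ through $\End_{\wE}(\md)$ and the observation that the image contains $\OO$ are exactly the two halves of that argument.

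The one step you should repair is the appeal to Proposition \ref{ringsareiso}. That proposition is stated, and proved, only for the particular lattice $\Xi$ produced in Proposition \ref{modulequotientnew}, whose defining property $\Xi^d=\phi(\wP)$ is what makes the evaluation map $\md\wtimes_{\wE}\wP\rightarrow \Xi^d$ surjective; the proof of the isomorphism $\End_{\wE}(\md)\cong\End_{\dualcat(\OO)}(\Xi^d)$ uses this surjectivity. The corollary you are proving concerns an arbitrary open bounded $G$-invariant lattice $\Theta$ as in Theorem \ref{bijBM}, for which that surjectivity is not available, so the citation does not literally cover the isomorphism $\End_{\wE}(\md)\cong\End_{\dualcat(\OO)}(\Theta^d)$ you invoke (the isomorphism is in fact true, but only because both sides turn out to equal $\OO$). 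The detour through the Banach side is also unnecessary: since $\Theta^d$ is $\OO$-torsion free and $\Theta\otimes_{\OO}k$ has finite length, Lemma \ref{mult=rank0} shows that $\md$ is a free $\OO$-module of finite rank, so any $\wE$-linear endomorphism of $\md$ extends to an $\wE_L$-linear endomorphism of $\md_L$, which is a scalar in $L$ by Corollary \ref{absirre} (ii); as it preserves the lattice $\md$, the scalar lies in $\OO$. With this substitution (which is how the paper reads Corollary \ref{absirre} in its one-line proof), the rest of your argument — the containment of $\OO$ in the image and the boundedness remark — goes through verbatim.
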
 
\begin{proof} The image of $\mathcal Z$ contains $\OO$ and is contained in $\End_{\wE}( \Hom_{\dualcat(\OO)}(\wP, \Theta^d))$, 
which is isomorphic to $\OO$ by Corollary \ref{absirre}. 
\end{proof}

\begin{cor}\label{commutativeOK} Let $\Pi$ be an absolutely irreducible admissible $L$-Banach space 
representation of $G$  containing an open bounded $G$-invariant lattice $\Theta$ such that $\Theta^d$ 
is an object of $\dualcat(\OO)$ and $\pi$ is a subquotient of $\Theta\otimes_{\OO} k$.  
If $\wE$ is commutative then $\overline{\Pi}\subseteq (Q^{\vee})^{ss}$.
\end{cor}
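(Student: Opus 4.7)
The plan is to extract everything from Proposition \ref{modulequotientnew} combined with commutativity and absolute irreducibility. First, since $\pi$ is a subquotient of $\Theta\otimes_{\OO}k$ we are in the situation of Theorem \ref{bijBM}; apply Proposition \ref{modulequotientnew} to produce an open bounded $G$-invariant lattice $\Xi\subseteq\Pi$ and a non-zero $\phi\in\Hom_{\dualcat(\OO)}(\wP,\Xi^d)$ such that $\phi(\wP)=\Xi^d$ and, writing $\mathfrak a:=\{a\in\wE:\phi\circ a=0\}$, the module $\md:=\Hom_{\dualcat(\OO)}(\wP,\Xi^d)$ is isomorphic to $\wE/\mathfrak a$ (here $\mathfrak a$ is a two-sided ideal since $\wE$ is commutative), with the evaluation map furnishing a surjection $\wP/\mathfrak a\wP\twoheadrightarrow\Xi^d$ in $\dualcat(\OO)$.

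Second, I would pin down the rank of $\md$ as an $\OO$-module. By Lemma \ref{mult=rank0}, $\md$ is a free $\OO$-module of finite rank, and $\md_L$ is an irreducible $\wE_L$-module by Proposition \ref{modulequotientnew}(ii). Absolute irreducibility of $\Pi$ combined with Corollary \ref{absirre} gives $\End_{\wE_L}(\md_L)=L$, and then Corollary \ref{theimageofcenter}, applied with the centre $\mathcal Z$ of $\wE$ being all of $\wE$ (by commutativity), shows that the image of $\wE$ in $\End_{\OO}(\md)$ is $\OO$. Hence $\wE$ acts on $\md$ through $\OO$, and the irreducibility of $\md_L$ as an $\wE_L$-module forces $\dim_L\md_L=1$, i.e. $\wE/\mathfrak a\cong\OO$ as an $\OO$-module.

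Third, I would reduce modulo $\varpi$. From $\wE/\mathfrak a\cong\OO$ one gets $\wE/(\mathfrak a+\varpi\wE)\cong k$, and since $\wE$ is local commutative with maximal ideal $\wm=\rad\wE$ this forces $\mathfrak a+\varpi\wE=\wm$. Applying $\otimes_{\OO}k$ to the surjection $\wP/\mathfrak a\wP\twoheadrightarrow\Xi^d$ and using $(\mathfrak a+\varpi\wE)\wP=\mathfrak a\wP+\varpi\wP$ together with the identification $(\wE/\rad\wE)\wtimes_{\wE}\wP\cong\wP/\wm\wP\cong Q$ recalled at the beginning of \S\ref{reldef}, one obtains a surjection
\[
Q\;\cong\;\wP/\wm\wP\;\twoheadrightarrow\;\Xi^d/\varpi\Xi^d\;\cong\;(\Xi/\varpi\Xi)^{\vee},
\]
where the last isomorphism is \eqref{Thetadnew}. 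Passing to semisimplifications and taking Pontryagin duals, this exhibits $(\Xi\otimes_{\OO}k)^{ss}$ as a subobject of $(Q^{\vee})^{ss}$, and Lemma \ref{commen} guarantees $(\Xi\otimes_{\OO}k)^{ss}\cong\overline{\Pi}$, giving the claim.

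There is no real obstacle here: all the hard work is already in Proposition \ref{modulequotientnew}, Corollary \ref{absirre}, Corollary \ref{theimageofcenter}, and the identification $\wP/\wm\wP\cong Q$ that holds under (H1)--(H4). The only slightly subtle point is deducing $\dim_L\md_L=1$ from the combination of commutativity and absolute irreducibility, which is handled cleanly by Corollary \ref{theimageofcenter}.
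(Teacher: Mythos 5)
Your argument from the point where finiteness is in hand onwards is essentially the paper's: Proposition \ref{modulequotientnew} produces $\Xi$ and $\mathfrak a$, commutativity plus absolute irreducibility and Corollary \ref{theimageofcenter} give $\wE/\mathfrak a\cong\OO$, and reducing the surjection onto $\Xi^d$ modulo $\varpi$ together with $\wP/\wm\wP\cong Q$ and \eqref{Thetadnew} yields $\overline{\Pi}\subseteq (Q^{\vee})^{ss}$. That part is fine.

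The genuine gap is at the finiteness step. The corollary does \emph{not} assume that $\Theta\otimes_{\OO}k$ has finite length, so you are not ``in the situation of Theorem \ref{bijBM}'': condition (a) there is exactly the finite-length hypothesis you are missing, and Lemma \ref{mult=rank0} likewise requires $\Xi^d\otimes_{\OO}k$ to be of finite length before it can tell you that $\md$ has finite $\OO$-rank. The same finiteness is what powers Corollaries \ref{absirre} and \ref{theimageofcenter}: their proofs need $\md_L$ finite dimensional in order to apply Lemma \ref{endoirrban} (which only converts \emph{algebraic} endomorphisms into scalars), so invoking them here without finiteness is circular. The paper supplies this input differently: it first observes that (H1)--(H4) alone (recall (H5) is not assumed in \S\ref{reldef}) give, via Lemma \ref{inclusion} and \eqref{dimensionm} for $n=1$, that the maximal ideal $\wm$ is finitely generated, whence the commutative pseudo-compact local ring $\wE$ is noetherian by Lemma \ref{madictop}; then Proposition \ref{longproof} (whose hypotheses are now automatic since $\wE$ equals its centre) shows $\Hom_{\dualcat(\OO)}(\wP,\Xi^d)_L$ is finite dimensional for the irreducible $\Pi$, with no finite-length assumption on the reduction -- indeed the finite length of $\Xi\otimes_{\OO}k$ comes out at the end from the surjection $Q\twoheadrightarrow \Xi^d\otimes_{\OO}k$, it is a conclusion, not a hypothesis. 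You need to insert this noetherianness-plus-Proposition \ref{longproof} step (or some substitute) before your second paragraph; as written, the proof assumes the finiteness it is supposed to deliver. (A smaller point: your identification $\md\wtimes_{\wE}\wP\cong\wP/\mathfrak a\wP$ also tacitly uses that $\mathfrak a\wP$ is closed, which is cleanest once $\wE$ is known to be noetherian so that $\mathfrak a$ is finitely generated.)
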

\begin{proof} As a consequence of the hypotheses (H1)-(H4) we know that the maximal ideal of $\wE$ is generated by 
at most $1+\dim_k \Ext^1_{\dualcat(k)}(Q, S)$ elements, see Lemma \ref{inclusion} and Proposition \ref{filtdone} (iii), which implies that $\wE$ is noetherian. We note that the 
proof of Lemma \ref{inclusion} and Proposition \ref{filtdone} (iii) for $n=1$ does not use (H5).  Let $\Xi$ and $\mathfrak a$ be as in 
Proposition \ref{modulequotientnew} then $\Hom_{\dualcat(\OO)}(\wP, \Xi^d)_L$ is finite dimensional by 
Proposition \ref{longproof}. Since $\wE$ is 
commutative and $\Pi$ is absolutely irreducible it follows from Corollary \ref{theimageofcenter} 
that $\wE/\mathfrak a\cong \OO$. Tensoring the surjection 
$\OO\wtimes_{\wE}\wP\twoheadrightarrow \Xi^d$ with $k$ we obtain a surjection 
$Q\cong k \wtimes_{E}P \twoheadrightarrow \Xi^d\otimes_{\OO} k$. Thus 
$\overline{\Pi}\cong (\Xi\otimes_{\OO} k)^{ss}\cong ((\Xi^d \otimes_{\OO} k)^{\vee})^{ss} \subseteq (Q^{\vee})^{ss}.$
\end{proof}

\subsection{Extensions of Banach space representations}\label{ext_of_ban}

Let $\Mod^?_{G}(\OO)$ be  a full subcategory of $\Mod^{\mathrm{l\, fin}}_G(\OO)$ closed under subquotients and 
arbitrary direct sums in $\Mod^{\mathrm{l\, fin}}_G(\OO)$.  Let $\dualcat(\OO)$ be a full subcategory of $\Mod_G^{\mathrm{pro\, aug}}(\OO)$ 
anti-equivalent to $\Mod^?_{G}(\OO)$ via Pontryagin duality. Assume that $\Mod^?_{G}(\OO)$ has only finitely many irreducible objects $\pi_1, \ldots, \pi_n$, which are admissible. 
Let $\wP$ be a projective envelope of $\pi_1^{\vee}\oplus\ldots\oplus \pi_n^{\vee}$ in $\dualcat(\OO)$, and let $\wE=\End_{\dualcat(\OO)}(\wP)$.
It follows from \cite[\S IV.4, Cor. 1]{gab} that the functor $M\mapsto \Hom_{\dualcat(\OO)}(\wP, M)$ induces an equivalence of categories between 
$\dualcat(\OO)$ and the category of compact right $\wE$-modules, with the inverse functor given by $\md\mapsto \md\wtimes_{\wE} \wP$. This implies that 
$\wE/\rad \wE \wtimes_{\wE} \wP \cong \pi_1^{\vee}\oplus\ldots\oplus \pi_n^{\vee}$, which is a finitely generated $\OO[[H]]$-module, as $\pi_i$ are assumed to be admissible.
We further assume that  the centre $\wZ$ of $\wE$ is noetherian, and $\wE$ is a finitely generated module over $\wZ$. Let $\Mod^{\mathrm{fg}}_{\wE[1/p]}$ be the category 
of finitely generated right $\wE[1/p]$-modules. 

\begin{lem}\label{fully_faithful} The functor $\md: \Ban^{\mathrm{adm}}_{\dualcat(\OO)}\rightarrow \Mod^{\mathrm{fg}}_{\wE[1/p]}$ is fully faithful.
\end{lem} 
\begin{proof} Lemma \ref{PitomPI} and Proposition \ref{admfg} show that $\md$ is well defined. It remains to show that it is fully faithfull.
Let $\Pi_1$, $\Pi_2$ be in $\Ban^{\mathrm{adm}}_{\dualcat(\OO)}$ and let $\Theta_1$ and $\Theta_2$ be open bounded $G$-invariant lattices
in $\Pi_1$ and $\Pi_2$, respectively. Then $\Theta_1^d$ and $\Theta_2^d$ are objects of $\dualcat(\OO)$ by Lemma \ref{obCO}. For $i=1$ and $i=2$ let 
$\md_i:=\Hom_{\dualcat(\OO)}(\wP, \Theta^d_i)$, then, because of equivalence of categories explained above, we have $\Hom_{\dualcat(\OO)}(\Theta^d_2, \Theta^d_1)\cong 
\Hom_{\wE}(\md_2, \md_1)$.  Since $\Hom_G(\Pi_1, \Pi_2)\cong \Hom_{\dualcat(\OO)}(\Theta^d_2, \Theta^d_1)\otimes_{\OO} L$ by \cite{iw}, 
and $\md(\Pi_i)=\md_i\otimes_{\OO} L$, we deduce the result.
\end{proof}

\begin{prop}\label{Yoneda} Let $R$ be a ring, $\mathcal A$ the category of finitely generated (right) modules of $R$, and let $\mathcal B$ be a full subcategory of $\mathcal A$ containing all 
the modules of finite length and cÁlosed under extensions and subquotients in $\mathcal A$.  Let $Z$ be the centre of $R$.  If $Z$ is noetherian and 
$R$ is a finitely generated $Z$-module, then for every $A, B\in \mathcal B$ with $B$ a module of finite length,  the natural map between the Yoneda-$Ext$ groups : 
$$ \varphi^n: \Ext^n_{\mathcal B} (A, B)\rightarrow \Ext^n_{\mathcal A}(A, B)$$
is an isomorphism, for all $n\ge 0$.
\end{prop}
\begin{proof} Since $Z$ is noetherian, and $R$ is a finitely generated $Z$-module, $R$ is left and right noetherian. Hence $\mathcal A$ is an abelian category.
Since $\mathcal B$ is a full subcategory, closed under subquotients in $\mathcal A$, $\mathcal B$ is also an abelian category. 

If $\varphi^n$ is bijective for a given $n$ and all $A, B\in \mathcal B$ then $\varphi^{n+1}$ is injective for all $A, B\in \mathcal B$, see 
\cite[Prop. 3.3]{oort}. Moreover, $\varphi^0$ and $\varphi^1$ are bijective by assumption. So it is enough to show that $\varphi^n$ is surjective for $n\ge 2$. 
Let $0\rightarrow B\rightarrow X_{1}\rightarrow \ldots \rightarrow X_n \rightarrow A\rightarrow 0$ be an extension representing $\xi\in \Ext^n_{\mathcal A}(A, B)$. 
Let $I$ be the $Z$-annihilator of $B$, then by Artin-Rees lemma, there exists a positive integer $c$, such that $B\cap I^c X_1=0$. Since $B$ is of finite length, 
$Z/I^c$ is a $Z$-module of finite length, and hence $X_1/I^c X_1$ is an $R$-module of finite length. We thus may represent 
$\xi$ with the extension $0\rightarrow B\rightarrow X_1/I^c X_1\rightarrow X_2/I^c X_1\rightarrow\ldots \rightarrow A\rightarrow 0$. Arguing inductively, 
we deduce that $\xi$ can be represented by an extension in $\mathcal B$, and so $\varphi^n$ is surjective for $n\ge 2$.
\end{proof}
\begin{remar} The upshot of Proposition \ref{Yoneda} is that $\mathcal A$ has enough projectives and the Yoneda $\Ext$-groups can be calculated using 
projective resolutions.
\end{remar}

\begin{cor}\label{extensionsbanach}Let $\Pi_1$ and $\Pi_2\in \Ban^{\mathrm{adm}}_{\dualcat(\OO)}$, with $\Pi_1$ of finite length. The functor $\md$ induces an isomorphism
  $$\Ext^i_G(\Pi_1, \Pi_2)\cong \Ext^i_{\wE[1/p]}(\md(\Pi_2), \md(\Pi_1))$$
 between the Yoneda $\Ext$-groups computed in $\Ban^{\mathrm{adm}}_{\dualcat(\OO)}$ and in $\Mod^{\mathrm{fg}}_{\wE[1/p]}$, respectively.
 \end{cor}
 \begin{proof} We apply Proposition \ref{Yoneda} with $R=\wE[1/p]$, $\mathcal A=\Mod^{\mathrm{fg}}_{\wE[1/p]}$, and $\mathcal B$ the full subcategory 
 with objects all the finitely generated $\wE[1/p]$-modules, which are isomorphic to $\md(\Pi)$, with $\Pi\in \Ban^{\mathrm{adm}}_{\dualcat(\OO)}$. Theorem 
 \ref{furtherDBan} implies that $\mathcal B$ contains all the modules of finite length. Let  $\md$ be a finitely generated $\wE[1/p]$-module, and let $\md^0$ be a finitely generated $\wE$-submodule, which is 
 an $\OO$-lattice in $\md$. If $\md^0\wtimes_{\wE} \wP$ is finitely generated over $\OO[[H]]$, then $\Pi(\md):=\Hom^{cont}_{\OO}(\md^0\wtimes_{\wE} \wP, L)$ is an admissible Banach space representation of $G$, 
 and $\md(\Pi(\md))\cong \md$. Since $\OO[[H]]$ is noetherian, this implies that $\mathcal B$ is closed under extensions and subquotients in $\mathcal A$.
 Lemma \ref{fully_faithful} implies that $\md$ induces an equivalence of categories 
 between  $\Ban^{\mathrm{adm}}_{\dualcat(\OO)}$ and $\mathcal B$.
\end{proof}
 
 \begin{remar} The assumptions made in this subsection are satisfied if $G=\GL_2(\Qp)$, $p\ge 5$  and 
$\Mod^?_{G}(\OO)$ is a block in the category of smooth locally finite representations of $G$ with a fixed central character, see \S \ref{blocks}. 
Further, for each block we will compute the ring $\wE$ and show that it satisfies the assumptions made in this subsection. Since  the decomposition into blocks is functorial, there are no extensions between Banach space representations 
lying in different blocks, so  Corollary \ref{extensionsbanach} will enable us to compute the $\Ext$-groups in the category of admissible unitary Banach space representations of $\GL_2(\Qp)$ with a fixed central character.
\end{remar}

\section{\texorpdfstring{Representations of $\mathrm{GL}_2(\mathbb{Q}_p)$}{Representations of $\mathrm{GL}_2(\mathbb{Q}_p)$}}\label{repsGL2}
\subsection{Notation}

Let $G:=\GL_2(\Qp)$, let $P$ be the subgroup of upper-triangular matrices, $T$ the subgroup of diagonal matrices, $U$ be the 
unipotent upper triangular matrices and $K:=\GL_2(\Zp)$. Let $\pF:=p\Zp$ and  
$$ I:=\begin{pmatrix} \Zp^{\times} & \Zp \\ \pF & \Zp^{\times} \end{pmatrix},\quad I_1:= \begin{pmatrix} 1+\pF & \Zp \\ \pF & 1+\pF \end{pmatrix}, 
\quad K_1:=\begin{pmatrix} 1+\pF & \pF \\ \pF & 1+\pF \end{pmatrix}.$$
For $\lambda\in \Fp$ we denote the Teichm\"uller lift of $\lambda$ to $\Zp$ by $[\lambda]$.  Set
$$H:=\biggl \{\begin{pmatrix} [\lambda] & 0\\ 0 & [\mu]\end{pmatrix}: \lambda, \mu\in \Fp^{\times}\biggr \}.$$  
Let $\varepsilon:\Qp\rightarrow L$, $x\mapsto x|x|$,  $\omega: \Qp\rightarrow k$, $x\mapsto x |x| \pmod{\pL}$, where $|\centerdot|$ is a norm on 
$\Qp$ with $|p|=\frac{1}{p}$, and  
$\alpha:T\rightarrow k^{\times}$ be the character
$$\alpha(\begin{pmatrix} \lambda & 0\\ 0 & \mu\end{pmatrix}):=\omega(\lambda\mu^{-1}).$$
Further, define 
$$\Pi:=\begin{pmatrix} 0 & 1\\ p & 0\end{pmatrix}, \quad s:=\begin{pmatrix} 0 & 1 \\ 1 & 0\end{pmatrix}, \quad 
t:= \begin{pmatrix} p & 0 \\ 0 & 1 \end{pmatrix}. $$
For $\lambda\in k^{\times}$ we define an unramified character
$\mu_{\lambda}:\Qp^{\times} \rightarrow k^{\times}$, by  $x\mapsto \lambda^{\val(x)}$. Given two characters 
$\chi_1, \chi_2:\Qp^{\times}\rightarrow k^{\times}$ we consider $\chi_1\otimes\chi_2$ as a character of $P$, 
which sends $\left(\begin{smallmatrix} a & b \\ 0 & d \end{smallmatrix}\right)$ to $\chi_1(a)\chi_2(d)$.

Let $Z$ be the centre of $G$, and set $Z_1:=Z\cap I_1$. Let $G^0:=\{g\in G: \detr g\in \Zp^{\times}\}$ and set $G^+:=ZG^0$.

Let  $\mathcal G$ be a topological group. We denote by  $\Hom(\mathcal G, k)$ the continuous group homomorphism from $\mathcal G$ to 
$(k, +)$. If $\VVV$ is a representation of $\mathcal G$ and $S$ is a subset of $\VVV$ we denote by 
$\langle \mathcal G \centerdot S\rangle$ the smallest subspace of $\VVV$ containing $S$ and stable under the action of $\mathcal G$.
The socle $\soc_{\mathcal G} \VVV$ is the maximal semi-simple $\mathcal G$-subrepresentation of $\VVV$. 
The socle filtration $\soc^i_{\mathcal G} \VVV\subseteq \VVV$ is defined by an exact sequence 
$0\rightarrow \soc_{\mathcal G}^i \VVV\rightarrow \soc_{\mathcal G}^{i+1}\VVV \rightarrow \soc_{\mathcal G}(\VVV/\soc^i_{\mathcal G} \VVV)\rightarrow 0$,
for $i\ge 0$ and $\soc^0_{\mathcal G} \VVV:=0$.

We make the same conventions as in \cite{colmez} regarding local class field theory: if $\Lambda$ is a topological ring let 
$\widehat{\mathcal T}(\Lambda)$ be the set of continuous characters $\delta:\Qp^{\times}\rightarrow \Lambda^{\times}$. 
Local class field theory gives us an isomorphism of topological groups between the abelianisation $W_{\Qp}^{ab}$ of 
the Weil group $W_{\Qp}$ of $\Qp$ and $\Qp^{\times}$. This enables us to consider an element $\delta\in \widehat{\mathcal T}(\Lambda)$ 
as a continuous character of $W_{\Qp}$ by the formula: 
\begin{equation}\label{local_class}
\delta(g)= \delta(p)^{-deg(g)} \delta(\varepsilon(g)), \quad \forall g\in W_{\Qp},
\end{equation}
where $\deg(g)$ is an integer defined by $g(x)= x^{p^{deg(g)}}$, for all $x\in \Fpbar$, and $\varepsilon$ is the cyclotomic character. Since $\gal$ is isomorphic to 
the profinite completion of $W_{\Qp}$, the character $\delta$ defined by \eqref{local_class} extends to a continuous character $\delta: \gal\rightarrow \Lambda^{\times}$ 
if and only if  $n\mapsto \delta(p^n)$ extends continuously to $\widehat{\ZZ}$. This is the case if $\Lambda=k$ or $\Lambda=L$ and $\delta$ is unitary.
The formula \eqref{local_class} identifies the cyclotomic character with the character $\Qp^{\times}\rightarrow \Zp^{\times}$, $x\mapsto x|x|$, which is also denoted by $\varepsilon$ above.

\subsection{Rationality}

\begin{lem}\label{abstractrat} Let $\rG$ be a group, $\rK$  a field and $\rL$ a field  extension of
$\rK$. Let $V$ and $W$ be $\rK[\rG]$-modules. If either $V$ is finitely generated over 
$\rK[\rG]$ or $\rL$ is finite over $\rK$ then the natural injection 
\begin{equation}\label{basechange}
\Hom_{\rK[\rG]}(V, W)\otimes_{\rK} \rL \hookrightarrow \Hom_{\rL[\rG]}(V\otimes_{\rK} \rL, W\otimes_{\rK} \rL).
\end{equation}
is an isomorphism. In particular, $W^{\rG}\otimes_{\rK} \rL\cong (W\otimes_{\rK} \rL)^{\rG}$.
\end{lem}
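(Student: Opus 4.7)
The plan is to reduce the statement to the well-known base-change isomorphism $W^I \otimes_{\rK} \rL \xrightarrow{\cong} (W \otimes_{\rK} \rL)^I$, valid whenever $I$ is finite (arbitrary $\rL$) or $\rL$ is a finite-dimensional $\rK$-vector space (arbitrary $I$). The device is to resolve $V$ by free $\rK[\rG]$-modules and apply $\Hom$ in both categories.

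First I would choose a presentation
\[
\rK[\rG]^{(J)} \xrightarrow{\;d\;} \rK[\rG]^{(I)} \longrightarrow V \longrightarrow 0,
\]
where $I$ may be taken finite precisely when $V$ is finitely generated over $\rK[\rG]$. Applying $\Hom_{\rK[\rG]}(-,W)$ gives a left-exact sequence
\[
0 \longrightarrow \Hom_{\rK[\rG]}(V,W) \longrightarrow W^I \xrightarrow{\;d^{*}\;} W^J.
\]
Since $\rL$ is flat over $\rK$ (any field extension is flat), tensoring with $\rL$ preserves exactness:
\[
0 \longrightarrow \Hom_{\rK[\rG]}(V,W)\otimes_{\rK}\rL \longrightarrow W^I\otimes_{\rK}\rL \xrightarrow{\;d^{*}\otimes\id\;} W^J\otimes_{\rK}\rL.
\]
Tensoring the original presentation with $\rL$ yields a presentation of $V\otimes_{\rK}\rL$ over $\rL[\rG]$, and applying $\Hom_{\rL[\rG]}(-,W\otimes_{\rK}\rL)$ gives the parallel sequence
\[
0 \longrightarrow \Hom_{\rL[\rG]}(V\otimes_{\rK}\rL,\,W\otimes_{\rK}\rL) \longrightarrow (W\otimes_{\rK}\rL)^I \longrightarrow (W\otimes_{\rK}\rL)^J.
\]

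Next I would compare the two sequences by the natural maps $\eta_X\colon W^X\otimes_{\rK}\rL \to (W\otimes_{\rK}\rL)^X$, which fit into an obvious commutative diagram. The whole lemma then reduces to showing that $\eta_I$ is an isomorphism and $\eta_J$ is injective. This is immediate in both of our hypotheses: if $V$ is finitely generated then $I$ is finite, so $\eta_I$ and $\eta_J$ are even isomorphisms because $\otimes_{\rK}\rL$ commutes with finite direct sums; if instead $\rL/\rK$ is finite, then $\rL \cong \rK^n$ as a $\rK$-module, whence $W^X\otimes_{\rK}\rL \cong (W^X)^n \cong (W^n)^X \cong (W\otimes_{\rK}\rL)^X$ for \emph{any} index set $X$. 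In both cases a short diagram chase (or the five lemma applied to the augmented sequences) identifies the two kernels, giving the desired isomorphism.

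The final assertion follows by specializing to $V=\rK$ with the trivial $\rG$-action: $\rK$ is cyclic as a $\rK[\rG]$-module, hence finitely generated, and $\Hom_{\rK[\rG]}(\rK,W)=W^{\rG}$, while $\rK\otimes_{\rK}\rL=\rL$ and $\Hom_{\rL[\rG]}(\rL,W\otimes_{\rK}\rL)=(W\otimes_{\rK}\rL)^{\rG}$. There is no real obstacle here; the only subtlety is noting that without one of the two finiteness hypotheses the map $\eta_I$ need not be surjective (an infinite product can fail to commute with tensoring by an infinite-dimensional extension), which is exactly why the lemma requires one of the two conditions.
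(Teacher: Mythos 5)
Your proof is essentially correct and is close in spirit to the paper's for the finitely generated case, but there is a small gap in the way you justify the key step, and your treatment of the finite-extension case is genuinely different from (and arguably cleaner than) the paper's.

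The gap: after correctly observing that you only need $\eta_I$ to be an isomorphism and $\eta_J$ to be injective, you justify this in the finitely generated case by writing that ``$\eta_I$ and $\eta_J$ are even isomorphisms because $\otimes_{\rK}\rL$ commutes with finite direct sums.'' But only $I$ is known to be finite; the kernel of $\rK[\rG]^{(I)}\twoheadrightarrow V$ need not be finitely generated over $\rK[\rG]$ (the group ring is not noetherian in general), so you cannot assume $J$ is finite, and $\eta_J$ need not be an isomorphism. What saves you is that $\eta_J$ is \emph{always} injective, for any index set $J$: pick a $\rK$-basis $\{e_\alpha\}$ of $\rL$; an element of $W^J\otimes_\rK\rL$ is a finite sum $\sum_\alpha f_\alpha\otimes e_\alpha$ with $f_\alpha\in W^J$, and if its image in $(W\otimes_\rK\rL)^J$ vanishes then for each $j$ one has $\sum_\alpha f_\alpha(j)\otimes e_\alpha=0$ in $W\otimes_\rK\rL$, so all $f_\alpha(j)=0$. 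You should replace the claim with this argument. The paper avoids this issue by using a two-term exact sequence $0\to U\to \rK[\rG]^{\oplus n}\to V\to 0$ and invoking injectivity of the natural map for $U$, which is the same fact stated at the right level of generality.

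On the finite-extension case your route diverges from the paper's. You show directly that $\eta_X\colon W^X\otimes_\rK\rL\to(W\otimes_\rK\rL)^X$ is an isomorphism for every index set $X$ by writing $\rL\cong\rK^n$ as a $\rK$-module and commuting the finite direct sum past the product; then the same diagram comparison as before finishes the proof without using the finitely generated case at all. The paper instead observes that $\Hom_{\rK[\rG]}(V,W)=\Hom_\rK(V,W)^{\rG}$, quotes a base-change fact for plain $\rK$-linear Hom from Bourbaki (valid for $\rL/\rK$ finite), and then bootstraps the commutation of $\rG$-invariants with $\otimes\rL$ from the first part of the lemma applied to $V=\Eins$. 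Your approach is more self-contained and more elementary; the paper's is shorter because it outsources the $\Hom$ base-change to a reference, at the cost of a logical dependence between the two cases of the lemma that your proof does not have.
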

\begin{proof} If $V$ is finitely generated over $\rK[\rG]$ we have an exact sequence
of $\rK[\rG]$-modules 
$0\rightarrow U\rightarrow \rK[\rG]^{\oplus n}\rightarrow V\rightarrow 0$. We obtain a commutative diagram
\begin{displaymath}
\xymatrix@1{ 0 \ar[r] &\Hom_{\rK[\rG]}(V, W)_{\rL}\ar[r]\ar[d] & (W^{\oplus n})_{\rL}\ar[r]\ar[d]^{\cong} &
\Hom_{\rK[\rG]}(U, W)_{\rL}\ar@{^(->}[d]\\  
0 \ar[r] & \Hom_{\rL[\rG]}(V_{\rL}, W_{\rL})\ar[r] & (W_\rL)^{\oplus n}\ar[r]&
\Hom_{\rL[\rG]}(U_{\rL}, W_{\rL})}
\end{displaymath}
Since the third vertical arrow is injective  and the second is an isomorphism we deduce that the 
first is also an isomorphism. Since $W^{\rG}\cong \Hom_{\rK[\rG]}(\Eins, W)$ we deduce 
$W^{\rG}\otimes_{\rK} \rL\cong (W\otimes_{\rK} \rL)^{\rG}$. 

Let $V$ be arbitrary. The group $\rG$ acts naturally on $\Hom_{\rK}(V, W)$ by conjugation. 
If  $\rL$ is finite over $\rK$ then we have $\Hom_{\rK}(V, W)_\rL\cong \Hom_{\rL}(V_\rL, W_{\rL})$,
see for example Proposition 16(i) in \S II.7.7 of \cite{bouralg}. Since by the previous part taking $\rG$-invariants commutes with the tensor product with $\rL$ we deduce that 
\eqref{basechange} is an isomorphism.  
\end{proof} 

\begin{remar} In the foundational papers \cite{bl}, \cite{breuil1}, \cite{breuil2}, \cite{vig} the authors study representation theory over an algebraically closed field.
Using the Lemma one may show that their results also hold over an extension of $\Fp$, provided the extension is ``large enough", see Lemma \ref{fielddefi}. Lemma \ref{abstractrat} will allow us to deduce various results
on $\Ext$-groups between irreducible representations over $k$ from the corresponding results over algebraically closed fields, which have already appeared in the literature, see Remark \ref{Hecke_algebra_extend_scalars}, Lemma 
\ref{bcH}, Proposition \ref{bcEXT}.
 \end{remar}
 
\subsection{Irreducible representations }
We recall the classification of  the (absolutely) irreducible smooth $k$-rep\-re\-sen\-ta\-tions of $G$ with a central character\footnote{Laurent Berger has shown recently in \cite{cch}, that every smooth irreducible representation of $G$ over an 
algebraically closed field of characteristic $p$ admits a central character.} due to Barthel-Livn\' e \cite{bl} and Breuil \cite{breuil1}. We then show that the category 
$\Mod^{\mathrm{l\, fin}}_{G,\zeta}(k)$ behaves well when we replace $k$ by an extension. We let $k$ be an arbitrary field of characteristic $p$ until Proposition \ref{irrkreps}, from then onwards $k$ is a finite field,
which is the situation we are most interested in. This  assumption is made for the sake of simplicity, one has to work harder if $k$ is not  a perfect field,  see Remark \ref{perfecto}. 
We assume from \S \ref{hecke} onwards that $k$ contains a square root of $\zeta(p)$, where $\zeta$ is the fixed central character.

Let $\sigma$ be an irreducible smooth representation of $K$. Since $K_1$ is a normal pro-$p$ subgroup of 
$K$, $\sigma^{K_1}$ is non-zero and since $\sigma$ is irreducible we deduce that $K_1$ acts trivially.
Hence $\sigma$ is an irreducible representation of $K/K_1\cong \GL_2(\Fp)$ and so $\sigma\cong \Sym^r k^2\otimes \det^a$ 
for uniquely determined  integers $0\le r\le p-1$ and $0\le a \le p-2$. We also note that 
this  implies that $\sigma$ is absolutely irreducible and can be defined over $\Fp$.

Let $\zeta:Z\rightarrow k^{\times}$ be a smooth character extending the central character of $\sigma$.
We extend the action of $K$ on $\sigma$ to the action of   $KZ$ by making $p$ act by a scalar $\zeta(p)$. 
 It is shown in \cite[Prop. 8]{bl} that there exists an isomorphism of algebras:
\begin{equation}\label{endoalgebra}
\End_{G}(\cIndu{KZ}{G}{\sigma})\cong k[T]
\end{equation}
for a certain Hecke operator $T\in  \End_{G}(\cIndu{KZ}{G}{\sigma})$ defined in \cite[\S 3]{bl}.  

\begin{prop}\label{reducetoabs} Let $\pi$ be a smooth irreducible $k$-representation of $G$ with a central character
 $\zeta$. There exists a finite extension $l$ of $k$ such that $\pi\otimes_k l$ is of finite length and all the irreducible subquotients are absolutely irreducible in the sense of Remark \ref{absirreequiv} (iii).
\end{prop}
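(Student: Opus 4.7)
The plan is to combine admissibility of $\pi$ with a Galois descent argument. By the results of Barthel-Livn\'e cited in the introduction, every smooth irreducible $k$-representation of $G = \GL_2(\Qp)$ with a central character is admissible, so $\pi^{K_1}$ is finite dimensional over $k$. Since $K_1$ is pro-$p$ and $\pi$ is irreducible, $\pi$ is generated under $G$ by $\pi^{K_1}$, so any element of $D := \End_{k[G]}(\pi)$ is determined by its restriction to $\pi^{K_1}$. Hence $D$ embeds into $\End_k(\pi^{K_1})$ and is a finite dimensional division algebra over $k$; as $k$ is finite, Wedderburn's little theorem forces $D$ to be a finite field extension of $k$.

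Let $l$ be any finite Galois extension of $k$ containing $D$ and put $\Gamma := \Gal(l/k)$. By Lemma~\ref{abstractrat} applied to the finitely generated $k[G]$-module $\pi$, one has $\End_{l[G]}(\pi\otimes_k l)\cong D\otimes_k l\cong\prod_{\sigma\in\Hom_k(D,l)} l$, and the corresponding orthogonal idempotents decompose $\pi\otimes_k l=\bigoplus_\sigma \pi_\sigma$ into admissible $l[G]$-modules with $\End_{l[G]}(\pi_\sigma)=l$. The natural semilinear action of $\Gamma$ on $\pi\otimes_k l$ permutes these idempotents, and hence the summands $\pi_\sigma$, via its action on $\Hom_k(D,l)$ by post-composition. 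To finish the proof it suffices to show that each $\pi_\sigma$ is absolutely irreducible.

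I would verify irreducibility of $\pi_\sigma$ as follows. Since $\pi_\sigma$ is admissible it is locally of finite length and hence contains a non-zero irreducible $l[G]$-submodule $V$. The sum $W:=\sum_{\gamma\in\Gamma}\gamma(V)$ is a non-zero $\Gamma$-stable $l[G]$-submodule of $\pi\otimes_k l$, so by Galois descent (and linear independence of characters, which guarantees that Galois invariants of a non-zero Galois-stable subspace are non-zero) it equals $U\otimes_k l$ for a non-zero $k[G]$-submodule $U$ of $\pi$. Irreducibility of $\pi$ forces $U=\pi$, so $W=\pi\otimes_k l$. Projecting to $\pi_\sigma$ via the idempotent $e_\sigma$ then exhibits $\pi_\sigma=\sum_{\gamma\in\Gamma_\sigma}\gamma(V)$ as a sum of irreducible $l[G]$-submodules (where $\Gamma_\sigma$ is the stabilizer of $\sigma$), in particular as a semisimple $l[G]$-module. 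If $V$ were proper in $\pi_\sigma$ this decomposition would have length $\geq 2$ and so produce a non-trivial idempotent in $\End_{l[G]}(\pi_\sigma)=l$, a contradiction. Thus $V=\pi_\sigma$, and $\pi_\sigma$ is irreducible.

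Absolute irreducibility of each $\pi_\sigma$ follows by exactly the same argument applied one level up: for any further finite Galois extension $l'/l$, Lemma~\ref{abstractrat} gives $\End_{l'[G]}(\pi_\sigma\otimes_l l')=l'$, and if $\pi_\sigma\otimes_l l'$ were reducible the Galois-descent procedure above (now with the Galois group $\Gal(l'/l)$ and with $\pi_\sigma$ in place of $\pi$) would write $\pi_\sigma\otimes_l l'$ as a direct sum of $\geq 2$ irreducible submodules, producing a non-trivial idempotent and contradicting $\End_{l'[G]}(\pi_\sigma\otimes_l l')=l'$. The whole argument is a routine Galois-descent exercise once admissibility has been secured; the only point requiring care is the bookkeeping of the $\Gamma$-action on the summands $\pi_\sigma$, which is straightforward through the identification of summands with embeddings $\sigma\colon D\hookrightarrow l$.
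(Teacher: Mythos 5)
Your proof is correct, but it takes a genuinely different route from the paper's. The paper proves this directly from the classification machinery: by (the proof of) Barthel--Livn\'e's Proposition 32, $\pi$ is a quotient of $\cIndu{KZ}{G}{\sigma}/P(T)\cIndu{KZ}{G}{\sigma}$ for some irreducible $P\in k[T]$, and one then takes $l$ to be the splitting field of $P$ and invokes the known structure of $\cIndu{KZ}{G}{\sigma}/(T-\lambda)$ from \cite{bl} and \cite{breuil1}. Your approach instead bootstraps from admissibility and Schur's lemma: $D:=\End_{k[G]}(\pi)$ is a finite field extension of $k$, and a Galois-descent argument over any finite Galois $l\supseteq D$ produces the decomposition $\pi\otimes_k l = \bigoplus_{\sigma} \pi_{\sigma}$ into absolutely irreducible pieces indexed by $\Hom_k(D,l)$. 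The logic of your irreducibility step (saturate an irreducible sub under $\Gal(l/k)$ to get a $\Gamma$-stable subspace, descend, conclude semisimplicity, and rule out length $\geq 2$ via $\End_{l[G]}(\pi_\sigma)=l$) is sound, as is the upgrade to absolute irreducibility.

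One point worth being careful about: the paper deduces admissibility of general irreducible $k$-representations as a \emph{corollary} of this proposition (via base change to $\bar k$), so quoting ``admissibility is known from Barthel--Livn\'e'' risks a circularity in the exposition. It is avoidable --- \cite{bl} Proposition 32 already gives the quotient structure over an arbitrary coefficient field, and hence admissibility of $\pi$ directly --- but then you are ultimately pulling on the same thread the paper does, just in a longer and more abstract way. What your argument buys is generality: it is pure Galois descent and would apply verbatim whenever irreducibles are known to be admissible with finite endomorphism algebras. What the paper's argument buys is brevity and explicitness: it identifies $l$ as the splitting field of $P$, which is useful downstream (e.g.\ in Lemma~\ref{fielddefi}, where the field of definition is pinned down in terms of $\chi(p)$ and $\zeta(p)$). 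A minor remark: you invoke ``locally of finite length'' to produce an irreducible submodule of $\pi_\sigma$, which appeals to Emerton's theorem; admissibility plus the fact that $I_1$ is pro-$p$ already gives an irreducible sub by a Zorn's lemma argument on $I_1$-invariants, so the heavier input is not needed.
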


\begin{proof} Following the proof of Proposition 32 in  \cite{bl} we deduce that $\pi$ is a quotient of 
$\cIndu{KZ}{G}{\sigma}/ P(T) \cIndu{KZ}{G}{\sigma}$, where $P\in k[T]$ is a non-zero 
polynomial,  irreducible over $k$ and $T$ is as in \eqref{endoalgebra}. We know 
that the assertion holds if $P(T)=T-\lambda$, for some $\lambda\in k$, by \cite{bl} if $\lambda\neq 0$  and \cite{breuil1}
if $\lambda=0$. We may take $l$ to be the splitting field of $P$. 
\end{proof}

\begin{remar}\label{perfecto} If $k$ is perfect then the same proof shows that for every smooth irreducible $k$-representation of $G$ with a central character $\zeta$ there  
exist a finite extension $l$ of $k$ such that $\pi\otimes_k l$ is isomorphic to a finite direct sum of of absolutely irreducible representations.
\end{remar}

\begin{cor} If $\pi$ is an object of  $\Mod^{\mathrm{l\, fin}}_{G,\zeta}(k)$ then
$\pi\otimes_k l$ is an object of $\Mod^{\mathrm{l\, fin}}_{G,\zeta}(l)$.
\end{cor}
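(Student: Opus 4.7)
The plan is to verify each of the three defining conditions of $\Mod^{\mathrm{l\, fin}}_{G,\zeta}(l)$ for $\pi\otimes_k l$. Smoothness and the requirement that $Z$ act via $\zeta$ both pass immediately through the base change, so the only substantive point is local finiteness. Given $w\in \pi\otimes_k l$, I would write $w=\sum_{i=1}^{n} v_i\otimes \lambda_i$ with $v_i\in \pi$ and $\lambda_i\in l$, and observe that the $l[G]$-submodule generated by $w$ is contained in $\pi_0\otimes_k l$, where $\pi_0:=\sum_{i=1}^n k[G]v_i$ is of finite length over $k[G]$ (a finite sum of finite-length $k[G]$-submodules of $\pi$). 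A d\'evissage along a $k[G]$-composition series of $\pi_0$ then reduces matters to the assertion: whenever $\sigma$ is an irreducible smooth $k[G]$-representation with central character $\zeta$, the base change $\sigma\otimes_k l$ has finite length as an $l[G]$-module.

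To handle the irreducible case I would invoke Proposition \ref{reducetoabs}, which supplies a finite extension $l_\sigma/k$ such that $\sigma\otimes_k l_\sigma$ has finite $l_\sigma[G]$-length with \emph{absolutely} irreducible subquotients. Fixing an algebraic closure of $k$ and embedding both $l$ and $l_\sigma$ into it, I take $l'$ to be their compositum, which is finite over both $l$ and $l_\sigma$. Then
$$\sigma\otimes_k l'\;\cong\;(\sigma\otimes_k l_\sigma)\otimes_{l_\sigma} l',$$
and since the $l_\sigma[G]$-composition factors of $\sigma\otimes_k l_\sigma$ are absolutely irreducible they remain irreducible after extending scalars to $l'$. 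Hence $\sigma\otimes_k l'$ has finite length as an $l'[G]$-module.

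The remaining step, which I view as the only real content, is descent of the finite-length property from $l'$ back to $l$. For this I would appeal to faithful flatness of $l'$ over $l$: any strictly ascending chain $V_1\subsetneq V_2\subsetneq\cdots$ of $l[G]$-submodules of $V:=\sigma\otimes_k l$ yields, upon applying $-\otimes_l l'$, a strictly ascending chain of $l'[G]$-submodules of $V\otimes_l l'\cong \sigma\otimes_k l'$. Since the latter has finite $l'[G]$-length the chain must terminate, so $\sigma\otimes_k l$ has finite $l[G]$-length, and the corollary follows. The only mild bookkeeping point, to be recorded, is that the identifications $\sigma\otimes_k l'\cong (\sigma\otimes_k l_\sigma)\otimes_{l_\sigma} l'\cong (\sigma\otimes_k l)\otimes_l l'$ are all $G$-equivariant, which is automatic since $G$ acts only on the $\sigma$-factor throughout.
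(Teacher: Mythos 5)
Your proof is correct and takes essentially the same route as the paper: reduce to $\pi$ of finite length via cyclic submodules, then to the irreducible case, and appeal to Proposition~\ref{reducetoabs}. The paper's own proof of this corollary is exceedingly terse (two sentences) and simply asserts that Proposition~\ref{reducetoabs} gives finite length of $\pi\otimes_k l$. As you correctly observe, the statement of that proposition only produces \emph{one} good extension $l_\sigma$, so passing to an arbitrary $l$ requires the compositum argument plus faithfully flat descent from $l'$ back down to $l$. You supply exactly this bridge; the paper leaves it implicit (and in fact uses the same descent move again, again implicitly, in the proof of Proposition~\ref{injgoinj}, where a cokernel $\cIndu{KZ}{G}{\sigma}/(P(T))$ is declared of finite length ``by base changing to the splitting field of $P(T)$''). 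So your elaboration is faithful to the intended argument. Two very minor remarks: (i) forming the compositum inside a fixed $\bar{k}$ presumes $l$ is algebraic over $k$, which is the situation of interest here, but if one wants arbitrary $l$ one should take the compositum inside $\bar{l}$ (or use one factor field of $l\otimes_k l_\sigma$, which splits since $k$ is finite and hence perfect); and (ii) you check the ascending chain condition via flatness, but finite length also needs the descending chain condition — the identical argument handles it, so this is a phrasing gap rather than a mathematical one.
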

\begin{proof} Given $v\in \pi\otimes_k l$ we may express $v=\sum_{i=1}^n \lambda_i v_i$ 
with $v_i \in \pi$ and $\lambda\in l$. Hence, we may assume that $\pi$ is of finite length.
Proposition \ref{reducetoabs} implies that $\pi\otimes_k l$ is of finite length. 
\end{proof} 

\begin{cor}\label{finlengthadm} Every smooth finite length $k$-representation of $G$ with a central character 
is admissible.
\end{cor}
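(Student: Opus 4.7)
The plan is to reduce the statement to the case of absolutely irreducible $\pi$, where admissibility is known from the Barthel-Livn\'e / Breuil classification, and then to descend via the rationality lemma \ref{abstractrat}.

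First I would reduce to the irreducible case. Given a finite length $\pi$, take any short exact sequence $0\to \pi_1\to \pi\to \pi_2\to 0$; since $H$-invariants is a left exact functor on smooth representations, if $\pi_1^H$ and $\pi_2^H$ are finite dimensional, so is $\pi^H$. By induction on the length of $\pi$ it therefore suffices to prove admissibility when $\pi$ is irreducible.

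Next, assume $\pi$ is irreducible with central character $\zeta$. By Proposition \ref{reducetoabs} there is a finite extension $l/k$ such that $\pi\otimes_k l$ has finite length in $\Mod^{\mathrm{sm}}_{G,\zeta}(l)$ and all of its irreducible subquotients are absolutely irreducible. Every absolutely irreducible smooth $l$-representation of $G$ with a central character is, by the classification of Barthel-Livn\'e \cite{bl} and Breuil \cite{breuil1}, either a character, a principal series, a special series or a supersingular representation, and in each case is well known to be admissible (this is a direct check on the explicit models). Since extensions of admissible smooth representations by admissible smooth representations remain admissible, $\pi\otimes_k l$ is admissible; in particular $(\pi\otimes_k l)^H$ is finite dimensional over $l$ for every compact open subgroup $H$ of $G$.

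Finally, I descend. Since $l/k$ is finite, Lemma \ref{abstractrat} applied with $\rG = H$ gives a canonical isomorphism
\begin{equation*}
\pi^H\otimes_k l \;\cong\; (\pi\otimes_k l)^H.
\end{equation*}
The right hand side is finite dimensional over $l$, so the left hand side is too, and comparing dimensions $\dim_l(\pi^H\otimes_k l)=\dim_k \pi^H$ yields that $\pi^H$ is finite dimensional over $k$. As $H$ was arbitrary, $\pi$ is admissible. There is no real obstacle here; the only substantive input is knowing admissibility for the absolutely irreducible representations, which is already contained in the classification results invoked in the proof of Proposition \ref{reducetoabs}.
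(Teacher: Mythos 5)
Your argument is correct and is essentially the paper's own proof: admissibility of the absolutely irreducible representations from the Barthel-Livn\'e/Breuil classification, base change to a finite extension $l$ via Proposition \ref{reducetoabs}, and descent of $H$-invariants via Lemma \ref{abstractrat}. The only additions are the routine d\'evissage steps (induction on length, stability of admissibility under extensions), which the paper leaves implicit.
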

\begin{proof} It follows from the classification, see \cite{bl} and \cite{breuil1}, that 
every absolutely irreducible representation is admissible. The assertion follows from 
Proposition \ref{reducetoabs} and Lemma \ref{abstractrat}. 
\end{proof}

\begin{lem} Let $\pi$ and $\tau$ be objects of $\Mod^{\mathrm{l \, fin}}_{G, \zeta}(k)$
(resp. $\Mod^{\mathrm{sm}}_{G, \zeta}(k)$) and let $l$ be a field extension of $k$. If 
$\pi$ is finitely generated over $G$ then the natural map
\begin{equation}\label{bcinjext1}
\Ext^1_{k[G],\zeta}(\pi, \tau)\otimes_k l \rightarrow \Ext^1_{l[G],\zeta}(\pi\otimes_k l, \tau\otimes_k l), 
\end{equation} 
is injective, where $\Ext^1$ are computed in the corresponding categories.
\end{lem}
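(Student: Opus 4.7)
The plan is to reduce the injectivity of \eqref{bcinjext1} to a diagram chase involving the four term exact sequence coming from an injective hull of $\tau$, combined with Lemma \ref{abstractrat}.

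First I would choose an embedding $\tau \hookrightarrow J$ with $J$ an injective object of the ambient category (either $\Mod^{\mathrm{l\,fin}}_{G,\zeta}(k)$ or $\Mod^{\mathrm{sm}}_{G,\zeta}(k)$); such $J$ exists because both categories have injective envelopes, and in both cases the cokernel $C:=J/\tau$ remains an object of the category since each is closed under quotients. Applying $\Hom_{k[G],\zeta}(\pi,\,\cdot\,)$ to $0\to\tau\to J\to C\to 0$ gives the four term exact sequence
\begin{equation*}
0\to\Hom(\pi,\tau)\to\Hom(\pi,J)\to\Hom(\pi,C)\to\Ext^1_{k[G],\zeta}(\pi,\tau)\to 0.
\end{equation*}

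Next I would tensor this sequence with $l$ over $k$. Since $l$ is a $k$-vector space and hence flat, tensoring preserves exactness, producing an analogous four term exact sequence with each $\Hom$ and $\Ext$ group replaced by its base change. Because $\pi$ is finitely generated over $k[G]$, Lemma \ref{abstractrat} identifies $\Hom_{k[G],\zeta}(\pi, W)\otimes_k l$ with $\Hom_{l[G],\zeta}(\pi\otimes_k l,\, W\otimes_k l)$ naturally in $W$, applied to $W=\tau$, $W=J$ and $W=C$.

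The exact sequence over $l$ obtained by applying $\Hom_{l[G],\zeta}(\pi\otimes_k l,\,\cdot\,)$ to the base changed short exact sequence $0\to\tau_l\to J_l\to C_l\to 0$ is only exact through $\Ext^1_{l[G],\zeta}(\pi_l,\tau_l)$ (the map into $\Ext^1$ need not be surjective, since $J_l$ need not be injective over $l$). Nevertheless, combining the two sequences gives a commutative diagram
\begin{equation*}
\xymatrix@C=1.2em{
\Hom(\pi,J)_l\ar[r]\ar[d]^{\cong} & \Hom(\pi,C)_l\ar[r]\ar[d]^{\cong} & \Ext^1_{k[G],\zeta}(\pi,\tau)_l\ar[r]\ar[d] & 0 \\
\Hom(\pi_l,J_l)\ar[r] & \Hom(\pi_l,C_l)\ar[r] & \Ext^1_{l[G],\zeta}(\pi_l,\tau_l) &
}
\end{equation*}
with exact rows in the indicated range, in which the first two vertical maps are isomorphisms by the previous step. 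A direct diagram chase (the cokernel of the top left arrow maps isomorphically to the cokernel of the bottom left arrow, which injects into $\Ext^1_{l[G],\zeta}(\pi_l,\tau_l)$) then yields injectivity of the third vertical map, which is precisely \eqref{bcinjext1}.

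The only real obstacle is step one, namely knowing that the ambient category contains an injective $J$ with quotient $C$ still in the category; for $\Mod^{\mathrm{sm}}_{G,\zeta}(k)$ this is immediate from Lemma \ref{genind} and closedness under quotients, and for $\Mod^{\mathrm{l\,fin}}_{G,\zeta}(k)$ it follows from the same lemma together with the fact, recalled in \S\ref{zerosec}, that this category is locally finite and hence has injective envelopes inside itself. Everything else is bookkeeping with the flat base change $\otimes_k l$ and Lemma \ref{abstractrat}, whose finite generation hypothesis on $\pi$ is exactly what the statement of the lemma assumes.
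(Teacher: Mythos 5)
Your proof is correct, and it takes a genuinely different route from the paper's. The paper works directly with Yoneda $\Ext$: the map sends an extension class $\xi: 0\to\tau\to\kappa\to\pi\to 0$ to its base change $\xi_l$, and if $\xi_l$ splits then, since $\pi$ is finitely generated and $\Hom_{k[G]}(\pi,\kappa)\otimes_k l\cong\Hom_{l[G]}(\pi_l,\kappa_l)$ by Lemma \ref{abstractrat}, a section of $\kappa_l\twoheadrightarrow\pi_l$ descends to a section of $\kappa\twoheadrightarrow\pi$ over $k$, so $\xi=0$. That argument is very short, but as written it most literally establishes injectivity of the restriction $\Ext^1_{k[G],\zeta}(\pi,\tau)\to\Ext^1_{l[G],\zeta}(\pi_l,\tau_l)$ to the image of $\Ext^1\otimes 1$; in general, a $k$-linear injection of a $k$-vector space into an $l$-vector space need not $l$-linearize to an injection, so there is a bit left implicit. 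Your argument via the four-term exact sequence coming from an injective envelope $\tau\hookrightarrow J$, flatness of $l$ over $k$, the natural base change isomorphisms on $\Hom$ (again Lemma \ref{abstractrat}, for $\tau$, $J$ and $C$), and the diagram chase yields the full statement about $\Ext^1\otimes_k l$ directly and with nothing swept under the rug. The price is a little more bookkeeping; the gain is that the $l$-linear structure is manifest from the start rather than needing to be reconciled with the Yoneda description. Both approaches use the finite generation of $\pi$ in exactly the same place, namely to invoke Lemma \ref{abstractrat}.

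One small point worth making explicit when you write this up: the commutativity of the second square in your diagram (with the connecting homomorphisms) should be justified by observing that the connecting map $\Hom(\pi,C)\to\Ext^1(\pi,\tau)$ is pullback along $f\in\Hom(\pi,C)$ of the extension $0\to\tau\to J\to C\to 0$, and that this is compatible with the $l$-module structures and with base change; it is true, but it is the one square that is not tautological.
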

\begin{proof} In terms of Yoneda $\Ext$ the map is given by sending 
an extension $0\rightarrow \tau\rightarrow \kappa\rightarrow \pi\rightarrow 0$
to $0\rightarrow \tau_l\rightarrow \kappa_l\rightarrow \pi_l\rightarrow 0$. 
Since $\pi$ is assumed to be finitely generated over $k[G]$, it follows from Lemma \ref{abstractrat} 
that $\Hom_G(\pi_l, \kappa_l)=\Hom_G(\pi, \kappa)_l$, hence  any splitting  of $0\rightarrow \tau_l\rightarrow \kappa_l\rightarrow \pi_l\rightarrow 0$
is already defined over $k$. Thus the map is an injective.
\end{proof}

Let $\bar{k}$ be the algebraic closure of $k$. It follows from \cite[Thm. 33]{bl} and \cite[Thm 1.1]{breuil1} that the  irreducible smooth 
$\bar{k}$-representations of $G$ with a central character fall 
into four disjoint classes:
\begin{itemize}
\item[(i)] characters, $\eta\circ \det$;
\item[(ii)] special series, $\Sp\otimes\eta\circ \det$;
\item[(iii)] principal series $\Indu{P}{G}{\chi_1\otimes\chi_2}$, with $\chi_1\neq \chi_2$;
\item[(iv)] supersingular $\cIndu{KZ}{G}{\sigma}/(T)$.
\end{itemize} 
The Steinberg representation $\Sp$ is defined by the exact sequence: 
\begin{equation}\label{defineSp}
0\rightarrow \Eins\rightarrow \Indu{P}{G}{\Eins}\rightarrow \Sp\rightarrow 0.
\end{equation}

\begin{defi}Let $\pi$ be a $\bar{k}$-representation of a group $\rG$ and $l$ a subfield of $\bar{k}$.
 We say that $\pi$ can be defined over $l$ if there exists 
an $l$-representation  $\tau$ of $\rG$ such that $\tau\otimes_l \bar{k} \cong \pi$. We say that $l$ is a field of definition 
of $\pi$ if it is the smallest subfield of $\bar{k}$ over which $\pi$ can be defined.
\end{defi}

\begin{lem}\label{DefineCHI} Let $\chi:\Qp^{\times}\rightarrow \bar{k}^{\times}$ be a smooth character then the field of definition of $\chi$ is $\Fp[\chi(p)]$.
\end{lem}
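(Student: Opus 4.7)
The plan is to reduce the statement to the explicit structure of smooth characters of $\Qp^{\times}$ with values in a field of characteristic $p$, and then observe that such a character takes values in a small extension of $\Fp$.

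First I would note that a $1$-dimensional representation is defined over a subfield $l\subseteq \bar{k}$ (in the sense of the preceding definition) if and only if its values lie in $l^{\times}$: indeed, $\tau\otimes_l \bar{k}\cong \chi$ for a character $\tau:\Qp^{\times}\to l^{\times}$ simply means $\tau = \chi$ with image landing in $l^{\times}\subseteq \bar{k}^{\times}$. Hence the field of definition of $\chi$ is the smallest subfield of $\bar{k}$ containing the image $\chi(\Qp^{\times})$. Since $\chi(p)\in \chi(\Qp^{\times})$, this field must certainly contain $\Fp[\chi(p)]$, giving one inclusion.

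For the other inclusion, I would decompose $\Qp^{\times}=p^{\ZZ}\times \Zp^{\times}$ and further $\Zp^{\times}\cong \mu_{p-1}\times(1+p\Zp)$ via the Teichm\"uller lift. The restriction of $\chi$ to $1+p\Zp$ is a smooth, hence continuous, homomorphism from a pro-$p$ group into $\bar{k}^{\times}$; since $\bar{k}$ has characteristic $p$ and $\bar{k}^{\times}$ contains no non-trivial elements of $p$-power order, this restriction is trivial. The restriction of $\chi$ to $\mu_{p-1}$ is a character of a cyclic group of order $p-1$, so its image consists of $(p-1)$-th roots of unity in $\bar{k}$; these all lie in $\Fp^{\times}$. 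Combining, $\chi(\Zp^{\times})\subseteq \Fp^{\times}$ and $\chi(p)$ generates the image on the $p^{\ZZ}$-factor, so $\chi(\Qp^{\times})\subseteq \Fp[\chi(p)]^{\times}$.

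The two inclusions give that the field of definition equals $\Fp[\chi(p)]$. There is no real obstacle here: the only point to handle with care is the vanishing of $\chi$ on the pro-$p$ part $1+p\Zp$, which relies on the characteristic of $\bar{k}$ being $p$ together with smoothness of $\chi$.
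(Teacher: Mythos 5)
Your proof is correct and follows essentially the same route as the paper: decompose $\Qp^{\times}\cong \Zp^{\times}\times p^{\ZZ}$, use smoothness plus characteristic $p$ to see that $\chi$ is trivial on $1+p\Zp$ (and hence $\chi(\Zp^{\times})\subseteq\Fp^{\times}$), and conclude. You spell out a couple of points the paper leaves terse — the reduction of "field of definition" to "smallest subfield containing the image" for one-dimensional representations, and the reason the restriction to $1+p\Zp$ must vanish (no nontrivial $p$-power torsion in $\bar{k}^{\times}$) — but these are the same argument made explicit, not a different method.
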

\begin{proof} Since $\chi$ is smooth it is trivial on $1+p \Zp$ and hence $\chi(\Zp^{\times})\subseteq \Fp^{\times}$, 
the group of $(p-1)$-st roots of unity in $\bar{k}$. 
Since $\Qp^{\times}\cong \Zp^{\times} \times p^{\ZZ}$ the assertion follows.
\end{proof}

\begin{lem}\label{fielddefi} Let $\pi$ be a smooth irreducible $\bar{k}$-representation of $G$ with a central character $\zeta$. Then there exists 
a smallest subfield $l$ of $\bar{k}$ over which $\pi$ can be defined. Moreover, 
\begin{itemize}
\item[(i)] if $\pi\cong \eta\circ \det$ then $l=\Fp[\eta(p)]$; 
\item[(ii)] if $\pi\cong \Sp\otimes\eta\circ \det$ then $l=\Fp[\eta(p)]$;
\item[(iii)] if $\pi\cong \Indu{P}{G}{\chi_1\otimes\chi_2}$ then $l= \Fp[\chi_1(p), \chi_2(p)]$;
\item[(iv)] if $\pi$ is supersingular then $l=\Fp[\zeta(p)]$.
\end{itemize} 
\end{lem}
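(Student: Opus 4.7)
The argument splits into two parts: exhibiting a model of $\pi$ over the stated subfield $l \subseteq \bar{k}$ (upper bound), and showing that $\pi$ cannot descend to any proper subfield (lower bound). The existence of a smallest field of definition will follow once the two bounds meet, since the set of admissible fields of definition consists of the fixed fields of closed subgroups of $\Gal(\bar{k}/\Fp)$ contained in the stabilizer of $[\pi]$.

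For the upper bound I would construct the $l$-models directly. In (i), Lemma \ref{DefineCHI} already says $\eta$, hence $\eta \circ \det$, takes values in $\Fp[\eta(p)]$. In (ii), the Steinberg representation is defined over $\Fp$ by the sequence \eqref{defineSp} (both $\Eins$ and $\Indu{P}{G}{\Eins}$ are), and twisting by $\eta \circ \det$ puts $\Sp \otimes \eta \circ \det$ on $\Fp[\eta(p)]$. In (iii), smooth parabolic induction commutes with extension of coefficients, so once each $\chi_i$ is put on $\Fp[\chi_i(p)]$ by Lemma \ref{DefineCHI} the induced representation is defined over $\Fp[\chi_1(p),\chi_2(p)]$. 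In (iv), the $K$-representation $\Sym^r k^2 \otimes \det^a$ is tautologically defined over $\Fp$; extending to $KZ$ by making $p$ act as $\zeta(p)$ places $\sigma$ over $\Fp[\zeta(p)]$, and the operators defining $\cIndu{KZ}{G}{\sigma}$ together with the Hecke operator $T$ of \eqref{endoalgebra} are defined over $\Fp$, so the quotient $\cIndu{KZ}{G}{\sigma}/T$ lives on $\Fp[\zeta(p)]$.

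For the lower bound, given an $l$-form $\pi_0$ of $\pi$, I would extract canonically attached invariants and use that they are forced to be $l$-rational. Case (i) is immediate since $\pi_0$ is itself a character with values in $l$. In case (iv) the central character $\zeta$ of $\pi_0$ takes values in $l$, so $\zeta(p) \in l$. For case (iii) the functor of $U$-coinvariants is defined over $l$, so $\pi_{0,U}$ is a $T$-module over $l$; the classification of mod-$p$ principal series endows $\pi_U$ with a canonical non-split filtration whose graded pieces are $\chi_1 \otimes \chi_2$ and $\chi_2 \otimes \chi_1$ placed asymmetrically as sub and quotient, the asymmetry being precisely the fact (special to characteristic $p$) that no Weyl-swap intertwiner $\Indu{P}{G}{\chi_1 \otimes \chi_2} \cong \Indu{P}{G}{\chi_2 \otimes \chi_1}$ exists when $\chi_1 \neq \chi_2$. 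Each graded piece is therefore individually defined over $l$, forcing $\chi_1(p), \chi_2(p) \in l$.

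Case (ii) is the delicate one, and I expect it to be the main technical obstacle.  The central character $\eta^2$ only gives $\eta(p)^2 \in l$, which is insufficient; one needs a finer invariant.  My plan is to use the $\Ext^1$ calculations in the block of $\Sp \otimes \eta \circ \det$ (case (iv) of \S\ref{corrisequiv}), which the paper develops later drawing on \cite{bp}, \cite{colmez}, \cite{ord2}, \cite{ext2}: among smooth characters $\chi$ of $G$, only $\chi = \eta \circ \det$ satisfies $\Ext^1_G(\Sp \otimes \eta \circ \det, \chi) \neq 0$, witnessed by the nonsplit sequence $0 \to \eta \circ \det \to \Indu{P}{G}{\eta \otimes \eta} \to \Sp \otimes \eta \circ \det \to 0$.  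This distinguished character is canonically attached to $\pi_0$, hence has an $l$-form, hence $\eta(p) \in l$.  Verifying this uniqueness claim via the block Ext computations is where the real work lies; the rest of the proof is then a matter of assembling these pieces into the four stated identifications.
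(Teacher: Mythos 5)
Your overall strategy (exhibit a model over the stated field, then show any field of definition contains it) is the same as the paper's, but your lower-bound arguments in cases (ii) and (iii) take a genuinely different route. The paper first uses Lemma \ref{abstractrat} to see that an $l$-form $\tau$ is unique, has $\End_G(\tau)=l$, hence a central character with values in $l$ (this is how it gets $\zeta(p)\in l$, as you do in (i) and (iv)); for (ii) it then observes that $\tau^{I_1}$ is one-dimensional and that $\bigl(\begin{smallmatrix}0&-1\\p&0\end{smallmatrix}\bigr)$, which normalizes $I_1$, acts on this line by $\eta(p)$ up to a sign in $\Fp$, so $\eta(p)\in l$; for (iii) it pins down a canonical line in $\tau^{I_1}$ via the $K$-subrepresentation generated by $\varphi_1$ and applies the explicit Hecke operator $\sum_{\lambda\in\Fp}\bigl(\begin{smallmatrix}p&[\lambda]\\0&1\end{smallmatrix}\bigr)$, which acts by $\chi_2(p)$. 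Your replacements — Ext-rigidity plus Galois invariance in (ii), Jacquet modules in (iii) — can be made to work, but (ii) is much heavier than the paper's one-line computation: you must spell out the Galois-twisting step ($\pi^{\sigma}\cong\pi$, hence $\Ext^1_G(\pi,(\eta\circ\det)^{\sigma})\neq 0$, hence $\eta^{\sigma}=\eta$ by uniqueness, hence $\eta(p)\in l$ since $\bar{k}/l$ is Galois), and you must cite the uniqueness of the character with nonzero $\Ext^1$ from the $\bar{k}$-computations of \cite{ext2}/\cite{colmez} directly, not from the paper's Propositions \ref{defoverk} and \ref{blocksoverk}, which are proved \emph{using} this lemma.

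There is one genuine error: in case (iii) your description of the Jacquet module is the characteristic-zero picture and is false mod $p$. For $\pi=\Indu{P}{G}{\chi_1\otimes\chi_2}$ the big Bruhat cell is, as a $U$-representation, $C^{\infty}_c(U)\cong C^{\infty}_c(\Qp)$ with translation action, and its coinvariants vanish in characteristic $p$ (there is no mod-$p$ Haar measure: $[\mathbf{1}_{\Zp}]=p[\mathbf{1}_{p\Zp}]=0$, and likewise for every ball). Hence $\pi_U$ is \emph{one}-dimensional, isomorphic to a single Weyl-chamber character of $T$, and there is no canonical two-step filtration with both $\chi_1\otimes\chi_2$ and $\chi_2\otimes\chi_1$ as graded pieces. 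The step "each graded piece is therefore individually defined over $l$" as written rests on a nonexistent structure. The repair is easy and actually simplifies your argument: coinvariants commute with $\otimes_l\bar{k}$, so $(\pi_0)_U$ is an $l$-form of that single character of $T$, and evaluating it at $\mathrm{diag}(p,1)$ and $\mathrm{diag}(1,p)$ (note $\omega(p)=1$, so any twist by $\alpha$ is irrelevant there) already puts both $\chi_1(p)$ and $\chi_2(p)$ in $l$, with no appeal to the nonexistence of an intertwining operator. With that correction, and the descent step in (ii) made explicit as above, your proof goes through; your upper-bound constructions agree with the paper's (in (iv) the Hecke operator $T$ is defined over $\Fp[\zeta(p)]$ rather than $\Fp$, a harmless slip).
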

\begin{proof} Let $l$ be a subfield of $\bar{k}$ and $\tau$ an $l$-representation of $G$ such that $\tau\otimes_l \bar{k}\cong \pi$. 
Since $\pi$ is irreducible $\tau$ is irreducible and hence it follows from Lemma \ref{abstractrat} that $\tau$ is uniquely determined
up to an isomorphism over $l$. As already mentioned $\pi^{I_1}$ is finite dimensional (and non-zero), this implies $\End_G(\pi)\cong \bar{k}$. 
We deduce from Lemma \ref{abstractrat} that $\End_G(\tau)\cong l$. Thus $Z$ acts on $\tau$ by a central character. We deduce 
that $\zeta(p)\in l$. 

If $\pi$ is supersingular then we are done since $\sigma|_K$ can be defined over $\Fp$, 
$\sigma$ can be defined over $\Fp[\zeta(p)]$ by using $KZ\cong K \times p^{\ZZ}$ and the endomorphism $T$ can also be defined over 
$\Fp[\zeta(p)]$, as is immediate from  \cite[\S 3]{bl}.

If $\pi$ is a character or special series then $\pi^{I_1}$ is $1$-di\-men\-sio\-nal. Lemma \ref{abstractrat} implies that $\tau^{I_1}$ 
is $1$-di\-men\-sio\-nal. Since $\bigl (\begin{smallmatrix} 0 & -1\\ p & 0\end{smallmatrix}\bigr)$ acts on $\pi^{I_1}$ by a scalar $\pm\eta(p)$, 
we deduce that $\eta(p)\in l$ and hence $\Fp[\eta(p)]$ is a field of definition of $\pi$. We note that it is immediate from 
\eqref{defineSp} that $\Sp$ can be defined over $\Fp$.

If $\pi$ is principal series then $\pi^{I_1}$ is $2$-di\-men\-sio\-nal with basis $\{\varphi_1, \varphi_2\}$, where $\supp \varphi_1=Ps I_1$, 
$\varphi_1(s)=1$, $\supp \varphi_2= PI_1$, $\varphi_2(1)=1$. The $K$-representation $\sigma:=\langle K\centerdot \varphi_1\rangle \subset \pi$
is  irreducible , $\sigma^{I_1}= \bar{k} \varphi_1$ and $\Hom_K(\sigma, \pi)$ is $1$-di\-men\-sio\-nal.
Now $\sigma$ can be realized over $\Fp$, so in particular over $k$. It follows from Lemma \ref{abstractrat} that 
$\Hom_K(\sigma, \tau)$ is $1$-di\-men\-sio\-nal. Choose a non-zero $\phi\in \Hom_K(\sigma, \tau)$ and let $v\in \phi(\sigma)^{I_1}$ be non-zero. 
The $1$-di\-men\-sio\-nality of the spaces involved implies that any $G$-equivariant isomorphism $\pi\cong \tau\otimes_l \bar{k}$ must map $\varphi_1$ to 
$v\otimes \lambda$ for some $\lambda\in \bar{k}$. A direct calculation shows that 
$\sum_{\lambda\in \Fp} \bigl (\begin{smallmatrix} p & [\lambda] \\ 0 & 1 \end{smallmatrix} \bigr ) \varphi_1= \chi_2(p) \varphi_1$, hence
$\sum_{\lambda\in \Fp} \bigl (\begin{smallmatrix} p & [\lambda] \\ 0 & 1 \end{smallmatrix} \bigr ) v= \chi_2(p) v$ and so $\chi_2(p)\in l$.
Since $\zeta(p)=\chi_1(p) \chi_2(p)$ we deduce that $\chi_1(p)\in l$. Hence, $\Fp[\chi_1(p), \chi_2(p)]$ is the field of definition 
of $\pi$. Further, since both $\chi_1$ and $\chi_2$ maybe defined over  $\Fp[\chi_1(p), \chi_2(p)]$ by Lemma \ref{DefineCHI} we 
deduce that $\tau$ is a principal series representation.
\end{proof}

Let $\chi: T\rightarrow \bar{k}^{\times}$ be a smooth character and let $X$ be the orbit of $\chi$ under the action of $\Gamma:=\Gal(\bar{k}/k)$. 
It follows from Lemma \ref{DefineCHI}  that $\chi$ can be defined over a finite extension of $k$ and so $X$ is finite. Let  
$$V_{\chi}:= (\bigoplus_{\psi\in X} \psi)^{\Gamma},$$
where the action of $\Gamma$ on $\bigoplus_{\psi\in X} \psi$ is given by $\gamma \centerdot (\lambda_{\psi})_{\psi}:= (\gamma(\lambda_{\psi}))_{\gamma(\psi)}$. 
Then $V_{\chi}$ is the unique irreducible $k$-representation of $T$ such that $V_{\chi}\otimes_k \bar{k}$ contains $\chi$. We note that if $\chi$ factors through the determinant, then we may consider both $\chi$ and
$V_{\chi}$ as representations of $G$. 

\begin{prop}\label{irrkreps} Let $\pi$ be an irreducible smooth $k$-representation of $G$ 
with a central character. Then $\pi$ is isomorphic to one of the following: 
\begin{itemize}
\item[(i)] $V_{\eta\circ \det}$, $\eta: \Qp^{\times}\rightarrow \bar{k}^{\times}$; 
\item[(ii)]  $\Sp\otimes V_{\eta\circ \det}$, $\eta: \Qp^{\times}\rightarrow \bar{k}^{\times}$;
\item[(iii)] $\Indu{P}{G}{V_{\chi}}$, $\chi:T\rightarrow \bar{k}^{\times}$ with $\chi\neq \chi^s$;
\item[(iv)] supersingular $\cIndu{KZ}{G}{\sigma}/(T)$.
\end{itemize}
\end{prop}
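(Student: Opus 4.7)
The plan is to reduce via base change to Galois descent and then invoke the Barthel-Livn\'e--Breuil classification of absolutely irreducible smooth $\overline{k}$-representations of $G$. Write $\overline{\pi} := \pi \otimes_k \overline{k}$ and $\Gamma := \Gal(\overline{k}/k)$. By Proposition \ref{reducetoabs}, $\overline{\pi}$ is of finite length with absolutely irreducible subquotients. First I will show that $\overline{\pi}$ is semisimple: the socle $\soc_G \overline{\pi}$ is a nonzero $\Gamma$-stable $\overline{k}$-subrepresentation, hence by Galois descent of the form $U \otimes_k \overline{k}$ for a nonzero $k[G]$-submodule $U \subseteq \pi$, and irreducibility of $\pi$ forces $U = \pi$, so $\overline{\pi}$ equals its socle.

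Next I will extract multiplicity-freeness from Schur plus admissibility. Since $\pi$ is cyclic, Lemma \ref{abstractrat} gives $\End_{k[G]}(\pi) \otimes_k \overline{k} \cong \End_{\overline{k}[G]}(\overline{\pi})$. Admissibility of every absolutely irreducible smooth $\overline{k}$-representation of $G$ with a central character (\cite{bl}, \cite{breuil1}) makes the right-hand side finite dimensional, so $\End_{k[G]}(\pi)$ is a finite-dimensional division algebra over the finite field $k$, hence by Wedderburn a field $l$ with $\End_{\overline{k}[G]}(\overline{\pi}) \cong \overline{k}^{[l:k]}$. Commutativity of this endomorphism ring forces a decomposition $\overline{\pi} \cong \bigoplus_{i=1}^{[l:k]} \tilde\pi_i$ with pairwise non-isomorphic absolutely irreducible summands. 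A nontrivial $\Gamma$-orbit decomposition of $\{[\tilde\pi_i]\}$ would split $\overline{\pi}$ $\Gamma$-equivariantly and, by Galois descent, split $\pi$; so $\Gamma$ acts transitively.

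Finally I apply the Barthel-Livn\'e--Breuil classification \cite{bl}, \cite{breuil1} to $\tilde\pi_1$. In the three non-supersingular cases --- $\tilde\pi_1 \cong \eta \circ \det$, $\Sp \otimes \eta \circ \det$, or $\Indu{P}{G}{\chi_1 \otimes \chi_2}$ with $\chi_1 \neq \chi_2$ --- compatibility of parabolic induction and of twisting by characters with base change and with the $\Gamma$-action identifies $\overline{\pi}$ with, respectively, $(\bigoplus_\sigma \sigma\eta) \circ \det$, $\Sp \otimes (\bigoplus_\sigma \sigma\eta) \circ \det$, or $\Indu{P}{G}{\bigoplus_\sigma \sigma\chi}$ (setting $\chi := \chi_1 \otimes \chi_2$, so $\chi \neq \chi^s$); taking $\Gamma$-invariants yields cases (i), (ii), (iii) via the representations $V_{\eta \circ \det}$ and $V_\chi$. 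In the supersingular case, Lemma \ref{fielddefi}(iv) shows the field of definition of $\tilde\pi_1$ is $\Fp[\zeta(p)] \subseteq k$, so the Galois orbit is a singleton, $\tilde\pi_1$ descends to a $k$-representation of shape $\cIndu{KZ}{G}{\sigma}/(T)$, and that $k$-form must coincide with $\pi$ by Lemma \ref{abstractrat}, giving case (iv). The main obstacle is the second paragraph: converting raw semisimplicity into multiplicity-freeness together with transitivity requires finite-dimensionality of $\End_{\overline{k}[G]}(\overline{\pi})$, which is where admissibility from \cite{bl}, \cite{breuil1} and Wedderburn's theorem over the finite field $k$ genuinely enter.
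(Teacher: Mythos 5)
Your reduction to a decomposition $\overline{\pi}\cong\bigoplus_i\tilde{\pi}_i$ into pairwise non-isomorphic absolutely irreducible summands permuted transitively by $\Gamma=\Gal(\bar{k}/k)$ is correct, and it is a genuinely different route from the paper's: the paper reads off the splitting $\pi\otimes_k l\cong\pi_1\oplus\cdots\oplus\pi_n$ with distinct absolutely irreducible summands from the proof of Proposition \ref{reducetoabs} (the Hecke polynomial is separable because $k$ is perfect), whereas you obtain semisimplicity by descending the socle and multiplicity-freeness from the fact that $\End_{k[G]}(\pi)$ is a finite field, via Schur, admissibility and Wedderburn, with transitivity again by descent of the orbit-isotypic pieces. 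All of that is sound.

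The gap is in the final step for cases (i)--(iii). What you have produced there is an isomorphism of $\bar{k}[G]$-modules $\overline{\pi}\cong\tau\otimes_k\bar{k}$, where $\tau$ is $V_{\eta\circ\det}$, $\Sp\otimes V_{\eta\circ\det}$ or $\Indu{P}{G}{V_{\chi}}$, and you then conclude $\pi\cong\tau$ by ``taking $\Gamma$-invariants''. But the $\Gamma$-invariants of $\overline{\pi}$ for its given semilinear structure are $\pi$ by construction, and nothing in your argument shows that the abstract $\bar{k}[G]$-isomorphism can be chosen $\Gamma$-equivariantly; two $k$-forms of the same $\bar{k}[G]$-module need not be isomorphic in general, so this passage requires a justification (here it amounts to vanishing of the relevant $H^1$ of $\Aut_{\bar{k}[G]}(\overline{\pi})\cong\prod_{\psi\in X}\bar{k}^{\times}$, i.e.\ Shapiro plus Hilbert 90). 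The paper avoids this entirely: it first proves that $\tau=\Indu{P}{G}{V_{\chi}}$ is irreducible over $k$, because $\tau\otimes_k l\cong\bigoplus_{\psi\in X}\Indu{P}{G}{\psi}$ has no proper $G$-invariant subspace stable under $\Gal(l/k)$ (the $\Indu{P}{G}{\psi}$ being irreducible and pairwise non-isomorphic since $\psi\neq\psi^s$), and then deduces $\Hom_{k[G]}(\tau,\pi)\neq 0$ from $\Hom_{G}(\tau_l,\pi_l)\neq 0$ via Lemma \ref{abstractrat}, so that a nonzero map between irreducibles gives $\tau\cong\pi$. Tellingly, you invoke exactly this Hom-descent in the supersingular case, where the orbit is a singleton and no descent problem arises; it is in the cases with a nontrivial orbit that it is actually needed, and there your proposal supplies neither it nor the irreducibility of $\Indu{P}{G}{V_{\chi}}$ (or a length comparison) needed to turn a nonzero homomorphism into an isomorphism. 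The gap is fillable with the tools you already cite, but as written the concluding step does not follow.
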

\begin{proof} Since $k$ is perfect it follows from the proof of Proposition \ref{reducetoabs} that there exists 
a finite Galois extension $l$ of $k$ such that 
$$\pi\otimes_k l  \cong \pi_1\oplus \ldots\oplus \pi_n$$ 
with $\pi_i$ absolutely irreducible and distinct. If $\pi_1$ is supersingular then, since the central character of $\pi_1$ is $k$-rational, $\pi_1$ 
can be realized over $k$ by Lemma \ref{fielddefi} and 
and  since $\pi$ is irreducible Lemma \ref{abstractrat} 
implies that $\pi_l \cong \pi_1$ and so $\pi$ is absolutely irreducible supersingular. 

The proof in the cases $\pi_1$ is a character, special series or principal series is the same. We only treat the principal series case 
so $\pi_1\cong \Indu{P}{G}{\chi}$ with $\chi:T\rightarrow \bar{k}^{\times}$ a smooth character with $\chi\neq \chi^s$. 
Let $\tau:=\Indu{P}{G}{V_{\chi}}$, then $\tau_l\cong \oplus_{\psi\in X} \Indu{P}{G}{\psi}$. Since $\chi\neq \chi^s$ we have $\psi\neq \psi^s$ 
for every $\psi\in X$. Hence, all the principal series are irreducible and distinct. Since the $\Gamma$-action on $\tau\otimes_k l$ permutes 
the irreducible subspaces transitively we deduce that  $\tau_l$ does not contain a proper $G$-invariant subspace, which is stable under the action of $\Gamma$.
Hence $\tau$ is an irreducible $G$-representation.   
Since $\Hom_G(\tau_l, \pi_l)\neq 0$ Lemma \ref{abstractrat} implies that $\Hom_G(\tau, \pi)\neq 0$.  Since both $\pi$ and $\tau$ are irreducible
they must be isomorphic. 
\end{proof}
\begin{remar}\label{absirreequiv} It follows from the Proposition \ref{irrkreps} that for an irreducible $\pi$ with a central character the following are equivalent:
\begin{itemize} 
\item[(i)] $\pi\otimes_k l$ is irreducible for all $l/k$ finite;
\item[(ii)] $\pi\otimes_k l$ is irreducible for all $l/k$;
\item[(iii)] $\pi\otimes_k l$ irreducible for some $l/k$ with $l$ algebraically closed.
\end{itemize}
In this case, we will say that $\pi$ is \textit{absolutely irreducible}.
\end{remar}  
Suppose  $p\in Z$ acts trivially on $\sigma$ and $\sigma|_K\cong \Sym^r k^2$. Let 
$\varphi\in \cIndu{KZ}{G}{\Sym^r k^2}$ be such that 
 $\supp \varphi =ZK$ and $\varphi(1)$ is non-zero and $I_1$-invariant. If we identify $\Sym^r k^2$ 
with the space of homogeneous polynomials in two variables $x$ and $y$ of degree $r$, then we may take 
$\varphi(1)=x^r$.  Since $\varphi$ generates $\cIndu{KZ}{G}{\Sym^r k^2}$ as 
a $G$-representation $T$ is determined by $T\varphi$. 
\begin{lem}\label{TT}
 \begin{itemize}
\item[(i)] If $r=0$  then $ T \varphi = \left(\begin{smallmatrix} 0 & 1 \\ p & 0 \end{smallmatrix} \right ) \varphi +\sum_{\lambda \in \Fp} \left(\begin{smallmatrix} 1 & [\lambda]\\ 0 & 1
\end{smallmatrix}\right) t \varphi .$
\item[(ii)] Otherwise, $ T \varphi = \sum_{\lambda \in \Fp} \left(\begin{smallmatrix} 1 & [\lambda] \\ 0 & 1\end{smallmatrix} \right)t  \varphi.$
\end{itemize}
\end{lem}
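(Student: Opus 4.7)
This is a direct computation from the definition of $T$ given in \cite[\S 3]{bl}. By Frobenius reciprocity, elements of $\End_G(\cIndu{KZ}{G}{\sigma})$ correspond to compactly supported functions $\Phi\colon G\to \End_k(\sigma)$ satisfying $\Phi(k_1 g k_2)=\sigma(k_1)\Phi(g)\sigma(k_2)$ for all $k_1,k_2\in KZ$, and the Hecke action takes the form $(T_\Phi\varphi)(g)=\sum_{g_0\in KZ\backslash G}\Phi(gg_0^{-1})\varphi(g_0)$. The operator $T$ of \eqref{endoalgebra} corresponds to a function $\Phi_T$ supported on the double coset $KZ\cdot t\cdot KZ$, and the generator chosen in \textit{loc.\ cit.} is normalised so that $\Phi_T(t)(y^r)=y^r$. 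Since $\varphi$ has $\supp\varphi=KZ$ with $\varphi(1)=x^r$, only the term $g_0=1$ contributes and $(T\varphi)(g)=\Phi_T(g)\,x^r$; in particular $T\varphi$ is supported in $KZtKZ$, which decomposes as a disjoint union of $p+1$ left $KZ$-cosets (the $p+1$ neighbours of the standard vertex in the Bruhat--Tits tree of $G$), with representatives $\left(\begin{smallmatrix}1 & -[\lambda]\\0 & p\end{smallmatrix}\right)$ for $\lambda\in\Fp$ and $\Pi$.

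Next I would determine $\Phi_T(t)\in\End_k(\sigma)$ via biequivariance. For $k=\left(\begin{smallmatrix}1 & b\\0 & 1\end{smallmatrix}\right)$ with $b\in\Zp$ we have $tkt^{-1}=\left(\begin{smallmatrix}1 & pb\\0 & 1\end{smallmatrix}\right)$, which acts trivially on $\sigma=\Sym^r k^2$ since $\sigma$ factors through $K/K_1$. The relation $\Phi_T(t)\sigma(k)=\sigma(tkt^{-1})\Phi_T(t)$ applied to $y^r$ thus becomes the polynomial identity
$$\sum_{i=0}^{r}\binom{r}{i}\bar{b}^{\,i}\,\Phi_T(t)(x^iy^{r-i})=\Phi_T(t)(y^r),\qquad \bar{b}\in\Fp.$$
Since $r\le p-1<|\Fp|$, comparing coefficients and using the normalisation $\Phi_T(t)(y^r)=y^r$ forces $\Phi_T(t)(x^iy^{r-i})=0$ for all $1\le i\le r$. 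In particular $\Phi_T(t)(x^r)=0$ when $r\ge 1$, while $\Phi_T(t)=\id$ when $r=0$.

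Finally I would verify the formula at each of the $p+1$ coset representatives; left $KZ$-equivariance will then take care of the rest. Writing $\left(\begin{smallmatrix}1 & -[\lambda]\\0 & p\end{smallmatrix}\right)=sts\cdot\left(\begin{smallmatrix}1 & -[\lambda]\\0 & 1\end{smallmatrix}\right)$ and using that $\sigma(s)$ swaps $x^r$ with $y^r$ while the upper unipotent $\sigma\left(\begin{smallmatrix}1 & -[\lambda]\\0 & 1\end{smallmatrix}\right)$ fixes $x^r$, one computes $(T\varphi)\left(\begin{smallmatrix}1 & -[\lambda]\\0 & p\end{smallmatrix}\right)=\sigma(s)\Phi_T(t)(y^r)=x^r$; writing $\Pi=st$ gives $(T\varphi)(\Pi)=\sigma(s)\Phi_T(t)(x^r)$, which equals $x^r$ if $r=0$ and vanishes if $r\ge 1$. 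These match the values of the right-hand side at the same representatives: each summand $\left(\begin{smallmatrix}1 & [\lambda]\\0 & 1\end{smallmatrix}\right)t\,\varphi$ is supported on $KZ\left(\begin{smallmatrix}1 & -[\lambda]\\0 & p\end{smallmatrix}\right)$ with value $\varphi(pI)=x^r$ at the representative, and $\Pi\varphi$ is supported on $KZ\Pi$ with value $\varphi(\Pi^2)=x^r$ at $\Pi$, so the claimed formulas follow. The main (mildly) subtle point is the Vandermonde step identifying $\Phi_T(t)$, which works only because $r<p$.
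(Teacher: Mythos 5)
Your proof is correct, but it takes a genuinely different route from the paper, whose entire argument is a citation: in Barthel--Livn\'e's notation the lemma is the computation of $T([1,e_{\vec{0}}])$, and the two displayed formulas are read off directly from formula (19) in the proof of their Theorem 19. You instead rederive the formula from the realisation of $\End_G(\cIndu{KZ}{G}{\sigma})$ as a convolution algebra of bi-equivariant kernels $\Phi$: since $\supp\varphi=KZ$, only one coset contributes and $T\varphi$ is the function $g\mapsto\Phi_T(g)x^r$ supported on $KZtKZ$; your coset decomposition into the $p+1$ left cosets $KZ\left(\begin{smallmatrix}1&-[\lambda]\\0&p\end{smallmatrix}\right)$, $\lambda\in\Fp$, and $KZ\Pi$ is correct, as are the supports and values of the summands on the right-hand side ($gu_{[\lambda]}t=p\cdot\mathrm{id}$ at the representative, and $\Pi^2=p\cdot\mathrm{id}$), so checking at one representative per coset does suffice by left $KZ$-equivariance. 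The bi-equivariance/Vandermonde step pinning down $\Phi_T(t)$ is also fine; note that besides $r\le p-1<|\Fp|$ you are implicitly using that $\binom{r}{i}\not\equiv 0\pmod p$ for $0\le i\le r\le p-1$, which is what lets you pass from the vanishing of $\binom{r}{i}\Phi_T(t)(x^iy^{r-i})$ to that of $\Phi_T(t)(x^iy^{r-i})$. What your approach buys is a self-contained verification, and in fact the constraint analysis gives slightly more than you state: bi-equivariance (using lower unipotents in $t^{-1}Kt\cap K$ as well) forces the image of $\Phi_T(t)$ into the line $k\,y^r$ and its kernel to contain all $x^iy^{r-i}$ with $i\ge 1$, so the admissible kernel is unique up to scalar; the only input you still borrow from Barthel--Livn\'e is the normalisation $\Phi_T(t)(y^r)=y^r$, which is precisely the scalar that their formula (19) — and hence the paper's one-line citation — encodes. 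The paper's approach buys brevity; yours makes the normalisation and the coset combinatorics visible.
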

\begin{proof} In the notation of \cite{bl} this is a calculation of $T([1, e_{\vec{0}}])$. The claim follows from the formula (19) in the proof 
of \cite{bl} Theorem 19. 
 \end{proof}
Theorem 19 in  \cite{bl} says that $\cIndu{KZ}{G}{\sigma}$ is a free $k[T]$-module. 
Hence, the map $T-\lambda$ is injective, for all $\lambda\in k$.
\begin{defi} Let $\pi(r,\lambda)$ be a representation of $G$ defined by the exact sequence:
\begin{equation}\label{definingpir}
\xymatrix{ 0 \ar[r] & \cIndu{ZK}{G}{\Sym^r k^2}\ar[r]^-{T-\lambda}&  \cIndu{ZK}{G}{\Sym^r k^2}\ar[r]& \pi(r,\lambda)\ar[r]& 0.}
\end{equation}
If $\eta: \Qp^{\times}\rightarrow k^{\times}$ is a smooth character then let $\pi(r,\lambda,\eta):=\pi(r,\lambda)\otimes \eta\circ \det$.
\end{defi}
It follows from \cite[Thm.30]{bl} and \cite[Thm.1.1]{breuil1} that $\pi(r,\lambda)$ is  absolutely irreducible unless $(r,\lambda)=(0,\pm1)$ or $(r,\lambda)=(p-1,\pm1)$.
Moreover, one has non-split exact sequences:
\begin{equation}\label{zero1}
 0 \rightarrow \mu_{\pm 1}\circ \det\rightarrow \pi(p-1,\pm 1)\rightarrow \Sp\otimes\mu_{\pm 1}\circ \det \rightarrow 0, 
\end{equation}
\begin{equation}\label{p-1}
 0 \rightarrow \Sp\otimes\mu_{\pm 1}\circ \det\rightarrow \pi(0,\pm 1)\rightarrow \mu_{\pm 1}\circ \det \rightarrow 0, 
\end{equation}
where  $\mu_{\lambda}:\Qp^{\times} \rightarrow k^{\times}$, $x\mapsto \lambda^{\val(x)}$.
Further, if $\lambda\neq 0$ and $(r,\lambda)\neq (0,\pm 1)$ then \cite[Thm.30]{bl} asserts that 
\begin{equation}\label{induced}
 \pi(r,\lambda)\cong \Indu{P}{G}{\mu_{\lambda^{-1}}\otimes \mu_{\lambda}\omega^r}.
\end{equation}

If $\pi$ is an absolutely irreducible $k$-representation of $G$ with a central character $\zeta$ and $\zeta(p)$ is a square in $k$ then 
$\pi$ is a quotient of $\pi(r, \lambda, \eta)$ for some $\lambda\in k$ and $\eta: \Qp^{\times}\rightarrow k^{\times}$. The supersingular representations 
are isomorphic (over $k[\sqrt{\zeta(p)}]$) to $\pi(r, 0, \eta)$.  
All the isomorphism between supersingular representations cor\-res\-pon\-ding to different $r$ and $\eta$ are given by 
\begin{equation}\label{intertwine}
\pi(r,0,\eta)\cong \pi(r,0,\eta\mu_{-1})\cong \pi(p-1-r,0,\eta\omega^{r})\cong \pi(p-1-r,0,\eta\omega^{r}\mu_{-1})
\end{equation}
see \cite[Thm 1.3]{breuil1}. We refer to the \textit{regular} case if $\pi\cong \pi(r, 0, \eta)$ with $0<r<p-1$, 
and \textit{Iwahori} case if $\pi\cong \pi(0, 0, \eta)\cong \pi(p-1, 0, \eta)$.

\subsection{Hecke algebra and extensions}\label{hecke}

Let $\HH:=\End_{G}(\cIndu{ZI_1}{G}{\zeta})$ and let $\Mod_{\HH}$ be the category of right $\HH$-modules. 
 Let $\II: \Mod^{\mathrm{sm}}_{G, \zeta}(k)\rightarrow \Mod_{\HH}$ be the functor: 
$$\II(\pi):=\pi^{I_1}\cong \Hom_G(\cIndu{ZI_1}{G}{\zeta}, \pi).$$
Let $\TT:\Mod_{\HH}\rightarrow \Mod^{\mathrm{sm}}_{G, \zeta}(k)$ be the functor:
$$\TT(M):=M\otimes_{\HH} \cIndu{ZI_1}{G}{\zeta}.$$
One has $\Hom_{\HH}(M, \II(\pi))\cong \Hom_G(\TT(M),\pi)$. Moreover, Vign\'eras 
in \cite[Thm.5.4]{vig} shows that $\II$ induces a bijection between irreducible objects in $\Mod^{\mathrm{sm}}_{G, \zeta}(k)$ and $\Mod_{\HH}$. 
Let $\Mod^{\mathrm{sm}}_{G, \zeta}(k)^{I_1}$ be the  full subcategory of  $\Mod^{\mathrm{sm}}_{G, \zeta}(k)$ consisting 
of representations generated by their $I_1$-invariants.  Ollivier 
has shown\footnote{In fact, both Vign\'eras and Ollivier work with the full Hecke algebra $\End_G(\cIndu{I_1}{G}{\Eins})$. Our Hecke algebra is the quotient of 
the full Hecke algebra by the ideal generated by all the elements of the form $T_z -\zeta(z)^{-1}$, where $T_z$ is the Hecke operator corresponding to the (double) coset $zI_1$, 
see \cite[\S2]{coeff}, for all $z\in Z$. In particular, if $\pi$ is a smooth representation of $G$, the action of the full Hecke algebra on $\pi^{I_1}$ factors through the action of $\HH$ if and only 
$Z$ acts on $\pi^{I_1}$ by the character $\zeta$, or equivalently the subrepresentation of $\pi$ generated by $\pi^{I_1}$ has a central character equal to $\zeta$.  The results of \cite{o2} imply that \eqref{functorsI&T} induces an equivalence of categories.}
in \cite{o2} that 
\begin{equation}\label{functorsI&T}
\II: \Mod^{\mathrm{sm}}_{G, \zeta}(k)^{I_1}\rightarrow \Mod_{\HH}, \quad \TT: \Mod_{\HH}\rightarrow \Mod^{\mathrm{sm}}_{G, \zeta}(k)^{I_1}
\end{equation} 
are quasi-inverse to each other and so $\Mod_{\HH}$ is equivalent to   $\Mod^{\mathrm{sm}}_{G, \zeta}(k)^{I_1}$. 

\begin{remar}\label{Hecke_algebra_extend_scalars} We note that $(\cIndu{ZI_1}{G}{\zeta})\otimes_k l\cong \cIndu{ZI_1}{G}{(\zeta\otimes_k l)}$, 
and since it is finitely generated we have $\HH\otimes_k l\cong \End_G(\cIndu{ZI_1}{G}{\zeta\otimes_k l})$. Moreover, $\II(\pi)\otimes_k l \cong \II(\pi\otimes_k l)$ by Lemma \ref{abstractrat} and 
$\TT(M)\otimes_k l\cong \TT(M\otimes_k l)$. Hence, if we show that the functors in \eqref{functorsI&T} induce an equivalence of  categories over 
some extension of $k$ then the same  also holds over  $k$.
\end{remar}

In particular, if  $\tau=\langle G \centerdot \tau^{I_1}\rangle$ and  $\pi$ is 
in $\Mod^{\mathrm{sm}}_{G, \zeta}(k)$  then one has:
\begin{equation}\label{homit}
\begin{split}
\Hom_G(\tau, \pi) \cong  \Hom_{\HH}(\II(\tau), \II(\pi))
\end{split}
\end{equation}
and the natural map $\TT \II(\tau)\rightarrow \tau$  is an isomorphism. We have shown in \cite[\S9]{ext2} that 
\eqref{homit} gives an $E_2$-spectral sequence:
\begin{equation}\label{specseq}
\Ext^i_{\HH}(\II(\tau), \RR^j \II(\pi))\Longrightarrow \Ext^{i+j}_{G, \zeta}(\tau, \pi)
\end{equation}
where $\Ext^n_{G, \zeta}(\tau, \ast)$ is the $n$-th right derived functor of $\Hom_G(\tau, \ast)$ on $\Mod^{\mathrm{sm}}_{G, \zeta}(k)$.
The $5$-term sequence associated to \eqref{specseq} gives us:
\begin{equation}\label{5T}
\begin{split}
0\rightarrow &\Ext^1_{\HH}(\II(\tau), \II(\pi))\rightarrow 
\Ext^1_{G, \zeta}(\tau, \pi)\rightarrow \Hom_{\HH}(\II(\tau), \RR^1\II(\pi))\\
&\rightarrow \Ext^2_{\HH}(\II(\tau), \II(\pi))\rightarrow \Ext^2_{G, \zeta}(\tau, \pi)
\end{split}
\end{equation}

Let $\Mod_{G, \zeta}^{\mathrm{l \, adm}}(\OO)$ (resp.\,$\Mod_{G, \zeta}^{\mathrm{l \, adm}}(k)$) be  the full subcategory of $\Mod^{\mathrm{sm}}_{G, \zeta}(\OO)$ (resp. $\Mod^{\mathrm{sm}}_{G, \zeta}(k)$)
consisting of all locally admissible representations, see \S\ref{zerosec}. As already explained in \S \ref{zerosec}, it follows from \cite[Thm.2.3.8]{ord1} that a smooth representation of $G$ with a central character is locally admissible 
if and only if it is locally of finite length, so that  $\Mod_{G, \zeta}^{\mathrm{l \, adm}}(\OO)=\Mod_{G, \zeta}^{\mathrm{lfin}}(\OO)$ and 
$\Mod_{G, \zeta}^{\mathrm{l \, adm}}(k)=\Mod_{G, \zeta}^{\mathrm{lfin}}(k)$. The inclusion 
  $\iota: \Mod_{G, \zeta}^{\mathrm{l \, adm}}(\OO)\rightarrow \Mod^{\mathrm{sm}}_{G, \zeta}(k)$ has a right adjoint functor  $V\mapsto V_{\mathrm{l\,adm}}$, which associates to $V$ the subset of all locally admissible elements. 
 Taking locally admissible elements is a left exact functor, see \cite[2.2.19]{ord1}, which is the identity functor on locally admissible representations. 
 Let $\dualcat(\OO)$ (resp.\,$\dualcat(k)$) be the full subcategory of $\Mod^{\mathrm{pro\, aug}}_{G}(\OO)$ 
 anti-equivalent to  $\Mod_{G, \zeta}^{\mathrm{l \, adm}}(\OO)$ (resp.\,$\Mod_{G, \zeta}^{\mathrm{l \, adm}}(k)$) via the Pontryagin duality. 
 
 \begin{prop}\label{injgoinj} The functor $\iota: \Mod^{\mathrm{l\, adm}}_{G, \zeta}(k)\rightarrow \Mod^{\mathrm{sm}}_{G, \zeta}(k)$ maps injectives to injectives. 
\end{prop} 
\begin{proof} Let $J$ be an injective object in $\Mod^{\mathrm{l\, adm}}_{G, \zeta}(k)$ and let $\iota(J)\hookrightarrow J_1$ be 
an injective envelope of $\iota(J)$ in 
$\Mod^{\mathrm{sm}}_{G, \zeta}(k)$. If $\iota(J)\neq J_1$ then $(J_1/\iota(J))^{I_1}\neq 0$ and thus there exists 
$v\in  (J_1/\iota(J))^{I_1}$ such that $\sigma:=\langle K\centerdot v\rangle$ is an irreducible
representation of $K$. Let $A:= \langle G\centerdot v\rangle \subseteq J_1/\iota(J)$ then by pulling back 
we obtain $B\subseteq J_1$ and an exact sequence:
\begin{equation}\label{AB}
0\rightarrow \iota(J)\rightarrow B\rightarrow A\rightarrow 0.
\end{equation}
Since $\iota(J)\hookrightarrow J_1$ is essential, the class of the sequence \eqref{AB} is a non-zero element 
in $\Ext^1_{G, \zeta}(A, \iota(J))$. We will show that $\Ext^1_{G, \zeta}(A, \iota(J))=0$ and thus obtain
a contradiction to $\iota(J)\neq J_1$.

Since $J_1$ has a central character $\zeta$, $Z$ acts on $\sigma$ by $\zeta$. 
 Let $\varphi\in \cIndu{KZ}{G}{\sigma}$ be such that 
 $\supp \varphi =ZK$ and $\varphi(1)$ spans $\sigma^{I_1}$. By Frobenius reciprocity 
we obtain a map $\psi: \cIndu{KZ}{G}{\sigma}\rightarrow A$, which sends $\varphi$ to $v$. 
Since $v$ generates $A$ as a $G$-representation, $\psi$ is surjective. It is shown in \cite[Cor. 3.8]{eff} that 
the restriction functor $\Mod_{G, \zeta}^{\mathrm{l \, adm}}(k)\rightarrow \Mod^{\mathrm{sm}}_{K, \zeta}(k)$, 
$\pi \mapsto \pi|_{K}$ sends injectives to injectives. Hence, 
\begin{equation}
\Ext^1_{G, \zeta}( \cIndu{KZ}{G}{\sigma}, \iota(J))\cong \Ext^1_{K, \zeta}(\sigma, \iota(J))=0
\end{equation} 
and so $\psi$ cannot be injective. Thus $\Ker \psi$ is non-zero, and \cite[Prop 18]{bl} asserts that
$(\Ker \psi)^{I_1}$ is of finite codimension in $(\cIndu{KZ}{G}{\sigma})^{I_1}$. In particular, 
the set $\{\psi( T^n \varphi): n\ge 0\}$, where $T$ is the Hecke operator defined in \eqref{endoalgebra}, is linearly dependent and so  there exists a non-zero 
polynomial $P$ such that $\psi( P(T) \varphi)=0$. Hence, $\psi$ factors through
\begin{equation}\label{bald2}
 \cIndu{KZ}{G}{\sigma}\twoheadrightarrow \cIndu{KZ}{G}{\sigma}/(P(T)) \twoheadrightarrow A. 
\end{equation}
Since $\cIndu{KZ}{G}{\sigma}/(T-\lambda)$ is of finite  length, for all $\lambda$, by base changing
to the splitting field of $P(T)$, we see that $\cIndu{KZ}{G}{\sigma}/(P(T))$ is of finite length 
and hence is  admissible and thus $A$ is admissible. 

We  claim  that there exists a finite length subrepresentation $\kappa$ of $B$ such the $B=\kappa+\iota(J)$. The claim  implies that $B$ is locally finite (or equivalently locally admissible). Since $J$ is injective in $ \Mod^{\mathrm{l\, adm}}_{G, \zeta}(k)$, the claim implies that \eqref{AB} is split. To prove the claim, we proceed as follows. Choose $w\in B$, which maps to $v$ in $A$. Let $\tau$ be the $KZ$-subrepresentation 
of $B$ generated by $v$. Since $Z$ acts by the central  character and the action of $K$ is smooth, $\tau$ is finite dimensional. By Frobenius reciprocity we obtain a map 
$\theta: \cIndu{KZ}{G}{\tau} \rightarrow B$, such that the composition $\cIndu{KZ}{G}{\tau} \overset{\theta}{\rightarrow} B\rightarrow A$ is surjective.
Let $\tau'$ be the kernel of the surjection $\tau\twoheadrightarrow \sigma$. Since compact induction is an exact functor we obtain an exact sequence 
\begin{equation}\label{bald}
0\rightarrow \cIndu{KZ}{G}{\tau'}\rightarrow \cIndu{KZ}{G}{\tau}\rightarrow \cIndu{KZ}{G}{\sigma}\rightarrow 0.
\end{equation}
Let $\Upsilon$ be the subrepresentation of $ \cIndu{KZ}{G}{\tau}$ fitting into the exact sequence
\begin{equation}\label{bald1}
0\rightarrow \cIndu{KZ}{G}{\tau'}\rightarrow \Upsilon\rightarrow P(T) (\cIndu{KZ}{G}{\sigma})\rightarrow 0,
\end{equation}
where $P(T)$ is the endomorphism of $ \cIndu{KZ}{G}{\sigma}$ constructed in \eqref{bald2}. It follows from \eqref{bald2} that  $\cIndu{KZ}{G}{\tau}/\Upsilon$ is of finite length, and the composition 
$\Upsilon\overset{\theta}{\rightarrow} B \rightarrow A$ is zero. Hence, $\theta(\Upsilon)$ is contained in $\iota(J)$.  It follows from \eqref{bald1} that  $\Upsilon$ is a finitely generated $G$-representation, and since $J$ is locally finite, we deduce that $\theta(\Upsilon)$ is of finite length. Thus $\kappa:=\theta(\cIndu{KZ}{G}{\tau})$ is of finite length, and the composition 
$\kappa\hookrightarrow B\twoheadrightarrow A$ is surjective, which implies the claim.

\end{proof}  

If $\tau$ is in $\Mod_{G, \zeta}^{\mathrm{l \, adm}}(k)$ then  we denote by $\Ext_{G, \zeta}^{ \mathrm{l\,adm}, n}(\tau, \ast)$ (resp. $\Ext_{G, \zeta}^{ n}(\tau, \ast)$)
the $n$-th right derived functor of $\Hom_G(\tau, \ast)$ in $\Mod_{G, \zeta}^{\mathrm{l \, adm}}(k)$ (resp. $\Mod_{G, \zeta}^{\mathrm{sm}}(k)$).
It follows from Corollary \ref{enoughinj} or \cite[2.1.1]{ord2} that $\Mod_{G, \zeta}^{\mathrm{l \, adm}}(k)$ and $\Mod_{G, \zeta}^{\mathrm{l \, adm}}(\OO)$ have enough injectives.

\begin{cor}\label{thesame} Let $\tau$ and $\pi$ be in $\Mod_{G, \zeta}^{\mathrm{l \, adm}}(k)$ then $\iota$ induces an isomorphism 
\begin{equation}
\Ext^n_{G, \zeta}(\iota(\tau), \iota(\pi))\cong \Ext^{\mathrm{l\, adm},n}_{G, \zeta}(\tau,\pi)\cong \Ext^n_{\dualcat(k)}(\pi^{\vee}, \tau^{\vee}),
\quad  \forall n\ge 0.
\end{equation}
\end{cor}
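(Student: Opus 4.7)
The plan is to establish the two isomorphisms separately, with Proposition~\ref{injgoinj} doing the essential work for the first and Pontryagin duality for the second.

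For the first isomorphism $\Ext^n_{G,\zeta}(\iota(\tau),\iota(\pi))\cong \Ext^n_{\dcc}(\tau,\pi)$, I would choose an injective resolution $\pi\rightarrow J^{\bullet}$ of $\pi$ in $\dcc$, which exists because $\dcc$ has enough injectives (it has a generator and exact filtered colimits, being a Grothendieck abelian category in its own right; alternatively one quotes \cite{ord1} for existence). By Proposition~\ref{injgoinj}, $\iota(J^{\bullet})$ is then an injective resolution of $\iota(\pi)$ in $\Mod^{\mathrm{sm}}_{G,\zeta}(k)$. Since $\iota$ is the inclusion of a full subcategory, we have $\Hom_G(\iota(\tau),\iota(J^i))=\Hom_G(\tau,J^i)$ for every $i$, so the two complexes computing the respective $\Ext$ groups coincide termwise, yielding the isomorphism for all $n\ge 0$. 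For $n=0$ this is the identity; for $n=1$ it is already recorded in \eqref{ext1super}; the point of using Proposition~\ref{injgoinj} is precisely to extend this to higher $n$.

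For the second isomorphism $\Ext^n_{\dcc}(\tau,\pi)\cong \Ext^n_{\dualcat}(\pi^{\vee},\tau^{\vee})$, I would invoke Pontryagin duality, which gives an anti-equivalence between $\dcc$ and $\dualcat$. Any anti-equivalence of abelian categories preserves $\Ext^n$ while reversing the arguments; concretely, an injective resolution $\pi\rightarrow J^{\bullet}$ in $\dcc$ Pontryagin-dualises to a projective resolution $(J^{\bullet})^{\vee}\rightarrow \pi^{\vee}$ in $\dualcat$, and the canonical isomorphism $\Hom_G(\tau,J^i)\cong \Hom_{\dualcat}((J^i)^{\vee},\tau^{\vee})$ identifies the two complexes whose cohomology computes the groups in question.

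The only subtle point is ensuring that $\dcc$ really has enough injectives so that step one makes sense; this is guaranteed by the general machinery of locally finite (or locally admissible) categories recalled in Section~\ref{zerosec}, since $\dcc$ has a set of generators (for example the finitely generated admissible subobjects of $X_{\zeta}$) and exact filtered colimits. I expect no further obstacle: the corollary is really a formal consequence of Proposition~\ref{injgoinj} combined with the Pontryagin duality described in Section~\ref{zerosec}.
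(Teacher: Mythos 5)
Your proof is correct and is exactly the argument the paper intends (the corollary is stated without proof because it is considered immediate from Proposition~\ref{injgoinj}): pick an injective resolution in $\dcc$, push it forward via the exact fully faithful $\iota$ (using Proposition~\ref{injgoinj} to keep it injective), observe the Hom-complexes agree termwise, and then pass through Pontryagin duality, which as an anti-equivalence turns this injective resolution into a projective one computing the same $\Ext$ groups. The only thing worth making explicit is that $\iota(\pi)\to\iota(J^\bullet)$ remains exact because $\dcc$ is closed under subquotients in $\Mod^{\mathrm{sm}}_{G,\zeta}(k)$ (so $\iota$ is exact), but this is a minor point and your appeal to the machinery of \S\ref{zerosec} for enough injectives is on target.
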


\begin{cor}\label{injgoinjO} The functor 
$\iota: \Mod_{G, \zeta}^{\mathrm{l \, adm}}(\OO)\rightarrow \Mod_{G, \zeta}^{\mathrm{sm}}(\OO)$ maps 
injectives to injectives.
\end{cor}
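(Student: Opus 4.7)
The plan is to adapt the proof of Proposition \ref{injgoinj} to the $\OO$-coefficient case, using a $\varpi$-torsion reduction to import the $k$-coefficient result. Let $J$ be an injective object of $\Mod^{\mathrm{l\,adm}}_{G,\zeta}(\OO)$, and choose an injective envelope $\iota(J)\hookrightarrow J_1$ in $\Mod^{\mathrm{sm}}_{G,\zeta}(\OO)$ (which exists by Lemma \ref{genind}). Suppose for contradiction that $\iota(J)\neq J_1$. Then $J_1/\iota(J)$ is a non-zero smooth $\OO$-torsion $G$-representation, and since $I_1$ is pro-$p$ we have $(J_1/\iota(J))^{I_1}\neq 0$.

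Pick a non-zero $v\in (J_1/\iota(J))^{I_1}$ and let $n$ be minimal with $\varpi^n v=0$. Replacing $v$ by $\varpi^{n-1}v$ (which remains $I_1$-fixed since multiplication by $\varpi$ is $G$-equivariant), I can assume $v$ is $\varpi$-torsion. Refining the choice as in Proposition \ref{injgoinj}, I may further assume that $\sigma:=\langle K\centerdot v\rangle$ is an irreducible $k$-representation of $K$ on which $Z$ acts by the reduction of $\zeta$. Then $A:=\langle G\centerdot v\rangle\subseteq J_1/\iota(J)$ is a smooth $k[G]$-module, and the Frobenius-reciprocity and Hecke-algebra arguments from the proof of Proposition \ref{injgoinj} apply verbatim: a surjection $\cIndu{KZ}{G}{\sigma}\twoheadrightarrow A$ has non-trivial $I_1$-invariants in its kernel by \cite[Prop.\ 18]{bl}, forcing $P(T)\varphi$ to lie in the kernel for some non-zero $P\in k[T]$. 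Consequently $A$ is a quotient of $\cIndu{KZ}{G}{\sigma}/(P(T))$, hence admissible.

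Let $B\subseteq J_1$ be the preimage of $A$, giving a short exact sequence
\begin{equation}\label{planABseq}
0\to \iota(J)\to B\to A\to 0
\end{equation}
in $\Mod^{\mathrm{sm}}_{G,\zeta}(\OO)$. The sequence cannot split, since $\iota(J)\hookrightarrow J_1$ is essential and $A\neq 0$. On the other hand $B$ is locally admissible, being an extension of the (locally) admissible representation $A$ by the locally admissible $\iota(J)$, using that $\Mod^{\mathrm{l\,adm}}_{G,\zeta}(\OO)$ is closed under extensions in $\Mod^{\mathrm{sm}}_{G,\zeta}(\OO)$ (Emerton \cite{ord1}). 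Therefore \eqref{planABseq} is an extension in $\Mod^{\mathrm{l\,adm}}_{G,\zeta}(\OO)$, which must split by injectivity of $J$, a contradiction.

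The only novel point beyond Proposition \ref{injgoinj} is the initial $\varpi$-torsion reduction, and even this is straightforward because the obstruction to admissibility (finite-dimensionality of $A^{I_1}$ via the Hecke operator $T$) is a $k$-linear phenomenon that can be detected on a single $\varpi$-torsion layer; there is no genuine difficulty introduced by passing from $k$ to $\OO$.
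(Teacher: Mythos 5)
There is a genuine gap at the step where you claim $A$ is admissible. You write that ``a surjection $\cIndu{KZ}{G}{\sigma}\twoheadrightarrow A$ has non-trivial $I_1$-invariants in its kernel by \cite[Prop.\ 18]{bl}'', but that proposition gives no such thing unconditionally: it says that a \emph{nonzero} $G$-subrepresentation of $\cIndu{KZ}{G}{\sigma}$ has $I_1$-invariants of finite codimension. In the proof of Proposition \ref{injgoinj} the step you are compressing actually runs through an $\Ext$-vanishing: one first shows $\Ext^1_{G,\zeta}(\cIndu{KZ}{G}{\sigma},\iota(J))\cong\Ext^1_{K/Z_1}(\sigma,\iota(J))=0$, which forces $\psi$ to be non-injective (otherwise the preimage of $A$ in $J_1$ would give a nonsplit extension of $\cIndu{KZ}{G}{\sigma}$ by $\iota(J)$, contradicting the vanishing), and only then does \cite[Prop.\ 18]{bl} apply to $\Ker\psi$. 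That $\Ext$-vanishing in turn rests on the result of \cite{eff} that restriction from $\Mod^{\mathrm{l\,adm}}_{G,\zeta}(k)$ to $\Mod^{\mathrm{sm}}_{K/Z_1}(k)$ sends injectives to injectives.

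In your setting $\iota(J)$ is an $\OO$-module rather than a $k$-module, so the $\Ext$ you would need to vanish lives in $\Mod^{\mathrm{sm}}_{G,\zeta}(\OO)$, and you would require an $\OO$-coefficient analogue of the \cite{eff} statement. The paper invokes \cite{eff} only for $k$-coefficients and you give no argument for the $\OO$-version, so this is precisely the ``genuine difficulty introduced by passing from $k$ to $\OO$'' that your closing sentence claims does not exist. The paper's proof sidesteps the issue entirely: it notes that $J[\varpi]$ is injective in $\Mod^{\mathrm{l\,adm}}_{G,\zeta}(k)$ and applies Proposition \ref{injgoinj} as a black box to get $J_1[\varpi]=\iota(J)[\varpi]$, then a snake-lemma argument embeds $(J_1/\iota(J))[\varpi]$ into $\iota(J)/\varpi\iota(J)$, which yields local admissibility of the relevant subobject directly, with no new $\Ext$ computation. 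To repair your argument you would need to either justify the missing $\Ext$-vanishing for $\OO$-coefficients, or, more in the spirit of the paper, obtain local admissibility of your $A$ by comparing it with $\iota(J)/\varpi\iota(J)$ rather than via the Hecke-algebra route.
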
 
\begin{proof} Let $J$ be an injective object in $\Mod_{G, \zeta}^{\mathrm{l \, adm}}(\OO)$ and $\iota(J)\hookrightarrow J_1$ be 
an injective envelope of $\iota(J)$ in $\Mod_{G, \zeta}^{\mathrm{sm}}(\OO)$. Now $J[\varpi]$ is injective in 
$\Mod_{G, \zeta}^{\mathrm{l \, adm}}(k)$ and $\iota(J[\varpi])\hookrightarrow J_1[\varpi]$ is an injective envelope 
of $\iota(J[\varpi])$ in  $\Mod_{G, \zeta}^{\mathrm{sm}}(k)$. It follows from Proposition \ref{injgoinj} that 
$J_1[\varpi]\cong \iota(J[\varpi])=\iota(J)[\varpi]$. Hence, we obtain an injection 
$(J_1/\iota(J))[\varpi]\hookrightarrow \iota(J)/\varpi \iota(J)$. This implies that $(J_1/\iota(J))[\varpi]$ is an object of 
$\Mod_{G, \zeta}^{\mathrm{l \, adm}}(\OO)$ and so the extension
$$0\rightarrow \iota(J)\rightarrow A\rightarrow (J_1/\iota(J))[\varpi]\rightarrow 0$$ 
splits, where $A\subset J_1$. Since $\iota(J)\hookrightarrow J_1$ is essential, we get that $(J_1/\iota(J))[\varpi]=0$, 
which implies that $J_1=\iota(J)$.  
\end{proof} 

\begin{cor}\label{projaretfree} Projective objects in $\dualcat(\OO)$ are $\OO$-torsion free. In particular, the hypothesis (H0) of 
\S\ref{def} is satisfied.
\end{cor}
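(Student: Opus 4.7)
The plan is to translate the statement via Pontryagin duality into a divisibility statement about injectives, and then use Corollary~\ref{injgoinjO} to reduce to proving $\varpi$-divisibility of injectives in the larger category $\Mod^{\mathrm{sm}}_{G,\zeta}(\OO)$, where the verification is by a direct Frobenius reciprocity argument.

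More precisely, let $\wP$ be projective in $\dualcat(\OO)$. Since $\dualcat(\OO)$ is anti-equivalent to $\Mod^{\mathrm{l\,fin}}_{G,\zeta}(\OO) = \Mod^{\mathrm{l\,adm}}_{G,\zeta}(\OO)$ (the latter equality using \cite[Thm.\ 2.3.8]{ord1} for $G=\GL_2(\Qp)$, though the final divisibility argument does not rely on this identification, only on having an injective in $\Mod^{\mathrm{l\,adm}}_{G,\zeta}(\OO)$), the object $J := \wP^{\vee}$ is injective in $\Mod^{\mathrm{l\,adm}}_{G,\zeta}(\OO)$. By Corollary~\ref{injgoinjO}, $J$ is then injective in $\Mod^{\mathrm{sm}}_{G,\zeta}(\OO)$. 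Under the Pontryagin duality, the submodule $\wP[\varpi]$ corresponds to the cokernel of multiplication by $\varpi$ on $J$, so showing $\wP[\varpi]=0$ is equivalent to showing that $\varpi : J \to J$ is surjective.

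To prove that $J$ is $\varpi$-divisible, fix $v\in J$. Choose an open pro-$p$ subgroup $H$ of $G$ fixing $v$ and small enough that $\zeta$ is trivial on $H\cap Z$, and let $n\ge 1$ be such that $\varpi^n v = 0$. Via Frobenius reciprocity, $v$ corresponds to a $G$-equivariant map
\[
\phi: \cIndu{HZ}{G}{(\OO/\varpi^n)\otimes \zeta_n}\longrightarrow J,
\]
where $\zeta_n$ is the reduction of $\zeta$ modulo $\varpi^n$ and $H$ acts trivially on the coefficient module. Compact induction is exact, so multiplication by $\varpi$ yields an injection
\[
\cIndu{HZ}{G}{(\OO/\varpi^n)\otimes \zeta_n}\hookrightarrow \cIndu{HZ}{G}{(\OO/\varpi^{n+1})\otimes \zeta_{n+1}}
\]
in $\Mod^{\mathrm{sm}}_{G,\zeta}(\OO)$. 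Since $J$ is injective in this category, $\phi$ extends along this inclusion, and the extension corresponds to an $H$-invariant element $w\in J$ with $\varpi w = v$. Hence $J = \varpi J$, which dually gives $\wP[\varpi]=0$.

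The second assertion, that hypothesis (H0) of \S\ref{def} holds, is then immediate: $\Hom_{\dualcat(k)}(\wP[\varpi],\rad Q)=\Hom_{\dualcat(k)}(0,\rad Q)=0$. No step is really the ``hard part''; the only non-formal input is the existence of enough injectives and Corollary~\ref{injgoinjO}, both already available, together with the elementary extension argument above.
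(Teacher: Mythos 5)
Your proof is correct and reaches the same conclusion, but it takes a genuinely different route from the paper. Both arguments use the same key input, Corollary~\ref{injgoinjO}, to reduce to showing that $J=\wP^{\vee}$ is $\varpi$-divisible as an injective object of $\Mod^{\mathrm{sm}}_{G,\zeta}(\OO)$. The paper then argues abstractly: it shows that any object of $\Mod^{\mathrm{sm}}_{G,\zeta}(\OO)$ embeds into a $\varpi$-divisible object (a space of continuous functions $C(G,W)$ valued in a divisible torsion $\OO$-module $W$), so injectivity splits that embedding and a direct summand of a $\varpi$-divisible module is $\varpi$-divisible. You instead argue element by element: given $v\in J$ killed by $\varpi^n$ and fixed by a small open pro-$p$ subgroup $H$, Frobenius reciprocity packages $v$ as a morphism from $\cIndu{HZ}{G}{(\OO/\varpi^n)\otimes\zeta_n}$, and extending that morphism along the multiplication-by-$\varpi$ inclusion into $\cIndu{HZ}{G}{(\OO/\varpi^{n+1})\otimes\zeta_{n+1}}$ yields the required preimage $w$ with $\varpi w=v$. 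Your version is more hands-on and avoids introducing an auxiliary divisible envelope and verifying it lies in the correct category; the paper's version is a one-line application of a general module-theoretic fact once the embedding exists.

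One small slip worth fixing: you ask for $H$ small enough that $\zeta$ be trivial on $H\cap Z$, but that is not achievable for a general continuous $\zeta\colon Z\to\OO^{\times}$ (for instance a character injective in a neighbourhood of $1$). What the construction actually needs is that $\zeta_{n+1}$, and hence $\zeta_n$, be trivial on $H\cap Z$, i.e.\ $\zeta(H\cap Z)\subseteq 1+\varpi^{n+1}\OO$; this can always be arranged by shrinking $H$, since $\zeta$ is continuous. With that correction both compactly induced representations are genuine objects of $\Mod^{\mathrm{sm}}_{G,\zeta}(\OO)$ and the rest of your argument goes through unchanged.
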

\begin{proof} Let $P$ be a projective object in $\dualcat(\OO)$ then $P^{\vee}$ is an injective object in 
$\Mod_{G, \zeta}^{\mathrm{l \, adm}}(\OO)$  and also in $\Mod_{G, \zeta}^{\mathrm{sm}}(\OO)$ by Corollary \ref{injgoinjO}
and it is enough to show that $P^{\vee}$ is $\varpi$-divisible. 
We claim that any $V$ in $\Mod_{G, \zeta}^{\mathrm{sm}}(\OO)$ may be embedded into an object which is $\varpi$-divisible. 
The claim gives the result, since injectivity of  $P^{\vee}$ implies that the embedding must split. Since direct summands
of $\varpi$-divisible modules are $\varpi$-divisible, we are done.  We may embed $j:V\hookrightarrow W$  into a $\varpi$-divisible 
$\OO$-torsion module, since the category of $\OO$-torsion modules has enough injectives 
and these are $\varpi$-divisible. The embedding $V\hookrightarrow C^u(G, W)$, $v\mapsto [g\mapsto j(gv)]$ where the target is the space of uniformly continuous 
functions with discrete topology on $W$ solves the problem.
\end{proof} 
 
\begin{cor}\label{projaretfree2} Let $M$ be an $\OO$-torsion free object of $\dualcat(\OO)$ then $M$ is projective 
in $\dualcat(\OO)$ if and only if $M\otimes_{\OO} k$ is projective in $\dualcat(k)$.
\end{cor}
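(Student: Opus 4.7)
The plan is to prove both implications by lifting arguments, with the harder direction exhibiting $M$ as the projective envelope of $M\otimes_\OO k$.

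The easy direction ($M$ projective in $\dualcat(\OO)$ implies $M\otimes_\OO k$ projective in $\dualcat(k)$) is formal. Given a surjection $f:A\twoheadrightarrow B$ in $\dualcat(k)$ and a morphism $g:M\otimes_\OO k\to B$, I would compose to get $M\twoheadrightarrow M\otimes_\OO k\overset{g}{\to} B$, lift through $f$ by projectivity of $M$ to some $\tilde g:M\to A$, and observe that $\tilde g$ factors through $M\otimes_\OO k$ since $A$ is killed by $\varpi$.

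For the substantive direction, assume $M$ is $\OO$-torsion free and $M\otimes_\OO k$ is projective in $\dualcat(k)$. The plan is to produce a projective $\wP$ in $\dualcat(\OO)$ with $\wP/\varpi\wP\cong M\otimes_\OO k$ and then identify $\wP$ with $M$. Since $\dualcat(\OO)$ has projective envelopes, let $q:\wP\twoheadrightarrow M\otimes_\OO k$ be one. Lemma \ref{reduceprojenv} (with $n=1$) says $\wP/\varpi\wP\twoheadrightarrow M\otimes_\OO k$ is a projective envelope in $\dualcat(k)$; but $M\otimes_\OO k$ is already projective in $\dualcat(k)$, so this surjection is an isomorphism. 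Because $M$ is $\OO$-torsion free, the reduction $M\twoheadrightarrow M\otimes_\OO k$ is surjective, so projectivity of $\wP$ yields a lift $\phi:\wP\to M$ with $\phi\otimes_\OO k$ the isomorphism just identified.

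It remains to show $\phi$ is an isomorphism, which is a standard topological Nakayama argument. For surjectivity, the cokernel $C$ satisfies $C/\varpi C=0$; since any object of $\dualcat(\OO)$ is a cofiltered limit of finite length quotients (each killed by some power of $\varpi$), $C=\varpi C$ forces $C=0$. For injectivity, set $K:=\ker\phi$; the $\OO$-torsion freeness of $M$ makes it flat over the DVR $\OO$, so tensoring $0\to K\to \wP\to M\to 0$ with $k$ gives a short exact sequence $0\to K/\varpi K\to \wP/\varpi\wP\to M/\varpi M\to 0$, and the right map being an isomorphism forces $K/\varpi K=0$, whence $K=0$ by Nakayama again. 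Therefore $\phi$ is an isomorphism and $M$ is projective. The only nonroutine ingredient is Lemma \ref{reduceprojenv}, which is what allows the projective $M\otimes_\OO k$ in $\dualcat(k)$ to be lifted to a projective in $\dualcat(\OO)$ with the prescribed reduction; once that is in hand, everything reduces to topological Nakayama.
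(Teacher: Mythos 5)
Your proof is correct and takes essentially the same route as the paper's: in both, one takes a projective envelope $\wP$ of $M\otimes_\OO k$ in $\dualcat(\OO)$, invokes Lemma \ref{reduceprojenv} together with the projectivity of $M\otimes_\OO k$ to see $\wP\otimes_\OO k\cong M\otimes_\OO k$, lifts to a map $\wP\to M$, and then concludes by a Nakayama-type argument using that every object is a projective limit of finite length (hence $\varpi$-power torsion) quotients; the paper streamlines the surjectivity step by noting $M\to M\otimes_\OO k$ is an essential epimorphism, while you run Nakayama separately on the cokernel and kernel, but the content is the same. One small inaccuracy: $\OO$-torsion-freeness of $M$ is not what makes $M\twoheadrightarrow M\otimes_\OO k$ surjective (that is automatic as it is a quotient); torsion-freeness is needed only for the injectivity step, where — as you correctly use — it kills the $\Tor_1$ (equivalently, guarantees $M[\varpi]=0$ in the snake lemma) so that $\Ker\phi$ reduces to zero mod $\varpi$.
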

\begin{proof} Since every $A$ in $\dualcat(k)$ is killed by $\varpi$ we have 
$$\Hom_{\dualcat(k)}(M\otimes_{\OO} k, A)\cong \Hom_{\dualcat(\OO)}(M, A).$$
Hence, if $M$ is projective in $\dualcat(\OO)$ then the functor $\Hom_{\dualcat(k)}(M\otimes_{\OO} k, \ast)$ is exact and 
so $M\otimes_{\OO} k$ is projective in $\dualcat(k)$.

Let $\phi:P\twoheadrightarrow M\otimes_{\OO} k$ be a projective envelope of $M\otimes_{\OO} k$ in 
$\dualcat(\OO)$. Since $M\rightarrow M\otimes_{\OO} k$ is essential and $P$ is projective there exists a surjection 
$\psi: P\twoheadrightarrow M$ such that the diagram 
\begin{displaymath}
\xymatrix@1{P\ar@{->>}[r]^-{\psi}\ar@{->>}[dr]_-{\phi} & M\ar@{->>}[d] \\ & M\otimes_{\OO} k }
\end{displaymath}
commutes. Since $M\otimes_{\OO} k$ is projective it is its own projective envelope in $\dualcat(k)$. Thus 
it follows from Lemma \ref{reduceprojenv} that $\phi$ induces an isomorphism $P\otimes_{\OO} k\cong M \otimes_{\OO} k$. 
Since $M$ is $\OO$-torsion free we get  $(\Ker \psi)\otimes_{\OO} k=0$. 
Nakayama's lemma implies that $\Ker \psi=0$ and hence $M\cong P$ is projective.
\end{proof}   

\begin{cor}\label{projaretfree3} Let $P_{\bullet}\twoheadrightarrow M$ be a projective resolution of $M$
in $\dualcat(k)$. Let $\wM$ be an $\OO$-torsion free object of $\dualcat(\OO)$ such that $\wM\otimes_{\OO} k\cong M$.
Then there exists a projective resolution $\wP_{\bullet}\twoheadrightarrow \wM$ of $\wM$ in $\dualcat(\OO)$
lifting the resolution of $M$.
\end{cor}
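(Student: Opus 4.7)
The plan is to build $\wP_\bullet$ inductively by lifting the resolution one step at a time, using Corollaries \ref{projaretfree} and \ref{projaretfree2} together with Lemma \ref{reduceprojenv} at each stage.

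The first key step is a \emph{lifting lemma for projectives}: given any projective object $P$ in $\dualcat(k)$, there exists an $\OO$-torsion free projective object $\wP$ in $\dualcat(\OO)$ with $\wP\otimes_{\OO} k\cong P$. To produce it, decompose $P$ as a (possibly infinite) direct sum/product of projective envelopes of irreducibles $S_\alpha$ of $\dualcat(k)$ (which is legitimate because the category is locally finite and closed under direct products). For each $S_\alpha$, pick a projective envelope $\wP_{S_\alpha}\twoheadrightarrow S_\alpha$ in $\dualcat(\OO)$; by Corollary \ref{projaretfree} it is $\OO$-torsion free, and by Lemma \ref{reduceprojenv} its reduction modulo $\varpi$ is a projective envelope of $S_\alpha$ in $\dualcat(k)$. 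Taking the corresponding product in $\dualcat(\OO)$ (on which $\varpi$ acts component-wise, so reduction mod $\varpi$ commutes with the product) yields the desired $\wP$; projectivity of $\wP$ in $\dualcat(\OO)$ follows from Corollary \ref{projaretfree2}.

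Next, I build the resolution. Apply the lifting lemma to $P_0$ to obtain $\wP_0$. Since $\wP_0$ is projective, the composition $\wP_0\twoheadrightarrow \wP_0/\varpi\wP_0\cong P_0\twoheadrightarrow M\cong \wM/\varpi\wM$ lifts to a morphism $\wP_0\to\wM$. Writing $\wM\cong\varprojlim \wM_i$ over its finite length quotients in $\dualcat(\OO)$ and applying Nakayama's lemma to each $\wM_i$ (which is killed by a power of $\varpi$, using that the image plus $\varpi\wM_i$ equals $\wM_i$), the lift $\wP_0\to\wM$ is surjective. Let $\wM_1:=\Ker(\wP_0\twoheadrightarrow\wM)$. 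As a subobject of the $\OO$-torsion free object $\wP_0$, $\wM_1$ is $\OO$-torsion free. Applying $\otimes_{\OO}k$ to $0\to\wM_1\to\wP_0\to\wM\to 0$ and using that $\wM$ is $\OO$-torsion free (so $\Tor_1^{\OO}(\wM,k)=0$) gives a short exact sequence
\begin{equation}
0\rightarrow\wM_1\otimes_{\OO}k\rightarrow P_0\rightarrow M\rightarrow 0,
\end{equation}
so $\wM_1\otimes_{\OO}k\cong\Image(P_1\to P_0)=:M_1$.

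Now iterate. Suppose we have constructed $\wP_0,\dots,\wP_n$ and $\wM_{n+1}:=\Ker(\wP_n\to\wP_{n-1})$ (resp.\ $\Ker(\wP_0\to\wM)$ when $n=0$), which is $\OO$-torsion free with $\wM_{n+1}\otimes_{\OO}k\cong M_{n+1}:=\Image(P_{n+1}\to P_n)$. Apply the lifting lemma to $P_{n+1}$ to get $\wP_{n+1}$, lift $P_{n+1}\twoheadrightarrow M_{n+1}$ to a map $\wP_{n+1}\to\wM_{n+1}$ (using projectivity of $\wP_{n+1}$), surjective by the same Nakayama/inverse-limit argument as above. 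The snake lemma applied to multiplication by $\varpi$ on $0\to\wM_{n+2}\to\wP_{n+1}\to\wM_{n+1}\to 0$ shows $\wM_{n+2}$ is $\OO$-torsion free with $\wM_{n+2}\otimes_{\OO}k\cong\Image(P_{n+2}\to P_{n+1})=M_{n+2}$, continuing the induction. Splicing the sequences $\wP_{n+1}\twoheadrightarrow\wM_{n+1}\hookrightarrow\wP_n$ produces the desired projective resolution $\wP_\bullet\twoheadrightarrow\wM$ in $\dualcat(\OO)$, and by construction reducing modulo $\varpi$ returns the original resolution of $M$.

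The main obstacle is the lifting lemma: showing that an arbitrary projective in $\dualcat(k)$ lifts to a projective in $\dualcat(\OO)$ with the prescribed reduction. Once one identifies projectives as products of projective envelopes and verifies that projective envelopes of irreducibles lift via Lemma \ref{reduceprojenv}, the remainder of the argument is a routine inductive application of Nakayama's lemma and the snake lemma.
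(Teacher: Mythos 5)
Your proof is correct, and its overall skeleton — lift one projective at a time, use a Nakayama-type argument for surjectivity of the lifted map, and use $\OO$-torsion-freeness of $\wP_n$ (Corollary \ref{projaretfree}) plus the snake lemma for multiplication by $\varpi$ to see that the kernel is again torsion free with the right reduction — is exactly the paper's inductive argument. Where you genuinely diverge is in how you lift a single projective $P$ of $\dualcat(k)$: you decompose $P$ as a product of projective envelopes of irreducibles (the dual of the Matlis--Gabriel decomposition of injectives in a locally finite category), lift each factor, and then invoke Corollary \ref{projaretfree2} to get projectivity of the product. The paper does this sub-step in one line: since $P$ is projective it is its own projective envelope in $\dualcat(k)$, so if $\wP\twoheadrightarrow P$ is a projective envelope of $P$ in $\dualcat(\OO)$, Lemma \ref{reduceprojenv} already gives $\wP\otimes_{\OO}k\cong P$, and $\wP$ is torsion free by Corollary \ref{projaretfree}; then the surjection $\wP\to\wM$ is produced by lifting against the essential epimorphism $\wM\twoheadrightarrow M$. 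Your route is valid but relies on the decomposition of projectives into indecomposables, a standard fact for locally finite categories that the paper never needs and that you should at least cite (it is in Gabriel's paper); the paper's shortcut buys you a shorter argument with no appeal to that structure theory, while your version has the mild advantage of making the lifted projective completely explicit as a product of the envelopes $\wP_{S_\alpha}$ already studied in the paper.
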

\begin{proof} Let $\phi:P\twoheadrightarrow M$ be  an epimorphism in $\dualcat(k)$ with $P$ projective and 
let $\wP$ be a projective  envelope of $P$ in $\dualcat(\OO)$. Lemma \ref{reduceprojenv} says that 
$P\cong \wP\otimes_{\OO} k$.  Since $\wM \otimes_{\OO} k \cong \wM/\varpi \wM\cong M$, the epimorphism 
$\wM\twoheadrightarrow M$ is essential by an application of Nakayama's lemma and since $\wP$ is projective there exists $\tilde{\phi}: \wP\twoheadrightarrow \wM$ 
such that the diagram 
\begin{displaymath}
\xymatrix@1{\wP\ar@{->>}[r]^-{\tilde{\phi}}\ar@{->>}[d] & \wM\ar@{->>}[d] \\ P \ar@{->>}[r]^-{\phi}& M}
\end{displaymath}
commutes. Since $\wM$ is $\OO$-torsion free it is $\OO$-flat and hence $(\Ker \tilde{\phi})\otimes_{\OO} k\cong \Ker \phi$. 
Moreover, $\wP$ is $\OO$-torsion free by Corollary \ref{projaretfree}, and hence $\Ker \tilde{\phi}$ is $\OO$-torsion free.
We may then continue to lift the whole resolution.
\end{proof}

\begin{lem}\label{RisH} 
Let $\pi$ be a smooth $k$-representation of $G$ with a central character $\zeta$. Forgetting the $\HH$-action  induces 
an isomorphism $\RR^i\II(\pi)\cong H^i(I_1/Z_1, \pi)$ for all $i\ge 0$.
\end{lem}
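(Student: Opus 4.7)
The plan is to factor $\II$ as a composition of two functors and use the fact that restriction along an open subgroup preserves injectives to identify $\RR^i\II$ with continuous cohomology of $I_1/Z_1$. Note first that $\zeta$ must be trivial on $Z_1$ for $\II$ to be nonzero on a nontrivial representation (otherwise $\pi^{I_1}=0$ automatically), and we may assume this. By Frobenius reciprocity,
\begin{equation*}
\II(\pi)=\Hom_G(\cIndu{ZI_1}{G}{\zeta},\pi)\cong \Hom_{ZI_1}(\zeta,\pi|_{ZI_1})=(\pi|_{ZI_1})^{I_1/Z_1},
\end{equation*}
so $\II=\Gamma\circ \Res$, where $\Res:\Mod^{\mathrm{sm}}_{G,\zeta}(k)\to \Mod^{\mathrm{sm}}_{ZI_1,\zeta}(k)$ is restriction, and $\Gamma(\cdot)=(\cdot)^{I_1/Z_1}$ (viewing $\Mod^{\mathrm{sm}}_{ZI_1,\zeta}(k)\cong \Mod^{\mathrm{sm}}_{I_1/Z_1}(k)$, since $\zeta|_{Z_1}$ is trivial and $ZI_1$ is generated by $Z$ and $I_1$).

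Next I would verify that $\Res$ sends injectives to injectives. Since $ZI_1$ is open in $G$, the compact induction functor $\cIndu{ZI_1}{G}{\cdot}$ is exact: as a functor to underlying $k$-modules it is just $\bigoplus_{g\in G/ZI_1}(\cdot)$, so exactness is immediate, and the $G$-action is defined on each resulting summand. This functor is left adjoint to $\Res$ by Frobenius reciprocity, and the right adjoint of an exact functor preserves injectives. (Both categories have enough injectives by Lemma \ref{genind} and its corollary.)

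Consequently, if $\pi\hookrightarrow J^\bullet$ is an injective resolution in $\Mod^{\mathrm{sm}}_{G,\zeta}(k)$, then $\pi|_{ZI_1}\hookrightarrow J^\bullet|_{ZI_1}$ is an injective resolution in $\Mod^{\mathrm{sm}}_{ZI_1,\zeta}(k)\cong\Mod^{\mathrm{sm}}_{I_1/Z_1}(k)$, and
\begin{equation*}
\RR^i\II(\pi)=H^i(\II(J^\bullet))=H^i(\Gamma(J^\bullet|_{ZI_1}))=\RR^i\Gamma(\pi|_{ZI_1})=H^i(I_1/Z_1,\pi),
\end{equation*}
where the last equality is the standard identification of the right derived functors of the invariants functor on smooth representations of a profinite group with continuous group cohomology. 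Since the action of $I_1/Z_1$ on $\pi$ depends only on $\pi|_{I_1}$ (with $Z_1$ acting trivially), this is the desired isomorphism of $k$-vector spaces.

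The only real content is the exactness of $\cIndu{ZI_1}{G}{\cdot}$, which is harmless since $ZI_1$ is open; everything else is formal abstract nonsense (adjointness, Grothendieck spectral sequence degenerating because $\Res$ is exact). There is no serious obstacle; the lemma is essentially a compatibility statement packaging the observation that $I_1$-invariants factor through restriction to an open subgroup.
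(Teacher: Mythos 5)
Your proof is correct and is essentially the same argument as the paper's: both show that restriction to the open subgroup $I_1$ (equivalently $ZI_1$) sends injectives to injectives because it is right adjoint to the exact functor $\cIndu{ZI_1}{G}{\cdot}$, identify the target category with $\Mod^{\mathrm{sm}}_{I_1/Z_1}(k)$ using smoothness of $\zeta$, and then compute $\RR^i\II$ from an injective resolution in $\Mod^{\mathrm{sm}}_{G,\zeta}(k)$.
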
 
\begin{proof} Let $\Res_{I_1}: \Mod^{\mathrm{sm}}_{G, \zeta}(k)\rightarrow \Mod^{\mathrm{sm}}_{I_1, \zeta}(k)$ 
be the restriction to $I_1$. Since $\Res_{I_1}$ is right adjoint to an exact functor
$\cInd_{ZI_1}^{G}$, $\Res_{I_1}$ maps injective objects to injective objects.
Since $\zeta$ is smooth and $I_1$ is pro-$p$, $\zeta$ is trivial 
on $Z_1:=I_1\cap Z$, hence we may identify 
$\Mod^{\mathrm{sm}}_{I_1, \zeta}(k)$ with $\Mod^{\mathrm{sm}}_{I_1/Z_1}(k)$. 
Choose an injective resolution $\pi\hookrightarrow J^{\bullet}$ 
of $\pi$ in $\Mod^{\mathrm{sm}}_{G, \zeta}(k)$. Then $\pi|_{I_1}\hookrightarrow (J|_{I_1})^{\bullet}$
is an injective resolution of $\pi|_{I_1}$ in  $\Mod^{\mathrm{sm}}_{I_1/Z_1}(k)$. Hence, for all 
$i\ge 0$ we get an isomorphism of $k$-vector spaces 
$\RR^i \II(\pi)\cong H^i(I_1/Z_1, \pi)$.
\end{proof}

The results proved in the rest of the subsection will only be used in \S\ref{nongenericcaseII}.

\begin{lem}\label{vanish3} If $p\ge 5$ then $\RR^i \II=0$ for $i\ge 4$.
\end{lem}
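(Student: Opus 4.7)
The plan is to reduce the assertion to a cohomological dimension computation via Lemma \ref{RisH} and then invoke Lazard's theory of $p$-adic analytic groups.

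First, by Lemma \ref{RisH} we have a vector space identification $\RR^i\II(\pi)\cong H^i(I_1/Z_1,\pi)$ for every smooth $k$-representation $\pi$ of $G$ with central character $\zeta$. So it suffices to prove that the continuous cohomology $H^i(I_1/Z_1,\pi)$ vanishes for $i\ge 4$ on every object of $\Mod^{\mathrm{sm}}_{G,\zeta}(k)$.

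Next, I would identify the group $I_1/Z_1$ as a torsion-free $p$-adic analytic pro-$p$ group and compute its dimension. Since $p\ge 5$, the group $I_1$ is a uniform pro-$p$ group of dimension $4$ (its Lie algebra being the pro-$p$ Iwahori Lie algebra in $\mathfrak{gl}_2(\Qp)$), and $Z_1=1+p\Zp$ sits inside $I_1$ as a $1$-dimensional central uniform subgroup. The quotient $I_1/Z_1$ is therefore a torsion-free $p$-adic analytic pro-$p$ group of dimension $3$.

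Now I would invoke Lazard's theorem (see \cite{laz}, also the account in the appendix of \cite{iw}): for a torsion-free compact $p$-adic analytic group $\HH$ of dimension $d$, the continuous cohomological $p$-dimension satisfies $\mathrm{cd}_p(\HH)=d$. In particular, $H^i(\HH,V)=0$ for every discrete $p$-torsion $\HH$-module $V$ whenever $i>d$. Applying this with $\HH=I_1/Z_1$ and $d=3$, we conclude $H^i(I_1/Z_1,\pi)=0$ for $i\ge 4$, which combined with Lemma \ref{RisH} yields $\RR^i\II=0$ for $i\ge 4$.

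There is no real obstacle; the only thing to check carefully is the numerology (namely that $\dim I_1/Z_1 = 3$ when $p \ge 5$, which makes $I_1$ uniform) and the fact that we may apply Lazard's cohomological dimension statement to smooth, rather than merely finite, coefficients, which is immediate by writing any smooth $\pi$ as a filtered colimit of its finite-dimensional $I_1/Z_1$-stable subspaces and using that continuous cohomology of a profinite group commutes with filtered colimits of discrete modules.
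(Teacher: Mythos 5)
Your overall strategy is exactly the paper's: reduce via Lemma \ref{RisH} to computing the cohomology of $I_1/Z_1$, identify $I_1/Z_1$ as a torsion-free compact $p$-adic analytic group of dimension $3$, and then invoke Lazard to get $\mathrm{cd}_p(I_1/Z_1)=3$. The extension to arbitrary smooth coefficients by taking a colimit over finite submodules is also fine and implicit in the paper.

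There is, however, a genuine gap in the step where you assert that $I_1/Z_1$ ``is therefore a torsion-free $p$-adic analytic pro-$p$ group.'' Quotienting a torsion-free (even uniform) pro-$p$ group by a closed central subgroup does \emph{not} automatically yield a torsion-free group: the simplest counterexample is $\Zp/p\Zp$. What you actually need is that $Z_1$ is \emph{isolated} in $I_1$, i.e.\ that $g^p\in Z_1$ forces $g\in Z_1$; this is true here but is precisely the content you'd have to check. You also assert that $I_1$ itself is uniform for $p\ge 5$, which is not obvious from what you wrote and isn't needed. The paper sidesteps both issues: it uses the Iwahori factorization $I_1/Z_1\cong(I_1\cap U^s)\times(I_1\cap T)/Z_1\times(I_1\cap U)\cong\Zp^3$ (as $p$-adic manifolds) to read off the dimension, and then cites Lazard \cite[III.3.2.7.5]{laz} directly for $p$-saturability of $I_1/Z_1$ when $p\ge5$, from which torsion-freeness and hence the Poincar\'e duality group property follow. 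To close your argument, replace the ``therefore'' by either the paper's citation of Lazard for $p$-saturability of the quotient, or by a short direct check that $Z_1$ is $p$-isolated in $I_1$.
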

\begin{proof} We have an isomorphism 
$$ I_1/Z_1\cong (I_1\cap U^s)\times (I_1\cap T)/Z_1 \times (I_1\cap U) \cong \Zp \times \Zp \times \Zp.$$ 
Hence, $I_1/Z_1$ is a compact $p$-adic analytic group of dimension $3$. Since we assume $p\ge 5$ it is $p$-saturable 
\cite[III.3.2.7.5]{laz}, and hence 
torsion free. Thus $I_1/Z_1$ is a Poincar\'e group of dimension $3$, \cite[V.2.5.8]{laz} and \cite{serreprop}. Hence
$H^i(I_1/Z_1, \ast)=0$ for all $i>3$ and the assertion follows from Lemma \ref{RisH}. 
\end{proof}

\begin{lem}\label{vanish2} Let $\tau$ be a smooth  irreducible representation 
of $G$ with a central character $\zeta$, such that $\tau\not\cong \pi(r, 0, \eta)$ with  $0<r<p-1$.
Then  $\Ext^i_{\HH}(\II(\tau), \ast)=0$ for $i\ge 2$. 
\end{lem}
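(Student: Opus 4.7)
The plan is to establish that $\II(\tau)$ has projective dimension at most one as a right $\HH$-module, which gives the required vanishing. First I would reduce to the case that $\tau$ is absolutely irreducible: the functors $\II$, $\TT$, and the algebra $\HH$ itself all commute with a finite scalar extension $l/k$ (as noted in the remark preceding \eqref{homit}), and $\Ext^i_{\HH}$ is preserved under $\otimes_k l$, so it suffices to verify the bound after passing to a splitting field. By Proposition \ref{irrkreps} the cases to treat are then: characters $\eta \circ \det$, twists of Steinberg $\Sp \otimes \eta \circ \det$, irreducible principal series $\Indu{P}{G}{\chi_1 \otimes \chi_2}$ with $\chi_1 \ne \chi_2$, and Iwahori-supersingular $\pi(r, 0, \eta)$ with $r \in \{0, p-1\}$. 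Each such $\tau$ is generated by its $I_1$-invariants, hence lies in $\Mod^{\mathrm{sm}}_{G,\zeta}(k)^{I_1}$, and the Ollivier equivalence \eqref{functorsI&T} reduces the problem to producing, in each case, a short exact sequence
$$0 \to \tau_1 \to \tau_0 \to \tau \to 0$$
in $\Mod^{\mathrm{sm}}_{G,\zeta}(k)^{I_1}$ whose image under $\II$ is a resolution by projective $\HH$-modules, equivalently $\tau_0$ and $\tau_1$ are direct summands of direct sums of copies of $\cIndu{ZI_1}{G}{\zeta}$.

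For a principal series $\tau = \Indu{P}{G}{\chi_1 \otimes \chi_2}$, the $\HH$-module $\II(\tau) = \tau^{I_1}$ is two dimensional and decomposes under the finite torus subring $k[H/Z_1] \subset \HH$ into two distinct one-dimensional characters $\chi, \chi^s$; writing $e_\chi, e_{\chi^s}$ for the corresponding central idempotents, an explicit computation of the affine Hecke operator $T_s$ identifies $\II(\tau)$ with the cokernel of an injection between two copies of $(e_\chi + e_{\chi^s})\HH$, yielding the desired length-one projective resolution. For a character $\eta \circ \det$ or a special series $\Sp \otimes \eta \circ \det$, I would apply $\II$ to the twist of \eqref{defineSp} by $\eta \circ \det$ and use the long exact $\Ext^\bullet_{\HH}(-, M)$ sequence: the principal series case shows $\Ext^{\geq 2}_{\HH}(\II(\Indu{P}{G}{\eta \otimes \eta}), M) = 0$, so it remains only to analyze the connecting maps in degrees $\geq 2$ between $\II(\eta \circ \det)$ and $\II(\Sp \otimes \eta \circ \det)$, and these are pinned down by one-dimensional $\Ext^1_\HH$ computations between these two one-dimensional $\HH$-modules.

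For the Iwahori-supersingular case I may assume, after twisting by $\eta \circ \det$, that $\tau = \pi(r, 0)$ with $r \in \{0, p-1\}$ and $\zeta(p) = 1$. The essential feature of these special values of $r$ is that the Serre weight $\sigma = \Sym^r k^2$ is either trivial or has $\sigma|_I$ an extension of two characters of $I/I_1$; in both cases one checks directly that $\cIndu{KZ}{G}{\sigma}$ is generated by its $I_1$-invariants and that $\II(\cIndu{KZ}{G}{\sigma})$ is projective over $\HH$, being parabolically induced from a projective module over the finite Hecke algebra of $K/K_1$. Applying $\II$ to the defining exact sequence
$$0 \to \cIndu{KZ}{G}{\sigma} \xrightarrow{T} \cIndu{KZ}{G}{\sigma} \to \pi(r, 0) \to 0$$
then produces a length-one projective resolution of $\II(\tau)$.

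The hard part will be the identification of $\II(\cIndu{KZ}{G}{\sigma})$ as a projective $\HH$-module in the Iwahori case and, correspondingly, the projective-resolution analysis for principal series; this is exactly the point at which the excluded regular range $0 < r < p-1$ fails, because there $\sigma$ is irreducible modulo $I_1$ in a way that prevents $\II(\cIndu{KZ}{G}{\sigma})$ from being projective, so the projective dimension of $\II(\pi(r, 0, \eta))$ jumps to $2$ and $\Ext^2_\HH$ becomes non-zero. The rest of the argument is a bookkeeping exercise with long exact sequences, once the two projectivity statements above are in hand.
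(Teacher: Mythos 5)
Your overall strategy is the same as the paper's: reduce to producing a length-one projective resolution of $\II(\tau)$ (possibly up to a direct summand $M$) by applying $\II$ to an explicit exact sequence of $G$-representations compactly induced from compact open subgroups, relying on the observation that $\II$ of a direct summand of $\cIndu{ZI_1}{G}{\zeta}$ is a projective $\HH$-module. Your treatment of the irreducible principal series via explicit idempotent computations in $\HH$ is a potentially viable alternate route to what the paper does (it just cites \cite[Cor.\ 6.6, Eq.\ (12)]{bp}); and your Iwahori-supersingular argument coincides with the paper's, applying $\II$ to \eqref{definingpir} with $r\in\{0,p-1\}$.

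However, the case $\tau = \eta\circ\det$ or $\tau = \Sp\otimes\eta\circ\det$ has a genuine gap. You propose applying $\II$ to the twist of \eqref{defineSp} and running the long exact $\Ext^\bullet_{\HH}(-,M)$ sequence. But $\Indu{P}{G}{\eta\otimes\eta}$ is a \emph{reducible} principal series, so your computation for $\chi_1\neq\chi_2$ does not apply to it directly (this much is fixable, since the paper handles it as $\pi(0,\lambda)$ with $\lambda=\pm 1$). The more serious problem is what the long exact sequence actually delivers: for $i\ge 2$ it gives only an isomorphism $\Ext^i_{\HH}(\II(\eta\circ\det),M)\cong\Ext^{i+1}_{\HH}(\II(\Sp\otimes\eta\circ\det),M)$ together with a surjection $\Ext^{i-1}_{\HH}(\II(\eta\circ\det),M)\twoheadrightarrow\Ext^{i}_{\HH}(\II(\Sp\otimes\eta\circ\det),M)$ from the vanishing in the middle. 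These identities relate the two families of groups to each other but do not force them to vanish — for instance $\Ext^2(\II(\Sp\otimes\eta\circ\det),M)$ could be a nonzero quotient of $\Ext^1(\II(\eta\circ\det),M)$. No ``$\Ext^1$ computation between the two one-dimensional $\HH$-modules'' closes this loop. The paper avoids this entirely: for $\tau$ a character or a twist of Steinberg it realizes $\tau$ as $H_0$ of the diagram $\tau^{I_1}\hookrightarrow\tau^{K_1}$, yielding a fresh exact sequence
$$0\rightarrow \cIndu{\KK}{G}{\tau^{I_1}\otimes\delta}\rightarrow \cIndu{KZ}{G}{\tau^{K_1}}\rightarrow\tau\rightarrow 0$$
where both inductions have projective image under $\II$, giving a genuine length-one resolution directly. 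You need something of this sort (or another self-contained projective resolution) for these two cases; the Steinberg exact sequence by itself is not enough.
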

\begin{proof} Using Lemma \ref{bcH} we may reduce to the case  where  $\tau$ is absolutely irreducible, which we now assume.
It is enough to produce an exact sequence of $\HH$-modules:
\begin{equation}\label{resprojmodH}
0\rightarrow P_1\rightarrow P_0\rightarrow \II(\tau)\oplus M\rightarrow 0
\end{equation}
with $P_0$ and $P_1$ projective and $M$ arbitrary. We observe that if $A$ is a direct summand 
of $\cIndu{I_1Z}{G}{\zeta}$, then $\II(A)$ is a direct summand of  $\II(\cIndu{I_1Z}{G}{\zeta})\cong \HH$
and hence $\II(A)$ is projective. If $\tau\cong \pi(r, \lambda, \eta)$, with 
$\lambda\neq 0$ and $0<r<p-1$, then such sequence is constructed in \cite[Cor.6.6, Eq.(12)]{bp}. 
If $\tau\cong \pi(r, \lambda, \eta)$ with $r=0$ or $r=p-1$ then one may obtain \eqref{resprojmodH} by applying 
$\II$ to \eqref{definingpir}. The sequence remains exact by \cite[2.9, 2.8]{bl} in the non-, and by
\cite[3.2.4, 3.2.5]{breuil1} in the supersingular case. If $\tau=\eta\circ \det$ or $\tau=\Sp\otimes\eta\circ \det$, 
then $\tau$ may be realized as an $H_0$ of the diagram $\tau^{I_1}\hookrightarrow \tau^{K_1}$, see \cite[Thm.10.1]{bp}. 
This means an exact sequence:
\begin{equation}
0\rightarrow \cIndu{\KK}{G}{\tau^{I_1}\otimes \delta}\rightarrow \cIndu{KZ}{G}{\tau^{K_1}}\rightarrow \tau\rightarrow 0.
\end{equation}
where $\delta(g)=(-1)^{\val(\det g)}$, where $\KK$ is the $G$-normalizer of $I$.  Again applying $\II$ we get \eqref{resprojmodH}.
\end{proof}

\begin{cor}\label{vanish4} Let $M$ be a finite dimensional $\HH$-module, such that the 
irreducible subquotients are isomorphic to $\II(\tau)$, where $\tau$ is as above, then $\Ext^i_{\HH}(M, \ast)=0$, for $i\ge 2$.
\end{cor}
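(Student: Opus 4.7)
The statement reduces immediately to a devissage from Lemma \ref{vanish2}. First I would argue that it suffices to treat the case where $M$ is of finite length as an $\HH$-module: since $M$ is finite dimensional over $k$ and every irreducible subquotient is of the form $\II(\tau)$ (in particular finite dimensional and non-zero), $M$ admits a finite composition series
\[
0=M_0\subsetneq M_1\subsetneq \cdots \subsetneq M_n=M
\]
with each $M_i/M_{i-1}\cong \II(\tau_i)$ for some $\tau_i$ of the allowed type.

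Next I would argue by induction on the length $n$. The base case $n=1$ is exactly Lemma \ref{vanish2}. For the inductive step, apply $\Hom_{\HH}(\ast,N)$ to the short exact sequence
\[
0\to M_{n-1}\to M_n\to \II(\tau_n)\to 0
\]
and read off the piece of the long exact sequence
\[
\Ext^i_{\HH}(\II(\tau_n),N)\to \Ext^i_{\HH}(M_n,N)\to \Ext^i_{\HH}(M_{n-1},N)
\]
for each $i\ge 2$. By Lemma \ref{vanish2} the left-hand term vanishes, and by the induction hypothesis the right-hand term vanishes, so the middle term vanishes for every $\HH$-module $N$, which is exactly the desired conclusion.

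\textbf{Main obstacle.} There is no real obstacle here; this is a standard devissage and all the content lives in Lemma \ref{vanish2}. The only thing to double-check is that the hypothesis ``irreducible subquotients isomorphic to $\II(\tau)$ for $\tau$ as in Lemma \ref{vanish2}'' is really stable under extensions, which is automatic since the hypothesis is phrased on composition factors. Consequently no extra input beyond Lemma \ref{vanish2} and the long exact sequence is required.
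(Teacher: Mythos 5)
Your proof is correct and is exactly the devissage the paper leaves implicit (the paper states the corollary without proof). The reduction from "finite dimensional" to "finite length" and the induction via the long exact sequence in $\Ext_{\HH}$ are the intended argument.
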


\begin{prop}\label{comphext} 
Let $\pi$ and $\tau$ be in $\Mod^{\mathrm{sm}}_{G, \zeta}(k)$. Suppose that $\tau$ is admissible, generated by $\tau^{I_1}$,
and the irreducible subquotients of $\II(\tau)$ are not isomorphic to $M(r,0, \eta)$ with $0<r<p-1$.  Then for $i\ge 1$ there exists an exact sequence: 
\begin{equation}\label{higherexts}
\Ext^1_{\HH}(\II(\tau), \RR^{i-1}\II(\pi)) \hookrightarrow \Ext^i_{G,\zeta}(\tau, \pi)\twoheadrightarrow 
\Hom_{\HH}(\II(\tau), \RR^i\II(\pi)).
\end{equation}
If $p\ge 5$ then $\Ext^4_{G,\zeta}(\tau, \pi)\cong \Ext^1_{\HH}(\II(\tau), \RR^3\II(\pi))$ and $\Ext^i_{G,\zeta}(\tau, \pi)=0$ for $i\ge 5$. 
\end{prop}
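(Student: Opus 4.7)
The plan is to extract everything from the Grothendieck spectral sequence \eqref{specseq} by combining the two vanishing results already at our disposal. Concretely, since $\tau$ is generated by $\tau^{I_1}$ we have $\tau\cong\TT\II(\tau)$, so $\Hom_G(\tau,-)\cong \Hom_{\HH}(\II(\tau),\II(-))$, and the spectral sequence
$$E_2^{i,j}=\Ext^i_{\HH}(\II(\tau),\RR^j\II(\pi))\Longrightarrow \Ext^{i+j}_{G,\zeta}(\tau,\pi)$$
applies. Lemma \ref{vanish3} gives $\RR^j\II=0$ for $j\ge 4$, so $E_2^{i,j}=0$ whenever $j\ge 4$. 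On the other hand, Corollary \ref{vanish4} (applied to $M=\II(\tau)$, whose irreducible constituents are of the allowed form by the hypothesis) gives $\Ext^i_{\HH}(\II(\tau),-)=0$ for $i\ge 2$, so $E_2^{i,j}=0$ whenever $i\ge 2$.

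So the $E_2$-page is concentrated in the strip $0\le i\le 1$, $0\le j\le 3$. The differentials $d_r\colon E_r^{i,j}\to E_r^{i+r,j-r+1}$ with $r\ge 2$ all start from or land in a column with index $\ge 2$, and therefore vanish identically; hence $E_2=E_\infty$. For each total degree $n=i+j$ we then obtain a two-step filtration on $\Ext^n_{G,\zeta}(\tau,\pi)$ whose graded pieces are $E_\infty^{1,n-1}$ (the subobject) and $E_\infty^{0,n}$ (the quotient), giving for $n=1,2,3$ the short exact sequence
$$0\to\Ext^1_{\HH}(\II(\tau),\RR^{n-1}\II(\pi))\to \Ext^n_{G,\zeta}(\tau,\pi)\to\Hom_{\HH}(\II(\tau),\RR^n\II(\pi))\to 0,$$
which is \eqref{higherexts}. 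For $n=4$ only the piece $E_\infty^{1,3}=\Ext^1_{\HH}(\II(\tau),\RR^3\II(\pi))$ survives, giving the isomorphism $\Ext^4_{G,\zeta}(\tau,\pi)\cong\Ext^1_{\HH}(\II(\tau),\RR^3\II(\pi))$. For $n\ge 5$ every pair $(i,j)$ with $i+j=n$ forces either $i\ge 2$ or $j\ge 4$, so $E_\infty^{i,j}=0$ and $\Ext^n_{G,\zeta}(\tau,\pi)=0$.

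The only non-routine step is justifying that Corollary \ref{vanish4} really applies to $\II(\tau)$, i.e.\ that the hypothesis on the irreducible subquotients of $\tau$ transfers to those of the $\HH$-module $\II(\tau)$; this follows from admissibility of $\tau$ (so $\II(\tau)$ is finite dimensional) together with Vign\'eras' bijection \cite[Thm.\ 5.4]{vig}, which identifies irreducible subquotients of $\II(\tau)$ with $\II$ applied to the irreducible subquotients of $\tau$ (each of which is by hypothesis not of the excluded supersingular form $\pi(r,0,\eta)$ with $0<r<p-1$). Everything else is the purely formal collapse of the spectral sequence.
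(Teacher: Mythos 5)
Your argument is correct and is essentially the paper's own proof: both collapse the spectral sequence \eqref{specseq} using Corollary \ref{vanish4} (vanishing of $\Ext^p_{\HH}(\II(\tau),-)$ for $p\ge 2$) and Lemma \ref{vanish3} (vanishing of $\RR^q\II$ for $q\ge 4$), so that $E_2=E_\infty$ is concentrated in the strip $0\le p\le 1$, $0\le q\le 3$, and the two-step filtration yields \eqref{higherexts}, the degree-$4$ isomorphism, and the vanishing for $i\ge 5$. Your final paragraph about transferring the hypothesis to $\II(\tau)$ is harmless but unnecessary, since the proposition assumes directly that $\tau^{I_1}=\II(\tau)$ satisfies the conditions of Corollary \ref{vanish4}.
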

\begin{proof} This follows from  a calculation with the spectral sequence \eqref{specseq}.
Let $E_2^{pq}= \Ext^p_{\HH}(\II(\tau), \RR^q\II(\pi))$. Then $E_2^{pq}=0$ for $p>1$, by Corollary \ref{vanish4}. Thus 
$E_{\infty}^{pq}= E_2^{pq}$ and for all $n\ge 0$ we obtain an exact sequence
\begin{equation}
0\rightarrow E_2^{0,n}\rightarrow E^n \rightarrow E_2^{1,n-1}\rightarrow 0.
\end{equation}   
If $p\ge 5$ then $E_2^{pq}=0$ for $q>3$ by Lemma \ref{vanish3}, which implies  the assertion.
\end{proof}

\begin{lem}\label{comphext1} Let $M$, $N$ be absolutely irreducible $\HH$-modules 
and let $d$ be the dimension of $\Ext^1_{\HH}(M, N)$. 
If $p>2$  and $d\neq 0$ then one of the following holds:
\begin{itemize}
\item[(i)] $M\cong N \cong \II(\pi(r,0, \eta))$ with $0<r<p-1$ and $d=2$;
\item[(ii)] $M\cong N$ and  $M\not\cong \II(\Sp\otimes \eta)$, $M\not\cong \II(\eta)$, $M\not \cong \II(\pi(r,0, \eta))$ 
with $0<r<p-1$ and $d=1$;
\item[(iii)] either ($M\cong \II(\eta)$ and $N\cong \II(\Sp\otimes \eta)$) or 
($N\cong \II(\eta)$ and $M\cong \II(\Sp\otimes \eta)$) and $d=1$.
\end{itemize}
where  $\eta:G\rightarrow k^{\times}$ is a smooth character .
\end{lem}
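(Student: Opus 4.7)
The plan is to transfer the question to the $G$-side using the Ollivier--Vign\'eras equivalence \eqref{functorsI&T}, apply the five-term exact sequence \eqref{5T}, and then invoke the known computations of $\Ext^1_{G,\zeta}$ between irreducible smooth $k$-representations.

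First I would set $\tau := \TT(M)$ and $\pi := \TT(N)$. By \cite{vig} and \cite{o2} both are irreducible smooth $k$-representations with central character $\zeta$, generated by their $I_1$-invariants, and $M = \II(\tau)$, $N = \II(\pi)$. The five-term sequence \eqref{5T} then gives an injection
\begin{equation*}
\Ext^1_{\HH}(\II(\tau), \II(\pi)) \hookrightarrow \Ext^1_{G,\zeta}(\tau, \pi)
\end{equation*}
whose image is the subspace of those extension classes $[0 \to \pi \to E \to \tau \to 0]$ for which $E$ is still generated by $E^{I_1}$; equivalently, those annihilated by the connecting map into $\Hom_{\HH}(\II(\tau), \RR^1\II(\pi))$, whose target is identified with $H^1(I_1/Z_1, \pi)$ as a $k$-vector space by Lemma \ref{RisH}. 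This already shows that $\Ext^1_{\HH}(M, N) \neq 0$ forces $\Ext^1_{G,\zeta}(\tau, \pi) \neq 0$.

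Next I would go through the classification in Proposition \ref{irrkreps} case by case and quote the computations of $\Ext^1_{G,\zeta}$ between irreducibles carried out in \cite{bp}, \cite{colmez}, \cite{ord2}, and completed in \cite{ext2}. These show that the group vanishes unless $\tau$ and $\pi$ are both absolutely irreducible (hence defined over $k$, by Lemma \ref{fielddefi}) and lie in the same block; the surviving pairs are precisely those appearing in (i)--(iii), with the dimensions of $\Ext^1_{G,\zeta}$ equal to the $d$ announced in the lemma in cases (ii) and (iii). In case (iii) and the non-exceptional subcases of (ii) the extension $E$ is automatically generated by $E^{I_1}$: for characters this is trivial because $E^{I_1} = E$, and for the remaining subcases one checks, using the explicit form of $\II(\tau)$ and of the relevant extension classes, that $\dim E^{I_1}$ matches what is needed to generate $E$. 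Hence in these cases the injection above is an isomorphism.

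The main obstacle is the regular supersingular case (i), where $\II(\pi(r,0,\eta))$ is a two-dimensional $\HH$-module and one must prove both that $\Ext^1_{\HH}(\II(\pi), \II(\pi))$ has dimension at least $2$ and that it has dimension at most $2$. For the lower bound I would exhibit two linearly independent $\HH$-extensions explicitly, either by applying $\II$ to the universal two-dimensional $G$-deformation (cf.\ the construction in \cite{bp}) or by a direct Hecke-algebra argument. For the upper bound I would compute $H^1(I_1/Z_1, \pi(r,0,\eta))$ as an $\HH$-module using Lemma \ref{RisH}, the Poincar\'e-duality structure on $I_1/Z_1$ (Lemma \ref{vanish3}), and the presentation $\pi(r,0,\eta) = \cIndu{KZ}{G}{\sigma}/T$, and then bound the image of the connecting map in $\Hom_{\HH}(\II(\pi), H^1(I_1/Z_1, \pi))$. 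The same circle of ideas also pins down the asymmetric case (iii) by comparing the $I_1$-invariants of the non-split extension between $\eta\circ\det$ and $\Sp\otimes\eta\circ\det$ with those of its subobject and quotient.
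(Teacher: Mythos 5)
Your strategy inverts the logic of the paper, and as written it does not work. The paper's own proof is a one-line appeal to computations carried out directly in the module category of $\HH$: the cases where $M$ is $\II(\eta)$ or $\II(\Sp\otimes\eta)$ are \cite[Lem.\ 11.3]{ext2}, and the cases $M\cong\II(\pi(r,\lambda,\eta))$ are \cite[Cor.\ 6.5, 6.6]{bp}. In this paper the lemma (together with the computation of $\RR^{\bullet}\II$) is an \emph{input} to the five-term sequence \eqref{5T}, used to bound the $G$-side Ext groups; you are trying to run \eqref{5T} backwards and read off the $\HH$-side dimensions from the $G$-side ones, and that is exactly where the gap lies.

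Concretely, your claim that in case (iii) and the ``non-exceptional'' subcases of (ii) the injection $\Ext^1_{\HH}(\II(\tau),\II(\pi))\hookrightarrow\Ext^1_{G,\zeta}(\tau,\pi)$ is an isomorphism, so that $d$ equals the $G$-side dimension, is false. For $\tau=\eta\circ\det$, $\pi=\Sp\otimes\eta\circ\det$ one has $\dim\Ext^1_{G,\zeta}(\tau,\pi)=2$ while the lemma asserts $d=1$ (and $E^{I_1}=E$ fails here: $E$ is not a character); for supersingular $\pi$ one has $\dim\Ext^1_{G,\zeta}(\pi,\pi)=3$ by Proposition \ref{H15} while $d=2$ (regular) or $d=1$ (Iwahori); for an irreducible principal series $\pi_1$ in a generic block $\dim\Ext^1_{G,\zeta}(\pi_1,\pi_1)=2$ while $d=1$. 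Thus the image of the connecting map into $\Hom_{\HH}(\II(\tau),\RR^1\II(\pi))$ is nonzero in almost every case, and computing it requires precisely the knowledge of $H^1(I_1/Z_1,\pi)$ as an $\HH$-module that you defer to a sketch. Your approach also gives no handle on the vanishing statements implicit in the lemma: for the two distinct principal series $\pi_1\not\cong\pi_2$ of a generic block, $\Ext^1_{G,\zeta}(\pi_2,\pi_1)$ is one-dimensional, yet the lemma asserts $\Ext^1_{\HH}(\II(\pi_2),\II(\pi_1))=0$; an injection into a nonzero space cannot detect this, and one must actually check that the unique nonsplit $G$-extension does not induce an $\HH$-extension on $I_1$-invariants. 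In short, the only cases your argument genuinely settles are those with $\Ext^1_{G,\zeta}(\tau,\pi)=0$; every positive dimension count, not just case (i), requires either the direct Hecke-algebra computations the paper cites or a full determination of the connecting map, neither of which is supplied.
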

\begin{proof} If $N\cong \II(\Sp\otimes \eta)$ or $N\cong \II(\eta)$ the assertion follows from \cite[11.3]{ext2}.
Otherwise, $N\cong \II(\pi(r, \lambda, \eta))$ and the assertion follows from Corollaries 6.5, 6.6 and 6.7 \cite{bp}.
We note that when the module denoted by $M$ in \cite[Cor.6.7]{bp} is irreducible, which is the case of interest here, it is isomorphic 
to the module denoted by $M'$  in \cite[Cor.6.7]{bp}, as they are both isomorphic to  $\II(\Indu{B}{G}{\mu_{\lambda}\otimes\mu_{\lambda^{-1}}}).$ 
\end{proof}
\begin{remar} Let $T_p$ be the Hecke operator in the full Hecke algebra $\End_G(\cIndu{I_1}{G}{\Eins})$ corresponding to the (double) coset
$\left( \begin{smallmatrix} p & 0\\ 0 & p\end{smallmatrix}\right) I_1$, and let $\lambda=\zeta(\left( \begin{smallmatrix} p & 0\\ 0 & p\end{smallmatrix}\right))$.
In \cite{bp} we work with the algebra $\HH_{p=\lambda}$, which is the quotient of the full Hecke algebra by the ideal generated by $T_p -\lambda^{-1}$.
Let $e_{\zeta}:=|Z\cap K/Z\cap K_1|^{-1}\sum_{z\in Z\cap K/Z\cap K_1}  \zeta(z) T_z$, where $T_z$ is the Hecke operator corresponding to the (double) coset $z I_1$, see 
 \cite[\S 2]{coeff}. Then $e_{\zeta}$ is a central idempotent in $\HH_{p=\lambda}$ and 
$\HH= e_{\zeta} \HH_{p=\lambda} e_{\zeta}$. Since $e_{\zeta}$ is a central idempotent we may calculate the $\Ext$-groups of $\HH$-modules in the category 
of $\HH_{p=\lambda}$-modules, which allows us to use the results of \cite{bp}.
\end{remar}

\subsection{Blocks}\label{blocks}
We show that the category $\Mod^{\mathrm{lfin}}_{G,\zeta}(\OO)$ naturally decomposes into a direct 
product of subcategories. 

\begin{lem}\label{bccompact} Let $J$ be an injective object of $\Mod^{\mathrm{sm}}_{\GG}(k)$, 
where $\GG$ is a profinite group. Let 
$l$ be a field extension of $k$ then $J\otimes_k l$ is an injective object of
$\Mod^{\mathrm{sm}}_{\GG}(l)$.
\end{lem}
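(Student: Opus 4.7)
The plan is to reduce, via Baer's criterion in a locally noetherian setting, to the case of a finite field extension, which I will handle through a coinduction adjunction.

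First I would observe that $\Mod^{\mathrm{sm}}_{\GG}(k')$ is a locally noetherian Grothendieck abelian category for any field $k'$: the objects $k'[\GG/N]=\cIndu{N}{\GG}{\Eins}$, with $N$ running over open normal subgroups of $\GG$, form a set of generators, and each is a finite-dimensional $k'$-vector space (since $\GG/N$ is a finite discrete group) and hence a noetherian object. By Gabriel's theorem, arbitrary direct sums of injectives then remain injective, and injectivity is detected by Baer's criterion: an object $J'$ is injective if and only if $\Ext^1_{k'[\GG]}(A,J')=0$ for every noetherian $A$.

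Next I would dispose of the case $l/k$ finite. The restriction-of-scalars functor $\mathrm{res}\colon\Mod^{\mathrm{sm}}_{\GG}(l)\to\Mod^{\mathrm{sm}}_{\GG}(k)$ is exact, and in the finite case admits a right adjoint $V\mapsto\Hom_k(l,V)$, which therefore preserves injectives. Because $\GG$ acts trivially on $l$ and on $l^{*}=\Hom_k(l,k)$, and because $l^{*}$ is a free $l$-module of rank one, so $l^{*}\cong l$ as $l$-modules, the canonical identification $\Hom_k(l,J)\cong J\otimes_k l^{*}$ upgrades to an isomorphism of $l[\GG]$-modules $\Hom_k(l,J)\cong J\otimes_k l$. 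Hence $J\otimes_k l$ is injective whenever $l/k$ is finite.

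For the general case I would verify Baer's criterion by descent. Let $A$ be a noetherian object of $\Mod^{\mathrm{sm}}_{\GG}(l)$; then $A$ is a finite-dimensional $l$-vector space on which $\GG$ acts through a finite quotient $\GG/N$, and the finitely many matrix entries of that action with respect to an $l$-basis of $A$ generate a finite subextension $l_0\subseteq l$ over $k$, so $A\cong A_0\otimes_{l_0}l$ for some $A_0\in\Mod^{\mathrm{sm}}_{\GG}(l_0)$. Because $-\otimes_{l_0}l$ is exact and its right adjoint $\mathrm{res}_{l/l_0}$ is also exact, the former preserves projectives, and the $\Hom$-adjunction derives to
\begin{equation*}
\Ext^1_{l[\GG]}(A,J\otimes_k l)\cong\Ext^1_{l_0[\GG]}(A_0,J\otimes_k l).
\end{equation*}
Fixing an $l_0$-basis of $l$ identifies $J\otimes_k l$, viewed as an $l_0[\GG]$-module, with a direct sum $\bigoplus_\alpha J\otimes_k l_0$; by the finite case each summand is injective in $\Mod^{\mathrm{sm}}_{\GG}(l_0)$, and by local noetherianity of that category the direct sum remains injective. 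Therefore the right-hand Ext vanishes, and $J\otimes_k l$ is injective in $\Mod^{\mathrm{sm}}_{\GG}(l)$.

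The main obstacle is the descent step, i.e.\ realizing a noetherian smooth $l[\GG]$-module as extension of scalars from a finite subextension of $k$, together with the appeal to Gabriel's theorem to pass from injectivity to $\Ext^1$-vanishing against noetherian source objects; once these are in place, the coinduction adjunction in the finite case and the Ext-adjunction in the descent step make the argument formal.
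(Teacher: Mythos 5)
The descent step is false, and it is exactly the step that carries the reduction to the finite case, so this is a genuine gap. For a noetherian $A$ in $\Mod^{\mathrm{sm}}_{\GG}(l)$ the matrix entries of the action generate a finitely generated extension of the finite field $k$, but there is no reason for this extension to be algebraic, hence finite, and no change of basis need cure this: it is simply not true that every finite-dimensional smooth $l$-representation is of the form $A_0\otimes_{l_0}l$ with $k\subseteq l_0\subseteq l$ and $l_0/k$ finite. Take $\GG$ any profinite group with a quotient isomorphic to $(\ZZ/p\ZZ)^2$ (e.g.\ $\GG=\Zp\times\Zp$, or the groups $I_1/Z_1$, $K_1/Z_1$ to which the lemma is applied in the paper), $k=\Fp$, $l=\Fp(t)$, and let $A_{\lambda}=l^2$ with the two generators acting by $\left(\begin{smallmatrix}1&1\\0&1\end{smallmatrix}\right)$ and $\left(\begin{smallmatrix}1&\lambda\\0&1\end{smallmatrix}\right)$. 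Any two-dimensional representation on which the first generator acts nontrivially is isomorphic to some $A_{\lambda}$, and $A_{\lambda}\cong A_{\lambda'}$ if and only if $\lambda=\lambda'$. Since $\Fp$ is algebraically closed in $\Fp(t)$, the only subextension of $l$ finite over $k$ is $k$ itself, and $A_t$ is not a base change from $\Fp$. So for such $A$ your Ext-adjunction has nothing to be applied to, and the Baer-criterion verification collapses precisely for transcendental $l/k$, the only case not already covered by your finite-extension argument.

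The strategy is probably repairable, but only with an extra input you do not supply: a noetherian $A$ has finite length, $\Ext^1$-vanishing propagates through extensions by d\'evissage, and one can show that every \emph{irreducible} smooth $l$-representation is defined over a subextension finite over $k$ (using that $k$ is finite, so $k[\GG/N]$ modulo its radical is a product of matrix algebras over finite fields and remains semisimple after $\otimes_k l$); the example above shows that extensions themselves do not descend, so the d\'evissage is genuinely needed. By contrast, the paper's proof avoids all rationality questions: it checks directly that $C(\GG,V)\otimes_k l\cong C(\GG,V\otimes_k l)$ by comparing $\PP$-invariants for open subgroups $\PP$ of $\GG$ — the finiteness being used is that of $\GG/\PP$, not of $l/k$ — and then writes $J$ as a direct summand of $C(\GG,V)$. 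A smaller point: justifying the displayed Ext-isomorphism via preservation of projectives is not available here, since $\Mod^{\mathrm{sm}}_{\GG}(l_0)$ need not have enough projectives; the isomorphism does hold, because restriction of scalars has the exact left adjoint $-\otimes_{l_0}l$ and therefore preserves injectives, so one can derive the adjunction using injective resolutions of the target.
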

\begin{proof} Let $V$ be a $k$-vector space and let $C(\GG, V)$ be the 
space of continuous functions $f:\GG\rightarrow V$. For every smooth $k$-representation $\pi$ of $\GG$ the map $\phi\mapsto [v\mapsto \phi(v) (1)]$ 
induces an isomorphism
$$\Hom_{\GG}(\pi, C(\GG, V))\cong \Hom_k(\pi, V).$$
The inverse is given by $\ell \mapsto [ v\mapsto [g\mapsto \ell(gv)]]$. The functor 
$\Hom_k(\ast, V)$ is exact and so $C(\GG, V)$ is an injective object 
of  $\Mod^{\mathrm{sm}}_{\GG}(k)$. The natural injection 
$C(\GG, V)\otimes_k l \hookrightarrow C(\GG, V\otimes_k {l})$ is 
also a surjection, since for every open subgroup $\mathcal P$ of $\mathcal G$ we have 
$$ (C(\GG, V)\otimes_k l)^{\PP}\cong  (k[\GG/\PP]\otimes_k V)\otimes_k l \cong l[\GG/\PP]\otimes_l V_l \cong C(\GG, V_l)^{\PP},$$
as $\PP$ is of finite index in $\GG$. This gives us the lemma for $J= C(\GG, V)$. 
In general, one can embed $J$ 
into $C(\GG, V)$ by taking $V$ to be the underlying vector space of $J$. 
Since $J$ is injective the embedding splits. Thus $J\otimes_k l$ is 
a direct summand of an injective object of $\Mod^{\mathrm{sm}}_{\GG}(l)$
and hence it is itself injective.
\end{proof}

\begin{cor}\label{bccoh} Let $\GG$, $\pi$ and $l$ be as above then 
$H^i(\GG, \pi)\otimes_k l\cong H^i(\GG, \pi\otimes_k l)$ for all $i\ge 0$. 
\end{cor}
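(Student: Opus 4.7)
The plan is to derive the base-change statement from an injective resolution, assembling the two pieces already at hand: Lemma \ref{bccompact} (base change preserves injectivity) and Lemma \ref{abstractrat} (base change commutes with $\GG$-invariants). The only extra ingredient is the exactness of $\otimes_k l$, which is automatic because $l$ is a $k$-vector space and hence $k$-flat.

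First I would pick an injective resolution $0\rightarrow \pi\rightarrow J^{\bullet}$ in $\Mod^{\mathrm{sm}}_{\GG}(k)$. Tensoring with $l$ over $k$ preserves exactness, so $0\rightarrow \pi\otimes_k l\rightarrow J^{\bullet}\otimes_k l$ is still exact, and by Lemma \ref{bccompact} each $J^i\otimes_k l$ is injective in $\Mod^{\mathrm{sm}}_{\GG}(l)$. Hence $J^{\bullet}\otimes_k l$ is an injective resolution of $\pi\otimes_k l$, and $H^i(\GG,\pi\otimes_k l)$ is the $i$-th cohomology of the complex $(J^{\bullet}\otimes_k l)^{\GG}$.

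Next I would invoke Lemma \ref{abstractrat} with $V=\Eins$ the trivial $k[\GG]$-module, which is generated by a single element and therefore finitely generated, and $W=J^i$. This yields natural isomorphisms
$$(J^i)^{\GG}\otimes_k l\cong \Hom_{k[\GG]}(\Eins, J^i)\otimes_k l\cong \Hom_{l[\GG]}(\Eins\otimes_k l, J^i\otimes_k l)\cong (J^i\otimes_k l)^{\GG}$$
for every $i$, compatible with the differentials of $J^{\bullet}$.

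Finally, since $\otimes_k l$ is exact it commutes with taking cohomology of the complex $(J^{\bullet})^{\GG}$, and combining this with the previous step gives
$$H^i(\GG,\pi)\otimes_k l\cong H^i\bigl((J^{\bullet})^{\GG}\bigr)\otimes_k l\cong H^i\bigl((J^{\bullet})^{\GG}\otimes_k l\bigr)\cong H^i\bigl((J^{\bullet}\otimes_k l)^{\GG}\bigr)\cong H^i(\GG,\pi\otimes_k l),$$
as required. There is no genuine obstacle here; the statement is a formal consequence of the two preceding lemmas together with the flatness of $l$ over $k$.
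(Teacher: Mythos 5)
Your proof is correct and follows the same route as the paper: tensor an injective resolution with $l$, use Lemma \ref{bccompact} to see it stays an injective resolution, and use Lemma \ref{abstractrat} (invariants commute with $\otimes_k l$) together with flatness of $l$ over $k$ to conclude. The explicit appeal to $V=\Eins$ being finitely generated is a nice touch but matches what the paper implicitly uses.
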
 
\begin{proof} Choose an injective resolution $\pi\hookrightarrow J^{\bullet}$ 
of $\pi$ in $\Mod^{\mathrm{sm}}_{\GG}(k)$. Lemma \ref{bccompact} says that 
$\pi_l \hookrightarrow J^{\bullet}_l$ is an injective resolution of $\pi_l$ 
in $\Mod^{\mathrm{sm}}_{\GG}(l)$. Since taking $\GG$-invariants commutes with $\otimes_k l$
by Lemma \ref{abstractrat} we get the assertion.
\end{proof}

\begin{cor}\label{[R,T]=0}
 Let $\pi$ be a smooth representation of $G$ with a central character $\zeta$  then 
\begin{equation}\label{RcommutesT}
\RR^i \II(\pi)\otimes_{k} l\cong \RR^i \II(\pi\otimes_k l)
\end{equation} 
for all field extensions $l$ of  $k$ and  all $i\ge 0$.
\end{cor}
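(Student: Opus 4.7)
=0}]
The plan is to reduce the statement to two facts already established: first, that taking $I_1$-invariants commutes with base change (Lemma \ref{abstractrat}); second, that group cohomology of $I_1/Z_1$ commutes with base change (Corollary \ref{bccoh}). The identification in Lemma \ref{RisH} of $\RR^i\II$ with cohomology of $I_1/Z_1$ on the underlying vector space is what bridges these two.

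First I would choose an injective resolution $\pi\hookrightarrow J^{\bullet}$ in $\Mod^{\mathrm{sm}}_{G,\zeta}(k)$. Since $l$ is flat over $k$, the complex $J^{\bullet}\otimes_k l$ is a resolution of $\pi\otimes_k l$ in $\Mod^{\mathrm{sm}}_{G,\zeta}(l)$ (where we view $\zeta$ as a character with values in $l^{\times}$). The key claim is that each $J^n\otimes_k l$ is acyclic for $\II$ computed in $\Mod^{\mathrm{sm}}_{G,\zeta}(l)$. To see this, apply Lemma \ref{RisH} over $l$ to identify $\RR^i\II(J^n\otimes_k l)$ with $H^i(I_1/Z_1, J^n\otimes_k l)$, then use Corollary \ref{bccoh} (for the group $\GG=I_1/Z_1$) to rewrite this as $H^i(I_1/Z_1,J^n)\otimes_k l$, and finally apply Lemma \ref{RisH} over $k$ to identify it with $\RR^i\II(J^n)\otimes_k l$. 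Since $J^n$ is injective in $\Mod^{\mathrm{sm}}_{G,\zeta}(k)$ it is $\II$-acyclic, so this vanishes for $i\ge 1$.

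Consequently $J^\bullet\otimes_k l$ is a $\II$-acyclic resolution of $\pi\otimes_k l$ in $\Mod^{\mathrm{sm}}_{G,\zeta}(l)$, and therefore computes $\RR^i\II(\pi\otimes_k l)$. By Lemma \ref{abstractrat} applied to $\GG=I_1$ (each $J^n$ is smooth, so $\II(J^n)=(J^n)^{I_1}$ and the lemma applies verbatim), there is a natural isomorphism of complexes of $\HH\otimes_k l$-modules
\begin{equation*}
\II(J^\bullet)\otimes_k l \;\cong\; \II(J^\bullet\otimes_k l).
\end{equation*}
Taking cohomology and using the flatness of $l$ over $k$ to commute $H^i$ past $\otimes_k l$, we obtain
\begin{equation*}
\RR^i\II(\pi)\otimes_k l = H^i(\II(J^\bullet))\otimes_k l \cong H^i(\II(J^\bullet)\otimes_k l)\cong H^i(\II(J^\bullet\otimes_k l)) = \RR^i\II(\pi\otimes_k l),
\end{equation*}
and all of the isomorphisms are induced by $\HH\otimes_k l$-linear maps, so the result holds as $\HH\otimes_k l$-modules.

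No single step is a serious obstacle; the only thing to be careful about is that we apply Lemma \ref{RisH} and Corollary \ref{bccoh} to the restrictions of the $J^n$ to $I_1/Z_1$ (which requires only that the underlying abelian groups agree, since Lemma \ref{RisH} is a vector-space identification), and that $\otimes_k l$ commutes with the purely algebraic operation of taking $I_1$-invariants. The $\HH$-equivariance is automatic because every map in sight is induced functorially from $G$-equivariant maps of the $J^n$.
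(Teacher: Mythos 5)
Your proof is correct and rests on exactly the two ingredients the paper uses, Lemma \ref{RisH} and Corollary \ref{bccoh}, which the paper's proof simply cites applied directly to $\pi$ (giving the chain $\RR^i\II(\pi)\otimes_k l\cong H^i(I_1/Z_1,\pi)\otimes_k l\cong H^i(I_1/Z_1,\pi\otimes_k l)\cong \RR^i\II(\pi\otimes_k l)$). Your additional layer of $\II$-acyclic resolutions together with Lemma \ref{abstractrat} only makes explicit the naturality and $\HH\otimes_k l$-linearity of the isomorphism, which the paper leaves implicit, so this is essentially the same argument.
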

\begin{proof} Lemma \ref{RisH}, Corollary \ref{bccoh}.
\end{proof}

\begin{lem}\label{bcH} Let $M$ and $N$ be $\HH$-modules. If $M$ is finitely generated over $\HH$ then
\begin{equation}\label{EcommutesT}
\Ext^i_{\HH}(M, N)\otimes_k l\cong  \Ext^i_{\HH_l}(M\otimes_k l, N\otimes_k l)
\end{equation}
for all field extensions $l$ of $k$ and all $i\ge 0$.
\end{lem}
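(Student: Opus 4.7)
The plan is to compute $\Ext^i_{\HH}(M,N)$ from a resolution of $M$ and then commute base change past both the Hom and the cohomology. Concretely, I would proceed in four steps.

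First, I would choose a resolution $P_\bullet \twoheadrightarrow M$ of $M$ in $\Mod_{\HH}$ by \emph{finitely generated} projective (in fact, finitely generated free) $\HH$-modules. Since $M$ is finitely generated by hypothesis, existence of such a resolution reduces to showing that kernels of $\HH$-linear maps between finitely generated $\HH$-modules are again finitely generated. For $G=\GL_2(\Qp)$ the pro-$p$-Iwahori Hecke algebra $\HH$ is noetherian (by results of Ollivier), so this is automatic.

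Second, I would show that $P_\bullet \otimes_k l \twoheadrightarrow M\otimes_k l$ is a projective resolution in $\Mod_{\HH_l}$. The terms $P_i\otimes_k l$ are finitely generated free $\HH_l$-modules, and exactness is preserved because $l$ is flat over $k$.

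Third, since $l/k$ is flat, cohomology commutes with the base change, giving
\begin{equation*}
\Ext^i_{\HH}(M,N)\otimes_k l \;=\; H^i\bigl(\Hom_{\HH}(P_\bullet,N)\bigr)\otimes_k l \;\cong\; H^i\bigl(\Hom_{\HH}(P_\bullet,N)\otimes_k l\bigr).
\end{equation*}
Since each $P_i$ is finitely generated over $\HH$, Lemma \ref{abstractrat} yields a natural isomorphism of complexes
\begin{equation*}
\Hom_{\HH}(P_\bullet,N)\otimes_k l \;\cong\; \Hom_{\HH_l}(P_\bullet\otimes_k l, N\otimes_k l),
\end{equation*}
whose $i$-th cohomology is $\Ext^i_{\HH_l}(M\otimes_k l, N\otimes_k l)$. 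Combining the two displays gives the claim.

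The only real obstacle is the first step: producing a resolution by finitely generated projectives, which requires noetherianness of $\HH$. Once that is in hand, everything else is a standard flatness-plus-Yoneda calculation. If one wished to avoid invoking noetherianness of $\HH$ directly, an alternative would be to reduce first to the case where $l/k$ is finite by writing an arbitrary $l$ as a filtered colimit of its finitely generated (hence in this context, one argues, finite) subextensions and using the compatibility of Ext with such colimits in the second variable when the first is finitely presented; but the route above via a finitely generated resolution is more direct.
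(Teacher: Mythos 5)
Your proposal is correct and is essentially the paper's own argument: noetherianness of $\HH$ (the paper deduces it from Vign\'eras's explicit description, with the centre noetherian and $\HH$ finite over it) gives a resolution of $M$ by free $\HH$-modules of finite rank, and then base change commutes with $\Hom$ from such modules and with cohomology since $l$ is flat over $k$. The only cosmetic point is that Lemma \ref{abstractrat} is stated for group algebras rather than for $\HH$; for finite rank free modules one simply uses $\Hom_{\HH}(\HH^{\oplus n},N)\cong N^{\oplus n}$ directly, as the paper does.
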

\begin{proof} It follows from the explicit description of $\HH$ given by Vign\'eras in 
\cite{vig} that the centre of $\HH$ is noetherian and $\HH$ is a finitely generated 
module over its centre, see \cite[\S1.2, 2.1.1, Cor.2.3]{vig}. Hence $\HH$ is noetherian and since $M$ is finitely generated 
we may find a resolution $P^{\bullet}\twoheadrightarrow  M$ by free $\HH$-modules of 
finite rank. Since $\Hom_{\HH}(\HH^{\oplus n}, N)_l\cong N^{\oplus}_l\cong 
\Hom_{\HH_l}(\HH^{\oplus n}_l, N_l)$ we get the assertion.
\end{proof}

\begin{prop}\label{bcEXT} Let $\tau$ and $\pi$ be in $\Mod^{\mathrm{sm}}_{G,\zeta}(k)$ and suppose that 
$\tau$ is of finite length. Then
\begin{equation}\label{bcextseq}
\Ext^i_{k[G], \zeta}(\tau, \pi)\otimes_k l \cong \Ext^i_{l[G], \zeta}(\tau \otimes_k l, \pi \otimes_k l)
\end{equation}   
for all  field extensions $l$ of $k$ and all $i\ge 0$.
\end{prop} 
\begin{proof} We will first prove the result when $\tau$ is irreducible. Then $\tau^{I_1}$ is finite dimensional and $\tau$ is generated by as a $G$-representation by the $I_1$-invariants.
By Lemma \ref{abstractrat},   
$\HH\otimes_k l\cong \End_{l[G]}( \cIndu{ZI_1}{G}{\zeta\otimes_k l })$. Since $\tau^{I_1}$ is finite dimensional, it is a finitely generated  $\HH$-module. Combining \eqref{RcommutesT} and 
\eqref{EcommutesT} we get an isomorphism of spectral sequences:
$$\Ext^i_{\HH}(\II(\tau), \RR^j \II(\pi))\otimes_k l\cong \Ext^i_{\HH_l}(\II(\tau_l), \RR^j \II(\pi_l)).$$
Since $k$ is a field, $l$ is $k$-flat  and so it follows from \eqref{specseq} that 
$\Ext^i_{\HH}(\II(\tau), \RR^j \II(\pi))_l$ converges to $\Ext^{i+j}_{k[G], \zeta}(\tau, \pi)_l$. 
We use \eqref{specseq} again to deduce the assertion. 

We will finish the proof by induction on the length of $\tau$. We have already proved the result when $\tau$ is irreducible and \eqref{bcextseq} 
is an isomorphism for $i=0$ by Lemma \ref{abstractrat}. If $\tau$ is not irreducible, then we may consider a short exact sequence 
$0\rightarrow \tau_1\rightarrow \tau\rightarrow \tau_2\rightarrow 0$, with both $\tau_1$ and $\tau_2$ of length strictly less than the length of $\tau$.
The induction step is given by comparing the two long exact sequences induced by the short exact 
sequence  and  the $5$-Lemma.
\end{proof}

Let $\Irr_{G, \zeta}(k)$ be the set of equivalence classes of smooth irreducible $k$-rep\-re\-sen\-ta\-tions
of $G$ with central character $\zeta$. We write $\pi\rel \tau$ if  $\pi\cong \tau$ or 
$\Ext^1_{G,\zeta}(\pi, \tau)\neq 0$ or $\Ext^1_{G,\zeta}(\tau, \pi)\neq 0$. We 
write $\pi\sim \tau$ if there exists $\pi_1, \ldots, \pi_n\in \Irr_{G,\zeta}(k)$, such that $\pi\cong\pi_1$,
$\tau\cong\pi_n$ and $\pi_i\rel\pi_{i+1}$ for $1\le i\le n-1$. The relation  $\sim$ is an equivalence relation on $\Irr_{G,\zeta}(k)$.
A block is  an equivalence class of $\sim$.

\begin{prop}\label{blockdecopm} The category $\Mod^{\mathrm{l\, fin}}_{G,\zeta}(\OO)$ decomposes into
a direct product of subcategories   
 \begin{equation}\label{eq531}
 \Mod^{\mathrm{l\, fin}}_{G,\zeta}(\OO)\cong \prod_{\BB} \Mod^{\mathrm{l\, fin}}_{G,\zeta}(\OO)^{\BB}
 \end{equation}
where the product is taken over all the blocks $\BB$ and 
the objects of $\Mod^{\mathrm{l\, fin}}_{G,\zeta}(\OO)^{\BB}$ are representations with all the irreducible subquotients 
lying in $\BB$. The equivalence in \eqref{eq531} is induced by sending $(\pi^{\BB})_{\BB}$, where  each $\pi^{\BB}$ is an object of  $\Mod^{\mathrm{l\, fin}}_{G,\zeta}(\OO)^{\BB}$,
 to the direct sum $\oplus_{\BB} \pi^{\BB}$.
\end{prop}
\begin{proof} This is standard,  see \cite[\S IV.2]{gab}, especially  the Corollary after Theorem 2. Let us note that 
every irreducible object in $\Mod^{\mathrm{l\, fin}}_{G,\zeta}(\OO)$ is killed by $\varpi$ and so 
$\Irr_{G,\zeta}(k)=\Irr_{G,\zeta}(\OO)$. Moreover, if $\tau$ and $\pi$ are irreducible then $\Ext^1_{\OO[G], \zeta}(\tau, \pi)\neq 0$
implies that either $\pi\cong \tau$ or $\Ext^1_{k[G], \zeta}(\tau, \pi)\neq 0$, see the proof of Lemma \ref{lemok0}. So 
we could have defined $\sim$ by considering the extensions in  $\Mod^{\mathrm{l\, fin}}_{G,\zeta}(\OO)$. Let $J_{\pi}$ and
$J_{\tau}$ be injective envelopes  of $\pi$ and $\tau$ in $\Mod^{\mathrm{l\, fin}}_{G,\zeta}(\OO)$. Then the following are equivalent:
1) $\Hom_{G}(J_{\pi}, J_{\tau})\neq 0$;  2) $\tau$ is a subquotient of $J_{\pi}$; 3) there exists a representation $\kappa$ 
of finite length which contains $\tau$ as a subquotient and $\soc_G \kappa\cong \pi$. Using this one can 
show that our definition of a subcategory cut out by a block coincides with the one used in \cite{gab}.
\end{proof}

Dually we obtain: 
\begin{cor}\label{blockdecompD} The category $\dualcat(\OO)$ decomposes into a direct product of subcategories 
\begin{equation}\label{eq532}
\dualcat(\OO)\cong \prod_{\BB}\dualcat(\OO)^{\BB},
\end{equation}
where  the product is taken over all the blocks $\BB$ and the objects of $\dualcat(\OO)^{\BB}$ are those $M$ in $\dualcat(\OO)$ such that for every irreducible subquotient $S$ of $M$, 
$S^{\vee}$ lies in $\BB$. The equivalence in \eqref{eq532} is induced by sending $(M^{\BB})_{\BB}$, where  each $M^{\BB}$ is an object of  $\dualcat(\OO)^{\BB}$,
 to the direct product $\prod_{\BB} M^{\BB}$.
\end{cor}

Let $\Ban^{\mathrm{adm}}_{G, \zeta}(L)$ be the category of admissible unitary $L$-Banach space representations of $G$ with a central character $\zeta$. 
We note that it follows from \cite{iw} and \cite[6.2.16]{em1} that  $\Ban^{\mathrm{adm}}_{G, \zeta}(L)$ is an abelian category. 

\begin{prop}\label{blockdecompB} The category $\Ban^{\mathrm{adm}}_{G, \zeta}(L)$ decomposes into a direct sum of 
categories: 
$$\Ban^{\mathrm{adm}}_{G, \zeta}(L)\cong \bigoplus_{\BB} \Ban^{\mathrm{adm}}_{G, \zeta}(L)^{\BB} ,$$
where the objects of  $\Ban^{\mathrm{adm}}_{G, \zeta}(L)^{\BB}$ are those $\Pi$ in $\Ban^{\mathrm{adm}}_{G,\zeta}(L)$ such that 
for every open bounded $G$-invariant lattice $\Theta$ in $\Pi$ the irreducible subquotients of $\Theta\otimes_{\OO} k$ 
lie in $\BB$.
\end{prop}
\begin{proof} Recall that we have showed in Lemma \ref{commen} that the reductions mod $p$ of any two open bounded $G$-invariant lattices 
in $\Pi$ have the same irreducible subquotients. Let $\Theta$ be an open bounded $G$-invariant lattice in $\Pi$, $\pi$ an irreducible 
subquotient of $\Theta\otimes_{\OO} k$ and $\BB$ the block of $\pi$. By Lemma \ref{contextGL2}  
the  Schikhof dual $\Theta^d$ is an object of $\dualcat(\OO)$. Let $\dualcat(\OO)^{\BB}$ be the full subcategory of $\dualcat(\OO)$ as in Corollary \ref{blockdecompD} and let  
$\dualcat(\OO)_{\BB}$ be the full subcategory of $\dualcat(\OO)$, such that $M$ is an object if and only if for all irreducible subquotients $S$ of $M$, the Pontryagin dual 
$S^{\vee}$ does not lie in $\BB$. It follows from  Corollary \ref{blockdecompD} that  we may canonically decompose 
$\Theta^d\cong (\Theta^d)_{\BB} \oplus (\Theta^d)^{\BB}$, where  $(\Theta^d)_{\BB}$ is an object of $\dualcat(\OO)_{\BB}$ 
and $(\Theta^d)^{\BB}$ is an object of $\dualcat(\OO)^{\BB}$. 

Let $\Pi_{\BB}:=\Hom^{cont}_{\OO}((\Theta^d)_{\BB}, L)$ and 
$\Pi^{\BB}:=\Hom^{cont}_{\OO}((\Theta^d)^{\BB}, L)$ with the supremum norm. Then it follows from the anti-equivalence of categories established in 
\cite{iw} that $\Pi\cong \Pi_{\BB} \oplus \Pi^{\BB}$. Further, since the decomposition in Corollary \ref{blockdecompD} is a decomposition of categories
we have no non-zero morphisms in $\dualcat(\OO)$ between $(\Theta^d)_{\BB}$ and $(\Theta^d)^{\BB}$. Dually this implies that there
are no non-zero morphisms between $\Pi_{\BB}$ and $\Pi^{\BB}$ in $\Ban^{\mathrm{adm}}_{G, \zeta}(L)$. Using  $(\Theta\otimes_{\OO} k)^{\vee}\cong
\Theta^d\otimes_{\OO} k$, see \cite[Lem. 5.4]{comp}, 
we deduce that $\Pi^{\BB}$ is a non-zero  object of $\Ban^{\mathrm{adm}}_{G, \zeta}(L)^{\BB}$ and 
none of the irreducible representations in $\BB$ appear as subquotients of the reduction modulo $p$ of any open bounded lattice in 
$\Pi_{\BB}$. Inductively we obtain a sequence of closed $G$-invariant subspaces $\Pi_i$ of $\Pi$ such that  
$\Pi_i\cong \Pi^{\BB_i} \oplus \Pi_{i+1}$ for some block $\BB_i$ with $\Pi^{\BB_i}\neq 0$ if $\Pi_i\neq 0$. Since 
$\Pi$ is admissible such sequence must become stationary, see \cite[Lem.5.8]{comp}. Hence, there exist finitely many  blocks $\BB_1, \ldots, \BB_m$ 
such that $\Pi^{\BB_i}\neq 0$ and so $\Pi\cong \oplus_{i=1}^m \Pi^{\BB_i}$.
\end{proof}

\begin{cor} Let $\Pi$ be an irreducible admissible $L$-Banach space representation of $G$ with a central character and let $\Theta$ be 
an open
bounded $G$-invariant lattice in $\Pi$. Then $\Theta\otimes_{\OO} k$ contains an irreducible subquotient $\pi$ and all other irreducible subquotients 
lie in the block of $\pi$. 
\end{cor}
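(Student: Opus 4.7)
The plan is to deduce this corollary directly from Proposition \ref{blockdecompB} together with the fact that admissible smooth $k$-representations with central character always contain an irreducible subquotient.

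First I would verify that $\Theta\otimes_{\OO}k$ contains at least one irreducible subquotient. Since $\Theta$ is open in $\Pi$, it is non-zero, hence so is $\Theta\otimes_{\OO}k$. The admissibility of $\Pi$ translates into the admissibility of $\Theta\otimes_{\OO}k$ as a smooth $k$-representation of $G$ on which $Z$ acts by the reduction of $\zeta$. Picking any non-zero vector, the cyclic $k[G]$-submodule it generates is finitely generated and admissible, and hence of finite length by Emerton's theorem for $G=\GL_2(\Qp)$ cited in \S\ref{zerosec} (which gives $\Mod^{\mathrm{l\,adm}}_{G,\zeta}=\Mod^{\mathrm{l\,fin}}_{G,\zeta}$ in the case at hand). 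Its composition factors supply the required irreducible subquotient $\pi$, which lies in some block $\BB$.

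Next I would invoke Proposition \ref{blockdecompB} to obtain a canonical decomposition $\Pi\cong\bigoplus_{\BB'}\Pi^{\BB'}$ in $\Ban^{\mathrm{adm}}_{G,\zeta}(L)$ with only finitely many non-zero summands. Irreducibility of $\Pi$ forces all but one of the summands to vanish, so $\Pi\cong\Pi^{\BB_0}$ for exactly one block $\BB_0$. By the very definition of the full subcategory $\Ban^{\mathrm{adm}}_{G,\zeta}(L)^{\BB_0}$, every irreducible subquotient of $\Theta\otimes_{\OO}k$ belongs to $\BB_0$. In particular, the $\pi$ produced in the previous step lies in $\BB_0$, so $\BB_0$ coincides with the block of $\pi$, and any other irreducible subquotient of $\Theta\otimes_{\OO}k$ is again in this block.

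There is no real obstacle beyond unwinding the definitions: the substantive content, namely the block decomposition of the category of admissible unitary Banach space representations, has already been carried out in Proposition \ref{blockdecompB}, and the only auxiliary input is the mild observation that an admissible $k[G]$-representation with central character is locally of finite length and therefore has irreducible subquotients. One should however remark that the decomposition is independent of the choice of $\Theta$, since by Lemma \ref{commen} the irreducible subquotients of the reduction modulo $\varpi$ of any open bounded $G$-invariant lattice in $\Pi$ coincide, so the resulting block $\BB_0$ is an invariant of $\Pi$.
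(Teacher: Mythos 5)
Your proof is correct and is essentially the intended argument: the corollary is placed immediately after Proposition \ref{blockdecompB} precisely so that it drops out of the block decomposition, and you have unwound this in the natural way, supplemented by the (correct) observation that $\Theta\otimes_{\OO}k$ is a nonzero object of $\Mod^{\mathrm{l\,fin}}_{G,\zeta}(k)$ and hence has an irreducible subquotient. The remark on independence of $\Theta$ via Lemma \ref{commen} is already built into the statement of Proposition \ref{blockdecompB}, so it is harmless but not needed.
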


\begin{prop}\label{defoverk} Let $l$ be a field extension of $k$. 
Let $\pi$  in $\Mod^{\mathrm{sm}}_{G,\zeta}(k)$ be absolutely irreducible and 
let $\tau$ in $\Mod^{\mathrm{sm}}_{G,\zeta}(l)$ be irreducible.  If 
$\pi\otimes_k l \rel \tau$ then there exists an absolutely irreducible
$\sigma$ in $\Mod^{\mathrm{sm}}_{G,\zeta}(k)$ such that $\tau\cong \sigma\otimes_k l$. 
Moreover, $\sigma$ is unique up to isomorphism.
\end{prop}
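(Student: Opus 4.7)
The plan is to handle uniqueness and existence separately. Uniqueness is immediate from Lemma \ref{abstractrat}: if $\sigma_1,\sigma_2$ are absolutely irreducible $k$-representations with $\sigma_1\otimes_k l\cong \sigma_2\otimes_k l$, then
$$\Hom_{k[G]}(\sigma_1,\sigma_2)\otimes_k l\cong \Hom_{l[G]}(\sigma_1\otimes_k l,\sigma_2\otimes_k l)\neq 0,$$
so $\Hom_{k[G]}(\sigma_1,\sigma_2)\neq 0$ and hence $\sigma_1\cong \sigma_2$ by irreducibility.

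For existence, the case $\pi\otimes_k l\cong \tau$ is trivial (take $\sigma:=\pi$), so assume the relation $\rel$ is witnessed by a non-vanishing $\Ext^1$ in one of the two directions. I would pass to $\bar k=\bar l$ and decompose $\tau_{\bar k}$ into absolutely irreducibles. This decomposition exists because $\tau$ is admissible (Proposition \ref{reducetoabs} together with admissibility of absolutely irreducibles from \cite{bl},\cite{breuil1}) and $\tau_{\bar k}$ is semisimple of finite length: any $\bar k[G]$-irreducible subrepresentation $\tau_0\subset \tau_{\bar k}$ has the property that $\sum_{\gamma\in \Gal(\bar l/l)}\gamma(\tau_0)$ is both Galois- and $G$-stable, hence by Galois descent corresponds to a nonzero $l[G]$-subrepresentation of $\tau$, which must equal $\tau$, so $\tau_{\bar k}=\sum_\gamma \gamma(\tau_0)$ is semisimple; admissibility bounds the orbit. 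Write $\tau_{\bar k}\cong \bigoplus_i \tau_i^{m_i}$ with pairwise non-isomorphic $\tau_i$. Since $\pi_l$ is absolutely irreducible it is admissible and generated by its $I_1$-invariants, so Proposition \ref{bcEXT} applies and the non-vanishing $\Ext^1$ between $\pi_l$ and $\tau$ persists after $-\otimes_l\bar k$; hence at least one $\tau_i$ lies in the block of $\pi_{\bar k}$ over $\bar k$.

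The key step is the observation that every element of the block of $\pi_{\bar k}$ is already defined over $k$, which I would verify case by case against the four block shapes listed in \S\ref{corrisequiv}: in each case the parameters describing the block elements (the supersingular $\pi$ itself in (i), the inducing characters $\chi_1,\chi_2$ in (ii), the character $\chi$ in (iii), the twisting character $\eta$ in (iv)) are determined by $\pi$ and are therefore $k$-rational by Lemma \ref{fielddefi}. So each $\tau_i\cong \sigma_i\otimes_k\bar k$ for some absolutely irreducible $\sigma_i$ over $k$; in particular each $\tau_i$ is $\Gal(\bar l/l)$-invariant, and the earlier description of $\tau_{\bar k}$ as a sum of Galois translates of a single irreducible then forces all $\tau_i$ to be isomorphic to a single $\tau_1$, i.e. $\tau_{\bar k}\cong \tau_1^m$. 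Since $\sigma_1\otimes_k l$ is absolutely irreducible, $\End_l((\sigma_1\otimes_k l)^m)\cong M_m(l)$ is split, so by Hilbert 90 the only $l$-form of $\tau_1^m$ up to isomorphism is $(\sigma_1\otimes_k l)^m$; the irreducibility of $\tau$ then forces $m=1$ and $\tau\cong \sigma_1\otimes_k l$, finishing the proof with $\sigma:=\sigma_1$. The main obstacle is the case-by-case verification that every element of a block over $\bar k$ descends to the field of definition of $\pi$; the semisimplicity and finiteness of $\tau_{\bar k}$ and the applicability of Proposition \ref{bcEXT} are routine.
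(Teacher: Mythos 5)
Your uniqueness argument via Lemma \ref{abstractrat} is exactly the paper's, and the overall strategy of your existence argument --- pass to the algebraic closure, locate an irreducible piece in the block of $\pi$ over $\bar l$, and invoke Proposition \ref{blocksoverk} together with the rationality of block members from Lemma \ref{fielddefi} --- also matches the paper's. The gap is the semisimplicity claim for $\tau\otimes_l\bar l$. You deduce it by descent along $\Gal(\bar l/l)$: the sum of Galois translates of an irreducible $\bar l[G]$-submodule is $\Gal$-stable, hence descends to an $l[G]$-submodule of $\tau$. That descent step requires $\bar l/l$ to be a Galois extension, i.e.\ $l$ perfect; but $l$ here is an arbitrary extension of the finite field $k$ (e.g.\ $k(t)$), and for imperfect $l$ the fixed field of $\Aut(\bar l/l)$ is the perfection $l^{\mathrm{perf}}$, not $l$, so a $\Gal$-stable subspace need only descend to $l^{\mathrm{perf}}$. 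Concretely, if $\End_{l[G]}(\tau)$ were a purely inseparable field extension of $l$, then $\tau\otimes_l\bar l$ would be indecomposable of length $>1$ and not semisimple, and your decomposition into Galois translates of a single irreducible fails. The paper flags exactly this in the parenthetical of its proof (``we do not require $l$ to be a perfect field, and hence $\tau\otimes_l\bar l$ need not be semisimple'') and only decomposes $\tau\otimes_l\bar l$ into finite-length summands $\tau_i$ with $\tau_i^{\mathrm{ss}}\cong\kappa_i^{\oplus m_i}$, using Proposition \ref{reducetoabs}. Your concluding Hilbert~90 step rests on the same Galois theory of $\bar l/l$ and is likewise unavailable for imperfect $l$.

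Neither semisimplicity nor a forms classification is needed to finish. Proposition \ref{bcEXT} gives a nonzero $\Ext^1$ over $\bar l$ between $\pi\otimes_k\bar l$ and some $\tau_i$; by d\'evissage within $\tau_i$, whose irreducible subquotients are all $\cong\kappa_i$, one has $\pi\otimes_k\bar l\rel\kappa_i$, so by Proposition \ref{blocksoverk} and Lemma \ref{fielddefi} there is an absolutely irreducible $\sigma$ over $k$ with $\kappa_i\cong\sigma\otimes_k\bar l$. Since $\kappa_i$ sits in the socle of $\tau_i$, we get $\Hom_{\bar l[G]}(\sigma\otimes_k\bar l,\tau\otimes_l\bar l)\neq 0$; as $\sigma\otimes_k l$ is a cyclic $l[G]$-module, Lemma \ref{abstractrat} yields $\Hom_{l[G]}(\sigma\otimes_k l,\tau)\neq 0$, and since $\sigma\otimes_k l$ is irreducible (because $\sigma$ is absolutely irreducible) and $\tau$ is irreducible, any nonzero map is an isomorphism. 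This sidesteps both perfectness and Hilbert~90.
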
 
\begin{proof} It follows from Lemma \ref{abstractrat} that if such $\sigma$ exists then it 
is unique. It follows from the proof of Proposition \ref{reducetoabs} that 
$\tau\otimes_l \bar{l}\cong \bigoplus_{i=1}^n \tau_i,$
where each $\tau_i$ is of finite length and $\tau_i^{ss}\cong \kappa_i^{\oplus m_i}$ with 
$\kappa_i$ absolutely irreducible. (Note that we do not require $l$ to be a perfect field, 
and hence $\tau\otimes_l \bar{l}$ need not be semisimple.) 
Thus using Proposition \ref{bcEXT} we may reduce the problem to the case  
when $l$ is algebraically closed. 
In this case in \cite{ext2}  we have determined all possible $\tau$ such that $\tau \rel \pi_l$.
It follows from the explicit description (recalled in Proposition \ref{blocksoverk} below) and Lemma \ref{fielddefi} 
that every such $\tau$ can be defined over the field of definition of $\pi$.
\end{proof}

\begin{cor}\label{bcinjenv} Let $\pi\in \Irr_{G,\zeta}(k)$ be absolutely irreducible and let $\pi\hookrightarrow J$
be an injective envelope of $\pi$ in $\Mod^{\mathrm{l\, fin}}_{G,\zeta}(k)$. Then 
$\pi\otimes_k l\hookrightarrow J\otimes_k l$ is an injective envelope of $\pi\otimes_k l$ 
in $\Mod^{\mathrm{l\, fin}}_{G,\zeta}(l)$.
\end{cor}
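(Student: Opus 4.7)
The plan is to verify the two defining properties of an injective envelope: that $J\otimes_k l$ is injective in $\Mod^{\mathrm{l\,fin}}_{G,\zeta}(l)$, and that $\pi\otimes_k l\hookrightarrow J\otimes_k l$ is essential. Throughout I use the earlier remark that for $G=\GL_2(\Qp)$ one has $\Mod^{\mathrm{l\,fin}}_{G,\zeta}=\Mod^{\mathrm{l\,adm}}_{G,\zeta}=\dcc$, so $J$ is injective in $\dcc$.

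First I would handle injectivity by climbing up to a larger category and coming back down. By Proposition \ref{injgoinj}, $J$ is injective in $\Mod^{\mathrm{sm}}_{G,\zeta}(k)$, so Lemma \ref{bccompact} gives that $J\otimes_k l$ is injective in $\Mod^{\mathrm{sm}}_{G,\zeta}(l)$, with central character still $\zeta$ (now viewed via $k^\times\hookrightarrow l^\times$). To transfer this to the locally-finite subcategory I need to verify that $J\otimes_k l$ actually lies in $\Mod^{\mathrm{l\,fin}}_{G,\zeta}(l)$. Given $v\in J\otimes_k l$, it lies in $U\otimes_k l$ for some finite-length $k[G]$-submodule $U\subset J$; filtering $U$ by its composition series and applying Proposition \ref{reducetoabs} to each irreducible quotient shows $U\otimes_k l$ is of finite length as an $l[G]$-module, so $v$ generates a finite-length $l[G]$-submodule. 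Since $\Mod^{\mathrm{l\,fin}}_{G,\zeta}(l)$ is a full subcategory of $\Mod^{\mathrm{sm}}_{G,\zeta}(l)$ closed under subobjects, an object which is injective in the ambient category and lies in the subcategory is injective in the subcategory.

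For essentiality I would exploit that $J\otimes_k l$ is particularly transparent when viewed as a $k[G]$-module. Choose a $k$-basis $(e_i)_{i\in I}$ of $l$; then as $k[G]$-modules
\[
J\otimes_k l\;\cong\; J^{(I)},\qquad \pi\otimes_k l\;\cong\; \pi^{(I)},
\]
and the inclusion is compatible with this decomposition. Since $\pi\hookrightarrow J$ is an injective envelope the socle of $J$ is $\pi$, hence for any finite $S\subset I$ the socle of $J^{S}$ is $\pi^{S}$, so every non-zero $k[G]$-subrepresentation of $J^{S}$ meets $\pi^{S}$. A general non-zero element of $J^{(I)}$ is supported on some finite $S$, and the finite case then forces $\pi^{(I)}\hookrightarrow J^{(I)}$ to be essential as $k[G]$-modules. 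But any non-zero $l[G]$-subrepresentation of $J\otimes_k l$ is a fortiori a non-zero $k[G]$-subrepresentation, so it meets $\pi\otimes_k l$; this is the essentiality we need.

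Combining the two steps yields the corollary. The only potentially delicate step is the verification that $J\otimes_k l$ is locally of finite length over $l[G]$, and this is resolved cleanly by Proposition \ref{reducetoabs}; no use is made at this stage of absolute irreducibility of $\pi$ beyond what is already packaged into the statement, though absolute irreducibility does ensure that $\pi\otimes_k l$ is itself irreducible in $\Mod^{\mathrm{l\,fin}}_{G,\zeta}(l)$, which is consistent with the resulting object being an injective envelope of a simple module.
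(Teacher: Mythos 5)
Your essentiality argument is clean and actually avoids the machinery the paper uses there (Lemma \ref{abstractrat} and Proposition \ref{defoverk}): decomposing $J\otimes_k l\cong J^{(I)}$ as a $k[G]$-module and using that socles commute with arbitrary direct sums is a legitimate and arguably more elementary route. The injectivity argument, however, has a genuine gap.

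You apply Lemma \ref{bccompact} to $J$ viewed in $\Mod^{\mathrm{sm}}_{G,\zeta}(k)$, but Lemma \ref{bccompact} is stated and proved only for a \emph{profinite} group $\GG$. The proof constructs the injective cogenerator $C(\GG,V)$ and crucially uses that an open subgroup $\PP$ has finite index in $\GG$, so that $C(\GG,V)^{\PP}\cong k[\GG/\PP]\otimes_k V$ is a finite sum and base change commutes with taking $\PP$-invariants. For $G=\GL_2(\Qp)$ this breaks: $G/\PP$ is infinite, $C(G,V)$ is not even smooth, and there is no analogous injective cogenerator to which the same computation applies. So the chain ``\ref{injgoinj}, then \ref{bccompact}, then descend to $\Mod^{\mathrm{l\,fin}}$'' does not go through. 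This is not a minor omission: base change of injectives for non-compact $p$-adic groups is exactly the kind of statement one should not expect to be formal.

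This is also why the paper does not argue this way. The paper's proof stays inside $\Mod^{\mathrm{l\,fin}}_{G,\zeta}$, takes an injective envelope $J_l\hookrightarrow J'$ there, and shows the quotient vanishes by producing an irreducible $\tau\in\Irr_{G,\zeta}(l)$ with $\Ext^1_{G,\zeta}(\tau,J_l)\neq 0$, then using the block theory of Proposition \ref{defoverk} (which needs the absolute irreducibility of $\pi$) to see that $\tau\cong\sigma\otimes_k l$ for some $\sigma\in\Irr_{G,\zeta}(k)$, and finally the Hecke-algebra base-change statement Proposition \ref{bcEXT} to derive $\Ext^1_{G,\zeta}(\tau,J_l)\cong\Ext^1_{G,\zeta}(\sigma,J)\otimes_k l=0$. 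Note that Proposition \ref{bcEXT} itself is the substitute for what you wanted from \ref{bccompact}: it is proved via the spectral sequence \eqref{specseq}, whose input group is the pro-$p$ Iwahori $I_1/Z_1$, \emph{which is profinite}, so \ref{bccompact} is used there legitimately. If you want to salvage your outline, you would need to replace the appeal to \ref{bccompact} for $G$ by this Ext-base-change-plus-block argument, at which point you have essentially reproduced the paper's proof of injectivity. Finally, your closing remark that the ``only potentially delicate step'' is showing $J\otimes_k l$ is locally of finite length misidentifies where the real difficulty lies.
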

\begin{proof} Let $\iota: J_l\hookrightarrow J'$ be an injective envelope of $J_l$ in $\Mod^{\mathrm{l\, fin}}_{G,\zeta}(l)$. 
We claim that the quotient is zero. Otherwise, there exists 
$\tau\in \Irr_{G,\zeta}(l)$ such that $\Hom_G(\tau, J'/J_l)\neq 0$. Since $\iota$ is 
essential we have $\Hom_G(\tau, J_l)\cong \Hom_{G}(\tau, J')$ and so $\Ext^1_{G,\zeta}(\tau, J_l)\neq 0$. 
Since we are working in the category of locally finite representations, every representation is equal to the 
union of its subrepresentations of finite length. Since $\tau$ is irreducible, and hence finitely generated as a $G$-representation, 
we deduce that $\Ext^1_{G,\zeta}(\tau, J_l)\cong \underset{\rightarrow}{\lim}\Ext^1_{G, \zeta}(\tau, \kappa_l)$, where the limit is taken over all 
the finite length  subrepresentations $\kappa$ of $J$.  This implies the existence of 
a subobject $\kappa$ of $J$ of finite length, such that $\Ext^1_{G,\zeta}(\tau, \kappa_l)\neq 0$. 
Further passing to short exact sequences we may assume that $\kappa$ is irreducible 
and lies in the block of $\pi$. Proposition \ref{defoverk} implies that $\kappa$ is absolutely 
irreducible, and applying it again we deduce that there exists $\sigma\in \Irr_{G,\zeta}(k)$ 
such that $\tau\cong \sigma\otimes_k l$. As $J$ is injective in $\Mod^{\mathrm{l\, fin}}_{G,\zeta}(k)$
we have $\Ext^1_{G,\zeta}(\sigma, J)=0$ and so Proposition \ref{bcEXT} implies that 
$\Ext^1_{G,\zeta}(\tau, J_l)=0$. This is a contradiction and so $J_l\cong J'$ is injective.

Since all the irreducible subquotients $\sigma$ of $J$ are absolutely irreducible, all the irreducible
subquotients of $J_l$ can be defined over $k$. Since  
$\Hom_G(\sigma_l, J_l)\cong \Hom_G(\sigma, J)_l$ by Lemma \ref{abstractrat}, we deduce that $\pi_l \hookrightarrow J_l$ is essential.  
\end{proof}

Let $L'$ be a finite extension of $L$ with the ring of integers $\OO'$ and residue field 
$k'$. Let $\pi$ be an absolutely irreducible $k$-representation of $G$ with a central character $\zeta$ and 
let $\wP$ be a projective envelope of $S:=\pi^{\vee}$ in $\dualcat(\OO)$. 

\begin{cor}\label{bcisok1} $\wP\otimes_{\OO} \OO'$ is a projective envelope of $S\otimes_k k'$ in 
$\dualcat(\OO')$. Moreover, $\End_{\dualcat(\OO')}(\wP\otimes_{\OO} \OO')\cong \End_{\dualcat(\OO)}(\wP)\otimes_{\OO} \OO'$.
\end{cor}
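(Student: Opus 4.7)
The plan is to verify three assertions: (a) $\wP\otimes_\OO\OO'$ is an object of $\dualcat(\OO')$ and is projective there; (b) its endomorphism ring is $\wE\otimes_\OO\OO'$; (c) the natural surjection $\wP\otimes_\OO\OO'\twoheadrightarrow S\otimes_k k'$ is essential. Since $\OO'$ is a free $\OO$-module of finite rank $n=[L':L]$, no completed tensor product is required, and $\wP\otimes_\OO\OO'\cong \wP^{\oplus n}$ as an object of $\dualcat(\OO)$, giving (a) once we equip it with the evident $\OO'$-action. For projectivity and the endomorphism ring I use the adjunction
\[
\Hom_{\dualcat(\OO')}(\OO'\otimes_\OO\wP, M)\cong \Hom_{\dualcat(\OO)}(\wP, M|_\OO),
\]
valid because $\OO'$ is finite free over $\OO$ and the restriction $\dualcat(\OO')\to\dualcat(\OO)$ is exact. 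The right-hand side is exact in $M$ by projectivity of $\wP$ in $\dualcat(\OO)$, so $\wP\otimes_\OO\OO'$ is projective in $\dualcat(\OO')$. Taking $M=\wP\otimes_\OO\OO'$ and using $M|_\OO\cong\wP^{\oplus n}$ yields $\End_{\dualcat(\OO')}(\wP\otimes_\OO\OO')\cong \wE^{\oplus n}\cong \wE\otimes_\OO\OO'$, settling (b).

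The main obstacle is (c). The plan is to reduce modulo $\varpi'$: since $\varpi=u{\varpi'}^e$ for some unit $u\in\OO'^{\times}$,
\[
(\wP\otimes_\OO\OO')/\varpi'(\wP\otimes_\OO\OO')\cong (\wP/\varpi\wP)\otimes_k k'\cong P\otimes_k k',
\]
where $P=\wP/\varpi\wP$ is a projective envelope of $S$ in $\dualcat(k)$ by Lemma~\ref{reduceprojenv}. Pontryagin dualizing Corollary~\ref{bcinjenv} shows that $P\otimes_k k'$ is a projective envelope of $S\otimes_k k'$ in $\dualcat(k')$, so it has $S\otimes_k k'$ as its unique simple quotient and $\Hom_{\dualcat(k')}(P\otimes_k k', S\otimes_k k')$ is one-dimensional over $k'$. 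By the adjunction above applied with $M = S\otimes_k k'$ (killed by $\varpi'$), we likewise get $\Hom_{\dualcat(\OO')}(\wP\otimes_\OO\OO', S\otimes_k k')\cong k'$.

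Finally, let $P'$ be a projective envelope of $S\otimes_k k'$ in $\dualcat(\OO')$. By projectivity of $\wP\otimes_\OO\OO'$, the natural surjection lifts to a map $\phi\colon\wP\otimes_\OO\OO'\to P'$; essentiality of $P'\twoheadrightarrow S\otimes_k k'$ forces $\phi$ to be surjective, and since $P'$ is projective, $\phi$ splits, giving $\wP\otimes_\OO\OO'\cong P'\oplus \ker\phi$. If $\ker\phi$ were nonzero it would admit a simple quotient (dually to the existence of socles in locally finite categories), which would have to be $S\otimes_k k'$ because this is the unique simple quotient of the whole projective $\wP\otimes_\OO\OO'$; but then $\Hom_{\dualcat(\OO')}(\wP\otimes_\OO\OO', S\otimes_k k')$ would be at least two-dimensional over $k'$, contradicting the one-dimensionality just established. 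Hence $\ker\phi=0$ and $\wP\otimes_\OO\OO'\cong P'$ is a projective envelope of $S\otimes_k k'$ in $\dualcat(\OO')$, completing the argument.
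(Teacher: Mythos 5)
Your proof is correct, but it takes a genuinely different route from the paper's. The paper first observes that $\wP$ is $\OO$-torsion free (Corollary \ref{projaretfree}), so that by Corollary \ref{projaretfree2} everything reduces to showing that $\wP\otimes_{\OO}k'$ is a projective envelope of $S\otimes_k k'$ in $\dualcat(k')$; this it gets by writing $\wP\otimes_{\OO}k'\cong J_{\pi}^{\vee}\otimes_k k'\cong (J_{\pi}\otimes_k k')^{\vee}$ (Lemma \ref{abstractrat}) and invoking Corollary \ref{bcinjenv}, and it disposes of the endomorphism-ring statement in one line using that $\OO'$ is finite free over $\OO$. You instead obtain projectivity and the isomorphism $\End_{\dualcat(\OO')}(\wP\otimes_{\OO}\OO')\cong \wE\otimes_{\OO}\OO'$ directly from the extension-of-scalars/restriction adjunction (legitimate, since $\OO'$ is finite free over $\OO$ and restriction is exact), and you prove essentiality by comparing with an abstract projective envelope $P'$, splitting the resulting surjection, and excluding a complement via the one-dimensionality over $k'$ of $\Hom_{\dualcat(\OO')}(\wP\otimes_{\OO}\OO', S\otimes_k k')$. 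The essential shared input is Corollary \ref{bcinjenv} (dualized); what your route buys is that it never appeals to torsion-freeness of $\wP$ or to Corollary \ref{projaretfree2}, and it makes the ring isomorphism explicit, at the cost of the extra splitting argument. Two steps you leave implicit and should state: the identification $P\otimes_k k'\cong (J_{\pi}\otimes_k k')^{\vee}$ (this is exactly where the paper cites Lemma \ref{abstractrat}; it is harmless because $k'$ is finite over $k$), and the fact that every simple quotient of $\wP\otimes_{\OO}\OO'$ is killed by $\varpi'$ and hence factors through $P\otimes_k k'$ — this is what justifies your claim that $S\otimes_k k'$ is the unique simple quotient used in the final contradiction.
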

\begin{proof} It is enough to show that $\wP \otimes_{\OO} k'$ is a projective envelope of $S\otimes_k k'$ in 
$\dualcat(k')$, see Corollary \ref{projaretfree2},  as $\wP$ is $\OO$-torsion free by Corollary \ref{projaretfree}. 
Now, 
$$ \wP \otimes_{\OO} k'\cong J_{\pi}^{\vee} \otimes_k k' \cong (J_{\pi}\otimes_k k')^{\vee},$$
where $J_{\pi}$ is an injective envelope of $\pi$ in $\Mod^{\mathrm{l\, fin}}_{G, \zeta}(k)$ and the last isomorphism follows from 
Lemma \ref{abstractrat}. Since $J_{\pi}\otimes_k k'$ is an injective envelope of $\pi\otimes_k k'$ in 
$\Mod^{\mathrm{l\, fin}}_{G, \zeta}(k')$ by Corollary \ref{bcinjenv} we get the first assertion. The second assertion follows since $\OO'$ 
is free of finite rank over $\OO$.
\end{proof}

\begin{cor}\label{bcisok2} Let $S$ be as above and suppose there exists $Q$ in $\dualcat(k)$ satisfying the hypotheses 
(H1)-(H5) of  \S\ref{firstsec}. Then $Q\otimes_k k'$ and $S\otimes_k k'$ satisfy the hypotheses (H1)-(H5) in 
$\dualcat(k')$.
\end{cor}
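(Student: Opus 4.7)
The plan is to verify each of (H1)--(H5) for the pair $(Q \otimes_k k', S \otimes_k k')$ in $\dualcat(k')$ by a direct base change argument. The two key inputs are Proposition \ref{defoverk} (which identifies irreducibles in $\dualcat(k')$ that are related via $\rel$ to an absolutely irreducible object) and Lemma \ref{abstractrat} (base change of $\Hom$, using the ``finite extension'' branch, which imposes no finiteness on the first argument), supplemented by Lemma \ref{bccompact} and Corollary \ref{bcinjenv} on base change of injectives.

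The preliminary step is to show that \emph{every} irreducible subquotient of $Q$ in $\dualcat(k)$ is absolutely irreducible. Let $S_i$ appear in a composition series of $Q$; by Proposition \ref{blockdecopm}, $S_i$ lies in the block of $S$, and by hypothesis $\pi = S^\vee$ is absolutely irreducible. Decompose $S_i^\vee \otimes_k \bar k$ into its absolutely irreducible constituents $\sigma_j$; each $\sigma_j$ lies in the block of $\pi$ over $\bar k$, so Proposition \ref{defoverk} gives an absolutely irreducible $\rho_j$ over $k$ with $\sigma_j \cong \rho_j \otimes_k \bar k$. Taking $\Gal(\bar k/k)$-invariants recovers $S_i^\vee \cong \bigoplus_j \rho_j$, and irreducibility of $S_i^\vee$ forces a single summand, so $S_i$ is absolutely irreducible. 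Consequently each $S_i \otimes_k k'$ remains irreducible in $\dualcat(k')$, and tensoring a composition series of $Q$ with $k'$ yields a composition series of $Q \otimes_k k'$ of the same length; hypothesis (H2) transfers directly, with $S \otimes_k k'$ appearing with multiplicity one.

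Next I would establish the base change formula
$$\Ext^i_{\dualcat(k)}(M, N) \otimes_k k' \cong \Ext^i_{\dualcat(k')}(M \otimes_k k', N \otimes_k k')$$
for all $i \ge 0$ and all $M, N$ in $\dualcat(k)$. Via Corollary \ref{thesame} this reduces to the analogous statement for $\Ext^i_{G, \zeta}(N^\vee, M^\vee)$, which one computes by picking an injective resolution $M^\vee \hookrightarrow J^\bullet$ in $\Mod^{\mathrm{sm}}_{G, \zeta}(k)$. By Lemma \ref{bccompact} the complex $J^\bullet \otimes_k k'$ is an injective resolution of $M^\vee \otimes_k k'$ in $\Mod^{\mathrm{sm}}_{G, \zeta}(k')$, and Lemma \ref{abstractrat} applied termwise gives $\Hom_{G, \zeta}(N^\vee, J^i) \otimes_k k' \cong \Hom_{G, \zeta}(N^\vee \otimes_k k', J^i \otimes_k k')$. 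Since $k'$ is a flat $k$-module, cohomology commutes with $\otimes_k k'$, yielding the isomorphism.

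The remaining hypotheses then fall out mechanically. For (H1) and (H3), let $S'' \in \Irr(\dualcat(k'))$ with $S'' \not\cong S \otimes_k k'$: if $S''$ does not lie in the block of $S \otimes_k k'$ then $\Hom_{\dualcat(k')}(Q \otimes k', S'')$ and $\Ext^1_{\dualcat(k')}(Q \otimes k', S'')$ vanish by block decomposition, while if $S''$ does lie in that block then Proposition \ref{defoverk} produces an absolutely irreducible $T \in \Irr(\dualcat(k))$ with $T \not\cong S$ and $S'' \cong T \otimes_k k'$, so the base change formula reduces the vanishing to (H1) and (H3) for $(Q, T)$. For (H4), the base change formula directly gives finite-dimensionality of $\Ext^1_{\dualcat(k')}(Q \otimes k', S \otimes k')$ from that of $\Ext^1_{\dualcat(k)}(Q, S)$. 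For (H5), since $S$ is absolutely irreducible the cosocle of $Q \otimes_k k'$ is simply $S \otimes_k k'$, so $\rad(Q \otimes_k k') = R \otimes_k k'$, and the base change formula yields $\Ext^2_{\dualcat(k')}(Q \otimes k', R \otimes k') \cong \Ext^2_{\dualcat(k)}(Q, R) \otimes_k k' = 0$. The only real subtlety is the preliminary absolute irreducibility step, without which neither the reduction of (H1)/(H3) via Proposition \ref{defoverk} nor the length-preservation argument for (H2) would go through; happily, the proof of Proposition \ref{defoverk} itself supplies exactly what is needed.
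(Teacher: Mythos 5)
Your overall strategy is the same as the paper's, which disposes of this corollary by citing exactly two inputs: Proposition \ref{defoverk} (to identify the irreducibles of $\dualcat(k')$ that can interact with $Q\otimes_k k'$ and descend them to absolutely irreducible objects over $k$) and Proposition \ref{bcEXT} (base change of $\Ext$-groups). Your preliminary step, the reduction of (H1)/(H3) through blocks, and the identification $\rad(Q\otimes_k k')=R\otimes_k k'$ are all fine, and your transfer of (H2) works (though it is quicker to use (H2') together with Corollary \ref{bcisok1}: $\Hom_{\dualcat(k')}(\wP\otimes k',Q\otimes k')\cong\Hom_{\dualcat(k)}(\wP,Q)\otimes_k k'$ is one-dimensional). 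The absolute irreducibility of the constituents of $Q$ is also already contained in Proposition \ref{blocksoverk}.

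The genuine problem is your justification of the base change formula for $\Ext^i_{\dualcat(k)}$. You compute $\Ext^i_{G,\zeta}$ from an injective resolution $M^\vee\hookrightarrow J^\bullet$ in $\Mod^{\mathrm{sm}}_{G,\zeta}(k)$ and invoke Lemma \ref{bccompact} to claim that $J^\bullet\otimes_k k'$ is again a complex of injectives. That lemma is stated, and proved, only for a \emph{profinite} group $\GG$: its proof rests on the isomorphism $(C(\GG,V)\otimes_k l)^{\PP}\cong k[\GG/\PP]\otimes_k V\otimes_k l$, which uses that $\PP$ has finite index in $\GG$, and there is no central character in that setting. It therefore does not apply to $G=\GL_2(\Qp)$, and this is precisely why the paper does not take your route: Proposition \ref{bcEXT} is proved via the spectral sequence \eqref{specseq}, so that the only cohomology that has to be base changed is $H^i(I_1/Z_1,\pi)$, i.e.\ cohomology of a compact group, where Lemma \ref{bccompact} (via Corollary \ref{bccoh}) is legitimate. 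Your argument can be repaired in two ways: either simply cite Proposition \ref{bcEXT} for the $\Ext$-groups you actually need, noting that under the anti-equivalence $\Ext^i_{\dualcat(k)}(M,N)\cong\Ext^i_{G,\zeta}(N^\vee,M^\vee)$ the first smooth argument is an irreducible representation in (H1), (H3), (H4), hence admissible and generated by its $I_1$-invariants; or justify the preservation of injectivity honestly, e.g.\ by observing that for the finite extension $k'/k$ the functor $-\otimes_k k'$ is right adjoint to the exact restriction functor $\Mod^{\mathrm{sm}}_{G,\zeta}(k')\rightarrow\Mod^{\mathrm{sm}}_{G,\zeta}(k)$ (since $k'\cong\Hom_k(k',k)$ as $k'$-modules), or by resolving inside $\Mod^{\mathrm{l\,fin}}_{G,\zeta}(k)$ by injective envelopes whose socles lie in the block of $\pi$ and applying Corollary \ref{bcinjenv}. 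With such a repair your version even has a small advantage over the paper's citation: it gives the base change isomorphism for arbitrary finite length coefficients, which is what one really wants for (H5), where the coefficient object is $R$ rather than an irreducible.
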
  
\begin{proof} Propositions \ref{defoverk}, \ref{bcEXT}.
\end{proof}

\begin{prop}\label{blocksoverk} Let $\pi\in \Irr_{G,\zeta}(k)$ be absolutely irreducible and let $\BB$ be the equivalence class
of $\pi$ in $\Irr_{G,\zeta}(k)$ under $\sim$. If $p\ge 5$ then one of the following holds:
\begin{itemize} 
\item[(i)] if $\pi$ is supersingular then $\BB=\{ \pi\}$;
\item[(ii)] if $\pi\cong \Indu{P}{G}{\chi_1\otimes \chi_2 \omega^{-1}}$ with $\chi_1\chi_2^{-1}\neq \Eins,
\omega^{\pm 1}$ then  
$$\BB=\{ \Indu{P}{G}{\chi_1\otimes \chi_2 \omega^{-1}}, \Indu{P}{G}{\chi_2\otimes \chi_1 \omega^{-1}}\};$$
\item[(iii)] if $\pi\cong \Indu{P}{G}{\chi\otimes\chi\omega^{-1}}$ then $\BB=\{\pi\}$;
\item[(iv)] otherwise, $\BB=\{ \eta, \Sp \otimes \eta, (\Indu{P}{G}{\alpha})\otimes\eta\}$;
\end{itemize}  
where $\eta:G\rightarrow k^{\times}$ is a smooth character.
\end{prop}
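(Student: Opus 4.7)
The plan is to reduce to a computation of $\Ext^1_{G,\zeta}(\pi,\tau)$ between absolutely irreducible $k$-representations and then quote the known classification of these Ext groups.

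First I would show that if $\pi\in\BB$ is absolutely irreducible and $\tau\in\BB$, then $\tau$ is also absolutely irreducible. By definition of $\sim$ it is enough to handle the case $\pi\rel\tau$, so either $\tau\cong\pi$ or one of the Ext groups $\Ext^1_{G,\zeta}(\pi,\tau)$, $\Ext^1_{G,\zeta}(\tau,\pi)$ is nonzero. Pick a finite Galois extension $l/k$ splitting $\tau$, so that $\tau\otimes_k l$ is a direct sum of absolutely irreducibles $\tau_1,\ldots,\tau_n$ (using Proposition \ref{reducetoabs}). By Proposition \ref{bcEXT} we have $\Ext^1_{G,\zeta}(\pi,\tau)\otimes_k l\cong \Ext^1_{l[G],\zeta}(\pi\otimes_k l,\tau\otimes_k l)$, so some $\tau_i$ satisfies $\pi\otimes_k l\rel \tau_i$. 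Proposition \ref{defoverk} then provides an absolutely irreducible $\sigma$ over $k$ with $\tau_i\cong \sigma\otimes_k l$; irreducibility of $\tau$ together with Lemma \ref{abstractrat} forces $\tau\cong\sigma$, hence $\tau$ is absolutely irreducible. Consequently all elements of $\BB$ are already described by the Barthel--Livn\'e/Breuil classification, and after twisting we may restrict attention to representations over $k$ itself.

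Next I would assemble the $\Ext^1$ computations for absolutely irreducible representations of $G$; after base changing to a splitting field (and using Proposition \ref{bcEXT} once more) this reduces to the case where the relevant characters and the representation $\pi$ are defined over $k$. The required inputs are:
\begin{itemize}
\item[(a)] For absolutely irreducible supersingular $\pi$, $\Ext^1_{G,\zeta}(\pi,\tau)=0=\Ext^1_{G,\zeta}(\tau,\pi)$ for every irreducible $\tau\not\cong\pi$. This has been shown by Paskunas--Breuil \cite{bp} and Colmez \cite{colmez}, and is completed in \cite{ext2}.
\item[(b)] For a generic principal series $\pi_1=\Indu{P}{G}{\chi_1\otimes\chi_2\omega^{-1}}$ with $\chi_1\chi_2^{-1}\neq\Eins,\omega^{\pm 1}$, one has $\Ext^1_{G,\zeta}(\pi_1,\pi_2)\neq 0$ only for $\pi_2=\pi_1$ or $\pi_2=\Indu{P}{G}{\chi_2\otimes\chi_1\omega^{-1}}$ (and symmetrically in the other variable).
\item[(c)] For $\pi=\Indu{P}{G}{\chi\otimes\chi\omega^{-1}}$, no non-self Ext survives: the only potential partners would be $\chi\circ\det$ or $(\Sp\otimes\chi\circ\det)$, and a direct computation shows the Ext groups with these vanish.
\item[(d)] In the remaining (non-generic) case one has the nonsplit sequence $0\to\Eins\to\Indu{P}{G}{\Eins}\to\Sp\to 0$ together with Colmez/Emerton's computation linking $\Sp$ and $\Indu{P}{G}{\omega\otimes\omega^{-1}}$, giving nontrivial Ext's in both directions amongst $\{\eta\circ\det,\,\Sp\otimes\eta\circ\det,\,(\Indu{P}{G}{\alpha})\otimes\eta\circ\det\}$ and nothing else.
\end{itemize}
These are exactly the Ext computations carried out in \cite{bp}, \cite{colmez}, \cite{ord2} and completed in \cite{ext2}; I would quote them rather than redo them.

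Once these Ext vanishings/non-vanishings are in hand the proof of the proposition is formal: given an absolutely irreducible $\pi$, the elements $\tau$ with $\pi\rel\tau$ are read off from the lists (a)--(d), and then closing up under the equivalence relation produces precisely the four families stated. In case (ii) one checks that $\pi_1\rel\pi_2$ and no further irreducible is $\rel$-related to either; in case (iv) one checks that $\eta\circ\det\rel\Sp\otimes\eta\circ\det$, that $\Sp\otimes\eta\circ\det\rel(\Indu{P}{G}{\alpha})\otimes\eta\circ\det$, and that the block closes there. The main technical obstacle is the Ext computation itself in the non-generic cases (iii), (iv), where one must use both the pro-$p$ Iwahori/Hecke spectral sequence \eqref{specseq} and Emerton's ordinary parts functor to pin down all the extension groups; fortunately these computations are already available in the literature cited above. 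All other ingredients in the argument (descent of irreducibility, base change of Ext) are provided by Proposition \ref{defoverk}, Proposition \ref{bcEXT} and Lemma \ref{abstractrat}.
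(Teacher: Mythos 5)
Your proposal is correct and follows essentially the same route as the paper: the paper's proof simply invokes the main result of \cite{ext2} for the case of an algebraically closed residue field and then descends to $k$ via Lemma \ref{fielddefi} and Propositions \ref{defoverk} and \ref{bcEXT}, which is exactly the reduction (descent of absolute irreducibility within a block plus base change of $\Ext^1$) that you spell out before quoting the known $\Ext^1$ computations.
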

\begin{proof} If $k$ is algebraically closed this is the main result of \cite{ext2}. It follows
from Lemma \ref{fielddefi}, Propositions \ref{defoverk} and \ref{bcEXT} that the same statement is true over $k$.
\end{proof}
\begin{remar} In fact, \cite{ext2} also computes the blocks for $p=3$, the only  difference is that the block in case (iv) 
contains $4$ distinct irreducible representations, because $\Indu{P}{G}{\alpha}$ is reducible if $p=3$, and its  semi-simplification 
is isomorphic to $\omega\circ \det\oplus \Sp\otimes\omega\circ\det$. In \cite{ext_new} 
we have found a new method to compute the blocks, which also works for $p=2$. If $p=2$ then the cases (iii) and (iv) collapse to one: a block with two irreducible 
representations $\BB=\{\eta, \Sp\otimes\eta\}$.
\end{remar}

\begin{cor}\label{irrsubQabs} Let $\Pi$ be an absolutely irreducible admissible unitary $L$-Banach space representation of $G$ with a central character
$\zeta$,  let $\Theta$ be an open bounded $G$-invariant lattice in $\Pi$ and let $\pi$ be an irreducible subquotient of $\Theta\otimes_{\OO} k$. 
Then either $\pi$ is absolutely irreducible or there exists a smooth character 
$\chi:\Qp^{\times}\rightarrow \bar{k}^{\times}$ such that $l:=k[\chi(p)]$ is a quadratic extension of $k$ and 
$$\pi\otimes_k l \cong \Indu{P}{G}{\chi\otimes \chi^{\sigma}\omega^{-1}}\oplus  \Indu{P}{G}{\chi^{\sigma}\otimes \chi\omega^{-1}},$$
where $\chi^{\sigma}$ is a conjugate of $\chi$ by the non-trivial element in $\Gal(l/k)$.   
\end{cor}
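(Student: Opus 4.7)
The plan is to enlarge the residue field until $\pi$ decomposes into absolutely irreducibles, then use the block decomposition of the Banach space category together with the explicit list in Proposition \ref{blocksoverk} to pin down the shape of $\pi$. Concretely, by Proposition \ref{reducetoabs} I may choose a finite Galois extension $l/k$ such that $\pi\otimes_k l\cong\tau_1\oplus\cdots\oplus\tau_n$ with each $\tau_i$ absolutely irreducible; multiplicities are trivial because $D:=\End_{k[G]}(\pi)$, being a finite-dimensional skew field over the finite field $k$, is a field by Wedderburn, so the Schur index is $1$. Let $L'/L$ be unramified with residue field $l$. Since $\Pi$ is absolutely irreducible, $\Pi':=\Pi\otimes_L L'$ remains irreducible; applying Proposition \ref{blockdecompB} to $\Pi'$ (with open bounded lattice $\Theta\otimes_{\OO}\OO'$), all irreducible subquotients of $(\Theta\otimes_{\OO} k)\otimes_k l$ lie in a single block $\BB'$ of $l$-representations, and in particular the $\tau_i$ lie in $\BB'$ and are permuted transitively by $\Gamma:=\Gal(l/k)$.

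Assume now that $\pi$ is not absolutely irreducible, so $n\geq 2$, and run through the cases of Proposition \ref{blocksoverk}. Cases (i) and (iii) are immediately excluded since $|\BB'|=1$. Case (iv) is excluded because its three members have pairwise distinct representation-theoretic types (character, twisted Steinberg, irreducible principal series), and since $\Gamma$ preserves type, every element of $\BB'$ would have to be $\Gamma$-fixed, contradicting $n\geq 2$. Only case (ii) survives, forcing $n=2$ and $\{\tau_1,\tau_2\}=\{\Indu{P}{G}{\chi_1\otimes\chi_2\omega^{-1}},\,\Indu{P}{G}{\chi_2\otimes\chi_1\omega^{-1}}\}$ for smooth characters $\chi_1,\chi_2\colon\Qp^\times\to l^\times$. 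A $\sigma\in\Gamma$ swapping $\tau_1,\tau_2$ must satisfy $\chi_1^\sigma=\chi_2$ (the alternative $\chi_1^\sigma=\chi_1$ would stabilize $\tau_1$). Set $\chi:=\chi_1$, so $\chi_2=\chi^\sigma$.

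Finally I extract the field of definition. Smooth characters $\Qp^\times\to l^\times$ restricted to $\Zp^\times$ are $\Fp$-valued and hence $\Gamma$-fixed, so $\chi\neq\chi^\sigma$ forces $\sigma(\chi(p))\neq\chi(p)$. Therefore the stabilizer of $\tau_1$ in $\Gamma$ equals $\Gal(l/k(\chi(p)))$, has index $2$, and gives $[k(\chi(p)):k]=2$; being quadratic, $k(\chi(p))$ is Galois over $k$ and automatically contains $\sigma(\chi(p))$, whence $l=k(\chi_1(p),\chi_2(p))=k(\chi(p))$. The most delicate step is the elimination of case (iv), which ultimately rests on the observation that $\Gamma$ acts by $l$-linear ring automorphisms and so cannot move between character, twisted Steinberg, and principal series components of a single block.
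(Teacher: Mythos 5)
Your overall strategy is the paper's own: extend the residue field, invoke Proposition \ref{blockdecompB} to confine all Galois-conjugate constituents of $\pi\otimes_k l$ to a single block (because otherwise $\Pi_{L'}$ would split), and then read off the possibilities from Proposition \ref{blocksoverk}. The multiplicity-one claim via commutativity of $\End_{k[G]}(\pi)$ over the finite field $k$, the transitivity of $\Gamma=\Gal(l/k)$ on the constituents, and the elimination of block type (iv) by Galois-invariance of representation type are all sound, and this case analysis is exactly what the paper's terse ``the assertion follows from Proposition \ref{irrkreps} and Proposition \ref{blocksoverk}'' elides.

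The one genuine gap is your closing inference that $l = k(\chi_1(p),\chi_2(p))$. The field $l$ you fixed at the start is whatever Proposition \ref{reducetoabs} produces (a splitting field of some Hecke polynomial) and carries no a priori minimality, so nothing in your stabilizer computation constrains $l$ itself; that computation only gives $[k[\chi(p)]:k]=2$, i.e.\ information about a \emph{subfield} of $l$. And the equality is not what the corollary asks for: the statement names $l$ as $k[\chi(p)]$, so you must show that $\pi\otimes_k k[\chi(p)]$ already has the claimed decomposition, which you have not done directly. To close this, observe (using $\omega(p)=1$ and Lemma \ref{fielddefi}(iii)) that the field of definition of $\tau_1=\Indu{P}{G}{\chi\otimes\chi^{\sigma}\omega^{-1}}$ over $\Fp$ is $\Fp[\chi(p),\sigma(\chi(p))]$, which is contained in $k[\chi(p)]$ since $\sigma(\chi(p))$ is a $k$-conjugate of $\chi(p)$; thus $\tau_1,\tau_2$ descend to $k[\chi(p)]$, and applying Lemma \ref{abstractrat} to $\Hom_G(\tau_i', \pi\otimes_k k[\chi(p)])$ together with a dimension count forces $\pi\otimes_k k[\chi(p)]\cong \tau_1'\oplus\tau_2'$. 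With that repair your proof is complete.
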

\begin{proof} We observe that for every  finite extension $l$ of $k$ the irreducible subquotients of $\pi_l$ are contained in 
the same  block. Since otherwise Proposition \ref{blockdecompB} implies that  $\Pi_{L'}$ is not irreducible, where  $L'$ 
is a finite extension of $L$ with residue field $l$. The assertion follows from the description of irreducible 
$k$-representations of $G$ in Proposition \ref{irrkreps} and Proposition \ref{blocksoverk}.
\end{proof}  

It follows from Propositions \ref{blocksoverk}, \ref{irrkreps} and \ref{bcEXT} that a block  $\BB$ contains only finitely 
many isomorphism classes of irreducible $k$-representations of $G$. Fix a representative $\pi_i$ for each isomorphism 
class in $\BB$ and let $\wP_{\BB}$ be a projective envelope of $(\oplus_{i=1}^n \pi_i)^{\vee}$ then $\wE_{\BB}:=\End_{\dualcat(\OO)}(\wP_{\BB})$
is a compact ring, see \S \ref{zerosec}.

\begin{prop}\label{gabriel} The category $\Mod^{\mathrm{l\, fin}}_{G,\zeta}(\OO)^{\BB}$ is anti-equivalent to the category 
of compact right $\wE_{\BB}$-modules. The centre of $\Mod^{\mathrm{l\, fin}}_{G,\zeta}(\OO)^{\BB}$ is isomorphic to the centre 
of $\wE_{\BB}$.
\end{prop}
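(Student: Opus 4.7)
The plan is to deduce this from Gabriel's theorem on locally finite categories with pseudo-compact endomorphism rings of projective generators, building on the formalism of \S\ref{zerosec}. Since $\Mod^{\mathrm{l\, fin}}_{G,\zeta}(\OO)^{\BB}$ is anti-equivalent to $\dualcat(\OO)^{\BB}$ via Pontryagin duality, it suffices to construct an equivalence between $\dualcat(\OO)^{\BB}$ and the category of compact right $\wE_{\BB}$-modules; the centre statement then reduces to the classical identification of the centre of the module category of a ring $R$ with $Z(R)$, combined with the equality $Z(\wE_{\BB})=Z(\wE_{\BB}^{\mathrm{op}})$.

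The candidate functors are
$$F\colon \dualcat(\OO)^{\BB}\to \{\text{compact right $\wE_{\BB}$-modules}\},\quad M\mapsto \Hom_{\dualcat(\OO)}(\wP_{\BB}, M),$$
equipped with the natural topology of \S\ref{zerosec}, and $G(\md):=\md\wtimes_{\wE_{\BB}}\wP_{\BB}$ in the reverse direction. Writing $\md$ as the cokernel of a map between products of copies of $\wE_{\BB}$ exhibits $G(\md)$ as a quotient of a product of copies of $\wP_{\BB}$; since $\dualcat(\OO)^{\BB}$ is a direct factor of $\dualcat(\OO)$ by Corollary \ref{blockdecompD}, hence closed under products and cokernels, $G$ lands in $\dualcat(\OO)^{\BB}$. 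The isomorphism $F\circ G\cong \id$ is furnished directly by Lemma \ref{headS0}.

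For $G\circ F\cong \id$, Lemma \ref{headS} provides surjectivity of the evaluation map $\ev_M\colon F(M)\wtimes_{\wE_{\BB}}\wP_{\BB}\twoheadrightarrow M$, which applies because every irreducible subquotient of $M\in \dualcat(\OO)^{\BB}$ is isomorphic to some $S_i$. Setting $K:=\ker(\ev_M)$ and applying the exact functor $F$ (exactness from projectivity of $\wP_{\BB}$), the canonical isomorphism of Lemma \ref{headS0} combined with a naturality check forces $F(K)=0$. If $K$ were non-zero, then $K^{\vee}$ would be a non-zero object of $\Mod^{\mathrm{l\, fin}}_{G,\zeta}(\OO)^{\BB}$, so Lemma \ref{useful} would supply a non-zero morphism $\pi_i\hookrightarrow K^{\vee}$ for some $i$; dualising, $K$ would admit a non-zero quotient $K\twoheadrightarrow S_i$, and projectivity of $\wP_{\BB}$ would lift $\wP_{\BB}\twoheadrightarrow S_i$ through this quotient, producing a non-zero element of $F(K)$, a contradiction.

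The main point requiring care is verifying that, under the canonical identification of Lemma \ref{headS0}, the map $F(\ev_M)$ really is the identity of $F(M)$; this is a routine but tedious trace through the presentation $\md=\coker(\prod_I \wE_{\BB}\to \prod_J \wE_{\BB})$ and the parallel presentation of $G(\md)$, so the only serious obstacle is this bookkeeping. For the centre, a natural endomorphism of the identity functor on compact right $\wE_{\BB}$-modules is determined by its value on $\wE_{\BB}$ itself, which is an element of $\End_{\wE_{\BB}}(\wE_{\BB})=\wE_{\BB}$ acting by left multiplication; naturality with respect to all right $\wE_{\BB}$-module maps then forces that element to be central, identifying the centre of the category with $Z(\wE_{\BB})$ and hence, via the anti-equivalence, with the centre of $\Mod^{\mathrm{l\, fin}}_{G,\zeta}(\OO)^{\BB}$.
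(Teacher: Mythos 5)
Your proof is correct, and it is worth noting that the paper's own ``proof'' is just a citation to Gabriel \cite[\S IV.4, Thm 4, Cor.\ 1, Cor.\ 5]{gab}, whereas you instead spell out an explicit argument using the paper's own Lemmas \ref{headS0} and \ref{headS}. This is exactly the content of Gabriel's theorem specialised to this block, so the mathematics is the same, but the self-contained version is more transparent in this context: the paper has already set up $\Hom_{\dualcat(\OO)}(\wP_{\BB},-)$ and $-\wtimes_{\wE_{\BB}}\wP_{\BB}$, proved that the former lands in compact modules, that the latter lands in $\dualcat(\OO)^{\BB}$ (via Corollary \ref{blockdecompD}), and established the key adjunction-type isomorphisms, so deriving the equivalence directly costs very little extra. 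Your handling of the one delicate point --- checking that under the identification of Lemma \ref{headS0} the map $F(\ev_M)$ is the identity, so that $F(K)=0$ and then $K=0$ by Lemma \ref{useful} and the dual lifting argument --- is done correctly, and the centre computation via the standard evaluation-at-$\wE_{\BB}$ argument together with $Z(\wE_{\BB})=Z(\wE_{\BB}^{\mathrm{op}})$ is fine. The only thing you buy by citing Gabriel is brevity; the only thing you buy by writing it out is self-containedness.
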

\begin{proof}  See \cite[\S IV.4 Thm 4, Cor 1, Cor 5]{gab}.
\end{proof}

\subsection{Representations of the torus}
Let $T$ be the subgroup of diagonal matrices in $G$, $T_0:=T\cap I$, $T_1:= T\cap I_1$.
\begin{prop}\label{Tirrfin} Every smooth irreducible $k$-representation of $T$ is finite dimensional and hence  admissible.
The absolutely irreducible representations are $1$-di\-men\-sio\-nal.
\end{prop}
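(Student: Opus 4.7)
The plan is to reduce the question to commutative algebra on the group algebra of the discretized torus. First, identify $T \cong \Qp^\times \times \Qp^\times$ with pro-$p$ open subgroup $T_1 \cong (1+\pF)^2$. Since $\pi$ is smooth and nonzero, any nonzero $v \in \pi$ has open $T$-stabilizer, and so the $T_1$-orbit of $v$ spans a finite-dimensional representation of the finite $p$-group $T_1/(T_1 \cap \Stab_T(v))$ over a field of characteristic $p$; this representation has a fixed vector, giving $\pi^{T_1} \neq 0$. Since $T$ is abelian, $T_1$ is normal and $\pi^{T_1}$ is $T$-stable, so irreducibility of $\pi$ forces $\pi^{T_1} = \pi$. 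Hence the action factors through the (discrete) quotient $T/T_1$.

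Next, decompose $T/T_1 \cong F \times \ZZ \times \ZZ$, where $F := T_0/T_1 \cong \Fp^\times \times \Fp^\times$ is finite, via the Teichmüller lift and the generators $\diag(p,1),\diag(1,p)$ of the $\ZZ^2$ factor. The group algebra
\[
R := k[T/T_1] \cong k[F]\bigl[X, X^{-1}, Y, Y^{-1}\bigr]
\]
is a finitely generated commutative $k$-algebra, hence Noetherian. The representation $\pi$ is then an irreducible $R$-module, so $\pi \cong R/\mathfrak{m}$ for some maximal ideal $\mathfrak{m}$ of $R$. By the Hilbert Nullstellensatz applied to the finitely generated $k$-algebra $R$, the residue field $R/\mathfrak{m}$ is a finite field extension of $k$. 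In particular $\pi$ is finite-dimensional over $k$, which immediately yields admissibility: every $\pi^H$ sits inside the finite-dimensional $k$-vector space $\pi$.

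For the statement about absolutely irreducible representations, I would run the same argument over $\bar k$. If $\pi$ is absolutely irreducible then $\pi \otimes_k \bar k$ is an irreducible smooth $\bar k$-representation of $T$; by the reasoning above it is isomorphic to $\bar k[T/T_1]/\mathfrak{m}'$ for a maximal ideal $\mathfrak{m}'$, and now Nullstellensatz over the algebraically closed field $\bar k$ forces $\bar k[T/T_1]/\mathfrak{m}' \cong \bar k$. Consequently $\dim_k \pi = \dim_{\bar k} (\pi \otimes_k \bar k) = 1$, as claimed.

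There is no serious obstacle: the only delicate point is the initial reduction ensuring that smoothness really does imply the triviality of the $T_1$-action, which is handled by the pro-$p$/characteristic-$p$ fixed-vector argument above. After that, everything reduces to a one-line invocation of Nullstellensatz for the commutative Noetherian ring $k[T/T_1]$.
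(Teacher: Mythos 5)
Your proof is correct and follows essentially the same strategy as the paper: use the pro-$p$ fixed-vector argument to show the $T_1$-action is trivial, reduce to an irreducible module over a finitely generated commutative $k$-algebra, and invoke Zariski's lemma (Nullstellensatz) to conclude the residue field is finite over $k$. The only minor difference is that you work with the full group algebra $k[T/T_1] \cong k[F][X^{\pm 1},Y^{\pm 1}]$ directly, whereas the paper first decomposes $\tau|_{T_0}$ into copies of a single character to see that $\tau$ is already simple over the Laurent subalgebra $k[t_1^{\pm 1},t_2^{\pm 1}]$; your choice streamlines that intermediate step, and the paper's treatment of the absolutely irreducible case (noting $R/\mathfrak{m}\cong k$ iff absolutely irreducible) avoids passing to $\bar k$ at all, but both routes are fine.
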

\begin{proof} Let $\tau$ be an irreducible smooth $k$-representation of $T$. Since $T_1$ is a pro-$p$ group we have $\tau^{T_1}\neq 0$, 
and since $T_1$ is normal in $T$ and $\tau$ is irreducible we obtain $\tau^{T_1}=\tau$. Since $T_0/T_1$ is a finite group of 
prime to $p$ order and with all its absolutely irreducible representations defined over $\Fp$, we may find  a smooth character 
$\chi: T_0\rightarrow k^{\times}$, such that $\chi$ is a direct summand of $\tau|_{T_0}$. Since $T$ is commutative and $\tau$ is irreducible
we deduce that $\tau|_{T_0}$ is isomorphic to a direct sum of $\chi$'s, in particular any $k$-subspace of $\tau$ is $T_0$-invariant.
Choose $t_1, t_2\in T$ such that their images generate $T/T_0$ as 
a group. Let $R=k[t_1^{\pm 1}, t_2^{\pm 1}]\subset k[T]$ then any $R$-invariant subspace of $\tau$ is $T_0$-invariant and hence $T$-invariant. 
Thus $\tau$ is an irreducible $R$-module and hence is isomorphic to $R/\mm$, where $\mm$ is a maximal ideal of $R$. Since 
$R$ is just the ring of Laurent polynomials in $2$ variables, $R/\mm$ is a finite extension of $k$. Thus, $\tau$ is finite dimensional
and $R/\mm$ is an absolutely irreducible $R$-module if and only if $R/\mm\cong k$. 
\end{proof} 

\begin{cor}\label{tau12sub} 
Let $\tau_1, \tau_2$  be  smooth irreducible $k$-representations of $T$. If  $\Indu{P}{G}{\tau_1}$ and $\Indu{P}{G}{\tau_2}$ 
have an irreducible subquotient in common then  $\tau_1\cong \tau_2$. 
\end{cor}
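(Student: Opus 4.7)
The plan is to extend scalars to a splitting field and then invoke the classification of smooth irreducible $\bar{k}$-representations of $G$ and their occurrence inside principal series, due to Barthel--Livn\'e \cite{bl} and Breuil \cite{breuil1}.

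First, I would choose a finite field extension $l/k$ over which both $\tau_1$ and $\tau_2$ decompose as sums of characters. By Proposition \ref{Tirrfin} the $\tau_i$ are finite-dimensional, and because $k$ is perfect each $\tau_i\otimes_k l$ splits as a direct sum of $\Gal(\bar k/k)$-conjugate characters
$$\tau_i\otimes_k l\;\cong\;\bigoplus_{\psi\in X_i}\psi,$$
where $X_i$ is the $\Gal(\bar k/k)$-orbit of some smooth character $\chi_i\colon T\to l^\times$. In the notation of Proposition \ref{irrkreps}, $\tau_i\cong V_{\chi_i}$. Since parabolic induction commutes with arbitrary direct sums and with base change,
$$\bigl(\Indu{P}{G}{\tau_i}\bigr)\otimes_k l\;\cong\;\bigoplus_{\psi\in X_i}\Indu{P}{G}{\psi}.$$
If $\pi$ is a common irreducible subquotient over $k$ and $\pi'$ is any irreducible subquotient of $\pi\otimes_k l$, then $\pi'$ is simultaneously a subquotient of $\Indu{P}{G}{\psi_1}$ for some $\psi_1\in X_1$ and of $\Indu{P}{G}{\psi_2}$ for some $\psi_2\in X_2$. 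It therefore suffices to prove $\psi_1=\psi_2$: then $X_1\cap X_2\neq\emptyset$, and as both sets are Galois orbits they coincide, whence $\tau_1\cong V_{\chi_1}\cong V_{\chi_2}\cong\tau_2$.

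To establish $\psi_1=\psi_2$ I would appeal to the classification. Writing $\psi_j=\chi_{j,1}\otimes\chi_{j,2}$, there are two cases. If $\chi_{j,1}\neq\chi_{j,2}$, then $\Indu{P}{G}{\psi_j}$ is itself absolutely irreducible and is of type (iii) in the list of irreducibles over $\bar k$; moreover the isomorphism class of such an irreducible principal series determines the ordered pair $(\chi_{j,1},\chi_{j,2})$ by \cite[Thm.~30]{bl} (equivalently by \eqref{induced}, after twisting so that the central character is trivial). If instead $\chi_{j,1}=\chi_{j,2}=\eta_j$, the exact sequences \eqref{defineSp}, \eqref{zero1} and \eqref{p-1} together with the classification show that the only irreducible subquotients of $\Indu{P}{G}{\psi_j}$ are $\eta_j\circ\det$ (type (i)) and $\Sp\otimes\eta_j\circ\det$ (type (ii)), each of which plainly determines $\eta_j$. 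Finally, types (i)--(ii) and type (iii) are disjoint families of irreducibles, so the two sub-cases do not mix: a principal-series irreducible subquotient of $\Indu{P}{G}{\psi_j}$ forces $\chi_{j,1}\neq\chi_{j,2}$. In every case $\psi_j$ is recovered from $\pi'$, forcing $\psi_1=\psi_2$.

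The only substantive ingredient is the uniqueness of the inducing character of an irreducible smooth principal series, which is the potential obstacle; however this is already contained in the classification cited above, so no further computation is needed beyond unpacking the case analysis.
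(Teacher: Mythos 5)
Your proof is correct and follows essentially the same route as the paper: base change to a splitting field $l$ where the $\tau_i$ become sums of Galois-conjugate characters, settle the character case via the Barthel--Livn\'e classification, and descend. The only cosmetic difference is that the paper cites \cite[\S7]{bl} as a black box for the absolutely irreducible (character) case and then takes $\Gal(l/k)$-invariants, whereas you unpack that case by the explicit case analysis on subquotients of principal series and conclude via equality of the Galois orbits.
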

\begin{proof} If $\tau_1$ and $\tau_2$ are absolutely irreducible then they are characters and the assertion follows from 
\cite[\S7]{bl}. In general, since $\tau_1$ and $\tau_2$ are finite dimensional, we may find a finite extension $l$ of $k$ such that 
$$
\tau_1\otimes_k l\cong \bigoplus_{\sigma\in \Gal(l/k)} \chi_1^{\sigma}, \quad \tau_2\otimes_k l\cong  \bigoplus_{\sigma\in \Gal(l/k)} \chi_2^{\sigma},$$
where $\chi_1$ and $\chi_2$ are smooth characters $T\rightarrow l^{\times}$. From the absolutely irreducible case, we deduce that
$\chi_1$ is Galois conjugate to $\chi_2$ and thus $\tau_1\otimes_k l\cong \tau_2 \otimes_k l$. Lemma \ref{abstractrat} implies 
$\tau_1\cong \tau_2$.
\end{proof}

\begin{lem}\label{strongchar} Let $\psi: T\rightarrow k^{\times}$ be a smooth character such that $\psi|_Z=\zeta$ and let 
$0\rightarrow \psi\rightarrow \epsilon \rightarrow \psi\rightarrow 0$ be a non-split extension in $\Mod^{\mathrm{sm}}_{T, \zeta}(k)$. If $p>2$ 
then $\dim \Ext^1_{T, \zeta}(\psi, \epsilon)=2$.
\end{lem}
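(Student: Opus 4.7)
The plan is to reduce by a twist to the case $\psi=\Eins$ and then exploit the Koszul resolution of $\Eins^{\vee}$ coming from Proposition \ref{projTistfree}.

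Tensoring by $\psi^{-1}$ gives an exact equivalence $\Mod^{\mathrm{sm}}_{T,\zeta}(k)\cong \Mod^{\mathrm{sm}}_{T,\Eins}(k)$, and the latter is the category of smooth representations of $T/Z\cong \Qp^{\times}$. Since all representations involved are of finite length, the relevant $\Ext^{1}$ agrees whether computed in $\Mod^{\mathrm{sm}}$ or in $\Mod^{\mathrm{l\,fin}}$, so it suffices to prove $\dim \Ext^{1}_{\Qp^{\times}}(\Eins,\epsilon')=2$, where $\epsilon':=\epsilon\otimes\psi^{-1}$ is the corresponding non-split self-extension of $\Eins$ over $\Qp^{\times}$.

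Next I invoke Proposition \ref{projTistfree} for $G=\Qp^{\times}$ and $\chi=\Eins$: the projective envelope $\wP$ of $\Eins^{\vee}$ in $\dualcat(\OO)$ is free of rank one over $\wE\cong \OO[[x,y]]$, and reduction modulo $\varpi$ gives the Koszul resolution used to prove Corollary \ref{extToruschar}, with $P=E=k[[x,y]]$. From this resolution the Yoneda algebra $\Ext^{\bullet}_{\dualcat(k)}(\Eins^{\vee},\Eins^{\vee})$ will be identified with the exterior algebra $\bigwedge^{\bullet}k^{2}$: commutativity of $E$ ensures the surjectivity hypothesis of Lemma \ref{alter}, so that $a\circ a=0$ for every $a\in \Ext^{1}$, and the dimensions $1,2,1$ in degrees $0,1,2$ from Corollary \ref{extToruschar} match those of the exterior algebra. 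In particular, for any nonzero $a\in\Ext^{1}(\Eins,\Eins)$ the Yoneda product $b\mapsto a\circ b\in \Ext^{2}(\Eins,\Eins)$ has kernel exactly $k\cdot a$.

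Finally, I apply $\Hom_{\Qp^{\times}}(\Eins,\cdot)$ to $0\rightarrow \Eins\rightarrow \epsilon'\rightarrow \Eins\rightarrow 0$. The connecting map $\delta_{0}\colon \Hom(\Eins,\Eins)\rightarrow \Ext^{1}(\Eins,\Eins)$ sends $\id$ to $a:=[\epsilon']\neq 0$ and is therefore injective, while $\delta_{1}\colon \Ext^{1}(\Eins,\Eins)\rightarrow \Ext^{2}(\Eins,\Eins)$ is the Yoneda product by $a$ and has $1$-dimensional kernel by the preceding paragraph. The resulting short exact sequence
\begin{equation*}
0\rightarrow \operatorname{coker}(\delta_{0})\rightarrow \Ext^{1}(\Eins,\epsilon')\rightarrow \ker(\delta_{1})\rightarrow 0
\end{equation*}
has both outer terms of dimension $1$, yielding $\dim \Ext^{1}(\Eins,\epsilon')=2$. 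The only step that is not formal bookkeeping is the identification of the Yoneda product with the wedge, and this is essentially immediate from the explicit Koszul resolution furnished by Proposition \ref{projTistfree}.
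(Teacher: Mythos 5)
Your proof is correct in substance, but it extracts the dimension by a different mechanism than the paper. After the same reduction (twist to $\psi=\Eins$, so that one works over $T/Z\cong\Qp^{\times}$, where Proposition \ref{projTistfree} gives $E\cong k[[x,y]]$), the paper does not touch $\Ext^2$ or the ring structure of the Yoneda algebra at all: it views $\epsilon^{\vee}$ as the quotient $P/(W+\mm^2)P$ attached to a line $W\subset\mm/\mm^2$ (say $W=kx$) and invokes Lemma \ref{AW}, so the answer is literally the dimension of $(W+\mm^2)/(W\mm+\mm^3)$, computed on the basis $\{x,y^2\}$ — a one-line ideal computation in $k[[x,y]]$. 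You instead run the long exact sequence for $\Hom(\Eins,-)$ applied to $0\to\Eins\to\epsilon'\to\Eins\to 0$ and use that the Yoneda pairing $\Ext^1\times\Ext^1\to\Ext^2$ is a nondegenerate alternating form, i.e.\ that $\Ext^{\bullet}_{k[[x,y]]}(k,k)$ is the exterior algebra on two generators; this is exactly the cup-product mechanism the paper abstracts in Lemmas \ref{alter} and \ref{equivalentcond} (which it deploys in the $\GL_2$ setting, not here). Both routes rest on Proposition \ref{projTistfree}; the paper's is shorter and needs nothing beyond $\Ext^1$, while yours makes the structural reason for the answer visible (the class $a=[\epsilon']$ spans the radical of multiplication by $a$) and generalizes more readily.

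Two points in your write-up deserve tightening. First, the surjectivity of $\Hom(E,k[x]/(x^3))\to\Hom(E,k[x]/(x^2))$ required in Lemma \ref{alter} is not a consequence of commutativity of $E$ (it fails, e.g., for $E=k[[x]]/(x^2)$); what you are using is that $E\cong k[[x,y]]$ is a power series ring, so every map to $k[x]/(x^2)$ lifts. Second, knowing $a\circ a=0$ for all $a$ together with the dimensions $1,2,1$ does not by itself force the pairing to be nonzero — if it were the zero pairing, $\ker(\delta_1)$ would be all of $\Ext^1$ and you would get $3$, not $2$. So the nondegeneracy really is the load-bearing step, as you acknowledge; it does hold, and follows from the standard identification of $\Ext^{\bullet}_{k[[x,y]]}(k,k)$ with $\bigwedge^{\bullet}k^2$ via the explicit Koszul resolution of Corollary \ref{extToruschar} (a short chain-map computation), but it should be stated as a fact being verified rather than as a consequence of the dimension count.
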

\begin{proof} After twisting we may assume that $\psi=\zeta=\Eins$. As we have seen in Proposition \ref{projTistfree} 
the hypotheses (H0)-(H5) are satisfied for $T/Z\cong \Qp^{\times}$ and $S=Q=\Eins^{\vee}$. Moreover, the
endomorphism ring $\wE$ of the projective envelope of $\Eins^{\vee}$ is isomorphic to $\OO[[x,y]]$ and 
$E=\wE\otimes_{\OO} k\cong k[[x,y]]$ with the maximal ideal $\mm=(x,y)$. The non-split extension $\epsilon^{\vee}$, 
defines a $1$-di\-men\-sio\-nal subspace $W$ of $\mm/\mm^2$, see \S\ref{critcomm}. Without loss of generality we may assume that 
the image of $x$ is a basis of $W$. Then the image of $\{x,  y^2\}$ in $(W+\mm^2)/(W\mm+\mm^3)$ is a basis and 
and the assertion follows from Lemma \ref{AW}.
\end{proof}
\subsection{Colmez's Montreal functor}\label{CMF}

Let $\Mod_{G,Z}^{\mathrm{fin}}(\OO)$ be the full subcategory of $\Mod_{G}^{\mathrm{sm}}(\OO)$ consisting of representations of finite 
length with a central character.  Let $\Mod^{\mathrm{fin}}_{\gal}(\OO)$ be the category of continuous $\gal$-representations on $\OO$-modules of finite length with the discrete topology, 
where  $\gal$ is the absolute Galois group of $\Qp$. Colmez in \cite{colmez} has defined an exact covariant 
functor $\VV: \Mod_{G,Z}^{\mathrm{fin}}(\OO)\rightarrow \Mod^{\mathrm{fin}}_{\gal}(\OO)$. 
If $\psi: \Qp^{\times}\rightarrow \OO^{\times}$ is a continuous character, then we may also consider it as a continuous character 
$\psi: \gal\rightarrow \OO^{\times}$ via \eqref{local_class} and for all  $\pi$ in $\Mod_{G,Z}^{\mathrm{fin}}(\OO)$ we have $\VV(\pi\otimes \psi\circ \det)\cong \VV(\pi)\otimes \psi$. Moreover, 
$ \VV(\Eins)=0$, $\VV(\Sp)=\omega$, $\VV(\Indu{P}{G}{(\chi_1\otimes \chi_2\omega^{-1})})\cong \chi_2$, 
$\VV(\pi(r,0))\cong \ind_{\GG_{\mathbb Q_{p^2}}}^{\gal} \omega_2^{r+1}$, where $\omega$ is the reduction of the cyclotomic character 
modulo $p$, $\omega_2:\GG_{\mathbb Q_{p^2}}\rightarrow k^{\times}$ is a character of the absolute Galois group of an unramified quadratic extension of $\Qp$, which via local class field theory corresponds 
to the character $\mathbb Q_{p^2}^{\times}\rightarrow k^{\times}$, $x\mapsto x|x| \mod{\varpi}$, see \cite[\S VII.4]{colmez}.
In particular, the representation $\VV(\pi(r,0))$ is absolutely irreducible. 

Let $\zeta: Z\rightarrow \OO^{\times}$ be a continuous character and let $\dualcat(\OO)$ be the category dual to 
$\Mod_{G, \zeta}^{\mathrm{lfin}}(\OO)$.  Let $\Rep_{\gal}(\OO)$ be the category of continuous $\gal$-representations on compact $\OO$-modules. We define a functor $\cV: \dualcat(\OO)\rightarrow \Rep_{\gal}(\OO)$ as follows. 
Let  $M$ be in $\dualcat(\OO)$, if it is of finite length then $\cV(M):=\VV(M^{\vee})^{\vee}(\varepsilon \zeta)$, 
where $\vee$ denotes the Pontryagin dual, $\varepsilon$ the cyclotomic character and we consider $\zeta$ as a character
of $\gal$ via the class field theory.
 In general, we may write 
$M\cong \underset{\longleftarrow}{\lim}\, M_i$, where the limit is taken over all quotients in $\dualcat(\OO)$ of finite length, we define
$\cV(M):=\underset{\longleftarrow}{\lim}\, \cV(M_i)$. Since we have dualized twice, the functor  $\cV$ is covariant. 
Moreover, it preserves exactness of short exact sequences of objects of finite length. Since all the maps $M_i\rightarrow M_j$ 
in the projective system are surjective with $M_i$ and $M_j$ of finite length, we deduce that the maps $\cV(M_i)\rightarrow \cV(M_j)$
are surjective. The exactness of projective limits in $\Rep_{\gal}(\OO)$ implies that the functor $\cV$ is exact. Let 
us note that with our normalization of $\cV$ we have:
$$ \cV(\pi^{\vee})\cong \VV(\pi), \quad \cV((\Indu{P}{G}{\chi_1\otimes \chi_2 \omega^{-1}})^{\vee})= \chi_1, \quad 
\cV((\Sp\otimes \eta\circ \det)^{\vee})=\eta,$$
where $\pi$ is a supersingular representation. 

Let $\Pi$ be an admissible unitary $L$-Banach space representation of $G$ with central character $\zeta$, and let $\Theta$ be an open bounded $G$-invariant lattice in $\Pi$. Then $\Theta/\varpi^n \Theta$ is admissible-smooth representation of $G$ for all $n\ge 1$, 
and hence locally finite. It follows from \S\ref{banach} that $\Theta^d$ is an object of $\dualcat(\OO)$. Since $\Theta^d$ is $\OO$-torsion free and $\cV$ is covariant and exact, $\cV(\Theta^d)$ is $\OO$-torsion free. We let $\cV(\Pi):=\cV(\Theta^d)\otimes_{\OO} L$. Since different 
open lattices in $\Pi$ are commensurable, $\cV(\Pi)$ does not depend on the choice of $\Theta$.

\begin{lem} Let $\Pi$ and $\Theta$ be as above. If $\Theta/\varpi \Theta$ is of finite length as a $G$-representation, then let $\VV(\Theta):=\underset{\longleftarrow}{\lim}\, \VV(\Theta/\varpi^n \Theta)$, and $\VV(\Pi):=\VV(\Theta)\otimes_{\OO} L$. 
Then $\cV(\Pi)\cong \VV(\Pi)^* (\varepsilon \zeta)$, where $*$ denotes $L$-linear dual. If $\cV(\Pi)$ is $2$-dimensional and $\det \cV(\Pi)=\varepsilon \zeta$, then $\cV(\Pi)\cong \VV(\Pi)$.
\end{lem}
\begin{proof} Since $\VV$ sends irreducible representations of $G$ to finite dimensional Galois representations, and $\Theta/\varpi \Theta$ is of finite length by assumption, 
we deduce that $\VV(\Theta/\varpi \Theta)$ is a finite dimensional $k$-vector space. It follows from \cite[2.2.2]{kisin} that $\VV(\Theta/\varpi^n \Theta)$ is a flat $\OO/(\varpi^n)$-module for all $n\ge 1$ and 
hence $\VV(\Theta)$ is a free $\OO$-module of finite rank, such that $\VV(\Theta)/\varpi^n \VV(\Theta)\cong \VV(\Theta/\varpi^n \Theta)$ for all $n\ge 1$. For every $n\ge 1$ we have 
$ \Theta^d/\varpi^n \Theta^d\cong (\Theta/\varpi^n \Theta)^{\vee}$, and hence $\cV(\Theta^d)/\varpi^n \cV(\Theta^d)\cong (\VV(\Theta)/\varpi^n \VV(\Theta))^{\vee} (\varepsilon \zeta)\cong \Hom_{\OO}(\VV(\Theta), \OO/(\varpi^n))(\varepsilon \zeta)$, for all $n\ge 1$, 
which implies that $\cV(\Theta^d)\cong \Hom_{\OO}(\VV(\Theta), \OO)(\varepsilon \zeta)$, and $\cV(\Pi)\cong \VV(\Pi)^*(\varepsilon\zeta)$. The last assertion follows from the fact that 
$\bigl (\begin{smallmatrix} 0 & 1 \\-1 & 0\end{smallmatrix} \bigr)$ conjugates $A=\bigl (\begin{smallmatrix} a & b \\ c & d\end{smallmatrix} \bigr)$ to $(\det A) A^{t-1}$, where $t$ denotes the transpose.
\end{proof}

We are going to adapt an argument of Kisin \cite[\S2]{kisin}, which uses Colmez's functor to relate the  deformation theory on the $\GL_2(\Qp)$-side to the deformation theory on the Galois side.

\begin{lem}\label{VT} Let $M$ be in $\dualcat(\OO)$, let $A$ be a noetherian $\OO$-subalgebra of 
$\End_{\dualcat(\OO)}(M)$ and let $\md$ be a finitely generated $A$-module, then there exists a natural isomorphism 
$\cV(\md\otimes_A M)\cong \md \otimes_A \cV(M)$.
\end{lem}
\begin{proof} This is identical to \cite[2.2.2]{kisin}, via \cite[1.2.7]{kisin2}. We recall the argument for the sake of completeness. 
 Since $A\subset \End_{\dualcat(\OO)}(M)$ and $\cV$ is a covariant functor, $\cV(M)$ is naturally an $A$-module.
 Since $\VV$ is exact and additive, we have an isomorphism   
 $\cV(A^n\otimes_A M)\cong \cV(M^{\oplus n})\cong \cV(M)^{\oplus n}\cong A^n \otimes_A \cV(M)$.  The isomorphism, exactness of $\cV$ and the snake lemma imply that for any finitely presented $A$-module $\md$ we have an
isomorphism $\md\otimes_A \cV(M)\cong \cV(\md\otimes_A M)$. We leave it as an exercise to the reader to check that 
the isomorphism does not depend on the presentation of $\md$ and is functorial. Since $A$ is noetherian any 
finitely generated $A$-module $\md$ is also finitely presented. 
\end{proof}

\begin{lem}\label{flattofree} Let $Q$ be an object of finite length in $\dualcat(k)$. Let $A\rightarrow A'$ be a morphism in $\mathfrak A$,  let $Q_A$ be 
a deformation of $Q$ to $A$ in the sense of Definition \ref{defiDef}. Then $\cV(Q_A)$ is a flat $A$-module  and
$$A'\otimes_A \cV(Q_A)\cong \cV(A'\otimes_A Q_A).$$ 
In particular, $k\otimes_A \cV(Q_A)\cong \cV(Q)$ and $\cV(Q_A)$ is a finite free $A$-module of rank $\dim_k \cV(Q)$.
\end{lem}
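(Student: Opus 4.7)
The plan is to proceed by induction on the length $\ell_{\OO}(A)$, using that $\cV$ is an exact covariant functor on $\dualcat(\OO)$ which preserves finite length, and that $\cV(Q)$ is finite dimensional over $k$ (since $Q$ has finite length in $\dualcat(k)$ and $\VV$ sends irreducibles to finite dimensional Galois representations). Set $d := \dim_k \cV(Q)$.

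The first step is to determine the $k$-dimension of $\cV(Q_A)$. Lemma \ref{grading} gives a filtration $Q_A \supset \mm_A Q_A \supset \mm_A^2 Q_A \supset \cdots$ with graded pieces isomorphic to $Q^{\oplus d_i}$, where $d_i = \dim_k \mm_A^i/\mm_A^{i+1}$. Applying the exact functor $\cV$ yields a filtration on $\cV(Q_A)$ with graded pieces $\cV(Q)^{\oplus d_i}$, so $\dim_k \cV(Q_A) = d \cdot \ell_{\OO}(A)$. In particular $\cV(Q_A)$ is a finitely generated $A$-module (via the natural map $A \to \End(Q_A) \to \End(\cV(Q_A))$).

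Next I identify the reduction mod $\mm_A$. The submodule $\mm_A Q_A \subset Q_A$ is the image of the multiplication map $\bigoplus_i Q_A \to Q_A$, $(v_i) \mapsto \sum m_i v_i$, where $m_1,\dots,m_s$ generate $\mm_A$ (this uses only that $\mm_A$ is finitely generated). Applying $\cV$ and using functoriality of the $A$-action, the image is exactly $\mm_A \cV(Q_A)$; hence the surjection $Q_A \twoheadrightarrow Q_A/\mm_A Q_A \cong Q$ gives $\cV(Q_A)/\mm_A\cV(Q_A) \cong \cV(Q)$. Lifting a $k$-basis of $\cV(Q)$ to elements $v_1,\dots,v_d$ of $\cV(Q_A)$ defines an $A$-linear map $A^d \to \cV(Q_A)$. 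By Nakayama (applicable since $A$ is local and $\cV(Q_A)$ is finitely generated), this map is surjective; comparing $k$-dimensions ($d \cdot \ell_\OO(A)$ on each side) shows it is an isomorphism. Thus $\cV(Q_A)$ is free, hence flat, of rank $d$ over $A$.

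For the base change statement, the surjection $Q_A \twoheadrightarrow A' \otimes_A Q_A$ (whose target is a deformation of $Q$ to $A'$) is a morphism in $\dualcat(\OO)$; applying $\cV$ and observing that the resulting surjection $\cV(Q_A) \twoheadrightarrow \cV(A' \otimes_A Q_A)$ kills the kernel of $A \to A'$ yields a canonical $A'$-linear map
\[
\phi : A' \otimes_A \cV(Q_A) \longrightarrow \cV(A' \otimes_A Q_A).
\]
Both sides are free $A'$-modules of rank $d$ by the freeness established above (applied to $A$ and to $A'$), and reducing modulo $\mm_{A'}$ both collapse to $\cV(Q)$ with $\phi$ inducing the identity. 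Hence $\phi$ is surjective by Nakayama and then an isomorphism by rank. Taking $A' = k$ gives $k \otimes_A \cV(Q_A) \cong \cV(Q)$. The main delicate point is verifying that $\cV(\mm_A Q_A) = \mm_A \cV(Q_A)$ and, more generally, that $\cV$ respects the $A$-module structures introduced by functoriality; this is the step where one must handle the possibly non-commutative coefficient ring $A$ carefully, but it reduces to the exactness of $\cV$ together with the identification of $\mm_A Q_A$ as the image of $\bigoplus_i Q_A \to Q_A$.
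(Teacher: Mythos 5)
Your route is genuinely different from the paper's. The paper first establishes, for \emph{every} finitely presented $A$-module $M$, a natural isomorphism $M\otimes_A\cV(Q_A)\cong\cV(M\otimes_A Q_A)$: choose a presentation $A^m\to A^n\to M\to 0$, use $\cV(Q_A^{\oplus n})\cong\cV(Q_A)^{\oplus n}$ and the exactness of $\cV$, and compare. Flatness of $\cV(Q_A)$ is then immediate from flatness of $Q_A$ plus exactness of $\cV$, freeness comes from Nakayama, and the base-change statement is just the special case $M=A'$. You instead prove freeness directly by a length count along the $\mm_A$-adic filtration of Lemma \ref{grading} and then treat base change separately. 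That first half is correct: the step you flag as delicate, $\cV(\mm_A Q_A)=\mm_A\cV(Q_A)$, does go through by exactness (take the $m_i$ to generate $\mm_A$ as a \emph{right} ideal, so that $\sum_i m_iQ_A$ is an $A$-submodule equal to $\mm_A Q_A$), and the Nakayama-plus-length comparison is sound.

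The base-change step, however, contains a genuine error: there is in general no surjection $Q_A\twoheadrightarrow A'\otimes_A Q_A$. The natural map $v\mapsto 1\otimes v$ is surjective only when $A\to A'$ is surjective (it already fails for $k\to k[x]/(x^2)$), so your construction of $\phi$ via "applying $\cV$ to this surjection and killing the kernel of $A\to A'$" does not make sense as written. The repair is standard but must be said: writing $A'$ as a quotient of $A^{\oplus n}$ shows $A'\otimes_A Q_A$ is a quotient of $Q_A^{\oplus n}$, hence an object of $\dualcat(\OO)$ carrying an $A'$-action, and it is $A'$-flat with $k\otimes_{A'}(A'\otimes_A Q_A)\cong k\otimes_A Q_A\cong Q$, i.e.\ it is a deformation of $Q$ to $A'$ --- a fact you assert parenthetically but need before you may apply your freeness argument over $A'$. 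Then $v\mapsto 1\otimes v$ is an $A$-linear morphism in $\dualcat(\OO)$, and applying $\cV$ and extending scalars yields the canonical $A'$-linear map $\phi\colon A'\otimes_A\cV(Q_A)\to\cV(A'\otimes_A Q_A)$; with $\phi$ so constructed, your reduction mod $\mm_{A'}$, Nakayama, and rank comparison do finish the proof. The paper's presentation argument avoids this issue entirely and gives the compatibility for all finitely generated $M$ at once.
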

\begin{proof} By definition of a deformation, $Q_A$ is $A$-flat. Hence the functor $\md\mapsto \md \otimes_A Q_A$ is exact. Since 
$\cV$ is exact, using Lemma \ref{VT} we deduce that the functor $\md \mapsto \md\otimes_A \cV(Q_A)$ is exact, so that $\cV(Q_A)$ is 
$A$-flat. The $A$-linear map $Q_A \rightarrow A'\otimes_A Q_A$, $v\mapsto 1\otimes v$ induces an $A$-linear map
$\cV(Q_A)\rightarrow \cV(A'\otimes_A Q_A)$. Since $\cV$ is  a functor, $A'$ acts on  $\cV(A'\otimes_A Q_A)$ and by the universality of the tensor product we obtain an 
$A'$-linear  map $A'\otimes_A \cV(Q_A)\rightarrow \cV(A'\otimes_A Q_A)$. This map is an isomorphism, since it follows from Lemma \ref{VT} that it is an isomorphism of $A$-modules.
Since $Q$ is of finite length and irreducible subquotients are mapped to finite dimensional $k$-vector spaces we deduce that 
$\cV(Q)$ is finite dimensional, as $k\otimes_A \cV(Q_A)\cong \cV(k\otimes_A Q_A)\cong \cV(Q)$, by Lemma \ref{VT},  we deduce the last assertion  from Nakayama's lemma
for $A$.
\end{proof} 

\begin{cor}\label{ftof1} Let $Q$ be an object of finite length in $\dualcat(k)$. The functor $\cV$ induces 
natural transformations between the deformation functors $\Def_Q\rightarrow \Def_{\cV(Q)}$, $\Def^{ab}_Q\rightarrow \Def^{ab}_{\cV(Q)}$, 
$Q_A \mapsto \cV(Q_A)$.
\end{cor}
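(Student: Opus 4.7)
The plan is to assemble the statement directly from Lemma \ref{flattofree}, which already does the real work. The corollary splits into three verifications: (a) defining a map $\Def_Q(A)\to\Def_{\cV(Q)}(A)$ on isomorphism classes for each $A$ in $\mathfrak A$; (b) checking naturality in $A$; and (c) descending to the commutative case $\Def^{ab}_Q\to\Def^{ab}_{\cV(Q)}$.

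For (a), given a deformation $(Q_A,\alpha)$ of $Q$ to $A$, the $A$-action on $Q_A$ is by definition a morphism $A\to \End_{\dualcat(\OO)}(Q_A)$; applying the functor $\cV$ and postcomposing gives a morphism $A\to \End(\cV(Q_A))$ whose image consists of $\gal$-equivariant endomorphisms, so $\cV(Q_A)$ is naturally an $A$-module with continuous $\gal$-action commuting with the $A$-action. Flatness of $\cV(Q_A)$ over $A$ is exactly the first conclusion of Lemma \ref{flattofree}. Since $A$ is artinian, $k$ is a finitely presented $A$-module, so $k\wtimes_A Q_A \cong k\otimes_A Q_A$, and the isomorphism $\alpha\colon k\wtimes_A Q_A\cong Q$ then gives, via $\cV$ and the second conclusion of Lemma \ref{flattofree}, an isomorphism
\[
k\otimes_A \cV(Q_A)\ \cong\ \cV(k\otimes_A Q_A)\ \xrightarrow{\cV(\alpha)}\ \cV(Q),
\]
which supplies the required trivialization $\alpha_{\cV}$. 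Well-definedness on isomorphism classes is then automatic: any isomorphism $\beta\colon (Q_A,\alpha)\xrightarrow{\sim}(Q_A',\alpha')$ of deformations becomes an $A$-linear, $\gal$-equivariant isomorphism $\cV(\beta)$ intertwining $\alpha_{\cV}$ and $\alpha'_{\cV}$ because $\cV$ is a covariant functor and the $A$-module structure is defined functorially.

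For (b), we must check that for every morphism $\varphi\colon A\to A'$ in $\mathfrak A$ the square
\[
\xymatrix{ \Def_Q(A) \ar[r]\ar[d] & \Def_{\cV(Q)}(A)\ar[d]\\ \Def_Q(A')\ar[r] & \Def_{\cV(Q)}(A')}
\]
commutes, where the vertical arrows are base change $Q_A\mapsto A'\wtimes_A Q_A$ and $N\mapsto A'\otimes_A N$. Again, since $A'$ is a finitely presented (even finite) $A$-module, $A'\wtimes_A Q_A \cong A'\otimes_A Q_A$, so this commutativity is precisely the natural base change isomorphism $\cV(A'\otimes_A Q_A)\cong A'\otimes_A \cV(Q_A)$ established in Lemma \ref{flattofree}, and one checks directly that this identification is compatible with the trivializations $\alpha_{\cV}$ on both sides.

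For (c), the restriction to $\mathfrak A^{ab}$ is immediate: if $A$ is commutative, then the induced $A$-action on $\cV(Q_A)$ factors through $A$ itself (rather than through some non-commutative endomorphism ring), so $\cV(Q_A)$ is automatically a commutative deformation of $\cV(Q)$, and the natural transformation of (a)--(b) restricts to $\Def^{ab}_Q\to \Def^{ab}_{\cV(Q)}$. I do not expect any real obstacle here; the content sits entirely in Lemma \ref{flattofree}, and this corollary is merely a packaging of that lemma in the language of deformation functors. The only point requiring a moment's care is the passage from $\wtimes_A$ to $\otimes_A$, which is harmless because all rings $A$ in $\mathfrak A$ are finite.
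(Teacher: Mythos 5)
Your proposal is correct and takes essentially the same route as the paper: the paper states this corollary without proof, treating it as an immediate consequence of Lemma \ref{flattofree}, which is precisely how you argue (flatness and base-change compatibility of $\cV(Q_A)$ from that lemma, functoriality of $\cV$ for well-definedness and naturality, and trivial restriction to $\mathfrak A^{ab}$). Your added care about replacing $\wtimes_A$ by $\otimes_A$ over the finite rings $A$ is consistent with the paper's own remark preceding Lemma \ref{grading}.
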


\begin{lem}\label{VT2} Let $\wP\twoheadrightarrow S$ be a projective envelope of an absolutely irreducible object $S$  in $\dualcat(\OO)$ and let $\wE=\End_{\dualcat(\OO)}(\wP)$.
For every compact right $\wE$-module $\md$ there exists a natural isomorphism $\cV(\md\wtimes_{\wE} \wP)\cong \md\wtimes_{\wE} \cV(\wP)$.
\end{lem}
\begin{proof} Let $\{\md_i\}_{i\in I}$ be a basis of open neighbourhoods  of $0$ in $\md$, consisting of right $\wE$-modules, and let $\mathfrak a_i$ be the $\wE$-annihilator of of $\md/\md_i$. 
Since $\md_i$ is open in $\md$, the quotient is an $ \wE$-module of finite length, and the quotient topology on $\md/\md_i$ is discrete. In particular, $\mathfrak a_i$ is open. Moreover, since $\wE$ is a local ring with residue field $k$, we deduce 
that $\wE/\mathfrak a_i$ is a finite $\OO$-module, which implies that it is noetherian. Let $\wP_i$ be the closure of $\mathfrak a_i \wP$ in $\wP$, so that $\wP/\wP_i\cong \wE/\mathfrak a_i \wtimes_{\wE} \wP$. Then 
$\cV(\md/\md_i \wtimes_{\wE} \wP)\cong \cV(\md/\md_i \otimes_{\wE/\mathfrak a_i} \wP/\wP_i)\cong \md/\md_i \otimes_{\wE/\mathfrak a_i} \cV(\wP/\wP_i)\cong \md/\md_i \wtimes_{\wE}\cV(\wP)$, 
where the second isomorphism is given by Lemma \ref{VT}. Since $\cV$ and $\wtimes$ commute with projective limits and $\md\cong \underset{\longleftarrow}{\lim} \,{\md/\md_i}$, by passing to the limit we obtain
$\cV(\md\wtimes_{\wE} \wP)\cong \md\wtimes_{\wE} \cV(\wP)$.
\end{proof}

\begin{cor}\label{ftof3} Let $\wP\twoheadrightarrow S$ be a projective envelope of an absolutely irreducible object $S$  in $\dualcat(\OO)$ and let $\wE=\End_{\dualcat(\OO)}(\wP)$. 
 Let  $\Pi$ be an irreducible admissible unitary $L$-Banach space representation of $G$ with a central character 
$\zeta$ and the reduction $\overline{\Pi}$ of finite length. Suppose that $S^{\vee}$ is a subquotient of $\overline{\Pi}$ 
and let $\Xi$ be an open bounded $G$-invariant lattice in $\Pi$ such that the natural map 
$\Hom_{\dualcat(\OO)}(\wP, \Xi^d)\wtimes_{\wE} \wP\rightarrow \Xi^d$ is surjective, see  Proposition \ref{modulequotientnew} (iii). 
Then we have a surjection 
$$\Hom_{\dualcat(\OO)}(\wP, \Xi^d)\wtimes_{\wE} \cV(\wP)\cong \cV(\Hom_{\dualcat(\OO)}(\wP, \Xi^d)\wtimes_{\wE} \wP)\twoheadrightarrow\cV(\Xi^d).$$
\end{cor}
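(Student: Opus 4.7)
The plan is to split the statement into two independent parts: the surjection $\cV(\md\wtimes_{\wE}\wP)\twoheadrightarrow \cV(\Xi^d)$ and the isomorphism $\md\wtimes_{\wE}\cV(\wP)\cong \cV(\md\wtimes_{\wE}\wP)$, where $\md:=\Hom_{\dualcat(\OO)}(\wP,\Xi^d)$.

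For the surjection, this is immediate. By Proposition \ref{modulequotientnew}(iii) (applied after picking such a $\Xi$ as in the hypotheses), the natural map $\md\wtimes_{\wE}\wP\twoheadrightarrow \Xi^d$ is a surjection in $\dualcat(\OO)$. Both source and target have finite length filtrations by objects in $\dualcat(\OO)$, so applying the exact functor $\cV$ (which is exact term by term on such systems, as noted at the start of \S\ref{CMF}) yields the desired surjection at the Galois side.

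For the isomorphism, the main input is Lemma \ref{flattofree}, which gives compatibility of $\cV$ with extension of scalars for deformations over rings $A\in\Aa$. First I would observe that $\md$ is a finitely generated $\OO$-module: since $\Pi$ is admissible and $\overline{\Pi}$ is of finite length, Lemma \ref{mult=rank0} identifies the $\OO$-rank of $\md$ with $d_0\cdot m$, where $m$ is the multiplicity of $S^{\vee}$ in $\overline\Pi$ and $d_0=\dim_k\End_{\dualcat(\OO)}(S)$. In particular $\md$ is complete, and $\md$ is finitely generated — hence, over each finite ring $\wE/\wm^n\in\Aa$, finitely presented — as an $\wE$-module. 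At each finite level $\wP/\wm^n\wP$ is a deformation of $Q$ to $\wE/\wm^n$, and the proof of Lemma \ref{flattofree} (finite presentation plus exactness of $\cV$) yields
\begin{equation*}
(\md/\wm^n\md)\otimes_{\wE/\wm^n}\cV(\wP/\wm^n\wP)\;\cong\;\cV\bigl((\md/\wm^n\md)\otimes_{\wE/\wm^n}(\wP/\wm^n\wP)\bigr).
\end{equation*}

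The final step is to pass to the inverse limit over $n$. On the left, Corollary \ref{ftof2} gives $\cV(\wP)\cong \wE^{\oplus d}$ as $\wE$-modules (with $d=\dim\cV(Q)$), so $\cV(\wP/\wm^n\wP)\cong(\wE/\wm^n)^{\oplus d}$ and the system reduces to $(\md/\wm^n\md)^{\oplus d}$; its limit is $\md^{\oplus d}\cong\md\wtimes_{\wE}\cV(\wP)$. On the right, $\cV$ commutes with the inverse limit by the very definition of $\cV$ on $\dualcat(\OO)$, and the inverse system $(\md/\wm^n\md)\otimes_{\wE/\wm^n}(\wP/\wm^n\wP)$ has inverse limit $\md\wtimes_{\wE}\wP$ by construction of $\wtimes_{\wE}$. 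Splicing gives the desired isomorphism, and combining with the surjection above finishes the proof. The main technical obstacle is verifying that the two inverse limits on either side really compute what one wants — in particular, one has to check cofinality of the finite quotients of $\md$ used here (namely $\md/\wm^n\md$) with respect to the basis of neighbourhoods entering the definition of $\wtimes_{\wE}$; this is where the finite generation of $\md$ as an $\OO$-module (hence its completeness for the $\wm$-adic topology induced from $\wE$) is used.
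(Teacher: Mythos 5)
Your proof is correct and follows essentially the same route as the paper's (very terse) proof, which also reduces to the combination of Lemma \ref{mult=rank0} (to get finite $\OO$-rank of $\md$), Lemma \ref{flattofree} (for the compatibility isomorphism), and exactness of $\cV$ (for the surjection). You have simply unpacked what the citation of Lemma \ref{flattofree} actually requires — applying it over the finite quotients $\wE/\wm^n$ and passing to the limit, with the cofinality of $\{\wm^n\md\}$ among open submodules guaranteed by finite generation of $\md$ over $\OO$ — which is a faithful expansion of the intended argument rather than a different one.
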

\begin{proof} We note that since $\overline{\Pi}$ is of finite length, $\Hom_{\dualcat(\OO)}(\wP, \Xi^d)$ is a finitely generated $\OO$-module
by Lemma \ref{mult=rank0}. The isomorphism follows from Lemma \ref{VT2}, the surjection from the exactness of $\cV$.
\end{proof}

\begin{cor}\label{ftof2} Let $\wP\twoheadrightarrow S$ be a projective envelope of an absolutely irreducible object $S$  in $\dualcat(\OO)$, let $\wE=\End_{\dualcat(\OO)}(\wP)$ and let 
$\wTor^i_{\wE}(\ast, \wP)$ be the $i$-th derived functor of $\wtimes_{\wE} \wP$ in the category of compact right $\wE$-modules. If 
$\SL_2(\Qp)$ acts trivially on $\wTor^1_{\wE}(k, \wP)$ then the functor $\md\mapsto \md\wtimes_{\wE} \cV(\wP)$ is exact. Moreover, if 
$k\wtimes_{\wE}\wP$ is of finite length then  $\cV(\wP)$ is a free $\wE$-module of rank equal to $\dim_k \cV(k\wtimes_{\wE}\wP)$. 
\end{cor} 
\begin{proof} Since every compact $\wE$-module can be written as a projective limit of $\wE$-modules of finite length, and $\wtimes$ commutes with projective limits, which 
are exact in the category of continuous $\gal$-representations on compact $\OO$-modules, it is enough to show that the functor $\md\mapsto \md\wtimes_{\wE} \cV(\wP)$
maps short exact sequences of continuous $\wE$-modules of finite length to short exact sequences.

If $\md$ is a continuous  $\wE$-module of finite length then by devissage we deduce that $\SL_2(\Qp)$ acts trivially on $\wTor^1_{\wE}(\md, \wP)$. 
Since $\cV$ kills every irreducible on which $\SL_2(\Qp)$ acts trivially, we obtain $\cV(\wTor^1_{\wE}(\md, \wP))=0$ for all finite length modules $\md$.  Using Lemma 
\ref{VT} we deduce that the functor $\md\mapsto \md\wtimes_{\wE} \cV(\wP)$ maps short exact sequence of continuous $\wE$-modules of finite length to 
short exact sequences. 

If $k\wtimes_{\wE} \wP$ is of finite length then $\cV(k\wtimes_{\wE} \wP)$ is a finite dimensional $k$-vector space. Since 
$k\wtimes_{\wE} \cV(\wP)\cong \cV(k\wtimes_{\wE} \wP)$ by Lemma \ref{VT}, we deduce from Nakayama's lemma that 
$\cV(\wP)$ is a free $\wE$-module of rank equal to $\dim_k \cV(k\wtimes_{\wE}\wP)$.
\end{proof}

 Let $\wP\twoheadrightarrow S$ be a projective envelope of an absolutely irreducible object $S$  in $\dualcat(\OO)$, let $\wE=\End_{\dualcat(\OO)}(\wP)$. From now on we assume 
 the existence of $Q$ in $\dualcat(k)$ of finite length, satisfying the hypothesis (H1)-(H5) in $\dualcat(k)$. Since (H0) holds in $\dualcat(\OO)$ by Corollary \ref{projaretfree}, the hypotheses (H1)-(H5) hold in $\dualcat(\OO)$ by Proposition \ref{H00}, and so $\wP$ is a flat $\wE$-module by 
 Corollary \ref{PisEflat}.

 Since $Q$ is of finite length, $\cV(Q)$ is a continuous representation of 
 $\gal$ on a finite dimensional $k$-vector space. Let $\Def^{ab, \psi}_{\cV(Q)}$ be a subfunctor of $\Def^{ab}_{\cV(Q)}$ parameterising deformations with determinant equal to 
$\psi:=\varepsilon\zeta$, 
where $\varepsilon$ is the cyclotomic character. If $\Def^{ab}_{\cV(Q)}$ is pro-representable  then so is  $\Def^{ab, \psi}_{\cV(Q)}$, 
see \cite[\S24]{mazur}, and 
we denote the 
corresponding ring by $R^{\psi}$, and the universal deformation of $\cV(Q)$ with determinant equal to $\psi$ by $\rho^{un, \psi}$. Let $\mm$ be a maximal ideal of  $R^{\psi}[1/p]$, then the residue field $\kappa(\mm)$ is a finite totally ramified extension of $L$, and the image of $R^{\psi}$ in $\kappa(\mm)$ is equal to the 
ring of integers $\OO_{\kappa(\mm)}$. 
Let $\mathfrak a$ be the intersection of 
those maximal ideals $\mm$ of $R^{\psi}[1/p]$ for which $\kappa(\mm)\otimes_{R^{\psi}} \rho^{un, \psi}$ is an absolutely irreducible $\kappa(\mm)[\gal]$-module and let $R'$ be the image of $R^{\psi}$ in 
$R^{\psi}[1/p]/\mathfrak a$.

\begin{prop}\label{ftof4} Assume that $S^{\vee}$ is not a character, so that $\cV(S)\neq 0$.
Suppose that the following hold:
\begin{itemize}
\item[(i)] $\End_{\gal}(\cV(Q))=k$; 
\item[(ii)] $\VV$ induces an injection, 
$$\Ext^1_{G,\zeta}(Q^{\vee}, Q^{\vee})\hookrightarrow \Ext^1_{k[\gal]}(\VV(Q^{\vee}), \VV(Q^{\vee}));$$
\item[(iii)] for every irreducible representation $\rho$ of $\gal$ defined over some finite totally ramified extension $L'$ of $L$
and satisfying $\det \rho=\psi$ and  $\overline{\rho}\cong \cV(Q)^{ss}$ 
there exists an open bounded $G$-invariant lattice $\Xi$ in a unitary admissible $L'$-Banach space representation $\Pi$ of $G$ such that the following hold:
\begin{itemize}
\item[a)] $\zeta$ is the central character of $\Pi$;
\item[b)]  $\overline{\Pi}$ contains $S^{\vee}$ with multiplicity $1$;
\item[c)] $\rho\cong \cV(\Xi^d)\otimes_{\OO} L$. 
\end{itemize}
\end{itemize}
Then there exists a natural surjection $\wE^{ab}\twoheadrightarrow R'$.
\end{prop}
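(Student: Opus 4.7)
The plan is to exhibit $\wE^{ab} \twoheadrightarrow R'$ as the factorization of a naturally defined surjection $\varphi: R^{\psi} \twoheadrightarrow \wE^{ab}$ through $R^{\psi}/\ker(R^{\psi}\twoheadrightarrow R')$. By Corollary \ref{ftof1}, $\cV$ furnishes a natural transformation $\Def^{ab}_Q \to \Def^{ab,\psi}_{\cV(Q)}$—the determinant of $\cV(Q_A)$ is forced to be $\psi=\varepsilon\zeta$ because $Q_A$ has central character $\zeta$. Under Corollary \ref{representab} and hypothesis (i) these functors are represented by $\wE^{ab}$ and $R^{\psi}$, so Yoneda produces a ring map $\varphi: R^{\psi} \to \wE^{ab}$ classifying the deformation $\cV(\wP)\otimes_{\wE}\wE^{ab}$ of $\cV(Q)$, which is free of rank $2$ over $\wE^{ab}$ by Corollary \ref{ftof2}.

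To obtain surjectivity of $\varphi$ I would inspect the induced tangent map. By Corollary \ref{thesame} and Lemma \ref{tangentspace}, the tangent space of $\wE^{ab}$ is canonically $\Ext^1_{\dualcat(k)}(Q,Q)\cong \Ext^1_{G,\zeta}(Q^{\vee},Q^{\vee})$, while that of $R^{\psi}$ sits as the fixed-determinant subspace of $\Ext^1_{\gal}(\cV(Q),\cV(Q))$, and the map between them induced by $\varphi$ is precisely the one coming from $\cV$ (equivalently, from $\VV$). Hypothesis (ii) asserts this map injects into the ambient Galois $\Ext^1$, hence a fortiori into the fixed-determinant subspace; dualising, the cotangent map surjects. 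Since both $\wE^{ab}$ and $R^{\psi}$ are complete local $\OO$-algebras with residue field $k$ and finite-dimensional tangent space (the former via (H4)), complete Nakayama yields that $\varphi$ is surjective.

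It remains to show $\ker\varphi\subseteq I:=\ker(R^{\psi}\twoheadrightarrow R')$. For each maximal ideal $\mm\subset R^{\psi}[1/p]$ with residue field $M$ and irreducible $\rho_{\mm}$, hypothesis (iii) furnishes a lattice $\Xi$ in an admissible unitary $M$-Banach space representation $\Pi$ with $\cV(\Xi^d)\otimes_{\OO}L\cong\rho_{\mm}$ and with $S^{\vee}$ occurring in $\overline{\Pi}$ with multiplicity one. By Proposition \ref{modulequotientnew} and Lemma \ref{mult=rank0}, the right $\wE$-module $\md:=\Hom_{\dualcat(\OO)}(\wP,\Xi^d)$ is free of rank one over $\OO_M$; commutativity of $\OO_M$ forces the $\wE$-action to factor through a local $\OO$-algebra homomorphism $\psi_{\mm}:\wE^{ab}\to\OO_M$. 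By Lemma \ref{flattofree} and Corollary \ref{ftof3} the push-forward of the universal Galois deformation along $\psi_{\mm}$ is $\md\wtimes_{\wE}\cV(\wP)\cong\cV(\md\wtimes_{\wE}\wP)$, which surjects onto $\cV(\Xi^d)$; both sides are free of rank two over $\OO_M$ after inverting $p$, so the surjection is a generic isomorphism with generic fibre $\rho_{\mm}$. Hence the composition $R^{\psi}\xrightarrow{\varphi}\wE^{ab}\xrightarrow{\psi_{\mm}}\OO_M\hookrightarrow M$ is the tautological map classifying $\rho_{\mm}$, which factors through $R^{\psi}/(\mm\cap R^{\psi})$. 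For $r\in\ker\varphi$, this forces $r\in\mm\cap R^{\psi}$ for every such $\mm$, whence $r\in\bigcap_{\mm}(\mm\cap R^{\psi})=\mathfrak a\cap R^{\psi}=I$, completing the argument. The principal obstacle is precisely this identification of the push-forward with $\cV(\Xi^d)$ after inverting $p$: one must track the several functorial manipulations relating $\wP$, $\md\wtimes_{\wE}\wP$ and $\Xi^d$ under $\cV$, and verify by rank considerations that the natural surjection $\md\wtimes_{\wE}\cV(\wP)\twoheadrightarrow\cV(\Xi^d)$ does become an isomorphism generically.
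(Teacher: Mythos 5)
Your overall structure matches the paper's proof, but there are two genuine gaps.

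\textbf{First, the determinant claim.} You assert at the outset that ``the determinant of $\cV(Q_A)$ is forced to be $\psi=\varepsilon\zeta$ because $Q_A$ has central character $\zeta$,'' and on this basis you construct the map out of $R^{\psi}$ directly. This is not an a priori fact: Corollary \ref{ftof1} only yields a natural transformation $\Def^{ab}_Q \to \Def^{ab}_{\cV(Q)}$, with no determinant constraint, and nothing proven up to this point in the paper forces $\det\cV(Q_A)$ to be the composite $\gal\to\OO^\times\to A^\times$. Indeed this compatibility is a \emph{consequence} of the theorems the proposition is used to prove (e.g.\ Propositions \ref{superdone}, \ref{kis}), not an ingredient. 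The paper sidesteps the issue by producing $\varphi\colon R\twoheadrightarrow\wE^{ab}$ from the unrestricted ring $R$ pro-representing $\Def^{ab}_{\cV(Q)}$, and only factoring through the quotient $R'$ of $R^\psi$ at the end, via the pointwise argument at each $\mm_\rho$. If you wish to keep $R^\psi$ from the start you would have to supply an independent proof that $\cV$ sends $\Def_Q^{ab}$ into the fixed-determinant subfunctor, which is a nontrivial statement about Colmez's functor.

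\textbf{Second, the choice of lattice.} Your use of Corollary \ref{ftof3} presupposes a lattice $\Xi$ for which the evaluation map $\md\wtimes_{\wE}\wP\to\Xi^d$ is surjective; this is supplied by Proposition \ref{modulequotientnew}(iii), whose hypothesis requires $\Pi$ irreducible, and even then $\Xi$ must be \emph{chosen} (as $\phi(\wP)$), not taken arbitrarily. Hypothesis (iii) of the proposition hands you some $\Xi$ with no such guarantee. The paper handles this by first reducing to $\Pi$ irreducible (using exactness of $\cV$ and the assumption that $S^{\vee}$ has multiplicity one), noting that $\cV(\Xi^d)\otimes_{\OO}L$ is lattice-independent since all open bounded lattices are commensurable, and only then replacing $\Xi$ by the favorable choice. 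You skip both of these steps, so the surjection you invoke is not available.

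The remaining pieces of your argument — surjectivity from (ii) via tangent spaces, the rank-one computation over $\OO_M$ and the resulting factorization $\wE\to\wE^{ab}\to\OO_M$, and the comparison of deformations after inverting $p$ — are sound and match the paper; in particular your direct treatment of the $\OO_M$-structure on $\md$ is a valid alternative to the paper's reduction of the kernel containment to the case $M=L$.
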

 \begin{proof} We note that in this Proposition we allow only commutative coefficients for our deformations. In particular, 
all the rings representing different functors are commutative. 
Corollary \ref{ftof1} gives a natural transformation of functors $\Def_Q^{ab}\rightarrow \Def_{\cV(Q)}^{ab}$.
Since both functors are pro-representable we obtain a map $\varphi: R\rightarrow \wE^{ab}$, where $R$ is the ring pro-representing
$\Def_{\cV(Q)}^{ab}$. Now (ii) is equivalent to 
the assertion that $\cV$ induces an injection 
$$ \Def^{ab}_{Q}(k[\epsilon])\cong \Ext^1_{\dualcat(k)}(Q, Q)\hookrightarrow \Ext^1_{k[\gal]}(\cV(Q),\cV(Q))\cong \Def^{ab}_{\cV(Q)}(k[\epsilon]),$$
which is equivalent to the assertion that $\varphi$ induces a surjection 
$$\mm_R/(\mm_R^2+\varpi R)\twoheadrightarrow 
\wm_{ab}/(\wm_{ab}^2+\varpi \wE^{ab}).$$ 
Since both rings are complete we deduce that $\varphi: R\twoheadrightarrow \wE^{ab}$ is surjective.

Let $\mm$ be a maximal ideal of $R^{\psi}[1/p]$, such that $\kappa(\mm)\otimes_{R^{\psi}} \rho^{un, \psi}$ is absolutely irreducible. We claim that there 
exists a map of $\OO$-algebras $x: \wE\rightarrow \OO_{\kappa(\mm)}$, such that $\kappa(\mm)\otimes_{\wE} \cV(\wP)$ is isomorphic to 
$\kappa(\mm)\otimes_{R^{\psi}} \rho^{un, \psi}$ as a $\kappa(\mm)$-representation of $\gal$. Since $\cV(Q)$ has only scalar endomorphisms by (i), there exists a unique $\gal$-invariant 
$\OO_{\kappa(\mm)}$-lattice in $\kappa(\mm)\otimes_{\wE} \cV(\wP)$, which reduces to $\cV(Q)$ modulo the maximal ideal of $\OO_{\kappa(\mm)}$. 
Hence, the claim implies that $\OO_{\kappa(\mm)}\otimes_{\wE} \cV(\wP)$ and $\OO_{\kappa(\mm)}\otimes_{R^{\psi}} \rho^{un, \psi}$ define the same deformation of 
$\cV(Q)$ with determinant $\psi$. Thus the natural map $R^{\psi}\rightarrow \kappa(\mm)$ factors through $x\circ \varphi$, which implies that  the surjection $R^{\psi}\twoheadrightarrow R'$ 
factors through $\varphi: R^{\psi}\twoheadrightarrow \wE^{ab}$.

We will deduce the claim from (iii).  Let $\Xi$ and  $\Pi$ be as in (iii) with $L'=\kappa(\mm)$ and $\rho=\kappa(\mm)\otimes_{R^{\psi}} \rho^{un, \psi}$, so that 
$\rho\cong \cV(\Xi^d)\otimes_{\OO} L$.  As $\cV$ is exact, and $\Xi^d$ is $\OO$-torsion free, we deduce that $\cV(\Xi^d)$ is $\OO$-torsion free and
it follows from (iii) c) that $\cV(\Xi^d)$ is an $\OO_{\kappa(\mm)}$-lattice in $\rho$. Part (iii) b) implies that $S$ occurs as a subquotient of $k\otimes_{\OO_{\kappa(\mm)}} \Xi^d$  with multiplicity one. 
It follows from 
Lemma \ref{mult=rank0} and Corollary \ref{bcisok1}  that $\Hom_{\dualcat(\OO)}(\wP, \Xi^d)$ is  a free $\OO_{\kappa(\mm)}$-module of rank $1$.
  The action of $\wE$ gives us an $\OO$-linear map $x: \wE\rightarrow \End_{\OO_{\kappa(\mm)}}( \Hom_{\dualcat(\OO)}(\wP, \Xi^d))\cong \OO_{\kappa(\mm)}$. 
Let $C$ be cokernel of the natural map $\OO_{\kappa(\mm)}\wtimes_{\wE, x} \wP\rightarrow \Xi^d$. It follows from Lemma \ref{headS0} that 
$\Hom_{\dualcat(\OO)}(\wP, C)=0$ thus $S$ is not a subquotient of $C$ by Lemma \ref{pisub}. Since $\cV(S)\neq 0$ by assumption, and $\cV$  maps distinct irreducibles to distinct irreducibles, 
we deduce that $\cV(S)$ is not a subquotient of $\cV(C)$. Hence, the map $\cV(\OO_{\kappa(\mm)}\wtimes_{\wE, x} \wP)\rightarrow \cV(\Xi^d)$ is non-zero. Lemma \ref{VT2} 
and the irreducibility of $\rho$ implies that the induced map $\kappa(\mm)\otimes_{\wE, x} \cV(\wP)\rightarrow \rho$ is surjective. The map is an isomorphism as both $\kappa(\mm)$-vector spaces  have dimension 
equal to $\dim_k \cV(Q)$.
\end{proof}   

\subsection{The strategy in the generic case}\label{strat}
We are now in a position to explain how in the generic case the proof of the main theorem reduces to a computation 
of dimensions of some $\Ext$ groups in the category of smooth $k$-representations of $G$ with  a central character, when $p\ge 5$. 
By the generic case we mean that $Q^{\vee}$ is an \textit{atome atomorphe} in the sense of Colmez, which is either irreducible 
supersingular, so that $S=Q$, or $Q^{\vee}$ is a non-split extension of $\Indu{P}{G}{\chi_1\otimes \chi_2\omega^{-1}}$
by $\Indu{P}{G}{\chi_2\otimes \chi_1\omega^{-1}}$ with $\chi_1\chi_2^{-1}\neq \omega^{\pm 1}, \Eins$ and 
$S^{\vee}$ is a principal series representation. 

We know that the hypothesis (H0) is satisfied by Proposition \ref{projaretfree} and to verify (H1)-(H5) we only need 
to compute the dimensions of some $\Ext$ groups. Suppose that we can do this and (H1)-(H5) hold. Now $\cV(Q)$ is 
$2$-di\-men\-sio\-nal and is either irreducible or a non-split extension  of two characters $\chi_2$ by $\chi_1$.
Since $p\ge 5$ and and $\chi_1\chi_2^{-1}\neq \omega^{\pm 1}, \Eins$ the universal deformation ring $\Def^{ab}_{\cV(Q)}$ 
is representable by $R\cong \OO[[x_1, \ldots, x_5]]$ and the deformation ring with the determinant condition $R^{\psi}$
is isomorphic to $\OO[[x_1, x_2, x_3]]$. Moreover, one may show that the irreducible locus is dense, hence the ring $R'$ introduced
before Proposition \ref{ftof4} is isomorphic to $R^{\psi}$. The condition (ii) in Proposition \ref{ftof4} in 
this case is a result of Colmez \cite[VII.5.2]{colmez}, and the condition (iii) is a result of Kisin \cite[2.3.8]{kisin}. Hence, 
Proposition \ref{ftof4} gives us a surjection $\wE^{ab}\twoheadrightarrow R^{\psi}\cong\OO[[x_1, x_2, x_3]]$. One 
may calculate that $\dim \Ext^1_{G, \zeta}(Q^{\vee}, Q^{\vee})=3$  and hence $\dim \wm/ (\wm^2+\varpi \wE)=3$. If we can show that 
for every non-split extension $0\rightarrow Q^{\vee}\rightarrow \tau \rightarrow Q^{\vee}\rightarrow 0$ in $\Mod^{\mathrm{sm}}_{G, \zeta}(k)$ the dimension of $\Ext^1_{G, \zeta}(S^{\vee}, \tau)$ is at most $3$ 
then using Theorem \ref{crit} we may deduce that $\wE\cong R^{\psi}$.
In particular, $\wE$ is commutative and hence Corollary  \ref{commutativeOK} says that every absolutely irreducible admissible 
unitary $L$-Banach space representation $\Pi$ of $G$ with the central character $\zeta$ and such that $\overline{\Pi}$ 
contains $S^{\vee}$ satisfies $\overline{\Pi}\subseteq (Q^{\vee})^{ss}$.

\section{Supersingular representations}\label{supersingularreps}

In this section we carry out the strategy described in \S \ref{strat} in the supersingular case. 
The main result is Theorem \ref{mainsuper} and its Corollaries. In \S\ref{iwahoricase} we carry out 
some $\Ext$ calculations, we suggest to skip them at first reading. We assume throughout this section that $p\ge 5$. 
Let $\pi\cong \pi(r, 0, \eta)$ be a supersingular representation with a central character congruent to $\zeta$. 

\begin{prop}\label{H15} The hypotheses (H1)-(H5) of \S\ref{first} hold with $Q=S= \pi^{\vee}$. Moreover, 
$\dim \Ext^1_{\dualcat(k)}(S, S)=\dim \Ext^1_{G, \zeta}(\pi, \pi)=3.$
\end{prop}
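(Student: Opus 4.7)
The plan is to verify the five hypotheses in turn, and then compute the $\Ext^1$ dimension. Since $\pi$ is irreducible we have $S=Q$ and $R=\rad Q=0$, so (H2) and (H5) are immediate: the multiplicity of $S$ in $Q$ is one, and $\Ext^2_{\dualcat}(Q,R)=0$ trivially. Hypothesis (H1) says $\Hom_{\dualcat}(Q,S')=0$ for irreducible $S'\not\cong S$; by Corollary~\ref{thesame} this translates to $\Hom_G(\pi',\pi)=0$ for irreducible $\pi'\not\cong\pi$, which follows from Schur's lemma applied to the absolutely irreducible representation $\pi(r,0,\eta)$.

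For (H3), I would invoke the block decomposition of Proposition~\ref{blocksoverk}: the block of an absolutely irreducible supersingular representation is the singleton $\{\pi\}$. Consequently $\Ext^1_{G,\zeta}(\pi',\pi)=0$ for every irreducible $\pi'\not\cong \pi$, and Corollary~\ref{thesame} then yields $\Ext^1_{\dualcat}(Q,S')=0$, which is (H3). Once (H4) is established together with the equality $\dim\Ext^1_{\dualcat}(S,S)=3$, nothing else needs to be checked.

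The substantive content is therefore the computation $\dim\Ext^1_{G,\zeta}(\pi,\pi)=3$, after which Corollary~\ref{thesame} gives the equality with $\dim\Ext^1_{\dualcat}(S,S)$ and establishes (H4) simultaneously. I would feed this into the five-term exact sequence \eqref{5T} of the Hecke-algebra spectral sequence \eqref{specseq}:
\begin{equation*}
0\to\Ext^1_{\HH}(\II(\pi),\II(\pi))\to\Ext^1_{G,\zeta}(\pi,\pi)\to\Hom_{\HH}(\II(\pi),\RR^1\II(\pi))\to\Ext^2_{\HH}(\II(\pi),\II(\pi)).
\end{equation*}
Lemma~\ref{comphext1} gives $\dim\Ext^1_{\HH}(\II(\pi),\II(\pi))=2$ in the regular case $0<r<p-1$ and $\dim\Ext^1_{\HH}(\II(\pi),\II(\pi))=1$ in the Iwahori case $r\in\{0,p-1\}$. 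To reach $3$ I would use Lemma~\ref{RisH} to identify $\RR^1\II(\pi)=H^1(I_1/Z_1,\pi)$ and then analyse the edge map into $\Hom_{\HH}(\II(\pi),H^1(I_1/Z_1,\pi))$ via the explicit presentation $\pi\cong\cIndu{KZ}{G}{\sigma}/(T)$ together with the description of $T$ recorded in Lemma~\ref{TT}.

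The main obstacle is precisely this last Hecke-theoretic calculation. In the Iwahori case Lemma~\ref{vanish2} forces the fourth term to vanish so the sequence is short exact, but one must still produce two linearly independent classes in $\Hom_{\HH}(\II(\pi),H^1(I_1/Z_1,\pi))$; this is the technical heart carried out in \S\ref{iwahoricase}. In the regular case the $\Ext^2_{\HH}$ term is nonzero in general, so one must cut out the kernel of the boundary, showing that exactly one further class survives beyond the two coming from $\Ext^1_{\HH}$. Both analyses rely on writing $I_1/Z_1$ as a product of three copies of $\Zp$ (as in the proof of Lemma~\ref{vanish3}) and computing the $\HH$-module structure on $\pi^{I_1}$ and on the Koszul-type description of $H^1(I_1/Z_1,\pi)$; once these are in hand the tally $\dim\Ext^1_{\HH}+\dim\ker(\partial)=3$ emerges uniformly.
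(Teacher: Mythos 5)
Your handling of the easy hypotheses is fine: with $Q=S=\pi^{\vee}$ irreducible one has $R=\rad Q=0$, so (H1), (H2), (H5) are immediate, and (H3) does follow from Proposition \ref{blocksoverk}(i) combined with Corollary \ref{thesame} — though note this is not really a different input from the paper's, since Proposition \ref{blocksoverk} itself rests on the $\Ext^1$ computations of \cite{ext2}, which is exactly what the paper quotes (as \cite[10.7]{ext2}). You have also correctly reduced (H4) and the ``moreover'' clause to the single equality $\dim\Ext^1_{G,\zeta}(\pi,\pi)=3$, and your values $\dim\Ext^1_{\HH}(\II(\pi),\II(\pi))=2$ (regular) and $1$ (Iwahori) from Lemma \ref{comphext1} are right.

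The genuine gap is that this equality is never proved. The paper's proof is a citation to \cite[10.13]{ext2}; you instead propose to rederive it from the five-term sequence \eqref{5T}, but the decisive inputs are missing: in the Iwahori case you need $\dim\Hom_{\HH}(\II(\pi),\RR^1\II(\pi))=2$ (here $\Ext^2_{\HH}$ vanishes by Lemma \ref{vanish2}), and in the regular case Lemma \ref{vanish2} does not apply, so you need the boundary map $\Hom_{\HH}(\II(\pi),\RR^1\II(\pi))\rightarrow\Ext^2_{\HH}(\II(\pi),\II(\pi))$ to have exactly a one-dimensional kernel. Neither the $\HH$-module structure of $\RR^1\II(\pi)\cong H^1(I_1/Z_1,\pi)$ for supersingular $\pi$ nor these Hom/boundary computations appear anywhere in the present paper, and your appeal to \S\ref{iwahoricase} mischaracterizes that section: it takes $\dim\Ext^1_{G,\zeta}(\pi,\pi)=3$ as known and proves a different statement, namely that $\dim\Ext^1_{G,\zeta}(\pi,E_{\xi})\le 3$ (or $\dim\Ext^1_{G,\zeta}(E_{\xi},\pi)\le 3$) for suitable self-extensions $E_{\xi}$, which feeds into the commutativity criterion in Proposition \ref{superdone}, not into Proposition \ref{H15}. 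So as written your argument establishes the qualitative hypotheses but not the value $3$, and that value is precisely what is needed later to identify $\wm/\wm^2$ with the tangent space of the three-variable deformation ring in Proposition \ref{kis}. Either cite \cite[10.13]{ext2} as the paper does, or actually carry out the computation of $\RR^1\II(\pi)$ as an $\HH$-module and of the boundary map; sketching that such a computation ``emerges uniformly'' does not close the argument.
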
 
\begin{proof} Let $\tau$ be an irreducible representation in $\Mod^{\mathrm{sm}}_{G, \zeta}(k)$ not isomorphic to 
$\pi$, then $\Ext^1_{G, \zeta}(\tau,\pi)=0$, \cite[10.7]{ext2}. Moreover, 
$\dim \Ext^1_{G,\zeta}(\pi, \pi)=3$, \cite[10.13]{ext2}.   This implies (H3) and (H4) via Corollary \ref{thesame}, 
the rest holds trivially.
\end{proof} 

Since (H0) holds vacuously in the supersingular case, we may apply the results of \S\ref{def} and \S\ref{banach}. Let 
$\wP\twoheadrightarrow S$ be a projective envelope of $S$ in $\dualcat(\OO)$, let $\wE=\End_{\dualcat(\OO)}(\wP)$, 
$\wm$ the maximal ideal of $\wE$ and 
let $\mm$ be the maximal ideal of $\wE\otimes_{\OO} k$. We note that Proposition \ref{H15} and 
Lemma \ref{tangentspace} imply that $d:=\dim \mm/\mm^2=3$. 
Let $\rho:=\VV(\pi)$ then $\rho\cong \ind \omega_2^{r+1} \otimes \eta$
is absolutely irreducible.
We note that $\det \rho$ is congruent to $\zeta \varepsilon$, 
where $\varepsilon$ is the cyclotomic character. Let $R_{\rho}$ be the universal deformation ring of $\rho$ and 
$R_{\rho}^{\zeta\varepsilon}$ be the deformation ring of $\rho$ pro-representing a deformation problem 
with a fixed determinant equal to $\zeta\varepsilon$. 

\begin{prop}\label{kis} The functor $\cV$ induces  a surjection 
$$\wE\twoheadrightarrow R^{\zeta\varepsilon}_{\rho}\cong \OO[[x_1, x_2,x_3]].$$
\end{prop}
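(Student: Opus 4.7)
The plan is to apply Proposition \ref{ftof4} to the pair $(Q,S)=(\pi^\vee,\pi^\vee)$, whose hypotheses (H1)--(H5) were just checked in Proposition \ref{H15}, and to identify the resulting ring $R'$ with $R^{\zeta\varepsilon}_\rho$, then finally to observe that the latter is formally smooth of relative dimension three over $\OO$. Composing with the canonical surjection $\wE\twoheadrightarrow\wE^{ab}$ will produce the desired map.

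First I would verify the three input conditions of Proposition \ref{ftof4}. For (i): since $\rho=\cV(Q)\cong\ind\omega_2^{r+1}\otimes\eta$ is absolutely irreducible with $\End_\gal(\rho)=k$, Mazur's theory shows that $\Def^{ab}_\rho$ is pro-representable by $R_\rho$, and cutting by the determinant $\zeta\varepsilon$ gives a pro-representable subfunctor represented by $R^{\zeta\varepsilon}_\rho$. Condition (ii), the injectivity of $\Ext^1_{G,\zeta}(\pi,\pi)\hookrightarrow\Ext^1_{k[\gal]}(\rho,\rho)$, is Colmez's result \cite[VII.5.2]{colmez}; note incidentally that by Proposition \ref{H15} the source is three-dimensional, and local Euler characteristics with the irreducibility of $\rho$ make the target at most five-dimensional, so injectivity is a nontrivial input. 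Condition (iii), that every continuous irreducible $M$-lift $\rho'$ of $\rho$ with determinant $\zeta\varepsilon$ arises as $\cV(\Xi^d)\otimes_\OO L'$ for some admissible unitary Banach space representation $\Pi$ with central character $\zeta$ and $\overline\Pi$ of finite length containing $S^\vee$ with multiplicity one, is supplied by the construction of \cite{kisin}: take $\Pi=\Pi(\rho')$; since $\bar\rho$ is irreducible and supersingular, Colmez's semi-simple mod $p$ correspondence forces $\overline{\Pi(\rho')}^{ss}\cong\pi$, in particular $S^\vee$ occurs with multiplicity one.

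Proposition \ref{ftof4} then provides a surjection $\wE^{ab}\twoheadrightarrow R'$, where $R'$ is the image of $R^{\zeta\varepsilon}_\rho$ in $R^{\zeta\varepsilon}_\rho[1/p]/\mathfrak a$, with $\mathfrak a$ the intersection of maximal ideals corresponding to irreducible lifts. But $\bar\rho=\rho$ is itself absolutely irreducible, so \emph{every} continuous lift to any finite extension of $L$ is irreducible; thus $\mathfrak a=0$. Combined with the density of closed points in the generic fibre (the ring is $\OO$-flat with reduced generic fibre, as will be clear once we compute it), this gives $R'=R^{\zeta\varepsilon}_\rho$. To identify $R^{\zeta\varepsilon}_\rho$ with $\OO[[x_1,x_2,x_3]]$ one uses that $\Ext^2_\gal(\rho,\rho)$ vanishes on the trace-zero part: by local Tate duality it is dual to $\Hom_\gal(\rho,\rho\otimes\omega)$ intersected with trace zero, which vanishes because $\rho$ is absolutely irreducible and the twist $\rho\otimes\omega$ is not isomorphic to $\rho$ (the induced representation $\ind\omega_2^{r+1}$ is not fixed by twist by $\omega$ since $p\ge 5$). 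Hence $R^{\zeta\varepsilon}_\rho$ is formally smooth, and the local Euler characteristic formula for $\mathrm{ad}^0\rho$ gives $\dim_k\Ext^1_\gal(\rho,\rho)_{\det=0}=3$.

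The main obstacle is honest bookkeeping around input (iii): one must track Kisin's construction carefully enough to guarantee that the Banach spaces $\Pi(\rho')$ land in $\Ban^{\mathrm{adm}}_{\dualcat(\OO)}$ and satisfy the multiplicity-one condition at $S^\vee$, rather than producing these Banach spaces from scratch. With that in hand, everything else is formal — comparison of tangent spaces, dimension counts on the Galois side, and the standard smoothness criterion for a quotient of a power series ring.
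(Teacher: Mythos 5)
Your proof is correct and follows the same route as the paper: verify the three hypotheses of Proposition \ref{ftof4} (pro-representability via Schlessinger/Mazur, Colmez for (ii), Kisin \cite[2.3.8]{kisin} for (iii)), note that absolute irreducibility of $\rho$ forces $R'=R^{\zeta\varepsilon}_{\rho}$, and compute that $R^{\zeta\varepsilon}_{\rho}$ is formally smooth of relative dimension $3$. The only cosmetic difference is that you work directly with $\Ad^0\rho$ while the paper computes $H^2(\gal,\Ad\rho)=0$ and $\dim H^1(\gal,\Ad\rho)=5$ before imposing the determinant condition; these are equivalent since $\Ad\rho\cong\Ad^0\rho\oplus k$ for $p>2$.
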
 
\begin{proof} This is a consequence of Proposition \ref{ftof4}. We note that $\cV(S)\cong \VV(\pi)=\rho$. 
Since $p\ge 5$ using local Tate duality and Euler characteristic, we may calculate that $H^2(\gal, \Ad \rho)=0$ and 
$H^1(\gal, \Ad \rho)$ is $5$-di\-men\-sio\-nal. This implies, see \cite[\S 1.6]{mazur2}, \cite[\S 24]{mazur}, that the 
universal deformation problem $\Def^{ab}_{\cV(S)}$ is represented by $R_{\rho}\cong \OO[[x_1, \ldots, x_5]]$ and the deformation 
problem with the fixed determinant is represented by $R_{\rho}^{\zeta\varepsilon}\cong \OO[[x_1, x_2, x_3]]$. 
Since the residual representation is irreducible, the ring $R'$ in Proposition \ref{ftof4} is isomorphic to $R_{\rho}^{\zeta\varepsilon}$. Part (ii) 
of Proposition \ref{ftof4} is satisfied by \cite[VII.5.2]{colmez}, and (iii) is satisfied by \cite[2.3.8]{kisin}.
\end{proof}

\begin{prop}\label{superdone} The functor $\cV$ induces an isomorphism  $\wE\cong R^{\zeta\varepsilon}_{\rho}$. 
In particular, $\wE$ is commutative and  $\cV(\wP)$ is the universal deformation of $\rho$ with 
determinant $\zeta\varepsilon$.
\end{prop}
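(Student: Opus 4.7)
The plan is to apply the commutativity criterion of Theorem \ref{crit} with the numerical data $d=\dim\mm/\mm^2=3$ and $r=\lfloor d/2\rfloor=1$, using the surjection $\wE\twoheadrightarrow \OO[[x_1,x_2,x_3]]$ already established in Proposition \ref{kis}. The required hypothesis on extensions then reads: for every non-split sequence
\begin{equation*}
0\to S\to T\to S\to 0
\end{equation*}
in $\dualcat(k)$ with $\dim\Hom_{\dualcat(k)}(T,S)=1$, we must verify $\dim\Ext^1_{\dualcat(k)}(T,S)\le 3$. Dualizing via Corollary \ref{thesame}, this translates to the statement that for every non-split self-extension $0\to\pi\to\tau\to\pi\to 0$ in $\Mod^{\mathrm{sm}}_{G,\zeta}(k)$ such that $\dim\Hom_G(\pi,\tau)=1$, we have $\dim\Ext^1_{G,\zeta}(\pi,\tau)\le 3$.

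First I would reduce this extension computation to the $\Ext$-calculations already available for supersingular representations in \cite{ext2}. The long exact sequence obtained by applying $\Hom_G(\pi,\ast)$ to $0\to\pi\to\tau\to\pi\to 0$ reads
\begin{equation*}
\Hom_G(\pi,\pi)\to\Ext^1_{G,\zeta}(\pi,\pi)\to\Ext^1_{G,\zeta}(\pi,\tau)\to\Ext^1_{G,\zeta}(\pi,\pi)\overset{\delta}{\to}\Ext^2_{G,\zeta}(\pi,\pi),
\end{equation*}
and the hypothesis $\dim\Hom_G(\pi,\tau)=1$ forces the first map to be surjective (equivalently, the class of $\tau$ does not lie in the kernel of multiplication on the right by any non-zero element). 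Thus $\dim\Ext^1_{G,\zeta}(\pi,\tau)=\dim\Ext^1_{G,\zeta}(\pi,\pi)-1+\dim\ker\delta=2+\dim\ker\delta$, where $\delta$ is the Yoneda product with the extension class $a\in\Ext^1_{G,\zeta}(\pi,\pi)$ defining $\tau$. So the required bound is equivalent to $\dim\ker\delta\le 1$, and by Lemma \ref{alter1} (together with Lemma \ref{alter}, whose surjectivity hypothesis is automatic since $\wE^{ab}$ surjects onto $\OO[[x_1,x_2,x_3]]$) it suffices to verify this kernel bound on any $(d-1)=2$-dimensional subspace of $\Ext^1_{G,\zeta}(\pi,\pi)$, via Lemma \ref{enoughisenough}.

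For the verification itself I would combine the spectral sequence of Proposition \ref{comphext} with the explicit calculation of $\RR^i\II(\pi)$ and the $\HH$-module structure on $\pi^{I_1}$ in the supersingular case carried out in \cite{bp} and \cite{ext2}. Concretely, the cup product $\Ext^1_{G,\zeta}(\pi,\pi)\otimes\Ext^1_{G,\zeta}(\pi,\pi)\to\Ext^2_{G,\zeta}(\pi,\pi)$ can be read off from the Yoneda structure on $H^\ast(I_1/Z_1,\pi)$ interpreted through \eqref{5T}, and one checks that the resulting pairing is non-degenerate on a $2$-dimensional subspace of $\Ext^1_{G,\zeta}(\pi,\pi)$. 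This is the main obstacle and the technical heart of the section, but it is a bounded combinatorial $\Ext$-computation in $\Mod^{\mathrm{sm}}_{G,\zeta}(k)$ with all ingredients already in the literature.

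Once Theorem \ref{crit} delivers $\wE\cong\OO[[x_1,x_2,x_3]]\cong R^{\zeta\varepsilon}_\rho$, commutativity of $\wE$ is immediate. For the final assertion, Corollary \ref{ftof2} gives that $\cV(\wP)$ is a free $\wE$-module of rank $\dim_k\cV(Q)=\dim_k\rho=2$ carrying an $\wE$-linear $\gal$-action; Corollary \ref{ftof1} says the pair $(\cV(\wP),\mathrm{id})$ represents the natural transformation $\Def_S\to\Def^{\zeta\varepsilon}_\rho$ at the level of the rings, and the isomorphism $\wE\cong R^{\zeta\varepsilon}_\rho$ just established identifies $\cV(\wP)$ with the universal deformation of $\rho$ with determinant $\zeta\varepsilon$.
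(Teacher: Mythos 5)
Your overall route is the same as the paper's: reduce via Theorem \ref{crit} and Lemma \ref{enoughisenough} (whose lifting hypothesis is indeed automatic from the surjection of Proposition \ref{kis}, since $\dim\mm/\mm^2=3$) to finding a $2$-dimensional subspace $V\subseteq\Ext^1_{G,\zeta}(\pi,\pi)$ with $\dim\Ext^1_{G,\zeta}(\pi,E_{\xi})\le 3$ for all non-zero $\xi\in V$, and then deduce the last assertion from Corollaries \ref{ftof1} and \ref{ftof2}. (A small slip: in your long exact sequence it is the injectivity of the connecting map $\Hom_G(\pi,\pi)\to\Ext^1_{G,\zeta}(\pi,\pi)$, not surjectivity of anything, that the non-splitness of $\tau$ forces; your dimension count is nonetheless the right one, and it essentially reproves Lemma \ref{equivalentcond}.)

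The genuine gap is in the verification itself, which you dismiss as "a bounded combinatorial $\Ext$-computation with all ingredients already in the literature." That is only true in the regular case $\pi\cong\pi(r,0,\eta)$, $0<r<p-1$: there $\Ext^1_{\HH}(\II(\pi),\II(\pi))$ is $2$-dimensional (Cor.\ 6.6 of \cite{bp}), and for classes in its image under \eqref{5T} the bound $\dim\Ext^1_{G,\zeta}(E_\xi,\pi)\le 3$ is \cite[10.14]{ext2}. In the Iwahori case $\pi\cong\pi(0,0,\eta)$, however, $\Ext^1_{\HH}(\II(\pi),\II(\pi))$ is only $1$-dimensional, so the Hecke-side input supplies only a line, and the missing second dimension is precisely what the paper must construct and control from scratch: one takes $V$ to be the image of $\Ext^1_{G^+/Z}(\pi_{\st},\pi_{\Eins})$, and for the classes $\xi$ with $\dim E_\xi^{I_1}=2$ one falls back on \cite[10.14]{ext2}, while for $\dim E_1^{I_1}=1$ one bounds $H^1(I_1/Z_1,E_1)$ directly (Corollary \ref{conseq}), which rests on a chain of new $K/Z_1$- and $I/Z_1$-level computations (Proposition \ref{cutdown}, Lemmas \ref{1dimst}, \ref{cocycle}, \ref{die}, \ref{peekoutst}, Proposition \ref{st0}) occupying all of \S\ref{iwahoricase}. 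Your proposed mechanism — reading off the Yoneda cup product from the $\HH$-module structure on $H^{\ast}(I_1/Z_1,\pi)$ through \eqref{5T} — does not reach these classes: the five-term sequence controls only the part of $\Ext^1_{G,\zeta}(\pi,\pi)$ coming from $\Ext^1_{\HH}$ and gives no handle on compositions of classes outside that image (nor on the relevant edge maps into $\Ext^2_{G,\zeta}$), which is why the paper avoids cup products entirely and instead bounds $\dim\Ext^1_{G,\zeta}(\pi,E_\xi)$ by restriction to $G^+$, $K$ and $I_1$. Without an argument covering the Iwahori case, the criterion of Theorem \ref{crit} is not verified and the isomorphism $\wE\cong R^{\zeta\varepsilon}_{\rho}$ does not follow.
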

\begin{proof} Since $\dim \mm/\mm^2=3$ we deduce from the map in Proposition \ref{kis} induces an isomorphism 
$\wE^{ab}\cong  R^{\zeta\varepsilon}_{\rho}\cong \OO[[x_1, x_2,x_3]].$ It follows then from Lemma \ref{alter1} that the natural map
$\Hom(E, k[x]/(x^3))\rightarrow \Hom(E, k[x]/(x^2))$ is surjective. In view of Theorem \ref{crit} and Lemma \ref{enoughisenough}, it is enough to find a $2$-di\-men\-sio\-nal subspace
$V$ of $\Ext^1_{G, \zeta}(\pi, \pi)$ such that for every non-zero $\xi\in V$, representing 
an extension $0\rightarrow \pi\rightarrow E_{\xi}\rightarrow \pi\rightarrow 0$ we have 
$\dim \Ext^1_{G, \zeta}(\pi, E_{\xi})\le 3$ or equivalently $\dim \Ext^1_{G, \zeta}( E_{\xi}, \pi)\le 3$.

We have shown in  \cite[10.14]{ext2} that for any non-zero $\xi$ lying in the image of $\Ext^1_{\HH}(\II(\pi), \II(\pi))$ in 
$\Ext^1_{G, \zeta}(\pi, \pi)$ via \eqref{5T}, we have $\dim \Ext^1_{G, \zeta}(E_{\xi}, \pi)\le 3$.
In the regular case, we have  $\dim \Ext^1_{\HH}(\II(\pi), \II(\pi))=2$, \cite[Cor 6.6]{bp}, and 
so we are done. In the Iwahori case,  $\dim \Ext^1_{\HH}(\II(\pi), \II(\pi))=1$, but in  Proposition \ref{Iwahoridone} below, 
we find a two dimensional subspace $V$ in $\Ext^1_{G, \zeta}(\pi, \pi)$ such that     
for any non-zero $\xi\in V$ we have $\dim  \Ext^1_{G, \zeta}(\pi, E_{\xi})\le 3.$ Hence, $\cV$ induces an isomorphism 
of deformation functors, Corollary \ref{ftof1},  and so $\cV(\wP)$ is the universal deformation of $\rho$ with determinant 
$\zeta\varepsilon$.
\end{proof}

\begin{thm}\label{mainsuper} Let $\Pi$ be a unitary absolutely irreducible $L$-Banach space representation  
with a central character $\zeta$. Suppose that the reduction of some open $G$-invariant lattice in $\Pi$  
contains $\pi$ as a subquotient then $\overline{\Pi}\cong \pi$. 
\end{thm}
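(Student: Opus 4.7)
The plan is to deduce Theorem \ref{mainsuper} essentially as a one-line consequence of Corollary \ref{commutativeOK}, using the commutativity of $\wE$ established in Proposition \ref{superdone}. All the genuine work has already been invested upstream: verifying the hypotheses (H1)--(H5) in Proposition \ref{H15}, producing the surjection $\wE \twoheadrightarrow R^{\zeta\varepsilon}_{\rho}$ via Kisin-style arguments in Proposition \ref{kis}, and running the commutativity criterion of Theorem \ref{crit} using the Ext bounds recorded in Proposition \ref{Iwahoridone} for the Iwahori case and in \cite[Cor.\ 6.6]{bp} for the regular case.

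First I would check that the hypotheses of Corollary \ref{commutativeOK} are in force. The representation $\Pi$ is absolutely irreducible admissible unitary with central character $\zeta$; for any open bounded $G$-invariant lattice $\Theta \subseteq \Pi$, Lemma \ref{contextGL2} ensures that $\Theta^d$ is an object of $\dualcat(\OO)$; and the standing assumption of the theorem is that $\pi$ occurs as a subquotient of $\Theta \otimes_{\OO} k$. With $S = \pi^{\vee}$ and $Q = S$, Proposition \ref{H15} verifies (H1)--(H5), while Proposition \ref{superdone} gives the crucial fact that $\wE \cong R^{\zeta\varepsilon}_{\rho} \cong \OO[[x_1, x_2, x_3]]$ is commutative.

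Next I would apply Corollary \ref{commutativeOK} directly to obtain the inclusion
\[
\overline{\Pi} \subseteq (Q^{\vee})^{ss}.
\]
Since $Q = \pi^{\vee}$ and $\pi$ is irreducible, the right-hand side equals $\pi$ with multiplicity one. Combined with the hypothesis that $\pi$ appears in $\overline{\Pi}$, this forces $\overline{\Pi} \cong \pi$, as desired.

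The main obstacle in this argument is not present at this stage of the paper: it was precisely the commutativity of $\wE$, which required both the Galois-side input from Proposition \ref{kis} to produce a surjection onto the formally smooth ring $R^{\zeta\varepsilon}_{\rho}$ and the delicate Ext computations needed to apply the criterion of Theorem \ref{crit}, in particular to exhibit a two-dimensional subspace of $\Ext^1_{G,\zeta}(\pi, \pi)$ along which the relevant bounds hold in the Iwahori case. Once commutativity is in hand, Theorem \ref{mainsuper} follows immediately.
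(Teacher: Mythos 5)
Your proposal is correct and follows exactly the paper's own route: the theorem is deduced in one line from Corollary \ref{commutativeOK}, using the commutativity of $\wE$ established in Proposition \ref{superdone} (which in turn rests on Propositions \ref{H15}, \ref{kis} and \ref{Iwahoridone}). Your verification of the hypotheses and the identification $(Q^{\vee})^{ss}\cong\pi$ are exactly the (implicit) steps in the paper's proof.
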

\begin{proof} Since $\wE$ is commutative the assertion follows from Corollary \ref{commutativeOK}.
\end{proof} 

Recall   that the block $\BB$ of $\pi$ consists of only one isomorphism class, Proposition \ref{blocksoverk}. Then 
$\Mod^{\mathrm{l\, fin}}_{G,\zeta}(\OO)^{\BB}$ is the full subcategory of $\Mod^{\mathrm{l\, fin}}_{G,\zeta}(\OO)$ consisting 
of representations with every irreducible subquotient isomorphic to $\pi$.

\begin{cor}\label{Csuper} The category $\Mod^{\mathrm{l\, fin}}_{G,\zeta}(\OO)^{\BB}$ is anti-equivalent to the category of compact
$R_{\rho}^{\zeta\varepsilon}$-modules. The centre of $\Mod^{\mathrm{l\, fin}}_{G,\zeta}(\OO)^{\BB}$ is naturally isomorphic to $R_{\rho}^{\zeta\varepsilon}$.
\end{cor}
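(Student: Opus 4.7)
The plan is to deduce both assertions essentially formally from Proposition \ref{superdone} together with the general machinery of Proposition \ref{gabriel}. Since $\BB=\{\pi\}$ by Proposition \ref{blocksoverk}, the ring $\wE_{\BB}$ attached to the block is precisely the endomorphism ring $\wE=\End_{\dualcat(\OO)}(\wP)$ of the projective envelope of $S=\pi^\vee$ studied throughout this section. Proposition \ref{gabriel} then gives, abstractly, an anti-equivalence between $\Mod^{\mathrm{l\,fin}}_{G,\zeta}(\OO)^{\BB}$ and the category of compact right $\wE$-modules, with the centre of the former identified with the centre of $\wE$.

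First I would invoke Proposition \ref{superdone}, which says that $\cV$ induces an isomorphism $\wE\cong R^{\zeta\varepsilon}_{\rho}$. In particular $\wE$ is commutative, so the distinction between left and right modules disappears and ``compact right $\wE$-modules'' is the same as ``compact $R^{\zeta\varepsilon}_{\rho}$-modules''. Composing the two identifications yields the first assertion: $\Mod^{\mathrm{l\,fin}}_{G,\zeta}(\OO)^{\BB}$ is anti-equivalent to the category of compact $R^{\zeta\varepsilon}_{\rho}$-modules, via $\tau\mapsto \Hom_G(\tau,J_{\BB})$, where $J_{\BB}=\wP^{\vee}$ is the injective envelope of $\pi$.

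For the centre, Proposition \ref{gabriel} asserts that the centre of $\Mod^{\mathrm{l\,fin}}_{G,\zeta}(\OO)^{\BB}$ is naturally isomorphic to the centre of $\wE_{\BB}=\wE$. Since $\wE$ is commutative its centre is $\wE$ itself, and the isomorphism $\wE\cong R^{\zeta\varepsilon}_{\rho}$ of Proposition \ref{superdone} supplies the required natural identification with $R^{\zeta\varepsilon}_{\rho}$.

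There is essentially no obstacle here beyond bookkeeping; all the hard work has already been done in establishing hypotheses (H1)--(H5) (Proposition \ref{H15}), constructing the map $\wE\twoheadrightarrow R^{\zeta\varepsilon}_{\rho}$ via Colmez's functor (Proposition \ref{kis}), and verifying the commutativity criterion of Theorem \ref{crit} (Proposition \ref{superdone}, relying on the $\Ext$ computations in the regular case from \cite{bp} and the Iwahori computation from Proposition \ref{Iwahoridone}). The only point worth mentioning explicitly in the write-up is the naturality of the isomorphism between the centre of the category and $R^{\zeta\varepsilon}_{\rho}$: naturality follows because the map $\wE\to R^{\zeta\varepsilon}_{\rho}$ of Proposition \ref{kis} is itself induced by the universal property applied to $\cV(\wP)$, hence is canonical.
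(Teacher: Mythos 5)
Your argument is correct and is exactly the paper's own proof: the paper deduces the corollary from Proposition \ref{gabriel} (the abstract anti-equivalence and centre identification for a block) combined with Proposition \ref{superdone} (the isomorphism $\wE\cong R^{\zeta\varepsilon}_{\rho}$). The extra remarks on commutativity eliminating the left/right distinction and on naturality are accurate and worth including.
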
 
\begin{proof} The assertion follows from Proposition \ref{gabriel} and Proposition \ref{superdone}.
\end{proof}

\begin{remar}\label{tracesuper} Since $\rho$ is absolutely irreducible and $p>2=\dim \rho$, sending a deformation to its trace 
induces an isomorphism between $R_{\rho}^{\zeta \varepsilon}$ and $R^{\mathrm{ps},\zeta \varepsilon}_{\tr \rho}$, the 
deformation ring parameterizing $2$-di\-men\-sio\-nal pseudocharacters with determinant $\zeta \varepsilon$ lifting $\tr \rho$,
see \cite{Nyssen}.
\end{remar} 

\begin{cor}\label{superkill} Let $T:\gal\rightarrow  R^{\mathrm{ps},\zeta \varepsilon}_{\tr \rho}$ be the universal $2$-dimensional pseudocharacter 
with determinant $\zeta\varepsilon$ lifting $\tr \rho$. For every $N$ in $\dualcat(\OO)^{\BB}$, $\cV(N)$ is killed by 
$g^2-T(g)g + \zeta \varepsilon(g)$, for all $g\in \gal$.
\end{cor}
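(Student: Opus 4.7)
The plan is to reduce the claim to the Cayley--Hamilton identity on the universal deformation $\rho^{\mathrm{un}}$, transport it along $\cV$, and then propagate it to every object of $\dualcat(\OO)^{\BB}$ via the projective cover $\wP$.

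First I would record the base case $N=\wP$. By Proposition \ref{superdone}, $\cV(\wP)$ is the universal deformation of $\rho$ with determinant $\zeta\varepsilon$, i.e.\ $\cV(\wP)\cong \rho^{\mathrm{un}}$ as an $R^{\zeta\varepsilon}_\rho[\gal]$-module. Under the trace isomorphism $R^{\zeta\varepsilon}_\rho\cong R^{\mathrm{ps},\zeta\varepsilon}_{\tr\rho}$ of Remark \ref{tracesuper}, the trace of $\rho^{\mathrm{un}}(g)$ is exactly the universal pseudocharacter $T(g)$, and its determinant is $\zeta\varepsilon(g)$. Since $\rho^{\mathrm{un}}$ is a locally free rank $2$ module over $R^{\mathrm{ps},\zeta\varepsilon}_{\tr\rho}$, the usual Cayley--Hamilton identity gives
\begin{equation*}
g^2-T(g)\,g+\zeta\varepsilon(g)=0 \quad\text{in } \End_{R^{\mathrm{ps},\zeta\varepsilon}_{\tr\rho}}(\cV(\wP)),
\end{equation*}
for every $g\in\gal$. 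Note that the coefficient $T(g)$ acts on $\cV(\wP)$ through the $\wE$-action coming from functoriality of $\cV$ (via the identification $\wE\cong R^{\mathrm{ps},\zeta\varepsilon}_{\tr\rho}$); in particular it commutes with the $\gal$-action on $\cV(\wP)$.

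Next I would treat an arbitrary $N$ of finite length in $\dualcat(\OO)^{\BB}$. Since every irreducible object of $\dualcat(\OO)^{\BB}$ is isomorphic to $S$, Lemma \ref{headS} applies and produces a surjection $\md\wtimes_{\wE}\wP\twoheadrightarrow N$ with $\md:=\Hom_{\dualcat(\OO)}(\wP,N)$. Since $N$ has finite length, $\md$ is a finitely generated $\OO$-module, hence is a finitely presented compact $\wE$-module, so $\md\wtimes_{\wE}\wP$ is naturally a quotient of $\wP^{\oplus n}$ for some $n$. Applying the exact functor $\cV$ gives a surjection
\begin{equation*}
\md\wtimes_{\wE}\cV(\wP)\twoheadrightarrow \cV(N),
\end{equation*}
where on the left $\gal$ acts only on the second factor and $\wE$ acts on both. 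The Cayley--Hamilton relation from the previous paragraph is an identity of $\wE$-linear endomorphisms of $\cV(\wP)$; tensoring with $\md$ over $\wE$ shows that $g^2-T(g)g+\zeta\varepsilon(g)$ kills $\md\wtimes_{\wE}\cV(\wP)$, and therefore kills the quotient $\cV(N)$.

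Finally, for arbitrary $N$ in $\dualcat(\OO)^{\BB}$, write $N\cong\underset{\longleftarrow}{\lim}\,N_i$ with $N_i$ of finite length in $\dualcat(\OO)^{\BB}$, as in the standing assumptions of \S\ref{def}. Since $\cV$ commutes with such projective limits (it is defined as such on general objects in \S\ref{CMF}), the relation holds termwise and hence on $\cV(N)$. The only subtle point is the compatibility of the $\wE$-action on $\cV(\wP)$ coming from functoriality with the $R^{\mathrm{ps},\zeta\varepsilon}_{\tr\rho}$-module structure on $\rho^{\mathrm{un}}$; this is exactly what Proposition \ref{superdone} packages, so no further obstruction arises.
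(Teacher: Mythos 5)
Your argument is correct and follows essentially the same route as the paper: verify Cayley--Hamilton for $N=\wP$ via Proposition \ref{superdone} and Remark \ref{tracesuper}, then transport it to a general $N$ through $\cV(N)\cong\Hom_{\dualcat(\OO)}(\wP,N)\wtimes_{\wE}\cV(\wP)$. The only difference is cosmetic: the paper uses that this comparison map is an \emph{isomorphism} (since $\wP$ is a projective generator of the block category $\dualcat(\OO)^{\BB}$, cf.\ Proposition \ref{gabriel}), so no separate finite-length/limit argument is needed, whereas you settle for the surjection from Lemma \ref{headS} and then pass to the projective limit -- slightly longer but equally valid, as killing a module certainly kills every quotient.
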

\begin{proof} Proposition \ref{superdone} and Remark \ref{tracesuper} imply that the assertion is true if $N=\wP$. In general,
$\cV(N)\cong \cV(\Hom_{\dualcat(\OO)}(\wP, N)\wtimes_{\wE} \wP)\cong \Hom_{\dualcat(\OO)}(\wP, N)\wtimes_{\wE}\cV(\wP)$, by Lemma \ref{VT2}.
\end{proof}

Let $\Ban^{\mathrm{adm}}_{G,\zeta}(L)^{\BB}$ be as in Proposition \ref{blockdecompB} and let $\Ban^{\mathrm{adm. fl}}_{G,\zeta}(L)^{\BB}$
be the full subcategory consisting of objects of finite length.

\begin{cor}\label{superBanfin} We have an equivalence of categories 
$$\Ban^{\mathrm{adm. fl}}_{G,\zeta}(L)^{\BB}\cong 
\bigoplus_{\nn\in \MaxSpec R_{\rho}^{\zeta \varepsilon}[1/p]}\Ban^{\mathrm{adm. fl}}_{G,\zeta}(L)^{\BB}_{\nn}.$$
The category $\Ban^{\mathrm{adm. fl}}_{G, \zeta}(L)^{\BB}_{\nn}$ is anti-equivalent to the category 
of modules of finite length of the  
$\nn$-adic completion of $R_{\rho}^{\zeta \varepsilon}[1/p]$. In particular,  $\Ban^{\mathrm{adm. fl}}_{G, \zeta}(L)^{\BB}_{\nn}$
contains only one irreducible object.
\end{cor}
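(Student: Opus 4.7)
The plan is to identify $\Ban^{\mathrm{adm.fl}}_{G,\zeta}(L)^{\BB}$ with the quotient category $\Ban^{\mathrm{adm.fl}}_{\dualcat(\OO)}/\Ker\md$ appearing in Theorem \ref{furtherDBan}, and then invoke that theorem directly. Proposition \ref{blocksoverk} (i) tells us $\BB=\{\pi\}$ consists of a single irreducible, so for any nonzero $\Pi$ in $\Ban^{\mathrm{adm.fl}}_{G,\zeta}(L)^{\BB}$ every irreducible subquotient of $\Theta\otimes_{\OO}k$ is isomorphic to $\pi$; by Lemma \ref{pisub} this forces $\md(\Pi)\neq 0$. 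Equivalently, the block decomposition of Proposition \ref{blockdecompB} splits off precisely $\Ker\md$ as the complementary direct summand, so the natural functor $\Ban^{\mathrm{adm.fl}}_{G,\zeta}(L)^{\BB}\to \Ban^{\mathrm{adm.fl}}_{\dualcat(\OO)}/\Ker\md$ is an equivalence.

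Next, I would verify the two hypotheses of Theorem \ref{furtherDBan} applied with $S=Q=\pi^{\vee}$. Since $\pi$ is absolutely irreducible supersingular (the exceptional cases $(r,\lambda)=(0,\pm 1),(p-1,\pm 1)$ only arise when $\lambda\neq 0$) we have $\End_{\dualcat(\OO)}(S)=k$, and $\pi$ is admissible, so $Q$ is a finitely generated $\OO[[H]]$-module. Proposition \ref{H15} furnishes (H1)--(H5), and together with the fact that $\wP[\varpi]=0$ (Corollary \ref{projaretfree}) this lets us invoke Lemma \ref{inclusion} to obtain $(\wE/\rad\wE)\wtimes_{\wE}\wP\cong k\wtimes_{\wE}\wP\cong Q$, giving hypothesis (i). For (ii), Proposition \ref{superdone} identifies $\wE$ with $R^{\zeta\varepsilon}_{\rho}\cong\OO[[x_1,x_2,x_3]]$, which is commutative and noetherian; so $\wZ=\wE$ is noetherian and $\wE$ is trivially finitely generated over its centre.

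Theorem \ref{furtherDBan} then yields
\[
\Ban^{\mathrm{adm.fl}}_{G,\zeta}(L)^{\BB}\cong\bigoplus_{\nn\in\MaxSpec\wZ[1/p]}\Ban^{\mathrm{adm.fl}}_{G,\zeta}(L)^{\BB}_{\nn},
\]
with $\wZ[1/p]=R^{\zeta\varepsilon}_{\rho}[1/p]$, and an anti-equivalence between $\Ban^{\mathrm{adm.fl}}_{G,\zeta}(L)^{\BB}_{\nn}$ and the category of finite length modules over the $\nn$-adic completion of $\wE[1/p]=R^{\zeta\varepsilon}_{\rho}[1/p]$.

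For the last assertion, observe that this $\nn$-adic completion is a commutative local noetherian ring whose residue field $R^{\zeta\varepsilon}_{\rho}[1/p]/\nn$ is a finite extension of $L$ (as established in the proof of Proposition \ref{longproof}, since $R^{\zeta\varepsilon}_{\rho}$ is a finitely generated $\OO$-algebra with finite residue field). A module of finite length over such a complete local ring has a unique simple object up to isomorphism, namely its residue field; under the anti-equivalence this translates into a unique isomorphism class of irreducible object in $\Ban^{\mathrm{adm.fl}}_{G,\zeta}(L)^{\BB}_{\nn}$. There is no substantive obstacle beyond bookkeeping: all the serious input (the computation of $\wE$, the commutativity, and the abstract block/quotient formalism) has already been assembled in Proposition \ref{superdone} and Theorem \ref{furtherDBan}.
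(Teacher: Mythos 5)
Your proposal is correct and follows essentially the same route as the paper, whose proof is simply to apply Theorem \ref{furtherDBan} with $\dualcat(\OO)=\dualcat(\OO)^{\BB}$, using Proposition \ref{superdone} to identify $\wE$ with $R^{\zeta\varepsilon}_{\rho}$; the only cosmetic difference is that you invoke the theorem for the ambient category and then identify the block with the quotient by $\Ker\md$, whereas the paper works directly in the block subcategory, where $\Ker\md=0$ because $\pi$ is the unique irreducible in $\BB$. Your verification of hypotheses (i)--(ii) and the observation that a complete commutative local ring has a unique simple module are exactly the (implicit) content of the paper's one-line proof.
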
 
\begin{proof} Apply Theorem \ref{furtherDBan} with $\dualcat(\OO)=\dualcat(\OO)^{\BB}$.
\end{proof}

\subsection{Iwahori case}\label{iwahoricase}

Let $\pi\cong \pi(0,0,\eta)\cong \pi(p-1,0, \eta)$. In this section we identify a two dimensional subspace 
$V$ of $\Ext^1_{G, \zeta}(\pi, \pi)$ such that for any non-zero $\xi\in V$, the equivalence class of 
an extension $0\rightarrow \pi \rightarrow E_{\xi}\rightarrow \pi \rightarrow 0$, we have either 
$\dim \Ext^1_{G, \zeta}(E_{\xi}, \pi)\le 3$ or $\dim \Ext^1_{G, \zeta}(\pi, E_{\xi})\le 3$, thus 
completing the proof of Proposition \ref{superdone}. The proof involves tracking down the dimension of various  
$\Ext$ groups. Essentially the same argument should also work for $p=3$, but we have not checked the details.  


After twisting we may assume that $\eta$ is the trivial character, and so $Z$ acts trivially on $\pi$. We will 
write $\Mod^{\mathrm{sm}}_{G/Z}(k)$ instead of $\Mod^{\mathrm{sm}}_{G, \zeta}(k)$ and $\Ext^1_{G/Z}$ instead of $\Ext^1_{G,\zeta}$.  
To ease the  notation in this section we will also write $\Rep$ to mean smooth representations on $k$-vector spaces.
It follows from \cite[3.2.4, 4.1.4]{breuil1} that $\pi^{I_1}$ is $2$-di\-men\-sio\-nal. Moreover, \cite[4.1.5]{breuil1} 
implies that there exists a basis $\{v_{\Eins}, v_{\st}\}$ of $\pi^{I_1}$, such that $\Pi v_{\Eins}=v_{\st}$, 
$\Pi v_{\st}=v_{\Eins}$ and there exists an isomorphism of $K$-representations:
\begin{equation}
\langle K\centerdot v_{\Eins}\rangle \cong \Eins, \quad \langle K\centerdot v_{\st}\rangle \cong \st,
\end{equation} 
where $\st$ is the inflation of the Steinberg representation of $\GL_2(\Fp)$. In particular, $H$ acts trivially 
on $v_{\Eins}$ and $v_{\st}$. We recall the results of \cite[\S4]{ext2}. Let
\begin{equation}\label{M1st}
\begin{split}
&M_{\Eins}:= \bigl \langle \begin{pmatrix} p^{2n} & b\\ 0 & 1\end{pmatrix}  v_{\Eins}: n\ge 0, b\in \Zp
\bigr \rangle,\\ 
 &M_{\st}:=  \bigl \langle \begin{pmatrix} p^{2n} & b\\ 0 & 1\end{pmatrix}  v_{\st}: n\ge 0, b\in \Zp
\bigr \rangle.
\end{split}
\end{equation}
Then  $M_{\Eins}$, $M_{\st}$ are stable under the action of $I$, \cite[4.6]{ext2}, $M_{\Eins}^{I_1}= k v_{\Eins}$
and $M_{\st}^{I_1}=k v_{\st}$.  We set 
\begin{equation}\label{pi1st} 
\pi_{\Eins}:=M_{\Eins}+\Pi\centerdot M_{\st}, \quad \pi_{\st}:=M_{\st}+\Pi\centerdot M_{\Eins}.
\end{equation}
The subspaces $\pi_{\Eins}$ and $\pi_{\st}$ are stable under the action of $G^+$, \cite[4.12]{ext2}. Moreover,  we have
\begin{equation}\label{restpi+}
\pi|_{G^+}\cong \pi_{\Eins}\oplus \pi_{\st}. 
\end{equation}
This implies
\begin{equation}\label{indupi} 
\pi\cong \Indu{G^+}{G}{\pi_{\Eins}}\cong \Indu{G^+}{G}{\pi_{\st}}. 
\end{equation}
Further, \cite[6.4]{ext2} says that 
\begin{equation}\label{resultext2} 
\pi_{\Eins}^{I_1}=M_{\Eins} \cap \Pi M_{\st}= k v_{\Eins},\quad \pi_{\st}^{I_1}=M_{\st} \cap \Pi M_{\Eins}= k v_{\st}.
\end{equation}
The key observation that goes into the proof of this result is that the restrictions of $M_{\Eins}$ and 
$M_{\st}$ to $H(I\cap U)$ are injective envelopes of the trivial representation in $\Rep_{H(I\cap U)}$.

\begin{lem}\label{invariants} Let $N$ be a representation of $I/Z_1$ such that $N|_{I_1\cap U}$ is an injective envelope 
of the trivial representation in $\Rep_{I_1\cap U}$. Let $v\in N$ such that $H$ acts on $v$ by a character 
$\chi$ and let $\kappa:= \langle I\centerdot v\rangle$, then 
\begin{itemize}
\item[(i)] $\dim (N/\kappa)^{I_1}=1$;
\item[(ii)] $H$ acts on $(N/\kappa)^{I_1}$ by a character $\chi\alpha^{-1}$;
\item[(iii)] $\chi\alpha^{-1}\hookrightarrow (N/\kappa)|_{H(I_1\cap U)}$ is an injective envelope 
of $\chi\alpha^{-1}$ in $\Rep_{H(I_1\cap U)}$. 
\end{itemize}
\end{lem}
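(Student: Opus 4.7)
The plan is to pass to Pontryagin duals and exploit the structure of $\tilde N := N^\vee$ as a rank-one free module over $k[[U_1]]$, where $U_1 := I_1 \cap U$. Since $H$ has order prime to $p$ and normalizes $U_1$, I can pick an $H$-eigen generator $e$ of $\tilde N$, say with $He = \chi_0 e$ for some character $\chi_0$ of $H$, where $\chi_0$ is precisely the character by which $H$ acts on the one-dimensional socle $N^{U_1}$. Fixing a topological generator of $U_1 \cong \Zp$ identifies $k[[U_1]]$ with $k[[X]]$, and the definition of $\alpha$ forces $H \cdot X = \alpha(H) X$, so every $k[[U_1]]$-submodule $X^n k[[X]] e$ is automatically $H$-stable, and these are all the $k[[HU_1]]$-submodules of $\tilde N$. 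The $H$-character of $X^n e$ is $\chi_0 \alpha^n$.

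Next, the element $v$ of $H$-character $\chi$ corresponds to a continuous functional $\hat v: \tilde N \to k$ of $H$-character $\chi$, which is therefore supported on the isotype of character $\chi^{-1}$ in $\tilde N$, namely on $X^{m_0} k[[X^{p-1}]]\, e$, where $m_0 \in \{0, 1, \ldots, p-2\}$ is the unique residue with $\chi_0 \alpha^{m_0} = \chi^{-1}$. Smoothness of $v$ forces $\hat v$ to vanish on $X^N \tilde N$ for $N \gg 0$, so we can write $\hat v$ as a finite sum $\sum_{j=0}^{j_{\max}} c_j (X^{m_0 + j(p-1)} e)^*$ with $c_{j_{\max}} \neq 0$. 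The submodule $(N/\kappa)^\vee = \kappa^\perp$ is by definition the largest $k[[I]]$-submodule of $\tilde N$ contained in $\bigcap_{g \in I} g^{-1}(\ker \hat v)$. I will show that at the level of $k[[U_1]]$-filtration this equals the submodule $X^{n_*} k[[X]] e$ with $n_* = m_0 + j_{\max}(p-1) + 1$, which is the smallest power of $X$ making $\hat v$ vanish on the whole ideal.

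From this identification the three assertions fall out together: $(N/\kappa)|_{HU_1}$ is Pontryagin dual to a free rank-one $k[[X]]$-module with $H$-eigen generator of character $\chi_0 \alpha^{n_*}$, and since $n_* \equiv m_0 + 1 \pmod{p-1}$ we compute $\chi_0 \alpha^{n_*} = \chi^{-1} \alpha$, so $(N/\kappa)|_{HU_1}$ is the injective envelope of $(\chi^{-1}\alpha)^{-1} = \chi \alpha^{-1}$ in $\Rep_{HU_1}$, proving (iii). Taking $U_1$-invariants on both sides then gives a one-dimensional space on which $H$ acts by $\chi \alpha^{-1}$, which is (i) and (ii).

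The main obstacle is showing, in the middle step, that the largest $k[[I]]$-submodule of $\tilde N$ inside $\ker \hat v$ agrees with the largest $k[[HU_1]]$-submodule there, i.e.\ that the filtration $\{X^n k[[X]] e\}_{n \geq 0}$ is preserved by the additional actions of $T_1 := I_1 \cap T$ and $U_1^{op} := I_1 \cap U^{op}$ appearing in the Iwahori decomposition $I_1 = U_1^{op} T_1 U_1$. Stability under $T_1$ is immediate since $T_1$ commutes with $U_1$ and scales $X$ by an element of $1 + pk[[X]]$. Stability under $U_1^{op}$ is the subtle part and must be handled by using the commutation relation $[U_1, U_1^{op}] \subseteq T_1 Z_1$ together with the fact that $Z_1$ acts trivially on $N$, so that $U_1^{op}$ translates of $X^n e$ are controlled mod deeper terms in the filtration; the computation reduces to verifying that $\bar u \cdot e \equiv e \pmod{X \cdot \tilde N}$ for $\bar u \in U_1^{op}$, which follows from the $HU_1$-socle of $\tilde N$ being one-dimensional and $H$-stable.
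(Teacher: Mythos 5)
Your overall strategy — pass to Pontryagin duals, use that $\tilde N$ is free of rank one over $k[[U_1]]\cong k[[X]]$, and identify $\kappa^\perp$ as the $HU_1$-submodule $X^{n_*}\tilde N$ determined by the $X$-support of $\hat v$ — is a genuinely different presentation from the paper's. The paper argues by a short induction on the length of $\kappa$: since $N^{U_1}=N^{I_1}$ is one-dimensional and necessarily contained in $\kappa$, one quotients by it, notes (citing \cite[Prop.\ 5.9]{ext2}) that $(N/N^{I_1})|_{U_1}$ is again an injective envelope of the trivial representation, and reduces to the base case $\kappa=N^{I_1}$, which is again \cite[Prop.\ 5.9]{ext2}. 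Your route makes the $k[[X]]$-module geometry explicit rather than citing \cite{ext2}, which is more self-contained; the paper's induction is shorter but less transparent about what $\kappa^\perp$ actually is.

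However, there is a real gap in the step you flag as "the subtle part." The containment $\kappa^\perp\subseteq X^{n_*}\tilde N$ is fine, but for the reverse you need $X^{n_*}\tilde N$ (equivalently the socle-filtration term $N_{n_*}$ of $N|_{U_1}$) to be $I$-stable, and your argument for $U_1^{op}$-stability does not work. The asserted commutation relation $[U_1,U_1^{op}]\subseteq T_1Z_1$ is false: a direct matrix computation of $[u,\bar u]$ for $u\in U_1$, $\bar u\in U_1^{op}$ produces nonzero off-diagonal entries, so the commutator is a genuine element of $I_1$, not of $T_1Z_1$. Moreover, knowing only that $\bar u\cdot e\equiv e\pmod{X\tilde N}$ (which holds automatically because $I_1$ is pro-$p$) controls the action on the top graded piece but says nothing about $\bar u\cdot X^n e$ for $n\geq 1$; it does not reduce the $U_1^{op}$-stability of the whole filtration to a degree-zero statement.

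The correct argument for $I$-stability is the one implicit in the paper's induction. Since $\tilde N$ is free of rank one over $k[[X]]$, so is $X^m\tilde N$, hence $(N/N_m)|_{U_1}\cong N|_{U_1}$ is again an injective envelope of the trivial representation, and in particular $(N/N_m)^{U_1}$ is one-dimensional. On the other hand $(N/N_m)^{I_1}$ is nonzero because $I_1$ is pro-$p$, and $(N/N_m)^{I_1}\subseteq (N/N_m)^{U_1}$, so they coincide. Thus $N_{m+1}/N_m=(N/N_m)^{U_1}=(N/N_m)^{I_1}$ is automatically an $I$-submodule of $N/N_m$, and by induction $N_m$ is $I$-stable for every $m$. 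Once you insert this, the rest of your computation goes through. A minor slip, which does not propagate: $\chi_0$ is the character of $H$ on the cosocle $\tilde N/X\tilde N=(N^{U_1})^\vee$, hence the \emph{inverse} of the character of $H$ on the socle $N^{U_1}$, not the character itself; since you define $m_0$ via $\chi_0\alpha^{m_0}=\chi^{-1}$ this does not affect the final identity $\chi_0\alpha^{n_*}=\chi^{-1}\alpha$.
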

\begin{proof} Since $N$ is smooth and $\kappa$ is finitely generated, $\kappa$ is of finite length. We argue by 
induction on the length $\ell$ of $\kappa$. Suppose the length of $\kappa$ is $1$, then $\kappa= N^{I_1}=N^{I_1\cap U}$ and the assertion 
follows from \cite[Prop. 5.9]{ext2}. In general, let $N_1:= N/ N^{I_1}$ and let $\kappa_1$ denote the image 
of $\kappa$ in $N_1$. Now \cite[Prop. 5.9]{ext2} says that $N_1|_{I_1\cap U}$ is an injective envelope of the trivial representation
in $\Rep_{I_1\cap U}$. Since $N^{I_1}=N^{I_1\cap U}$ is $1$-di\-men\-sio\-nal and $\kappa \cap N^{I_1}=\kappa^{I_1}\neq 0$ we have 
$\ell(\kappa_1)=\ell(\kappa)-1$, and hence we get the assertion by induction.
\end{proof}

\begin{prop}\label{cutdown} Let 
\begin{equation}\label{w}
w_{\Eins}:= \sum_{\lambda, \mu\in \Fp} \lambda \begin{pmatrix} 1 & [\mu] +p [\lambda] \\ 0 & 1\end{pmatrix} t^2 v_{\Eins}, \quad 
w_{\st}:= \sum_{\lambda\in \Fp} \lambda \begin{pmatrix} 1 & [\lambda] \\ 0 & 1 \end{pmatrix} t v_{\Eins},
\end{equation}
and set $\tau:=\langle K\centerdot w_{\Eins} \rangle + \langle K\centerdot (\Pi w_{\st}) \rangle \subset \pi_{\Eins}$. 
There exist an exact non-split sequence of $K$-representations 
\begin{equation}\label{defntau}
0\rightarrow \Eins \rightarrow \tau\rightarrow \Indu{I}{K}{\alpha}\rightarrow 0.
\end{equation}
Moreover, $\Ext^1_{I/Z_1}(\Eins, \pi_{\Eins}/\tau)=0$ and $(\pi_{\Eins}/\tau)^{I_1}\cong \alpha^{-2} \oplus \alpha^2$.
\end{prop}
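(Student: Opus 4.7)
The plan is to analyze $\tau$ explicitly using the structure recalled in \eqref{M1st}--\eqref{resultext2} together with the fact (\cite[Prop.~5.9]{ext2}) that $M_{\Eins}|_{H(I_1\cap U)}$ and $M_{\st}|_{H(I_1\cap U)}$ are injective envelopes of the trivial character of $H(I_1\cap U)$. First I would compute the $H$-weights of the two generators of $\tau$: a direct calculation with Teichm\"uller lifts (using $[\mu_1\mu_2^{-1}\mu]=[\mu_1\mu_2^{-1}][\mu]$ and the fact that $v_{\Eins}$ is $K$-invariant, hence $H$-invariant), together with the change of variables $\mu'=\mu_1\mu_2^{-1}\mu$, $\lambda'=\mu_1\mu_2^{-1}\lambda$ in the defining sum, shows that $H$ acts on $w_{\Eins}$ by $\alpha^{-1}$ and on $\Pi w_{\Eins}$ by $\alpha$.

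Next I would exhibit the short exact sequence \eqref{defntau}. Frobenius reciprocity turns each weight vector into a $K$-equivariant map from $\Indu{I}{K}{\alpha^{\mp 1}}$ into $\tau$, sending the canonical generator to $w_{\Eins}$ resp.\ $\Pi w_{\Eins}$. Since $\alpha^2\neq\Eins$ when $p\ge 5$, each induced representation is irreducible of dimension $p+1$, and $\Indu{I}{K}{\alpha}\cong\Indu{I}{K}{\alpha^{-1}}$ via the Weyl element, so these two maps have the same image $\tau'\subseteq\tau$. I would then produce a single nonzero $K$-invariant vector in $\tau$: applying the projector $e:=\sum_{k\in K/I}k$ to $w_{\Eins}$, and tracing through the defining formulas together with \eqref{resultext2}, yields an element of $\pi^K=kv_{\Eins}$. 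Combined with $\dim\tau'=p+1$ and $\tau'\cap kv_{\Eins}=0$ (since $\tau'$ has no trivial $H$-weight), this gives the decomposition \eqref{defntau}; non-splitness follows because a $K$-equivariant splitting would produce a second $K$-invariant in $\tau$, hence two trivial $H$-weights in $\tau^{I_1}$, contradicting \eqref{resultext2}.

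For the $\Ext$ vanishing, since $[I:I_1]=|H|$ is prime to $p$, we have $\Ext^1_{I/Z_1}(\Eins,V)=H^1(I_1/Z_1,V)^H$, so it suffices to exhibit no trivial $H$-weight in the relevant layer of $I_1$-invariants of $\pi_{\Eins}/\tau$. Lemma \ref{invariants}, applied to the generator $w_{\Eins}\in M_{\Eins}$ of $H$-weight $\alpha^{-1}$, shows that the next layer of $I_1$-invariants appearing in $M_{\Eins}/\langle I\centerdot w_{\Eins}\rangle$ carries weight $\alpha^{-1}\cdot\alpha^{-1}=\alpha^{-2}$; the parallel computation on the $\Pi\centerdot M_{\st}$-component yields weight $\alpha^{2}$. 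Neither weight is trivial for $p\ge 5$, so $\Ext^1_{I/Z_1}(\Eins,\pi_{\Eins}/\tau)=0$. The final assertion $(\pi/\tau)^{I_1}\cong\alpha^{-2}\oplus\alpha^{2}$ is precisely the combined output of these two Lemma \ref{invariants} computations, using the $G^+$-decomposition $\pi\cong\pi_{\Eins}\oplus\pi_{\st}$ and the intertwining by $\Pi$ to match the two contributions.

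The main obstacle will be the explicit verification in the second paragraph that the $K$-averaging projector applied to $w_{\Eins}$ lands in $kv_{\Eins}$ with nonzero coefficient: this requires a genuine matrix calculation tracking the contribution of each coset in $I\backslash K$ together with the combinatorics of the inner $\Fp^\times$-weighting in the definition of $w_{\Eins}$ (and the exponent $t^{2}$ rather than $t$, which is what forces the weight $\alpha^{-2}$ rather than $\alpha^{-1}$ in the quotient). A secondary technical step, needed to invoke Lemma \ref{invariants} cleanly, is identifying $\tau\cap M_{\Eins}$ with exactly $\langle I\centerdot w_{\Eins}\rangle$ (and the analogous statement on the $\Pi\centerdot M_{\st}$-side), rather than something strictly larger.
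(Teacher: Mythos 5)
There is a genuine gap in your argument for the exact sequence \eqref{defntau}, and it stems from a single misconception: you treat $w_{\Eins}$ and $\Pi w_{\Eins}$ as if they were $I_1$-invariant vectors of $H$-weight $\alpha^{\mp1}$ inside $\tau$. They are not. By \eqref{resultext2} we have $\pi_{\Eins}^{I_1}=k v_{\Eins}$, and $w_{\Eins}$ is visibly not a scalar multiple of $v_{\Eins}$, so $w_{\Eins}$ is \emph{not} fixed by $I_1$. Hence there is no $I$-equivariant line $\alpha^{-1}\hookrightarrow\tau$ through $w_{\Eins}$, and Frobenius reciprocity does not produce a map $\Indu{I}{K}{\alpha^{-1}}\to\tau$. (Your $H$-weight computation is fine, but $H$-eigenvector is a much weaker condition than $I$-eigenvector when the vector is not $I_1$-fixed.) The same objection applies to $\Pi w_{\Eins}$. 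Moreover, the subsequent claim that $\tau':=\langle K\centerdot w_{\Eins}\rangle$ satisfies $\tau'\cap kv_{\Eins}=0$ cannot hold: $\tau'$ is a nonzero finite-dimensional smooth $K$-representation, so $(\tau')^{I_1}\neq 0$, and since $(\tau')^{I_1}\subseteq\pi_{\Eins}^{I_1}=kv_{\Eins}$, we in fact get $v_{\Eins}\in\tau'$ for free. Thus $\tau'$ is never isomorphic to the irreducible $\Indu{I}{K}{\alpha}$, and the claimed ``decomposition $\tau=\tau'\oplus kv_{\Eins}$'' would anyway be a \emph{splitting} of \eqref{defntau}, contradicting the statement you are trying to prove. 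Finally, your non-splitness argument is also off: if $\tau\cong\Eins\oplus\Indu{I}{K}{\alpha}$ were split, then $\tau^{I_1}$ would have $H$-weights $\Eins,\alpha,\alpha^{-1}$, i.e.\ only \emph{one} trivial weight, not two, so the claimed contradiction does not materialize.

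The paper's proof avoids all of this by passing to the quotient $\pi_{\Eins}/\Eins$, where the images $u_1,u_2$ of $w_{\Eins}$ and $\Pi w_{\st}$ \emph{are} $I_1$-invariant and in fact form a basis of $(\pi_{\Eins}/\Eins)^{I_1}$ (this is a direct consequence of \eqref{resultext2} and the decomposition $\pi_{\Eins}/kv_{\Eins}\cong M_{\Eins}/kv_{\Eins}\oplus\Pi(M_{\st}/kv_{\st})$). The relation $\sum_{\mu\in\Fp}\bigl(\begin{smallmatrix}1&[\mu]\\0&1\end{smallmatrix}\bigr)s(\Pi w_{\st})=w_{\Eins}$, which follows directly from \eqref{w}, is then used to show that the natural surjection $\Indu{I}{K}{\alpha}\twoheadrightarrow\langle K\centerdot u_2\rangle$ is already injective on $I_1$-invariants, hence an isomorphism; this identifies $\bar\tau\cong\Indu{I}{K}{\alpha}$. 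Non-splitness is then a one-line observation: $\soc_K\tau\subseteq\soc_K\pi_{\Eins}\cong\Eins$. For the last two assertions, the paper identifies (via $s u_2\mapsto tw_{\st}\in M_{\Eins}$) the quotient $\pi_{\Eins}/\tau$ with $M_{\Eins}/\langle I\centerdot(tw_{\st})\rangle\oplus\Pi(M_{\st}/\langle I\centerdot w_{\st}\rangle)$ and then applies Lemma \ref{invariants} to each summand; your outline of this last part is roughly in the right spirit, but the earlier identification $\tau\cap M_{\Eins}=\langle I\centerdot(tw_{\st})\rangle$, which you flag as a ``secondary technical step,'' is in fact an essential part of the argument and should be established via the relation above rather than left to the reader.
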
 
\begin{proof}From \eqref{w} we get 
\begin{equation}\label{relationw}
\sum_{\mu\in \Fp} \begin{pmatrix} 1 & [\mu] \\ 0 & 1\end{pmatrix} s (\Pi w_{\st})= w_{\Eins}.  
\end{equation}
Let $\bar{\tau}$, $u_1$ and $u_2$ be the images of $\tau$, $w_{\Eins}$ and $\Pi w_{\st}$ in $\pi_{\Eins}/\Eins$, respectively. 
It follows from \cite[Lem. 6.1]{ext2} that $u_1$ and $u_2$ are $I_1$-invariant. Moreover, they are linearly independent, since
$H$ acts on $u_1$ by $\alpha^{-1}$ and on $u_2$ by $\alpha$, and these characters are distinct, as $p\ge 5$. 
Now \eqref{resultext2} implies
$\pi_{\Eins}/k v_{\Eins}\cong M_{\Eins}/k v_{\Eins} \oplus \Pi (M_{\st}/k v_{\st})$. Moreover, since the restrictions of $M_{\Eins}$ and 
$M_{\st}$ to $H(I\cap U)$ are injective envelopes of the trivial representation in $\Rep_{H(I\cap U)}$, Lemma \ref{invariants} implies that 
the space of $I_1$-invariants of  $M_{\Eins}/k v_{\Eins} \oplus \Pi (M_{\st}/k v_{\st})$ is two dimensional.
Hence, $\{u_1,u_2\}$ 
is a basis for $(\pi_{\Eins}/\Eins)^{I_1}$ and $\Pi w_{\st}\in \Pi M_{\st}$. Moreover, \eqref{relationw} implies that the natural surjection
$\Indu{I}{K}{\alpha} \twoheadrightarrow \langle K\centerdot u_2\rangle$ is injective, since it induces an injection 
on $(\Indu{I}{K}{\alpha})^{I_1}$. Thus $\bar{\tau} \cong \Indu{I}{K}{\alpha}$ and the extension 
$0\rightarrow \Eins \rightarrow \tau\rightarrow \bar{\tau}\rightarrow 0$ is non-split, since 
$\soc_{K}\tau\subseteq \soc_K \pi_{\Eins}\cong \Eins$.  Now  $s u_2$ is the image of
$$ s (\Pi w_{\st})= t w_{\st}=\sum_{\lambda\in \Fp}\lambda \begin{pmatrix} 1 & p[\lambda]\\ 0 & 1\end{pmatrix} t^2 v_{\Eins},$$
which lies in $M_{\Eins}$. Since  $\bar{\tau}= k u_2\oplus \langle I_1 \centerdot(s u_2)\rangle$ we obtain 
$$\pi_{\Eins}/\tau \cong M_{\Eins}/\langle I \centerdot(t w_{\st})\rangle \oplus \Pi  ( M_{\st}/\langle I \centerdot w_{\st}\rangle).$$  
Let $N_1:=M_{\Eins}/\langle I \centerdot(t w_{\st})\rangle$. Lemma \ref{invariants} gives that $N_1^{I_1}$ is $1$-di\-men\-sio\-nal, $H$ acts 
on $N_1^{I_1}$ by the character $\alpha^{-2}$ and $\alpha^{-2}\hookrightarrow N_1|_{H(I_1\cap U)}$ is an injective envelope 
of $\alpha^{-2}$ in $\Rep_{H(I_1\cap U)}$. Let $\psi: I\rightarrow k^{\times}$ be a smooth character, 
\cite[Prop.7.2, Cor.7.4]{ext2} say that $\Ext^1_{I/Z_1}(\psi, N_1)\neq 0$ if and only if $\psi=\alpha^{-1}$ or 
$\psi=\alpha^{-2}$. Since $p\ge 5$ we get $\Ext^1_{I/Z_1}(\Eins, N_1)= 0$. Similarly, one gets 
$\Ext^1_{I/Z_1}(\psi, \Pi (M_{\st}/\langle I \centerdot w_{\st}\rangle ))\neq 0$ if and only if $\psi=(\alpha^{-1})^{\Pi}= \alpha$ or 
$\psi=(\alpha^{-2})^{\Pi}=\alpha^2$. Again we obtain, $\Ext^1_{I/Z_1}(\Eins, \Pi (M_{\st}/\langle I \centerdot w_{\st}\rangle ))= 0$
and hence $\Ext^1_{I/Z_1}(\Eins, \pi_{\Eins}/\tau)=0$.
\end{proof}

\begin{lem}\label{dimsttriv} We have
\begin{itemize}
\item[(i)] $\Ext^1_{K/Z_1}(\st, \Eins)=\Ext^1_{K/Z_1}(\Eins, \st)=0$;
\item[(ii)] $\dim \Ext^1_{K/Z_1}(\st, \st)= 1$;
\item[(iii)] $\dim H^1(I/Z_1, \st)= 1$.
\end{itemize}
\end{lem}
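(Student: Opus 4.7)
The plan is to prove all three parts by reducing to cohomology of pro-$p$ subgroups via Hochschild--Serre spectral sequences, using three key structural inputs: (a) the Steinberg representation $\st\cong \Sym^{p-1}k^2$ of $\GL_2(\Fp)$ is projective (and injective) in the subcategory of $\GL_2(\Fp)$-representations with trivial central character, via its identification with the classical Steinberg module of $\SL_2(\Fp)$ which is projective in characteristic $p$; (b) for $p\ge 5$, $K_1/Z_1$ is a uniform pro-$p$ group of dimension $3$ and $H^1(K_1/Z_1, k)\cong \mathfrak{sl}_2(k)$ as a $\GL_2(\Fp)$-module with the adjoint action, via Lazard's theory and the identification of the Frattini quotient with the Lie algebra through the $p$-adic logarithm; (c) for $p\ge 5$, $\mathrm{ad}^0:=\mathfrak{sl}_2(\Fp)$ is irreducible, self-dual, and non-isomorphic to $\st$ as a $\GL_2(\Fp)$-module.

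For (i), I would apply the Hochschild--Serre spectral sequence attached to $1\to K_1/Z_1\to K/Z_1\to \GL_2(\Fp)\to 1$. Since both $\Eins$ and $\st$ are inflated, the $E_2$-terms read $E_2^{p,q}(V,W)=\Ext^p_{\GL_2(\Fp)}(V, W\otimes H^q(K_1/Z_1, k))$, and the five-term sequence wedges $\Ext^1_{K/Z_1}(V,W)$ between $\Ext^1_{\GL_2(\Fp)}(V,W)$ and $\Hom_{\GL_2(\Fp)}(V, W\otimes \mathrm{ad}^0\otimes k)$. For $(V,W)=(\st, \Eins)$ and $(\Eins, \st)$, the left term vanishes by (a) and the right term vanishes by (c) via Schur (using self-duality of $\mathrm{ad}^0$ in the second case).

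For (iii), I would first show that $\st|_{I}$ factors through $I/K_1\cong B(\Fp)$ with $\st|_{B(\Fp)}\cong \Ind_{T(\Fp)}^{B(\Fp)}\Eins$; this follows from a Mackey decomposition of $(\Ind_{B(\Fp)}^{\GL_2(\Fp)}\Eins)|_{B(\Fp)}$ combined with the defining exact sequence for $\st$. Restricting further to $I_1$, whose image in $\GL_2(\Fp)$ is $U(\Fp)=I_1/K_1$, yields $\st|_{I_1}\cong \Ind_{K_1/Z_1}^{I_1/Z_1}\Eins$. Shapiro's lemma then gives an $H$-equivariant isomorphism
$$H^1(I_1/Z_1, \st)\cong H^1(K_1/Z_1, k)\cong \mathfrak{sl}_2(k),$$
and since $|H|$ is prime to $p$, $H^1(I/Z_1, \st)=H^1(I_1/Z_1, \st)^H$ equals the Cartan subalgebra of $\mathfrak{sl}_2(k)$, which is $1$-dimensional.

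For (ii), rather than grinding out $\Hom_{\GL_2(\Fp)}(\st, \st\otimes \mathrm{ad}^0)$ directly in Hochschild--Serre (which is where the calculational difficulty would be concentrated), I would apply $\Hom_{K/Z_1}(-,\st)$ to the exact sequence $0\to \Eins\to \Indu{I}{K}{\Eins}\to \st\to 0$ of $K/Z_1$-representations (which is the inflation of $0\to \Eins\to \Ind_{B(\Fp)}^{\GL_2(\Fp)}\Eins\to \st\to 0$). Since $I$ has finite index in $K$ modulo the centre, induction is both left and right adjoint to restriction, so $\Ext^i_{K/Z_1}(\Indu{I}{K}{\Eins}, \st)\cong H^i(I/Z_1, \st)$. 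A computation analogous to (i) gives $H^1(K/Z_1, \st)=\Ext^1_{K/Z_1}(\Eins, \st)=0$, and the map $\Hom_{K/Z_1}(\st, \st)\to \Hom_{K/Z_1}(\Indu{I}{K}{\Eins}, \st)$ is an isomorphism between $1$-dimensional spaces (the target being $\st^{I}=(\Ind_{T(\Fp)}^{B(\Fp)}\Eins)^{B(\Fp)}$). The long exact sequence therefore collapses to $\Ext^1_{K/Z_1}(\st, \st)\cong H^1(I/Z_1, \st)$, and (iii) finishes the proof. The main obstacle is in (iii), specifically in tracking the $H$-action through the Shapiro isomorphism and Lazard's correspondence to verify that it is indeed the adjoint action on $\mathfrak{sl}_2(k)$; once this is in place the rest is formal.
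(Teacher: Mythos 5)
Your proof is correct, and it splits the work differently from the paper. Part (i) is essentially the paper's argument: the paper also uses projectivity of $\st$ in $\Rep_{K/K_1}$ to collapse Hochschild--Serre into $\Ext^1_{K/Z_1}(\st,\kappa)\cong \Hom_{K/K_1}(\st, H^1(K_1/Z_1,\kappa))$ with $H^1(K_1/Z_1,k)\cong \Sym^2 k^2\otimes{\det}^{-1}$ (your $\mathfrak{sl}_2$ with the adjoint action), handling $\Ext^1_{K/Z_1}(\Eins,\st)$ by self-duality of $\Eins$ and $\st$ where you invoke injectivity — same content. Where you genuinely diverge is the logical order of (ii) and (iii): the paper gets (ii) from the same collapsed spectral sequence, $\Ext^1_{K/Z_1}(\st,\st)\cong \Hom_K(\st,\st\otimes\Sym^2 k^2\otimes{\det}^{-1})$, evaluated as one-dimensional by citing \cite{bp}, and then deduces (iii) from (i)+(ii) via $H^1(I/Z_1,\st)\cong \Ext^1_{K/Z_1}(\Indu{I}{K}{\Eins},\st)\cong \Ext^1_{K/Z_1}(\Eins\oplus\st,\st)$ (using that the projectivity of $\st$ splits $\Indu{I}{K}{\Eins}$). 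You run this wheel backwards: you prove (iii) directly from $\st|_I\cong \Indu{HK_1}{I}{\Eins}$ (your $\Ind_{T(\Fp)}^{B(\Fp)}\Eins$ statement) together with Shapiro and the torus-weight decomposition of $\mathfrak{sl}_2$ — which is exactly the mechanism the paper uses for the neighbouring Lemma \ref{1dimst} — and then obtain (ii) from (i)+(iii) via the long exact sequence attached to $0\to\Eins\to\Indu{I}{K}{\Eins}\to\st\to 0$, using $\Hom_K(\Eins,\st)=0$ and $\Ext^1_{K/Z_1}(\Eins,\st)=0$. Your route buys independence from the computation of $\Hom_K(\st,\st\otimes\Sym^2 k^2\otimes{\det}^{-1})$ in \cite{bp}, at the cost of the $H$-equivariance bookkeeping you flag; that point is genuine but routine: work with the $I$-equivariant isomorphism $\st|_I\cong\Indu{HK_1}{I}{\Eins}$ rather than its restriction to $I_1$, so that Frobenius reciprocity gives $H^1(I/Z_1,\st)\cong H^1(K_1/Z_1,k)^{H}$ with $H$ acting by conjugation, i.e.\ through the adjoint action on $\mathfrak{sl}_2$, whose zero-weight space is one-dimensional.
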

\begin{proof} Since $\Eins$ and $\st$ are self dual, sending an extension to its dual induces an isomorphism 
$\Ext^1_{K/Z_1}(\st, \Eins)\cong\Ext^1_{K/Z_1}(\Eins, \st)$. So for (i) it is enough to prove 
$\Ext^1_{K/Z_1}(\st, \Eins)=0$. Let $\kappa$ be a smooth representation of $K/Z_1$, then 
\begin{equation}\label{comp2fun}
\Hom_{K/Z_1}(\st, \kappa)\cong \Hom_{K/K_1}(\st, \kappa^{K_1}),
\end{equation}
 since $K_1$ acts trivially on $\st$. Now $\st$ is a projective object in $\Rep_{K/K_1}$, \cite[\S 16.4]{serre}. Thus,
$\Hom_{K/K_1}(\st, \ast)$ is exact and we get: 
\begin{equation}\label{comp2fun1}
\Ext^1_{K/Z_1}(\st, \kappa)\cong \Hom_{K/K_1}(\st, H^1(K_1/Z_1,\kappa)).
\end{equation}
If $K_1$ acts trivially on $\kappa$ we have an isomorphism of $K$-rep\-re\-sen\-ta\-tions:
\begin{equation}\label{comp2fun2}
H^1(K_1/Z_1,\kappa)\cong \Hom(K_1/Z_1, k)\otimes \kappa\cong (\Sym^2 k^2 \otimes \dt^{-1})\otimes \kappa,
\end{equation}
see \cite[Prop 5.1]{bp}. Now $\dim \Hom_K(\st,\st \otimes \Sym^2 k^2 \otimes \det^{-1})=1$, by \cite[Prop 5.4 (ii)]{bp}
and $\Hom_K(\st,\Sym^2 k^2 \otimes \dt^{-1})=0$ as $p\ge 5$.
So we get the assertions (i) and (ii). For (iii) we observe that 
$$H^1(I/Z_1, \st)\cong \Ext^1_{K/Z_1}(\Indu{I}{K}{\Eins}, \st)\cong \Ext^1_{K/Z_1}(\Eins\oplus\st, \st).$$
\end{proof}

\begin{lem}\label{1dimst} We have $\dim \Ext^1_{I/Z_1}(\st, \alpha)=1$. The natural map 
\begin{equation}\label{shoot}
\Ext^1_{I/Z_1}( \st, \alpha)\rightarrow \Ext^1_{(I\cap P)/Z_1}(\st^{I}, \alpha)
\end{equation}
is an isomorphism. 
\end{lem}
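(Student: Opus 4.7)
The plan is to compute both Ext groups, show each is one-dimensional, and then establish the natural map is non-zero (hence an isomorphism). First I would identify $\st|_I$ using the Bruhat decomposition $K = I \sqcup I s I_1$: Mackey's formula applied to the short exact sequence $0\to\Eins\to \Indu{I}{K}{\Eins}\to \st\to 0$ of $K$-representations, restricted to $I$, yields $\Indu{I}{K}{\Eins}|_I \cong \Eins \oplus \Indu{I\cap I^-}{I}{\Eins}$, where $I^- = s^{-1}Is$ is the opposite Iwahori; the $\Eins$ summand matches the subrepresentation $\Eins \subset \Indu{I}{K}{\Eins}$, so $\st|_I\cong \Indu{I\cap I^-}{I}{\Eins}$. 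Since $[I:I\cap I^-]=p<\infty$, Shapiro/Frobenius reciprocity gives $\Ext^1_{I/Z_1}(\st,\alpha)\cong H^1((I\cap I^-)/Z_1,\alpha)$. Simultaneously, $\st^I$ is one-dimensional and trivial (spanned by the constant function), so $\st^I=\Eins$.

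For the source, $(I\cap I^-)=H_0 K_1$ decomposes as $H\ltimes K_1$ with $H$ of order prime to $p$, so inflation--restriction reduces to $H^1(K_1/Z_1,\alpha)^H$. Since $p\ge 5$, $K_1$ is a uniform pro-$p$ group and $(K_1/Z_1)^{ab}\otimes_{\ZZ_p}\Fp$ is isomorphic to the traceless $2\times 2$ matrices over $\Fp$ as $H$-module via the adjoint action, with $H$-weights $\alpha,\Eins,\alpha^{-1}$; dualising and twisting by $\alpha$ shifts these to $\Eins,\alpha,\alpha^2$, and as $p\ge 5$ makes $\alpha,\alpha^2$ non-trivial, the $H$-invariants form a one-dimensional line coming from the dual of the upper-nilpotent generator $e$. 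For the target, $\Ext^1_{(I\cap P)/Z_1}(\Eins,\alpha) = H^1((I_1\cap P)/Z_1,\alpha)^H$ after collapsing the prime-to-$p$ group $H$. The abelianisation of $(I_1\cap P)/Z_1 \cong (T_1/Z_1)\ltimes U_0$ has $H$-weights $\Eins$ (diagonal) and $\alpha$ (upper-unipotent $U_0$); dualising and tensoring with $\alpha$ gives weights $\alpha,\Eins$, so the target is one-dimensional, with generator supplied by the $U_0$-direction.

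Finally, to show the natural map is non-zero, my plan is to compare both sides by restriction to the common subgroup $(K_1\cap P)/Z_1$, which sits inside both $K_1/Z_1$ and $(I_1\cap P)/Z_1$. The source's generator, coming from the $e^*$-direction of the dual adjoint representation, restricts to a non-zero cocycle supported on the upper-unipotent line $U_1 \subset K_1\cap P$; the target's generator, coming from $U_0$, restricts to the very same direction via $U_1 \subset U_0$. Provided the functorial map in question (restrict to $I\cap P$, then pull back along $\st^I\hookrightarrow \st$) is compatible through the Shapiro identification on the source and the direct $H^1$ identification on the target with these two restrictions to $H^1((K_1\cap P)/Z_1,\alpha)^H$, the non-vanishing of this common restriction forces the natural map to be an isomorphism. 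The main obstacle will be rigorously verifying this compatibility of spectral-sequence edge maps on the two sides; concretely, I would construct an explicit non-split $(I\cap I^-)$-extension $F$ of $\Eins$ by $\alpha$ representing a generator of the source, induce and push out to obtain the $I$-extension $E$ of $\st$ by $\alpha$, and check directly that the subextension $\pi^{-1}(\st^I) \subset E|_{I\cap P}$ is non-split, with its class detected by the same $U_1$-cocycle that generates the target.
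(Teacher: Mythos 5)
Your identification $\st|_I\cong \Indu{I\cap I^-}{I}{\Eins}=\Indu{HK_1}{I}{\Eins}$ (one checks $I\cap sIs = HK_1$) and the ensuing Shapiro reductions and $H$-weight analyses, yielding one-dimensionality of both $\Ext^1_{I/Z_1}(\st,\alpha)$ and $\Ext^1_{(I\cap P)/Z_1}(\st^{I},\alpha)$, coincide with the paper's argument. The error is in your mechanism for non-vanishing of the natural map. You assert that the source and target generators ``restrict to the very same direction'' on $U_1:=K_1\cap U$. They do not. The source generator is the homomorphism $\kappa\bigl(\begin{smallmatrix}a&b\\c&d\end{smallmatrix}\bigr)=bp^{-1}\bmod p$ on $K_1$, which is non-zero on $U_1$. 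The target generator $f\in H^1((I_1\cap P)/Z_1,\alpha)^H$ is carried on $U_0:=I_1\cap U$, so $f\bigl(\begin{smallmatrix}1&b\\0&1\end{smallmatrix}\bigr)=b\bmod p$ for $b\in\Zp$; restricted to $U_1$, where $b\in\pF$, it is identically zero. In fact the full restriction $H^1((I_1\cap P)/Z_1,\alpha)^H\to H^1((K_1\cap P)/Z_1,\alpha)^H$ vanishes, since $K_1\cap U=p\cdot(I_1\cap U)$ and homomorphisms into $k$ kill $p$-th powers. So the ``common restriction'' you propose to compare is zero on one side by design and cannot detect anything. Worse, the diagram you implicitly need does not commute: the source's Shapiro identification pulls back along the function supported on the identity coset of $I/HK_1$, whereas the lemma's map pulls back along the norm $\sum_c\overline c$ of that function; these are different $K_1\cap P$-invariant vectors of $\st$, and pulling back along them gives different classes.

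What actually produces the non-vanishing, and what the paper computes, is a test at a generator of $I_1\cap U$ rather than of $K_1\cap U$. With $w\in E$ lifting the identity-coset vector so that $gw=w+\kappa(g)v$ for $g\in K_1$, one sets $w_1=\sum_{c\in I/HK_1}\overline c\,w$ (which lifts the generator of $\st^I$) and, for $g=\bigl(\begin{smallmatrix}1&1\\0&1\end{smallmatrix}\bigr)$, computes $gw_1-w_1=\kappa\bigl(\prod_c\overline{gc}^{-1}g\overline c\bigr)v=\kappa(g^p)v=v\neq 0$, using that the coset representatives commute with $g$. The $p$ accumulated by the product over $\Fp$ is exactly what cancels the $p^{-1}$ built into $\kappa$; this arithmetic cancellation is the substance of the lemma and is invisible to any restriction-to-$U_1$ comparison. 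Your ``Plan B'' of an explicit construction is the right idea, but it must be probed at a generator of $I_1\cap U$ and the verification is this $g^p$ identity, not a match of restrictions on $U_1$.
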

\begin{proof} Since $\Indu{I}{K}{\Eins}\cong \Eins \oplus \st$, we have an isomorphism
$\st|_I \cong \Indu{HK_1}{I}{\Eins}$ and hence 
\begin{equation}\label{isoextst}
\Ext^1_{I/Z_1}( \st, \alpha)\cong \Ext^1_{HK_1/Z_1}(\Eins, \alpha)\cong H^1(K_1/Z_1,\alpha)^H,
\end{equation}
which is one dimensional, see the proof of \cite[Prop. 5.4]{ext2}. We identify $H^1(K_1/Z_1,\alpha)$ 
with $\Hom(K_1/Z_1, k)$, then $\Ext^1_{I/Z_1}(\st, \alpha)$ is 
identified with the subspace  generated by $\kappa$, where
$$ \kappa(g)= (bp^{-1}) \pmod{p}, \quad \forall g=\bigl( \begin{smallmatrix} a & b \\ c & d \end{smallmatrix}\bigl )\in K_1.$$
Let $0\rightarrow \alpha \rightarrow E \rightarrow \st\rightarrow 0$ be the unique non-split extension and let $v$ be the basis
vector of $\alpha$. For each coset $c \in I/HK_1$ fix a coset representative $\overline{c}$ of the form 
$\bigl( \begin{smallmatrix} 1 & [\lambda] \\ 0 & 1 \end{smallmatrix}\bigl )$. We note that given $g\in I_1$ 
the element $\overline{gc}^{-1}g \overline{c}$ lies in $K_1$.
The isomorphism $\st|_I\cong \Indu{HK_1}{I}{\Eins}$ and \eqref{isoextst} imply
that there exists $w\in E$ such that the image of $\{\overline{c} w : c\in I/HK_1\}$ is a basis of 
$\st$ and for all $g\in K_1$ we have $g w = w + \kappa(g)v $.
 Let $w_1=\sum_c \overline{c} w$, then the image of $w_1$ in $\st$ spans $\st^{I}$. We have 
\begin{equation}
\begin{split}
g w_1=g \sum_c \overline{c} w= \sum_c  \overline{gc} (\overline{gc}^{-1}  g  \overline{c})w &=w_1 +\sum_c 
\kappa(\overline{gc}^{-1}  g  \overline{c})v\\
&= w_1 + \kappa(\prod_c (\overline{gc}^{-1}  g  \overline{c})) v.
\end{split}
\end{equation}
If $g=\bigl( \begin{smallmatrix} 1 & 1 \\ 0 & 1 \end{smallmatrix}\bigl )$ then $g$ commutes with 
$\overline{c}$, 
and so $\prod_c (\overline{gc}^{-1}  g  \overline{c})= g^p$, thus $g w_1= w_1 + v$. This implies that the map \eqref{shoot}
is non-zero. Moreover, the target is $1$-di\-men\-sio\-nal by \cite[Prop. 5.4]{ext2}, hence \eqref{shoot} is an isomorphism.   
\end{proof}

\begin{cor}\label{pitau} $\dim \Ext^1_{K/Z_1}(\sigma, \pi_{\Eins})=1$ 
and $\Ext^1_{K/Z_1}(\sigma, \pi_{\Eins}/\tau)=0$, where $\sigma=\Eins$ or $\sigma=\st$.
\end{cor}
\begin{proof} It follows from \ref{cutdown} that 
$$\Ext^1_{K/Z_1}(\Indu{I}{K}{\Eins}, \pi_{\Eins}/\tau)\cong \Ext^1_{I/Z_1}(\Eins, \pi_{\Eins}/\tau)=0, $$
$$\Hom_{K/Z_1}(\Indu{I}{K}{\Eins}, \pi_{\Eins}/\tau)\cong \Hom_{I/Z_1}(\Eins, \pi_{\Eins}/\tau)=0.$$
Since $\Indu{I}{K}{\Eins}\cong \Eins\oplus \st$, we get $\Ext^1_{K/Z_1}(\sigma, \pi_{\Eins}/\tau)=0$ and 
$\Hom_K(\sigma, \pi_{\Eins}/\tau)=0$. Thus, applying $\Hom_K(\sigma,\ast)$ to the exact sequence
$$0\rightarrow \Indu{I}{K}{\alpha} \rightarrow \pi_{\Eins}/\Eins \rightarrow \pi_{\Eins}/\tau\rightarrow 0$$  
we obtain
$$\Ext^1_{K/Z_1}(\sigma, \pi_{\Eins}/\Eins) \cong \Ext^1_{K/Z_1}(\sigma, \Indu{I}{K}{\alpha})\cong 
\Ext^1_{I/Z_1}(\sigma, \alpha).$$
Now $\Ext^1_{I/Z_1}(\st, \alpha)$ is $1$-di\-men\-sio\-nal by Lemma \ref{1dimst} and $\Ext^1_{I/Z_1}(\Eins, \alpha)$ is
$1$-di\-men\-sio\-nal by \cite[Prop. 5.4]{ext2}. Hence, 
$$\Ext^1_{I/Z_1}(\Eins, \pi_{\Eins}/\Eins)\cong \Ext^1_{K/Z_1}(\Eins\oplus \st, \pi_{\Eins}/\Eins)$$
is $2$-di\-men\-sio\-nal. We know that $\Ext^1_{I/Z_1}(\Eins, \Eins)=0$, \cite[Prop. 5.4]{ext2}, so 
\begin{equation}\label{map}
\Ext^1_{I/Z_1}(\Eins, \pi_{\Eins})\rightarrow\Ext^1_{I/Z_1}(\Eins, \pi_{\Eins}/\Eins)
\end{equation}
is an injection. The source has dimension $2$ by \cite[Thm. 7.9]{ext2}. Hence, 
\eqref{map} is an isomorphism. Using $\Indu{I}{K}{\Eins}\cong \Eins\oplus \st$ again we get 
that $\Ext^1_{K/Z_1}(\st, \pi_{\Eins})$ and $\Ext^1_{K/Z_1}(\Eins, \pi_{\Eins})$ are both $1$-di\-men\-sio\-nal.
\end{proof}

\begin{lem} We have exact sequences of $G^+$-representations:
\begin{equation}\label{tree1}
0\rightarrow \cIndu{K^{\Pi}Z}{G^+}{\Eins} \rightarrow \cIndu{KZ}{G^+}{\Eins} \rightarrow \pi_{\Eins}\rightarrow 0
\end{equation}
\begin{equation}\label{tree2}
0\rightarrow \cIndu{K^{\Pi}Z}{G^+}{\st^{\Pi}} \rightarrow \cIndu{KZ}{G^+}{\st} \rightarrow \pi_{\st}\rightarrow 0.
\end{equation}
\end{lem}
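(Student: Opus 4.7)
The plan is to deduce both sequences simultaneously from the two presentations of $\pi$ as $\cIndu{KZ}{G}{\sigma}/(T)$, using $\sigma=\Eins$ (case $r=0$) for the first sequence and $\sigma=\st\cong \Sym^{p-1}k^2$ (case $r=p-1$) for the second; both presentations are available because $\pi(0,0)\cong \pi(p-1,0)$. From the $k[T]$-freeness of $\cIndu{KZ}{G}{\sigma}$ recalled after Lemma~\ref{TT}, each of them fits into a short exact sequence of $G$-representations $0\to \cIndu{KZ}{G}{\sigma}\xrightarrow{T}\cIndu{KZ}{G}{\sigma}\to \pi\to 0$, and I restrict this to $G^+$.

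Using $G=G^+\sqcup G^+\Pi$ and $KZ\subset G^+$, Mackey's formula yields
\[
\cIndu{KZ}{G}{\sigma}\big|_{G^+}\;\cong\;\cIndu{KZ}{G^+}{\sigma}\;\oplus\;\cIndu{K^{\Pi}Z}{G^+}{\sigma^{\Pi}},
\]
where the first summand consists of functions supported on $G^+$ and the second of functions supported on $G^+\Pi$ (the latter then rewritten, via left translation by $\Pi$ and conjugation of the source, as $\cInd$ from $K^{\Pi}Z$ of the $\Pi$-twist of $\sigma$). Similarly $\pi|_{G^+}=\pi_{\Eins}\oplus\pi_{\st}$ by \eqref{restpi+}. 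The key observation about $T$ is that the explicit formulas in Lemma~\ref{TT} express $T\varphi$ as a sum of translates of $\varphi$ by elements ($\Pi$, $t$, and $\bigl(\begin{smallmatrix}1&[\lambda]\\0&1\end{smallmatrix}\bigr)t$) each of which has determinant of odd $p$-adic valuation. Hence $T$ sends functions supported on $G^+$ to functions supported on $G^+\Pi$ and vice versa, i.e.\ it is block anti-diagonal with respect to the Mackey decomposition. Because $T$ is injective on $G$, both off-diagonal components $T_1:\cIndu{KZ}{G^+}{\sigma}\hookrightarrow \cIndu{K^{\Pi}Z}{G^+}{\sigma^{\Pi}}$ and $T_2:\cIndu{K^{\Pi}Z}{G^+}{\sigma^{\Pi}}\hookrightarrow \cIndu{KZ}{G^+}{\sigma}$ are injective, and the cokernel of the restricted $T$ decomposes accordingly into two short exact sequences.

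To match these two sequences with the stated ones I would track which generator hits which summand of $\pi_{\Eins}\oplus \pi_{\st}$. Frobenius reciprocity identifies the generator $\varphi_0$ of $\cIndu{KZ}{G^+}{\sigma}$ (supported on $KZ$ with value the $I_1$-fixed vector of $\sigma$) with the $I_1$-invariant generator of the $K$-subrepresentation $\langle K\centerdot (\cdot)\rangle\subset \pi$: this is $v_{\Eins}$ when $\sigma=\Eins$, and $v_{\st}$ when $\sigma=\st$. Since $v_{\Eins}\in\pi_{\Eins}$ and $v_{\st}\in\pi_{\st}$, in the case $\sigma=\Eins$ the quotient of $\cIndu{KZ}{G^+}{\Eins}$ by the image of $T_2$ is precisely $\pi_{\Eins}$, yielding \eqref{tree1}; in the case $\sigma=\st$ the quotient of $\cIndu{KZ}{G^+}{\st}$ by the image of $T_2$ is $\pi_{\st}$, yielding \eqref{tree2}. (Surjectivity of the two maps to $\pi_{\Eins}$ and $\pi_{\st}$ also follows directly, since $\pi_{\Eins}=G^+\cdot v_{\Eins}$ and $\pi_{\st}=G^+\cdot v_{\st}$ thanks to \eqref{restpi+} and the fact that $\pi$ is $G$-irreducible.)

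The only place where care is needed is the bookkeeping in the Mackey step — in particular the identification of the ``other'' summand as $\cIndu{K^{\Pi}Z}{G^+}{\sigma^{\Pi}}$ via $\Pi$-translation (using $\Pi^2\in Z$) — and the verification that under the $G$-presentation the generator maps to $v_{\Eins}$ respectively $v_{\st}$; everything else is routine once the block anti-diagonality of $T$ is recorded. I anticipate no real obstacle beyond choosing notation that keeps the two Mackey summands and the two summands $\pi_{\Eins},\pi_{\st}$ straight.
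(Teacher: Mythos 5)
Your argument is correct and follows essentially the same route as the paper's proof: restrict the presentation $0\to \cIndu{KZ}{G}{\sigma}\xrightarrow{T}\cIndu{KZ}{G}{\sigma}\to\pi\to 0$ to $G^+$, split by support into $\cIndu{KZ}{G^+}{\sigma}\oplus\cIndu{K^{\Pi}Z}{G^+}{\sigma^{\Pi}}$, use Lemma \ref{TT} plus $G$-equivariance of $T$ (and the fact that $\varphi$, $\Pi\varphi$ generate the two summands over $G^+$) to see that $T$ interchanges them, and conclude from injectivity of $T$. The only cosmetic difference is the final matching of quotients with $\pi_{\Eins}$ and $\pi_{\st}$: you track the image of the generator $\varphi$ (a multiple of $v_{\Eins}$, resp.\ $v_{\st}$), while the paper matches the summands via the vanishing $\Hom_K(\check\sigma,\pi_{\sigma})=0$ deduced from \eqref{resultext2}; both hinge on the same facts about $\pi^{I_1}$.
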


\begin{proof} Below we let ($\sigma=\st$ and $\check{\sigma} =\Eins$) or ($\check{\sigma}=\st$ and $\sigma =\Eins$).
Let 
$$\FF^+:=\{f\in \cIndu{KZ}{G}{\sigma}: \supp f\subseteq G^+\}\cong \cIndu{KZ}{G^+}{\sigma},$$
$$\FF^-:=\{f\in \cIndu{KZ}{G}{\sigma}: \supp f\subseteq \Pi G^+\}\cong \cIndu{K^{\Pi}Z}{G^+}{\sigma^{\Pi}}.$$
We have  $\cIndu{KZ}{G}{\sigma}|_{G^+}\cong \FF^+\oplus \FF^-$. Let $\varphi\in\cIndu{KZ}{G}{\sigma}$, such 
that $\supp \varphi=KZ$ and $\varphi(1)$ spans $\sigma^{I_1}$. Then $\FF^+=\langle G^+\centerdot \varphi\rangle$ and 
$\FF^-= \langle G^+\centerdot \Pi\varphi\rangle$. It follows from Lemma \ref{TT} that $T\varphi \in \FF^{-}$ and 
$T \Pi \varphi =\Pi T\varphi \in \FF^{+}$. Hence, $T(\FF^+)\subset \FF^-$ and $T(\FF^-) \subset \FF^+$. Hence,
\begin{equation}\label{piplusminus}
\pi|_{G^+}\cong \FF^+/T(\FF^-)\oplus \FF^-/T(\FF^+).
\end{equation}
Now \eqref{resultext2} implies that $\Hom_K(\check{\sigma}, \pi_{\sigma})=0$, and thus \eqref{restpi+} and \eqref{piplusminus}
give $\pi_{\sigma}\cong \FF^+/T(\FF^-)$, $\pi_{\check{\sigma}}\cong \FF^-/T(\FF^+)$. Since $T$ is an injection we obtain the result. 
\end{proof}

\begin{prop}\label{extst1st1} We have 
\begin{itemize} 
\item[(i)] $\dim \Ext^1_{G^+/Z}(\pi_{\st}, \pi_{\Eins})=\dim \Ext^1_{G^+/Z}(\pi_{\Eins}, \pi_{\st})=2$;
\item[(ii)]$\dim \Ext^1_{G^+/Z}(\pi_{\st}, \pi_{\st})=\dim \Ext^1_{G^+/Z}(\pi_{\Eins}, \pi_{\Eins})=1$;
\end{itemize} 
\end{prop}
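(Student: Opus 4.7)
The plan is to leverage the isomorphism $\pi\cong \Indu{G^+}{G}{\pi_{\Eins}}$ from \eqref{indupi}, the decomposition \eqref{restpi+}, and the $\Pi$-conjugation symmetry between $\pi_{\Eins}$ and $\pi_{\st}$ to reduce the four dimensions in question to two unknowns summing to $3$, and then to bound each of them separately using the $G^+$-resolutions \eqref{tree1}, \eqref{tree2} together with Corollary \ref{pitau}. Since $G^+$ is open of index $2$ in $G$, compact and smooth induction coincide there and both Frobenius adjunctions are available with exact restriction, so Shapiro's lemma gives
$$\Ext^1_{G/Z}(\pi,\pi)\cong\Ext^1_{G^+/Z}(\pi_{\Eins},\pi_{\Eins})\oplus\Ext^1_{G^+/Z}(\pi_{\Eins},\pi_{\st}).$$
Writing $a$ and $b$ for the dimensions of the two summands and using $\dim\Ext^1_{G/Z}(\pi,\pi)=3$ from Proposition \ref{H15}, one gets $a+b=3$. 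Conjugation by $\Pi$ normalises $G^+$ and $Z$ and sends $\pi_{\Eins}$ to $\pi_{\st}$ (by \eqref{pi1st}); being an auto-equivalence of $\Rep_{G^+/Z}$, it identifies $\dim\Ext^1_{G^+/Z}(\pi_{\st},\pi_{\st})=a$ and $\dim\Ext^1_{G^+/Z}(\pi_{\st},\pi_{\Eins})=b$. It therefore suffices to prove $a\le 1$ and $b\le 2$.

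To bound $a$, I will apply $\Hom_{G^+/Z}(-,\pi_{\Eins})$ to the resolution \eqref{tree1}. Compact induction from an open subgroup preserves projectives (as left adjoint to exact restriction), so for every $i$ one has
$$\Ext^i_{G^+/Z}(\cIndu{KZ}{G^+}{\Eins},V)\cong \Ext^i_{K/Z_1}(\Eins,V),$$
and conjugation by $\Pi$, combined with $\pi_{\Eins}^{\Pi}\cong \pi_{\st}$, gives $\Ext^i_{G^+/Z}(\cIndu{K^\Pi Z}{G^+}{\Eins},\pi_{\Eins})\cong \Ext^i_{K/Z_1}(\Eins,\pi_{\st})$. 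Using $\pi_{\Eins}^K\subseteq \pi_{\Eins}^{I_1}=k\cdot v_{\Eins}$ and that $K$ acts trivially on $v_{\Eins}$, one has $\Hom_{K/Z_1}(\Eins,\pi_{\Eins})=k$; while $K$ acts on $\pi_{\st}^{I_1}=kv_{\st}$ through $\st$, so $\Hom_{K/Z_1}(\Eins,\pi_{\st})=0$. The low-term sequence forces $\End_{G^+/Z}(\pi_{\Eins})=k$ and produces an injection
$$\Ext^1_{G^+/Z}(\pi_{\Eins},\pi_{\Eins})\hookrightarrow \Ext^1_{K/Z_1}(\Eins,\pi_{\Eins}),$$
whose target is $1$-dimensional by Corollary \ref{pitau}, giving $a\le 1$.

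For $b\le 2$, apply $\Hom_{G^+/Z}(-,\pi_{\Eins})$ to \eqref{tree2}. By the same formalism, and using $\Hom_{K/Z_1}(\st,\pi_{\Eins})=0$ (since $\soc_K\pi_{\Eins}\cong \Eins$) and $\Hom_{K/Z_1}(\st,\pi_{\st})=k$ (generated by $v_{\st}$), the long exact sequence collapses to
$$0\to k\to \Ext^1_{G^+/Z}(\pi_{\st},\pi_{\Eins})\to \Ext^1_{K/Z_1}(\st,\pi_{\Eins}),$$
whose last term is again $1$-dimensional by Corollary \ref{pitau}, so $b\le 2$. Combined with $a+b=3$ this forces $a=1$ and $b=2$, which is exactly (i) and (ii). The main technical points to verify carefully are Shapiro/Frobenius reciprocity for higher $\Ext$ in the smooth-representation setting and the compatibility of the $\Pi$-twist in the identification $\Ext^i_{K^\Pi Z/Z}(\Eins,\pi_{\Eins})\cong \Ext^i_{K/Z_1}(\Eins,\pi_{\st})$; both are routine but should be invoked explicitly.
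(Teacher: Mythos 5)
Your proof is correct and is essentially the paper's argument: you bound $\dim\Ext^1_{G^+/Z}(\pi_{\Eins},\pi_{\Eins})\le 1$ and $\dim\Ext^1_{G^+/Z}(\pi_{\st},\pi_{\Eins})\le 2$ by applying $\Hom_{G^+/Z}(-,\pi_{\Eins})$ to \eqref{tree1} and \eqref{tree2} respectively, invoke $\dim\Ext^1_{G/Z}(\pi,\pi)=3$ via Frobenius reciprocity for the index-$2$ induction to force equalities, and carry the other two dimensions over by $\Pi$-conjugation. The only cosmetic difference is that you decompose $\Ext^1_{G/Z}(\pi,\pi)$ by adjoining in the second variable (getting $\Ext^1_{G^+/Z}(\pi_{\Eins},\pi_{\Eins}\oplus\pi_{\st})$) whereas the paper adjoins in the first (getting $\Ext^1_{G^+/Z}(\pi_{\Eins}\oplus\pi_{\st},\pi_{\Eins})$); both are valid since $\Ind_{G^+}^G$ coincides with coinduction for finite index.
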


\begin{proof} By applying $\Hom_{G^+/Z}(\ast, \pi_{\Eins})$ to \eqref{tree1} we get an exact sequence 
$$\Hom_{K^{\Pi}}(\Eins, \pi_{\Eins})\rightarrow \Ext^1_{G^{+}/Z}(\pi_{\Eins}, \pi_{\Eins})\rightarrow  
\Ext^1_{K/Z_1}(\Eins, \pi_{\Eins}).$$
It follows from \eqref{M1st} that  $\pi_{\Eins}^{\Pi}\cong \pi_{\st}$, hence 
$$\Hom_{K^{\Pi}}(\Eins, \pi_{\Eins})\cong \Hom_{K}(\Eins, \pi_{\st})=0$$ and 
so $\dim \Ext^1_{G^{+}/Z}(\pi_{\Eins}, \pi_{\Eins})\le 1$. Similarly, by applying $\Hom_{G^+/Z}(\ast, \pi_{\Eins})$ to 
\eqref{tree2}
 we obtain $\dim \Ext^1_{G^+/Z}(\pi_{\st}, \pi_{\Eins})\le 2$. On the other hand $\pi\cong \Indu{G^+}{G}{\pi_{\Eins}}$ and 
 we know that
$$\Ext^1_{G/Z}(\pi, \pi)\cong \Ext^1_{G^+/Z}(\pi_{\Eins}\oplus \pi_{\st}, \pi_{\Eins})$$
is $3$-di\-men\-sio\-nal, \cite[10.13]{ext2}. Hence, both inequalities are in fact equalities. We obtain the rest 
by using $\pi_{\st}=\pi_{\Eins}^{\Pi}$ and $\pi_{\Eins}=\pi_{\st}^{\Pi}$.
\end{proof}  

\begin{cor}\label{extstpist} We have $\dim \Ext^1_{K/Z_1}(\st, \pi_{\st})=\dim \Ext^1_{K/Z_1}(\Eins, \pi_{\st})=1$.
\end{cor}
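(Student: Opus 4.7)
The result is the $\pi_\st$-analog of Corollary \ref{pitau}, and my plan is to carry the proof of that corollary through step by step with $v_\st$ in place of $v_\Eins$. I define
\[
w'_\st:=\sum_{\lambda,\mu\in\Fp}\lambda\begin{pmatrix}1&[\mu]+p[\lambda]\\0&1\end{pmatrix}t^2 v_\st,\qquad
w'_\Eins:=\sum_{\lambda\in\Fp}\lambda\begin{pmatrix}1&[\lambda]\\0&1\end{pmatrix}tv_\st,
\]
and $\tau':=\langle K\centerdot w'_\st\rangle+\langle K\centerdot(\Pi w'_\st)\rangle\subset\pi_\st$. The proof of Proposition \ref{cutdown} uses as its only geometric input that $M_\st|_{H(I\cap U)}$ is an injective envelope of the trivial character in $\Rep_{H(I\cap U)}$---the identical statement it used for $M_\Eins$, coming from \cite[Prop.~5.9]{ext2}. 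It thus yields a non-split exact sequence $0\to\st\to\tau'\to\Indu{I}{K}{\alpha}\to 0$ of $K$-representations (the socle is now $\soc_K\pi_\st\cong\st$) together with $\Ext^1_{I/Z_1}(\Eins,\pi_\st/\tau')=0$. Frobenius reciprocity via $\Indu{I}{K}{\Eins}\cong\Eins\oplus\st$ then gives $\Hom_K(\sigma,\pi_\st/\tau')=\Ext^1_K(\sigma,\pi_\st/\tau')=0$ for $\sigma\in\{\Eins,\st\}$, and the exact sequence $0\to\Indu{I}{K}{\alpha}\to\pi_\st/\st\to\pi_\st/\tau'\to 0$ produces $\Ext^1_{K/Z_1}(\sigma,\pi_\st/\st)\cong\Ext^1_{I/Z_1}(\sigma,\alpha)$, one-dimensional for each $\sigma\in\{\Eins,\st\}$ by Lemma \ref{1dimst} and \cite[Prop.~5.4]{ext2}.

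For the total dimension I use the $\Pi$-twist. Since $\Pi$ normalises $I/Z_1$ and the $\OO$-linear map $v\mapsto\Pi v$ intertwines the $I$-action on $\pi_\st$ with that on $\pi_\Eins$ up to the automorphism $g\mapsto\Pi g\Pi^{-1}$, which fixes the trivial character of $H$ and swaps $\alpha\leftrightarrow\alpha^{-1}$, we obtain $\Ext^1_{I/Z_1}(\Eins,\pi_\st)\cong\Ext^1_{I/Z_1}(\Eins,\pi_\Eins)$. The latter is two-dimensional by \cite[Thm.~7.9]{ext2} (as invoked in the proof of Corollary \ref{pitau}), so Frobenius reciprocity gives $\dim\Ext^1_K(\Eins\oplus\st,\pi_\st)=2$. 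The long exact sequence attached to $0\to\st\to\pi_\st\to\pi_\st/\st\to 0$, together with $\Hom_K(\sigma,\pi_\st/\st)=0$ (since $\soc_K(\pi_\st/\st)\subseteq\Indu{I}{K}{\alpha}$, which is irreducible and isomorphic to neither $\Eins$ nor $\st$) and Lemma \ref{dimsttriv}, yields $\dim\Ext^1_K(\Eins,\pi_\st)\le 1$ and $\dim\Ext^1_K(\st,\pi_\st)\ge 1$.

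The main obstacle is to upgrade the sum $2$ to the split $(1,1)$, i.e.\ to rule out the degenerate possibility $\dim\Ext^1_K(\Eins,\pi_\st)=0$. My plan is to show that the connecting map $\delta\colon\Ext^1_K(\st,\pi_\st/\st)\to\Ext^2_K(\st,\st)$ in that long exact sequence is injective: since the target is non-zero, as follows from the Lyndon--Hochschild--Serre spectral sequence for $K_1/Z_1\vartriangleleft K/Z_1$ together with the explicit description of $H^2(K_1/Z_1,k)$ as a $K/K_1$-representation (in the spirit of Lemma \ref{dimsttriv}), this will force $\dim\Ext^1_K(\st,\pi_\st)=1$ and hence $\dim\Ext^1_K(\Eins,\pi_\st)=1$ from the sum. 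The injectivity of $\delta$ amounts to showing that the unique non-trivial class in $\Ext^1_I(\st,\alpha)$ is genuinely obstructed from lifting to a $K$-extension of $\st$ by $\pi_\st$; this I expect to verify by an explicit cocycle calculation carried out on $I_1$-invariants inside $\pi_\st/\tau'$, again leveraging the injective envelope property of $M_\st|_{H(I\cap U)}$ that powered every earlier step.
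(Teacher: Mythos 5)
Your argument establishes only $\dim\Ext^1_{K/Z_1}(\Eins,\pi_{\st})+\dim\Ext^1_{K/Z_1}(\st,\pi_{\st})=2$ together with the one-sided bounds $\dim\Ext^1_{K/Z_1}(\Eins,\pi_{\st})\le 1$ and $\dim\Ext^1_{K/Z_1}(\st,\pi_{\st})\ge 1$, which still leaves the two possibilities $(1,1)$ and $(0,2)$. The step that eliminates $(0,2)$ --- the injectivity of the connecting map $\delta\colon\Ext^1_{K/Z_1}(\st,\pi_{\st}/\st)\to\Ext^2_{K/Z_1}(\st,\st)$ --- is precisely the step you do not prove: you only announce a plan (``I expect to verify by an explicit cocycle calculation''). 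Non-vanishing of the target gives no information about injectivity; what is needed is that the unique non-trivial class of $\Ext^1_{I/Z_1}(\st,\alpha)\cong\Ext^1_{K/Z_1}(\st,\Indu{I}{K}{\alpha})$ is genuinely obstructed from lifting to an extension of $\st$ by $\pi_{\st}$, and lifting computations of exactly this kind are the delicate part of this section (compare Lemmas \ref{cocycle}, \ref{die} and \ref{peekoutst}, each of which needs a real cocycle construction). So the decisive point of the corollary is left open. A secondary issue: the transfer of Proposition \ref{cutdown} to $\pi_{\st}$ is asserted ``step by step'' but not checked --- the $K$-socle of $\pi_{\st}$ is the $p$-dimensional $\st$ rather than a character, and \eqref{resultext2} describes $\pi_{\st}/k v_{\st}$, not $\pi_{\st}/\st$, so the exactness of $0\to\st\to\tau'\to\Indu{I}{K}{\alpha}\to 0$ and the vanishing $\Ext^1_{I/Z_1}(\Eins,\pi_{\st}/\tau')=0$ require their own verification.

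The gap can be closed without any obstruction computation by importing the missing lower bound the way the paper does: applying $\Hom_{G^+/Z}(\ast,\pi_{\st})$ to \eqref{tree1}, and using $\Hom_K(\Eins,\pi_{\st})=0$, $\dim\Hom_{K^{\Pi}}(\Eins,\pi_{\st})=1$ and $\dim\Ext^1_{G^+/Z}(\pi_{\Eins},\pi_{\st})=2$ from Proposition \ref{extst1st1}, gives $\dim\Ext^1_{K/Z_1}(\Eins,\pi_{\st})\ge 1$; likewise \eqref{tree2} together with $\dim\Ext^1_{G^+/Z}(\pi_{\st},\pi_{\st})=1$ gives $\dim\Ext^1_{K/Z_1}(\st,\pi_{\st})\ge 1$. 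These two lower bounds combined with your count $\dim\Ext^1_{I/Z_1}(\Eins,\pi_{\st})=2$ force $(1,1)$ immediately --- this is the paper's proof --- and they render both the $\tau'$ construction and the $\Ext^2$ obstruction step unnecessary.
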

\begin{proof} Applying $\Hom_{G/Z}(\ast, \pi_{\st})$ to \eqref{tree1} we get an exact sequence:
$$\Hom_{K^{\Pi}}(\Eins, \pi_{\st})\hookrightarrow \Ext^1_{G/Z}(\pi_{\Eins}, \pi_{\st})\rightarrow \Ext^1_{K/Z_1}(\Eins, \pi_{\st}).$$
Proposition \ref{extst1st1} implies that $\dim \Ext^1_{K/Z_1}(\Eins, \pi_{\st})\ge 1$. We apply 
$\Hom_{K/Z_1}(\ast, \pi_{\st})$ to \eqref{tree2} we get an injection
 $$\Ext^1_{K/Z_1}(\pi_{\st}, \pi_{\st})\hookrightarrow \Ext^1_{K/Z_1}(\st, \pi_{\st}).$$
Proposition \ref{extst1st1} implies that $\dim \Ext^1_{K/Z_1}(\st, \pi_{\st})\ge 1$. We know \cite[7.9]{ext2} that 
$$\Ext^1_{I/Z_1}(\Eins, \pi_{\st})\cong \Ext^1_{K/Z_1}(\Indu{I}{K}{\Eins}, \pi_{\st})\cong \Ext^1_{K/Z_1}(\Eins\oplus \st, \pi_{\st})$$
is $2$-di\-men\-sio\-nal. This implies that both inequalities must be equalities.
\end{proof}

\begin{lem} Let $E$ be the unique non-split extension $0\rightarrow \Eins\rightarrow E\rightarrow \Eins\rightarrow 0$
of $(I\cap P)/Z_1$-representations. Then 
the natural map 
\begin{equation}\label{zero}
\Ext^1_{(I\cap P)/Z_1}(E, \alpha)\rightarrow \Ext^1_{(I\cap P)/Z_1}(\Eins, \alpha)
\end{equation}
 is zero.
\end{lem}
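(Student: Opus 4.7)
The plan is to analyze the long exact sequence obtained by applying $\Hom_{(I\cap P)/Z_1}(-,\alpha)$ to $0\to\Eins\to E\to\Eins\to 0$. Since $\alpha$ is nontrivial on $H$ while $H$ acts trivially on $\Eins$ and $E$, we have $\Hom(\Eins,\alpha)=\Hom(E,\alpha)=0$, so the sequence collapses to
$$0\to\Ext^1(\Eins,\alpha)\to\Ext^1(E,\alpha)\to\Ext^1(\Eins,\alpha)\overset{\partial}{\to}\Ext^2(\Eins,\alpha),$$
with all $\Ext$ taken in $\Mod^{\mathrm{sm}}_{(I\cap P)/Z_1}(k)$. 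The map in \eqref{zero} vanishes precisely when $\partial$ is injective, and by a standard computation $\partial$ is (up to sign) Yoneda product with the class $[E]\in\Ext^1_{(I\cap P)/Z_1}(\Eins,\Eins)$.

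Next I would unwind the group structure: $(I\cap P)/Z_1=H\ltimes P_1$, where $P_1:=(T_1/Z_1)\ltimes(I\cap U)\cong(1+\pF)\ltimes\Zp$ is a torsion-free pro-$p$ group of dimension $2$, $\alpha|_{P_1}$ is trivial, and $H$ acts by the character $\alpha$ on $I\cap U$ and trivially on $T_1/Z_1$. Passing to abelianizations, a short calculation gives $\dim\Ext^1_{(I\cap P)/Z_1}(\Eins,\Eins)=1$ (with generator $f_2$ supported on $T_1/Z_1$) and $\dim\Ext^1_{(I\cap P)/Z_1}(\Eins,\alpha)=1$ (with generator $c_1$ whose restriction to $P_1$ is the natural surjection $f_1\colon I\cap U\twoheadrightarrow k$). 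In particular $E$ is well-defined, and the lemma reduces to checking that $f_2\smile c_1\neq 0$ in $H^2((I\cap P)/Z_1,\alpha)$.

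Because $|H|$ is prime to $p$, Hochschild--Serre identifies $H^i((I\cap P)/Z_1,\alpha)$ with the $H$-invariants in $H^i(P_1,k)$ for the action combining conjugation on $P_1$ with the $\alpha$-twist on coefficients, compatibly with cup products. Since $p\ge 5$ the group $P_1$ is $p$-saturable and hence a Poincar\'e duality pro-$p$ group of dimension $2$ by Lazard, so $H^2(P_1,k)\cong k$ and the cup pairing on $H^1(P_1,k)=kf_1\oplus kf_2$ is a perfect alternating pairing. Therefore $f_2\smile f_1\neq 0$ in $H^2(P_1,k)$.

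The only delicate point is the bookkeeping of $H$-characters. One checks that under $H$-conjugation $f_1$ is $\alpha^{-1}$-isotypic and $f_2$ is fixed, so $f_2\smile f_1$ is $\alpha^{-1}$-isotypic under conjugation, which combined with the $\alpha$-twist on coefficients makes it $H$-invariant; the same computation already implicitly shows that $c_1$ lies in $H^1(P_1,k)^{H,\alpha}$. Under Hochschild--Serre $f_2\smile c_1$ therefore corresponds to the nonzero class $f_2\smile f_1$, whence $\partial(c_1)\neq 0$ and \eqref{zero} is zero. The main obstacle is precisely this character-tracking step; everything else is a routine application of Poincar\'e duality for $P_1$ and the five-term exact sequence.
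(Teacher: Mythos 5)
Your proof is correct, and it takes a genuinely different route from the paper's. The paper argues by direct contradiction: it writes down bases $\{w_1,w_2\}$ of $E$ and $\{v_1,v_2,v_3\}$ of a hypothetical $E'$ with $0\to\alpha\to E'\to E\to 0$, computes the action of $d=\bigl(\begin{smallmatrix}1+p&0\\0&1\end{smallmatrix}\bigr)$ and $u=\bigl(\begin{smallmatrix}1&1\\0&1\end{smallmatrix}\bigr)$ on $v_1$, and derives a contradiction from the group relation $du=u^{p+1}d$. You instead recast the statement as the non-vanishing of the connecting map $\partial\colon\Ext^1(\Eins,\alpha)\to\Ext^2(\Eins,\alpha)$, identify $\partial$ as Yoneda/cup product with $[E]$, push everything down to $P_1=(I_1\cap P)/Z_1\cong\Zp\rtimes\Zp$ via Hochschild--Serre (using that $|H|$ is prime to $p$), and then invoke Poincar\'e duality for $P_1$ to conclude that the cup pairing on $H^1(P_1,k)$ is perfect and alternating, so $f_2\smile f_1\neq 0$. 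The $H$-isotypy bookkeeping you do is the right check that this nonzero class survives into $H^2((I\cap P)/Z_1,\alpha)$. Your route is more structural and makes visible that the lemma is really an instance of non-degeneracy of the duality pairing for the two-dimensional torsion-free $p$-adic analytic group $P_1$ --- a fact the paper itself invokes nearby (Lemma \ref{indHi}, Lemma \ref{wellknown}), so no extra machinery is being imported. The paper's argument is more elementary and self-contained, and has the pedagogical advantage of showing the reader exactly which cocycle identity fails. A small presentational remark: you should say explicitly that the Hochschild--Serre identification $H^*((I\cap P)/Z_1,M)\cong H^*(P_1,M)^{H}$ is compatible with cup products (it is, since $H$ has order prime to $p$ and the spectral sequence is multiplicative and degenerates), as that compatibility is the hinge of your reduction; and you conflate $c_1\in H^1((I\cap P)/Z_1,\alpha)$ with its image $f_1\in H^1(P_1,k)$ in a couple of places, which is harmless but worth tidying.
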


\begin{proof} We know that $\Ext^1_{(I\cap P)/Z_1}(\Eins, \Eins)\cong \Hom((I\cap P)/Z_1, k)$ is one dimensional 
and we may choose a basis 
$\{w_1, w_2\}$ of $E$ such that $w_2$ is fixed by $I\cap P$ and $d w_1= w_2+ w_1$, $uw_1=w_1$, where
$d=\bigl ( \begin{smallmatrix} 1+p & 0 \\ 0 & 1\end{smallmatrix}\bigr)$ and 
$u=\bigl (\begin{smallmatrix} 1 & 1 \\ 0 & 1\end{smallmatrix}\bigr )$, \cite[5.7]{ext2}. Suppose that the map is non-zero, 
then we have an extension $0\rightarrow \alpha \rightarrow E'\rightarrow E\rightarrow 0$. 
Since $\Ext^1_{(I\cap P)/Z_1}(\Eins, \alpha)$ is one dimensional, \cite[5.7]{ext2}, we may choose a basis 
$\{v_1,v_2, v_3\}$ of $E'$ such that $I$ acts by $\alpha$ on $v_3$, $v_2$ maps to $w_2$, $dv_2=  v_2$, 
$u v_2= v_2 + v_3$, $v_1$ maps to $w_1$ and $H$ act trivially on $v_1$ and $v_2$. Now $H$ act trivially 
on $(d-1) v_1$, hence $(d-1)v_1 = \lambda v_1 + \mu v_2$. By considering the image in $E$ we get 
$ dv_1= v_1 + v_2$. The image of $(u-1) v_1$ is zero in $E$. Hence $(u-1)v_1= \lambda v_3$, for some $\lambda$, 
$(u^p-1)v_1= (u-1)^p v_1= 0$ and so $ u^{p+1}v_1= u v_1= v_1 + \lambda v_3$. Now 
$$ du v_1= d ( v_1 + \lambda v_2)= v_1 + v_2 +\lambda v_3,$$ 
$$ u^{p+1} d v_1= u^{p+1}(v_1 + v_2)= v_1 +\lambda v_3 + v_2 + v_3.$$
Since $d u = u^{p+1} d$ in $I\cap P$ we deduce that $E'$ cannot exist.
\end{proof}  

\begin{lem}\label{cocycle} Let $\ee$ be the unique non-split extension 
$0\rightarrow \st\rightarrow \ee\rightarrow \st\rightarrow 0$   
of $K/Z_1$-representations. Then $\ee^{I\cap U}$ is the unique non-split 
extension $0\rightarrow \Eins\rightarrow \ee^{I\cap U}\rightarrow 
\Eins\rightarrow 0$ of $(I\cap P)/Z_1$-representations. 
\end{lem}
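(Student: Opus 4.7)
I would split the argument into two steps: first showing $\dim_{k_L}\ee^{I\cap U}=2$ with trivial socle and cosocle, then showing the resulting extension over $(I\cap P)/Z_1$ is non-split, which forces it to be the unique non-split one by \cite[5.7]{ext2}.

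For the dimension, I would apply the left exact functor $(-)^{I\cap U}$ to $0\to\st\to\ee\to\st\to 0$. Since $\st$ is inflated from $K/K_1$ and $(I\cap U)/(K_1\cap U)\cong U(\mathbb F_p)$, the restriction $\st|_{I\cap U}$ factors through $U(\mathbb F_p)\cong \mathbb F_p$. The module $\st|_{U(\mathbb F_p)}$ is $p$-dimensional with one-dimensional invariants (the paper's earlier identification $\st^{I_1}=k_L v_\st$, together with $\st^{I_1}=\st^{I\cap U}$, which follows because the remaining generators of $I_1$ either already lie in $I\cap U$ or lie in $K_1$ and so act trivially on $\st$), hence it is a uniserial $k_L[U(\mathbb F_p)]$-module of length $p$, i.e.\ isomorphic to the regular representation $k_L[U(\mathbb F_p)]$. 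This module is self-injective, so $\Ext^1_{I\cap U}(\st,\st)=0$, the sequence splits upon restriction to $I\cap U$, and consequently $\dim \ee^{I\cap U}=2$.

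The socle and cosocle of $\ee^{I\cap U}$ as $(I\cap P)/Z_1$-modules are each isomorphic to $\st^{I\cap U}=\Eins$: $I\cap U$ acts trivially by construction, and the complement $T_0/Z_1\cdot(T_0\cap K_1)/Z_1$ of $I\cap U$ in $(I\cap P)/Z_1$ acts trivially on $\st^{I_1}$, since $H\subset T_0$ acts trivially on $v_\st$ by the paper's earlier remarks and $T_0\cap K_1$ acts trivially on any $K/K_1$-inflated representation. So $\ee^{I\cap U}$ is an extension $0\to\Eins\to\ee^{I\cap U}\to\Eins\to 0$ of $(I\cap P)/Z_1$-modules.

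The key step is showing this extension is non-split. By the explicit description in the proof of Lemma (the earlier \eqref{zero}), the unique non-split extension $E$ is characterised by $u=\bigl(\begin{smallmatrix}1&1\\0&1\end{smallmatrix}\bigr)$ acting trivially and $d=\bigl(\begin{smallmatrix}1+p&0\\0&1\end{smallmatrix}\bigr)\in T_0\cap K_1$ acting non-trivially, so I need only show $d$ acts non-trivially on $\ee^{I\cap U}$. I would use the explicit cocycle description of $\ee$: the isomorphism $\Ext^1_{K/Z_1}(\st,\st)\cong \Hom_{K/K_1}(\st,H^1(K_1/Z_1,\st))\cong \Hom_K(\st,\st\otimes V)$ with $V:=\Sym^2 k_L^2\otimes\det^{-1}=H^1(K_1/Z_1,k_L)$ (from \eqref{comp2fun1}, \eqref{comp2fun2}), together with $\dim\Hom_K(\st,\st\otimes V)=1$ from Lemma \ref{dimsttriv}, realizes $\ee$ as $\st\oplus\st$ with $K_1$-action twisted by the cocycle $k\mapsto \phi\circ(\psi(k)\otimes\cdot)$, where $\phi$ is a generator of $\Hom_K(\st,\st\otimes V)$ and $\psi\colon K_1\to V$ is the canonical surjection. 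The matrix of $(d-1)$ on $\ee^{I\cap U}$ is then computed by pairing the image of $v_\st\in \st^{I\cap U}$ under $\phi$ with $\psi(d)\in V$. Since $\psi(d)$ is a nonzero $H$-invariant vector in $V$ and $\phi$ is nonzero and $K$-equivariant, the image of $v_\st$ is a nonzero $H$-invariant vector of $\st\otimes V$ whose $\psi(d)$-component along the trivial $H$-isotypic line of $V$ is the nonzero multiple of $v_\st$ corresponding to the nontriviality of $\phi$, so $(d-1)$ carries the cosocle line non-trivially to the socle line.

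The main obstacle will be justifying cleanly this last step, i.e.\ that the composition ``evaluate the $K$-equivariant $\phi$ on $v_\st$, then pair with $\psi(d)\in V^H$'' gives a nonzero element of $\st^{I\cap U}$. The argument rests on the fact that $V^H$ is one-dimensional (spanned by the class of the ``diagonal'' coordinate on $K_1/Z_1$) and that $\phi\neq 0$ forces its component onto $\st\otimes V^H$ to be nonzero after restriction to $v_\st$, as otherwise $\phi$ would vanish on the $H$-isotypic component of $\st$ containing $v_\st$ and hence on the $K$-submodule it generates, i.e.\ on all of $\st$.
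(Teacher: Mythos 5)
Your overall reduction --- show $\dim_k\ee^{I\cap U}=2$, identify the socle and cosocle with $\Eins$, then show that $d=\left(\begin{smallmatrix}1+p&0\\0&1\end{smallmatrix}\right)$ acts non-trivially --- is essentially the target the paper's proof also aims at, but both of your key steps have genuine gaps. First, the splitting claim for $\ee|_{I\cap U}$ is false, and the reasoning behind it conflates the finite quotient $U(\Fp)$ with $I\cap U\cong\Zp$. The extension $\ee$ is not inflated from $K/K_1$: since $\st$ is injective in $\Rep_{K/K_1}$, $K_1$ must act non-trivially on $\ee$, and in fact $u^p=\left(\begin{smallmatrix}1&p\\0&1\end{smallmatrix}\right)\in K_1\cap U$ already acts non-trivially. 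Indeed, in the lattice model $M=\Sym^{p-1}\Zp^2\otimes\delta^{\frac{1-p}{2}}$ used in the paper's proof, whose reduction $M/p^2M$ is the non-split extension $\ee$, one has (with the action conventions of Lemma \ref{algcoinv}) $(u^p-1)\,y^{p-1}\equiv -p\,xy^{p-2}\not\equiv 0\pmod{p^2M}$. Hence $\ee|_{I\cap U}$ does not factor through $U(\Fp)$, self-injectivity of $k[U(\Fp)]$ is irrelevant, and as a smooth representation of $I\cap U\cong\Zp$ the module $\st\cong k[t]/(t^p)$ (with $t=u-1$) is not injective: $\Ext^1_{I\cap U}(\st,\st)\cong k[t]/(t^p)\neq 0$, and the computation just quoted shows $\ee|_{I\cap U}$ is not killed by $t^p$, so the restricted sequence genuinely does not split. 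The equality $\dim\ee^{I\cap U}=2$, equivalently the existence of an $I\cap U$-invariant lift of $v_{\st}$, is exactly the non-trivial content of the lemma and is not established by your argument; the paper obtains it by showing $\partial\colon\st^I\to H^1(I/Z_1,\st)$ is an isomorphism (using $\dim\ee^I=1$ and Lemma \ref{dimsttriv}(iii)) and exhibiting an explicit cocycle in the class of $\ee$ vanishing on $I_1\cap P$-upper-unipotents, namely $f(g)=(g-1)x^{p-1}$ computed in $M/p^2M$.

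Second, even granting an $I\cap U$-invariant lift $\tilde v$ of $v_{\st}$, your non-vanishing argument for $(d-1)\tilde v$ is a non sequitur. You must show that the contraction of $\phi(v_{\st})\in\st\otimes V$ against the single functional $\mathrm{ev}_d\in V^{*}$ is non-zero, but the vanishing of one such contraction does not force $\phi(v_{\st})=0$, and a fortiori does not force $\phi$ to vanish on the $H$-isotypic component of $v_{\st}$: equivariance only places $\phi(v_{\st})$ in the space of $U(\Fp)$-invariant, $H$-weight-zero vectors of $\st\otimes V$, which is strictly larger than $\st^{I_1}\otimes V^{H}$, so no contradiction arises if the $V^{H}$-component of $\phi(v_{\st})$ happened to vanish. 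A genuine computation of the constant is required, and this is what the paper's integral model delivers in one line: $(d-1)\,x^{p-1}=\bigl((1+p)^{\frac{p-1}{2}}-1\bigr)x^{p-1}\equiv\frac{p-1}{2}\,p\,x^{p-1}\pmod{p^2M}$, so the cocycle takes the non-zero value $\lambda v$ with $\lambda=\frac{p-1}{2}$ on $d$ (this is where $p$ odd is used). If you want to keep your structure, replace both problematic steps by this reduction mod $p^2$ of $\Sym^{p-1}\Zp^2\otimes\delta^{\frac{1-p}{2}}$: the single vector $x^{p-1}$ simultaneously provides the $I\cap U$-invariant lift (hence $\dim\ee^{I\cap U}=2$) and the non-trivial action of $d$ on it.
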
  
\begin{proof} By taking $I$-invariants we obtain an exact sequence:
$$ \st^{I}\hookrightarrow \ee^I\rightarrow \st^I \overset{\partial}{\rightarrow} H^1(I/Z_1, \st).$$
Now $\dim \ee^I=1$, since $\Hom_I(\Eins, \ee)\cong \Hom_K (\st, \ee)$ by Frobenius reciprocity.
Hence, $\partial$ is an injection. Since by Lemma \ref{dimsttriv} $\dim H^1(I/Z_1, \st)=1$ we get that 
$\partial$ is an isomorphism. Fix a non-zero $v\in \st^I=\ee^I$. To prove the assertion it is enough to give a $1$-cocycle 
$f: I/Z_1\rightarrow \st$, such that for some scalar $\lambda\neq 0$ we have
\begin{equation}\label{condition}
f(\bigl(\begin{smallmatrix} a & b \\ 0 & d \end{smallmatrix}\bigr ))= \frac{a-d}{p}\lambda v, \quad 
\forall \bigl(\begin{smallmatrix} a & b \\ 0 & d \end{smallmatrix}\bigr )\in I_1\cap P.
\end{equation}
Since then there exists $w\in \ee$ such that $(g-1) w= f(g)$, for all $g\in I$, as $\partial$ is an isomorphism. Then  
\eqref{condition} implies that $w\in \ee^{I\cap U}$ and $w\not\in \ee^{I\cap P}$. Thus $v$ and $w$ are linearly independent, 
and so $\dim \ee^{I\cap U}\ge 2$. Since 
$\st^I=\st^{I\cap U}$ is $1$-di\-men\-sio\-nal we get that  $ \ee^{I\cap U}= \langle v , w\rangle$. 
thus we have an exact sequence $0\rightarrow \st^I\rightarrow \ee^{I\cap U}\rightarrow \st^I\rightarrow 0$ and \eqref{condition}
implies that this sequence is non-split.  

We will construct  a cocycle $f$ satisfying \eqref{condition}. We have $\Zp^{\times}\cong \mu_{p-1} \times (1+p\Zp)$, let 
$\pr: \Zp^{\times}\rightarrow 1+p\Zp$ denote the projection  and let 
$\delta: K\rightarrow 1+p\Zp$ be the character $\delta(g)=\pr(\det(g))$. Let 
$M:= \Sym^{p-1} \Zp^2 \otimes \delta^{\frac{1-p}{2}}$, then $K$ acts on $M$, $Z_1$ acts trivially 
and $M/pM\cong \st$. We have an exact sequence of $\Zp/p^2 \Zp[K]$-modules 
$$ 0\rightarrow M/pM\overset{p}{\rightarrow} M/p^2 M \rightarrow M/pM \rightarrow 0.$$
Let $w:= x^{p-1} +p^2M \in M/p^2 M$, then the image of $w$ in $M/pM$ is $I$-invariant. Thus
$f(g):= (g-1)w$ takes values in $pM/p^2M\cong \st$ for all $g\in I$. Moreover, it is 
immediate that $f$ satisfies \eqref{condition} with $v= x^{p-1}+ pM$ and $\lambda=\frac{p-1}{2}$. 
  
\end{proof}

\begin{lem}\label{die} Let $\ee$ be the unique non-split extension $0\rightarrow \st\rightarrow \ee\rightarrow \st\rightarrow 0$   
of $K/Z_1$-representations. Then the natural map
\begin{equation}\label{zeroagain}
\Ext^1_{I/Z_1} (\ee, \alpha)\rightarrow \Ext^1_{I/Z_1}(\st, \alpha)
\end{equation}
is zero.
\end{lem}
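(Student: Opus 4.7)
The plan is to exhibit \eqref{zeroagain} as the top row of a commutative diagram whose other three sides together force it to vanish. Concretely, I would consider
\[
\xymatrix{
\Ext^1_{I/Z_1}(\ee,\alpha) \ar[r] \ar[d] & \Ext^1_{I/Z_1}(\st,\alpha) \ar[d] \\
\Ext^1_{(I\cap P)/Z_1}(\ee,\alpha) \ar[r] \ar[d] & \Ext^1_{(I\cap P)/Z_1}(\st,\alpha) \ar[d] \\
\Ext^1_{(I\cap P)/Z_1}(\ee^{I\cap U},\alpha) \ar[r] & \Ext^1_{(I\cap P)/Z_1}(\st^{I\cap U},\alpha),
}
\]
where the upper vertical maps are restriction from $I$ to $I\cap P$, the lower vertical maps are induced by the inclusions $\ee^{I\cap U}\hookrightarrow\ee$ and $\st^{I\cap U}\hookrightarrow\st$ (legitimate because $I\cap P$ normalises $I\cap U$), and the horizontal maps throughout come from the inclusion $\st\hookrightarrow\ee$. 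Commutativity is just the naturality of $\Ext^1$.

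Next I would identify the bottom row. The $I\cap U$-invariants of $\st$ coincide with $\st^I$ and form the trivial character $\Eins$; by Lemma~\ref{cocycle} the $I\cap U$-invariants of $\ee$ form the unique non-split extension $E$ of $\Eins$ by $\Eins$, and the inclusion $\st^{I\cap U}\hookrightarrow\ee^{I\cap U}$ is the inclusion $\Eins\hookrightarrow E$ from that non-split sequence. Consequently the bottom horizontal arrow is precisely the map \eqref{zero}, which the preceding (unnumbered) lemma has already shown to be zero.

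The right-hand column is under even better control: its composite $\Ext^1_{I/Z_1}(\st,\alpha)\to\Ext^1_{(I\cap P)/Z_1}(\st^{I\cap U},\alpha)=\Ext^1_{(I\cap P)/Z_1}(\Eins,\alpha)$ is exactly the natural map of Lemma~\ref{1dimst}, which is an isomorphism. Therefore the composite from $\Ext^1_{I/Z_1}(\ee,\alpha)$ along the left column and then across the bottom equals the composite across the top and then down the right column; the former vanishes by \eqref{zero}, while the second factor of the latter is an isomorphism. Chasing, the top horizontal arrow \eqref{zeroagain} must be zero, completing the proof.

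There is no serious obstacle here beyond keeping track of the two functors being composed on each side; the real content has already been isolated in Lemma~\ref{1dimst}, Lemma~\ref{cocycle}, and the vanishing statement \eqref{zero}. The only point that merits care is verifying that $\st^{I\cap U}=\st^I$ is one-dimensional (which follows from the fact that, on $\GL_2(\Fp)$, the upper unipotent has two orbits on $\mathbb{P}^1$, so $(\Ind_B^{\GL_2(\Fp)}\Eins)^U$ is two-dimensional and $\st^U$ is one-dimensional) so that the lower-right corner of the diagram really is $\Ext^1_{(I\cap P)/Z_1}(\Eins,\alpha)$ and Lemma~\ref{1dimst} applies verbatim.
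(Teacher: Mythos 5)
Your proof is correct and takes essentially the same approach as the paper: both build a commutative square out of Lemma~\ref{cocycle}'s diagram, identify the bottom arrow with \eqref{zero} (zero) and the right arrow with \eqref{shoot} (an isomorphism by Lemma~\ref{1dimst}), and conclude that the top arrow \eqref{zeroagain} must vanish. The only cosmetic difference is that you factor the vertical arrows into two steps (restriction from $I$ to $I\cap P$, then pullback along the inclusion of $(I\cap U)$-invariants) where the paper writes them as a single map.
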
  
\begin{proof} Lemma \ref{cocycle} gives a commutative diagram
\begin{displaymath} 
\xymatrix@1{ 0\ar[r] & \st^{I}\ar[r]\ar@{^(->}[d]& \ee^{I\cap U}\ar[r]\ar@{^(->}[d] & \st^{I}\ar[r]\ar@{^(->}[d] & 0\\
           0\ar[r] & \st\ar[r]& \ee \ar[r] & \st\ar[r] & 0}
\end{displaymath}
with exact rows, and the top row a non-split extension of $I\cap P$-representations, with the middle vertical arrow 
$I\cap P$-equivariant. Applying $\Hom_{I/Z_1}(\ast, \alpha)$ to the bottom row, and $\Hom_{(I\cap P)/Z_1}(\ast, \alpha)$
to the top row we obtain a commutative diagram: 
\begin{displaymath}
\xymatrix@1{ \Ext^1_{I/Z_1}(\ee, \alpha)\ar[d]\ar[r]^{\beta} & \Ext^1_{I/Z_1}(\st, \alpha)\ar[d]^{\cong}_{\eqref{shoot}}\\
             \Ext^1_{(I\cap P)/Z_1}(\ee^{I\cap U}, \alpha) \ar[r]^{0}_{\eqref{zero}} &  \Ext^1_{(I\cap P)/Z_1}(\st^I, \alpha).}
\end{displaymath}
It follows from the diagram that $\beta$ is the zero map.
\end{proof}

\begin{lem}\label{peekoutst} Let $\ee$ be the unique non-split extension $0\rightarrow \st\rightarrow \ee\rightarrow \st\rightarrow 0$   
of $K/Z_1$-representations, then $\dim \Ext^1_{K/Z_1}(\ee, \pi_{\Eins})=1$. Moreover, let 
$$0\rightarrow \pi_{\Eins}\rightarrow E'_1\rightarrow \ee\rightarrow 0$$
be an exact sequence of $K/Z_1$-representations, then $\Hom_K(\st, E'_1)\neq 0$.
\end{lem}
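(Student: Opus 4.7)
The plan is to compute $\Ext^1_{K/Z_1}(\ee,\pi_{\Eins})$ by first replacing $\pi_{\Eins}$ with the subobject $\tau$ from Proposition \ref{cutdown}, and then performing a devissage through the sequence $0\to\Eins\to\tau\to\Indu{I}{K}{\alpha}\to 0$, reducing the calculation to $\Ext^1_I(\ee,\alpha)$ where Lemma \ref{die} does the decisive work.

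First I would show $\Ext^1_{K/Z_1}(\ee,\pi_{\Eins})\cong \Ext^1_{K/Z_1}(\ee,\tau)$. By Corollary \ref{pitau}, $\Hom_K(\sigma,\pi_{\Eins}/\tau)=\Ext^1_K(\sigma,\pi_{\Eins}/\tau)=0$ for $\sigma=\Eins,\st$. Feeding $0\to\st\to\ee\to\st\to 0$ through, one concludes $\Hom_K(\ee,\pi_{\Eins}/\tau)=\Ext^1_K(\ee,\pi_{\Eins}/\tau)=0$, so the long exact sequence attached to $0\to\tau\to\pi_{\Eins}\to\pi_{\Eins}/\tau\to 0$ yields the claimed identification. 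For the upper bound on this common dimension, applying $\Hom_K(\ee,-)$ to $0\to\Eins\to\tau\to\Indu{I}{K}{\alpha}\to 0$, together with $\Ext^1_K(\st,\Eins)=0$ from Lemma \ref{dimsttriv}(i) (devissaged to $\ee$), reduces matters to $\Ext^1_K(\ee,\Indu{I}{K}{\alpha})\cong\Ext^1_{I/Z_1}(\ee,\alpha)$ by Frobenius reciprocity; Lemmas \ref{1dimst} and \ref{die} together give that this last group is one-dimensional (the connecting $\Ext^1_I(\st,\alpha)\to\Ext^1_I(\ee,\alpha)$ is injective and the map back vanishes by \ref{die}). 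Hence $\dim\Ext^1_{K/Z_1}(\ee,\pi_{\Eins})\le 1$.

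For the matching lower bound, I apply $\Hom_K(-,\pi_{\Eins})$ to $0\to\st\to\ee\to\st\to 0$. Since $\pi_{\Eins}^{I_1}=k_L v_{\Eins}$ generates the trivial $K$-representation, $\soc_K\pi_{\Eins}=\Eins$, whence $\Hom_K(\st,\pi_{\Eins})=0$; therefore the induced map
$$\alpha\colon \Ext^1_{K/Z_1}(\st,\pi_{\Eins})\longrightarrow \Ext^1_{K/Z_1}(\ee,\pi_{\Eins})$$
is injective, and $\dim\Ext^1_{K/Z_1}(\st,\pi_{\Eins})=1$ by Corollary \ref{pitau}. Combined with the upper bound this proves $\dim\Ext^1_{K/Z_1}(\ee,\pi_{\Eins})=1$ and shows that $\alpha$ is an isomorphism.

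The second assertion then drops out: by exactness of the long exact sequence, the pullback map $\beta\colon\Ext^1_{K/Z_1}(\ee,\pi_{\Eins})\to\Ext^1_{K/Z_1}(\st,\pi_{\Eins})$ induced by $\st\hookrightarrow\ee$ is zero, so for any $[E_1]\in\Ext^1_{K/Z_1}(\ee,\pi_{\Eins})$ the pullback $E_1\times_{\ee}\st$ splits as a $K/Z_1$-representation. Composing the splitting with the canonical injection $E_1\times_{\ee}\st\hookrightarrow E_1$ produces a nonzero $K$-equivariant map $\st\to E_1$, so $\Hom_K(\st,E_1)\neq 0$. The main obstacle is really the one-dimensionality of $\Ext^1_{I/Z_1}(\ee,\alpha)$, which rests on the vanishing in Lemma \ref{die}; once that is in hand the rest is bookkeeping with two long exact sequences.
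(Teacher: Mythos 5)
Your proof is correct and takes essentially the same approach as the paper: both arguments reduce to $\tau$ via Proposition \ref{cutdown}, devissage through $0\to\Eins\to\tau\to\Indu{I}{K}{\alpha}\to 0$ and Shapiro's lemma, and rest decisively on the vanishing in Lemma \ref{die}. The only difference is one of bookkeeping — you establish $\dim\Ext^1_{K/Z_1}(\ee,\pi_{\Eins})=1$ by matching upper and lower bounds and then deduce that the pullback map $\beta$ to $\Ext^1_{K/Z_1}(\st,\pi_{\Eins})$ vanishes, whereas the paper proves $\beta=0$ directly by a diagram chase and reads off the dimension as a consequence.
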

\begin{proof} Applying $\Hom_{K/Z_1}(\ast, \pi_{\Eins})$ to $0\rightarrow \st\rightarrow \ee\rightarrow \st\rightarrow 0$ 
we get an exact sequence
$$\Ext^1_{K/Z_1}(\st, \pi_{\Eins})\hookrightarrow \Ext^1_{K/Z_1}(\ee, \pi_{\Eins})\overset{\beta}{\rightarrow} 
\Ext^1_{K/Z_1}(\st, \pi_{\Eins}).$$ 
We claim that $\beta$ is zero. The claim and Lemma \ref{1dimst} gives $\dim \Ext^1_{K/Z_1}(\ee, \pi_{\Eins})=1$. 
The Yoneda interpretation of the claim  gives the second assertion. Let $\tau\subset \pi_{\Eins}$ be the representation 
considered  in Proposition \ref{cutdown}, then $\Hom_K(\st, \pi_{\Eins}/\tau)=0$, hence
$\Hom_{K}(\ee, \pi_{\Eins}/\tau)=0$. Moreover, Corollary \ref{pitau} says that $\Ext^1_{K/Z_1}(\st, \pi_{\Eins}/\tau)=0$, 
this implies
$\Ext^1_{K/Z_1}(\ee, \pi_{\Eins}/\tau)=0$. Hence, we have a commutative diagram:
\begin{displaymath}\label{squareone}
\xymatrix@1{ \Ext^1_{K/Z_1}(\ee, \tau)\ar[d]^{\cong}\ar[r]^{\gamma} & \Ext^1_{K/Z_1}(\st, \tau)\ar[d]^{\cong}\\
             \Ext^1_{K/Z_1}(\ee, \pi_{\Eins}) \ar[r]^{\beta} &  \Ext^1_{K/Z_1}(\st, \pi_{\Eins}).}
\end{displaymath}
Recall that \eqref{defntau} is an exact sequence $0\rightarrow \Eins\rightarrow \tau\rightarrow \Indu{I}{K}{\alpha}\rightarrow 0$ 
of $K/Z_1$-rep\-re\-sen\-ta\-tions. Lemma \ref{dimsttriv} says that $\Ext^1_{K/Z_1}(\st, \Eins)=0$. This implies  
$\Ext^1_{K/Z_1}(\ee, \Eins)=0$. 
Thus applying $\Hom_{K/Z_1}(\ee, \ast)$ and   $\Hom_{K/Z_1}(\st, \ast)$ to \eqref{defntau} we get a commutative diagram: 
\begin{displaymath}
\xymatrix@1{ \Ext^1_{K/Z_1}(\ee, \tau)\ar@{^(->}[d]\ar[r]^{\gamma} & \Ext^1_{K/Z_1}(\st, \tau)\ar@{^(->}[d]\\
             \Ext^1_{K/Z_1}(\ee, \Indu{I}{K}{\alpha}) \ar[r]^{\delta} &  \Ext^1_{K/Z_1}(\st, \Indu{I}{K}{\alpha}).}
\end{displaymath}     
Now $\delta$ is zero by Shapiro's lemma and Lemma \ref{die}, hence $\gamma=\beta=0$. 
\end{proof}

\begin{prop}\label{st0} Let 
\begin{equation}\label{defne1}
0\rightarrow \pi_{\Eins}\rightarrow E_1 \rightarrow \pi_{\st}\rightarrow 0
\end{equation}
 be a non-split 
extension of $G^{+}/Z$ representations. Suppose that $E_1^{I_1}$ is $1$-di\-men\-sio\-nal, then $\Ext^1_{K/Z_1}(\st, E_1)=0$.
\end{prop}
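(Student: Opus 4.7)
The plan is to apply $\Hom_{K/Z_1}(\st, -)$ to the given short exact sequence and analyze the resulting long exact sequence, using the $I_1$-invariants hypothesis to kill the $\Hom$ term and the content of Lemma \ref{peekoutst} to kill the $\Ext^1$ term. From \eqref{resultext2} and the fact that $\langle K\centerdot v_{\Eins}\rangle\cong \Eins$ and $\langle K\centerdot v_{\st}\rangle\cong \st$, one has $\Hom_K(\st, \pi_{\Eins})=0$ and $\dim \Hom_K(\st, \pi_{\st})=1$, the latter spanned by the inclusion $\iota:\st\cong \langle K\centerdot v_{\st}\rangle\hookrightarrow \pi_{\st}$. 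By Corollaries \ref{pitau} and \ref{extstpist}, $\dim \Ext^1_{K/Z_1}(\st, \pi_{\Eins})=\dim \Ext^1_{K/Z_1}(\st, \pi_{\st})=1$.

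First I would show $\Hom_K(\st, E_1)=0$. The inclusion $\pi_{\Eins}\subset E_1$ gives $kv_{\Eins}=\pi_{\Eins}^{I_1}\subseteq E_1^{I_1}$, so the hypothesis forces $E_1^{I_1}=kv_{\Eins}$, a line mapping to zero in $\pi_{\st}^{I_1}=kv_{\st}$. Any nonzero $\phi:\st\to E_1$ would be injective and, composed with $E_1\twoheadrightarrow \pi_{\st}$, still nonzero since $\Hom_K(\st, \pi_{\Eins})=0$; hence $\phi(\st^{I_1})\subseteq E_1^{I_1}$ would map nontrivially to $\pi_{\st}^{I_1}$, a contradiction. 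The long exact sequence obtained from $\Hom_{K/Z_1}(\st, -)$ thus reduces to
\[
0\to k\xrightarrow{\partial_0} k\to \Ext^1_{K/Z_1}(\st, E_1)\to k\xrightarrow{\partial_1} \Ext^2_{K/Z_1}(\st, \pi_{\Eins}),
\]
so $\partial_0$ is an isomorphism and $\Ext^1_{K/Z_1}(\st, E_1)\cong \ker \partial_1$.

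The final and main step is to show $\partial_1$ is injective. Under the Yoneda interpretation, the connecting maps are given by composition with $[E_1]\in \Ext^1_{K/Z_1}(\pi_{\st}, \pi_{\Eins})$, so a generator of $\Ext^1_{K/Z_1}(\st, \pi_{\Eins})$ is $\xi:=\partial_0(\iota)=[E_1]\cdot \iota$, and for the generator $[M]$ of $\Ext^1_{K/Z_1}(\st, \pi_{\st})$ one has $\partial_1([M])=[E_1]\cdot [M]$. The proof of Lemma \ref{peekoutst} shows that the map $\Ext^1_{K/Z_1}(\ee, \pi_{\Eins})\to \Ext^1_{K/Z_1}(\st, \pi_{\Eins})$ induced by $\st\hookrightarrow \ee$ vanishes, which by exactness is equivalent to injectivity of the Yoneda multiplication $\xi'\mapsto \xi'\cdot [\ee]$ from $\Ext^1_{K/Z_1}(\st, \pi_{\Eins})$ to $\Ext^2_{K/Z_1}(\st, \pi_{\Eins})$; in particular $\xi\cdot [\ee]\neq 0$. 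By associativity of Yoneda composition,
\[
[E_1]\cdot \iota\cdot [\ee]=\xi\cdot [\ee]\neq 0.
\]
Writing $\iota\cdot [\ee]=c[M]$ in the one dimensional space $\Ext^1_{K/Z_1}(\st, \pi_{\st})$, the non-vanishing of $c\cdot [E_1]\cdot [M]$ forces $c\neq 0$ and $\partial_1([M])\neq 0$, which gives $\Ext^1_{K/Z_1}(\st, E_1)=0$.

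The main obstacle in this plan is the translation of Lemma \ref{peekoutst} into the Yoneda-composition statement $\xi\cdot [\ee]\neq 0$ and the subsequent identification $\xi=[E_1]\cdot \iota$ needed so that associativity can be invoked to produce a nonzero class in $\Ext^2_{K/Z_1}(\st, \pi_{\Eins})$. Once the boundary maps are understood as Yoneda products, everything else is formal dimension counting together with the $I_1$-invariant argument killing $\Hom_K(\st, E_1)$.
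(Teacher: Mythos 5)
Your proof is correct and takes a genuinely cleaner route than the paper's, though both hinge on Lemma~\ref{peekoutst}. The paper argues entirely with explicit extensions: it first invokes Lemma~\ref{peekoutst} to show that $\ee$ is not a subrepresentation of $\pi_{\st}$, deduces that $\Ext^1_{K/Z_1}(\st,\st)\to \Ext^1_{K/Z_1}(\st,\pi_{\st})$ is an isomorphism and hence that $\ee\subset E_2$, then assumes $\Ext^1_{K/Z_1}(\st,E_1)\neq 0$, builds $E_3$ with $E_3/\pi_{\Eins}\cong E_2$, pulls $\ee$ back to $E_3'\subset E_3$ and invokes Lemma~\ref{peekoutst} a second time to reach a contradiction. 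You instead identify $\Ext^1_{K/Z_1}(\st,E_1)$ with $\ker\partial_1$ and show $\partial_1$ is injective by one Yoneda computation: the vanishing of $\beta$ in Lemma~\ref{peekoutst} (equivalently, by dimension counting, the statement $\dim\Ext^1_{K/Z_1}(\ee,\pi_{\Eins})=1$ together with Corollary~\ref{pitau}) gives injectivity of $\cdot[\ee]\colon\Ext^1_{K/Z_1}(\st,\pi_{\Eins})\to\Ext^2_{K/Z_1}(\st,\pi_{\Eins})$, so $\xi\cdot[\ee]=[E_1]\cdot(\iota\cdot[\ee])\neq 0$, which forces both $\iota\cdot[\ee]\neq 0$ and $\partial_1([M])\neq 0$ in one stroke. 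In effect you collapse the paper's two uses of Lemma~\ref{peekoutst} into a single application via associativity of the Yoneda product, and never need to mention $E_2$, $E_3$, $E_3'$, or the fact that $\ee\not\subset\pi_{\st}$. What the paper's version buys is that it stays at the level of concrete extensions of representations (which is the idiom of the surrounding sections); what your version buys is brevity and a transparent identification of where the one essential nonvanishing comes from.
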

\begin{proof} We note that the assumption $\dim E_1^{I_1}=1$, implies that $E_1^{I_1}=\pi_{\Eins}^{I_1}$ and hence 
$\Hom_{K}(\st, E_1)=0$. Let $0\rightarrow \st\rightarrow \ee\rightarrow \st\rightarrow 0$ be the unique non-split extension
of $K/Z_1$-representations. Now, $\ee$ cannot be a subrepresentation of $\pi_{\st}$, since in that case by pulling back we 
would obtain 
a subrepresentation $E'_1\subset E_1$ such that we have an exact sequence
$0\rightarrow \pi_{\Eins}\rightarrow E'_1\rightarrow \ee\rightarrow 0$
of $K/Z_1$-representations with $\Hom_K(\st, E'_1)=\Hom_K(\st, E_1)=0$, which would contradict Lemma \ref{peekoutst}. 
Hence, $\Hom_K(\st, \pi_{\st}/\st)=0$ and so we obtain an injection 
\begin{equation}\label{stpist}
\Ext^1_{K/Z_1}(\st, \st)\hookrightarrow \Ext^1_{K/Z_1}(\st, \pi_{\st})
\end{equation}
 Corollary \ref{extstpist} asserts $\dim  \Ext^1_{K/Z_1}(\st, \pi_{\st})=1$, 
so the map of \eqref{stpist} is an isomorphism. The Yoneda interpretation of 
this says if we  let 
\begin{equation}
0\rightarrow \pi_{\st}\rightarrow E_2\rightarrow \st\rightarrow 0
\end{equation}
be the unique non-split extension of $K/Z_1$-representations, then $E_2$ contains $\ee$ as a subrepresentation. 
Corollary \ref{pitau} says that $\dim \Ext^1_{K/Z_1}(\st, \pi_{\Eins})=1$ so  applying 
$\Hom_{K/Z_1}(\st, \ast)$ to \eqref{defne1} gives us an injection
\begin{equation}\label{giveitome}
\Ext^1_{K/Z_1}(\st, E_1)\hookrightarrow \Ext^1_{K/Z_1}(\st, \pi_{\st}).
\end{equation}
Suppose that $\Ext^1_{K/Z_1}(\st, E_1)\neq 0$ then \eqref{giveitome} would give  a non-split extension of $K/Z_1$-representations
\begin{equation}\label{giveitome2}
0\rightarrow E_1 \rightarrow E_3\rightarrow \st\rightarrow 0
\end{equation}
such that $E_3/\pi_{\Eins}\cong E_2$. Now, $\Hom_K(\st, E_1)=0$ and so  $\Hom_K(\st, E_3)=0$, as otherwise we would 
obtain a splitting of \eqref{giveitome2}. As $\ee$ is a subrepresentation of $E_2$, by pulling back we obtain a subrepresentation 
$E'_3\subset E_3$, which sits in an exact sequence:
$$0\rightarrow \pi_{\Eins}\rightarrow E'_3\rightarrow \ee\rightarrow 0.$$
Since $\Hom_K(\st, E_3)=0$, we have $\Hom_K(\st, E'_3)=0$. Hence, we obtain a contradiction
to Lemma \ref{peekoutst}.
\end{proof}

\begin{cor}\label{conseq} Let $E_1$ be as above, then $\dim H^1(I_1/Z_1, E_1)\le 2$.
\end{cor}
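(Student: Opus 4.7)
The plan is to decompose $H^1(I_1/Z_1, E_1)$ as a direct sum of its $H$-isotypic components and bound each one using the $K$-structure of $E_1$. Since $I = H \ltimes I_1$ with $|H|$ coprime to $p$, and $E_1$ has trivial central character (so $H \cap Z$ acts trivially on $H^1(I_1/Z_1, E_1)$), Hochschild–Serre degenerates to yield
$$H^1(I_1/Z_1, E_1) = \bigoplus_{i=0}^{p-2} \Ext^1_{I/Z_1}(\alpha^i, E_1),$$
where $\alpha$ is the character through which $H$ acts on $I_1 \cap U$. Each summand identifies with $\Ext^1_{K/Z_1}(\Indu{I}{K}{\alpha^i}, E_1)$ by Shapiro reciprocity, since the inclusion $I \subset K$ has finite index $p+1$.

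For $i=0$, one has $\Indu{I}{K}{\Eins} \cong \Eins \oplus \st$, and Proposition \ref{st0} gives $\Ext^1_{K/Z_1}(\st, E_1) = 0$, so it suffices to bound $\Ext^1_{K/Z_1}(\Eins, E_1)$. Applying $\Hom_K(\Eins, -)$ to $0 \to \pi_{\Eins} \to E_1 \to \pi_{\st} \to 0$ and noting that $\Hom_K(\Eins, \pi_{\st}) = 0$ (because $\pi_{\st}^K \subseteq \pi_{\st}^{I_1} = k v_{\st}$ while $\langle K \centerdot v_{\st}\rangle \cong \st$ has no $K$-invariants), together with the 1-dimensionality of $\Ext^1_{K/Z_1}(\Eins, \pi_{\Eins})$ (Corollary \ref{pitau}) and $\Ext^1_{K/Z_1}(\Eins, \pi_{\st})$ (Corollary \ref{extstpist}), one obtains $\dim \Ext^1_{K/Z_1}(\Eins, E_1) \le 2$.

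For $i \neq 0$, the remaining task is to show $\Ext^1_{I/Z_1}(\alpha^i, E_1) = 0$. The induction $\Indu{I}{K}{\alpha^i}$ is a principal series of $\GL_2(\Fp)$ whose irreducible constituents are neither $\Eins$ nor $\st$. Applying $\Hom_K(\Indu{I}{K}{\alpha^i}, -)$ to the defining sequence, the $\Hom$-terms vanish because $\pi_{\Eins}^{I_1}$ and $\pi_{\st}^{I_1}$ are concentrated in the trivial $H$-isotypic, so the vanishing reduces to $\Ext^1_{K/Z_1}(\Indu{I}{K}{\alpha^i}, \pi_{\Eins}) = \Ext^1_{K/Z_1}(\Indu{I}{K}{\alpha^i}, \pi_{\st}) = 0$. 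These in turn follow from the explicit classification of $\Ext^1$-groups between irreducible $\GL_2(\Fp)$-representations in \cite{bp} applied to the socle filtrations of $\pi_{\Eins}$ and $\pi_{\st}$, which are built from trivial representations and Steinbergs via the amalgamation from $KZ$ and $K^\Pi Z$ recorded in \eqref{tree1} and \eqref{tree2}. Summing the contributions gives $\dim H^1(I_1/Z_1, E_1) \le 2$. The main obstacle is the systematic vanishing for $i \neq 0$, which requires careful bookkeeping of which principal series of $\GL_2(\Fp)$ can appear in the two-step $K$-socle of $\pi_{\Eins}$ and $\pi_{\st}$ and of the behaviour of the connecting map in the long exact sequence.
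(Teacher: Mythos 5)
Your decomposition into $H$-isotypic components via Hochschild--Serre is sound, and your treatment of the trivial component $i=0$ (reduce to $\Ext^1_{K/Z_1}(\Eins, E_1)$ using $\Indu{I}{K}{\Eins}\cong \Eins\oplus\st$ and Proposition \ref{st0}, then bound via the long exact sequence with Corollaries \ref{pitau}, \ref{extstpist}) is exactly the paper's argument. But there is a genuine gap at the nontrivial components $i\neq 0$. You correctly identify that you must show $\Ext^1_{I/Z_1}(\alpha^i,\pi_{\Eins})=\Ext^1_{I/Z_1}(\alpha^i,\pi_{\st})=0$ for $i\not\equiv 0$, but you do not prove it; you only gesture at a socle-filtration computation and yourself call it ``the main obstacle.'' The sketch is also dubious even in outline: $\pi_{\Eins}$ and $\pi_{\st}$ are infinite-dimensional, so controlling $\Ext^1_{K/Z_1}(\Indu{I}{K}{\alpha^i},\pi_{\Eins})$ requires handling an unbounded socle filtration, not just ``the two-step $K$-socle'' as you suggest, and the classification of extensions between irreducible $\GL_2(\Fp)$-representations does not obviously propagate to such a bound without the finer structure of $\pi_{\Eins}|_K$.

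The paper avoids this entirely. By \cite[Prop.\ 7.9]{ext2}, $H$ acts trivially on $H^1(I_1/Z_1,\pi_{\Eins})$ and on $H^1(I_1/Z_1,\pi_{\st})$. Taking $I_1/Z_1$-invariants of \eqref{defne1} gives the four-term exact sequence \eqref{miau}, which sandwiches $H^1(I_1/Z_1,E_1)$ between subquotients of these two $H$-trivial modules, whence $H$ acts trivially on $H^1(I_1/Z_1,E_1)$ as well. This kills all the $i\neq 0$ isotypic components at a stroke, with no further $\Ext$ computations. If you keep your decomposition, you should at minimum cite \cite[Prop.\ 7.9]{ext2} (this is precisely the statement you are missing) and then observe that the sandwich argument renders the remaining work unnecessary.
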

\begin{proof} Taking $I_1/Z_1$-invariants of \eqref{defne1} gives us an exact sequence:
\begin{equation}\label{miau}
 \pi_{\st}^{I_1}\hookrightarrow H^1(I_1/Z_1, \pi_{\Eins})\rightarrow H^1(I_1/Z_1, E_1)\rightarrow H^1(I_1/Z_1, \pi_{\st})
\end{equation}
By \cite[7.9]{ext2} $H$ acts trivially on $H^1(I_1/Z_1, \pi_{\Eins})$ and $H^1(I_1/Z_1, \pi_{\st})$. Hence,
\begin{equation}\label{longiso}
\begin{split}
H^1&(I_1/Z_1, E_1) \cong H^1(I_1/Z_1, E_1)^H \cong \Ext^1_{I/Z_1}(\Eins, E_1)\\
 &\cong \Ext^1_{K/Z_1}(\Indu{I}{K}{\Eins}, E_1)\cong \Ext^1_{K/Z_1}(\Eins, E_1)\oplus \Ext^1_{K/Z_1}(\st, E_1)
\end{split}
\end{equation}
 Application of $\Hom_{K/Z_1}(\Eins, \ast)$ to 
\eqref{defne1} gives an exact sequence:
\begin{equation}\label{miau2}
\Ext^1_{K/Z_1}(\Eins, \pi_{\Eins})\hookrightarrow  \Ext^1_{K/Z_1}(\Eins, E_1)\rightarrow \Ext^1_{K/Z_1}(\Eins, \pi_{\st}).
\end{equation}
It follows from \eqref{miau2} and  Corollaries \ref{pitau}, \ref{extstpist} that $\dim   \Ext^1_{K/Z_1}(\Eins, E_1)\le 2$. 
Proposition \ref{st0} says $\Ext^1_{K/Z_1}(\st, E_1)=0$, hence \eqref{longiso} gives us the assertion.
\end{proof}

Recall that $\pi\cong \Indu{G^+}{G}{\pi_{\Eins}}\cong \Indu{G^+}{G}{\pi_{\st}}$. Thus we have an injection
\begin{equation}\label{niceinjection}
\Ext^1_{G^+/Z}(\pi_{\st}, \pi_{\Eins})\hookrightarrow \Ext^1_{G^+/Z}(\pi_{\st}\oplus \pi_{\Eins}, \pi_{\Eins})\cong \Ext^1_{G/Z}(\pi, \pi).
\end{equation}

\begin{prop}\label{Iwahoridone} Let $\xi$ lie in the image of \eqref{niceinjection}. Suppose that $\xi\neq 0$ 
then either $\dim \Ext^1_{G/Z}(\pi, E_{\xi})\le 3$ or $\dim \Ext^1_{G/Z}(E_{\xi}, \pi)\le 3$, 
where $E_{\xi}$ is the corresponding extension of $\pi$ by $\pi$.
\end{prop}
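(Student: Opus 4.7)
The plan is to use the Hecke-algebra exact sequence of Proposition \ref{comphext} combined with the cohomological bound of Corollary \ref{conseq}. Given $\xi \neq 0$ in the image of \eqref{niceinjection}, let $0 \to \pi_{\Eins} \to E_1 \to \pi_{\st} \to 0$ be the corresponding non-split extension in $\Ext^1_{G^+/Z}(\pi_{\st}, \pi_{\Eins})$, so that $E_\xi \cong \Indu{G^+}{G}{E_1}$. Mackey decomposition gives $E_\xi|_{G^+} \cong E_1 \oplus E_1^{\Pi}$, where $E_1^{\Pi}$ is obtained from $E_1$ by conjugating the $G^+$-action by $\Pi$ and sits in the exact sequence $0 \to \pi_{\st} \to E_1^{\Pi} \to \pi_{\Eins} \to 0$. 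A direct matrix computation shows $\Pi I_1 \Pi^{-1} = I_1$, so conjugation by $\Pi$ identifies $(E_1^{\Pi})^{I_1}$ with $E_1^{I_1}$ as $k$-vector spaces; in particular $\dim E_1^{I_1} = \dim (E_1^{\Pi})^{I_1} \in \{1,2\}$, which will be the basis of the case split.

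Suppose first $\dim E_1^{I_1} = 1$; I will then bound $\dim \Ext^1_{G/Z}(\pi, E_\xi) \leq 3$. In this case $\dim \II(E_\xi) = 2 = \dim \II(\pi)$, so the inclusion $\II(\pi) \hookrightarrow \II(E_\xi)$ is an isomorphism of $\HH$-modules. Proposition \ref{comphext} combined with Lemma \ref{RisH} yields
\begin{equation*}
\dim \Ext^1_{G/Z}(\pi, E_\xi) = \dim \Ext^1_{\HH}(\II(\pi), \II(\pi)) + \dim \Hom_{\HH}(\II(\pi), H^1(I_1/Z_1, E_\xi)).
\end{equation*}
The first term equals $1$ in the Iwahori case, as recorded in the proof of Proposition \ref{superdone}. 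For the second, Corollary \ref{conseq} gives $\dim H^1(I_1/Z_1, E_1) \leq 2$, and the analogous bound for $E_1^{\Pi}$ follows by $\Pi$-conjugation, so $\dim H^1(I_1/Z_1, E_\xi) \leq 4$. I will then show, via the $H$-character decomposition and explicit Hecke operators on $H^1(I_1/Z_1, \pi)$ described in \cite[\S 7]{ext2}, that only a $2$-di\-men\-sio\-nal subspace of this cohomology can support an $\HH$-homo\-mor\-phism from the $2$-di\-men\-sio\-nal $\II(\pi)$, yielding the desired total of at most~$3$.

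In the remaining case $\dim E_1^{I_1} = 2$, we have $\dim \II(E_\xi) = 4 = 2\dim \II(\pi)$, so the natural map $\II(E_\xi) \twoheadrightarrow \II(\pi)$ is surjective. This forces the image of $\xi$ under the connecting map $\Ext^1_{G/Z}(\pi, \pi) \to \Hom_{\HH}(\II(\pi), H^1(I_1/Z_1, \pi))$ of \eqref{5T} to vanish, so $\xi$ lies in the image of $\Ext^1_{\HH}(\II(\pi), \II(\pi))$. The argument of \cite[10.14]{ext2}, already invoked in the regular case of Proposition \ref{superdone}, then applies verbatim and yields $\dim \Ext^1_{G/Z}(E_\xi, \pi) \leq 3$.

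The hard part will be the precise Hecke-module bookkeeping in Case~$1$, namely the bound $\dim \Hom_{\HH}(\II(\pi), H^1(I_1/Z_1, E_\xi)) \leq 2$. This requires tracking the $\HH$-action on the (at most) $4$-di\-men\-sio\-nal cohomology $H^1(I_1/Z_1, E_\xi) = H^1(I_1/Z_1, E_1) \oplus H^1(I_1/Z_1, E_1^{\Pi})$, using the explicit Hecke operators and $H$-character decomposition from \cite[\S 7]{ext2}, the vanishing statement of Proposition \ref{st0}, and the $\Pi$-symmetry relating the two summands. Combining these inputs so that the total dimension is at most~$3$ rather than something larger is the technical heart of the argument.
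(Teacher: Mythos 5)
Your proposal follows the paper's own strategy almost step for step: the same dichotomy on $\dim E_1^{I_1}$, the same treatment of the case $\dim E_1^{I_1}=2$ (then $\dim E_{\xi}^{I_1}=4$, so $\xi$ comes from $\Ext^1_{\HH}(\II(\pi),\II(\pi))$ via \eqref{5T} and \cite[10.14]{ext2} applies), and, when $\dim E_1^{I_1}=1$, the same use of Proposition \ref{comphext} together with the bound $\dim H^1(I_1/Z_1,E_{\xi})\le 4$ coming from Corollary \ref{conseq} and the decomposition $E_{\xi}|_{G^+}\cong E_1\oplus E_1^{\Pi}$. Up to that point everything you say is correct.

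The problem is that you never actually prove the decisive estimate $\dim \Hom_{\HH}(\II(\pi), H^1(I_1/Z_1,E_{\xi}))\le 2$: you announce it, defer it to "Hecke-module bookkeeping" with explicit operators, the $H$-character decomposition, Proposition \ref{st0} and $\Pi$-symmetry, and call it the technical heart. As written the argument is therefore incomplete precisely where the conclusion is decided. Moreover, none of that extra machinery is needed. In your Case 1 you have already observed $E_{\xi}^{I_1}=\pi^{I_1}$, and $\II(\pi)=\pi^{I_1}$ is an irreducible $\HH$-module whose underlying $k$-vector space is $2$-di\-men\-sio\-nal; hence any non-zero $\HH$-homomorphism $\II(\pi)\rightarrow \RR^1\II(E_{\xi})$ is injective, the $\II(\pi)$-isotypic socle of $\RR^1\II(E_{\xi})$ is a direct sum of copies of $\II(\pi)$, and since $\dim \RR^1\II(E_{\xi})=\dim H^1(I_1/Z_1,E_{\xi})\le 4$ by Lemma \ref{RisH} and Corollary \ref{conseq}, at most two copies fit, so $\dim \Hom_{\HH}(\II(\pi), \RR^1\II(E_{\xi}))\le 2$. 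Combined with $\dim\Ext^1_{\HH}(\II(\pi),\II(E_{\xi}))=\dim\Ext^1_{\HH}(\II(\pi),\II(\pi))=1$ this gives $\dim\Ext^1_{G/Z}(\pi,E_{\xi})\le 3$ via \eqref{5T}, which is exactly how the paper closes the case. Inserting this one observation completes your proof; the explicit Hecke-operator analysis you sketch would be redundant (and note that Proposition \ref{st0} already enters through Corollary \ref{conseq}, so re-invoking it there adds nothing new).
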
 
\begin{proof} Since $\xi$ lies in the image of \eqref{niceinjection}
we have $E_{\xi}\cong \Indu{G^+}{G}{E_1}$, where $E_1$ is an extension of $G^+/Z$-representations:
$0\rightarrow \pi_{\Eins}\rightarrow E_1\rightarrow \pi_{\st}\rightarrow 0.$
Moreover, $\xi\neq 0$ implies that $E_1$ is non-split. If $\dim E_1^{I_1}=2$ then $\dim E_{\xi}^{I_1}=4$ and 
hence $\xi$ lies in the image 
of $\Ext^1_{\HH}(\pi^{I_1}, \pi^{I_1})\hookrightarrow \Ext^1_{G/Z}(\pi, \pi)$ via \eqref{5T} and 
we know that the assertion is true for such $\xi$ by \cite[10.14]{ext2}. 
Suppose that $\dim E_1^{I_1}=1$ then, since $E_{\xi}|_{I_1}\cong E_1\oplus E_1^{\Pi}$, we get 
$\dim E_{\xi}^{I_1}=2$, hence $E_{\xi}^{I_1}=\pi^{I_1}$ and \cite[6.7]{bp}, or Lemma \ref{comphext1} gives 
$\dim \Ext^1_{\HH}(\pi^{I_1}, E_{\xi}^{I_1})=1$. Moreover, Corollary \ref{conseq} implies that
$\dim  H^1(I_1/Z_1, E_{\xi})=2 \dim H^1(I_1/Z_1, E_1)\le 4$. Since $\pi^{I_1}$ is an irreducible $\HH$-module and 
its underlying vector space is  $2$-di\-men\-sio\-nal, we deduce from Lemma \ref{RisH} that
$\dim \Hom_{\HH}(\pi^{I_1}, \RR^1 \II(E_{\xi}))\le 2$.  Now \eqref{5T} implies 
$\dim \Ext^1_{G/Z}(\pi, E_{\xi})\le 3$.
\end{proof}

\section{Non-supersingular representations}\label{nonsupersingularreps}
We recall the properties of Emerton's functor of ordinary parts. This functor is an extremely useful tool 
for calculating $\Ext$ groups, when some principal series are involved. In \S \ref{parabolic} we discuss 
Banach space representations of $G$ obtained by parabolic induction of admissible unitary Banach 
space representations of the torus $T$. We assume throughout this section $p\ge 3$.

\subsection{Ordinary parts}\label{ordinaryparts}
Let $A$ be a complete local noetherian commutative $\OO$-algebra with a finite residue field. 
Emerton in \cite{ord1} has defined a functor 
$\Ord_P: \Mod^{\mathrm{l adm}}_{G, \zeta}(A)\rightarrow  \Mod^{\mathrm{ladm}}_{T, \zeta}(A)$, satisfying 
\begin{equation}\label{adjord}
\Hom_{A[G]}( \Indu{\overline{P}}{G}{U}, V)\cong \Hom_{A[M]}(U, \Ord_P(V)),
\end{equation}
where $\overline{P}$ is the parabolic subgroup of $G$ opposite to $P$ with respect to $T$, see Theorem 4.4.6 in \cite{ord1} if $U$ is admissible, the general statement follows from the fact that
$\Ord_{P}$ and $\Indu{\overline{P}}{G}{}$ commute with inductive limits, see Lemmas 3.2.2 and 4.1.4 in \cite{ord1}.
Since induction is an exact functor, \cite[4.1.5]{ord1}, the functor $\Ord_P$ is left exact.
It follows from  \cite[Prop. 4.3.4]{ord1} that for every $U$ in $\Mod^{\mathrm{ladm}}_{T, \zeta}(A)$ we have:
\begin{equation}\label{ordandind}
\Ord_P (\Indu{\overline{P}}{G}{U})\cong U.
\end{equation}
From now on we suppose that $A$ is artinian. It is shown in \cite[\S 3.7]{ord2} that \eqref{adjord} induces  an $E_2$-spectral sequence:
\begin{equation}\label{specseqord}
\Ext^i_{T,\zeta}(U, \RR^j \Ord_P V) \Longrightarrow \Ext^{i+j}_{G, \zeta}(\Indu{\overline{P}}{G}{U}, V).
\end{equation}
The $\Ext$ groups in \cite{ord2} are computed in the category of locally admissible representations.  This category coincides with the category 
of locally finite representations  by Proposition \ref{Tirrfin}, Corollary \ref{finlengthadm} and \cite[2.3.8]{ord1}.
However, we have shown in Corollary \ref{thesame} that for $G=\GL_2(\Qp)$ and $A=k$ these groups coincide with the $\Ext$ groups 
computed in $\Mod^{\mathrm{sm}}_{G, \zeta}(k)$. This answers a question raised in \cite[3.7.8]{ord2}.
It follows from \cite{eff} that $\RR^j\Ord_P =0$ for $j\ge 2$. Moreover, it follows from Proposition 
\ref{projTistfree} that each block of the category $\Mod^{\mathrm{ladm}}_{T, \zeta}(k)$ is anti-equivalent to the category 
of compact $k[[x,y]]$-modules. Hence,  $\Ext^i_{T, \zeta}=0$ for $i\ge 3$. Thus 
\eqref{specseqord} yields an exact sequence:
\begin{equation}\label{ordseq}
\begin{split}
&\Ext^1_{T, \zeta}(U, \Ord_P V)\hookrightarrow \Ext^1_{G, \zeta}(\Indu{\overline{P}}{G}{U}, V)\rightarrow 
\Hom_T( U, \RR^1\Ord_P V) \\ &\rightarrow \Ext^2_{T, \zeta}(U, \Ord_P V)\rightarrow \Ext^2_{G, \zeta}(\Indu{\overline{P}}{G}{U}, V)
\twoheadrightarrow \Ext^1_{T, \zeta}(U, \RR^1\Ord_P V)
\end{split}
\end{equation}
and an isomorphism 
\begin{equation}\label{ordext3}
\Ext^3_{G, \zeta}(\Indu{\overline{P}}{G}{U}, V)\cong \Ext^2_{T, \zeta}(U, \RR^1\Ord_P V).
\end{equation}
Moreover, we have $\Ext^i_{G, \zeta}(\Indu{\overline{P}}{G}{U}, V)=0$ for $i\ge 4$. Since, we prefer working with $P$ instead of 
$\overline{P}$ we note that the map $f\mapsto [g\mapsto f(s g)]$ induces an isomorphism:
\begin{equation}
\Indu{P}{G}{U}\cong \Indu{\overline{P}}{G}{U^s}
\end{equation}
It follows from  \cite[4.2.10]{ord2} that 
\begin{equation}\label{ordinduced}
\Ord_P(\Indu{P}{G}{U})\cong U^s, \quad \RR^1\Ord_P (\Indu{P}{G}{U})\cong U \otimes \alpha^{-1}
\end{equation}

\begin{prop}\label{projectiveandord} Let 
$\chi: T\rightarrow k^{\times}$ be a smooth character such that $\chi|_Z=\zeta$. Let 
$\iota: \Indu{\overline{P}}{G}{\chi}\hookrightarrow J$ 
be an injective envelope of $\Indu{\overline{P}}{G}{\chi}$ in  $\Mod_{G, \zeta}^{\mathrm{l \, adm}}(A)$. Then the following hold
\begin{itemize} 
\item[(i)] $ \Ord_P( \Indu{\overline{P}}{G}{\chi}) \hookrightarrow \Ord_P J$ is an injective envelope of 
$\chi$ in $\Mod_{T, \zeta}^{\mathrm{l \, adm}}(A)$;
\item[(ii)] the adjoint map $\Indu{\overline{P}}{G}{\Ord_P J}\rightarrow J$ is injective;
\item[(iii)] There exists a natural surjective  ring homomorphism 
$$\End_{A[G]}(J)\twoheadrightarrow \End_{A[G]}( \Indu{\overline{P}}{G}{\Ord_P J})\cong \End_{A[T]}( \Ord_P J).$$
\end{itemize}
\end{prop}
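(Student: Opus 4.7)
The plan is to prove (ii) first using the counit of the adjunction, deduce (i) from (ii), and derive (iii) formally.

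Since $U \mapsto \Indu{\overline{P}}{G}{U}$ is exact by \cite[4.1.5]{ord1} and is left adjoint to $\Ord_P$ by \eqref{adjord}, the right adjoint $\Ord_P$ preserves injective objects; hence $\Ord_P J$ is injective in $\Mod_{T,\zeta}^{\mathrm{l \, adm}}(A)$, and \eqref{ordandind} identifies $\chi = \Ord_P(\Indu{\overline{P}}{G}{\chi})$ as a subobject of $\Ord_P J$ via $\Ord_P \iota$. Let $\eta : \Indu{\overline{P}}{G}{\Ord_P J} \to J$ denote the counit of the adjunction. The triangular identity implies $\Ord_P(\eta)$ is the identity modulo \eqref{ordandind}, and left exactness of $\Ord_P$ then yields $\Ord_P(\ker \eta) = 0$.

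Assuming (ii), assertion (i) follows: for any non-zero subobject $U \hookrightarrow \Ord_P J$, the adjoint of the inclusion is the composition $\tilde\phi \colon \Indu{\overline{P}}{G}{U} \hookrightarrow \Indu{\overline{P}}{G}{\Ord_P J} \xrightarrow{\eta} J$, which is injective. Essentialness of $\iota$ intersects the image of $\tilde\phi$ non-trivially with $\iota(\Indu{\overline{P}}{G}{\chi})$, and applying $\Ord_P$ to this intersection (using $\Ord_P\,\tilde\phi(\Indu{\overline{P}}{G}{U}) = U$, which holds because $\tilde\phi$ is injective) produces a non-zero element of $U \cap \chi$ inside $\Ord_P J$. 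For (iii), the assignment $\phi \mapsto \Indu{\overline{P}}{G}{\Ord_P \phi}$ defines the first ring homomorphism; its surjectivity follows because given $\psi \in \End_{A[G]}(\Indu{\overline{P}}{G}{\Ord_P J})$, injectivity of $J$ extends $\eta \circ \psi$ to $\tilde\psi \in \End_{A[G]}(J)$, and the identity $\tilde\psi \circ \eta = \eta \circ \psi$ shows $\tilde\psi$ stabilizes $\eta(\Indu{\overline{P}}{G}{\Ord_P J})$ and restricts to $\psi$ there. The final isomorphism in (iii) is the adjunction \eqref{adjord}.

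The main obstacle is (ii), namely promoting $\Ord_P(\ker \eta) = 0$ to $\ker \eta = 0$. Equivalently, one must show that every non-zero subobject $K \subseteq \Indu{\overline{P}}{G}{\Ord_P J}$ has non-zero ordinary part. For $G = \GL_2(\Qp)$ with $p \geq 5$, the irreducible locally admissible $k$-representations of $G$ annihilated by $\Ord_P$ are the supersingulars and the special-series twists; supersingulars do not embed into any parabolic induction, so the essential task is to show that no Steinberg-type subrepresentation of $\Indu{\overline{P}}{G}{\Ord_P J}$ can lie in $\ker \eta$. This requires a devissage along the socle filtration of $\Ord_P J$ combined with the formulas in \eqref{ordinduced}, and is where the group-theoretic hypotheses and the restriction $p \geq 5$ enter essentially.
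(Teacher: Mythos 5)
Your plan reverses the logical dependency in the paper, and that reversal creates two genuine gaps. The paper proves (i) first, then deduces (ii) \emph{from} (i) by a purely formal argument, and finally gets (iii). You propose (ii) $\Rightarrow$ (i), which fails in a specific case, and your attack on (ii) itself is left incomplete precisely where it cannot be completed by the route you suggest.

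\textbf{Gap in deducing (i) from (ii).} You take a nonzero $U\subseteq\Ord_P J$, form $\tilde\phi\colon\Indu{\overline{P}}{G}{U}\hookrightarrow J$, intersect its image with $\iota(\Indu{\overline{P}}{G}{\chi})$ to get $W\neq 0$, and then ``apply $\Ord_P$'' to get a nonzero element of $U\cap\chi$. But $\Ord_P$ is only left exact, so $\Ord_P(W)\subseteq U\cap\chi$, and nothing forces $\Ord_P(W)$ to be nonzero. This is not a technicality: when $\chi=\chi^s$ the $G$-socle of $\Indu{\overline{P}}{G}{\chi}$ is the character $\chi\circ\det$, which has $\Ord_P=0$ by \eqref{ordtriv}, so the nonzero intersection $W$ can very well be annihilated by $\Ord_P$. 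That is exactly the case the paper has to treat separately in (i), by noting that $0\to\chi\to\Indu{\overline{P}}{G}{\chi}\to\Sp\otimes\chi\to0$ is non-split (so $J$ is also an injective envelope of $\chi\circ\det$) and invoking $\dim\Ext^1_{G/Z}(\Sp,\Eins)=1$; in the other case $\chi\neq\chi^s$ it uses Corollary \ref{tau12sub}. Your approach simply has no access to these facts because it tries to make (i) a consequence of (ii).

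\textbf{The promotion $\Ord_P(\ker\eta)=0\ \Rightarrow\ \ker\eta=0$ is the wrong lever.} You correctly observe that $\Ord_P(\ker\eta)=0$ by the triangular identity, and correctly recognize that this does not suffice because there exist plenty of representations killed by $\Ord_P$. You then propose a devissage over supersingular and Steinberg constituents. The paper avoids all of this: it never tries to show that subobjects of $\Indu{\overline{P}}{G}{\Ord_P J}$ have nonzero ordinary parts. Instead it shows that the subobject $\Indu{\overline{P}}{G}{\chi}\hookrightarrow\Indu{\overline{P}}{G}{\Ord_P J}$ is \emph{essential}, using the evaluation map $\ev\colon f\mapsto f(1)$, which is nonzero on every nonzero $G$-submodule of a parabolic induction (an elementary fact, true with no restriction on $p$), combined with the already-proved essentiality from (i). Since $\eta$ restricts to the injection $\iota$ on $\Indu{\overline{P}}{G}{\chi}$, essentiality of the latter forces $\ker\eta=0$. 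So (ii) is formal given (i); the hard, $\GL_2(\Qp)$-specific content is all in (i).

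Your treatment of (iii) is essentially the paper's, modulo that you need (ii) to extend $\eta\circ\psi$ along the inclusion $\Indu{\overline{P}}{G}{\Ord_P J}\hookrightarrow J$.
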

\begin{proof} Since $\Ord_P$ is right adjoint to the exact functor $\Indu{\overline{P}}{G}{}$, $\Ord_P J$ is
injective  in  $\Mod_{T, \zeta}^{\mathrm{l \, adm}}(A)$ and we obtain an injection $\Ord_P \iota: \chi\hookrightarrow \Ord_P J$. 
For every $\tau$ in $\Mod_{T, \zeta}^{\mathrm{l \, adm}}(A)$ we have a commutative diagram:
 \begin{equation}\label{diagramofhoms}
\xymatrix@1{\; \Hom_{A[T]}(\tau, \chi)\;\ar[d]^{\cong} \ar@{^(->}[r] & \Hom_{A[T]}(\tau, \Ord_P J)\ar[d]^{\cong}\\
            \; \Hom_{A[G]}(\Indu{\overline{P}}{G}{\tau}, \Indu{\overline{P}}{G}{\chi})\; \ar@{^{(}->}[r] & 
\Hom_{A[G]}(\Indu{\overline{P}}{G}{\tau}, J).}
\end{equation}
We claim that if $\tau$ is irreducible then the bottom  arrow is an isomorphism. Suppose that 
$\Hom_{A[G]}(\Indu{\overline{P}}{G}{\tau}, J)$ is non-zero, then as $\iota:\Indu{\overline{P}}{G}{\chi}\hookrightarrow J$
is essential, the representations $\Indu{\overline{P}}{G}{\tau}$ and $\Indu{\overline{P}}{G}{\chi}$ have an irreducible 
subquotient in common. In this case it follows from Corollary \ref{tau12sub} that $\tau\cong \chi$. If $\chi\neq \chi^s$ then 
$\Indu{\overline{P}}{G}{\chi}$ is irreducible and the claim follows from the essentiality of $\iota$. If $\chi=\chi^s$ then 
$\chi$ factors through the determinant and thus extends to a character $\chi: G\rightarrow k^{\times}$ and we have an exact non-split 
sequence $0\rightarrow \chi\rightarrow \Indu{\overline{P}}{G}{\chi} \rightarrow \Sp \otimes \chi \rightarrow 0$. Since the 
sequence is non-split, $J$ is also an injective envelope of $\chi$ and any non-zero map $A[G]$-equivariant map 
$\Indu{\overline{P}}{G}{\chi}\rightarrow J$ is an injection. Thus the claim in this case is equivalent to the  assertion that  
$\Ext^1_{G/Z}(\Sp, \Eins)$ is one dimensional. This is shown in \cite[Thm 11.4]{ext2}. 
The claim implies that the top arrow in \eqref{diagramofhoms} is an isomorphism and hence $\Ord_P \iota$ is essential, which proves 
(i). 

We claim that the map $\Indu{\overline{P}}{G} \Ord_P \iota: \Indu{\overline{P}}{G}{\chi}\rightarrow \Indu{\overline{P}}{G}{\Ord_P J}$
is essential. It is enough to show that the natural map
\begin{equation}
 \Hom_G(\pi,  \Indu{\overline{P}}{G}{\chi})\rightarrow  \Hom_G(\pi, \Indu{\overline{P}}{G}{\Ord_P J})
 \end{equation}
is an isomorphism for all irreducible representations $\pi$. By adjointness this is equivalent to showing that  the natural map
\begin{equation}\label{neediso}
\Hom_T(\pi_{\overline{U}},  \chi)\rightarrow  \Hom_T( \pi_{\overline{U}}, \Ord_P J)
\end{equation}
is an isomorphism, where $\pi_{\overline{U}}$ denotes the coinvariants  by the subgroup of lower-triangular  unipotent matrices. Since $\pi$ is an irreducible
representation the coinvariants are either zero  or an irreducible representation of $T$. Since $\chi\hookrightarrow \Ord_P J$ is 
essential by Part (i), in both cases we obtain that \eqref{neediso} is an isomorphism.


Applying $\Hom_{A[G]}(\Indu{\overline{P}}{G}{\Ord_P J}, \ast)$ to the injection $\Indu{\overline{P}}{G}{\Ord_P J}\hookrightarrow J$ 
we get a commutative diagram 
 \begin{displaymath}
\xymatrix@1{ \Hom_{A[T]}(\Ord_P J, \Ord_P J)\ar[d]^{\cong} \ar[r]^{\id} & \Hom_{A[T]}(\Ord_P J,  \Ord_P J)\ar[d]^{\cong}\\
             \Hom_{A[G]}(\Indu{\overline{P}}{G}{\Ord_P J}, \Indu{\overline{P}}{G}{\Ord_P J}) \ar[r] & 
\Hom_{A[G]}(\Indu{\overline{P}}{G}{\Ord_P J}, J).}
\end{displaymath}
Hence, the bottom arrow is an isomorphism. Applying $\Hom_{A[G]}(\ast, J)$ and using injectivity of $J$ we get a surjection
$$\Hom_{A[G]}(J,J)\twoheadrightarrow \Hom_{A[G]}(\Indu{\overline{P}}{G}{\Ord_P J}, J).$$
This implies that every endomorphism of $J$ maps $\Indu{\overline{P}}{G}{\Ord_P J}$ to itself and every endomorphism 
of $\Indu{\overline{P}}{G}{\Ord_P J}$ extends to an endomorphism of $J$, which implies (iii).
\end{proof}

\begin{cor}\label{endoPOrd} Let $\chi:T\rightarrow k^{\times}$ be a smooth character such that $\chi|_Z=\zeta$. Let 
$\wP$ be a projective envelope of $(\Indu{\overline{P}}{G}{\chi})^{\vee}$ in $\dualcat_{G,\zeta}(\OO)$,  let
$\wP_{\chi^{\vee}}$ be a projective envelope of $\chi^{\vee}$ in $\dualcat_{T, \zeta}(\OO)$ and let 
$\wM=(\Indu{\overline{P}}{G}{(\wP_{\chi^{\vee}})^{\vee}})^{\vee}$ then there exists a continuous surjection of rings:
\begin{equation}\label{surjrings}
\End_{\dualcat(\OO)}(\wP)\twoheadrightarrow \End_{\dualcat(\OO)}(\wM)\cong \OO[[x,y]].
\end{equation}
\end{cor}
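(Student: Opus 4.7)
The plan is to dualize Proposition~\ref{projectiveandord} and pass from artinian coefficients to $\OO$. Setting $J := \wP^{\vee}$ and $J_T := (\wP_{\chi^{\vee}})^{\vee}$, these are respectively injective envelopes of $\Indu{\overline{P}}{G}{\chi}$ in $\Mod^{\mathrm{l\,adm}}_{G,\zeta}(\OO)$ and of $\chi$ in $\Mod^{\mathrm{l\,adm}}_{T,\zeta}(\OO)$ (for $T$ this uses that $\Mod^{\mathrm{l\,fin}}_{T,\zeta}(\OO)=\Mod^{\mathrm{l\,adm}}_{T,\zeta}(\OO)$, which follows from Proposition~\ref{Tirrfin}). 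By construction $\wM^{\vee}=\Indu{\overline{P}}{G}{J_T}$, so proving the corollary amounts to establishing the three dual statements: $\Ord_P J \cong J_T$; the natural adjunction map $\Indu{\overline{P}}{G}{J_T}\hookrightarrow J$ is injective; and the restriction map $\End_{\OO[G]}(J)\to \End_{\OO[G]}(\Indu{\overline{P}}{G}{J_T})\cong \End_{\OO[T]}(J_T)$ is well-defined and surjective.

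First I would extend Proposition~\ref{projectiveandord} from the artinian case $A=\OO/\varpi^n\OO$ to $A=\OO$. Write $J\cong \varinjlim_n J[\varpi^n]$; since $\OO$ acts on $J$ and each $J[\varpi^n]$ is an injective envelope of $\Indu{\overline{P}}{G}{\chi}$ in $\Mod^{\mathrm{l\,adm}}_{G,\zeta}(\OO/\varpi^n\OO)$ (Corollary~\ref{injgoinjO} applied in the smaller category, together with uniqueness of injective envelopes, gives this after checking essentiality on each $\varpi$-torsion layer), we can apply Proposition~\ref{projectiveandord} level by level. Because $\Ord_P$ and $\Indu{\overline{P}}{G}{}$ both commute with inductive limits (\cite[Lem.\ 3.2.2, 4.1.4]{ord1}), the isomorphism $\Ord_P J[\varpi^n]\cong J_T[\varpi^n]$ and the injection $\Indu{\overline{P}}{G}{J_T[\varpi^n]}\hookrightarrow J[\varpi^n]$ pass to the colimit, giving $\Ord_P J\cong J_T$ and $\iota:\Indu{\overline{P}}{G}{J_T}\hookrightarrow J$ over $\OO$.

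Next I would produce the map on endomorphism rings exactly as in part (iii) of Proposition~\ref{projectiveandord}: applying $\Hom_{\OO[G]}(\Indu{\overline{P}}{G}{J_T},-)$ to $\iota$, combined with the adjunction $\eqref{adjord}$ and \eqref{ordandind}, gives a commutative square whose bottom row is the identity on $\End_{\OO[T]}(J_T)$, so $\End_{\OO[G]}(\Indu{\overline{P}}{G}{J_T})\cong \End_{\OO[T]}(J_T)$. Injectivity of $J$ then yields a surjection $\End_{\OO[G]}(J)\twoheadrightarrow \End_{\OO[G]}(\Indu{\overline{P}}{G}{J_T})$, and in particular every endomorphism of $J$ preserves the subobject $\Indu{\overline{P}}{G}{J_T}$. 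Dualizing (which reverses arrows and replaces each ring by its opposite, but preserves surjectivity) produces the asserted surjection $\End_{\dualcat(\OO)}(\wP)\twoheadrightarrow \End_{\dualcat(\OO)}(\wM)\cong \End_{\dualcat_{T,\zeta}(\OO)}(\wP_{\chi^{\vee}})$. The final identification with $\OO[[x,y]]$ is then Proposition~\ref{projTistfree}. Continuity of the surjection is automatic: both sides carry the natural (pseudo-compact) topology from their definitions as endomorphism rings of objects of $\dualcat(\OO)$, and the map is induced by functorial composition with the dual of $\iota$.

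The main obstacle is the extension from artinian $A$ to $A=\OO$, specifically checking that the formation of injective envelopes, $\Ord_P$, and the adjunction \eqref{adjord} are all compatible with the filtration by $\varpi$-torsion; none of this is deep, but it requires carefully invoking the results of \cite{ord1} in the non-artinian setting (which is why Proposition~\ref{projectiveandord} was stated only for artinian $A$). Once this compatibility is in hand, the corollary is a direct dualization.
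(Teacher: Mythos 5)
Your proposal is correct and is essentially the paper's own argument: dualize Proposition \ref{projectiveandord}, identify $\Ord_P(\wP^{\vee})$ with $(\wP_{\chi^{\vee}})^{\vee}$ by uniqueness of injective envelopes, transport part (iii) through Pontryagin duality (which replaces endomorphism rings by their opposites but preserves surjectivity), and conclude with Proposition \ref{projTistfree}. The artinian-to-$\OO$ d\'evissage you single out as the main obstacle is unnecessary — and as written it glosses over choosing the level-$n$ isomorphisms compatibly with the transition maps — since the adjunction \eqref{adjord} and the isomorphism \eqref{ordandind} hold for any complete local noetherian $A$, so the proof of Proposition \ref{projectiveandord} applies verbatim with $A=\OO$, which is exactly how the paper invokes it.
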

\begin{proof} It follows from Proposition \ref{projectiveandord} that $\Ord_P( \wP^{\vee})$ is an injective envelope
of $\chi$ in $\Mod_{T, \zeta}^{\mathrm{l\, adm}}(\OO)$. Since injective envelopes are unique up to an isomorphism 
we deduce that $\Ord_P( \wP^{\vee})\cong (\wP_{\chi^{\vee}})^{\vee}$. Duality induces an isomorphism between 
the endomorphism ring of an object and the opposite of the endomorphism ring of its dual. Thus 
it follows from Proposition \ref{projectiveandord} (iii) 
that we have  natural maps:
$$\End_{\dualcat_G(\OO)}(\wP)\twoheadrightarrow \End_{\dualcat_G(\OO)}(\wM)\cong \End_{\dualcat_T(\OO)}(\wP_{\chi^{\vee}}).$$
The last ring is isomorphic to $\OO[[x,y]]$ by Proposition \ref{projTistfree}.
\end{proof} 

\begin{cor}\label{headM}
 We keep notations of Corollary \ref{endoPOrd}. Let $R:=\End_{\dualcat(\OO)}(\wM)$ and
let $\md$ be a compact $R$-module then there exists a natural isomorphism:
$$\md\wtimes_R \wM \cong (\Indu{P}{G}{(\md\wtimes_R \wP_{\chi^{\vee}})^{\vee}})^{\vee}.$$
In particular, $\md \mapsto \md\wtimes_R \wM$ defines an exact functor from the category of 
compact $R$-modules to $\dualcat(\OO)$. Moreover, $\Hom_{\dualcat(\OO)}(\wP, \md \wtimes_R \wM)\cong \md$.
\end{cor}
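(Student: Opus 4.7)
The plan is to realize $\wM$ as the value at $\wP_{\chi^{\vee}}$ of a single functor that commutes with $\md\wtimes_R(\,\cdot\,)$, and then unwind the $\Hom$ by adjunction. Define
$\Ind^{\vee}:\dualcat_{T,\zeta}(\OO)\rightarrow \dualcat_{G,\zeta}(\OO)$ by $N\mapsto (\Indu{\overline{P}}{G}{N^{\vee}})^{\vee}$, so by construction $\wM=\Ind^{\vee}(\wP_{\chi^{\vee}})$. Because parabolic induction is exact and commutes with arbitrary inductive limits on the smooth side, its Pontryagin dual $\Ind^{\vee}$ is exact and commutes with arbitrary projective limits; in particular it commutes with direct products.

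First I would verify the natural isomorphism $\md\wtimes_R\wM\cong \Ind^{\vee}(\md\wtimes_R\wP_{\chi^{\vee}})$. The two sides, regarded as functors of $\md$, are both right exact (the second because $\Ind^{\vee}$ is exact and $\wtimes_R$ is right exact) and both commute with arbitrary direct products (by \cite[Exp $VII_B$]{SGA3} for the completed tensor product, and by the previous paragraph for $\Ind^{\vee}$). For $\md=R$ they evidently coincide with $\wM$, hence they coincide for every $\md\cong\prod_{i\in I}R$. An arbitrary compact $R$-module admits a presentation $\prod_{i\in I}R\rightarrow \prod_{j\in J}R\rightarrow \md\rightarrow 0$, and passing to cokernels using right exactness yields the claimed natural isomorphism in general.

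Since Proposition \ref{projTistfree} tells us that $\wP_{\chi^{\vee}}$ is free of rank one over $R\cong\OO[[x,y]]$, the functor $\md\mapsto \md\wtimes_R\wP_{\chi^{\vee}}$ is exact on compact $R$-modules; composing with the exact functor $\Ind^{\vee}$ and invoking the isomorphism just established gives exactness of $\md\mapsto \md\wtimes_R\wM$.

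Finally I would compute the $\Hom$ by adjunction. Dualizing the Emerton adjunction \eqref{adjord} produces a natural isomorphism $\Hom_{\dualcat_G(\OO)}(W,\Ind^{\vee}N)\cong \Hom_{\dualcat_T(\OO)}((\Ord_P W^{\vee})^{\vee},N)$. Specializing to $W=\wP$ and invoking Proposition \ref{projectiveandord}(i), which identifies $\Ord_P(\wP^{\vee})$ with an injective envelope of $\chi$ and hence, by uniqueness, with $(\wP_{\chi^{\vee}})^{\vee}$, this adjunction collapses to $\Hom_{\dualcat_G(\OO)}(\wP,\Ind^{\vee}N)\cong \Hom_{\dualcat_T(\OO)}(\wP_{\chi^{\vee}},N)$. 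Taking $N=\md\wtimes_R\wP_{\chi^{\vee}}$ and invoking Lemma \ref{headS0} on the torus (with projective envelope $\wP_{\chi^{\vee}}\twoheadrightarrow \chi^{\vee}$) collapses the right-hand side to $\md$. The only point requiring attention is compatibility of this chain with the two $R$-actions — one through $\End_{\dualcat(\OO)}(\wP)\twoheadrightarrow R$ from Corollary \ref{endoPOrd}, the other through $R=\End_{\dualcat_T(\OO)}(\wP_{\chi^{\vee}})$ — but this is automatic from naturality, since the surjection of Corollary \ref{endoPOrd} is induced precisely from the action of $\End_{\dualcat_T(\OO)}(\wP_{\chi^{\vee}})$ on $(\Ord_P\wP^{\vee})^{\vee}\cong\wP_{\chi^{\vee}}$.
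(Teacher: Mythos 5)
Your proof is correct, and for the first two assertions (the isomorphism $\md\wtimes_R\wM\cong\Ind^{\vee}(\md\wtimes_R\wP_{\chi^{\vee}})$ and exactness) it proceeds exactly as the paper does: check for $\md=R$, pass to direct products, then reduce the general case via a presentation $\prod_J R\to\prod_I R\to\md\to 0$, and deduce exactness from freeness of $\wP_{\chi^{\vee}}$ over $R$ together with exactness of induction and duality.

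Where you depart from the paper is the third assertion. The paper also reduces this to $\md=R$, using the isomorphism $\Hom_{\dualcat(\OO)}(\wP,\wM)\cong\End_{\dualcat(\OO)}(\wM)=R$ supplied by Proposition \ref{projectiveandord}(iii), and then runs the same products-and-presentations machinery a second time. You instead dualize the Emerton adjunction \eqref{adjord} once and for all, obtaining $\Hom_{\dualcat_G(\OO)}(\wP,\Ind^{\vee}N)\cong\Hom_{\dualcat_T(\OO)}(\wP_{\chi^{\vee}},N)$ for arbitrary $N$ in $\dualcat_T(\OO)$, and then quote Lemma \ref{headS0} on the torus side. This handles every compact $R$-module in one pass, with no second reduction, and the $R$-linearity is indeed automatic because the surjection in Corollary \ref{endoPOrd} is literally the map induced on Homs by this adjunction: precomposition by $\phi\in\End(\wP)$ on the left corresponds, under $\Ord_P$, to precomposition by $\Ord_P(\phi)$ on the right, which is the image of $\phi$ under that surjection. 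Both arguments rest on the same inputs (Proposition \ref{projectiveandord}, freeness of $\wP_{\chi^{\vee}}$, Lemma \ref{headS0}); yours is the slightly tighter packaging. One small thing worth keeping straight when writing it up: Corollary \ref{endoPOrd} sets $\wM=(\Indu{\overline{P}}{G}{\wP_{\chi^{\vee}}^{\vee}})^{\vee}$ while the statement of the Corollary uses $\Indu{P}{G}$, an inconsistency present in the paper itself; your consistent use of $\overline{P}$ throughout is the reading that makes the $\md=R$ base case definitional, which is what both proofs implicitly assume.
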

\begin{proof} The assertion is true by definition if $\md=R$. If $\md=\prod_{i\in I} R$ for some set 
$I$ then 
$$(\md\wtimes_R \wM)^{\vee}\cong \bigoplus_{i\in I} \wM^{\vee}\cong \Indu{P}{G}{(\bigoplus_{i\in I} \wP_{\chi}^{\vee})}\cong 
\Indu{P}{G}{(\md\wtimes\wP_{\chi^{\vee}})^{\vee}}.$$
In general, we may present $\md$ as $\prod_{j\in J} R\rightarrow \prod_{i\in I} R\rightarrow \md\rightarrow 0$ and argue as in 
Lemma \ref{headS0}. Since $\wP_{\chi^{\vee}}$ is a free $R$-module of rank $1$ by Proposition \ref{projTistfree}, 
the functor $\md\mapsto \md\wtimes_R \wP_{\chi^{\vee}}$ is exact and since induction and Pontryagin dual 
are exact functors $\md\mapsto \md\wtimes_R \wM$ is exact.

The last assertion is proved similarly. It follows from Proposition \ref{projectiveandord} that we have an isomorphism 
$\Hom_{\dualcat(\OO)}(\wP, \wM)\cong \Hom_{\OO[T]}(\Ord_{P} J, \Ord_P, J)\cong \End_{\dualcat(\OO)}(\wM)$.
Hence the assertion is true when $\md=R$ and thus it is also true 
when $\md\cong \prod_{i\in I} R$ for some set $I$. In general, 
we may present $\md$ as $\prod_{j\in J} R\rightarrow \prod_{i\in I} R\rightarrow \md\rightarrow 0$ and argue as in Lemma \ref{headS0}.
\end{proof}

\begin{lem}\label{UindJ} Let $U$ and $J$ be in $\Mod_{T, \zeta}^{\mathrm{l \, adm}}(k)$ and suppose that
$J$ is injective. Then 
\begin{equation}
\Ext^1_{G, \zeta}(\Indu{P}{G}{U}, \Indu{P}{G}{J})\cong \Hom_T(U^s, J\otimes \alpha^{-1}) 
\end{equation}
and $\Ext^i_{G, \zeta}(\Indu{P}{G}{U}, \Indu{P}{G}{J})=0$, for $i\ge 2$. 
\end{lem}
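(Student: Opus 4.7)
The plan is to apply the spectral sequence \eqref{specseqord} associated with Emerton's ordinary parts functor. Since the spectral sequence is stated for induction from the opposite parabolic $\overline{P}$, I first rewrite
$$\Ext^i_{G,\zeta}(\Indu{P}{G}{U}, \Indu{P}{G}{J})\cong \Ext^i_{G,\zeta}(\Indu{\overline{P}}{G}{U^s}, \Indu{P}{G}{J})$$
using the isomorphism $\Indu{P}{G}{U}\cong \Indu{\overline{P}}{G}{U^s}$ recalled just before \eqref{ordinduced}. Then with $V:=\Indu{P}{G}{J}$, the formulas in \eqref{ordinduced} identify $\Ord_P V \cong J^s$ and $\RR^1\Ord_P V\cong J\otimes \alpha^{-1}$, and $\RR^j\Ord_P V=0$ for $j\geq 2$.

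The key observation is that both $J^s$ and $J\otimes\alpha^{-1}$ remain injective in $\Mod^{\mathrm{l\,adm}}_{T,\zeta}(k)$. Indeed, the $s$-twist is an autoequivalence of this category (the character $\zeta|_Z$ is $s$-invariant since $Z$ is contained in the center, which $s$ fixes pointwise), and tensoring with the character $\alpha^{-1}$ is also an autoequivalence (since $\alpha^{-1}$ is trivial on $Z$, the central character condition is preserved). Autoequivalences preserve injectives, so
$$\Ext^i_{T,\zeta}(U^s, J^s)=0=\Ext^i_{T,\zeta}(U^s, J\otimes\alpha^{-1}),\quad \forall i\geq 1.$$

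Consequently the $E_2$ page of \eqref{specseqord} is concentrated in the column $i=0$, with only the entries $E_2^{0,0}=\Hom_T(U^s, J^s)$ and $E_2^{0,1}=\Hom_T(U^s, J\otimes\alpha^{-1})$ potentially non-zero. The spectral sequence therefore degenerates at $E_2$ for trivial reasons, yielding
$$\Ext^1_{G,\zeta}(\Indu{P}{G}{U}, \Indu{P}{G}{J})\cong E_2^{0,1}\cong \Hom_T(U^s, J\otimes\alpha^{-1})$$
and $\Ext^i_{G,\zeta}(\Indu{P}{G}{U}, \Indu{P}{G}{J})=0$ for $i\geq 2$.

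There is no real obstacle: the argument is a straightforward application of the spectral sequence, with the only substantive point being the verification that injectivity is preserved under $s$-twist and twist by $\alpha^{-1}$, which is immediate since both are autoequivalences of the ambient category.
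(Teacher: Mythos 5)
Your proof is correct and follows the same route as the paper: identify the ordinary parts via \eqref{ordinduced}, observe that $J^s$ and $J\otimes\alpha^{-1}$ are injective (the paper asserts this without the brief autoequivalence justification you supply, but that is the implicit reason), and then read off the vanishing of the higher $\Ext^i_{T,\zeta}$ terms from the spectral sequence \eqref{specseqord}. The only cosmetic difference is that the paper quotes the derived exact sequences \eqref{ordseq}--\eqref{ordext3} while you invoke the $E_2$ degeneration directly; these amount to the same thing.
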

\begin{proof} It follows from \eqref{ordinduced} that $\Ord_P ( \Indu{P}{G}{J}) $ and 
$\RR^1\Ord_P( \Indu{P}{G}{J})$ are both injective objects. Thus the terms $\Ext^i_{T,\zeta}$
in \eqref{ordseq} and \eqref{ordext3} vanish and we get the assertion.
\end{proof}

\begin{prop}\label{projresprincser} 
Let $\chi: T\rightarrow k^{\times}$ be a smooth character, such that $\chi\neq \chi^s$ and $\chi\neq \chi^s \alpha^2$. 
Then there exists an exact sequence in $\dualcat_{G, \zeta}(k)$: 
\begin{equation}\label{resindordinj}
0\rightarrow P_{S'}\rightarrow P_S\rightarrow M_{\chi^{\vee}}\rightarrow 0
\end{equation}
where $S=(\Indu{P}{G}{\chi})^{\vee}$, $S'=(\Indu{P}{G}{\chi^s\alpha})^{\vee}$, $P_S$ a  projective envelope 
of $S$ in $\dualcat_{G, \zeta}(k)$ and 
$$M_{\chi^{\vee}}:=(\Indu{P}{G}{(P_{\chi^{\vee}})^{\vee}})^{\vee},$$
where $P_{\chi^{\vee}}$ is a projective envelope of $\chi^{\vee}$ in $\dualcat_{T, \zeta}(k)$. 
\end{prop}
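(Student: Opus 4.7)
The plan is to pass to the dual category $\Mod^{\mathrm{l\,adm}}_{G,\zeta}(k)$. Writing $J := P_S^\vee$ for an injective envelope of $\Indu{P}{G}{\chi}$ and $J_\chi := (P_{\chi^\vee})^\vee$ for an injective envelope of $\chi$ in $\Mod^{\mathrm{l\,adm}}_{T,\zeta}(k)$, the goal is to produce an exact sequence
$$0 \to \Indu{P}{G}{J_\chi} \to J \to J' \to 0$$
with $J'$ an injective envelope of $\Indu{P}{G}{\chi^s\alpha}$; applying $(-)^\vee$ will then yield the resolution of $M_{\chi^\vee}$ and identify its kernel with $P_{S'}$.

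The inclusion on the left is supplied by Proposition~\ref{projectiveandord}. Using the standard isomorphism $\Indu{P}{G}{U}\cong \Indu{\overline{P}}{G}{U^s}$, we regard $\Indu{P}{G}{\chi}$ as $\Indu{\overline{P}}{G}{\chi^s}$; part~(i) of that proposition then identifies $\Ord_P J$ with an injective envelope of $\chi^s$, namely $(J_\chi)^s$, and part~(ii) gives an injection $\Indu{\overline{P}}{G}{(J_\chi)^s}\hookrightarrow J$ whose source is isomorphic to $\Indu{P}{G}{J_\chi}$ via the same $s$-identification. Let $Q$ denote its cokernel.

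To prove $Q$ is injective, the long exact sequence for $\Ext^\bullet(\tau,-)$ combined with injectivity of $J$ reduces matters to verifying $\Ext^2_{G,\zeta}(\tau,\Indu{P}{G}{J_\chi})=0$ for every irreducible $\tau$. Proposition~\ref{blocksoverk}, together with the hypotheses $\chi\ne\chi^s$ and $\chi\ne\chi^s\alpha^2$ (which force both $\Indu{P}{G}{\chi}$ and $\Indu{P}{G}{\chi^s\alpha}$ to be irreducible principal series), ensures that every irreducible in the block of $\Indu{P}{G}{\chi}$ is of the form $\Indu{P}{G}{\psi}$. By \eqref{ordinduced} we have $\Ord_P\Indu{P}{G}{J_\chi}\cong J_\chi^s$ and $\RR^1\Ord_P\Indu{P}{G}{J_\chi}\cong J_\chi\otimes\alpha^{-1}$, both of which are injective in $\Mod^{\mathrm{l\,adm}}_{T,\zeta}(k)$ (the latter being an injective envelope of $\chi\alpha^{-1}$). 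Hence the spectral sequence \eqref{specseqord} applied to $\Indu{\overline{P}}{G}{\psi^s}$ degenerates onto its $i=0$ column, and this yields both the desired vanishing and the identification $\Ext^1_{G,\zeta}(\Indu{P}{G}{\psi},\Indu{P}{G}{J_\chi})\cong \Hom_T(\psi^s,J_\chi\otimes\alpha^{-1})$, which is nonzero precisely when $\psi=\chi^s\alpha$ (using $\alpha^s=\alpha^{-1}$).

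A parallel Hom-computation shows that $\Hom_G(\Indu{P}{G}{\chi},\Indu{P}{G}{J_\chi})\to\Hom_G(\Indu{P}{G}{\chi},J)$ is an isomorphism of one-dimensional spaces, and that $\Hom_G(\tau,\Indu{P}{G}{J_\chi})=\Hom_G(\tau,J)=0$ for every other principal series $\tau$ in the block. Feeding these into the long exact sequence $0\to\Hom(\tau,\Indu{P}{G}{J_\chi})\to\Hom(\tau,J)\to\Hom(\tau,Q)\to\Ext^1(\tau,\Indu{P}{G}{J_\chi})\to 0$ yields $\Hom_G(\Indu{P}{G}{\chi^s\alpha},Q)=k$ and $\Hom_G(\tau,Q)=0$ for every other irreducible in the block. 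Thus $\soc Q\cong\Indu{P}{G}{\chi^s\alpha}$ is simple, so $Q$ is indecomposable and coincides with its injective envelope $J'$. Dualizing the resulting short exact sequence gives the proposition. The principal obstacle is bookkeeping: keeping the $s$-twist interchange between $P$- and $\overline{P}$-induction consistent throughout, and handling the degenerate case $\chi=\chi^s\alpha$ (where $S=S'$) uniformly with the generic case $\chi\ne\chi^s\alpha$; the underlying vanishing statements are formal consequences of the injectivity in the torus category of $J_\chi^s$ and $J_\chi\otimes\alpha^{-1}$.
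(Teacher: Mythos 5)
Your argument is correct and is essentially the paper's own proof: the same essential inclusion $\Indu{P}{G}{J_{\chi}}\hookrightarrow J$ supplied by Proposition \ref{projectiveandord}, the same computation of $\Ext$-groups against principal series (your degeneration of the ordinary-parts spectral sequence, using that $J_{\chi}^s$ and $J_{\chi}\otimes\alpha^{-1}$ are injective for $T$, is exactly Lemma \ref{UindJ}), and the same identification of the socle of the cokernel as $\Indu{P}{G}{\chi^s\alpha}$; the paper merely packages the last step by embedding the cokernel into the injective envelope of $\Indu{P}{G}{\chi^s\alpha}$ and showing the quotient vanishes, where you instead invoke a Baer-type criterion to see the cokernel is already injective. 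The only point you leave implicit is that $\Ext^1_{G,\zeta}(\tau,Q)$ also vanishes for irreducible $\tau$ outside the block of $\Indu{P}{G}{\chi}$, which is immediate from the block decomposition (Proposition \ref{blockdecopm}) since every irreducible subquotient of $Q$, being a subquotient of $J$, lies in that block.
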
 
\begin{remar} If we write $\chi=\chi_1\otimes \chi_2 \omega^{-1}$ then $\chi^s \alpha= \chi_2\otimes \chi_1\omega^{-1}$ and we exclude
the case $\chi_1\chi_2^{-1}=\omega^{\pm 1}$. In particular, both principal series representations are irreducible. For analogous sequences,
when $\chi_1\chi_2^{-1}=\omega^{\pm 1}$ see Proposition \ref{resindJNG} and \eqref{1B}, \eqref{2B}.
\end{remar}
\begin{proof} We show the existence of the dual sequence in $\Mod_{G, \zeta}(k)$. Let $J_{\chi}$ be an injective envelope of $\chi$ in 
$\Mod_{T, \zeta}^{\mathrm{l \, adm}}(k)$ and let $J_{\pi_{\chi}}$ be an injective envelope of 
$\pi_{\chi}:=\Indu{P}{G}{\chi}$ in $\Mod_{G, \zeta}^{\mathrm{l \, adm}}(k)$. 
Then Proposition \ref{projectiveandord} gives an injection $\Indu{P}{G}{J_{\chi}}\hookrightarrow J_{\pi_{\chi}}$ and we 
denote the quotient  by $\kappa_1$. Let $\pi$ be an irreducible smooth representation of $G$, then by applying $\Hom_G(\pi, \ast)$ we get an isomorphism 
\begin{equation}\label{hompiq1}
\Hom_G(\pi, \kappa_1)\cong \Ext^1_{G, \zeta}(\pi, \Indu{P}{G}{J_{\chi}}).
\end{equation}
If $\Hom_G(\pi, \kappa_1)\neq 0$ then $\pi$ is a subquotient of $J_{\pi_{\chi}}$ and hence lies in the block of $\Indu{P}{G}{\chi}$, see the proof of Proposition \ref{blockdecopm}. Hence, $\pi\cong \Indu{P}{G}{\chi}$ or 
$\pi\cong \Indu{P}{G}{\chi^s \alpha}$, see Proposition \ref{blocksoverk}. It follows from Lemma \ref{UindJ} and \eqref{hompiq1} that $\pi\cong \Indu{P}{G}{\chi^s\alpha}$
and $\Hom_G(\pi, \kappa_1)$ is one dimensional. Thus we may embed $\kappa_1\hookrightarrow J_{\pi}$, where $J_{\pi}$ is 
an injective envelope of $\pi$. Let 
$\kappa_2$ be the quotient. Then for every irreducible $\tau$ we have isomorphisms
\begin{equation}\label{hompiq2}
\Hom_G(\tau, \kappa_2)\cong \Ext^1_{G, \zeta}(\tau, \kappa_1)\cong \Ext^2_{G, \zeta}(\tau, \Indu{P}{G}{J_{\chi}}).
\end{equation} 
If $\Hom_G(\tau, \kappa_2)\neq 0$ then $\tau$ lies in the  block of $\Indu{P}{G}{\chi}$. Hence, $\tau\cong \Indu{P}{G}{\chi}$ or 
$\tau\cong \Indu{P}{G}{\chi^s \alpha}$. It follows from Lemma \ref{UindJ} and \eqref{hompiq1} that $\Ext^2$ term vanishes. 
Since every non-zero object of  $\Mod_{G, \zeta}^{\mathrm{l \, adm}}(k)=\Mod_{G, \zeta}^{\mathrm{lfin}}(k)$ has a non-zero socle, 
we deduce that $\kappa_2=0$. 
\end{proof}    

\begin{cor}\label{projresprincser2} 
Let $\chi: T\rightarrow k^{\times}$ be a smooth character, such that $\chi\neq \chi^s$ and $\chi\neq \chi^s \alpha^2$. 
Then there exists an exact sequence in $\dualcat_{G, \zeta}(\OO)$: 
\begin{equation}\label{resindordinj2}
0\rightarrow \wP_{S'}\rightarrow \wP_S\rightarrow \wM_{\chi^{\vee}}\rightarrow 0
\end{equation}
where $S=(\Indu{P}{G}{\chi})^{\vee}$, $S'=(\Indu{P}{G}{\chi^s\alpha})^{\vee}$, $\wP_S$ a  projective envelope 
of $S$ in $\dualcat_{G, \zeta}(\OO)$ and 
$$\wM_{\chi^{\vee}}:=(\Indu{P}{G}{(\wP_{\chi^{\vee}})^{\vee}})^{\vee},$$
where $\wP_{\chi^{\vee}}$ is a projective envelope of $\chi^{\vee}$ in $\dualcat_{T, \zeta}(\OO)$. 
\end{cor}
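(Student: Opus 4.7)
The plan is to lift the mod-$\varpi$ resolution of Proposition~\ref{projresprincser} via projectivity of $\wP_S$, then identify the kernel as $\wP_{S'}$ using the uniqueness of projective envelopes together with Corollary~\ref{projaretfree2}. First I would check the mod-$\varpi$ compatibilities. By Lemma~\ref{reduceprojenv} we have $\wP_S \otimes_{\OO} k \cong P_S$ and $\wP_{\chi^{\vee}} \otimes_{\OO} k \cong P_{\chi^{\vee}}$. For $\wM_{\chi^{\vee}}$, dualizing gives
\[
(\wM_{\chi^{\vee}}/\varpi)^{\vee} \cong (\wM_{\chi^{\vee}}^{\vee})[\varpi] \cong \Indu{P}{G}{(\wP_{\chi^{\vee}})^{\vee}[\varpi]} \cong \Indu{P}{G}{(\wP_{\chi^{\vee}}/\varpi)^{\vee}} \cong \Indu{P}{G}{P_{\chi^{\vee}}^{\vee}},
\]
since smooth induction is exact and commutes with $\varpi$-torsion. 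Dualizing back yields $\wM_{\chi^{\vee}} \otimes_{\OO} k \cong M_{\chi^{\vee}}$. In addition, $\wM_{\chi^{\vee}}$ is $\OO$-torsion free: its Pontryagin dual $\Indu{P}{G}{(\wP_{\chi^{\vee}})^{\vee}}$ is $\varpi$-divisible because $(\wP_{\chi^{\vee}})^{\vee}$ is $\varpi$-divisible (being the dual of the $\OO$-torsion free module $\wP_{\chi^{\vee}}$, see Corollary~\ref{projaretfree}) and induction preserves $\varpi$-divisibility.

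Next I would construct the map. Since $\wP_S$ is projective in $\dualcat_{G,\zeta}(\OO)$ and the reduction $\wM_{\chi^{\vee}} \twoheadrightarrow \wM_{\chi^{\vee}}/\varpi \cong M_{\chi^{\vee}}$ is surjective, the composition $\wP_S \twoheadrightarrow P_S \twoheadrightarrow M_{\chi^{\vee}}$ supplied by Proposition~\ref{projresprincser} lifts to a morphism $\tilde{q}\colon \wP_S \to \wM_{\chi^{\vee}}$. Its reduction mod $\varpi$ agrees with the surjection $P_S \twoheadrightarrow M_{\chi^{\vee}}$, so $\Coker(\tilde{q})/\varpi \Coker(\tilde{q}) = 0$, and Nakayama's lemma for compact objects of $\dualcat(\OO)$ (noting $\wM_{\chi^{\vee}}$ is a finitely generated $\OO[[H]]$-module on any compact open pro-$p$ subgroup) forces $\Coker(\tilde{q}) = 0$. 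Hence $\tilde{q}$ is surjective, giving an exact sequence $0 \to \wK \to \wP_S \to \wM_{\chi^{\vee}} \to 0$ with $\wK := \Ker \tilde{q}$.

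It remains to identify $\wK$ with $\wP_{S'}$. Since $\wM_{\chi^{\vee}}$ is $\OO$-flat, tensoring with $k$ preserves exactness, so $\wK \otimes_{\OO} k \cong P_{S'}$ by comparison with the mod-$\varpi$ resolution~\eqref{resindordinj}. As $\wK \subseteq \wP_S$ is an $\OO$-torsion free object of $\dualcat(\OO)$ with $\wK \otimes_{\OO} k \cong P_{S'}$ projective in $\dualcat(k)$, Corollary~\ref{projaretfree2} ensures $\wK$ is itself projective in $\dualcat(\OO)$. The composition $\wK \twoheadrightarrow \wK/\varpi \wK \cong P_{S'} \twoheadrightarrow S'$ is essential (the first arrow is essential by $\varpi$-adic separatedness of $\wK$, and the second by construction of $P_{S'}$), so $\wK$ is a projective envelope of $S'$ in $\dualcat_{G,\zeta}(\OO)$. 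By uniqueness of projective envelopes, $\wK \cong \wP_{S'}$.

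The main technical point is the computation $\wM_{\chi^{\vee}} \otimes_{\OO} k \cong M_{\chi^{\vee}}$ together with the $\OO$-flatness of $\wM_{\chi^{\vee}}$; once these bookkeeping facts are settled, the argument is driven by projectivity of $\wP_S$, Nakayama, and the uniqueness of projective envelopes. No delicate $\Ext$ computations are required beyond those already performed in Proposition~\ref{projresprincser}.
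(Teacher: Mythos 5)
Your argument follows essentially the same route as the paper: verify that $\wM_{\chi^{\vee}}$ is $\OO$-torsion free and reduces mod $\varpi$ to $M_{\chi^{\vee}}$, then lift the mod-$\varpi$ projective resolution from Proposition~\ref{projresprincser}. The paper packages the lifting step as a citation to Corollary~\ref{projaretfree3} (whose proof runs on essentiality of $\wM_{\chi^{\vee}}\twoheadrightarrow M_{\chi^{\vee}}$ rather than on Nakayama, but these are equivalent), and obtains $\wM_{\chi^{\vee}}\otimes_{\OO}k\cong M_{\chi^{\vee}}$ from Corollary~\ref{headM} together with Lemma~\ref{reduceprojenv}; your direct dualizing computation of the same isomorphism is fine.

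One point needs correcting. The parenthetical remark that $\wM_{\chi^{\vee}}$ is a finitely generated $\OO[[H]]$-module is false: $\wP_{\chi^{\vee}}\cong\OO[[x,y]]$ is not finitely generated over $\OO$, so $J_{\chi}:=(\wP_{\chi^{\vee}})^{\vee}$ is not admissible and $\wM_{\chi^{\vee}}=(\Indu{P}{G}{J_{\chi}})^{\vee}$ is not finitely generated over $\OO[[H]]$. Fortunately the clause is unnecessary: Nakayama in the form $C/\varpi C=0\Rightarrow C=0$ holds for every object of $\dualcat(\OO)$ without any finiteness hypothesis, since Pontryagin-dualizing turns it into the statement that a discrete $\OO$-torsion module with $\varpi$ acting injectively is zero. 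Deleting the parenthetical leaves your Nakayama step, the identification $\wK\otimes_{\OO}k\cong P_{S'}$, the application of Corollary~\ref{projaretfree2}, and the uniqueness of projective envelopes intact, so the proof is correct.
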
 

\begin{proof} Recall that  if $A$ is an object in $\dualcat(k)$ and $\wP\twoheadrightarrow A$ is  a
projective envelope of $A$ in $\dualcat(\OO)$ then $\wP/\varpi \wP \rightarrow A$ is a projective envelope
of $A$ in $\dualcat(k)$, see Lemma \ref{reduceprojenv}. From this and Corollary \ref{headM} we deduce that  
$\wM_{\chi^{\vee}}\otimes_{\OO} k  \cong M_{\chi^{\vee}}$. 
Proposition \ref{projTistfree} says that $\wP_{\chi^{\vee}}$ is $\OO$-torsion free, hence its dual is $\varpi$-divisible, 
hence $\Indu{P}{G}{\wP_{\chi^{\vee}}^{\vee}}$ is $\varpi$-divisible and so $\wM_{\chi^{\vee}}$ is $\OO$-torsion free. 
The assertion follows from Corollary \ref{projaretfree3} and Proposition \ref{projresprincser}.
\end{proof}

\subsection{Parabolic induction of unitary characters}\label{parabolic}

Let $\Mod^{\mathrm{lfin}}_{T, \zeta}(\OO)$ be the full subcategory of $\Mod^{\mathrm{sm}}_{T}(\OO)$ with objects
locally finite representations on which $Z$ acts by $\zeta$.  The irreducible objects correspond to 
the maximal ideals of $k[T]/(z-\zeta(z) : z\in Z)$, or alternatively $\Gal(\bar{k}/k)$-orbits of 
smooth characters $\chi: T\rightarrow \bar{k}^{\times}$, such that the restriction of $\chi$ to $Z$ is congruent to $\zeta$, and are of the form 
$V_{\chi}$, see Proposition \ref{irrkreps}. It follows from the proof of Proposition \ref{projTistfree} that there are no 
extensions between distinct irreducible representations. Hence, each block  consists of only one irreducible representation
and so if $\BB=\{\tau\}$ then $\Mod^{\mathrm{lfin}}_{T, \zeta}(\OO)^{\BB}$ is a full subcategory of 
$\Mod^{\mathrm{lfin}}_{T, \zeta}(\OO)$ with objects locally finite representations with all the irreducible subquotients isomorphic to $\tau$.
It follows from \cite[\S IV.2]{gab} that we have a decomposition of categories:
\begin{equation}\label{dT}
\Mod^{\mathrm{lfin}}_{T, \zeta}(\OO)\cong \prod_{\BB} \Mod^{\mathrm{lfin}}_{T, \zeta}(\OO)^{\BB},
\end{equation}
where the product is taken over all the blocks $\BB$. Using \eqref{dT} and arguing as in Proposition \ref{blockdecompB},
we obtain a decomposition of the category of admissible unitary $L$-Banach space representations of $T$ on which $Z$ acts by $\zeta$ into a direct sum of 
subcategories:
\begin{equation}\label{dTBan}
\Ban^{\mathrm{adm}}_{T,\zeta}(L)\cong \bigoplus_{\BB} \Ban^{\mathrm{adm}}_{T,\zeta}(L)^{\BB},
\end{equation}
where $\Pi$ is an object of $\Ban^{\mathrm{adm}}_{T,\zeta}(L)^{\BB}$ if and only if all the irreducible subquotients of 
$\Theta/\varpi \Theta$ lie in $\BB$, where $\Theta$ is an open bounded $T$-invariant lattice in $\Pi$.
By Proposition \ref{Tirrfin}, an irreducible $\tau$ is absolutely irreducible if and only if it is a character. 

Let $\chi: T\rightarrow k^{\times}$ be a smooth character with $\chi|_Z\equiv\zeta$, let $\wP\twoheadrightarrow \chi^{\vee}$ be a 
projective envelope of $\chi^{\vee}$ in $\dualcat_{T, \zeta}(\OO)$, the category anti-equivalent to $\Mod^{\mathrm{lfin}}_{T, \zeta}(\OO)$ 
by Pontryagin duality, and let $\wE$ be the endomorphism ring of $\wP$ in $\dualcat_{T, \zeta}(\OO)$. We have showed in Proposition \ref{projTistfree}
that $\wE\cong \OO[[x,y]]$. In particular, $\wE$ is commutative and noetherian. Let $\BB$ be the block of $\chi$ and let 
$\Ban^{\mathrm{adm.fl}}_{T,\zeta}(L)^{\BB}$ be the full subcategory of $\Ban^{\mathrm{adm}}_{T,\zeta}(L)^{\BB}$ consisting of 
all objects of finite length.

\begin{lem}\label{BanfinT} We have an equivalence of categories 
$$\Ban^{\mathrm{adm. fl}}_{T,\zeta}(L)^{\BB}\cong 
\bigoplus_{\nn\in \MaxSpec \wE[1/p]}\Ban^{\mathrm{adm. fl}}_{T,\zeta}(L)^{\BB}_{\nn}.$$
The category $\Ban^{\mathrm{adm. fl}}_{T, \zeta}(L)^{\BB}_{\nn}$ is anti-equivalent to the category 
of modules of finite length of the  
$\nn$-adic completion of $\wE[1/p]$. In particular,  $\Ban^{\mathrm{adm. fl}}_{T, \zeta}(L)^{\BB}_{\nn}$
contains only one irreducible object $\Pi_{\nn}$. \end{lem}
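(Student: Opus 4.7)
The plan is to apply Theorem \ref{furtherDBan} directly to the torus $T$, using the category $\dualcat(\OO) := \dualcat_{T,\zeta}(\OO)^{\BB}$ in place of $\dualcat(\OO)$ and $\Ban^{\mathrm{adm}}_{T,\zeta}(L)^{\BB}$ in place of $\Ban^{\mathrm{adm}}_{\dualcat(\OO)}$. The latter identification is legitimate: by the construction underlying \eqref{dTBan} (which mirrors Proposition \ref{blockdecompB}), $\Pi \in \Ban^{\mathrm{adm}}_{T,\zeta}(L)^{\BB}$ iff $\Theta^d \in \dualcat_{T,\zeta}(\OO)^{\BB}$ for any open bounded $T$-invariant lattice $\Theta \subset \Pi$.

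First I verify the two hypotheses of Theorem \ref{furtherDBan}. By Proposition \ref{projTistfree}, $\wE \cong \OO[[x,y]]$, which is commutative noetherian, so its centre $\wZ = \wE$ is noetherian and $\wE$ is trivially a finitely generated module over itself; this is (ii). For (i), since $\wP$ is a free $\wE$-module of rank $1$ (again by Proposition \ref{projTistfree}), $(\wE/\rad\wE) \wtimes_\wE \wP \cong k \wtimes_\wE \wP \cong \chi^\vee$, which is one-dimensional over $k$, hence of finite length in $\dualcat_{T,\zeta}(\OO)^{\BB}$ and finitely generated over $\OO[[H_T]]$ for any compact open $H_T \subset T$.

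The key point specific to the torus is that $\Ker \md = 0$, so the quotient category in Theorem \ref{furtherDBan} coincides with $\Ban^{\mathrm{adm.fl}}_{T,\zeta}(L)^{\BB}$ itself. To see this, note that (as recalled in the paragraph preceding the lemma) the proof of Proposition \ref{projTistfree} shows that there are no extensions in $\Mod^{\mathrm{sm}}_{T,\zeta}(k)$ between distinct irreducible representations, so each block consists of a single irreducible, and here $\BB = \{\chi\}$. Thus every nonzero object of $\dualcat_{T,\zeta}(\OO)^{\BB}$ has $\chi^\vee$ as a subquotient, and so by Lemma \ref{pisub} $\md(\Pi) \neq 0$ for every nonzero $\Pi \in \Ban^{\mathrm{adm.fl}}_{T,\zeta}(L)^{\BB}$.

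Having cleared those checks, Theorem \ref{furtherDBan} gives the direct sum decomposition indexed by $\nn \in \MaxSpec \wZ[1/p] = \MaxSpec \wE[1/p]$ and the anti-equivalence of each summand with the finite length modules over the $\nn$-adic completion of $\wE[1/p]$. For the ``In particular'' statement (which should read $T$ rather than $G$), the $\nn$-adic completion of $\wE[1/p]$ is a complete local noetherian ring whose residue field is a finite extension of $L$; the finite length modules over such a ring form a length category whose unique irreducible object is the residue field, which under the anti-equivalence corresponds to the unique irreducible object $\Pi_\nn$. There is no real obstacle here: the whole lemma is a direct specialization of the general machinery of \S\ref{banach} to the very explicit commutative ring $\wE \cong \OO[[x,y]]$, and the only thing one must verify by hand is the triviality of $\Ker \md$, which is immediate from the block description.
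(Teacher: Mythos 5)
Your proof is correct and follows exactly the paper's route: the paper's own proof is the one-liner "apply Theorem \ref{furtherDBan} with $\dualcat(\OO)=\dualcat_{T,\zeta}(\OO)^{\BB}$". Your additional checks (the hypotheses via Proposition \ref{projTistfree}, and the vanishing of $\Ker\md$ because $\BB=\{\chi\}$, so the quotient category is the category itself) are precisely what the paper leaves implicit, and they are argued correctly.
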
 
\begin{proof} Apply Theorem \ref{furtherDBan} with $\dualcat(\OO)=\dualcat_{T,\zeta}(\OO)^{\BB}$.  
\end{proof}

\begin{lem}\label{7point9} Let $\nn$ be a maximal ideal of $\wE[1/p]$ and let $\Pi_{\nn}$ be as above then $\Pi_{\nn}$ is 
finite dimensional over $L$, with $\dim_L \Pi_{\nn}=[\wE[1/p]/\nn: L]$.
\end{lem}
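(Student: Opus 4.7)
The plan is to unwind the anti-equivalence of Lemma \ref{BanfinT} explicitly, using that $\wP$ is a free $\wE$-module of rank one, and to identify the irreducible module over the $\nn$-adic completion.

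First, denote by $\wE_\nn$ the $\nn$-adic completion of $\wE[1/p]$. This is a complete local Noetherian ring with residue field $\wE[1/p]/\nn$, which is a finite extension of $L$ since $\wE\cong\OO[[x,y]]$ is a finitely generated $\OO$-algebra (after inverting $p$) of Krull dimension $2$ and $\nn$ is maximal. A module over $\wE_\nn$ of finite length is simple if and only if it is isomorphic to the residue field $\wE[1/p]/\nn$. Hence the unique irreducible object $\Pi_\nn$ of $\Ban^{\mathrm{adm. fl}}_{T,\zeta}(L)^\BB_\nn$ corresponds under Lemma \ref{BanfinT} to $\md_L:=\wE[1/p]/\nn$, and in particular $\dim_L \md_L=[\wE[1/p]/\nn:L]$.

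Next, I would make the anti-equivalence concrete using the construction of Definition \ref{defPimd}. Pick any $\wE$-stable open bounded $\OO$-lattice $\md^0\subseteq \md_L$; such a lattice exists because $\wE$ is compact and the $\wE$-action on $\md_L$ is continuous (Lemma \ref{toponmd2}). Then $\Pi_\nn\cong \Pi(\md_L)=\Hom^{cont}_{\OO}((\md^0\wtimes_{\wE}\wP)_{\mathrm{tf}},L)$ with the supremum norm. By Proposition \ref{projTistfree}, $\wP$ is a free $\wE$-module of rank one, so the completed tensor product simplifies drastically:
\begin{equation*}
\md^0\wtimes_{\wE}\wP\;\cong\;\md^0
\end{equation*}
as compact $\OO$-modules. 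Since $\md^0$ is $\OO$-torsion free (being an $\OO$-lattice in the $L$-vector space $\md_L$), the torsion-free quotient is just $\md^0$ itself.

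Finally, the $\OO$-module $\md^0$ is finitely generated because $\md_L$ is finite-dimensional over $L$ and $\md^0$ is an open bounded lattice. Therefore $\Hom^{cont}_{\OO}(\md^0,L)\cong \md^0\otimes_{\OO}L\cong \md_L$ as $L$-vector spaces, so
\begin{equation*}
\dim_L \Pi_\nn \;=\; \dim_L \md_L \;=\; [\wE[1/p]/\nn : L],
\end{equation*}
which is the claim. No real obstacle arises here: once one observes that $\wP$ is free of rank one over $\wE$ the computation is formal, and the only substantive input is the identification of the irreducible module of $\wE_\nn$ with the residue field.
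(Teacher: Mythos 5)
Your proof is correct, but it takes a genuinely different route from the paper's. The paper argues through the reduction mod $\varpi$: it sets $\md(\Pi_\nn):=\Hom_{\dualcat_{T,\zeta}(\OO)}(\wP,\Theta^d)_L$, observes this is an irreducible $\wE[1/p]$-module killed by $\nn$ and hence isomorphic to the residue field $\wE[1/p]/\nn$, and then invokes Lemma \ref{mult=rank0} together with Corollary \ref{fgZfl} to deduce that $\chi$ occurs in $\Theta\otimes_{\OO}k$ with multiplicity $[\wE[1/p]/\nn:L]$, that $\Theta\otimes_{\OO}k$ is of finite length, and that all its irreducible subquotients are $\chi$ (one-dimensional), so $\dim_L\Pi_\nn=\dim_k\Theta\otimes_{\OO}k=[\wE[1/p]/\nn:L]$. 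You instead go through the inverse functor $\Pi(\cdot)$ of Definition \ref{defPimd} and use the special fact that $\wP$ is a \emph{free} $\wE$-module of rank one (Proposition \ref{projTistfree}) to collapse the completed tensor product $\md^0\wtimes_{\wE}\wP\cong\md^0$; this is a cleaner computation that exploits what is genuinely particular about the torus. The one point you glossed over is the passage from ``$\Pi_\nn$ corresponds to $\md_L$ under the anti-equivalence of Lemma \ref{BanfinT}'' to ``$\Pi_\nn\cong\Pi(\md_L)$ as Banach space representations'': the anti-equivalence of Theorem \ref{furtherDBan} (which underlies Lemma \ref{BanfinT}) is a priori between a quotient category $\Ban^{\mathrm{adm.\,fl}}/\Ker\md$ and the module category, so $\Pi_\nn$ and $\Pi(\md_L)$ are only identified up to objects of $\Ker\md$. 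For the torus this is harmless because $\Ker\md$ is trivial: the block $\BB=\{\chi\}$ is a singleton, so for any nonzero $\Pi$ in $\Ban^{\mathrm{adm}}_{T,\zeta}(L)^{\BB}$ the irreducible $\chi$ must occur in $\Theta\otimes_{\OO}k$, whence $\md(\Pi)\neq 0$ by Lemma \ref{pisub}. You should say this explicitly; the paper's argument avoids the issue entirely by never constructing $\Pi(\md_L)$ and instead reading the dimension off from the multiplicity of $\chi$ in the reduction mod $\varpi$.
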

\begin{proof} Let $\Theta$ be an open bounded $T$-invariant lattice in $\Pi_{\nn}$. It follows from Theorem 
\ref{furtherDBan} that $\md(\Pi_{\nn}):=\Hom_{\dualcat_{T,\zeta}(\OO)}(\wP, \Theta^d)_L$ is an irreducible $\wE[1/p]$-mo\-du\-le 
killed by $\nn$. Since $\wE[1/p]/\nn$ is a field we have  $\md(\Pi_{\nn})\cong \wE[1/p]/\nn$. Corollary \ref{fgZfl} implies that $\Theta\otimes_{\OO} k$ is of finite length and the 
irreducible subquotients are isomorphic to $\chi$. Lemma \ref{mult=rank0} says that 
$\chi$ occurs in $\Theta/\varpi \Theta$ with multiplicity $[\wE[1/p]/\nn:L]$.
 Hence, $ [\wE[1/p]/\nn:L]=\dim_k \Theta\otimes_{\OO} k=\dim_L \Pi_{\nn}$.
\end{proof}

Let $\Pi$ be in $\Ban^{\mathrm{adm}}_{T,\zeta}(L)$ and let $|\centerdot|$ be a $T$-invariant norm defining the topology on $\Pi$. 
We may consider $\Pi$ as a representation of $P$ by letting $U$ act trivially.
We let $(\Indu{P}{G}{\Pi})_{cont}$ be the space of continuous functions $f: G\rightarrow \Pi$ such that $f(bg)=b f(g)$ for all $b\in P$ and $g\in G$.
The function $g\mapsto |f(g)|$ is continuous and constant on the cosets $Pg$ since the norm on $\Pi$ is $T$-invariant. Since $P\backslash G$ is compact, 
the function $f\mapsto \| f\|:=\sup_{g\in G} |f(g)|$
defines a $G$-invariant norm on $(\Indu{P}{G}{\Pi})_{cont}$ with respect to which  it is complete. If $\Theta$ is an open bounded $T$-invariant lattice in $\Pi$, 
then $(\Indu{P}{G}{\Theta})_{cont}$ is an open bounded $G$-invariant lattice in $(\Indu{P}{G}{\Pi})_{cont}$ and we have 
\begin{equation}\label{redlatind}
(\Indu{P}{G}{\Theta})_{cont}\otimes_{\OO} \OO/(\varpi^n)\cong \Indu{P}{G}{(\Theta/\varpi^n \Theta)}, \quad \forall n\ge 1.
\end{equation}
Using \eqref{redlatind} one may show that the admissibility of $\Pi$ implies the admissibility of  $(\Indu{P}{G}{\Pi})_{cont}$. 
Let $\wM:= (\Indu{P}{G}{\wP^{\vee}})^{\vee}$ and recall that $\End_{\dualcat_{G, \zeta}(\OO)}(\wM)$ is naturally isomorphic to $\wE=\End_{\dualcat_{T, \zeta}(\OO)}(\wP)$
by Corollary \ref{endoPOrd}.

\begin{lem}\label{thruOrd} Let $\Pi$ be in $\Ban^{\mathrm{adm}}_{T,\zeta}(L)^{\BB}$, let $\Theta$ be an open bounded $T$-in\-va\-riant lattice in $\Pi$ and let 
$\md:=\Hom_{\dualcat_{T,\zeta}(\OO)}(\wP, \Theta^d)$. Then
$$ (\Indu{P}{G}{\Theta})_{cont}^d\cong \md\wtimes_{\wE} \wM, \quad (\Indu{P}{G}{\Pi})_{cont}\cong \Hom_{\OO}^{cont}(\md\wtimes_{\wE} \wM, L).$$
\end{lem}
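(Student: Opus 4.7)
The plan is to establish the first isomorphism by reducing to finite levels and then passing to an inverse limit; the second isomorphism will then follow by Schikhof duality applied to the first. First I would write $\Theta \cong \underset{\longleftarrow}{\lim}\, \Theta/\varpi^n \Theta$ and combine equation \eqref{redlatind} with Pontryagin duality to obtain
\begin{equation*}
(\Indu{P}{G}{\Theta})_{cont}^d \cong \underset{\longleftarrow}{\lim}\, \Indu{P}{G}{(\Theta/\varpi^n\Theta)}^{\vee}.
\end{equation*}
Since $\wP$ is projective and $\Theta^d$ is $\OO$-torsion free, applying $\Hom_{\dualcat_{T,\zeta}(\OO)}(\wP,-)$ to the short exact sequence $0 \to \Theta^d \xrightarrow{\varpi^n} \Theta^d \to (\Theta/\varpi^n\Theta)^{\vee} \to 0$ shows that $\md/\varpi^n \md \cong \Hom_{\dualcat_{T,\zeta}(\OO)}(\wP, (\Theta/\varpi^n\Theta)^{\vee})$, and $\md \cong \underset{\longleftarrow}{\lim}\, \md/\varpi^n \md$.

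The crucial finite-level ingredient is that for every object $N$ of $\dualcat^{\BB}_{T,\zeta}(\OO)$ killed by a power of $\varpi$, the natural evaluation map
\begin{equation*}
\Hom_{\dualcat_{T,\zeta}(\OO)}(\wP, N) \wtimes_{\wE} \wP \longrightarrow N
\end{equation*}
is an isomorphism of $\wE$-modules. This is because $\wP \cong \wE$ as an $\wE$-module by Proposition \ref{projTistfree}: choosing a generator $v \in \wP$, evaluation at $v$ identifies $\Hom(\wP,N) \cong N$, and the tensor evaluation becomes the canonical isomorphism $N \wtimes_{\wE} \wE \cong N$. Applying this with $N = (\Theta/\varpi^n\Theta)^{\vee}$ and then invoking Corollary \ref{headM} gives
\begin{equation*}
(\md/\varpi^n\md) \wtimes_{\wE} \wM \cong \bigl(\Indu{P}{G}{((\md/\varpi^n\md) \wtimes_{\wE} \wP)^{\vee}}\bigr)^{\vee} \cong \Indu{P}{G}{(\Theta/\varpi^n\Theta)}^{\vee}.
\end{equation*}

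Taking the inverse limit of the above over $n$, and using that $\wtimes_{\wE} \wM$ commutes with inverse limits of compact $\wE$-modules (since $\wM$ is topologically free over $\wE$ by the same Proposition together with the argument of Corollary \ref{topfree}), yields the desired isomorphism $\md \wtimes_{\wE} \wM \cong (\Indu{P}{G}{\Theta})_{cont}^d$. For the second isomorphism, I would apply $\Hom^{cont}_{\OO}(-,L)$ to both sides and invoke the Schikhof duality of \cite{iw}, which identifies $\Pi$ with $\Hom^{cont}_{\OO}(\Theta^d, L)$ and the Banach completion $(\Indu{P}{G}{\Pi})_{cont}$ with $\Hom^{cont}_{\OO}((\Indu{P}{G}{\Theta})_{cont}^d, L)$ (the norm on the right being supremum). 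The main bookkeeping obstacle will be tracking the $\wE$-module structures across the interchange of $\Hom$, Pontryagin duality, tensor product and inverse limit, and in particular justifying compatibility of the limit with $\wtimes_{\wE} \wM$; all of this is ultimately powered by the freeness of rank one of $\wP$ over $\wE$.
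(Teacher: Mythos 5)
Your proof follows the paper's route — establish the isomorphism at finite level using Corollary \ref{headM} together with the fact that evaluation $\Hom_{\dualcat_{T,\zeta}(\OO)}(\wP,N)\wtimes_{\wE}\wP\to N$ is an isomorphism on the block, then pass to the limit, then Schikhof-dualize — but the justification of the evaluation step has a gap. You assert that because $\wP$ is free of rank one over $\wE$ (Proposition \ref{projTistfree}), ``evaluation at a generator $v$ identifies $\Hom(\wP,N)\cong N$.'' That does not follow from freeness over $\wE$: a morphism $\psi\in\Hom_{\dualcat_{T,\zeta}(\OO)}(\wP,N)$ is a $T$-equivariant continuous $\OO$-linear map, and it is determined by $\psi(v)$ only if $v$ topologically generates $\wP$ as an object of $\dualcat_{T,\zeta}(\OO)$, i.e.\ if the image of $\OO[T]$ is dense in $\wE$. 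That is true here (it is essentially how one sees $\wE\cong\OO[[x,y]]$ in Proposition \ref{projTistfree}, since $\wE\cong\OO[[\,\widehat{T/Z}(p)\,]]$), but it is a separate fact, and nothing you cite makes that identification. (Note also that you call the evaluation ``an isomorphism of $\wE$-modules'' — $N$ has no given $\wE$-action; the map is an isomorphism in $\dualcat_{T,\zeta}(\OO)$.) The paper's argument avoids all this: since $\chi^{\vee}$ is the \emph{unique} irreducible object of $\dualcat_{T,\zeta}(\OO)^{\BB}$, $\Hom_{\dualcat_{T,\zeta}(\OO)}(\wP,-)$ vanishes on an object of the block only when that object is zero; by Lemma \ref{headS} the evaluation map is surjective, and applying $\Hom_{\dualcat_{T,\zeta}(\OO)}(\wP,-)$ together with Lemma \ref{headS0} shows its kernel has vanishing $\Hom$ from $\wP$, hence is zero.

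A smaller issue is the passage to the limit: you claim $\wM$ is topologically free over $\wE$ ``by the same Proposition together with the argument of Corollary \ref{topfree},'' but both of those statements are about $\wP$, not $\wM$, so that citation does not carry the claim. You don't actually need topological freeness of $\wM$: exactness of $\wtimes_{\wE}\wM$ (Corollary \ref{headM}) plus the general commutation of completed tensor products with direct products already gives commutation with inverse limits along a Mittag-Leffler system. The paper sidesteps even this by observing that for all $n$ one has $(\md\wtimes_{\wE}\wM)\otimes_{\OO}\OO/\varpi^n\cong(\md/\varpi^n\md)\wtimes_{\wE}\wM\cong(\Indu{P}{G}{(\Theta/\varpi^n\Theta)})^{\vee}\cong(\Indu{P}{G}{\Theta})_{cont}^d\otimes_{\OO}\OO/\varpi^n$, and then invoking $\varpi$-adic completeness of both sides.
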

\begin{proof} Since $\chi$ is the only irreducible object  of $\Mod^{\mathrm{lfin}}_{T, \zeta}(\OO)^{\BB}$, for every object $N$ of  $\dualcat_{T, \zeta}(\OO)^{\BB}$,
$\Hom_{\dualcat_{T, \zeta}(\OO)}(\wP, N)=0$ is equivalent to $N=0$. Thus, it follows from Lemma \ref{headS0} that the map
$\Hom_{\dualcat_{T, \zeta}(\OO)}(\wP, N)\wtimes_{\wE} \wP\rightarrow N$ is an isomorphism. In particular, $\md\wtimes_{\wE}\wP\cong \Theta^d$, 
$(\md/\varpi^n \md)\wtimes_{\wE}\wP\cong \Theta^d/\varpi^n \Theta^d$, for all $n\ge 1$. It follows from \eqref{Thetadnew} that
$\Theta/\varpi^n \Theta \cong ((\md/\varpi^n \md)\wtimes_{\wE} \wP)^{\vee}$, for all $n\ge 1$. Hence, 
$$(\Indu{P}{G}{\Theta})_{cont}^d\otimes_{\OO} \OO/(\varpi^n) \cong (\Indu{P}{G}{(\Theta/\varpi^n \Theta)})^{\vee}\cong 
(\md\wtimes_{\wE}\wM)\otimes_{\OO} \OO/(\varpi^n),$$
where the first isomorphism is \eqref{Thetadnew} and \eqref{redlatind}, the second is given by  Corollary \ref{headM}. We get the first assertion by passing to the
limit. The second assertion follows from \cite{iw}.
\end{proof}

\begin{prop}\label{QofM} Let $\Pi$ be an absolutely irreducible admissible  unitary $L$-Banach space representation of $G$
with a central character $\zeta$.
If $\Hom_{\dualcat_{G, \zeta}(\OO)}(\wM, \Xi^d)\neq 0$ for some open $G$-invariant lattice $\Xi$ in $\Pi$, 
then either $\Pi\cong \eta\circ \det$ or 
$\Pi\cong (\Indu{P}{G}{\psi})_{cont}$ for some continuous unitary character $\psi: T\rightarrow L^{\times}$ lifting $\chi$
with $\psi\neq \psi^s$.
\end{prop}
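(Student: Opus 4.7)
The plan is to use the non-zero map $\phi:\wM\to\Xi^d$ to produce a continuous unitary character $\psi:T\to\OO^\times$ lifting $\chi$ so that $\Pi$ embeds isometrically into $(\Indu{P}{G}{\psi})_{cont}$, and then to analyze the closed irreducible $G$-invariant subspaces of the right-hand side. As a first step, Lemma~\ref{imageofphi} (applicable because $\Pi$ is irreducible) allows me to replace $\Xi$ by a commensurate open bounded $G$-invariant lattice so that $\phi:\wM\twoheadrightarrow\Xi^d$ becomes a surjection in $\dualcat_{G,\zeta}(\OO)$. Setting $R:=\End_{\dualcat(\OO)}(\wM)\cong\OO[[x,y]]$ (via Corollary~\ref{endoPOrd} and Proposition~\ref{projTistfree}), the space $\md:=\Hom_{\dualcat_{G,\zeta}(\OO)}(\wM,\Xi^d)$ acquires a natural right $R$-module structure by precomposition.

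The heart of the argument is the identification of $\md_L:=\md\otimes_\OO L$ as a one-dimensional $L$-vector space on which $R$ acts through a character $\psi_0:R\to\OO\subset L$. For this I would introduce a projective envelope $\wP^\natural$ of $(\Indu{P}{G}{\chi})^\vee$ in $\dualcat_{G,\zeta}(\OO)$; the analogue of Corollary~\ref{endoPOrd} supplies a surjection $\wP^\natural\twoheadrightarrow\wM$ that intertwines the action of $\End(\wP^\natural)$ with that of $R$. Pullback embeds $\md$ into $\Hom_{\dualcat(\OO)}(\wP^\natural,\Xi^d)$, and Proposition~\ref{modulequotientnew}(ii), applied to the projective envelope $\wP^\natural$ and the irreducible $\Pi$, shows that this ambient module is irreducible over $\End(\wP^\natural)[1/p]$ after inverting $p$. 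Since the $\End(\wP^\natural)$-action on $\md_L$ factors through $R$, we conclude that $\md_L$ is an irreducible right $R[1/p]$-module; commutativity then gives $\md_L\cong R[1/p]/\nn$ for a maximal ideal $\nn$, and a base-change argument invoking the absolute irreducibility of $\Pi$ forces the residue field $M:=R[1/p]/\nn$ to equal $L$, since after extending scalars to $M$ the module $\md_L\otimes_LM$ would otherwise fail to be irreducible over $R[1/p]\otimes_LM$ while $\Pi_M$ remains irreducible.

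Once this is done, set $\nn_0:=\nn\cap R$, so that $R/\nn_0\cong\OO$. By Proposition~\ref{projTistfree} (interpreting $R$ as the universal deformation ring of $\chi^\vee$, cf.\ Lemma~\ref{2ex}), the projection $\psi_0:R\twoheadrightarrow\OO$ corresponds to a continuous unitary character $\psi:T\to\OO^\times$ lifting $\chi$. The factorisation $\phi:\wM\twoheadrightarrow\wM/\nn_0\wM\twoheadrightarrow\Xi^d$ together with Corollary~\ref{headM} applied to the cyclic $R$-module $R/\nn_0\cong\OO$ (endowed with the $T$-structure of $\psi$) and Lemma~\ref{thruOrd} identifies $\wM/\nn_0\wM$ with $((\Indu{P}{G}{\OO_\psi})_{cont})^d$; this is $\OO$-torsion free because $\wP_{\chi^\vee}$ is free of rank one over $R$. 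Schikhof duality therefore translates the surjection of $\OO$-torsion free objects of $\dualcat(\OO)$ into a closed $G$-equivariant isometric embedding $\Pi\hookrightarrow(\Indu{P}{G}{\psi})_{cont}$.

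It then remains to analyze the closed irreducible $G$-invariant subspaces of $(\Indu{P}{G}{\psi})_{cont}$. If $\psi=\psi^s$, then $\psi=\eta\otimes\eta$ for some continuous unitary character $\eta:\Qp^\times\to\OO^\times$ and one has a non-split short exact sequence
\[0\rightarrow\eta\circ\det\rightarrow(\Indu{P}{G}{\psi})_{cont}\rightarrow\widehat{\Sp}\otimes\eta\circ\det\rightarrow 0\]
whose socle is $\eta\circ\det$, so irreducibility of $\Pi$ forces $\Pi\cong\eta\circ\det$. If on the other hand $\psi\neq\psi^s$, then $(\Indu{P}{G}{\psi})_{cont}$ is topologically irreducible by the standard analysis of unitary principal series for $\GL_2(\Qp)$, and hence $\Pi\cong(\Indu{P}{G}{\psi})_{cont}$. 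The main obstacle I anticipate is the descent step establishing $M=L$ from absolute irreducibility of $\Pi$; the identification of $\wM/\nn_0\wM$ as the Schikhof dual of the continuous parabolic induction is more formal and is handled uniformly by Corollary~\ref{headM} together with Lemma~\ref{thruOrd}.
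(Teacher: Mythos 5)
Your proposal follows the same architecture as the paper's proof: Lemma~\ref{imageofphi} to make $\phi$ surjective, reduction of the $\wE_S := \End_{\dualcat(\OO)}(\wP_S)$-action on $\md := \Hom_{\dualcat(\OO)}(\wM,\Xi^d)$ to an action of the commutative ring $R := \End_{\dualcat(\OO)}(\wM)\cong\OO[[x,y]]$ via Corollary~\ref{endoPOrd}, Proposition~\ref{modulequotientnew} to get irreducibility and finite-dimensionality of $\md_L$, and Lemma~\ref{thruOrd} to dualize $\OO\wtimes_R \wM$ into $(\Indu{P}{G}{\psi})_{cont}$, ending with the dichotomy $\psi=\psi^s$ versus $\psi\neq\psi^s$. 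The one place you diverge, and the step you yourself flag as the anticipated obstacle, is showing $R[1/p]/\nn = L$. Your proposed base-change argument (that $\md_L\otimes_L M$ would fail to be irreducible while $\Pi_M$ stays irreducible) needs an intermediate bridge you have not supplied: one would have to set the projective-envelope machinery up over the ring of integers of $M$ (via Corollary~\ref{bcisok1}), check that $\md(\Pi_M)\cong\md_L\otimes_L M$, and re-apply Proposition~\ref{modulequotientnew}(ii) over $M$ to conclude that this module is irreducible over the base-changed endomorphism ring. The paper avoids any base change: Proposition~\ref{ringsareiso} identifies $\End_{\wE_S}(\md)\cong\End_{\dualcat(\OO)}(\Xi^d)$, and since $\md_L$ is finite dimensional, Lemma~\ref{endoirrban} (together with absolute irreducibility of $\Pi$) forces $\End_{L[G]}^{cont}(\Pi)=L$; hence $\End_R(\md_L)=L$, and commutativity of $R$ plus irreducibility of $\md_L$ give $\dim_L\md_L=1$ directly. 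Both routes succeed, but the paper's is tighter and stays over $L$ throughout; yours, while correct in outline, would in the end invoke much the same ingredients after the descent.
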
 
\begin{proof} Let $S:=(\Indu{P}{G}{\chi})^{\vee}$ and let $\wP_S\twoheadrightarrow S$ be a projective 
envelope of $S$ in $\dualcat_{G,\zeta}(\OO)$. We note that if $\Indu{P}{G}{\chi}$ is reducible then it is 
a non-split extension of two irreducible representations, hence $\wP_S$ is a projective envelope of 
an irreducible object in $\dualcat_{G,\zeta}(\OO)$, namely the cosocle of $S$. 
Let $\wE_S:=\End_{\dualcat_{G,\zeta}(\OO)}(\wP_S)$ and let $\wE=\End_{\dualcat_{G,\zeta}(\OO)}(\wM)$ as above. Recall that  in 
the Corollary \ref{endoPOrd} we have shown that the natural map $\wP_S\twoheadrightarrow \wM$ induces a  
 surjection of rings $\varphi:\wE_S\twoheadrightarrow \wE$.

Lemma \ref{imageofphi} allows us to assume that there exists a surjection $\phi:\wM\twoheadrightarrow \Xi^d$ in
$\dualcat_{G, \zeta}(\OO)$. Let $\gamma$ be the composition $\wP_S\rightarrow \wM\overset{\phi}{\rightarrow} \Xi^d$
and let $\md:=\Hom_{\dualcat_{G,\zeta}(\OO)}(\wP_S, \Xi^d)$. It follows from Proposition \ref{modulequotientnew} 
that $\md= \gamma\circ \wE_S$. Since $\gamma$ factors through $\wM$ it 
will be killed by any $\phi_1\in \Ker \varphi$. Hence, $\md\cong \phi\circ \wE\cong \Hom_{\dualcat_{G,\zeta}(\OO)}(\wM, \Xi^d)$ and 
the action of $\wE_{S}$ on $\md$ factors through the action of $\wE$. By Proposition \ref{modulequotientnew}, $\md_L$ is an irreducible 
$\wE[1/p]$-module and, since $\wE\cong \OO[[x,y]]$, $\md_L$ is a finite dimensional $L$-vector space. 
Moreover, 
$\End_{\wE_S}(\md)\cong \End_{\wE}(\md)$. Since $\md_L$ is finite dimensional, $\wE$ is commutative and $\Pi$ absolutely irreducible we deduce from Proposition \ref{ringsareiso} and Lemma \ref{endoirrban} that $\md_L$ is one dimensional and so $\md$ is a free $\OO$-module of rank $1$.
Since $\phi\in \md$ the  map $\ev: \md\wtimes_{\wE} \wM\rightarrow \Xi^d$ is surjective. Dually this means that we have an injection 
$\Pi\hookrightarrow \Hom_{\OO}^{cont}(\OO\wtimes_{\wE} \wM, L)\cong (\Indu{P}{G}{\psi})_{cont}$ and the character $\psi$ comes from Lemma \ref{7point9}, where the last isomorphism is given by
Lemma \ref{thruOrd}, which identifies $\Pi$ with a closed $G$-invariant subspace of  
$(\Indu{P}{G}{\psi})_{cont}$. (We note that both Banach space representations are admissible.)
If $\psi\neq \psi^s$ then $(\Indu{P}{G}{\psi})_{cont}$ is topologically irreducible and if $\psi=\psi^s$ then it has a 
unique closed $G$-invariant subspace isomorphic to a character, \cite[5.3.4]{emcoates}. This implies the assertion.
\end{proof}

\section{Generic residually reducible case}\label{genericcase}
In this section we deal with the case where in Colmez's terminology the \textit{atome automorphe} consists
of two distinct irreducible representations.  More precisely, let $\chi_1, \chi_2: \Qp^{\times}\rightarrow k^{\times}$ 
be smooth characters and assume that $\chi_1\chi_2^{-1}\neq \Eins, \omega^{\pm 1}$. We assume throughout this section that $p\ge 3$.
Let 
$\chi: T\rightarrow k^{\times}$ be the character $\chi=\chi_1\otimes \chi_2\omega^{-1}$ then $\chi^s\alpha=\chi_2\otimes\chi_1\omega^{-1}$. 
Let 
$$\pi_1:=\Indu{P}{G}{\chi}, \quad \pi_2:=\Indu{P}{G}{\chi^s\alpha}.$$
We note that the assumption on $\chi_1$ and $\chi_2$ implies that both representations are irreducible and distinct.
Let $\pi$ be an irreducible smooth representation of $G$ with a central character. It is well known, see for example \cite[11.5]{ext2}, that
if $\Ext^1_{G, \zeta}(\pi, \pi_1)\neq 0$ then $\pi\cong \pi_1$ or $\pi\cong \pi_2$. Moreover, 
$$\dim \Ext^1_{G,\zeta}(\pi_1, \pi_1)=2, \quad \dim \Ext^1_{G, \zeta}(\pi_2, \pi_1)=1.$$ 
Let 
\begin{equation}\label{defkappa}
0\rightarrow \pi_1\rightarrow \kappa \rightarrow \pi_2\rightarrow 0
\end{equation} 
be the unique non-split extension.

\begin{lem}\label{ordkappa} $\Ord_P \kappa \cong \Ord_P\pi_1\cong \chi^s$,  $\RR^1\Ord_P \kappa\cong \RR^1\Ord_P \pi_2\cong \chi^s.$
\end{lem}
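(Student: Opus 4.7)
The plan is to apply the left-exact functor $\Ord_P$ to the short exact sequence \eqref{defkappa}. Since $\RR^j\Ord_P=0$ for $j\ge 2$, this yields a six-term exact sequence
\[
0 \to \Ord_P\pi_1 \to \Ord_P\kappa \to \Ord_P\pi_2 \overset{\partial}{\to} \RR^1\Ord_P\pi_1 \to \RR^1\Ord_P\kappa \to \RR^1\Ord_P\pi_2 \to 0.
\]
First I would compute the four outer terms. Using \eqref{ordinduced} together with the identity $\alpha^s=\alpha^{-1}$ on $T$, one finds $\Ord_P\pi_1\cong \chi^s \cong \RR^1\Ord_P\pi_2$ and $\Ord_P\pi_2\cong \chi\alpha^{-1}\cong \RR^1\Ord_P\pi_1$. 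The hypothesis $\chi_1\neq\chi_2$ forces the two $T$-characters $\chi^s = \chi_2\omega^{-1}\otimes\chi_1$ and $\chi\alpha^{-1}=\chi_1\omega^{-1}\otimes\chi_2$ to be distinct, so the lemma amounts to showing that $\partial$ is an isomorphism; equivalently, that the middle map $\Ord_P\kappa \to \chi\alpha^{-1}$ is zero.

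The key step is to establish $\Hom_T(\chi\alpha^{-1},\Ord_P\kappa)=0$. Via $\alpha^s=\alpha^{-1}$ and the identification $\Indu{P}{G}{U}\cong \Indu{\overline P}{G}{U^s}$, one has $\pi_2\cong \Indu{\overline P}{G}{\chi\alpha^{-1}}$, so the adjunction \eqref{adjord} identifies this $\Hom_T$-space with $\Hom_G(\pi_2,\kappa)$. Applying $\Hom_G(\pi_2,\cdot)$ to \eqref{defkappa} and using that $\pi_1,\pi_2$ are distinct irreducibles (so $\Hom_G(\pi_2,\pi_1)=0$), together with the fact that the class $[\kappa]$ is a nonzero element of the one-dimensional space $\Ext^1_{G,\zeta}(\pi_2,\pi_1)$, I obtain an injection $\Hom_G(\pi_2,\pi_2)\hookrightarrow \Ext^1_{G,\zeta}(\pi_2,\pi_1)$, whence $\Hom_G(\pi_2,\kappa)=0$.

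Now $\Ord_P\kappa$ fits into an exact sequence $0\to\chi^s\to\Ord_P\kappa\to K\to 0$ with $K$ a subobject of $\chi\alpha^{-1}$, and I want to rule out $K=\chi\alpha^{-1}$. If it held, then since $\chi^s$ and $\chi\alpha^{-1}$ are distinct irreducible characters of $T$, the extension class would lie in $\Ext^1_T(\chi\alpha^{-1},\chi^s)$, which vanishes (as used in the proof of Proposition \ref{projTistfree}: extensions between distinct smooth characters of $T$ split). The sequence would therefore split, giving an embedding $\chi\alpha^{-1}\hookrightarrow\Ord_P\kappa$ contradicting the previous paragraph. Hence $K=0$ and $\Ord_P\kappa\cong\chi^s$. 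Substituting back, $\partial$ becomes an injection between two abstractly isomorphic irreducible $T$-modules, hence an isomorphism, and the tail of the six-term sequence collapses to $\RR^1\Ord_P\kappa\cong\RR^1\Ord_P\pi_2\cong \chi^s$.

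The proof is thus quite mechanical once the values of $\Ord_P$ and $\RR^1\Ord_P$ on principal series are in hand; the only real input is the vanishing $\Hom_G(\pi_2,\kappa)=0$, and this is immediate from the non-splitness of $\kappa$ combined with the fact that $\Ext^1_{G,\zeta}(\pi_2,\pi_1)$ is exactly one-dimensional.
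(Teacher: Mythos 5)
Your proof is correct and takes essentially the same route as the paper. The paper assumes the middle map $\Ord_P\kappa\to\Ord_P\pi_2$ is nonzero, shows the resulting $T$-extension of $\chi\alpha^{-1}$ by $\chi^s$ must split, and pushes that splitting back through the adjunction to split \eqref{defkappa}, a contradiction; you instead establish $\Hom_G(\pi_2,\kappa)=0$ up front (using non-splitness of $\kappa$, which is the only part of the one-dimensionality of $\Ext^1_{G,\zeta}(\pi_2,\pi_1)$ actually needed), transfer it via the adjunction to $\Hom_T(\chi\alpha^{-1},\Ord_P\kappa)=0$, and then rule out the surjection — the same adjunction-plus-$\Ext^1_T$-vanishing argument, just run in the other direction.
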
 
\begin{proof} Since $\RR^i\Ord_P=0$ for $i\ge 2$, we apply $\Ord_P$ to \eqref{defkappa} to get an exact sequence: 
\begin{equation}
\begin{split} 
0\rightarrow \Ord_P  \pi_1&\rightarrow \Ord_P \kappa \rightarrow \Ord_P \pi_2 \\
&\rightarrow \RR^1\Ord_P \pi_1\rightarrow  \RR^1\Ord_P \kappa \rightarrow \RR^1\Ord_P \pi_2\rightarrow 0.
\end{split}
\end{equation}
It follows from \eqref{ordinduced} that $\Ord_P \pi_1\cong \chi^s$ and $\Ord_P \pi_2\cong \chi\alpha^{-1}$. 
Hence, if  the map $\Ord_P\kappa\rightarrow \Ord_P \pi_2$ is non-zero then it must be surjective. Hence, 
we have an exact sequence of $T$-representations 
$0\rightarrow \chi^s\rightarrow \Ord_P \kappa \rightarrow \chi\alpha^{-1}\rightarrow 0$. Since $\chi^s\neq \chi\alpha^{-1}$
this sequence must split, see Corollary \ref{extToruschar}. But then using adjointness \eqref{adjord} we would obtain a splitting 
of \eqref{defkappa}. Hence, the map $\Ord_P \pi_2\rightarrow \RR^1\Ord_P \pi_1$ is non-zero, and since 
$\RR^1\Ord_P \pi_1\cong \chi\alpha^{-1}$ the map is an isomorphism. Thus we obtain the claim.
\end{proof} 

\begin{lem}\label{extpi1kappa} Let $\pi$ be irreducible  and suppose that $\Ext^1_{G, \zeta}(\pi, \kappa)\neq 0$ 
then $\pi\cong \pi_1$. Moreover, $\dim \Ext^1_{G, \zeta}(\pi_1, \kappa)\le 3$ and $\Ext^i_{G, \zeta}(\pi_2, \kappa)=0$ for all 
$i\ge 0$.
\end{lem}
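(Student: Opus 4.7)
The plan is to first cut down to $\pi \in \{\pi_1,\pi_2\}$ using the defining sequence \eqref{defkappa}, and then to control the remaining $\Ext$-groups via the ordinary parts spectral sequence \eqref{specseqord}, exploiting Lemma \ref{ordkappa}.

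First I apply $\Hom_G(\pi,-)$ to $0\to\pi_1\to\kappa\to\pi_2\to 0$ to obtain the piece
\begin{equation*}
\Hom_G(\pi,\pi_2)\to\Ext^1_{G,\zeta}(\pi,\pi_1)\to\Ext^1_{G,\zeta}(\pi,\kappa)\to\Ext^1_{G,\zeta}(\pi,\pi_2).
\end{equation*}
If the middle term is non-zero, then either $\pi\cong\pi_2$ (coming from $\Hom$), or $\Ext^1_{G,\zeta}(\pi,\pi_i)\neq 0$ for some $i$. By Proposition \ref{blocksoverk}(ii), the block of $\pi_i$ is $\{\pi_1,\pi_2\}$, so in every case $\pi\in\{\pi_1,\pi_2\}$. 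The excluded case $\pi\cong\pi_2$ will be ruled out by the vanishing established below.

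To prove $\Ext^i_{G,\zeta}(\pi_2,\kappa)=0$ for all $i\ge 0$, I write $\pi_2=\Indu{P}{G}{\chi^s\alpha}\cong\Indu{\bar P}{G}{\chi\alpha^{-1}}$ and apply \eqref{specseqord} with $U=\chi\alpha^{-1}$ and $V=\kappa$. Lemma \ref{ordkappa} gives $\Ord_P\kappa\cong\chi^s$ and $\RR^1\Ord_P\kappa\cong\chi^s$, so the relevant $\HH$-term is $\Ext^i_{T,\zeta}(\chi\alpha^{-1},\chi^s)$ with $j=0,1$. The hypothesis $\chi_1\chi_2^{-1}\neq\Eins$ ensures that the characters $\chi\alpha^{-1}=\chi_1\omega^{-1}\otimes\chi_2$ and $\chi^s=\chi_2\omega^{-1}\otimes\chi_1$ are distinct; Corollary \ref{extToruschar} then makes every $\Ext^i_{T,\zeta}(\chi\alpha^{-1},\chi^s)$ vanish. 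Feeding this into \eqref{ordseq} and \eqref{ordext3} kills $\Ext^i_{G,\zeta}(\pi_2,\kappa)$ for $0\le i\le 3$, and the truncation $\RR^j\Ord_P=0$ for $j\ge 2$ together with $\Ext^i_{T,\zeta}=0$ for $i\ge 3$ handles $i\ge 4$.

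For the bound on $\pi_1$ I run the same spectral sequence with $U=\chi^s$ (using $\pi_1\cong\Indu{\bar P}{G}{\chi^s}$) and $V=\kappa$. The five-term sequence \eqref{ordseq} now reads
\begin{equation*}
0\to\Ext^1_{T,\zeta}(\chi^s,\chi^s)\to\Ext^1_{G,\zeta}(\pi_1,\kappa)\to\Hom_T(\chi^s,\chi^s),
\end{equation*}
and by Proposition \ref{projTistfree} the endomorphism ring of the projective envelope of $(\chi^s)^\vee$ in $\dualcat_{T,\zeta}(\OO)$ is $\OO[[x,y]]$, whence $\dim\Ext^1_{T,\zeta}(\chi^s,\chi^s)=2$; combined with $\dim\Hom_T(\chi^s,\chi^s)=1$ this gives $\dim\Ext^1_{G,\zeta}(\pi_1,\kappa)\le 3$.

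There is no real obstacle: the entire argument reduces to the $T$-side calculation of $\Ext^i_{T,\zeta}$ between two characters, which is controlled by Proposition \ref{projTistfree} and Corollary \ref{extToruschar}. The only point requiring care is verifying $\chi\alpha^{-1}\neq\chi^s$ from the running hypothesis $\chi_1\chi_2^{-1}\neq\Eins$, after which Emerton's machinery does all the work.
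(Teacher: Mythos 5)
Your proof follows essentially the paper's strategy: reduce to $\pi\in\{\pi_1,\pi_2\}$ via the long exact sequence induced by \eqref{defkappa}, then kill or bound the remaining $\Ext$-groups using the ordinary-parts spectral sequence together with Lemma \ref{ordkappa} and the vanishing of $\Ext^i_{T,\zeta}$ between distinct characters (Corollary \ref{extToruschar}). Your version just spells out what the paper compresses into one sentence, in particular the verification that $\chi\alpha^{-1}\neq\chi^s$ under $\chi_1\neq\chi_2$ and the dimensions $\dim\Ext^1_{T,\zeta}(\chi^s,\chi^s)=2$, $\dim\Hom_T(\chi^s,\chi^s)=1$; the only blemish is the clause ``$\pi\cong\pi_2$ (coming from $\Hom$)'' in the first paragraph, which is not a genuine alternative --- from the fragment $\Ext^1_{G,\zeta}(\pi,\pi_1)\to\Ext^1_{G,\zeta}(\pi,\kappa)\to\Ext^1_{G,\zeta}(\pi,\pi_2)$, non-vanishing of the middle term already forces one of the outer $\Ext^1$-terms to be non-zero, which suffices.
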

\begin{proof} If $\Ext^1_G(\pi, \kappa)\neq 0$ then $\Ext^1_G(\pi, \pi_1)\neq 0$ or $\Ext^1_G(\pi, \pi_2)\neq 0$ and 
hence $\pi\cong \pi_1$ or $\pi\cong \pi_2$. The assertion follows from the degeneration of spectral sequence \eqref{ordseq}, 
Lemma \ref{ordkappa} and the fact that for distinct characters  $\chi, \psi: T\rightarrow k^{\times}$ 
we have  $\Ext^i_{T, \zeta}(\chi, \psi)=0$ for all $i\ge 0$, see Corollary \ref{extToruschar}. 
\end{proof} 

\begin{prop}\label{genHok} Let $S:=\pi_1^{\vee}$ and $Q:=\kappa^{\vee}$ then the hypotheses (H1)-(H5) of \S\ref{first} are satisfied. 
\end{prop}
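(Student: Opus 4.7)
The plan is to translate hypotheses (H1)--(H5) from the dual category $\dualcat(k)$ to statements about $\Ext$-groups in the category of smooth $G$-representations via Pontryagin duality, and then to read off each statement from facts already established about $\kappa$, principally Lemma \ref{extpi1kappa}. The key identity is Corollary \ref{thesame}, which, combined with Pontryagin duality, gives
\[
\Ext^n_{\dualcat(k)}(\kappa^\vee, \pi'^\vee) \cong \Ext^n_{G,\zeta}(\pi', \kappa)
\]
for any smooth irreducible $k$-representation $\pi'$ with central character $\zeta$ and any $n \ge 0$. This reduces the verification to four short computations on the smooth side, using that $\kappa$ fits in the non-split exact sequence \eqref{defkappa}.

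For (H1), observe that since \eqref{defkappa} is non-split, $\soc_G \kappa \cong \pi_1$, so $\Hom_G(\pi', \kappa) = 0$ whenever the irreducible $\pi'$ is not isomorphic to $\pi_1 = S^\vee$. For (H2), the composition series of $\kappa$ is precisely $\{\pi_1, \pi_2\}$, each with multiplicity one, so in particular $S = \pi_1^\vee$ occurs in $Q = \kappa^\vee$ with multiplicity one. For (H3) and (H5), I would apply Lemma \ref{extpi1kappa}: the first assertion there says that $\Ext^1_{G,\zeta}(\pi', \kappa) = 0$ whenever $\pi' \not\cong \pi_1$, which yields (H3); and the same lemma asserts $\Ext^i_{G,\zeta}(\pi_2, \kappa) = 0$ for all $i \ge 0$. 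Since the short exact sequence $0 \to \pi_2^\vee \to \kappa^\vee \to \pi_1^\vee \to 0$ shows that $R := \rad Q \cong \pi_2^\vee$ (its unique maximal subobject, whose quotient is the irreducible cosocle $S$), the vanishing $\Ext^2_{G,\zeta}(\pi_2, \kappa) = 0$ gives $\Ext^2_{\dualcat(k)}(Q, R) = 0$, i.e.\ (H5).

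Finally, for (H4), Lemma \ref{extpi1kappa} gives $\dim \Ext^1_{G,\zeta}(\pi_1, \kappa) \le 3$, which is in particular finite, giving (H4) via the same duality isomorphism. There is no real obstacle to carry out: once Lemma \ref{extpi1kappa} is in place (which is where the real work happens, via the ordinary parts spectral sequence \eqref{ordseq}, Lemma \ref{ordkappa}, and the vanishing of $\Ext^i$ between distinct characters of $T$ from Corollary \ref{extToruschar}), the proposition is a mechanical translation. The only mildly subtle point to keep track of is the identification $\rad Q \cong \pi_2^\vee$, which requires recognising that Pontryagin duality reverses the socle/cosocle filtration of $\kappa$.
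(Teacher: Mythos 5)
Your proposal is correct and follows essentially the same route as the paper: the paper's one-line proof deduces (H1) from the non-splitness of \eqref{defkappa}, (H2) from $\pi_1 \not\cong \pi_2$, and (H3)--(H5) from Lemma \ref{extpi1kappa}, which is exactly the unpacking you carry out (including the observation $\rad Q \cong \pi_2^\vee$).
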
 
\begin{proof} (H1) holds because \eqref{defkappa} is non-split, (H2) holds as $\pi_1\not\cong \pi_2$, (H3), (H4) and (H5) follow
from Lemma \ref{extpi1kappa}.
\end{proof} 
Since (H0) holds for $G$ by Corollary \ref{projaretfree}, we may apply the results of \S\ref{def} and \S\ref{banach}. Let 
$\wP\twoheadrightarrow S$ be a projective envelope of $S$ in $\dualcat(\OO)$, let $\wE=\End_{\dualcat(\OO)}(\wP)$ and 
let $\mm$ be the maximal ideal of $\wE\otimes_{\OO} k$.  Let $\rho:=\cV(Q)$ then since $\cV$ is exact  we get an exact sequence 
of Galois representations 
$$0\rightarrow \chi_2\rightarrow \rho\rightarrow \chi_1\rightarrow 0.$$ 
This sequence is non-split by \cite[VII.4.13]{colmez}. We note that $\det \rho$ is congruent to $\varepsilon\zeta$, where 
$\varepsilon$ is the cyclotomic character.

\begin{prop}\label{surjformsmooth} The functor $\cV$ induces a surjection 
$$\wE\twoheadrightarrow R^{\varepsilon\zeta}_{\rho}\cong \OO[[x, y,z]],$$
where $R^{\varepsilon \zeta}_{\rho}$ pro-represents the deformation functor of $\rho$
with determinant $\varepsilon \zeta$.
\end{prop}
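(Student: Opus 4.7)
The plan is to apply Proposition \ref{ftof4} to our situation, mirroring the argument of Proposition \ref{kis} in the supersingular case but with several complications coming from the fact that $\rho$ is reducible. We note that $\cV(S)=\cV(\pi_1^{\vee})\cong \chi_1$ is non-zero, so the non-triviality hypothesis on $\cV(S)$ in Proposition \ref{ftof4} is satisfied.

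First I would compute the relevant Galois cohomology to identify $R^{\varepsilon\zeta}_{\rho}$. Since $\chi_1\chi_2^{-1}\neq \Eins, \omega^{\pm 1}$ and $p\ge 5$, a direct computation using local duality and the Euler--Poincar\'e formula shows that $H^2(\gal, \ad \rho)=0$, so $\Def^{ab}_{\cV(Q)}$ is representable and formally smooth; one computes $\dim H^1(\gal, \ad \rho)=5$, and imposing the determinant condition $\zeta \varepsilon$ cuts this down to $3$. Hence $\Def^{ab}_{\cV(Q)}$ is pro-represented by $R_{\rho}\cong \OO[[x_1,\ldots,x_5]]$ and $R^{\varepsilon\zeta}_{\rho}\cong \OO[[x,y,z]]$, giving condition (i) of Proposition \ref{ftof4}.

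Next I would verify condition (ii), namely that $\VV$ induces an injection $\Ext^1_{G,\zeta}(\kappa,\kappa)\hookrightarrow \Ext^1_{k[\gal]}(\rho,\rho)$. This is a consequence of Colmez's result \cite[VII.5.2]{colmez}, which says that on the full category of smooth finite length $G$-representations with a central character, $\VV$ gives a fully faithful embedding modulo characters into representations of $\gal$; in particular, since $\kappa$ has no character subquotients, extensions of $\kappa$ by $\kappa$ inject into extensions of $\rho$ by $\rho$. For condition (iii), given an irreducible $2$-dimensional $M$-representation $\tilde{\rho}$ of $\gal$ over a finite extension $M$ of $L$ with $\overline{\tilde{\rho}}\cong \rho^{ss}\cong \chi_1\oplus \chi_2$ and determinant $\zeta\varepsilon$, by density of crystalline points in the deformation space and Kisin's theorem \cite[2.3.8]{kisin} we obtain an admissible unitary $M$-Banach space representation $\Pi$ with $\cV(\Theta^d)[1/p]\cong \tilde{\rho}$. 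Using that $\tilde{\rho}^{ss}\cong \rho^{ss}$ and the compatibility of $\VV$ with the mod $p$ local Langlands correspondence, one shows $\overline{\Pi}^{ss}$ lies in the block of $\pi_1$, and a dimension count using Lemma \ref{mult=rank0} together with the fact that $\cV(\Theta^d)$ is a free $\OO_M$-module of rank $2$ forces $S^{\vee}=\pi_1$ to occur in $\overline{\Pi}$ with multiplicity $1$.

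The main obstacle is the final step: identifying $R'$ (the image of $R^{\varepsilon\zeta}_{\rho}$ in the product of residue fields at irreducible points) with $R^{\varepsilon\zeta}_{\rho}$ itself. In contrast to the supersingular case, the residual representation $\rho$ is reducible, so a priori $R'$ could be a proper quotient. The key point is that the deformation ring $R^{\varepsilon\zeta}_{\rho}\cong \OO[[x,y,z]]$ is a formally smooth $\OO$-algebra of dimension $3$, while the reducible locus is cut out by the Galois-invariant condition of preserving a line lifting $\chi_2$, which defines a proper closed subscheme; the irreducible locus is therefore dense and contains a dense set of maximal ideals with $L$-rational residue fields. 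This density of irreducible points (combined with $R^{\varepsilon\zeta}_{\rho}$ being a reduced complete local Noetherian $\OO$-algebra with $\OO$-flat generic fibre) yields $R'\cong R^{\varepsilon\zeta}_{\rho}$, and Proposition \ref{ftof4} then produces the desired surjection $\wE^{ab}\twoheadrightarrow R^{\varepsilon\zeta}_{\rho}$, which lifts to a surjection $\wE\twoheadrightarrow R^{\varepsilon\zeta}_{\rho}$ since the target is commutative.
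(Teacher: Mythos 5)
Your proposal is correct, and it is essentially the route the paper itself outlines in \S\ref{strat}: compute $H^2(\gal,\Ad\rho)=0$ and $\dim H^1(\gal,\Ad\rho)=5$ to get $R^{\varepsilon\zeta}_{\rho}\cong\OO[[x,y,z]]$, verify (ii) of Proposition \ref{ftof4} via \cite[VII.5.2]{colmez} and (iii) via \cite[2.3.8]{kisin} (your multiplicity-one bookkeeping is fine: since $\chi_1\neq\chi_2$ and $\cV(\Xi^d)\otimes k$ has semisimplification $\chi_1\oplus\chi_2$, the constituent $\pi_1$ occurs exactly once), and then identify $R'$ with $R^{\varepsilon\zeta}_{\rho}$ by density of the irreducible locus. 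The paper's actual proof of this proposition, however, takes a shortcut that bypasses your last two steps: after the same cohomology computation it cites \cite[2.3.4]{kisin} for the stronger statement that $\Spec\wE^{ab}$, viewed as a closed subscheme of $\Spec R_{\rho}$, contains all of $\Spec R^{\varepsilon\zeta}_{\rho}$ (not merely its irreducible points), so that reducedness of $R^{\varepsilon\zeta}_{\rho}$ alone yields $\wE^{ab}\twoheadrightarrow R^{\varepsilon\zeta}_{\rho}$, with no pointwise verification of (iii) and no discussion of $R'$. What your route buys is that it only needs the weaker, pointwise input (iii); what it costs is that you must justify that the irreducible locus is dense, and there your description of the reducible locus as ``preserving a line lifting $\chi_2$'' is slightly imprecise -- a reducible characteristic-zero point may instead contain a line lifting $\chi_1$; the clean statement is that the reducible locus is the closed locus where the trace pseudocharacter is a sum of two characters, and it is proper because irreducible (e.g.\ suitable crystalline) deformations with determinant $\zeta\varepsilon$ exist, after which the Jacobson property and the fact that $R^{\varepsilon\zeta}_{\rho}[1/p]$ is a domain give $R'\cong R^{\varepsilon\zeta}_{\rho}$ as you claim. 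Both routes conclude identically via $\wE\twoheadrightarrow\wE^{ab}\twoheadrightarrow R^{\varepsilon\zeta}_{\rho}$.
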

\begin{proof} 
Since $\chi_1\neq \chi_2$ and the sequence is non-split, we get that $\End_{k[\gal]}(\rho)=k$ and 
hence the universal deformation functor $\Def^{ab}_{\rho}$ is representable. Since $\chi_1\chi_2^{-1}\neq \omega^{\pm 1}$
a standard calculation with local Tate duality and Euler characteristic gives  $H^2(\gal, \Ad \rho)=0$ and 
$H^1(\gal, \Ad \rho)$ is $5$-di\-men\-sio\-nal. This implies, see \cite[\S 1.6]{mazur2}, \cite[\S 24]{mazur}, that 
$\Def^{ab}_{\rho}$ is represented by $R\cong \OO[[x_1, \ldots, x_5]]$ and the deformation 
problem with the fixed determinant is represented by $R^{\varepsilon\zeta}\cong\OO[[x_1, x_2, x_3]]$. It follows 
from \cite[2.3.4]{kisin} that $\Spec \wE^{ab}$ is a closed subset of $\Spec R$ and contains
$\Spec R^{\varepsilon\zeta}$, which is stronger than (iii) in Proposition \ref{ftof4}. Since $R^{\varepsilon\zeta}$ is reduced  
we obtain a surjection $\wE^{ab}\twoheadrightarrow R^{\varepsilon\zeta}\cong \OO[[x_1, x_2, x_3]].$
\end{proof}

\begin{cor}\label{dimgentang} We have 
$$\dim \Ext^1_{G, \zeta}(\pi_1, \kappa)=\dim \Ext^1_{\dualcat(k)}(Q, S)=\dim \Ext^1_{\dualcat(k)}(Q, Q)=3.$$
\end{cor}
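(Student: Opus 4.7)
The plan is to establish the three equalities by combining an identification of the first two $\Ext$ groups with the ring $\wE$, an upper bound coming from the ordinary parts computation, and a lower bound coming from the Galois deformation ring.

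First, Proposition \ref{genHok} says that (H1)--(H5) hold for $S=\pi_1^{\vee}$ and $Q=\kappa^{\vee}$, so Lemma \ref{first} applies and gives an isomorphism
$$\Ext^1_{\dualcat(k)}(Q,Q)\cong \Ext^1_{\dualcat(k)}(Q,S)$$
via \eqref{iso1}. On the other hand, Corollary \ref{thesame} identifies $\Ext^1_{\dualcat(k)}(Q,S)$ with $\Ext^1_{G,\zeta}(\pi_1,\kappa)$ (Pontryagin duality flips the arrows). So it remains to prove that this last group has dimension exactly $3$.

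For the upper bound, I would simply invoke Lemma \ref{extpi1kappa}, which already gives $\dim \Ext^1_{G,\zeta}(\pi_1,\kappa)\le 3$; the point there was that $\Ord_P\kappa$ and $\RR^1\Ord_P\kappa$ are both isomorphic to $\chi^s$ by Lemma \ref{ordkappa}, so the spectral sequence \eqref{ordseq} collapses against the vanishing of $\Ext^i_{T,\zeta}(\chi^s,\chi^s\alpha^{-1})$ for all $i$ (Corollary \ref{extToruschar}), leaving only the contributions from $\Ext^1_{T,\zeta}(\chi^s,\chi^s)$ and $\Hom_T(\chi^s,\chi^s)$, i.e.\ $2+1=3$.

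For the lower bound, I would translate everything to a question about the ring $\wE$. By Lemma \ref{tangentspace} together with the already-established (H1)--(H5),
$$\Ext^1_{\dualcat(k)}(Q,Q)\cong \Hom_{\hA}(\wE, k[\varepsilon])\cong (\wm/(\wm^2+\varpi\wE))^{\ast},$$
so it suffices to show $\dim_k \wm/(\wm^2+\varpi\wE)\ge 3$. But Proposition \ref{surjformsmooth} produces a surjection of pseudo-compact local $\OO$-algebras $\wE\twoheadrightarrow R_{\rho}^{\varepsilon\zeta}\cong \OO[[x,y,z]]$, which induces a surjection on $\wm/(\wm^2+\varpi\wE)$ onto the $3$-dimensional tangent space of $\OO[[x,y,z]]$. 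Combining the two bounds with Steps~1 and~2 gives the triple equality $=3$.

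The only subtle point is to make sure the chain of isomorphisms in Steps~1 and~2 is legitimate; both rely on hypotheses (H1)--(H5) via Proposition \ref{genHok} and on Corollary \ref{thesame}, and no further $\Ext$ computation is required — the substantive inputs (Lemma \ref{extpi1kappa} and Proposition \ref{surjformsmooth}) have already been proved. There is no real obstacle here; the corollary is essentially a bookkeeping consequence of the earlier results, recording that the tangent space to the noncommutative deformation problem of $Q$ has the same dimension as that of the commutative Galois deformation problem of $\rho=\cV(Q)$ in the generic case.
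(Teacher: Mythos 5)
Your proposal is correct and follows essentially the same route as the paper: identifying the three $\Ext^1$ groups via Lemma \ref{first} and the anti-equivalence of Corollary \ref{thesame}, bounding above by Lemma \ref{extpi1kappa}, and bounding below by the surjection $\wE\twoheadrightarrow R^{\varepsilon\zeta}_{\rho}\cong\OO[[x,y,z]]$ of Proposition \ref{surjformsmooth} combined with Lemma \ref{tangentspace}. The extra recollection of how the upper bound was obtained via ordinary parts is harmless exposition, not a deviation.
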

\begin{proof} We note that all three $\Ext^1$ groups are isomorphic, the first two by anti-equivalence of categories, the last
two by Lemma \ref{first}. Now $\Ext^1_{\dualcat(k)}(Q, Q)$ is isomorphic to $(\mm/\mm^2)^*$ by Lemma \ref{tangentspace} and the surjection of 
Proposition \ref{surjformsmooth} implies that $\dim_k \mm/\mm^2\ge 3$. 
Since $\dim \Ext^1_{G, \zeta}(\pi_1, \kappa)\le 3$ by   Lemma \ref{extpi1kappa} we are done.
\end{proof}

\begin{prop}\label{critforgenOK} $\dim \Ext^1_{G, \zeta}(\pi_1, \tau)\le 3$ for all non-split 
extensions  $0\rightarrow \kappa\rightarrow \tau\rightarrow \kappa\rightarrow 0$ in $\Mod^{\mathrm{sm}}_{G, \zeta}(k)$.   
\end{prop}

\begin{proof} Proposition \ref{surjformsmooth}, Corollary \ref{dimgentang} and Lemma \ref{alter1} imply that the equivalent conditions of 
Lemma \ref{alter} are satisfied and thus by Lemma \ref{enoughisenough} it is enough to check the statement for every non-split extension 
in some $2$-di\-men\-sio\-nal subspace of $\Ext^1_{G, \zeta}(\kappa, \kappa)$. Let $\Upsilon$ be the image of:
$$ \Ext^1_{T, \zeta}(\chi^s, \chi^s)\cong \Ext^1_{G,\zeta}(\pi_1, \pi_1)\hookrightarrow 
\Ext^1_{G,\zeta}(\pi_1, \kappa)\cong \Ext^1_{G,\zeta}(\kappa, \kappa).$$
The extension class of $0\rightarrow \kappa\rightarrow \tau \rightarrow \kappa\rightarrow 0$ lies in $\Upsilon$ if and only if 
there exists an extension $0\rightarrow \chi \rightarrow \epsilon \rightarrow \chi\rightarrow 0$ in $\Mod^{\mathrm{sm}}_{T, \zeta}(k)$ 
and an injection $\Indu{P}{G}{\epsilon}\hookrightarrow \tau$. We denote the quotient by $\kappa_1$.
Since the semi-simplification $\tau^{ss}\cong \pi_1^{\oplus 2}\oplus \pi_2^{\oplus 2}$ we have $\kappa_1^{ss}\cong \pi_2^{\oplus 2}$. 
As $\chi\neq \chi^s \alpha$, the $5$-term sequence \eqref{ordseq} implies that
$\Ext^1_{G, \zeta}(\pi_2, \Indu{P}{G}{\epsilon})$ is $1$-di\-men\-sio\-nal. Since $\Hom_G(\pi_2, \tau)=0$ we deduce that 
$\kappa_1$ cannot be semisimple. We use \eqref{ordseq} again to obtain
$\Ext^1_{G, \zeta}(\pi_2, \pi_2)\cong \Ext^1_{T, \zeta}(\chi^s \alpha, \chi^s\alpha)$. Hence, 
$\kappa_1\cong \Indu{P}{G}{\delta}$, where $0\rightarrow \chi^s\alpha\rightarrow \delta\rightarrow \chi^s\alpha\rightarrow 0$ is an
extension in $\Mod^{\mathrm{sm}}_{T, \zeta}(k)$. Applying $\Ord_P$ to $0\rightarrow \Indu{P}{G}{\epsilon} \rightarrow \tau\rightarrow \Indu{P}{G}{\delta}
\rightarrow 0$ gives an exact sequence:
$$0\rightarrow \epsilon^s \rightarrow \Ord_P \tau\rightarrow \delta^s\overset{\partial}{\rightarrow} \epsilon \alpha^{-1}\rightarrow 
\RR^1\Ord_P \tau \rightarrow \delta \alpha^{-1}\rightarrow 0.$$  
 Since $\Hom_G(\pi_2, \kappa)=0$ we have
$\Hom_T(\chi\alpha^{-1}, \Ord_P \tau)\cong \Hom_G(\pi_2, \tau)=0$. Since $\chi^s\neq \chi\alpha^{-1}$ we have 
$\Ext^1_{T, \zeta}(\chi\alpha^{-1}, \chi^s)=0$ and hence $\partial$ is injective. Since the source and the target are $2$-di\-men\-sio\-nal, $\partial$
is an isomorphism and hence $\Ord_P \tau\cong \RR^1\Ord_P \tau\cong \epsilon^s$ and we have an exact sequence 
$$0\rightarrow \Ext^1_{T, \zeta}(\chi^s, \epsilon^s)\rightarrow \Ext^1_{G,\zeta}(\pi_1, \tau)\rightarrow \Hom_T(\chi^s, \epsilon^s)$$
Since the first term is $2$-di\-men\-sio\-nal by Lemma \ref{strongchar} and the last term is $1$-di\-men\-sio\-nal as $\epsilon$ is non-split, 
we deduce that $\dim \Ext^1_{G, \zeta}(\pi_1, \tau)\le 3$.
\end{proof}

\begin{cor}\label{wEcommGen} The functor $\cV$ induces an isomorphism $\wE\cong R^{\varepsilon \zeta}_{\rho}$. In particular,
$\cV(\wP)$ is the universal deformation of $\rho$ with 
determinant $\zeta\varepsilon$.
\end{cor}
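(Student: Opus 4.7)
The plan is to combine the three pieces already assembled: Proposition \ref{surjformsmooth} giving a surjection $\wE \twoheadrightarrow R^{\varepsilon\zeta}_{\rho} \cong \OO[[x_1,x_2,x_3]]$, Corollary \ref{dimgentang} pinning down $d := \dim_k \mm/\mm^2 = 3$, and Proposition \ref{critforgenOK} bounding the relevant $\Ext^1$. These are exactly the ingredients needed to apply the commutativity criterion Theorem \ref{crit}.

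First I would note that since $d = 3$, we have $r = \lfloor d/2 \rfloor = 1$, so the numerical bound required by Theorem \ref{crit} is $\dim \Ext^1_{\dualcat(k)}(T, S) \le \tfrac{r(r-1)}{2} + d = 3$ for every exact sequence $0 \to Q \to T \to Q \to 0$ in $\dualcat(k)$ with $\dim \Hom_{\dualcat(k)}(T, S) = 1$. Dualizing via the anti-equivalence between $\dualcat(k)$ and the subcategory of $\Mod^{\mathrm{sm}}_{G,\zeta}(k)$, such $T = \tau^{\vee}$ comes from a (necessarily non-split, since $\dim \Hom_{\dualcat(k)}(T,S) \leq \dim \Hom_{\dualcat(k)}(Q,S)+\dim \Hom_{\dualcat(k)}(Q,S) = 2$ with equality in the split case) extension $0 \to \kappa \to \tau \to \kappa \to 0$, and $\Ext^1_{\dualcat(k)}(T, S) \cong \Ext^1_{G, \zeta}(\pi_1, \tau)$. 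Thus Proposition \ref{critforgenOK} supplies exactly the required bound, and in fact does so for all non-split such extensions (not only those with $\dim \Hom_G(\pi_1,\tau) = 1$), which is more than enough.

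Next, Theorem \ref{crit} then gives $\wE \cong \OO[[x_1,x_2,x_3]]$. Since the surjection $\wE \twoheadrightarrow R^{\varepsilon\zeta}_{\rho}$ of Proposition \ref{surjformsmooth} is a surjection from $\OO[[x_1,x_2,x_3]]$ onto another formal power series ring in three variables over $\OO$ (both regular local rings of the same Krull dimension $4$), it is automatically an isomorphism. This is the first claim; the only step that required real input was the $\Ext^1$ bound of Proposition \ref{critforgenOK}, and that is where the main work of the generic case is concentrated.

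For the second claim, I would unwind the construction of $\cV$ as a map of deformation functors from Corollary \ref{ftof1}. The surjection of Proposition \ref{surjformsmooth} was defined as the map of representing rings induced by the natural transformation $\Def^{ab}_Q \to \Def^{ab,\varepsilon\zeta}_{\rho}$, $Q_A \mapsto \cV(Q_A)$, applied to the universal commutative deformation $\wP^{ab} = \wE^{ab} \wtimes_{\wE} \wP$. Since we have just shown $\wE$ is commutative, $\wP^{ab} = \wP$, and the isomorphism $\wE \cong R^{\varepsilon\zeta}_{\rho}$ corresponds under the representability to the deformation $\cV(\wP)$ of $\rho$ to $\wE$ classifying the identity morphism. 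Hence $\cV(\wP)$ is the universal deformation of $\rho$ with determinant $\varepsilon\zeta$.

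The main obstacle in the overall argument was already dispatched in Proposition \ref{critforgenOK} (translating the $r(r-1)/2 + d$ bound into an $\Ext^1$ computation on the $G$-side, and carrying it out using $\Ord_P$ and a careful analysis of which $\kappa$-by-$\kappa$ extensions arise from extensions of $\chi^s$ by $\chi^s$ on the torus); granted that, the present corollary is a two-line dimension count plus an appeal to Theorem \ref{crit}.
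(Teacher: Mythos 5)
Your proof is correct and follows essentially the paper's own route: apply Theorem \ref{crit} with the surjection of Proposition \ref{surjformsmooth}, the tangent-space count $d=3$ from Corollary \ref{dimgentang}, and the bound of Proposition \ref{critforgenOK} (which, as you observe, covers all non-split self-extensions of $\kappa$ and hence every $T$ with $\dim \Hom_{\dualcat(k)}(T,S)=1$), then identify $\cV(\wP)$ with the universal fixed-determinant deformation via Corollary \ref{ftof1}. Your added step that a surjection between formal power series rings in three variables over $\OO$ must be an isomorphism is fine but already implicit in the proof of Theorem \ref{crit} (Lemma \ref{graded} shows the given surjection itself is an isomorphism), so nothing essential differs.
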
 
\begin{proof} The first assertion follows from Theorem \ref{crit}. We then deduce that $\cV$ induces an isomorphism of deformation 
functors, Corollary \ref{ftof1}, and thus $\cV(\wP)$ is the universal deformation of $\rho$ with determinant $\zeta\varepsilon$.
\end{proof}

\begin{thm}\label{mainGen} Let $\Pi$ be an admissible unitary absolutely irreducible $L$-Banach space representation of $G$   
with a central character $\zeta$. Suppose that the reduction of some open bounded $G$-invariant lattice in $\Pi$ 
 contains $\pi_1$ as a subquotient then $\overline{\Pi}\subseteq \pi_1\oplus \pi_2$. 
\end{thm}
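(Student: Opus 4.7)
The plan is to deduce the theorem as a direct application of Corollary \ref{commutativeOK} to the irreducible object $S = \pi_1^{\vee}$, with $Q = \kappa^{\vee}$ playing the role of the ``atome automorphe''. To apply that corollary I must verify three things: that the hypotheses (H0)--(H5) of \S\ref{def} are satisfied for this $S$ and $Q$, that $\Theta^d$ lies in $\dualcat(\OO)$ for every open bounded $G$-invariant lattice $\Theta$ in $\Pi$, and that the pseudo-compact ring $\wE = \End_{\dualcat(\OO)}(\wP)$ is commutative.

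The first two points are already in hand from earlier in the paper. Hypothesis (H0) holds for every projective envelope in $\dualcat(\OO)$ by Corollary \ref{projaretfree}, since projective objects there are $\OO$-torsion free. Hypotheses (H1)--(H5) for $S = \pi_1^{\vee}$ and $Q = \kappa^{\vee}$ are precisely the content of Proposition \ref{genHok}. The fact that $\Theta^d$ is an object of $\dualcat(\OO) = \dualcat_{G,\zeta}(\OO)$ for any open bounded $G$-invariant lattice $\Theta$ in our $\Pi$ is Lemma \ref{contextGL2}, using that $\Pi$ is admissible and has central character $\zeta$. By hypothesis, $\pi_1 = S^{\vee}$ appears as a subquotient of $\Theta\otimes_{\OO} k$ for one (equivalently, by Lemma \ref{commen}, for every) such $\Theta$.

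The third point, commutativity of $\wE$, is the substantive input and is supplied by Corollary \ref{wEcommGen}: Colmez's functor $\cV$ induces an isomorphism
\begin{equation*}
\wE \xrightarrow{\ \sim\ } R^{\varepsilon\zeta}_{\rho} \cong \OO[[x,y,z]],
\end{equation*}
whose commutativity was the outcome of the whole strategy outlined in \S\ref{strat} (and carried out in Propositions \ref{surjformsmooth} and \ref{critforgenOK} via the criterion of Theorem \ref{crit}, using the $\Ext$ computation for extensions of $\kappa$ by itself through the functor of ordinary parts). This is the step where all the real work sits; in the present theorem it enters only as a black box.

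Granting all of this, Corollary \ref{commutativeOK} applies verbatim: it produces an inclusion $\overline{\Pi} \subseteq (Q^{\vee})^{ss}$. Finally I unwind the right-hand side. By construction $Q^{\vee} = \kappa$, and from the defining non-split sequence
\begin{equation*}
0 \to \pi_1 \to \kappa \to \pi_2 \to 0
\end{equation*}
of \eqref{defkappa} we read off $\kappa^{ss} \cong \pi_1 \oplus \pi_2$. Hence $\overline{\Pi} \subseteq \pi_1 \oplus \pi_2$, as claimed. No obstacle remains beyond checking the citations assemble correctly; the entire theorem is a packaging of Corollary \ref{commutativeOK}, Corollary \ref{wEcommGen}, Proposition \ref{genHok}, Lemma \ref{contextGL2} and Corollary \ref{projaretfree}.
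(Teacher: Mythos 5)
Your proposal is correct and follows exactly the route the paper takes: the paper's proof likewise invokes Lemma \ref{contextGL2} to place $\Theta^d$ in $\dualcat(\OO)$ and then applies Corollary \ref{commutativeOK} once the commutativity of $\wE$ (Corollary \ref{wEcommGen}) is known. You have simply made explicit the verification of (H0)--(H5) and the identification $(Q^{\vee})^{ss}\cong\pi_1\oplus\pi_2$ that the paper leaves implicit.
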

\begin{proof} The Schikhof dual of an open bounded $G$-invariant lattice in $\Pi$ is an object of $\dualcat(\OO)$ 
by Lemma \ref{contextGL2}.
Since $\wE$ is commutative the assertion follows from Corollary \ref{commutativeOK}.
\end{proof}

\begin{cor}\label{cormainGen} Let $\Pi$ be as in Theorem \ref{mainGen} and suppose that $\overline{\Pi}$ does not contain $\pi_2$ then 
$\Pi\cong (\Indu{P}{G}{\psi})_{cont}$ for some continuous unitary character $\psi: T\rightarrow L^{\times}$ lifting $\chi$ and satisfying 
$\psi|_Z=\zeta$.
\end{cor}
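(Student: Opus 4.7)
The plan is to reduce Corollary \ref{cormainGen} to Proposition \ref{QofM} by means of the resolution provided in Corollary \ref{projresprincser2}. Let $\Xi$ be an open bounded $G$-invariant lattice in $\Pi$. By Theorem \ref{mainGen} every irreducible subquotient of $\overline{\Pi}$ lies in $\{\pi_1,\pi_2\}$, and by assumption $\pi_2$ is not such a subquotient; hence every irreducible subquotient of $\overline{\Pi}$ is isomorphic to $\pi_1$. Applying Lemma \ref{pisub} (to the block $\BB$ of $\pi_1$) with $S'=\pi_2^{\vee}$ yields $\Hom_{\dualcat(\OO)}(\wP_{S'},\Xi^d)=0$, while with $S=\pi_1^{\vee}$ it yields $\Hom_{\dualcat(\OO)}(\wP_S,\Xi^d)\neq 0$. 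Note that the hypotheses $\chi\neq\chi^s$ and $\chi\neq\chi^s\alpha^2$ of Corollary \ref{projresprincser2} translate to $\chi_1\chi_2^{-1}\neq\omega^{\pm 1}$, which holds in the generic case.

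First I would apply $\Hom_{\dualcat(\OO)}(-,\Xi^d)$ to the short exact sequence
\begin{equation*}
0\rightarrow \wP_{S'}\rightarrow \wP_S\rightarrow \wM_{\chi^{\vee}}\rightarrow 0
\end{equation*}
of Corollary \ref{projresprincser2}. Combined with the vanishing above, this produces an isomorphism
\begin{equation*}
\Hom_{\dualcat(\OO)}(\wM_{\chi^{\vee}},\Xi^d)\;\cong\;\Hom_{\dualcat(\OO)}(\wP_S,\Xi^d),
\end{equation*}
and the right hand side is non-zero. In particular, the hypothesis $\Hom_{\dualcat(\OO)}(\wM,\Xi^d)\neq 0$ of Proposition \ref{QofM} is satisfied, where $\wM=\wM_{\chi^{\vee}}$.

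Proposition \ref{QofM} then forces $\Pi\cong\eta\circ\det$ or $\Pi\cong(\Indu{P}{G}{\psi})_{cont}$ for a continuous unitary character $\psi:T\rightarrow L^{\times}$ lifting $\chi$ with $\psi\neq\psi^s$. The first option is ruled out because a unitary character has one-dimensional reduction and so $\overline{\Pi}$ would be a character of $G$, whereas $\pi_1=\Indu{P}{G}{\chi}$ is an irreducible (non one-dimensional) subquotient of $\overline{\Pi}$. Hence $\Pi\cong(\Indu{P}{G}{\psi})_{cont}$. Finally, since $\Pi$ has central character $\zeta$ and the continuous parabolic induction $(\Indu{P}{G}{\psi})_{cont}$ has central character $\psi|_Z$, we obtain $\psi|_Z=\zeta$, completing the proof.

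The whole argument is essentially bookkeeping: the only real input is the resolution of $\wM_{\chi^{\vee}}$ by projective envelopes of the two principal series in the block, together with the structural input of Proposition \ref{QofM}. There is no substantial obstacle — the delicate work has already been carried out in Corollary \ref{projresprincser2} and in Proposition \ref{QofM} (which itself relies on the commutativity of $\wE$ established in Corollary \ref{wEcommGen}).
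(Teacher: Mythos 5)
Your argument is correct and is essentially the paper's own proof: deduce from Theorem \ref{mainGen} and the absence of $\pi_2$ that $\Hom_{\dualcat(\OO)}(\wP_{S'},\Xi^d)=0$ while $\Hom_{\dualcat(\OO)}(\wP_S,\Xi^d)\neq 0$, feed this into the exact sequence of Corollary \ref{projresprincser2} to get $\Hom_{\dualcat(\OO)}(\wM_{\chi^{\vee}},\Xi^d)\neq 0$, and conclude via Proposition \ref{QofM}. The only cosmetic differences are that you invoke Lemma \ref{pisub} where the paper cites Lemma \ref{mult=rank0}, and that you spell out the (immediate) exclusion of the case $\Pi\cong\eta\circ\det$, which the paper leaves implicit.
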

\begin{proof} Let $\wP_2$ be a projective envelope of $\pi_2^{\vee}$ in $\dualcat_{G, \zeta}(\OO)$
and let $\Theta$ be an open bounded $G$-invariant lattice in $\Pi$. 
Theorem \ref{mainGen} implies that $\overline{\Pi}\cong \pi_1$. Hence  Lemma \ref{mult=rank0}
 says that $\Hom_{\dualcat(\OO)}(\wP_2, \Theta^d)=0$ and $\Hom_{\dualcat(\OO)}(\wP, \Theta^d)\neq 0$. 
We deduce from Corollary \ref{projresprincser2} that  $\Hom_{\dualcat(\OO)}(\wM, \Theta^d)\neq 0$, 
where $\wM=(\Indu{P}{G}{\wP_{\chi^{\vee}}^{\vee}})^{\vee}$ and $\wP_{\chi^{\vee}}$ is a projective envelope of $\chi^{\vee}$ 
in $\dualcat_{T, \zeta}(\OO)$. The assertion follows from Proposition \ref{QofM}.
\end{proof}

\subsection{The centre}
Recall   that the block $\BB$ of $\pi_1$ contains only two irreducible representations $\pi_1$ and $\pi_2$, Proposition \ref{blocksoverk}, and so
$\Mod^{\mathrm{l\, fin}}_{G,\zeta}(\OO)^{\BB}$ is the full subcategory of $\Mod^{\mathrm{l\, fin}}_{G,\zeta}(\OO)$ consisting 
of representations with every irreducible subquotient isomorphic to either $\pi_1$ or $\pi_2$. Let $\dualcat(\OO)^{\BB}$ be the full 
subcategory of $\dualcat(\OO)$ anti-equivalent to $\Mod^{\mathrm{l\, fin}}_{G,\zeta}(\OO)^{\BB}$, as in Proposition \ref{blockdecompD}.
Let $\wP_1$ and $\wP_2$ be projective envelopes of $S_1:=\pi_1^{\vee}$ and $S_2:=\pi_2^{\vee}$ in $\dualcat(\OO)$, respectively. Let 
$\wP_{\BB}:=\wP_1\oplus \wP_2$ and $\wE_{\BB}:=\End_{\dualcat(\OO)}(\wP_{\BB})$. The aim of this subsection is to compute the ring 
$\wE_{\BB}$ and determine its centre. 
 
\begin{lem}\label{samehoms} Let $M$ and $N$ be objects of $\dualcat(\OO)^{\BB}$ then 
$\cV$ induces an isomorphism $\Hom_{\dualcat(\OO)}(M, N)\cong \Hom_{\gal}(\cV(M), \cV(N))$.
\end{lem}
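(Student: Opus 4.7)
The plan is to reduce to the case where $M$ is one of the projective covers $\wP_1,\wP_2$ (where $\wP_2$ plays, for the block $\BB$, the role symmetric to that of $\wP=\wP_1$ analysed above, with the characters $\chi_1,\chi_2$ swapped so that $\cV(\wP_2)=\rho_2^{un}$ is the universal deformation of the non-split extension $\rho_2$ of $\chi_2$ by $\chi_1$ with determinant $\varepsilon\zeta$). Both $\Hom_{\dualcat(\OO)}(-,N)$ and the composite $\Hom_{\gal}(\cV(-),\cV(N))$ are contravariant left exact on $\dualcat(\OO)^{\BB}$ (the latter because $\cV$ is exact), and there is a natural transformation between them induced by functoriality of $\cV$. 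To promote an isomorphism on the projective generator $\wP_{\BB}=\wP_1\oplus\wP_2$ to arbitrary $M$, I would choose a presentation $\wP_{\BB}^{I}\to \wP_{\BB}^{J}\to M\to 0$ by products of copies of $\wP_{\BB}$ (which exists because $\wP_{\BB}$ is a projective generator of $\dualcat(\OO)^{\BB}$ and direct products exist), apply $\cV$ (which preserves the presentation by exactness), and then chase the diagram using left exactness on both sides; compatibility of each functor with arbitrary products in the first argument lets one pass $I$ and $J$ through.

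The heart of the proof is therefore the case $M=\wP_i$, $i\in\{1,2\}$. There I would argue in two sub-steps. First, fix $N$ of finite length in $\dualcat(\OO)^{\BB}$. Since $\wP_i$ is projective, $\Hom_{\dualcat(\OO)}(\wP_i,N)$ is a finitely generated $\OO$-module whose length, by Lemma \ref{mult=rank0}, equals the multiplicity of $S_i=\pi_i^\vee$ as a subquotient of $N\otimes_{\OO}k$. On the Galois side, using the isomorphism $\cV(\wP_i)\cong\rho_i^{un}$ from Corollary \ref{wEcommGen} (and its $i=2$ analogue), together with $N\cong\Hom_{\dualcat(\OO)}(\wP_{\BB},N)\wtimes_{\wE_{\BB}}\wP_{\BB}$ (Lemma \ref{ohyeah}, which applies because the hypotheses (H1)--(H5) are verified by Proposition \ref{genHok}) and the commutation $\cV(\md\wtimes_{\wE_{\BB}}\wP_{\BB})\cong \md\wtimes_{\wE_{\BB}}\cV(\wP_{\BB})$ (Lemma \ref{flattofree} plus a passage to the limit), one rewrites
\[
\Hom_{\gal}(\rho_i^{un},\cV(N))\cong\Hom_{\gal}(\rho_i^{un},\Hom_{\dualcat(\OO)}(\wP_{\BB},N)\wtimes_{\wE_{\BB}}(\rho_1^{un}\oplus\rho_2^{un}))
\]
and by the universal property of $\rho_i^{un}$ over $\wE_i:=\End_{\dualcat(\OO)}(\wP_i)\cong R^{\varepsilon\zeta}_{\rho_i}$ this Hom is computed as the $i$-th component of $\Hom_{\dualcat(\OO)}(\wP_{\BB},N)$, matching the left side.

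Second, to pass from finite length $N$ to general $N$, write $N=\varprojlim N_\alpha$ over its finite length quotients in $\dualcat(\OO)^{\BB}$: both Hom functors commute with this inverse limit ($\cV$ does by definition on $\dualcat(\OO)$, and $\Hom_{\gal}(\rho_i^{un},-)$ does because $\rho_i^{un}$ is finitely generated over $\wE_i$, which is noetherian), giving the isomorphism in general.

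The main obstacle is the explicit comparison in the key case $M=\wP_i$, $N=\wP_j$ with $i\ne j$, i.e.\ identifying the off-diagonal components of $\wE_{\BB}$ with $\Hom_{\gal}(\rho_i^{un},\rho_j^{un})$. On the Galois side this Hom group is controlled by the universal pseudocharacter $T:\gal\to R^{\mathrm{ps},\zeta\varepsilon}_{\tr\rho}$; on the $G$-side one must verify that the natural $\wE_i$-linear map $\Hom_{\dualcat(\OO)}(\wP_i,\wP_j)\to \Hom_{\gal}(\rho_i^{un},\rho_j^{un})$ is an isomorphism, for which I would compare both sides with a common module over $R^{\mathrm{ps},\zeta\varepsilon}_{\tr\rho}$, using that $\wE_{\BB}$ is a finitely generated module over its centre $R^{\mathrm{ps},\zeta\varepsilon}_{\tr\rho}$ and becomes a matrix algebra after inverting suitable elements (a fact foreshadowed in the introduction).
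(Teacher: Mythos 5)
Your proposal takes a genuinely different route from the paper's, and it contains a circularity that undermines the key step.

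The paper's proof is a d\'evissage: since $\cV$ commutes with projective limits, reduce to $M$, $N$ of finite length; there the only irreducible objects are $S_1$, $S_2$, and one knows both that $\Hom_{\dualcat(\OO)}(S_a,S_b)\cong\Hom_{\gal}(\cV(S_a),\cV(S_b))$ (both sides are $k$ or $0$) and, crucially, that $\cV$ induces an injection $\Ext^1_{\dualcat(\OO)}(S_a,S_b)\hookrightarrow\Ext^1_{\gal}(\cV(S_a),\cV(S_b))$ by Colmez \cite[\S VII.5]{colmez}; one then inducts on $\ell(M)+\ell(N)$ via the long exact sequence and the five lemma, following the proof of Lemma A.1 in \cite{ext2}. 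The $\Ext^1$ injection on simples is the engine that carries the induction, and it is nowhere in your argument.

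Your argument instead reduces to $M=\wP_i$ via a presentation of $M$ by products of copies of $\wP_{\BB}$, which is fine in principle. But the heart of your argument — identifying $\Hom_{\dualcat(\OO)}(\wP_i,\wP_j)$ with $\Hom_{\gal}(\rho_i^{un},\rho_j^{un})$ for $i\neq j$ — is exactly what you cannot obtain from what is available at this point. Corollary~\ref{wEcommGen} gives the diagonal blocks $\wE_i\cong R^{\varepsilon\zeta}_{\rho_i}\cong\End_{\gal}(\rho_i^{un})$ (even the last identification needs a small argument, since a priori $\End_{\gal}(\rho_i^{un})$ could be larger than $R^{\varepsilon\zeta}_{\rho_i}$). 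For the off-diagonal blocks you propose to ``compare both sides with a common module over $R^{\mathrm{ps},\zeta\varepsilon}_{\tr\rho}$, using that $\wE_{\BB}$ is a finitely generated module over its centre \ldots and becomes a matrix algebra after inverting suitable elements.'' But that description of $\wE_{\BB}$ and its centre is precisely the content of Corollary~\ref{Cgen}, and Corollary~\ref{Cgen} is deduced in the paper \emph{from} Lemma~\ref{samehoms}. So the step you flag as the ``main obstacle'' is resolved by appealing to a result that logically depends on the lemma you are proving. The Galois-theoretic Proposition~\ref{genrc} in the appendix tells you what $\End_{\gal}(\rho_1^{un}\oplus\rho_2^{un})$ looks like, but it gives you no handle on the map from $\wE_{\BB}$ into it, and in particular no way to rule out a kernel or cokernel without something like the $\Ext^1$ comparison on simples. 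The fix is to drop the reduction to projectives and run the d\'evissage on finite-length objects, using Colmez's $\Hom$ isomorphism and $\Ext^1$ injection on $S_1$, $S_2$ as the base case.
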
 
\begin{proof} Since $\cV$ commutes with projective limits it is enough to show the statement for objects of finite 
length. Now $\dualcat(\OO)^{\BB}$ has only two irreducible objects $S_1$,  $S_2$. For 
$A$ and $B$ isomorphic to $S_1$ or $S_2$ we have $\Hom_{\dualcat(\OO)}(A, B)\cong \Hom_{\gal}(\cV(A), \cV(B))$, since both sides are equal either to 
$k$ or to $0$ and an injection $\Ext^1_{\dualcat(\OO)}(A, B)\hookrightarrow \Ext^1_{\gal}(\cV(A), \cV(B))$ by \cite[\S VII.5]{colmez}.
We then may argue by induction on $\ell(M)+\ell(N)$, where $\ell$ denotes the length, see the proof of Lemma A.1 in 
\cite{ext2}. 
\end{proof}   

Let $\rho_1$ and $\rho_2$ be $2$-di\-men\-sio\-nal $k$-representations of $\gal$ such that we have exact non-split sequences of Galois representations:
$$ 0\rightarrow \chi_2\rightarrow \rho_1\rightarrow \chi_1\rightarrow 0, \quad 0\rightarrow \chi_1\rightarrow \rho_2\rightarrow \chi_2\rightarrow 0.$$
Since $\Ext^1_{\gal}(\chi_1, \chi_2)$ and $\Ext^1_{\gal}(\chi_2, \chi_1)$ are one dimensional such representations exist and are uniquely 
determined up to isomorphism. We note that $\det \rho_1=\det \rho_2$ is congruent to $\zeta \varepsilon$. Let $\rho^{un}_1$ and $\rho^{un}_2$ 
be the universal deformations of $\rho_1$ and $\rho_2$ respectively with determinant $\zeta \varepsilon$.
Let $\chi:=\tr \rho_1=\tr \rho_2$ and 
let $R^{\mathrm{ps}, \varepsilon\zeta}_{\chi}$ be the universal deformation ring parameterizing $2$-di\-men\-sio\-nal pseudocharacters with determinant 
$\zeta\varepsilon$ lifting $\chi$.

\begin{cor}\label{Cgen} The category $\Mod^{\mathrm{l\, fin}}_{G,\zeta}(\OO)^{\BB}$ is anti-equivalent to the category of compact
$\End_{\gal}(\rho_1^{un}\oplus \rho_2^{un})$-modules. The centre of $\Mod^{\mathrm{l\, fin}}_{G,\zeta}(\OO)^{\BB}$ is naturally isomorphic to 
$R_{\chi}^{\mathrm{ps}, \zeta\varepsilon}$.
\end{cor}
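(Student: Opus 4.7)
The plan is to reduce the statement to the two already-established facts: Gabriel's anti-equivalence (Proposition \ref{gabriel}) and the identification of the deformation ring with the endomorphism ring of a single projective envelope (Corollary \ref{wEcommGen}); and then to glue these across the two irreducibles $\pi_1,\pi_2$ of the block using the fully-faithfulness statement Lemma \ref{samehoms}.

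First, by Proposition \ref{gabriel}, the category $\Mod^{\mathrm{l\, fin}}_{G,\zeta}(\OO)^{\BB}$ is anti-equivalent to the category of compact right $\wE_{\BB}$-modules, where $\wE_{\BB}=\End_{\dualcat(\OO)}(\wP_1\oplus\wP_2)$, and its centre is the centre of $\wE_{\BB}$. Thus the corollary reduces to two claims:
\begin{equation*}
\wE_{\BB}\cong \End_{\gal}(\rho_1^{un}\oplus\rho_2^{un}),\qquad Z(\wE_{\BB})\cong R^{\mathrm{ps},\varepsilon\zeta}_{\chi}.
\end{equation*}

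Next I would treat the two summands $\wP_1$ and $\wP_2$ separately. For $\wP_1$ this is exactly Corollary \ref{wEcommGen}, which says $\cV(\wP_1)\cong\rho_1^{un}$ is the universal deformation of $\rho_1$ with determinant $\zeta\varepsilon$. For $\wP_2$ one applies the entire machinery of Proposition \ref{genHok}--Corollary \ref{wEcommGen} with the roles of $\chi_1,\chi_2$ swapped: the non-split extension $0\to\pi_2\to\kappa'\to\pi_1\to 0$ plays the role of $\kappa$ (its existence and uniqueness follows from $\dim\Ext^1_{G,\zeta}(\pi_1,\pi_2)=1$, which is symmetric to what was used for $\pi_1$), the same Ord$_P$-calculations yield hypotheses (H1)--(H5), and one obtains $\cV(\wP_2)\cong\rho_2^{un}$. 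Then I would apply Lemma \ref{samehoms} to $M=N=\wP_1\oplus\wP_2$, both objects of $\dualcat(\OO)^{\BB}$, to obtain
\begin{equation*}
\wE_{\BB}=\End_{\dualcat(\OO)}(\wP_1\oplus\wP_2)\;\cong\; \End_{\gal}(\cV(\wP_1)\oplus\cV(\wP_2))=\End_{\gal}(\rho_1^{un}\oplus\rho_2^{un}),
\end{equation*}
proving the first claim and the anti-equivalence of categories.

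Finally, for the centre, I would use the structure of $\End_{\gal}(\rho_1^{un}\oplus\rho_2^{un})$ as a matrix-like algebra: diagonal entries are $R^{\varepsilon\zeta}_{\rho_i}$ and off-diagonal entries are the appropriate $\Hom_\gal$-modules. Any element of the centre must act on each $\rho_i^{un}$ by an endomorphism, hence by a scalar in $R^{\varepsilon\zeta}_{\rho_i}$ (since $\rho_i^{un}$ is the universal deformation and thus $\End_\gal(\rho_i^{un})=R^{\varepsilon\zeta}_{\rho_i}$). Compatibility under the off-diagonal homomorphisms forces the two scalars to have equal trace when viewed in a common ring, and this common ring is precisely the universal pseudocharacter deformation ring $R^{\mathrm{ps},\varepsilon\zeta}_\chi$. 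I would defer the explicit verification of this last identification to the appendix's results on pseudocharacter deformation theory (as announced in the introduction). The main obstacle I anticipate is not the matrix-algebra argument itself but writing down cleanly the pseudocharacter description of $Z(\wE_{\BB})$: one must check that the two trace maps $R^{\varepsilon\zeta}_{\rho_i}\to R^{\mathrm{ps},\varepsilon\zeta}_\chi$ agree on the image of the centre and induce an \emph{isomorphism} with $R^{\mathrm{ps},\varepsilon\zeta}_\chi$, which relies on genericity ($\chi_1\chi_2^{-1}\neq \Eins,\omega^{\pm1}$) to rule out coincidences in the reducible locus.
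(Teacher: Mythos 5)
Your proposal follows essentially the same route as the paper: reduce to Gabriel's anti-equivalence (Proposition \ref{gabriel}), identify $\wE_{\BB}\cong\End_{\gal}(\rho_1^{un}\oplus\rho_2^{un})$ via Corollary \ref{wEcommGen} applied to both projective envelopes together with Lemma \ref{samehoms}, and then invoke the appendix (Proposition \ref{genrc}) for the structure of this endomorphism ring as a free rank-$4$ $R^{\mathrm{ps},\zeta\varepsilon}_{\chi}$-module with centre $R^{\mathrm{ps},\zeta\varepsilon}_{\chi}$. Your heuristic sketch of why the centre is the pseudocharacter ring is a fair summary of what Propositions \ref{homrank1}--\ref{genrc} actually establish, and the proof is correct.
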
 
\begin{proof} Corollary \ref{wEcommGen} and Lemma \ref{samehoms} imply  that 
$$\wE_{\BB}\cong \End_{\gal}(\cV(\wP_1)\oplus \cV(\wP_2))\cong \End_{\gal}(\rho_1^{un}\oplus \rho_2^{un}).$$
In Proposition \ref{genrc} we have showed that $\End_{\gal}(\rho_1^{un}\oplus \rho_2^{un})$ is a free $R_{\chi}^{\mathrm{ps}, \zeta\varepsilon}$-module 
of rank $4$ and its centre is isomorphic to $R_{\chi}^{\mathrm{ps}, \zeta\varepsilon}$. 
The assertion follows from Proposition \ref{gabriel}.
\end{proof}

\begin{cor}\label{genkill} Let $T:\gal\rightarrow  R^{\mathrm{ps},\zeta \varepsilon}_{\chi}$ be the universal $2$-dimensional pseudocharacter 
with determinant $\zeta\varepsilon$ lifting $\chi$. For every $N$ in $\dualcat(\OO)^{\BB}$, $\cV(N)$ is killed by 
$g^2-T(g)g + \zeta \varepsilon(g)$, for all $g\in \gal$.
\end{cor}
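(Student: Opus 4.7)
The plan is to follow the same template as Corollary \ref{superkill} in the supersingular case, using $\wP_{\BB} = \wP_1 \oplus \wP_2$ in place of the single projective $\wP$.

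First I would verify the statement for $N = \wP_{\BB}$. By Corollary \ref{Cgen}, we have $\cV(\wP_{\BB}) \cong \rho_1^{\mathrm{un}} \oplus \rho_2^{\mathrm{un}}$, and the centre $R^{\mathrm{ps},\zeta\varepsilon}_{\chi}$ of $\wE_{\BB}$ acts on each summand via the canonical map to the deformation ring $R^{\zeta\varepsilon}_{\rho_i}$ (sending the universal pseudocharacter $T$ to the trace of $\rho_i^{\mathrm{un}}$; this is the universal property of $R^{\mathrm{ps},\zeta\varepsilon}_{\chi}$, using that $p > 2 = \dim \rho_i$ so that pseudocharacters correspond to traces). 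Each $\rho_i^{\mathrm{un}}$ is a free rank-$2$ module over $R^{\zeta\varepsilon}_{\rho_i}$, and the Cayley–Hamilton identity for a $2$-dimensional representation with trace $T$ and determinant $\zeta\varepsilon$ kills $\rho_i^{\mathrm{un}}$. Hence $g^2 - T(g)g + \zeta\varepsilon(g)$ kills $\cV(\wP_{\BB})$.

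Next I would bootstrap from $\wP_{\BB}$ to an arbitrary object $N$ of $\dualcat(\OO)^{\BB}$. Writing $N \cong \underset{\longleftarrow}{\lim}\, N_i$ over its finite length quotients and using that $\cV$ commutes with projective limits in this setting (the transition maps are surjective on $\cV$ by exactness, so Mittag-Leffler applies), it suffices to treat $N$ of finite length. For such $N$, since $\dualcat(\OO)^{\BB}$ has only the two irreducible objects $S_1, S_2$ and both lie in the cosocle of $\wP_{\BB}$, the analog of Lemma \ref{headS} yields a surjection
\begin{equation*}
\md(N) \wtimes_{\wE_{\BB}} \wP_{\BB} \twoheadrightarrow N,
\end{equation*}
where $\md(N) := \Hom_{\dualcat(\OO)}(\wP_{\BB}, N)$. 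Iterating (replacing $N$ by the kernel, which also lies in $\dualcat(\OO)^{\BB}$ and has smaller length) and using exactness of $\cV$ reduces us to showing that $\md(N) \wtimes_{\wE_{\BB}} \cV(\wP_{\BB})$ is killed by $g^2 - T(g)g + \zeta\varepsilon(g)$.

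Finally, the commutation $\cV(\md(N) \wtimes_{\wE_{\BB}} \wP_{\BB}) \cong \md(N) \wtimes_{\wE_{\BB}} \cV(\wP_{\BB})$ is the analog of Lemma \ref{flattofree} / Corollary \ref{ftof3}: since $\md(N)$ is a finitely generated $\OO$-module (by Lemma \ref{mult=rank0}, as $N$ has finite length), it is finitely presented over $\wE_{\BB}$ by Corollary \ref{Cgen} (which gives $\wE_{\BB}$ noetherian as a finite module over a noetherian centre), so the completed tensor product agrees with the usual one and the exactness of $\cV$ together with flatness of $\wP_{\BB}$ over $\wE_{\BB}$ (Corollary \ref{ftof2}) gives the needed isomorphism. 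Since the element $g^2 - T(g)g + \zeta\varepsilon(g)$ lies in $R^{\mathrm{ps},\zeta\varepsilon}_{\chi}$, which acts on both factors and kills $\cV(\wP_{\BB})$ by the first step, it kills $\md(N) \wtimes_{\wE_{\BB}} \cV(\wP_{\BB}) \cong \cV(N)$.

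The only mildly delicate point is the second step — producing a resolution of $N$ by direct sums of copies of $\wP_{\BB}$ and checking that $\cV$ intertwines tensor products over the non-commutative ring $\wE_{\BB}$ — but this is essentially formal once one knows $\wP_{\BB}$ is $\wE_{\BB}$-flat and $\cV$ is exact and commutes with direct products (the latter follows from $\cV$ commuting with projective limits).
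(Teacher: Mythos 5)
Your proposal is correct and follows essentially the same route as the paper: verify the Cayley--Hamilton killing for $\cV(\wP_{\BB})$ via its identification with $\rho_1^{\mathrm{un}} \oplus \rho_2^{\mathrm{un}}$ and the fact (Corollary \ref{traceisthesame}) that $T$ equals both traces, then transfer to general $N$ by commuting $\cV$ with $\wtimes_{\wE_{\BB}} \wP_{\BB}$. One small simplification you miss: since $\Hom_{\dualcat(\OO)}(\wP_{\BB},\ast)$ induces an equivalence between $\dualcat(\OO)^{\BB}$ and compact right $\wE_{\BB}$-modules (Proposition \ref{gabriel}), the map $\md(N)\wtimes_{\wE_{\BB}}\wP_{\BB}\to N$ is in fact an isomorphism, which is what the paper uses directly; your ``iteration'' step is therefore unnecessary (and indeed the surjection plus exactness of $\cV$ already suffices without it, so it does no harm).
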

\begin{proof} Corollary  \ref{wEcommGen} and Proposition \ref{rpsr} imply that the assertion is true if $N=\wP_1$ or $N=\wP_2$. 
Hence, the assertion holds for $N=\wP_{\BB}$. The general case follows from the isomorphism:
$$\cV(N)\cong \cV(\Hom_{\dualcat(\OO)}(\wP_{\BB}, N)\wtimes_{\wE_{\BB}} \wP_{\BB})\cong \Hom_{\dualcat(\OO)}(\wP_{\BB}, N)\wtimes_{\wE_{\BB}}
\cV(\wP_{\BB}),$$
which is proved in the same way as Lemma \ref{VT2}.
\end{proof}

Let $\Ban^{\mathrm{adm}}_{G,\zeta}(L)^{\BB}$ be as in Proposition \ref{blockdecompB} and let $\Ban^{\mathrm{adm. fl}}_{G,\zeta}(L)^{\BB}$
be the full subcategory consisting of objects of finite length.

\begin{cor}\label{genBanach} We have an equivalence of categories 
$$\Ban^{\mathrm{adm. fl}}_{G,\zeta}(L)^{\BB}\cong 
\bigoplus_{\nn\in \MaxSpec R_{\chi}^{\mathrm{ps},\zeta \varepsilon}[1/p]}\Ban^{\mathrm{adm. fl}}_{G,\zeta}(L)^{\BB}_{\nn}.$$
The category $\Ban^{\mathrm{adm. fl}}_{G, \zeta}(L)^{\BB}_{\nn}$ is anti-equivalent to the category 
of modules of finite length of the  
$\nn$-adic completion of $\End_{\gal}(\rho_1^{un}\oplus \rho_2^{un})[1/p]$.
\end{cor} 
\begin{proof} Apply Theorem \ref{furtherDBan} with $\dualcat(\OO)=\dualcat(\OO)^{\BB}$.
\end{proof}

\begin{cor}\label{genirr} Suppose that the pseudo-character corresponding to a maximal ideal $\nn$ of 
$R_{\chi}^{\mathrm{ps}, \zeta\varepsilon}[1/p]$ is irreducible over the residue field of $\nn$ then 
the category $\Ban^{\mathrm{adm. fl}}_{G, \zeta}(L)^{\BB}_{\nn}$ is anti-equivalent to the category 
of modules of finite length of the  
$\nn$-adic completion of $R_{\chi}^{\mathrm{ps}, \zeta\varepsilon}[1/p]$. In particular, it contains 
only one irreducible object.
\end{cor}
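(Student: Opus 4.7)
The plan is to reduce to the previous corollary and then invoke Morita equivalence. By that corollary, $\Ban^{\mathrm{adm. fl}}_{G, \zeta}(L)^{\BB}_{\nn}$ is already anti-equivalent to the category of finite length modules over the $\nn$-adic completion $\widehat{A}_{\nn}$ of $A := \End_{\gal}(\rho_1^{\mathrm{un}}\oplus\rho_2^{\mathrm{un}})[1/p]$. Writing $\widehat{R}_{\nn}$ for the $\nn$-adic completion of $R^{\mathrm{ps},\zeta\varepsilon}_{\chi}[1/p]$, my goal is to produce an isomorphism $\widehat{A}_{\nn} \cong M_2(\widehat{R}_{\nn})$. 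Since $\widehat{R}_{\nn}$ is a commutative complete local noetherian ring, Morita equivalence then identifies finite length $\widehat{A}_{\nn}$-modules with finite length $\widehat{R}_{\nn}$-modules, and the latter has exactly one simple object (the residue field), giving the ``in particular'' assertion.

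The key intermediate claim is that after base change to $\widehat{R}_{\nn}$ both $\rho_1^{\mathrm{un}}$ and $\rho_2^{\mathrm{un}}$ become isomorphic to a common free rank-$2$ representation $\rho_{\nn}^{\mathrm{un}}$. The irreducibility assumption on the pseudocharacter at $\nn$ allows one to apply the theorem of Nyssen (used already in Remark~\ref{tracesuper}) to the complete local ring $\widehat{R}_{\nn}$: the restriction of the universal pseudocharacter $T$ to $\widehat{R}_{\nn}$ lifts to a Galois representation $\rho_{\nn}^{\mathrm{un}}$ on a free rank-$2$ $\widehat{R}_{\nn}$-module, unique up to isomorphism. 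Both $\rho_i^{\mathrm{un}}\otimes_{R^{\mathrm{ps},\zeta\varepsilon}_{\chi}}\widehat{R}_{\nn}$ have trace equal to $T$ and absolutely irreducible residual representation, hence each must be isomorphic to $\rho_{\nn}^{\mathrm{un}}$.

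A standard Schur/Nakayama argument then gives $\End_{\gal}(\rho_{\nn}^{\mathrm{un}}) = \widehat{R}_{\nn}$: any endomorphism reduces modulo the maximal ideal to a scalar by Schur applied to the absolutely irreducible residual representation, hence by induction on the powers of the maximal ideal and completeness of $\widehat{R}_{\nn}$ it lies in $\widehat{R}_{\nn}$ itself. Consequently $(\rho_1^{\mathrm{un}}\oplus\rho_2^{\mathrm{un}})\otimes_{R^{\mathrm{ps},\zeta\varepsilon}_{\chi}}\widehat{R}_{\nn}\cong (\rho_{\nn}^{\mathrm{un}})^{\oplus 2}$ and so $\widehat{A}_{\nn}\cong \End_{\gal}((\rho_{\nn}^{\mathrm{un}})^{\oplus 2})\cong M_2(\widehat{R}_{\nn})$, as required.

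The only real obstacle is verifying that $\nn$-adic completion commutes with the formation of $\End_{\gal}$, i.e.\ that $A \otimes_{R^{\mathrm{ps},\zeta\varepsilon}_{\chi}[1/p]} \widehat{R}_{\nn} \cong \widehat{A}_{\nn}$ and that this completed endomorphism ring really is $\End_{\gal}$ of the completed module. This is standard because Proposition~\ref{genrc} asserts $A$ is finitely generated over $R^{\mathrm{ps},\zeta\varepsilon}_{\chi}[1/p]$, so completion is exact on $A$ and commutes with $\Hom$ between the finitely presented modules $\rho_i^{\mathrm{un}}$; once this is in place the Morita argument above closes the proof.
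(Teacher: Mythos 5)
Your overall strategy (identify the $\nn$-adic completion of $\End_{\gal}(\rho_1^{\mathrm{un}}\oplus\rho_2^{\mathrm{un}})[1/p]$ with a $2\times 2$ matrix algebra over $\widehat{R}_{\nn}$ and conclude by Morita equivalence) is the right one, and several of your steps are fine: lifting the pseudocharacter over $\widehat{R}_{\nn}$ by Nyssen/Rouquier, the Carayol-type uniqueness giving $\rho_i^{\mathrm{un}}\otimes\widehat{R}_{\nn}\cong\rho^{\mathrm{un}}_{\nn}$, and the Schur plus successive approximation argument for $\End_{\gal}(\rho^{\mathrm{un}}_{\nn})=\widehat{R}_{\nn}$. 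But the step you dismiss as "standard" --- that $\nn$-adic completion commutes with the formation of $\End_{\gal}$, i.e.\ that the natural map $\End_{\gal}(\rho_1^{\mathrm{un}}\oplus\rho_2^{\mathrm{un}})\otimes\widehat{R}_{\nn}\rightarrow \End_{\gal}\bigl((\rho_1^{\mathrm{un}}\oplus\rho_2^{\mathrm{un}})\otimes\widehat{R}_{\nn}\bigr)$ is an isomorphism --- is exactly where the real content lies, and your justification does not cover it. Finite presentation and flatness give the compatibility for $\Hom_{R}$, not for $\Hom_{R[\gal]}$: one still has to commute the passage to Galois invariants with the base change, and this is not formal here. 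The usual argument (invariants are cut out by finitely many topological generators, with closedness of stabilizers coming from continuity) breaks down because the action is continuous only for the $\mm$-adic topology of the deformation ring, while in $\widehat{R}_{\nn}$ the ideal $\mm$ generates the unit ideal ($p$ has been inverted), so continuity gives no control after the base change. Nor does a rank count save you: you get an injection of $\widehat{R}_{\nn}$-algebras, both free of rank $4$, containing the two idempotents, but such an injection need not be onto --- compare $\bigl(\begin{smallmatrix} R & \mm \\ \mm & R\end{smallmatrix}\bigr)\subset \mathrm{M}_2(R)$. What is actually needed is that the off-diagonal modules $\Hom_{\gal}(\rho_1^{\mathrm{un}},\rho_2^{\mathrm{un}})$ and $\Hom_{\gal}(\rho_2^{\mathrm{un}},\rho_1^{\mathrm{un}})$ remain "unimodular" after specialising at $\nn$, i.e.\ that their images exhaust $\Hom_{\gal}(\rho_{\nn}^{\mathrm{un}},\rho_{\nn}^{\mathrm{un}})$.

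This missing input is precisely what the appendix provides, and it is how the paper proves the corollary: Proposition \ref{homrank1} shows each off-diagonal $\Hom$ is free of rank $1$ over $R^{\mathrm{ps}}_{\chi}$, generated by an explicit $\Phi$ whose two compositions equal $c$, where by Corollary \ref{redloc1} the element $c$ generates the reducibility ideal $\rr$; since the pseudocharacter at $\nn$ is irreducible one has $c\notin\nn$, so $c$ is a unit in $\widehat{R}_{\nn}$, and Corollary \ref{redloc2} gives directly that the completion of $\End_{\gal}(\rho_1^{\mathrm{un}}\oplus\rho_2^{\mathrm{un}})[1/p]$ is $\mathrm{M}_2(\widehat{R}_{\nn})$; the paper's proof of the present corollary is a one-line citation of that result (together with Proposition \ref{genrc}). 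Once you invoke Corollary \ref{redloc2} your Nyssen/Carayol machinery becomes unnecessary; conversely, without it (or some equivalent control of the off-diagonal $\Hom$'s at $\nn$) your argument does not close.
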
 
\begin{proof} Since the pseudocharacter corresponding to $\nn$ is irreducible, $\nn$ cannot contain the reducibility ideal of  $R_{\chi}^{\mathrm{ps}, \zeta\varepsilon}[1/p]$, see \S\ref{easier}. It follows from Corollary \ref{redloc2} that for such $\nn$ the $\nn$-adic completion 
of $\End_{\gal}(\rho_1^{un}\oplus \rho_2^{un})[1/p]$ is isomorphic to the ring of two by two matrices 
over the $\nn$-adic completion of $R_{\chi}^{\mathrm{ps}, \zeta\varepsilon}[1/p]$.
\end{proof} 

Let $\nn$ be a maximal ideal of $R_{\chi}^{\mathrm{ps},\zeta \varepsilon}[1/p]$ with residue field $L$, let $T_{\nn}: \gal\rightarrow L$ be 
the pseudocharacter corresponding to $\nn$ and let 
$\Irr(\nn)$ denote the set (of equivalence classes of) irreducible objects in  
$\Ban^{\mathrm{adm. fl}}_{G,\zeta}(L)^{\BB}_{\nn}$. 

\begin{cor}\label{genred} If $T_{\nn}=\psi_1+\psi_2$ with $\psi_1, \psi_2: \gal\rightarrow L^{\times}$ continuous homomorphisms then 
$$\Irr(\nn)=\{ (\Indu{P}{G}{\psi_1\otimes \psi_2 \varepsilon^{-1}})_{cont}, (\Indu{P}{G}{\psi_2\otimes \psi_1 \varepsilon^{-1}})_{cont}\}.$$
\end{cor}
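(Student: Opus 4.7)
The plan is to verify that the two continuous principal series exhibited are in $\Irr(\nn)$ and then show every irreducible object of $\Ban^{\mathrm{adm.fl}}_{G,\zeta}(L)^{\BB}_{\nn}$ is one of them. For the existence direction, set $\Pi^{(j)} := (\Indu{P}{G}{\psi_j\otimes\psi_{3-j}\varepsilon^{-1}})_{cont}$ for $j\in\{1,2\}$. Specialising $\det \rho_i^{un}=\zeta\varepsilon$ at $\nn$ gives $\psi_1\psi_2=\zeta\varepsilon$, so the central character of $\Pi^{(j)}$ equals $\psi_j\cdot\psi_{3-j}\varepsilon^{-1}=\zeta$. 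The genericity assumption $\chi_1\chi_2^{-1}\neq 1,\omega^{\pm 1}$ implies $\psi_1\psi_2^{-1}\neq 1,\varepsilon^{\pm 1}$, so each $\Pi^{(j)}$ is admissible and absolutely irreducible; its reduction modulo $\varpi$ is $\pi_j\in\BB$, and by Corollary~\ref{Cgen} the action of the centre $R_\chi^{\mathrm{ps},\zeta\varepsilon}$ on $\md(\Pi^{(j)})$ has universal trace $T_\nn$, placing $\Pi^{(j)}$ in $\Irr(\nn)$.

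For the converse, let $\Pi\in\Irr(\nn)$. After enlarging $L$ via Corollary~\ref{BCBan} we may assume $\Pi$ is absolutely irreducible; this is harmless since $\Pi^{(1)},\Pi^{(2)}$ are already defined over $L$ and $\Pi$ descends from any enlargement. Theorem~\ref{mainGen} gives $\overline{\Pi}\subseteq\pi_1\oplus\pi_2$. Suppose first that $\pi_1\subseteq\overline{\Pi}$ but $\pi_2\not\subseteq\overline{\Pi}$. Corollary~\ref{cormainGen} then yields $\Pi\cong(\Indu{P}{G}{\psi})_{cont}$ for some continuous unitary $\psi=\psi^{(1)}\otimes\psi^{(2)}$ lifting $\chi_1\otimes\chi_2\omega^{-1}$ with $\psi^{(1)}\psi^{(2)}=\zeta$. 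The $\nn$-block condition, translated through Colmez's explicit description of $\cV$ on continuous principal series, yields the pseudo-character identity $\psi^{(1)}+\psi^{(2)}\varepsilon=\psi_1+\psi_2$, which has exactly the two solutions $(\psi^{(1)},\psi^{(2)})=(\psi_1,\psi_2\varepsilon^{-1})$ and $(\psi_2,\psi_1\varepsilon^{-1})$. The constraint $\psi^{(1)}\equiv\chi_1\pmod{\varpi}$ coming from $\pi_1\subseteq\overline{\Pi}$ singles out the first, so $\Pi\cong\Pi^{(1)}$. The symmetric case $\pi_2\subseteq\overline{\Pi}$, $\pi_1\not\subseteq\overline{\Pi}$ gives $\Pi\cong\Pi^{(2)}$.

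The main obstacle is the remaining case, where $\overline{\Pi}$ contains both $\pi_1$ and $\pi_2$, which I would rule out via Colmez's correspondence. By Corollary~\ref{I2}, $\dim_L\cV(\Pi)\le 2$ with equality iff $\Pi$ is non-ordinary. Were $\Pi$ non-ordinary, $\cV(\Pi)$ would be $2$-dimensional; Corollary~\ref{genkill} combined with the determinant constraint $\det\cV(\Pi)=\zeta\varepsilon=\psi_1\psi_2$ pins down the characteristic polynomial of each $g\in\gal$ on $\cV(\Pi)$ as $(X-\psi_1(g))(X-\psi_2(g))$, so $\tr\cV(\Pi)=\psi_1+\psi_2$ and $\cV(\Pi)$ is a reducible Galois representation — contradicting the theorem of Colmez and Kisin (underlying Theorem~\ref{I3}) that $\cV$ sends absolutely irreducible non-ordinary Banach representations to absolutely irreducible Galois representations. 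Hence $\Pi$ is ordinary. Since $\chi_1\chi_2^{-1}\neq 1,\omega^{\pm 1}$ excludes characters and Steinberg-type representations from $\BB$ by Proposition~\ref{blocksoverk}, any ordinary $\Pi$ in $\Ban^{\mathrm{adm}}_{G,\zeta}(L)^{\BB}$ is forced to be a continuous principal series, whose residual representation contains exactly one of $\pi_1,\pi_2$ — again contradicting the hypothesis. This eliminates the remaining case and completes the classification.
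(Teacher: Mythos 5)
Your existence direction and the two ``one factor only'' cases reproduce, with more explicit pseudocharacter bookkeeping, what the paper uses implicitly via Corollary~\ref{genkill}, and those steps look sound. The gap is in the third case, where you must rule out $\overline{\Pi}\cong\pi_1\oplus\pi_2$. Your argument there appeals to ``the theorem of Colmez and Kisin (underlying Theorem~\ref{I3}) that $\cV$ sends absolutely irreducible non-ordinary Banach representations to absolutely irreducible Galois representations.'' That assertion is not an external input; it is a consequence of the correspondence the paper is in the process of constructing. Concretely, Theorem~\ref{Lbij} (the precise form of Theorem~\ref{I3}) is proved via Proposition~\ref{welldefined}, and the proof of Proposition~\ref{welldefined} explicitly invokes Corollaries~\ref{genred}, \ref{NgIred} and Proposition~\ref{NgIIred} to see that a reducible pseudocharacter forces ordinarity. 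So your argument uses the statement you are trying to prove, making the third case circular.

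The paper closes the case by a purely module-theoretic count, which both avoids the circularity and is shorter: Corollary~\ref{genkill} places the two principal series in $\Irr(\nn)$ and $\psi_1\psi_2^{-1}\neq\Eins,\varepsilon^{\pm1}$ makes them irreducible and distinct; and Proposition~\ref{genrc} gives the explicit presentation $\wE_{\BB}\cong\begin{pmatrix}R^{\mathrm{ps}}_\chi & R^{\mathrm{ps}}_\chi\Phi_{12}\\ R^{\mathrm{ps}}_\chi\Phi_{21}& R^{\mathrm{ps}}_\chi\end{pmatrix}$ with $\Phi_{ij}\Phi_{ji}=c\in\mathfrak{r}\subseteq\nn$, so $\wE_\BB[1/p]/\nn$ has nilpotent radical with semisimple quotient $L\times L$ and hence exactly two (one-dimensional) simple modules; the anti-equivalence of Theorem~\ref{furtherDBan} then caps $|\Irr(\nn)|$ at $2$. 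If you want to keep your approach without invoking Theorem~\ref{I3}, the repair is to note that $\overline\Pi\cong\pi_1\oplus\pi_2$ would make $\md(\Pi)$ a $2$-dimensional absolutely irreducible $\wE_\BB[1/p]/\nn$-module (Lemma~\ref{mult=rank0}, Corollaries~\ref{absirre} and~\ref{theimageofcenter}), which Proposition~\ref{genrc} shows cannot exist since $\rho_x$ reducible is equivalent to $c\in\nn$ (Corollary~\ref{redloc1}).
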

\begin{proof} Corollary \ref{genkill} implies that, since 
$$\VV((\Indu{P}{G}{\psi_1\otimes \psi_2 \varepsilon^{-1}})_{cont})=\psi_2, \quad \VV((\Indu{P}{G}{\psi_2\otimes \psi_1 \varepsilon^{-1}})_{cont})=\psi_1,$$
both Banach space representations lie in $\Irr(\nn)$. Since $\chi_1\chi_2^{-1}\neq \omega^{\pm 1}, \Eins$ we also have $\psi_1\psi_2^{-1}\neq 
\varepsilon^{\pm 1}, \Eins$. Thus 
the Banach space representations are irreducible and distinct. It follows from the explicit description of 
 $\End_{\gal}(\rho_1^{un}\oplus \rho_2^{un})$ in Proposition \ref{genrc} that the ring
$\End_{\gal}(\rho_1^{un}\oplus \rho_2^{un})[1/p]/\nn$ has two non-isomorphic irreducible modules.
\end{proof}

\section{Non-generic case I}\label{nongenericcaseI}

In this section we deal with the case where in Colmez's terminology the \textit{atome automorphe} consists
of two isomorphic irreducible representations. We assume throughout this section that $p\ge 3$. Let $\pi:=\Indu{P}{G}{\chi}$, where 
$\chi: T\rightarrow k^{\times}$ is the character $\chi=\chi_1\otimes \chi_1\omega^{-1}$, for some 
smooth character $\chi_1: \Qp^{\times}\rightarrow k^{\times}$.
 We note that $\chi^s\alpha=\chi$. 
 We show that the formalism of \S \ref{firstsec} applies with $Q=S=\pi^{\vee}$. Hence, the projective envelope 
 $\wP$ of $S$ is the universal deformation of $S$, and its endomorphism ring $\wE$ is the universal deformation ring in the sense of 
Theorem \ref{repnonC}. The new feature in this case is that the ring $\wE$ is non-commutative.  Indeed, if $\wE$ were commutative, then by arguing as in the proof 
of Theorem \ref{mainsuper}, we would deduce that if $\pi$ is a subquotient of a reduction modulo $\varpi$ of an open bounded $G$-invariant lattice in 
an absolutely irreducible $L$-Banach space representation $\Pi$ with central character $\zeta$, then the reduction is isomorphic to $\pi$. However, 
the Banach space representations corresponding to $2$-dimensional crystalline Galois representations of small weight provide a counterexample to this, see 
\cite[5.3.3.1]{breuil2} with $a_p=2p$. By applying the functor $\cV$ we deduce that $\cV(\wP)$ is a deformation to $\wE$ of one dimensional Galois representation 
$\cV(S)=\VV(\pi)=\chi_1$.   Since  we allow the coefficients in our deformation theory be non-com\-mu\-ta\-ti\-ve, Lemma \ref{1ex} implies that the ring $\OO[[\gal(p)]]^{op}$ solves the universal 
deformation problem of $\chi_1$, where $\gal(p)$ is the maximal pro-$p$ quotient of $\gal$. Hence we obtain a map $\varphi_{\cV}: \OO[[\gal(p)]]^{op}\rightarrow \wE$ uniquely determined up to $\wE$-conjugation.
We show that $\varphi_{\cV}$ is surjective by looking at the tangent spaces.

 Let $R^{\mathrm{ps}, \zeta \varepsilon}_{2\chi_1}$ be the universal deformation ring parameterising $2$-di\-men\-sio\-nal pseudocharacters of $\gal$ with determinant $\zeta\varepsilon$ 
lifting $2 \VV(\pi)=2\chi_1$ and let $T:\gal\rightarrow  R^{\mathrm{ps}, \zeta \varepsilon}_{2\chi_1}$ be the universal pseudocharacter. Kisin has shown that every two dimensional Galois representation, 
with reduction modulo $\varpi$ equal to $\chi_1\oplus \chi_1$, lies in the image of $\VV$. This result combined with a  "non-commutative Zariski closure" argument, see Corollary \ref{quotientNGI}, shows that 
$\varphi_{\cV}$ induces a surjection $\wE\twoheadrightarrow (R^{\mathrm{ps}, \zeta \varepsilon}_{2\chi_1}[[\gal]]/J)^{op}$, where 
$J$ is a closed two-sided ideal generated by $g^2-T(g)g+\zeta\varepsilon(g)$ for all $g\in \gal$. We show that this map is an isomorphism, Corollary \ref{weiterso2},  by 
proving structure theorems about both  rings, see Lemma \ref{condNGI} and Proposition \ref{weiterso}.  We also show that $R^{\mathrm{ps}, \zeta \varepsilon}_{2\chi_1}[[\gal]]/J$
is a free module of rank $4$ over its center, which is isomorphic to $R^{\mathrm{ps}, \zeta \varepsilon}_{2\chi_1}$. We record the consequences for Banach space representations in \S \ref{CandBsp}.

The idea to try and  show that $\wE$ is isomorphic to a Cayley-Hamilton quotient was inspired by  \cite{boston}.  


\subsection{Deformation theory.}\label{defNgI}
\begin{prop}\label{hypngI} Let $S=Q=\pi^{\vee}$  then the hypotheses (H1)-(H5) of \S\ref{first} are satisfied. Moreover, 
$d:=\dim \Ext^1_{\dualcat(k)}(S, S)=2$. 
\end{prop}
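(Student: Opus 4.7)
The plan is to verify the five hypotheses in turn, noting that most of them are essentially formal and that the real content lies in the dimension count. First, observe that $\pi$ is absolutely irreducible (since $\chi \neq \chi^s$, as $\omega \neq \mathbf{1}$ for $p > 2$) and defined over $k$, so $Q = S = \pi^{\vee}$ is irreducible with $\End_{\dualcat(k)}(S) = k$. In particular $R = \rad Q = 0$, and this immediately delivers (H1) (trivially, as $S$ is irreducible), (H2) (as $S$ occurs in $Q$ exactly once), and (H5) (as $\Ext^2_{\dualcat(k)}(Q, 0) = 0$).

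For (H3), I would invoke the block-theoretic input. Case (iii) of Proposition \ref{blocksoverk} states that the block of $\Indu{P}{G}{\chi \otimes \chi\omega^{-1}}$ is a singleton $\{\pi\}$. Combining this with Proposition \ref{blockdecopm} and Corollary \ref{thesame}, the groups $\Ext^1_{\dualcat(k)}(S, S') \cong \Ext^1_{G,\zeta}(\pi, \pi')$ vanish for every irreducible $\pi' \not\cong \pi$, because such $\pi'$ lies in a different block from $\pi$.

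The substantive step is to establish (H4) together with the identity $d = 2$. Here I would apply Emerton's ordinary parts spectral sequence \eqref{ordseq}, writing $\pi = \Indu{P}{G}{\chi} \cong \Indu{\overline{P}}{G}{\chi^s}$ and taking $U = \chi^s$. By \eqref{ordinduced} we have $\Ord_P \pi \cong \chi^s$ and $\RR^1\Ord_P\pi \cong \chi \otimes \alpha^{-1}$; a direct character computation then yields $\chi\alpha^{-1} = \chi^s$, precisely because the two diagonal components of $\chi$ agree up to the twist by $\omega^{-1}$. Feeding this into the 5-term sequence, together with the torus computations from Proposition \ref{projTistfree} and Corollary \ref{extToruschar} (which give $\dim \Hom_T(\chi^s,\chi^s) = 1$, $\dim \Ext^1_T(\chi^s, \chi^s) = 2$, and $\dim \Ext^2_T(\chi^s, \chi^s) = 1$), produces
\begin{equation*}
0 \to k^2 \to \Ext^1_{G,\zeta}(\pi,\pi) \to k \xrightarrow{d_2} k.
\end{equation*}
Hence $\dim \Ext^1_{G,\zeta}(\pi,\pi) \in \{2,3\}$, with the answer being $2$ iff the connecting map $d_2$ is injective.

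The hard part will be showing that $d_2$ is an isomorphism. One route is to interpret $d_2(\id_{\chi^s})$ as a Yoneda cup product and verify its non-triviality by exhibiting the unique non-split self-extension of $\chi^s$ on $T$ that obstructs lifting the identity to a derived map into the entire $\Ord_P$-complex of $\pi$. A more efficient route, and the one I would adopt, is to appeal directly to the principal-series $\Ext^1$ calculations of \cite{ext2}, which produce $\dim \Ext^1_{G,\zeta}(\pi,\pi) = 2$ for precisely this kind of non-generic principal series. Either way, one obtains $d_2 \neq 0$, which simultaneously gives finite-dimensionality (H4) and the claimed value $d = 2$; passing to $\dualcat(k)$-Ext via Corollary \ref{thesame} then finishes the proof.
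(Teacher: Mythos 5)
Your proof is correct and, at bottom, rests on the same input as the paper's: the paper simply cites \cite[Thm.\ 11.5]{ext2} for the facts that $\Ext^1_{G,\zeta}(\tau,\pi)=0$ for irreducible $\tau\not\cong\pi$ (giving (H3)) and that $\dim\Ext^1_{G,\zeta}(\pi,\pi)=2$ (giving (H4) and $d=2$), and notes the remaining hypotheses are trivial since $Q=S$ is irreducible and $R=0$. Your treatment of (H1), (H2), (H5) is identical, and your block-theoretic route to (H3) via Propositions \ref{blocksoverk} and \ref{blockdecopm} and Corollary \ref{thesame} is valid, though it is really the same input repackaged, since Proposition \ref{blocksoverk} is itself established using the $\Ext^1$ computations of \cite{ext2}.

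The spectral-sequence detour for (H4) is a genuine addition: it does independently establish finiteness (indeed $\dim\Ext^1_{G,\zeta}(\pi,\pi)\le 3$), and the character identity $\chi\alpha^{-1}=\chi^s$ you verify is correct and is exactly the feature that makes the source and target of the transgression coincide. But, as you honestly observe, the five-term sequence alone only gives $\dim\in\{2,3\}$, and settling the injectivity of $d_2$ still forces you back to \cite{ext2}. So the spectral sequence here functions as a useful consistency check rather than an independent route to $d=2$; the paper's shorter proof is preferable given that the citation is unavoidable either way. One small remark: you write that showing $d_2\neq 0$ "simultaneously gives finite-dimensionality (H4)," but (H4) already follows from the bound $\dim\le 3$ regardless of $d_2$; only the sharp value $d=2$ depends on it.
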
 
\begin{proof} Let $\tau$ be irreducible in  $\Mod^{\mathrm{sm}}_{G, \zeta}(k)$. It is well known, see for example \cite[Thm 11.5]{ext2}, that
if $\Ext^1_{G, \zeta}(\tau, \pi)\neq 0$ then $\pi\cong \tau$ and $\dim \Ext^1_{G, \zeta}(\pi, \pi)=2$. Dually this implies (H3) and 
(H4) and all the other hypotheses hold trivially.
\end{proof} 
Since (H0) holds for $G$ by Corollary \ref{projaretfree}, we may apply the results of \S\ref{def} and \S\ref{banach}. Let 
$\wP\twoheadrightarrow S$ be a projective envelope of $S$ in $\dualcat(\OO)$, let $\wE=\End_{\dualcat(\OO)}(\wP)$, 
$\wm$ the maximal ideal of $\wE$ and 
let $\mm$ be the maximal ideal of $\wE\otimes_{\OO} k$. We note that the last part of Proposition \ref{hypngI} and 
Lemma \ref{tangentspace} gives $\dim \mm/\mm^2=2$.

Let $\wP_{\chi^{\vee}}$ be a projective envelope of $\chi^{\vee}$ in $\dualcat_{T, \zeta}(\OO)$ and let 
$\wM:=(\Indu{P}{G}{\wP_{\chi^{\vee}}^{\vee}})^{\vee}$. Corollary \ref{endoPOrd} gives us a 
surjection 
\begin{equation}\label{qNGI}
\wE\twoheadrightarrow \End_{\dualcat_{G, \zeta}(\OO)}(\wM)\cong \End_{\dualcat_{T, \zeta}(\OO)}(\wP_{\chi^{\vee}})\cong \OO[[x,y]].
\end{equation}
Let $\mathfrak a$ be the kernel of \eqref{qNGI}. Since 
$\dim \mm/\mm^2=2$ we deduce from \eqref{qNGI} that $\wE/\mathfrak a \cong \wE^{ab}$.

\begin{lem} There exists $t\in \wm^2$ such that $\mathfrak a = \wE t$ and  $\phi t\neq 0$ for all non-zero $\phi\in \wE$.
\end{lem}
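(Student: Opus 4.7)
The plan is to apply Corollary \ref{projresprincser2} in the degenerate situation $\chi^s\alpha = \chi$ (which holds for $\chi = \chi_1 \otimes \chi_1 \omega^{-1}$), producing an exact sequence $0 \to \wP \to \wP \to \wM \to 0$ whose first map will furnish $t$, and then to read off the two assertions by applying $\Hom_{\dualcat(\OO)}(\wP, -)$.

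First, I would verify the hypotheses of Corollary \ref{projresprincser2}: one has $\chi \neq \chi^s$ and $\chi \neq \chi^s\alpha^2$ because $\omega \neq \Eins$ (as $p \geq 5$), while a direct computation gives $\chi^s\alpha = \chi$, so $S' = (\Indu{P}{G}{\chi^s\alpha})^\vee$ coincides with $S = \pi^\vee$. Both projective envelopes $\wP_S$ and $\wP_{S'}$ appearing in Corollary \ref{projresprincser2} are therefore identified with our $\wP$, and the corollary produces an exact sequence
\begin{equation*}
0 \to \wP \xrightarrow{\iota} \wP \to \wM \to 0
\end{equation*}
in $\dualcat_{G, \zeta}(\OO)$, with $\iota$ injective. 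Set $t := \iota \in \wE$. The main bookkeeping task here is to confirm that the proof of Proposition \ref{projresprincser} goes through when $S' = S$; this hinges on the one-dimensionality of $\Hom_G(\pi, \kappa_1)$, computed via Lemma \ref{UindJ} and the $\Ord_P$-spectral sequence \eqref{ordseq}, together with the vanishing of $\Ext^2_{G, \zeta}(\pi, \Indu{P}{G}{J_\chi})$, and both persist in the degenerate setup.

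Next, I would show $t \in \wm^2$ by proving $\mathfrak{a} \subseteq \wm^2$. The surjection $\wE \twoheadrightarrow \wE^{ab} \cong \OO[[x,y]]$ from \eqref{qNGI} induces a surjection $\wm/\wm^2 \twoheadrightarrow \wm^{ab}/(\wm^{ab})^2$ of cotangent spaces. The target has $k$-dimension $3$, spanned by the images of $\varpi, x, y$. For the source, $\dim_k \mm/\mm^2 = 2$ by Proposition \ref{hypngI} and Lemma \ref{tangentspace}, and $\varpi \notin \wm^2$ since otherwise its image in $(\wm^{ab})^2$ would contradict the structure of $\OO[[x,y]]$. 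Hence $\dim_k \wm/\wm^2 = 3$, the cotangent surjection is an isomorphism of $3$-dimensional $k$-vector spaces, and its kernel $(\mathfrak{a} + \wm^2)/\wm^2$ vanishes, giving $\mathfrak{a} \subseteq \wm^2$.

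Finally, applying $\Hom_{\dualcat(\OO)}(\wP, -)$ to the exact sequence, projectivity of $\wP$ yields
\begin{equation*}
0 \to \wE \xrightarrow{t_*} \wE \to \Hom_{\dualcat(\OO)}(\wP, \wM) \to 0,
\end{equation*}
where $t_*(\phi) = t \circ \phi$. By Corollaries \ref{endoPOrd} and \ref{headM}, $\Hom_{\dualcat(\OO)}(\wP, \wM) \cong \End_{\dualcat_{G,\zeta}(\OO)}(\wM) \cong \wE^{ab}$, and the induced surjection $\wE \twoheadrightarrow \wE^{ab}$ agrees with \eqref{qNGI}. Thus $\mathfrak{a}$ equals the image of $t_*$, which is the principal ideal $\wE t$ of the statement. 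Injectivity of $t_*$ is automatic from injectivity of $\iota$: $t \circ \phi = 0$ forces $\phi(\wP) \subseteq \ker \iota = 0$, hence $\phi = 0$, which is precisely the assertion $\phi t \neq 0$ for all non-zero $\phi \in \wE$. The principal obstacle will be the degenerate-case verification of Proposition \ref{projresprincser}, which I expect to be routine but requires care in tracking the collapse of the two principal series in the block into a single representation.
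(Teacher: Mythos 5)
Your proof is correct and is essentially the paper's own argument: the element $t$ is the first map in the exact sequence $0\to\wP\to\wP\to\wM\to 0$ furnished by Corollary \ref{projresprincser2} (whose hypotheses indeed allow $\chi=\chi^s\alpha$, so your extra verification is harmless but unnecessary), and applying $\Hom_{\dualcat(\OO)}(\wP,-)$ and identifying the cokernel with $\End_{\dualcat(\OO)}(\wM)$ gives $\mathfrak a=\wE t$ together with the non-zero-divisor property. The only (minor) deviation is how you place $t$ in $\wm^2$: you show $\mathfrak a\subseteq\wm^2$ by a cotangent-space dimension count, whereas the paper notes that $t$ dies in $\wE^{ab}$ and hence in the commutative ring $\wE/\wm^2$; both rest on the same inputs and are equally valid.
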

\begin{proof} Since $\chi=\chi^s\alpha$ Corollary \ref{projresprincser2} gives us an exact sequence
\begin{equation}\label{eNGI}
0\rightarrow \wP\overset{t}{\rightarrow} \wP\rightarrow \wM\rightarrow 0.
\end{equation}
Applying the exact functor $\Hom_{\dualcat(\OO)}(\wP, \ast)$ to \eqref{eNGI} we get an exact sequence 
\begin{equation}
0\rightarrow \wE\overset{t_*}{\rightarrow} \wE\rightarrow \Hom_{\dualcat(\OO)}(\wP,\wM)\rightarrow 0.
\end{equation}
The last term is isomorphic to $\End_{\dualcat(\OO)}(\wM)$ by Proposition \ref{projectiveandord} (iii), Corollary \ref{endoPOrd}. Hence, 
$\mathfrak a=t_{*}(\wE)= \wE t$ and since $t_*$ is injective we get that $\phi t=0$ implies $\phi=0$. As the image of $t$ in $\wE^{ab}$ is zero, 
the image of $t$ in the commutative ring $\wE/\wm^2$ will also be zero. Hence, $t\in \wm^2$.
\end{proof}

\begin{lem}\label{condNGI} Let $\varphi: \wE\twoheadrightarrow R$ be a quotient such that $R^{ab}\cong \OO[[x,y]]$ and there
exists an element $t'\in R$ such that $\Ker(R\twoheadrightarrow R^{ab}) =R t'$ and  $a t'=0$ implies that $a=0$ for 
all $a\in R$. Then $\varphi$ is an isomorphism.
\end{lem}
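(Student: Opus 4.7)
The plan is to prove $\varphi$ is injective by exhibiting $\ker \varphi$ as sitting inside $\bigcap_n \wm^n$, which vanishes by $\wm$-adic completeness of $\wE$ (Corollary \ref{Einvlim}). Set $\mathfrak{a} := \ker(\wE \twoheadrightarrow \wE^{ab}) = \wE t$ and $\mathfrak{a}' := \ker(R \twoheadrightarrow R^{ab}) = R t'$. By the preceding lemma, right multiplication by $t$ gives a short exact sequence
\begin{equation*}
0 \to \wE \xrightarrow{\cdot t} \wE \to \wE^{ab} \to 0,
\end{equation*}
and analogously for $R$ by hypothesis. Then $\varphi$ induces a morphism between these short exact sequences.

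First I would show that the map $\wE^{ab} \to R^{ab}$ induced by $\varphi$ is an isomorphism. This map is a continuous surjection $\OO[[x,y]] \twoheadrightarrow \OO[[x,y]]$ of noetherian rings, hence an isomorphism by the classical fact that a surjective endomorphism of a noetherian commutative ring is injective.

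Next I would analyze $\varphi(t)$. Since $t$ maps to $0$ in $\wE^{ab}$, it maps to $0$ in $R^{ab}$, so $\varphi(t) = b t'$ for some $b \in R$. Surjectivity of $\varphi$ and the commutative diagram force $\varphi(\mathfrak{a}) = \mathfrak{a}'$, so $R b t' = R t'$; by the injectivity of right multiplication by $t'$ on $R$ this gives $R b = R$. Since $R$ is a local ring (quotient of the local ring $\wE$, cf.\ Corollary \ref{unitsE}), a left-invertible element is a unit, so $b \in R^\times$.

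Finally the descent: suppose $a \in \ker \varphi$. The surjectivity on abelianizations (in fact bijectivity) forces $a \in \mathfrak{a}$, so $a = a_1 t$; then $\varphi(a_1) b t' = 0$, and since $b$ is a unit and right multiplication by $t'$ is injective on $R$, we get $\varphi(a_1) = 0$. Since $\mathfrak{a}$ is a two-sided ideal one checks inductively $\mathfrak{a}^n = \wE t^n$, so iterating gives $a \in \wE t^n$ for every $n \geq 1$, that is $a \in \bigcap_n \mathfrak{a}^n \subseteq \bigcap_n \wm^n$, which is $0$ by $\wm$-adic completeness of $\wE$. The main obstacle is the unit-ness of $b$; once that is in place, the inductive descent and the completeness of $\wE$ close the argument cleanly.
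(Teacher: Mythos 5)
Your proof is correct and follows essentially the same route as the paper: you first show $\varphi^{ab}\colon \wE^{ab}\rightarrow R^{ab}$ is an isomorphism and then that $\varphi(t)$ and $t'$ generate the same principal ideal and hence differ by a unit, exactly as in the paper's argument. The only variation is the finish: where the paper compares the graded pieces $\mathfrak a^n/\mathfrak a^{n+1}$ of $\wE$ with those of $R$ and passes to the limit, you run an explicit descent inside $\Ker \varphi$ to obtain $\Ker\varphi\subseteq \bigcap_{n}\wE t^{n}\subseteq \bigcap_{n}\wm^{n}=0$, which is the same containment the paper's limit argument rests on, so the two proofs coincide in substance.
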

\begin{proof} The composition $\wE\twoheadrightarrow R\twoheadrightarrow R^{ab}$ factors through $\wE^{ab}$ and since both rings 
are formally smooth of the same dimension we deduce that $\varphi^{ab}: \wE^{ab}\rightarrow R^{ab}$ is an isomorphism.
Thus $\Ker(R\rightarrow R^{ab})= R\varphi(t)$. Hence, we may write $\varphi(t)= a  t'$ and $t'=b \varphi(t)$ for some 
$a, b\in R$. Hence, $(1-ba) t'=0$ and so $ba=1$ and this implies that $b$ and $a$ are units in $R$. (Note that any element 
of $1+\mm_R$ is a unit and hence if the image of $a$ in $R^{ab}$ is a unit then $a$ is a unit in $R$.) 
So we may assume that $t'=\varphi(t)$. 

Since $\mathfrak a$ is  a two-sided ideal and $\mathfrak a=\wE t$, for every $b\in \wE$ there exists $a\in \wE$ such that $tb=at$. This implies 
that for $n\ge 1$ we have $\mathfrak a^n= \wE t^n$. Moreover, since the right multiplication by $t$ is injective, 
multiplication by $t^n$ induces an isomorphism $\wE/\mathfrak a\cong \mathfrak a^n/\mathfrak a^{n+1}$. Since the multiplication 
by $\varphi(t)$ is injective in $R$, multiplication by $\varphi(t)^n$ induces an isomorphism 
$R/\varphi(\mathfrak a)\cong \varphi(\mathfrak a)^n/\varphi(\mathfrak a)^{n+1}$. Hence, $\varphi$ induces an isomorphism
$\mathfrak a^n/\mathfrak a^{n+1}\cong \varphi(\mathfrak a)^n/\varphi(\mathfrak a)^{n+1}$, for all $n\ge 1$. Thus an isomorphism 
$\wE/\mathfrak a^n\cong R/\varphi(\mathfrak a)^n$ for all $n$. Passing to the limit  we get $\wE\cong R$.
\end{proof}    

Now $\cV(S)$ is a $1$-di\-men\-sio\-nal $k$-representation of $\gal$, the absolute Galois group of $\Qp$.
Let $\mathfrak A$ be the category of local finite artinian augmented (possibly non-com\-mu\-ta\-ti\-ve) $\OO$-algebras
defined in Definition \ref{deficatA} and let $\Def_{\cV(S)}:\mathfrak A\rightarrow \Sets$ be the functor, 
such that $\Def_{\cV(S)}(A)$ is the set of isomorphism classes of deformations of $\cV(S)$ to $A$, see 
Definition \ref{defiDef}.  Lemma \ref{1ex} says that the functor 
$\Def_{\cV(S)}:\mathfrak A\rightarrow \Sets$ is 
pro-represented (in the sense of Theorem \ref{repnonC}) by the ring $\OO[[\GG]]^{op}\cong \End_{\OO[[\GG]]}(\OO[[\GG]])$, 
where $\GG:=\gal(p)$ is the maximal pro-$p$ quotient of $\gal$ and $\OO[[\GG]]$ is the universal deformation.

It follows from Corollary  \ref{ftof1} that  $\cV$ induces a natural transformation 
$\cV: \Def_S\rightarrow \Def_{\cV(S)}$. Since $\Def_S$ is pro-represented by $\wE$ by  Theorem \ref{repnonC} we deduce from 
Yoneda's Lemma in this non-commutative context, see Lemma \ref{yoneda}, that the natural transformation 
of functors is induced by 
$$\varphi_{\cV}: \OO[[\GG]]^{op}\rightarrow \wE,$$
where the morphism $\varphi_{\cV}$ is 
uniquely determined up to conjugation by $\wE^{\times}$. Since by  a result of Colmez, \cite[VII.4.15]{colmez}, 
we know that $\cV$ induces an injection 
$$\Ext^1_{\dualcat_{G, \zeta}(k)}(S, S)\hookrightarrow \Ext^1_{\gal}(\cV(S), \cV(S)),$$
we deduce via Lemma \ref{tangentspace} and the proof of Proposition \ref{ftof4} that $\varphi_{\cV}$ is surjective. 

\begin{remar} A note on actions: our groups always act on the left, $(g, v)\mapsto gv$, hence a representation $(\rho, V)$ of $\GG$ 
gives rise to a left $\OO[[\GG]]$-module, which we may write down as a homomorphism 
$\rho: \OO[[\GG]]\rightarrow \End_{\OO}(V)$. In our context, it is also natural to consider $\rho$ as a right  $\OO[[\GG]]^{op}$-module, 
via the isomorphisms $\Hom_{\OO[[\GG]]}(\OO[[\GG]], \rho)\cong \rho$, $\phi\mapsto \phi(1)$, and $\OO[[\GG]]^{op}\cong 
\End_{\OO[[\GG]]}(\OO[[\GG]])$. Having made this point we will not distinguish between left $\OO[[\GG]]$-modules and 
right $\OO[[\GG]]^{op}$-modules.
\end{remar} 

 \begin{prop} Let $M$ be a finite extension of $L$ and let $\rho: \OO[[\GG]]\rightarrow \End_M(W)$ be
a continuous absolutely irreducible representation of $\GG$ with $\dim_M W\le 2$. Then $\Ker \varphi_{\cV} \subset \Ker \rho$.
\end{prop}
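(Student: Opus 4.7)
The plan is to realise the given representation $\rho$ (after a harmless twist) as a Galois subquotient of $\cV(\Xi^d)\otimes_\OO L$ for some open bounded $G$-invariant lattice $\Xi$ inside a unitary admissible $L$-Banach space representation of $G$ with $\Xi^d\in \dualcat(\OO)^{\BB}$.  Once this is done, functoriality of $\cV$ combined with Corollary \ref{ftof3} forces the $\gal$-action on $\cV(\Xi^d)$ (and hence on $\rho$) to factor through $\varphi_\cV$, which is exactly the desired inclusion $\Ker \varphi_\cV\subset \Ker\rho$.  Extending coefficients from $\OO$ to $\OO_M$ does not affect the question by Corollary \ref{bcisok1}, so I may assume $M = L$.

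Suppose first $\dim_L W = 1$.  Then $\rho$ is a continuous character $\psi:\GG\to L^{\times}$, and so factors through the abelianisation $\OO[[\GG]]^{ab}\cong \OO[[\GG^{ab}]]$.  Since $p$ is odd, $\GG^{ab}\cong \Zp^2$ and hence $\OO[[\GG]]^{ab}\cong \OO[[x,y]]$.  On the other hand Corollary \ref{endoPOrd}, together with the observation $\wE/\mathfrak a = \wE^{ab}$ (itself a consequence of $\dim\mm/\mm^2 = 2$ already noted in the discussion above), identifies $\wE^{ab}$ with $\OO[[x,y]]$.  The induced surjection $\varphi_\cV^{ab}:\OO[[\GG]]^{ab}\twoheadrightarrow \wE^{ab}$ is then a surjection of Noetherian local domains of equal Krull dimension and hence an isomorphism.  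Consequently any $a\in \Ker \varphi_\cV$ has zero image in $\OO[[\GG]]^{ab}$, so $\psi(a) = 0$.

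Suppose now $\dim_L W = 2$ with $\rho$ absolutely irreducible.  Because $\GG$ is pro-$p$, $\bar\rho$ has pro-$p$ image in $\GL_2(k)$ and therefore fixes a line, so $\bar\rho$ is a successive self-extension of the trivial character.  Choose a lift $\tilde\chi_1:\gal\to \OO^{\times}$ of $\chi_1$ and an unramified character $\eta$ so that $V := W\otimes \tilde\chi_1\eta$ is absolutely irreducible of dimension $2$ over $L$ with residual semisimplification $\chi_1\oplus\chi_1$ and determinant $\zeta\varepsilon$.  Colmez's construction $V\mapsto\Pi(V)$, combined with Kisin's density-of-crystalline-points argument already invoked in Proposition \ref{surjformsmooth} and the proof of Proposition \ref{ftof4} (ultimately \cite[Thm.~2.3.8]{kisin}), produces a unitary admissible $L$-Banach space representation $\Pi$ of $G$ with central character $\zeta$ and an open bounded $G$-invariant lattice $\Xi\subset\Pi$ whose Schikhof dual lies in $\dualcat(\OO)^{\BB}$ and satisfies $\cV(\Xi^d)\otimes_\OO L\cong V$.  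Corollary \ref{ftof3} then yields a surjection
\[
 \md \wtimes_{\wE}\cV(\wP)\twoheadrightarrow \cV(\Xi^d),\qquad \md:=\Hom_{\dualcat(\OO)}(\wP,\Xi^d),
\]
on which the $\gal$-action comes entirely from $\cV(\wP)$, and is thus given by $\varphi_\cV$ followed by the $\wE$-module structure.  Hence $\Ker\varphi_\cV$ annihilates $\cV(\Xi^d)$, and therefore annihilates $V$ and $W$ as well.

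The main obstacle is the $2$-dimensional realisation step.  One has to verify that $\Pi(V)$ really admits a lattice whose Schikhof dual lies in the correct block $\BB$, i.e. that $\pi = \Indu{P}{G}{\chi_1\otimes\chi_1\omega^{-1}}$ actually appears as a subquotient of $\overline{\Pi(V)}$, and that the surjectivity hypothesis of Proposition \ref{modulequotientnew}(iii) is in force so that Corollary \ref{ftof3} may legitimately be invoked.  This residually scalar, non-generic situation is noticeably more subtle than the supersingular and generic reducible cases of \S\ref{supersingularreps} and \S\ref{genericcase}; it rests on combining the explicit description of the block (Proposition \ref{blocksoverk}(iii)) with the short resolution $0\to \wP\to \wP\to \wM\to 0$ of Corollary \ref{projresprincser2} and a careful identification of $\Ord_P \Pi(V)$ provided by the $p$-adic Langlands correspondence.
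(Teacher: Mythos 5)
Your proof follows the same two-case strategy as the paper and the essential mechanism is the same: in dimension~$1$ compare abelianizations, and in dimension~$2$ realize $\rho$ via Kisin as $L\otimes_\OO\cV(\Xi^d)$ and push the $\GG$-action through $\cV(\wP)\cong\wE$. The $1$-dimensional case is fine, and is a mild fleshing-out of the paper's one-line argument that the surjection of \eqref{qNGI} forces $\varphi_\cV^{ab}:\OO[[\GG]]^{ab}\to\wE^{ab}$ to be an isomorphism.

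In the $2$-dimensional case there is a genuine gap, which you correctly flag but then steer toward the wrong repair. The paper gets the \emph{whole} realization package in one stroke from Kisin's Theorem~2.3.8: not only an open bounded lattice $\Xi$ with $L\otimes_\OO\cV(\Xi^d)\cong\rho$, but also the mod-$p$ compatibility $\overline{\Pi}\cong\pi^{\oplus 2}$. That second output is exactly the input needed for the rest of the argument: it guarantees that $\pi$ appears as a subquotient of $\Theta\otimes_\OO k$ (so Lemma~\ref{pisub} and Proposition~\ref{modulequotientnew} apply), it allows one to choose $\Xi$ so that the map $\Hom_{\dualcat(\OO)}(\wP,\Xi^d)\wtimes_\wE\wP\twoheadrightarrow\Xi^d$ of Proposition~\ref{modulequotientnew}(iii) is a surjection (the very hypothesis under which Corollary~\ref{ftof3} is stated), and, via Lemma~\ref{mult=rank0}, it gives $\Hom_{\dualcat(\OO)}(\wP,\Xi^d)$ free of $\OO$-rank~$2$, so that the surjection onto $\cV(\Xi^d)$ is an isomorphism of $\GG$-modules and $\cV(\Xi^d)$ is a lattice in $\rho$ on which $\GG$ acts through $\varphi_\cV$. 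You instead propose to derive $\overline{\Pi(V)}\supset\pi$ ``by hand'' from Proposition~\ref{blocksoverk}(iii), Corollary~\ref{projresprincser2}, and an analysis of $\Ord_P\Pi(V)$; none of that is needed (and it is far from clear it can be made to run), because the block statement and the multiplicity~$2$ are already contained in the compatibility part of Kisin's theorem. Without $\overline{\Pi}\cong\pi^{\oplus 2}$ your invocation of Corollary~\ref{ftof3} and the passage from $\md\wtimes_\wE\cV(\wP)$ to $\cV(\Xi^d)$ is not justified.

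Two minor points: the twisting character $\eta$ need not (and in general cannot) be taken unramified; it is a character with trivial reduction, and one produces it by taking a square root in $1+\pL$ (possible because $p\neq 2$). Also, when you conclude ``therefore annihilates $V$ and $W$ as well'', you should say a word about why the twist by $\tilde\chi_1\eta$ is harmless -- the point is that $\varphi_\cV$ is defined through the identification of $\Def_{\cV(S)}$ with $\Def_\Eins$ via $\tilde\chi_1$, so the $\OO[[\GG]]$-module structure transported to $\cV(\Xi^d)$ through $\varphi_\cV$ is exactly the one corresponding to the untwisted $\rho$ on $W$.
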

\begin{proof} If $\dim W=1$ then $\rho$ factors through $\OO[[\GG]]^{ab}$. Since $p>2$, 
$\GG$ is a free pro-$p$ group on $2$-generators, \cite[7.5.8]{nsw}. 
It follows from \eqref{qNGI} that 
$\varphi$ induces an isomorphism $\OO[[\GG]]^{ab}\cong \wE^{ab}$ and we are done. 

Suppose that $\dim W=2$ by base change we may assume that $M=L$.
 It follows from \cite[2.3.8]{kisin} that there exists an open bounded $G$-invariant lattice 
$\Xi$ in a unitary admissible $L$-Banach space representation $\Pi$ of $G$ such that $L\otimes_{\OO}\cV(\Xi^d)\cong \rho$
and $\overline{\Pi}\cong \pi^{\oplus 2}$.
Since all open bounded lattices are commensurable,  $L\otimes_{\OO}\cV(\Xi^d)$ does not depend on the choice of $\Xi$. Thus we may choose $\Xi$
so that we have a surjection
$$ \Hom_{\dualcat(\OO)}(\wP, \Xi^d)\wtimes_{\wE} \wP\twoheadrightarrow \Xi^d,$$
see Proposition \ref{modulequotientnew}. Corollary \ref{ftof2} says that $\cV(\wP)$ is a free $\wE$-module of rank $1$ and Corollary \ref{ftof3} 
gives us a surjection
$$ \Hom_{\dualcat(\OO)}(\wP, \Xi^d)\wtimes_{\wE}\cV(\wP)\twoheadrightarrow \cV(\Xi^d).$$
Choose a basis element of $\cV(\wP)$ over $\wE$, then this gives us an isomorphism of $\wE$-modules, $\wP\cong \wE$ and hence
a map $\OO[[\GG]]\twoheadrightarrow \wP$ compatible with $\varphi_{\cV}$. (We note that all such choices 
differ by a unit of $\wE$, and in the non-commutative setting $\varphi_{\cV}$ is uniquely determined up to conjugation by 
$\wE^{\times}$.)  And thus we have a surjection of $\GG$-representations 
\begin{displaymath}
\begin{split}
 \Hom_{\dualcat(\OO)}(\wP, \Xi^d)\cong &\Hom_{\dualcat(\OO)}(\wP, \Xi^d)\wtimes_{\OO[[\GG]]^{op}}\OO[[\GG]]\\ 
&\twoheadrightarrow \Hom_{\dualcat(\OO)}(\wP, \Xi^d)\wtimes_{\wE}\cV(\wP)\twoheadrightarrow \cV(\Xi^d),
\end{split}
\end{displaymath}
where the first isomorphism is given by $\phi\mapsto \phi\wtimes 1$. The $\GG$-action on $\Hom_{\dualcat(\OO)}(\wP, \Xi^d)$
is given by 
\begin{equation}\label{good_action}
 g\centerdot \phi= g \centerdot (\phi\wtimes 1)= \phi \wtimes g= \phi \wtimes (1\centerdot g)=
(\phi \circ \varphi_{\cV}(g) )\wtimes 1=
\phi \circ \varphi_{\cV}(g).
\end{equation}
 Since $\overline{\Pi}\cong \pi^{\oplus 2}$ Lemma \ref{mult=rank0} says that $\Hom_{\dualcat(\OO)}(\wP, \Xi^d)$ is a
free $\OO$-module of rank $2$, and hence $\Hom_{\dualcat(\OO)}(\wP, \Xi^d)\cong \cV(\Xi^d)$. Since $\cV(\Xi^d)$ is a lattice 
in $\rho$ we deduce that $\Ker \varphi_{\cV} \subset \Ker \rho$.
\end{proof}

\begin{cor}\label{quotientNGI} Let $\varphi: \OO[[\GG]]\twoheadrightarrow R$ be a quotient such that 
$\bigcap_{\rho} \Ker \rho =0$, where the intersection is taken over all 
continuous representations $\rho: R\rightarrow \End_M(W)$, where $M$ is a finite extension of $L$, 
$\dim_M W\le 2$ and $(\rho, W)$ is absolutely irreducible. Then $\Ker \varphi_{\cV}\subseteq \Ker \varphi$.
\end{cor}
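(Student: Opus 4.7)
The plan is to deduce Corollary \ref{quotientNGI} directly from the preceding Proposition by composing representations with $\varphi$. The Proposition tells us that whenever we have a continuous absolutely irreducible representation $\rho: \OO[[\GG]]\to \End_M(W)$ of dimension at most $2$ over a finite extension $M/L$, the kernel $\Ker \varphi_{\cV}$ is contained in $\Ker \rho$. So to access this hypothesis, I would take an arbitrary continuous absolutely irreducible representation $\sigma: R \to \End_M(W)$ with $\dim_M W \le 2$ and form the composition $\sigma \circ \varphi: \OO[[\GG]] \to \End_M(W)$; this is again continuous (since $\varphi$ is a continuous surjection of pseudo-compact rings) and absolutely irreducible with dimension at most $2$.

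Applying the Proposition to $\rho := \sigma \circ \varphi$ gives $\Ker \varphi_{\cV} \subseteq \Ker(\sigma \circ \varphi)$, which is equivalent to $\varphi(\Ker \varphi_{\cV}) \subseteq \Ker \sigma$. Since this holds for every such $\sigma$, I can take the intersection over all absolutely irreducible continuous representations $\sigma$ of $R$ of dimension at most $2$ to obtain
\[
\varphi(\Ker \varphi_{\cV}) \;\subseteq\; \bigcap_{\sigma} \Ker \sigma \;=\; 0,
\]
where the vanishing is precisely the hypothesis on $R$. This yields $\Ker \varphi_{\cV} \subseteq \Ker \varphi$, as required.

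There is essentially no obstacle here: the statement is a formal consequence of the Proposition, and the only point to verify is that pre-composition with the surjection $\varphi$ preserves continuity, absolute irreducibility, and the dimension bound, all of which are immediate. No further input (e.g.\ deformation theory, Colmez's injection, or the structure of $\wE$) is needed beyond what is already encoded in the Proposition.
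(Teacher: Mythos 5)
Your argument is correct and is exactly the (implicit) deduction the paper intends: the Corollary is stated without proof precisely because it follows formally from the Proposition by pre-composing each absolutely irreducible $\sigma$ of $R$ with the surjection $\varphi$ and intersecting kernels. The only points needing a word — that $\sigma\circ\varphi$ remains continuous, absolutely irreducible and of dimension at most $2$ — you address, and they are indeed immediate since $\varphi$ is a continuous surjection.
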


\subsection{Cayley-Hamilton quotient.}
We will construct a quotient $\OO[[\GG]]\twoheadrightarrow R$, satisfying the conditions of Corollary \ref{quotientNGI}, 
and such that $R^{op}$ satisfies the conditions of Lemma \ref{condNGI}. This will imply that $\wE\cong R^{op}$. 
After twisting we may assume that $\chi_1$ is trivial and $\zeta=\varepsilon^{-1}$. The ring $R$ will turn out 
to be isomorphic to $R^{\mathrm{ps}, \Eins}[[\GG]]/J$, where $R^{\mathrm{ps},\Eins}$
is a (commutative) deformation ring parameterizing all $2$-di\-men\-sio\-nal pseudocharacters lifting the trace of the trivial 
$2$-di\-men\-sio\-nal $k$-rep\-re\-sen\-ta\-tion of $\GG$ with determinant equal to $1$, see the conditions 
(o)-(iii) in Proposition \ref{pseudoprop} below,
 and $J$ is a closed two-sided ideal generated by $g^2-T(g) g+1$, for all $g\in \GG$, where $T: \GG\rightarrow R^{\mathrm{ps}, \Eins}$ is the 
universal pseudocharacter with determinant $1$. Using this we will show that for a finite extension 
$M$ of $L$ an absolutely irreducible 
$M$-representation  of $\wE_M$ can be at most $2$-di\-men\-sio\-nal. 

Recall that the maximal pro-$p$ quotient  $\GG$ of $\gal$ is a free pro-$p$ group generated by $2$ elements, which we denote by 
 $\gamma$ and $\delta$.  We let 
\begin{equation}
R:= \frac{\OO[[t_1, t_2, t_3]]\widehat{\otimes}_{\OO}\OO [[ \GG]]}{J},
\end{equation}
where $J$ is  a  closed two-sided ideal generated by  
\begin{equation}
\gamma^2-2(1+t_1) \gamma +1, \quad \delta^2-2(1+t_2) \delta +1,
\end{equation} 
\begin{equation}
(\gamma \delta)^2-2(1+t_3) \gamma \delta +1,\quad  (\delta\gamma)^2-2(1+t_3) \delta\gamma+1.
\end{equation}
Sending $x\mapsto \gamma-1$, $y\mapsto \delta-1$ induces an isomorphism between 
$\OO[[\GG]]$ and $\OO[[x, y]]^{\mathrm{nc}}$, the ring of non-commutative formal power series. 
We denote the images of $t_1, t_2, t_3, x, y$ in $R$ by the same letters. We note that the elements $t_1$, $t_2$  and $t_3$ 
are central in $R$. 

Substituting $\gamma=1+x$ and $\delta=1+y$ in the relations defining the ideal $J$ we get  
 that the following relations hold in $R$:
\begin{equation}\label{xyt} 
 x^2=2t_1(1+x), \quad y^2= 2t_2(1+y),
\end{equation}
\begin{equation}\label{xy+}
(x+y+xy)^2= 2t_3(1+x+y+xy), \quad (x+y+yx)^2= 2t_3(1+x+y+yx)
\end{equation}

Since $2$ is invertible in $R$, and every $a\in 1+\mm_R$ is a unit, we get that $t_1, t_2, t_3\in \mm^2_R$. Thus
the natural map $\OO[[\GG]] \rightarrow R$ is surjective on tangent spaces and,  since both rings are complete,
\begin{equation}\label{surjectOGR}
\OO[[\GG]] \twoheadrightarrow R
\end{equation} 
is surjective and $\dim \mm_R/(\mm^2_R +\varpi_L R)\le 2$. Let $J^{ab}$ be the ideal generated by the  relations 
\eqref{xyt},\eqref{xy+} in the commutative ring $\OO[[t_1, t_2, t_3, x, y]]$. Then we have a natural surjection 
\begin{equation}\label{abel}
R\twoheadrightarrow \frac{\OO[[t_1, t_2, t_3, x, y]]}{J^{ab}}\cong \OO[[x, y]].       
\end{equation}
Since the target is commutative, \eqref{abel} factors through $R^{ab} \twoheadrightarrow \OO[[x,y]]$. 
Since $\dim \mm_R/(\mm^2_R +\varpi_L R)\le 2$ we obtain $R^{ab}\cong \OO[[x,y]]$ and  
$\dim \mm_R/(\mm^2_R +\varpi_L R)=2$.

\begin{defi}\label{bigC} Let $C$ be the commutative ring 
$$\frac{\OO[[t_1, t_2, t_3]] [a_1, a_2, b_1, b_2]} {(a_1+a_2-2t_1, a_1 a_2- 2t_1,  b_1+b_2-2t_2, b_1 b_2-2t_2)}$$
let $\mm_C$ be the maximal ideal of $C$ and let  
$\mathfrak A:=\left(\begin{smallmatrix} C & C \\ \mm_C & C\end{smallmatrix}\right)$,  $\mathfrak P= \left(\begin{smallmatrix} \mm_C & C \\ \mm_C & \mm_C\end{smallmatrix}\right).$
\end{defi}

\begin{prop}\label{realizeT} There exists a continuous representation of $\GG$ on a free rank $2$ module over $C$, which induces a homomorphism 
of $\OO[[t_1, t_2, t_3]]$-algebras $\rho: R\rightarrow \mathfrak A$. In particular, $\tr \rho(\gamma)= 2(1+t_1)$, $\tr \rho(\delta)= 2(1+t_2)$, 
$\tr \rho(\gamma \delta)= 2(1+t_3)$ and $\det(\rho(g))=1$, for all $g\in \GG$.
\end{prop}
\begin{proof} We note that $\mathfrak P$ is a two sided ideal of $\mathfrak A$ and $\mathfrak A$ is $\mathfrak P$-adically complete.
Let 
$$\alpha=\begin{pmatrix} a_1 & 1 \\ 0 & a_2 \end{pmatrix}, \quad \beta= \begin{pmatrix} b_1 & 0 \\ b & b_2 \end{pmatrix}$$
with $b= 2+2t_3-(1+a_1)(1+b_1) - (1+a_2)(1+b_2)\in \mm_C$.  
Sending $g\mapsto g-1$ induces an isomorphism
 $ (1+\mathfrak P^i)/(1+\mathfrak P^{i+1})\cong \mathfrak P^i/\mathfrak P^{i+1}$, where the right hand side is a group with respect to addition.
Since $\mathfrak P^i/\mathfrak P^{i+1}$ is a finite dimensional  $\OO/\varpi_L \OO$-vector space, we deduce that
$1+\mathfrak P$ is a pro-$p$ group. Hence, $\gamma\mapsto 1+\alpha$ and $\delta\mapsto 1+\beta$ induces a group homomorphism 
$\GG\rightarrow 1+\mathfrak P$ and hence an algebra homomorphism 
\begin{equation}\label{algebrahom}
\OO[[t_1, t_2, t_3]]\bigl[\bigl[\GG\bigr]\bigr]\rightarrow \mathfrak A.
\end{equation} 
By construction of $C$ we have 
$\det(1+\alpha)=\det(1+\beta)=1$, $\tr(1+\alpha)=2(1+t_1)$ and $\tr(1+\beta)=2(1+t_2)$.  Hence, $\det( (1+\alpha)(1+\beta))=1$ and 
a direct calculation shows that $\tr( (1+\alpha)(1+\beta))= 2(1+t_3)$. Hence, \eqref{algebrahom} factors through $\rho:R\rightarrow \mathfrak A$. 
\end{proof}

\begin{cor}\label{injectcentre} The natural map $\OO[[t_1, t_2, t_3]]\rightarrow R$ is injective.
\end{cor} 
\begin{proof}
Since the composition $\OO[[t_1, t_2, t_3]] \rightarrow R\overset{\rho}{\rightarrow} \mathfrak A$ is injective, where $\rho$ is the representation constructed in the Proposiition \ref{realizeT}, we obtain the claim.
\end{proof} 

Let $H$ be the subgroup of $\GG$ generated as an abstract group by $\gamma$ and $\delta$. There is a natural length function 
$\ell: H\rightarrow \ZZ_{\ge 0}$, $\ell(h)= \min (\sum_{i\ge 1}  |m_i|)$, where the minimum is taken over all 
finite expressions $h=\gamma^{m_1}\delta^{m_2}\ldots$, with $m_i\in \ZZ$. We let 
$$\Gamma:=\{ 1, \gamma, \delta, \gamma\delta, \delta\gamma\}$$
and given an integer $m\ge 0$ 
we define
$$  S_m:= \{ g \in H: \ell(g)\le m\},$$
and for a subset $S$ of $H$ we define
$$\Sigma(S):=\{ g_1g_2:  g_1, g_2,  g_1^{-1}g_2\in S \}\cup \{ g_1^{-1}g_2:  g_1, g_2,  g_1g_2\in S \}.$$ 
We note that if $1\in S$ then by taking $g_1=1$  we obtain that $\Sigma(S)$ contains $S$ and by taking $g_2=1$ we get that 
$\Sigma(S)$ contains $S^{-1}$, the set of  inverses of the elements of $S$.  

\begin{lem}\label{permutealot} 
\begin{itemize} 
\item[(i)] 
$S_2\subset \Sigma(\Sigma(\Gamma))$; 
\item[(ii)] $S_m \subset \Sigma(\Sigma( \Sigma(S_{m-1})))$, for  $m\ge 3$. 
\end{itemize} 
\end{lem}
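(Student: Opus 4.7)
Part (i) is a finite verification, while part (ii) is a constructive case analysis on reduced words. The underlying intuition is that $\Sigma$ encodes the pseudocharacter-style identity $g_1g_2 + g_1^{-1}g_2 = T(g_1)g_2$: if traces are known on $S$, they are determined on $\Sigma(S)$, so these lemmas amount to showing that the traces of $\Gamma$ (respectively $S_{m-1}$) determine traces on $S_2$ (respectively $S_m$).

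For (i), I first note that taking $g_1 = 1$ in the first form of $\Sigma$, or $g_2 = 1$ in the second, places $\Gamma \cup \Gamma^{-1}$ inside $\Sigma(\Gamma)$; this already covers $1$, $\gamma^{\pm 1}$, $\delta^{\pm 1}$, and $(\gamma\delta)^{\pm 1}, (\delta\gamma)^{\pm 1}$. The remaining elements of $S_2$ are $\gamma^{\pm 2}, \delta^{\pm 2}$, and the six mixed products $\gamma^{\pm 1}\delta^{\pm 1}, \delta^{\pm 1}\gamma^{\pm 1}$ not already in $\Gamma \cup \Gamma^{-1}$. For each I exhibit an explicit decomposition: e.g.\ $\gamma^2 = \gamma\cdot\gamma$ with $\gamma^{-1}\gamma = 1 \in \Gamma$ gives $\gamma^2 \in \Sigma(\Gamma)$; $\gamma^{-2} = \gamma^{-1}\cdot\gamma^{-1}$ with $\gamma\gamma^{-1} = 1 \in \Sigma(\Gamma)$ gives $\gamma^{-2} \in \Sigma^2(\Gamma)$; $\gamma\delta^{-1} = \gamma\cdot\delta^{-1}$ with auxiliary element $\gamma^{-1}\delta^{-1} = (\delta\gamma)^{-1} \in \Gamma^{-1} \subseteq \Sigma(\Gamma)$ gives $\gamma\delta^{-1} \in \Sigma^2(\Gamma)$; and so on. Each of the seventeen elements of $S_2$ can be checked this way.

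For (ii), I write $g \in S_m$ as a reduced word $g = s_1 s_2 \cdots s_m$ (so each $s_i \in \{\gamma^{\pm 1}, \delta^{\pm 1}\}$ and $s_is_{i+1} \neq 1$). The outermost application of $\Sigma$ decomposes $g = s_1 \cdot (s_2\cdots s_m)$ with $s_1 \in S_1$ and $v := s_2\cdots s_m \in S_{m-1}$; the auxiliary element is $s_1^{-1} v$, which has length exactly $m$ because no cancellation occurs at the $s_1^{-1}s_2$ junction (this is only forbidden if $s_1 = s_2$, which is allowed). The task thus reduces to placing $s_1^{-1}v$ inside $\Sigma^2(S_{m-1})$. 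For this I split $s_1^{-1}v = (s_1^{-1}s_2)\cdot(s_3\cdots s_m)$ and examine the new auxiliary element $s_2^{-1}s_1 s_3 \cdots s_m$: in the benign sub-case $s_1 = s_2$ it collapses to $s_3\cdots s_m \in S_{m-2}$, and similarly if $s_1 = s_3^{-1}$ the length drops by two; in the remaining cases the word is reduced of length exactly $m$, and a third application of $\Sigma$, splitting off the prefix $s_2^{-1}s_1 s_3 \in S_3$, lands us in $\Sigma(S_{m-1})$ (this step requires $m \geq 4$, with $m = 3$ handled as a base case by listing the finitely many reduced words of length three).

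The principal obstacle is the combinatorial bookkeeping in (ii): the only constraint on a reduced word is that consecutive letters do not cancel, so non-adjacent letters may freely coincide or be mutual inverses, producing many sub-cases. The integer $3$ in the statement reflects that one application of $\Sigma$ can, in the worst case, only absorb one position of "overhang" at the left end of the word, so three applications are needed before the remaining tail is short enough to lie in $S_{m-1}$. The case analysis is elementary once the splits above are fixed; I would carry it out by treating separately the possibilities $s_1 = s_2$, $s_1 = s_3^{-1}$, and the generic case, and verify that in each case all three elements in the $\Sigma$-triple land in the prescribed set.
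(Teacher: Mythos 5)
Part (i) is correct and is essentially the paper's argument (observe $1\in\Gamma$, enumerate $S_2$ and exhibit explicit decompositions).

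Part (ii) has a genuine gap in what you call the remaining (generic) case. After your three prefix-splits the auxiliary word is $w := s_3^{-1}s_1^{-1}s_2\,s_4\cdots s_m$. Under the constraints you have accumulated ($s_1 \neq s_2$ and $s_1 \neq s_3^{-1}$), the only possible cancellation in $w$ is at the $s_2 s_4$ junction; if $s_4 \neq s_2^{-1}$ then $w$ is reduced of length $m$, not $\le m-1$, and the claimed containment in $\Sigma(S_{m-1})$ fails. Concretely take $g = \gamma\delta\gamma\delta$ with $m = 4$: your three splits produce auxiliary words $\gamma^{-1}\delta\gamma\delta$, then $\delta^{-1}\gamma^2\delta$, then $\gamma^{-2}\delta^2$, all of length $4$, so none lies in $S_3$. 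The underlying problem is that cutting off one letter at a time never shortens the auxiliary word; its letters are merely permuted and inverted, so there is no ``overhang'' being absorbed.

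The paper's proof avoids this by working with syllables (maximal powers of one generator) rather than individual letters. It first disposes of the case where some syllable has exponent of absolute value at least $2$: such a $g$ is split at that syllable boundary, and the resulting auxiliary word, though still of length $m$, now has a fat syllable at its front, which is the one configuration that provably shortens after one more application of $\Sigma$. In the remaining case (all syllables of size $1$), it cuts $g = \gamma^{m_1}\delta^{m_2}\cdot\gamma^{m_3}\cdots$ at the second syllable boundary; the auxiliary word $\delta^{-m_2}\gamma^{m_3-m_1}\cdots$ then either drops in length (when $m_1 = m_3$, the $\gamma$-syllable vanishes) or acquires a syllable $\gamma^{\pm 2}$ (when $m_1 = -m_3$), whereupon the fat-syllable case applies. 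The whole argument is built so that each step either shortens the word or manufactures a fat syllable, a dichotomy your letter-by-letter splitting does not achieve. If you want to salvage your approach you would need to cut at the point where the word changes generator, not at a fixed position, and you would need the preliminary reduction to words with no repeated adjacent letter.
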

\begin{proof} Since $1\in \Gamma$, $\Sigma(\Gamma)$ will contain $\Gamma\cup \Gamma^{-1}$ and 
also $\gamma^2$, $\delta^2$, $\gamma^{-1}\delta$, $\delta^{-1}\gamma$. Thus all the elements of $S_2$, except for $\gamma\delta^{-1}$ and $\delta \gamma^{-1}$, are contained in $\Sigma(\Gamma)\cup \Sigma(\Gamma)^{-1}$, 
which is a subset of $\Sigma(\Sigma(\Gamma))$.
To finish the proof of (i), we observe that since $\gamma, \delta^{-1}, \gamma^{-1}\delta^{-1} \in \Sigma(\Gamma)$,  $\gamma\delta^{-1}\in \Sigma(\Sigma(\Gamma))$ and 
since $\delta, \gamma^{-1}, \delta^{-1}\gamma^{-1} \in \Sigma(\Gamma)$,  $\delta\gamma^{-1}\in \Sigma(\Sigma(\Gamma))$.

Let $g=\gamma^{m_1}\delta^{m_2}\ldots$ be an expression of $g$  such that $\ell(g)=\sum_{i\ge 1} |m_i|\ge 3$. 
Without loss of generality we may assume $m_1\neq 0$. If $|m_1|>1$ then $g\in \Sigma(S_{m-1})$ as we may take 
$g_1= \gamma^{\varepsilon}$,  $g_2=\gamma^{-\varepsilon} g$ 
where $\varepsilon=m_1/|m_1|$, so that $g=g_1g_2$ and $g_1^{-1}g_2\in S_{m-1}$. Hence, if $|m_j|>1$ for some $j$, then 
$g\in \Sigma(\Sigma(S_{m-1}))$, for we may take (for odd $j$ ) 
$g_1= \gamma^{m_1}\ldots \delta^{m_{j-1}}\gamma^{m_j}$ and $g_2= \delta^{m_{j+1}}\ldots$ and so $g=g_1 g_2$ and $ g_1^{-1} g_2\in \Sigma(S_{m-1})$
by the previous calculation. 
Thus we may assume $|m_i|=1$ for all $i$ and since $\ell(g)\ge 3$ this implies $m_3\neq 0$.
Let $g_1=\gamma^{m_1}\delta^{m_2}$, $g_2= \gamma^{m_3}\ldots$, if $m_1$ and $m_3$ have the same sign then $g_1^{-1}g_2\in S_{m-1}$, if not 
then $g_1^{-1}g_2= \delta^{-m_2} \gamma^{m_3-m_1} \ldots \in \Sigma(\Sigma(S_{m-1}))$.
\end{proof}

\begin{cor}\label{relationdetermines} Let $B$ be a topological ring and let $f: \GG\rightarrow B$ be a continuous function such 
that 
\begin{equation}\label{relationdetermines1}
f(g^{-1}h)-f(g)f(h)+f(gh)=0, \quad  \forall g, h\in \GG.
\end{equation}
Then $f$ is uniquely determined by its values at the elements of $\Gamma$. Moreover, 
the image of $f$ is contained in the closure of the subring  of $B$ generated by 
$f(g)$, $g\in\Gamma$.
\end{cor}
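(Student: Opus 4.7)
The plan is to exploit the functional equation \eqref{relationdetermines1} as a device that reconstructs the value of $f$ at certain products from its values at smaller elements. Explicitly, rewriting the relation, if $g_1, g_2, g_1^{-1}g_2$ lie in a subset $S \subset \GG$ on which $f$ is known, then the identity
\[
f(g_1 g_2) = f(g_1) f(g_1^{-1} g_2) + f(g_2) - f(g_1)f(g_1) \cdot 0 \;\;\text{(rewrite of \eqref{relationdetermines1} at $(g_1, g_1^{-1}g_2)$)}
\]
determines $f(g_1 g_2)$ from values of $f$ on $S$. Similarly, substituting $(g, h) \mapsto (g_1, g_2)$ in \eqref{relationdetermines1} determines $f(g_1^{-1}g_2)$ from $f(g_1), f(g_2), f(g_1 g_2)$. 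Thus for any subset $S \subset \GG$, if $f$ is known on $S$ then $f$ is uniquely determined on $\Sigma(S)$, and moreover the values of $f$ on $\Sigma(S)$ lie in the subring of $B$ generated by $f(S)$.

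Next, I would iterate this principle starting from $\Gamma$. By Lemma \ref{permutealot}(i), two applications of $\Sigma$ give that $f|_{S_2}$ is determined by $f|_\Gamma$, with values landing in the subring $R_0 \subset B$ generated by $\{f(g) : g \in \Gamma\}$. Then Lemma \ref{permutealot}(ii) provides the inductive step: three applications of $\Sigma$ carry $S_{m-1}$ into a set containing $S_m$, so by induction on $m$ the function $f$ is determined on all of $S_m$ and takes values in $R_0$. Since $H = \bigcup_{m \ge 0} S_m$, this shows that $f|_H$ is determined by $f|_\Gamma$ and takes values in $R_0$.

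Finally, I would invoke continuity together with density: the abstract subgroup $H$ generated by $\gamma$ and $\delta$ is dense in $\GG$ because $\gamma, \delta$ are topological generators of the pro-$p$ group $\GG$. Since $f$ is continuous and $B$ is a topological ring, $f$ on $\GG$ is determined by its restriction to $H$, and its image lies in the closure $\overline{R_0}$ of $R_0$ in $B$. This yields both statements of the corollary.

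The argument is essentially combinatorial bookkeeping once the functional equation is read as a production rule, so no single step is a serious obstacle; the only point deserving care is the book-keeping in Lemma \ref{permutealot} (already established) and the observation that $\Sigma$-closure preserves the subring generated by the initial values, which is immediate from the explicit formula derived from \eqref{relationdetermines1}.
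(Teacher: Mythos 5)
Your overall plan is exactly the paper's proof: use the functional equation to show that knowing $f$ on $S$ determines $f$ on $\Sigma(S)$, invoke Lemma \ref{permutealot} to climb $S_1\subset S_2\subset\cdots$ from $\Gamma$, and finish by density of $H$ and continuity of $f$. However, the displayed formula you wrote for $f(g_1g_2)$ is garbled and incorrect as written: the correct and much simpler move is to substitute $(g,h)=(g_1,g_2)$ directly into \eqref{relationdetermines1}, which gives
\begin{equation*}
f(g_1 g_2)=f(g_1)f(g_2)-f(g_1^{-1}g_2),
\end{equation*}
determining $f(g_1g_2)$ when $g_1,g_2,g_1^{-1}g_2\in S$; the same substitution rearranged gives $f(g_1^{-1}g_2)=f(g_1)f(g_2)-f(g_1g_2)$ when $g_1,g_2,g_1g_2\in S$, which is the other half of $\Sigma(S)$. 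Once that display is replaced, the argument is correct and coincides with the paper's.
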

\begin{proof} Since $f$ is continuous and $H$ is dense in $\GG$, $f$ is uniquely determined by its restriction 
to $H$. Using \eqref{relationdetermines1} and Lemma \ref{permutealot} we deduce that $f|_H$ is uniquely 
determined by $f(g)$, $g\in \Gamma$ and $f(H)$ is contained in the subring of $B$ generated by $f(g)$, $g\in \Gamma$. 
Since $H$ is dense in $G$, $f(G)$ is contained in the closure of this ring. 
\end{proof} 

\begin{prop}\label{pseudoprop} Let $T(g):= g+ g^{-1}\in R$ then  
$T(g)\in \OO[[t_1, t_2, t_3]]$ for each $g\in \GG$. Moreover, $T$ is the unique 
continuous function $T: \GG \rightarrow \OO[[t_1, t_2, t_3]]$ such that 
\begin{itemize} 
\item[(o)] $T(1)=2$;
\item[(i)] $\frac{ T(g)^2-T(g^2)}{2}=1$;
\item[(ii)] $T(gh)=T(hg)$;
\item[(iii)] $T(g^{-1}h)-T(g) T(h) +T(gh)=0$;
\item[(iv)] $T(\gamma)=2(1+t_1)$,  $T(\delta)=2(1+t_1)$, $T(\gamma \delta)=T(\delta \gamma)=2(1+t_3)$.
\end{itemize}
\end{prop}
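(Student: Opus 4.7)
My plan is to reduce the entire proposition to a single \emph{centrality claim}: $T(g) := g + g^{-1}$ lies in the central subring $\OO[[t_1,t_2,t_3]] \subset R$ for every $g \in \GG$. Granting this, (o) is trivial, (i) follows from the expansion $(g+g^{-1})^2 - (g^2+g^{-2}) = 2$ in any associative ring, and (iv) is immediate from the defining relations of $R$: multiplying $\gamma^2 - 2(1+t_1)\gamma + 1 = 0$ by $\gamma^{-1}$ yields $T(\gamma) = 2(1+t_1)$, and similarly $T(\delta) = 2(1+t_2)$ and $T(\gamma\delta) = T(\delta\gamma) = 2(1+t_3)$. The heart of the matter is the algebraic identity, valid in any associative ring,
\[
T(g_1g_2) + T(g_1^{-1}g_2) \;=\; T(g_1)\,g_2 + g_2^{-1}\,T(g_1),
\]
obtained by direct expansion; when $T(g_1)$ is central the right-hand side becomes $T(g_1)T(g_2)$, yielding (iii) for the pair $(g_1,g_2)$. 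Property (ii) then follows: $T(g_1 g_2) = T(g_1)T(g_2) - T(g_1^{-1}g_2) = T(g_2)T(g_1) - T(g_2^{-1}g_1) = T(g_2 g_1)$, using $T(g^{-1}) = T(g)$ and the commutativity of central elements.

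I would prove the centrality claim by induction on the word-length in the free abstract subgroup $H = \langle \gamma, \delta \rangle \le \GG$, structured exactly according to Lemma \ref{permutealot}. The base case comprises $\Gamma$: the computation above places $T(g) \in \OO[[t_1,t_2,t_3]]$ for every $g \in \Gamma$, and this subring is genuinely central in $R$ by Lemma \ref{injectcentre}. For the inductive step, fix $g \in S_m \cap H$ with $m \ge 2$. Lemma \ref{permutealot} produces a factorization $g = g_1 g_2$ (or $g = g_1^{-1}g_2$) with $g_1, g_2$, and $g_1^{-1}g_2$ (respectively $g_1 g_2$) lying in some iterated $\Sigma$-image of $S_{m-1}$. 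Since the inductive hypothesis gives centrality at every element of that image, the displayed identity rearranges to
\[
T(g_1 g_2) \;=\; T(g_1)\,T(g_2) - T(g_1^{-1}g_2),
\]
with all three right-hand terms in $\OO[[t_1,t_2,t_3]]$. Hence $T(g) \in \OO[[t_1,t_2,t_3]]$, upgrading centrality from $S_{m-1}$ to $S_m$. Since $H$ is dense in $\GG$, since $T$ is continuous (being built from continuous operations in the topological ring $R$), and since $\OO[[t_1,t_2,t_3]]$ is closed in $R$, the centrality claim extends to all of $\GG$.

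The main obstacle is administrative rather than conceptual: the nested $\Sigma$-structure of Lemma \ref{permutealot} forces one to peel off three layers of $\Sigma$ in each inductive step, and one must verify that the intermediate factorizations supplied by the lemma remain inside $H$ so that the identity applies at each stage. Modulo this bookkeeping, the argument is clean. Uniqueness, finally, is an immediate application of Corollary \ref{relationdetermines}: any continuous $T \colon \GG \to \OO[[t_1,t_2,t_3]]$ satisfying (iii) is determined by its restriction to $\Gamma$, and (o) together with (iv) pin down that restriction.
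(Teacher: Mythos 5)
Your proposal is correct and its core is essentially the paper's argument: the same centrality identity $T(g^{-1}h) - T(g)T(h) + T(gh) = h^{-1}T(g) - T(g)h^{-1}$ drives both proofs, the same induction over Lemma~\ref{permutealot} propagates it from $\Gamma$ to $H$, and the same appeal to Corollary~\ref{relationdetermines} and Lemma~\ref{injectcentre} pins $T$ inside $\OO[[t_1,t_2,t_3]]$ and gives uniqueness. (The paper phrases the induction as first establishing (iii) for all $g,h\in H$ and then deducing centrality from Corollary~\ref{relationdetermines}; you run it as a simultaneous induction on word length, establishing centrality and (iii) together. These are the same argument with the bookkeeping reorganized.)

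Where you genuinely diverge is the proof of (ii). The paper gets $T(gh)=T(hg)$ by pushing through the representation $\rho\colon R\to\mathfrak{A}$ of Lemma~\ref{injectcentre}, using Cayley--Hamilton to identify $\rho(T(g))$ with $\tr\rho(g)$ and then the injectivity of $\rho$ on $\OO[[t_1,t_2,t_3]]$. You instead note that once centrality and (iii) are in hand, (ii) is formal: apply (iii) to $(g_1,g_2)$ and to $(g_2,g_1)$, use that $T(g_1)T(g_2)=T(g_2)T(g_1)$ in the commutative subring, and observe $T(h)=T(h^{-1})$ gives $T(g_1^{-1}g_2)=T(g_2^{-1}g_1)$. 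This is shorter, avoids the detour through $\rho$, and has the pleasant feature of deriving (ii) from the other axioms rather than re-verifying it on the nose. The paper's route has a compensating advantage in that the identification $\rho(T(g))=\tr\rho(g)$ is reused immediately (e.g.\ Corollary~\ref{involute2}), so the detour pays for itself in context; but as a self-contained proof of the Proposition your route is cleaner.
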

\begin{proof} We note that $T: \GG\rightarrow R$ is continuous and satisfies (o), (i) and (iv). 
Now
\begin{equation}
T(g^{-1}h)-T(g) T(h) +T(gh)= h^{-1} T(g) - T(g) h^{-1}.   
\end{equation}
So (iii) holds for all $g, h \in G$ such that  $T(g)$ is central in $R$. Since $T(g)$ is central in $R$ for every $g\in \Gamma$, 
using Lemma \ref{permutealot} we deduce that (iii) holds for every $g, h\in H$ and by continuity of $T$ and density of $H$, we get that 
(iii) holds. It follows from Lemma \ref{injectcentre} and (iv) that the closure of the subring of $R$ generated by $T(g)$, $g\in \Gamma$ 
is $\OO[[t_1, t_2, t_3]]$.  It follows from Corollary \ref{relationdetermines} that $T(g)\in \OO[[t_1, t_2, t_3]]$ and 
is uniquely determined. It remains to show that $T$ satisfies (ii). Let $\rho: R\rightarrow \mathfrak A$ be the homomorphism constructed 
in Proposition \ref{realizeT}. Recall that $\mathfrak A$ is a subring of the ring of  $2\times 2$ matrices over  a commutative ring $C$.
Hence  for every $a\in \mathfrak A$ we have 
\begin{equation}\label{CHam}
a^2 -\tr(a) a +\det a = 0,
\end{equation}
where $\tr: \mathfrak A\rightarrow C$ and $\det:\mathfrak A\rightarrow C$ are the usual trace and determinant. Since by construction 
$\det \rho(\gamma)=\det\rho(\delta)=1$ we get that $\det\rho(g)=1$ for all $g\in \GG$, and so  we may rewrite \eqref{CHam} to get 
\begin{equation}\label{Tistrace}
\rho(T(g))= \rho(g) +\rho(g)^{-1}= \tr(\rho(g)) 
\end{equation}    
and hence $\rho(T(gh)-T(hg))=0$. The restriction of $\rho$ to $\OO[[t_1, t_2, t_3]]$ is injective and this implies (ii).  
\end{proof} 

\begin{cor}\label{thesameps} Let $\eta: \gal \rightarrow k^{\times}$ and $\psi:\gal\rightarrow \OO^{\times}$ be  continuous characters 
such that $\psi \equiv \eta^2\pmod{\pL}$. Then the universal deformation ring $R^{\mathrm{ps}, \psi}$ 
parameterizing $2$-di\-men\-sio\-nal pseudo-characters of $\gal$ lifting $2\eta$ with determinant $\psi$ is isomorphic to 
$\OO[[x_1, x_2, x_3]]$ and the universal pseudocharacter is equal to the trace of the representation constructed in 
Proposition \ref{realizeT} twisted with $\sqrt{\psi}$.
\end{cor}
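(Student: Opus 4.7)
The plan is to reduce to the trivial case by twisting and then invoke Proposition \ref{pseudoprop} together with the uniqueness result of Corollary \ref{relationdetermines} to identify the universal deformation ring with $\OO[[t_1,t_2,t_3]]$. First I would observe that since $p$ is odd and $\psi\equiv \eta^2\pmod{\pL}$, there is a unique continuous square root $\sqrt{\psi}:\gal\rightarrow\OO^\times$ lifting $\eta$. The assignment $T'\mapsto \sqrt{\psi}^{-1}\cdot T'$ is compatible with axioms (o)--(iii) of Proposition \ref{pseudoprop} and transforms the determinant condition from $\psi$ to $\Eins$, so it furnishes a natural isomorphism of deformation functors between the deformation problem of $2\eta$ with determinant $\psi$ and that of $2\Eins$ with determinant $\Eins$. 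It thus suffices to establish the claim in the case $\eta=\Eins$, $\psi=\Eins$.

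Next I would argue that every such pseudocharacter $T':\gal\rightarrow A$ (with $A$ in $\mathfrak{A}^{ab}$) factors through the maximal pro-$p$ quotient $\GG$ of $\gal$. Since $n=2$ and $p\neq 2$, the appendix on deformation theory of $2$-dimensional $\gal$-representations provides a continuous representation $\rho:\gal\rightarrow\GL_2(A)$ with $\tr\rho=T'$ (this is the standard lifting of pseudocharacters to representations in the $2$-dimensional setting in odd residue characteristic). The residual triviality of $T'$ forces $\rho$ to take values in the pro-$p$ subgroup $1+\mm_A M_2(\OO)\subset\GL_2(A)$, so $\rho$, and hence $T'$, factors through $\GG$. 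This is the main technical input; everything else is then formal.

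With this reduction, Proposition \ref{pseudoprop} gives a continuous function $T:\GG\rightarrow\OO[[t_1,t_2,t_3]]$ satisfying (o)--(iii), i.e.\ a deformation of $2\Eins$ with determinant $\Eins$, and hence induces a continuous $\OO$-algebra homomorphism $\varphi:R^{\mathrm{ps},\Eins}\rightarrow\OO[[t_1,t_2,t_3]]$ by the universal property. Conversely, I would define $\theta:\OO[[t_1,t_2,t_3]]\rightarrow R^{\mathrm{ps},\Eins}$ by
\[
\theta(t_1)=\tfrac{1}{2}(T^{\mathrm{un}}(\gamma)-2),\quad \theta(t_2)=\tfrac{1}{2}(T^{\mathrm{un}}(\delta)-2),\quad \theta(t_3)=\tfrac{1}{2}(T^{\mathrm{un}}(\gamma\delta)-2),
\]
which lie in $\mm_{R^{\mathrm{ps},\Eins}}$ since $T^{\mathrm{un}}$ lifts $2\Eins$ and $2$ is invertible. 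By construction $\varphi\circ\theta$ fixes each $t_i$, hence equals the identity. For the other composition, $\theta\circ T$ is a continuous function $\GG\rightarrow R^{\mathrm{ps},\Eins}$ satisfying (o)--(iii) and agreeing with $T^{\mathrm{un}}$ on $\Gamma=\{1,\gamma,\delta,\gamma\delta,\delta\gamma\}$ (using condition (ii) to handle $\delta\gamma$); Corollary \ref{relationdetermines} then forces $\theta\circ T=T^{\mathrm{un}}$, and since $R^{\mathrm{ps},\Eins}$ is topologically generated by the image of $T^{\mathrm{un}}$, we obtain $\theta\circ\varphi=\id$.

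It follows that $R^{\mathrm{ps},\Eins}\cong\OO[[t_1,t_2,t_3]]$ with universal pseudocharacter $T$, which by formula \eqref{Tistrace} in the proof of Proposition \ref{pseudoprop} is precisely the trace of the representation $\rho$ of Lemma \ref{injectcentre}. Untwisting back by $\sqrt{\psi}$ identifies $R^{\mathrm{ps},\psi}$ with $\OO[[x_1,x_2,x_3]]$ and its universal pseudocharacter with $\sqrt{\psi}\cdot\tr\rho$, finishing the proof. The only delicate point is the factoring-through-$\GG$ step, which is precisely the place where the hypothesis $p>2$ enters decisively.
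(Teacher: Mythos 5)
Your twisting reduction and your identification of the universal ring over the pro-$p$ quotient (via Proposition \ref{pseudoprop}, Corollary \ref{relationdetermines} and \eqref{Tistrace}) are fine, but the step you single out as the main technical input is exactly where the argument breaks. You claim that any pseudocharacter $T'\colon\gal\rightarrow A$ lifting $2\cdot\Eins$ over an artinian local $A$ is the trace of a representation $\rho\colon\gal\rightarrow\GL_2(A)$. The ``standard lifting'' results (Nyssen, Rouquier; cf.\ Remark \ref{tracesuper} and \cite{Nyssen}) require the \emph{residual} pseudocharacter to be absolutely irreducible; here it is $2\cdot\Eins$, the opposite extreme (not even multiplicity free), and the lifting statement is false. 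Concretely, let $A=k[t_1,t_2,t_3]/(t_1,t_2,t_3)^2$ and let $T'$ be the determinant-$1$ pseudocharacter of $\GG$ (inflated to $\gal$) with $T'(\gamma)=2+2t_1$, $T'(\delta)=2+2t_2$, $T'(\gamma\delta)=2+2t_3$, obtained by specialising the function of Proposition \ref{pseudoprop}. Any $\rho$ with $\tr\rho=T'$ has residually unipotent image (Brauer--Nesbitt plus Kolchin), so after conjugation $\bar\rho(\gamma)=\bigl(\begin{smallmatrix}1&b_1\\0&1\end{smallmatrix}\bigr)$, $\bar\rho(\delta)=\bigl(\begin{smallmatrix}1&b_2\\0&1\end{smallmatrix}\bigr)$, and in particular $\rho$ factors through $\GG$. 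Writing $\rho(\gamma)=\bar\rho(\gamma)+M$, $\rho(\delta)=\bar\rho(\delta)+N$ with $M,N\in M_2(\mm_A)$ and $\mm_A^2=0$, the conditions $\tr\rho(\gamma)=2+2t_1$ and $\det\rho(\gamma)=1$ give $b_1m_{21}=2t_1$, hence $b_1\neq 0$ and $m_{21}\in kt_1$; likewise $n_{21}\in kt_2$; but $\tr\rho(\gamma\delta)=2+2t_3$ forces $b_2m_{21}+b_1n_{21}=2(t_3-t_1-t_2)$, whose left side lies in the span of $t_1,t_2$ --- a contradiction. So the representation you want need not exist, and the factorisation of $T'$ through $\gal(p)$ cannot be obtained this way. (A smaller slip in the same step: residual triviality of $T'$ only makes $\bar\rho$ unipotent, not trivial, and $1+\mm_AM_2(\OO)$ should read $1+M_2(\mm_A)$.)

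The paper proves the factorisation without ever lifting to a two-dimensional representation: it passes to the Cayley--Hamilton quotient $S=A[\mathrm G]/J$ with $J$ generated by $g^2-T(g)g+D(g)$, shows that the image in $S^{\times}$ of the kernel of the residual representation is a pro-$p$ group by the gcd argument with $x^l-1$ and $(x-1)^2$ (Lemma \ref{pst1}), and deduces $\Hr\subseteq\Ker T$ (Proposition \ref{pst2}), which in the present residually trivial case is exactly Corollary \ref{pst3}: every $T'$ lifting $2\cdot\Eins$ factors through $\gal(p)$. If you replace your lifting step by an appeal to Corollary \ref{pst3}, the rest of your proof --- the unique square root $\sqrt{\psi}$, the twisting isomorphism of deformation functors, and the mutually inverse maps $\varphi,\theta$ built from Proposition \ref{pseudoprop} and the uniqueness of Corollary \ref{relationdetermines}, with \eqref{Tistrace} identifying the universal pseudocharacter with the trace of the representation of Lemma \ref{injectcentre} --- is correct and coincides with the paper's (very terse) argument.
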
 
\begin{proof} Since $\psi$ modulo $\pL$ is a square and $p\neq 2$ there exists a continuous 
character $\psi_1: \gal\rightarrow \OO^{\times}$ such that $\psi_1^2=\psi$. Corollary \ref{pst3} implies that it is enough 
to show that the assertion after  replacing $\gal$ with its maximal pro-$p$ quotient $\mathcal G$ and this follows 
Proposition \ref{pseudoprop}.
\end{proof}

Following \cite{boston} we introduce an involution $\ast$ on $R$, by letting $g^{\ast}:=g^{-1}$, extending it linearly on 
$\OO[[t_1, t_2, t_3]]\bigl[\bigl[ \GG \bigr] \bigr]$ and observing that $J^{\ast}=J$. 

\begin{lem}\label{fixedpointsinv} $\OO[[t_1, t_2, t_3]]= \{a\in R: a=a^{\ast}\}$.
\end{lem}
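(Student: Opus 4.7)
The plan is to exhibit a continuous $\OO$-linear projection $\sigma : R \to \OO[[t_1,t_2,t_3]]$ that restricts to the identity on the $\ast$-fixed points, from which the nontrivial inclusion follows. The easy inclusion $\OO[[t_1,t_2,t_3]] \subseteq \{a \in R : a = a^\ast\}$ holds because each $t_i$ is central in $R$ and fixed by $\ast$ (the involution is $\OO[[t_1,t_2,t_3]]$-linear by construction).

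Since $p \neq 2$, $2$ is invertible in $\OO$, so I would set
\[
\sigma : R \to R, \qquad \sigma(a) := \tfrac{1}{2}(a + a^\ast).
\]
This is continuous (since $\ast$ is continuous on $R$, descending from a continuous anti-involution on the completed tensor product once one observes $J^\ast = J$), $\OO$-linear, and satisfies $\sigma(a) = a \iff a = a^\ast$. A short check using centrality and $\ast$-invariance of the $t_i$ together with the anti-involution relation $(ab)^\ast = b^\ast a^\ast$ gives left $\OO[[t_1,t_2,t_3]]$-linearity: $\sigma(ca) = c\,\sigma(a)$ for $c \in \OO[[t_1,t_2,t_3]]$ and $a \in R$. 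By Proposition \ref{pseudoprop}, for each $g \in \GG$,
\[
\sigma(g) = \tfrac{1}{2}(g + g^{-1}) = \tfrac{1}{2} T(g) \in \OO[[t_1,t_2,t_3]],
\]
so $\sigma$ carries any finite $\OO[[t_1,t_2,t_3]]$-linear combination of elements of $\GG$ into $\OO[[t_1,t_2,t_3]]$.

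To conclude, I would use that by construction $R$ is a quotient of $\OO[[t_1,t_2,t_3]] \widehat{\otimes}_\OO \OO[[\GG]]$, so the image of $\GG$ topologically generates $R$ as a left $\OO[[t_1,t_2,t_3]]$-module; the finite combinations above are therefore dense in $R$. The subring $\OO[[t_1,t_2,t_3]]$ is compact and injects continuously into the pseudo-compact (hence Hausdorff) ring $R$ by Lemma \ref{injectcentre}, so its image is closed. Continuity of $\sigma$ then forces $\sigma(R) \subseteq \OO[[t_1,t_2,t_3]]$, and any $\ast$-fixed $a \in R$ satisfies $a = \sigma(a) \in \OO[[t_1,t_2,t_3]]$. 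The only place where one has to be slightly careful is this last density/closedness step, but it is handled by standard pseudo-compact topology rather than anything specific to the situation.
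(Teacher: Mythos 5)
Your proof is correct and follows essentially the same route as the paper: the paper likewise considers the averaging map $a\mapsto \frac{a+a^{\ast}}{2}$, notes via Proposition \ref{pseudoprop} that it sends the dense subring generated by $\OO[[t_1,t_2,t_3]]$ and $\GG$ into $\OO[[t_1,t_2,t_3]]$, and concludes by continuity and density. Your extra remark about closedness of $\OO[[t_1,t_2,t_3]]$ in $R$ just makes explicit a point the paper leaves implicit.
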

\begin{proof} Every $a\in \OO[[t_1, t_2, t_3]]$ is fixed by $\ast$ by construction. The map 
$R\rightarrow R$, $a\mapsto \frac{a+a^*}{2}$ is continuous and maps the subring $\OO[[t_1, t_2, t_3]] \bigl[\GG\bigr] +J$ 
into $\OO[[t_1, t_2, t_3]]$ by Proposition \ref{pseudoprop}. Since the subring is dense in $R$, we conclude that 
the fixed points of $\ast$ are contained in $\OO[[t_1, t_2, t_3]]$.
\end{proof}

\begin{cor}\label{involute1} Let $a\in R$ then $a+a^{\ast}$ and $a^{\ast}a$ are in $\OO[[t_1, t_2, t_3]]$. 
\end{cor}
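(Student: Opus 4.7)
The plan is to prove this as an immediate consequence of Lemma \ref{fixedpointsinv} once we verify that both $a + a^\ast$ and $a^\ast a$ are fixed by the involution $\ast$.

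First I would unpack what $\ast$ really is. Although the text says we extend $g \mapsto g^{-1}$ ``linearly'', the resulting map is actually an \emph{anti}-involution on $R$: on group elements $g, h \in \mathcal G$ we have $(gh)^\ast = (gh)^{-1} = h^{-1} g^{-1} = h^\ast g^\ast$, and since $\ast$ fixes the central subring $\OO[[t_1, t_2, t_3]]$ pointwise and is continuous, it follows that $(ab)^\ast = b^\ast a^\ast$ for all $a, b \in R$. Moreover $g^{\ast\ast} = g$ on group elements, so $a^{\ast\ast} = a$ for all $a \in R$. One still needs to check that $\ast$ descends to $R = \OO[[t_1,t_2,t_3]][[\mathcal G]]/J$, which is already recorded in the paragraph preceding Lemma \ref{fixedpointsinv} via the observation $J^\ast = J$.

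Given this, the verification is immediate. For the sum,
\[
(a + a^\ast)^\ast = a^\ast + a^{\ast\ast} = a^\ast + a,
\]
so $a + a^\ast$ is fixed by $\ast$. For the product, using that $\ast$ reverses multiplication,
\[
(a^\ast a)^\ast = a^\ast \cdot (a^\ast)^\ast = a^\ast a,
\]
so $a^\ast a$ is also fixed by $\ast$. Lemma \ref{fixedpointsinv} then identifies the $\ast$-fixed subring of $R$ with $\OO[[t_1, t_2, t_3]]$, which concludes the proof.

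There really is no obstacle here: the content of the corollary lies entirely in Lemma \ref{fixedpointsinv} (whose proof in turn rests on Proposition \ref{pseudoprop} and Lemma \ref{injectcentre}). The only small point worth flagging explicitly in the write-up is the anti-multiplicativity of $\ast$, since the paper phrases the definition in a slightly ambiguous way; once that is stated the two computations above are one-liners.
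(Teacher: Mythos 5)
Your argument is correct and is exactly the paper's proof: the paper also deduces the corollary from $(ab)^{\ast}=b^{\ast}a^{\ast}$, $(a^{\ast})^{\ast}=a$ and Lemma \ref{fixedpointsinv}, and you have merely written out the two one-line computations explicitly. Your remark on the anti-multiplicativity of $\ast$ is a fair clarification but does not change the route.
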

\begin{proof} This follows from $(ab)^{\ast}=b^{\ast} a^{\ast}$ and $(a^{\ast})^{\ast}=a$ and Lemma \ref{fixedpointsinv}. 
\end{proof}
  
\begin{cor}\label{involute2} Let $\rho: R\rightarrow \mathfrak A$ be the representation constructed in Proposition \ref{realizeT}. Then 
\begin{equation}\label{theendisinevitable}
\rho(a+a^{\ast})= \tr(\rho(a)), \quad \rho(a^{\ast}a)= \det(\rho(a)), \quad \forall a\in R.
\end{equation}
\end{cor}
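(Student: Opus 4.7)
My plan is to prove both identities by first verifying them on group elements $g \in \GG$, extending by $\OO[[t_1,t_2,t_3]]$-linearity to the dense subring $\OO[[t_1,t_2,t_3]][\GG] + J$, and then passing to the limit. The key preliminary observation is that the image of $\OO[[t_1,t_2,t_3]]$ under $\rho$ lies in the scalar matrices $A \cdot I \subseteq \mathfrak{A}$: this is immediate from the construction in Lemma \ref{injectcentre}, where $\rho(t_1) = (a_1+a_2)/2 \in A$ acts via $(a_1+a_2)/2 \cdot I$, and similarly for $t_2, t_3$. Because of this, $\rho(c)$ for $c \in \OO[[t_1,t_2,t_3]]$ commutes with all of $\rho(R)$ and pulls in and out of the trace and determinant.

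For $g \in \GG$, recall from the proof of Proposition \ref{pseudoprop} that $\det \rho(g) = 1$, so the Cayley-Hamilton identity \eqref{CHam} reads $\rho(g)^2 - \tr(\rho(g))\rho(g) + I = 0$, whence $\rho(g) + \rho(g)^{-1} = \tr(\rho(g))I$. Since $g^* = g^{-1}$, this is precisely $\rho(g + g^*) = \tr(\rho(g)) I$. Likewise $\rho(g^* g) = \rho(1) = I = \det(\rho(g)) I$. Writing a typical element of the dense subring as $a = \sum_g c_g g$ with $c_g \in \OO[[t_1,t_2,t_3]]$, I then compute
\begin{equation*}
\rho(a + a^*) = \sum_g \rho(c_g)(\rho(g) + \rho(g^{-1})) = \sum_g \rho(c_g) \tr(\rho(g)) I = \tr\bigl(\sum_g \rho(c_g)\rho(g)\bigr) = \tr(\rho(a)),
\end{equation*}
where the last two equalities use that each $\rho(c_g)$ is a scalar matrix.

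The determinant identity requires a little more. Introduce the adjugate involution $M \mapsto M^{\vee} := \tr(M) I - M$ on $2\times 2$ matrices, which satisfies $M^{\vee} M = \det(M) I$ and $(MN)^{\vee} = N^{\vee} M^{\vee}$. For $g \in \GG$, since $\det\rho(g)=1$, we have $\rho(g)^{\vee} = \rho(g)^{-1} = \rho(g^*)$. Using that $\rho(c_g)$ is scalar, the same linearity calculation as above gives, for $a = \sum c_g g$ in the dense subring,
\begin{equation*}
\rho(a^*) = \sum_g \rho(c_g) \rho(g)^{\vee} = \tr(\rho(a)) I - \rho(a) = \rho(a)^{\vee}.
\end{equation*}
Hence $\rho(a^* a) = \rho(a^*)\rho(a) = \rho(a)^{\vee} \rho(a) = \det(\rho(a)) I$ on the dense subring. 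Since $\rho$ and $*$ are continuous, and $M \mapsto M^{\vee}$, $\tr$, $\det$ are polynomial maps on the topological ring $\mathfrak{A}$, both sides of each identity in \eqref{theendisinevitable} are continuous in $a$, so the identities extend to all $a \in R$. The only step that requires a small verification is the identification of the image of $\OO[[t_1,t_2,t_3]]$ with scalar matrices; once that is in hand the rest is formal linear algebra together with the known identity on group elements.
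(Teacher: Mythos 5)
Your proof is correct, and for the trace identity it is essentially the paper's argument: the map $a\mapsto \rho(a+a^{\ast})-\tr(\rho(a))$ is killed on $\GG$ by \eqref{Tistrace}, is $\OO[[t_1,t_2,t_3]]$-linear because $\rho$ sends $\OO[[t_1,t_2,t_3]]$ to scalars, and vanishes on all of $R$ by density and continuity. Where you diverge is the determinant identity. You prove $\rho(a^{\ast})=\tr(\rho(a))-\rho(a)$ (the adjugate) element-by-element on the dense subring, using $\det\rho(g)=1$ and the scalar nature of the coefficients, and then run the density/continuity argument a second time. The paper instead deduces the determinant identity purely algebraically once the trace identity is known on all of $R$: from the tautological relation $a^2-(a+a^{\ast})a+a^{\ast}a=0$ in $R$ one gets $\rho(a)^2-\tr(\rho(a))\rho(a)+\rho(a^{\ast}a)=0$, and comparing with the Cayley--Hamilton identity $\rho(a)^2-\tr(\rho(a))\rho(a)+\det(\rho(a))=0$ in $\mathfrak A$ gives $\rho(a^{\ast}a)=\det(\rho(a))$ directly for every $a\in R$, with no second appeal to density or to the shape of elements of the dense subring. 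Both routes are valid; yours makes the relation $\rho\circ\ast=(\,\cdot\,)^{\vee}\circ\rho$ explicit, which is a nice structural fact, while the paper's is shorter and works uniformly for arbitrary $a$ in one stroke.
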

\begin{proof} The function $R\rightarrow \mathfrak A$, $a\mapsto \rho(a+a^{\ast})- \tr(\rho(a))$ is $\OO[[t_1, t_2, t_3]]$-linear, continuous, 
 and zero on $\GG$ by \eqref{Tistrace}. Hence, it is zero on $\OO[[t_1, t_2, t_3]] \bigl [ \GG\bigr]+J$
and since it is  dense  the function is zero on $R$. Now $a^2-( a+a^{\ast}) a +a^{\ast} a=0$ in $R$. 
Hence, 
$$0=\rho(a)^2- \rho(a+a^{\ast}) \rho(a) +\rho(a^{\ast}a)= \rho(a)^2-\tr(\rho(a))\rho(a) +\rho(a^{\ast} a).$$
 Since,  $\rho(a)^2-\tr(\rho(a)) \rho(a) +\det(\rho(a))=0$ in $\mathfrak A$ we get $\rho(a^{\ast}a)=\det \rho(a)$.
\end{proof} 

\begin{cor}\label{involute3} Let $g\in \mathcal G$ and $T$ as in Proposition \ref{pseudoprop} then 
$g^2-T(g) g +1=0$ in $R$.
\end{cor}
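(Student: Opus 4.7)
The plan is to observe that this is an essentially immediate consequence of Proposition \ref{pseudoprop} together with the fact that central elements commute with $g$. By Proposition \ref{pseudoprop}, the element $T(g)$ is defined as $g + g^{-1} \in R$, and the conclusion of that proposition is that $T(g)$ actually lies in the subring $\OO[[t_1,t_2,t_3]]$. Since the elements $t_1, t_2, t_3$ are central in $R$ by construction, $T(g)$ is a central element of $R$.

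Consequently, $T(g)$ commutes with $g$, and we may compute
\begin{equation*}
T(g)\,g \;=\; g\,T(g) \;=\; g(g+g^{-1}) \;=\; g^2 + g g^{-1} \;=\; g^2 + 1,
\end{equation*}
where the last equality uses that $g \in \mathcal{G}$ has a two-sided inverse $g^{-1}$ in $R$ (coming from the group $\mathcal{G}$ sitting inside $R$). Rearranging yields
\begin{equation*}
g^2 - T(g)\,g + 1 \;=\; g^2 - (g^2+1) + 1 \;=\; 0,
\end{equation*}
as required.

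There is no real obstacle here; the work has all been done in setting up the ring $R$ so that the defining relations of $J$ encode precisely the Cayley--Hamilton identity for the generators of $\Gamma$, and in Proposition \ref{pseudoprop} where it is established that the trace function $T$ is central-valued. One could alternatively deduce the identity by applying the faithful representation $\rho$ used in the proof of Lemma \ref{injectcentre} and invoking Corollary \ref{involute2} together with classical Cayley--Hamilton for $2\times 2$ matrices, but the direct calculation above is cleaner and avoids any appeal to injectivity of $\rho$ on non-central elements (which is not available).
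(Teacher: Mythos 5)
Your proof is correct and is essentially the paper's argument: both reduce to the one-line identity $(g+g^{-1})g = g^2 + 1$ once one knows $T(g)=g+g^{-1}$ and that $g$ is invertible in $R$. The paper phrases this via the involution $a\mapsto a^{\ast}$ (using $g^{\ast}=g^{-1}$, $g^{\ast}g=1$ and the identity $g^2-(g+g^{\ast})g+g^{\ast}g=0$), while you route through centrality of $T(g)$ — which, as your own display shows, is not actually needed since $(g+g^{-1})g$ can be expanded directly — but the substance is identical.
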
 
\begin{proof} We have $T(g)=g+g^{-1}=g +g^{\ast}$ and the assertion follows from \eqref{theendisinevitable} and 
the identity $g^2-(g+g^{\ast}) g + g^{\ast}g=0$.
\end{proof}

 To ease the calculations we set 
\begin{equation}\label{defuv}
u:= x-t_1=\gamma-1-t_1, \quad v:= y-t_2=\delta-1-t_2.
\end{equation}
We note that the images of $u$ and $v$ form a $k$-basis of $\mm_R/(\mm_R^2+\varpi_L R)$ and hence it follows from \eqref{surjectOGR} 
that $u$ and $v$ generate $R$ topologically over $\OO$. Then \eqref{xyt} reads 
\begin{equation}\label{uvsquared} 
u^2= 2t_1-t_1^2, \quad v^2=2t_2-t_2^2.
\end{equation}
In particular, $u^2$ and $v^2$ are central in $R$. Hence, 
\begin{equation}\label{commuteanti}
u(uv-vu)= -(uv-vu)u, \quad v(uv-vu)= -(uv-vu)v.
\end{equation}
We also note that substituting $t_1= \frac{\gamma+\gamma^{-1}}{2}-1$ and $t_2= \frac{\delta+\delta^{-1}}{2}-1$ in \eqref{defuv} gives 
$u= \frac{\gamma-\gamma^{-1}}{2}$, $v= \frac{\delta-\delta^{-1}}{2}$. Hence, 
\begin{equation}\label{uvast}
u^{\ast}=-u , \quad v^{\ast}=-v, \quad (uv-vu)^{\ast}=-(uv-vu), \quad (uv+vu)^{\ast}= uv+vu
\end{equation}

\begin{lem}\label{expressa1} Every element $a\in R$ maybe written as
\begin{equation}\label{expressa}
a= \lambda_1 + \lambda_2 u + \lambda_3 v + \lambda_4 (uv-vu)
\end{equation}
with $\lambda_i\in \OO[[t_1, t_2, t_3]]$.
\end{lem}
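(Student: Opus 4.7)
Let $w:=uv-vu$ and let
$$M:=\OO[[t_1,t_2,t_3]]\cdot 1 + \OO[[t_1,t_2,t_3]]\cdot u + \OO[[t_1,t_2,t_3]]\cdot v + \OO[[t_1,t_2,t_3]]\cdot w \subseteq R.$$
The goal is to show $M=R$. Since $M$ is a finitely generated module over the compact (noetherian) ring $\OO[[t_1,t_2,t_3]]$, it is automatically closed in $R$; since $\gamma=1+t_1+u$, $\delta=1+t_2+v$ lie in $M$ and (by Corollary~\ref{involute3} applied to $g=\gamma,\delta$) also $\gamma^{-1}=2(1+t_1)-\gamma$ and $\delta^{-1}=2(1+t_2)-\delta$ lie in $M$, it suffices to prove that $M$ is a subring; for then $M$ contains the dense subring $\OO[[t_1,t_2,t_3]][\gamma^{\pm 1},\delta^{\pm 1}]$ of $R$, and closedness of $M$ gives $M=R$.

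The first key ingredient is that $uv+vu\in\OO[[t_1,t_2,t_3]]$. Indeed, by \eqref{uvast}, $(uv+vu)^\ast=v^\ast u^\ast+u^\ast v^\ast=vu+uv$, so $uv+vu$ is $\ast$-fixed and Lemma~\ref{fixedpointsinv} applies. Combined with $w\in M$, this immediately gives $uv,vu\in M$. Similarly $w^\ast=-w$ forces $w^\ast w=-w^2\in\OO[[t_1,t_2,t_3]]$ by Corollary~\ref{involute1}, so $w^2\in M$.

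It remains to check that $M\cdot u, M\cdot v\subseteq M$ (right multiplication suffices, as the bases I use are on the left). Using \eqref{uvsquared} and the central character of $u^2,v^2$, only the products $uw$ and $vw$ require work. Setting $c:=uv+vu\in\OO[[t_1,t_2,t_3]]$, I would compute
$$uw=u(uv-vu)=u^2v-uvu=u^2v-\tfrac{1}{2}(c+w)u=u^2v-\tfrac{1}{2}cu-\tfrac{1}{2}wu=u^2v-\tfrac{1}{2}cu+\tfrac{1}{2}uw,$$
where the last equality uses $wu=-uw$ from \eqref{commuteanti}. Solving gives $uw=2u^2v-cu\in M$; the parallel calculation with the roles of $u,v$ swapped yields $vw=cv-2v^2u\in M$. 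Finally $wu=-uw$ and $wv=-vw$ by \eqref{commuteanti}, and $w^2\in\OO[[t_1,t_2,t_3]]$ as already noted. This exhausts all products of the generators $1,u,v,w$ by $u$ and $v$, so $M$ is a subring, completing the argument.

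There is no real obstacle here; the only point that needs care is the organisation of the (small) computation in the previous paragraph, and the verification that $\OO[[t_1,t_2,t_3]][\gamma^{\pm 1},\delta^{\pm 1}]$ is dense in $R$, which follows at once from the definition of $R$ as a quotient of $\OO[[t_1,t_2,t_3]]\widehat{\otimes}_{\OO}\OO[[\GG]]$ together with the fact that the subgroup generated by $\gamma,\delta$ is dense in $\GG$.
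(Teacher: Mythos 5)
Your proof is correct. The computational core — that $u^2$, $v^2$, $uv+vu$ and $(uv-vu)^2$ lie in $\OO[[t_1,t_2,t_3]]$ (via \eqref{uvsquared}, \eqref{uvast}, Lemma \ref{fixedpointsinv} and Corollary \ref{involute1}) together with the anticommutation relations \eqref{commuteanti} — is exactly what the paper uses, but the packaging differs. The paper invokes \eqref{surjectOGR} to write an arbitrary $a\in R$ as a non-commutative formal power series in $u,v$ and then reduces monomial by monomial, substituting $uv=\tfrac12\bigl((uv+vu)+(uv-vu)\bigr)$, $vu=\tfrac12\bigl((uv+vu)-(uv-vu)\bigr)$ and using $uvu=(uv+vu)u-u^2v$, $vuv=(uv+vu)v-v^2u$; you instead show that the $\OO[[t_1,t_2,t_3]]$-module $M$ spanned by $1,u,v,uv-vu$ is a closed subring containing $\gamma^{\pm1},\delta^{\pm1}$, hence contains a dense subring of $R$ and therefore equals $R$. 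Your identities $uw=2u^2v-cu$ and $vw=cv-2v^2u$ are correct, and the reduction to checking right multiplication by $u$ and $v$ alone is legitimate since $w=uv-vu$ and the coefficients are central. Your organisation has the small advantage of making explicit the limit step that the paper's term-by-term rewriting of an infinite power series leaves implicit: $M$ is the continuous image of the compact module $\OO[[t_1,t_2,t_3]]^{\oplus 4}$, hence compact and closed (noetherianity of $\OO[[t_1,t_2,t_3]]$ is not actually needed for this), at the cost of the short extra verifications that $M$ is multiplicatively closed and that $\OO[[t_1,t_2,t_3]][\gamma^{\pm1},\delta^{\pm1}]$ is dense in $R$, which is the same density used elsewhere in the paper (e.g.\ in Lemma \ref{fixedpointsinv}).
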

\begin{proof} It follows form \eqref{surjectOGR} that $a$ may be written as a formal power series with coefficients 
in $\OO$ in (non-commuting) variables $u$, $v$. It follows from \eqref{uvsquared} that $u^2, v^2\in \OO[[t_1, t_2, t_3]]$ 
so we only need to deal with monomials  of the form $(uv)^n$, $(uv)^n u$, $(vu)^n$, $(vu)^nv$. 
Lemma \ref{fixedpointsinv} and \eqref{uvast} give that $(uv-vu)^2, uv+vu\in \OO[[t_1, t_2, t_3]]$ and since $2$ is invertible 
in $R$,  we may substitute $uv= \frac{(uv-vu)+ (uv+vu)}{2}$ and $vu= \frac{(uv+vu)- (uv-vu)}{2}$. Thus 
$(uv)^2= \lambda uv- \lambda vu + \mu$  and $(vu)^2=\lambda vu- \lambda uv + \mu$ with $\lambda, \mu\in \OO[[t_1,t_2,t_3]]$,
which leaves us to deal with  
$uvu$ and $vuv$. Since $uvu=(uv+vu)u - u^2v$ and $vuv=(uv+vu)v - v^2 u$ we are done.
\end{proof}

\begin{cor}\label{dimle2} Let $L'$ be a finite extension of $L$ and $W$ a finite dimensional $L'$-vector space with continuous 
$R$-action $\tau: R\otimes_{\OO} L' \rightarrow \End_{L'}(W)$. Suppose that the representation $W$ is absolutely irreducible, then $\dim_{L'} W\le 2$. 
\end{cor}
\begin{proof} Since $W$ is absolutely irreducible and finite dimensional over $L'$ we have $\End_{\GG}(W)=L'$ and thus $\tau$ induces a continuous
homomorphism of $\OO$-algebras $\OO[[t_1, t_2, t_3]]\rightarrow L'$. Since $W$ is absolutely irreducible $\tau$ is surjective. 
It follows from Lemma \ref{expressa1} that  $(\dim_{L'}  W)^2 = \dim_{L'} \End_{L'} W =\dim_{L'} (\tau(R)\otimes_{\OO} L') \le 4$. 
\end{proof}

\begin{lem}\label{uvvunon} $(uv-vu)^{\ast}(uv-vu)\neq 0$ in $R$.
\end{lem}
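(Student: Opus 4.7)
The plan is to reduce the nonvanishing of $(uv-vu)^\ast(uv-vu)$ to a short matrix calculation with a single explicit two-dimensional representation of $R$. First observe that by \eqref{uvast} we have $(uv-vu)^\ast = -(uv-vu)$, and hence $(uv-vu)^\ast(uv-vu) = -(uv-vu)^2$ is $\ast$-invariant. By Lemma \ref{fixedpointsinv} this element therefore lies in the central subring $\OO[[t_1,t_2,t_3]]$, and by Lemma \ref{injectcentre} the inclusion $\OO[[t_1,t_2,t_3]] \hookrightarrow R$ is injective. Consequently it will be enough to exhibit any continuous $\OO$-algebra homomorphism $\widetilde{\tau}: R \to M_2(\OO)$ under which $(uv-vu)^2$ has nonzero image.

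Such a $\widetilde{\tau}$ is produced from any continuous two-dimensional representation $\tau: \GG \to \SL_2(\OO)$: by Cayley--Hamilton, $\tau$ automatically satisfies every defining relation of $R$ once one sends $t_1$, $t_2$, $t_3$ to the normalised half-traces $\tfrac{1}{2}\tr\tau(\gamma) - 1$, $\tfrac{1}{2}\tr\tau(\delta) - 1$, $\tfrac{1}{2}\tr\tau(\gamma\delta) - 1$ respectively, so $\tau$ extends uniquely to $\widetilde{\tau}: R \to M_2(\OO)$. Moreover, by Proposition \ref{pseudoprop} we have $u = (\gamma - \gamma^{-1})/2$ and $v = (\delta - \delta^{-1})/2$ in $R$, so $\widetilde{\tau}(u)$ and $\widetilde{\tau}(v)$ are computed explicitly from $\tau(\gamma)$ and $\tau(\delta)$; the task is to choose $\tau$ so that $\widetilde{\tau}(uv-vu) \neq 0$ in $M_2(\OO)$.

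The concrete choice that works is $\tau(\gamma) = \mathrm{diag}(1+p,(1+p)^{-1})$ and $\tau(\delta) = \left(\begin{smallmatrix} 1 & p \\ p & 1+p^2 \end{smallmatrix}\right)$. Both matrices lie in $1 + pM_2(\OO)$ (so $\tau$ is continuous on the pro-$p$ group $\GG$) and have determinant $1$. A direct computation then yields
\[
\widetilde{\tau}(uv-vu) = \begin{pmatrix} 0 & 2\mu p \\ -2\mu p & 0 \end{pmatrix}, \qquad \mu := \tfrac{1}{2}\bigl((1+p)-(1+p)^{-1}\bigr) \equiv p \pmod{p^2},
\]
and hence $\widetilde{\tau}((uv-vu)^2) = -4\mu^2 p^2 \cdot I_2$, which is a nonzero element of $M_2(\OO)$. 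This suffices.

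The only genuine subtlety is the choice of $\tau$. Pro-$p$ continuity forces $\tau(\gamma)$ and $\tau(\delta)$ to be close to the identity, and several seemingly natural choices (e.g.\ taking $\tau(\delta)$ upper-triangular) collapse the commutator to zero; the representation above is essentially the simplest one which remains noncommutative at the required depth. Everything else is bookkeeping around the involution $\ast$ and the injectivity of $\rho$ on the centre.
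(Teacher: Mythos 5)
Your argument is correct, but it takes a genuinely different (and more concrete) route than the paper. You construct a specific test representation $\tau:\GG\to \SL_2(\OO)$ with $\tau(\gamma)$ diagonal and $\tau(\delta)$ non-triangular, extend it to $\widetilde\tau:R\to M_2(\OO)$ by Cayley--Hamilton, and then verify by hand that $\widetilde\tau((uv-vu)^2)=-4\mu^2p^2I_2\neq 0$. The paper instead works with the ``generic'' representation $\rho:R\to\mathfrak A\subset M_2(A)$ already built in the proof of Lemma \ref{injectcentre}, observes (from \eqref{defuv}) that $uv-vu=\gamma\delta-\delta\gamma$, invokes Corollary \ref{involute2} to reduce the claim to $\det\bigl(\rho(\gamma\delta-\delta\gamma)\bigr)\neq 0$, and then specializes $t_1=t_2=0$ (which shrinks $\rho(\gamma)$, $\rho(\delta)$ to $\left(\begin{smallmatrix}1&1\\0&1\end{smallmatrix}\right)$, $\left(\begin{smallmatrix}1&0\\2t_3&1\end{smallmatrix}\right)$ and yields determinant $-4t_3^2\neq 0$ in $\OO[[t_3]]$). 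The paper's route avoids having to hunt for a good $\OO$-point and keeps one free variable $t_3$ around, whereas your route is more hands-on but requires the careful matrix choice you correctly flag as the crux. Both reduce the question to $2\times 2$ matrices via essentially the same Cayley--Hamilton observation.

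Two small remarks. First, the sentences about $\OO[[t_1,t_2,t_3]]\hookrightarrow R$ being injective, while true, are not logically needed for your argument: to conclude $(uv-vu)^2\neq 0$ in $R$ it already suffices to exhibit \emph{any} ring homomorphism out of $R$ under which its image is nonzero, regardless of whether the element is central. Second, the identities $u=(\gamma-\gamma^{-1})/2$, $v=(\delta-\delta^{-1})/2$ are stated in the text just after \eqref{uvsquared}, not in Proposition \ref{pseudoprop}; this is a citation slip only, not a gap.
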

\begin{proof} It follows from \eqref{defuv} that $uv-vu= \gamma \delta-\delta\gamma$. Using Corollary \ref{involute2} it is enough 
to show that $\det(\rho(\gamma\delta-\delta \gamma))\neq 0$. If we specialize $t_1=t_2=0$ this means it is enough to show that 
the determinant of 
$$ \begin{pmatrix} 1 & 1 \\ 0 & 1\end{pmatrix} \begin{pmatrix}1 & 0 \\ 2t_3 & 1\end{pmatrix}- \begin{pmatrix}1 & 0 \\ 2t_3 & 1\end{pmatrix} 
\begin{pmatrix} 1 & 1 \\ 0 & 1\end{pmatrix}= \begin{pmatrix} 2t_3 & 0 \\ 0 & -2t_3 \end{pmatrix}$$ 
is non-zero in $\OO[[t_3]]$, which is clear.
\end{proof}

\begin{lem}\label{preweiterso} Let $\mm$ be a maximal ideal of $C[1/p]$ and let $\rho_{\mm}: R\rightarrow \mathfrak A\otimes_C \kappa(\mm)$
 be the specialization at $\mm$ of the representation $\rho$ constructed in Proposition \ref{realizeT}. Then the following are equivalent: 
 \begin{itemize}
 \item[(i)] $\rho_{\mm}$ is absolutely irreducible;
 \item[(ii)] $\rho_{\mm}(uv -vu)$ is invertible;
 \item[(iii)] $(uv - vu)(uv -vu)^{\ast}\not\in \mm$.
 \end{itemize}
 \end{lem}
 \begin{proof} (i) implies (ii). The kernel of $\rho_{\mm}(uv -vu)$ is stable under $u$ and $v$, see \eqref{commuteanti}. If $\rho_{\mm}(uv -vu)=0$ then 
 the action of $\GG$ factors throughout its abelian quotient, as $\gamma \delta -\delta \gamma= uv -vu$. Since $\rho_{\mm}$ is absolutely 
 irreducible, this would force the dimension of $\rho_{\mm}$ over $\kappa(\mm)$ to be $1$. Since the dimension is $2$, we deduce that the kernel of 
 $\rho_{\mm}(uv-vu)$ is zero, and hence it is invertible.

 (ii) implies (i). Suppose that $\rho_{\mm}$ is not absolutely irreducible. Then after replacing $\kappa(\mm)$ be a finite extension, we may choose a basis 
 such that the matrices of $\rho_{\mm}(\gamma)$ and $\rho_{\mm}(\delta)$ are both upper-triangular. Since, $uv-vu =\gamma \delta- \delta \gamma$ we deduce 
 that $\rho_{\mm}(uv -vu)$ is nilpotent.

(ii) is equivalent to (iii).  It follows from  \eqref{theendisinevitable} that the image of $(uv - vu)(uv -vu)^{\ast}$ in $\kappa(\mm)$ is equal to the determinant of $\rho_{\mm}(uv-vu)$.
 \end{proof}

\begin{lem}\label{fnotvanish} Let $f\in \OO[[x_1, \ldots, x_n]]$ be non-zero then there exists $a_i\in \pL$, $1\le i\le n$ such that
$f(a_1,\ldots, a_n)\neq 0$.
\end{lem}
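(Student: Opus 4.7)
The plan is to prove this by induction on $n$, with the key input being Weierstrass preparation (or a suitable substitute) for the one-variable case, together with the fact that $\pL$ is infinite (indeed uncountable, since $\OO$ is the ring of integers in a finite extension of $\Qp$).

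For the base case $n=1$, suppose $f \in \OO[[x]]$ is non-zero. I write $f = \varpi^m g$ where $g \in \OO[[x]]$ with $g \not\equiv 0 \pmod{\varpi}$. By the Weierstrass preparation theorem for complete DVRs, $g = u \cdot p$ where $u \in \OO[[x]]^\times$ and $p \in \OO[x]$ is a distinguished polynomial. For any $a \in \pL$, the unit $u(a)$ is a unit in $\OO$, so $f(a) = 0$ if and only if $p(a) = 0$. Since $p$ is a polynomial, it has only finitely many roots in $\pL$, but $\pL$ is infinite, so some $a \in \pL$ satisfies $f(a) \neq 0$.

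For the inductive step, write
\begin{equation*}
f = \sum_{i \ge 0} f_i \, x_1^i, \qquad f_i \in \OO[[x_2, \ldots, x_n]].
\end{equation*}
Since $f \neq 0$, there exists some $i_0$ with $f_{i_0} \neq 0$. By the inductive hypothesis applied to $f_{i_0}$, there exist $a_2, \ldots, a_n \in \pL$ such that $f_{i_0}(a_2, \ldots, a_n) \neq 0$. Then the specialization $\tilde f(x_1) := f(x_1, a_2, \ldots, a_n) \in \OO[[x_1]]$ is non-zero, since its $x_1^{i_0}$-coefficient $f_{i_0}(a_2, \ldots, a_n)$ is non-zero. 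Applying the base case to $\tilde f$ produces $a_1 \in \pL$ with $f(a_1, a_2, \ldots, a_n) = \tilde f(a_1) \neq 0$.

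There is no real obstacle here; the only subtlety is verifying that substitution $x_i \mapsto a_i$ with $a_i \in \pL$ is well defined as a continuous $\OO$-algebra map $\OO[[x_1, \ldots, x_n]] \to \OO$, which follows from $\pL$-adic convergence. The statement is clearly sharp in the sense that one needs $a_i \in \pL$ rather than just in $\OO$ for the substitution to make sense on all of $\OO[[x_1, \ldots, x_n]]$.
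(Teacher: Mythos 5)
Your proof is correct, but it runs on a different key input than the paper's. The paper's argument is: $\OO[[x_1,\ldots,x_n]]$ is a UFD, so the non-zero $f$ is divisible by only finitely many primes; since $\pL$ is infinite one can pick $a_n\in\pL$ with $x_n-a_n\nmid f$, so the image of $f$ in $\OO[[x_1,\ldots,x_n]]/(x_n-a_n)\cong\OO[[x_1,\ldots,x_{n-1}]]$ is non-zero, and one induces downward variable by variable. You instead never invoke unique factorisation: you expand $f=\sum_i f_i x_1^i$, use the inductive hypothesis to specialise $x_2,\ldots,x_n$ so that some coefficient $f_{i_0}$ survives, and then settle the remaining one-variable case by Weierstrass preparation (unit times distinguished polynomial, finitely many roots, $\pL$ infinite). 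Both routes are sound; the paper's is shorter, with the single citation to the UFD property of the power series ring doing all the work, while yours is more elementary in its inputs but requires the small verifications you flag — that evaluation at points of $\pL$ is a continuous $\OO$-algebra map (it sends the maximal ideal into the maximal ideal, so the series converge) and that it is compatible with the expansion along $x_1$, so the $x_1^{i_0}$-coefficient of the specialisation is indeed $f_{i_0}(a_2,\ldots,a_n)$; your extraction of $\varpi^m$ so that Weierstrass preparation applies is also fine, since the valuations of the non-zero coefficients attain a minimum. No gap.
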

\begin{proof} Since $\OO[[x_1,\ldots, x_n]]$ is a unique factorisation domain, \cite[20.3]{matsumura}, $f$ is divisible by only finitely 
many prime elements. Hence, we may find $a_n\in \pL$ such that $x_n-a_n$ does not divide $f$. Let $f_1$ be the image 
of $f$ in $\OO[[x_1, \ldots, x_n]]/(x_n-a_n)\cong \OO[[x_1,\ldots, x_{n-1}]]$. By construction $f_1$ is non-zero and we proceed as before.
\end{proof}

\begin{prop}\label{weiterso} Let $a=\lambda_1 + \lambda_2u + \lambda_3 v + \lambda_4(uv-vu)\in R$ with $\lambda_i\in \OO[[t_1, t_2, t_3]]$ and 
not all $\lambda_i$ equal to zero. Then there exists a finite extension $L'$ of $L$ and a $2$-di\-men\-sio\-nal $L'$-vector space 
$W$, with a continuous action $\tau: R\otimes_{\OO} L'\rightarrow \End_{L'} W$ such that $\tau$ is absolutely irreducible and $\tau(a)\neq 0$.
\end{prop}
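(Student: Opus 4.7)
The plan is to use the explicit $2$-dimensional representation $\rho : R \to M_2(A)$ constructed in the proof of Lemma~\ref{injectcentre}, and to specialize it to an absolutely irreducible representation over a finite extension of $L$ in which the image of $a$ remains nonzero. With $\rho(u) = \alpha - t_1 I$ and $\rho(v) = \beta - t_2 I$ (so $\rho(u)$ is upper triangular and $\rho(v)$ lower triangular in the basis of Lemma~\ref{injectcentre}), a direct matrix calculation gives $\rho(uv - vu)$ explicitly in terms of $b$, $c := a_1 - t_1$, and $d := b_1 - t_2$. Substituting the defining formula $b = 2(t_3 - t_1 - t_2 - t_1 t_2) - 2cd$, one reads off that the $(1,1)$-entry of $\rho(a) = \lambda_1 I + \lambda_2 \rho(u) + \lambda_3 \rho(v) + \lambda_4 \rho(uv - vu)$ equals
$$
f := \lambda_1 + \lambda_2 c + \lambda_3 d + \lambda_4 b = \bigl(\lambda_1 + 2\lambda_4(t_3 - t_1 - t_2 - t_1 t_2)\bigr) + \lambda_2 c + \lambda_3 d - 2\lambda_4 cd.
$$

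The ring $A$ is a free $\OO[[t_1, t_2, t_3]]$-module of rank four with basis $\{1, c, d, cd\}$, and is an integral domain: after eliminating $a_2$ and $b_2$ via the linear relations, $a_1$ and $b_1$ each satisfy an irreducible monic quadratic over the base, the constant terms vanishing to order exactly one at the origin and so failing to be squares. Expanding $f$ in the basis $\{1, c, d, cd\}$, vanishing forces $\lambda_4 = 0$ (from the $cd$-coefficient), then $\lambda_2 = \lambda_3 = 0$, and finally $\lambda_1 = 0$; hence under the hypothesis of the proposition $f$ is nonzero in $A$. Setting $h := b \cdot (b + 4cd)$, both factors are nonzero in $A$ (each has $\pm 2$ as its $cd$-coefficient), so $g := fh$ is nonzero in the domain $A$, and the norm $N(g) \in \OO[[t_1, t_2, t_3]]$ of the finite flat extension $A / \OO[[t_1, t_2, t_3]]$ is a nonzero formal power series.

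By Lemma~\ref{fnotvanish} I can then choose $\alpha_1, \alpha_2, \alpha_3 \in \pL$ with $N(g)(\alpha) \neq 0$; let $M$ be a finite extension of $L$ containing square roots $c_0, d_0$ of the specializations of $c^2$ and $d^2$. Then $A \otimes_{\OO[[t_1, t_2, t_3]]} M \cong M^4$, one factor per sign choice of $(c_0, d_0)$, and $N(g)(\alpha)$ is the product of the four specializations of $g$, so some factor has $g$ nonzero and hence each of $f$, $b$, $b + 4cd$ nonzero; picking this factor yields a continuous $\OO$-algebra homomorphism $\tau : R \otimes_\OO M \to M_2(M)$ with $\tau(a) \neq 0$. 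Finally, $\tau$ is absolutely irreducible: $\tau(\gamma) = I + \alpha$ is upper triangular with $(1,2)$-entry $1$, so its eigenlines are $M e_1$ and (when $c_0 \neq 0$) the line spanned by ${(-1, 2c_0)}^{\mathrm t}$; $\tau(\delta) = I + \beta$ is lower triangular, and a direct computation shows $M e_1$ is $\tau(\delta)$-stable iff $b = 0$, while the other eigenline of $\tau(\gamma)$ is $\tau(\delta)$-stable iff $b + 4 c_0 d_0 = 0$; both conditions fail by construction. The main obstacle is the need to keep $\tau(a)$ nonzero simultaneously with the two irreducibility discriminants $b$ and $b + 4cd$ at the specialization, which is precisely what combining them into the single element $g = fh$ of the domain $A$ and passing to its norm accomplishes.
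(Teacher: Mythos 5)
Your argument is correct, and although it starts from the same two ingredients as the paper --- the explicit representation $\rho\colon R\rightarrow \mathfrak A$ of Lemma \ref{injectcentre} and specialization of the central variables via Lemma \ref{fnotvanish} --- it executes the two key steps by a genuinely different mechanism. The paper never opens up the matrices: it pushes all non-vanishing conditions into the centre by forming $\lambda=(uv-vu)^{\ast}(uv-vu)\in\OO[[t_1,t_2,t_3]]$ (non-zero by Lemma \ref{uvvunon}), multiplies $\lambda$ by the non-zero $\lambda_i$, and specializes where this single central power series is non-zero; irreducibility then falls out because a reducible $2$-dimensional $\tau$ would make $\tau(uv-vu)$ nilpotent, so $\tau(\lambda)=\det\tau(uv-vu)=0$ by Corollary \ref{involute2}, and $\tau(a)\neq 0$ follows from surjectivity of $\tau$ together with Lemma \ref{expressa1}, which forces $1,\tau(u),\tau(v),\tau(uv-vu)$ to be a basis of $\End_M(W)$. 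You instead compute the $(1,1)$-entry of $\rho(a)$ explicitly, detect its non-vanishing (and that of the two discriminants $b$ and $b+4cd$) in the free basis $\{1,c,d,cd\}$ of $A$, and enforce simultaneous non-vanishing at a specialization by taking the norm of $g=fb(b+4cd)$ down to the centre, checking irreducibility by hand on the eigenlines of $\tau(\gamma)$. The paper's central-element trick buys a basis-free argument that needs no integrality properties of $A$ and is reused in Corollaries \ref{weiterso3}--\ref{weiterso5}; your computation is more elementary and even exhibits a specific non-zero matrix entry of $\tau(a)$, at the cost of extra bookkeeping exactly where the norm enters.

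Two points in your write-up should be tightened, though neither is a genuine gap. First, for $A$ to be a domain it is not enough that each quadratic satisfied by $a_1$ and $b_1$ is irreducible over $\OO[[t_1,t_2,t_3]]$: you also need the two quadratic extensions to be linearly disjoint, i.e.\ that $d^2$ remains a non-square after adjoining $c$ (which one sees, for instance, from the valuation at the prime $(t_2)$, which stays odd in that extension); this is what guarantees $g$ is a nonzerodivisor, without which $N(g)$ could vanish identically. Second, $A\otimes_{\OO[[t_1,t_2,t_3]]}M\cong M^4$ only holds when $\alpha_1,\alpha_2\neq 0$; either throw the extra factor $t_1t_2$ into the power series before applying Lemma \ref{fnotvanish}, or simply observe that $N(g)(\alpha)\neq 0$ makes the image of $g$ a unit in $A\otimes M$, so that $f$, $b$ and $b+4cd$ are non-zero under any $M$-point of $A$ extending the chosen specialization.
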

\begin{proof} Let $\lambda=(uv-vu)^{\ast}(uv-vu) \in R$, we note that $\lambda$ is non-zero in $R$ by Lemma \ref{uvvunon}. 
Let $f\in \OO[[t_1, t_2, t_3]]$ be the product of $\lambda$ and non-zero $\lambda_i$'s. By Lemma \ref{fnotvanish} we may find 
$a_1, a_2, a_3\in \pL$ such that $f(a_1, a_2, a_3)\neq 0$. 
Let $C$ be the ring defined in Definition \ref{bigC} and let $\mm$ be any maximal 
ideal of $C[1/p]$ containing $(t_1-a_1, t_2-a_2, t_3-a_3)$. Then the residue field $\kappa(\mm)$ of $\mm$  is a finite extension of $L$. Moreover, the image 
of $f$ in $\kappa(\mm)$ is equal to $f(a_1, a_2, a_3)$ and hence is non-zero. So the image of $\lambda$ in $\kappa(\mm)$ is non-zero, 
and not all $\lambda_i$ map to $0$ in $\kappa(\mm)$.  Let $\tau=\rho_{\mm}: R\otimes_{\OO}\kappa(\mm)\rightarrow \mathfrak A\otimes_C \kappa(\mm)$ as in Lemma \ref{preweiterso}.
Since the image of $\lambda$ in $\kappa(\mm)$ is non-zero by construction, Lemma \ref{preweiterso} implies that $\tau$ is absolutely irreducible. Thus, $\tau$ is surjective. 
Since $\dim_{\kappa(\mm)}  \mathfrak A\otimes_C \kappa(\mm)=4$, 
we deduce from Lemma \ref{expressa1} that $1$, $\tau(u)$, $\tau(v)$ and $\tau(uv-vu)$ are linearly independent. Hence, $\tau(a)\neq 0$, 
as $\kappa(\mm)$ was constructed so that the images of non-zero $\lambda_i$ are non-zero.
\end{proof}

\begin{cor}\label{weiterso3} The centre of $R$ is equal to $\OO[[t_1, t_2, t_3]]$.
\end{cor}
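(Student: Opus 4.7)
The plan is to combine Lemma \ref{expressa1} (normal form) with the existence result of Proposition \ref{weiterso} (absolutely irreducible $2$-dimensional quotients separating points), using Schur's lemma as the crucial rigidity input.

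The containment $\OO[[t_1,t_2,t_3]]\subseteq Z(R)$ is built into the construction: the elements $t_1,t_2,t_3$ are central in $\OO[[t_1,t_2,t_3]]\widehat{\otimes}_{\OO}\OO[[\GG]]$ and the ideal $J$ is two-sided, so their images lie in $Z(R)$, and Lemma \ref{injectcentre} gives that the map $\OO[[t_1,t_2,t_3]]\to R$ is injective. For the reverse inclusion, I would take an arbitrary $a\in Z(R)$, invoke Lemma \ref{expressa1} to write
\[
a=\lambda_1+\lambda_2 u+\lambda_3 v+\lambda_4(uv-vu), \qquad \lambda_i\in\OO[[t_1,t_2,t_3]],
\]
and show $\lambda_2=\lambda_3=\lambda_4=0$.

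Suppose for contradiction that some $\lambda_i$ with $i\in\{2,3,4\}$ is non-zero. Apply Proposition \ref{weiterso} to the non-zero element $\lambda_i u$ (respectively $\lambda_i v$ or $\lambda_i(uv-vu)$) to obtain a finite extension $M/L$ and a continuous absolutely irreducible $2$-dimensional representation $\tau\colon R\otimes_{\OO}M\to\End_M(W)$ with $\tau(\lambda_i)\neq 0$. The proof of that proposition in fact shows the stronger statement that $1,\tau(u),\tau(v),\tau(uv-vu)$ are linearly independent over $M$ (the image of $\tau$ exhausts the $4$-dimensional algebra $\mathfrak{A}\otimes_A M$). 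Since $a$ is central, $\tau(a)$ commutes with the image of $\tau$, and absolute irreducibility plus Schur's lemma force $\tau(a)=c\cdot 1$ for some $c\in M$. Expanding
\[
c=\tau(\lambda_1)+\tau(\lambda_2)\tau(u)+\tau(\lambda_3)\tau(v)+\tau(\lambda_4)\tau(uv-vu)
\]
and using the linear independence yields $\tau(\lambda_2)=\tau(\lambda_3)=\tau(\lambda_4)=0$, contradicting $\tau(\lambda_i)\neq 0$.

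The only step requiring any care is the assertion that $\tau(a)$ is a scalar: this is standard Schur, provided one records that ``absolutely irreducible'' in the sense used here gives $\End_{R\otimes M}(W)=M$. There is no real obstacle to overcome beyond this — the entire content has already been packaged in Proposition \ref{weiterso}, which was designed precisely to give both non-vanishing of a prescribed coefficient and the full $4$-dimensional image needed to read off the remaining coefficients.
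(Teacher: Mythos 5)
Your proof is correct, but it follows a different route from the paper's. The paper also reduces to Proposition \ref{weiterso} and Schur's lemma, but it isolates the obstruction via the involution $\ast$: given a central $a$, the antisymmetric part $\frac{a-a^{\ast}}{2}$ is again central, and if it were non-zero one takes $\tau$ as in Proposition \ref{weiterso} with $\tau\bigl(\frac{a-a^{\ast}}{2}\bigr)\neq 0$; Schur makes this a scalar while Corollary \ref{involute2} gives $\tr\tau\bigl(\frac{a-a^{\ast}}{2}\bigr)=\tau\bigl(\frac{a-a^{\ast}}{2}+(\frac{a-a^{\ast}}{2})^{\ast}\bigr)=0$, a contradiction since $2$ is invertible; then Lemma \ref{fixedpointsinv} identifies the $\ast$-fixed central elements with $\OO[[t_1,t_2,t_3]]$. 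You instead work with the normal form of Lemma \ref{expressa1} and kill the coefficients of $u$, $v$, $uv-vu$ by linear independence in a well-chosen irreducible specialization. Both arguments are sound; yours trades the involution formalism (Lemma \ref{fixedpointsinv}, Corollary \ref{involute2}) for the spanning set of Lemma \ref{expressa1}, at the price of appealing to the proof rather than the statement of Proposition \ref{weiterso} for the independence of $1,\tau(u),\tau(v),\tau(uv-vu)$. That appeal is legitimate (the independence is recorded there), and in fact avoidable: since $\tau$ is absolutely irreducible and $2$-dimensional it is surjective, so $\tau(R\otimes_{\OO}M)=\End_M(W)$ is $4$-dimensional, and by Lemma \ref{expressa1} it is spanned over $M$ by $1,\tau(u),\tau(v),\tau(uv-vu)$, which forces their independence; the same surjectivity also gives immediately that a central element maps into the centre of $\End_M(W)=M_2(M)$, i.e.\ to a scalar, which is the Schur input you flag. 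With those two remarks made explicit, your argument is complete.
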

\begin{proof} Suppose there exists  a non-zero 
element $z$ in the centre of $R$ such that $z^{\ast}=-z$. Let $(\tau, W)$ and $L'$ be as in Proposition \ref{weiterso} with $\tau(z)\neq 0$ then 
$\tau(z)$ is a scalar matrix in  $\End_{L'}(W)$. It follows from Corollary \ref{involute2} that
$\tr \tau(z)= \tau(z^{\ast}+z)=0$ and thus $\tau(z)=0$. We obtain a contradiction.  Since $2$ is invertible in $R$,  
Lemma \ref{fixedpointsinv} implies that the centre is contained in $\OO[[t_1, t_2, t_3]]$. The other inclusion holds by construction.
\end{proof}

\begin{cor}\label{weiterso1} $R$ is a free $\OO[[t_1, t_2, t_3]]$-module of rank $4$.  
\end{cor}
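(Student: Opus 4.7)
The plan is to show that the four elements $1, u, v, uv-vu$ form a basis of $R$ as a module over the central subring $\OO[[t_1,t_2,t_3]]$, which by Corollary \ref{weiterso3} is the center of $R$. Spanning is already done: Lemma \ref{expressa1} shows that every $a\in R$ can be written as $\lambda_1+\lambda_2 u+\lambda_3 v+\lambda_4(uv-vu)$ with $\lambda_i\in\OO[[t_1,t_2,t_3]]$. So the only thing to check is linear independence over $\OO[[t_1,t_2,t_3]]$.

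For this I would argue by contradiction. Suppose we have a non-trivial relation
\begin{equation*}
a:=\lambda_1+\lambda_2 u+\lambda_3 v+\lambda_4(uv-vu)=0
\end{equation*}
in $R$ with not all $\lambda_i$ equal to zero in $\OO[[t_1,t_2,t_3]]$. Proposition \ref{weiterso} applies to exactly such an expression, and produces a finite extension $M/L$ together with an absolutely irreducible continuous representation $\tau\colon R\otimes_{\OO}M\to \End_M(W)$ on a $2$-dimensional $M$-vector space $W$ with $\tau(a)\neq 0$. But $a=0$ in $R$ forces $\tau(a)=0$, a contradiction. Hence the map
\begin{equation*}
\OO[[t_1,t_2,t_3]]^{\oplus 4}\longrightarrow R,\qquad (\lambda_1,\lambda_2,\lambda_3,\lambda_4)\longmapsto \lambda_1+\lambda_2 u+\lambda_3 v+\lambda_4(uv-vu)
\end{equation*}
is injective, and combined with Lemma \ref{expressa1} it is an isomorphism of $\OO[[t_1,t_2,t_3]]$-modules.

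There is no real obstacle here: all the hard work has already been carried out. The spanning statement comes from the explicit rewriting in Lemma \ref{expressa1} (which exploited $u^2,v^2,(uv-vu)^2,uv+vu\in\OO[[t_1,t_2,t_3]]$), and the linear independence is immediate from the existence of enough absolutely irreducible $2$-dimensional specializations provided by Proposition \ref{weiterso}, which in turn rested on the non-vanishing of $\det\rho(\gamma\delta-\delta\gamma)$ established in Lemma \ref{uvvunon} together with Lemma \ref{fnotvanish} to choose the specialization point.
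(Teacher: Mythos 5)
Your argument is correct and is essentially the paper's own proof: spanning comes from Lemma \ref{expressa1}, and linear independence follows because a non-trivial relation $\lambda_1+\lambda_2 u+\lambda_3 v+\lambda_4(uv-vu)=0$ would contradict Proposition \ref{weiterso}, which supplies a specialization $\tau$ with $\tau(a)\neq 0$. Nothing further is needed.
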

\begin{proof} If $0=\lambda_1 + \lambda_2 u + \lambda_3 v + \lambda_4(uv-vu)$ then it follows from Proposition 
\ref{weiterso} that all $\lambda_i=0$. The result then follows from Lemma \ref{expressa1}.
\end{proof}

\begin{cor}\label{uv-vunonteiler} Let $a\in R$ and suppose that $a(uv-vu)=0$ or $(uv-vu)a=0$ then $a=0$. 
\end{cor}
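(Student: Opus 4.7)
The plan is to use the explicit basis provided by Corollary \ref{weiterso1} together with the anticommutation relations \eqref{commuteanti} to reduce the assertion to a linear algebra statement over the integral domain $\OO[[t_1,t_2,t_3]]$. Write $a = \lambda_1 + \lambda_2 u + \lambda_3 v + \lambda_4(uv-vu)$ with $\lambda_i \in \OO[[t_1,t_2,t_3]]$, and set $\alpha := u^2 = 2t_1 - t_1^2$, $\beta := v^2 = 2t_2 - t_2^2$, $\mu := uv+vu$, $\gamma := (uv-vu)^2$. By Lemma \ref{fixedpointsinv} and \eqref{uvast}, all four quantities $\alpha,\beta,\mu,\gamma$ lie in $\OO[[t_1,t_2,t_3]]$; moreover $\gamma = -(uv-vu)^{*}(uv-vu)$ is non-zero by Lemma \ref{uvvunon}.

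Next I would use \eqref{commuteanti} to compute $u(uv-vu)$ and $v(uv-vu)$ in the basis $\{1,u,v,uv-vu\}$. A short calculation (using $vu = \mu - uv$ to replace $uvu$ and $vuv$) yields
\begin{equation*}
u(uv-vu) = -\mu\, u + 2\alpha\, v, \qquad v(uv-vu) = -2\beta\, u + \mu\, v.
\end{equation*}
Multiplying $a$ on the right by $uv-vu$ and collecting terms produces
\begin{equation*}
a(uv-vu) = \lambda_4\gamma\,\cdot 1 + (-\lambda_2\mu - 2\lambda_3\beta)\,u + (2\lambda_2\alpha + \lambda_3\mu)\,v + \lambda_1(uv-vu).
\end{equation*}
By Corollary \ref{weiterso1}, vanishing of $a(uv-vu)$ forces each of the four coefficients to vanish in the domain $\OO[[t_1,t_2,t_3]]$. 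From $\lambda_4\gamma = 0$ and $\gamma \neq 0$ one gets $\lambda_4 = 0$; also $\lambda_1 = 0$ directly.

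The key identity is $(uv+vu)^2 - (uv-vu)^2 = 4\alpha\beta$, i.e.\ $\mu^2 - 4\alpha\beta = \gamma \neq 0$. The $2 \times 2$ system in $(\lambda_2,\lambda_3)$ coming from the middle two coefficients has determinant $-\mu^2 + 4\alpha\beta = -\gamma \neq 0$, so $\lambda_2 = \lambda_3 = 0$. This handles the right-multiplication case. The left-multiplication case $(uv-vu)a = 0$ is entirely parallel: one instead computes $(uv-vu)u = \mu u - 2\alpha v$ and $(uv-vu)v = 2\beta u - \mu v$, gets a system with the same non-zero determinant $-\gamma$, and concludes identically. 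There is no real obstacle here — all the substantive work has been done in Lemma \ref{uvvunon} (non-vanishing of $\gamma$) and Corollary \ref{weiterso1} (freeness); the remaining argument is a direct calculation in a free module of rank $4$ over an integral domain.
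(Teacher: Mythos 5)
Your proof is correct, and all the auxiliary computations check out (including $uvu = \mu u - \alpha v$, $vuv = \mu v - \beta u$, and the identity $\mu^2 - 4\alpha\beta = \gamma$). However, the paper's own proof is a one-liner and sidesteps the explicit matrix calculation entirely: it simply observes that $(uv-vu)^2 = -(uv-vu)^{\ast}(uv-vu)$ is a \emph{central} element lying in $\OO[[t_1,t_2,t_3]]$ (by Corollary \ref{involute1}) and non-zero by Lemma \ref{uvvunon}, so if $a(uv-vu)=0$ then $a\cdot(uv-vu)^2 = 0$, i.e.\ $\gamma a = 0$; since $R$ is free over the integral domain $\OO[[t_1,t_2,t_3]]$ by Corollary \ref{weiterso1}, multiplication by the non-zero scalar $\gamma$ is injective, hence $a=0$, and the left-multiplication case is identical. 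Your route does essentially the same job but by explicitly diagonalising right-multiplication by $uv-vu$ in the free rank-$4$ basis; the paper's ``square it'' trick packages the determinant computation into the single assertion $\gamma \neq 0$ and avoids having to verify the off-diagonal entries and the $2\times 2$ determinant $\mu^2 - 4\alpha\beta = \gamma$. Both are valid; the paper's argument is more economical and more robust (it does not even need the explicit basis decomposition of Lemma \ref{expressa1}, only freeness), while yours makes the linear-algebraic content visible.
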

\begin{proof} Since $(uv-vu)^2=-(uv-vu)(uv-vu)^*$ is in $\OO[[t_1,t_2, t_3]]$ and is non-zero by Lemma \ref{uvvunon}, the assertion 
follows from Corollary \ref{weiterso1}.
\end{proof}     

\begin{cor}\label{weiterso2} $\wE\cong R^{op}$. In particular, the functor $\cV$ induces an equivalence of categories between 
$\dualcat(\OO)^{\BB}$ and the category of compact $R^{op}$-modules.
\end{cor}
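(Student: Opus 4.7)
The plan is to prove this in two parts: first establish the ring isomorphism $\wE\cong R^{op}$, and then deduce the equivalence of categories from it via Gabriel's theorem.

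For the ring isomorphism, I will produce a surjection $\psi:R^{op}\twoheadrightarrow\wE$ by applying Corollary \ref{quotientNGI} to the natural quotient $\OO[[\GG]]\twoheadrightarrow R$: the intersection-of-kernels hypothesis of that corollary is supplied by Proposition \ref{weiterso} (every non-zero element of $R$ is detected by some continuous absolutely irreducible two-dimensional representation of $R\otimes_\OO M$) together with Proposition \ref{dimle2} (all such representations are at most two-dimensional). To upgrade $\psi$ to an isomorphism, I will transpose the argument of Lemma \ref{condNGI}. The key structural features are symmetric between $R^{op}$ and $\wE$: both have abelianization isomorphic to $\OO[[x,y]]$ (for $R$, via \eqref{abel} and the tangent-space computation $\dim\mm_R/(\mm_R^2+\varpi R)=2$; for $\wE$, via \eqref{qNGI}), and in each case the kernel of abelianization is a principal ideal generated by a non-zero-divisor: the image of $uv-vu$ in $R^{op}$, with injectivity from Corollary \ref{uv-vunonteiler}, and $t\in\wE$ arising from \eqref{eNGI}. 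Since $\psi$ is a surjection between two-dimensional complete local rings with abelianizations of matching tangent-space dimension, $\psi^{ab}$ is an isomorphism of formal power series rings, and the graded-piece argument in the proof of Lemma \ref{condNGI} will then run verbatim. The hard part will be keeping careful track of the left/right conventions when reversing the direction of that lemma, but the structural symmetry means no new ideas are required.

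For the equivalence of categories, Proposition \ref{gabriel} already gives an equivalence $\md(-):=\Hom_{\dualcat(\OO)}(\wP,-)$ between $\dualcat(\OO)^{\BB}$ and the category of compact $\wE$-modules, which via $\wE\cong R^{op}$ becomes an equivalence with compact $R^{op}$-modules. To identify this with $\cV$, I will use that Gabriel's theorem supplies the natural isomorphism $\md(M)\wtimes_\wE\wP\cong M$ for every $M\in\dualcat(\OO)^{\BB}$. Applying $\cV$, exploiting its commutation with the completed tensor product (since $\wP$ is topologically free over $\wE$ by Corollary \ref{topfree}), and invoking Corollary \ref{ftof2} to identify $\cV(\wP)\cong\wE$ as a free $\wE$-module of rank $\dim\cV(S)=1$, I will obtain the natural isomorphism
\[
\cV(M)\cong\md(M)\wtimes_\wE\cV(\wP)\cong\md(M)\wtimes_\wE\wE\cong\md(M),
\]
which completes the identification.
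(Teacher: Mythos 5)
Your overall strategy is the paper's, but your first step is stated with the arrow reversed, and as written it does not follow from the result you cite. Corollary \ref{quotientNGI}, fed with Propositions \ref{weiterso} and \ref{dimle2}, gives the inclusion $\Ker \varphi_{\cV}\subseteq \Ker \varphi$ for the quotient $\varphi\colon \OO[[\GG]]\twoheadrightarrow R$; since $\varphi_{\cV}\colon \OO[[\GG]]^{op}\twoheadrightarrow \wE$ is surjective, this yields a surjection $\wE\twoheadrightarrow R^{op}$, not a surjection $R^{op}\twoheadrightarrow \wE$. To get a map in your direction you would need the opposite inclusion $\Ker\varphi\subseteq\Ker\varphi_{\cV}$, i.e. that the defining relations of $R$ (the relations $g^{2}-T(g)g+1$) are already known to hold in $\wE$ --- but that is essentially what is being proved, and it is not available from Corollary \ref{quotientNGI} or anything established before this point. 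So the step ``produce a surjection $R^{op}\twoheadrightarrow\wE$'' has no justification as described, and the care you propose to spend on ``transposing'' Lemma \ref{condNGI} is an artifact of this reversal.

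Once the direction is corrected, your plan collapses to the paper's proof and no transposition is needed: Lemma \ref{condNGI} applies verbatim to the surjection $\wE\twoheadrightarrow R^{op}$, with the conditions on the target checked exactly as you indicate ($R^{ab}\cong\OO[[x,y]]$ from \eqref{abel} together with the tangent-space bound, and $t'=uv-vu$ generating the kernel of abelianization with $at'=0\Rightarrow a=0$ by Corollary \ref{uv-vunonteiler}), while the needed properties of the source $\wE$ (its abelianization and the element $t$ from \eqref{eNGI}) were established when the lemma was set up. Your treatment of the second assertion is fine: Proposition \ref{gabriel} gives the equivalence of $\dualcat(\OO)^{\BB}$ with compact $\wE$-modules, and your identification of this equivalence with $\cV$ via $\cV(M)\cong\Hom_{\dualcat(\OO)}(\wP,M)\wtimes_{\wE}\cV(\wP)$ and Corollary \ref{ftof2} (free of rank $\dim\cV(S)=1$) is correct; it in fact spells out more than the paper, which at this point simply cites Proposition \ref{gabriel}.
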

\begin{proof} Proposition \ref{weiterso} says that $\varphi: \OO[[\GG]]\twoheadrightarrow R$ 
satisfies the conditions of Corollary \ref{quotientNGI} and thus we have $\Ker \varphi_{\cV} \subseteq \Ker \varphi$ and 
hence a surjection $\wE\twoheadrightarrow R^{op}$. Corollary \ref{uv-vunonteiler} implies that $R^{op}$ satisfies the
conditions of Lemma \ref{condNGI} with  $t'= uv-vu$, hence the surjection is an isomorphism. The last assertion follows from 
Proposition \ref{gabriel}.
\end{proof} 

\begin{cor}\label{weiterso4} Let $\mathcal Z$ be the centre of $R$, let $\nn$ be a maximal ideal of $\mathcal Z[1/p]$ with residue field $\kappa(\nn)$
and let $T_{\nn}: \GG \rightarrow \kappa(\nn)$ be the specialization at $\nn$ of the universal pseudocharacter $T$, see Proposition \ref{pseudoprop}.
If the image of $(uv-vu)(uv-vu)^{\ast}$ in $\kappa(\nn)$ is non-zero then 
$R\otimes_{\mathcal Z} \kappa(\nn)$ is a central simple $\kappa(\nn)$-algebra of dimension $4$. Moreover, $R\otimes_{\mathcal Z} \kappa(\nn)$
is a matrix algebra over $\kappa(\nn)$ if and only if $T_{\nn}$ is the trace of a $2$-dimensional representation of $\GG$ defined over $\kappa(\nn)$. 
\end{cor}  
\begin{proof} It follows from Corollary \ref{weiterso1} that $R\otimes_{\mathcal Z} \kappa(\nn)$ is  a $4$-dimensional $\kappa(\nn)$ algebra. Let 
$\mm$ be any maximal ideal of $C[1/p]$ containing $\nn$ and let $\kappa(\mm)$ be its residue field. The representation $\rho_{\mm}$ is absolutely irreducible, 
as part (iii) of Lemma \ref{preweiterso} is satisfied. Hence, $R\otimes_{\mathcal Z} \kappa(\mm)\cong \mathrm M_2(\kappa(\mm))$ the algebra of $2\times 2$ matrices over $\kappa(\nn)$. 
Thus the centre of $R\otimes_{\mathcal Z} \kappa(\nn)$ is a one dimensional $\kappa(\nn)$-vector space, which implies that $R\otimes_{\mathcal Z} \kappa(\nn)$ is a central simple $\kappa(\nn)$-algebra.
If $R\otimes_{\mathcal Z} \kappa(\nn)\cong \mathrm M_2(\kappa(\nn))$ then letting $\tau$ be the standard module, we obtain that $\tau\otimes_{\kappa(\nn)} \kappa(\mm)\cong \rho_{\mm}$ and 
hence $\tr \tau= \tr \rho_{\mm}=T_{\nn}$ by Proposition \ref{realizeT}. Conversely, if there exists a representation $\tau:\GG\rightarrow \GL_2(\kappa(\nn))$ such that $\tr \tau =T_{\nn}$ then 
$\tr \tau= \tr \rho_{\mm}$ and so $\tau$ is absolutely irreducible and the 
surjection $R\twoheadrightarrow \End_{\kappa(\nn)}(\tau)$ factors through $R\otimes_{\mathcal Z} \kappa(\nn)$ and is then an isomorphism, since both the source and the target are $4$-dimensional.
\end{proof}

\begin{cor}\label{weiterso4a}  Let $\mathcal Z$ be the centre of $R$, let $\nn$ be a maximal ideal of $\mathcal Z[1/p]$ with residue field $\kappa(\nn)$
and let $T_{\nn}: \GG \rightarrow \kappa(\nn)$ be the specialization at $\nn$ of the universal pseudocharacter $T$, see Proposition \ref{pseudoprop}.
If the image of $(uv-vu)(uv-vu)^{\ast}$ in $\kappa(\nn)$ is non-zero then the $\nn$-adic completion of $R[1/p]$ is an Azumaya algebra of rank $4$
over the $\nn$-adic completion of $\mathcal Z[1/p]$. Moreover, it 
is a matrix algebra over the $\nn$-adic completion of $\mathcal Z$ if and only if $T_{\nn}$ is the trace of a $2$-dimensional, absolutely irreducible representation of $\GG$ defined over $\kappa(\nn)$. 
\end{cor}  
\begin{proof} It follows from Corollaries \ref{weiterso3}, \ref{weiterso1} that the $\nn$-adic completion of $R$ is a free, rank $4$ module over the $\nn$-adic completion of $\mathcal Z$. The assertion follows from Corollary \ref{weiterso4} and the idempotent lifting Lemma.
\end{proof}

\begin{cor}\label{weiterso5} Let $\mathcal Z$ be the centre of $R$ and let $\nn$ be a maximal ideal of $\mathcal Z[1/p]$. 
If the image of $(uv-vu)(uv-vu)^{\ast}$ in $\mathcal Z[1/p]/\nn$ is zero then 
$R[1/p]/ \nn R[1/p]$ has at most $2$ non-isomorphic irreducible modules.
\end{cor}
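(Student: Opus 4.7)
The plan is to reduce the problem to counting maximal ideals of an explicit commutative finite-dimensional algebra over $F := \mathcal Z[1/p]/\nn$. Write $A := R[1/p]/\nn R[1/p]$; by Lemma \ref{expressa1} and Corollary \ref{weiterso1}, $A$ is a $4$-dimensional $F$-algebra with basis $\{1, u, v, N\}$, where $N := uv - vu$. From \eqref{uvast} we have $N^{\ast} = -N$, so $(uv-vu)(uv-vu)^{\ast} = -N^2$, and the hypothesis becomes $N^2 = 0$ in $A$.

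The anti-commutation identities $uN = -Nu$ and $vN = -Nv$ of \eqref{commuteanti}, together with the centrality of $\mathcal Z$, show that every $a \in A$ satisfies $aN \in NA$; hence $I := AN = NA$ is a two-sided ideal with $I^2 \subseteq A N^2 A = 0$. In particular $I \subseteq \mathrm{rad}(A)$, and simple $A$-modules correspond bijectively to simple $A/I$-modules. Moreover, since $N \in I$, the images of $u$ and $v$ commute in $A/I$, so $A/I$ is a commutative $F$-algebra spanned by $\{1, u, v\}$ and subject to the relations
\[ u^2 = c_1, \qquad v^2 = c_2, \qquad uv = w_0, \]
where $c_1, c_2, w_0 \in F$ are the images of the central elements $u^2, v^2 \in \mathcal Z$ of \eqref{uvsquared} and $\tfrac{1}{2}(uv+vu) \in \mathcal Z$ (central by Lemma \ref{fixedpointsinv}).

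It therefore suffices to bound the number of maximal ideals of the commutative $F$-algebra $B := F[U, V]/(U^2 - c_1, V^2 - c_2, UV - w_0)$, of which $A/I$ is a quotient. I argue by cases: if $c_1 \neq 0$ then $U$ is invertible in $B$ and $V = (w_0/c_1)U$, making $B$ a quotient of $F[U]/(U^2 - c_1)$, which has at most two maximal ideals; the case $c_2 \neq 0$ is symmetric. If $c_1 = c_2 = 0$, the relation $U \cdot (UV) = U^2 V = 0$ forces $w_0 U = 0$ in $B$, so either $w_0 = 0$, in which case $B \cong F[U, V]/(U^2, V^2, UV)$ is local, or $w_0 \neq 0$ forces $U = 0$ in $B$, contradicting $UV = w_0 \neq 0$. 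In every case $B$ has at most two maximal ideals, whence $A$ has at most two non-isomorphic simple modules.

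The main obstacle is just the bookkeeping in the final case analysis; the key structural input, that $N$ becomes nilpotent and generates a square-zero two-sided ideal with commutative quotient, is an immediate consequence of the hypothesis together with the properties of $R$ established earlier in this section.
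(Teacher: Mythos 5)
Your argument is correct and takes a genuinely different route from the paper's. Both proofs start from the same observation: under the hypothesis $N := uv - vu$ squares to zero in $R_1 := R[1/p]/\nn R[1/p]$, so $VN = 0$ for every simple right $R_1$-module $V$, and one may replace $R_1$ by the quotient $R_1/I$, $I := R_1 N = N R_1$, which is exactly the $R_1/R_1\theta$ of the published proof. Where you diverge is in what you extract from this quotient. The paper attempts a pure dimension count, claiming $\dim R_1/I \le 2$ by rank-nullity for right multiplication $m_N$ by $N$; but since $m_N^2 = 0$ on the $4$-dimensional space $R_1$, rank-nullity only gives $\dim(R_1 N) \le 2$, hence $\dim R_1/I \ge 2$, and this dimension actually equals $3$ precisely when the images of $u^2$, $v^2$ and $uv+vu$ in $\mathcal Z[1/p]/\nn$ all vanish (for instance at $\nn = (t_1, t_2, t_3)$). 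You avoid any dimension count: you observe that $R_1/I$ is commutative, generated by $u$ and $v$ subject to scalar relations, and classify its maximal ideals by a three-case analysis. This is both correct and more informative: in the degenerate $3$-dimensional case your analysis shows the quotient is local (one simple module), while in the remaining cases it is a quotient of a quadratic extension and so has at most two. Your version thus closes a gap in the argument as printed.
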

\begin{proof} Let $R_1:= R[1/p]/\nn R[1/p]$, $L':=\mathcal Z[1/p]/\nn$ and let $\theta$ be the image of $uv-vu$ in $R_1$.
Let $V$ be an irreducible right $R_1$-module. It follows from \eqref{commuteanti} that $V\theta$ is an $R_1$-submodule of $V$.
Since the image of $(uv-vu)^2=-(uv-vu)(uv-vu)^*$ in $L'$ is zero, we deduce that $\theta^2=0$ and since $V$ is irreducible we get 
 $V\theta=0$. Thus $\Hom_{R_1}(R_1/R_1\theta, V)\neq 0$ and it is enough to show that $\dim_{L'} R_1/R_1\theta \le 2$.
It follows from Corollary \ref{weiterso1} that $R_1$ is a $4$-di\-men\-sio\-nal $L'$ vector space. Let 
$m_{\theta}: R_1\rightarrow R_1$, $a\mapsto a\theta$, then $\dim \Ker m_{\theta}+ \dim \Image m_{\theta}=4$ and since $\theta^2=0$
we have $\dim \Image m_{\theta}\le \dim \Ker m_{\theta}$. Thus $\dim_{L'} R_1/R_1\theta\le 2$.
\end{proof}  
 
\subsection{The centre and Banach space representations.}\label{CandBsp}
\begin{thm}\label{mainNGI} Let $\Pi$ be a unitary absolutely irreducible admissible $L$-Ba\-nach space representation  
with the central character $\zeta$. Suppose that the reduction of some open bounded $G$-invariant lattice in $\Pi$ 
contains $\pi$ as a subquotient then $\overline{\Pi}\subseteq \pi\oplus \pi$. 
\end{thm}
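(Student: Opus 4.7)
The plan is to combine the ring-theoretic analysis of $\wE$ carried out in this section with the general Banach-space formalism of \S\ref{banach}. First I would verify that we are in the setup where these machines apply: $\Pi$ has central character $\zeta$ and is admissible, so by Lemma \ref{contextGL2} the Schikhof dual $\Theta^d$ of any open bounded $G$-invariant lattice $\Theta \subset \Pi$ lies in $\dualcat(\OO)$; moreover, $\pi = \Indu{P}{G}{\chi_1 \otimes \chi_1 \omega^{-1}}$ is absolutely irreducible (its block is $\{\pi\}$ by Proposition \ref{blocksoverk}(iii)), so $\End_{\dualcat(\OO)}(S) = k$ with $S = \pi^{\vee}$. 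The hypotheses (H1)--(H5) hold by Proposition \ref{hypngI}, and (H0) holds by Corollary \ref{projaretfree}.

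With this in place, Theorem \ref{bijBM} and Proposition \ref{modulequotientnew} show that $\md(\Pi) := \Hom_{\dualcat(\OO)}(\wP, \Theta^d) \otimes_{\OO} L$ is a non-zero finite-dimensional irreducible right $\wE_L$-module, and since $\Pi$ is absolutely irreducible, Corollary \ref{absirre} upgrades this to absolute irreducibility of $\md(\Pi)$ as an $\wE_L$-module. The key input from this section is Corollary \ref{weiterso2}, which identifies $\wE \cong R^{op}$; hence $\md(\Pi)$ is naturally an absolutely irreducible left $R \otimes_{\OO} L$-module. Proposition \ref{dimle2} then forces
$$\dim_L \md(\Pi) \le 2.$$

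Finally I would invoke Corollary \ref{redmultI2} to convert this dimension bound into the required statement about $\overline{\Pi}$. Its hypotheses are satisfied: $Q = S = \pi^{\vee}$ is finitely generated over $\OO[[H]]$ since $\pi$ is admissible, the centre of $\wE$ equals $\OO[[t_1, t_2, t_3]]$ (Corollary \ref{weiterso3}) which is noetherian, and $\wE$ is a free module of rank $4$ over its centre (Corollary \ref{weiterso1}). The corollary then yields $\overline{\Pi} \subseteq ((Q^{\oplus m})^{\vee})^{ss} = \pi^{\oplus m}$, where $m$ is the multiplicity of $\pi$ in $\overline{\Pi}$; by Lemma \ref{mult=rank0} (applied with $d = \dim_k \End_{\dualcat(\OO)}(S) = 1$) we have $m = \dim_L \md(\Pi) \le 2$, so $\overline{\Pi} \subseteq \pi \oplus \pi$.

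Since every ingredient is already in place, there is no genuine obstacle in this assembly; the substantive work has been to establish the non-commutative structure theorem $\wE \cong R^{op}$ and the $2$-dimensionality bound on absolutely irreducible modules of $R_L$, carried out earlier in the section via the pseudocharacter/involution analysis.
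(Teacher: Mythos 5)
Your argument is correct and is essentially the paper's own proof: both rest on $\wE\cong R^{op}$ (Corollary \ref{weiterso2}), the bound of Proposition \ref{dimle2} applied to the absolutely irreducible module $\md(\Pi)$ (Corollary \ref{absirre}), the noetherian-centre and finite-generation facts (Corollaries \ref{weiterso1}, \ref{weiterso3}) which via Corollary \ref{fgZfl} and Proposition \ref{longproof} give finite length and finite dimensionality, and Lemma \ref{mult=rank0} to turn $\dim_L \md(\Pi)\le 2$ into the multiplicity bound. The only cosmetic difference is the final packaging: the paper observes that the block of $\pi$ is $\{\pi\}$, so $\overline{\Pi}\cong \pi^{\oplus m}$ directly, whereas you route this through Corollary \ref{redmultI2}; since $Q=S=\pi^{\vee}$ here, the two formulations coincide.
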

\begin{proof} By Proposition \ref{modulequotientnew} we may choose an open bounded $G$-invariant lattice $\Xi$ in $\Pi$ such 
that the natural map $\Hom_{\dualcat(\OO)}(\wP, \Xi^d)\wtimes_{\wE} \wP\rightarrow \Xi^d$ is surjective.          
 It follows from Corollaries \ref{weiterso1} and \ref{weiterso2} that the centre of $\wE$
is noetherian and $\wE$ is a finite module over its centre. Hence $\Xi\otimes_{\OO} k$ is of finite length by 
Corollary \ref{fgZfl} and  $\Hom_{\dualcat(\OO)}(\wP, \Xi^d)_L$ is finite dimensional over $L$.
Since the block of $\pi$ consists only of $\pi$ itself
we deduce that 
$$\overline{\Pi}\cong ((\Xi^d\otimes_{\OO} k)^{ss})^{\vee}\cong \pi^{\oplus m},$$ 
where $m$ is equal to the dimension of $\Hom_{\dualcat(\OO)}(\wP, \Xi^d)_L$ by Lemma \ref{mult=rank0}.  Since 
$\Pi$ is absolutely irreducible, $\Hom_{\dualcat(\OO)}(\wP, \Xi^d)_L$ is an absolutely irreducible right $\wE_L$-module by 
Proposition \ref{absirre}. Since $\wE\cong R^{op}$ we deduce from Corollary  \ref{dimle2} that the  dimension 
of $\Hom_{\dualcat(\OO)}(\wP, \Xi^d)_L$ is at most $2$. 
\end{proof}

\begin{cor}\label{cormainNGI} Let $\Pi$ be as in Theorem \ref{mainNGI} and suppose that $\overline{\Pi}\cong\pi$ then 
$\Pi\cong (\Indu{P}{G}{\psi})_{cont}$ for some continuous unitary character $\psi: T\rightarrow L^{\times}$ lifting $\chi$ and satisfying 
$\psi|_Z=\zeta$.
\end{cor}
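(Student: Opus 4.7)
The plan is to deduce this corollary from Proposition \ref{QofM} by producing a non-zero element of $\Hom_{\dualcat(\OO)}(\wM, \Theta^d)$ for any open bounded $G$-invariant lattice $\Theta$ in $\Pi$. First I would set $\md := \Hom_{\dualcat(\OO)}(\wP, \Theta^d)$ and use the hypothesis $\overline{\Pi}\cong \pi$ together with Lemma \ref{mult=rank0} (noting $\pi$ is absolutely irreducible so $\End_{\dualcat(\OO)}(S)=k$) to conclude that $\md$ is a free $\OO$-module of rank $1$.

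Next I would exploit the rank-one structure to force the $\wE$-action to pass through $\wE/\mathfrak{a}$. Since $\md\cong \OO$ as an $\OO$-module and $\OO$ sits in the centre of $\wE$, the right $\wE$-action on $\md$ is given by a ring homomorphism $\wE \to \End_{\OO}(\md)^{op}=\OO$. As the target is commutative this homomorphism kills the commutator ideal $[\wE,\wE]$; but the quotient $\wE/\mathfrak{a}\cong \OO[[x,y]]$ from \eqref{qNGI} is commutative of tangent dimension $2=\dim_k \mm/\mm^2$, forcing the surjection $\wE^{ab}\twoheadrightarrow \wE/\mathfrak{a}$ to be an isomorphism, hence $\mathfrak{a}=[\wE,\wE]$. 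Therefore $\mathfrak{a}$, and in particular $t$, acts trivially on $\md$.

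Applying $\Hom_{\dualcat(\OO)}(-,\Theta^d)$ to the exact sequence \eqref{eNGI}
\begin{equation*}
0\rightarrow \wP\overset{t}{\rightarrow}\wP\rightarrow \wM\rightarrow 0
\end{equation*}
identifies $\Hom_{\dualcat(\OO)}(\wM,\Theta^d)$ with $\{\phi\in \md: \phi\circ t=0\}$. Since $\md\cdot t=0$ and $\md\neq 0$, this subspace is non-zero. Hence $\Hom_{\dualcat(\OO)}(\wM,\Theta^d)\neq 0$, and Proposition \ref{QofM} applies with $\Xi=\Theta$, yielding that $\Pi$ is either isomorphic to $\eta\circ\det$ for some continuous unitary character $\eta$, or to $(\Indu{P}{G}{\psi})_{cont}$ for some continuous unitary character $\psi:T\rightarrow L^\times$ lifting $\chi$ with $\psi\neq\psi^s$.

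Finally I would rule out the character case: if $\Pi\cong \eta\circ\det$ then $\overline{\Pi}$ is a smooth character of $G$, contradicting $\overline{\Pi}\cong \pi=\Indu{P}{G}{\chi_1\otimes\chi_1\omega^{-1}}$, which is a principal series. So $\Pi\cong (\Indu{P}{G}{\psi})_{cont}$, and the identity $\psi|_Z=\zeta$ follows from comparing central characters, since the central character of $(\Indu{P}{G}{\psi})_{cont}$ equals $\psi|_Z$ while that of $\Pi$ is $\zeta$ by hypothesis. There is no serious obstacle here once the non-commutative machinery of \S\ref{nongenericcaseI} has been set up; the only subtlety is the identification $\mathfrak{a}=[\wE,\wE]$, which is essentially a dimension count comparing $\wE/\mathfrak{a}$ with $\wE^{ab}$.
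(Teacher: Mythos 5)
Your proof is correct and follows essentially the same strategy as the paper: use $\overline{\Pi}\cong\pi$ and Lemma \ref{mult=rank0} to see that $\md = \Hom_{\dualcat(\OO)}(\wP,\Theta^d)$ is free of rank one over $\OO$, deduce that the $\wE$-action on $\md$ factors through $\wE^{ab}\cong \wE/\mathfrak{a}$, hence $t$ annihilates $\md$, then use \eqref{eNGI} to transfer the nonvanishing of $\md$ to $\Hom_{\dualcat(\OO)}(\wM,\Theta^d)$ and invoke Proposition \ref{QofM}. Two small remarks: your identification $\mathfrak{a}=[\wE,\wE]$ by comparing tangent dimensions is what the paper establishes just after \eqref{qNGI} (it needs the formal smoothness of $\OO[[x,y]]$, not merely that the induced map on tangent spaces is an isomorphism, since a surjection of complete local rings inducing an isomorphism on cotangent spaces need not be injective in general), and your explicit elimination of the $\eta\circ\det$ case and verification of $\psi|_Z=\zeta$ by comparing central characters fill in steps the paper leaves implicit.
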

\begin{proof} Let $\Xi$ be as in the proof of Theorem \ref{mainNGI}. Since $\overline{\Pi}\cong \pi$ 
we deduce from Lemma \ref{mult=rank0}
that $\Hom_{\dualcat(\OO)}(\wP, \Xi^d)$ is a free $\OO$-module of rank $1$. 
Hence, the action of $\wE$ on it factors through the action 
of $\wE^{ab}$. In particular, the element $t\in \wE$ defined  in \eqref{eNGI} kills $\Hom_{\dualcat(\OO)}(\wP, \Xi^d)$, and 
hence it follows from \eqref{eNGI} that we have an isomorphism $\Hom_{\dualcat(\OO)}(\wM, \Xi^d)\cong \Hom_{\dualcat(\OO)}(\wP, \Xi^d)$.
The assertion follows from Proposition \ref{QofM}.
\end{proof}

Let $\chi_1:\Qp^{\times}\rightarrow k^{\times}$ be a continuous character. Recall  that the block $\BB$ of 
$\pi:=\Indu{P}{G}{\chi_1\otimes\chi_1\omega^{-1}}$ consists of only one isomorphism class, Proposition \ref{blocksoverk}. So
$\Mod^{\mathrm{l\, fin}}_{G,\zeta}(\OO)^{\BB}$ is the full subcategory of $\Mod^{\mathrm{l\, fin}}_{G,\zeta}(\OO)$ consisting 
of representations with every irreducible subquotient isomorphic to $\pi$. Let $R^{\mathrm{ps}, \varepsilon \zeta}_{\chi}$ be 
the universal deformation ring parameterizing $2$-di\-men\-sio\-nal pseudocharacters of $\gal$ with determinant 
$\zeta\varepsilon$ lifting $\chi:=2\chi_1$ and let $T:\gal\rightarrow R^{\mathrm{ps}, \varepsilon \zeta}_{\chi}$ be the universal deformation of 
$\chi$.

\begin{cor}\label{NgIkill} The category $\Mod^{\mathrm{l\, fin}}_{G,\zeta}(\OO)^{\BB}$ is anti-equivalent to the category of right compact
$R_{\chi}^{\mathrm{ps}, \zeta\varepsilon}[[\gal]]/J$-modules, where $J$ is a closed two-sided ideal generated by 
$g^2-T(g)g + \varepsilon\zeta(g)$ for all $g\in \gal(p)$.
\end{cor}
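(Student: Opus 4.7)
The plan is to deduce Corollary~\ref{NgIkill} by combining the abstract Gabriel-style equivalence of Proposition~\ref{gabriel} with the explicit description of $\wE$ obtained in Corollary~\ref{weiterso2}, and then to re-interpret the ring $R$ produced there as a Cayley--Hamilton quotient of a completed group algebra over the universal pseudocharacter deformation ring.

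First, as in the body of the section, I would reduce by a twist to the case $\chi_1=\Eins$, $\zeta=\varepsilon^{-1}$ (so $\zeta\varepsilon=\Eins$); twisting an irreducible representation by a character commutes with Pontryagin duality and with the formation of $\dualcat(\OO)^{\BB}$, so the equivalence is unaffected.  In this normalized situation, Proposition~\ref{gabriel} identifies $\Mod^{\mathrm{l\,fin}}_{G,\zeta}(\OO)^{\BB}$ with the opposite of the category of compact right $\wE$-modules, and Corollary~\ref{weiterso2} supplies an isomorphism $\wE\cong R^{op}$, where $R=\OO[[t_1,t_2,t_3]]\widehat\otimes_{\OO}\OO[[\mathcal G]]/J$ with $\mathcal G=\gal(p)$ and $J$ the closed two-sided ideal from before Lemma~\ref{injectcentre}.

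Second, I would use Corollary~\ref{thesameps} (applied with $\eta=\Eins$, $\psi=\Eins$) to identify $\OO[[t_1,t_2,t_3]]$ canonically with $R^{\mathrm{ps},\Eins}_{2\Eins}$ in such a way that the universal pseudocharacter is exactly the function $T\colon\mathcal G\to R$, $g\mapsto g+g^{-1}$, of Proposition~\ref{pseudoprop}.  Under this identification, Corollary~\ref{involute3} says that every element $g\in\mathcal G$ satisfies the Cayley--Hamilton relation $g^2-T(g)g+1=0$ in $R$.  Hence the surjection $R^{\mathrm{ps},\Eins}_{2\Eins}[[\mathcal G]]\twoheadrightarrow R$ factors through the closed two-sided ideal $J^{CH}$ generated by these relations for all $g\in\mathcal G$.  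Conversely, the four explicit generators of the original $J$ are precisely the Cayley--Hamilton relations attached to $\gamma,\delta,\gamma\delta,\delta\gamma\in\Gamma$, so they lie in $J^{CH}$ and the induced map $R^{\mathrm{ps},\Eins}_{2\Eins}[[\mathcal G]]/J^{CH}\to R$ is an isomorphism.

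Third, I would untwist.  Twisting by a continuous square root $\psi_1\colon\gal\to\OO^\times$ of $\zeta\varepsilon$ (which exists since $p$ is odd and $\zeta\varepsilon$ is a square modulo $\varpi$) gives, via the isomorphism of Corollary~\ref{thesameps}, a canonical identification of $R^{\mathrm{ps},\Eins}_{2\Eins}$ with $R^{\mathrm{ps},\zeta\varepsilon}_{\chi}$ carrying $T$ to the universal pseudocharacter $\gal\to R^{\mathrm{ps},\zeta\varepsilon}_{\chi}$.  The same twist turns the Cayley--Hamilton relation $g^2-T(g)g+1=0$ into $g^2-T(g)g+\zeta\varepsilon(g)=0$, and the completed group algebra over $\mathcal G$ into the completed group algebra over $\gal$ modulo the ideal generated by these relations (lifts of elements of $\gal(p)$ to $\gal$ suffice, because imposing Cayley--Hamilton with determinant $\zeta\varepsilon(g)$ forces any element of the kernel of $\gal\twoheadrightarrow\gal(p)$ to act by the scalar $\psi_1$, matching the twisting construction).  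Combining with the first two steps yields the claimed anti-equivalence with compact right $R^{\mathrm{ps},\zeta\varepsilon}_{\chi}[[\gal]]/J$-modules.

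The main obstacle I anticipate is the bookkeeping in the last paragraph: making precise the passage between the pro-$p$ completion $\mathcal G=\gal(p)$ (where all the ring-theoretic work of the section was done) and the full Galois group $\gal$ that appears in the statement, in a way that correctly matches the Cayley--Hamilton ideals on the two sides of the twist by $\psi_1$.  Everything else is essentially assembling pieces that have already been proved.
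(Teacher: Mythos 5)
Your steps match the paper's argument for the most part: twist to $\chi_1=\Eins$, $\zeta=\varepsilon^{-1}$; use Proposition~\ref{gabriel} and Corollary~\ref{weiterso2} to identify the category with compact modules over $\wE\cong R^{op}$; use Corollary~\ref{thesameps}, Proposition~\ref{pseudoprop} and Corollary~\ref{involute3} to recognize $R$ as $R^{\mathrm{ps},\Eins}_{\chi}[[\gal(p)]]/J'$. (One small omission: you should also note, as the paper does, that the involution $\ast$ on $R$ gives $R\cong R^{op}$, so that ``right $R^{op}$-modules'' can be rewritten as ``right $R$-modules'' to match the statement.)

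The gap is in your last paragraph, where you pass from $R^{\mathrm{ps},\zeta\varepsilon}_{\chi}[[\gal(p)]]/J'$ to $R^{\mathrm{ps},\zeta\varepsilon}_{\chi}[[\gal]]/J$. You claim that ``imposing Cayley--Hamilton with determinant $\zeta\varepsilon(g)$ forces any element of the kernel of $\gal\twoheadrightarrow\gal(p)$ to act by the scalar $\psi_1$.'' This does not follow from the relation alone. If $h\in\Ker(\gal\to\gal(p))$, then after twisting ($T(h)=2$, $\zeta\varepsilon(h)=1$) the Cayley--Hamilton relation gives only $(h-1)^2=0$, i.e.\ $h$ is unipotent, not $h=1$. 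The fact that $h$ actually maps to $1$ in the quotient is the nontrivial content of Corollary~\ref{pst4}, whose proof (via Proposition~\ref{pst2} and Lemma~\ref{pst1}) uses two further ingredients that your argument does not supply: a gcd argument over the residue field $k$ showing that in any finite quotient the image of $\Ker\rho$ is a $p$-group, and the universal property of the maximal pro-$p$ quotient to conclude that the closed normal subgroup $\Hr$ maps to $1$. Similarly, your parenthetical ``lifts of elements of $\gal(p)$ to $\gal$ suffice'' is not accurate at face value: distinct lifts $\tilde g_1=\tilde g_2 h$ yield distinct relations in $R[[\gal]]$, and comparing the ideals they generate is precisely what Corollary~\ref{pst4} settles. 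Replacing this paragraph with an appeal to Corollary~\ref{pst4} closes the gap and recovers the paper's proof.
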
 
\begin{proof} By twisting we may assume that $\chi_1$ is trivial and $\zeta=\varepsilon^{-1}$, see the proof of 
Corollary \ref{thesameps}. We have shown in Corollary \ref{thesameps} that $T$ factors through $\GG$, the maximal pro-$p$ quotient of $\gal$. 
Corollary \ref{pst4} says that $R_{\chi}^{\mathrm{ps}, \zeta\varepsilon}[[\gal]]/J\cong R_{\chi}^{\mathrm{ps}, \zeta\varepsilon}[[\GG]]/J'$, 
where the ideal $J'$ is a closed two-sided ideal of $R_{\chi}^{\mathrm{ps}, \zeta\varepsilon}[[\GG]]$ defined by the same relations.
It follows from Proposition \ref{pseudoprop} and Corollary \ref{involute3} that 
$R_{\chi}^{\mathrm{ps}, \Eins}[[\GG]]/J'$  is the ring $R$ considered above. 
The assertion follows from Corollaries 
\ref{weiterso3}, \ref{weiterso2} and Proposition \ref{gabriel}. We also note that the involution 
$\ast$ induces an isomorphism between $R$ and $R^{op}$, so the category of right compact $R$
is equivalent to the category of left compact $R$-modules.   
\end{proof}

\begin{cor}\label{CngI} The centre of the category $\Mod^{\mathrm{ladm}}_{G, \zeta}(\OO)^{\BB}$ is naturally isomorphic to $R_{\chi}^{\mathrm{ps},\zeta\varepsilon}$.
\end{cor}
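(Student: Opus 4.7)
The plan is to chain together the identifications already established in the preceding material. First, since $G = \GL_2(\Qp)$, Emerton's theorem (cited earlier in the paper, \cite[Thm 2.3.8]{ord1}) gives $\Mod^{\mathrm{l\,adm}}_{G,\zeta}(\OO) = \Mod^{\mathrm{l\,fin}}_{G,\zeta}(\OO)$, so the block-subcategory in the statement is literally the same as the one handled by Corollary \ref{NgIkill}. After this identification, I invoke Proposition \ref{gabriel} which tells us that the centre of $\Mod^{\mathrm{l\,fin}}_{G,\zeta}(\OO)^{\BB}$ is naturally isomorphic to the centre of the endomorphism ring $\wE_{\BB} = \End_{\dualcat(\OO)}(\wP)$, where $\wP$ is a projective envelope of $S = \pi^{\vee}$ (recall that the block $\BB$ consists of a single isomorphism class, so $\wE_{\BB} = \wE$ in the notation of this section).

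Next I would use Corollary \ref{weiterso2}, which states $\wE \cong R^{op}$. Since the centre of a ring equals the centre of its opposite ring, it suffices to identify the centre of $R$. This is precisely what Corollary \ref{weiterso3} supplies: the centre of $R$ equals $\OO[[t_1,t_2,t_3]]$, embedded via the map sending $t_1, t_2, t_3$ to the images coming from the relations defining $J$.

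The final step, and the only place where a tiny bit of care is needed, is to identify $\OO[[t_1,t_2,t_3]]$ naturally with $R^{\mathrm{ps},\zeta\varepsilon}_{\chi}$. After twisting to reduce to the case $\chi_1 = \Eins$ and $\zeta = \varepsilon^{-1}$ (as is done in the proof of Corollary \ref{thesameps}), Proposition \ref{pseudoprop} exhibits a canonical continuous function $T : \GG \to \OO[[t_1,t_2,t_3]]$ with $T(g) = g + g^{-1}$ in $R$, and this function satisfies the pseudocharacter identities (o)--(iii). Conversely, every continuous $2$-dimensional pseudocharacter on $\gal$ lifting $2\Eins$ with trivial determinant factors through $\gal(p) = \GG$ by Corollary \ref{pst3}, so $T$ is the universal such pseudocharacter and $\OO[[t_1,t_2,t_3]] \cong R^{\mathrm{ps},\Eins}_{2\Eins}$; untwisting gives the statement for general $\chi_1$ and $\zeta$.

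The main (modest) obstacle is verifying that the composite isomorphism is \emph{natural}, i.e.\ independent of the auxiliary choices (choice of projective envelope $\wP$, choice of the section $\OO[[\GG]] \twoheadrightarrow R$, etc.). For this I would observe that the action of the centre of $\wE$ on any object $M$ of $\dualcat(\OO)^{\BB}$ is intrinsic (it is the centre of the category acting on $M$), and the action of $R^{\mathrm{ps},\zeta\varepsilon}_{\chi}$ on $\cV(M)$ through the characteristic-polynomial-style relations $g^2 - T(g)g + \zeta\varepsilon(g) = 0$ (Corollary \ref{involute3}) is also intrinsic, being determined by the universal property of the pseudodeformation ring. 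Both actions coincide on the universal object $\wP$ by construction of the isomorphism $\wE \cong R^{op}$, and exactness plus the fact that $\wP$ pro-generates $\dualcat(\OO)^{\BB}$ propagates the identification to every object, which is exactly the content of naturality.
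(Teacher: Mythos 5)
Your proposal is correct and follows essentially the same route as the paper's (terse) proof, which simply cites Corollary~\ref{weiterso3}/\ref{weiterso4}, Corollary~\ref{thesameps}, and Proposition~\ref{gabriel}; you have merely filled in the intermediate links — the identification $\Mod^{\mathrm{l\,adm}}_{G,\zeta}(\OO)=\Mod^{\mathrm{l\,fin}}_{G,\zeta}(\OO)$, the passage through $\wE\cong R^{op}$ from Corollary~\ref{weiterso2}, and the twisting/untwisting step from the proof of Corollary~\ref{thesameps} — all of which the paper leaves implicit. The naturality discussion at the end is the right sanity check and is consistent with how the paper sets up $\cV$ as a morphism of deformation functors.
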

\begin{proof} Corollary \ref{weiterso4}, Corollary \ref{thesameps}, Proposition \ref{gabriel}.
\end{proof}

Let $\Ban^{\mathrm{adm}}_{G,\zeta}(L)^{\BB}$ be as in Proposition \ref{blockdecompB} and let $\Ban^{\mathrm{adm. fl}}_{G,\zeta}(L)^{\BB}$
be the full subcategory consisting of objects of finite length. Let $\Pi$ be in $\Ban^{\mathrm{adm. fl}}_{G,\zeta}(L)^{\BB}$, 
 and let $\md(\Pi):=\Hom_{\dualcat(\OO)}(\wP, \Theta^d)\otimes_{\OO} L$, where $\Theta$ is an open  bounded $G$-invariant lattice in $\Pi$. 
It follows from Proposition \ref{longproof} that $\md(\Pi)$ is a finite dimensional $L$-vector space with continuous $\wE$-action. 
Let $\nn$ be a maximal ideal in $R^{\mathrm{ps}, \varepsilon\zeta}_{\chi}[1/p]$, recall that 
$\Ban^{\mathrm{adm. fl}}_{G, \zeta}(L)^{\BB}_{\nn}$ is the full subcategory of $\Ban^{\mathrm{adm. fl}}_{G,\zeta}(L)^{\BB}$
consisting of those $\Pi$ such that $\md(\Pi)$ is killed by a power of $\nn$.

\begin{cor}\label{pups} We have an equivalence of categories 
$$\Ban^{\mathrm{adm. fl}}_{G,\zeta}(L)^{\BB}\cong 
\bigoplus_{\nn\in \MaxSpec R_{\chi}^{\mathrm{ps},\zeta \varepsilon}[1/p]}\Ban^{\mathrm{adm. fl}}_{G,\zeta}(L)^{\BB}_{\nn}.$$
The category $\Ban^{\mathrm{adm. fl}}_{G, \zeta}(L)^{\BB}_{\nn}$ is anti-equivalent to the category 
of modules of finite length of the  
$\nn$-adic completion of $(R_{\chi}^{\mathrm{ps}, \zeta\varepsilon}[[\gal]]/J)[1/p]$.
\end{cor}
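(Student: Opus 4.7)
The plan is to deduce Corollary \ref{pups} by applying Theorem \ref{furtherDBan} to the full subcategory $\dualcat(\OO)^{\BB}$ of $\dualcat(\OO)$, where $\BB = \{\pi\}$. First I would observe that by definition of the block decomposition (Proposition \ref{blockdecompB} and Corollary \ref{blockdecompD}), the categories $\Ban^{\mathrm{adm}}_{\dualcat(\OO)^{\BB}}$ and $\Ban^{\mathrm{adm}}_{G,\zeta}(L)^{\BB}$ coincide, so passing to objects of finite length on both sides yields $\Ban^{\mathrm{adm. fl}}_{\dualcat(\OO)^{\BB}} = \Ban^{\mathrm{adm. fl}}_{G,\zeta}(L)^{\BB}$. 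Thus it suffices to verify the two hypotheses of Theorem \ref{furtherDBan} for $\wP$ and $\wE = \End_{\dualcat(\OO)}(\wP)$ in the present setting.

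For hypothesis (i), Lemma \ref{inclusion} together with Proposition \ref{hypngI} (which established (H1)--(H5) with $Q = S = \pi^{\vee}$) gives $(\wE/\rad \wE) \wtimes_{\wE} \wP \cong \wP/\wm\wP \cong Q = \pi^{\vee}$. Since $\pi = \Indu{P}{G}{\chi_1 \otimes \chi_1 \omega^{-1}}$ is a smooth admissible irreducible $k$-representation of $G$, its Pontryagin dual is a finitely generated $\OO[[H]]$-module and is of finite length in $\dualcat(\OO)$. For hypothesis (ii), I invoke Corollary \ref{weiterso2} together with the involution $\ast$: under the isomorphism $\wE \cong R^{op}$ and the fact that $\ast$ identifies $R$ with $R^{op}$, the centre of $\wE$ is identified with the centre of $R$, which by Corollary \ref{weiterso3} is $\OO[[t_1, t_2, t_3]]$, and via Corollary \ref{thesameps} this ring is naturally $R_\chi^{\mathrm{ps}, \zeta\varepsilon}$; in particular, it is noetherian. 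By Corollary \ref{weiterso1}, $R$ (hence $\wE$) is free of rank $4$ over this centre, so $\wE$ is a finitely generated module over its centre.

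Having verified both hypotheses, Theorem \ref{furtherDBan} yields a decomposition
\begin{equation*}
\Ban^{\mathrm{adm. fl}}_{G,\zeta}(L)^{\BB} \cong \bigoplus_{\nn \in \MaxSpec \wZ[1/p]} \Ban^{\mathrm{adm. fl}}_{G,\zeta}(L)^{\BB}_{\nn}
\end{equation*}
where $\wZ$ denotes the centre of $\wE$, and each summand is anti-equivalent to the category of modules of finite length over the $\nn$-adic completion of $\wE[1/p]$. Substituting $\wZ \cong R_\chi^{\mathrm{ps}, \zeta\varepsilon}$ gives the indexing set in the statement. For the identification of the anti-equivalence, note that by Corollary \ref{NgIkill} the ring $\wE$ is identified (via the involution $\ast$, which interchanges left and right modules) with $R_\chi^{\mathrm{ps}, \zeta\varepsilon}[[\gal]]/J$, so the $\nn$-adic completion of $\wE[1/p]$ is precisely the $\nn$-adic completion of $(R_\chi^{\mathrm{ps}, \zeta\varepsilon}[[\gal]]/J)[1/p]$.

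The most delicate point will be the book-keeping with the $\mathrm{op}$ superscript: Theorem \ref{antiequiv} produces right $\wE$-modules, while the target ring in the statement is the (non-opposite) ring $R_\chi^{\mathrm{ps}, \zeta\varepsilon}[[\gal]]/J$. Fortunately, the involution $\ast$ constructed in Lemma \ref{fixedpointsinv} and the surrounding discussion gives a canonical isomorphism $R \cong R^{op}$, which converts finite-length right modules into finite-length left modules without altering the categorical content. Apart from this, the proof is essentially formal once the hypotheses are checked.
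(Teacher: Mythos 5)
Your proposal is correct and follows essentially the same route as the paper, which proves this corollary simply by applying Theorem \ref{furtherDBan} with $\dualcat(\OO)=\dualcat(\OO)^{\BB}$; you have merely unpacked the verification of the two hypotheses (hypothesis (i) via Lemma \ref{inclusion} and Proposition \ref{hypngI}, hypothesis (ii) via Corollaries \ref{weiterso1}--\ref{weiterso3} and \ref{thesameps}) and the identification $\wE\cong R^{op}\cong R\cong R_{\chi}^{\mathrm{ps},\zeta\varepsilon}[[\gal]]/J$, all of which the paper establishes in the preceding results.
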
 
\begin{proof} Apply Theorem \ref{furtherDBan} with $\dualcat(\OO)=\dualcat(\OO)^{\BB}$.
\end{proof}

\begin{cor}\label{NgIirr} Suppose that the pseudo-character corresponding to a maximal ideal $\nn$ of 
$R_{\chi}^{\mathrm{ps}, \zeta\varepsilon}[1/p]$ is the trace of an absolutely irreducible representation of $\gal$ defined over the residue field of $\nn$ then 
the category $\Ban^{\mathrm{adm. fl}}_{G, \zeta}(L)^{\BB}_{\nn}$ is anti-equivalent to the category 
of modules of finite length of the  
$\nn$-adic completion of $R_{\chi}^{\mathrm{ps}, \zeta\varepsilon}[1/p]$. In particular, it contains only one irreducible object.
\end{cor} 
\begin{proof} Corollaries \ref{weiterso4a} and \ref{pups}. The last assertion follows from the fact that the only 
irreducible module is $R_{\chi}^{\mathrm{ps}, \zeta\varepsilon}[1/p]/\nn$.
\end{proof} 

Let $\nn$ be a maximal ideal of $R_{\chi}^{\mathrm{ps},\zeta \varepsilon}[1/p]$ with residue field $L$, let $T_{\nn}: \gal\rightarrow L$ be 
the pseudocharacter corresponding to $\nn$ and let 
$\Irr(\nn)$ denote the set (of equivalence classes of) irreducible objects in  
$\Ban^{\mathrm{adm. fl}}_{G,\zeta}(L)^{\BB}_{\nn}$. 

\begin{cor}\label{NgIred} If $T_{\nn}=\psi_1+\psi_2$ with $\psi_1, \psi_2: \gal\rightarrow L^{\times}$ continuous homomorphisms then 
$$\Irr(\nn)=\{ (\Indu{P}{G}{\psi_1\otimes \psi_2 \varepsilon^{-1}})_{cont}, (\Indu{P}{G}{\psi_2\otimes \psi_1 \varepsilon^{-1}})_{cont}\}.$$
\end{cor}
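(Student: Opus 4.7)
The plan is to mirror the proof of Corollary \ref{genred} in the generic case, adapted to the non-commutative structure of the ring $R$ from \S\ref{nongenericcaseI}. Let $\Pi_1 := (\Indu{P}{G}{\psi_1\otimes\psi_2\varepsilon^{-1}})_{cont}$ and $\Pi_2 := (\Indu{P}{G}{\psi_2\otimes\psi_1\varepsilon^{-1}})_{cont}$. Both are unitary admissible with central character $\zeta$; the natural lattice in $\Pi_i$ reduces modulo $\varpi$ to $\Indu{P}{G}{\chi_1\otimes\chi_1\omega^{-1}}=\pi$, since $\psi_1$ and $\psi_2$ both reduce to $\chi_1$, placing $\Pi_i\in\Ban^{\mathrm{adm}}_{G,\zeta}(L)^{\BB}$. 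To see they are topologically irreducible, I would observe that $\psi_1\psi_2^{-1}$ reduces to $\Eins$ modulo $\pL$ while $\varepsilon^{\pm 1}$ reduces to $\omega^{\pm 1}\neq\Eins$; hence $\psi_1\psi_2^{-1}\neq\varepsilon^{\pm 1}$, so $\psi_1\otimes\psi_2\varepsilon^{-1}\neq(\psi_1\otimes\psi_2\varepsilon^{-1})^s$ and the standard irreducibility results for continuous parabolic inductions apply (cf.\ the discussion in the proof of Proposition \ref{QofM}). Assuming $\psi_1\neq\psi_2$, Colmez's functor gives $\VV(\Pi_1)=\psi_2\neq\psi_1=\VV(\Pi_2)$, so $\Pi_1\not\cong\Pi_2$.

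To place $\Pi_i$ in the $\nn$-component, I would use the exactness of $\cV$ together with the reduction formula $\VV(\Indu{P}{G}{\chi_1\otimes\chi_2\omega^{-1}})=\chi_2$: for any open bounded $G$-invariant lattice $\Theta_i\subset\Pi_i$, the module $\cV(\Theta_i^d)\otimes_{\OO}L$ is a 2-dimensional continuous $\gal$-representation with trace $\psi_1+\psi_2=T_{\nn}$ and determinant $\zeta\varepsilon$ (the latter arising from Kisin's results as in the proof of Corollary \ref{genred}). Via the identification of $R_\chi^{\mathrm{ps},\zeta\varepsilon}$ with the centre of $\Mod^{\mathrm{lfin}}_{G,\zeta}(\OO)^{\BB}$ provided by Corollary \ref{CngI}, the action of $R_\chi^{\mathrm{ps},\zeta\varepsilon}$ on $\md(\Pi_i):=\Hom_{\dualcat(\OO)}(\wP,\Theta_i^d)\otimes_{\OO}L$ therefore factors through $R_\chi^{\mathrm{ps},\zeta\varepsilon}[1/p]/\nn$, so $\Pi_i\in\Ban^{\mathrm{adm.fl}}_{G,\zeta}(L)^{\BB}_{\nn}$.

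The heart of the argument is the exhaustion. By Corollary \ref{pups}, $\Ban^{\mathrm{adm.fl}}_{G,\zeta}(L)^{\BB}_{\nn}$ is anti-equivalent to finite length modules of the $\nn$-adic completion $R_{\nn}$ of $(R_\chi^{\mathrm{ps},\zeta\varepsilon}[[\gal]]/J)[1/p]\cong R[1/p]$. Any irreducible $R_{\nn}$-module is annihilated by $\nn$ (which lies in the Jacobson radical of $R_{\nn}$) and hence descends to a module of $R[1/p]/\nn R[1/p]$. I would then verify the hypothesis of Corollary \ref{weiterso5}, namely that the central element $(uv-vu)(uv-vu)^{\ast}\in\OO[[t_1,t_2,t_3]]$ vanishes modulo $\nn$: by Corollary \ref{involute2} it equals $\det\rho(uv-vu)=\det[\rho(\gamma),\rho(\delta)]$ for the universal 2-dimensional representation $\rho$ of Lemma \ref{injectcentre}, and at a reducible pseudocharacter $\psi_1+\psi_2$ the specialization of $\rho$ is conjugate to an upper-triangular representation, forcing $[\rho(\gamma),\rho(\delta)]$ to be strictly upper triangular and hence of zero determinant. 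Corollary \ref{weiterso5} then bounds the number of isomorphism classes of irreducible modules by $2$, and these are exhausted by $\md(\Pi_1)$ and $\md(\Pi_2)$.

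The main difficulty I anticipate is verifying that $\md(\Pi_1)$ and $\md(\Pi_2)$ yield \emph{distinct} irreducible $R_{\nn}$-modules rather than collapsing — equivalently, that the two non-split extensions of $\psi_1$ by $\psi_2$ and of $\psi_2$ by $\psi_1$ carried by $\cV(\Theta_1^d)$ and $\cV(\Theta_2^d)$ are genuinely distinguishable in the completion. This requires tracking the direction of the extension through Colmez's explicit computation on parabolic inductions and matching it with the non-commutative action of $R$, but once the calculation in paragraph two is carried out carefully the distinction follows because $\VV(\Pi_1)\neq\VV(\Pi_2)$ descends to non-isomorphic simple quotients of the two modules.
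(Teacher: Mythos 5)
Your proposal captures the correct overall architecture --- verify that the two parabolic inductions are irreducible objects of $\Ban^{\mathrm{adm.fl}}_{G,\zeta}(L)^{\BB}_{\nn}$, show $(uv-vu)(uv-vu)^*$ vanishes in $\mathcal Z[1/p]/\nn$, and invoke Corollary \ref{weiterso5} for the bound --- and this is indeed the route the paper takes. However, there are two substantive problems.

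First, you silently restrict to the case $\psi_1\neq\psi_2$ and never return to $\psi_1=\psi_2$, where the right-hand side is a singleton and the claim is $|\Irr(\nn)|=1$. Corollary \ref{weiterso5} only gives $|\Irr(\nn)|\le 2$, so you still have to rule out a second irreducible object when the two inductions coincide. The paper does this by observing that any hypothetical second irreducible $\Pi$ would have $\md(\Pi)$ one-dimensional (this drops out of the proof of Corollary \ref{weiterso5}, since the quotient $R_1/R_1\theta$ there has the two characters $L'\times L'$ as its simple quotients, each one-dimensional), and then applies Corollary \ref{cormainNGI} to conclude $\Pi$ is a parabolic induction of a unitary character, hence coincides with the one already found. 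Without this step the proof is incomplete.

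Second, your placement of $\Pi_i$ in the $\nn$-component contains an error: you assert that $\cV(\Theta_i^d)\otimes_{\OO}L$ is a $2$-dimensional representation with trace $\psi_1+\psi_2$. In fact Colmez's functor applied to an irreducible continuous parabolic induction yields a \emph{character}, $\VV((\Indu{P}{G}{\psi_1\otimes\psi_2\varepsilon^{-1}})_{cont})=\psi_2$, so $\cV(\Theta_i^d)\otimes_{\OO}L$ is one-dimensional. The correct way to see that $\nn$ annihilates $\md(\Pi_i)$ is via Corollary \ref{NgIkill}: every $\cV(N)$ for $N\in\dualcat(\OO)^{\BB}$ is killed by $g^2-T(g)g+\varepsilon\zeta(g)$ for all $g$; applied to the one-dimensional module $\psi_2$ this forces $T\equiv\psi_1+\psi_2=T_\nn$ on the scalar by which the centre acts, so $\nn$ kills. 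Finally, your worry in the last paragraph about whether $\md(\Pi_1)$ and $\md(\Pi_2)$ ``collapse'' is illusory: once $\VV(\Pi_1)=\psi_2\neq\psi_1=\VV(\Pi_2)$ gives $\Pi_1\not\cong\Pi_2$, the anti-equivalence of Corollary \ref{pups} automatically carries them to non-isomorphic simple modules.
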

\begin{proof} Let $\mathcal Z$ be the centre of $\wE$. We may identify $\wE$ with $R$ and  $\mathcal Z$ with $R_{\chi}^{\mathrm{ps}, \zeta\varepsilon}$.
Corollary \ref{thesameps} says that 
the universal pseudocharacter is equal to the trace of the representation constructed in the proof 
of Lemma \ref{injectcentre}. In particular, if the image of $(uv-vu)(uv-vu)^*$ in $\mathcal Z[1/p]/\nn$
is non-zero, then $T_{\nn}$ is the trace of an absolutely irreducible $2$-di\-men\-sio\-nal representation, see the proof 
of Proposition \ref{weiterso}. Since $T_{\nn}=\psi_1+\psi_2$ we deduce that the image of $(uv-vu)(uv-vu)^*$
in $\mathcal Z[1/p]/\nn$ is zero. Corollary \ref{NgIkill} implies that for every $N$ in $\dualcat(\OO)$, $\cV(N)$ is 
killed by $g^2-T(g)g+\varepsilon \zeta(g)$, for all $g\in \gal$. Since
$$\VV((\Indu{P}{G}{\psi_1\otimes \psi_2 \varepsilon^{-1}})_{cont})=\psi_2, \quad  \VV((\Indu{P}{G}{\psi_2\otimes \psi_1 \varepsilon^{-1}})_{cont})=\psi_1,$$
both Banach space representations lie in $\Irr(\nn)$. If $\psi_1\neq \psi_2$ then the representations are non-isomorphic and we are done,  
since Corollary \ref{weiterso5} says that $\Ban^{\mathrm{adm. fl}}_{G,\zeta}(L)^{\BB}_{\nn}$
has at most $2$ irreducible objects. Suppose that $\psi_1=\psi_2$ and  $\Irr(\nn)$ contains an 
irreducible object $\Pi\not\cong (\Indu{P}{G}{\psi_1\otimes\psi_1\varepsilon^{-1}})_{cont}$. Then it follows from 
the proof of Corollary \ref{weiterso5} that $\md(\Pi)$ is one dimensional. By Corollary \ref{cormainNGI}, 
$\Pi$ is isomorphic to the parabolic induction of a unitary character, and thus must be contained in one 
of the components that we have handled already. Hence, if $\psi_1=\psi_2$ then $|\Irr(\nn)|=1$.
\end{proof}

\section{Non-generic case II}\label{nongenericcaseII}
In this section we deal with the case, where in Colmez's terminology the \textit{atome automorphe} consists
of three distinct irreducible representations. We assume throughout this section that $p\ge 5$. After twisting we may assume that our fixed central character is trivial 
and the block $\BB$ consists of $\Eins$, $\Sp$ and $\pi_{\alpha}:=\Indu{P}{G}{\alpha}$.  
The formalism developed in \S \ref{firstsec} does not work in the category $\dualcat(\OO)^{\BB}$. 
However, Colmez's functor kills off all the representations on which $\SL_2(\Qp)$ acts trivially
and so it is natural to work in the quotient category. We show in \S \ref{qcat} that the category of compact $\OO$-modules 
with the trivial $G$-action is a thick subcategory of $\dualcat(\OO)^{\BB}$ and the formalism 
of \S \ref{firstsec} applies in the quotient category $\qcat(\OO)^{\BB}$ to a projective envelope $\wP_{\pi_{\alpha}^{\vee}}$ 
of $\pi_{\alpha}^{\vee}$. Using Proposition \ref{ftof4} we show that $\cV$ induces 
a surjection $\varphi: \wE:=\End_{\dualcat(\OO)}(\wP_{\pi_{\alpha}^{\vee}})\twoheadrightarrow R^{\psi}_{\rho}$, where 
$\rho$ is the non-split extension $0\rightarrow \Eins\rightarrow \rho\rightarrow \omega \rightarrow 0$ and 
$R^{\psi}_{\rho}$ is the universal deformation ring of $\rho$ with a fixed determinant. The proof requires all kinds
of $\Ext$ calculations, which are carried out in \S \ref{hextII}, \S\ref{prepa}. (We suggest to skip them on first reading.)

The second difficulty is that $R^{\psi}_{\rho}$ is not formally smooth and hence we cannot use the same argument as in the 
generic case. The functor $\Indu{P}{G}{\Ord_{\overline{P}}}$ 
is left exact and we have a natural transformation to the identity functor. This induces a functorial filtration 
on every object of $\Mod^{\mathrm{ladm}}_{G, \zeta}(\OO)$ and dually on every object of $\dualcat(\OO)^{\BB}$ and by 
functoriality on  $\wE$. In \S \ref{filtration} we compare this filtration 
to the filtration on $R^{\psi}_{\rho}$ induced by powers of the ideal defined by the intersection of $R_{\rho}^{\psi}$
and the reducible locus in $R^{\psi}_{\rho}[1/p]$. We show in Theorem \ref{varphisoNGII} that $\varphi$ is an isomorphism 
and $\cV(\wP_{\pi_{\alpha}^{\vee}})$ is the universal deformation of $\rho$ with the fixed determinant.
In order to do this we need a good knowledge of the ring $R_{\rho}^{\psi}$. This is provided  by the appendix \S \ref{someDef} 
using results of B\"ockle \cite{bockle}. 

In \S \ref{Thecentre} we compute the endomorphism ring of $\wP_{\Eins^{\vee}}\oplus \wP_{\Sp^{\vee}}\oplus \wP_{\pi_{\alpha}^{\vee}}$
and show that its centre is naturally isomorphic to  $R_{\rho}^{\psi}$ and it is a finitely generated module over its 
centre. As a consequence we may describe $\dualcat(\OO)^{\BB}$ as a module category over an explicit ring. 
        
In \S \ref{Banachagain} we apply the theory of \S \ref{banach} to describe the category of 
admissible unitary $L$-Banach space representations of $G$ of finite length whose reduction mod $\varpi$ 
lies in $\Mod^{\mathrm{ladm}}_{G, \zeta}(k)^{\BB}$.

If $\pi$ and $\tau$ are smooth $k$-representations of $G$ on which $Z$ acts trivially, in order to simplify the notation we will 
write:
$$ e^i_{G/Z}(\pi,\tau):=\dim_k \Ext^i_{G/Z}(\pi, \tau).$$
If it is clear from the context that we are working with $G$-representations, then we will drop the index $G/Z$ and write $e^i(\pi, \tau)$ instead.
Simirlarly,  if $\pi$ and $\tau$ are representations of $T$ on which $Z$ acts trivially, we will let $e^i_{T/Z}(\pi, \tau):= \dim_k \Ext^i_{T/Z}(\pi, \tau)$. 

We assume all the way till \S \ref{Banachagain} that our fixed central character $\zeta$ is trivial. This is harmless 
since we may always twist to achieve this, see Lemma \ref{liftetatwist}. We recall that the representation $\pi(0,1)$, defined
in \eqref{p-1}, is the unique non-split extension of $\Sp$ by $\Eins$ with $2$-dimensional $I_1$-invariants. 

\subsection{\texorpdfstring{Higher $\mathrm{Ext}$-groups}{Higher $\mathrm{Ext}$-groups}}\label{hextII}
The dimensions of $\Ext^1_{G/Z}$ groups between irreducible representations in the block of the trivial representation, are given by: 
\begin{equation}
e^1(\Eins, \Eins)=0, \quad e^1(\Sp, \Eins)=1, \quad e^1(\Indu{P}{G}{\alpha}, \Eins)=1,
\end{equation}
\begin{equation}
e^1(\Eins, \Sp)=2, \quad e^1(\Sp, \Sp)=0, \quad e^1(\Indu{P}{G}{\alpha}, \Sp)=0,
\end{equation}
\begin{equation}
e^1(\Eins, \Indu{P}{G}{\alpha})=0, \quad e^1(\Sp, \Indu{P}{G}{\alpha})=1, \quad e^1(\Indu{P}{G}{\alpha}, \Indu{P}{G}{\alpha})=2,
\end{equation}
see Theorems 11.4 and 11.5 (ii) in \cite{ext2}. We are going to determine the dimensions of higher $\Ext$-groups.
It is shown in \cite[4.1.3]{ord2} that 
\begin{equation}\label{ordtriv}
\Ord_P \Eins =0, \quad \RR^1\Ord_P \Eins = \alpha^{-1},
\end{equation}
\begin{equation}\label{ordsp}
\Ord_P \Sp =\Eins, \quad \RR^1\Ord_P \Sp = 0.
\end{equation}
It follows directly from \eqref{ordseq}, \eqref{ordext3}  and \eqref{ordtriv} that
\begin{equation}\label{extindtriv}
e^i_{G/Z}(\Indu{P}{G}{\Eins}, \Eins)=0, \quad i\ge 0
\end{equation} 
and from \eqref{ordseq}, \eqref{ordext3}, \eqref{ordsp} and Corollary \ref{extToruschar} that
\begin{equation}
e^1_{G/Z}(\Indu{P}{G}{\Eins}, \Sp)=e^1_{T/Z}(\Eins, \Eins)=2, \quad e^2_{G/Z}(\Indu{P}{G}{\Eins}, \Sp)=e^2_{T/Z}(\Eins, \Eins)=1
\end{equation} 
and $e^i(\Indu{P}{G}{\Eins}, \Sp)=0$ for $i\ge 3$. 

\begin{prop}\label{RIi1}  $\RR^1\II(\Eins)\cong \II(\Indu{P}{G}{\alpha})$, $\RR^2\II(\Eins)\cong \II(\Indu{P}{G}{\alpha})$,  $\RR^3\II(\Eins)\cong \II(\Eins)$
and $\RR^i\II(\Eins)=0$ for $i\ge 4$.
\end{prop}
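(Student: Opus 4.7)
The plan is to handle the four ranges $i = 0$, $i \in \{1,2\}$, $i = 3$, and $i \geq 4$ separately. The vanishing for $i \geq 4$ is immediate from Lemma~\ref{vanish3}, and $\II(\Eins) = \Eins^{I_1} = k$ by definition. For $i = 3$ the group $I_1/Z_1$ is a Poincar\'e pro-$p$ group of dimension $3$ (for $p \geq 5$, as used in Lemma~\ref{vanish3}), so $H^3(I_1/Z_1, k)$ is one-dimensional, matching $\II(\Eins)$ as a vector space. The essential work is therefore the identification of $\RR^1\II(\Eins)$ and $\RR^2\II(\Eins)$.

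First I would establish the underlying $k$-vector space structure via Lemma~\ref{RisH}, which identifies $\RR^i\II(\Eins)$ with $H^i(I_1/Z_1, k)$. A direct commutator computation inside $I_1$ for $u_b := \bigl(\begin{smallmatrix}1 & b \\ 0 & 1\end{smallmatrix}\bigr) \in I_1 \cap U$ and $\bar u_c := \bigl(\begin{smallmatrix}1 & 0 \\ c & 1\end{smallmatrix}\bigr) \in I_1 \cap U^s$ gives
\begin{equation*}
[u_b, \bar u_c] \equiv \begin{pmatrix} 1+bc & 0 \\ 0 & 1-bc \end{pmatrix} \pmod{\text{higher order terms}},
\end{equation*}
which represents a non-trivial element of $(T \cap I_1)/Z_1$. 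Hence commutators kill the diagonal direction, the abelianization of $I_1/Z_1$ has $\Fp$-dimension $2$, and Poincar\'e duality then yields $\dim H^i(I_1/Z_1, k) = 1, 2, 2, 1$ for $i = 0, 1, 2, 3$, matching the predicted dimensions on the right-hand side of the proposition.

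Next I would pin down the $H$-weight structure. Since $H$ normalizes $I_1$ and acts on $I_1 \cap U$ by $\alpha$ and on $I_1 \cap U^s$ by $\alpha^{-1}$, the dual action gives $H^1(I_1/Z_1, k) \cong \alpha^{-1} \oplus \alpha$ as $H$-modules. A direct inspection of the standard basis $\{\varphi_1, \varphi_2\}$ of $\II(\Indu{P}{G}{\alpha})$ (supported respectively on $P s I_1$ and $P I_1$, as in the proof of Lemma~\ref{fielddefi}) shows that it has precisely the same $H$-decomposition $\alpha^{-1} \oplus \alpha$. The $H$-equivariance of the Poincar\'e duality pairing $H^1 \otimes H^2 \to H^3$, combined with the fact that the orientation character is trivial (the adjoint action of $H$ on the Lie algebra of $I_1/Z_1$ has determinant $\alpha \cdot \alpha^{-1} \cdot \mathbf{1} = \mathbf{1}$, so $H$ acts trivially on $H^3$), transports the same $H$-weight decomposition to $H^2$.

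The remaining step is to upgrade these $H$-equivariant isomorphisms to $\HH$-module isomorphisms. The cleanest approach is via the spectral sequence \eqref{specseq} applied with $\tau = \Indu{P}{G}{\Eins}$, combined with the complete vanishing \eqref{extindtriv}; this forces $\Ext^i_{\HH}(\II(\Indu{P}{G}{\Eins}), \RR^j\II(\Eins)) = 0$ for all $i, j \geq 0$. Since $\II(\Indu{P}{G}{\Eins})$ is a non-split extension of $\II(\Sp)$ by $\II(\Eins)$ (via Ollivier's equivalence applied to $0 \to \Eins \to \Indu{P}{G}{\Eins} \to \Sp \to 0$), the classification of extensions recorded in Lemma~\ref{comphext1} forces each $2$-dimensional $\HH$-module with $H$-weights $\alpha^{-1} \oplus \alpha$ and no $\Ext$-pairing with $\II(\Indu{P}{G}{\Eins})$ to be isomorphic to $\II(\Indu{P}{G}{\alpha})$. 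The hard point I anticipate is exactly this last verification of the full Hecke action: if the indirect constraint from \eqref{specseq} is insufficient to rule out a twisted candidate, I would fall back on an explicit cochain calculation using a small projective resolution of $k$ over $k[[I_1/Z_1]]$ (for instance the Koszul-type complex associated to a $p$-saturated generating sequence of $I_1/Z_1$) and track the Hecke operators on explicit cocycle representatives to identify the module structure directly.
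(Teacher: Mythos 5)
Your route is genuinely different from the paper's, but as written it has two concrete gaps.

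First, the $\RR^3$ case is not finished. You show that $H^3(I_1/Z_1,k)$ is one-dimensional and that the $H$-action on it is trivial, and stop there, asserting that this ``matches $\II(\Eins)$ as a vector space.'' But $\II(\Sp)$ is also a one-dimensional $\HH$-module with trivial $H$-action (they differ only in the action of the Hecke generators $T_s$ and $T_\Pi$), so nothing so far rules it out; nor do you rule out unramified twists $\II(\Eins\otimes\mu)$, $\II(\Sp\otimes\mu)$. The paper has a genuine argument here: it observes that every one-dimensional $\HH$-module is of the form $\II(\pi\otimes\mu)$ with $\pi\in\{\Eins,\Sp\}$ and $\mu$ a smooth character, then uses Proposition \ref{comphext} to produce a nonzero $\Ext^3_{G/Z}(\pi\otimes\mu,\Eins)$, forcing $\pi\otimes\mu$ into the block of $\Eins$ (so $\mu=\Eins$), and finally excludes $\pi=\Sp$ because that would contradict \eqref{extindtriv}. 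You need some version of this last step; the $H$-weight alone does not discriminate.

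Second, for $\RR^1$ and $\RR^2$ your only positive constraint is the $H$-weight decomposition; the other input, the total vanishing of $\Ext^i_{\HH}(\II(\Indu{P}{G}{\Eins}),\RR^j\II(\Eins))$, is a \emph{negative} constraint, and the appeal to Lemma \ref{comphext1} (which classifies $\Ext^1_{\HH}$ between irreducibles, not two-dimensional $\HH$-modules with a given $H$-weight) does not close the gap. You yourself flag this and offer a fallback by explicit cochain computation, but the fallback is not carried out. The paper's argument is tighter because it feeds in a \emph{positive} input from the second spectral sequence: it uses the ordinary-parts spectral sequence \eqref{ordseq}, \eqref{ordext3} together with \eqref{ordtriv} to compute $e^2(\Indu{P}{G}{\alpha},\Eins)=2$, and then Proposition \ref{comphext}, together with the vanishing of $\Ext^2_{\HH}$, forces $\Hom_{\HH}(\II(\Indu{P}{G}{\alpha}),\RR^2\II(\Eins))\neq 0$. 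Since $\II(\Indu{P}{G}{\alpha})$ is irreducible and two-dimensional, and $\RR^2\II(\Eins)$ has dimension two, this nonzero map is automatically an isomorphism. This interplay between the Ord spectral sequence (producing nonzero $\Ext$-groups on the $G$-side) and the Hecke spectral sequence (converting them to $\Hom_{\HH}$-statements) is the key mechanism missing from your proposal. Note also that the paper simply cites \cite[11.2]{ext2} for $\RR^1\II(\Eins)\cong\II(\Indu{P}{G}{\alpha})$ rather than reconstructing it; if you want to reprove it, you face exactly the Hecke-identification issue you flag.
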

\begin{proof} The first assertion is given by \cite[11.2]{ext2}.
Since $I_1/Z_1$ is a Poincar\'e group of dimension $3$, see the proof of Corollary  \ref{vanish3}, 
$ H^3(I_1/Z_1, \Eins)$ is one dimensional  and $H^i(I_1/Z_1, \Eins)=0$ for $i\ge 4$. 
We deduce that $\RR^3\II(\Eins)\cong \II(\pi\otimes \mu)$, 
where $\pi=\Eins$ or $\pi=\Sp$ and $\mu: G\rightarrow k^{\times}$ is a smooth character, since all the 
$1$-di\-men\-sio\-nal modules of the  Hecke algebra $\HH$ are of this form. 
It follows from Proposition \ref{comphext} that $\Ext^3_{G/Z}(\pi\otimes \mu, \Eins)\neq 0$. Hence, 
$\pi\otimes\mu$ is in the block of $\Eins$ and so $\mu$ is trivial. If $\pi\cong \Sp$ then the 
same argument implies $\Ext^3_{G/Z}(\Indu{P}{G}{\Eins}, \Eins)\neq 0$, thus contradicting \eqref{extindtriv}.
It follows from \eqref{ordseq}, \eqref{ordext3} and \eqref{ordtriv} that $e^i(\Indu{P}{G}{\alpha}, \Eins)=0$ for $i\ge 4$ and  
\begin{equation}\label{extaltriv}  
e^3(\Indu{P}{G}{\alpha}, \Eins)=1, \quad e^2(\Indu{P}{G}{\alpha}, \Eins)=2.
\end{equation}
Since $\RR^1 \II(\Eins)\cong \II(\Indu{P}{G}{\alpha})$, Lemma \ref{comphext1} (i) implies that $\Ext^1_{\HH}(\II(\Indu{P}{G}{\alpha}), \RR^1\II(\Eins))$ is one dimensional.  Proposition \ref{comphext} and  \eqref{extaltriv} 
imply that  $\Hom_{\HH}(\II(\Indu{P}{G}{\alpha}), \RR^2\II(\Eins))$ is non-zero. Since 
$I_1/Z_1$ is a Poincar\'e group of dimension $3$ we have  
$$\dim H^2(I_1/Z_1, \Eins)=\dim H^1(I_1/Z_1, \Eins)=2.$$ 
As $\II(\Indu{P}{G}{\alpha})$ is irreducible and $2$-di\-men\-sio\-nal we obtain $\RR^2\II(\Eins)\cong \II(\Indu{P}{G}{\alpha})$.
\end{proof} 

\begin{cor}\label{extirest1}For $i\ge 2$, $e^i(\Eins, \Eins)=0$ and $e^i(\Sp, \Eins)=0$, except $e^3(\Eins, \Eins)=1$ and $e^4(\Sp, \Eins)=1$.
\end{cor}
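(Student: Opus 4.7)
The plan is to apply the short exact sequences of Proposition~\ref{comphext} to the pairs $(\tau,\pi) = (\Eins, \Eins)$ and $(\Sp, \Eins)$. Both $\Eins$ and $\Sp$ are admissible and generated by their $I_1$-invariants, and neither is of the form $\pi(r,0,\eta)$ with $0<r<p-1$, so the hypotheses of Corollary~\ref{vanish4} are satisfied and Proposition~\ref{comphext} applies. The right-hand derived functors $\RR^j\II(\Eins)$ are given explicitly by Proposition~\ref{RIi1}: they are $\II(\Indu{P}{G}{\alpha})$ for $j=1,2$, $\II(\Eins)$ for $j=3$, and zero for $j\ge 4$. Combined with Lemma~\ref{comphext1}, which lists all pairs of irreducible $\HH$-modules admitting a non-trivial $\Ext^1_{\HH}$, this reduces the problem to checking a short list of $\Hom_{\HH}$ and $\Ext^1_{\HH}$ terms.

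For $(\tau,\pi) = (\Eins,\Eins)$, the modules $\II(\Eins)$ and $\II(\Indu{P}{G}{\alpha})$ are distinct and not related by case~(iii) of Lemma~\ref{comphext1}, so both $\Hom_{\HH}(\II(\Eins),\II(\Indu{P}{G}{\alpha}))$ and $\Ext^1_{\HH}(\II(\Eins), \II(\Indu{P}{G}{\alpha}))$ vanish; this forces $e^2(\Eins,\Eins)=0$. At $i=3$ the outer term in Proposition~\ref{comphext} is $\Hom_{\HH}(\II(\Eins),\II(\Eins)) = k$ and the inner term still vanishes, giving $e^3(\Eins,\Eins)=1$. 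At $i=4$ the group is identified with $\Ext^1_{\HH}(\II(\Eins),\II(\Eins))$, which is zero since this self-extension class is ruled out by cases~(i)--(iii) of Lemma~\ref{comphext1} (and can also be read off from the known vanishing $e^1(\Eins,\Eins)=0$ via the $i=1$ sequence).

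For $(\tau,\pi)=(\Sp,\Eins)$ the key preliminary step is to determine $\Ext^1_{\HH}(\II(\Sp),\II(\Eins))$, which by Lemma~\ref{comphext1}(iii) is at most one-dimensional; I will pin it down using the $i=1$ case of Proposition~\ref{comphext} together with the already-known value $e^1(\Sp,\Eins)=1$ and the vanishing of $\Hom_{\HH}(\II(\Sp),\II(\Indu{P}{G}{\alpha}))$. With this in hand, $e^i(\Sp,\Eins)$ for $i=2,3$ vanishes because both bounding terms are trivially zero (distinct irreducible $\HH$-modules with no $\Ext^1$), and $e^4(\Sp,\Eins) = \Ext^1_{\HH}(\II(\Sp),\RR^3\II(\Eins)) = \Ext^1_{\HH}(\II(\Sp),\II(\Eins)) = 1$. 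The only mildly delicate point is the bootstrap for $\Ext^1_{\HH}(\II(\Sp),\II(\Eins))$; everything else is a mechanical bookkeeping exercise in Proposition~\ref{comphext}, Proposition~\ref{RIi1} and Lemma~\ref{comphext1}.
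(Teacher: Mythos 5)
Your proof is correct and follows the same strategy as the paper: combine the five-term exact sequences from Proposition~\ref{comphext} with the computation of $\RR^j\II(\Eins)$ from Proposition~\ref{RIi1} and the vanishing constraints on $\Ext^1_{\HH}$ from Lemma~\ref{comphext1}. The only minor variation is your bootstrap for $\Ext^1_{\HH}(\II(\Sp),\II(\Eins))$ from the already-known value $e^1(\Sp,\Eins)=1$ via the $i=1$ sequence, whereas the paper simply cites the one-dimensionality of this group directly from \cite[11.3]{ext2}; your route is slightly more self-contained but amounts to the same thing.
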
 
\begin{proof} The only non-zero $\Ext^i_{\HH}$ groups for $i\ge 1$ between $\II(\Eins)$ and  $\II(\Sp)$ 
are $\Ext^1_{\HH}(\II(\Eins), \II(\Sp))$ and  $\Ext^1_{\HH}(\II(\Sp), \II(\Eins))$, see Lemmas \ref{vanish2} and \ref{comphext1}, 
which are $1$-di\-men\-sio\-nal.
The assertion follows from Proposition \ref{RIi1} and Proposition \ref{comphext}. 
\end{proof}

\begin{lem}\label{indHi} Let $\chi:T\rightarrow k^{\times}$ be a smooth character, then $H^2(I_1/Z_1, \Indu{P}{G}{\chi})$ is $2$-di\-men\-sio\-nal  
and  $H^i(I_1/Z_1, \Indu{P}{G}{\chi})=0$, for $i\ge 3$. 
\end{lem}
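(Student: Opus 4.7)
The plan is to decompose the restriction $\Indu{P}{G}{\chi}|_{I_1}$ via Mackey and then apply Shapiro's lemma together with a cohomology computation for an abelian pro-$p$ group of rank $2$.

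First I would identify the $I_1$-orbits on $P\backslash G\cong \mathbb{P}^1(\Qp)$. A direct computation shows that $I_1$ acts with exactly two orbits: the small disk $\{[1:y]:y\in p\Zp\}$ containing $\infty=[1:0]$, and the large disk $\{[x:1]:x\in\Zp\}$ containing $0=[0:1]$. Each orbit is compact open in $\mathbb{P}^1(\Qp)$, so the smooth induction splits as an $I_1$-representation:
$$\Indu{P}{G}{\chi}|_{I_1}\;\cong\;\Indu{I_1\cap P}{I_1}{\chi}\;\oplus\;\Indu{I_1\cap\overline{P}}{I_1}{\chi^s},$$
with stabilizers $I_1\cap P$ and $I_1\cap s^{-1}Ps=I_1\cap\overline{P}$, where $\chi^s$ denotes $\chi$ conjugated by $s$.

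Next, both stabilizers contain $Z_1$, so Shapiro's lemma gives
$$H^i(I_1/Z_1,\Indu{P}{G}{\chi})\;\cong\;H^i((I_1\cap P)/Z_1,\chi)\;\oplus\;H^i((I_1\cap\overline{P})/Z_1,\chi^s).$$
The Iwahori decomposition yields $I_1\cap P=T_1\cdot(U\cap I_1)$ and $I_1\cap\overline{P}=T_1\cdot(\overline{U}\cap I_1)$, where $T_1=T\cap I_1\cong(1+p\Zp)^2$. Since $T_1/Z_1\cong 1+p\Zp\cong\Zp$ and $U\cap I_1\cong\overline{U}\cap I_1\cong\Zp$, both quotient groups are isomorphic to $\Zp^2$, which for $p\geq 5$ is $p$-saturable and hence a Poincar\'e group of dimension $2$.

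The crucial simplification is that $\chi|_{T_1}$ and $\chi^s|_{T_1}$ are trivial: $T_1$ is pro-$p$ while $k^\times$ has no $p$-torsion, so any smooth character of $T_1$ into $k^\times$ is trivial. Extending trivially along $U$ (respectively $\overline{U}$), each summand reduces to $H^i(\Zp^2,k)\cong \wedge^i k^2$, which has dimensions $1,2,1,0,0,\ldots$ in degrees $0,1,2,3,\ldots$. Adding the two copies yields dimensions $2,4,2,0,0,\ldots$, which gives the claim. The one point that requires care is justifying the Mackey decomposition for smooth (non-compactly supported) induction from the closed non-open subgroup $P$; however this is automatic since $P\backslash G$ is compact and totally disconnected, so the $I_1$-orbits there are compact open and the space of smooth sections splits as a direct sum over them.
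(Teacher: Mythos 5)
Your proof follows the same route as the paper's (Mackey restriction to $I_1$, Shapiro's lemma, then a cohomological-dimension argument for a compact $p$-adic analytic group of dimension~$2$), and the conclusion is correct, but there is one factual slip worth flagging: the claim that $(I_1\cap P)/Z_1\cong\Zp^2$ is false. The Iwahori decomposition $I_1\cap P=T_1\cdot(U\cap I_1)$ is a \emph{semidirect} product, with $T_1$ acting on $U\cap I_1$ through the character $\mathrm{diag}(a,d)\mapsto ad^{-1}$. Quotienting by $Z_1$ only kills the diagonal $a=d$, so the induced action of $T_1/Z_1\cong\Zp$ on $U\cap I_1\cong\Zp$ is still non-trivial, and $(I_1\cap P)/Z_1\cong\Zp\rtimes\Zp$ is non-abelian. (Concretely, $(I_1\cap P)/Z_1$ is the group of matrices $\left(\begin{smallmatrix}1&b\\0&d\end{smallmatrix}\right)$ with $b\in\Zp$, $d\in 1+p\Zp$, which does not commute.) Consequently the K\"unneth/exterior-algebra computation $H^i(\Zp^2,k)\cong\wedge^i k^2$ is not what is being applied.

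That said, your end result survives: the group is still a compact torsion-free $p$-adic analytic pro-$p$ group of dimension~$2$, hence a Poincar\'e group of dimension~$2$ by Lazard and Serre; this gives $\dim H^2=1$ and $H^i=0$ for $i\ge 3$ on each of the two Shapiro summands, which is all the lemma asserts. This is exactly how the paper argues, and it is the reason the paper invokes Lazard/Serre rather than writing the quotient as $\Zp^2$. If you wanted the $H^1$ dimension as well you would compute the Frattini quotient of $\Zp\rtimes\Zp$ (still of dimension~$2$), but that is not needed here. Aside from this mislabelling of the group, your Mackey decomposition, the triviality of $\chi$ on the pro-$p$ part, and the application of Shapiro are all correct.
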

\begin{proof} By restricting to $I_1$ we obtain 
 \begin{equation}
(\Indu{P}{G}{\chi})|_{I_1}\cong \Indu{I_1\cap P}{I_1}{\Eins} \oplus \Indu{I_1\cap P^s}{I_1}{\Eins}.
\end{equation}
Shapiro's lemma gives
\begin{equation}
H^i(I_1/Z_1, \Indu{P}{G}{\chi})\cong H^i((I_1\cap P)/Z_1, \Eins)\oplus H^i((I_1\cap P^s)/Z_1, \Eins).
\end{equation}
Since $(I_1\cap P)/Z_1\cong (I_1\cap P^s)/Z_1\cong \Zp \rtimes \Zp$ is a compact torsion-free $p$-adic analytic group 
of dimension $2$, the assertion follows from \cite[V.2.5.8]{laz} and \cite{serreprop}. 
 \end{proof}

\begin{cor}\label{comment7} Let Let $\chi:T\rightarrow k^{\times}$ be a smooth character, then $\RR^2\II(\Indu{P}{G}{\chi})$ is a $2$-di\-men\-sio\-nal  
$k$-vector space and  $\RR^i\II(\Indu{P}{G}{\chi})=0$, for $i\ge 3$. 
\end{cor}
\begin{proof} Lemma \ref{RisH} provides a natural isomorphism of $k$-vector spaces between $H^i(I_1/Z_1, \Indu{P}{G}{\chi})$ and $\RR^i\II( \Indu{P}{G}{\chi})$ and  the assertion follows from  Lemma \ref{indHi}.
\end{proof}

\begin{prop}\label{RIiSp} $\RR^1\II(\Sp)\cong \II(\Indu{P}{G}{\Eins})$, $\RR^2\II(\Sp)\cong \II(\Eins)$ and $\RR^i\II(\Sp)=0$ for $i\ge 3$.
\end{prop}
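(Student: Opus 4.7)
The plan is to apply the derived functors $\RR^{\bullet}\II$, which compute $H^{\bullet}(I_1/Z_1,-)$ on underlying vector spaces by Lemma \ref{RisH}, to the defining exact sequence $0\to \Eins\to \Indu{P}{G}{\Eins}\to \Sp\to 0$ and then extract $\RR^i\II(\Sp)$ from the resulting long exact sequence. Proposition \ref{RIi1} identifies $\RR^i\II(\Eins)$ as $\II(\Eins),\II(\Indu{P}{G}{\alpha}),\II(\Indu{P}{G}{\alpha}),\II(\Eins),0$ for $i=0,1,2,3,\ge 4$, while Shapiro's lemma combined with the fact that $(I_1\cap P)/Z_1\cong\Zp\rtimes\Zp$ is a two-dimensional torsion-free $p$-adic analytic group, hence a Poincar\'e group (as in the proof of Lemma \ref{indHi}), gives $\RR^i\II(\Indu{P}{G}{\Eins})$ of dimensions $2,4,2,0$ for $i=0,1,2,\ge 3$. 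A dimension count on $I_1$-invariants shows that $(\Indu{P}{G}{\Eins})^{I_1}\twoheadrightarrow \Sp^{I_1}$ is surjective, so the initial connecting map $\II(\Sp)\to \RR^1\II(\Eins)$ vanishes and the long exact sequence breaks into
$$0\to \II(\Indu{P}{G}{\alpha})\to \RR^1\II(\Indu{P}{G}{\Eins})\to \RR^1\II(\Sp)\to \II(\Indu{P}{G}{\alpha})\to \RR^2\II(\Indu{P}{G}{\Eins})\to \RR^2\II(\Sp)\to \II(\Eins)\to 0.$$
The vanishing $\RR^i\II(\Sp)=0$ for $i\ge 3$ is then immediate from $\RR^3\II(\Indu{P}{G}{\Eins})=0$, $\RR^4\II(\Eins)=0$ and Lemma \ref{vanish3}.

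The substance of the proof is to pin down the $\HH$-module structure by applying Proposition \ref{comphext} with three carefully chosen auxiliary $\tau$, using Ord-computations of $e^i(\tau,\Sp)$. From \eqref{ordsp}, the spectral sequences \eqref{ordseq},\eqref{ordext3} and Corollary \ref{extToruschar} one obtains $e^i(\Indu{P}{G}{\alpha},\Sp)=e^i_T(\alpha^{-1},\Eins)=0$ for all $i\ge 1$ and $e^i(\Indu{P}{G}{\Eins},\Sp)=e^i_T(\Eins,\Eins)$, which equals $2,1,0$ for $i=1,2,\ge 3$. Proposition \ref{comphext} with $\tau=\Indu{P}{G}{\alpha}$ then forces $\Hom_{\HH}(\II(\Indu{P}{G}{\alpha}),\RR^i\II(\Sp))=0$ and $\Ext^1_{\HH}(\II(\Indu{P}{G}{\alpha}),\RR^i\II(\Sp))=0$ for every $i\ge 1$; with $\tau=\Sp$ and $e^1(\Sp,\Sp)=0$ it forces $\Hom_{\HH}(\II(\Sp),\RR^1\II(\Sp))=0$; and with $\tau=\Indu{P}{G}{\Eins}$, together with the computation $\dim\Ext^1_{\HH}(\II(\Indu{P}{G}{\Eins}),\II(\Sp))=1$ obtained from the exact sequence $0\to\II(\Eins)\to\II(\Indu{P}{G}{\Eins})\to\II(\Sp)\to 0$ and Lemma \ref{comphext1}, it yields $\dim\Hom_{\HH}(\II(\Indu{P}{G}{\Eins}),\RR^1\II(\Sp))=1$.

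Given these constraints, a short case analysis of the unique non-zero map $\II(\Indu{P}{G}{\Eins})\to \RR^1\II(\Sp)$ rules out a factorisation through the cosocle $\II(\Sp)$ (since $\II(\Sp)$ is not a submodule of $\RR^1\II(\Sp)$) and an image equal to $\II(\Eins)$ (since $\Hom_{\HH}(\II(\Indu{P}{G}{\Eins}),\II(\Eins))=0$), so the map is injective and $\II(\Indu{P}{G}{\Eins})\hookrightarrow \RR^1\II(\Sp)$. To conclude that this embedding is an isomorphism one rules out a non-zero cokernel: its socle would have to be $\II(\Eins)$ (the other options $\II(\Sp)$ and $\II(\Indu{P}{G}{\alpha})$ are killed by the socle vanishings just derived, using $\Ext^1_{\HH}(\II(\Sp),\II(\Indu{P}{G}{\Eins}))=0=\Ext^1_{\HH}(\II(\Indu{P}{G}{\alpha}),\II(\Indu{P}{G}{\Eins}))$ so the resulting extension would split), and a careful comparison of $\Ext^1_{\HH}(\II(\Eins),\RR^1\II(\Sp))$ and $\Ext^2(\Eins,\Sp)$ via Proposition \ref{comphext} together with the long exact sequence for $\Hom(-,\Sp)$ forces this cokernel to vanish. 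This identifies $\RR^1\II(\Sp)\cong \II(\Indu{P}{G}{\Eins})$; the subsequent connecting map $\RR^1\II(\Sp)\to \II(\Indu{P}{G}{\alpha})$ is then zero (since $\RR^1\II(\Sp)$ has no $\II(\Indu{P}{G}{\alpha})$-quotient), so $\RR^2\II(\Eins)\xrightarrow{\sim}\RR^2\II(\Indu{P}{G}{\Eins})$ by equality of dimensions and irreducibility, and $\RR^2\II(\Sp)\cong \RR^3\II(\Eins)\cong \II(\Eins)$. The main obstacle is precisely this identification at degree one: dimension counts plus the long exact sequence alone admit several $\HH$-module configurations, and ruling out a non-trivial cokernel of $\II(\Indu{P}{G}{\Eins})\hookrightarrow \RR^1\II(\Sp)$ genuinely requires combining the Ord-computations with Proposition \ref{comphext} for all three auxiliary choices of $\tau$.
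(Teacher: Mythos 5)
Your proof takes a genuinely different route from the paper. The paper's proof is a few lines: it cites Proposition 11.2 of \cite{ext2}, which already gives both $\RR^1\II(\Sp)\cong\II(\Indu{P}{G}{\Eins})$ and the splitting $\RR^1\II(\Indu{P}{G}{\Eins})\cong\RR^1\II(\Eins)\oplus\RR^1\II(\Sp)$, and then reads off $\RR^2\II(\Sp)$ and the vanishing from the long exact sequence plus Proposition \ref{RIi1} and Lemma \ref{indHi}. You instead try to re-derive the identification $\RR^1\II(\Sp)\cong\II(\Indu{P}{G}{\Eins})$ internally, from the Ord-spectral-sequence computations of $e^i(\tau,\Sp)$ and from Proposition \ref{comphext} with $\tau\in\{\pi_\alpha,\Sp,\Indu{P}{G}{\Eins}\}$. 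The preliminary steps --- the socle vanishings $\Hom_{\HH}(\II(\pi_\alpha),\RR^1\II(\Sp))=\Hom_{\HH}(\II(\Sp),\RR^1\II(\Sp))=0$, the one-dimensionality of $\Hom_{\HH}(\II(\Indu{P}{G}{\Eins}),\RR^1\II(\Sp))$, and the injectivity of the unique non-zero map $\II(\Indu{P}{G}{\Eins})\to\RR^1\II(\Sp)$ --- are all correct, and the argument that $\soc Q$ can only contain $\II(\Eins)$ (where $Q$ is the cokernel) is sound. This is a legitimate attempt to make the proof self-contained.

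The gap is in the closing step. The phrase ``a careful comparison of $\Ext^1_{\HH}(\II(\Eins),\RR^1\II(\Sp))$ and $\Ext^2(\Eins,\Sp)$ \dots forces this cokernel to vanish'' is not an argument, and it is not clear that this comparison suffices: Proposition \ref{comphext} with $\tau=\Eins$ only bounds $\dim\Ext^1_{\HH}(\II(\Eins),\RR^1\II(\Sp))+\dim\Hom_{\HH}(\II(\Eins),\RR^2\II(\Sp))$ by $2$, and to extract anything you would at least also need $\dim\Hom_{\HH}(\II(\Eins),\RR^1\II(\Sp))=1$, which you never compute (it follows from $e^1(\Eins,\Sp)=2$ and $\dim\Ext^1_{\HH}(\II(\Eins),\II(\Sp))=1$, and is essential to control the socle of $\RR^1\II(\Sp)$ itself). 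A clean way to close the gap is to argue with the connecting map $\delta:\RR^1\II(\Sp)\to\RR^2\II(\Eins)\cong\II(\Indu{P}{G}{\alpha})$: since $\Hom_{\HH}(\II(\Indu{P}{G}{\Eins}),\II(\Indu{P}{G}{\alpha}))=0$ (an indecomposable two-step module with socle $\II(\Eins)$ cannot map non-trivially to a two-dimensional irreducible), $\delta$ vanishes on the submodule $\II(\Indu{P}{G}{\Eins})$ and hence factors through $Q$. If $Q\ne 0$ then $\dim\RR^1\II(\Sp)>2$, so $\delta\ne 0$; its image is a non-zero submodule of the irreducible $\II(\Indu{P}{G}{\alpha})$, hence all of it, so $\II(\Indu{P}{G}{\alpha})$ is a quotient of $Q$ and (comparing dimensions, $\dim Q\le 2$) in fact $Q\cong\II(\Indu{P}{G}{\alpha})$. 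But you have already shown $\soc Q$ can only contain $\II(\Eins)$, a contradiction. You should either supply such an argument or, more economically, cite \cite[Prop.\ 11.2]{ext2} as the paper does.
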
 
\begin{proof} Proposition 11.2 of \cite{ext2} says that the natural maps induce  isomorphisms 
$\RR^1\II(\Indu{P}{G}{\Eins}) \cong \RR^1\II(\Eins)\oplus \RR^1\II(\Sp)$
and $\RR^1\II(\Sp)\cong \II(\Indu{P}{G}{\Eins})$. Hence, applying $\II$ to the exact sequence 
$0\rightarrow \Eins\rightarrow \Indu{P}{G}{\Eins}\rightarrow \Sp\rightarrow 0$, and observing that $\RR^3(\Indu{P}{G}{\Eins})$ vanishes by Corollary \ref{comment7}, we get 
\begin{equation}\label{longyeah} 
\RR^2\II(\Eins)\hookrightarrow \RR^2\II(\Indu{P}{G}{\Eins})\rightarrow \RR^2\II(\Sp)\twoheadrightarrow \RR^3\II(\Eins)
\end{equation} 
The first arrow in \eqref{longyeah} is a surjection, since both the source and the  target are $2$-di\-men\-sio\-nal, see Proposition \ref{RIi1}
and Corollary \ref{comment7} respectively. This implies 
the last arrow is an isomorphism.  Further, we deduce from Corollary \ref{comment7} and Proposition \ref{RIi1} that 
 $\RR^i\II(\Sp)\cong \RR^{i+1}\II(\Eins)=0$ for $i\ge 3$.
\end{proof}

\begin{prop}\label{resP} Let $U$ be in  $\Mod_{T/Z}^{\mathrm{l \, adm}}(k)$ then for all $i\ge 0$ we have an exact sequence
\begin{equation}\label{observe}
\Ext^i_{G/Z}(\Sp, \Indu{P}{G}{U})\hookrightarrow \Ext^i_{G/Z}(\Indu{P}{G}{\Eins}, \Indu{P}{G}{U})\twoheadrightarrow
\Ext^i_{G/Z}(\Eins, \Indu{P}{G}{U}).
\end{equation}
\end{prop}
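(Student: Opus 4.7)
The plan is to prove that the restriction map
\[
\rho^i \colon \Ext^i_{G/Z}(\Indu{P}{G}{\Eins}, \Indu{P}{G}{U}) \to \Ext^i_{G/Z}(\Eins, \Indu{P}{G}{U})
\]
induced by $\Eins \hookrightarrow \Indu{P}{G}{\Eins}$ is surjective for every $i \ge 0$; the sequence in the proposition then results from the long exact sequence of $\Ext^i_{G/Z}(-, \Indu{P}{G}{U})$ applied to $0 \to \Eins \to \Indu{P}{G}{\Eins} \to \Sp \to 0$. The strategy is to exhibit a natural section using derived Frobenius reciprocity and Yoneda induction.

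First I would set up derived Frobenius reciprocity. Since $P \backslash G \cong \mathbb{P}^1(\Qp)$ is compact, smooth induction $\Indu{P}{G}{(-)}$ coincides with compact induction; it is exact, and being right adjoint to the exact functor $\Res_P$ it also preserves injectives. Grothendieck's spectral sequence applied to $\Hom_{G/Z}(\Eins, -) \circ \Indu{P}{G}{(-)} = \Hom_{P/Z}(\Eins, -)$ therefore degenerates and yields a natural isomorphism
\[
\Phi^i_V \colon \Ext^i_{G/Z}(\Eins, \Indu{P}{G}{V}) \xrightarrow{\sim} \Ext^i_{P/Z}(\Eins, V),
\]
which at the level of Yoneda extensions is ``restrict to $P$ and push out along the $P$-equivariant evaluation $\ev_1 \colon \Indu{P}{G}{V} \to V$, $f \mapsto f(1)$''. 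Separately, exactness of $\Indu{P}{G}{(-)}$ means that termwise induction of Yoneda $i$-extensions descends to a natural map
\[
\Ind_P^G \colon \Ext^i_{P/Z}(\Eins, U) \to \Ext^i_{G/Z}(\Indu{P}{G}{\Eins}, \Indu{P}{G}{U}),
\]
and I set $\sigma^i := \Ind_P^G \circ (\Phi^i_U)^{-1}$.

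The key calculation is $\rho^i \circ \sigma^i = \mathrm{id}$. Given a class $\xi \colon 0 \to U \to E^i \to \cdots \to E^1 \to \Eins \to 0$, the pullback of $\Ind_P^G \xi$ along $\Eins \hookrightarrow \Indu{P}{G}{\Eins}$ is
\[
0 \to \Indu{P}{G}{U} \to \Indu{P}{G}{E^i} \to \cdots \to \Indu{P}{G}{E^2} \to \widetilde{E}^1 \to \Eins \to 0
\]
with $\widetilde{E}^1 = \Indu{P}{G}{E^1} \times_{\Indu{P}{G}{\Eins}} \Eins$. Applying $\Phi^i_U$ reduces to a single geometric check: the $P$-equivariant map $(f,\lambda) \mapsto f(1)$ sends $\widetilde{E}^1|_P$ to $E^1$ and extends $\ev_1 \colon \Indu{P}{G}{U}|_P \to U$ along $U \hookrightarrow E^1$, so by the universal property of pushout combined with the five-lemma it induces an isomorphism of the pushed-out extension with the original $E^1$. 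Hence the result is $\xi$ itself, and $\sigma^i$ is a section of $\rho^i$.

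The main obstacle is the explicit identification of $\Phi^i_V$ as the ``restrict then push out along $\ev_1$'' operation at the level of Yoneda extensions, which requires a careful diagram chase through the derived adjunction; once pinned down in the one-step case, the higher-degree verification is formal, reducing term by term to the same pushout computation.
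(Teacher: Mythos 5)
Your proof is correct in outline and rests on exactly the same two facts the paper uses: the derived Frobenius reciprocity isomorphism
$\Ext^i_{G/Z}(V, \Indu{P}{G}{U})\cong \Ext^i_{P/Z}(V|_P, U)$,
and the observation that $0\rightarrow \Eins\rightarrow \Indu{P}{G}{\Eins}\rightarrow \Sp\rightarrow 0$ splits after restriction to $P$ (via $\ev_1$). The difference is one of execution, and the paper's route is considerably shorter. The paper uses the Shapiro isomorphism \emph{naturally in the first variable} $V$; applying $\Ext^\bullet_{G/Z}(-,\Indu{P}{G}{U})$ to the short exact sequence and transporting the whole long exact sequence across the isomorphism to the $P$-side, the $P$-split sequence immediately yields split short exact sequences, which pull back to the $G$-side by naturality. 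You instead fix $V=\Eins$, build a termwise-induction map on Yoneda extensions, and check by hand that composing with restriction-to-$\Eins$ recovers the original class. This works, but it forces you through the verification you flag as ``the main obstacle,'' namely the Yoneda-level description of $\Phi^i$ as ``restrict and push out along $\ev_1$,'' plus a fibered-product/pushout diagram chase. All of that evaporates once you state derived Shapiro functorially in $V$ and invoke naturality; the section of $\rho^i$ becomes simply
$\Ext^i_{G/Z}(\Eins,\Indu{P}{G}{U})\cong\Ext^i_{P/Z}(\Eins,U)\xrightarrow{r^*}\Ext^i_{P/Z}(\Indu{P}{G}{\Eins}|_P,U)\cong\Ext^i_{G/Z}(\Indu{P}{G}{\Eins},\Indu{P}{G}{U})$,
where $r=\ev_1$ is the $P$-equivariant retraction, and $\rho^i\circ\sigma^i=\mathrm{id}$ is immediate from $r\circ\iota=\mathrm{id}$ and naturality — no Yoneda-level bookkeeping needed. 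So: same underlying idea, but your packaging creates a technical detour that the paper's formulation avoids.
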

\begin{proof} Recall that by Corollary \ref{thesame} it does not matter whether we compute the $\Ext$ groups in 
$\Mod^{\mathrm{sm}}_{G/Z}(k)$ or in $\Mod_{G/Z}^{\mathrm{l \, adm}}(k)$. If $V$ is in $\Mod^{\mathrm{sm}}_{G/Z}(k)$ then 
\begin{equation}
\Ext^i_{G/Z}(V, \Indu{P}{G}{U})\cong \Ext^i_{P/Z}(V, U),
\end{equation}
see \cite[4.2.1]{ord2}. Since the sequence $0\rightarrow \Eins\rightarrow \Indu{P}{G}{\Eins}\rightarrow \Sp\rightarrow 0$
splits, when restricted to $P$, we obtain the result. 
\end{proof}

\begin{cor}\label{injtriv} Let $\kappa$ be in $\Mod^{\mathrm{ladm}}_{T/Z}(k)$ such that
$\Hom_T(\chi, \kappa)=0$ for all $\chi\in \Irr_{T/Z}(k)$, $\chi\neq \Eins_T$. Then $e^i(\Sp, \Indu{P}{G}{\kappa})=0$ 
and $\Ext^i_{G/Z}(\Eins, \Indu{P}{G}{\kappa})\cong 
\Ext^i_{T/Z}(\Eins, \kappa)$ for all $i\ge 0$. 
\end{cor}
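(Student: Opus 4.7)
The strategy I would adopt is to reduce both assertions to Proposition \ref{resP}, which supplies a short exact sequence
$$0\to \Ext^i_{G/Z}(\Sp, \Indu{P}{G}{\kappa})\to \Ext^i_{G/Z}(\Indu{P}{G}{\Eins}, \Indu{P}{G}{\kappa})\to \Ext^i_{G/Z}(\Eins, \Indu{P}{G}{\kappa})\to 0.$$
It is enough to show that the middle and right-hand terms are both naturally isomorphic to $\Ext^i_{T/Z}(\Eins,\kappa)$: the vanishing of the left-hand term then follows by comparing dimensions, and the claimed isomorphism is simply the identification of the right-hand term. First I would use the hypothesis, combined with the block decomposition of $\Mod^{\mathrm{lfin}}_{T/Z}(k)$ (a formal consequence of Corollary \ref{extToruschar}, since every non-zero object in a locally finite category has non-zero socle), to conclude that every irreducible subquotient of $\kappa$ is isomorphic to $\Eins_T$, hence every irreducible subquotient of $\kappa\otimes\alpha^{-1}$ is isomorphic to $\alpha^{-1}$. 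Since $p\ge 5$ we have $\alpha\ne\Eins_T$, so Corollary \ref{extToruschar} and dévissage give $\Ext^i_{T/Z}(\Eins,\kappa\otimes\alpha^{-1})=0$ for every $i\ge 0$.

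For the middle term, I would apply the spectral sequence \eqref{specseqord} with $U=\Eins$ (noting $\Indu{\overline P}{G}{\Eins}=\Indu{P}{G}{\Eins}$) and $V=\Indu{P}{G}{\kappa}$. By \eqref{ordinduced}, $\Ord_P V\cong \kappa^s$ and $\RR^1\Ord_P V\cong \kappa\otimes\alpha^{-1}$. Since $(-)^s$ is a self-equivalence of $\Mod^{\mathrm{lfin}}_{T/Z}(k)$ fixing $\Eins_T$, $\Ext^i_{T/Z}(\Eins,\kappa^s)\cong \Ext^i_{T/Z}(\Eins,\kappa)$, and the vanishing above kills the entire $j=1$ row. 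The spectral sequence therefore degenerates and yields $\Ext^i_{G/Z}(\Indu{P}{G}{\Eins}, \Indu{P}{G}{\kappa})\cong \Ext^i_{T/Z}(\Eins,\kappa)$ for all $i$.

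For the right-hand term, I would combine the Frobenius isomorphism $\Ext^i_{G/Z}(\Eins,\Indu{P}{G}{\kappa})\cong \Ext^i_{P/Z}(\Eins,\kappa)$ from \cite[4.2.1]{ord2} (already used in the proof of \ref{resP}) with the Grothendieck spectral sequence attached to the composition $\Hom_{P/Z}(\Eins,-)=\Hom_{T/Z}(\Eins,(-)^U)$. Since $U$ acts trivially on $\kappa$ one has $H^q(U,\kappa)\cong H^q_{\mathrm{sm}}(U,k)\otimes_k \kappa$, so the required degeneration is equivalent to $H^q_{\mathrm{sm}}(U,k)=0$ for $q\ge 1$. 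Writing $\Qp=\varinjlim_n p^{-n}\Zp$ and using the Milnor sequence relating $H^q_{\mathrm{sm}}(\Qp,k)$ to $\varprojlim$ and $\varprojlim^1$ of $H^q(p^{-n}\Zp,k)$, both terms vanish: the identification $p^{-n}\Zp\cong\Zp$ turns each transition map into multiplication by $p$, which is zero in characteristic $p$; and $H^q(\Zp,k)=0$ for $q\ge 2$ since $\Zp$ is a Poincaré group of dimension $1$ when $p\ge 5$. Substituting, the spectral sequence collapses to the $q=0$ row and gives $\Ext^i_{P/Z}(\Eins,\kappa)\cong \Ext^i_{T/Z}(\Eins,\kappa)$, completing the argument.

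The principal obstacle will be the last step: while the vanishing $H^q_{\mathrm{sm}}(\Qp,k)=0$ for $q\ge 1$ is not deep, it sits outside the Poincaré-duality framework used elsewhere in the paper (since $\Qp$ is not compact), so one must argue directly with the Milnor sequence and the characteristic-$p$ divisibility of $\Qp/\Zp$. Once this input is in place, the spectral sequences are automatically bounded because Corollary \ref{extToruschar} gives $\Ext^i_{T/Z}=0$ for $i\ge 3$, so convergence is not an issue.
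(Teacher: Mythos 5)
Your route is genuinely different from the paper's: the paper first treats the case where $\kappa=J$ is injective in $\Mod^{\mathrm{ladm}}_{T/Z}(k)$, using \eqref{ordseq} and \eqref{ordext3} to get $\Ext^i_{G/Z}(\Indu{P}{G}{\Eins},\Indu{P}{G}{J})=0$ for $i\ge 1$, so that by Proposition \ref{resP} the induced representation is acyclic for both $\Hom_G(\Eins,-)$ and $\Hom_G(\Sp,-)$, computes the two $\Hom$'s directly (using $\Sp_U=0$), and then resolves a general $\kappa$ by injectives lying in the block of $\Eins_T$, computing both derived functors from the induced acyclic resolution. Your plan, by contrast, has a genuine gap at its very first reduction: you claim that once the middle and right terms of the sequence of Proposition \ref{resP} are identified with $\Ext^i_{T/Z}(\Eins,\kappa)$, the vanishing of $\Ext^i_{G/Z}(\Sp,\Indu{P}{G}{\kappa})$ "follows by comparing dimensions". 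The hypotheses only force $\kappa$ to be locally admissible with all irreducible subquotients isomorphic to $\Eins_T$; taking $\kappa$ to be a countable direct sum of copies of $\Eins_T$ already makes $\Hom_T(\Eins,\kappa)$ infinite dimensional, and from an exact sequence $0\to A\to B\to C\to 0$ with $B$ abstractly isomorphic to $C$ one cannot conclude $A=0$. To rescue the reduction you would have to check that the surjection in \ref{resP} is compatible with your two identifications (the $\Ord_P$-adjunction for the middle term, and restriction to $P$ plus a Hochschild--Serre argument for the right term), i.e.\ that under them it becomes an isomorphism $\Ext^i_{T/Z}(\Eins,\kappa^s)\to\Ext^i_{T/Z}(\Eins,\kappa)$; this naturality is never addressed and is not a formality. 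Alternatively one must prove the $\Sp$-vanishing directly, which is what the paper's acyclic-resolution argument accomplishes.

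There is also a foundational issue in your treatment of the right-hand term. The functor of smooth $U$-invariants on $\Mod^{\mathrm{sm}}_{P/Z}(k)$ does preserve injectives (inflation is an exact left adjoint), but to identify its derived functors evaluated on the inflation of $\kappa$ with smooth cohomology $H^q_{\mathrm{sm}}(U,\kappa)$ you need injective objects of $\Mod^{\mathrm{sm}}_{P/Z}(k)$ to be acyclic for $H^{\bullet}_{\mathrm{sm}}(U,-)$; since $U\cong\Qp$ is closed but not open in $P/Z$, restriction to $U$ does not obviously preserve injectives, and this is precisely the kind of delicacy that Emerton's $\Ord_P$ formalism, and the paper's reduction to injective $\kappa$ on the torus side, are designed to avoid. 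Your auxiliary computation $H^q_{\mathrm{sm}}(\Qp,k)=0$ for $q\ge 1$ is correct (in degree $1$ because any continuous homomorphism factors through the $p$-divisible group $\Qp/p^n\Zp$, and in higher degrees by the limit argument), and your handling of the middle term via \eqref{specseqord}, \eqref{ordinduced} and the block decomposition of $\Mod^{\mathrm{ladm}}_{T/Z}(k)$ is sound; but without the two points above, the vanishing $e^i(\Sp,\Indu{P}{G}{\kappa})=0$, which is half of the statement, is not established.
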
 
\begin{proof} Suppose that $\kappa=J$ is injective in $\Mod^{\mathrm{ladm}}_{T/Z}(k)$. Then it follows from 
\eqref{ordseq} and \eqref{ordext3} that $\Ext^i_{G/Z}(\Indu{P}{G}{\Eins}, \Indu{P}{G}{J})=0$ 
for all $i\ge 1$. Proposition \ref{resP} implies that $\Indu{P}{G}{J}$ is acyclic for 
$\Hom_{G/Z}(\Eins, \ast)$ and $\Hom_{G/Z}(\Sp, \ast)$. Moreover, since $\RR^1\Ord_P \Sp=0$, the $U$-coinvariants 
$\Sp_U$ are zero  by \cite[3.6.2]{ord2}. Hence, $\Hom_{G}(\Sp, \Indu{P}{G}{J})=0$ and 
$\Hom_G(\Eins_G, \Indu{P}{G}{J})\cong \Hom_T(\Eins_T, J)$.

In general, let $\kappa\hookrightarrow J^{\bullet}$ be an injective resolution of $\kappa$ in 
$\Mod^{\mathrm{ladm}}_{T/Z}(k)$. Since the block of $\Eins_T$ contains only $\Eins_T$ itself, see Corollary \ref{extToruschar}, 
we may assume that for each $i\ge 1$ all $\Hom_T(\chi, J^i)=0$ for all $\chi\in \Irr_{T/Z}(k)$, $\chi\neq \Eins_T$. By inducing 
we obtain a resolution $\Indu{P}{G}{\kappa}\hookrightarrow \Indu{P}{G}{J^{\bullet}}$ by acyclic objects 
for functors $\Hom_G(\Eins, \ast)$, $\Hom_G(\Sp, \ast)$. Since $\Hom_{G}(\Sp, \Indu{P}{G}{J^i})=0$ and 
$\Hom_G(\Eins, \Indu{P}{G}{J^i})\cong \Hom_T(\Eins, J^i)$ we obtain the assertion.
\end{proof}

\begin{cor}\label{acyclic1Jal} Let $\kappa$ be in $\Mod^{\mathrm{ladm}}_{T/Z}(k)$ such that $\Hom_T(\chi, \kappa)=0$ for all 
$\chi\in \Irr_{T/Z}(k)$, $\chi\neq \alpha$, then $e^i(\Eins, \Indu{P}{G}{\kappa})=0$ for all $i\ge 0$. 
\end{cor}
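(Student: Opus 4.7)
The plan is to compute $\Ext^i_{G/Z}(\Eins, \Indu{P}{G}{\kappa})$ by first descending to the Borel subgroup via Frobenius reciprocity, then descending further to the torus via Lyndon--Hochschild--Serre, exploiting the fact that $\Eins$ and $\alpha$ lie in different blocks of $\Mod^{\mathrm{ladm}}_{T/Z}(k)$ by Corollary \ref{extToruschar}. Since $\Indu{P}{G}{-}$ is right adjoint to the exact restriction functor $\Res^G_P$, it preserves injectives, so I obtain a natural isomorphism
\begin{equation*}
\Ext^i_{G/Z}(\Eins, \Indu{P}{G}{\kappa}) \cong \Ext^i_{P/Z}(\Eins, \kappa) \qquad \text{for all } i\ge 0,
\end{equation*}
with $\kappa$ regarded as a $P$-representation by letting $U$ act trivially.

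Next I apply the Lyndon--Hochschild--Serre spectral sequence for the extension $1 \to U \to P/Z \to T/Z \to 1$:
\begin{equation*}
E_2^{i,j} = \Ext^i_{T/Z}\bigl(\Eins, H^j_{sm}(U, \kappa)\bigr) \Longrightarrow \Ext^{i+j}_{P/Z}(\Eins, \kappa).
\end{equation*}
Since $U$ acts trivially on $\kappa$, one has $H^j_{sm}(U, \kappa) \cong H^j_{sm}(U, k) \otimes_k \kappa$ as a $T$-representation, so the content reduces to the computation of smooth cohomology $H^j_{sm}(\Qp, k)$.

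The main technical step is the vanishing $H^j_{sm}(\Qp, k) = 0$ for $j \ge 1$. For $j = 1$, any continuous smooth homomorphism $\phi : \Qp \to k$ has open kernel, hence factors through $\Qp/p^n\Zp$ for some $n$; since this quotient is a divisible $p$-group while $k$ is an $\mathbb{F}_p$-vector space, $\phi = 0$. For higher $j$, I would write $\Qp = \bigcup_n p^{-n}\Zp$ as an increasing union of compact open subgroups, each isomorphic to $\Zp$ (a Poincar\'e pro-$p$ group of dimension $1$, so $H^j(\Zp,k) = k$ for $j \in \{0,1\}$ and $0$ otherwise), and invoke a Milnor-type sequence expressing $H^j_{sm}(\Qp, k)$ in terms of $\varprojlim$ and $\varprojlim^1$ of the finite-level cohomology groups. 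Under the standard identifications with $\Zp$, the inclusion $p^{-n}\Zp \hookrightarrow p^{-(n+1)}\Zp$ becomes multiplication by $p$, so the induced transition maps on $H^1 \cong k$ are identically zero in characteristic $p$, forcing both $\varprojlim$ and $\varprojlim^1$ to vanish.

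Given this vanishing, the spectral sequence collapses to $\Ext^i_{P/Z}(\Eins, \kappa) \cong \Ext^i_{T/Z}(\Eins, \kappa^U) = \Ext^i_{T/Z}(\Eins, \kappa)$, which is zero for every $i \ge 0$ by Corollary \ref{extToruschar} because $\kappa$ has only $\alpha \neq \Eins$ as irreducible subquotient. The hard part will be justifying the Milnor-type sequence rigorously in the smooth category of $\Qp$-representations with discrete $k$-coefficients; as a backup I would dimension-shift against the injective resolution starting from the smoothly induced $C^\infty(\Qp, k)$ to reduce higher-degree vanishing to the $j=1$ case.
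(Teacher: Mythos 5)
Your proof is correct, but it follows a genuinely different route from the paper's. The paper first reduces to $\kappa$ injective (by the d\'evissage from the proof of Corollary \ref{injtriv}), then kills $\Ext^i$ for $i\ge 2$ using the exact sequence of Proposition \ref{resP} combined with Lemma \ref{UindJ}, and handles $i=1$ by citing $e^1(\Eins, \Indu{P}{G}{\alpha})=0$ together with an exhaustion of $\Indu{P}{G}{J}$ by finite-length subobjects. You instead pull back along the Frobenius-reciprocity isomorphism $\Ext^i_{G/Z}(\Eins, \Indu{P}{G}{\kappa}) \cong \Ext^i_{P/Z}(\Eins, \kappa)$ (the same one used inside the proof of Proposition \ref{resP}) and then run the Lyndon--Hochschild--Serre spectral sequence for $U\triangleleft P/Z$, so everything hinges on the vanishing of smooth cohomology $H^j(U, k)$ for $j\ge 1$. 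That vanishing does hold, and the technical point you flag about the Milnor sequence can be settled: with $U_n=p^{-n}\Zp$, the functor of $\Qp$-invariants is the inverse limit over $n$ of the exact system of $U_n$-invariants, an injective $C^\infty(\Qp, I)$ produces the system of $I$-valued functions on the discrete sets $\Qp/U_n$ whose derived inverse limit vanishes (dualize the free two-term presentation of $\varinjlim_n k[\Qp/U_n]\cong k$ against $I$, using that $k$ is a field), so the Grothendieck spectral sequence collapses to the two-term Milnor sequence, and your observation that the restriction maps on $H^1(U_n,k)\cong k$ vanish in characteristic $p$ (because $U_n\hookrightarrow U_{n+1}$ is multiplication by $p$ under the standard identifications with $\Zp$) finishes the computation. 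Your route is uniform in $i$, avoids the reduction to injective envelopes entirely, and in fact establishes the stronger assertion $\Ext^i_{P/Z}(\Eins,\kappa)\cong\Ext^i_{T/Z}(\Eins,\kappa)$ for every $\kappa$ inflated from $T$; the paper's route stays within the derived ordinary-parts toolkit and reuses the $\Ext^1$-vanishing input $e^1(\Eins, \Indu{P}{G}{\alpha})=0$ that is already in play throughout \S\ref{hextII}.
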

\begin{proof} Frobenius reciprocity and the assumption on $\kappa$ imply the assertion for $i=0$. 
It is enough to show the assertion when $\kappa$ is injective in $\Mod^{\mathrm{ladm}}_{T/Z}(k)$, 
since then we may deduce the general case as in the proof of Corollary \ref{injtriv}. Suppose that $\kappa=J$ is injective,  
it follows from Proposition \ref{resP} and Lemma \ref{UindJ} that $\Ext^i$ vanishes for $i\ge 2$. 
It is enough to show the statement for $i=1$. We know that $\Ext^1_{G/Z}(\Eins,\Indu{P}{G}{\alpha})=0$, \cite[11.5]{ext2}.
Hence, if $U$ is any representation of finite length with irreducible subquotients isomorphic
to $\Indu{P}{G}{\alpha}$ then $\Ext^1_{G/Z}(\Eins, U)=0$. Since $ \Indu{P}{G}{J}$ is a union of 
subobjects of finite length with the irreducible subquotients isomorphic to  $\Indu{P}{G}{\alpha}$ we deduce the assertion.
\end{proof}

\begin{cor}\label{ext1al} We have $\Ext^i_{G/Z}(\Eins, \Indu{P}{G}{\alpha})=0$ for all $i\ge 0$. Moreover, 
\begin{equation}
e^2_{G/Z}(\Sp, \Indu{P}{G}{\alpha})=2, \quad e^3_{G/Z}(\Sp, \Indu{P}{G}{\alpha})=1,
\end{equation}
and $e^i_{G/Z}(\Sp, \Indu{P}{G}{\alpha})=0$ for $i\ge 4$.
\end{cor}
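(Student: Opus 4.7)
The plan is to derive the vanishing $\Ext^i_{G/Z}(\Eins, \Indu{P}{G}{\alpha})=0$ directly from the acyclicity criterion, then use Proposition \ref{resP} together with the ordinary parts spectral sequence to reduce the computation of $\Ext^i_{G/Z}(\Sp, \Indu{P}{G}{\alpha})$ to $\Ext$-computations in the torus category, which are known by Corollary \ref{extToruschar}.

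First I would observe that the character $\alpha: T\rightarrow k^{\times}$ is itself an irreducible object of $\Mod^{\mathrm{ladm}}_{T/Z}(k)$, so it trivially satisfies $\Hom_T(\chi, \alpha)=0$ for all $\chi\in \Irr_{T/Z}(k)$ with $\chi\neq \alpha$. Hence Corollary \ref{acyclic1Jal} applied with $\kappa=\alpha$ gives $\Ext^i_{G/Z}(\Eins, \Indu{P}{G}{\alpha})=0$ for all $i\ge 0$. Feeding this into the exact sequence \eqref{observe} of Proposition \ref{resP} yields isomorphisms
\begin{equation*}
\Ext^i_{G/Z}(\Sp, \Indu{P}{G}{\alpha})\cong \Ext^i_{G/Z}(\Indu{P}{G}{\Eins}, \Indu{P}{G}{\alpha}), \qquad i\ge 0,
\end{equation*}
so it remains to compute the right-hand side.

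For this I would use the ordinary parts spectral sequence \eqref{specseqord} (equivalently \eqref{ordseq} and \eqref{ordext3}) applied with $U=\Eins$ (so that $\Indu{\overline{P}}{G}{\Eins}\cong \Indu{P}{G}{\Eins}$) and $V=\Indu{P}{G}{\alpha}$. By \eqref{ordinduced} we have $\Ord_P V\cong \alpha^{-1}$ and $\RR^1\Ord_P V\cong \Eins_T$. The $E_2$-terms are then $E_2^{i,0}=\Ext^i_{T/Z}(\Eins,\alpha^{-1})$ and $E_2^{i,1}=\Ext^i_{T/Z}(\Eins,\Eins_T)$. Corollary \ref{extToruschar} gives $E_2^{i,0}=0$ for all $i$, while $E_2^{i,1}$ has dimensions $1,2,1,0,0,\ldots$ for $i=0,1,2,3,\ldots$. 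Since the column $j=0$ vanishes identically, the $d_2$-differentials are trivial and the sequence degenerates at $E_3=E_\infty$, giving $\Ext^n_{G/Z}(\Indu{P}{G}{\Eins},\Indu{P}{G}{\alpha})\cong \Ext^{n-1}_{T/Z}(\Eins,\Eins_T)$ for all $n\ge 1$. Reading off dimensions yields $0,1,2,1,0,\ldots$ for $n=0,1,2,3,4,\ldots$, which is exactly the assertion.

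No real obstacle is expected: the first vanishing is a direct appeal to Corollary \ref{acyclic1Jal}, and the rest is bookkeeping with the ordinary parts spectral sequence whose relevant $E_2$-terms were already computed in Corollary \ref{extToruschar}. The only thing worth double-checking is consistency with the known value $e^1(\Sp,\Indu{P}{G}{\alpha})=1$ recalled at the beginning of \S\ref{hextII}, which matches the dimension $1$ we obtain for $n=1$.
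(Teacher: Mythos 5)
Your proof is correct and follows essentially the same route as the paper's: the first assertion is a direct application of Corollary \ref{acyclic1Jal} with $\kappa=\alpha$, the reduction to $\Ext^i_{G/Z}(\Indu{P}{G}{\Eins}, \Indu{P}{G}{\alpha})$ comes from feeding that vanishing into the exact sequence of Proposition \ref{resP}, and the final dimensions are read off from the ordinary parts spectral sequence \eqref{specseqord} together with Corollary \ref{extToruschar}. The paper's proof is a terse version of exactly this argument.
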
 
\begin{proof} The first assertion follows  from Corollary \ref{acyclic1Jal}.  It follows from Proposition \ref{resP} that 
$\Ext^i_{G/Z}(\Sp, \Indu{P}{G}{\alpha})\cong \Ext^i_{G/Z}(\Indu{P}{G}{\Eins},  \Indu{P}{G}{\alpha})$, for all $i\ge 0$.
The last assertion follows from \eqref{specseqord}.
\end{proof}

\begin{cor}\label{forgotten}$\RR^1\II(\pi_{\alpha})\cong \II(\pi_{\alpha})\oplus \II(\pi(0,1))$, 
$\RR^2\II(\pi_{\alpha})\cong \II(\pi(0,1))$, $\RR^i\II(\pi_{\alpha})=0$ for $i\ge 3$.
\end{cor}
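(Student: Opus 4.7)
The plan is to first establish the dimensions of $\RR^i\II(\pi_\alpha)$ by Shapiro's lemma, then pin down the Hecke-module structure degree by degree using the spectral sequence of Proposition \ref{comphext} with the irreducibles $\tau\in\BB=\{\Eins,\Sp,\pi_\alpha\}$ as test objects.

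Since $\alpha$ is trivial on $I_1\cap T$, Mackey decomposition gives $\pi_\alpha|_{I_1}\cong \Indu{I_1\cap P}{I_1}{\Eins}\oplus \Indu{I_1\cap P^s}{I_1}{\Eins}$. Shapiro's lemma together with the fact (used in the proof of Lemma \ref{indHi}) that $(I_1\cap P)/Z_1$ is a torsion-free compact $p$-adic analytic group of dimension $2$ and hence a Poincar\'e group yields $\dim H^i(I_1/Z_1,\pi_\alpha)=2,4,2,0,\ldots$; invoking Lemma \ref{RisH} then gives the corresponding dimensions of $\RR^i\II(\pi_\alpha)$ and the vanishing $\RR^i\II(\pi_\alpha)=0$ for $i\ge 3$. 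For the module structure I feed Proposition \ref{comphext} with the following ingredients: $e^i(\Eins,\pi_\alpha)=0$ for all $i$ (Corollary \ref{ext1al}); $e^i(\Sp,\pi_\alpha)=0,1,2,1,0,\ldots$ (Corollary \ref{ext1al}); $e^i(\pi_\alpha,\pi_\alpha)=1,2,1,0,\ldots$, extracted from the ordinary-parts five-term sequence \eqref{ordseq}--\eqref{ordext3} via $\Ord_P\pi_\alpha\cong\alpha^s$ and $\RR^1\Ord_P\pi_\alpha\cong\Eins$ from \eqref{ordinduced}, together with the vanishing $\Ext^j_{T/Z}(\alpha^s,\Eins)=0$ from Corollary \ref{extToruschar}; the Hecke-algebra vanishing $\Ext^j_\HH(\II(\tau),-)=0$ for $j\ge 2$ (Lemma \ref{vanish2}, valid since none of $\Eins,\Sp,\pi_\alpha$ is supersingular); and the values of $\Ext^1_\HH(\II(\tau),\II(\pi_\alpha))$ supplied by Lemma \ref{comphext1} (which are $0$ for $\tau\in\{\Eins,\Sp\}$ and $1$ for $\tau=\pi_\alpha$). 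Reading off the five-term sequence yields $\Hom_\HH(\II(\Eins),\RR^i\II(\pi_\alpha))=0$ for all $i$, $\Hom_\HH(\II(\Sp),\RR^1\II(\pi_\alpha))=1$, and $\Hom_\HH(\II(\pi_\alpha),\RR^1\II(\pi_\alpha))=1$.

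To assemble, I use that every composition factor of $\RR^i\II(\pi_\alpha)$ lies among $\II(\Eins),\II(\Sp),\II(\pi_\alpha)$ by the block decomposition. The $4$-dimensional $\RR^1\II(\pi_\alpha)$ thus has $\soc=\II(\Sp)\oplus\II(\pi_\alpha)$ of dimension $3$, and the $1$-dimensional quotient must be $\II(\Eins)$: if it were $\II(\Sp)$, the vanishings $\Ext^1_\HH(\II(\Sp),\II(\Sp))=0=\Ext^1_\HH(\II(\Sp),\II(\pi_\alpha))$ from Lemma \ref{comphext1} would force the extension to split, contradicting the socle dimension. Feeding the same vanishings into the short exact sequence for $\II(\pi(0,1))$ also yields $\Ext^1_\HH(\II(\pi(0,1)),\II(\pi_\alpha))=0$, so $\II(\pi_\alpha)$ splits off; the complementary two-dimensional piece has socle $\II(\Sp)$ and cosocle $\II(\Eins)$, and is therefore the unique non-split such extension (using $\dim\Ext^1_\HH(\II(\Eins),\II(\Sp))=1$), namely $\II(\pi(0,1))$ via \eqref{p-1}. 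This gives the first isomorphism, which in turn yields $\Ext^1_\HH(\II(\Sp),\RR^1\II(\pi_\alpha))=1=\Ext^1_\HH(\II(\pi_\alpha),\RR^1\II(\pi_\alpha))$. Feeding these back into Proposition \ref{comphext} gives $\Hom_\HH(\II(\Sp),\RR^2\II(\pi_\alpha))=1$ and $\Hom_\HH(\II(\pi_\alpha),\RR^2\II(\pi_\alpha))=0$, so $\soc \RR^2\II(\pi_\alpha)=\II(\Sp)$; the same non-splitting argument forces the $1$-dimensional quotient to be $\II(\Eins)$, so $\RR^2\II(\pi_\alpha)\cong\II(\pi(0,1))$. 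The principal obstacle is the interlocked nature of the argument (the degree-$2$ result depends on the explicit structure of $\RR^1$) and the bookkeeping of all relevant $\Ext^1_\HH$ vanishings from Lemma \ref{comphext1}.
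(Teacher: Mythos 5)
Your proposal is correct in substance, but it takes a genuinely different route from the paper. For the first isomorphism the paper simply quotes \cite[Thm.\ 7.16]{bp}, whereas you reprove it: Shapiro's lemma and the structure of $(I_1\cap P)/Z_1$ give the dimensions $2,4,2,0,\dots$ of $H^i(I_1/Z_1,\pi_{\alpha})$, and the socle of $\RR^1\II(\pi_{\alpha})$ is then pinned down by testing Proposition \ref{comphext} against the three irreducibles of the block. For $\RR^2$ the paper argues without knowing $\RR^1$ at all: from $\Ext^3_{G/Z}(\Sp,\pi_{\alpha})\neq 0$ (Corollary \ref{ext1al}) and $\RR^3\II(\pi_{\alpha})=0$ (Lemma \ref{indHi}) it extracts $\Ext^1_{\HH}(\II(\Sp),\RR^2\II(\pi_{\alpha}))\neq 0$, hence $\II(\Eins)$ is a subquotient; it is not a submodule because $\Ext^2_{G/Z}(\Eins,\pi_{\alpha})=0$, and $2$-dimensionality forces the non-split extension of $\II(\Eins)$ by $\II(\Sp)$, i.e.\ $\II(\pi(0,1))$ via \eqref{p-1}. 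You instead compute $\Hom_{\HH}(\II(\Sp),\RR^2)=1$ and $\Hom_{\HH}(\II(\pi_{\alpha}),\RR^2)=0$ from the already determined $\RR^1$ via \eqref{higherexts} and run the same non-splitting argument. Your version buys a self-contained proof, including the statement the paper imports from \cite{bp}; the paper's version is shorter, avoids computing $H^1$, and makes the degree-two statement independent of the degree-one one.

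Two steps need tightening. First, the blanket claim that every composition factor of $\RR^i\II(\pi_{\alpha})$ lies among $\II(\Eins),\II(\Sp),\II(\pi_{\alpha})$ ``by the block decomposition'' is not justified as stated: the block decomposition lives on the category of $G$-representations, and what Proposition \ref{comphext} actually gives is block-confinement of the \emph{socle} constituents, via the surjection $\Ext^i_{G/Z}(\sigma,\pi_{\alpha})\twoheadrightarrow\Hom_{\HH}(\II(\sigma),\RR^i\II(\pi_{\alpha}))$ and the vanishing of cross-block Ext groups (two-dimensional irreducible $\HH$-modules are in any case excluded from the socle by the dimension count). This suffices, but the one-dimensional top constituent must then be handled for all candidates $\II(\eta)$, $\II(\Sp\otimes\eta)$, not only $\II(\Sp)$: if the extension over the socle splits the candidate sits in the socle, which is excluded as above, and if it does not split, Lemma \ref{comphext1} kills $\Ext^1_{\HH}$ of any such candidate with $\eta\neq\Eins$ against $\II(\Sp)\oplus\II(\pi_{\alpha})$, so only $\II(\Eins)$ survives --- exactly your argument, just applied to the full list. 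Second, ``Poincar\'e group of dimension $2$'' gives $\dim H^0=\dim H^2=1$ but not $\dim H^1=2$; you need in addition either the vanishing of the Euler characteristic of a positive-dimensional compact $p$-adic analytic group, or the direct computation that the abelianization of $(I_1\cap P)/Z_1$ is $\Zp\times\ZZ/p\ZZ$, so that $\Hom((I_1\cap P)/Z_1,k)$ is $2$-dimensional. With these two repairs your argument is complete.
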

\begin{proof} The first assertion is  \cite[Thm. 7.16]{bp}. Since $\Ext^3_{G/Z}(\Sp, \Indu{P}{G}{\alpha})\neq 0$ by Corollary \ref{ext1al} and 
$\RR^3\II(\Indu{P}{G}{\alpha})=0$ by Corollary \ref{comment7}, Proposition \ref{comphext} implies that 
$\Ext^1_{\HH}(\II(\Sp),\RR^2\II(\Indu{P}{G}{\alpha}))\neq 0$. If $M$ is irreducible 
then
$\Ext^1_{\HH}(\II(\Sp), M)\neq 0$ implies that $M\cong \II(\Eins)$, see \cite[11.3]{ext2}.
Thus $\II(\Eins)$ is an irreducible subquotient of $\RR^2\II(\Indu{P}{G}{\alpha})$.  Since 
$\Ext^2_{G/Z}(\Eins, \Indu{P}{G}{\alpha})=0$, it follows from Proposition \ref{comphext} that 
$\II(\Eins)$ cannot be a submodule of  $\RR^2\II(\Indu{P}{G}{\alpha})$. If $M$ is irreducible 
then  $\Ext^1_{\HH}(\II(\Eins), M)\neq 0$ implies that $M\cong \II(\Sp)$, \cite[11.3]{ext2}.
Since by Corollary \ref{comment7} the underlying vector space of $\RR^2\II(\Indu{P}{G}{\alpha})$ is $2$-di\-men\-sio\-nal, 
we deduce that there exists a non-split sequence:
\begin{equation}
0\rightarrow \II(\Sp)\rightarrow \RR^2\II(\Indu{P}{G}{\alpha})\rightarrow \II(\Eins)\rightarrow 0
\end{equation}   
Now $\Ext^1_{\HH}(\II(\Eins), \II(\Sp))$ is one dimensional, \cite[11.3]{ext2}, and the only non-split 
extension is obtained by applying $\II$ to \eqref{p-1}.
\end{proof}

We  record below the dimensions of $\Ext^i_{G/Z}(\pi, \tau)$, where $\tau$, $\pi$ are  $\Eins$, $\Sp$ or $\Indu{P}{G}{\alpha}$. 
All the other $\Ext$-groups vanish. 

\begin{displaymath}
\begin{array}{ccccccccccccc}
\tau=\Eins&   &   &  &   & \tau=\Sp & & & & \tau=\Indu{P}{G}{\alpha}& \\
    i & 1 & 2 & 3 & 4& i&1 & 2 & 3& i & 1 & 2 & 3\\
\hline
\Eins & 0 & 0 & 1 & 0& \Eins& 2 & 2 & 0& \Eins & 0 & 0 & 0\\
\hline
\Sp &  1 & 0 & 0 & 1& \Sp& 0 & 0 & 1 & \Sp & 1 & 2 & 1\\
\hline 
\Indu{P}{G}{\alpha}& 1& 2 & 1 & 0&\Indu{P}{G}{\alpha} &0& 0 & 0& \Indu{P}{G}{\alpha} & 2 & 1 & 0\\
\end{array}
\end{displaymath}
Using the table one can construct minimal injective resolutions of $\Eins$, $\Sp$ and $\Indu{P}{G}{\alpha}$:

\begin{remar}\label{comment11}
Let $\kappa$ be an object of $\Mod^{\mathrm{l adm}}_{G/Z}(k)$ and $\iota: \soc_G \kappa\hookrightarrow J$ an 
injective envelope of $\soc_G \kappa$ in  $\Mod^{\mathrm{l adm}}_{G/Z}(k)$. Since $J$ is injective there exists 
$\phi: \kappa\rightarrow J$ such that the composition $\soc_G \kappa\rightarrow \kappa\rightarrow J$ is equal 
to $\iota$. Since $\iota$ is an injection,  we deduce that $\soc_G \Ker \phi =0$ and since $\Ker \phi$ is
an object of  $\Mod^{\mathrm{l adm}}_{G/Z}(k)$, we deduce that $\phi$ is injective. Since $\iota$ is essential, 
so is $\phi$ and hence for every irreducible object $\pi$ of $\Mod^{\mathrm{l adm}}_{G/Z}(k)$ we have 
$\Hom_{G/Z}(\pi, \kappa)\cong \Hom_{G/Z}(\pi, J)$ and thus $\Hom_{G/Z}(\pi, J/\kappa)\cong \Ext^1_{G/Z}(\pi, \kappa)$.
Hence, if we know the dimensions of $\Ext^1_{G/Z}(\pi, \kappa)$ for all irreducible $\pi$ then we may determine 
$\soc_G (J/\kappa)$ and thus construct the next step in the injective resolution. This way we obtain an injective resolution 
$\kappa\hookrightarrow J^{\bullet}$ such that for all irreducible $\pi$ in $\Mod^{\mathrm{l adm}}_{G/Z}(k)$ the complex 
$\Hom_G(\pi, J^{\bullet})$ has zero differentials.  In particular, $\Ext^i_{G/Z}(\pi, \kappa)\cong \Hom_{G}(\pi, J^i)$ for all $i\ge 0$.
Since in a locally finite category every injective object is determined by its socle up to isomorphism, the knowledge of 
$\Ext^i_{G/Z}(\pi, \kappa)$ for all irreducible $\pi$ determines $J^i$ up to isomorphism. It should be pointed out that these kind of arguments are standard in commutative algebra, see
for example \cite[Thm.18.5]{matsumura}.
\end{remar}

Using the table  and Remark \ref{comment11} we get:
\begin{equation}\label{res1NG}
0\rightarrow \Eins \rightarrow J_{\Eins} \rightarrow  J_{\Sp}\oplus J_{\pi_{\alpha}} \rightarrow J_{\pi_{\alpha}}^{\oplus 2}\rightarrow 
J_{\pi_{\alpha}}\oplus J_{\Eins}\rightarrow J_{\Sp}\rightarrow 0
\end{equation}
\begin{equation}\label{resSpNG}
0\rightarrow \Sp\rightarrow J_{\Sp} \rightarrow J_{\Eins}^{\oplus 2}\rightarrow J_{\Eins}^{\oplus 2} \rightarrow J_{\Sp} \rightarrow 0
\end{equation}
\begin{equation}\label{resalphaNG}
0\rightarrow \pi_{\alpha}\rightarrow J_{\pi_{\alpha}} \rightarrow J_{\Sp}\oplus J_{\pi_{\alpha}}^{\oplus 2}\rightarrow J_{\Sp}^{\oplus 2} \oplus 
J_{\pi_{\alpha}}\rightarrow J_{\Sp}\rightarrow 0
\end{equation}
where $\pi_{\alpha}=\Indu{P}{G}{\alpha}$ and $J_{\pi}$ denotes an injective envelope of $\pi$.

\subsection{Preparation}\label{prepa}
Since $e^1(\Eins,\Sp)=2$ there exists a unique smooth $k$-rep\-re\-sen\-ta\-tion $\tau_1$ of $G/Z$  such that 
$\Hom_G(\Eins, \tau_1)=0$ and we have an exact sequence:
\begin{equation}\label{defipi1}
0\rightarrow \Sp\rightarrow \tau_1\rightarrow \Eins\oplus \Eins \rightarrow 0.
\end{equation}
Applying $\Ord_P$ to \eqref{defipi1} and using \eqref{ordtriv}, \eqref{ordsp} we get 
\begin{equation}\label{ordpi1}
\Ord_P \tau_1\cong \Ord_P \Sp \cong \Eins, \quad 
\RR^1\Ord_P \tau_1\cong (\RR^1\Ord_P \Eins)^{\oplus 2}\cong (\alpha^{-1})^{\oplus 2}.
\end{equation}

\begin{lem}\label{ext1tau1} $e^1(\Eins, \tau_1)=0$, $e^1(\Sp, \tau_1)=2$, $e^1(\Indu{P}{G}{\alpha}, \tau_1)=2$. 
\end{lem}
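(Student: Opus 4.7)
The plan is to apply the long exact sequences obtained by applying the functors $\Hom_{G/Z}(\pi,-)$ for $\pi \in \{\Eins, \Sp, \pi_\alpha\}$ to the defining short exact sequence \eqref{defipi1}, and then to read off the answers from the $\Ext$-table compiled at the end of \S\ref{hextII}. All three computations are of the same flavour, so the real work is just careful bookkeeping using the values $e^i(\pi,\tau)$ already established.

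For $\pi=\Eins$, the relevant piece of the long exact sequence is
\[
0\to \Hom(\Eins,\tau_1)\to \Hom(\Eins,\Eins)^{\oplus 2}\overset{\partial}{\to}\Ext^1_{G/Z}(\Eins,\Sp)\to \Ext^1_{G/Z}(\Eins,\tau_1)\to \Ext^1_{G/Z}(\Eins,\Eins)^{\oplus 2}.
\]
By construction $\Hom_G(\Eins,\tau_1)=0$, so $\partial$ is injective; since both source and target are $2$-dimensional, $\partial$ is an isomorphism. Combined with $e^1(\Eins,\Eins)=0$ from the table, this gives $e^1(\Eins,\tau_1)=0$.

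For $\pi=\Sp$, the long exact sequence yields
\[
\Hom(\Sp,\Eins)^{\oplus 2}\to \Ext^1_{G/Z}(\Sp,\Sp)\to \Ext^1_{G/Z}(\Sp,\tau_1)\to \Ext^1_{G/Z}(\Sp,\Eins)^{\oplus 2}\to \Ext^2_{G/Z}(\Sp,\Sp).
\]
The table gives $e^0(\Sp,\Eins)=e^1(\Sp,\Sp)=e^2(\Sp,\Sp)=0$ and $e^1(\Sp,\Eins)=1$, so $e^1(\Sp,\tau_1)=2$. Analogously, for $\pi=\pi_\alpha$ we obtain
\[
\Hom(\pi_\alpha,\Eins)^{\oplus 2}\to \Ext^1_{G/Z}(\pi_\alpha,\Sp)\to \Ext^1_{G/Z}(\pi_\alpha,\tau_1)\to \Ext^1_{G/Z}(\pi_\alpha,\Eins)^{\oplus 2}\to \Ext^2_{G/Z}(\pi_\alpha,\Sp),
\]
where the outer two terms vanish and the inner term on the right has dimension $2$, giving $e^1(\pi_\alpha,\tau_1)=2$.

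There is no real obstacle here: all the $\Ext$-groups needed have been computed in \S\ref{hextII} (via Emerton's $\Ord_P$-spectral sequence and the Hecke-algebra spectral sequence), and the only small point to check is that $\Hom(\Eins,\tau_1)=0$ forces the connecting map $\partial$ in the first case to be the claimed isomorphism; this is immediate from dimension count.
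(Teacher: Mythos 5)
Your proof is correct, and for the first claim your argument is essentially the same as the paper's. For the other two claims you take a different but valid route: where you apply $\Hom_{G/Z}(\Sp,\ast)$ and $\Hom_{G/Z}(\Indu{P}{G}{\alpha},\ast)$ directly to the defining sequence \eqref{defipi1} and read everything off from the $\Ext$-table at the end of \S\ref{hextII} (in particular the entries $e^1(\Sp,\Sp)=e^2(\Sp,\Sp)=0$, $e^1(\Sp,\Eins)=1$, and $e^1(\Indu{P}{G}{\alpha},\Sp)=e^2(\Indu{P}{G}{\alpha},\Sp)=0$, $e^1(\Indu{P}{G}{\alpha},\Eins)=1$), the paper instead invokes the identifications $\Ord_P\tau_1\cong\Eins$ and $\RR^1\Ord_P\tau_1\cong(\alpha^{-1})^{\oplus 2}$ from \eqref{ordpi1} and reads $e^1(\Indu{P}{G}{\Eins},\tau_1)$ and $e^1(\Indu{P}{G}{\alpha},\tau_1)$ off the $5$-term sequence \eqref{ordseq}, then passes from $\Indu{P}{G}{\Eins}$ to $\Sp$ via the auxiliary exact sequence $0\to\Eins\to\Indu{P}{G}{\Eins}\to\Sp\to 0$ together with the already-established $e^1(\Eins,\tau_1)=0$. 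Both approaches are sound; yours is more uniform (one tool applied three times) at the cost of needing a couple more entries from the $\Ext$-table, while the paper's keeps the hypotheses lighter (only the computation of $\Ord_P$ and $\RR^1\Ord_P$ on $\tau_1$, plus vanishing of $\Ext$-groups between distinct characters of $T$). One minor point worth spelling out in your argument for the first claim: the vanishing $\Hom_G(\Eins,\Sp)=0$ is what puts the $0$ at the left end of your long exact sequence, and it is $\Hom_G(\Eins,\tau_1)=0$ (forced by the construction of $\tau_1$) that makes $\partial$ injective; you state these but they deserve a word of justification since the first is just distinctness of the irreducibles and the second is exactly the defining property of $\tau_1$.
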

\begin{proof} Since $e^1(\Eins,\Eins)=0$, we get the first claim by applying $\Hom_{G/Z}(\Eins, \ast)$ 
to \eqref{defipi1}. From \eqref{ordpi1} and the $5$-term sequence for $\Ord_P$, see \eqref{ordseq}, we get that
\begin{equation}
\Ext^1_{G/Z}(\Indu{P}{G}{\Eins}, \tau_1)\cong \Ext^1_{T/Z}(\Eins, \Eins)
\end{equation}
\begin{equation}
\Ext^1_{G/Z}(\Indu{P}{G}{\alpha}, \tau_1)\cong \Hom_{T/Z}(\alpha^{-1}, (\alpha^{-1})^{\oplus 2})
\end{equation}
are both  $2$-di\-men\-sio\-nal. Since $e^1(\Eins, \tau_1)=0$, by applying $\Hom_{G/Z}(\ast, \tau_1)$ to 
the exact sequence $0\rightarrow \Eins \rightarrow \Indu{P}{G}{\Eins}\rightarrow \Sp\rightarrow 0$ we deduce that
$\Ext^1_{G/Z}(\Sp, \tau_1)\cong \Ext^1_{G/Z}(\Indu{P}{G}{\Eins}, \tau_1)$.  
\end{proof} 

\begin{prop}\label{RItau1}
$\II(\tau_1)\cong \II(\pi(0,1))$, $\RR^1\II(\tau_1)\cong \II(\Sp)\oplus \II(\Indu{P}{G}{\alpha})^{\oplus 2}$, $\RR^2\II(\tau_1)\cong 
\II(\Eins)\oplus \II(\Indu{P}{G}{\alpha})^{\oplus 2}$.
\end{prop}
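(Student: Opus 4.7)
The plan is to apply the derived functors $\RR^i\II$ to the defining exact sequence
$0\rightarrow \Sp\rightarrow \tau_1\rightarrow \Eins\oplus\Eins\rightarrow 0$
and analyze the resulting long exact sequence of Hecke modules. Plugging in the values of $\RR^i\II(\Sp)$ and $\RR^i\II(\Eins)$ from Propositions \ref{RIiSp} and \ref{RIi1} — in particular $\RR^1\II(\Sp)\cong \II(\Indu{P}{G}{\Eins})\cong \II(\Eins)\oplus\II(\Sp)$, $\RR^2\II(\Sp)\cong\II(\Eins)$, $\RR^1\II(\Eins)\cong\RR^2\II(\Eins)\cong\II(\Indu{P}{G}{\alpha})$, and $\RR^3\II(\Sp)=0$ — reduces everything to computing the ranks of three connecting maps $\partial_0,\partial_1,\partial_2$.

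The connecting maps $\partial_1:\II(\Eins)^{\oplus 2}\otimes\II(\Indu{P}{G}{\alpha})/\II(\Eins)\to \II(\Eins)$ and $\partial_2$ (into $0$) are easy: since $\II(\Indu{P}{G}{\alpha})$ is irreducible and not isomorphic to $\II(\Eins)$, one has $\partial_1=0$ by $\Hom_{\HH}$-vanishing between distinct irreducibles, and $\partial_2=0$ trivially. The crucial step is showing that $\partial_0:\II(\Eins)^{\oplus 2}\to \II(\Eins)\oplus\II(\Sp)$ has rank exactly $1$. The image is automatically contained in the $\II(\Eins)$ factor (again because $\Hom_{\HH}(\II(\Eins),\II(\Sp))=0$). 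To rule out $\partial_0=0$, I would apply $\Hom_{\HH}(\II(\Eins),\,\cdot\,)$ to the short exact sequence that would then describe $\II(\tau_1)$ and use $\dim\Ext^1_{\HH}(\II(\Eins),\II(\Sp))=1$ (Lemma \ref{comphext1}) together with the adjunction $\Hom_G(\Eins,\tau_1)\cong\Hom_{\HH}(\II(\Eins),\II(\tau_1))$ coming from $\TT\II(\Eins)\cong\Eins$; the assumption $\Hom_G(\Eins,\tau_1)=0$ forces the kernel of $\partial_0$ to be one-dimensional. This then makes $\II(\tau_1)$ a non-split extension of $\II(\Eins)$ by $\II(\Sp)$, which is uniquely $\II(\pi(0,1))$.

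With the ranks of all $\partial_i$ determined, $\RR^1\II(\tau_1)$ sits in an exact sequence $0\to\II(\Sp)\to\RR^1\II(\tau_1)\to\II(\Indu{P}{G}{\alpha})^{\oplus 2}\to 0$ and $\RR^2\II(\tau_1)$ in $0\to\II(\Eins)\to\RR^2\II(\tau_1)\to\II(\Indu{P}{G}{\alpha})^{\oplus 2}\to 0$. Both extensions split: by Lemma \ref{comphext1}, $\Ext^1_{\HH}(\II(\Indu{P}{G}{\alpha}),\II(\Sp))$ and $\Ext^1_{\HH}(\II(\Indu{P}{G}{\alpha}),\II(\Eins))$ both vanish, since $\II(\Indu{P}{G}{\alpha})$ is irreducible, distinct from the targets, and the pair does not fall under the Steinberg/character exceptional case. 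This yields the desired direct-sum decompositions.

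The main obstacle, as indicated, is pinning down $\partial_0$. Everything else reduces to the classification of $\Ext^1_{\HH}$ between irreducible Hecke modules (Lemma \ref{comphext1}) and the non-existence of non-zero maps between distinct irreducibles, which are formal. The subtlety in $\partial_0$ is that $\II$ does not preserve socles, so the hypothesis $\Hom_G(\Eins,\tau_1)=0$ must be transported to the Hecke side via the $(\TT,\II)$-adjunction and combined with the $1$-dimensionality of $\Ext^1_{\HH}(\II(\Eins),\II(\Sp))$ to rule out $\partial_0=0$.
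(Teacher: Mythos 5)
Your proof is correct in substance and, for the identification $\II(\tau_1)\cong\II(\pi(0,1))$, is exactly the paper's argument: $\partial_0$ is non-zero because otherwise the one-dimensionality of $\Ext^1_{\HH}(\II(\Eins),\II(\Sp))$ would force $\Hom_{\HH}(\II(\Eins),\II(\tau_1))\cong\Hom_G(\Eins,\tau_1)\neq 0$, and its image can only be the unique copy of $\II(\Eins)$ inside $\RR^1\II(\Sp)\cong\II(\Indu{P}{G}{\Eins})$. One factual correction: $\II(\Indu{P}{G}{\Eins})$ is \emph{not} isomorphic to $\II(\Eins)\oplus\II(\Sp)$; applying \eqref{homit} with $\tau=\Sp$ gives $\Hom_{\HH}(\II(\Sp),\II(\Indu{P}{G}{\Eins}))\cong\Hom_G(\Sp,\Indu{P}{G}{\Eins})=0$, so it is the non-split extension of $\II(\Sp)$ by $\II(\Eins)$. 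This does not damage your argument, since all you actually use is that it has a unique submodule isomorphic to $\II(\Eins)$ with quotient $\II(\Sp)$, which is what left exactness of $\II$ applied to \eqref{defineSp} provides. Also, the non-splitness of $0\rightarrow\II(\Sp)\rightarrow\II(\tau_1)\rightarrow\II(\Eins)\rightarrow 0$ should be recorded explicitly via the same adjunction ($\Hom_{\HH}(\II(\Eins),\II(\tau_1))\cong\Hom_G(\Eins,\tau_1)=0$) before invoking uniqueness of the non-split extension.

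Where you genuinely diverge from the paper is the surjectivity of $\RR^1\II(\tau_1)\rightarrow\RR^1\II(\Eins)^{\oplus 2}$. You get it for free because the connecting map $\RR^1\II(\Eins)^{\oplus 2}\cong\II(\Indu{P}{G}{\alpha})^{\oplus 2}\rightarrow\RR^2\II(\Sp)\cong\II(\Eins)$ is an $\HH$-map from an $\II(\Indu{P}{G}{\alpha})$-isotypic module to a non-isomorphic irreducible, hence zero (your displayed description of the source of $\partial_1$ is garbled, but this is clearly what is meant). The paper instead proves this surjectivity by a dimension count: $e^1(\Indu{P}{G}{\alpha},\tau_1)=2$ from Lemma \ref{ext1tau1}, combined with Proposition \ref{comphext} and $\Ext^1_{\HH}(\II(\Indu{P}{G}{\alpha}),\II(\tau_1))=0$, forces $\dim\Hom_{\HH}(\II(\Indu{P}{G}{\alpha}),\RR^1\II(\tau_1))=2$, whence surjectivity. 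Your route is more economical, being purely internal to the long exact sequence and avoiding Lemma \ref{ext1tau1} altogether, while the paper's count records the dimension of $\Hom_{\HH}(\II(\Indu{P}{G}{\alpha}),\RR^1\II(\tau_1))$ along the way; from that point on both arguments coincide, splitting the resulting extensions of $\RR^1\II(\tau_1)$ and $\RR^2\II(\tau_1)$ by the $\Ext^1_{\HH}$-vanishing of Lemma \ref{comphext1}.
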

\begin{proof} We apply $\II$ to \eqref{defipi1}. Suppose that the connecting homomorphism 
$\partial: \II(\Eins)^{\oplus 2}\rightarrow \RR^1\II(\Sp)$ is zero. Then we would have an exact 
sequence of $\HH$-modules $0\rightarrow \II(\Sp)\rightarrow \II(\tau_1)\rightarrow \II(\Eins)^{\oplus 2}\rightarrow 0$.
Since $\Ext^1_{\HH}(\II(\Eins), \II(\Sp))=1$ by \cite[11.3]{ext2}, we would obtain 
$\Hom_G(\Eins, \tau_1)\cong \Hom_{\HH}(\II(\Eins), \II(\tau_1))\neq 0$ contradicting the construction of $\tau_1$.
Hence, $\partial$ is non-zero. Since $\RR^1\II(\Sp)\cong \II(\Indu{P}{G}{\Eins})$ by Proposition \ref{RIiSp}, the image of $\partial$
is $1$-di\-men\-sio\-nal. Hence, we obtain a non-split extension 
$0\rightarrow \II(\Sp)\rightarrow \II(\tau_1)\rightarrow \II(\Eins)\rightarrow 0$. Since the only non-split 
extension between $\II(\Sp)$ and $\II(\Eins)$ is realized by applying $\II$ to \eqref{p-1} we deduce that 
$\II(\tau_1)\cong \II(\pi(0,1))$. The cokernel of $\partial$ is isomorphic to $\II(\Sp)$. Hence, we obtain an exact sequence 
\begin{equation}\label{RItau12}
0\rightarrow \II(\Sp)\rightarrow \RR^1\II(\tau_1)\rightarrow \RR^1\II(\Eins)^{\oplus 2}.
\end{equation}
As $e^1(\Indu{P}{G}{\alpha}, \tau_1)=2$ by Lemma \ref{ext1tau1} and $\Ext^1_{\HH}(\II(\Indu{P}{G}{\alpha}), \II(\tau_1))=0$
by Lemma \ref{comphext1}, Proposition \ref{comphext} implies that $\dim \Hom_{\HH}(\II(\Indu{P}{G}{\alpha}), \RR^1\II(\tau_1))=2$.
Since $\RR^1\II(\Eins)\cong \II(\Indu{P}{G}{\alpha})$ by Proposition \ref{RIi1}, we deduce that the last arrow in 
\eqref{RItau12} is surjective. Since $\Ext^1_{\HH}(\II(\Indu{P}{G}{\alpha}), \II(\Sp))=0$ by Lemma \ref{comphext1},
we get $\RR^1\II(\tau_1)\cong \II(\Sp)\oplus \II(\Indu{P}{G}{\alpha})^{\oplus 2}$.  As $\RR^3\II(\Sp)=0$ by Proposition \ref{RIiSp},
we have an exact sequence: 
\begin{equation}\label{RItau13}
0\rightarrow \RR^2\II(\Sp)\rightarrow \RR^2\II(\tau_1)\rightarrow \RR^2\II(\Eins)^{\oplus 2}\rightarrow 0.
\end{equation}
Propositions \ref{RIiSp} and \ref{RIi1} give  $\RR^2\II(\Sp)\cong \II(\Eins)$ and $\RR^2\II(\Eins)\cong \II(\Indu{P}{G}{\alpha})$.
Lemma \ref{comphext1} implies that the sequence \eqref{RItau13} is split. This gives the last assertion.
\end{proof}

Since $e^i(\Eins, \Indu{P}{G}{\alpha})=0$ for $i\ge 0$, Corollary \ref{ext1al}, by applying $\Hom_G(\ast, \Indu{P}{G}{\alpha})$
to \eqref{defipi1}, we deduce that $\Ext^i_{G/Z}(\tau_1, \Indu{P}{G}{\alpha})\cong \Ext^i_{G/Z}(\Sp, \Indu{P}{G}{\alpha})$ for 
all $i\ge 0$. In particular, $e^1(\tau_1, \Indu{P}{G}{\alpha})=1$ and hence there exists a unique smooth $k$-representation 
$\tau_2$ of $G/Z$ such that $\Hom_G(\Sp, \tau_2)=0$ and there exists an exact sequence:
\begin{equation}\label{defipi2}
0\rightarrow \Indu{P}{G}{\alpha}\rightarrow \tau_2\rightarrow \tau_1\rightarrow 0.
\end{equation} 

\begin{lem}\label{ordpi2} $\Ord_P \tau_2\cong \alpha^{-1}$, $\RR^1\Ord_P \tau_2\cong \alpha^{-1}\oplus \alpha^{-1}$.
\end{lem}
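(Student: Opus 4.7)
The plan is to apply $\Ord_P$ to the defining sequence \eqref{defipi2} and compute the resulting six-term exact sequence, using the values of $\Ord_P$ and $\RR^1\Ord_P$ on the outer terms $\Indu{P}{G}{\alpha}$ and $\tau_1$, together with the vanishing $\RR^i\Ord_P = 0$ for $i \geq 2$. From \eqref{ordinduced} we have $\Ord_P(\Indu{P}{G}{\alpha}) \cong \alpha^s = \alpha^{-1}$ and $\RR^1\Ord_P(\Indu{P}{G}{\alpha}) \cong \alpha\otimes\alpha^{-1} = \Eins$, and from \eqref{ordpi1} we know $\Ord_P\tau_1 \cong \Eins$ and $\RR^1\Ord_P\tau_1 \cong (\alpha^{-1})^{\oplus 2}$. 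Therefore \eqref{defipi2} yields the exact sequence
\begin{equation*}
0 \to \alpha^{-1} \to \Ord_P\tau_2 \to \Eins \xrightarrow{\partial} \Eins \to \RR^1\Ord_P\tau_2 \to (\alpha^{-1})^{\oplus 2} \to 0.
\end{equation*}
The content of the lemma is then equivalent to the assertion that the connecting map $\partial$ is an isomorphism; granting this, $\Ord_P\tau_2 \cong \alpha^{-1}$ and $\RR^1\Ord_P\tau_2 \cong (\alpha^{-1})^{\oplus 2}$ follow directly.

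The main point, and the only nontrivial step, is thus to show $\partial \neq 0$. I would argue by contradiction: if $\partial = 0$, then $\Ord_P\tau_2$ sits in an extension of $\Eins$ by $\alpha^{-1}$. Since $\alpha \neq \Eins$ (we are in the regime $p \ge 5$), Corollary \ref{extToruschar} gives $\Ext^1_{T/Z}(\Eins,\alpha^{-1}) = 0$, so this extension splits and $\Hom_T(\Eins, \Ord_P\tau_2) \neq 0$. By the adjunction \eqref{adjord}, this gives a nonzero $G$-equivariant map $\Indu{\overline{P}}{G}{\Eins} \to \tau_2$, whose image is isomorphic to one of $\Eins$, $\Sp$, or $\Indu{\overline{P}}{G}{\Eins}$.

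I would then rule out each possibility using the defining properties of $\tau_2$. The cases $\Eins$ and $\Indu{\overline{P}}{G}{\Eins}$ both force $\Hom_G(\Eins, \tau_2) \neq 0$; but $\Hom_G(\Eins, \Indu{P}{G}{\alpha}) = 0$ (different irreducibles) and $\Hom_G(\Eins, \tau_1) = 0$ (by the definition of $\tau_1$), so applying $\Hom_G(\Eins, -)$ to \eqref{defipi2} gives $\Hom_G(\Eins,\tau_2) = 0$, a contradiction. The remaining case, image isomorphic to $\Sp$, directly contradicts the defining property $\Hom_G(\Sp, \tau_2) = 0$. Hence $\partial \neq 0$, and since both source and target of $\partial$ are one-dimensional, $\partial$ is an isomorphism, completing the proof. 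The main obstacle is just organizing the case analysis cleanly; everything else is formal long-exact-sequence manipulation.
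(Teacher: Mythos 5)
Your proof is correct and follows essentially the same route as the paper: apply $\Ord_P$ to \eqref{defipi2}, feed in \eqref{ordinduced} and \eqref{ordpi1}, and force the connecting map to be an isomorphism by showing $\Hom_T(\Eins,\Ord_P\tau_2)=0$, which via the adjunction \eqref{adjord} comes down to $\Hom_G(\Eins,\tau_2)=\Hom_G(\Sp,\tau_2)=0$ together with the vanishing of extensions between the distinct characters $\Eins$ and $\alpha^{-1}$ of $T$. The only cosmetic slip is listing $\Eins$ as a possible image of a nonzero map $\Indu{\overline{P}}{G}{\Eins}\to\tau_2$ (it is not a quotient, since \eqref{defineSp} is non-split), but this extra case is ruled out anyway, so the argument stands.
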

\begin{proof} We apply $\Ord_P$ to \eqref{defipi2}. Since $\Hom_G(\Sp, \tau_2)=\Hom_G(\Eins, \tau_2)=0$ we  have   
$\Hom_T(\Eins, \Ord_P \tau_2)\cong \Hom_G(\Indu{P}{G}{\Eins}, \tau_2)=0$. Since $\Ord_P \tau_1=\Eins$ and 
there are no extensions between $\alpha^{-1}$ and $\Eins$, we deduce that the connecting homomorphism 
$\partial: \Ord_P \tau_1\rightarrow \RR^1\Ord_P (\Indu{P}{G}{\alpha})$ is injective. Since both the source and the target
are $1$-di\-men\-sio\-nal we deduce that $\partial$ is an isomorphism. Hence, $\Ord_P \tau_2\cong \Ord_P (\Indu{P}{G}{\alpha})$
and $\RR^1\Ord_P \tau_2\cong \RR^1\Ord_P \tau_1$. 
\end{proof} 

\begin{cor}\label{Indu1pi2} $e^i(\Indu{P}{G}{\Eins}, \tau_2)=0$, for $i\ge 0$.
\end{cor}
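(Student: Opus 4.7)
The strategy is to read off the vanishing directly from the Emerton spectral sequence for ordinary parts, using the values of $\Ord_P \tau_2$ and $\RR^1\Ord_P \tau_2$ already computed in Lemma \ref{ordpi2}. Since the trivial character is fixed by the Weyl element $s$, we have $\Indu{P}{G}{\Eins} \cong \Indu{\overline{P}}{G}{\Eins}$, so \eqref{specseqord} applies with $U=\Eins$ and $V=\tau_2$ to give
\begin{equation*}
E_2^{i,j} = \Ext^i_{T/Z}(\Eins, \RR^j\Ord_P \tau_2) \Longrightarrow \Ext^{i+j}_{G/Z}(\Indu{P}{G}{\Eins}, \tau_2).
\end{equation*}

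The key input is that only the columns $j=0,1$ survive (by \cite{eff}, or as already used in \eqref{ordseq}, \eqref{ordext3}), and Lemma \ref{ordpi2} identifies both nonzero columns as $\Ext^i_{T/Z}(\Eins, \alpha^{-1})$ and $\Ext^i_{T/Z}(\Eins, \alpha^{-1} \oplus \alpha^{-1})$ respectively. I will then note that $\alpha$ is the nontrivial character $\mathrm{diag}(\lambda,\mu) \mapsto \omega(\lambda\mu^{-1})$, so $\Eins \not\cong \alpha^{-1}$ as characters of $T/Z$, and Corollary \ref{extToruschar}, dualized via Pontryagin duality to the category $\Mod^{\mathrm{lfin}}_{T/Z}(k)$, yields $\Ext^i_{T/Z}(\Eins, \alpha^{-1}) = 0$ for every $i \geq 0$. (Equivalently, these $\Ext$ groups are components with respect to distinct maximal ideals of the group algebra of $T/Z$.) Hence the entire $E_2$ page vanishes, and the abutment gives the claim.

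There is essentially no obstacle here since Lemma \ref{ordpi2} has already carried out the substantive computation of ordinary parts; the corollary is a formal consequence of combining it with the spectral sequence and the well-known rigidity of distinct characters of $T/Z$ under $\Ext$.
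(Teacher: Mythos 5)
Your argument is correct and coincides with the paper's proof, which likewise cites Lemma \ref{ordpi2} together with the degeneration of the ordinary-parts spectral sequence \eqref{ordseq}/\eqref{ordext3} and the vanishing $\Ext^i_{T/Z}(\Eins, \alpha^{-1})=0$ for $\Eins\neq\alpha^{-1}$ from Corollary \ref{extToruschar}. The only cosmetic difference is that you spell out the $E_2$-page explicitly rather than invoking the packaged five-term sequence.
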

\begin{proof} Lemma \ref{ordpi2}, \eqref{ordseq}.
\end{proof}

\begin{lem}\label{smallextcompute} 
$e^1(\Eins, \tau_2)=0$, $e^1(\Sp, \tau_2)=e^2(\Sp, \tau_2)=0$, $e^1(\Indu{P}{G}{\alpha}, \tau_2)\le 4$.
\end{lem}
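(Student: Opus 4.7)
The plan is to extract the four claimed bounds by applying $\Hom(\pi,\ast)$ to the defining sequence \eqref{defipi2} for a suitable irreducible $\pi$, combined with the auxiliary vanishing $\Ext^i_G(\Indu{P}{G}{\Eins},\tau_2)=0$ obtained from the ordinary-parts spectral sequence. All four statements reduce to dimension counts in long exact sequences whose terms are either already computed in the table preceding \eqref{res1NG} or in Lemmas \ref{ext1tau1} and \ref{ordpi2} and Corollaries \ref{ext1al}, \ref{Indu1pi2}.

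First I would establish $e^1(\Eins,\tau_2)=0$ by applying $\Hom(\Eins,\ast)$ to \eqref{defipi2}: Corollary \ref{ext1al} gives $\Ext^i(\Eins,\Indu{P}{G}{\alpha})=0$ for every $i$, so $\Ext^1(\Eins,\tau_2)\cong\Ext^1(\Eins,\tau_1)$, and the latter vanishes by Lemma \ref{ext1tau1}. Next, to handle the Steinberg terms I would use the spectral sequence \eqref{specseqord} with $U=\Eins$ and $V=\tau_2$: Lemma \ref{ordpi2} says both $\Ord_P\tau_2\cong\alpha^{-1}$ and $\RR^1\Ord_P\tau_2\cong(\alpha^{-1})^{\oplus 2}$ are sums of the character $\alpha^{-1}$, and Corollary \ref{extToruschar} gives $\Ext^i_T(\Eins,\alpha^{-1})=0$ for all $i\ge 0$, so every $E_2$-term of the spectral sequence vanishes. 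Hence $\Ext^i(\Indu{P}{G}{\Eins},\tau_2)=0$ for all $i\ge 0$, which is Corollary \ref{Indu1pi2} restated.

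Now applying $\Hom(\ast,\tau_2)$ to the short exact sequence $0\to\Eins\to\Indu{P}{G}{\Eins}\to\Sp\to 0$ yields a long exact sequence whose outer terms $\Ext^{i}(\Indu{P}{G}{\Eins},\tau_2)$ all vanish. Combined with $e^1(\Eins,\tau_2)=0$ from the first step, this forces $\Ext^1(\Sp,\tau_2)=0$ and $\Ext^2(\Sp,\tau_2)=0$ by sandwiching them between zero terms. Finally, for $e^1(\Indu{P}{G}{\alpha},\tau_2)$ I would apply $\Hom(\Indu{P}{G}{\alpha},\ast)$ to \eqref{defipi2}, obtaining
\[
\Ext^1(\Indu{P}{G}{\alpha},\Indu{P}{G}{\alpha})\to \Ext^1(\Indu{P}{G}{\alpha},\tau_2)\to \Ext^1(\Indu{P}{G}{\alpha},\tau_1),
\]
where the outer groups have dimensions $2$ and $2$ respectively (the first from the $\Ext^1$ table, the second from Lemma \ref{ext1tau1}), giving the bound $\le 4$.

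No step is genuinely obstructive here: the whole lemma is a bookkeeping exercise in long exact sequences, and the only slightly delicate point is the vanishing $\Ext^i(\Indu{P}{G}{\Eins},\tau_2)=0$, which however falls out cleanly because the $T$-representations $\Ord_P\tau_2$ and $\RR^1\Ord_P\tau_2$ are (sums of) a single character distinct from $\Eins$, and $\Ext^i_T$ between distinct smooth characters vanishes identically by Corollary \ref{extToruschar}.
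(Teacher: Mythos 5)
Your proposal is correct and takes essentially the same route as the paper: deduce $e^1(\Eins,\tau_2)=0$ from \eqref{defipi2} plus the vanishing $\Ext^i(\Eins,\Indu{P}{G}{\alpha})=0$ and $e^1(\Eins,\tau_1)=0$; then invoke Corollary \ref{Indu1pi2} and apply $\Hom(\ast,\tau_2)$ to $0\to\Eins\to\Indu{P}{G}{\Eins}\to\Sp\to 0$ to shift degrees on the Steinberg; then apply $\Hom(\Indu{P}{G}{\alpha},\ast)$ to \eqref{defipi2} for the last bound. One small inaccuracy: from the dimension shift $\Ext^i(\Eins,\tau_2)\cong\Ext^{i+1}(\Sp,\tau_2)$, it is $\Hom(\Eins,\tau_2)=0$ (which holds because $\Hom(\Eins,\tau_1)=0$ by construction and $\Hom(\Eins,\Indu{P}{G}{\alpha})=\Ext^1(\Eins,\Indu{P}{G}{\alpha})=0$) that forces $e^1(\Sp,\tau_2)=0$, while $e^1(\Eins,\tau_2)=0$ forces $e^2(\Sp,\tau_2)=0$; you attributed both vanishings to $e^1(\Eins,\tau_2)=0$.
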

\begin{proof} Since $e^1(\Eins, \Indu{P}{G}{\alpha})=e^1(\Eins, \tau_1)=0$ we deduce that $e^1(\Eins, \tau_2)=0$. 
By applying $\Hom_{G/Z}(\ast, \tau_2)$ to the exact sequence 
$0\rightarrow \Eins\rightarrow \Indu{P}{G}{\Eins}\rightarrow \Sp\rightarrow 0$  and using Corollary \ref{Indu1pi2} 
we obtain $\Ext^i_{G/Z}(\Eins, \tau_2)\cong \Ext^{i+1}_{G/Z}(\Sp, \tau_2)$ for $i\ge 0$. Hence, 
$e^1(\Sp, \tau_2)=e^2(\Sp, \tau_2)=0$. Since $e^1(\Indu{P}{G}{\alpha}, \Indu{P}{G}{\alpha})=e^1(\Indu{P}{G}{\alpha}, \tau_1)=2$
the last assertion follows after  applying $\Hom_{G/Z}(\Indu{P}{G}{\alpha}, \ast)$ to \eqref{defipi2}.
\end{proof}

For a smooth character $\chi: T/Z\rightarrow k^{\times}$ we denote by $J_{\chi}$ its injective envelope in $\Mod^{\mathrm{l adm}}_{T/Z}(k)$.
We note that uniqueness of injective envelopes implies that $(J_{\chi})^s\cong J_{\chi^s}$ and $J_{\chi}\cong J_{\Eins} \otimes \chi$. 
 Let $J_{\Eins_G}$, $J_{\Sp}$ and $J_{\pi_{\alpha}}$ be injective envelopes of the trivial representation, $\Sp$ and $\pi_{\alpha}:=\Indu{P}{G}{\alpha}$ in $\Mod^{\mathrm{l adm}}_{G/Z}(k)$, respectively.

\begin{prop}\label{resindJNG} There exist exact sequences:
\begin{equation}\label{1JNG}
0\rightarrow \Indu{P}{G}{ J_{\Eins_T}}\rightarrow J_{\Eins_G}\rightarrow J_{\pi_{\alpha}}\rightarrow 0
\end{equation}
\begin{equation}\label{alphaJNG}
  0\rightarrow \Indu{P}{G}{ J_{\alpha}}\rightarrow J_{\pi_{\alpha}}\rightarrow J_{\Sp}\rightarrow 0
\end{equation} 
\begin{equation}\label{SpNG}
0\rightarrow (\Indu{P}{G}{J_{\Eins_T}})/\Eins_G\rightarrow J_{\Sp} \rightarrow J_{\pi_{\alpha}}^{\oplus 2}
\end{equation}
\end{prop}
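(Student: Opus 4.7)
The strategy parallels the proof of Proposition \ref{projresprincser}, but adapted to the non-generic block $\BB=\{\Eins,\Sp,\pi_\alpha\}$, using Proposition \ref{projectiveandord} to produce the initial inclusions and then identifying the cokernels by socle and $\Ext^1$ computations.

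For \eqref{1JNG}, I would apply Proposition \ref{projectiveandord} to $\chi=\Eins_T$ (noting $\Eins_T=\Eins_T^s$, so the distinction between $P$ and $\overline{P}$ is immaterial via $\Indu{P}{G}{U}\cong \Indu{\overline{P}}{G}{U^s}$). Since $0\to \Eins\to \Indu{P}{G}{\Eins_T}\to \Sp\to 0$ is non-split, the injective envelope of $\Indu{P}{G}{\Eins_T}$ coincides with $J_{\Eins_G}$, so the proposition yields the inclusion $\Indu{P}{G}{J_{\Eins_T}}\hookrightarrow J_{\Eins_G}$. Writing $\kappa_1$ for the cokernel, I would compute $\Hom_G(\pi,\kappa_1)$ and $\Ext^1_{G/Z}(\pi,\kappa_1)$ for each irreducible $\pi\in\BB$ from the long exact sequence obtained by applying $\Hom_G(\pi,\cdot)$. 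The key inputs are Corollary \ref{injtriv} (which gives $e^i(\Sp,\Indu{P}{G}{J_{\Eins_T}})=0$ for all $i\ge 0$ and $\Ext^i_{G/Z}(\Eins,\Indu{P}{G}{J_{\Eins_T}})\cong \Ext^i_{T/Z}(\Eins,J_{\Eins_T})$, vanishing for $i\ge 1$) together with Lemma \ref{UindJ} (giving $e^1(\pi_\alpha,\Indu{P}{G}{J_{\Eins_T}})=\dim\Hom_T(\alpha^{-1},J_{\alpha^{-1}})=1$ and vanishing for $i\ge 2$). This forces $\soc_G\kappa_1=\pi_\alpha$ and $\Ext^1_{G/Z}(\pi,\kappa_1)=0$ for every $\pi\in\BB$, so $\kappa_1\cong J_{\pi_\alpha}$.

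For \eqref{alphaJNG} the argument is analogous: $\pi_\alpha\cong \Indu{\overline{P}}{G}{\alpha^{-1}}$, so Proposition \ref{projectiveandord} produces an inclusion $\Indu{\overline{P}}{G}{J_{\alpha^{-1}}}\hookrightarrow J_{\pi_\alpha}$, which under the $s$-twist reads $\Indu{P}{G}{J_\alpha}\hookrightarrow J_{\pi_\alpha}$. The cokernel $\kappa_2$ is analyzed via the long exact sequence, using Corollary \ref{acyclic1Jal} to annihilate $\Ext^*(\Eins,\Indu{P}{G}{J_\alpha})$, Lemma \ref{UindJ} to handle $\pi_\alpha$, and the sequence $0\to \Eins\to \Indu{P}{G}{\Eins}\to\Sp\to 0$ to reduce the $\Sp$ case to the already-treated $\Indu{P}{G}{\Eins}$ case; this pins down $\soc_G\kappa_2=\Sp$ and $\Ext^1_{G/Z}(\pi,\kappa_2)=0$, giving $\kappa_2\cong J_\Sp$.

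For \eqref{SpNG} I would proceed by quotienting \eqref{1JNG} by $\Eins_G=\soc_G\Indu{P}{G}{J_{\Eins_T}}$, producing $0\to (\Indu{P}{G}{J_{\Eins_T}})/\Eins_G\to J_{\Eins_G}/\Eins_G\to J_{\pi_\alpha}\to 0$. A direct long exact sequence calculation (using $\Hom_G(\Sp,\Indu{P}{G}{J_{\Eins_T}})=0$, $\Ext^1_{G/Z}(\Sp,\Eins)=k$, and the vanishing of $\Hom_G$ from $\Eins$ and $\pi_\alpha$) shows $\soc_G((\Indu{P}{G}{J_{\Eins_T}})/\Eins_G)=\Sp$, so injectivity of $J_\Sp$ supplies the embedding $(\Indu{P}{G}{J_{\Eins_T}})/\Eins_G\hookrightarrow J_\Sp$. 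The quotient $\kappa_3$ is handled similarly: $\Hom_G(\Eins,\kappa_3)=\Hom_G(\Sp,\kappa_3)=0$ by chasing LES and using vanishing $\Ext^2(\Eins,\Eins)=\Ext^2(\Sp,\Eins)=0$ from the table in \S\ref{hextII}, while $\Hom_G(\pi_\alpha,\kappa_3)\cong \Ext^1_{G/Z}(\pi_\alpha,(\Indu{P}{G}{J_{\Eins_T}})/\Eins_G)$ is computed from the LES of $0\to \Eins\to \Indu{P}{G}{J_{\Eins_T}}\to (\Indu{P}{G}{J_{\Eins_T}})/\Eins_G\to 0$ to have dimension $2$ (using $e^1(\pi_\alpha,\Eins)=1$, $e^1(\pi_\alpha,\Indu{P}{G}{J_{\Eins_T}})=1$, $e^2(\pi_\alpha,\Eins)=2$, $e^2(\pi_\alpha,\Indu{P}{G}{J_{\Eins_T}})=0$). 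Thus $\soc_G\kappa_3=\pi_\alpha^{\oplus 2}$, so $\kappa_3\hookrightarrow J_{\pi_\alpha}^{\oplus 2}$ by essentiality.

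The main technical obstacle is the dimension count $\Ext^1_{G/Z}(\pi_\alpha,(\Indu{P}{G}{J_{\Eins_T}})/\Eins_G)=2$, which requires that the natural map $\Ext^1_{G/Z}(\pi_\alpha,\Eins)\to \Ext^1_{G/Z}(\pi_\alpha,\Indu{P}{G}{J_{\Eins_T}})$ is an isomorphism between $1$-dimensional spaces rather than zero. This can be verified by pushing forward the unique non-split extension $0\to\Eins\to E\to \pi_\alpha\to 0$ along the socle inclusion $\Eins\hookrightarrow \Indu{P}{G}{J_{\Eins_T}}$: since $\Indu{P}{G}{J_{\Eins_T}}$ contains $\Eins$ only in its socle, a splitting of the pushforward would restrict to a splitting of $E$, a contradiction. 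Granted this, the socle and injectivity checks in all three sequences are routine applications of the computations assembled in \S\ref{hextII} together with Corollaries \ref{injtriv}, \ref{acyclic1Jal}, and Lemma \ref{UindJ}.
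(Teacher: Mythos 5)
Your proof follows essentially the same route as the paper's: use Proposition \ref{projectiveandord} to produce the injections, then identify each cokernel by computing $\Hom_G(\pi,-)$ and $\Ext^1_{G/Z}(\pi,-)$ for $\pi\in\{\Eins,\Sp,\pi_\alpha\}$ via the long exact sequences, feeding in Corollaries \ref{injtriv}, \ref{acyclic1Jal}, Lemma \ref{UindJ}, and the $\Ext$ table of \S\ref{hextII}. Two small remarks. First, the detour through ``quotienting \eqref{1JNG} by $\Eins_G$'' is not what your subsequent computations actually use -- you (like the paper) extract everything from $0\to\Eins_G\to\Indu{P}{G}{J_{\Eins_T}}\to\kappa\to 0$, so that preamble can be dropped. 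Second, and more substantively, what you identify as the ``main technical obstacle'' is not one: once you have $\Hom_G(\pi_\alpha,\kappa)=0$ (immediate, since every irreducible subquotient of $\Indu{P}{G}{J_{\Eins_T}}$ is $\Eins$ or $\Sp$), the long exact sequence gives that $\Ext^1_{G/Z}(\pi_\alpha,\Eins)\to\Ext^1_{G/Z}(\pi_\alpha,\Indu{P}{G}{J_{\Eins_T}})$ is injective, hence an isomorphism between $1$-dimensional spaces. The pushforward argument you propose is therefore redundant (and as stated it is also not airtight -- a splitting of the pushforward gives a complement to $\Indu{P}{G}{J_{\Eins_T}}$ in $E'$, not directly a section of $E$). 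The paper records exactly the vanishing $\Hom_G(\pi_\alpha,\kappa)=0$ and then reads off $\Ext^1_{G/Z}(\pi_\alpha,\kappa)\cong\Ext^2_{G/Z}(\pi_\alpha,\Eins)$ of dimension $2$ directly from the exact sequence, using $\Ext^2_{G/Z}(\pi_\alpha,\Indu{P}{G}{J_{\Eins_T}})=0$.
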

\begin{proof} The injections in \eqref{1JNG} and \eqref{alphaJNG} follow from Proposition \ref{projectiveandord}. Lemma \ref{UindJ} 
gives $e^i(\Indu{P}{G}{\Eins}, \Indu{P}{G}{J_{\Eins}})=0$ for $i\ge 1$. Proposition \ref{resP} gives
$e^i(\Eins, \Indu{P}{G}{J_{\Eins}})=0$ and $e^i(\Sp,\Indu{P}{G}{J_{\Eins}})=0$ for $i\ge 1$. Lemma  \ref{UindJ} 
implies that $e^1(\Indu{P}{G}{\alpha}, \Indu{P}{G}{J_{\Eins}})=1$, and $e^i(\Indu{P}{G}{\alpha}, \Indu{P}{G}{J_{\Eins}})=0$
for all $i\ge 2$. This gives \eqref{1JNG}, see Remark \ref{comment11}. Similarly  we obtain \eqref{alphaJNG}, 
noting that $e^i(\Eins, \Indu{P}{G}{J_{\alpha}})=0$ for all $i\ge 0$, see Corollary \ref{acyclic1Jal}.

Applying $\Hom_{G/Z}(\Eins, \ast)$ to the exact sequence:
\begin{equation}\label{definequotientNG} 
0\rightarrow \Eins_G \rightarrow \Indu{P}{G}{J_{\Eins}}\rightarrow \kappa\rightarrow 0
\end{equation}
we get $e^i(\Eins, \kappa)=0$ for $i=0$ and $i=1$. (Here we are using  the fact that  $e^1(\Eins, \Eins)=e^2(\Eins, \Eins)=e^1(\Eins,  \Indu{P}{G}{J_{\Eins}})=0$.)
As $e^1(\Sp, \Eins)=1$ and $e^1(\Sp, \Indu{P}{G}{J_{\Eins}})=e^2(\Sp, \Eins)=0$ we get $\dim \Hom_{G/Z}(\Sp, \kappa)=1$ 
and $\Ext^1_{G/Z}(\Sp, \kappa)=0$. Since all the irreducible subquotients of $\Indu{P}{G}{J_{\Eins}}$ are either $\Eins_G$ or $\Sp$
we have $\Hom_{G/Z}(\Indu{P}{G}{\alpha}, \kappa)=0$. Moreover, $e^1(\Indu{P}{G}{\alpha}, \Eins)= e^1(\Indu{P}{G}{\alpha}, \Indu{P}{G}{J_{\Eins}})=1$, 
$e^2(\Indu{P}{G}{\alpha}, \Indu{P}{G}{J_{\Eins}})=0$ and thus $\Ext^1_{G/Z}(\Indu{P}{G}{\alpha}, \kappa)\cong \Ext^2_{G/Z}(\Indu{P}{G}{\alpha}, \Eins)$
is $2$-di\-men\-sio\-nal. Hence, we deduce the existence of \eqref{SpNG}.
\end{proof}

Let 
\begin{equation}\label{defikappa}
0\rightarrow \Eins\rightarrow \kappa\rightarrow \pi_{\alpha}\rightarrow 0
\end{equation}
be a non-split extension. Since $\Ext^1_{G/Z}(\pi_{\alpha}, \Eins)$ is one di\-men\-sio\-nal $\kappa$ is uniquely determined up to isomorphism. Applying   
$\Ord_P$ to \eqref{defikappa} we obtain:
\begin{equation}\label{kappa1}
\Ord_P \kappa=0, \quad \RR^1\Ord_P \kappa\cong \RR^1 \Ord_P \pi_{\alpha} \cong \Eins.
\end{equation}
It follows from \eqref{ordseq} that 
\begin{equation}\label{kappa2}
e^1(\pi_{\alpha}, \kappa)=0, \quad e^1(\Indu{P}{G}{\Eins}, \kappa)=1.
\end{equation}

\begin{lem}\label{helpkis1} $e^1(\kappa, \kappa)=0$, $e^1(\Sp, \kappa)=2$, $e^1(\kappa, \Sp)=2$.
\end{lem}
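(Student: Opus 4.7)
The proof is a direct long exact sequence chase using the defining sequence \eqref{defikappa} and the table of $\Ext$-dimensions between irreducible objects of the block.

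First, I will compute the auxiliary dimension $e^1(\Eins,\kappa)$. Applying $\Hom_{G/Z}(\Eins,\ast)$ to \eqref{defikappa} gives an exact sequence whose relevant piece reads
\begin{equation*}
\Ext^1_{G/Z}(\Eins,\Eins)\rightarrow \Ext^1_{G/Z}(\Eins,\kappa)\rightarrow \Ext^1_{G/Z}(\Eins,\pi_{\alpha}).
\end{equation*}
Both flanking terms vanish by the table, so $e^1(\Eins,\kappa)=0$. Next, applying $\Hom_{G/Z}(\ast,\kappa)$ to \eqref{defikappa} and using $e^1(\pi_{\alpha},\kappa)=0$ from \eqref{kappa2} together with the vanishing just established, I get
\begin{equation*}
0=\Ext^1_{G/Z}(\pi_{\alpha},\kappa)\rightarrow \Ext^1_{G/Z}(\kappa,\kappa)\rightarrow \Ext^1_{G/Z}(\Eins,\kappa)=0,
\end{equation*}
whence $e^1(\kappa,\kappa)=0$.

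For $e^1(\Sp,\kappa)$ I apply $\Hom_{G/Z}(\Sp,\ast)$ to \eqref{defikappa}. Since $\Hom_G(\Sp,\Eins)=\Hom_G(\Sp,\pi_{\alpha})=0$ (distinct irreducibles) and, from the table, $e^1(\Sp,\Eins)=e^1(\Sp,\pi_{\alpha})=1$ while $e^2(\Sp,\Eins)=0$, the long exact sequence collapses to
\begin{equation*}
0\rightarrow k\rightarrow \Ext^1_{G/Z}(\Sp,\kappa)\rightarrow k\rightarrow 0,
\end{equation*}
giving $e^1(\Sp,\kappa)=2$. Similarly, for $e^1(\kappa,\Sp)$ I apply $\Hom_{G/Z}(\ast,\Sp)$ to \eqref{defikappa}; all of $\Hom_G(\pi_{\alpha},\Sp)$, $\Hom_G(\Eins,\Sp)$, $e^1(\pi_{\alpha},\Sp)$ and $e^2(\pi_{\alpha},\Sp)$ vanish by the table, while $e^1(\Eins,\Sp)=2$, yielding an isomorphism $\Ext^1_{G/Z}(\kappa,\Sp)\cong \Ext^1_{G/Z}(\Eins,\Sp)$, i.e.\ $e^1(\kappa,\Sp)=2$.

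This is entirely mechanical and there is no real obstacle; the only thing to verify carefully is the direction of the relevant $\Ext$-entries in the table compiled at the end of \S\ref{hextII}, together with the input $e^1(\pi_{\alpha},\kappa)=0$ from \eqref{kappa2}. No further input is required.
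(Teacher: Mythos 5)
Your computations are correct, and the first and third equalities are proved exactly as in the paper: $e^1(\Eins,\kappa)=0$ from $e^1(\Eins,\Eins)=e^1(\Eins,\pi_{\alpha})=0$, combined with $e^1(\pi_{\alpha},\kappa)=0$ from \eqref{kappa2} gives $e^1(\kappa,\kappa)=0$; and applying $\Hom_{G/Z}(\ast,\Sp)$ to \eqref{defikappa} with $e^1(\pi_{\alpha},\Sp)=e^2(\pi_{\alpha},\Sp)=0$ gives $e^1(\kappa,\Sp)=e^1(\Eins,\Sp)=2$. The only place you diverge from the paper is the middle assertion: you apply $\Hom_{G/Z}(\Sp,\ast)$ to \eqref{defikappa} and conclude $e^1(\Sp,\kappa)=e^1(\Sp,\Eins)+e^1(\Sp,\pi_{\alpha})=2$, using $\Hom_G(\Sp,\pi_{\alpha})=0$ for injectivity on the left and $e^2(\Sp,\Eins)=0$ (Corollary \ref{extirest1}) for surjectivity on the right; the paper instead applies $\Hom_{G/Z}(\ast,\kappa)$ to $0\rightarrow\Eins\rightarrow\Indu{P}{G}{\Eins}\rightarrow\Sp\rightarrow 0$ and uses $\dim\Hom_G(\Eins,\kappa)=1$ together with $e^1(\Indu{P}{G}{\Eins},\kappa)=1$ from \eqref{kappa2}, which comes out of the ordinary-parts sequence \eqref{ordseq}. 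Both inputs are established before the lemma, so your variant is a perfectly valid alternative; it trades the $\Ord_P$ computation of \eqref{kappa2} for the higher $\Ext$ entry $e^2(\Sp,\Eins)=0$ from the Hecke-algebra spectral sequence, with no gain or loss in generality.
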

\begin{proof} The first assertion follows since $e^1(\Eins, \Eins)=0$, $e^1(\Eins, \pi_{\alpha})=0$ thus $e^1(\Eins, \kappa)=0$  
and $e^1(\pi_{\alpha},\kappa)=0$ by \eqref{kappa2}.
For the second apply $\Hom_{G/Z}(\ast, \kappa)$ to $0\rightarrow \Eins\rightarrow \Indu{P}{G}{\Eins}\rightarrow \Sp\rightarrow 0$ and 
use \eqref{kappa2}. Since  $e^1(\pi_{\alpha}, \Sp)=e^2(\pi_{\alpha}, \Sp)=0$ we have  $e^1(\kappa, \Sp)=e^1(\Eins, \Sp)=2$.
\end{proof}
 
\begin{lem}\label{helpkis} Let $\beta$ in $\Mod^{\mathrm{sm}}_{G/Z}(k)$ be such that $\soc_G \beta \cong \Sp$ and the semisimplification 
is isomorphic to $\Sp\oplus \Eins \oplus \pi_{\alpha}$ then $e^1(\beta,\beta)\le 3$.  
\end{lem}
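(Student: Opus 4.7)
\medskip

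The plan is to exploit the short exact sequence $0 \to \Sp \to \beta \to \beta' \to 0$ where $\beta' := \beta/\Sp$, and then bound $e^1(\beta, \beta)$ in terms of $e^1(\beta', \beta)$ and $e^1(\Sp, \beta)$ via the long exact sequence obtained from $\Hom_{G/Z}(-, \beta)$.

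First I would pin down the isomorphism class of $\beta'$. Since $\beta'$ has Jordan--H\"older factors $\Eins$ and $\pi_\alpha$ and since $e^1(\Eins,\pi_\alpha)=0$ while $e^1(\pi_\alpha,\Eins)=1$, the only possibilities for $\beta'$ are $\Eins\oplus \pi_\alpha$ or the non-split extension $\kappa$ from \eqref{defikappa}. Suppose $\beta'\cong \Eins\oplus \pi_\alpha$. Then $\beta$ corresponds to a class in $\Ext^1_{G/Z}(\Eins\oplus\pi_\alpha,\Sp) = \Ext^1_{G/Z}(\Eins,\Sp)\oplus\Ext^1_{G/Z}(\pi_\alpha,\Sp)$, but the second summand vanishes by the table. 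Pulling $\beta$ back along $\pi_\alpha \hookrightarrow \Eins\oplus\pi_\alpha$ therefore gives a split extension of $\pi_\alpha$ by $\Sp$, and hence an embedding $\pi_\alpha \hookrightarrow \beta$, contradicting $\soc_G\beta\cong \Sp$. Thus $\beta'\cong \kappa$.

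Next I would compute the three auxiliary quantities $e^1(\Sp,\beta)$, $e^1(\Eins,\beta)$, $e^1(\pi_\alpha,\beta)$ by feeding the sequence $0\to\Sp\to\beta\to\kappa\to 0$ into $\Hom_{G/Z}(\pi,-)$ for $\pi\in\{\Sp,\Eins,\pi_\alpha\}$ and using the $\Ext$-table together with Lemma \ref{helpkis1} and \eqref{kappa2}. For $\pi=\Sp$ the relevant inputs are $e^1(\Sp,\Sp)=e^2(\Sp,\Sp)=0$ and $e^1(\Sp,\kappa)=2$, giving $e^1(\Sp,\beta)=2$. For $\pi=\pi_\alpha$, the vanishing of $e^1(\pi_\alpha,\Sp)$ together with $e^1(\pi_\alpha,\kappa)=0$ and $\Hom_G(\pi_\alpha,\beta)=\Hom_G(\pi_\alpha,\kappa)=0$ (socles are $\Sp$ and $\Eins$) yields $e^1(\pi_\alpha,\beta)=0$. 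For $\pi=\Eins$, one uses $\Hom_G(\Eins,\beta)=0$, $\dim\Hom_G(\Eins,\kappa)=1$, $e^1(\Eins,\Sp)=2$ and $e^1(\Eins,\kappa)=0$ to get $e^1(\Eins,\beta)\le 1$.

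I would then apply $\Hom_{G/Z}(-,\beta)$ to the sequence $0\to\Eins\to\kappa\to\pi_\alpha\to 0$ defining $\kappa$; since both $\Hom_G(\pi_\alpha,\beta)$ and $\Hom_G(\Eins,\beta)$ vanish, the long exact sequence gives
\begin{equation*}
e^1(\kappa,\beta)\;\le\;e^1(\pi_\alpha,\beta)+e^1(\Eins,\beta)\;\le\;0+1\;=\;1.
\end{equation*}
Finally, applying $\Hom_{G/Z}(-,\beta)$ to $0\to\Sp\to\beta\to\kappa\to 0$, the exact sequence
\begin{equation*}
\Ext^1_{G/Z}(\kappa,\beta)\to \Ext^1_{G/Z}(\beta,\beta)\to \Ext^1_{G/Z}(\Sp,\beta)
\end{equation*}
yields $e^1(\beta,\beta)\le e^1(\kappa,\beta)+e^1(\Sp,\beta)\le 1+2=3$. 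The only genuinely delicate point is the preliminary structural step that rules out $\beta'\cong\Eins\oplus\pi_\alpha$; after that, everything reduces to bookkeeping with the $\Ext$-table and Lemma \ref{helpkis1}.
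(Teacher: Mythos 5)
Your proof is correct and follows the same overall strategy as the paper: establish the filtration $0\to\Sp\to\beta\to\kappa\to 0$ (the paper asserts this from $e^1(\pi_\alpha,\Sp)=e^2(\pi_\alpha,\Sp)=0$ and the socle condition, which your more explicit pullback argument unpacks), then bound $e^1(\beta,\beta)\le e^1(\kappa,\beta)+e^1(\Sp,\beta)\le 1+2=3$. The only minor divergence is in the intermediate step: the paper gets $e^1(\kappa,\beta)=e^1(\kappa,\Sp)-e^0(\kappa,\kappa)=1$ directly from applying $\Hom_{G/Z}(\kappa,-)$ to the filtration, whereas you route through $e^1(\Eins,\beta)$ and $e^1(\pi_\alpha,\beta)$ via the defining extension of $\kappa$; both are valid bookkeeping with the same $\Ext$-data.
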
 
\begin{proof} Since $e^1(\pi_{\alpha}, \Sp)=e^2(\pi_{\alpha}, \Sp)=0$ there exists an exact sequence 
$0\rightarrow \Sp \rightarrow \beta\rightarrow \kappa\rightarrow 0$. Since $e^1(\Sp, \Sp)=e^2(\Sp, \Sp)=0$ we get 
$e^1(\Sp, \beta)=e^1(\Sp, \kappa)=2$. Since $e^1(\kappa, \kappa)=0$ we get $e^1(\kappa, \beta)=e^1(\kappa, \Sp)-e^0(\kappa, \kappa)=1$. 
Thus $e^1(\beta, \beta)\le e^1(\kappa,\beta)+e^1(\Sp, \beta)=3$.   
\end{proof}

\begin{remar} Using the bound of Lemma \ref{helpkis} and the results of Kisin \cite{kisin} one may show that $\VV$ induces an isomorphism 
between the deformation functors  of $\beta$ with a fixed central character and  $\VV(\beta)$ with a fixed 
determinant.
\end{remar}

\begin{lem}\label{wellknown} Let $\rG$ be a compact torsion-free $p$-adic analytic pro-$p$ group of dimension $d$ and let $\tau$ be in 
$\Mod^{\mathrm{sm}}_{\rG}(k)$ then there exists a natural isomorphism between $\Ext^d_{\rG}(\Eins,\tau)$ and the $\rG$-coinvariants $\tau_{\rG}$.
\end{lem}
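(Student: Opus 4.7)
The plan is to deduce this from Poincar\'e duality for compact $p$-adic analytic pro-$p$ groups. Since $\rG$ is compact, torsion-free, $p$-adic analytic, and $p\ge 5$, it is $p$-saturable by Lazard \cite[III.3.2.7.5]{laz}; hence by \cite[V.2.5.8]{laz} together with Serre \cite{serreprop} it is a Poincar\'e duality group of dimension $d=\dim \rG$. This provides a dualizing module $D$ and, for every discrete $\rG$-module $M$, a natural isomorphism
$$H^i(\rG, M)\cong H_{d-i}(\rG, M\otimes D),\qquad 0\le i\le d.$$

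First I would identify, for $\tau$ in $\Mod^{\mathrm{sm}}_{\rG}(k)$, the $\Ext$ group $\Ext^i_{\rG}(\Eins,\tau)$ with the continuous group cohomology $H^i(\rG,\tau)$. This is essentially tautological once one recalls that smooth $k$-representations of $\rG$ are the same as discrete $k[\rG]$-modules and that, on such modules, continuous cohomology agrees with the derived functors of the fixed-points functor computed in the abelian category $\Mod^{\mathrm{sm}}_{\rG}(k)$; the analogous identification has already been used implicitly in Lemma \ref{RisH}.

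Next I would check that the dualizing module $D$ is trivial as a $\rG$-module. By construction $D$ is one-dimensional over $\Fp$ with $\rG$ acting through a continuous character $\rG\to \Fp^{\times}$; since $\rG$ is pro-$p$ and $|\Fp^{\times}|$ is prime to $p$, this character is necessarily trivial. Plugging this into Poincar\'e duality at $i=d$ yields, for any $\tau$ in $\Mod^{\mathrm{sm}}_{\rG}(k)$, a natural isomorphism
$$\Ext^d_{\rG}(\Eins,\tau)\cong H^d(\rG,\tau)\cong H_0(\rG,\tau)\cong \tau_{\rG}.$$

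The main obstacle, such as it is, is purely organisational: one must assemble clean references for (i) Lazard's Poincar\'e duality for $p$-saturable groups, (ii) the agreement of smooth and continuous cohomology on the category $\Mod^{\mathrm{sm}}_{\rG}(k)$, and (iii) the triviality of $D$ in the pro-$p$ case. Each step is classical and naturality in $\tau$ is automatic since every comparison map in the argument is functorial.
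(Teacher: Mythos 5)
Your argument is essentially the paper's: identify $\Ext^i_{\rG}(\Eins,-)$ with continuous group cohomology, invoke Lazard--Serre Poincar\'e duality in dimension $d$, note that the mod-$p$ dualizing module is trivial because $\rG$ is pro-$p$ while $\Aut(\ZZ/p)\cong\Fp^{\times}$ has order prime to $p$, and conclude $H^d(\rG,\tau)\cong\tau_{\rG}$. Two caveats, one of which touches the only real content of the lemma. First, the duality you quote, $H^i(\rG,M)\cong H_{d-i}(\rG,M\otimes D)$ for \emph{every} discrete module $M$, is not literally what \cite[V.2.5.8]{laz} or \cite[I.4.5]{serregal} provide: there duality is a perfect pairing of finite groups, stated for finite $p$-primary modules. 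The statement you want is true, but the way to get it is exactly the step the paper carries out and you have glossed: a smooth $k$-representation $\tau$ is the filtered union of its finite subrepresentations (every vector has open stabilizer and $k$ is finite), Poincar\'e duality applied to each finite piece gives $H^d(\rG,\tau_i)\cong H^0(\rG,\tau_i^*)^*\cong(\tau_i)_{\rG}$, and one passes to the colimit using that $H^d(\rG,-)$ and coinvariants commute with filtered colimits of discrete modules (\cite[I.2.2 Cor.\ 2]{serregal}). So either include this reduction or cite a source stating duality in the homological form for arbitrary discrete modules; as written, your citations do not cover infinite $\tau$. Second, the appeal to $p$-saturability via \cite[III.3.2.7.5]{laz} is both unjustified in this generality (that criterion is dimension-sensitive; the paper uses it in Lemma \ref{vanish3} only for a specific $3$-dimensional group with $p\ge 5$) and unnecessary here: torsion-freeness is a hypothesis of the lemma, and the Poincar\'e duality property follows directly from \cite[V.2.5.8]{laz} together with \cite{serreprop}, which is what the paper cites.
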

\begin{proof} Since $H^0(\rG, \ast)\cong \Hom_{\rG}(\Eins, \ast)$ and $H^i(\rG, \ast)$ is the $i$-th derived functor 
of $H^0(\rG, \ast)$, \cite[\S 2.2]{serregal}, for all $i\ge 0$ we have a natural isomorphism of functors 
$\Ext^i_{\rG}(\Eins, \ast)\cong H^i(\rG, \ast)$. Since $\rG$ is compact torsion-free and $p$-adic analytic, 
it is a Poincar\'e group of dimension $d$, \cite[2.5.8]{laz}, 
\cite{serreprop}. Since $\rG$ is pro-$p$, it acts trivially on the dualizing module. If $\tau$ is finite then 
Poincar\'e duality induces an isomorphism $H^d(\rG, \tau)\cong H^0(\rG, \tau^*)^*\cong \tau_{\rG}$, \cite[I.4.5]{serregal}, 
where $\ast$ denotes $k$-linear dual. In general, we may write $\tau$ as a union of finite subrepresentations 
$\tau=\underset{\longrightarrow}{\lim}\, \tau_i$. We have 
$$ H^d(\rG, \tau)\cong \underset{\longrightarrow}{\lim}\, H^d(\rG, \tau_i)\cong \underset{\longrightarrow}{\lim}\,(\tau_i)_{\rG}\cong \tau_{\rG},$$
where the first isomorphism is given by \cite[I.2.2 Cor.2]{serregal}.
\end{proof} 

In Lemmas below $\kappa$ is the representation defined in \eqref{defikappa}.

\begin{lem}\label{H30} $\II(\kappa)\cong \II(\Eins)$, $\RR^3\II(\kappa)=0$.
\end{lem}
\begin{proof} Since $\Ext^1_{\HH}(\II(\pi_{\alpha}), \II(\Eins))=0$, Lemma \ref{comphext1}, we have $\II(\kappa)\cong \II(\Eins)$. 
Lemma  \ref{indHi} and Lemma  \ref{wellknown} imply that the $I_1/Z_1$-coinvariants of $\pi_{\alpha}$ are zero. Hence, $I_1/Z_1$-coinvariants
of $\kappa$ are also zero, since otherwise we would obtain a $I_1$-equivariant  splitting of \eqref{defikappa}, which would 
contradict $\II(\kappa)\cong \II(\Eins)$. Lemma \ref{wellknown} implies that $H^3(I_1/Z_1, \kappa)=0$ and it follows from Lemma \ref{RisH} that 
$\RR^3 \II(\kappa)=0$.
\end{proof} 
 
\begin{lem}\label{RIikappa} $\RR^1\II(\kappa)\cong \II(\pi(0,1))$, $\RR^2\II(\kappa)\cong \II(\Sp)$, $\RR^i\II(\kappa)=0$ for $i\ge 3$.
\end{lem}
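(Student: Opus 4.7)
The plan is to run the long exact sequence of derived functors obtained by applying $\II$ to the defining short exact sequence $0\to\Eins\to\kappa\to\pi_\alpha\to 0$, and then to pin down the two connecting homomorphisms using the values of $\RR^i\II(\Eins)$ and $\RR^i\II(\pi_\alpha)$ supplied by Proposition~\ref{RIi1} and Corollary~\ref{forgotten} together with the information about $\kappa$ already gathered in the proof of Lemma~\ref{H30}. Vanishing for $i\ge 4$ is immediate from Lemma~\ref{vanish3}, and the case $i=3$ is exactly Lemma~\ref{H30}; only $i=1,2$ require work.

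First I would deal with the boundaries of the long exact sequence. The proof of Lemma~\ref{H30} gives $\II(\kappa)\cong\II(\Eins)$, so the natural map $\II(\Eins)\to\II(\kappa)$ is an isomorphism and the map $\II(\kappa)\to\II(\pi_\alpha)$ vanishes. Consequently the connecting homomorphism $\II(\pi_\alpha)\to\RR^1\II(\Eins)\cong\II(\pi_\alpha)$ is a non-zero endomorphism of the irreducible $\HH$-module $\II(\pi_\alpha)$, hence an isomorphism, and we are reduced to an exact sequence
$$0\to\RR^1\II(\kappa)\to\II(\pi_\alpha)\oplus\II(\pi(0,1))\overset{\partial_1}{\longrightarrow}\II(\pi_\alpha)\to\RR^2\II(\kappa)\to\II(\pi(0,1))\overset{\partial_2}{\longrightarrow}\II(\Eins)\to 0,$$
in which surjectivity of $\partial_2$ is forced by $\RR^3\II(\kappa)=0$ and $\RR^3\II(\pi_\alpha)=0$. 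Applying $\II$ to \eqref{p-1} and comparing dimensions gives $0\to\II(\Sp)\to\II(\pi(0,1))\to\II(\Eins)\to 0$, so $\Ker\partial_2\cong\II(\Sp)$.

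The main obstacle is to determine the rank of $\partial_1$. I will compute $\Hom_{\HH}(\II(\pi_\alpha),\RR^1\II(\kappa))$ via the five-term sequence of Proposition~\ref{comphext} applied with $\tau=\pi_\alpha$ (which is irreducible, non-supersingular, and thus satisfies the hypothesis of Corollary~\ref{vanish4}) and $\pi=\kappa$. The input data are $\Ext^1_{G/Z}(\pi_\alpha,\kappa)=0$ from \eqref{kappa2}, $\Ext^1_{\HH}(\II(\pi_\alpha),\II(\Eins))=0$ from Lemma~\ref{comphext1} (none of the three cases listed there apply), and $\Ext^2_{\HH}(\II(\pi_\alpha),-)=0$ from Corollary~\ref{vanish4}. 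The five-term sequence then collapses to $\Hom_{\HH}(\II(\pi_\alpha),\RR^1\II(\kappa))=0$. If $\partial_1$ were zero, then $\RR^1\II(\kappa)$ would equal $\II(\pi_\alpha)\oplus\II(\pi(0,1))$ and would admit the inclusion of the first summand as a non-zero Hom from $\II(\pi_\alpha)$, a contradiction. Hence $\partial_1\ne 0$.

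Finally, since $\II(\pi_\alpha)$ is irreducible, the image of $\partial_1$ is all of $\II(\pi_\alpha)$. As $\II(\pi_\alpha)$ is not a subquotient of $\II(\pi(0,1))$ (whose subquotients are $\II(\Sp)$ and $\II(\Eins)$), the restriction of $\partial_1$ to the $\II(\pi(0,1))$-summand is zero, so $\partial_1$ is the projection onto the $\II(\pi_\alpha)$-summand and $\Ker\partial_1\cong\II(\pi(0,1))$. Reading off the long exact sequence yields $\RR^1\II(\kappa)\cong\II(\pi(0,1))$ and $\RR^2\II(\kappa)\cong\Ker\partial_2\cong\II(\Sp)$, completing the proof. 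The only delicate step is the non-vanishing of $\partial_1$; everything else is direct bookkeeping.
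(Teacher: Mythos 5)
Your proof is correct and follows the paper's argument essentially verbatim: you apply $\II$ to \eqref{defikappa}, feed in Proposition \ref{RIi1}, Corollary \ref{forgotten}, Lemma \ref{H30} and Lemma \ref{vanish3}, and then use Proposition \ref{comphext} together with \eqref{kappa2} to get $\Hom_{\HH}(\II(\pi_{\alpha}),\RR^1\II(\kappa))=0$, which pins down the connecting map exactly as the paper does. The only difference is that you spell out the final bookkeeping (identifying $\Ker\partial_1$ and $\Ker\partial_2$) that the paper leaves implicit in "which implies the assertion."
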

\begin{proof} Lemmas \ref{H30} implies that $\RR^3\II(\kappa)=0$. It follows from Lemma \ref{vanish3} that $\RR^i\II(\kappa)=0$ for $i\ge 4$. 
Applying $\II$ to \eqref{defikappa} and using Proposition \ref{RIi1}, Lemma \ref{H30} and Corollary \ref{forgotten} we obtain an exact sequence: 
$$\RR^1\II(\kappa)\hookrightarrow \II(\pi_{\alpha})\oplus \II(\pi(0,1))\rightarrow \II(\pi_{\alpha})\rightarrow 
\RR^2\II(\kappa)\rightarrow \II(\pi(0,1))\twoheadrightarrow \II(\Eins).$$
It follows from Proposition \ref{comphext} and \eqref{kappa2} that $\Hom_{\HH}(\II(\pi_{\alpha}), \RR^1\II(\kappa))=0$, which implies the assertion.  
\end{proof}

\begin{lem}\label{p01k} $e^1(\pi(0,1), \kappa)=2$, $e^2(\pi(0,1), \kappa)=1$.
\end{lem}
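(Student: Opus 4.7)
The natural approach is to apply the Hecke-algebra spectral sequence of Proposition \ref{comphext} with $\tau = \pi(0,1)$ and the target $\pi = \kappa$. The representation $\pi(0,1)$ is admissible, generated by its $I_1$-invariants, and its irreducible subquotients $\Eins$ and $\Sp$ avoid the excluded class $\pi(r,0,\eta)$ with $0 < r < p-1$; so Corollary \ref{vanish4} applies and gives $\Ext^i_{\HH}(\II(\pi(0,1)), M) = 0$ for all $i \geq 2$ and all $M$ of the relevant shape. Hence the spectral sequence collapses onto two columns:
$$E_2^{i,j} = \Ext^i_{\HH}\bigl(\II(\pi(0,1)),\, \RR^j\II(\kappa)\bigr) \Longrightarrow \Ext^{i+j}_{G/Z}(\pi(0,1), \kappa), \qquad i \in \{0,1\}.$$
Moreover, Lemma \ref{RIikappa} identifies $\II(\kappa) \cong \II(\Eins)$, $\RR^1\II(\kappa) \cong \II(\pi(0,1))$, $\RR^2\II(\kappa) \cong \II(\Sp)$, and $\RR^j\II(\kappa) = 0$ for $j \geq 3$, so the nonzero rows are $j = 0, 1, 2$.

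The $E_2$-page reduces to computing $\Hom_{\HH}$ and $\Ext^1_{\HH}$ from $\II(\pi(0,1))$ into each of $\II(\Eins)$, $\II(\pi(0,1))$, $\II(\Sp)$. I would handle this by applying $\Hom_{\HH}(\ast, M)$ to the non-split extension
$$0 \to \II(\Sp) \to \II(\pi(0,1)) \to \II(\Eins) \to 0$$
for each choice of $M$, using Lemma \ref{comphext1} for the dimensions of $\Ext^1_{\HH}$ between the irreducibles $\II(\Eins)$ and $\II(\Sp)$ (both $\Ext^1_{\HH}(\II(\Sp),\II(\Eins))$ and $\Ext^1_{\HH}(\II(\Eins),\II(\Sp))$ are one-dimensional, while $\Ext^1_{\HH}$ of $\II(\Eins)$ or $\II(\Sp)$ with itself vanishes). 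In each case the relevant connecting homomorphism is cup product with the class of $\II(\pi(0,1)) \in \Ext^1_{\HH}(\II(\Eins), \II(\Sp))$, which pairs nontrivially with $\id_{\II(\Sp)}$ and with $\id_{\II(\Eins)}$; this pins down the dimensions. I expect to obtain $E_2^{0,0} = E_2^{1,0} = 1$ (for $M = \II(\Eins)$), $E_2^{0,1} = E_2^{1,1} = 1$ (for $M = \II(\pi(0,1))$), and $E_2^{0,2} = E_2^{1,2} = 0$ (for $M = \II(\Sp)$).

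Since $E_2^{i,j} = 0$ for $i \geq 2$, the higher differentials $d_r: E_r^{i,j} \to E_r^{i+r, j-r+1}$ all land in or start from zero terms, so the spectral sequence degenerates at $E_2$. Reading off the abutment, $\Ext^1_{G/Z}(\pi(0,1), \kappa)$ has graded pieces $E_\infty^{0,1}$ and $E_\infty^{1,0}$, both one-dimensional, giving $e^1(\pi(0,1), \kappa) = 2$; and $\Ext^2_{G/Z}(\pi(0,1), \kappa)$ has graded pieces $E_\infty^{0,2} = 0$ and $E_\infty^{1,1} = 1$, giving $e^2(\pi(0,1), \kappa) = 1$.

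The only step that requires any real care is the determination of the boundary maps on the $E_2$-page --- specifically, recognizing that the cup product with the class of $\II(\pi(0,1))$ is nonzero on $\End_{\HH}(\II(\Sp))$ and $\End_{\HH}(\II(\Eins))$, because $\II(\pi(0,1))$ is the unique nonsplit extension in the relevant one-dimensional $\Ext^1$-group. Once this is in place, the rest of the argument is bookkeeping within the collapsed spectral sequence. As a sanity check, one can alternatively cross-verify by applying $\Hom_{G/Z}(\ast, \kappa)$ to $0 \to \Sp \to \pi(0,1) \to \Eins \to 0$, using the Ext-table from \S\ref{hextII} to compute $\Ext^i(\Eins, \kappa)$ and $\Ext^i(\Sp, \kappa)$ via $0 \to \Eins \to \kappa \to \pi_\alpha \to 0$ together with Corollary \ref{ext1al}; this recovers $e^1 = 2$ directly and reduces $e^2$ to the claim that the Yoneda product with the class of $\pi(0,1)$ gives a surjection $\Ext^2(\Sp, \kappa) \twoheadrightarrow \Ext^3(\Eins, \kappa)$.
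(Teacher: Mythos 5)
Your proof is correct and follows essentially the same route as the paper: Proposition \ref{comphext} collapses the Hecke spectral sequence to two columns (by Corollary \ref{vanish4}), and the $\HH$-Ext groups of $\II(\pi(0,1))$ against $\RR^j\II(\kappa)$ are computed from Lemma \ref{comphext1} using the nonsplit extension $0\to\II(\Sp)\to\II(\pi(0,1))\to\II(\Eins)\to 0$. One small citation slip: the identification $\II(\kappa)\cong\II(\Eins)$ is established in the proof of Lemma \ref{H30}, not in Lemma \ref{RIikappa} (which only records the higher $\RR^j\II(\kappa)$); and your closing ``cross-check'' leaves unverified the surjectivity of $\Ext^2(\Sp,\kappa)\to\Ext^3(\Eins,\kappa)$, but since that is offered only as a sanity check it does not affect the argument.
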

\begin{proof} Using Lemmas \ref{vanish2}, \ref{comphext1} one obtains $\Ext^i_{\HH}(\II(\pi(0,1)), \II(\pi(0,1))$ is $1$-di\-men\-sio\-nal 
and $\Ext^i_{\HH}(\II(\pi(0,1)), \II(\Sp))=0$ for $i=0$, $i=1$. The assertion follows from Proposition \ref{comphext} and Lemma \ref{RIikappa}.
\end{proof}

\subsection{Quotient category}\label{qcat}
\begin{lem}\label{ext1110} Let $0\rightarrow \pi_1\rightarrow \pi_2\rightarrow \pi_3\rightarrow 0$ be an extension in $\Mod^{\mathrm{sm}}_{G/Z}(\OO)$
then  $G$ acts trivially on $\pi_1$ and $\pi_3$ if and only if it acts trivially on $\pi_2$.
\end{lem}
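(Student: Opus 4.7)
The ``only if'' direction is immediate: a subrepresentation or quotient of a representation with trivial $G$-action has trivial $G$-action. For the converse, assume $G$ acts trivially on $\pi_1$ and $\pi_3$. I would encode the extension by a homomorphism
$$ c:G\longrightarrow \Hom_{\OO}(\pi_3,\pi_1),\qquad c(g)(\bar v):=(g-1)v, $$
where $v\in\pi_2$ is any lift of $\bar v\in\pi_3$. The image lies in $\pi_1$ because $\bar v$ is $G$-fixed in $\pi_3$, and it is independent of the lift because $G$ acts trivially on $\pi_1$. The cocycle identity
$$ c(gh)(\bar v)=(g-1)v+g(h-1)v=c(g)(\bar v)+c(h)(\bar v) $$
(using $(h-1)v\in\pi_1$ and the trivial $G$-action there) shows that $c$ is a genuine group homomorphism. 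The goal reduces to showing $c\equiv 0$, since then $gv=v$ for every $g\in G$ and $v\in\pi_2$.

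Because $\pi_2$ has trivial central character, $c$ vanishes on $Z$, so it factors through $G/Z$ and then through $(G/Z)^{\mathrm{ab}}$. The plan here is a direct computation: $[\GL_2(\Qp),\GL_2(\Qp)]=\SL_2(\Qp)$ (it is contained for determinant reasons and equal because $\SL_2(\Qp)$ is generated by elementary matrices and hence perfect), so quotienting further by $Z$ identifies $(G/Z)^{\mathrm{ab}}$ with $\Qp^\times/\det(Z)=\Qp^\times/(\Qp^\times)^2$, a group of exponent $2$.

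Since $p\neq 2$, multiplication by $2$ is invertible on every $\OO$-module, in particular on $\Hom_{\OO}(\pi_3,\pi_1)$. Any homomorphism from an exponent-$2$ group into such a module is therefore zero, so $c\equiv 0$.

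\textbf{The main (non-)obstacle.} There really is none: the argument is a cocycle computation followed by the abelianization of $G/Z$. The only arithmetic input is $p\neq 2$, via invertibility of $2$ on $\OO$-modules; this is weaker than the standing assumption $p\ge 5$. The one mild point worth noting is that one wants the conclusion for \emph{arbitrary} $\OO$-modules (not just $k$- or $\OO$-flat ones), which is precisely why the exponent-$2$ observation together with invertibility of $2$ is the right packaging.
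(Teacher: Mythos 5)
Your proof is correct and follows essentially the same route as the paper's: the paper fixes $v\in\pi_2$ and shows $g\mapsto(g-1)v$ is a homomorphism into $\pi_1$ factoring through $G/Z\SL_2(\Qp)\cong\Qp^\times/(\Qp^\times)^2$, then kills it because $\pi_1$ is $p$-power torsion and this quotient has order prime to $p$. Your packaging of the same cocycle computation into $\Hom_{\OO}(\pi_3,\pi_1)$ and your use of invertibility of $2$ on $\OO$-modules are only cosmetic variants of that argument.
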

\begin{proof} Choose  $v$ in $\pi_2$ then the map $g\mapsto (g-1) v$ defines a group homomorphism $\psi:G\rightarrow (\pi_1, +)$.
Since $Z$ acts trivially on $\pi_2$, $\psi$ will factor through $G/Z \SL_2(\Qp)$. The order of $G/Z \SL_2(\Qp)$ is prime to $p$,
as $p>2$. Since every element of $\pi_2$ is killed by a power of $p$, we deduce that $\psi$ is zero. Hence, $G$ acts trivially on $\pi_2$.
The other implication is trivial.   
\end{proof}

Let $\mathfrak T(\OO)$ be the category of compact $\OO$-modules with the trivial $G$-action. It follows from Lemma \ref{ext1110} that 
$\mathfrak T(\OO)$ is a thick subcategory of $\dualcat(\OO)$ and hence we may build a quotient category $\qcat(\OO):=\dualcat(\OO)/\mathfrak T(\OO)$.
Recall, \cite[\S III.1]{gab}, that the objects of $\qcat(\OO)$ are the same as the objects of $\dualcat(\OO)$, the morphisms are given 
by 
\begin{equation}\label{defHomQ}
\Hom_{\qcat(\OO)}(M, N):=\underset{\longrightarrow}{\lim}\, \Hom_{\dualcat(\OO)}( M', N/N'),
\end{equation}
where the limit is taken over all subobjects $M'$ of $M$ and $N'$ of $N$ such that $G$ acts trivially on $M/M'$ and 
$N'$. Let $\TT:\dualcat(\OO)\rightarrow \qcat(\OO)$ be the functor $\TT M=M$ for every object of $\dualcat(\OO)$ and 
$\TT f: \TT M\rightarrow \TT N$ is the image of $f: M\rightarrow N$ in $\underset{\longrightarrow}{\lim}\, \Hom_{\dualcat(\OO)}( M', N/N')$
 under the natural map. The category $\qcat(\OO)$ is abelian and $\TT$ is an exact functor, \cite[Prop 1, \S III.1]{gab}.
In our situation it is easy to describe the homomorphisms in the quotient category explicitly. For  an object $M$ of $\dualcat(\OO)$, we denote 
by $I_G(M):= (M^{\vee}/ (M^{\vee})^{G})^{\vee}\subseteq M$. 

\begin{lem}\label{HomQexp} Let $M$ and $N$ be objects of $\dualcat(\OO)$, then $\Hom_{\dualcat(\OO)}(I_G(M), \Eins)=0$ and 
$(N/N^{G})^G=0$. In particular, 
\begin{equation}
\Hom_{\qcat(\OO)}(\TT M, \TT N)\cong \Hom_{\dualcat(\OO)}(I_G(M), N/N^G).
\end{equation} 
\end{lem}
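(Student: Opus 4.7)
Both vanishing statements reduce, via Pontryagin duality, to assertions about a smooth representation $V$ in $\Mod^{\mathrm{l\,fin}}_{G, \zeta}(\OO)$. The key input is the computation of the abelianization of $G/Z$: since $\SL_2(\Qp)$ is perfect and $Z \hookrightarrow \GL_2(\Qp)$ maps under $\det$ to $(\Qp^{\times})^2$, one has $(G/Z)^{\mathrm{ab}} \cong \Qp^{\times}/(\Qp^{\times})^2$, a group of order $4$ (recall $p \ge 5$). Since every object of $\Mod^{\mathrm{l\,fin}}_{G, \zeta}(\OO)$ is $p$-primary torsion and $\gcd(4, p) = 1$, every continuous group homomorphism from $G$ into such an object which is trivial on $Z$ must vanish.

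For the first claim, I would set $V := M^{\vee}$, so that $\Hom_{\dualcat(\OO)}(I_G(M), \Eins) = (V/V^{G})^{G}$. Given $v \in V$ whose class is $G$-invariant, the assignment $\phi_v : g \mapsto (g-1)v$ takes values in $V^{G}$ by hypothesis and is additive because $V^{G}$ is fixed pointwise (the defect $(gh-1)v - (g-1)v - (h-1)v = (g-1)(h-1)v$ vanishes, since $(h-1)v \in V^{G}$). By the input above, $\phi_v = 0$, hence $v \in V^{G}$. For the second claim, dually, let $V := N^{\vee}$ and $W := I_G(V)$; then $(N/N^{G})^{G}$ is Pontryagin-dual to the $G$-coinvariants $W_G$, so it suffices to prove $W = I_G(W)$. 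For each $v \in V$, I would define $\psi_v: G \to W/I_G(W)$ by $g \mapsto (g-1)v$; additivity modulo $I_G(W)$ comes from the identity $(g-1)(h-1)v \in I_G(W)$ (noting $(h-1)v \in W$), and the coprime-order argument again gives $\psi_v = 0$, forcing $(g-1)v \in I_G(W)$ for all $g, v$, whence $W_G = 0$.

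The Hom identification then follows from Gabriel's explicit description of morphisms in the Serre quotient by $\mathfrak T(\OO)$: $\Hom_{\qcat(\OO)}(\TT M, \TT N)$ is the filtered colimit of $\Hom_{\dualcat(\OO)}(M', N/N')$ over pairs with $M/M' \in \mathfrak T(\OO)$ and $N' \in \mathfrak T(\OO)$---equivalently, with $M' \supseteq I_G(M)$ and $N' \subseteq N^{G}$ by the very definitions of these subobjects. The pair $(I_G(M), N^{G})$ itself lies in the system and is the terminal object in the colimit direction, so the colimit stabilizes at $\Hom_{\dualcat(\OO)}(I_G(M), N/N^{G})$; put differently, the content of the two vanishing statements is precisely that $I_G(M)$ admits no nonzero map to any object of $\mathfrak T(\OO)$ and $N/N^{G}$ receives none, which is the standard universal criterion making $\Hom_{\dualcat(\OO)}$ and $\Hom_{\qcat(\OO)}$ coincide on this pair. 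The main hurdle is really only the additivity check for $\phi_v$ and $\psi_v$; once that is in place, the coprime-order argument gives both vanishings for free.
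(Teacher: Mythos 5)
Your proof is correct and follows essentially the paper's route: both vanishing statements rest on the same $(g-1)v$ homomorphism argument (perfectness of $\SL_2(\Qp)$, triviality on $Z$ since $\zeta$ is trivial here, and the prime-to-$p$ order of $\Qp^{\times}/(\Qp^{\times})^2$) that the paper has already packaged as Lemma \ref{ext1110} and simply cites, whereas you inline it. For the Hom identification you use cofinality of the pair $(I_G(M), N^G)$ in Gabriel's colimit, while the paper deduces it from the two vanishing statements together with Lemme 4 of \cite[\S III.1]{gab}; the difference is cosmetic.
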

\begin{proof} The first two assertions follow from Lemma \ref{ext1110}. 
Hence, it follows from the definition that 
$\Hom_{\qcat(\OO)}(\TT(I_G(M)), \TT(N/N^G))\cong \Hom_{\dualcat(\OO)}(I_G(M), N/N^G).$
Moreover, Lemme 4 in \cite[\S III.1]{gab} implies that the natural maps induce isomorphisms 
$\TT I_{G}(M)\cong \TT(M)$, $\TT N\cong \TT(N/N^G)$.
\end{proof} 

\begin{lem}\label{projQproj} If $P$ is a projective object of $\dualcat(\OO)$ with $\Hom_{\dualcat(\OO)}(P, \Eins)=0$ then 
$\TT P$ is a projective object of $\qcat(\OO)$ and 
$$\Hom_{\dualcat(\OO)}(P, N)\cong \Hom_{\qcat(\OO)}(\TT P, \TT N)$$ 
for all $N$.
\end{lem}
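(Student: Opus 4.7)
The plan is to extract both statements from two basic vanishings: (a) $\Hom_{\dualcat(\OO)}(P,M)=0$ for every $M$ in $\mathfrak T(\OO)$, and (b) $I_G(P)=P$. For (a) I would argue by devissage: every $M$ in $\mathfrak T(\OO)$ is a projective limit of finite length objects $M_i$ whose only irreducible subquotient in $\dualcat(\OO)$ is $\Eins$ (since $G$ acts trivially), so projectivity of $P$ together with the hypothesis $\Hom_{\dualcat(\OO)}(P,\Eins)=0$ kills $\Hom_{\dualcat(\OO)}(P,M_i)$ inductively on length, and these vanishings pass to the limit because $\Hom_{\dualcat(\OO)}(P,-)$ commutes with projective limits. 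For (b), Pontryagin duality identifies $(P^{\vee})^G$ with $\Hom_{\dualcat(\OO)}(P,\Eins)$, which vanishes by hypothesis, so $I_G(P):=(P^{\vee}/(P^{\vee})^G)^{\vee}=P$.

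The Hom formula then follows directly: Lemma \ref{HomQexp} gives
$$\Hom_{\qcat(\OO)}(\TT P,\TT N)\cong\Hom_{\dualcat(\OO)}(I_G(P),N/N^G)=\Hom_{\dualcat(\OO)}(P,N/N^G),$$
and applying $\Hom_{\dualcat(\OO)}(P,-)$ to the short exact sequence $0\to N^G\to N\to N/N^G\to 0$ yields $\Hom_{\dualcat(\OO)}(P,N/N^G)\cong\Hom_{\dualcat(\OO)}(P,N)$ by projectivity of $P$ and vanishing (a) applied to $M=N^G$.

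For projectivity of $\TT P$ in $\qcat(\OO)$, I would lift any short exact sequence $0\to A\to B\xrightarrow{f} C\to 0$ in $\qcat(\OO)$ to an honest short exact sequence in $\dualcat(\OO)$. Represent $f$ by a morphism $\tilde f\colon B'\to C/C'$ in $\dualcat(\OO)$ with $B/B'$ and $C'$ in $\mathfrak T(\OO)$; epimorphicity of $f$ in $\qcat(\OO)$ forces $\Coker\tilde f\in\mathfrak T(\OO)$, so after replacing the target by $\Image\tilde f$ one obtains a short exact sequence in $\dualcat(\OO)$ whose image under the exact functor $\TT$ is canonically isomorphic to the given sequence in $\qcat(\OO)$. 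Applying $\Hom_{\dualcat(\OO)}(P,-)$ to the lifted sequence produces an exact sequence of abelian groups by projectivity of $P$, and the Hom formula already proven identifies this term-by-term with $\Hom_{\qcat(\OO)}(\TT P,-)$ applied to the original sequence; hence $\Hom_{\qcat(\OO)}(\TT P,-)$ is exact, so $\TT P$ is projective.

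The main obstacle is bookkeeping rather than mathematics: making precise the claim that a short exact sequence in the Serre quotient $\qcat(\OO)$ can be lifted to one in $\dualcat(\OO)$ up to canonical isomorphism in $\qcat(\OO)$, and keeping track of the identifications $\TT B'\cong B$ and $\TT\Image\tilde f\cong C$ when chasing diagrams. These are standard consequences of the calculus of fractions developed in \cite[\S III.1]{gab} already invoked elsewhere in the paper, but the verification benefits from writing out the roof description of morphisms in $\qcat(\OO)$ explicitly.
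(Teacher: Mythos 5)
Your argument is correct and matches the paper's proof in all essentials: the paper likewise reduces the Hom formula to the vanishing $\Hom_{\dualcat(\OO)}(P,N^G)=0$ (your (a)) plus Lemma~\ref{HomQexp} applied with $I_G(P)=P$ (your (b), which the paper leaves implicit), and establishes projectivity of $\TT P$ by lifting short exact sequences in $\qcat(\OO)$ to $\dualcat(\OO)$ via Gabriel's \cite[Cor.~1, \S III.1]{gab}. Your write-up is a slightly more detailed account of the same route, not a different one.
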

\begin{proof} Since $\Hom_{\dualcat(\OO)}(P, \Eins)=0$ we get $\Hom_{\dualcat(\OO)}(P, N^G)=0$. Since $P$ is projective we deduce 
$\Hom_{\dualcat(\OO)}(P, N)\cong \Hom_{\dualcat(\OO)}(P,N/N^G)$. The second assertion follows from Lemma \ref{HomQexp}.
The exactness of $\Hom_{\qcat(\OO)}(\TT P, \ast)$ follows from \cite[Cor 1, \S III.1]{gab}, which 
says that every exact sequence of $\qcat(\OO)$ is isomorphic to an exact sequence of the form $0\rightarrow \TT M_1\rightarrow 
\TT M_2 \rightarrow \TT M_3 \rightarrow 0$, where $0\rightarrow M_1\rightarrow M_2\rightarrow M_3\rightarrow 0$ is an exact sequence 
in $\dualcat(\OO)$. 
\end{proof}   

\begin{lem} The category $\qcat(\OO)$ has enough projectives. 
\end{lem}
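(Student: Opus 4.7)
The plan is to combine Lemma \ref{projQproj} with the block decomposition. Since $\Eins$ is the only trivial-$G$ irreducible, the subcategory $\mathfrak T(\OO)$ lies entirely in the block $\BB=\{\Eins,\Sp,\Indu{P}{G}{\alpha}\}$, and Corollary \ref{blockdecompD} gives a decomposition $\qcat(\OO)\cong \dualcat(\OO)^\BB/\mathfrak T(\OO)\times \prod_{\BB'\neq\BB}\dualcat(\OO)^{\BB'}$. The non-$\BB$ factors already have enough projectives (those of $\dualcat(\OO)^{\BB'}$), so the problem reduces to producing projective covers in $\qcat(\OO)^\BB:=\dualcat(\OO)^\BB/\mathfrak T(\OO)$.

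For finite length $M$ in $\dualcat(\OO)^\BB$ I would replace $M$ by $M':=I_G(M)$, so that $\TT M\cong \TT M'$ in $\qcat(\OO)^\BB$. The key observation is that $M'$ has no $\Eins$-quotient: any surjection $\phi\colon M'\twoheadrightarrow\Eins$ would yield, by pushout along $\phi$, an extension $0\to\Eins\to M/\ker\phi\to M/M'\to 0$ whose outer terms are trivial-$G$, forcing $M/\ker\phi$ to be trivial-$G$ as well by Lemma \ref{ext1110}. This produces a trivial-$G$ quotient of $M$ of strictly larger length than the maximal such quotient $M/M'=M^G$, a contradiction. Consequently the cosocle of $M'$ is a direct sum of copies of $\Sp^\vee$ and $\pi_\alpha^\vee$ only. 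Letting $b,c$ be the corresponding multiplicities, I set $P:=\wP_{\Sp^\vee}^{\oplus b}\oplus\wP_{\pi_\alpha^\vee}^{\oplus c}$ and lift the cosocle projection to an essential epimorphism $P\twoheadrightarrow M'$. Since the cosocle of each projective summand is a non-$\Eins$ irreducible, $\Hom_{\dualcat(\OO)}(P,\Eins)=0$, so Lemma \ref{projQproj} guarantees that $\TT P$ is projective in $\qcat(\OO)^\BB$, and the composition $\TT P\twoheadrightarrow\TT M'\cong\TT M$ is the required projective cover.

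The main obstacle will be extending this construction to arbitrary (not-necessarily-finite-length) $M$, where the length comparison in the no-$\Eins$-quotient argument breaks down. One way to proceed is to write $M=\varprojlim M_i$ as a limit of its finite length quotients, build projective covers $\TT P_i\twoheadrightarrow \TT M_i$ as above, and assemble them into a projective cover of $\TT M$ using that arbitrary products of projectives are again projective in $\dualcat(\OO)^\BB$ (dually to arbitrary direct sums of injectives being injective in the locally finite category $\Mod^{\mathrm{lfin}}_{G,\zeta}(\OO)^\BB$). A cleaner alternative is to observe that $(\qcat(\OO)^\BB)^{op}$ is the Serre quotient of the locally finite Grothendieck category $\Mod^{\mathrm{lfin}}_{G,\zeta}(\OO)^\BB$ by its thick subcategory of smooth representations with trivial $G$-action, hence is itself a Grothendieck category with a set of generators and exact direct limits, so Gabriel's theorem (\cite[\S II.6, Thm.~2]{gab}) applies to produce injective envelopes in $(\qcat(\OO)^\BB)^{op}$, giving projective envelopes in $\qcat(\OO)^\BB$.
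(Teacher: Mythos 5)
Your core idea is the right one — apply Lemma~\ref{projQproj} to a projective cover of $I_G(M)$ and push forward through $\TT$ — and this is indeed the route the paper takes. But your implementation is considerably heavier than necessary, and the extra complication creates the infinite-length worry that occupies the end of your proposal.

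The paper's proof has no case split: for arbitrary $M$ in $\dualcat(\OO)$, take a projective envelope $P\twoheadrightarrow I_G(M)$ in $\dualcat(\OO)$ (this exists for all objects, not only finite-length ones, since $\dualcat(\OO)$ is the dual of a locally finite Grothendieck category). The vanishing $\Hom_{\dualcat(\OO)}(I_G(M),\Eins)=0$ is already the first assertion of Lemma~\ref{HomQexp} and holds for every $M$. Since the kernel of an essential epimorphism lies in the radical, any map $P\to\Eins$ would factor through $I_G(M)$, so $\Hom_{\dualcat(\OO)}(P,\Eins)=0$; Lemma~\ref{projQproj} and exactness of $\TT$ then give a projective epi $\TT P\twoheadrightarrow \TT I_G(M)\cong \TT M$. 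That is the whole proof.

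Two concrete places where your version overcomplicates. First, your derivation of $\Hom_{\dualcat(\OO)}(I_G(M),\Eins)=0$ uses a length comparison, which is why you then need a separate strategy for infinite-length $M$. The length-free argument (already implicit in Lemma~\ref{HomQexp}) uses the universal property of $I_G(M)$: if $\phi\colon I_G(M)\twoheadrightarrow\Eins$ were nonzero, then $M/\Ker\phi$ sits in an extension of the trivial-$G$ object $M/I_G(M)$ by $\Eins$, hence is trivial-$G$ by Lemma~\ref{ext1110}; but $M/I_G(M)$ is the \emph{maximal} trivial-$G$ quotient (dual to $(M^\vee)^G$ being the maximal trivial-$G$ subobject), so $I_G(M)\subseteq\Ker\phi$, contradicting $\phi\neq0$. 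This makes no reference to length. Second, the block decomposition and the explicit cosocle bookkeeping ($\wP_{\Sp^\vee}^{\oplus b}\oplus\wP_{\pi_\alpha^\vee}^{\oplus c}$) are not needed: the abstract projective envelope of $I_G(M)$ already has the required property, and one does not have to know its decomposition into indecomposables. Your final alternative (observing that $(\qcat(\OO))^{\mathrm{op}}$ is again a locally finite Grothendieck category and citing Gabriel) is correct and would also work, but is heavier machinery than the two-line argument above.
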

\begin{proof} Let $M$ be in $\dualcat(\OO)$ and let $P\twoheadrightarrow I_G(M)$ be a projective envelope of $I_G(M)$ in $\dualcat(\OO)$. 
Since $\Hom_{\qcat(\OO)}(I_G(M), \Eins)=0$ by Lemma \ref{HomQexp} we also have $\Hom_{\qcat(\OO)}(P, \Eins)=0$. Thus $\TT P$ is projective 
in $\qcat(\OO)$ by Lemma \ref{projQproj} and since $\TT$ is exact we have $\TT P\twoheadrightarrow \TT I_G(M)\cong \TT M$.
\end{proof}

\begin{lem}\label{essQess} If $\Hom_{\dualcat(\OO)}(N, \Eins)=0$ then for every esssential epimorphism  
$q: M\twoheadrightarrow N$, $\TT q: \TT M\twoheadrightarrow \TT N$ is an essential epimorphism in $\qcat(\OO)$.
\end{lem}
\begin{proof} Let $a: T\rightarrow \TT M$ be a morphism in $\qcat(\OO)$ such that the composition $\TT q \circ a: T\rightarrow \TT N$ is 
an epimorphism. We claim that $a$ is an epimorphism. After replacing $T$ with the image of $a$ we may assume that 
$a$ is a monomorphism. It follows from \cite[Prop 1, \S III.1]{gab} that there exists a monomorphism 
$u: M'\rightarrow M$ in $\dualcat(\OO)$ such that $a: T\rightarrow \TT M$ is isomorphic to $\TT u : \TT M'\rightarrow \TT M$. 
Now  $\TT q \circ \TT u = \TT(q\circ u): \TT M'\rightarrow \TT N'$ is an epimorphism, and hence $G$ acts trivially on the cokernel 
of $q\circ u$ in $\dualcat(\OO)$, see Lemme 3 in \cite[\S III.1]{gab}. As $\Hom_{\dualcat(\OO)}(N, \Eins)=0$, we get that 
$q\circ u$ is an epimorphism, and since $q$ is essential, $u: M'\rightarrow M$ is an epimorphism, which implies that $\TT u$ (and hence 
$a$) is an epimorphism. 
\end{proof}

We note that the category $\qcat(\OO)$ is $\OO$-linear. Since $\TT$ is exact we have $(\TT M)[\varpi]\cong \TT(M[\varpi])$ and
$\TT M/\varpi \TT M\cong \TT( M/\varpi M)$. The composition $\dualcat(k)\rightarrow \dualcat(\OO)
\overset{\TT}{\rightarrow} \qcat(\OO)$ factors through the quotient category $\qcat(k):=\dualcat(k)/\mathfrak T(k)$  and induces an equivalence of 
categories between $\qcat(k)$ and the full subcategory of $\qcat(\OO)$ consisting of the objects killed by $\varpi$.
We denote by $T_{\Eins}$ and $T_{\alpha}$ the following objects of $\qcat(k)$:
\begin{equation}\label{defiT1Ta}
T_{\Eins}:= \TT ( \Indu{P}{G}{\Eins})^{\vee}, \quad T_{\alpha}:= \TT(\Indu{P}{G}{\alpha})^{\vee}.
\end{equation}
We note that since $\TT (\Eins) \cong 0$ in $\qcat(k)$ and since $\TT$ is exact  we have 
\begin{equation}
T_{\Eins}\cong \TT  \Sp^{\vee}\cong \TT \tau_1^{\vee},
\end{equation}
where $\tau_1$ is the representation defined by  \eqref{defipi1}. 
\begin{lem}\label{homT1NG}  
$\Hom_{\qcat(k)}(\TT M , T_{\Eins})\cong \Hom_G(\Indu{P}{G}{\Eins}, M^{\vee}),$ for all $M$  in $\dualcat(k)$.
\end{lem}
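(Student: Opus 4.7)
The plan is to assemble the isomorphism from four straightforward pieces, using the computation of the $\Ord_P$-related Ext vanishings from \eqref{extindtriv} and the concrete description of $\Hom$ in $\qcat(k)$ from Lemma \ref{HomQexp}.

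First I would rewrite the left hand side using the identification $T_{\Eins}=\TT\Sp^{\vee}$ already noted in the text (which follows by applying $\TT$ to $0\to \Sp^{\vee}\to(\Indu{P}{G}{\Eins})^{\vee}\to\Eins\to 0$ and using $\TT\Eins=0$). Lemma \ref{HomQexp} gives
\[
\Hom_{\qcat(k)}(\TT M,T_{\Eins})\cong\Hom_{\dualcat(k)}\bigl(I_G(M),\Sp^{\vee}/(\Sp^{\vee})^G\bigr).
\]
Now $(\Sp^{\vee})^G=(\Sp_G)^{\vee}$ vanishes because $\Sp$ has no non-zero trivial quotient; equivalently, $\Sp$ is a quotient of $\Indu{P}{G}{\Eins}$, whose coinvariants vanish by the same type of long exact sequence argument used in \eqref{extindtriv}. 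Thus the right hand side simplifies to $\Hom_{\dualcat(k)}(I_G(M),\Sp^{\vee})$, and applying Pontryagin duality converts this into $\Hom_G(\Sp,I_G(M)^{\vee})=\Hom_G(\Sp,M^{\vee}/(M^{\vee})^G)$.

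Next I would relate $\Hom_G(\Sp,\ast)$ to $\Hom_G(\Indu{P}{G}{\Eins},\ast)$ by applying $\Hom_G(-,M^{\vee}/(M^{\vee})^G)$ to the short exact sequence $0\to\Eins\to\Indu{P}{G}{\Eins}\to\Sp\to 0$. This yields an exact sequence
\[
0\to\Hom_G(\Sp,M^{\vee}/(M^{\vee})^G)\to\Hom_G(\Indu{P}{G}{\Eins},M^{\vee}/(M^{\vee})^G)\to\Hom_G(\Eins,M^{\vee}/(M^{\vee})^G),
\]
whose last term is $(M^{\vee}/(M^{\vee})^G)^G$, and this vanishes by the same observation as in the proof of Lemma \ref{HomQexp}. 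Hence $\Hom_G(\Sp,M^{\vee}/(M^{\vee})^G)\cong\Hom_G(\Indu{P}{G}{\Eins},M^{\vee}/(M^{\vee})^G)$.

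Finally I would remove the quotient by $(M^{\vee})^G$ by using $0\to(M^{\vee})^G\to M^{\vee}\to M^{\vee}/(M^{\vee})^G\to 0$. The associated long exact sequence reads
\[
\Hom_G(\Indu{P}{G}{\Eins},(M^{\vee})^G)\to\Hom_G(\Indu{P}{G}{\Eins},M^{\vee})\to\Hom_G(\Indu{P}{G}{\Eins},M^{\vee}/(M^{\vee})^G)\to\Ext^1_G(\Indu{P}{G}{\Eins},(M^{\vee})^G).
\]
Since $(M^{\vee})^G$ carries the trivial $G$-action, it is a filtered colimit of finite direct sums of copies of $\Eins$; as $\Indu{P}{G}{\Eins}$ is finitely generated, $\Hom_G$ and $\Ext^1_G$ against it commute with such colimits, and \eqref{extindtriv} forces both outer terms to be zero. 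Chaining the four isomorphisms delivers the claim, and all steps are functorial in $M$, so the isomorphism is natural. There is no real obstacle here apart from keeping the duality bookkeeping straight: the main ingredients are the two $\Ord_P$-driven vanishings $(\Sp^{\vee})^G=0$ and $e^i(\Indu{P}{G}{\Eins},\Eins)=0$ from \eqref{extindtriv}.
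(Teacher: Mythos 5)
Your argument is correct and rests on exactly the same two ingredients the paper uses: Lemma \ref{HomQexp} and the vanishing $e^i(\Indu{P}{G}{\Eins},\Eins)=0$ from \eqref{extindtriv}. The only difference is cosmetic: you first rewrite $T_{\Eins}$ as $\TT\Sp^{\vee}$ and then must pass back from $\Sp$ to $\Indu{P}{G}{\Eins}$ via the defining short exact sequence, whereas the paper works directly with $T_{\Eins}=\TT(\Indu{P}{G}{\Eins})^{\vee}$ (using $\Hom_G(\Indu{P}{G}{\Eins},\Eins)=0$ to see that the dual has no trivial subobject), so your detour adds one step but nothing essential changes.
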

\begin{proof}
Since $\Ext^i_{G/Z}(\Indu{P}{G}{\Eins}, \Eins)=0$ for $i\ge 0$ by \eqref{extindtriv}, we have 
$$\Hom_G(\Indu{P}{G}{\Eins}, M^{\vee})\cong \Hom_G(\Indu{P}{G}{\Eins}, M^{\vee}/(M^{\vee})^G)\cong \Hom_{\qcat(k)}(\TT M , T_{\Eins}).$$ 
The last isomorphism follows from Lemma \ref{HomQexp}.
\end{proof} 

\begin{prop}\label{Qhyp} The hypotheses (H1)-(H5) hold in $\qcat(k)$ with $S=T_{\alpha}$ and $Q=\TT \tau_2^{\vee}$, where $\tau_2$ is the representation 
defined by \eqref{defipi2}.
\end{prop}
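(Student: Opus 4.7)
The plan is to identify the irreducible objects of $\qcat(k)$ in the block $\BB$, verify hypotheses (H1) and (H2) directly, and then reduce (H3), (H4), (H5) to the $\Ext$-group computations in $\Mod^{\mathrm{sm}}_{G/Z}(k)$ carried out in \S\ref{hextII}, using Gabriel's localization theory.

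I will begin by identifying the irreducibles of $\qcat(k)^{\BB}$. By general properties of quotient categories, these are the images under $\TT$ of the irreducibles in $\dualcat(k)^{\BB}$ not lying in $\mathfrak T(k)$; since $\TT\Eins^\vee \cong 0$, they are exactly $T_\Eins = \TT \Sp^\vee$ and $T_\alpha = \TT \pi_\alpha^\vee$. Applying the exact functor $\TT$ to the dual of the composition series of $\tau_2$ given by \eqref{defipi1} and \eqref{defipi2}, and using $\TT\Eins^\vee=0$, will yield an exact sequence $0 \to T_\Eins \to Q \to T_\alpha \to 0$, settling (H2) with $R = \rad Q \cong T_\Eins$. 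Hypothesis (H1) will follow immediately from Lemma \ref{homT1NG} applied to $M = \tau_2^\vee$ together with Corollary \ref{Indu1pi2}.

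The core of the argument is (H3), (H4), (H5). I plan to establish the higher-degree analogues of Lemma \ref{homT1NG}:
\begin{equation*}
\Ext^i_{\qcat(k)}(\TT N^\vee, T_\Eins) \cong \Ext^i_{G/Z}(\Indu{P}{G}{\Eins}, N), \qquad \Ext^i_{\qcat(k)}(\TT N^\vee, T_\alpha) \cong \Ext^i_{G/Z}(\pi_\alpha, N),
\end{equation*}
valid for all $N$ in $\Mod^{\mathrm{ladm}}_{G/Z}(k)$ and all $i \ge 0$. Applying the first formula with $N=\tau_2$ and invoking Corollary \ref{Indu1pi2} will give $\Ext^i_{\qcat(k)}(Q, T_\Eins) = 0$ for $i=1,2$, proving (H3) and (H5). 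Applying the second formula with $N=\tau_2$ and Lemma \ref{smallextcompute} will give $\dim_k \Ext^1_{\qcat(k)}(Q, T_\alpha) \le 4$, proving (H4).

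To justify the two descent formulas I will use the theory of localizing subcategories \cite[\S III]{gab}: $\mathfrak T(k)$ is localizing (the inclusion $\mathfrak T(k) \hookrightarrow \dualcat(k)$ has right adjoint $M \mapsto M^G$), equivalently, on the dual side the subcategory $\mathfrak T^{\mathrm{dual}}$ of representations with trivial $G$-action is localizing in $\Mod^{\mathrm{ladm}}_{G/Z}(k)$, and $\qcat(k) \cong \bar{\mathcal A}^{op}$ where $\bar{\mathcal A} := \Mod^{\mathrm{ladm}}_{G/Z}(k)/\mathfrak T^{\mathrm{dual}}$. By \eqref{extindtriv} the functor $\Hom_G(\Indu{P}{G}{\Eins}, \cdot)$ and all its right derived functors vanish on $\mathfrak T^{\mathrm{dual}}$, so they descend to derived functors on $\bar{\mathcal A}$ coinciding with $\Ext^i_{\bar{\mathcal A}}(\TT \Indu{P}{G}{\Eins}, \cdot)$—the degree-zero case being essentially the content of Lemma \ref{homT1NG}—giving the $T_\Eins$-formula. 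For the $T_\alpha$-formula, Corollary \ref{ext1al} shows $\pi_\alpha$ is $\mathfrak T^{\mathrm{dual}}$-closed in all degrees, and the injective resolution \eqref{resalphaNG}, whose terms $J_{\pi_\alpha}$ and $J_\Sp$ have non-trivial socles and are therefore $\mathfrak T^{\mathrm{dual}}$-closed injectives, will descend under $\TT$ to an injective resolution of $\TT\pi_\alpha$ in $\bar{\mathcal A}$, computing the right-hand side directly. The main obstacle will be making these descent claims rigorous: all the vanishing statements required are in place from \S\ref{hextII}, but one must combine them with Gabriel's general formalism to obtain the precise identifications of $\Ext$-groups above.
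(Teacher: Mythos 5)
The route you propose for (H3) and (H5) could in principle be made to work, but the second descent formula, on which your proof of (H4) rests, is false as stated, and the justification you sketch cannot be repaired because it confuses the two variances. Taking $N=\Eins$ and $i=1$ in $\Ext^i_{\qcat(k)}(\TT N^{\vee}, T_{\alpha})\cong \Ext^i_{G/Z}(\Indu{P}{G}{\alpha}, N)$ gives a contradiction: the left-hand side vanishes because $\TT \Eins^{\vee}\cong 0$ in $\qcat(k)$, while $\Ext^1_{G/Z}(\Indu{P}{G}{\alpha}, \Eins)$ is one-dimensional. The tools you invoke control the wrong variable: dualizing \eqref{resalphaNG} and applying $\TT$ yields a projective resolution of $T_{\alpha}$ in $\qcat(k)$, which (like the vanishing in Corollary \ref{ext1al}) computes $\Ext^i_{\qcat(k)}(T_{\alpha},-)$, whereas (H4) concerns $\Ext^1_{\qcat(k)}(Q, T_{\alpha})$ with $T_{\alpha}$ in the second variable; the vanishing that a comparison in that variable would require, namely $\Ext^i_{G/Z}(\Indu{P}{G}{\alpha}, T)=0$ for all $T$ with trivial $G$-action, fails already for $i=1$. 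So (H4) is not established. The paper obtains it (together with (H1), (H3), (H5)) from a single construction: $\TT J_{\pi_{\alpha}}^{\vee}$ is a projective envelope of $T_{\alpha}$ in $\qcat(k)$ by Lemmas \ref{projQproj} and \ref{essQess}, and Lemma \ref{smallextcompute} with Corollary \ref{Indu1pi2} produces an exact sequence $0\rightarrow \tau_2\rightarrow J_{\pi_{\alpha}}\rightarrow J_{\pi_{\alpha}}^{\oplus d}\rightarrow \kappa\rightarrow 0$ with $d\le 4$ and $\Hom_G(\Indu{P}{G}{\Eins},\kappa)=0$; dualizing and applying $\TT$ gives $0\rightarrow \TT\kappa^{\vee}\rightarrow (\TT J_{\pi_{\alpha}}^{\vee})^{\oplus d}\rightarrow \TT J_{\pi_{\alpha}}^{\vee}\rightarrow Q\rightarrow 0$, from which $\dim\Ext^1_{\qcat(k)}(Q,T_{\alpha})\le d$, the vanishing of $\Hom$ and $\Ext^1$ of $Q$ against every irreducible not isomorphic to $T_{\alpha}$, and $\Ext^2_{\qcat(k)}(Q,T_{\Eins})\cong\Hom_{\qcat(k)}(\TT\kappa^{\vee},T_{\Eins})=0$ all follow at once. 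If you insist on your framework, you would have to replace the false formula by independently proved finiteness of $\Ext^1_{\qcat(k)}(T_{\Eins},T_{\alpha})$ and $\Ext^1_{\qcat(k)}(T_{\alpha},T_{\alpha})$ and use $0\rightarrow T_{\Eins}\rightarrow Q\rightarrow T_{\alpha}\rightarrow 0$, but the proof of those statements (Lemma \ref{dimextgrQ}) is again exactly this projective-resolution argument.

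Two further points. First, (H1) and (H3) quantify over all irreducible objects of $\qcat(k)$, not only $T_{\Eins}$: you must also dispose of $\TT\pi^{\vee}$ for irreducible $\pi$ outside the block of the trivial representation. This is easy — for instance $\Hom_{\qcat(k)}(\TT J_{\pi_{\alpha}}^{\vee},\TT\pi^{\vee})=0$ for every irreducible $\pi\not\cong\Eins,\Indu{P}{G}{\alpha}$, which is how the paper treats all such $S'$ uniformly — but your write-up omits it. Second, even your first descent formula $\Ext^i_{\qcat(k)}(\TT N^{\vee},T_{\Eins})\cong\Ext^i_{G/Z}(\Indu{P}{G}{\Eins},N)$ is not purely formal in degrees $i\ge 1$: $\TT$ does not send every projective of $\dualcat(k)$ to a projective of $\qcat(k)$ (Lemma \ref{projQproj} needs $\Hom_{\dualcat(k)}(P,\Eins^{\vee})=0$, which fails for $J_{\Eins_G}^{\vee}$), so on the smooth side injective resolutions of $N$ do not automatically descend and an effaceability or resolution argument with carefully chosen acyclic objects is still required; in the degrees $i\le 2$ actually needed, the four-term sequence above already supplies it.
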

\begin{remar} The hypotheses (H1)-(H5) are stated in \S \ref{firstsec} assuming that $\dualcat$ is a full subcategory of $\Mod^{\mathrm{pro\, aug}}_G(\OO)$, where 
$G$ is a locally pro-$p$ group, but the statements make sense in any $k$-linear abelian category, such as $\qcat(k)$.
\end{remar}

\begin{proof}[Proof of Proposition \ref{Qhyp}] If $\pi_1$, $\pi_2$ are irreducible non-trivial in $\Mod^{\mathrm{l adm}}_{G/Z}(k)$ then it follows from Lemma \ref{HomQexp} that 
$\TT \pi_1^{\vee}$ and $\TT \pi_2^{\vee}$ are irreducible in $\qcat(k)$ and $\TT\pi_1^{\vee}\cong \TT\pi_2^{\vee}$ implies $\pi_1\cong \pi_2$. 
In particular, $T_{\Eins}$ and $T_{\alpha}$ are irreducible, non-zero  and non-isomorphic in $\qcat(k)$. Conversely, it follows from Lemma \ref{HomQexp}
that every irreducible non-zero object of $\qcat(k)$ is isomorphic to $\TT \pi^{\vee}$, where $\pi$ is an irreducible non-trivial 
representation in $\Mod^{\mathrm{l adm}}_{G/Z}(k)$. Let $J_{\pi_{\alpha}}$ be an injective envelope of $\Indu{P}{G}{\alpha}$ in 
$\Mod^{\mathrm{l adm}}_{G/Z}(k)$, then $P:=J^{\vee}_{\pi_{\alpha}}$ is a projective envelope of $(\Indu{P}{G}{\alpha})^{\vee}$ in $\dualcat(k)$ and 
it follows from  Lemmas \ref{projQproj} and \ref{essQess} that $\TT P$ is a 
projective envelope of $T_{\alpha}$ in $\qcat(k)$. It follows from Lemma \ref{smallextcompute} and Remark \ref{comment11}
that we have an exact sequence in $\Mod^{\mathrm{l adm}}_{G/Z}$: 
\begin{equation}\label{restau2}
 0\rightarrow \tau_2\rightarrow J_{\pi_{\alpha}}\rightarrow J^{\oplus d}_{\pi_{\alpha}}\rightarrow \kappa\rightarrow 0,
\end{equation}   
where $d=\dim \Ext^1_{G/Z}(\Indu{P}{G}{\alpha}, \tau_2)\le 4$ and $\Hom_G(\Indu{P}{G}{\Eins}, \kappa)=0$ by Corollary \ref{Indu1pi2}. By dualizing 
\eqref{restau2} and applying $\TT$ we get an exact sequence:
\begin{equation}\label{restau2VT}
 0\rightarrow \TT\kappa^{\vee}\rightarrow \TT P^{\oplus d}\rightarrow \TT P\rightarrow \TT \tau_2^{\vee}\rightarrow 0,
\end{equation}
Let $\pi$ be an irreducible  representation of $\Mod^{\mathrm{l adm}}_{G/Z}(k)$ with $\pi\not\cong \Eins$ and  $\pi\not\cong \Indu{P}{G}{\alpha}$. Since 
$\TT \pi^{\vee}$ is irreducible in $\qcat(k)$, is not isomorphic to $T_{\alpha}$  and $\TT P$ is a projective envelope of 
$T_{\alpha}$ in $\qcat(k)$, we deduce that $\Hom_{\qcat(k)}(\TT P, \TT \pi^{\vee})=0$. Applying $\Hom_{\qcat(k)}( \ast, \TT \pi^{\vee})$ 
to \eqref{restau2VT} we get that 
\begin{equation}\label{H13holds}
\Hom_{\qcat(k)}(\TT \tau_2^{\vee}, \TT \pi^{\vee})=0, \quad \Ext^1_{\qcat(k)}(\TT \tau_2^{\vee}, \TT \pi^{\vee})=0
\end{equation} 
\begin{equation}\label{H5holds}
\Ext^2_{\qcat(k)}(\TT \tau_2^{\vee}, \TT \pi^{\vee})\cong \Hom_{\qcat(k)}( \TT\kappa^{\vee}, \TT\pi^{\vee})
\end{equation}
It follows from \eqref{H13holds} that (H1) and (H3) hold. Dualizing \eqref{defipi2} 
and applying $\TT$ we get an exact sequence $0\rightarrow T_{\Eins}\rightarrow \TT \tau_2^{\vee} \rightarrow T_{\alpha}\rightarrow 0$. Since 
$T_\alpha\not\cong T_{\Eins}$, (H2) holds. Further, applying $\Hom_{\qcat(k)}(\ast, T_{\alpha})$ to \eqref{restau2VT}  we deduce that 
\begin{equation}\label{lateruse}
\dim \Ext^1_{\qcat(k)}(\TT \tau^{\vee}_2, T_{\alpha})\le d\le 4,
\end{equation}
hence (H4) holds.  Since $\Hom_G(\Indu{P}{G}{\Eins}, \kappa)=0$ we deduce from Lemma \ref{homT1NG} that $\Hom_{\qcat(k)}(\TT \kappa^{\vee}, T_{\Eins})=0$. Since $T_{\Eins}$ is  the maximal proper subobject of $\TT \tau_2^{\vee}$, 
 it follows from \eqref{H5holds} that (H5) is satisfied.
\end{proof} 

\begin{remar}\label{comment27} It follows from \eqref{restau2} that  the hypotheses  (H1)-(H4) hold in $\dualcat(k)$ with $S=\pi_{\alpha}^{\vee}$ and $Q=\tau_2^{\vee}$. The problem 
is that (H5) does not hold in $\dualcat(k)$: one may calculate using the results of \S \ref{hextII} that $\Ext^2_{G/Z}(\Eins, \tau_2)\cong\Ext^2_{G/Z}(\Eins, \tau_1)\cong \Ext^2_{G/Z}(\Eins, \Sp)\neq 0$. 
This implies that $\Ext^2_{G/Z}(\tau_1, \tau_2)\neq 0$ since $e^1(\Sp, \tau_2)=0$ by Lemma \ref{smallextcompute}. Dually we obtain that $\Ext^2_{\dualcat(k)}(\tau_2^{\vee}, \tau_1^{\vee})\neq 0$.
\end{remar}

\begin{lem}\label{dimextgrQ} $\Ext^1_{\qcat(k)}(T_{\Eins}, T_{\Eins})$, $\Ext^1_{\qcat(k)}(T_{\Eins}, T_{\alpha})$, $\Ext^1_{\qcat(k)}(T_{\alpha} ,T_{\alpha})$
are $2$-di\-men\-sio\-nal and $\Ext^1_{\qcat(k)}(T_{\alpha}, T_{\Eins})$ is $1$-di\-men\-sio\-nal.
\end{lem}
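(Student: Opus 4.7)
The plan is to compute each of the four $\Ext^1$ groups using projective envelopes in the quotient category $\qcat(k)$. Since $\Hom_G(\Eins, \pi_\alpha) = 0 = \Hom_G(\Eins, \Sp)$, Lemmas \ref{projQproj} and \ref{essQess} guarantee that $\TT P_{\pi_\alpha^\vee}$ and $\TT P_{\Sp^\vee}$ (the images of the projective envelopes $P_{\pi_\alpha^\vee} = J_{\pi_\alpha}^\vee$ and $P_{\Sp^\vee} = J_{\Sp}^\vee$ from $\dualcat(k)$) are projective envelopes of $T_\alpha$ and $T_{\Eins}$ respectively in $\qcat(k)$. Let $\Omega_\alpha$ and $\Omega_{\Eins}$ denote the corresponding first syzygies, with Pontryagin duals $J_{\pi_\alpha}/\pi_\alpha$ and $J_\Sp/\Sp$.

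Next, I would invoke the exactness of $\TT$ to obtain, for each $i,j \in \{\Eins, \alpha\}$, an exact sequence
\begin{equation*}
\Hom_{\qcat(k)}(\TT P, T_j) \to \Hom_{\qcat(k)}(\TT \Omega, T_j) \to \Ext^1_{\qcat(k)}(T_i, T_j) \to 0.
\end{equation*}
The Hom-groups in $\qcat(k)$ will be expressed in the original category using Lemma \ref{homT1NG} (giving $\Hom_G(\Indu{P}{G}{\Eins}, M^\vee)$ when the target is $T_{\Eins}$) and Lemma \ref{HomQexp} (giving $\Hom_{\dualcat(k)}(I_G(M), \pi_\alpha^\vee)$ when the target is $T_\alpha$, using $(\pi_\alpha^\vee)^G = 0$). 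Evaluating these then reduces to computing $\Hom$ and $\Ext^1$ groups in $\Mod^{\mathrm{ladm}}_{G/Z}(k)$ via the short exact sequences $0 \to \pi \to J_\pi \to J_\pi/\pi \to 0$ and $0 \to \Eins \to \Indu{P}{G}{\Eins} \to \Sp \to 0$, combined with the tables of $\Ext^i$ groups collected in \S \ref{hextII}.

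Three of the computations are reasonably direct: $\Ext^1_{\qcat(k)}(T_\alpha, T_\alpha)$ will follow from the identification with $\Ext^1_{\dualcat(k)}(\pi_\alpha^\vee, \pi_\alpha^\vee) = e^1(\pi_\alpha, \pi_\alpha) = 2$; $\Ext^1_{\qcat(k)}(T_\alpha, T_{\Eins})$ will reduce to $\Ext^1_G(\Indu{P}{G}{\Eins}, \pi_\alpha)$, which the long exact sequence of $\Hom(-, \pi_\alpha)$ applied to $0 \to \Eins \to \Indu{P}{G}{\Eins} \to \Sp \to 0$ identifies with $\Ext^1_G(\Sp, \pi_\alpha) = 1$; and $\Ext^1_{\qcat(k)}(T_{\Eins}, T_{\Eins})$ will reduce by a parallel calculation to $\Ext^1_G(\Indu{P}{G}{\Eins}, \Sp) = 2$.

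The main obstacle will be $\Ext^1_{\qcat(k)}(T_{\Eins}, T_\alpha)$, where the $G$-invariants of $J_\Sp/\Sp$ are nontrivial, so $I_G(\Omega_{\Eins}) \subsetneq \Omega_{\Eins}$ and one cannot just replace the quotient Hom by a Hom in $\dualcat(k)$. The long exact sequence of $\Hom(\Eins, -)$ applied to $0 \to \Sp \to J_\Sp \to J_\Sp/\Sp \to 0$ yields $(J_\Sp/\Sp)^G \cong \Eins^{\oplus 2}$ via $e^1(\Eins, \Sp) = 2$. Writing $V = (J_\Sp/\Sp)^G$, the computation of $\Hom_{\qcat(k)}(\TT \Omega_{\Eins}, T_\alpha) = \Hom_G(\pi_\alpha, (J_\Sp/\Sp)/V)$ will then use the long exact sequence applied to $0 \to V \to J_\Sp/\Sp \to (J_\Sp/\Sp)/V \to 0$ together with $\Ext^1(\pi_\alpha, \Eins) = 1$ and the vanishing $\Ext^1(\pi_\alpha, J_\Sp/\Sp) = \Ext^2(\pi_\alpha, \Sp) = 0$ to produce $2$ as required. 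Once the interplay between the functor $\TT$ and the $I_G$-operation is handled cleanly in this case, the lemma will follow.
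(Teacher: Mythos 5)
Your proof is correct but takes a genuinely different route from the paper, and it is worth comparing the two. The paper does not work with $\Sp$ at all: instead it uses the identity $T_{\Eins}=\TT\tau_1^\vee$ (where $\tau_1$ is the representation constructed in \eqref{defipi1}) and the minimal injective resolutions $0\to\tau_1\to J_{\Sp}\to J_{\Sp}^{\oplus 2}\oplus J_{\pi_\alpha}^{\oplus 2}$ and $0\to\pi_\alpha\to J_{\pi_\alpha}\to J_{\Sp}\oplus J_{\pi_\alpha}^{\oplus 2}$. The point of replacing $\Sp$ by $\tau_1$ is that $\Hom_G(\Eins,\tau_1)=\Ext^1_G(\Eins,\tau_1)=0$ by Lemma \ref{ext1tau1}, so the cokernel $\kappa=J_{\Sp}/\tau_1$ has no trivial subrepresentation and Lemma \ref{essQess} applies directly: the dualized, $\TT$-ified surjection $\TT J^\vee\twoheadrightarrow\TT\kappa^\vee$ is essential, whence $\Hom_{\qcat(k)}(\TT\kappa^\vee,\TT\pi^\vee)=\Hom_{\qcat(k)}(\TT J^\vee,\TT\pi^\vee)$ for $\pi\in\{\Sp,\pi_\alpha\}$ and the four dimensions are read off as the multiplicities of $J_{\Sp}$ and $J_{\pi_\alpha}$ in the second injective step. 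You represent $T_{\Eins}$ as $\TT\Sp^\vee$ instead, use only a one-step projective cover, and then unwind the quotient-category $\Hom$ via Lemma \ref{HomQexp} (or \ref{homT1NG}) together with long exact sequences in $\Mod^{\mathrm{ladm}}_{G/Z}(k)$. This works, but precisely because $\Ext^1_G(\Eins,\Sp)\neq 0$ you hit the complication you flag in the fourth case, where $(J_{\Sp}/\Sp)^G\neq 0$ and $I_G(\Omega_{\Eins})\subsetneq\Omega_{\Eins}$, requiring an extra long exact sequence. The paper's choice of $\tau_1$ is engineered to kill that obstruction at the source; what you pay in extra bookkeeping is roughly what the paper saves by that device.

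Two small omissions in your write-up that should be filled in: (a) in the cases $\Ext^1_{\qcat(k)}(T_\alpha,T_\alpha)$ and $\Ext^1_{\qcat(k)}(T_{\Eins},T_{\Eins})$ the term $\Hom_{\qcat(k)}(\TT P, T_j)$ is one-dimensional, and you implicitly use that its image in $\Hom_{\qcat(k)}(\TT\Omega,T_j)$ is zero; this is true because $\TT\Omega=\rad(\TT P)$ and the generator of $\Hom_{\qcat(k)}(\TT P,T_j)$ is the projection onto the cosocle, but it should be said. (b) In the computation of $\Hom_G(\pi_\alpha,(J_{\Sp}/\Sp)/V)$ you cite the vanishing of $\Ext^1_G(\pi_\alpha,J_{\Sp}/\Sp)\cong\Ext^2_G(\pi_\alpha,\Sp)$ for surjectivity onto $\Ext^1_G(\pi_\alpha,V)$, but you also need $\Hom_G(\pi_\alpha,J_{\Sp}/\Sp)\cong\Ext^1_G(\pi_\alpha,\Sp)=0$ for injectivity; this holds by the table in \S\ref{hextII} but is not mentioned. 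With these two observations made explicit, your argument is complete.
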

\begin{proof} Let $J_{\Sp}$, $J_{\pi_{\alpha}}$ be injective envelopes of $\Sp$ and $\pi_{\alpha}:= \Indu{P}{G}{\alpha}$ in 
$\Mod^{\mathrm{l adm}}_{G/Z}(k)$. It follows from Lemma \ref{ext1tau1} that we have an exact sequence:
\begin{equation}\label{resolvetau1}
0\rightarrow \tau_1\rightarrow J_{\Sp}\rightarrow J_{\Sp}^{\oplus 2} \oplus J_{\pi_{\alpha}}^{\oplus 2}
\end{equation}
Moreover, if we let $\kappa$ be the cokernel of the second arrow then the monomorphism 
$\kappa\hookrightarrow J_{\Sp}^{\oplus 2} \oplus J_{\pi_{\alpha}}^{\oplus 2}$ induced by the third arrow is  essential. Let $\pi$ be $\Sp$ or 
$\pi_{\alpha}$ then we know from Lemmas \ref{projQproj} and \ref{essQess} that $\TT J_{\pi}^{\vee}$ is a projective envelope of $\TT \pi^{\vee}$
in $\qcat(k)$. By dualizing \eqref{resolvetau1}, applying $\TT$ and then $\Hom_{\qcat(k)}(\ast, \TT \pi^{\vee})$ we obtain 
$$\Ext^1_{\qcat(k)}(T_{\Eins}, \TT \pi^{\vee})\cong \Hom_{\qcat(k)}(\TT \kappa^{\vee}, \TT \pi^{\vee})\cong 
\Hom_{\qcat(k)}( \TT J^{\vee}, \TT \pi^{\vee}),$$ 
where $J=J_{\Sp}^{\oplus 2 }\oplus  J_{\pi_{\alpha}}^{\oplus 2}$. The last isomorphism follows from the fact that $\TT \pi^{\vee}$ is irreducible, 
and $\TT J^{\vee}\twoheadrightarrow \TT \kappa^{\vee}$ is essential by Lemma \ref{essQess}. Hence $\Ext^1_{\qcat(k)}(T_{\Eins}, T_{\Eins})$ 
and $\Ext^1_{\qcat(k)}(T_{\Eins}, T_{\alpha})$ are $2$-di\-men\-sio\-nal. To calculate dimensions of $\Ext^1_{\qcat(k)}(T_{\alpha}, T_{\alpha})$ and 
$\Ext^1_{\qcat(k)}(T_{\alpha}, T_{\Eins})$ the same argument may be applied to \eqref{resalphaNG}.
\end{proof}
The functor $\cV: \dualcat(\OO)\rightarrow \Rep_{\gal}(\OO)$ kills the trivial representation and hence every object in $\mathfrak T (\OO)$. It 
follows from Corollaire 2 in \cite[\S III.1]{gab} that $\cV$ factors through $\TT: \dualcat(\OO) \rightarrow \qcat(\OO)$. We denote 
$\cV: \qcat(\OO)\rightarrow   \Rep_{\gal}(\OO)$ by the same letter. We have 
\begin{equation}\label{cVT}
 \cV(T_{\Eins})\cong \VV(\Sp)^{\vee}(\varepsilon)\cong \Eins, \quad \cV(T_{\alpha})\cong \VV(\Indu{P}{G}{\alpha})^{\vee}(\varepsilon)\cong \omega.
\end{equation}

\begin{lem}\label{inithyp} The functor $\cV$ induces an injection 
$$ \cV: \Ext^1_{\qcat(\OO)}(S_1, S_2)\hookrightarrow \Ext^1_{\OO[\gal]}(\cV(S_1), \cV(S_2)),$$
for $S_1,S_2\in \{ T_{\Eins}, T_{\alpha}\}$.
\end{lem}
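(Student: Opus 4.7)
\emph{Proof plan.} First I would reduce to coefficients in $k$. Since both $S_1$ and $S_2$ are killed by $\varpi$, an argument analogous to Lemma \ref{lemok0} shows $\Ext^1_{\qcat(\OO)}(S_1,S_2)\cong \Ext^1_{\qcat(k)}(S_1,S_2)$, and since $\cV S_i$ is a $k$-representation of $\gal$, the same holds on the Galois side. The dimensions of the four source groups are $2,2,2,1$ by Lemma \ref{dimextgrQ}. A local Tate duality computation, using $\cV T_{\Eins}\cong \Eins$, $\cV T_{\alpha}\cong \omega$ and the assumption $p\ge 5$ (so that $\omega^{2}\ne \Eins$), gives the same dimensions $2,2,2,1$ for $\Ext^1_{\gal}(\cV S_1,\cV S_2)$. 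Therefore injectivity of $\cV$ on $\Ext^1$ is equivalent to bijectivity, which one can hope to establish by exhibiting linearly independent classes that map to linearly independent classes.

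The natural map on $\Ext^1$ is described as follows: by Corollaire 1 in \cite[\S III.1]{gab}, every class $[\epsilon]\in \Ext^1_{\qcat(k)}(S_1,S_2)$ is the $\TT$-image of an honest short exact sequence $0\to M_2\to M\to M_1\to 0$ in $\dualcat(k)$ with $\TT M_i\cong S_i$; since $\mathfrak T(k)\subset \Ker \cV$, applying the exact functor $\cV$ produces a well-defined class in $\Ext^1_{\gal}(\cV S_1,\cV S_2)$. The plan is to choose, for each pair, convenient lifts $M_i$ among $\{\Sp^{\vee},(\Indu{P}{G}{\Eins})^{\vee},\pi_{\alpha}^{\vee}\}$ and $\tau_2^{\vee}$, $\kappa^{\vee}$, and then reduce to Colmez's injectivity result \cite[VII.5.2]{colmez} for $\dualcat$. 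For the two diagonal pairs $(T_{\alpha},T_{\alpha})$ and $(T_{\Eins},T_{\Eins})$, taking $M_1=M_2=\pi_{\alpha}^{\vee}$ (respectively $M_1=M_2=\Sp^{\vee}$) realises the full $2$-dimensional $\Ext^1$ in $\qcat$ as $\Ext^1_{\dualcat(k)}(\pi_\alpha^\vee,\pi_\alpha^\vee)$ (resp. $\Ext^1_{\dualcat(k)}(\Sp^\vee,\Sp^\vee)$), as one sees from the tables of $e^i$ in \S\ref{hextII}, and the claim reduces to Colmez's injectivity on those groups.

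The off-diagonal pairs are the main obstacle. For $(T_{\alpha},T_{\Eins})$ the situation is clean: the extension $0\to T_{\Eins}\to \TT\tau_{2}^{\vee}\to T_{\alpha}\to 0$ coming from \eqref{defipi2} realises a generator of the $1$-dimensional $\Ext^{1}_{\qcat}(T_\alpha,T_\Eins)$, and $\cV(\tau_2)$ contains the non-split Galois extension $0\to \Eins\to\rho\to\omega\to 0$ as a subquotient by construction of $\tau_{2}$ from $\pi_\alpha$, so $\cV$ of this class is non-zero. For $(T_{\Eins},T_{\alpha})$ no single naive lift to $\dualcat$ yields a $2$-dimensional family; the hard part will be to produce two linearly independent extensions in $\qcat$ and verify that their $\cV$-images span $\Ext^{1}_{\gal}(\Eins,\omega)$. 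Here I would combine an extension coming from a lift $0\to \pi_\alpha^\vee\to M\to (\Indu{P}{G}{\Eins})^{\vee}\to 0$ in $\dualcat$ (the $1$-dimensional contribution, already injective into the Galois side by Colmez) with an extension constructed from $\kappa^\vee$ (using the sequence \eqref{defikappa}), which in $\qcat$ provides a class not visible in $\dualcat$ between the canonical lifts but whose $\cV$-image is the non-split Galois extension $0\to \omega\to \cV\kappa^\vee\to \Eins\to 0$ implicit in Lemma \ref{RIikappa}. Checking that these two classes are linearly independent and map to a basis of $\Ext^1_{\gal}(\Eins,\omega)$—using the dimension match and the known values of $\cV$ on the constituents—concludes the argument.
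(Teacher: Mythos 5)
Your overall strategy---reduce to $k$-coefficients, match dimensions on both sides, and then exhibit explicit classes whose $\cV$-images are independent, using Colmez's computations---is indeed the shape of the paper's proof, but several of your concrete steps fail. First, the claimed isomorphism $\Ext^1_{\qcat(\OO)}(S_1,S_2)\cong\Ext^1_{\qcat(k)}(S_1,S_2)$ is false in the diagonal cases: the analogue of Lemma \ref{lemok0} only gives an exact sequence whose third term $\Hom_{\qcat(k)}(S_1,S_2)$ is nonzero when $S_1\cong S_2$, and e.g.\ the dual of the Steinberg representation over $\OO/\varpi^2$ gives a self-extension of $T_{\Eins}$ in $\qcat(\OO)$ not killed by $\varpi$. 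The correct reduction is different: if $\cV(E)$ splits then it is killed by $\varpi$, and since $\cV$ is exact and $\cV(S_1),\cV(S_2)\neq 0$, the extension $E$ itself is killed by $\varpi$, so one only needs injectivity over $k$. Second, your treatment of the diagonal case $S_1=S_2=T_{\Eins}$ cannot work: by the table in \S\ref{hextII} one has $\Ext^1_{G/Z}(\Sp,\Sp)=0$, so self-extensions of $\Sp^{\vee}$ in $\dualcat(k)$ see none of the two-dimensional $\Ext^1_{\qcat(k)}(T_{\Eins},T_{\Eins})$; these classes only exist after passing to the quotient category, and they are produced as $\TT$ of the duals of $\Indu{P}{G}{(Y_{\tau}\otimes\chi)}$ with $Y_{\tau}$ a self-extension of the trivial character of $T$, whose images under $\VV$ are identified by \cite[VII.4.14]{colmez} (not by \cite[VII.5.2]{colmez}).

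The off-diagonal cases also have gaps. For $\Ext^1_{\qcat(k)}(T_{\Eins},T_{\alpha})$ your second class built from $\kappa$ does not exist: from \eqref{defikappa} one gets $\TT\kappa^{\vee}\cong T_{\alpha}$ (the trivial subobject is killed by $\TT$) and $\cV(\kappa^{\vee})\cong\omega$ is one-dimensional, so $\kappa$ yields neither a class in $\Ext^1_{\qcat(k)}(T_{\Eins},T_{\alpha})$ nor a nonsplit Galois extension, and Lemma \ref{RIikappa} is about $\RR^{\bullet}\II(\kappa)$, not about any Galois extension. What is actually used are Colmez's extensions $E_{\tau}$ of $\Eins$ by $\Sp$ (\cite[VII.4.19]{colmez}) together with the nonsplit extensions $0\rightarrow E_{\tau}\rightarrow\Pi\rightarrow\Indu{P}{G}{\alpha}\rightarrow 0$, for which \cite[VII.4.25]{colmez} shows the resulting classes span the two-dimensional $\Ext^1_{k[\gal]}(\Eins,\omega)$; since $\TT E_{\tau}^{\vee}\cong T_{\Eins}$ these give the required surjectivity. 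Finally, for the one-dimensional $\Ext^1_{\qcat(k)}(T_{\alpha},T_{\Eins})$ you cannot assert that $\cV$ of the class of $\TT\tau_2^{\vee}$ is nonzero ``by construction'': $\tau_2$ is defined purely on the automorphic side, and the nonsplitness of $0\rightarrow\Eins\rightarrow\cV(\tau_2^{\vee})\rightarrow\omega\rightarrow 0$ (Corollary \ref{equivqbgal}) is deduced in the paper from the equivalence of categories, which itself rests on the present lemma---so as stated your argument is circular. An independent input is needed: the paper takes the nonsplit extension $0\rightarrow\Indu{P}{G}{\alpha}\rightarrow\Pi\rightarrow\Sp\rightarrow 0$, applies $\Ord_P$ to show the $U$-coinvariants of $\Pi$ vanish, and then uses \cite[VII.1.8]{colmez} to see that $\VV(\Pi)$ has one-dimensional $\Gal(\Qpbar/\Qp^{ab})$-invariants, hence is nonsplit.
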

\begin{proof} We interpret $\Ext^1$ as Yoneda $\Ext$ and the extension  $0\rightarrow S_2\rightarrow E\rightarrow S_1\rightarrow 0$
is mapped to $0\rightarrow \cV(S_2)\rightarrow \cV(E)\rightarrow \cV(S_1)\rightarrow 0$. If this extension splits, then 
$\cV(E)\cong \cV(S_1)\oplus \cV(S_2)$ is killed by $\varpi$. Since $\cV(S_1)$ and $\cV(S_2)$ are non-zero by \eqref{cVT} the exactness of $\cV$ implies 
that $E$ is killed by $\varpi$. Thus it is enough to show that $\cV$ induces an injection
$$\Ext^1_{\qcat(k)}(S_1, S_2)\hookrightarrow \Ext^1_{k[\gal]}(\cV(S_1), \cV(S_2)).$$
This assertion follows from the work of Colmez. We first treat the case $S_1\cong S_2$. Let $\chi:T/Z\rightarrow k^{\times}$ be 
a smooth character.
Since  $T/Z\cong \Qp^{\times}$ the space $\Ext^1_{T/Z}(\Eins, \Eins)\cong \Hom(\Qp^{\times}, k)$ is $2$-di\-men\-sio\-nal. Fix  $\tau\in \Hom(\Qp^{\times}, k)$
and let $Y_{\tau}$ be the corresponding extension of $\Eins$ by itself. Since parabolic induction is exact we have an exact sequence 
$$0\rightarrow \Indu{P}{G}{\chi}\rightarrow \Indu{P}{G}{Y_{\tau}\otimes \chi} \rightarrow \Indu{P}{G}{\chi}\rightarrow 0.$$
We denote $\pi_{\chi}:= \Indu{P}{G}{\chi}$. Since $\chi$ is trivial on $Z$ we may write it as $\chi=\chi_1^{-1}\otimes \chi_1$,  then 
$\VV(\pi_{\chi})\cong \chi_1 \omega$. It is shown in the proof of \cite[VII.4.14]{colmez} that the composition of  
$$\Hom(\Qp^{\times}, k)\rightarrow \Ext^1_{G/Z}(\pi_{\chi}, \pi_{\chi})\overset{\VV}{\rightarrow} \Ext^1_{k[\gal]}(\chi_1\omega, \chi_1\omega)\cong 
\Hom(\Qp^{\times}, k)$$ 
is the identity map. Using the anti-equivalence of categories, we obtain a surjection 
$$\Ext^1_{\dualcat(k)}(\pi_{\chi}^{\vee}, \pi_{\chi}^{\vee})\overset{\cV}{\twoheadrightarrow} 
\Ext^1_{k[\gal]}(\chi_1^{-1}, \chi_1^{-1})\cong  \Hom(\Qp^{\times}, k).$$ 
Since $\cV$ factors through $\TT$, we obtain a surjection 
$$\Ext^1_{\qcat(k)}(\TT \pi_{\chi}^{\vee}, \TT \pi_{\chi}^{\vee})\overset{\cV}{\twoheadrightarrow} 
\Ext^1_{k[\gal]}(\chi_1^{-1}, \chi_1^{-1})\cong\Hom(\Qp^{\times}, k).$$
When $\chi=\Eins$ or $\chi=\alpha$ we know by Lemma \ref{dimextgrQ} that the source is $2$-di\-men\-sio\-nal. Since the target is $2$-di\-men\-sio\-nal, the map
is an isomorphism. We deal with the case $S_1\not\cong S_2$ similarly. 

We claim that the map $\cV: \Ext^1_{\qcat(k)}(T_{\Eins}, T_{\alpha})\rightarrow \Ext^1_{k[\gal]}(\Eins, \omega)$ is surjective. 
For every non-zero smooth homomorphism  $\tau: \Qp^{\times}\rightarrow k$, 
Colmez constructs an extension $0\rightarrow \Sp\rightarrow E_\tau\rightarrow \Eins\rightarrow 0$, see \cite[VII.4.19]{colmez}, and shows 
that $\Ext^1_{G/Z}(\Indu{P}{G}{\alpha}, E_{\tau})$ is $1$-di\-men\-sio\-nal, \cite[VII.4.26]{colmez}, see also Lemma \ref{helpkis1}.
If we let $\epsilon_{\tau}$ 
 be a non-split extension $0\rightarrow E_{\tau}\rightarrow \Pi\rightarrow \Indu{P}{G}{\alpha}\rightarrow 0$, then $\VV(\epsilon_{\tau})$ 
defines an element of $\Ext^1_{k[\gal]}(\Eins, \omega)$.
It follows from \cite[VII.4.25]{colmez}, that the $\VV(\epsilon_{\tau})$ for different $\tau$ span the $2$-di\-men\-sio\-nal space  
$\Ext^1_{k[\gal]}(\Eins, \omega)$. Since $\TT E_{\tau}^{\vee}\cong T_{\Eins}$ we get our claim by applying $\cV$ to 
the extension $0\rightarrow T_{\alpha} \rightarrow \TT \Pi^{\vee}\rightarrow  \TT E_{\tau}^{\vee}\rightarrow 0$. 
Since $\Ext^1_{\qcat(k)}(T_{\Eins}, T_{\alpha})$ is $2$-di\-men\-sio\-nal by Lemma \ref{dimextgrQ} we deduce that $\cV$ induces an isomorphism. 

Finally, since $\Ext^1_{\qcat(k)}(T_{\alpha}, T_{\Eins})$ is $1$-di\-men\-sio\-nal, it is enough to produce an extension 
$0\rightarrow \Indu{P}{G}{\alpha}\rightarrow \Pi\rightarrow \Sp\rightarrow 0$, such that $0\rightarrow \Eins\rightarrow \VV(\Pi)\rightarrow 
\omega\rightarrow 0$ is non-split. We know that $\Ext^1_{G/Z}(\Sp, \Indu{P}{G}{\alpha})$ is $1$-di\-men\-sio\-nal, see \cite[11.5 (ii)]{ext2}. 
Let $0\rightarrow \Indu{P}{G}{\alpha}\rightarrow \Pi\rightarrow \Sp\rightarrow 0$ be a non-split extension. Applying $\Ord_P$ to 
it gives an isomorphism $\RR^1\Ord_P \Pi \cong \RR^1\Ord_P \Sp=0$. It follows from \cite[3.3.1]{ord2} that 
the space of $U$-coinvariants of $\Pi$ is zero. 
Since the space of $U$-coinvariants of $\Indu{P}{G}{\alpha}$ is $1$-di\-men\-sio\-nal, we deduce from \cite[VII.1.8]{colmez} 
that the space of $\Gal(\Qpbar/ \Qp^{ab})$-invariants of $\VV(\Pi)$ is $1$-di\-men\-sio\-nal, where $\Qp^{ab}$ is the maximal abelian extension 
of $\Qp$. Hence, $\VV(\Pi)$ can not be split.    
\end{proof}

Let $\BB=\{\Eins, \Sp^{\vee}, (\Indu{P}{G}{\alpha})^{\vee}\}$ be the block of the trivial representation. Let $\dualcat(\OO)^{\BB}$ be the full 
subcategory  of $\dualcat(\OO)$ consisting of all $M$ whose irreducible subquotients lie in $\BB$. It follows from \ref{blocks} that 
$\dualcat(\OO)^{\BB}$ is abelian and $\dualcat(\OO)\cong  \dualcat(\OO)^{\BB}\oplus \dualcat(\OO)_{\BB}$, where $\dualcat(\OO)_{\BB}$ is the full
subcategory of $\dualcat(\OO)$ consisting of those $M$ which no irreducible subquotient lies in $\BB$. Since $\mathfrak T(\OO)$ is contained 
in $\dualcat(\OO)^{\BB}$ we may build a quotient category $\qcat(\OO)^{\BB}:=\dualcat(\OO)^{\BB}/\mathfrak T(\OO)$ and 
we have an isomorphism of categories $\qcat(\OO)\cong \qcat(\OO)^{\BB}\oplus \dualcat(\OO)_{\BB}$.  Recall that $\Rep_{\gal}(\OO)$ is the category of continuous representations of $\gal$ on compact $\OO$-modules. Let $\Rep_{\gal}^{\BB}(\OO)$ be the full 
subcategory of $\Rep_{\gal}(\OO)$ with objects $\tau$  such that there exists $M$ in $\dualcat(\OO)^{\BB}$, such that $\tau\cong \cV(M)$. 

\begin{prop}\label{equivofcatsII} The functor $\cV$ induces an equivalence of categories between $\qcat(\OO)^{\BB}$ and $\Rep_{\gal}^{\BB}(\OO)$.
\end{prop}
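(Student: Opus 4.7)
Essential surjectivity of $\cV \colon \qcat^{\BB}(\OO) \to \Rep_{\gal}^{\BB}(\OO)$ is automatic from the definition of the target as the essential image, so the content of the proposition is full faithfulness. Following the pattern of Lemma \ref{samehoms}, the first step is a reduction to the case where $M$ and $N$ are of finite length: every object of $\qcat^{\BB}(\OO)$ is a projective limit of its finite-length quotients (which transfers from the analogous statement in $\dualcat^{\BB}(\OO)$ via the exactness of $\TT$), $\cV$ was defined to commute with such projective limits, and the two $\Hom$-functors convert surjective projective systems appropriately.

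The second step is an induction on $\ell(M) + \ell(N)$, proving simultaneously that (a) $\cV$ induces an isomorphism $\Hom_{\qcat^{\BB}(\OO)}(M, N) \cong \Hom_{\gal}(\cV M, \cV N)$, and (b) $\cV$ induces an injection $\Ext^1_{\qcat^{\BB}(\OO)}(M, N) \hookrightarrow \Ext^1_{\gal}(\cV M, \cV N)$, where $\Ext^1$ on the left is Yoneda $\Ext^1$ in the abelian category $\qcat^{\BB}(\OO)$. In the base case, both $M$ and $N$ are irreducible; every irreducible object of $\qcat^{\BB}(\OO)$ is killed by $\varpi$ and is the image under $\TT$ of an irreducible object of $\dualcat^{\BB}(k)$, and since $\TT \Eins = 0$ the only candidates are $T_{\Eins}$ and $T_{\alpha}$. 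Lemma \ref{HomQexp}, together with irreducibility of $\Sp^{\vee}$ and $(\Indu{P}{G}{\alpha})^{\vee}$ in $\dualcat(k)$, identifies the left-hand side of (a) with $k$ when $M \cong N$ and with $0$ otherwise, in agreement with the Galois side because $\cV T_{\Eins} = \Eins$ and $\cV T_{\alpha} = \omega$ are non-isomorphic characters (as $p \ge 5$). Assertion (b) for these irreducibles is exactly Lemma \ref{inithyp}.

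The induction step is a standard five-lemma-and-diagram-chase. Write one of the two objects as an extension $0 \to A \to N \to B \to 0$ of two strictly shorter ones (and do the symmetric thing when it is $M$ that is being decomposed). The six-term Yoneda $\Ext$-sequences on both sides of $\cV$, assembled into a commutative diagram, together with (a) and (b) for the pairs $(M, A)$ and $(M, B)$ furnished by induction, yield (a) for $(M, N)$ via the five-lemma. For (b), any class $\xi \in \Ext^1(M, N)$ with $\cV \xi = 0$ has vanishing image in $\Ext^1(M, B)$ by (b) applied to $(M, B)$, hence lifts to some $\bar \xi \in \Ext^1(M, A)$; the surjectivity part of (a) applied to $(M, B)$ lets one modify $\bar \xi$ by an element coming from $\Hom(M, B)$ so that its $\cV$-image in $\Ext^1(\cV M, \cV A)$ vanishes, and (b) for $(M, A)$ then forces $\bar \xi = 0$, so $\xi = 0$.

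\textbf{Main obstacle.} All the essential mathematical content sits in the base case, concretely in the injectivity statement of Lemma \ref{inithyp}, which imports the deep compatibility results of Colmez between $\cV$ and first-order deformations. Once that input is granted, the rest of the proof is purely formal devissage, structurally identical to Lemma \ref{samehoms} and to Lemma~A.1 of \cite{ext2}. A minor auxiliary technicality is the passage from the $\OO$-coefficient formulation to the $k$-coefficient statements really proved in Lemma \ref{inithyp}; this is handled by a small induction on the $\varpi$-length of $N$ using the exact sequence $0 \to N[\varpi] \to N \to \varpi N \to 0$ and exactness of $\cV$.
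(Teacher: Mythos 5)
Your argument is correct and follows essentially the same route as the paper: reduce to finite-length objects via projective limits (using that a morphism to a discrete finite-length target factors through a finite-length quotient), then induct on $\ell(M)+\ell(N)$, proving the Hom-bijection and the $\Ext^1$-injectivity simultaneously, with the base case supplied by Lemma \ref{inithyp} together with the elementary Hom computation for the two irreducibles $T_{\Eins}$, $T_{\alpha}$ (the paper itself delegates the induction step to the proof of Lemma A.1 of \cite{ext2}, which is exactly the d\'evissage you spell out). The only superfluous point is your closing remark about passing from $k$- to $\OO$-coefficients: Lemma \ref{inithyp} is already stated and proved over $\OO$, so no extra induction on the $\varpi$-length is needed.
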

\begin{proof} We note that since $\qcat(\OO)^{\BB}$ is a direct summand of $\qcat(\OO)$, for every object $M$ of
$\qcat(\OO)^{\BB}$ a projective envelope of $M$ in $\qcat(\OO)$ lies in $\qcat(\OO)^{\BB}$. This implies that 
if $M$ and $N$ are objects of $\qcat(\OO)^{\BB}$ then $\Ext^i_{\qcat(\OO)^{\BB}}(M, N)=\Ext^i_{\qcat(\OO)}(M, N)$ for all $i\ge 0$. 
It is enough to show that for $M$ and $N$ objects of $\qcat(\OO)^{\BB}$, $\cV$ induces a bijection
\begin{equation}\label{bijhom}
\Hom_{\qcat(\OO)}(M, N)\overset{\cong}{\rightarrow} \Hom_{\OO[\gal]}(\cV(M), \cV(N)),
\end{equation}   
where $\Hom_{\OO[\gal]}$ means morphisms in the category $\Rep_{\gal}(\OO)$. 
We may write $M\cong \underset{\longleftarrow}{\lim}\, M_i$ and 
$N\cong \underset{\longleftarrow}{\lim}\, N_j$, where the limit is taken over all the quotients of finite length. Then 
$\cV(M)\cong \underset{\longleftarrow}{\lim}\, \cV(M_i)$ and  $\cV(N)\cong \underset{\longleftarrow}{\lim}\, \cV(N_j)$, 
where $\cV(M_i)$ and $\cV(N_j)$ 
are of finite length. Now 
\begin{equation}\label{firstreduct}
\Hom_{\OO[\gal]}(\cV(M), \cV(N))\cong \underset{\longleftarrow}{\lim}\, \Hom_{\OO[\gal]}(\cV(M), \cV(N_j)).
\end{equation}
The kernels of $\cV(M)\rightarrow \cV(M_i)$ form a basis of open neighbourhoods of $0$ in $\cV(M)$. Since $\cV(N_j)$ is 
of finite length it  carries the discrete topology and hence  
every $\phi: \cV(M)\rightarrow \cV(N_j)$ in $\Rep_{\gal}(\OO)$ factors through $\cV(M_i)\rightarrow \cV(N_j)$ for some $i$. We 
obtain: 
\begin{equation}\label{secondreduct}
\Hom_{\OO[\gal]}(\cV(M), \cV(N_j))\cong \underset{\longrightarrow}{\lim}\, \Hom_{\OO[\gal]}(\cV(M_i), \cV(N_j)).
\end{equation}
Since \eqref{firstreduct} and \eqref{secondreduct} also hold for $M$ and $N$ in $\qcat(\OO)$, it is enough to verify \eqref{bijhom}
when $M$ and $N$ are of finite length. 

One may show that \eqref{bijhom} holds, when $M$ and $N$ are of finite length, by proving a stronger statement: 
\eqref{bijhom} holds and $\cV$ induces an injection 
\begin{equation}\label{injext}
\Ext^1_{\qcat(\OO)}(M, N)\hookrightarrow \Ext^1_{\OO[\gal]}(\cV(M), \cV(N)).
\end{equation}
The proof is  by induction on $\ell(M)+\ell(N)$, where $\ell$ denotes the number 
of irreducible subquotients, see the proof of 
Lemma A.1 in \cite{ext2}. Since the only irreducible objects in $\qcat(\OO)^{\BB}$ are $T_{\alpha}$ and $T_{\Eins}$ the initial induction step 
follows from Lemma \ref{inithyp}.
\end{proof} 

\begin{cor}\label{RepBab} The category $\Rep_{\gal}^{\BB}(\OO)$ is abelian.
\end{cor}

\begin{lem}\label{factor-loc-p}  If $M$ is an object of $\qcat(\OO)^{\BB}$ then the action of $\gal$ on $\cV(M)$ factors through 
$\Gal(F(p)|\Qp)$, where $F=\Qp(\mu_p)$ and $F(p)$ denotes the maximal pro-$p$ extension of $F$.
\end{lem}
\begin{proof} If $M$ is irreducible then $M\cong T_{\Eins}$ or $M\cong T_{\alpha}$, and it follows from \eqref{cVT} that  $\Gal(\Qpbar | F)$ acts trivially on $\cV(M)$.
If $M$ is of finite length then the cosocle filtration on $M$ induces a filtration of $\cV(M)$ such that $\Gal(\Qpbar | F)$ acts trivially on the graded pieces. 
This implies that the image of $\Gal(\Qpbar | F)$ in $\Aut_{\OO}(\cV(M))$ is a $p$-group, and hence $\Gal(\Qpbar | F(p))$ acts trivially on $\cV(M)$. 
The general case may be deduced form this by taking projective limits, as in the proof of Proposition \ref{equivofcatsII}.
\end{proof}

\begin{remar}\label{comment29} Lemma \ref{factor-loc-p} allows us to consider $\Rep_{\gal}^{\BB}(\OO)$ as the full subcategory of 
of $\Mod^{\mathrm{pro\, aug}}_{\Gal(F(p)| \Qp)}(\OO)$. Since $\Gal(F(p)| F)$ is an open pro-$p$ subgroup of $\Gal(F(p)| \Qp)$, this enables us to 
to apply the results of \S \ref{firstsec} with $\dualcat= \Rep_{\gal}^{\BB}(\OO)$. 
\end{remar}

Let $\wP\twoheadrightarrow (\Indu{P}{G}{\alpha})^{\vee}$ be a projective envelope of $(\Indu{P}{G}{\alpha})^{\vee}$ in $\dualcat(\OO)$. 
Then $\TT \wP\twoheadrightarrow T_{\alpha}$ is a projective envelope of $T_{\alpha}$ in $\qcat(\OO)$ by Lemma \ref{projQproj} and 
hence $\cV(\wP)\twoheadrightarrow \omega$ is a projective envelope of $\omega$ in $\Rep_{\gal}^{\BB}(\OO)$.  Let 
$$\wE:=\End_{\dualcat(\OO)}(\wP)\overset{\TT}{\cong} \End_{\qcat(\OO)}(\TT\wP)\overset{\cV}{\cong}\End_{\OO[\gal]}^{cont}(\cV(\wP)),$$
where the first isomorphism is given by Lemma \ref{projQproj} and the second by Proposition \ref{equivofcatsII}.

\begin{cor}\label{equivqbgal} The hypotheses (H0)-(H5) hold in $\Rep_{\gal}^{\BB}(\OO)$
with $S=\cV(T_{\alpha})\cong \omega$ and 
$Q=\cV(\TT \tau_2^{\vee})\cong \cV(\tau_2^{\vee})$, which is uniquely determined  up to isomorphism by the  non-split extension 
\begin{equation}\label{karakiri}
 0 \rightarrow \Eins\rightarrow \cV(\tau_2^{\vee})\rightarrow \omega\rightarrow 0.
\end{equation}
 \end{cor}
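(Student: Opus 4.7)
The plan is to transport all the structure from the quotient category $\qcat^{\BB}(\OO)$ to $\Rep_{\gal}^{\BB}(\OO)$ via the equivalence established in Proposition \ref{equivofcatsII}. First, I would show that $\Rep_{\gal}^{\BB}(\OO)$ is abelian: since $\qcat(\OO) = \dualcat(\OO)/\mathfrak T(\OO)$ is the quotient of an abelian category by a thick subcategory and hence abelian, and since $\qcat^{\BB}(\OO)$ is a direct summand of $\qcat(\OO)$, it too is abelian. The equivalence of Proposition \ref{equivofcatsII} then transports this structure to $\Rep_{\gal}^{\BB}(\OO)$.

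Next, for the hypotheses (H1)--(H5), I would use the fact that an equivalence of abelian categories preserves all $\Hom$ and $\Ext$ groups, irreducibility, and finite length. In the proof of the proposition preceding Lemma \ref{dimextgrQ} the hypotheses (H1)--(H5) were already verified in $\qcat(k)=\qcat^{\BB}(k)$ with $S=T_{\alpha}$ and $Q=\TT\tau_2^{\vee}$. Under the equivalence induced by $\cV$, the irreducible objects $T_{\alpha}$ and $T_{\Eins}$ correspond by \eqref{cVT} to $\omega$ and $\Eins$ respectively, so $\cV(T_\alpha)\cong\omega$ is again irreducible with endomorphism ring $k$, and all the finiteness and vanishing statements constituting (H1)--(H5) transfer directly to $\Rep_{\gal}^{\BB}(k)$ with $S=\omega$ and $Q=\cV(\TT\tau_2^{\vee})$.

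For (H0), I would argue that a projective envelope $\wP$ of $\omega$ in $\Rep_{\gal}^{\BB}(\OO)$ is given (via the equivalence) by $\cV(\TT P)$, where $P$ is a projective envelope of $(\Indu{P}{G}{\alpha})^{\vee}$ in $\dualcat(\OO)$. By Corollary \ref{projaretfree}, $P$ is $\OO$-torsion free, so $P[\varpi]=0$, whence $\TT P[\varpi]=\TT(P[\varpi])=0$ by exactness of $\TT$, and therefore $\wP[\varpi]=\cV(\TT P[\varpi])=0$. Thus (H0) holds trivially.

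Finally, to identify $\cV(\tau_2^{\vee})$, I would dualize the defining sequence \eqref{defipi2} for $\tau_2$ and apply $\TT$ to obtain the short exact sequence $0\rightarrow T_{\Eins}\rightarrow \TT\tau_2^{\vee}\rightarrow T_{\alpha}\rightarrow 0$ in $\qcat^{\BB}(k)$, which is non-split by (H2). Applying the exact functor $\cV$ (which factors through $\TT$, so $\cV(\TT\tau_2^\vee)\cong\cV(\tau_2^\vee)$) and using \eqref{cVT} yields the desired extension \eqref{karakiri}, non-split because the equivalence of categories preserves non-splitness. Uniqueness up to isomorphism follows from Lemma \ref{dimextgrQ}, which shows $\dim \Ext^1_{\qcat^{\BB}(k)}(T_{\alpha},T_{\Eins})=1$, hence $\dim \Ext^1_{\Rep_{\gal}^{\BB}(k)}(\omega,\Eins)=1$ via the equivalence. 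The only subtle point, and thus the main thing to double-check, is that the equivalence of categories really does preserve projective envelopes (it does, by the universal property) so that (H0)--(H5), which reference a projective envelope of $S$, transfer cleanly; beyond this verification the result is essentially formal.
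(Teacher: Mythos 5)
Your proposal is correct and coincides with the paper's (implicit) argument: the corollary is stated there without a separate proof, being an immediate consequence of Proposition \ref{equivofcatsII} together with the earlier verification of (H1)--(H5) in $\qcat(k)$ with $S=T_{\alpha}$, $Q=\TT\tau_2^{\vee}$, and the observations (used right after the corollary) that $\TT\wP$, for $\wP$ a projective envelope of $(\Indu{P}{G}{\alpha})^{\vee}$ in $\dualcat(\OO)$, is a projective envelope of $T_{\alpha}$ in $\qcat(\OO)$ and is $\OO$-torsion free, so (H0) is automatic. The only slip is your citation of (H2) for the non-splitness of $0\to T_{\Eins}\to\TT\tau_2^{\vee}\to T_{\alpha}\to 0$: that follows from (H1) (a splitting would give a nonzero map $\TT\tau_2^{\vee}\to T_{\Eins}$, i.e.\ $\Hom_G(\Indu{P}{G}{\Eins},\tau_2)\neq 0$, contradicting Corollary \ref{Indu1pi2}), not from the multiplicity statement (H2).
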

\begin{proof}  Since $\wP$  is $\OO$-torsion free by Corollary \ref{projaretfree} and $\cV$ is exact and $\OO$-linear, we deduce that the 
sequence $0\rightarrow \cV(\wP)\overset{\varpi}{\rightarrow} \cV(\wP)\rightarrow \cV(\wP/\varpi \wP)\rightarrow 0$ is exact. 
Hence, $\cV(\wP)$ is $\OO$-torsion free and so (H0) holds in $\Rep_{\gal}^{\BB}(\OO)$. 
The equivalence of categories established in Proposition \ref{equivofcatsII} and the $\Ext$- calculations  made 
in Proposition \ref{Qhyp} show that (H1)-(H5) hold in $\Rep_{\gal}^{\BB}(k)$, and hence in $\Rep_{\gal}^{\BB}(\OO)$ by Proposition \ref{H00}.
\end{proof}

\begin{cor}\label{NGIIEflat} The functor $\md\mapsto \md \wtimes_{\wE} \cV(\wP)$ is exact.
\end{cor}
\begin{proof} Since the hypotheses are satisfied by Corollary \ref{equivqbgal}, the assertion follows from the Corollary \ref{PisEflat}.
\end{proof}

\begin{lem}\label{Tcomwtimes} For a compact right $\wE$-module $\md$, we let  $\md \wtimes_{\wE} \TT \wP$ be an object of $\qcat(\OO)^{\BB}$ corresponding to $\md\wtimes_{\wE} \cV(\wP)$ under the equivalence of categories induced by $\cV$, 
see Proposition \ref{equivofcatsII}.
Then $\TT(\md \wtimes_{\wE} \wP)\cong \md \wtimes_{\wE} \TT \wP$.
\end{lem}
\begin{proof} This follows from Lemma \ref{VT2} and the fact that $\cV$ factors through $\TT$.
\end{proof} 

\begin{cor} $\SL_2(\Qp)$ acts trivially on $\wTor^1_{\wE}(k, \wP)$.
\end{cor}
\begin{proof} It follows from Corollary \ref{NGIIEflat} and Lemma \ref{Tcomwtimes} that $\TT(\wTor^1_{\wE}(k, \wP))=0$, which implies the assertion.
\end{proof}

\begin{defi}\label{Rpsi}
Let $R$ be the universal deformation ring of $\cV(\tau_2^{\vee})$ and let $R^{\psi}$ be the deformation ring parameterizing deformations of 
$\cV(\tau_2^{\vee})$ with determinant 
equal to  the cyclotomic character. Here we consider the usual deformations with commutative coefficients. 
\end{defi}
In the appendix \S \ref{someDef} we have recalled a construction  of an explicit presentation of $R$ and $R^{\psi}$ due to  B\"ockle, \cite{bockle}.

\begin{prop}\label{definephi} The functor $\cV$ induces  a surjection $\varphi:\wE\twoheadrightarrow R^{\psi}$.
\end{prop}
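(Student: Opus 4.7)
The plan is to transfer the deformation problem across the equivalence of Proposition \ref{equivofcatsII}: $\cV$ sends the projective envelope $\TT\wP$ of $T_\alpha$ to a projective envelope $\cV(\wP)$ of $\omega$ in $\Rep_{\gal}^{\BB}(\OO)$, and by Corollary \ref{equivqbgal} the hypotheses (H0)-(H5) are inherited with $S = \omega$ and $Q = \cV(\tau_2^\vee)$. Thus by Theorem \ref{repnonC}, the ring $\wE \cong \End_{\OO[\gal]}^{cont}(\cV(\wP))$ pro-represents (in the non-commutative sense of \S\ref{def}) the deformation functor of $\cV(\tau_2^\vee)$ \emph{within} the subcategory $\Rep_{\gal}^{\BB}(\OO)$; its commutative quotient modulo the determinant-$\varepsilon$ condition, which I denote $\wE^{ab, \varepsilon}$, pro-represents the corresponding commutative-coefficient subfunctor.

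Since $\Rep_{\gal}^{\BB}(\OO) \subseteq \Rep_{\gal}(\OO)$ is a full subcategory, every deformation of $\cV(\tau_2^\vee)$ in the subcategory is automatically a deformation as a Galois representation, giving an inclusion of subfunctors $\Def_{\cV(Q)}^{\BB, ab, \varepsilon} \hookrightarrow \Def_{\cV(Q)}^{\varepsilon}$, the target being pro-represented by $R^\psi$. Yoneda's lemma converts this inclusion into a surjection $\beta: R^\psi \twoheadrightarrow \wE^{ab, \varepsilon}$. I would then define $\varphi$ to be the composition $\wE \twoheadrightarrow \wE^{ab, \varepsilon} \cong R^\psi$, provided one can show that $\beta$ is actually an isomorphism. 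By Nakayama (applied to a surjection of complete local rings with the same residue field $k$), this reduces to checking that $\beta$ induces an isomorphism on $\mm/(\mm^2 + \varpi)$; dually, one must show that the natural inclusion
\[
\Ext^1_{\Rep_{\gal}^{\BB}(k),\, \det=\varepsilon}(\cV(Q), \cV(Q)) \hookrightarrow \Ext^1_{k[\gal],\, \det=\varepsilon}(\cV(Q), \cV(Q))
\]
is an equality. One would approach this by combining Lemma \ref{inithyp} (injectivity of $\cV$ on $\Ext^1$ between the irreducibles $T_{\Eins}, T_\alpha$) extended to $Q$ by dévissage (as in the proof of Proposition \ref{equivofcatsII}) with the explicit presentation of $R^\psi$ from the appendix \S\ref{someDef} and the $\Ext$-computations of \S\ref{hextII}.

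The principal obstacle will be this tangent-space equality: Lemma \ref{inithyp} gives only injectivity of $\cV$ on $\Ext^1$, so upgrading to equality requires detailed knowledge of the structure of $R^\psi$ to bound its tangent-space dimension from above. If the direct route proves delicate, the fallback plan is to construct $\varphi$ directly via the universal property of $\wE$: one produces a deformation of $Q$ to $R^\psi$ in $\qcat^{\BB}(\OO)$ by combining Kisin's \cite[2.3.8]{kisin} (which realizes each absolutely irreducible $L$-point of $\Spec R^\psi[1/p]$ as $\cV(\Xi^d)\otimes L$ with $\Xi$ lying in $\dualcat^{\BB}(\OO)$ by Proposition \ref{blockdecompB}) with parabolic induction handling the reducible locus (\S\ref{parabolic}), Zariski density of such points in $\Spec R^\psi$, and $\varpi$-adic continuity; then Theorem \ref{repnonC} yields $\varphi: \wE \to R^\psi$, and surjectivity follows from Lemma \ref{inithyp} applied at the tangent level.
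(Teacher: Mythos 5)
The critical step in your main route — ``By Nakayama (applied to a surjection of complete local rings with the same residue field $k$), this reduces to checking that $\beta$ induces an isomorphism on $\mm/(\mm^2 + \varpi)$'' — is false. A surjection of complete local $\OO$-algebras inducing an isomorphism on $\mm/(\mm^2+\varpi)$ need not be injective: the quotient map $\OO[[x,y]]\twoheadrightarrow \OO[[x,y]]/(xy)$ is a counterexample. Nakayama gives you surjectivity from surjectivity on tangent spaces, which you already have for free from the inclusion of functors; it does nothing for injectivity, and no amount of tangent-space bookkeeping by itself will. Moreover, the tangent-space equality you would need is itself circular here: the paper only establishes the lower bound $\dim_k \wm_{\wE}/(\wm_{\wE}^2+\varpi \wE)\ge 4$ in the corollary that \emph{follows} Proposition \ref{definephi} and uses the surjection $\wE\twoheadrightarrow R^\psi$ to get it. So your primary route collapses to an assertion that would require the statement being proved.

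The paper does not try to show $\beta: R^\psi\rightarrow \wE^{ab}$ is an isomorphism at this stage at all. Instead it applies Proposition \ref{ftof4}, which produces the map $R\twoheadrightarrow \wE^{ab}$ by the tangent-space argument you cite, and then shows the one-sided inclusion $\Ker(R\to\wE^{ab})\subseteq \mm_\rho$ at every irreducible $L'$-point $\rho$ of $\Spec R^\psi[1/p]$ via \cite[2.3.8]{kisin}; since by Lemma \ref{A7} these $\mm_\rho$ intersect to zero, and by Corollary \ref{RpsirhoA} $R^\psi$ is $\OO$-torsion free, this forces $\Ker(R\to\wE^{ab})\subseteq \Ker(R\to R^\psi)$, hence a factorization $R\twoheadrightarrow \wE^{ab}\twoheadrightarrow R^\psi$. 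The isomorphism $\wE\cong R^\psi$ is only established much later (Theorem \ref{varphisoNGII}) by an entirely different graded-ring argument. Your fallback plan names the right ingredients (Kisin's result, Zariski density of the irreducible locus), but as written it tries to construct a deformation of $Q$ over $R^\psi$ in $\qcat^\BB(\OO)$, which presupposes that the universal deformation over $R^\psi$ lies in $\Rep_{\gal}^\BB(\OO)$ — essentially the content you were trying to prove. You should instead keep the construction one-directional as in Proposition \ref{ftof4}: build the map $R\to\wE^{ab}$ functorially, then bound its kernel pointwise and pass to the intersection.
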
 
\begin{proof} The intersection of maximal ideals of $R^{\psi}[1/p]$ corresponding to the irreducible representations is zero by Lemma \ref{A7}.
Moreover, it follows from Corollary  \ref{RpsirhoA} that $R^{\psi}$ is $\OO$-torsion free. Hence, the ring denoted by $R'$ in the statement of the 
Proposition \ref{ftof4} is equal to $R^{\psi}$. We will prove the assertion by modifying the proof of Proposition \ref{ftof4}. 

We note that $\cV(\tau_2^{\vee})\cong k\wtimes_{\wE}\cV(\wP)\cong \cV(k\wtimes_{\wE} \wP)$, and has only scalar endomorphisms. 
 Since $\cV(\wP)$ is $\wE$-flat by Corollary \ref{NGIIEflat}, $\wE^{ab}\wtimes_{\wE} \cV(\wP)$
is a deformation of $\cV(\tau_2^{\vee})$ to $\wE^{ab}$. Thus we obtain a natural map $\varphi: R\rightarrow \wE^{ab}$, where $R$ is the universal deformation ring 
of $\cV(\tau_2^{\vee})$. To show the surjectivity of $\varphi$ it is enough to show that it induces a surjection on tangent spaces, which is equivalent to 
showing that the natural map $\Ext^1_{\Rep_{\gal}^{\BB}(k)}(\cV(\tau_2^{\vee}), \cV(\tau_2^{\vee}))\rightarrow \Ext^1_{\gal}((\cV(\tau_2^{\vee}), \cV(\tau_2^{\vee}))$ is injective. 
This assertion follows from \eqref{injext}. Hence, $\varphi: R\twoheadrightarrow \wE^{ab}$ is surjective. We proceed as in the proof of Proposition \ref{ftof4}
to show that every closed point in $\mm\in \Spec R^{\psi}[1/p]$, corresponding to an irreducible representation lies in $\Spec \wE^{ab}$. This implies that 
$R\twoheadrightarrow R^{\psi}$ factors through $\varphi$. As explained in the proof 
of Proposition \ref{ftof4} it is enough to produce a map of $\OO$-algebras $x: \wE\rightarrow \kappa(\mm)$, such that $\kappa(\mm)\otimes_{\wE}\cV(\wP)$ 
is isomorphic to $\kappa(\mm)\otimes_{\wE} \rho^{un, \psi}$, where $\rho^{un, \psi}$ is the universal deformation with determinant $\psi$. 
It follows from \cite[2.3.8]{kisin} that part (iii) of Proposition \ref{ftof4} holds, and then the argument in the proof of Proposition \ref{ftof4} allows us to conclude.
\end{proof}

\begin{cor} Let $\wm$ be the maximal ideal of $\wE$ then $\dim \wm/(\wm^2+\varpi \wE)=4$.
\end{cor}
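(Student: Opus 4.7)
The plan is to identify the tangent space with an $\Ext^1$-group in $\Rep^{\BB}_{\gal}(k)$, and then compute that group from the long exact sequence attached to \eqref{karakiri}, using the dimensions already recorded in Lemma \ref{dimextgrQ}.

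First, by Lemma \ref{projQproj} together with the equivalence of categories $\qcat^{\BB}(\OO) \cong \Rep^{\BB}_{\gal}(\OO)$ of Proposition \ref{equivofcatsII}, we may identify $\wE$ with the endomorphism ring of the projective envelope of $\omega$ in $\Rep^{\BB}_{\gal}(\OO)$. Hence Lemma \ref{tangentspace}, applied to the deformation problem of $Q=\cV(\tau_2^{\vee})$ in $\Rep^{\BB}_{\gal}(\OO)$, gives $\dim_k \wm/(\wm^2+\varpi\wE) = \dim_k \Ext^1_{\Rep^{\BB}_{\gal}(k)}(Q,Q)$.

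Next, Corollary \ref{equivqbgal} asserts that the hypotheses (H1)--(H5) hold for $(S,Q) = (\omega, \cV(\tau_2^{\vee}))$ in $\Rep^{\BB}_{\gal}(k)$, with radical $R = \Eins$. Lemma \ref{first} therefore yields $\Ext^1_{\Rep^{\BB}_{\gal}(k)}(Q,Q) \cong \Ext^1_{\Rep^{\BB}_{\gal}(k)}(Q,\omega)$. To evaluate the right hand side, I apply $\Hom_{\Rep^{\BB}_{\gal}(k)}(-,\omega)$ to the non-split extension $0 \to \Eins \to Q \to \omega \to 0$ of \eqref{karakiri}. Since $\omega \not\cong \Eins$ (using $p>2$), we get $\Hom(\Eins,\omega)=0$, and the long exact sequence reduces to
\begin{equation*}
0 \to \Ext^1(\omega,\omega) \to \Ext^1(Q,\omega) \to \Ext^1(\Eins,\omega) \to \Ext^2(\omega,\omega).
\end{equation*}
By Lemma \ref{dimextgrQ} (transported via the equivalence of Proposition \ref{equivofcatsII}), $\dim \Ext^1(\omega,\omega) = \dim \Ext^1_{\qcat(k)}(T_{\alpha},T_{\alpha}) = 2$ and $\dim \Ext^1(\Eins,\omega) = \dim \Ext^1_{\qcat(k)}(T_{\Eins},T_{\alpha}) = 2$.

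It remains to show $\Ext^2_{\Rep^{\BB}_{\gal}(k)}(\omega,\omega)=0$, which is the one genuine subtlety. Because $\Rep^{\BB}_{\gal}(k)$ is by definition a \emph{full} subcategory of $\Rep_{\gal}(k)$, Yoneda $\Ext^2$ in the subcategory injects into Yoneda $\Ext^2$ in the ambient category; and $\Ext^2_{\gal}(\omega,\omega) \cong H^2(\gal,\Eins)$ vanishes by local Tate duality, since its dual is $H^0(\gal,\omega) = 0$ (again using $\omega\neq\Eins$). This forces $\Ext^2_{\Rep^{\BB}_{\gal}(k)}(\omega,\omega) = 0$, so the sequence above gives $\dim \Ext^1(Q,\omega) = 2+2 = 4$, and hence $\dim \wm/(\wm^2+\varpi\wE) = 4$.

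The main obstacle is the vanishing of $\Ext^2(\omega,\omega)$ in the subcategory; the argument above sidesteps any direct computation in $\Rep^{\BB}_{\gal}(k)$ by embedding into the ambient Galois category, but one should verify that the Yoneda interpretation of $\Ext^2$ makes the embedding manifestly injective. An alternative, which avoids any comparison between subcategory and ambient $\Ext$, would be to instead apply $\Hom(Q,-)$ to \eqref{karakiri}, invoke (H3) and (H5) directly (giving $\Ext^1(Q,\Eins)=\Ext^2(Q,\Eins)=0$) to recover Lemma \ref{first}'s isomorphism, and then combine with the surjection $\varphi \colon \wE \twoheadrightarrow R^{\psi}$ of Proposition \ref{definephi} and the known tangent space of $R^{\psi}$ from the appendix to pin the dimension on both sides.
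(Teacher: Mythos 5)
Your reduction of $\dim\wm/(\wm^2+\varpi\wE)$ to $\dim\Ext^1_{\Rep^{\BB}_{\gal}(k)}(Q,\omega)$ via Lemma \ref{tangentspace}, Lemma \ref{first} and Corollary \ref{equivqbgal}, and the identification of $\Ext^1(\omega,\omega)$ and $\Ext^1(\Eins,\omega)$ with the $2$-dimensional spaces of Lemma \ref{dimextgrQ} via Proposition \ref{equivofcatsII}, are fine. The gap is the claim $\Ext^2_{\Rep^{\BB}_{\gal}(k)}(\omega,\omega)=0$. Yoneda $\Ext^2$ of a full subcategory does not automatically inject into the ambient $\Ext^2$: an equivalence of $2$-extensions in $\Rep_{\gal}(k)$ is witnessed by a zigzag of morphisms whose intermediate objects need not lie in the essential image of $\cV$, and $\Rep^{\BB}_{\gal}(k)$ is defined precisely as that essential image rather than by a concrete closure property in $\Rep_{\gal}(k)$. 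You flag this subtlety, but leaving it open leaves the main argument incomplete.

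You do not, however, need the $\Ext^2$-vanishing at all. Your long exact sequence already gives the upper bound unconditionally: since $\Hom(\Eins,\omega)=0$, the map $\Ext^1(\omega,\omega)\to\Ext^1(Q,\omega)$ is injective and its cokernel injects into $\Ext^1(\Eins,\omega)$, whence $\dim\Ext^1(Q,\omega)\le 2+2=4$ regardless of what $\Ext^2(\omega,\omega)$ is. For the matching lower bound, the surjection $\varphi\colon\wE\twoheadrightarrow R^{\psi}$ of Proposition \ref{definephi}, together with the $4$-dimensional tangent space of $R^{\psi}\cong\OO[[x,y,z,w]]/(f)$ from Corollary \ref{RpsirhoA}, gives $\dim\wm/(\wm^2+\varpi\wE)\ge 4$. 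This closes the argument, and is essentially what your own ``alternative'' gestures at once rewritten as a two-sided estimate. The paper's proof has exactly this shape: the lower bound is Proposition \ref{definephi} plus Corollary \ref{RpsirhoA}, and the upper bound is recorded in \eqref{lateruse}, namely $\dim\Ext^1_{\qcat(k)}(\TT\tau_2^{\vee},T_{\alpha})\le e^1(\Indu{P}{G}{\alpha},\tau_2)\le 4$, obtained from the $\GL_2$-side via Lemma \ref{smallextcompute}. Your long-exact-sequence bound is numerically the same $2+2$; the two derivations of the upper bound differ only in which side of the functor $\cV$ they are carried out on.
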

\begin{proof} It follows from Corollary \ref{RpsirhoA} that the  tangent space of $R^{\psi}$ is $4$-di\-men\-sio\-nal. Hence, Proposition 
\ref{definephi} implies that the tangent space of $\wE$ is at least $4$-di\-men\-sio\-nal.  By Proposition \ref{equivofcatsII} and since (H1) and (H3)  hold in $\qcat(k)$ we have: 
$$\Ext^1_{\Rep_{\gal}^{\BB}(k)}(\cV(\tau_2^{\vee}), \cV(\tau_2^{\vee}))\cong \Ext^1_{\qcat(k)}(\TT\tau_2^{\vee}, \TT\tau_2^{\vee})\cong \Ext^1_{\qcat(k)}(\TT\tau_2^{\vee}, T_{\alpha})$$
and is of dimension at most $4$ by \eqref{lateruse}. Hence it follows from 
Lemma \ref{tangentspace}  that the tangent space of $\wE$ is at most $4$-di\-men\-sio\-nal.
\end{proof} 

Let $\wP_{\alpha^{\vee}}$ be a projective envelope of $\alpha^{\vee}$ in $\dualcat_{T/Z}(\OO)$ and 
$\wM=(\Indu{P}{G}{(\wP_{\alpha^{\vee}})^{\vee}})^{\vee}$. All the irreducible subquotients of $\wM$ are isomorphic to $(\Indu{P}{G}{\alpha})^{\vee}$ 
and hence $\TT$ induces an isomorphism $\End_{\dualcat(\OO)}(\wM)\cong \End_{\qcat(\OO)}(\TT \wM)$ by Lemma \ref{HomQexp}. 
 Thus it follows from Proposition \ref{endoPOrd} that we have a natural surjection
\begin{equation}\label{defiidealaQ}
\wE\cong \End_{\qcat(\OO)}(\TT \wP)\twoheadrightarrow \End_{\qcat(\OO)}(\TT \wM)\cong \OO[[x, y]]
\end{equation}  
We let $\wa$ be the kernel of \eqref{defiidealaQ}, then $\wa$ is also the kernel of $\End_{\dualcat(\OO)}(\wP)\twoheadrightarrow \End_{\dualcat(\OO)}(\wM)$.

\begin{prop}\label{imageofa} The image of $\wa$ in $R^{\psi}$ is equal to $\rr:=R^{\psi}\cap  \bigcap_x \mm_x$ where the intersection is taken over 
all maximal ideals of $R^{\psi}[1/p]$  such that the corresponding representation $\rho_x$ is reducible. Moreover, 
$\wE/\wa \cong R^{\psi}/\varphi(\wa)$. 
\end{prop}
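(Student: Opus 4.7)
The plan is to prove both assertions together by establishing that the composition
\[
\psi\colon\wE\xrightarrow{\varphi}R^{\psi}\twoheadrightarrow R^{\psi}/\rr
\]
is surjective with kernel exactly $\wa$. Granted this, the second assertion $\wE/\wa\cong R^{\psi}/\varphi(\wa)$ holds because $\ker\varphi\subseteq\ker\psi=\wa$, and the first assertion $\varphi(\wa)=\rr$ follows from the surjectivity of $\varphi$: the inclusion $\varphi(\wa)\subseteq\rr$ expresses the factorization, while $\rr=\varphi(\varphi^{-1}(\rr))=\varphi(\wa)$ holds once $\varphi^{-1}(\rr)=\ker\psi=\wa$. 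Equivalently, I must show that the induced surjection $\bar\varphi\colon\wE/\wa\cong\OO[[x,y]]\twoheadrightarrow R^{\psi}/\rr$ is an isomorphism.

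For the inclusion $\varphi(\wa)\subseteq\rr$ I invoke the Banach-space realization of closed points. By the construction used in the proof of Proposition \ref{ftof4} (following Kisin), for each maximal ideal $\mm_x$ of $R^{\psi}[1/p]$ with reducible $\rho_x$ there exist a finite extension $L'/L$ and an open bounded $G$-invariant lattice $\Xi_x$ in an absolutely irreducible admissible unitary $L'$-Banach space representation $\Pi_x$ of $G$ such that $\cV(\Xi_x^d)\otimes_{\OO}L'\cong\rho_x$, and the homomorphism $\wE\to R^{\psi}[1/p]/\mm_x$ is computed by the $\wE$-action on the finitely generated $\OO_{L'}$-module $\Hom_{\dualcat(\OO)}(\wP,\Xi_x^d)$. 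Because $\rho_x$ is reducible, the inverse Montreal functor (as in the arguments leading to Corollary \ref{cormainGen}) forces $\Pi_x$ to be a continuous unitary parabolic induction $(\Indu{P}{G}{\psi_x})_{\mathrm{cont}}$ for a continuous unitary character $\psi_x\colon T\to L'^{\times}$ lifting $\alpha$. By Lemma \ref{thruOrd}, $\Xi_x^d$ is then a specialisation of $\wM\otimes_{\OO}\OO_{L'}$, so the map $\wE\to R^{\psi}[1/p]/\mm_x$ factors through $\End_{\dualcat(\OO)}(\wM)\cong\wE/\wa$. Intersecting over all reducible $x$ yields $\varphi(\wa)\subseteq\rr$.

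The main obstacle is verifying that $\bar\varphi$ is an isomorphism. My strategy is a tangent-space comparison: since $\OO[[x,y]]$ is formally smooth of relative dimension $2$ over $\OO$, it suffices to show that $R^{\psi}/\rr$ is $\OO$-flat and has tangent dimension $2$ over $k$. The $\OO$-flatness is immediate because $\rr$ is an intersection of maximal ideals of $R^{\psi}[1/p]$, so $\varpi$ is not a zero divisor in $R^{\psi}/\rr$. For the tangent-space computation I will use the explicit presentation of $R^{\psi}$ from the appendix (following B\"ockle, cf.\ Corollary \ref{RpsirhoA}): $R^{\psi}$ has tangent dimension $4$, and the image of $\rr$ in the tangent space $\mm_{R^{\psi}}/(\mm_{R^{\psi}}^{2}+\varpi R^{\psi})$ is precisely the $2$-dimensional subspace of linear functionals vanishing on the upper-triangular (reducible) deformation classes of $\cV(\tau_2^{\vee})$ with determinant $\varepsilon\zeta$, leaving a $2$-dimensional tangent space for $R^{\psi}/\rr$ parameterised by deformations of the character $\omega$ in the filtration $0\to\Eins\to\cV(\tau_2^{\vee})\to\omega\to 0$. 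The delicate point is to match this Galois-side tangent computation with the parabolic-induction description of $\wE/\wa$; this boils down to the compatibility of Colmez's functor $\cV$ with parabolic induction in families, a compatibility already exploited in Proposition \ref{definephi} and in the construction leading to Corollary \ref{endoPOrd}.
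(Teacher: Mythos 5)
Your overall architecture is right: reduce both assertions to the single claim that the composition $\wE\xrightarrow{\varphi}R^{\psi}\twoheadrightarrow R^{\psi}/\rr$ has kernel exactly $\wa$, which in turn follows from the inclusion $\varphi(\wa)\subseteq\rr$ together with knowing that $\wE/\wa$ and $R^{\psi}/\rr$ are both isomorphic to $\OO[[x,y]]$. But the second half of your argument has a genuine gap, and the first half is routed through material that does not apply as directly as you claim.

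\textbf{The tangent-space sufficiency claim is false.} You assert that since $\wE/\wa\cong\OO[[x,y]]$ is formally smooth of relative dimension $2$, the surjection $\bar\varphi\colon\OO[[x,y]]\twoheadrightarrow R^{\psi}/\rr$ is an isomorphism once $R^{\psi}/\rr$ is $\OO$-flat with tangent dimension $2$. This is not enough: $\OO[[x,y]]\twoheadrightarrow\OO[[x,y]]/(xy)$ is a surjection onto an $\OO$-flat ring with a $2$-dimensional tangent space, yet it has a nontrivial kernel. To rule this out you need more — either an independent argument that $R^{\psi}/\rr$ has Krull dimension $3$, or better, the explicit computation of the ideal $\rr$. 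The paper's proof simply cites Corollary \ref{A6}, which, using B\"ockle's presentation $R^{\psi}\cong\OO[[c_0,c_1,d_0,d_1]]/(pc_0+c_0d_1+c_1d_0)$ from Corollary \ref{RpsirhoA}, shows that $\rr=(c_0,c_1)$ as an ideal (not merely that its image in the tangent space is $2$-dimensional), whence $R^{\psi}/\rr\cong\OO[[d_0,d_1]]$. Once you have that, the surjection $\OO[[x,y]]\twoheadrightarrow\OO[[x,y]]$ is forced to be an isomorphism because any such surjective endomorphism of a noetherian domain is injective.

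\textbf{The route to $\varphi(\wa)\subseteq\rr$ is roundabout and misattributed.} Proposition \ref{ftof4}(iii) and \cite[2.3.8]{kisin} produce Banach space representations for \emph{irreducible} $\rho_x$, not reducible ones, so they cannot be invoked here as stated. Corollary \ref{cormainGen} lives in the generic block (\S\ref{genericcase}), not in the non-generic block you are working in, so appealing to "the arguments leading to it" is not immediate. The paper avoids Banach space representations of $\rho_x$ entirely: given reducible $\rho_x$ with $\det=\varepsilon$, write $0\to\delta^{-1}\to\rho_x\to\delta\varepsilon\to 0$, form the character $\chi:=\delta\varepsilon\otimes\delta^{-1}\varepsilon^{-1}$ of $T$ lifting $\alpha$, which gives a point $y\colon\wE\twoheadrightarrow\wE/\wa\cong\End_{\dualcat(\OO)}(\wM)\to L$. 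Then $L\otimes_{\wE,y}\cV(\wP)$ is a $2$-dimensional lift of $\cV(\tau_2^\vee)$ (using Corollary \ref{ftof2}) which admits $\delta\varepsilon$ as a quotient, and Lemma \ref{moveL} — the uniqueness of the non-split reducible lift with prescribed determinant and quotient — forces $L\otimes_{\wE,y}\cV(\wP)\cong\rho_x$, hence $y=\varphi^{-1}(x)\supseteq\wa$. Your proposal never uses this uniqueness statement, and it is the crux of the inclusion.
</br>
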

\begin{proof} Since we know that $R^{\psi}/\rr \cong \OO[[x,y]]$ by Corollary \ref{A6}, $\wE/\wa \cong \OO[[x,y]]$ by \eqref{defiidealaQ}, 
and $\varphi$ is surjective, it is enough to show that $\rr$ contains $\varphi(\wa)$. 

Let $x$ be a maximal ideal of $R^{\psi}[1/p]$ with residue 
field $L$ and let $\rho_x$ be the corresponding representation. Suppose that $\rho_x$ is reducible then since $\det \rho_x=\varepsilon$ 
we have an exact sequence $0\rightarrow \delta^{-1}\rightarrow \rho_{x}\rightarrow \delta\varepsilon\rightarrow 0$,
where $\delta: \gal\rightarrow L^{\times}$ is a continuous character, lifting the trivial character $\Eins: \gal\rightarrow k^{\times}$.  

Let $\chi: T\rightarrow L^{\times}$ be the character $\chi:=\delta \varepsilon\otimes \delta^{-1}\varepsilon^{-1}$. Then $\chi$ is trivial on $Z$ and is a deformation 
of $\alpha: T\rightarrow k^{\times}$ and hence defines a maximal ideal $y:\wE\rightarrow \End_{\dualcat(\OO)}(\wM)\rightarrow L$, 
such that $\Hom_{\OO}^{cont}(\OO\otimes_{\wE, y}\wM, L)\cong (\Indu{P}{G}{\chi})_{cont}$.
It follows from the construction of Colmez's functor that $\VV((\Indu{P}{G}{\chi})_{cont})\cong \delta^{-1}$ and hence 
$L\otimes_{\wE,y} \cV(\wM)\cong \cV(\OO\otimes_{\wE,y} \wM)_L\cong \delta\varepsilon$.
Since $\cV(\wP)$ is a free $\wE$-module of rank $2$ by Corollary \ref{ftof2}, $L\otimes_{\wE, y} \cV(\wP)$ is a $2$-di\-men\-sio\-nal $L$-representation 
of $\gal$ lifting $k\otimes_{\wE}\cV(\wP)\cong \cV(\tau_2^{\vee})$. Moreover, we know that $L\otimes_{\wE, y} \cV(\wP)$ admits 
$L\otimes_{\wE, y}\cV(\wM)\cong \delta\varepsilon$ as a quotient. Lemma \ref{moveL} implies that $\rho_x\cong L\otimes_{\wE, y} \cV(\wP)$, which implies that $y=\varphi^{-1}(x)$. 
Since by construction $y$ contains $\wa$, we deduce that $x$ contains $\varphi(\wa)$. Hence,
$\varphi(\wa)$ is contained in $R^{\psi}\cap  \bigcap_x \mm_x$ where the intersection is taken over 
all maximal ideals of $R^{\psi}[1/p]$ with residue field $L$  such that the corresponding representation $\rho_x$ is reducible. 
Remark \ref{enoughL} implies that this ideal is equal to $\rr$.
\end{proof}

\subsection{Filtration by ordinary parts}\label{filtration}
Let $P$ be a projective envelope of $(\Indu{P}{G}{\alpha})^{\vee}$ in $\dualcat(k)$ and let $E=\End_{\dualcat(k)}(P)$. 
Recall that uniqueness of projective envelopes implies the existence of  an isomorphism $P\cong \wP\otimes_{\OO} k$, 
and hence $E\cong \wE\otimes_{\OO} k$. Moreover, $\TT P$ is projective in $\qcat(k)$ and $\End_{\qcat(k)}(\TT P)\cong E$
by Proposition \ref{projQproj}. Since $\wE/\wa \cong \OO[[x,y]]$ is $\OO$-torsion free, we have an injection 
$\wa\otimes_{\OO} k\hookrightarrow E$, and we denote the ideal $\wa \otimes_{\OO} k$  by $\mathfrak a$.
We are going to show that $\varphi$, defined in Proposition \ref{definephi},  induces an isomorphism 
$\mathfrak a^n/\mathfrak a^{n+1}\overset{\cong}{\rightarrow}\varphi(\mathfrak a)^n/ \varphi(\mathfrak a)^{n+1}$, for all  $n\ge 1$.
 Using this we will show in Theorem \ref{varphisoNGII} that $\varphi$ is an isomorphism. 

\begin{lem}\label{vaca1} Let $\kappa$ be an object of $\Mod^{\mathrm{l adm}}_{T/Z}(k)$, let $\theta$ be a subspace of 
$(\Indu{P}{G}{\kappa})^G$ and  let $\tau$ be the quotient: 
\begin{equation}
0\rightarrow \theta \rightarrow \Indu{P}{G}{\kappa}\rightarrow \tau\rightarrow 0
\end{equation}
Then $\Ord_P \tau\cong \kappa^s$, $\RR^1\Ord_P \tau\cong (\kappa/\theta)\otimes \alpha^{-1}$, where we have identified $\theta$ with 
the subspace of $\kappa^T$ by evaluating at $1$.
\end{lem}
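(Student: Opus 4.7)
The plan is to compute both $\Ord_P \tau$ and $\RR^1\Ord_P \tau$ via the long exact sequence of $\Ord_P$ applied to the given short exact sequence, using that $\RR^i\Ord_P = 0$ for $i \geq 2$. Since $\theta \subseteq (\Indu{P}{G}{\kappa})^G$, the group $G$ acts trivially on $\theta$, so as a smooth $G$-representation $\theta$ is a direct sum of copies of $\Eins$. From \eqref{ordtriv} I deduce $\Ord_P \theta = 0$ and $\RR^1\Ord_P \theta \cong \theta \otimes \alpha^{-1}$, where $\theta$ on the right is viewed as a $T$-module with trivial $T$-action. Combined with \eqref{ordinduced}, the long exact sequence becomes
\begin{equation*}
0 \to \kappa^s \to \Ord_P \tau \xrightarrow{\partial} \theta \otimes \alpha^{-1} \xrightarrow{\delta} \kappa \otimes \alpha^{-1} \to \RR^1\Ord_P \tau \to 0,
\end{equation*}
so the lemma reduces to showing that $\delta$ is injective with cokernel $(\kappa/\theta) \otimes \alpha^{-1}$, which forces $\partial = 0$ and gives both claimed isomorphisms.

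The map $\delta$ is, by functoriality of the long exact sequence, the result of applying $\RR^1\Ord_P$ to the inclusion $\theta \hookrightarrow \Indu{P}{G}{\kappa}$. Since $\theta$ is identified with a subspace of $\kappa^T$ via evaluation at $1$, this inclusion sends $v\in\theta$ to the constant function with value $v\in\kappa$, hence factors as $\theta \hookrightarrow \Indu{P}{G}{\theta} \xrightarrow{\Indu{P}{G}{\iota}} \Indu{P}{G}{\kappa}$, where $\iota:\theta\hookrightarrow\kappa$ is the given inclusion. Applying $\RR^1\Ord_P$, the second arrow becomes $\iota\otimes\alpha^{-1}$ by naturality of the isomorphism $\RR^1\Ord_P\Indu{P}{G}{(-)}\cong (-)\otimes\alpha^{-1}$ from \eqref{ordinduced}. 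For the first arrow, I would apply the $\Ord_P$ long exact sequence to $0 \to \theta \to \Indu{P}{G}{\theta} \to \Sp\otimes\theta \to 0$; using $\Ord_P(\Sp\otimes\theta)\cong\theta$ and $\RR^1\Ord_P(\Sp\otimes\theta)=0$ from \eqref{ordsp}, this sequence collapses to
\begin{equation*}
0 \to \theta \xrightarrow{\cong} \theta \to \theta\otimes\alpha^{-1} \xrightarrow{b} \theta\otimes\alpha^{-1} \to 0,
\end{equation*}
forcing the map $b$ (which is $\RR^1\Ord_P$ of $\theta\hookrightarrow\Indu{P}{G}{\theta}$) to be an isomorphism. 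Consequently $\delta = (\iota\otimes\alpha^{-1})\circ b$ is injective with cokernel $(\kappa/\theta)\otimes\alpha^{-1}$, as required.

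The only non-routine step is the identification of $\delta$: since both its source and target are twists by $\alpha^{-1}$ of trivial $T$-modules, $\delta$ is \emph{a priori} only determined up to an automorphism of $\theta\otimes\alpha^{-1}$. The factorization through $\Indu{P}{G}{\theta}$ and the comparison with the transparent case $\kappa = \theta$ pin this automorphism down (as the isomorphism $b$), which is what makes the cokernel come out to exactly $(\kappa/\theta)\otimes\alpha^{-1}$ rather than merely some twist of it. Once this is in place, both claims of the lemma drop out of the displayed long exact sequence.
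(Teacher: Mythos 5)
Your overall strategy coincides with the paper's: apply the $\Ord_P$ long exact sequence to $0\to\theta\to\Indu{P}{G}{\kappa}\to\tau\to 0$ and identify the connecting map $\delta\colon\RR^1\Ord_P\theta\to\RR^1\Ord_P\Indu{P}{G}{\kappa}$ with the inclusion $\theta\otimes\alpha^{-1}\hookrightarrow\kappa\otimes\alpha^{-1}$. Where you diverge is \emph{how} you pin that map down. The paper simply invokes a commutative square that it attributes to the proof of \cite[4.1.2]{ord2}, so it outsources the identification. You instead re-derive it internally: factor $\theta\hookrightarrow\Indu{P}{G}{\kappa}$ through $\Indu{P}{G}{\theta}$, use naturality of $\RR^1\Ord_P\Indu{P}{G}{(-)}\cong(-)\otimes\alpha^{-1}$ on the second arrow, and handle the first arrow via the auxiliary sequence $0\to\theta\to\Indu{P}{G}{\theta}\to\Sp\otimes\theta\to 0$. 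This is a legitimate, more self-contained alternative, and it has the nice feature that you never need $\delta$ to literally be the inclusion, only to agree with it up to a source automorphism $b$; the cokernel is unaffected.

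There is, however, one hole you should close. When you display
\begin{equation*}
0 \to \theta \xrightarrow{\cong} \theta \to \theta\otimes\alpha^{-1} \xrightarrow{b} \theta\otimes\alpha^{-1} \to 0
\end{equation*}
you decorate the first map with $\cong$ and then say this ``forces'' $b$ to be an isomorphism, but you give no reason why that first map is an isomorphism. In general $\theta$ can be infinite-dimensional, so exactness alone does not settle it. The clean justification is that the connecting homomorphism $\partial\colon\Ord_P(\Sp\otimes\theta)\to\RR^1\Ord_P\theta$, i.e.\ $\theta\to\theta\otimes\alpha^{-1}$, is a morphism of $T$-representations from a module with trivial $T$-action to one on which $T$ acts by $\alpha^{-1}\neq\Eins$; hence $\partial=0$. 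From this, $b$ is injective, and surjectivity of $b$ follows from $\RR^1\Ord_P(\Sp\otimes\theta)=0$. (Equivalently, the same $T$-equivariance observation shows $a\colon\theta\to\theta$ is surjective, and injectivity comes from $\Ord_P\theta=0$.) Once you insert that one-line argument, your proof is complete and correct.
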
 
\begin{proof} We note that evaluation at $1$ induces an isomorphism $(\Indu{P}{G}{\kappa})^G\cong \kappa^T$, which 
allows us to identify $\theta$ with a subspace of $\kappa^T$. It follows from 
\cite[4.1.1]{ord2} that $\Ord_P \theta =0$ and from the proof of \cite[4.1.2]{ord2} that we have a commutative diagram
\begin{displaymath}
\xymatrix@1{\RR^1\Ord_P \theta \ar[r]\ar[d]^{\cong}&  \RR^1\Ord_P \Indu{P}{G}{\kappa}\ar[d]^{\cong}\\ 
\;\theta\otimes \alpha^{-1}\;
\ar@{^(->}[r]& \kappa\otimes \alpha^{-1}}. 
\end{displaymath} 
Hence, $\Ord_P \tau\cong \Ord_P \Indu{P}{G}{\kappa}\cong \kappa^s$ and $\RR^1\Ord_P \tau\cong (\kappa/\theta)\otimes \alpha^{-1}$. 
\end{proof}

\begin{lem}\label{vaca2} Let $\tau$ be in  $\Mod^{\mathrm{l adm}}_{G/Z}(k)$ such that $\Hom_G(\pi', \tau)=0$ 
for all irreducible $\pi'$ not isomorphic 
to $\Eins$, $\Sp$ or $\Indu{P}{G}{\alpha}$. Then 
$G$ acts trivially on the kernel of the natural map $\Indu{\overline{P}}{G}{\Ord_P \tau}\rightarrow \tau$.
\end{lem}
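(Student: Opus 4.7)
Write $\epsilon: \tau_0 := \Indu{\overline{P}}{G}{\Ord_P \tau} \to \tau$ for the natural counit and $K := \ker \epsilon$. The plan is to reduce via an injective envelope of $\tau$ to a direct-sum computation over the three irreducibles of the block $\BB = \{\Eins, \Sp, \pi_\alpha\}$, with $\pi_\alpha := \Indu{P}{G}{\alpha}$, and then handle each summand separately---the only nontrivial case being $\Sp$.

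By the hypothesis on $\tau$ and Proposition \ref{blockdecopm}, $\tau$ lies in the block $\BB$, so I embed $\tau$ into an injective envelope $J \cong J_\Eins^{\oplus a} \oplus J_\Sp^{\oplus b} \oplus J_{\pi_\alpha}^{\oplus c}$ in $\Mod^{\mathrm{l \, adm}}_{G/Z}(k)$. Left-exactness of $\Ord_P$ combined with exactness of $\Indu{\overline{P}}{G}{}$ yields a natural injection $K \hookrightarrow K_J := \ker(\Indu{\overline{P}}{G}{\Ord_P J} \to J)$, and $K_J$ splits along the direct sum. For $S = \Eins$ and $S = \pi_\alpha$, note that $J_\Eins$ is an injective envelope of the length-two $\Indu{\overline{P}}{G}{\Eins_T}$ (whose socle is $\Eins$) and $J_{\pi_\alpha}$ is an injective envelope of the irreducible $\Indu{\overline{P}}{G}{\alpha^{-1}} \cong \pi_\alpha$, so Proposition \ref{projectiveandord} (ii) applies directly and gives $K_{J_\Eins} = K_{J_{\pi_\alpha}} = 0$.

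The crux is the case $S = \Sp$. I first identify $\Ord_P J_\Sp \cong J_{\Eins_T}$: since $\Ord_P$ preserves injectives (being the right adjoint of an exact functor), this reduces to computing the socle of $\Ord_P J_\Sp$, which by Frobenius reciprocity is the one-dimensional character $\Eins_T$ (as $\Sp$ appears as a Jordan-H\"older factor of $\Indu{\overline{P}}{G}{\chi}$ only when $\chi = \Eins_T$). The counit is therefore $\Indu{\overline{P}}{G}{J_{\Eins_T}} \to J_\Sp$, and since $\Hom_G(\Eins, J_\Sp) = 0$, the subrepresentation of constant functions (a copy of $\Eins$ corresponding to $(J_{\Eins_T})^T$) is killed; the counit descends to $\bar\epsilon: Q := \Indu{\overline{P}}{G}{J_{\Eins_T}}/\Eins \to J_\Sp$. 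Using the long exact sequence arising from $0 \to \Eins \to \Indu{\overline{P}}{G}{J_{\Eins_T}} \to Q \to 0$, together with the partial vanishing $\Ext^1_G(\Sp, \Indu{\overline{P}}{G}{J_{\Eins_T}}) = 0$ (obtained from Proposition \ref{resP} and the spectral sequence of ordinary parts), one finds $\soc Q \cong \Sp$. Tracing through the adjunction, the restriction of $\bar\epsilon$ to $\soc Q$ corresponds to the nonzero inclusion $\Eins_T \hookrightarrow J_{\Eins_T} = \Ord_P J_\Sp$; hence it is a nonzero map between irreducibles, thus injective. Since $Q$ is locally admissible its socle is essential, forcing $\bar\epsilon$ itself to be injective and yielding $K_{J_\Sp} \cong \Eins$ with trivial $G$-action.

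The main obstacle is this $\Sp$-analysis: correctly identifying $\Ord_P J_\Sp$, and then tracing through the interplay between the adjunction $\Indu{\overline{P}}{G}{} \dashv \Ord_P$ and the spectral sequence of ordinary parts both to compute $\soc Q$ and to verify that $\bar\epsilon$ restricted to this socle is nonzero. The other two cases reduce cleanly to Proposition \ref{projectiveandord} together with the block decomposition.
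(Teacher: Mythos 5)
Your proof is correct, but it takes a genuinely different route from the paper. The paper's argument works directly with $K := \ker\bigl(\Indu{\overline{P}}{G}{\Ord_P\tau}\to\tau\bigr)$ and exploits the single key fact that $\Ord_P K = 0$, which is formal from the adjunction and \eqref{ordandind}: one decomposes $\Ord_P\tau\cong \kappa_\Eins\oplus\kappa_{\alpha^{-1}}$ via Corollary \ref{tau12sub}, hence $K=K_\Eins\oplus K_{\alpha^{-1}}$, then kills $K_{\alpha^{-1}}$ because $\Ord_P\pi_\alpha\neq 0$, and finally uses Lemma \ref{ext1110} together with $\Ord_P K=0$ to force $K=K^G$. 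Your proof instead passes to an embedding $\tau\hookrightarrow J$ into a direct sum of indecomposable injectives, reduces to the three indecomposables via additivity of $\Ord_P$ and $\Indu{\overline{P}}{G}{}$, disposes of $J_\Eins$ and $J_{\pi_\alpha}$ via Proposition \ref{projectiveandord}(ii), and carries out a socle calculation for $J_\Sp$ (correctly reproducing $K_{J_\Sp}\cong\Eins$, which is consistent with the resolution \eqref{SpNG}). What the paper's route buys is economy and self-containment --- it never needs the structure of injective envelopes; what yours buys is an explicit identification of the kernel at the injective level and a cleaner conceptual reduction. Two minor points you might tighten: (1) you should justify that $J$ can be taken to be a \emph{direct sum} of $J_\Eins,J_\Sp,J_{\pi_\alpha}$ --- this holds because $\Mod^{\mathrm{lfin}}_{G/Z}(k)$ has a set of finite-length generators and so is locally Noetherian, making arbitrary direct sums of injectives injective; alternatively, you can write $\tau$ as a filtered union of finite-length subobjects and argue on those, since triviality of the $G$-action on $K$ is checked locally. (2) The vanishing $\Ext^1_G(\Sp,\Indu{\overline{P}}{G}{J_{\Eins_T}})=0$ is most directly Corollary \ref{injtriv}; citing Proposition \ref{resP} plus the spectral sequence amounts to re-deriving it.
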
 
\begin{proof} We denote the  kernel by $K$. By construction we have $\Ord_P K=0$. 
If $\chi$ is irreducible in $\Mod^{\mathrm{l adm}}_{T/Z}(k)$ and $\chi\neq \Eins$, $\chi\neq \alpha^{-1}$, then 
$\Hom_T(\chi, \Ord_P \tau)\cong \Hom_G(\Indu{\overline{P}}{G}{\chi}, \tau)=0$, the first quality holding by the adjointness property  of $\Ord_P$, and the second by our assumption on $\tau$ together with 
Corollary \ref{tau12sub}.
 Since there are no extensions between $\alpha^{-1}$ and $\Eins$ in 
$\Mod^{\mathrm{l adm}}_{T/Z}(k)$, we deduce that $\Ord_{P}\tau\cong \kappa_{\Eins} \oplus \kappa_{\alpha^{-1}}$, where all the 
irreducible subquotients of $\kappa_{\Eins}$ are isomorphic to $\Eins$ and all the irreducible subquotients of   $\kappa_{\alpha^{-1}}$ are isomorphic to 
$\alpha^{-1}$. Hence, there are no non-zero homomorphisms between $\Indu{\overline{P}}{G}{\kappa_{\Eins}}$ and 
$\Indu{\overline{P}}{G}{\kappa_{\alpha^{-1}}}$
and so we may write $K=K_{\Eins} \oplus K_{\alpha^{-1}}$ where all the irreducible subquotients of $K_{\Eins}$ are $\Eins$ or $\Sp$ and all the irreducible 
subquotients of $K_{\alpha^{-1}}$ are $\Indu{\overline{P}}{G}{\alpha^{-1}}$. Since $\Ord_P K=0$ we get that $K_{\alpha^{-1}}=0$. 
Now $(K/K^G)^G=0$ by Lemma  \ref{ext1110}. Hence, if $K\neq K^G$ then we must have $\Hom_G(\Sp, K/K^G)\neq 0$. However, this implies that $K$ contains $\Sp$ or $\Indu{P}{G}{\Eins}$ 
as a subobject, which contradicts $\Ord_P K=0$.
\end{proof}   

\begin{lem}\label{technik} Let $J$ be an injective object in $\Mod^{\mathrm{l adm}}_{T/Z}(k)$ and $\tau$ an object 
of $\Mod^{\mathrm{l adm}}_{G/Z}(k)$. If $\Hom_T(\Ord_P \tau, J\otimes \alpha^{-1})=0$ then 
$\Ext^1_{G/Z}(\tau, \Indu{P}{G}{J})=0$.
\end{lem}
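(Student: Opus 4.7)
The plan is to apply the left exact functor $\Ord_P$ to an arbitrary extension $0 \to \Indu{P}{G}{J} \to E \to \tau \to 0$ representing a class in $\Ext^1_{G/Z}(\tau, \Indu{P}{G}{J})$, use \eqref{ordinduced} to identify the first two terms of the resulting long exact sequence, and read off the connecting map $\partial \colon \Ord_P \tau \to J \otimes \alpha^{-1}$. By hypothesis this $\partial$ is zero, so the shortened sequence $0 \to J^s \to \Ord_P E \to \Ord_P \tau \to 0$ is exact. Since $J$ is injective in $\Mod^{\mathrm{l\,adm}}_{T/Z}(k)$ and $s$-conjugation is an autoequivalence of that category, $J^s$ is also injective, so this short exact sequence splits in $\Mod^{\mathrm{l\,adm}}_{T/Z}(k)$.

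The cleanest way to package the rest of the argument is through the spectral sequence \eqref{specseqord} applied to $V = \Indu{P}{G}{J}$. Both $\Ord_P V = J^s$ and $\RR^1 \Ord_P V = J \otimes \alpha^{-1}$ are injective in $\Mod^{\mathrm{l\,adm}}_{T/Z}(k)$, so $E_2^{i,j}$ vanishes for $i \geq 1$, and the spectral sequence degenerates to
\[
\Ext^1_{G/Z}(\Indu{\overline{P}}{G}{U}, \Indu{P}{G}{J}) \cong \Hom_T(U, J \otimes \alpha^{-1})
\]
for every $U$ in $\Mod^{\mathrm{l\,adm}}_{T/Z}(k)$. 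Taking $U = \Ord_P \tau$ and invoking the hypothesis, this group is zero. Equivalently, pulling back our extension along the counit $c_\tau \colon \Indu{\overline{P}}{G}{\Ord_P \tau} \to \tau$ of the adjunction $(\Indu{\overline{P}}{G}{-}, \Ord_P)$ yields a split extension.

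The main obstacle is promoting ``vanishing after pullback along $c_\tau$'' to vanishing of the class itself, since $c_\tau$ is typically neither injective nor surjective. The plan is to decompose $\tau$ using $K = \ker c_\tau$ and $C = \coker c_\tau$: the triangle identity $\Ord_P(c_\tau) = \id_{\Ord_P \tau}$ forces $\Ord_P K = 0$, and the long exact sequence of $\Ord_P$ applied to $0 \to \image c_\tau \to \tau \to C \to 0$ then forces $\Ord_P C = 0$ as well. Running the spectral sequence \eqref{specseqord} separately on $K$ and $C$, and using that every $\RR^j \Ord_P \Indu{P}{G}{J}$ remains injective in $\Mod^{\mathrm{l\,adm}}_{T/Z}(k)$, should express $\Ext^i_{G/Z}(K, \Indu{P}{G}{J})$ and $\Hom_{G/Z}(C, \Indu{P}{G}{J})$ in terms of $\Hom_T$'s from $\Ord_P K = 0$ or $\Ord_P C = 0$ into $T$-injectives, hence force them to vanish in low degrees. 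A diagram chase through the five-term sequences attached to $0 \to K \to \Indu{\overline{P}}{G}{\Ord_P \tau} \to \image c_\tau \to 0$ and $0 \to \image c_\tau \to \tau \to C \to 0$ then transports the vanishing through $c_\tau^*$ back to the original class, giving $\Ext^1_{G/Z}(\tau, \Indu{P}{G}{J}) = 0$.
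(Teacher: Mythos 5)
Your opening step is sound and matches the paper's: apply $\Ord_P$ to the extension, note that the connecting map $\Ord_P\tau\to \RR^1\Ord_P(\Indu{P}{G}{J})\cong J\otimes\alpha^{-1}$ lies in the group that vanishes by hypothesis, and use injectivity of $J$. But from there the paper does not detour through the counit $c_\tau$; it stays with the original extension $0\to \Indu{P}{G}{J}\to\kappa\to\tau\to 0$, observes that $\RR^1\Ord_P(\Indu{P}{G}{J})\hookrightarrow\RR^1\Ord_P\kappa$ is a split injection of injective $T$-objects, rewrites both $\RR^1\Ord_P$ terms as $U$-coinvariants via $\RR^1\Ord_P(-)\cong (-)_U\otimes\alpha$ (which is specific to $G=\GL_2(\Qp)$), and then uses the adjunction $\Hom_G(\kappa, \Indu{P}{G}{J})\cong\Hom_T(\kappa_U, J)$ to turn the split projection $\kappa_U\to J$ into a $G$-equivariant retraction of $\Indu{P}{G}{J}\hookrightarrow\kappa$. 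This is short and avoids the bookkeeping you are attempting.

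Your attempted route has two genuine gaps. First, $\Ord_P C=0$ is not forced: from $0\to\image c_\tau\to\tau\to C\to 0$ you only get that $\Ord_P(\image c_\tau)\to\Ord_P\tau$ is an isomorphism, whence the long exact sequence gives an injection $\Ord_P C\hookrightarrow\RR^1\Ord_P(\image c_\tau)$, which need not be zero. Second, and more seriously, you cannot ``run the spectral sequence \eqref{specseqord} on $K$ and $C$''. That spectral sequence computes $\Ext^{i+j}_{G,\zeta}(\Indu{\overline{P}}{G}{U}, V)$ for $U$ a $T$-representation; its first argument is constrained to be parabolically induced, which $K=\ker c_\tau$ and $C=\coker c_\tau$ are not. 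So the vanishing of $\Ext^1_{G/Z}(K, \Indu{P}{G}{J})$ and $\Hom_{G/Z}(C, \Indu{P}{G}{J})$ that your diagram chase requires does not follow from the tools you invoke. To handle the first variable of $\Ext$ for a general $G$-representation mapping into $\Indu{P}{G}{J}$ you need Frobenius reciprocity on the other side — $\Hom_G(\,\cdot\,,\Indu{P}{G}{J})\cong\Hom_T((\,\cdot\,)_U, J)$ — which is exactly the adjunction the paper uses and which sidesteps $c_\tau$ altogether.
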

\begin{proof} Since by \eqref{ordinduced} and assumption $\Hom_T(\Ord_P \tau, \RR^1\Ord_P \Indu{P}{G}{J})=0$, by applying 
$\Ord_P$ to the extension $0\rightarrow \Indu{P}{G}{J}\rightarrow \kappa\rightarrow \tau\rightarrow 0$ 
we obtain an injection $J\otimes\alpha^{-1}\hookrightarrow \RR^1\Ord_P \kappa$. Since $J$ is injective 
the injection  splits. As $G=\GL_2(\Qp)$ we have  $\RR^1\Ord_P \kappa\cong \kappa_U \otimes \alpha$, where 
subscript $U$ denotes the coinvariants by the unipotent radical of $P$, \cite[3.6.2]{ord2}.  
Thus $\kappa_U\cong  (\Indu{P}{G}{J})_U \oplus \tau_U\cong J\oplus \tau_U$. Since 
$\Hom_G(\kappa, \Indu{P}{G}{J})\cong\Hom_T(\kappa_U, J)$
we obtain a splitting.
\end{proof}

On every $\tau$ in $\Mod^{\mathrm{l adm}}_{G/Z}(k)$ we define an increasing  filtration $\tau^{\bullet}$ by subobjects 
uniquely determined by  1) $\tau^0=0$ and 
2) $\gr^{i+1} \tau:=\tau^{i+1}/\tau^{i}$  is the image of $\Indu{\overline{P}}{G}{\Ord_P(\tau/\tau^i)}\rightarrow \tau/\tau^i.$
Dually on every $M$ in $\dualcat(k)$ we define a decreasing filtration $M^{\bullet}$ by subobjects 
$M^0=M$ and $M^i$ be the kernel of $M\rightarrow ((M^{\vee})^i)^{\vee}$ and let $\gr^i M:= M^i/M^{i+1}$.

\begin{lem}\label{vaca3} The filtration is functorial: for every $\phi:\tau \rightarrow \kappa$ in $\Mod^{\mathrm{ladm}}_{G/Z}(k)$ 
we have $\phi(\tau^i)\subseteq \kappa^i$ and for every $\psi: M\rightarrow N$ in $\dualcat(k)$ we have 
$\psi(M^i)\subseteq N^i$, for all $i\ge 0$. 
\end{lem}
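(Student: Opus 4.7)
The proof is a standard functoriality argument and the plan is to verify the claim by induction on $i$, using that both the filtration and the counit of the adjunction $(\Indu{\overline{P}}{G}{-}, \Ord_P)$ are natural. Concretely, I would show the statement for $\phi:\tau\rightarrow\kappa$ in $\Mod^{\mathrm{ladm}}_{G/Z}(k)$ first, and then deduce the statement for $\psi:M\rightarrow N$ in $\dualcat(k)$ by applying Pontryagin duality, since our filtration $M^{\bullet}$ is defined precisely as the dual of the filtration on $M^\vee$.

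For the induction, the base case $i=0$ is trivial since $\tau^0=0$. Suppose inductively that $\phi(\tau^i)\subseteq \kappa^i$. Then $\phi$ induces $\bar\phi: \tau/\tau^i \rightarrow \kappa/\kappa^i$, so by functoriality of $\Ord_P$ we obtain a morphism $\Ord_P \bar\phi : \Ord_P(\tau/\tau^i) \rightarrow \Ord_P(\kappa/\kappa^i)$, and then by functoriality of $\Indu{\overline{P}}{G}{-}$ together with naturality of the counit $\Indu{\overline{P}}{G}{\Ord_P(-)}\Rightarrow \id$ (coming from \eqref{adjord}), we obtain a commutative square
\begin{equation*}
\xymatrix@1{\Indu{\overline{P}}{G}{\Ord_P(\tau/\tau^i)} \ar[r]\ar[d] & \tau/\tau^i \ar[d]^{\bar\phi}\\
\Indu{\overline{P}}{G}{\Ord_P(\kappa/\kappa^i)} \ar[r] & \kappa/\kappa^i.}
\end{equation*}
Since $\gr^{i+1}\tau$ and $\gr^{i+1}\kappa$ are by definition the images of the horizontal arrows, $\bar\phi$ carries $\gr^{i+1}\tau$ into $\gr^{i+1}\kappa$. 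Lifting to the original representations and using the inductive hypothesis $\phi(\tau^i)\subseteq \kappa^i$, this gives $\phi(\tau^{i+1})\subseteq \kappa^{i+1}$, which completes the induction.

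For the dual statement, given $\psi: M\rightarrow N$ in $\dualcat(k)$, its Pontryagin dual $\psi^\vee: N^\vee \rightarrow M^\vee$ satisfies $\psi^\vee((N^\vee)^i) \subseteq (M^\vee)^i$ by the first part. Since $M^i$ was defined as the kernel of $M \rightarrow ((M^\vee)^i)^\vee$ and similarly for $N^i$, dualizing back shows that $\psi$ maps $M^i$ into $N^i$.

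No real obstacle arises: the whole content is the naturality of $\Ord_P$, of parabolic induction, and of the counit of adjunction, all of which are built into the definitions.
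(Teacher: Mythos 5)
Your proposal is correct and follows essentially the same route as the paper's own argument: induction on $i$, using naturality of the counit $\Indu{\overline{P}}{G}{\Ord_P(-)}\Rightarrow \id$ to transport the top graded piece, with the dual statement obtained by Pontryagin duality. The paper states this more tersely but the content is identical.
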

\begin{proof} Trivially $\phi(\tau^0)\subseteq \kappa^0$. Suppose $\phi(\tau^i)\subseteq \phi(\kappa^i)$ then 
we get a map $\phi: \tau/\tau^i\rightarrow \kappa/\kappa^i$. The natural transformation 
$\Indu{\overline{P}}{G}{\Ord_{P}}\rightarrow \id$ induces a map $\gr^{i+1} \tau \rightarrow \gr^{i+1} \kappa$ and 
hence $\phi(\tau^{i+1})\subseteq \kappa^{i+1}$.
\end{proof}

\begin{lem} Let $J$  be an injective object in $\Mod^{\mathrm{l adm}}_{G/Z}(k)$. 
Then for $i\ge 1$ we have:
\begin{equation}\label{shift}
\Ord_P \gr^{i+1} J\cong \Ord_P J/J^i\cong \RR^1\Ord_P \gr^i J.
\end{equation}
\end{lem}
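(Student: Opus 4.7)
\textit{Proof plan.}

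Set $U_i := \Ord_P(J/J^i)$. The first observation is that $\RR^1\Ord_P$ vanishes on every $J/J^i$. Since $J$ is injective in $\Mod^{\mathrm{l\,adm}}_{G/Z}(k)$, the constant complex $J$ is its own injective resolution, so $\RR^j\Ord_P J = 0$ for all $j\ge 1$. Applying $\Ord_P$ to $0\to J^i\to J\to J/J^i\to 0$ and using $\RR^j\Ord_P = 0$ for $j\ge 2$ (\cite{eff}), one reads off that $\RR^1\Ord_P(J/J^i)=0$ for every $i\ge 0$.

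For the first isomorphism I will exploit that $\gr^{i+1}J$ is the image of the counit $\Indu{\overline{P}}{G}{U_i}\to J/J^i$. Since all irreducible subquotients of $J/J^i$ lie in the block $\BB=\{\Eins,\Sp,\Indu{P}{G}{\alpha}\}$, Lemma \ref{vaca2} gives an exact sequence
\begin{equation*}
0\to K_i\to \Indu{\overline{P}}{G}{U_i}\to \gr^{i+1}J\to 0
\end{equation*}
in which $G$ acts trivially on $K_i$. Because $\Ord_P\Eins = 0$, applying $\Ord_P$ gives an injection $U_i=\Ord_P\Indu{\overline{P}}{G}{U_i}\hookrightarrow \Ord_P\gr^{i+1}J$. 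On the other hand the inclusion $\gr^{i+1}J\hookrightarrow J/J^i$ and left exactness of $\Ord_P$ produce an injection $\Ord_P\gr^{i+1}J\hookrightarrow U_i$. The composite $U_i\to \Ord_P\gr^{i+1}J\to U_i$ is obtained by applying $\Ord_P$ to the adjunction counit $\Indu{\overline{P}}{G}{U_i}\to J/J^i$; under the $(\Indu{\overline{P}}{G}{},\Ord_P)$-adjunction this counit corresponds to $\id_{U_i}$, so the composite is the identity. Hence both arrows are isomorphisms and $\Ord_P\gr^{i+1}J\cong U_i$.

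For the second isomorphism I apply $\Ord_P$ to the short exact sequence $0\to \gr^i J\to J/J^{i-1}\to J/J^i\to 0$ and use the preceding step (at level $i-1$). The resulting six-term sequence reads
\begin{equation*}
0\to \Ord_P\gr^i J\to \Ord_P(J/J^{i-1})\to \Ord_P(J/J^i)\to \RR^1\Ord_P\gr^i J\to \RR^1\Ord_P(J/J^{i-1}).
\end{equation*}
The first arrow is the identity $U_{i-1}\overset{\cong}{\to}U_{i-1}$ by the first part of the proof, so the second arrow is zero and $\Ord_P(J/J^i)\hookrightarrow \RR^1\Ord_P\gr^i J$. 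The cokernel of this injection is contained in $\RR^1\Ord_P(J/J^{i-1})$, which vanishes by the initial observation. Therefore the injection is an isomorphism, establishing $\Ord_P(J/J^i)\cong \RR^1\Ord_P\gr^i J$. The main point is really the adjunction computation in the middle paragraph; the vanishing of $\RR^1\Ord_P$ on $J$ is the easy input that makes the long exact sequence in the last step collapse.
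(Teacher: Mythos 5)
Your proof is correct and follows essentially the same route as the paper's: you establish the first isomorphism by observing that the adjunction counit $\Indu{\overline{P}}{G}{U_i}\to J/J^i$ factors through $\gr^{i+1}J$ and gives the identity on $U_i$ after applying $\Ord_P$, and you deduce the second isomorphism from the long exact sequence for $0\to\gr^iJ\to J/J^{i-1}\to J/J^i\to 0$ together with $\RR^1\Ord_P(J/J^i)=0$.

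The one thing worth flagging is your invocation of Lemma \ref{vaca2}. That lemma requires the hypothesis that $\tau=J/J^i$ has no irreducible quotients outside $\{\Eins,\Sp,\Indu{P}{G}{\alpha}\}$, which is not part of the statement being proved (the lemma is asserted for an arbitrary injective $J$ in $\Mod^{\mathrm{l\,adm}}_{G/Z}(k)$). As it happens, you do not need it: once you know that $\Ord_P\gr^{i+1}J\hookrightarrow U_i$ is injective (left exactness) and that the composite $U_i\to\Ord_P\gr^{i+1}J\hookrightarrow U_i$ equals $\id_{U_i}$, the second arrow is forced to be surjective, hence an isomorphism, and therefore the first arrow is an isomorphism as well. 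There is no need to produce the injection $U_i\hookrightarrow\Ord_P\gr^{i+1}J$ in advance, and dropping that step both removes the unwarranted block assumption and matches the paper's argument, which uses only the factorization of the counit and left exactness.
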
 
\begin{proof}
From $\Indu{\overline{P}}{G}{\Ord_P (J/J^{i-1})}\twoheadrightarrow \gr^{i} J  \hookrightarrow J/J^{i-1}$ and 
left exactness of $\Ord_P$ we deduce that $\Ord_P \gr^i J \overset{\cong}{\rightarrow} \Ord_P J/J^{i-1}$, for all $i\ge 1$. This gives the first isomorphism in \eqref{shift}.
Since $J$ is injective we have $\RR^1\Ord_P J=0$ and since $\RR^2\Ord_P =0$, we get 
$\RR^1\Ord_P J/J^i=0$, for all $i\ge 1$. Thus applying $\Ord_P$ to 
$0\rightarrow \gr^i J\rightarrow J/J^{i-1}\rightarrow J/J^i\rightarrow 0$ we get  the isomorphism 
$ \Ord_P J/J^i\cong \RR^1\Ord_P \gr^i J$, for all $i\ge 1$.   
\end{proof}

\begin{lem}\label{theJ1}  $J^1_{\Eins_G}\cong \Indu{P}{G}{J_{\Eins_T}}$, $J^1_{\pi_{\alpha}}\cong \Indu{P}{G}{J_{\alpha}}$, 
$J^1_{\Sp}\cong (\Indu{P}{G}{J_{\Eins_T}})/\Eins_G$, where $J_{\Eins_T}$ and  $J_{\alpha}$ denote injective envelopes of $\Eins_T$ and $\alpha$ in $\Mod^{\mathrm{ladm}}_{T/Z}(k)$.
\end{lem} 
\begin{proof} The first two isomorphisms follow from the Propositions \ref{resindJNG} and \ref{projectiveandord} (i). Applying $\Ord_P$ to \eqref{alphaJNG}
gives us an isomorphism $\Ord_P(J_{\Sp})\cong \RR^1\Ord_P(\Indu{P}{G}{J_{\alpha}})\cong J_{\alpha}\otimes \alpha^{-1}\cong J_{\Eins_T}$.  The last isomorphism 
follows from \eqref{SpNG}.
\end{proof}

\begin{lem}\label{exactseqJ} For all $i\ge 0$ we have exact sequences $0\rightarrow J^1_{\Eins_G}\rightarrow J^{i+1}_{\Eins_G}\rightarrow 
J^i_{\pi_{\alpha}}\rightarrow 0$, $0\rightarrow J^1_{\pi_{\alpha}}\rightarrow J^{i+1}_{\pi_{\alpha}}\rightarrow 
J^i_{\Sp}\rightarrow 0$,  $0\rightarrow J^2_{\Eins_G}\rightarrow J^{i+2}_{\Eins_G}\rightarrow 
J^i_{\Sp}\rightarrow 0$.
\end{lem}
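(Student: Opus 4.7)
The plan is to reduce all three sequences to a single formal observation about the behaviour of the filtration $\tau \mapsto \tau^\bullet$ under quotients of the form $B/B^j$, and then combine it with the identifications $J_{\Eins_G}/J^1_{\Eins_G} \cong J_{\pi_\alpha}$ and $J_{\pi_\alpha}/J^1_{\pi_\alpha} \cong J_{\Sp}$ coming from Proposition \ref{resindJNG}.

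First I would prove the following auxiliary statement: for every $B$ in $\Mod^{\mathrm{l adm}}_{G/Z}(k)$ and every $j \ge 0$, one has
$$(B/B^j)^i = B^{i+j}/B^j \quad \text{for all } i \ge 0.$$
This proceeds by induction on $i$. The case $i = 0$ is trivial. For the step, assuming $(B/B^j)^i = B^{i+j}/B^j$, the canonical isomorphism $(B/B^j)/(B/B^j)^i \cong B/B^{i+j}$ identifies the image of $\Indu{\overline{P}}{G}{\Ord_P((B/B^j)/(B/B^j)^i)}$ in the quotient with $B^{i+j+1}/B^{i+j}$ (by the very definition of $B^{i+j+1}$); pulling back via the projection $B/B^j \twoheadrightarrow B/B^{i+j}$ then gives $(B/B^j)^{i+1} = B^{i+j+1}/B^j$. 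Combined with the functoriality of the filtration established in Lemma \ref{vaca3}, which forces any isomorphism of objects to carry the filtration onto the filtration, this is the only non-trivial ingredient.

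Now the first exact sequence follows by applying this to $B = J_{\Eins_G}$ with $j = 1$: by \eqref{1JNG} the quotient $J_{\Eins_G}/J^1_{\Eins_G}$ is isomorphic to $J_{\pi_\alpha}$, so that $(J_{\Eins_G}/J^1_{\Eins_G})^i$ corresponds under this isomorphism to $J^i_{\pi_\alpha}$, which gives $J^{i+1}_{\Eins_G}/J^1_{\Eins_G} \cong J^i_{\pi_\alpha}$. The second sequence is entirely analogous, using \eqref{alphaJNG} to identify $J_{\pi_\alpha}/J^1_{\pi_\alpha}$ with $J_{\Sp}$.

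For the third sequence, specialize the first sequence to $i = 1$ to obtain $J^2_{\Eins_G}/J^1_{\Eins_G} \cong J^1_{\pi_\alpha}$; then
$$J_{\Eins_G}/J^2_{\Eins_G} \cong (J_{\Eins_G}/J^1_{\Eins_G})/(J^2_{\Eins_G}/J^1_{\Eins_G}) \cong J_{\pi_\alpha}/J^1_{\pi_\alpha} \cong J_{\Sp}$$
by \eqref{alphaJNG}. Applying the auxiliary statement with $B = J_{\Eins_G}$ and $j = 2$ then yields $J^{i+2}_{\Eins_G}/J^2_{\Eins_G} \cong J^i_{\Sp}$, which is the desired third sequence. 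Since all of the structural work has already been carried out in Proposition \ref{resindJNG}, there is no real obstacle to overcome here; the only point requiring care is to verify the compatibility statement $(B/B^j)^i = B^{i+j}/B^j$ cleanly, which is just a matter of unwinding the definition of the filtration.
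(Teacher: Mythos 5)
Your proof is correct and follows the same approach as the paper's: the paper states the identity $(\tau/\tau^j)^i \cong \tau^{i+j}/\tau^j$ as a direct consequence of the construction of the filtration and then applies the identifications $J_{\Eins_G}/J^1_{\Eins_G}\cong J_{\pi_\alpha}$ and $J_{\pi_\alpha}/J^1_{\pi_\alpha}\cong J_{\Sp}$ from Proposition \ref{resindJNG}, exactly as you do. Your inductive verification of that identity is a fair spelling-out of the "by construction" that the paper leaves to the reader.
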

\begin{proof} By construction of the filtration for each $\tau$ in $\Mod^{\mathrm{ladm}}_{G/Z}(k)$ and $i, j\ge 0$ we have an 
isomorphism $(\tau/\tau^i)^j\cong \tau^{i+j}/\tau^i$. We apply this observation to \eqref{1JNG} and \eqref{alphaJNG}.
\end{proof}  

\begin{lem}\label{extired} $\gr^2 J_{\Sp}\cong \Indu{P}{G}{(J_{\alpha}/\alpha)}$, $e^1(\Eins_G, J^2_{\Sp})=0$.
\end{lem}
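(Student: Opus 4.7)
The plan is to compute $\Ord_P$ of $\gr^2 J_{\Sp}$ using the shift isomorphism \eqref{shift} and the explicit description of $J^1_{\Sp}$ coming from Proposition \ref{resindJNG}, then recognize the resulting $T$-representation as the $\Ord_P$ of an induced representation whose comparison map turns out to be an isomorphism. After that, the Ext vanishing will follow by dévissage along the filtration and the computations of \S\ref{prepa}.

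For the first assertion, I will proceed as follows. By \eqref{shift} applied with $i=1$, $\Ord_P \gr^2 J_{\Sp} \cong \RR^1\Ord_P \gr^1 J_{\Sp} = \RR^1\Ord_P J^1_{\Sp}$. Proposition \ref{resindJNG} gives $J^1_{\Sp} \cong (\Indu{P}{G}{J_{\Eins_T}})/\Eins_G$, so Lemma \ref{vaca1} applied with $\kappa = J_{\Eins_T}$ and $\theta = \Eins$ yields $\RR^1\Ord_P J^1_{\Sp} \cong (J_{\Eins_T}/\Eins)\otimes \alpha^{-1}$. Since $J_{\Eins_T}\otimes \alpha^{-1}$ is an injective envelope of $\alpha^{-1}$ in $\Mod^{\mathrm{ladm}}_{T/Z}(k)$, we identify this with $J_{\alpha^{-1}}/\alpha^{-1}$, and via $f\mapsto [g\mapsto f(sg)]$ we rewrite $\Indu{\overline{P}}{G}{J_{\alpha^{-1}}/\alpha^{-1}}\cong \Indu{P}{G}{J_{\alpha}/\alpha}$. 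The definition of the filtration gives a surjection $\Indu{\overline{P}}{G}{\Ord_P(J_{\Sp}/J^1_{\Sp})}\twoheadrightarrow \gr^2 J_{\Sp}$; it suffices to show it is injective. By \eqref{SpNG} the quotient $J_{\Sp}/J^1_{\Sp}$ embeds into $J_{\pi_{\alpha}}^{\oplus 2}$, so all its irreducible subquotients lie in the block of $\pi_{\alpha}$; in particular the hypothesis of Lemma \ref{vaca2} is satisfied, and the kernel $K$ of the counit map carries the trivial $G$-action. But $\Indu{P}{G}{J_{\alpha}/\alpha}$ has no non-zero $G$-invariants: taking $G$-invariants equals taking $T$-invariants of $J_{\alpha}/\alpha$, which vanishes because every irreducible subquotient of $J_{\alpha}/\alpha$ is isomorphic to $\alpha\neq \Eins_T$. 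Hence $K=0$ and $\gr^2 J_{\Sp}\cong \Indu{P}{G}{J_{\alpha}/\alpha}$.

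For the second assertion, I apply $\Hom_{G/Z}(\Eins_G,\ast)$ to the exact sequence
\begin{equation*}
0\rightarrow J^1_{\Sp}\rightarrow J^2_{\Sp}\rightarrow \gr^2 J_{\Sp}\rightarrow 0.
\end{equation*}
To control the outer terms, first use the defining sequence $0\to \Eins_G\to \Indu{P}{G}{J_{\Eins_T}}\to J^1_{\Sp}\to 0$: since $J_{\Eins_T}$ is injective in $\Mod^{\mathrm{ladm}}_{T/Z}(k)$, Corollary \ref{injtriv} gives $\Ext^i_{G/Z}(\Eins_G, \Indu{P}{G}{J_{\Eins_T}})=0$ for $i\geq 1$, and combined with $e^1(\Eins_G,\Eins_G)=0$ this forces $e^1(\Eins_G, J^1_{\Sp})=0$. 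For the other end, $\gr^2 J_{\Sp}\cong \Indu{P}{G}{J_{\alpha}/\alpha}$; since every irreducible subquotient of $J_{\alpha}/\alpha$ is isomorphic to $\alpha$, Corollary \ref{acyclic1Jal} gives $\Ext^i_{G/Z}(\Eins_G,\Indu{P}{G}{J_{\alpha}/\alpha})=0$ for all $i\geq 0$. The long exact $\Ext$-sequence then yields $e^1(\Eins_G, J^2_{\Sp})=0$.

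The main potential obstacle is the identification $\gr^2 J_{\Sp}\cong \Indu{P}{G}{J_{\alpha}/\alpha}$, specifically ruling out a non-zero kernel in the comparison map; this is handled cleanly by combining Lemma \ref{vaca2} (which makes $K$ trivial as a $G$-module) with the input from \eqref{SpNG} that identifies the irreducible subquotients of $J_{\Sp}/J^1_{\Sp}$, after which the vanishing of $T$-invariants in $J_{\alpha}/\alpha$ finishes the job. The Ext vanishing is then purely formal given the Ord-calculations of \S\ref{ordinaryparts} and the acyclicity criteria in Corollaries \ref{injtriv} and \ref{acyclic1Jal}.
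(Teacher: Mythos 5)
Your proof is correct and follows essentially the same route as the paper: the identification of $\gr^2 J_{\Sp}$ comes from \eqref{shift} and Lemma \ref{vaca1}, and your extra step showing the kernel of $\Indu{\overline{P}}{G}{\Ord_P(J_{\Sp}/J^1_{\Sp})}\rightarrow \gr^2 J_{\Sp}$ vanishes (trivial $G$-action by Lemma \ref{vaca2}, no $G$-invariants since $(J_{\alpha}/\alpha)^T=0$) is exactly the mechanism the paper itself uses in the proof of Proposition \ref{filtrationNG}, while the Ext-vanishing is the same d\'evissage along $0\rightarrow J^1_{\Sp}\rightarrow J^2_{\Sp}\rightarrow \gr^2 J_{\Sp}\rightarrow 0$ with Corollary \ref{acyclic1Jal}. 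One small correction: in your deduction of $e^1(\Eins_G, J^1_{\Sp})=0$ from $0\rightarrow \Eins_G\rightarrow \Indu{P}{G}{J_{\Eins_T}}\rightarrow J^1_{\Sp}\rightarrow 0$, the connecting map lands in $\Ext^2_{G/Z}(\Eins_G,\Eins_G)$, so the fact needed is $e^2(\Eins_G,\Eins_G)=0$ (Corollary \ref{extirest1}), not $e^1(\Eins_G,\Eins_G)=0$ — both are true, so your argument stands; the paper instead obtains this vanishing directly from \eqref{SpNG}, using that $J_{\Sp}$ is injective and $\Hom_G(\Eins_G, J_{\pi_{\alpha}}^{\oplus 2})=0$.
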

\begin{proof} It follows from \eqref{shift} and Lemma \ref{vaca1} that $\Ord_P \gr^2 J_{\Sp}\cong J_{\alpha^{-1}}/\alpha^{-1}$ and hence there is 
a surjection  $\Indu{P}{G}{(J_{\alpha}/\alpha)}\twoheadrightarrow  \gr^2 J_{\Sp}$. The injectivity of this map follows from Lemma \ref{vaca2} and \eqref{SpNG}.
Corollary \ref{acyclic1Jal} implies that 
$e^1(\Eins_G,\gr^2 J_{\Sp})=0$. It follows from \eqref{SpNG} that $e^1(\Eins_G, J^1_{\Sp})=0$. Hence, $e^1(\Eins_G, J^2_{\Sp})=0$.
\end{proof} 

\begin{lem}\label{conf1} Let $0\rightarrow \Eins \rightarrow \kappa\rightarrow \pi_{\alpha}\rightarrow 0$ be a non-split extension. There 
exists an exact sequence $0\rightarrow \kappa\rightarrow J^2_{\Eins_G}\rightarrow J^2_{\Sp}\rightarrow 0$.
\end{lem}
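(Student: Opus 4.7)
The plan is to construct an injection $\kappa\hookrightarrow J^2_{\Eins_G}$ by extending the socle inclusion $\Eins\hookrightarrow J^2_{\Eins_G}$ along $\Eins\hookrightarrow\kappa$, and then identify the cokernel with $J^2_\Sp$. The key preliminary is the vanishing $\Ext^1_{G/Z}(\pi_\alpha,J^2_{\Eins_G})=0$. Applying $\Hom(\pi_\alpha,-)$ to $0\to J^1_{\Eins_G}\to J^2_{\Eins_G}\to J^1_{\pi_\alpha}\to 0$ from Lemma \ref{exactseqJ} and using the identifications $J^1_{\Eins_G}=\Indu{P}{G}{J_{\Eins_T}}$, $J^1_{\pi_\alpha}=\Indu{P}{G}{J_\alpha}$ of Proposition \ref{resindJNG}, the spectral sequence \eqref{ordseq} yields $\Ext^1(\pi_\alpha,J^1_{\pi_\alpha})=0$ and shows both $\Hom(\pi_\alpha,J^1_{\pi_\alpha})$ and $\Ext^1(\pi_\alpha,J^1_{\Eins_G})$ are one-dimensional. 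The needed surjectivity of the connecting map $\partial$ between them follows by comparison with the analogous connecting map from the sequence \eqref{1JNG}, which is an iso since $\Ext^1(\pi_\alpha,J_{\Eins_G})=0$ and $\pi_\alpha=\soc J_{\pi_\alpha}\subseteq J^1_{\pi_\alpha}$.

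Once $\Ext^1(\pi_\alpha,J^2_{\Eins_G})=0$, the inclusion $\Eins\hookrightarrow J^2_{\Eins_G}$ extends to $\iota:\kappa\to J^2_{\Eins_G}$; this $\iota$ is injective since it is injective on the socle $\Eins$. To identify the cokernel $Y:=J^2_{\Eins_G}/\iota(\kappa)$, observe that $\iota(\kappa)\cap J^1_{\Eins_G}=\iota(\Eins)$ because $J^1_{\Eins_G}$ has no $\pi_\alpha$-subquotient. The snake lemma applied to
\begin{displaymath}
\xymatrix@1{0\ar[r] & \Eins\ar[r]\ar@{^(->}[d] & \kappa\ar[r]\ar@{^(->}[d]^-{\iota} & \pi_\alpha\ar[r]\ar@{^(->}[d] & 0\\
0\ar[r] & J^1_{\Eins_G}\ar[r] & J^2_{\Eins_G}\ar[r] & J^1_{\pi_\alpha}\ar[r] & 0}
\end{displaymath}
yields $0\to J^1_{\Eins_G}/\Eins\to Y\to J^1_{\pi_\alpha}/\pi_\alpha\to 0$, which by Proposition \ref{resindJNG} and the proof of Lemma \ref{extired} is $0\to J^1_\Sp\to Y\to \gr^2 J_\Sp\to 0$ (using exactness of parabolic induction for $\Indu{P}{G}{J_\alpha}/\Indu{P}{G}{\alpha}\cong\Indu{P}{G}{J_\alpha/\alpha}$).

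To identify $Y$ with $J^2_\Sp$, I will first verify that $\soc Y=\Sp$. The vanishing $\Hom(\Eins,Y)=0$ follows from $\Ext^1(\Eins,\kappa)=0$ (which is immediate from $\Ext^1(\Eins,\Eins)=\Ext^1(\Eins,\pi_\alpha)=0$), and $\Hom(\pi_\alpha,Y)=0$ reduces to injectivity of the connecting map $\Hom(\pi_\alpha,\gr^2 J_\Sp)\to\Ext^1(\pi_\alpha,J^1_\Sp)$, which I verify via the long exact sequence of $\Hom(\pi_\alpha,-)$ applied to $0\to\Eins\to\Indu{P}{G}{J_{\Eins_T}}\to J^1_\Sp\to 0$ using the known values $e^i(\pi_\alpha,\Eins)$ from the table in \S\ref{hextII} and the $\Ord_P$-computation of $\Ext^*(\pi_\alpha,\Indu{P}{G}{J_{\Eins_T}})$. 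Given $\soc Y=\Sp$, $Y$ embeds into $J_\Sp$ by injectivity. I then compute the filtration on $Y$: since $J^1_\Sp\hookrightarrow Y$ satisfies $(J^1_\Sp)^1=J^1_\Sp$ and $\Ord_P$ is left exact we get $Y^1=J^1_\Sp$; moreover $Y/Y^1\cong\Indu{P}{G}{J_\alpha/\alpha}\cong\Indu{\overline{P}}{G}{(J_\alpha/\alpha)^s}$ lies in the essential image of $\Indu{\overline{P}}{G}{-}$, so the counit $\Indu{\overline{P}}{G}{\Ord_P(Y/Y^1)}\to Y/Y^1$ is an isomorphism, giving $Y^2=Y$. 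By functoriality of the filtration (Lemma \ref{vaca3}), the embedding $Y=Y^2\hookrightarrow J_\Sp$ lands in $J^2_\Sp$, and comparing the two graded pieces $J^1_\Sp$ and $\gr^2 J_\Sp$ shows this is an isomorphism.

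The main obstacle I anticipate is the verification $\soc Y=\Sp$, specifically ruling out $\pi_\alpha$ from the socle of $Y$. This is the only step where the specific extension class of the sequence $0\to J^1_\Sp\to Y\to\gr^2 J_\Sp\to 0$ matters (as opposed to only its graded pieces), and it is what makes the resulting $Y$ actually embed into $J_\Sp$ and hence coincide with $J^2_\Sp$ rather than a potentially split or otherwise distinct extension.
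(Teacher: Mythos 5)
Your overall strategy — embed $\kappa$ into $J^2_{\Eins_G}$, identify the quotient $Y:=J^2_{\Eins_G}/\kappa$ by showing $\soc Y\cong \Sp$ and then comparing with $J^2_\Sp$ via $\Ord_P$ — is the one the paper uses. But there is a genuine gap in your treatment of the socle of $Y$, precisely at the step you flag as "the main obstacle."

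To rule $\pi_\alpha$ out of $\soc Y$ you try to use the filtration sequence $0\to J^1_\Sp\to Y\to\gr^2 J_\Sp\to 0$, reducing the question to injectivity of its connecting map $\partial:\Hom(\pi_\alpha,\gr^2 J_\Sp)\to\Ext^1(\pi_\alpha,J^1_\Sp)$, and then you propose to "verify this" by computing the $\Ext$ groups around the \emph{other} sequence $0\to\Eins\to\Indu{P}{G}{J_{\Eins_T}}\to J^1_\Sp\to 0$. That computation can pin down the dimension of $\Ext^1(\pi_\alpha,J^1_\Sp)$, but it says nothing about the specific map $\partial$: $\partial$ is cup product with the class $[Y]\in\Ext^1(\gr^2 J_\Sp,J^1_\Sp)$, and the argument as written has no access to this class. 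Knowing the two graded pieces of $Y$ (and even that the extension is non-split) does not force $\partial$ to be injective; for instance, $\Hom(\pi_\alpha,\gr^2 J_\Sp)\cong\Hom_T(\alpha,J_\alpha/\alpha)$ is $2$-dimensional, so injectivity is a real constraint that has to be extracted from the specific $Y$. Your acknowledgment of the obstacle is correct, but the proposed verification does not resolve it.

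The fix is the one you already applied to $\Eins$: use the \emph{original} exact sequence $0\to\kappa\to J^2_{\Eins_G}\to Y\to 0$, not the filtration of $Y$. Since $\soc J^2_{\Eins_G}\cong\Eins_G$, we have $\Hom(\pi_\alpha,J^2_{\Eins_G})=0$, so $\Hom(\pi_\alpha,Y)$ injects into $\Ext^1_{G/Z}(\pi_\alpha,\kappa)$, which is $0$ by \eqref{kappa2}. This is what the paper does, and it sidesteps the dependence on $[Y]$ entirely. As a secondary remark, your construction of $\kappa\hookrightarrow J^2_{\Eins_G}$ via showing $\Ext^1(\pi_\alpha,J^2_{\Eins_G})=0$ is correct but more elaborate than needed: since $\soc\kappa\cong\Eins$ and $J_{\Eins_G}$ is injective, $\kappa$ embeds in $J_{\Eins_G}$, and then $\kappa\cap J^1_{\Eins_G}=\Eins$ (as $J^1_{\Eins_G}$ has no $\pi_\alpha$-subquotient) forces the image into $J^2_{\Eins_G}$ — no $\Ext^1$-vanishing required.
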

\begin{proof} We recall from \S\ref{prepa} that $\kappa$ is uniquely determined up to isomorphism, since $e^1(\pi_{\alpha}, \Eins)=1$. Since $\soc_G \kappa\cong 
\Eins$ there exists an injection $\iota: \kappa\hookrightarrow J_{\Eins_G}$. Since $\Eins$ occurs  as a subquotient of $\kappa$  
with  multiplicity one, $\Hom_G(\kappa, J_{\Eins_G})$ is $1$-di\-men\-sio\-nal and so the image of $\iota$ does not depend on 
the choice of $\iota$. Since $\Eins\hookrightarrow J_{\Eins_G}$ is essential $J^1_{\Eins_G}\cap \kappa\neq 0$. Since 
$J^1_{\Eins_G}\cong \Indu{P}{G}{J_{\Eins_T}}$ by Lemma \ref{theJ1}, it does not contain $\pi_{\alpha}$ as a subquotient and we deduce that $J^1_{\Eins_G}\cap \kappa=\Eins$ and hence we have an injection 
$\pi_{\alpha}\cong \kappa/\Eins \hookrightarrow J_{\Eins_{G}}/J^1_{\Eins_G}\cong J_{\pi_{\alpha}}$. Hence, 
$\kappa$ is contained in $J^2_{\Eins_G}$ and  since $J^2_{\Eins_G}/J^1_{\Eins_G}\cong J^1_{\pi_{\alpha}}\cong \Indu{P}{G}{J_\alpha}$ by Lemmas \ref{exactseqJ} and \ref{theJ1}, we obtain an exact sequence 
\begin{equation}\label{yetaneq}
 0\rightarrow \Indu{P}{G}{J_{\Eins_T}}/\Eins_G \rightarrow J^2_{\Eins_G}/\kappa\rightarrow \Indu{P}{G}{(J_{\alpha}/\alpha)}\rightarrow 0.
\end{equation}
Since $J_{\Sp}$ is injective there exist a map $J^2_{\Eins_G}/\kappa \rightarrow J_{\Sp}$ extending the injection 
$\Indu{P}{G}{J_{\Eins_T}}/\Eins_G\cong J^1_{\Sp}\hookrightarrow J_{\Sp}$.  
Since $e^1(\pi_{\alpha}, \kappa)=0$, see \eqref{kappa2}, and $\Hom_G(\pi_{\alpha}, J^2_{\Sp})=0$ we obtain  $\Hom_G(\pi_{\alpha}, J^2_{\Eins_G}/\kappa)=0$ and 
it follows from \eqref{yetaneq} that $\soc_G J^2_{\Eins_G}/\kappa\cong \soc_G \Indu{P}{G}{J_{\Eins_T}}/\Eins_G$, which implies that the map $J^2_{\Eins_G}/\kappa \rightarrow J_{\Sp}$, 
constructed above, is an embedding, as it induces an isomorphism on $G$-socles.
By applying $\Ord_P$ to \eqref{yetaneq} and using \eqref{shift} we obtain isomorphisms 
$$ \Ord_P ((J^2_{\Eins_G}/\kappa)/J^1_{\Sp})\cong \RR^1\Ord_P J^1_{\Sp}\cong \Ord_P (J_{\Sp}/J^1_{\Sp}),$$
and so $J^2_{\Eins_G}/\kappa\cong J^2_{\Sp}$. Thus \eqref{yetaneq} coincides with the tautological exact sequence $0\rightarrow J^1_{\Sp}\rightarrow J^2_{\Sp}\rightarrow \gr^2 J_{\Sp}\rightarrow 0$.
\end{proof}

\begin{lem}\label{conf2} $e^1(\pi(0,1), J^2_{\Sp})=1$.
\end{lem}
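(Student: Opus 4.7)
The plan is to exploit the short exact sequence $0 \to \kappa \to J^2_{\Eins_G} \to J^2_{\Sp} \to 0$ provided by Lemma \ref{conf1}, where $\kappa$ is the non-split extension of $\pi_\alpha$ by $\Eins$. Applying $\Hom_{G/Z}(\pi(0,1), -)$ yields a long exact sequence whose outer terms I can compute using the earlier $\Ext$-calculations, in particular Lemma \ref{p01k} which already supplies $e^1(\pi(0,1), \kappa)=2$ and $e^2(\pi(0,1), \kappa)=1$.

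First I will compute $\Ext^i(\pi(0,1), J^2_{\Eins_G})$ for $i\leq 2$ using Lemma \ref{exactseqJ}, which gives $0\to J^1_{\Eins_G}\to J^2_{\Eins_G}\to J^1_{\pi_\alpha}\to 0$ with $J^1_{\Eins_G}\cong \Indu{P}{G}{J_{\Eins_T}}$ and $J^1_{\pi_\alpha}\cong \Indu{P}{G}{J_\alpha}$. Applying $\Hom_{G/Z}(-, \Indu{P}{G}{J_{\Eins_T}})$ to $0\to \Sp\to \pi(0,1)\to \Eins\to 0$ and invoking Corollary \ref{injtriv} kills all higher $\Ext$'s and yields $\Hom(\pi(0,1), \Indu{P}{G}{J_{\Eins_T}})=k$. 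Similarly, applying $\Hom_{G/Z}(-, \Indu{P}{G}{J_\alpha})$ to the same sequence and combining Corollary \ref{acyclic1Jal} (which kills $\Ext^i(\Eins, \Indu{P}{G}{J_\alpha})$) with Proposition \ref{resP} and Lemma \ref{UindJ} (which compute $\Ext^i(\Sp, \Indu{P}{G}{J_\alpha})$ via $\Indu{P}{G}{\Eins}$, giving $k$ in degree $1$ and $0$ elsewhere) yields $\Ext^1(\pi(0,1), \Indu{P}{G}{J_\alpha})=k$ with all other degrees zero. Chasing the long exact sequence then gives $\Hom(\pi(0,1), J^2_{\Eins_G})=k$, $\Ext^1(\pi(0,1), J^2_{\Eins_G})=k$, and $\Ext^2(\pi(0,1), J^2_{\Eins_G})=0$.

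Next I compute $\Hom_{G/Z}(\pi(0,1), \kappa)=k$: apply $\Hom(-, \kappa)$ to $0\to \Sp\to \pi(0,1)\to \Eins\to 0$, using $\Hom(\Sp, \kappa)=0$ (which follows from the defining sequence of $\kappa$ since $\Hom(\Sp, \Eins)=\Hom(\Sp, \pi_\alpha)=0$) and $\Hom(\Eins, \kappa)=\Hom(\Eins, \soc \kappa)=k$. Moreover the natural map $\Hom(\pi(0,1), \kappa)\to \Hom(\pi(0,1), J^2_{\Eins_G})$ is an isomorphism of $1$-dimensional spaces, since a non-zero map $\pi(0,1)\to \kappa$ necessarily factors through the cosocle $\Eins$ of $\pi(0,1)$ and the socle $\Eins$ of $\kappa$, and then composing with $\kappa\hookrightarrow J^2_{\Eins_G}$ is non-zero because $\soc J^2_{\Eins_G}=\Eins$.

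Feeding these values into the long exact sequence induced by $0\to \kappa\to J^2_{\Eins_G}\to J^2_{\Sp}\to 0$ yields
\begin{equation*}
0\to k \overset{\cong}{\to} k \to \Hom(\pi(0,1), J^2_{\Sp}) \to k^{\oplus 2} \to k \to \Ext^1(\pi(0,1), J^2_{\Sp}) \to k \to 0,
\end{equation*}
so an exactness count forces $\dim \Hom(\pi(0,1), J^2_{\Sp})=\dim \Ext^1(\pi(0,1), J^2_{\Sp})$. To pin down the common value, I independently compute $\Hom(\pi(0,1), J^2_{\Sp})=k$ via the alternative sequence $0\to J^1_{\Sp}\to J^2_{\Sp}\to \gr^2 J_{\Sp}\to 0$: Lemma \ref{extired} identifies $\gr^2 J_{\Sp}\cong \Indu{P}{G}{(J_\alpha/\alpha)}$, so Corollary \ref{acyclic1Jal} applied to $0\to \Sp\to \pi(0,1)\to \Eins\to 0$ gives $\Hom(\pi(0,1), \gr^2 J_{\Sp})\cong \Hom(\Sp, \gr^2 J_{\Sp})=0$ (the latter from $\Hom(\Indu{P}{G}{\Eins}, \Indu{P}{G}{J_\alpha/\alpha})=0$ via Proposition \ref{resP}), while a direct computation analogous to the one above gives $\Hom(\pi(0,1), J^1_{\Sp})=k$. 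This forces $\Hom(\pi(0,1), J^2_{\Sp})=k$, and hence $e^1(\pi(0,1), J^2_{\Sp})=1$. The main obstacle is the careful bookkeeping of the map $\Hom(\pi(0,1),\kappa)\to \Hom(\pi(0,1), J^2_{\Eins_G})$, which one must verify is an isomorphism in order to extract the dimension count from the long exact sequence.
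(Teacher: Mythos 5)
Your proof is correct and follows essentially the same route as the paper: both exploit the exact sequence $0\to\kappa\to J^2_{\Eins_G}\to J^2_{\Sp}\to 0$ from Lemma \ref{conf1} together with the computations $e^1(\pi(0,1),\kappa)=2$, $e^2(\pi(0,1),\kappa)=1$ from Lemma \ref{p01k}. The minor differences are cosmetic: the paper reads off $e^i(\pi(0,1),J^2_{\Eins_G})$ from the combined injective sequence $0\to J^2_{\Eins_G}\to J_{\Eins_G}\to J_{\Sp}\to 0$ and asserts $e^0(\pi(0,1),J^2_{\Sp})=1$ directly from \eqref{SpNG}, then unwinds the long exact sequence to get $\Ext^1(\pi(0,1),J^2_{\Sp})\cong\Ext^2(\pi(0,1),\kappa)$, whereas you use the one-step filtration sequences of Lemma \ref{exactseqJ} for the $\Ext$ input, supply the $\Hom$-level bookkeeping explicitly, and close with an Euler characteristic count combined with an independent computation of $\Hom(\pi(0,1),J^2_{\Sp})$; these are two equivalent ways of extracting the same dimension from the same long exact sequence.
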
 
\begin{proof} Combining \eqref{1JNG} and \eqref{alphaJNG} we obtain an exact sequence 
$0\rightarrow J^2_{\Eins_G}\rightarrow J_{\Eins_G}\rightarrow J_{\Sp}\rightarrow 0$. Hence, 
$e^1(\pi(0,1), J^2_{\Eins_G})=1$ and $e^2(\pi(0,1), J^2_{\Eins_G})=0$. Since $\Sp$ occurs only once as a subquotient of $\pi(0,1)$
we have $e^0(\pi(0,1), J_{\Sp})=1$. 
It follows from \eqref{SpNG}  and Lemma \ref{theJ1} that any map $\pi(0,1)\rightarrow J_{\Sp}$ has image lying in $J^1_{\Sp}$, and hence in $J^2_{\Sp}$. Thus
$e^0(\pi(0,1), J^2_{\Sp})=1$. We apply $\Hom_G(\pi(0,1), \ast)$ to the exact sequence of Lemma \ref{conf1}.
Since $e^1(\pi(0,1), \kappa)=2$, Lemma \ref{p01k}, we obtain an isomorphism $\Ext^1_{G/Z}(\pi(0,1), J^2_{\Sp})\cong 
\Ext^2_{G/Z}(\pi(0,1), \kappa)$. The assertion follows from Lemma \ref{p01k}.
\end{proof} 

It follows from Lemma \ref{extired} and \eqref{shift} that $\Ord_P \gr^3 J_{\Sp}\cong J_{\Eins_T}/\Eins_T$. Moreover, 
since $e^1(\Eins_G, J^2_{\Sp})=0$ by Lemma \ref{extired}, we have an exact sequence 
\begin{equation}
0\rightarrow (\Indu{P}{G}{(J_{\Eins_T}/\Eins_T)})^G\rightarrow \Indu{P}{G}{(J_{\Eins_T}/\Eins_T)}\rightarrow 
\gr^3 J_{\Sp}\rightarrow 0.
\end{equation}
In particular, $\Hom_G(\Eins, \gr^3 J_{\Sp})=\Hom_G(\pi_{\alpha}, \gr^3 J_{\Sp})=0$ and
$$\Hom_G(\Sp, \gr^3 J_{\Sp})\cong \Hom_G(\Indu{P}{G}{\Eins}, \gr^3 J_{\Sp})\cong \Hom_T(\Eins, J_{\Eins_T}/\Eins_T),$$
where the last isomorphism follows from Lemma \ref{vaca1}. Hence, $\soc_G \gr^3 J_{\Sp}\cong \Sp^{\oplus 2}$ and we have an 
isomorphism
\begin{equation}\label{J3soc}
 \gr^3 J_{\Sp}/\soc_G \gr^3 J_{\Sp} \cong \Indu{P}{G}{(J_{\Eins_T}/\soc^2_T J_{\Eins_T})}.
\end{equation}
Since $e^1(\Eins, \Eins)=e^2(\Eins, \Eins)=0$ we have isomorphisms:
$$\Ext^1_{G/Z}(\Eins, \gr^3 J_{\Sp})\cong \Ext^1_{G/Z}(\Eins, \Indu{P}{G}{(J_{\Eins_T}/\Eins_T)})\overset{\ref{injtriv}}{\cong} 
\Ext^1_{T/Z}(\Eins, J_{\Eins_T}/\Eins_T).$$
In particular, $e^1_{G/Z}(\Eins, \gr^3 J_{\Sp})=e^2_{T/Z}(\Eins, \Eins)=1$. Using \eqref{J3soc} we deduce that 
$$\Hom_G(\Eins, \gr^3 J_{\Sp}/\soc_G \gr^3 J_{\Sp})\cong \Hom_T(\Eins, J_{\Eins_T}/\soc^2_T J_{\Eins_T})$$
 is $3$-di\-men\-sio\-nal. Since $e^1(\Eins, \soc_G \gr^3 J_{\Sp})=e^1(\Eins, \Sp^{\oplus 2})=4$ we deduce that the natural map 
\begin{equation}\label{conf3} 
\Ext^1_{G/Z}(\Eins, \gr^3 J_{\Sp})\rightarrow \Ext^1_{G/Z}(\Eins, \gr^3 J_{\Sp}/\soc_G \gr^3 J_{\Sp})
\end{equation} 
is zero.
\begin{prop}\label{firstzero} $e^1(\Eins, J^3_{\Sp})=0$.
\end{prop}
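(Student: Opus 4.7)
The plan is to apply the left-exact functor $\Hom_G(\Eins, -)$ to the short exact sequence $0 \to J^2_{\Sp} \to J^3_{\Sp} \to \gr^3 J_{\Sp} \to 0$ and extract $\Ext^1_{G/Z}(\Eins, J^3_{\Sp})$ from the resulting long exact sequence. Three inputs are needed, all extractable from earlier material. First, devissage along $0 \to J^1_{\Sp} \to J^2_{\Sp} \to \gr^2 J_{\Sp} \to 0$, combined with Corollary \ref{injtriv} applied to $\Indu{P}{G}{J_{\Eins_T}}$ and Corollary \ref{acyclic1Jal} applied to $\gr^2 J_{\Sp} \cong \Indu{P}{G}{(J_\alpha/\alpha)}$, gives $\Ext^1_{G/Z}(\Eins, J^2_{\Sp}) = 0$ and an isomorphism $\Ext^2_{G/Z}(\Eins, J^2_{\Sp}) \cong \Ext^2_{G/Z}(\Eins, J^1_{\Sp})$. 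Second, applying $\Hom_G(\Eins, -)$ to $0 \to \Eins_G \to \Indu{P}{G}{J_{\Eins_T}} \to J^1_{\Sp} \to 0$, using the vanishing of $\Ext^i_{G/Z}(\Eins, \Indu{P}{G}{J_{\Eins_T}})$ for $i \ge 1$ together with the value $e^3(\Eins, \Eins) = 1$ from the table, identifies $\Ext^2_{G/Z}(\Eins, J^1_{\Sp})$ with $\Ext^3_{G/Z}(\Eins, \Eins) = k$. Third, the computation immediately above the proposition shows $\Ext^1_{G/Z}(\Eins, \gr^3 J_{\Sp}) = k$. Hence the long exact sequence collapses to
\[
0 \to \Ext^1_{G/Z}(\Eins, J^3_{\Sp}) \to k \xrightarrow{\ \delta\ } k,
\]
and the claim reduces to the assertion that $\delta$ is non-zero.

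To make $\delta$ concrete, I would invoke Lemma \ref{conf1}: the exact sequence $0 \to \kappa \to J^2_{\Eins_G} \to J^2_{\Sp} \to 0$, together with $\Ext^i_{G/Z}(\Eins, J^2_{\Eins_G}) = 0$ for $i \ge 1$ (which follows by devissage along \eqref{1JNG} from Corollaries \ref{injtriv} and \ref{acyclic1Jal}), gives $\Ext^2_{G/Z}(\Eins, J^2_{\Sp}) \cong \Ext^3_{G/Z}(\Eins, \kappa)$. The defining sequence \eqref{defikappa} for $\kappa$ and the vanishing of $\Ext^i_{G/Z}(\Eins, \pi_\alpha)$ for all $i$ then identify this with $\Ext^3_{G/Z}(\Eins, \Eins)$. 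Under these successive identifications, $\delta(c)$ for the generator $c \in \Ext^1_{G/Z}(\Eins, \gr^3 J_{\Sp})$ becomes an explicit 3-fold Yoneda product, assembled from $c$ itself, the extension $0 \to J^2_{\Sp} \to J^3_{\Sp} \to \gr^3 J_{\Sp} \to 0$, the sequence of Lemma \ref{conf1}, and \eqref{defikappa}.

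The hard part is verifying that this 3-extension is non-zero in the one-dimensional space $\Ext^3_{G/Z}(\Eins, \Eins)$. Here I would exploit the vanishing of the natural map \eqref{conf3}, which lifts $c$ to a concrete non-trivial class $\tilde c \in \Ext^1_{G/Z}(\Eins, \Sp^{\oplus 2})$ coming from the socle of $\gr^3 J_{\Sp}$; tracing $\tilde c$ through the cascade of connecting maps down to $\Ext^3_{G/Z}(\Eins, \Eins)$, the resulting class should match, up to a non-zero scalar, the Poincaré-duality generator of $H^3(I_1/Z_1, k)$ that yields the isomorphism $\RR^3 \II(\Eins) \cong \II(\Eins)$ in Proposition \ref{RIi1}. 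A cleaner alternative is to invoke injectivity of $J_{\Sp}$: since $\Hom_G(\Eins, J_{\Sp}) = 0 = \Ext^1_{G/Z}(\Eins, J_{\Sp})$, the long exact sequence from $0 \to J^3_{\Sp} \to J_{\Sp} \to J_{\Sp}/J^3_{\Sp} \to 0$ gives $\Ext^1_{G/Z}(\Eins, J^3_{\Sp}) \cong \Hom_G(\Eins, J_{\Sp}/J^3_{\Sp})$, and the proposition becomes equivalent to the statement that $J_{\Sp}/J^3_{\Sp}$ carries no non-zero $G$-fixed vector; this I would attack via the functoriality of the filtration (Lemma \ref{vaca3}) together with Lemma \ref{vaca2}, which force any putative copy of $\Eins$ to clash with the $\Ord_P$-analysis of the successive graded pieces.
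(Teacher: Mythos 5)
Your opening reduction is correct and is a perfectly good reformulation: applying $\Hom_G(\Eins,-)$ to $0\to J^2_{\Sp}\to J^3_{\Sp}\to \gr^3 J_{\Sp}\to 0$, and computing $\Ext^1_{G/Z}(\Eins,J^2_{\Sp})=0$, $\Ext^2_{G/Z}(\Eins,J^2_{\Sp})\cong \Ext^3_{G/Z}(\Eins,\Eins)\cong k$ (via Corollaries \ref{injtriv}, \ref{acyclic1Jal} and the table) and $\Ext^1_{G/Z}(\Eins,\gr^3 J_{\Sp})\cong k$, does reduce the proposition to the non-vanishing of the connecting map $\delta\colon k\to k$; likewise your alternative reduction, via injectivity of $J_{\Sp}$, to $\Hom_G(\Eins, J_{\Sp}/J^3_{\Sp})=0$ is correct. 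But this is only a restatement: the non-vanishing of $\delta$ (equivalently, the absence of a trivial subrepresentation in $J_{\Sp}/J^3_{\Sp}$) is exactly the content of the proposition, and neither of your two sketches actually proves it. In the first sketch, after the (correct) identifications through Lemma \ref{conf1} and \eqref{defikappa}, the decisive step is the sentence ``the resulting class should match, up to a non-zero scalar, the Poincar\'e-duality generator'' --- no argument is offered for this, and it is precisely the hard point.

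The second sketch cannot be closed with the tools you name. The $\Ord_P$-analysis of the graded pieces, Lemma \ref{vaca2} and the functoriality of the filtration (Lemma \ref{vaca3}) are all blind to trivial subrepresentations, because $\Ord_P\Eins=0$ and the odd-numbered graded pieces of $J_{\Sp}$ (quotients of $\Indu{P}{G}{\kappa}$ with $\kappa$ built out of trivial characters of $T$) do contain $\Eins$ as a subquotient; indeed $\Ext^1_{G/Z}(\Eins,\gr^3 J_{\Sp})$ is $1$-dimensional, so pure devissage along the filtration cannot rule out a copy of $\Eins$ in $J_{\Sp}/J^3_{\Sp}$ --- contrast with the step one level down, $e^1(\Eins,J^2_{\Sp})=0$ (Lemma \ref{extired}), which succeeds only because $\gr^2 J_{\Sp}$ is of $\alpha$-type. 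Some genuinely non-formal input is required, and in the paper it is Lemma \ref{conf2}, $e^1(\pi(0,1),J^2_{\Sp})=1$, which rests on the exact sequence $0\to\kappa\to J^2_{\Eins_G}\to J^2_{\Sp}\to 0$ of Lemma \ref{conf1} and the Hecke-theoretic computation $e^1(\pi(0,1),\kappa)=2$, $e^2(\pi(0,1),\kappa)=1$ of Lemma \ref{p01k}: the paper assumes $e^1(\Eins,J^3_{\Sp})\neq 0$, uses the vanishing of \eqref{conf3} and a dimension count in $\Ext^1_{G/Z}(\Eins,\Sp^{\oplus 2})$ to manufacture an extension $0\to J^2_{\Sp}\to\tau'\to\tau_1^{\oplus 2}\to 0$ with socle $\Sp$, and derives $e^1(\pi(0,1),J^2_{\Sp})\ge 2$, a contradiction. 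Your proposal never invokes Lemma \ref{conf2}, Lemma \ref{p01k}, or any substitute for them, so the essential step is missing; to repair either sketch you would need to supply such an input (for instance by quantifying extensions of $\pi(0,1)$, or by an explicit cocycle/cup-product computation pinning down the $3$-fold Yoneda class), not merely the formal identifications.
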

\begin{proof}
 Let $\tau$ be the subrepresentation of $J^3_{\Sp}$ such that $\tau$ contains  $J^2_{\Sp}$ and 
$\tau/J^2_{\Sp}\cong \soc_G \gr^3 J_{\Sp}$. In particular, $J^3_{\Sp}/\tau = \gr^3 J_{\Sp}/\soc_G \gr^3 J_{\Sp}$. We have a commutative diagram: 
\begin{displaymath}
\xymatrix@1{\Ext^1_{G/Z}(\Eins, J^3_{\Sp})\ar@{^(->}[d]^{\ref{extired}}\ar[r]& 
\Ext^1_{G/Z}(\Eins, J^3_{\Sp}/\tau)  \ar[d]^{=}\\ 
\Ext^1_{G/Z}(\Eins, \gr^3 J_{\Sp})\ar[r]^{0}_{\eqref{conf3}}& 
\Ext^1_{G/Z}(\Eins, J^3_{\Sp}/\tau) .}
\end{displaymath} 
Hence, the top horizontal arrow is zero and we obtain an exact sequence:
\begin{equation}
0\rightarrow \Hom_{G}(\Eins, J^3_{\Sp}/\tau)\rightarrow \Ext^1_{G/Z}(\Eins, \tau)\rightarrow \Ext^1_{G/Z}(\Eins, J^3_{\Sp})\rightarrow 0.
\end{equation}
As $(J^3_{\Sp}/\tau)^G\cong (J_{\Eins_T}/\soc^2_T J_{\Eins_T})^T$ is $3$-dimensional, if $e^1(\Eins, J^3_{\Sp})\neq 0$ then 
$e^1(\Eins,\tau)\ge 4$. As $e^1(\Eins, J^2_{\Sp})=0$ we have an injection $\Ext^1_{G/Z}(\Eins, \tau)\hookrightarrow 
\Ext^1_{G/Z}(\Eins, \tau/J^2_{\Sp})$. Since $\tau/J^2_{\Sp}\cong \Sp^{\oplus 2}$, 
$e^1(\Eins, \tau/J^2_{\Sp})=4$ and the injection must be an isomorphism. This implies the existence of an exact sequence
\begin{equation}\label{contra1}
 0\rightarrow J^2_{\Sp}\rightarrow \tau'\rightarrow \tau_1^{\oplus 2}\rightarrow 0,
\end{equation}
with $\soc_G \tau'\cong \Sp$, where $\tau_1$ is the representation defined by \eqref{defipi1}. Since $\soc_G \tau'\cong \Sp$, we have 
 $e^0(\pi(0,1), J^2_{\Sp})=e^0(\pi(0,1), \tau')=1$. Applying $\Hom_{G/Z}(\pi(0,1), \ast)$ 
to \eqref{contra1} we deduce that $e^1(\pi(0,1), J^2_{\Sp})\ge e^0(\pi(0,1), \tau_1^{\oplus 2})\ge 2$. This contradicts Lemma \ref{conf2}.    
\end{proof} 

\begin{cor}\label{firstzero1} $e^1(\Eins, J^4_{\pi_{\alpha}})=0$. 
\end{cor}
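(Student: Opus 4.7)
The plan is to deduce this corollary directly from the exact sequence of Lemma \ref{exactseqJ} together with Proposition \ref{firstzero} and Corollary \ref{acyclic1Jal}.

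Taking $i=3$ in the second exact sequence of Lemma \ref{exactseqJ} gives
\begin{equation*}
0\rightarrow J^1_{\pi_{\alpha}}\rightarrow J^4_{\pi_{\alpha}}\rightarrow J^3_{\Sp}\rightarrow 0.
\end{equation*}
Applying $\Hom_{G/Z}(\Eins,\ast)$ to this sequence produces an exact sequence
\begin{equation*}
\Ext^1_{G/Z}(\Eins, J^1_{\pi_{\alpha}})\rightarrow \Ext^1_{G/Z}(\Eins, J^4_{\pi_{\alpha}})\rightarrow \Ext^1_{G/Z}(\Eins, J^3_{\Sp}).
\end{equation*}
It therefore suffices to show that both the outer terms vanish.

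For the right-hand term, this is precisely the content of Proposition \ref{firstzero}. For the left-hand term, recall from Proposition \ref{resindJNG} that $J^1_{\pi_{\alpha}}\cong \Indu{P}{G}{J_{\alpha}}$, where $J_{\alpha}$ is the injective envelope of $\alpha$ in $\Mod^{\mathrm{ladm}}_{T/Z}(k)$. Since all irreducible subquotients of $J_{\alpha}$ are isomorphic to $\alpha$, and in particular $\Hom_T(\chi, J_{\alpha})=0$ for every irreducible $\chi\neq \alpha$, Corollary \ref{acyclic1Jal} applies and yields $\Ext^i_{G/Z}(\Eins, \Indu{P}{G}{J_{\alpha}})=0$ for all $i\ge 0$; in particular $\Ext^1_{G/Z}(\Eins, J^1_{\pi_{\alpha}})=0$. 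This gives the desired vanishing, and there is no substantive obstacle beyond verifying that Lemma \ref{exactseqJ} does provide the above short exact sequence for $i=3$, which is immediate from its statement.
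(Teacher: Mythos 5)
Your proof is correct and follows the same route as the paper: the short exact sequence $0\rightarrow J^1_{\pi_\alpha}\rightarrow J^4_{\pi_\alpha}\rightarrow J^3_{\Sp}\rightarrow 0$ from Lemma \ref{exactseqJ}, with Corollary \ref{acyclic1Jal} handling $J^1_{\pi_\alpha}$ and Proposition \ref{firstzero} handling $J^3_{\Sp}$. The only cosmetic difference is that the paper notes $e^i(\Eins, J^1_{\pi_\alpha})=0$ for all $i$ to conclude $e^1(\Eins, J^4_{\pi_\alpha})=e^1(\Eins, J^3_{\Sp})$ as an equality, whereas you sandwich the middle term between two zeros, which is equally valid.
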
 
\begin{proof} Lemma \ref{exactseqJ} gives an exact sequence 
$0\rightarrow J^1_{\pi_{\alpha}}\rightarrow J^4_{\pi_{\alpha}}\rightarrow J^3_{\Sp}\rightarrow 0$.
Corollary \ref{acyclic1Jal} says that $e^i(\Eins, J^1_{\pi_{\alpha}})=0$ for
all $i\ge 0$. Hence, $e^1(\Eins, J^4_{\pi_{\alpha}})=e^1(\Eins, J^3_{\Sp})=0$, where the last equality follows from 
Proposition \ref{firstzero}.  
\end{proof} 

The following technical result will be useful later, in the arguments of  \S \ref{Thecentre}.
\begin{lem}\label{Y1} Let $J$ be an injective envelope  of $\Eins_G$ or $\pi_{\alpha}$ in $\Mod^{\mathrm{ladm}}_{G/Z}(k)$. The exact 
sequence $0\rightarrow J^1\rightarrow J \rightarrow J/J^1\rightarrow 0$ induces  isomorphisms $\End_G(J)\cong \End_G(J/J^1)$,  
$\End_G(\gr^1 J)\cong \End_G(\gr^1(J/J^1))$.
\end{lem}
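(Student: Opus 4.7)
The key observation is that both $\End_G(J)$ and $\End_G(J/J^1)$ should be naturally isomorphic to $\End_T(\Ord_P J) \cong k[[x,y]]$, where the latter isomorphism comes from Proposition \ref{projTistfree}. First I would note that by Proposition \ref{projectiveandord}, $J$ is an injective envelope of $\Indu{\overline{P}}{G}{\chi}$ with $\chi=\Eins_T$ when $J=J_{\Eins_G}$ (since $\Eins_G \hookrightarrow \Indu{\overline{P}}{G}{\Eins_T}$ essentially, the socle of the induced representation being $\Eins_G$) and $\chi=\alpha^{-1}$ when $J=J_{\pi_\alpha}$ (since $\pi_\alpha=\Indu{P}{G}{\alpha}\cong\Indu{\overline{P}}{G}{\alpha^{-1}}$). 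Consequently $J^1=\Indu{\overline{P}}{G}{\Ord_P J}$ with $\Ord_P J = J_\chi$, and via \eqref{1JNG} and \eqref{alphaJNG} the quotient $J/J^1$ is again an injective envelope of an irreducible (namely $J_{\pi_\alpha}$ or $J_{\Sp}$).

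The plan for the first isomorphism is to compute both endomorphism rings concretely. Applying $\Hom_G(-,J)$ to $0\to J^1\to J\to J/J^1\to 0$ and using injectivity of $J$ yields a short exact sequence
\begin{equation*}
0 \to \Hom_G(J/J^1, J) \to \End_G(J) \to \Hom_G(J^1, J) \to 0.
\end{equation*}
I would show $\Hom_G(J/J^1,J)=0$ using a socle argument combined with functoriality of the filtration to reduce to the vanishing of $\Hom_T(\Ord_P(J/J^1),\Ord_P J)$, which holds because for $p\ge 5$ the relevant characters ($\Eins_T$ and $\alpha^{-1}$) lie in distinct blocks of $\Mod^{\mathrm{ladm}}_{T/Z}(k)$. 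By the adjunction $\Indu{\overline{P}}{G}\dashv \Ord_P$ together with \eqref{ordandind}, we have $\Hom_G(J^1,J)\cong \Hom_T(\Ord_P J,\Ord_P J)=\End_T(\Ord_P J)\cong k[[x,y]]$. Applying the same argument to $J/J^1$ gives $\End_G(J/J^1)\cong \End_T(\Ord_P(J/J^1))\cong k[[x,y]]$.

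Next, I would show the natural map $\End_G(J)\to \End_G(J/J^1)$ is surjective. Applying $\Hom_G(J,-)$ to the exact sequence, the kernel is $\Hom_G(J,J^1)$ and the image lands in $\End_G(J/J^1)\subseteq \Hom_G(J,J/J^1)$. Surjectivity onto $\End_G(J/J^1)$ reduces to $\Hom_G(J^1,J/J^1)=0$ (again via adjointness and distinct blocks) and $\Ext^1_G(J,J^1)=0$. The latter follows from Lemma \ref{technik}: one needs $\Hom_T(\Ord_P J, \Ord_P J\otimes \alpha^{-1})=0$, which for $J=J_{\Eins_G}$ becomes $\Hom_T(J_{\Eins_T},J_{\alpha^{-1}})=0$ and for $J=J_{\pi_\alpha}$ becomes $\Hom_T(J_{\alpha^{-1}},J_{\Eins_T})=0$, both holding since for $p\ge 5$ the characters are distinct. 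Having a surjection between two local complete rings each abstractly isomorphic to $k[[x,y]]$, and noting that the only prime $I\subset k[[x,y]]$ with $k[[x,y]]/I\cong k[[x,y]]$ is $I=0$ (from Krull dimension and regularity), we conclude that the natural map is an isomorphism.

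For the second isomorphism, I would observe that Proposition \ref{projectiveandord}(iii) gives natural surjections $\End_G(J)\twoheadrightarrow \End_G(\gr^1 J)\cong \End_T(\Ord_P J)$ and $\End_G(J/J^1)\twoheadrightarrow \End_G(\gr^1(J/J^1))\cong \End_T(\Ord_P(J/J^1))$. By the same $k[[x,y]]$-self-map argument, both of these surjections are in fact isomorphisms. Combining with the first isomorphism $\End_G(J)\cong \End_G(J/J^1)$ yields $\End_G(\gr^1 J)\cong \End_G(\gr^1(J/J^1))$, which is the claim. The main technical obstacle is handling the case $J=J_{\pi_\alpha}$ cleanly: since $\Sp$ is not of the form $\Indu{\overline{P}}{G}{\chi}$, Proposition \ref{projectiveandord} does not apply verbatim to $J/J^1=J_{\Sp}$, so one must verify the identification $\End_G(J_{\Sp})\cong \End_T(J_{\Eins_T})$ via an alternative route, for instance by exploiting the essential subobject $(\Indu{P}{G}{J_{\Eins_T}})/\Eins_G\hookrightarrow J_{\Sp}$ from \eqref{SpNG}.
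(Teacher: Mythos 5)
The proposal breaks down at the very first step. The key claims that $\Hom_G(J/J^1,J)=0$ and that $\End_G(J)\cong k[[x,y]]$ are both false, and the entire ring-theoretic argument rests on them.

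Take $J=J_{\Eins_G}$, so $J/J^1\cong J_{\pi_\alpha}$ by Proposition \ref{resindJNG}. Then $\Hom_G(J/J^1,J)=\Hom_G(J_{\pi_\alpha},J_{\Eins_G})$, which under Pontryagin duality is $\Hom_{\dualcat(k)}(P_{\Eins_G^{\vee}},P_{\pi_\alpha^{\vee}})$; by \eqref{X6} and Corollary \ref{ideals} this is isomorphic to $\rr_k$, the (nonzero) reducibility ideal. Your ``distinct blocks'' reasoning does not apply because $\Eins_G$, $\Sp$ and $\pi_\alpha$ all lie in the \emph{same} block, so there are plenty of nonzero maps between their injective envelopes. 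Consequently the sequence $0\to\Hom_G(J/J^1,J)\to\End_G(J)\to\Hom_G(J^1,J)\to 0$ has nonzero kernel, and the conclusion $\End_G(J)\cong\Hom_G(J^1,J)\cong k[[x,y]]$ fails: indeed $\End_G(J)\cong R^{\psi}_k\cong k[[x,y,z,w]]/(xz-yw)$ by Corollaries \ref{rings} and \ref{RpsirhoA}, a $3$-dimensional singular ring, not $k[[x,y]]$. You have also over-read Proposition \ref{projectiveandord}(iii), which gives only a \emph{surjection} $\End_G(J)\twoheadrightarrow\End_T(\Ord_P J)$, with kernel precisely $\rr_k$.

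The correct decomposition goes the other way: apply $\Hom_G(J,-)$ to $0\to J^1\to J\to J/J^1\to 0$. Here $\Hom_G(J,J^1)\cong\Hom_T(J_U,J_\chi)=0$ because $J$ injective forces $\RR^1\Ord_P J=0$ and hence $J_U=0$; together with $\Ext^1_G(J,J^1)=0$ from Lemma \ref{technik} this gives $\End_G(J)\cong\Hom_G(J,J/J^1)$. One then checks $\Hom_G(J^1,J/J^1)\cong\Hom_T(\Ord_P J,\Ord_P(J/J^1))=0$ because $\chi^s\neq\chi\alpha^{-1}$ for $p\ge 5$, yielding $\Hom_G(J,J/J^1)\cong\End_G(J/J^1)$. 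Note the asymmetry: the vanishing $\Hom_G(J,J^1)=0$ is genuine, while the vanishing $\Hom_G(J/J^1,J)=0$ you needed is not. The second isomorphism is handled similarly with $J^2$ in place of $J$, using $\Hom_G(\gr^2 J,\gr^1 J)=0$ (these have no common irreducible subquotients since $\chi\neq\chi^s\alpha$).
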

\begin{remar}\label{Y2} Proposition \ref{resindJNG} and Lemma \ref{theJ1} imply  $J_{\Eins_G}/ J^1_{\Eins_G}\cong J_{\pi_{\alpha}}$ 
and $J_{\pi_{\alpha}}/J^1_{\pi_{\alpha}}\cong J_{\Sp}$.  
\end{remar}
\begin{proof} Let $J_{\chi}$ be an injective envelope of $\chi$ 
in $\Mod^{\mathrm{ladm}}_{G/Z}(k)$, where $\chi$ is either $\Eins_T$ or $\alpha$. Lemma \ref{theJ1} says  that $J^1\cong \Indu{P}{G}{J_{\chi}}$. 
It follows from \eqref{shift} that $\Ord_P J \cong \Ord_P \Indu{P}{G}{J_{\chi}}\cong J_{\chi^s}$. Since $\chi^s\neq \chi \alpha^{-1}$
we get $\Hom_T(\Ord_P J, J_{\chi}\otimes\alpha^{-1})=0$ and thus $\Ext^1_{G/Z}(J, \Indu{P}{G}{J_\chi})=0$ by Lemma \ref{technik}.
Hence we obtain an exact sequence $0\rightarrow \Hom_G(J, J^1)\rightarrow \Hom_G(J,J)\rightarrow \Hom_{G}(J, J/J^1)\rightarrow 0$.
Since $J$ is injective $\RR^1\Ord_P J=0$ thus the $U$-coinvariants $J_U$ are zero, \cite[3.6.2]{ord2}, and so $\Hom_G(J, J^1)=0$. 
As $\Ord_P J/J^1\cong \RR^1\Ord_P J^1\cong J_{\chi}\otimes \alpha^{-1}$ we get $\Hom_G(J^1, J/J^1)=0$ and 
so $\Hom_G(J, J/J^1)\cong \Hom_G(J/J^1, J/J^1)$. 

The second assertion follows by the same argument with $J^2$ instead of $J$. 
Note that $J^1=\gr^1 J\cong \Indu{P}{G}{J_{\chi}}$ and $\gr^2 J=\gr^1(J/J^1)$. Now
 $\Ord_P J^2\cong \Ord_P J$, hence $\Ext^1_{G/Z}(J^2, \Indu{P}{G}{J_\chi})=0$ by Lemma \ref{technik}, and 
$\RR^1\Ord_P J^2\cong \Ord_P J/J^2\cong\RR^1\Ord_P \gr^2 J$ by \eqref{shift}. Hence, 
$J^2_U\cong (\gr^2 J)_U$ and so $$\Hom_G(J^2, \gr^1 J)\cong \Hom_G(\gr^2 J, \gr^1 J)=0.$$ 
The last equality follows from the fact that 
$\gr^1 J$ and $\gr^2 J$ do not have a common irreducible subquotient, as $\gr^1 J\cong \Indu{P}{G}{J_{\chi}}$ and 
$\gr^2 J$ is a quotient of $\Indu{P}{G}{J_{\chi^s\alpha}}$ by \eqref{shift} and $\chi\neq \chi^s\alpha$ since $p\ge 5$. 
We obtain an isomorphism $\Hom_G(J^2, J^2)\cong \Hom_G(J^2, \gr^2 J)\cong \Hom_G(\gr^2 J, \gr^2 J)$. On the other 
hand from the exact sequence $0\rightarrow \Hom_G(J^1, J)\rightarrow \Hom_G(J^2, J)\rightarrow \Hom_G(\gr^2 J, J)\rightarrow 0$
we obtain an exact sequence $0\rightarrow \Hom_G(J^1, J^1)\rightarrow \Hom_G(J^2, J^2)\rightarrow \Hom_G(\gr^2 J, J^1)\rightarrow 0$
from the functoriality of the filtration. Hence, an isomorphism    $\Hom_G(J^1, J^1)\cong \Hom_G(J^2, J^2)$.      
\end{proof}

We let $\dInd: \dualcat_T(\OO)\rightarrow \dualcat_G(\OO)$ be the functor $\dIndu{P}{G}{N}:= (\Indu{P}{G}{N^{\vee}})^{\vee}$. 
With this notation we have $\wM\cong \dIndu{P}{G}{\wP_{\alpha^{\vee}}}$, 
and hence $M:=\wM\otimes_{\OO} k\cong \dIndu{P}{G}{P_\alpha^{\vee}}$, where $\wP_{\alpha^{\vee}}$ (resp. $P_{\alpha^{\vee}}$) is 
a projective envelope of $\alpha^{\vee}$ in $\dualcat_{T}(\OO)$ (resp. $\dualcat_{T}(k)$). Moreover, we have
$$E/\mathfrak a \cong \End_{\dualcat_T(k)}(P_{\alpha^{\vee}})\cong \End_{\dualcat_G(k)}(M)\cong \End_{\qcat(k)}(\TT M),$$
see Corollary \ref{endoPOrd}, \eqref{defiidealaQ}. We let $\cJ:\dualcat_G(k)\rightarrow \dualcat_G(k)$ be 
the functor $\cJ(N)= (\Indu{\overline{P}}{G}{(\Ord_P N^{\vee})})^{\vee}$.

\begin{prop}\label{filtrationNG} Let $P_{\alpha^{\vee}}$ be a projective envelope of $\alpha^{\vee}$ in $\dualcat_{T/Z}(k)$.
The\-re exists a decreasing filtration $P_{\alpha^{\vee}}^{\bullet}$ of $P_{\alpha^{\vee}}$ by subobjects, such that 
\begin{itemize}
\item[(i)] $P^0_{\alpha^{\vee}}=P_{\alpha^{\vee}}$;
\item[(ii)] $\rad P^i_{\alpha^{\vee}}\subseteq P^{i+1}_{\alpha^{\vee}} \subseteq P^i_{\alpha^{\vee}}$, for all $i\ge 0$;
\item[(iii)] for $i\ge 0$ we have 
\begin{equation}\label{ievenNGII}
\TT P^{2i}/ \TT P^{2i+1}\cong \TT\dIndu{P}{G}{ P^i_{\alpha^{\vee}}},
\end{equation} 
\begin{equation}\label{ioddNGII}
\TT P^{2i+1}/ \TT P^{2i+2}\cong 
\TT\dIndu{P}{G}{ ((P^i_{\alpha^{\vee}})^s \otimes \alpha^{\vee})},
\end{equation}
\end{itemize}
where $P^i=(J/J^i)^{\vee}$ and $J^{\bullet}$ is the filtration of $J$ by ordinary parts. Moreover, $\Hom_{\qcat(k)}(\TT P^{2i}, T_{\Eins})=0$.
\end{prop}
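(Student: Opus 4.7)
The plan is to construct $P^{\bullet}_{\alpha^\vee}$ inductively on $i$, while simultaneously verifying the two interleaved isomorphisms \eqref{ievenNGII}, \eqref{ioddNGII} for the (already defined) functorial filtration $P^{\bullet}$ on $P = J_{\pi_\alpha}^\vee$. The two key tools are the injectivity of the adjunction map $\Indu{\overline{P}}{G}{\Ord_P J}\hookrightarrow J$ for $J$ injective (Proposition \ref{projectiveandord}(ii)), and the shift identity $\Ord_P \gr^{i+1} J \cong \RR^1\Ord_P \gr^i J$ from Lemma \ref{shift}, combined with \eqref{ordinduced}: $\Ord_P \Indu{P}{G}{U} = U^s$ and $\RR^1\Ord_P \Indu{P}{G}{U} = U\otimes \alpha^{-1}$. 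The interplay between $\Ord_P$ and $\RR^1\Ord_P$ is precisely what produces the alternation between $\dInd P^i_{\alpha^\vee}$ and $\dInd((P^i_{\alpha^\vee})^s \otimes \alpha^\vee)$ in the graded pieces.

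For $i=0$, set $P^0_{\alpha^\vee} := P_{\alpha^\vee}$. Since $\Ord_P J_{\pi_\alpha} = J_{\alpha^{-1}}$ by Proposition \ref{projectiveandord}, one has $\gr^1 J_{\pi_\alpha} = \Indu{\overline P}{G}{J_{\alpha^{-1}}} = \Indu{P}{G}{J_\alpha}$, which dualizes to $P^0/P^1 \cong \dIndu{P}{G}{P_{\alpha^\vee}}$, giving \eqref{ievenNGII}. For the next step, Lemma \ref{shift} combined with \eqref{ordinduced} yields $\Ord_P \gr^2 J_{\pi_\alpha} = J_\alpha \otimes \alpha^{-1} = J_{\Eins_T}$, and Proposition \ref{resindJNG} identifies $\gr^2 J_{\pi_\alpha} = (\Indu{P}{G}{J_{\Eins_T}})/\Eins_G$. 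Dualizing and applying $\TT$ (which kills the $\Eins$-contribution) yields \eqref{ioddNGII} for $i=0$, using the identity $(P_{\alpha^\vee})^s \otimes \alpha^\vee = P_{\Eins_T^\vee}$.

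The inductive step iterates the same mechanism: computing $\Ord_P \gr^{2i+1} J_{\pi_\alpha}$ and $\Ord_P \gr^{2i+2} J_{\pi_\alpha}$ via Lemma \ref{shift}, and using the induction hypothesis that the preceding graded pieces are (up to $\TT$) parabolic inductions of the claimed $T$-modules, one obtains explicit descriptions of the next two graded pieces. The subobject $P^{i+1}_{\alpha^\vee}\subseteq P^i_{\alpha^\vee}$ is defined as the dual of the cokernel of the natural map supplied by these computations. Property (ii), $\rad P^i_{\alpha^\vee}\subseteq P^{i+1}_{\alpha^\vee}$, is automatic from the construction, since each quotient $P^i_{\alpha^\vee}/P^{i+1}_{\alpha^\vee}$ has only $\alpha^\vee$ as an irreducible constituent and is therefore semisimple (recall $\End_{\dualcat_{T/Z}(k)}(P_{\alpha^\vee}) \cong k[[x,y]]$ by Proposition \ref{projTistfree}).

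Finally, the claim $\Hom_{\qcat(k)}(\TT P^{2i}, T_{\Eins}) = 0$ follows by induction on $i$: the base case reduces via Lemma \ref{projQproj} to $\Hom_G(\Sp, J_{\pi_\alpha}) = 0$ (since $\soc_G J_{\pi_\alpha} = \pi_\alpha$), and the inductive step combines the filtration exact sequence $0\to \TT P^{2i+2}\to \TT P^{2i}\to \TT(P^{2i}/P^{2i+2})\to 0$ with the vanishing $\Hom_{\qcat(k)}(\TT \dInd N, T_\Eins) \cong \Hom_G(\Indu{P}{G}{\Eins}, \Indu{P}{G}{N^\vee}) \cong \Hom_T(\Eins_T, N^\vee) = 0$ (by Lemma \ref{homT1NG} and Frobenius reciprocity) for the graded pieces $N = P^i_{\alpha^\vee}$ and $N = (P^i_{\alpha^\vee})^s \otimes \alpha^\vee$, whose irreducible constituents are all $\alpha^\vee$ or $\alpha^{-\vee}$ and never $\Eins_T^\vee$. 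The main obstacle lies in the inductive step: one must ensure that the "trivial $G$-subquotient" correction terms (such as the $\Eins_G$ appearing in Proposition \ref{resindJNG}) continue to disappear in $\qcat(k)$ at every level, and confirm that the $G$-side data genuinely pins down $P^{i+1}_{\alpha^\vee}$ as an unambiguous subobject of $P^i_{\alpha^\vee}$ rather than merely a subquotient.
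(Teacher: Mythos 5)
You take the same route as the paper (the $\Ord_P$-filtration of $J_{\pi_\alpha}$ by images of $\Indu{\overline{P}}{G}{\Ord_P(\cdot)}$, Proposition \ref{projectiveandord}, the shift isomorphism \eqref{shift}), but the step you yourself flag as ``the main obstacle'' is precisely the content of the proof, and you leave it unresolved. The paper settles it as follows: by Lemma \ref{vaca2} the kernel $\theta_i$ of $\Indu{\overline{P}}{G}{\Ord_P(\gr^i J)}\to \gr^i J$ has trivial $G$-action, Lemma \ref{vaca1} computes $\RR^1\Ord_P$ of the quotient, and the crucial parity observation is that $\kappa_{2i-1}^T=0$ where $\kappa_i:=\Ord_P\gr^i J$, so the correction terms vanish at every odd level, while at even levels one gets exact sequences $0\to\theta_{2i}\to\kappa_{2i}\to\kappa_{2i+2}\to 0$ with $T$ acting trivially on $\theta_{2i}$. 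Setting $P^i_{\alpha^\vee}:=(\kappa_{2i+1}^s)^\vee$, these sequences are exactly what produce honest inclusions $P^{i+1}_{\alpha^\vee}\hookrightarrow P^i_{\alpha^\vee}$ whose cokernel is (a twist of) the dual of a space of $T$-invariants, i.e.\ a direct sum of copies of a single character; that is why it is semisimple and why (ii) holds. Your justification of (ii) --- ``all Jordan--H\"older factors are $\alpha^\vee$, hence the quotient is semisimple'' --- is a false implication: $\Ext^1_{T/Z}(\alpha,\alpha)$ is two-dimensional (Proposition \ref{projTistfree}), so modules with all constituents $\alpha^\vee$ are typically far from semisimple. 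Without identifying the cokernel as a $T$-trivial twist, (ii) is not established, and your description of $P^{i+1}_{\alpha^\vee}$ as ``the dual of the cokernel of the natural map'' does not by itself pin it down as a subobject of $P^i_{\alpha^\vee}$.

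The argument for the final claim is also broken. For the odd graded pieces the module is $N=(P^i_{\alpha^\vee})^s\otimes\alpha^\vee$, and since $\alpha^s\alpha=\Eins$ every irreducible constituent of $N$ is $\Eins_T^\vee$, not $\alpha^\vee$ or $(\alpha^{-1})^\vee$; e.g.\ for $i=0$ one has $N\cong P_{\Eins_T^\vee}$ and $\Hom_T(\Eins_T, N^\vee)=\Hom_T(\Eins_T, J_{\Eins_T})\neq 0$, so the vanishing you invoke for the odd pieces fails --- this is exactly why the statement is made only for the even steps $\TT P^{2i}$. Moreover the induction via $0\to\TT P^{2i+2}\to\TT P^{2i}\to\TT(P^{2i}/P^{2i+2})\to 0$ runs in the wrong direction: homomorphisms out of the subobject $\TT P^{2i+2}$ are not controlled by those out of $\TT P^{2i}$. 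The paper's argument is direct and uses the parity fact again: $\Hom_{\qcat(k)}(\TT P^{2i}, T_{\Eins})\cong \Hom_G(\Indu{P}{G}{\Eins}, J/J^{2i})\cong\Hom_T(\Eins,\Ord_P(J/J^{2i}))=\Hom_T(\Eins,\kappa_{2i+1})=0$ since $\kappa_{2i+1}^T=0$. So while your overall strategy is the right one, the proposal has genuine gaps at the inductive step, at (ii), and in the ``moreover'' claim.
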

\begin{proof} 
Let $J$  be an injective envelope of $\Indu{P}{G}{\alpha}$ in $\Mod^{\mathrm{l adm}}_{G/Z}(k)$. It follows from Lemma \ref{vaca2} that for $i\ge 1$ we have an exact sequence 
\begin{equation}\label{shiftthatthing}
 0\rightarrow \theta_i\rightarrow \Indu{\overline{P}}{G}{\Ord_P (\gr^i J)}\rightarrow \gr^i J\rightarrow 0,
\end{equation}
where $G$ acts trivially on $\theta_i$. By evaluating at the identity, we may identify $\theta_i$ with a subspace of $(\Ord_P \gr^i J)^T$. 
Lemma \ref{vaca1} says that 
\begin{equation}\label{shiftagain}
\RR^1\Ord_P \gr^i J \cong ((\Ord_P \gr^i J)/\theta_i)^s \otimes \alpha^{-1}.
\end{equation}
For $i\ge 1$ let $\kappa_i:= \Ord_P \gr^i J$. We deduce from Proposition \ref{projectiveandord} that we have an injection 
$\Indu{\overline{P}}{G}{J_{\alpha^{-1}}}\hookrightarrow J$. Hence, $\kappa_1\cong J_{\alpha^{-1}}\cong J_{\Eins}\otimes \alpha^{-1}$ and $\theta_1=0$. 
It follows from \eqref{shift} and 
\eqref{shiftagain} that $\kappa_{2i-1}^T=0$ and hence $\theta_{2i-1}=0$, for all $i\ge 1$. We deduce from \eqref{shift} and 
\eqref{shiftagain} that $\kappa_{2i}\cong (\kappa_{2i-1})^s \otimes \alpha^{-1}$ and we have an exact sequence 
\begin{equation}\label{comment50}
0\rightarrow \theta_{2i} \rightarrow \kappa_{2i}\rightarrow \kappa_{2i+2}\rightarrow 0
\end{equation}of $T$-representations, where $T$ acts trivially on
$\theta_{2i}$. In particular, $\kappa_i$ is a successive extension of copies of $\alpha^{-1}$ when $i$ is odd, and a a successive extension of copies of $\Eins_T$ when $i$ is even.

Since $\TT \theta_i^{\vee}=0$ we deduce from \eqref{shiftthatthing} that  
\begin{equation}\label{applyTNG}
 \TT (\gr^i J )^{\vee}\cong \TT (\Indu{\overline{P}}{G}{\kappa_i})^{\vee}\cong  \TT (\Indu{P}{G}{\kappa_i^s})^{\vee}.
\end{equation}
We let $P^i_{\alpha^{\vee}}:= (\kappa_{2i+1}^s)^{\vee}\cong(\kappa_{2i+2}\otimes\alpha)^{\vee}$ then $P^0_{\alpha^{\vee}}\cong (J_{\alpha^{-1}}^s)^{\vee}\cong P_{\alpha^{\vee}}$. 
Moreover, by twisting \eqref{comment50}  by $\alpha$ and dualizing we obtain injections $P^{i+1}_{\alpha^{\vee}}\hookrightarrow P^i_{\alpha^{\vee}}$ with  semi-simple cokernel. Hence, 
$\rad P^i_{\alpha^{\vee}}\subseteq P^{i+1}_{\alpha^{\vee}}$. Part (iii) follows from \eqref{applyTNG}. Moreover, we deduce from 
\eqref{shift} that 
$$\Hom_G(\Indu{P}{G}{\Eins}, J/J^{2i})\cong \Hom_T(\Eins, \kappa_{2i+1})=0.$$
Since $P^{i}\cong (J/J^i)^{\vee}$, Lemma \ref{homT1NG} implies that 
\begin{displaymath}
\begin{split}
\Hom_{\qcat(k)}(\TT P^{2i}, T_{\Eins})&\overset{\ref{homT1NG}}{\cong} \Hom_G(\Indu{P}{G}{\Eins}, J/J^{2i})\cong \Hom_T(\Eins, \Ord_P(J/J^{2i}))\\&\overset{\eqref{shift}}{\cong}
\Hom_T(\Eins, \Ord_P \gr^{2i+1} J)\cong \Hom_T(\Eins, \kappa_{2i+1})= 0.
\end{split}
\end{displaymath}
\end{proof} 

\begin{lem}\label{achtungbaby1} $\Hom_{\dualcat(k)}(P^2, \Eins_G^{\vee})=0$, $\Hom_{\dualcat(k)}(P^4, \Eins_G^{\vee})=0$.
\end{lem}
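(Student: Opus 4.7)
The plan is to translate both vanishings across Pontryagin duality into statements about $J := J_{\pi_\alpha}$ and then use the exact sequences supplied by Lemma \ref{exactseqJ} together with Proposition \ref{firstzero}.

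First I would note that since $P$ is a projective envelope of $\pi_\alpha^{\vee}$ in $\dualcat(k)$, its Pontryagin dual $P^\vee$ is the injective envelope $J_{\pi_\alpha}$ of $\pi_\alpha$ in $\Mod^{\mathrm{ladm}}_{G/Z}(k)$. From the definition of the filtration $P^\bullet$ we get $(P^i)^{\vee} \cong J_{\pi_\alpha}/J_{\pi_\alpha}^i$ for all $i$. By Pontryagin duality this gives
\[
\Hom_{\dualcat(k)}(P^i, \Eins_G^{\vee}) \cong \Hom_G(\Eins_G, J_{\pi_\alpha}/J_{\pi_\alpha}^i).
\]
Next I would use the middle exact sequence of Lemma \ref{exactseqJ}, namely $0 \to J_{\pi_\alpha}^1 \to J_{\pi_\alpha}^{i+1} \to J_{\Sp}^i \to 0$, combined with the isomorphism $J_{\pi_\alpha}/J_{\pi_\alpha}^1 \cong J_{\Sp}$ coming from \eqref{alphaJNG}. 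A short diagram chase yields $J_{\pi_\alpha}/J_{\pi_\alpha}^i \cong J_{\Sp}/J_{\Sp}^{i-1}$, so the two vanishings reduce respectively to
\[
\Hom_G(\Eins_G, J_{\Sp}/J_{\Sp}^1) = 0 \quad\text{and}\quad \Hom_G(\Eins_G, J_{\Sp}/J_{\Sp}^3) = 0.
\]

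For the first of these I would invoke \eqref{SpNG}, which embeds $J_{\Sp}/J_{\Sp}^1$ into $J_{\pi_\alpha}^{\oplus 2}$; since $\soc_G J_{\pi_\alpha} = \pi_\alpha \not\cong \Eins_G$, no nonzero map from $\Eins_G$ can land there. For the second I would apply $\Hom_G(\Eins_G, -)$ to the exact sequence $0 \to J_{\Sp}^3 \to J_{\Sp} \to J_{\Sp}/J_{\Sp}^3 \to 0$. This produces
\[
\Hom_G(\Eins_G, J_{\Sp}) \longrightarrow \Hom_G(\Eins_G, J_{\Sp}/J_{\Sp}^3) \longrightarrow \Ext^1_{G/Z}(\Eins_G, J_{\Sp}^3).
\]
The left term vanishes because $\soc_G J_{\Sp} = \Sp \not\cong \Eins_G$, and the right term vanishes by Proposition \ref{firstzero}. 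Hence the middle term vanishes, finishing the second case.

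There is really no hard step here: once one has translated to the injective side and compared the two filtrations, the only nontrivial input is Proposition \ref{firstzero}, which has already been established via the careful socle analysis in \eqref{conf3} and Lemma \ref{conf2}. Thus the lemma is essentially a bookkeeping consequence of the earlier structural results on $J_{\Sp}^\bullet$ and $J_{\pi_\alpha}^\bullet$.
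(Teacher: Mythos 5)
Your proof is correct and follows essentially the same route as the paper's: both translate across Pontryagin duality to $\Hom_G(\Eins_G, J_{\pi_\alpha}/J_{\pi_\alpha}^{2i})$, dispose of the $i=1$ case via the embedding from \eqref{SpNG}, and reduce the $i=2$ case to the vanishing of $\Ext^1_{G/Z}(\Eins_G, J_{\Sp}^3)$ established in Proposition \ref{firstzero}. The only cosmetic difference is that you first pass from $J_{\pi_\alpha}$ to $J_{\Sp}$ via Lemma \ref{exactseqJ} and then run the long exact sequence, whereas the paper runs the long exact sequence for $J_{\pi_\alpha}$ and invokes Corollary \ref{firstzero1}, which performs the same reduction to $J_{\Sp}^3$ internally.
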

\begin{proof}The assertion is equivalent to $\Hom_G(\Eins, J_{\pi_{\alpha}}/J^{2i}_{\pi_{\alpha}})=\Ext^1_{G/Z}(\Eins, J^{2i}_{\pi_{\alpha}})=0$ 
for $i=1,2$. If $i=2$ this follows from Corollary \ref{firstzero1}.
Proposition \ref{resindJNG} gives an exact sequence 
$0\rightarrow J^2_{\pi_{\alpha}}\rightarrow J_{\pi_{\alpha}}\rightarrow J_{\pi_{\alpha}}^{\oplus 2}$, 
which proves the assertion for $i=1$.
\end{proof}

\begin{lem}\label{vaca4} The ideal $\mathfrak a$ is a finitely generated right $E$-module and $E/\mathfrak a\wtimes_E P\cong P/P^2$, 
$\mathfrak a P\cong P^2$, $\mathfrak a \wtimes_{E} \TT P\cong \mathfrak a \TT P\cong \TT P^2$.
\end{lem}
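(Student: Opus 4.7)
The plan is to establish the three assertions in sequence: finite generation of $\mathfrak{a}$ as a right $E$-module, the identification $\mathfrak{a}P \cong P^2$ in $\dualcat(k)$, and the tensor-product statements, then transfer these to $\qcat(k)$ via exactness of $\TT$.

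I would begin with finite generation. The quotient $E/\mathfrak{a}\cong k[[x,y]]$ has $2$-dimensional tangent space, while $\dim_k \mm/\mm^2 = 4$ by Lemma~\ref{tangentspace} combined with \eqref{lateruse} and the surjection of Proposition~\ref{definephi} onto $R^\psi$, whose tangent space is $4$-dimensional by Corollary~\ref{RpsirhoA}. Hence the image of $\mathfrak{a}$ in $\mm/\mm^2$ is $2$-dimensional, and a Nakayama-type argument in the compact local ring $E$ will lift a basis to two right $E$-module generators of $\mathfrak{a}$.

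For the identification $\mathfrak{a}P \cong P^2$, I would use Corollary~\ref{endoPOrd} and Proposition~\ref{projectiveandord} to factor the surjection $E \twoheadrightarrow E/\mathfrak{a}$ through the isomorphism $\Hom_{\dualcat(k)}(P, M) \cong \End_{\dualcat(k)}(M)$ induced by the canonical $P \twoheadrightarrow M$, where $M = \dIndu{P}{G}{P_{\alpha^\vee}}$. Then $\mathfrak{a} = \{\phi \in E : \phi(P) \subseteq \ker(P\twoheadrightarrow M)\}$, and projectivity of $P$ in $\dualcat(k)$ shows that every subobject of $P$ is generated by images of maps from $P$, whence $\mathfrak{a}P = \ker(P \twoheadrightarrow M)$; this is matched to the filtration step $P^2$ via Proposition~\ref{filtrationNG} and Proposition~\ref{resindJNG}. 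For the tensor-product assertions, Corollary~\ref{topfree} gives that $P$ is topologically free (hence flat) over $E$; combined with finite generation of $\mathfrak{a}$, this yields $\mathfrak{a} \wtimes_E P \cong \mathfrak{a} \otimes_E P$, and the flatness-induced injection $\mathfrak{a} \otimes_E P \hookrightarrow E \otimes_E P = P$ identifies this with $\mathfrak{a}P$. Applying the exact functor $\TT$ and invoking Lemma~\ref{Tcomwtimes} transfers all the conclusions to $\qcat(k)$, giving $\mathfrak{a} \wtimes_E \TT P \cong \mathfrak{a}\TT P \cong \TT P^2$.

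The hardest step will be the careful matching of filtration indexing. Identifying $\ker(P \twoheadrightarrow M)$ with the precise step $P^2$—rather than with a naively larger step of the dualized general filtration of Lemma~\ref{vaca3}—requires tracking the even/odd refinement of Proposition~\ref{filtrationNG}, in which the $\pi_\alpha$-type pieces $\TT P^{2i}/\TT P^{2i+1}$ alternate with the $\{\Eins, \Sp\}$-type pieces $\TT P^{2i+1}/\TT P^{2i+2}$. The key inputs are the vanishing $\theta_{2i-1} = 0$ for $i \geq 1$, established via \eqref{shift} and \eqref{shiftagain} in the proof of Proposition~\ref{filtrationNG}, together with the explicit identification $J^1 \cong \Indu{P}{G}{J_\alpha}$ from Proposition~\ref{resindJNG}; these pin down the dualization so that $\mathfrak{a}P$ matches $P^2$ on the nose in $\dualcat(k)$, and the matching then propagates through $\TT$ to $\qcat(k)$.
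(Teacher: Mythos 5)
Your characterization of $\mathfrak a$ as $\{\phi\in E:\phi(P)\subseteq\ker(P\twoheadrightarrow M)\}$ is correct, but note that $\ker(P\twoheadrightarrow M)=P^1$, not $P^2$: since $M=\dIndu{P}{G}{P_{\alpha^\vee}}\cong P/P^1$ (this is \eqref{ievenNGII} with $i=0$), the kernel of the canonical surjection is the first filtration step. Your proposal then asserts $\mathfrak{a}P=\ker(P\twoheadrightarrow M)$ and tries to ``match'' this to $P^2$ by filtration bookkeeping, but there is nothing to match: $P^1\neq P^2$, and the refinement from $P^1$ to $P^2$ is a genuine additional step, not an indexing issue. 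The missing observation is that $\Hom_{\dualcat(k)}(P,P^1/P^2)=0$ (because $\pi_{\alpha}^{\vee}$ is not a subquotient of $P^1/P^2$, which by \eqref{ioddNGII} has all constituents in $\{\Eins_G^{\vee},\Sp^{\vee}\}$), so that $\mathfrak a=\{\phi:\phi(P)\subseteq P^1\}=\{\phi:\phi(P)\subseteq P^2\}$ and therefore $\mathfrak aP\subseteq P^2$. Without this you only get $\mathfrak aP\subseteq P^1$.

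The intermediate claim that ``projectivity of $P$ shows that every subobject of $P$ is generated by images of maps from $P$'' is also false in general. $P$ is a projective \emph{envelope} of a single irreducible, not a projective generator, so $\sum_{\phi\colon P\to N}\phi(P)$ is only the largest subobject of $N$ whose cosocle involves $\pi_{\alpha}^{\vee}$ alone. In particular $P^1\cong P_{\Sp^{\vee}}$ has cosocle $\Sp^{\vee}$, so it is \emph{not} covered by images of maps from $P$; you would prove the wrong equality even if $\ker(P\to M)$ were what you want. The reverse inclusion $P^2\subseteq\mathfrak aP$ needs the cosocle of $P^2$ to be a sum of $\pi_{\alpha}^{\vee}$'s, which the paper verifies by dualizing \eqref{SpNG} to obtain an explicit surjection $P^{\oplus2}\twoheadrightarrow P^2$; this then yields two concrete elements $\phi_1,\phi_2\in\mathfrak a$ with $P^2=\phi_1(P)+\phi_2(P)$. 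Finally, your Nakayama argument for finite generation of $\mathfrak a$ is also incomplete: computing the image of $\mathfrak a$ in $\mm/\mm^2$ gives $\dim(\mathfrak a+\mm^2)/\mm^2=2$, but this is only a quotient of $\mathfrak a/\mathfrak a\mm$, which is the space Nakayama needs to be finite-dimensional; the surjection $\mathfrak a/\mathfrak a\mm\twoheadrightarrow(\mathfrak a+\mm^2)/\mm^2$ may have nontrivial kernel, and $E$ is not known to be noetherian at this stage (that is essentially what this section is proving). The paper avoids all this by producing $\phi_1,\phi_2$ first and \emph{deducing} finite generation from $\mathfrak a\cong\Hom_{\dualcat(k)}(P,\mathfrak aP)=\phi_1E+\phi_2E$; this reversal of order is what makes the argument close.
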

\begin{proof}  Since $\wM\otimes_{\OO} k \cong P/P^1$  and $\wM$ is $\OO$-flat, we deduce from the definition of $\wa$ in \eqref{defiidealaQ} that 
$$\mathfrak a=\{\phi\in E: \phi ( P)\subseteq P^1\}=\{\phi\in E: \phi(P)\subseteq  P^2\},$$
where the second equality follows from the fact that $\Hom_{\dualcat(k)}(P, P^1/P^2)=0$ as  
Lemma \ref{vaca31/2} implies that $\pi_{\alpha^{\vee}}$ is not a subquotient of $P^1/P^2$.
Hence,   $\mathfrak a P\subseteq P^2$   and $\mathfrak a \TT P\subseteq \TT P^2$. 
On the other hand using \eqref{alphaJNG} and \eqref{SpNG} we get a 
surjection $P\oplus P\twoheadrightarrow P^2$. For $i=1$ and $i=2$ let $\phi_i\in E$ be the composition 
$$  P\rightarrow P\oplus P\twoheadrightarrow P^2\hookrightarrow  P,$$
where the first arrow is $(\id, 0)$ if $i=1$ and $(0, \id)$ if $i=2$. Then $\phi_1, \phi_2\in \mathfrak a$ and 
$P^2= \phi_1(P) + \phi_2(P)$. Hence, $P^2\subseteq \mathfrak a P$ and so $\mathfrak a P=P^2$ is closed in $P$, which implies 
$E/\mathfrak a \wtimes_E P\cong P/\mathfrak a P\cong P/P^2$. Using Lemma \ref{headS0} and exactness of $\Hom_{\dualcat(k)}(P, \ast)$
we get 
$$ \mathfrak a \cong \Hom_{\dualcat(k)}(P, \mathfrak a P)\cong \Hom_{\dualcat(k)}(P, P^2).$$
Hence, $\mathfrak a= \phi_1 E+ \phi_2 E$ is a finitely generated right $E$-module. In particular, 
$\mathfrak a \TT P=\phi_1(\TT P)+\phi_2(\TT P) =\TT P^2$ is an object of $\qcat(k)$. Since $\TT P$ is $E$-flat by Lemma \ref{NGIIEflat},
we obtain $\mathfrak a\wtimes_E \TT P\cong \mathfrak a \TT P$.
\end{proof}

\begin{lem}\label{vaca31/2} $\Hom_{\qcat(k)}(\TT P, \TT P^{2i-1}/\TT P^{2i})=0$, for all $i\ge 1$.
\end{lem}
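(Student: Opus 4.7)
The plan is to translate the $\Hom$ in $\qcat(k)$ back to a $\Hom$ in $\dualcat(k)$ via Lemma \ref{projQproj}, and then to exploit projectivity of $P$ together with a composition factor analysis of the filtration $P^\bullet$.

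Since $P$ is a projective envelope of $\pi_\alpha^\vee$ and $\pi_\alpha^\vee \not\cong \Eins$, Lemma \ref{projQproj} applies: $\TT P$ is projective in $\qcat(k)$ and
$$\Hom_{\qcat(k)}(\TT P, \TT \gr^{2i-1} P) \cong \Hom_{\dualcat(k)}(P, \gr^{2i-1} P),$$
where $\gr^{2i-1}P := P^{2i-1}/P^{2i}$; by exactness of $\TT$ this is the same as $\TT P^{2i-1}/\TT P^{2i}$. It therefore suffices to show that $\pi_\alpha^\vee$ does not occur as a subquotient of $\gr^{2i-1}P$.

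Next I would identify the composition factors of $\gr^{2i-1}P = (\gr^{2i}J)^\vee$, where $J$ is the injective envelope of $\pi_\alpha$. Unwinding the proof of Proposition \ref{filtrationNG}, the $T$-representation $\kappa_{2i} = \Ord_P(\gr^{2i} J)$ is built from $\kappa_1 = J_{\alpha^{-1}}$ via $\kappa_2 \cong \kappa_1^s \otimes \alpha^{-1}$, whose composition factors are all isomorphic to $\Eins_T$ (using $\alpha^s = \alpha^{-1}$), and then $\kappa_{2i+2}$ is a quotient of $\kappa_{2i}$. Hence every composition factor of $\kappa_{2i}$ for $i \ge 1$ is $\Eins_T$. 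Exactness of parabolic induction forces the composition factors of $\Indu{\overline{P}}{G}{\kappa_{2i}}$ to lie in $\{\Eins_G, \Sp\}$, and the surjection $\Indu{\overline{P}}{G}{\kappa_{2i}} \twoheadrightarrow \gr^{2i}J$ from \eqref{shiftthatthing} inherits this bound. Pontryagin dualizing, the composition factors of $\gr^{2i-1}P$ lie in $\{\Eins_G^\vee, \Sp^\vee\}$; in particular, $\pi_\alpha^\vee$ never appears.

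Finally, writing $\gr^{2i-1}P = \underset{\longleftarrow}{\lim}\, N_j$ as a projective limit of its finite-length quotients in $\dualcat(k)$ — possible because $\Mod^{\mathrm{l\,adm}}_{G/Z}(k)$ is locally finite and hence so is its dual — each $N_j$ inherits the composition factor bound, so $\pi_\alpha^\vee$ is not a subquotient of $N_j$, and Lemma \ref{mult=dim} gives $\Hom_{\dualcat(k)}(P, N_j)=0$. Since $\Hom_{\dualcat(k)}(P, \cdot)$ commutes with projective limits, $\Hom_{\dualcat(k)}(P, \gr^{2i-1}P)=0$, which completes the proof. The only delicate step is the second: one must carefully track the twists in Proposition \ref{filtrationNG} to pin down the composition factors, but once the identity $\alpha^s = \alpha^{-1}$ is in hand, this is just bookkeeping.
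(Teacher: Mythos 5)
Your proof is correct and follows essentially the paper's route: both arguments identify that every irreducible subquotient of $\TT P^{2i-1}/\TT P^{2i}$ is $T_{\Eins}$ and then invoke projectivity of $\TT P$ (equivalently $P$) as the envelope of $T_{\alpha}$ (equivalently $\pi_{\alpha}^{\vee}$). The only difference is cosmetic: the paper cites \eqref{ioddNGII} directly and stays inside $\qcat(k)$, whereas you re-derive the composition-factor bound from the construction of $\kappa_{2i}$ and transfer the Hom computation back to $\dualcat(k)$ via Lemma \ref{projQproj}; the latter is correct but redundant, since Proposition \ref{filtrationNG}(iii) already records the conclusion you re-prove.
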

\begin{proof} All the irreducible subquotients of $\TT P^{2i-1}/\TT P^{2i}$ are isomorphic 
to $T_{\Eins}$, see \eqref{ioddNGII}. Since $\TT P$ is a projective envelope of $T_{\alpha}$ in $\qcat(k)$, see Lemmas \ref{projQproj} and 
\ref{essQess}, 
there are no non-zero homomorphisms.
\end{proof}

\begin{lem}\label{vaca32/3} We have an isomorphism of $E$-modules:
\begin{equation}\label{holi}
\Hom_{\qcat(k)}(\TT P, \TT P^{2i}/\TT P^{2i+2})\cong \Hom_{\dualcat_T(k)}(P_{\alpha^{\vee}}, P^i_{\alpha^{\vee}}),
\end{equation} 
for all $i\ge 0$.
\end{lem}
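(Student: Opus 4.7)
The plan is to split $\TT P^{2i}/\TT P^{2i+2}$ into its two filtration layers and observe that only the even layer contributes. Applying the left-exact functor $\Hom_{\qcat(k)}(\TT P, -)$ to the short exact sequence
\begin{equation*}
0 \to \TT P^{2i+1}/\TT P^{2i+2} \to \TT P^{2i}/\TT P^{2i+2} \to \TT P^{2i}/\TT P^{2i+1} \to 0
\end{equation*}
(obtained by applying the exact functor $\TT$ to the analogous sequence in $\dualcat(k)$), the first term vanishes by Lemma \ref{vaca31/2} with $i+1$ in place of $i$. This yields
\begin{equation*}
\Hom_{\qcat(k)}(\TT P, \TT P^{2i}/\TT P^{2i+2}) \cong \Hom_{\qcat(k)}(\TT P, \TT P^{2i}/\TT P^{2i+1}).
\end{equation*}
By \eqref{ievenNGII} the right hand side equals $\Hom_{\qcat(k)}(\TT P, \TT \dIndu{P}{G}{P^i_{\alpha^\vee}})$; since $P$ is a projective envelope of $\pi_\alpha^\vee \not\cong \Eins$, we have $\Hom_{\dualcat(k)}(P, \Eins) = 0$, so Lemma \ref{projQproj} identifies this naturally with $\Hom_{\dualcat(k)}(P, \dIndu{P}{G}{P^i_{\alpha^\vee}})$.

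Next I would unwind this $\Hom$ group via Pontryagin duality and the Emerton adjunction. Setting $J := P^\vee$, which is an injective envelope of $\pi_\alpha$ in $\Mod^{\mathrm{ladm}}_{G/Z}(k)$, and writing $N := P^i_{\alpha^\vee}$, the chain
\begin{equation*}
\Hom_{\dualcat(k)}(P, \dIndu{P}{G}{N}) \cong \Hom_G(\Indu{P}{G}{N^\vee}, J) \cong \Hom_T(N^\vee, \Ord_{\overline{P}} J)
\end{equation*}
uses first duality and then the adjunction between $\Indu{P}{G}{}$ and $\Ord_{\overline{P}}$. The analogue of Proposition \ref{projectiveandord} with $P$ and $\overline{P}$ interchanged gives $\Ord_{\overline{P}} J \cong J_\alpha$, whose Pontryagin dual is $P_{\alpha^\vee}$; a final application of duality converts $\Hom_T(N^\vee, J_\alpha)$ into $\Hom_{\dualcat_T(k)}(P_{\alpha^\vee}, N)$, producing the desired isomorphism.

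The hard part is checking compatibility with the right $E$-module structures. The source carries the precomposition action of $E = \End_{\dualcat(k)}(P)$ pushed forward through $\TT$, while the target inherits an $E$-action through the surjection $E \twoheadrightarrow E/\mathfrak{a} \cong \End_{\dualcat_T(k)}(P_{\alpha^\vee})$ of \eqref{defiidealaQ}. One has to verify that the endomorphism of $P_{\alpha^\vee}$ attached to $\phi \in E$ by the above chain of adjunctions agrees with the image of $\phi$ under this surjection. This is precisely the content of how \eqref{defiidealaQ} was defined, via Corollary \ref{endoPOrd} and the identification $P/P^2 \cong \dIndu{P}{G}{P_{\alpha^\vee}}$ from Lemma \ref{vaca4}: both the action of $E$ on $\dIndu{P}{G}{N}$ through $\End_{\dualcat(k)}(P/P^2)$ and the adjunction above factor uniquely through $\End_{\dualcat_T(k)}(P_{\alpha^\vee})$, and they agree there by naturality. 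Granting this bookkeeping, the constructed isomorphism is automatically $E$-equivariant.
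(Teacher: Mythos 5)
Your first three reductions coincide with the paper's: kill the odd graded piece via Lemma \ref{vaca31/2}, apply \eqref{ievenNGII}, and pass from $\qcat(k)$ to $\dualcat(k)$ via Lemma \ref{projQproj}. One small inaccuracy there: you invoke only left-exactness of $\Hom_{\qcat(k)}(\TT P,-)$, but left-exactness plus vanishing of $\Hom$ on the subobject gives only an \emph{injection}; to get the asserted isomorphism you need exactness, i.e.\ projectivity of $\TT P$, which the paper explicitly cites and you should too. After the reduction to $\Hom_{\dualcat(k)}(P, \dIndu{P}{G}{P^i_{\alpha^\vee}})$ your route diverges from the paper's. The paper uses the functoriality of the filtration (Lemma \ref{vaca3}) to show that every such map kills $P^1$, identifies $P/P^1\cong \dIndu{P}{G}{P_{\alpha^\vee}}$ (the $i=0$ case of \eqref{ievenNGII}), and then invokes full-faithfulness of $\dInd$. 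You instead unwind the whole thing by Pontryagin duality and a direct use of the Emerton adjunction $\Hom_G(\Indu{P}{G}{-}, -)\cong \Hom_T(-, \Ord_{\overline{P}}-)$ plus the identification $\Ord_{\overline{P}}J\cong J_\alpha$. This is a valid alternative — it avoids Lemma \ref{vaca3} entirely, which is aesthetically pleasing — but note that the ``analogue of Proposition \ref{projectiveandord} with $P$ and $\overline{P}$ interchanged'' is not actually stated in the paper; you should justify it either by symmetry or, more economically, by rewriting $\Indu{P}{G}{N^\vee}\cong\Indu{\overline{P}}{G}{(N^\vee)^s}$ and using the stated form of the proposition together with the $s$-twist $(J_{\alpha^{-1}})^s\cong J_\alpha$. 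Your care over the $E$-module structure in the final paragraph is more explicit than the paper's (which simply lets naturality do the work), and the argument you sketch — that both actions factor through $E/\mathfrak a\cong\End_{\dualcat_T(k)}(P_{\alpha^\vee})$ via \eqref{defiidealaQ} and agree there by naturality — is the right one, though it does require checking that the $s$-twists introduced by swapping $P$ and $\overline P$ cancel out when comparing your adjunction chain with the surjection \eqref{defiidealaQ}, which itself is set up through $\Ord_P$.
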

\begin{proof} The $E$-module structure on the left hand side is given by the action of $E=\End_{\qcat(k)}(\TT P)$ on $\TT P$ and on the right hand side 
by the action of $E/\mathfrak a\cong \End_{\dualcat_T(k)}(P_{\alpha^{\vee}})$ on $P_{\alpha^{\vee}}$. Since $\TT P$ is 
projective, $\Hom_{\qcat(k)}(\TT P, \ast)$ is exact and so we get:
\begin{displaymath}
\begin{split}
&\Hom_{\qcat(k)}(\TT P, \TT P^{2i}/\TT P^{2i+2})\overset{\ref{vaca31/2}}{\cong}
\Hom_{\qcat(k)}(\TT P, \TT P^{2i}/\TT P^{2i+1})\\
&\overset{\eqref{ievenNGII}}{\cong}\Hom_{\qcat(k)}(\TT P, \TT \dIndu{P}{G}{P^i_{\alpha^{\vee}}})
\overset{\ref{projQproj}}{\cong} \Hom_{\dualcat(k)}( P, \dIndu{P}{G}{P^i_{\alpha^{\vee}}}) \\
&\overset{\ref{vaca3}}{\cong} \Hom_{\dualcat(k)}(P/ P^1, \dIndu{P}{G}{P^i_{\alpha^{\vee}}})\overset{\eqref{ievenNGII}}{\cong} 
\Hom_{\dualcat(k)}(\dIndu{P}{G}{P_{\alpha^{\vee}}},  \dIndu{P}{G}{P^i_{\alpha^{\vee}}})\\
&\cong \Hom_{\dualcat_T(k)}(P_{\alpha^{\vee}}, P^i_{\alpha^{\vee}}).
\end{split}
\end{displaymath}
\end{proof}

\begin{lem}\label{vaca5} Let $\md$ be a compact $E/\mathfrak a$-module. If $\Hom_{\dualcat(k)}(P^{2i}, \Eins_G^{\vee})=0$   for a fixed $i$ then 
\begin{equation}
\Hom_{\qcat(k)}( \TT P^{2i}/\TT P^{2i+2} ,\md \wtimes_E \TT P)\cong
\Hom_{\qcat(k)}( \TT P^{2i},\md \wtimes_E \TT P).
\end{equation}
\end{lem}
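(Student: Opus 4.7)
The claim is equivalent to showing the restriction map
\begin{equation*}
\rho: \Hom_{\qcat(k)}(\TT P^{2i}, \md \wtimes_E \TT P) \to \Hom_{\qcat(k)}(\TT P^{2i+2}, \md \wtimes_E \TT P)
\end{equation*}
is zero, since the stated isomorphism then follows from the left-exact sequence associated with $0 \to \TT P^{2i+2} \to \TT P^{2i} \to \TT P^{2i}/\TT P^{2i+2} \to 0$.

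The underlying principle is this: because $\md$ is killed by $\mathfrak a$, the balancing relation $m \wtimes (a v) = (ma) \wtimes v = 0$ for $a \in \mathfrak a$, $m \in \md$, $v \in \TT P$, combined with Lemma \ref{vaca4}'s identification $\TT P^2 = \mathfrak a \TT P$, shows that for every $m \in \md$ the map $\sigma_m : \TT P \to \md \wtimes_E \TT P$, $v \mapsto m \wtimes v$, vanishes on $\TT P^2$. This immediately handles the base case $i = 0$, since $\TT P$ is projective in $\qcat(k)$ by Lemma \ref{projQproj} (noting $\Hom_{\dualcat(k)}(P, \Eins^\vee) = 0$ because $P$ is a projective envelope of $\pi_\alpha^\vee$), and the analog of Lemma \ref{headS0} in $\qcat(k)$ identifies $\Hom_{\qcat(k)}(\TT P, \md \wtimes_E \TT P) \cong \md$ via $m \mapsto \sigma_m$.

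For $i \ge 1$, the plan is to invoke the hypothesis $\Hom_{\dualcat(k)}(P^{2i}, \Eins_G^\vee) = 0$, which is equivalent to $I_G(P^{2i}) = P^{2i}$. Setting $N := \md \wtimes_E P$, so that $\TT N \cong \md \wtimes_E \TT P$ by the $k$-analog of Lemma \ref{Tcomwtimes}, Lemma \ref{HomQexp} then yields
\begin{equation*}
\Hom_{\qcat(k)}(\TT P^{2i}, \TT N) \cong \Hom_{\dualcat(k)}(P^{2i}, N/N^G),
\end{equation*}
and the problem reduces to showing that every $\dualcat(k)$-morphism $\phi: P^{2i} \to N/N^G$, when restricted to $P^{2i+2}$, becomes zero in $\qcat(k)$. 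I would argue this by lifting: projectivity of $P$ identifies $\Hom_{\dualcat(k)}(P, N) \cong \md$, so every map $P \to N$ has the form $\sigma_m$ and, by the base-case observation, restricts to zero on $P^2 \supseteq P^{2i+2}$. Thus it suffices to show that every $\phi$ arises by restriction along $P^{2i} \hookrightarrow P$ from some $\sigma_m \in \Hom_{\dualcat(k)}(P, N)$, composed with the projection $N \twoheadrightarrow N/N^G$.

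The main obstacle is precisely the controlled lifting in the final step: arbitrary morphisms out of the subobject $P^{2i}$ need not extend to $P$, nor need they lift from $N/N^G$ to $N$. The plan is to reduce both to $\Ext^1$ vanishings: $\Ext^1_{\dualcat(k)}(P/P^{2i}, N) = 0$ for the extension problem, and $\Ext^1_{\dualcat(k)}(P^{2i}, N^G) = 0$ for passing between $N$ and $N/N^G$. Both should ultimately follow from the hypothesis together with the filtration structure of Proposition \ref{filtrationNG}, the fact that $N^G$ is built from copies of $\Eins^\vee$ (so maps and Ext's from $P^{2i}$ into it are controlled by the hypothesis and by $\Hom_{\dualcat(k)}(P^{2i+2}, \Eins^\vee)$-type data computed from Lemma \ref{achtungbaby1}), and the explicit description of $N$ as $\md \wtimes_{E/\mathfrak a}(P/P^2)$ arising from Lemma \ref{vaca4}. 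Organising these vanishings cleanly, so that they chain together to produce $\sigma_m$ representing $\phi$ on $P^{2i+2}$, is the technical heart of the argument.
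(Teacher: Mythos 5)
Your reduction to showing that the restriction to $\TT P^{2i+2}$ vanishes is correct, and your $i=0$ argument (every map out of $\TT P$ is some $\sigma_m$, which kills $\TT P^2 = \mathfrak a\TT P$ because $\md \mathfrak a = 0$) is sound. But the lifting strategy for $i\ge 1$ has a fatal flaw: for $i\ge 1$ one has $P^{2i}\subseteq P^2$, so \emph{every} $\sigma_m$ already restricts to zero on $P^{2i}$. If each $\phi\colon P^{2i}\to N/N^G$ really arose as $\sigma_m|_{P^{2i}}$ post-composed with $N\twoheadrightarrow N/N^G$, you would conclude $\Hom_{\qcat(k)}(\TT P^{2i}, \md\wtimes_E\TT P)=0$ outright, which is far stronger than the lemma claims and is false in the applications (e.g.\ in Lemma \ref{achtungbaby5} one needs the nonzero surjection $\TT P^{2i}=\mathfrak a^i\wtimes_E\TT P\twoheadrightarrow \mathfrak a^i/\mathfrak a^{i+1}\wtimes_E\TT P$ to factor through $\TT P^{2i}/\TT P^{2i+2}$; it cannot be zero). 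Consequently the two $\Ext^1$ vanishings you posit cannot both hold, and there is no way to ``organise them cleanly''; the approach cannot succeed.

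The paper's proof uses a genuinely different mechanism that does not try to lift $\phi$ to $P$ at all. One first observes that the kernel of $\md\wtimes_E\TT P\to \md\wtimes_E\TT P/\TT P^1$ is a quotient of a product of copies of $\TT P^1/\TT P^2$, all of whose irreducible subquotients are $T_{\Eins}$; since $\Hom_{\qcat(k)}(\TT P^{2i},T_\Eins)=0$ and $\Hom_{\qcat(k)}(\TT P^{2i+2},T_\Eins)=0$ by Proposition \ref{filtrationNG}, the maps $\Hom_{\qcat(k)}(\TT P^j, \md\wtimes_E\TT P)\to\Hom_{\qcat(k)}(\TT P^j, \md\wtimes_E\TT P/\TT P^1)$ are injective for $j=2i, 2i+2$. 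It then suffices to show the right vertical restriction in the obvious square is zero. For that, the decisive point is that $\md\wtimes_E P/P^1\cong\dIndu{P}{G}{(\md\wtimes_E P_{\alpha^\vee})}$ is itself a $\dInd$, so its canonical filtration stops: $(\md\wtimes_E P/P^1)^1=0$. Functoriality of the filtration (Lemma \ref{vaca3}) then forces every morphism $P^{2i}\to \md\wtimes_E P/P^1$ to kill $(P^{2i})^1=P^{2i+1}\supseteq P^{2i+2}$, and passing through Lemma \ref{HomQexp} using the hypothesis $\Hom_{\dualcat(k)}(P^{2i},\Eins_G^\vee)=0$ transfers this to $\qcat(k)$. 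Nothing in this argument attempts to extend a morphism from $P^{2i}$ to $P$; the entire work is done by the structure of the filtration on the target, which is the idea missing from your proposal.
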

\begin{proof} Since $\mathfrak a$ acts trivially on $\md$ we have 
$\md \wtimes_E \TT P\cong \md \wtimes_E \TT P/\mathfrak a \TT P$. It follows from Lemma \ref{vaca3} that the filtration 
on $\TT P$ is $E$-invariant. Lemma \ref{vaca4} gives us an exact sequence:
\begin{equation}
\md \wtimes_E \TT P^1/\TT P^2 \rightarrow \md \wtimes_E \TT P\rightarrow \md \wtimes_E \TT P/ \TT P^1\rightarrow 0.
\end{equation}  
We may find an exact sequence of compact $E$-modules:
\begin{equation} \label{resolvemdNG}
\prod_{i\in I} E/\mathfrak a \rightarrow \prod_{j\in J} E/\mathfrak a \rightarrow \md \rightarrow 0
\end{equation} 
for some index sets $I$ and $J$. Applying $\wtimes_{E} \TT P^1/\TT P^2$ to \eqref{resolvemdNG} we deduce that 
$\md\wtimes_E \TT P^1/\TT P^2$ is a quotient of  $\prod_{j\in J} \TT P^1/\TT P^2$. Hence, it follows from \eqref{ioddNGII} 
that all the irreducible subquotients of $\md\wtimes_E \TT P^1/\TT P^2$ are isomorphic to $T_{\Eins}$. Since 
$\Hom_{\qcat(k)}(\TT P^{2i}, T_{\Eins})=0$ by Proposition \ref{filtrationNG}, we get an injection:
\begin{equation}\label{hominjectNGII}
\Hom_{\qcat(k)}(\TT P^{2i}, \md\wtimes_E \TT P)\hookrightarrow \Hom_{\qcat(k)}(\TT P^{2i},  \md\wtimes_E \TT P/\TT P^1).
\end{equation}
Hence, we obtain a commutative diagram:
\begin{displaymath}
\xymatrix@1{\;\Hom_{\qcat(k)}(\TT P^{2i}, \md\wtimes_E \TT P)\;\ar@{^(->}[r]\ar[d]& 
\Hom_{\qcat(k)}(\TT P^{2i},  \md\wtimes_E \TT P/\TT P^1)\ar[d]\\ 
\;\Hom_{\qcat(k)}(\TT P^{2i+2}, \md\wtimes_E \TT P)\;\ar@{^(->}[r]& 
\Hom_{\qcat(k)}(\TT P^{2i+2},  \md\wtimes_E \TT P/\TT P^1).}
\end{displaymath} 
It is enough to show that the right vertical arrow is zero. As $ P/P^1\cong \dIndu{P}{G}{P_{\alpha^{\vee}}}$, 
Corollary \ref{headM} says  $\md\wtimes_E P/P^1\cong
\dIndu{P}{G}{(\md \wtimes_{E} P_{\alpha^{\vee}})}$. In particular, $\md \wtimes_E P/P^1\cong \cJ(\md \wtimes_E P/P^1)$ and so 
the second step of filtration on $\md \wtimes_E P/ P^1$ is zero. 
Hence, Lemma \ref{vaca3} implies that 
$$\Hom_{\dualcat(k)}(P^{2i}, \md \wtimes_E P/ P^1)\cong \Hom_{\dualcat(k)}(P^{2i}/P^{2i+1}, \md \wtimes_E P/ P^1).$$
Since $\Hom_{\dualcat(k)}(P^{2i}, \Eins_G^{\vee})=0$ by assumption, Lemma \ref{HomQexp} implies that 
\begin{equation}\label{hominjectNGIII}
\Hom_{\qcat(k)}(\TT P^{2i}, \TT(\md \wtimes_E P/ P^1))\cong \Hom_{\qcat(k)}(\TT P^{2i}/\TT P^{2i+1}, \TT(\md \wtimes_E P/ P^1)).
\end{equation}      
Since $\TT(\md \wtimes_E P/ P^1)\cong \md \wtimes_E \TT P/\TT P^1$ by Lemma \ref{Tcomwtimes}, the right vertical arrow in the diagram above is zero.
\end{proof}

Let $\varphi: \wE\twoheadrightarrow R^{\psi}$ be the homomorphism defined in Proposition \ref{definephi}, let $\rr$ be the ideal of $R^{\psi}$ defined in 
Proposition \ref{imageofa} and let $\rr_k=\rr\otimes_{\OO} k$.  

\begin{lem}\label{achtungbaby5} If $\Hom_{\dualcat(k)}(P^{2i}, \Eins_G^{\vee})=0$ and $\mathfrak a^i\wtimes_E \TT P\cong \TT P^{2i}$ for a fixed $i$ then 
the map $\varphi$ induces an isomorphism 
$\mathfrak a^i/\mathfrak a^{i+1}\overset{\cong}{\rightarrow} \rr_k^i/\rr_k^{i+1}\cong \nn^i$, where $\nn$ is 
the maximal ideal of $E/\mathfrak a$. Moreover, $\mathfrak a^{i+1}\wtimes_E \TT P \cong \TT P^{2i+2}$.
\end{lem}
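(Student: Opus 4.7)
The plan is to combine $E$-flatness of $\TT P$ with Lemma \ref{vaca5} to get one direction of the comparison, and then transport information from the Galois side via $\varphi$ and the appendix to close the argument.

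First I would invoke $E$-flatness of $\TT P$ --- this follows from Proposition \ref{equivofcatsII}, Corollary \ref{topfree} and Lemma \ref{Tcomwtimes} --- to apply $\wtimes_E \TT P$ to the exact sequence
$$0 \to \mathfrak a^{i+1} \to \mathfrak a^i \to \mathfrak a^i/\mathfrak a^{i+1} \to 0.$$
Using the hypothesis $\mathfrak a^i \wtimes_E \TT P \cong \TT P^{2i}$, this identifies $\mathfrak a^{i+1} \wtimes_E \TT P$ with a subobject of $\TT P^{2i}$ fitting into
$$0 \to \mathfrak a^{i+1} \wtimes_E \TT P \to \TT P^{2i} \to (\mathfrak a^i/\mathfrak a^{i+1}) \wtimes_E \TT P \to 0.$$

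Next I would apply Lemma \ref{vaca5} to $\md := \mathfrak a^i/\mathfrak a^{i+1}$, which is a compact $E/\mathfrak a$-module since $\mathfrak a$ annihilates it. The hypothesis $\Hom_{\dualcat(k)}(P^{2i}, \Eins_G^{\vee}) = 0$ is exactly what the Lemma needs, and its conclusion says that the canonical quotient map $\TT P^{2i} \twoheadrightarrow (\mathfrak a^i/\mathfrak a^{i+1}) \wtimes_E \TT P$ factors through $\TT P^{2i}/\TT P^{2i+2}$. This gives the inclusion $\TT P^{2i+2} \subseteq \mathfrak a^{i+1} \wtimes_E \TT P$ and a surjection $\TT P^{2i}/\TT P^{2i+2} \twoheadrightarrow (\mathfrak a^i/\mathfrak a^{i+1}) \wtimes_E \TT P$.

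To control the other direction and to identify $\rr_k^i/\rr_k^{i+1}$ with $\nn^i$, I would pass to the Galois side via $\varphi \colon \wE \twoheadrightarrow R^{\psi}$. Proposition \ref{imageofa} gives $\varphi(\wa) \subseteq \rr$, so reducing modulo $\varpi$ yields a surjection $\mathfrak a^i/\mathfrak a^{i+1} \twoheadrightarrow \rr_k^i/\rr_k^{i+1}$. The second isomorphism $\rr_k^i/\rr_k^{i+1} \cong \nn^i$ is an assertion about the structure of $R^{\psi}$ along the reducible locus: it will be extracted from the appendix, using Corollary \ref{A6} ($R^{\psi}/\rr \cong \OO[[x,y]]$) together with the explicit B\"ockle presentation of $R^{\psi}$, which determines both generators of $\rr$ and the structure of its graded powers.

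The main obstacle is to upgrade the surjection $\TT P^{2i}/\TT P^{2i+2} \twoheadrightarrow (\mathfrak a^i/\mathfrak a^{i+1}) \wtimes_E \TT P$ to an isomorphism, equivalently to prove $\mathfrak a^{i+1} \wtimes_E \TT P = \TT P^{2i+2}$. For this I would compute the two sides separately and match their sizes: on one hand $\TT P^{2i}/\TT P^{2i+2}$ has a two-step filtration by Proposition \ref{filtrationNG} with graded pieces $\TT \dIndu{P}{G}{P^i_{\alpha^{\vee}}}$ and $\TT \dIndu{P}{G}{((P^i_{\alpha^{\vee}})^s \otimes \alpha^{\vee})}$, and since $P_{\alpha^{\vee}}$ is free of rank one over $E/\mathfrak a \cong k[[x,y]]$ by Proposition \ref{projTistfree}, these pieces correspond to $\nn^i/\nn^{i+1}$; on the other hand, pairing the surjection $\mathfrak a^i/\mathfrak a^{i+1} \twoheadrightarrow \nn^i$ coming from Galois with $(\mathfrak a^i/\mathfrak a^{i+1}) \wtimes_E \TT P \cong (\mathfrak a^i/\mathfrak a^{i+1}) \wtimes_{E/\mathfrak a} (\TT P/\TT P^2)$ gives a matching lower bound. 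Once these sizes agree, all the surjections must be isomorphisms, proving both $\mathfrak a^i/\mathfrak a^{i+1} \cong \rr_k^i/\rr_k^{i+1} \cong \nn^i$ and $\mathfrak a^{i+1} \wtimes_E \TT P \cong \TT P^{2i+2}$ simultaneously.
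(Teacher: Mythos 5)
Your first two steps are the same as the paper's: tensor the exact sequence $0\to\mathfrak a^{i+1}\to\mathfrak a^i\to\mathfrak a^i/\mathfrak a^{i+1}\to 0$ with the $E$-flat module $\TT P$, and apply Lemma \ref{vaca5} to the compact $E/\mathfrak a$-module $\mathfrak a^i/\mathfrak a^{i+1}$ to conclude that the surjection $\TT P^{2i}\twoheadrightarrow(\mathfrak a^i/\mathfrak a^{i+1})\wtimes_E \TT P$ factors through $\TT P^{2i}/\TT P^{2i+2}$. That part is sound. Where your argument breaks down is in closing the loop.

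First, a concrete error: you propose to extract the isomorphism $\rr_k^i/\rr_k^{i+1}\cong\nn^i$ directly from the appendix, but Corollary \ref{A6}(ii) only provides a \emph{surjection} $\rr_k^i/\rr_k^{i+1}\twoheadrightarrow\nn^i$; the Remark following it explicitly states that the isomorphism is only established later, during the proof of Proposition \ref{filisthesame} — and in fact this very lemma is part of that proof. You cannot assume it. Second, your final ``size-matching'' paragraph is not a proof; it also rests on the unjustified claim that the graded pieces of $\TT P^{2i}/\TT P^{2i+2}$ from Proposition \ref{filtrationNG} ``correspond to $\nn^i/\nn^{i+1}$''. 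They involve $P^i_{\alpha^\vee}$, about which the only a priori information is $\nn^i P_{\alpha^\vee}\subseteq P^i_{\alpha^\vee}\subseteq P_{\alpha^\vee}$; you cannot identify $P^i_{\alpha^\vee}$ with $\nn^i P_{\alpha^\vee}$ without further argument.

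The missing idea is a sandwich argument over $E/\mathfrak a\cong k[[x,y]]$. Apply the exact functor $\Hom_{\qcat(k)}(\TT P,-)$ to $\TT P^{2i}/\TT P^{2i+2}\twoheadrightarrow(\mathfrak a^i/\mathfrak a^{i+1})\wtimes_E\TT P$, using Lemma \ref{vaca32/3} on the source and Lemma \ref{headS0} on the target, to obtain a surjection $\md:=\Hom_{\dualcat_T(k)}(P_{\alpha^\vee},P^i_{\alpha^\vee})\twoheadrightarrow\mathfrak a^i/\mathfrak a^{i+1}$. Proposition \ref{filtrationNG}(ii) together with freeness of $P_{\alpha^\vee}$ over $E/\mathfrak a$ gives $\nn^i P_{\alpha^\vee}\subseteq P^i_{\alpha^\vee}\subseteq P_{\alpha^\vee}$, hence $\nn^i\subseteq\md\subseteq E/\mathfrak a$. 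So $\md$ is torsion-free of rank $1$. The chain of surjections $\md\twoheadrightarrow\mathfrak a^i/\mathfrak a^{i+1}\twoheadrightarrow\rr_k^i/\rr_k^{i+1}\twoheadrightarrow\nn^i$ becomes an isomorphism after tensoring with the quotient field $K$ of $E/\mathfrak a$, because $\md\otimes K\cong K\cong\nn^i\otimes K$; since $\md\hookrightarrow\md\otimes K$, the composite $\md\to\nn^i$ is injective, hence every intermediate surjection is an isomorphism. This simultaneously establishes $\mathfrak a^i/\mathfrak a^{i+1}\cong\rr_k^i/\rr_k^{i+1}\cong\nn^i$. For the final claim, use $\Hom_{\qcat(k)}(\TT P^{2i},T_{\Eins})=0$ (Proposition \ref{filtrationNG}) and Lemma \ref{headS} to get a surjection $\md\wtimes_E\TT P\twoheadrightarrow\TT P^{2i}/\TT P^{2i+2}$, whose composite with your step-2 surjection is the isomorphism obtained by applying $\wtimes_E\TT P$ to $\md\cong\mathfrak a^i/\mathfrak a^{i+1}$; this forces $\TT P^{2i}/\TT P^{2i+2}\cong(\mathfrak a^i/\mathfrak a^{i+1})\wtimes_E\TT P$, and then $E$-flatness of $\TT P$ gives $\mathfrak a^{i+1}\wtimes_E\TT P\cong\TT P^{2i+2}$.
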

\begin{proof} Recall that $E/\mathfrak a\cong k[[x,y]]$, let $\nn$ be the maximal ideal of $E/\mathfrak a$ and let $\mathrm{K}$ be the quotient field of $E/\mathfrak a$. We have a surjection 
$$\TT P^{2i}\cong \mathfrak a^i\wtimes_E \TT P \twoheadrightarrow \mathfrak a^i/\mathfrak a^{i+1}\wtimes_E \TT P,$$
We note that since $\mathfrak a$ is a finitely generated right $E$-module, $\mathfrak a^j$ is a closed submodule of $\mathfrak a^{j-1}$ for 
all $j\ge 1$ and hence $\mathfrak a^i/\mathfrak a^{i+1}$ 
is a compact $E/\mathfrak a$-module. It follows from Lemma \ref{vaca5} that the surjection factors through 
$\TT P^{2i}/\TT P^{2i+2}\twoheadrightarrow \mathfrak a^i/\mathfrak a^{i+1}\wtimes_E \TT P$. We apply $\Hom_{\qcat(k)}(\TT P, \ast)$ and use 
Lemmas \ref{headS0} and \ref{vaca32/3} to get a  surjection  of (right) $E$-modules: 
$\Hom_{\dualcat_T(k)}(P_{\alpha^{\vee}}, P^i_{\alpha^{\vee}}) \twoheadrightarrow \mathfrak a^i/\mathfrak a^{i+1} $,
where $E$ acts on $\md:=\Hom_{\dualcat_T(k)}(P_{\alpha^{\vee}}, P^i_{\alpha^{\vee}})$ via 
$E/\mathfrak a \cong \End_{\dualcat_T(k)}(P_{\alpha^{\vee}})$.
It follows from Proposition \ref{filtrationNG} (ii) that $\rad^i P_{\alpha^{\vee}}\subseteq P^i_{\alpha^{\vee}}$. Since 
$P_{\alpha^{\vee}}$ is flat over $E/\mathfrak a$, see the proof of Proposition \ref{projTistfree}, and 
$k\wtimes_{E/\mathfrak a} P_{\alpha^{\vee}}\cong \alpha^{\vee}$ is irreducible, we get that 
$\rad^i P_{\alpha^{\vee}}\cong \nn^i\wtimes_E P_{\alpha^{\vee}}
\cong \nn^i P_{\alpha^{\vee}}$. Since $P^i_{\alpha^{\vee}}\subseteq P_{\alpha^{\vee}}$ we have 
$\nn^i\subseteq \md\subseteq  E/\mathfrak a$. Hence, $\dim_{\mathrm K} \md \otimes_{E} \mathrm{K}=1$ and we have an injection 
$\md \hookrightarrow \md \otimes_{E} \mathrm{K}$. Proposition \ref{imageofa} and Corollary \ref{A6} (ii) give a surjection
$ \md\twoheadrightarrow \varphi(\mathfrak a)^i/\varphi(\mathfrak a)^{i+1}\twoheadrightarrow \nn^i.$
Since $\nn^i\otimes_{E} \mathrm{K}$ is $1$-di\-men\-sio\-nal, the map induces an isomorphism 
$\md \otimes_E \mathrm{K}\cong \nn^i\otimes_E \mathrm{K}$. 
Hence, the composition $\md\rightarrow \nn^i$ is injective, and so $\varphi: \mathfrak a^i/\mathfrak a^{i+1}\rightarrow 
\varphi(\mathfrak a)^i/\varphi(\mathfrak a)^{i+1}$ is injective, and thus an isomorphism.  Since $\Hom_{\qcat(k)}(\TT P^{2i}, T_{\Eins})=0$ by 
Proposition \ref{filtrationNG}, Lemma \ref{headS}  implies that the evaluation map $\md\wtimes_E \TT P\rightarrow \TT P^{2i}/\TT P^{2i+2}$
is surjective. Since the composition $\md\wtimes_E \TT P\rightarrow \TT P^{2i}/\TT P^{2i+2}\rightarrow 
\mathfrak a^i/\mathfrak a^{i+1} \wtimes_E \TT P$
is an isomorphism, we deduce that $\TT P^{2i}/\TT P^{2i+2}\cong \mathfrak a^i/\mathfrak a^{i+1} \wtimes_E \TT P$. Since $\TT P$ is $E$-flat 
and  $\mathfrak a^i\wtimes_E \TT P\cong \TT P^{2i}$ by assumption, we deduce that $\TT P^{2i+2}\cong \mathfrak a^{i+1}\wtimes_E \TT P$.
\end{proof} 

\begin{lem}\label{purestress} The map $\varphi$ induces isomorphisms $\mathfrak a/\mathfrak a^2 \cong \rr_k/ \rr_k^2\cong \nn$ and
$\mathfrak a^2/\mathfrak a^3\cong \rr_k^2/\rr_k^3\cong \nn^2$, where $\nn$ is the maximal ideal of $E/\mathfrak a$.
\end{lem}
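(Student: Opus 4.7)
The plan is to deduce Lemma \ref{purestress} by iterating Lemma \ref{achtungbaby5} twice, for $i=1$ and $i=2$. The machinery has been set up precisely so that this is a formal consequence of earlier results; no new computation is needed.

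First I would establish the $i=1$ case. Lemma \ref{achtungbaby5} requires the hypotheses (a) $\Hom_{\dualcat(k)}(P^{2},\Eins_G^{\vee})=0$ and (b) $\mathfrak a\wtimes_E \TT P\cong \TT P^{2}$. Hypothesis (a) is supplied by the first assertion of Lemma \ref{achtungbaby1}, while (b) is supplied by Lemma \ref{vaca4}, where it is shown that $\mathfrak a\wtimes_E \TT P\cong \mathfrak a\,\TT P\cong \TT P^2$. Lemma \ref{achtungbaby5} then yields both the isomorphism $\mathfrak a/\mathfrak a^2 \cong \rr_k/\rr_k^2\cong \nn$ and, crucially for the next step, the compatibility $\mathfrak a^2\wtimes_E \TT P\cong \TT P^4$.

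Second, I would apply Lemma \ref{achtungbaby5} with $i=2$. The required hypothesis $\Hom_{\dualcat(k)}(P^4,\Eins_G^{\vee})=0$ is the second half of Lemma \ref{achtungbaby1}, and the second required hypothesis $\mathfrak a^2\wtimes_E \TT P\cong \TT P^4$ is exactly what we obtained as a byproduct of the $i=1$ step. The conclusion gives the isomorphism $\mathfrak a^2/\mathfrak a^3\cong \rr_k^2/\rr_k^3\cong \nn^2$.

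There is no genuine obstacle here: all the real work — the filtration of Proposition \ref{filtrationNG}, the vanishing statements of Lemma \ref{achtungbaby1} (which ultimately rest on the $\Ext^1$-computations of \S\ref{hextII} through Proposition \ref{firstzero} and Corollary \ref{firstzero1}), the finite generation of $\mathfrak a$ in Lemma \ref{vaca4}, and the inductive bootstrap built into Lemma \ref{achtungbaby5} — has already been carried out. The present lemma is simply the first two instances of that induction, recorded separately because these are exactly the cases where the relevant hypothesis on $\Hom_{\dualcat(k)}(P^{2i},\Eins_G^{\vee})$ has been verified. The identification with $\nn^i$ on the Galois side uses Corollary \ref{A6}(ii) through Proposition \ref{imageofa}, both of which feed into Lemma \ref{achtungbaby5}.
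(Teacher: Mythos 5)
Your proposal is correct and follows essentially the same route as the paper: the paper's proof is exactly the two applications of Lemma \ref{achtungbaby5} (for $i=1$ and $i=2$), with the hypotheses supplied by Lemma \ref{achtungbaby1} and Lemma \ref{vaca4}, and the output $\mathfrak a^2\wtimes_E \TT P\cong \TT P^4$ of the first step feeding the second. Indeed your write-up is slightly cleaner than the paper's, whose final sentence cites Lemma \ref{vaca4} where Lemma \ref{achtungbaby5} is meant.
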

\begin{proof} Since $\Hom_{\dualcat(k)}(P^2, \Eins_G^{\vee})=0$ by Lemma \ref{achtungbaby1} and $\mathfrak a\wtimes_E \TT P\cong \TT P^2 $ by Lemma 
\ref{vaca4}, Lemma \ref{achtungbaby5} implies that  $\mathfrak a/\mathfrak a^2 \cong \rr_k/ \rr_k^2\cong \nn$ and 
$\mathfrak a^2\wtimes_E \TT P\cong \TT P^4$. Since $\Hom_{\dualcat(k)}(P^4, \Eins_G^{\vee})=0$ by Lemma \ref{achtungbaby1}, Lemma \ref{vaca4} 
implies $\mathfrak a^2/\mathfrak a^3\cong \rr_k^2/\rr_k^3\cong \nn^2$. 
\end{proof}

\begin{prop}\label{filisthesame} The surjection of graded rings $\varphi^{\bullet}: \gr^{\bullet}_{\mathfrak a}(E)\twoheadrightarrow \gr^{\bullet}_{\rr_k}(R^{\psi}_k)$ 
is an isomorphism. 
\end{prop}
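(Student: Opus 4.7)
The plan is to prove, by induction on $n \geq 0$, the simultaneous assertions
\begin{equation*}
(\ast)_n\ :\ \varphi \text{ induces an isomorphism } \mathfrak a^n/\mathfrak a^{n+1} \xrightarrow{\cong} \rr_k^n/\rr_k^{n+1}, \qquad (\dagger)_n\ :\ \mathfrak a^n \wtimes_E \TT P \cong \TT P^{2n}.
\end{equation*}
The base cases $n=0,1,2$ are exactly the content of Lemma \ref{purestress} (which, through its invocation of Lemma \ref{achtungbaby5}, also records $(\dagger)_n$ for these values). Together with the fact that $\varphi^{\bullet}$ is a priori a surjection of graded rings, $(\ast)_n$ for all $n$ is exactly the statement that $\varphi^{\bullet}$ is injective in every degree, which is what we need.

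For the inductive step, suppose $(\ast)_m$ and $(\dagger)_m$ hold for all $m \leq n$. Lemma \ref{achtungbaby5} applied at index $n+1$ furnishes both $(\ast)_{n+1}$ and $(\dagger)_{n+2}$, provided we can verify its two hypotheses at that index, namely
\begin{equation*}
\mathrm{(a)}_{n+1}\ :\ \Hom_{\dualcat(k)}(P^{2(n+1)}, \Eins_G^{\vee}) = 0, \qquad \mathrm{(b)}_{n+1}\ :\ \mathfrak a^{n+1} \wtimes_E \TT P \cong \TT P^{2(n+1)}.
\end{equation*}
Hypothesis (b)$_{n+1}$ is precisely $(\dagger)_{n+1}$, which is handed to us by the inductive hypothesis. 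The entire difficulty is therefore concentrated in establishing (a)$_{n+1}$ for every $n$.

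To handle (a)$_{n}$, dualize: with $J = J_{\pi_\alpha}$ and $P^{2n}$ the Pontryagin dual of $J/J^{2n}$, the condition becomes $\Hom_G(\Eins_G, J/J^{2n}) = 0$, and since $\soc_G J = \pi_\alpha$ implies $\Hom_G(\Eins_G, J^{2n}) = 0$, the long exact sequence reduces it to $\Ext^1_{G/Z}(\Eins_G, J^{2n}) = 0$. I would prove this by a secondary induction on $n$, the cases $n=1,2$ being Lemma \ref{achtungbaby1} (bootstrapped from Corollary \ref{firstzero1}). The inductive engine is the filtration analysis of \S\ref{filtration}: Lemma \ref{exactseqJ} presents $J^{2n+2}$ as an extension of pieces involving $J^{2n}_{\Sp}$ or $J^{2n}_{\pi_\alpha}$; the constituents $\gr^{2j}J$ are quotients of $\Indu{\overline P}{G}{\kappa_{2j}}$ with $\kappa_{2j}$ having $\alpha^{-1}$-subquotients (annihilated by $\Ext^{\bullet}_{G/Z}(\Eins, -)$ via Corollary \ref{acyclic1Jal}), while $\gr^{2j+1}J \cong \Indu{\overline P}{G}{\kappa_{2j+1}}$ has trivial subquotients for which the higher $\Ext$'s are computed by the degeneration of \eqref{ordseq} and the $T$-side input of Corollary \ref{extToruschar}; the extension-class argument of Proposition \ref{firstzero} has to be propagated by inducting on the socle length of the piece of $J_{\Eins_T}$ that enters.

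The hard part will be this secondary induction: at each step a potential obstruction class may appear in $\Ext^1(\Eins_G, \gr^{2n+1}J) = \Ext^2_{T/Z}(\Eins_T, J_{\Eins_T}/\soc^{2n}_T J_{\Eins_T})$, but one has to show it actually lifts to zero in $\Ext^1(\Eins_G, J^{2n+2})$, mirroring the $n=2$ argument (Proposition \ref{firstzero}) that used $\pi(0,1)$ and the one-dimensionality of $e^1(\pi(0,1), J^2_{\Sp})$. The cleanest way I see to carry this through is to identify, using $(\dagger)_n$ inductively, the Galois-side avatar of $P^{2n}$ under the equivalence $\cV$ of Proposition \ref{equivofcatsII} and to translate the vanishing into a statement about $\Hom_{\gal}$ into the universal deformation modulo $\rr^n$; on the Galois side the vanishing is explicit from the presentation of $R^{\psi}$ in Corollary \ref{A6}. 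Once (a)$_n$ is secured for all $n$, iteration of Lemma \ref{achtungbaby5} delivers $(\ast)_n$ in every degree, which is the proposition.
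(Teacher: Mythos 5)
The paper proves this proposition by a route completely different from yours, and the difference matters: your approach has a gap that the paper's argument is specifically designed to sidestep.

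Your plan requires establishing $(\mathrm{a})_n : \Hom_{\dualcat(k)}(P^{2n}, \Eins_G^{\vee})=0$ for \emph{every} $n$, since Lemma \ref{achtungbaby5} needs this hypothesis at each stage of the iteration. But the paper only proves this for $n=1,2$ (Lemma \ref{achtungbaby1}, via Proposition \ref{firstzero} and Corollary \ref{firstzero1}), and even those cases cost a fair amount of work (the $\pi(0,1)$ detour, Lemma \ref{conf2}, the analysis of $\gr^3 J_{\Sp}$). There is no statement in the paper covering $(\mathrm{a})_n$ for $n>2$, and your sketch of a secondary induction does not close. In particular, the route you suggest at the end --- ``identify, using $(\dagger)_n$, the Galois-side avatar of $P^{2n}$ under $\cV$ and translate the vanishing into a statement about $\Hom_{\gal}$'' --- cannot work, because $\cV$ factors through the quotient functor $\TT:\dualcat(\OO)\to\qcat(\OO)$ which annihilates precisely the objects with trivial $G$-action. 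The condition $(\mathrm{a})_n$ is a statement about whether $\Eins_G^{\vee}$ occurs as a quotient of $P^{2n}$, i.e.\ about what is killed by $\TT$; the Galois side is by construction blind to this, so no amount of deformation-theoretic information about $R^{\psi}$ can detect it.

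The paper instead proves the proposition with the input you already have ($\varphi^0,\varphi^1,\varphi^2$ isomorphisms from Proposition \ref{imageofa} and Lemma \ref{purestress}) plus the explicit shape of the target graded ring from Corollary \ref{RpsirhoA}: $\gr^{\bullet}_{\rr_k}(R^{\psi}_k)\cong (E/\mathfrak a)[\bar z,\bar w]/(x\bar z-y\bar w)$, a quadratic quotient of a polynomial ring in two variables over $E/\mathfrak a\cong k[[x,y]]$. Since $\mathfrak a$ is generated by two elements (Lemma \ref{vaca4}), $\gr^{\bullet}_{\mathfrak a}(E)$ is generated in degree $1$; $\varphi^1$ being an isomorphism forces the relation $x\bar z-y\bar w$ to hold; $\varphi^2$ being an isomorphism forces $\bar z\bar w=\bar w\bar z$, hence $\gr^{\bullet}_{\mathfrak a}(E)$ is commutative. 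One then gets a surjection $(E/\mathfrak a)[\bar z,\bar w]/(x\bar z-y\bar w)\twoheadrightarrow\gr^{\bullet}_{\mathfrak a}(E)\twoheadrightarrow\gr^{\bullet}_{\rr_k}(R^{\psi}_k)$ whose composite is a surjective endomorphism of a noetherian ring, hence an isomorphism, forcing both arrows to be isomorphisms. This is exactly what makes the base cases $n\le 2$ in Lemma \ref{purestress} sufficient: the structure of the target does the rest. Your degree-by-degree induction is not the intended argument, and completing it would require proving new vanishing statements for each $n$ that the paper deliberately avoids.
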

\begin{proof} It follows from Lemma \ref{RpsirhoA} that $R^{\psi}_k \cong k[[x,y,z,w]]/(xz-yw)$ and $\rr_k=(z,w)$. Thus
$R^{\psi}_k/\rr_k \cong k[[x,y]]$ and $\gr^{\bullet}_{\rr_k}(R^{\psi}_k)\cong (R^{\psi}_k/\rr_k)[\bar{z}, \bar{w}]/(x\bar{z}-y\bar{w})$.
It follows from Proposition \ref{imageofa} that $\varphi^0$ induces an isomorphism $E/\mathfrak a\cong R^{\psi}_k/\rr_k\cong k[[x,y]]$.
Lemma \ref{purestress} implies that $\varphi^1$ induces an isomorphism $\mathfrak a/\mathfrak a^2\cong \rr_k/\rr_k^2$.  In particular, 
$\gr^{\bullet}_{\mathfrak a}(E)/\gr^{>1}_{\mathfrak a}(E)\cong \gr^{\bullet}_{\rr_k}(R^{\psi}_k)/\gr^{>1}_{\rr_k}(R^{\psi}_k)$  is a 
commutative ring. Hence, we have a surjection
\begin{equation}\label{hardwork}
 \beta:(E/\mathfrak a)[\bar{z}, \bar{w}]^{\mathrm{nc}}\twoheadrightarrow \gr^{\bullet}_{\mathfrak a}(E)
 \end{equation}
such that the image of $x\bar{z}-y \bar{w}$ is zero, where the source is a polynomial ring in two non-commutative variables with coefficients in $\wE/\mathfrak a$. Lemma \ref{purestress} implies that $\varphi^2$ induces an isomorphism 
$\mathfrak a^2/\mathfrak a^3\cong \rr_k^2/\rr_k^3$. Hence,  $\bar{z} \bar{w}-\bar{w}\bar{z}$ maps to zero in $\gr^{\bullet}_{\mathfrak a}(E)$. 
Thus $\gr^{\bullet}_{\mathfrak a}(E)$ is a commutative ring and \eqref{hardwork} factors through
$$ (E/\mathfrak a)[\bar{z}, \bar{w}]/(x\bar{z}-y\bar{w})\twoheadrightarrow  \gr^{\bullet}_{\mathfrak a}(E)\twoheadrightarrow \gr^{\bullet}_{\rr_k}(R^{\psi})\cong 
(E/\mathfrak a)[\bar{z}, \bar{w}]/(x\bar{z}-y\bar{w}).$$
Since any surjection of a noetherian ring onto itself is an isomorphism we deduce the assertion. 
\end{proof}

\begin{thm}\label{varphisoNGII} The map $\varphi$ induces an isomorphism $\wE\cong R^{\psi}$.
\end{thm}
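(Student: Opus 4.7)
The plan is to deduce the theorem in two steps: first, reduce to the mod-$\varpi$ statement that $\varphi$ induces an isomorphism $\varphi_k\colon E \twoheadrightarrow R^{\psi}_k$; and second, establish the latter by combining Proposition \ref{filisthesame} with a separation argument for $E$ and a completeness argument for $R^{\psi}_k$.

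For the reduction, I would use that $\wE$ is $\OO$-torsion free (any $\phi \in \wE$ with $\varpi\phi = 0$ kills every vector of the $\OO$-torsion free module $\wP$, by Corollary \ref{projaretfree}) and that $R^{\psi}$ is $\OO$-flat by Corollary \ref{RpsirhoA}. Letting $K$ denote the kernel of $\varphi$, the sequence $0 \to K \to \wE \to R^{\psi} \to 0$, tensored with $k$ and using $\Tor_1^{\OO}(R^{\psi}, k) = 0$, yields an exact sequence $0 \to K/\varpi K \to E \to R^{\psi}_k \to 0$. If $\varphi_k$ is an isomorphism then $K = \varpi K$; since $K$ inherits $\varpi$-adic separatedness from the pseudo-compact ring $\wE$, a Nakayama-type argument forces $K = 0$.

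For the mod-$\varpi$ statement, Proposition \ref{filisthesame} supplies an isomorphism of graded rings $\gr^{\bullet}_{\mathfrak a}(E) \overset{\cong}{\to} \gr^{\bullet}_{\rr_k}(R^{\psi}_k)$. A straightforward induction using the five lemma on the short exact sequences $0 \to \mathfrak a^n/\mathfrak a^{n+1} \to E/\mathfrak a^{n+1} \to E/\mathfrak a^n \to 0$ then upgrades this to isomorphisms $E/\mathfrak a^n \overset{\cong}{\to} R^{\psi}_k/\rr_k^n$ for all $n \ge 0$. By Corollary \ref{RpsirhoA} the ring $R^{\psi}_k$ admits the explicit presentation $k[[x,y,z,w]]/(xz - yw)$ with $\rr_k = (z,w)$; in particular it is $\rr_k$-adically complete, being a quotient of the $\rr_k$-adically complete noetherian ring $k[[x,y]][[z,w]]$ by a finitely generated ideal. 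Taking inverse limits, the map $\varphi_k$ factors as
$$E \longrightarrow \underset{\longleftarrow}{\lim}\, E/\mathfrak a^n \overset{\cong}{\longrightarrow} \underset{\longleftarrow}{\lim}\, R^{\psi}_k/\rr_k^n \overset{\cong}{\longrightarrow} R^{\psi}_k,$$
so the kernel of $\varphi_k$ is exactly $\bigcap_n \mathfrak a^n$. To conclude, I will show this intersection vanishes: the formalism of \S\ref{firstsec} applies in $\qcat(k)$ to the projective envelope $\TT P$ of $T_{\alpha}$ (the hypotheses (H1)--(H5) were verified before Proposition \ref{filtrationNG}), so Corollary \ref{Einvlim} gives $E \cong \underset{\longleftarrow}{\lim}\, E/\mm^n$ where $\mm$ is the maximal ideal of $E$, whence $\bigcap_n \mm^n = 0$. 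Since $\mathfrak a \subseteq \mm$, we get $\bigcap_n \mathfrak a^n \subseteq \bigcap_n \mm^n = 0$, making $\varphi_k$ injective, hence an isomorphism; the Nakayama argument of the second paragraph then lifts this to $\wE \cong R^{\psi}$.

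The main substance of the proof is really packaged into Proposition \ref{filisthesame}, whose derivation occupied all of \S\ref{filtration} through a careful comparison between the functorial filtration $\TT P^{\bullet}$ on the projective cover and the $\mathfrak a$-adic filtration on $E$. Given that input, the remaining steps are essentially formal: separation of $E$ comes for free from the pseudo-compact structure built into the abstract machinery, completeness of $R^{\psi}_k$ is visible in its presentation, and the lift from $k$-coefficients to $\OO$-coefficients is standard since both sides are $\OO$-flat.
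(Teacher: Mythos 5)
Your proof is correct and follows essentially the same route as the paper: Proposition \ref{filisthesame} yields isomorphisms $E/\mathfrak a^i \cong R^{\psi}_k/\varphi(\mathfrak a)^i$ for all $i$, passing to the limit gives $E \cong R^{\psi}_k$, and $\OO$-flatness of $R^{\psi}$ together with Nakayama lifts this to $\wE \cong R^{\psi}$. Your second paragraph (completeness of $R^{\psi}_k$ for the $\rr_k$-adic topology and separatedness of $E$ via $\bigcap_n \mm^n = 0$ from Corollary \ref{Einvlim}) simply makes explicit what the paper compresses into the phrase ``passing to the limit''.
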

\begin{proof} We deduce from Proposition \ref{filisthesame} that $\varphi$ induces an 
isomorphism $E/\mathfrak a^i \overset{\cong}{\rightarrow} R^{\psi}_k/\varphi(\mathfrak a)^i$, for all 
$i\ge 1$. Passing to the limit we get an isomorphism $E\cong R^{\psi}_k$. Since $R^{\psi}$ is $\OO$-flat 
by Corollary \ref{RpsirhoA}, we get that $(\Ker \varphi)\otimes_{\OO} k=0$. Hence, $\Ker \varphi=0$ by Nakayama's 
lemma.
\end{proof}

\begin{cor}\label{univdefII} $\cV(\wP_{\pi_{\alpha}^{\vee}})$ is the universal deformation 
of $\rho$ with determinant equal to $\zeta\varepsilon$.
\end{cor}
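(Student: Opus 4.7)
The plan is to deduce this directly from Theorem~\ref{varphisoNGII}; no substantial new argument is needed, in close analogy with Proposition~\ref{superdone} in the supersingular case and Corollary~\ref{wEcommGen} in the generic reducible case. First I would note that the isomorphism $\varphi:\wE\xrightarrow{\sim}R^{\psi}$ supplied by Theorem~\ref{varphisoNGII} makes $\wE$ commutative, so $\wE=\wE^{ab}$ and $\wP_{\pi_{\alpha}^{\vee}}=\wP_{\pi_{\alpha}^{\vee}}^{ab}$. Second, Corollary~\ref{ftof2} asserts that $\cV(\wP_{\pi_{\alpha}^{\vee}})$ is a free $\wE$-module of rank $\dim_k\cV(Q)=2$ carrying a continuous $\gal$-action which commutes with the $\wE$-action and reduces modulo $\wm$ to $\cV(Q)\cong\rho$. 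Transporting along $\varphi^{-1}$, this exhibits $\cV(\wP_{\pi_{\alpha}^{\vee}})$ as a deformation of $\rho$ to $R^{\psi}$ with determinant $\zeta\varepsilon$ (the determinant is correct because the central character $\zeta$ is fixed on the $\GL_2(\Qp)$-side).

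The remaining point is to identify this deformation with the universal one $\rho^{\mathrm{un},\psi}$. For this I would appeal to the Yoneda interpretation already exploited in Corollary~\ref{ftof1} and the proof of Proposition~\ref{ftof4}: the natural transformation $\cV:\Def_Q\to \Def^{\psi}_{\rho}$ between pro-representable functors corresponds to a ring map $\varphi':R^{\psi}\to \wE$ characterised by the property that $\rho^{\mathrm{un},\psi}$ pulls back along $\varphi'$ to $\cV(\wP_{\pi_{\alpha}^{\vee}})$. A direct check shows that $\varphi'$ and the surjection $\varphi$ of Proposition~\ref{definephi} are mutually inverse: both are determined by factoring the tautological surjection $R\twoheadrightarrow\wE$ (Yoneda correspondent of $\cV$, with $R$ the unrestricted Galois deformation ring) through $R\twoheadrightarrow R^{\psi}$, and their two compositions become the identity after pre-composition with this surjection. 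Hence $\varphi^{-1}=\varphi'$, and since $\varphi$ is an isomorphism we obtain $\cV(\wP_{\pi_{\alpha}^{\vee}})\cong \rho^{\mathrm{un},\psi}$ as desired.

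I do not anticipate any real obstacle: the genuine work has already been absorbed into Theorem~\ref{varphisoNGII} (and, before it, into the filtration comparison of \S\ref{filtration} and the explicit presentation of $R^{\psi}$ in the appendix). The only place requiring a moment of care is matching conventions between the Yoneda map $R^{\psi}\to \wE$ and the factored surjection $\wE\twoheadrightarrow R^{\psi}$ of Proposition~\ref{definephi}, but this is bookkeeping that falls out of the universal properties.
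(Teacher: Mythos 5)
Your proposal is correct and takes the same route as the paper, which compresses the whole argument into the single observation that Theorem~\ref{varphisoNGII} shows $\cV$ induces an isomorphism of deformation functors and hence carries the universal object $\wP_{\pi_{\alpha}^{\vee}}$ to the universal object $\rho^{\mathrm{un},\psi}$; your explicit reconciliation of the Yoneda map $R^{\psi}\to\wE$ with the surjection of Proposition~\ref{definephi} (via factoring through $R\twoheadrightarrow R^{\psi}$) is exactly what makes that one-line deduction rigorous. One small wording caveat: the surjectivity of $R\to\wE^{ab}$ is not ``tautological'' but is the content of hypothesis~(ii) of Proposition~\ref{ftof4} (Colmez's injectivity on $\Ext^1$), though this does not affect the argument since you only use that surjectivity as an established input.
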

\begin{proof} It follows from Theorem \ref{varphisoNGII} that $\cV$ induces an isomorphism between deformation 
functors and hence an isomorphism between the universal objects.
\end{proof}

\subsection{The centre}\label{Thecentre}
Let $\BB=\{\Eins, \Sp, \pi_{\alpha}\}$ and let  $\wP_{\Eins_G^{\vee}}$, $\wP_{\Sp^{\vee}}$ and $\wP_{\pi_{\alpha}^{\vee}}$ be projective envelopes of 
$\Eins_G^{\vee}$, $\Sp^{\vee}$ and $\pi_{\alpha}^{\vee}$ in $\dualcat(\OO)$. Let 
$\wP_{\BB}:= \wP_{\pi_{\alpha}^{\vee}}\oplus \wP_{\Sp^{\vee}}\oplus\wP_{\Eins_G^{\vee}}$ and $\wE_{\BB}:=\End_{\dualcat(\OO)}(\wP_{\BB})$.
Recall that the functor $N\mapsto \Hom_{\dualcat(\OO)}(\wP_{\BB}, N)$ induces an equivalence of categories between 
$\dualcat(\OO)^{\BB}$ and the category of compact $\wE_{\BB}$-modules, Proposition \ref{gabriel}. In this section we 
compute the ring $\wE_{\BB}$ and show that it is a finitely generated module over its centre, and that the centre is 
naturally  isomorphic to $R^{\psi}$.  

After twisting we may assume that our fixed central character $\zeta$ is trivial, see Lemma \ref{liftetatwist} below. 
For a character  $\chi: T/Z\rightarrow k^{\times}$ we let $\wP_{\chi^{\vee}}$ be a projective envelope of $\chi^{\vee}$
in $\dualcat_{T/Z}(\OO)$ and let $\wM_{\chi^{\vee}}:=(\Indu{P}{G}{(\wP_{\chi^{\vee}})^{\vee}})^{\vee}$. Further
we define $\wM_{\Eins_T^{\vee}, 0}$ by the exact sequence: 
\begin{equation}\label{0B}
0\rightarrow \wM_{\Eins_T^{\vee}, 0}\overset{\xi_{32}}{\rightarrow} \wM_{\Eins_T^{\vee}}\overset{\theta}{\rightarrow} \OO\rightarrow 0,
\end{equation}
where $\OO$ is equipped with the trivial $G$-action. Proposition \ref{resindJNG} and Corollary \ref{projaretfree3} imply the existence of exact sequences:
\begin{equation}\label{1B} 
0\rightarrow \wP_{\pi_{\alpha}^{\vee}}\overset{\varphi_{31}}{\rightarrow} \wP_{\Eins_G^{\vee}}\overset{\psi_3}{\rightarrow} \wM_{\Eins^{\vee}_T}\rightarrow 0
\end{equation}
\begin{equation}\label{2B} 
0\rightarrow \wP_{\Sp^{\vee}}\overset{\varphi_{12}}{\rightarrow} \wP_{\pi_{\alpha}^{\vee}}\overset{\psi_1}{\rightarrow} \wM_{\alpha^{\vee}}\rightarrow 0
\end{equation}
\begin{equation}\label{3B} 
\wP^{\oplus 2}_{\pi_{\alpha}^{\vee}}\rightarrow \wP_{\Sp^{\vee}}\overset{\psi_2}{\rightarrow} \wM_{\Eins^{\vee}_T, 0}\rightarrow 0
\end{equation}

\begin{lem}\label{allvanish} $\Hom_{\dualcat(\OO)}(\wP_{\pi_{\alpha}^{\vee}},  \wM_{\Eins^{\vee}_T})$, $\Hom_{\dualcat(\OO)}(\wP_{\pi_{\alpha}^{\vee}},  \wM_{\Eins^{\vee}_T, 0})$,
$\Hom_{\dualcat(\OO)}(\wP_{\Eins_G^{\vee}}, \wM_{\alpha^{\vee}})$  and $\Hom_{\dualcat(\OO)}(\wP_{\Sp^{\vee}}, \wM_{\alpha^{\vee}})$, all vanish. 
\end{lem}
\begin{proof}  The proof in all the cases is the same, so we prove only the vanishing of $\Hom_{\dualcat(\OO)}(\wP_{\pi_{\alpha}^{\vee}},  \wM_{\Eins^{\vee}_T})$. The irreducible subquotients of 
$\wM_{\Eins^{\vee}_T}$ are isomorphic to $\Eins_G^{\vee}$ and $\Sp^{\vee}$. In particular, $\pi_{\alpha}^{\vee}$ is not a subquotient. 
Since $\wP_{\pi^{\vee}_{\alpha}}$ is  a projective envelope of $\pi_{\alpha}^{\vee}$, we deduce that 
$\Hom_{\dualcat(\OO)}(\wP_{\pi_{\alpha}^{\vee}}, \wM_{\Eins^{\vee}_T})=0$.
\end{proof}

We let $\varphi_{32}:=\varphi_{31}\circ\varphi_{12}: \wP_{\Sp^{\vee}}\hookrightarrow \wP_{\Eins^{\vee}_G}$ and denote: 
\begin{displaymath}
\wE_{11}:= \End_{\dualcat(\OO)}(\wP_{\pi_{\alpha}^{\vee}}), \quad \wE_{22}:= \End_{\dualcat(\OO)}(\wP_{\Sp^{\vee}}), \quad 
\wE_{33}:= \End_{\dualcat(\OO)}(\wP_{\Eins_G^{\vee}}).
\end{displaymath}
For $i=1, 2, 3$ we let $\wa_{ii}:=\{\phi\in \wE_{ii}: \psi_i\circ \phi=0\}$, with $\psi_i$ defined  in \eqref{1B}, \eqref{2B}, \eqref{3B}. Let $e_1$, 
$e_2$ and $e_3$ be idempotents in $\wE_{\BB}$ cutting out  $\wP_{\pi_{\alpha}^{\vee}}$, $\wP_{\Sp^{\vee}}$ and $\wP_{\Eins_G^{\vee}}$ respectively.
\begin{lem}\label{X1} 
\begin{equation}\label{X2}
\wE_{11}\overset{\cong}{\rightarrow}\Hom_{\dualcat(\OO)}(\wP_{\pi_{\alpha}^{\vee}}, \wP_{\Eins_G^{\vee}}),\quad z_{11}\mapsto \varphi_{31}\circ z_{11}
\end{equation} 
\begin{equation}\label{X3}
\wE_{22}\overset{\cong}{\rightarrow}\Hom_{\dualcat(\OO)}(\wP_{\Sp^{\vee}}, \wP_{\pi_{\alpha}^{\vee}}), \quad z_{22}\mapsto \varphi_{12} \circ z_{22}
\end{equation}  
\begin{equation}\label{X4}
\Hom_{\dualcat(\OO)}(\wP_{\Eins_G^{\vee}}, \wP_{\Sp^{\vee}})\overset{\cong}{\rightarrow} \Hom_{\dualcat(\OO)}(\wP_{\Eins_G^{\vee}}, \wP_{\pi_{\alpha}^{\vee}}), 
\quad z_{23}\mapsto \varphi_{12}\circ z_{23}
\end{equation}
\begin{equation}\label{X5}
\Hom_{\dualcat(\OO)}(\wP_{\pi_{\alpha}^{\vee}}, \wP_{\Sp^{\vee}}) \overset{\cong}{\rightarrow} \wa_{11}, \quad z_{21}\mapsto \varphi_{12}\circ z_{21},
\end{equation}
\begin{equation}\label{X6}
\Hom_{\dualcat(\OO)}(\wP_{\Eins_G^{\vee}}, \wP_{\pi_{\alpha}^{\vee}}) \overset{\cong}{\rightarrow} \wa_{33}, \quad z_{13}\mapsto \varphi_{31}\circ z_{13},
\end{equation}
\begin{equation}\label{X7}
\Hom_{\dualcat(\OO)}(\wP_{\Eins_G^{\vee}}, \wP_{\Sp^{\vee}}) \overset{\cong}{\rightarrow} \wa_{33}, \quad z_{23}\mapsto \varphi_{32}\circ z_{23},
\end{equation}
\end{lem}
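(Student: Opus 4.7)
The plan is to derive each of the seven isomorphisms by applying a suitable $\Hom$-functor to one of the three defining sequences \eqref{1B}, \eqref{2B}, \eqref{3B} and exploiting two general facts: (a) each $\wP_X$ is projective in $\dualcat(\OO)$, so $\Hom_{\dualcat(\OO)}(\wP_X,\ast)$ is exact and no $\Ext^1$-terms appear; and (b) $\Hom_{\dualcat(\OO)}(\wP_X,N)=0$ whenever $X$ is not a subquotient of $N$, because $\wP_X\twoheadrightarrow X$ is an essential epimorphism onto an irreducible object. The only inputs I will need on the $\wM_{\chi^{\vee}}$'s are their composition factors, which are controlled by $\chi$: the irreducible constituents of $\wM_{\Eins_T^{\vee}}$ (resp.\ $\wM_{\alpha^{\vee}}$) are $\Eins_G^{\vee}$ and $\Sp^{\vee}$ (resp.\ only $\pi_\alpha^{\vee}$), since these are just the duals of the composition factors of $\Indu{P}{G}{\Eins_T}$ (resp.\ $\Indu{P}{G}{\alpha}$).

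With this setup, the isomorphisms are obtained as follows. For \eqref{X2} and \eqref{X3}, I apply $\Hom_{\dualcat(\OO)}(\wP_{\pi_\alpha^{\vee}},\ast)$ to \eqref{1B} and $\Hom_{\dualcat(\OO)}(\wP_{\Sp^{\vee}},\ast)$ to \eqref{2B}; in both cases the terminal $\Hom$ into $\wM_{\Eins_T^{\vee}}$ resp.\ $\wM_{\alpha^{\vee}}$ vanishes by the composition-factor observation, giving the isomorphism induced by $\varphi_{31}\circ$ resp.\ $\varphi_{12}\circ$. For \eqref{X4}, I apply $\Hom_{\dualcat(\OO)}(\wP_{\Eins_G^{\vee}},\ast)$ to \eqref{2B}; again the $\Hom$ into $\wM_{\alpha^{\vee}}$ vanishes (since $\Eins_G^{\vee}$ is not a subquotient of $\wM_{\alpha^{\vee}}$) and the claim follows. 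For \eqref{X5} and \eqref{X6}, I again apply $\Hom_{\dualcat(\OO)}(\wP_{\pi_\alpha^{\vee}},\ast)$ to \eqref{2B} and $\Hom_{\dualcat(\OO)}(\wP_{\Eins_G^{\vee}},\ast)$ to \eqref{1B} respectively; this time the terminal $\Hom$ groups need not vanish, but by definition $\wa_{ii}$ is exactly the kernel of composition with $\psi_i$ acting on $\wE_{ii}$, which identifies the preceding $\Hom$ group as $\wa_{11}$ resp.\ $\wa_{33}$ under post-composition with $\varphi_{12}$ resp.\ $\varphi_{31}$. Finally, \eqref{X7} is obtained by composing \eqref{X4} with \eqref{X6}, so that $z_{23}\mapsto \varphi_{31}\circ(\varphi_{12}\circ z_{23})=\varphi_{32}\circ z_{23}$.

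There is no real obstacle here beyond bookkeeping: every step reduces to a long exact sequence plus a vanishing statement of type (b). The one small point to verify carefully is that, in \eqref{X5} and \eqref{X6}, the connecting map ``$\psi_i\circ$'' agrees with the one implicitly used to define $\wa_{ii}$; this is tautological once one unwinds that $\wa_{ii}=\{\phi\in\wE_{ii}:\psi_i\circ\phi=0\}$ and recalls that $\Hom_{\dualcat(\OO)}(\wP_X,\psi_i)$ is precisely post-composition with $\psi_i$. All six maps \eqref{X2}--\eqref{X7} are exhibited as explicit post-composition maps and shown to be bijective by reading off the long exact sequence, so the proof is essentially the diagram chase described above.
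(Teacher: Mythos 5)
Your proposal is correct and is essentially the paper's own argument: apply $\Hom_{\dualcat(\OO)}(\wP_X,\ast)$ to the sequences \eqref{1B} and \eqref{2B}, use that $\Hom$ out of a projective envelope into $\wM_{\Eins_T^{\vee}}$ resp. $\wM_{\alpha^{\vee}}$ vanishes when the relevant irreducible is not a constituent, identify the kernels with $\wa_{11}$, $\wa_{33}$ by their definition via $\psi_i$, and obtain \eqref{X7} by composing \eqref{X4} with \eqref{X6}. The paper merely writes out the case \eqref{X2} and notes the rest are identical, so your more detailed bookkeeping adds nothing new in substance.
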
 
\begin{proof} The proof in all the cases is the same, one uses \eqref{1B}, \eqref{2B} and \eqref{3B} together with Lemma \ref{allvanish} and the left exactness of $\Hom$.
 The assertion in \eqref{X7} follows from \eqref{X4} and \eqref{X6}.
\end{proof}

\begin{lem}\label{defibeta} There exists $\beta: \wP_{\Sp^{\vee}}\rightarrow \wP_{\Eins_G^{\vee}}$ such that
$\psi_3\circ \beta= \xi_{32}\circ\psi_2$. Moreover, the following sequence: 
\begin{equation}\label{Z2} 
\wP_{\pi_{\alpha}^{\vee}}\oplus \wP_{\Sp^{\vee}}\overset{\varphi_{13}\oplus\beta}{\longrightarrow} \wP_{\Eins_G^{\vee}}\rightarrow \OO\rightarrow 0
\end{equation} 
is exact.
\end{lem}
\begin{proof} Since $\Hom_{\dualcat(\OO)}(\wP_{\Sp^{\vee}}, \OO)=0$, we deduce from \eqref{0B} that $\xi_{32}$ induces an isomorphism 
$\Hom_{\dualcat(\OO)}(\wP_{\Sp^{\vee}}, \wM_{\Eins_T^{\vee}, 0})\overset{\xi_{32}\circ}{\cong}\Hom_{\dualcat(\OO)}(\wP_{\Sp^{\vee}}, \wM_{\Eins_T^{\vee}})$. 
Since $\wP_{\Sp^{\vee}}$ is projective we deduce from \eqref{1B} that $\psi_3$ induces a surjection $\Hom_{\dualcat(\OO)}(\wP_{\Sp^{\vee}}, \wP_{\Eins_G^{\vee}})\overset{\psi_3\circ}{\twoheadrightarrow }
\Hom_{\dualcat(\OO)}(\wP_{\Sp^{\vee}}, \wM_{\Eins_T^{\vee}})$. Hence, there exists $\beta: \wP_{\Sp^{\vee}}\rightarrow \wP_{\Eins_G^{\vee}}$ such that
$\psi_3\circ \beta= \xi_{32}\circ\psi_2$.  Combining \eqref{0B} with \eqref{1B} we obtain \eqref{Z2}.
\end{proof}

\begin{prop}\label{Risomo} Restriction to  $\wP_{\pi_{\alpha}^{\vee}}$ in \eqref{1B} and to $\wP_{\Sp^{\vee}}$ in \eqref{2B} induces isomorphisms: 
$\wE_{33}\overset{\cong}{\rightarrow} \wE_{11}$, $z_{33}\mapsto z_{33}|_{\wP_{\pi_{\alpha}^{\vee}}}$ and  
$\wE_{11}\overset{\cong}{\rightarrow} \wE_{22}$, $z_{11}\mapsto z_{11}|_{\wP_{\Sp^{\vee}}}$.
\end{prop}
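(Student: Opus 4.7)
The two isomorphisms are proved in parallel; I describe the argument for $\wE_{33}\xrightarrow{\cong}\wE_{11}$, the case of $\wE_{11}\xrightarrow{\cong}\wE_{22}$ being entirely analogous with \eqref{2B} and \eqref{alphaJNG} in place of \eqref{1B} and \eqref{1JNG}.

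First I would check well-definedness. Apply Proposition \ref{projectiveandord}(iii) to $\pi=\Indu{\overline{P}}{G}{\Eins_T}=\Indu{P}{G}{\Eins_T}$: its socle is $\Eins_G$ so $J_\pi=J_{\Eins_G}$, and $\Ord_P J_\pi$ is the injective envelope of $\Ord_P\pi=\Eins_T$, i.e.\ $J_{\Eins_T}$. Part (iii) produces a surjection $\End_G(J_{\Eins_G})\twoheadrightarrow\End_G(\Indu{P}{G}{J_{\Eins_T}})$ by restriction, in particular every endomorphism of $J_{\Eins_G}$ preserves the sub $\Indu{P}{G}{J_{\Eins_T}}$; dualizing, every element of $\wE_{33}$ preserves the subobject $\wP_{\pi_\alpha^{\vee}}$ of \eqref{1B} and hence has a well-defined restriction to it, giving the map $\wE_{33}\to\wE_{11}$.

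To prove this map is an isomorphism, apply the left exact functor $\Hom_{\dualcat(\OO)}(\cdot,\wP_{\Eins_G^{\vee}})$ to \eqref{1B}; using projectivity of $\wP_{\Eins_G^{\vee}}$ and the identification \eqref{X2} for $\wE_{11}\cong\Hom(\wP_{\pi_\alpha^{\vee}},\wP_{\Eins_G^{\vee}})$, I would obtain
\begin{equation*}
0\to\Hom(\wM_{\Eins_T^{\vee}},\wP_{\Eins_G^{\vee}})\to\wE_{33}\to\wE_{11}\to\Ext^1_{\dualcat(\OO)}(\wM_{\Eins_T^{\vee}},\wP_{\Eins_G^{\vee}})\to0.
\end{equation*}
It suffices to kill both ends. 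Under Pontryagin duality these are $\Hom_G(J_{\Eins_G},\Indu{P}{G}{J_{\Eins_T}})$ and $\Ext^1_G(J_{\Eins_G},\Indu{P}{G}{J_{\Eins_T}})$, so I would work on the $G$-representation side using the resolution \eqref{1JNG} from Proposition \ref{resindJNG}, applying $\Hom_G(\cdot,\Indu{P}{G}{J_{\Eins_T}})$ to it. The Hom- and Ext$^1$-terms from $\Indu{P}{G}{J_{\Eins_T}}$ to itself are controlled by Lemma \ref{UindJ}: the relevant $\Ext^1$ reduces to $\Hom_T(J_{\Eins_T},J_{\Eins_T}\otimes\alpha^{-1})$, which vanishes by Corollary \ref{extToruschar} since $\Eins_T\neq\alpha^{-1}$. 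The remaining contributions from $J_{\pi_\alpha}$ are handled by a socle/composition-factor argument: any non-zero map or extension from $J_{\pi_\alpha}$ into $\Indu{P}{G}{J_{\Eins_T}}$ would force $\pi_\alpha$ to appear as a subquotient of the target, but the target has only $\Eins_G$ and $\Sp$ as composition factors.

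The delicate point is the vanishing $\Hom_G(J_{\Eins_G},\Indu{P}{G}{J_{\Eins_T}})=0$, since $J_{\Eins_G}$ is not itself a parabolic induction and Proposition \ref{projectiveandord} does not apply to it directly. I expect the cleanest route is to observe that this Hom embeds, via the connecting map from the exact sequence on \eqref{1JNG}, into $\End_G(\Indu{P}{G}{J_{\Eins_T}})\cong\OO[[x,y]]$ (Corollary \ref{endoPOrd}); one then argues that any such $\phi:J_{\Eins_G}\to\Indu{P}{G}{J_{\Eins_T}}$, composed with the canonical inclusion $\Indu{P}{G}{J_{\Eins_T}}\hookrightarrow J_{\Eins_G}$, yields an endomorphism of $J_{\Eins_G}$ whose image under the surjection of Proposition \ref{projectiveandord}(iii) can be controlled and shown to vanish, forcing $\phi=0$. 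The hard part will be this last step, which requires careful tracking through the diagrammatic content of Proposition \ref{projectiveandord}(iii) and the compatibility with restriction to the sub $\wP_{\pi_\alpha^{\vee}}$; once this is in hand, both isomorphisms of the proposition follow in parallel.
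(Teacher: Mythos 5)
Your overall strategy — applying $\Hom_{\dualcat(\OO)}(-,\wP_{\Eins_G^{\vee}})$ to \eqref{1B}, using projectivity of $\wP_{\Eins_G^{\vee}}$ to truncate the long exact sequence, and identifying the middle map with the restriction map via \eqref{X2} — is sound, and it is a genuinely different route from the paper's. The paper instead first reduces modulo $\varpi$: since $\wP_{\Eins_G^{\vee}}$ and $\wP_{\pi_\alpha^{\vee}}$ are $\OO$-torsion free (Corollary \ref{projaretfree}), so are $\wE_{33}$ and $\wE_{11}$, and Nakayama applied to cokernel then kernel reduces the statement to an isomorphism $\wE_{33}\otimes_{\OO}k\cong\wE_{11}\otimes_{\OO}k$, which via $\End_{\dualcat(\OO)}(\wP_{\pi^{\vee}})\otimes_{\OO}k\cong\End_G(J_\pi)^{op}$ is exactly Lemma \ref{Y1} together with Remark \ref{Y2}. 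That reduction lets the paper work with ordinary parts in the cleaner mod-$p$ setting.

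However, your proof has a genuine gap in the vanishing arguments, and your proposed fix for one of them is wrong. You need $\Hom_{\dualcat(\OO)}(\wM_{\Eins_T^{\vee}},\wP_{\Eins_G^{\vee}})=0$ and $\Ext^1_{\dualcat(\OO)}(\wM_{\Eins_T^{\vee}},\wP_{\Eins_G^{\vee}})=0$, dually $\Hom_G(J_{\Eins_G},\Indu{P}{G}{J_{\Eins_T}})=0$ and $\Ext^1_G(J_{\Eins_G},\Indu{P}{G}{J_{\Eins_T}})=0$. For the $\Ext^1$, you propose applying $\Hom_G(-,\Indu{P}{G}{J_{\Eins_T}})$ to \eqref{1JNG} and disposing of the $J_{\pi_\alpha}$-contributions by a ``socle/composition-factor argument''. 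This cannot work: $\Eins_G$, $\Sp$, $\pi_\alpha$ all lie in the same block $\BB$, so no Hom or $\Ext^1$ group between $J_{\pi_\alpha}$ and $\Indu{P}{G}{J_{\Eins_T}}$ is forced to vanish by block separation, and in fact $\Ext^1_G(J_{\pi_\alpha},\Indu{P}{G}{J_{\Eins_T}})$ is not controlled this way (Lemma \ref{technik} fails its hypothesis here, since $\Ord_P J_{\pi_\alpha}\cong J_{\alpha^{-1}}\cong J_{\Eins_T}\otimes\alpha^{-1}$). For the Hom-vanishing you explicitly flag a hard step and sketch only a speculative route through Proposition \ref{projectiveandord}(iii); that route has not been carried out and is not obviously closable.

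The missing ingredient is the ordinary-parts technology, and it gives the vanishing directly, without detouring through \eqref{1JNG}. Since $J_{\Eins_G}$ is injective, $\RR^1\Ord_P J_{\Eins_G}=0$, so by \cite[3.6.2]{ord2} the $U$-coinvariants $(J_{\Eins_G})_U$ vanish; by Frobenius reciprocity $\Hom_G(J_{\Eins_G},\Indu{P}{G}{J_{\Eins_T}})\cong\Hom_T((J_{\Eins_G})_U,J_{\Eins_T})=0$. For the $\Ext^1$, apply Lemma \ref{technik} with $\tau=J_{\Eins_G}$ and $J=J_{\Eins_T}$: one has $\Ord_P J_{\Eins_G}\cong J_{\Eins_T}$ (by \eqref{shift} and Proposition \ref{projectiveandord}), and $\Hom_T(J_{\Eins_T},J_{\Eins_T}\otimes\alpha^{-1})=\Hom_T(J_{\Eins_T},J_{\alpha^{-1}})=0$ since $\Eins_T\neq\alpha^{-1}$ (as $p\ge5$), so $\Ext^1_{G/Z}(J_{\Eins_G},\Indu{P}{G}{J_{\Eins_T}})=0$. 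These are exactly the two computations appearing in the paper's proof of Lemma \ref{Y1}. With them inserted, your long exact sequence argument closes; but without them your proof is not complete.
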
 
\begin{proof} We only show the first claim, the second can be proved in an identical manner. Since 
$\Hom_{\dualcat(\OO)}(\wP_{\pi_{\alpha}^{\vee}}, \wM_{\Eins_T^{\vee}})=0$ by Lemma \ref{allvanish}, every endomorphism of $\wP_{\Eins_G^{\vee}}$ maps 
$\wP_{\pi_{\alpha}^{\vee}}$ to itself. Hence, we obtain a well defined map $r:\wE_{33}\rightarrow \wE_{11}$. Now both $\wE_{33}$ 
and $\wE_{11}$ are $\OO$-torsion free, since $\wP_{\Eins_G^{\vee}}$ and $\wP_{\pi_{\alpha}^{\vee}}$ are by Corollary \ref{projaretfree}. 
Nakayama's lemma for compact $\OO$-modules applied to the cokernel and then to the kernel of $r$ implies that it is enough 
to show that $r\otimes_{\OO} k: \wE_{33}\otimes_{\OO} k \rightarrow \wE_{11}\otimes_{\OO} k$ is an isomorphism. Let $J_{\pi}$ be an injective envelope
of an irreducible representation $\pi$ in $\Mod^{\mathrm{ladm}}_{G/Z}(k)$, $P_{\pi^{\vee}}$ projective envelope of $\pi^{\vee}$ in $\dualcat(k)$ and 
$\wP_{\pi^{\vee}}$ projective envelope of $\pi^{\vee}$ in $\dualcat(\OO)$. Then 
$$\End_{\dualcat(\OO)}(\wP_{\pi^{\vee}})\otimes_{\OO} k \cong \End_{\dualcat(k)}(P_{\pi^{\vee}})\cong \End_{G}(J_{\pi})^{op},$$
where the first isomorphism follows from \eqref{tildeseq1} in \S \ref{def}, the second since $J_{\pi}^{\vee}$ is a projective envelope 
of $\pi^{\vee}$ and thus is isomorphic to $P_{\pi^{\vee}}$. Now the assertion of 
the Proposition follows from Lemma \ref{Y1}.
\end{proof}

\begin{cor}\label{injZ} Let $z$ lie in the centre of  $\wE_{\BB}$. If the restriction 
of $z$ to any of $\wP_{\pi_{\alpha}^{\vee}}$, $\wP_{\Sp^{\vee}}$ or $\wP_{\Eins_G^{\vee}}$ is equal to zero 
then $z=0$. 
\end{cor}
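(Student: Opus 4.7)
The plan is to exploit the orthogonal idempotent decomposition of the identity in $\wE_\BB$ together with the restriction isomorphisms of Proposition \ref{Risomo}. Let $e_1, e_2, e_3 \in \wE_\BB$ be the idempotents cutting out $\wP_{\pi_\alpha^\vee}$, $\wP_{\Sp^\vee}$, $\wP_{\Eins_G^\vee}$. Centrality of $z$ forces $z$ to commute with each $e_i$, so $e_i z e_j = z e_i e_j = 0$ for $i \neq j$, and therefore $z$ decomposes as $z = z_{11} + z_{22} + z_{33}$ with $z_{ii} := e_i z e_i$ lying in $\wE_{ii}$. Under the identification in which $e_i \wP_\BB$ is the corresponding projective summand, $z_{ii}$ is nothing other than the restriction of $z$ to that summand, so the hypothesis is that some one of $z_{11}, z_{22}, z_{33}$ vanishes.

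The crucial observation is that these three components are not independent: centrality forces them to correspond to one another under the restriction isomorphisms of Proposition \ref{Risomo}. Viewing $\varphi_{31} \in e_3 \wE_\BB e_1$, the relation $z\varphi_{31} = \varphi_{31} z$ collapses (only the $e_3 z e_3$ term survives on the left and $e_1 z e_1$ on the right) to
\begin{equation*}
z_{33} \circ \varphi_{31} = \varphi_{31} \circ z_{11},
\end{equation*}
which says exactly that the restriction isomorphism $r_1: \wE_{33} \overset{\cong}{\to} \wE_{11}$ of Proposition \ref{Risomo} sends $z_{33} \mapsto z_{11}$. Running the same argument with $\varphi_{12} \in e_1 \wE_\BB e_2$ in place of $\varphi_{31}$ yields $z_{11} \circ \varphi_{12} = \varphi_{12} \circ z_{22}$, which identifies $z_{22}$ with the image of $z_{11}$ under the restriction isomorphism $r_2:\wE_{11} \overset{\cong}{\to} \wE_{22}$ of Proposition \ref{Risomo}.

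Since $r_1$ and $r_2$ are isomorphisms and transport $(z_{33}, z_{11}, z_{22})$ to one another, the vanishing of any single component forces the vanishing of all three, whence $z = z_{11} + z_{22} + z_{33} = 0$. There is no real obstacle here: once Proposition \ref{Risomo} is in hand, the corollary is a purely formal consequence of centrality, the only small point being the passage from the centrality relations $z\varphi_{ij} = \varphi_{ij} z$ to the compatibility of the $z_{ii}$ with the restriction maps.
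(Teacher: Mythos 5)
Your proof is correct and follows essentially the same route as the paper: the whole content is Proposition \ref{Risomo} plus the observation that a central element's components on the three summands correspond under the restriction maps. The paper phrases that compatibility as functoriality of $z_M$ across subobjects (via the equivalence of $\dualcat(\OO)^{\BB}$ with compact $\wE_{\BB}$-modules), whereas you extract it directly from the relations $z\circ\varphi_{31}=\varphi_{31}\circ z$ and $z\circ\varphi_{12}=\varphi_{12}\circ z$ together with the injectivity of $\varphi_{31}$ and $\varphi_{12}$ from \eqref{1B} and \eqref{2B} — the same argument made explicit.
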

\begin{proof} Since $\dualcat(\OO)^{\BB}$ is equivalent to the category  
of compact $\wE_{\BB}$-modules, for every object $M$ of $\dualcat(\OO)^{\BB}$, $z$ defines a functorial 
homomorphism $z_M: M\rightarrow M$. It follows from the functoriality that for every subobject $N$ 
of $M$, $z_N$ is equal to the restriction of $z_M$ to $N$. The assertion follows from 
Proposition \ref{Risomo} and this observation.
\end{proof}

\begin{cor}\label{rings} The rings $\wE_{11}$, $\wE_{22}$ and $\wE_{33}$ are naturally isomorphic to $R^{\psi}$. 
In particular, they are commutative noetherian integral domains. 
\end{cor}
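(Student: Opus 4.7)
The plan is to deduce this corollary directly from two results already established in \S\ref{nongenericcaseII}. First, observe that the projective envelope denoted $\wP$ throughout this section is by construction a projective envelope of $\pi_\alpha^{\vee} = (\Indu{P}{G}{\alpha})^{\vee}$, so $\wP = \wP_{\pi_\alpha^{\vee}}$ (up to canonical isomorphism) and $\wE = \wE_{11}$. Theorem \ref{varphisoNGII} then supplies a natural isomorphism $\varphi: \wE_{11} \overset{\cong}{\to} R^{\psi}$; the naturality comes from the fact that $\varphi$ is built functorially from Colmez's functor $\cV$ via the universal property of $R^{\psi}$ as recorded in Proposition \ref{definephi}.

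For the other two rings I would invoke Proposition \ref{Risomo}, which gives natural ring isomorphisms
\begin{equation*}
\wE_{33} \overset{\cong}{\longrightarrow} \wE_{11}, \qquad \wE_{11} \overset{\cong}{\longrightarrow} \wE_{22}
\end{equation*}
obtained by restricting endomorphisms of $\wP_{\Eins_G^{\vee}}$ to the subobject $\wP_{\pi_\alpha^{\vee}}$ (via $\varphi_{31}$ of \eqref{1B}) and endomorphisms of $\wP_{\pi_\alpha^{\vee}}$ to the subobject $\wP_{\Sp^{\vee}}$ (via $\varphi_{12}$ of \eqref{2B}). Composing these restriction maps with $\varphi$ yields the desired natural isomorphisms $\wE_{22} \cong R^{\psi}$ and $\wE_{33} \cong R^{\psi}$.

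The last assertion is immediate from the explicit description of $R^{\psi}$ worked out in the appendix: by Corollary \ref{RpsirhoA} the ring $R^{\psi}$ is $\OO$-flat and its reduction mod $\varpi$ is isomorphic to $k[[x,y,z,w]]/(xz-yw)$, so $R^{\psi}$ is commutative and noetherian (being a quotient of a power series ring over $\OO$), and it is an integral domain since $xz-yw$ is a prime element in the unique factorization domain $\OO[[x,y,z,w]]$ (equivalently, its reduction mod $\varpi$ is already a domain and $R^{\psi}$ is $\OO$-flat). There is no real obstacle here — the content of the corollary lies entirely in Theorem \ref{varphisoNGII} and Proposition \ref{Risomo}, which have already been proved.
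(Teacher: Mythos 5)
Your proposal follows essentially the same route as the paper (Theorem~\ref{varphisoNGII} for $\wE_{11}$, then Proposition~\ref{Risomo} to carry the identification over to $\wE_{22}$ and $\wE_{33}$, then Corollary~\ref{RpsirhoA} for the final assertion), but it elides the one point that requires an actual argument: why the isomorphisms $\wE_{33}\to\wE_{11}$ and $\wE_{11}\to\wE_{22}$ are \emph{natural}. You assert that Proposition~\ref{Risomo} supplies ``natural ring isomorphisms,'' but that proposition produces isomorphisms depending on the chosen short exact sequences \eqref{1B} and \eqref{2B}, which are not canonical; only the ambient objects $\wM_{\Eins^{\vee}_T}$ and $\wM_{\alpha^{\vee}}$ are canonical, and the maps $\varphi_{31},\varphi_{12}$ realizing the projective covers are determined only up to automorphism of the source. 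A different choice changes the identification of $\wP_{\pi_\alpha^{\vee}}$ as a subobject of $\wP_{\Eins_G^{\vee}}$ and thus a priori changes the restriction map.

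The paper fills this gap with a short bootstrap argument which your proposal should include: \eqref{1B} and \eqref{2B} are minimal projective resolutions, so any two choices differ by an isomorphism, which conjugates the resulting ring homomorphism $\wE_{33}\to\wE_{11}$ by a unit of $\wE_{33}$. But the non-canonical isomorphisms of Proposition~\ref{Risomo} together with $\wE_{11}\cong R^{\psi}$ already force all three rings to be commutative, and conjugation in a commutative ring is trivial; hence the restriction map is independent of the choice, and the identification $\wE_{33}\cong R^{\psi}$ (and likewise $\wE_{22}\cong R^{\psi}$) is canonical after all. Without this observation, the word ``naturally'' in the corollary is not justified for $\wE_{22}$ and $\wE_{33}$. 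The remaining part of your argument (commutativity, noetherianity, and integral-domain-ness of $R^{\psi}$ via the presentation in Corollary~\ref{RpsirhoA} and primality of the defining relation) is fine.
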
 
\begin{proof} The isomorphism $\wE_{11}\cong R^{\psi}$ in Theorem \ref{varphisoNGII} is natural since it is induced by a morphism of deformation 
functors. The sequences \eqref{1B} and  \eqref{2B} are not canonical, but are minimal projective resolutions of $\wM_{\Eins_G^{\vee}}$ and 
$\wM_{\alpha^{\vee}}$ respectively. Since any two minimal projective resolutions of the same object are isomorphic, a different choice 
of an exact sequence in \eqref{1B} would conjugate the homomorphism $\wE_{33}\rightarrow \wE_{11}$ by an element of $\wE_{33}$. 
Since as a consequence  of Proposition \ref{Risomo} all the rings are isomorphic and hence are commutative, we deduce that 
the homomorphism $\wE_{33}\rightarrow \wE_{11}$ does not depend on the choice of \eqref{1B}.
The last assertion follows from the explicit description of $R^{\psi}$ in 
Corollary \ref{RpsirhoA} below.
\end{proof}

\begin{cor}\label{ideals} For $i=1, 2, 3$, $\wa_{ii}$ is the annihilator of $\Hom_{\dualcat(\OO)}(\wP_{\pi_{\alpha}^{\vee}}, \wM_{\alpha^{\vee}})$, 
$\Hom_{\dualcat(\OO)}(\wP_{\Sp^{\vee}}, \wM_{\Eins_T^{\vee}, 0})$ and $\Hom_{\dualcat(\OO)}(\wP_{\Eins_G^{\vee}}, \wM_{\Eins_T^{\vee}})$ respectively.
Moreover, $\wE_{ii}/\wa_{ii}$ is $\OO$-torsion free. 
\end{cor}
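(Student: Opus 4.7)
The plan is to treat all three cases $i=1,2,3$ uniformly by constructing, for each $i$, a ring homomorphism from $\wE_{ii}$ into $\End_{\dualcat(\OO)}(\wM)$, where $\wM$ is the relevant cokernel module ($\wM_{\alpha^\vee}$, $\wM_{\Eins_T^\vee,0}$, or $\wM_{\Eins_T^\vee}$), verifying that the kernel of this map is precisely $\wa_{ii}$, and reading off both the annihilator description and the $\OO$-torsion freeness. Throughout, the engine is the observation that the irreducible subquotients of the $\wM$'s are highly restricted: $\wM_{\alpha^\vee}$ has only $\pi_\alpha^\vee$ as irreducible subquotient, while $\wM_{\Eins_T^\vee}$ and its submodule $\wM_{\Eins_T^\vee,0}$ (see \eqref{0B}) have only $\Eins_G^\vee$ and $\Sp^\vee$.

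For the annihilator statements, take $i=1$ first. If $\phi\in\wa_{11}$ then $\psi_1\circ\phi=0$, so by \eqref{2B} and Lemma \ref{X1} (eq. \eqref{X5}) we can write $\phi=\varphi_{12}\circ\phi'$ for some $\phi'$. For any $\beta\in\Hom_{\dualcat(\OO)}(\wP_{\pi_\alpha^\vee},\wM_{\alpha^\vee})$, the composition $\beta\circ\varphi_{12}$ lies in $\Hom(\wP_{\Sp^\vee},\wM_{\alpha^\vee})$, which vanishes because $\Sp^\vee$ is not a subquotient of $\wM_{\alpha^\vee}$. Hence $\beta\circ\phi=0$, so $\wa_{11}$ annihilates the module. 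Conversely $\psi_1$ itself is an element of $\Hom(\wP_{\pi_\alpha^\vee},\wM_{\alpha^\vee})$, so any element of the annihilator lies in $\wa_{11}$. The cases $i=3$ and $i=2$ are identical, using $\Hom(\wP_{\pi_\alpha^\vee},\wM_{\Eins_T^\vee})=0$ and $\Hom(\wP_{\pi_\alpha^\vee},\wM_{\Eins_T^\vee,0})=0$ respectively to kill $\alpha\circ\varphi_{31}$ and $\alpha\circ\xi_{21}$.

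For the $\OO$-torsion freeness I would show each $\phi\in\wE_{ii}$ preserves $\ker\psi_i$, yielding a well-defined induced endomorphism $\bar\phi$ of $\wM$. For $i=1$: $\psi_1\circ\phi\circ\varphi_{12}:\wP_{\Sp^\vee}\to\wM_{\alpha^\vee}$ vanishes by the same Hom-vanishing, so $\phi\circ\varphi_{12}$ lands in $\varphi_{12}(\wP_{\Sp^\vee})=\ker\psi_1$ and descends to $\bar\phi:\wM_{\alpha^\vee}\to\wM_{\alpha^\vee}$. The resulting ring map $\wE_{11}\to\End_{\dualcat(\OO)}(\wM_{\alpha^\vee})$ has kernel exactly $\wa_{11}$ by construction. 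The same argument handles $i=3$ via $\psi_3\circ\phi\circ\varphi_{31}=0$, and $i=2$ via $\psi_2\circ\phi\circ\xi_{21}=0$ (each of the two components factors through a Hom-group that vanishes).

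Finally, all three modules $\wM_{\alpha^\vee}$, $\wM_{\Eins_T^\vee}$, $\wM_{\Eins_T^\vee,0}$ are $\OO$-torsion free: the first two are Pontryagin duals of $\varpi$-divisible objects (since $\wP_{\alpha^\vee}$ and $\wP_{\Eins_T^\vee}$ are $\OO$-torsion free by Proposition \ref{projTistfree}, their duals are $\varpi$-divisible, parabolic induction preserves this, and dualizing back gives $\OO$-torsion freeness), and $\wM_{\Eins_T^\vee,0}$ is torsion free as a subobject of $\wM_{\Eins_T^\vee}$ via \eqref{0B}. Therefore $\End_{\dualcat(\OO)}(\wM)$ is $\OO$-torsion free in each case, and the injection $\wE_{ii}/\wa_{ii}\hookrightarrow\End_{\dualcat(\OO)}(\wM)$ gives what we want. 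There is no real obstacle here; the only point that needs genuine checking (beyond bookkeeping) is that each $\phi\in\wE_{ii}$ stabilizes $\ker\psi_i$, and this reduces immediately to the subquotient analysis of the $\wM$'s.
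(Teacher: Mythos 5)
Your proof is correct, but it takes a genuinely different route from the paper's. The paper's proof is shorter: it applies $\Hom_{\dualcat(\OO)}(\wP_{\pi_\alpha^\vee},\ast)$ to \eqref{2B} (and analogously for $i=2,3$) to obtain the identification $\wE_{ii}/\wa_{ii}\cong\Hom_{\dualcat(\OO)}(\wP_\bullet,\wM_\bullet)$ as a cyclic module generated by $\psi_i$, and then uses the commutativity of $\wE_{ii}$ (Corollary \ref{rings}) to conclude directly that the annihilator of the cyclic module $\psi_i\circ\wE_{ii}$ equals the annihilator of the generator $\psi_i$, which is $\wa_{ii}$ by definition; torsion-freeness then falls out because $\wM_\bullet$ is $\OO$-torsion free, so the Hom module is. Your argument avoids invoking commutativity: you instead exploit the factorizations of $\wa_{ii}$ from Lemma \ref{X1} (or, for $i=2$, projectivity of $\wP_{\Sp^\vee}$ together with $\ker\psi_2=\operatorname{im}(\wP_{\pi_\alpha^\vee}^{\oplus 2}\to\wP_{\Sp^\vee})$) and the disjointness of irreducible subquotients between $\wP_\bullet$ and $\wM_\bullet$ (e.g.\ $\Hom(\wP_{\Sp^\vee},\wM_{\alpha^\vee})=0$). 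This buys some self-containedness, since Corollary \ref{rings} rests on the comparatively deep identification $\wE_{11}\cong R^\psi$. On the other hand, your torsion-freeness argument — building a ring map $\wE_{ii}\to\End_{\dualcat(\OO)}(\wM_\bullet)$ with kernel $\wa_{ii}$ — is more elaborate than the paper's, which simply reads off torsion-freeness from the established isomorphism $\wE_{ii}/\wa_{ii}\cong\Hom(\wP_\bullet,\wM_\bullet)$. Two small expository issues: your claim that cases $i=2$ and $i=3$ are ``identical'' glosses over the fact that \eqref{3B} is not a short exact sequence, so the $i=2$ factorization of $\phi\in\wa_{22}$ requires an extra lifting step via projectivity of $\wP_{\Sp^\vee}$; and the symbol $\xi_{21}$ is not defined in the paper (you presumably mean the first arrow of \eqref{3B}), while $\alpha$ in ``kill $\alpha\circ\varphi_{31}$'' should read $\beta$.
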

\begin{proof} The proof in all cases is the same. We deal with $i=1$. By applying $\Hom_{\dualcat(\OO)}(\wP_{\pi_{\alpha}^{\vee}}, \ast)$ 
to \eqref{1B} we deduce that  $\Hom_{\dualcat(\OO)}(\wP_{\pi_{\alpha}^{\vee}}, \wM_{\alpha^{\vee}})= \psi_1 \circ \wE_{11}\cong \wE_{11}/\wa_{11}$. 
Since $\wE_{11}$ is commutative the annihilator of $\psi_1$ coincides with the annihilator $\psi_1\circ \wE_{11}$. Further, since
$\wM_{\alpha^{\vee}}$ is $\OO$-torsion free so is $\Hom_{\dualcat(\OO)}(\wP_{\pi_{\alpha}^{\vee}}, \wM_{\alpha^{\vee}})$ and hence $\wE_{11}/\wa_{11}$. 
\end{proof}

It follows from Corollary \ref{RpsirhoA} that $R^{\psi}$ is $\OO$-torsion free. Thus we have an injection 
$R^{\psi}\hookrightarrow R^{\psi}[1/p]$. Let $\mathfrak r$ be the intersection of the reducible locus in 
$R^{\psi}[1/p]$ with $R^{\psi}$, see Corollary \ref{A6}.

\begin{lem}\label{imageofr} The image of $\mathfrak r$ in $\wE_{ii}$ via the natural isomorphism of Corollary \ref{rings} is equal to $\wa_{ii}$. 
\end{lem}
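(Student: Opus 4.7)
The case $i=1$ is precisely Proposition \ref{imageofa} applied with $\wE = \wE_{11}$, $\wM = \wM_{\alpha^{\vee}}$ and $\wa = \wa_{11}$; the natural isomorphism $\wE_{11} \cong R^{\psi}$ of Corollary \ref{rings} coincides with the surjection $\varphi$ of that proposition.

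For $i = 3$ I would mimic the proof of Proposition \ref{imageofa}. Corollary \ref{endoPOrd} applied to the exact sequence \eqref{1B} identifies $\wE_{33}/\wa_{33}$ with $\End_{\dualcat(\OO)}(\wM_{\Eins_T^{\vee}}) \cong \OO[[s_1, s_2]]$. On the Galois side, Corollary \ref{A6}(ii) gives $R^{\psi}/\rr \cong \OO[[x,y]]$. Writing $\varphi_3: \wE_{33} \xrightarrow{\sim} R^{\psi}$ for the composition of the restriction isomorphism $\wE_{33} \cong \wE_{11}$ of Proposition \ref{Risomo} with the isomorphism of Theorem \ref{varphisoNGII}, I would show first that $\varphi_3(\wa_{33}) \subseteq \rr$; the reverse inclusion then follows because $R^{\psi}/\varphi_3(\wa_{33})$ is $\OO$-flat by Corollary \ref{ideals}, and a surjection of $\OO$-flat complete local noetherian rings with the same tangent space dimension (both $\OO[[s_1,s_2]]$) must be an isomorphism.

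To establish the inclusion $\varphi_3(\wa_{33}) \subseteq \rr$, I would pick an arbitrary closed point $\nn$ of $(\wE_{33}/\wa_{33})[1/p]$ with residue field $L'$ a finite extension of $L$. Such an $\nn$ corresponds under $\End_{\dualcat(\OO)}(\wM_{\Eins_T^{\vee}}) \cong \End_{\dualcat_{T,\zeta}(\OO)}(\wP_{\Eins_T^{\vee}})$ to a continuous character $\chi: T \to (L')^{\times}$ that is trivial on $Z$ and lifts $\Eins_T$. Form the admissible unitary Banach space representation $\Pi := (\Indu{P}{G}{\chi})_{cont}$, whose Schikhof dual of an open bounded $G$-invariant lattice is identified by Lemma \ref{thruOrd} as a quotient of $\wM_{\Eins_T^{\vee}}$; composition with the surjection $\wP_{\Eins_G^{\vee}} \twoheadrightarrow \wM_{\Eins_T^{\vee}}$ of \eqref{1B} shows that the action of $\wE_{33}$ on this lattice factors through the map $\wE_{33} \twoheadrightarrow \wE_{33}/\wa_{33} \to L'$ given by $\nn$. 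Applying the exact functor $\cV$ and using that $\cV(\wP_{\pi_{\alpha}^{\vee}})$ is the universal deformation of $\rho$ with determinant $\varepsilon\zeta$ (Corollary \ref{univdefII}), I would argue as in the proof of Proposition \ref{imageofa}, invoking Lemma \ref{moveL}, that the two-dimensional $L'$-representation $\rho_x$ corresponding to the image $x := \varphi_3(\nn)$ in $R^{\psi}[1/p]$ admits the one-dimensional representation $L' \otimes_{\nn} \cV(\wM_{\Eins_T^{\vee}})$ as a quotient, and is therefore reducible. Hence every such $\nn$ maps into $\rr$; intersecting over all closed points of $(\wE_{33}/\wa_{33})[1/p]$ and invoking Remark \ref{enoughL} (together with $\OO$-flatness of the quotient) yields $\varphi_3(\wa_{33}) \subseteq \rr$.

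The case $i = 2$ proceeds in exactly the same way, with the sequences \eqref{2B}, \eqref{3B} and \eqref{0B} replacing \eqref{1B}: the surjection $\wE_{22} \twoheadrightarrow \End_{\dualcat(\OO)}(\wM_{\Eins_T^{\vee},0})$ from Corollary \ref{ideals} identifies $\wE_{22}/\wa_{22}$ with a subring of $\End(\wM_{\Eins_T^{\vee}})$, which becomes isomorphic to $\OO[[s_1,s_2]]$ after inverting $p$ because \eqref{0B} shows the cokernel of $\wM_{\Eins_T^{\vee},0} \hookrightarrow \wM_{\Eins_T^{\vee}}$ is a rank-one trivial $\OO$-module. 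The same parabolic-induction construction then realises each closed point of $(\wE_{22}/\wa_{22})[1/p]$ as a reducible point of $R^{\psi}[1/p]$. The main obstacle, in both cases, is the identification $L' \otimes_{\wE_{ii}, \nn} \cV(\wP_{\bullet}) \cong \rho_x$, which relies on Colmez's explicit construction of $\VV$ on parabolic inductions together with Lemma \ref{moveL}; once this is in hand the rest is a dimension count.
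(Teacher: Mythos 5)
The case $i=1$ is handled exactly as in the paper, so there is nothing to say there. For $i=2,3$, however, you take a genuinely different route from the paper and it has a real gap. You try to reproduce the Galois-theoretic argument of Proposition \ref{imageofa}, specializing at a maximal ideal $\nn$ of $(\wE_{33}/\wa_{33})[1/p]$ and claiming that $\rho_x = L'\otimes_{\nn}\cV(\wP_{\pi_\alpha^\vee})$ admits $L'\otimes_\nn \cV(\wM_{\Eins_T^\vee})$ as a quotient. In the $i=1$ case this works precisely because the relevant exact sequence is $0\to\wP_{\Sp^\vee}\to\wP_{\pi_\alpha^\vee}\to\wM_{\alpha^\vee}\to 0$, so $\wM_{\alpha^\vee}$ \emph{is} a quotient of the projective whose $\cV$ is the universal deformation. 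For $i=3$ the relevant sequence is \eqref{1B}: $0\to\wP_{\pi_\alpha^\vee}\to\wP_{\Eins_G^\vee}\to\wM_{\Eins_T^\vee}\to 0$. Applying $\cV$ gives $\cV(\wM_{\Eins_T^\vee})$ as a quotient of $\cV(\wP_{\Eins_G^\vee})$, with $\cV(\wP_{\pi_\alpha^\vee})$ appearing as the \emph{submodule}, not the middle term. After specializing at $\nn$ the sequence is only right exact, and there is no map from $\rho_x$ onto $L'\otimes_\nn\cV(\wM_{\Eins_T^\vee})$; what you would get for free is a right-exact sequence $\rho_x \to L'\otimes_\nn\cV(\wP_{\Eins_G^\vee})\to L'\otimes_\nn\cV(\wM_{\Eins_T^\vee})\to 0$, and without controlling the $\Tor$ term and the size of the middle specialization you cannot conclude reducibility of $\rho_x$. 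The same problem occurs for $i=2$, where \eqref{3B} exhibits $\wM_{\Eins_T^\vee,0}$ as a quotient of $\wP_{\Sp^\vee}$, again not of $\wP_{\pi_\alpha^\vee}$. (There is also a smaller confusion of directions: the ``every such $\nn$ gives a reducible point'' argument establishes $\varphi_3(\wa_{33})\supseteq\rr$ rather than $\subseteq$, though since either inclusion plus the identification of both quotients with $\OO[[x,y]]$ and the fact that a surjection of noetherian domains of equal Krull dimension is an isomorphism would finish the job, this is not by itself fatal.)

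The paper avoids the Galois side entirely for $i=2,3$. It shows, using the annihilator description of Corollary \ref{ideals} together with the vanishings $\Hom_{\dualcat(\OO)}(\wP_{\Eins_G^{\vee}}, \wM_{\alpha^{\vee}})=0$ and $\Hom_{\dualcat(\OO)}(\wP_{\pi_{\alpha}^{\vee}}, \wM_{\Eins_T^{\vee},0})=0$, that the restriction isomorphisms $\wE_{33}\to\wE_{11}\to\wE_{22}$ of Proposition \ref{Risomo} carry $\wa_{33}$ into $\wa_{11}$ into $\wa_{22}$ (this is the easy one-line composition argument: if $z\in\wa_{33}$ then $z(\wP_{\Eins_G^\vee})\subseteq\wP_{\pi_\alpha^\vee}$, and $\psi_1\circ z$ viewed on $\wP_{\Eins_G^\vee}$ lies in a vanishing $\Hom$ space, whence $\psi_1\circ z_{11}=0$). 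This yields surjections $\wE_{33}/\wa_{33}\twoheadrightarrow\wE_{11}/\wa_{11}\twoheadrightarrow\wE_{22}/\wa_{22}$; since each $\wE_{ii}/\wa_{ii}$ is $\OO$-torsion free by Corollary \ref{ideals}, one only needs to check these surjections are isomorphisms modulo $\varpi$, which is exactly the last assertion of Lemma \ref{Y1}. You should note that the endomorphism-ring comparison ($\End_G(\gr^1 J)\cong\End_G(\gr^1(J/J^1))$ in Lemma \ref{Y1}) is the genuine input the paper uses here; it replaces the entire Galois-side specialization analysis you attempt.
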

\begin{proof} If $i=1$ then the assertion  follows from  Proposition \ref{imageofa} and Theorem \ref{varphisoNGII}. We claim that
the isomorphisms $\wE_{33}\overset{\cong}{\rightarrow} \wE_{11}\overset{\cong}{\rightarrow} \wE_{22}$ of Proposition \ref{Risomo}
identify  $\wa_{33}$ with  $\wa_{11}$ and $\wa_{11}$ with $\wa_{22}$. Since 
$\Hom_{\dualcat(\OO)}(\wP_{\Eins_G^{\vee}}, \wM_{\alpha^{\vee}})$ and 
$\Hom_{\dualcat(\OO)}(\wP_{\pi_{\alpha}^{\vee}}, \wM_{\Eins_T^{\vee},0})$ are zero by Lemma \ref{allvanish}, 
using Corollary  \ref{ideals}, we get that the image of $\wa_{33}$ is contained in $\wa_{11}$ and the image of 
$\wa_{11}$ is contained in $\wa_{22}$. Hence, we obtain surjections $\wE_{33}/\wa_{33}\twoheadrightarrow \wE_{11}/\wa_{11} \twoheadrightarrow 
\wE_{22}/\mathfrak a_{22}$.  Since $\wE_{ii}/\wa_{ii}$ is $\OO$-torsion free it is enough to show that the surjections are isomorphisms 
after tensoring with $k$. This assertion follows from the last assertion in Lemma \ref{Y1}.  
\end{proof} 
 
We embed $R^{\psi}$ into $\wE_{\BB}$ diagonally using the isomorphisms of Corollary \ref{rings}:
\begin{equation}\label{embeddR} 
R^{\psi}\hookrightarrow \wE_{11}\oplus \wE_{22}\oplus \wE_{33}\hookrightarrow \wE_{\BB}, \quad z\mapsto z_{11}\oplus z_{22}\oplus z_{33}.
\end{equation}
\begin{lem}\label{zcommutes1} Let $\psi\in \wE_{\BB}$ such that 
$e_3\circ \psi\circ e_2=0$ then $z\circ \psi=\psi\circ z$ for all $z\in R^{\psi}$. 
\end{lem}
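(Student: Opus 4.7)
The plan is to decompose $\psi$ with respect to the idempotents $e_1, e_2, e_3$ and verify the commutation relation $z\circ \psi_{ij} = \psi_{ij}\circ z$ componentwise, where $\psi_{ij}:=e_i\circ \psi\circ e_j$ is regarded as a morphism $\wP_j\to \wP_i$ (with $\wP_1=\wP_{\pi_{\alpha}^{\vee}}$, $\wP_2=\wP_{\Sp^{\vee}}$, $\wP_3=\wP_{\Eins_G^{\vee}}$). The hypothesis kills the component $\psi_{32}$, which is precisely the one piece of $\Hom_{\dualcat(\OO)}(\wP_2,\wP_3)$ that is not described via Lemma \ref{X1}. By the embedding \eqref{embeddR}, the action of $z$ on $\wP_i$ is via $z_{ii}\in \wE_{ii}$, so what must be shown in each case is $z_{ii}\circ \psi_{ij}=\psi_{ij}\circ z_{jj}$.

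The diagonal components $(i,i)$ are immediate from the commutativity of $\wE_{ii}$ established in Corollary \ref{rings}. For the ``downward'' components $(1,2)$, $(3,1)$ (and for the composed map $(3,2)$, which is however zero by hypothesis), I will use the isomorphisms \eqref{X2}, \eqref{X3} of Lemma \ref{X1}: writing $\psi_{12}=\varphi_{12}\circ a$ with $a\in \wE_{22}$, the chain
\begin{equation*}
z_{11}\circ \psi_{12}=z_{11}\circ \varphi_{12}\circ a=\varphi_{12}\circ z_{22}\circ a=\varphi_{12}\circ a\circ z_{22}=\psi_{12}\circ z_{22}
\end{equation*}
follows from the defining relation $z_{11}\circ\varphi_{12}=\varphi_{12}\circ z_{22}$ of the isomorphism $\wE_{11}\cong \wE_{22}$ together with commutativity of $\wE_{22}$; the case $(3,1)$ is analogous using $\varphi_{31}$.

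For the ``upward'' components $(2,1)$, $(1,3)$, $(2,3)$, I will use the isomorphisms \eqref{X5}, \eqref{X6}, \eqref{X7}, which express each $\psi_{ij}$ as the image of an element of a certain ideal under left composition with one of $\varphi_{12}$, $\varphi_{31}$, $\varphi_{32}$. Since all three of these maps are injective (by the exact sequences \eqref{1B}, \eqref{2B}, and the fact that $\varphi_{32}=\varphi_{31}\circ\varphi_{12}$), it suffices to verify the desired identity after composition with them on the left. For instance, for $(2,1)$:
\begin{equation*}
\varphi_{12}\circ z_{22}\circ \psi_{21}=z_{11}\circ\varphi_{12}\circ\psi_{21}=\varphi_{12}\circ\psi_{21}\circ z_{11},
\end{equation*}
using commutativity of $\wE_{11}$ and the fact that $\varphi_{12}\circ\psi_{21}\in \wa_{11}\subset \wE_{11}$; injectivity of $\varphi_{12}$ then yields $z_{22}\circ\psi_{21}=\psi_{21}\circ z_{11}$. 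The cases $(1,3)$ and $(2,3)$ are identical with $\varphi_{31}$, resp.\ $\varphi_{32}$, in place of $\varphi_{12}$. Since the only component not covered by these arguments is $\psi_{32}$, which vanishes by assumption, summing gives $z\circ\psi=\psi\circ z$. The whole argument is bookkeeping given the structure results already proved; there is no real obstacle, and the role of the hypothesis $e_3\circ\psi\circ e_2=0$ is simply to remove the one component $\Hom_{\dualcat(\OO)}(\wP_2,\wP_3)$ that is not controlled by Lemma \ref{X1}.
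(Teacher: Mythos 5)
Your proof is correct and follows essentially the same route as the paper's (which is merely more terse): both arguments reduce to the commutation relations $z_{33}\circ\varphi_{31}=\varphi_{31}\circ z_{11}$ and $z_{11}\circ\varphi_{12}=\varphi_{12}\circ z_{22}$ built into the embedding \eqref{embeddR}, the commutativity of the $\wE_{ii}$ from Corollary \ref{rings}, and the description of the off-diagonal Hom spaces in Lemma \ref{X1}. Your componentwise verification, including the injectivity trick for the ``upward'' components, is exactly what the paper's one-line ``the assertion follows from Lemma \ref{X1}'' is silently invoking.
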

\begin{proof} It follows from the definition of the embedding that $z$ commutes with $\varphi_{31}$ and $\varphi_{12}$ and 
hence with their composition $\varphi_{32}$. Since the rings $\wE_{11}$, $\wE_{22}$ and $\wE_{33}$ are commutative the assertion 
follows from Lemma \ref{X1}.
\end{proof}

\begin{lem}\label{subB} $\Hom_{\dualcat(\OO)}(\OO, \wP_{\Eins_G^{\vee}})=0$.
\end{lem}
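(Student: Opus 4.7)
The plan is to rephrase the statement via Pontryagin duality and then to exhibit the vanishing of an appropriate space of $U$-coinvariants, where $U$ is the unipotent radical of $P$.

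By the anti-equivalence between $\dualcat(\OO)$ and $\Mod^{\mathrm{lfin}}_{G,\Eins}(\OO)$ (recall we have reduced to the case $\zeta=\Eins$), Pontryagin duality yields
$$\Hom_{\dualcat(\OO)}(\OO, \wP_{\Eins_G^{\vee}})\cong \Hom_{\OO[G]}(J_{\Eins_G}, L/\OO),$$
where $J_{\Eins_G}:=\wP_{\Eins_G^{\vee}}^{\vee}$ is an injective envelope of the trivial representation $\Eins_G$ in $\Mod^{\mathrm{lfin}}_{G,\Eins}(\OO)$ and $L/\OO$ carries the trivial $G$-action. Since $L/\OO$ has trivial $U$-action, every $G$-equivariant homomorphism $J_{\Eins_G}\to L/\OO$ must factor through the $U$-coinvariants $(J_{\Eins_G})_{U}$. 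It therefore suffices to prove that $(J_{\Eins_G})_{U}=0$.

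To establish this, I will invoke Emerton's identification $V_{U}\otimes\alpha\cong \RR^1\Ord_P V$ for $V$ in $\Mod^{\mathrm{ladm}}_{G,\zeta}(A)$ with $A$ artinian (\cite[Prop.\ 3.6.2]{ord2}). By Corollary \ref{injgoinjO} the object $J_{\Eins_G}$ remains injective in the larger category $\Mod^{\mathrm{sm}}_{G,\Eins}(\OO)$, so a formal computation of $\Hom$'s shows that each $J_{\Eins_G}[\varpi^n]$ is injective in $\Mod^{\mathrm{ladm}}_{G,\Eins}(\OO/\varpi^n)$ (here we use that $\Mod^{\mathrm{lfin}}_{G,\zeta}=\Mod^{\mathrm{ladm}}_{G,\zeta}$ for our $G$ by \cite[Thm.\ 2.3.8]{ord1}). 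In particular $\RR^1\Ord_P(J_{\Eins_G}[\varpi^n])=0$, and Emerton's formula then forces $(J_{\Eins_G}[\varpi^n])_{U}=0$. Since $J_{\Eins_G}=\varinjlim_n J_{\Eins_G}[\varpi^n]$, and both $\Ord_P$ and the $U$-coinvariant functor commute with inductive limits, passing to the direct limit yields $(J_{\Eins_G})_{U}=0$.

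I do not anticipate any substantial obstacle. The only point requiring mild care is that Emerton's $\RR^1\Ord_P$-identification is stated for artinian coefficients, which is handled routinely by the approximation of $J_{\Eins_G}$ by its $\varpi^n$-torsion subobjects described above.
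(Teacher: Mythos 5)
Your argument is correct, but it is genuinely different from the one in the paper. The paper's proof is a one-line algebraic contradiction: a nonzero map $\OO\rightarrow \wP_{\Eins_G^{\vee}}$, composed with the canonical surjection $\wP_{\Eins_G^{\vee}}\twoheadrightarrow \OO$ (dual to $L/\OO\hookrightarrow J_{\Eins_G}$), would give a nonzero endomorphism of $\wP_{\Eins_G^{\vee}}$ that kills the kernel of that surjection, hence a zero divisor in $\End_{\dualcat(\OO)}(\wP_{\Eins_G^{\vee}})\cong R^{\psi}$, contradicting the fact that $R^{\psi}$ is an integral domain (Corollaries \ref{rings} and \ref{RpsirhoA}). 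So the paper trades on the hard-won identification of the endomorphism ring with $R^{\psi}$ via Theorem \ref{varphisoNGII}. You instead stay on the automorphic side: dualizing to $\Hom_{G}(J_{\Eins_G}, L/\OO)$, factoring through the $U$-coinvariants, and killing these by the injectivity of $J_{\Eins_G}[\varpi^n]$ together with Emerton's identification $\RR^1\Ord_P V\cong V_U\otimes\alpha$ — which is exactly the mechanism the paper itself uses in Lemma \ref{Y1} (there over $k$; your passage to $\OO$ via the $\varpi^n$-torsion filtration and direct limits is routine, since coinvariants commute with inductive limits and $J[\varpi^n]$ is injective over $\OO/\varpi^n$ by the $\Hom(A,J[\varpi^n])=\Hom(A,J)$ computation). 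What each approach buys: the paper's is shorter given what has already been established in \S\ref{Thecentre}; yours is logically lighter, needing only Corollary \ref{injgoinjO}, the equality $\Mod^{\mathrm{lfin}}_{G,\zeta}=\Mod^{\mathrm{ladm}}_{G,\zeta}$ for $\GL_2(\Qp)$, and \cite[3.6.2]{ord2}, so it could be stated and proved much earlier in the paper and independently of the computation of $\wE_{33}$.
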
 
\begin{proof} If not then by composing $\wP_{\Eins_G^{\vee}}\twoheadrightarrow \OO\rightarrow \wP_{\Eins_G^{\vee}}$ we 
would obtain a zero divisor in $\End_{\dualcat(\OO)}(\wP_{\Eins_G^{\vee}})\cong R^{\psi}$.
\end{proof}

\begin{lem}\label{Z3} Let $\wP_{\Eins_{G}^{\vee}, 0}$ be the kernel of $\wP_{\Eins_G^{\vee}}\twoheadrightarrow \OO$. Then restriction 
induces an isomorphism $\End_{\dualcat(\OO)}(\wP_{\Eins_G^{\vee}})\cong \End_{\dualcat(\OO)}(\wP_{\Eins_{G}^{\vee}, 0})$.
\end{lem}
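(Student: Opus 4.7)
The plan is to prove the restriction map $\rho: \End_{\dualcat(\OO)}(\wP_{\Eins_G^{\vee}}) \to \End_{\dualcat(\OO)}(\wP_{\Eins_G^{\vee}, 0})$ is a well-defined ring isomorphism in three steps: well-definedness, injectivity, and surjectivity.

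For well-definedness, I would first show that $\Hom_{\dualcat(\OO)}(\wP_{\Eins_G^{\vee}}, \OO) \cong \OO$, generated by $\theta$. This follows from the projectivity of $\wP_{\Eins_G^{\vee}}$ as an envelope of $\Eins_G^{\vee} \cong k$, which gives $\Hom_{\dualcat(\OO)}(\wP_{\Eins_G^{\vee}}, \OO/\varpi^n) \cong \OO/\varpi^n$ for each $n$, so $\OO \cong \varprojlim \OO/\varpi^n$ computes the Hom to $\OO$. Consequently, for any $\phi \in \End_{\dualcat(\OO)}(\wP_{\Eins_G^{\vee}})$ there exists $\lambda_\phi \in \OO$ with $\theta \circ \phi = \lambda_\phi \theta$, so $\theta \circ \phi \circ \iota = 0$, proving $\phi$ preserves $\wP_{\Eins_G^{\vee}, 0}$. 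For injectivity, if $\rho(\phi) = 0$ then $\phi$ factors through the quotient $\OO$, giving an element of $\Hom_{\dualcat(\OO)}(\OO, \wP_{\Eins_G^{\vee}})$, which vanishes by Lemma \ref{subB}.

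For surjectivity, I would apply $\Hom_{\dualcat(\OO)}(-, \wP_{\Eins_G^{\vee}})$ to the defining sequence; using the projectivity of $\wP_{\Eins_G^{\vee}}$ together with Lemma \ref{subB}, this gives a four-term exact sequence
\begin{equation*}
0 \to \End_{\dualcat(\OO)}(\wP_{\Eins_G^{\vee}}) \to \Hom_{\dualcat(\OO)}(\wP_{\Eins_G^{\vee}, 0}, \wP_{\Eins_G^{\vee}}) \to \Ext^1_{\dualcat(\OO)}(\OO, \wP_{\Eins_G^{\vee}}) \to 0.
\end{equation*}
Dually, applying $\Hom_{\dualcat(\OO)}(-, \OO)$ to the same sequence and invoking the identification $\Hom_{\dualcat(\OO)}(\wP_{\Eins_G^{\vee}}, \OO) \cong \Hom_{\dualcat(\OO)}(\OO, \OO) = \OO$ established above, together with $\Ext^1_{\dualcat(\OO)}(\OO, \OO) = 0$ (which holds since extensions by $\OO$-modules with trivial $G$-action remain trivial by Lemma \ref{ext1110}, reducing this to the vanishing of $\Ext^1_{\OO}(\OO, \OO)$), one concludes $\Hom_{\dualcat(\OO)}(\wP_{\Eins_G^{\vee}, 0}, \OO) = 0$. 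The long exact sequence from $\Hom_{\dualcat(\OO)}(\wP_{\Eins_G^{\vee}, 0}, -)$ then yields $\End_{\dualcat(\OO)}(\wP_{\Eins_G^{\vee}, 0}) \cong \Hom_{\dualcat(\OO)}(\wP_{\Eins_G^{\vee}, 0}, \wP_{\Eins_G^{\vee}})$, and the restriction map $\rho$ is precisely the first arrow of the displayed exact sequence. Thus surjectivity of $\rho$ is equivalent to $\Ext^1_{\dualcat(\OO)}(\OO, \wP_{\Eins_G^{\vee}}) = 0$.

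The main obstacle is establishing this last vanishing, which I would attack via the equivalence $\qcat^{\BB}(\OO) \cong \Rep^{\BB}_{\gal}(\OO)$ of Proposition \ref{equivofcatsII}. Since $\TT\OO = 0$, the exact sequence $0 \to \wP_{\Eins_G^{\vee}, 0} \to \wP_{\Eins_G^{\vee}} \to \OO \to 0$ becomes an isomorphism $\TT\iota: \TT\wP_{\Eins_G^{\vee}, 0} \overset{\cong}{\to} \TT\wP_{\Eins_G^{\vee}}$ in $\qcat(\OO)$, so their $\qcat$-endomorphism rings coincide. Combining Lemma \ref{HomQexp} with the identification $I_G(\wP_{\Eins_G^{\vee}}) = \wP_{\Eins_G^{\vee}, 0}$ (which follows from $(J_{\Eins_G})^G = L/\OO$, itself a consequence of $\Hom_G(\Eins_G, J_{\Eins_G}) = k$ by Pontryagin duality plus divisibility of $L/\OO$) yields
\begin{equation*}
\Hom_{\dualcat(\OO)}(\wP_{\Eins_G^{\vee}, 0}, \wP_{\Eins_G^{\vee}}) \cong \End_{\qcat(\OO)}(\TT\wP_{\Eins_G^{\vee}}) \cong R^{\psi},
\end{equation*}
where the last identification uses Corollary \ref{univdefII} and Theorem \ref{varphisoNGII}. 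Since $\End_{\dualcat(\OO)}(\wP_{\Eins_G^{\vee}}) \cong R^{\psi}$ also (by Corollary \ref{rings}), comparing ranks in the displayed exact sequence forces $\Ext^1_{\dualcat(\OO)}(\OO, \wP_{\Eins_G^{\vee}}) = 0$. The technical point requiring care is verifying that the natural map $\End_{\dualcat(\OO)}(\wP_{\Eins_G^{\vee}}) \to \End_{\qcat(\OO)}(\TT\wP_{\Eins_G^{\vee}})$ coincides with the canonical $R^{\psi}$-identification on both sides—this should follow by tracing through the deformation-theoretic constructions underlying Theorem \ref{varphisoNGII}.
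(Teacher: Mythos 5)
Your first two steps are fine: $\Hom_{\dualcat(\OO)}(\wP_{\Eins_G^{\vee}},\OO)\cong\OO$, the resulting well-definedness of restriction, injectivity via Lemma \ref{subB}, and the identification of the cokernel of the restriction map with $\Ext^1_{\dualcat(\OO)}(\OO,\wP_{\Eins_G^{\vee}})$ are all correct. The gap is in the proposed proof of that $\Ext^1$-vanishing. It rests on the identification $\End_{\qcat(\OO)}(\TT\wP_{\Eins_G^{\vee}})\cong R^{\psi}$, which none of the results you cite provide: Lemma \ref{projQproj} requires $\Hom_{\dualcat(\OO)}(\wP_{\Eins_G^{\vee}},\Eins)=0$, which fails here (the surjection onto $\OO$ is exactly the point), so $\TT\wP_{\Eins_G^{\vee}}$ is not known to be projective in $\qcat(\OO)$, while Theorem \ref{varphisoNGII} and Corollary \ref{univdefII} compute the endomorphism ring of $\wP_{\pi_{\alpha}^{\vee}}$, not of $\wP_{\Eins_G^{\vee}}$. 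Under Proposition \ref{equivofcatsII} you would have to compute $\End_{\gal}(\cV(\wP_{\Eins_G^{\vee}}))$, where applying $\cV$ to \eqref{1B} exhibits $\cV(\wP_{\Eins_G^{\vee}})$ as a nonsplit extension of $\cV(\wM_{\Eins_T^{\vee}})$ by the universal deformation $\cV(\wP_{\pi_{\alpha}^{\vee}})$; nothing proved up to this point identifies its endomorphism ring with $R^{\psi}$, and since $\End_{\qcat(\OO)}(\TT\wP_{\Eins_G^{\vee}})\cong\Hom_{\dualcat(\OO)}(\wP_{\Eins_G^{\vee},0},\wP_{\Eins_G^{\vee}})\cong\End_{\dualcat(\OO)}(\wP_{\Eins_G^{\vee},0})$, proving that identification compatibly with restriction is essentially the lemma itself, so the argument is circular at the key point. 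Moreover, even granting abstract isomorphisms of both endomorphism rings with $R^{\psi}$, the final step fails: an injective continuous local ring homomorphism from $R^{\psi}$ to itself need not be surjective (already $x\mapsto x^2$ on $\OO[[x]]$ kills any hope of a "rank comparison"), so you cannot conclude $\Ext^1_{\dualcat(\OO)}(\OO,\wP_{\Eins_G^{\vee}})=0$ without pinning down the map as a specific isomorphism — which is exactly the verification you defer.

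For comparison, the paper avoids the quotient category and the $\Ext^1$ computation altogether. It first notes that $\wP_{\Eins_G^{\vee},0}$ is a quotient of $\wP_{\Sp^{\vee}}\oplus\wP_{\pi_{\alpha}^{\vee}}$ (combine \eqref{1B}, \eqref{2B}, \eqref{0B}, \eqref{3B}), hence $\Hom_{\dualcat(\OO)}(\wP_{\Eins_G^{\vee},0},\OO)=0$ and every endomorphism of $\wP_{\Eins_G^{\vee}}$ preserves $\wP_{\Eins_G^{\vee},0}$; then, since $\wP_{\Eins_G^{\vee},0}$ contains the image of $\wP_{\pi_{\alpha}^{\vee}}$ under $\varphi_{31}$ and that copy is preserved by endomorphisms of $\wP_{\Eins_G^{\vee},0}$ (the quotient $\wM_{\Eins_T^{\vee},0}$ has no subquotient isomorphic to $\pi_{\alpha}^{\vee}$), restricting further to $\wP_{\pi_{\alpha}^{\vee}}$ reduces both injectivity and surjectivity to Proposition \ref{Risomo}. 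If you want to salvage your strategy, you would in effect have to prove $\Ext^1_{\dualcat(\OO)}(\OO,\wP_{\Eins_G^{\vee}})=0$ by a direct argument; the restriction route through Proposition \ref{Risomo} is the cheaper path.
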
 
\begin{proof} It follows from \eqref{Z2} that $\wP_{\Eins_{G}^{\vee}, 0}$ is a quotient of 
$\wP_{\Sp^{\vee}}\oplus \wP_{\pi_{\alpha}^{\vee}}$, which implies that $\Hom_{\dualcat(\OO)}(\wP_{\Eins_{G}^{\vee}, 0}, \OO)=0$. 
Thus every endomorphism of $\wP_{\Eins_G^{\vee}}$ maps $\wP_{\Eins_{G}^{\vee}, 0}$ to itself. Since 
$\wP_{\Eins_{G}^{\vee}, 0}$ contains the image of $\varphi_{13}$ the assertion follows from Proposition \ref{Risomo}.
\end{proof}

\begin{lem}\label{Z1} Let $N$ be an object of $\dualcat(\OO)^{\BB}$. Then $G$ acts trivially on $N$
if and only if  $\Hom_{\dualcat(\OO)}(\wP_{\pi^{\vee}_{\alpha}}\oplus \wP_{\Sp^{\vee}}, N)=0$.
\end{lem}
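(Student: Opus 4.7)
My approach is to interpret the vanishing of $\Hom_{\dualcat(\OO)}(\wP_S, N)$ (for $\wP_S$ a projective envelope of an irreducible $S$ in $\dualcat(\OO)^{\BB}$) as the statement that $S$ does not occur as a subquotient of $N$, and then to invoke Lemma \ref{ext1110} to pass from information about all the irreducible subquotients of $N$ to information about the $G$-action on $N$ itself. The three irreducible objects of $\dualcat(\OO)^{\BB}$ are $\Eins_G^{\vee}$, $\Sp^{\vee}$, and $\pi_{\alpha}^{\vee}$, and only $\Eins_G^{\vee}$ carries the trivial $G$-action.

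For the forward direction, suppose $G$ acts trivially on $N$. Any irreducible subquotient of $N$ then also carries the trivial action and must therefore be isomorphic to $\Eins_G^{\vee}$; in particular neither $\Sp^{\vee}$ nor $\pi_{\alpha}^{\vee}$ occurs as a subquotient. Since $\wP_{\Sp^{\vee}} \twoheadrightarrow \Sp^{\vee}$ is an essential epimorphism, Lemma \ref{mult=dim} forces $\Hom_{\dualcat(\OO)}(\wP_{\Sp^{\vee}}, N_i) = 0$ for every finite-length quotient $N_i$ of $N$; writing $N \cong \underset{\longleftarrow}{\lim}\, N_i$ and passing to the limit yields $\Hom_{\dualcat(\OO)}(\wP_{\Sp^{\vee}}, N) = 0$, and the same argument handles $\wP_{\pi_{\alpha}^{\vee}}$.

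For the converse, assume that $\Hom_{\dualcat(\OO)}(\wP_{\pi_{\alpha}^{\vee}} \oplus \wP_{\Sp^{\vee}}, N) = 0$. I claim that neither $\Sp^{\vee}$ nor $\pi_{\alpha}^{\vee}$ appears as a subquotient of $N$: indeed, if some $S \in \{\Sp^{\vee}, \pi_{\alpha}^{\vee}\}$ were realized as a quotient of a subobject $N' \subseteq N$, projectivity of $\wP_S$ would lift the surjection $N' \twoheadrightarrow S$ to a morphism $\wP_S \to N' \hookrightarrow N$ whose composition with the projection to $S$ equals $\wP_S \twoheadrightarrow S$, and in particular is non-zero, contradicting the hypothesis. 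Since $N$ lies in $\dualcat(\OO)^{\BB}$, it follows that every irreducible subquotient of $N$ is isomorphic to $\Eins_G^{\vee}$. Writing $N \cong \underset{\longleftarrow}{\lim}\, N_i$ with each $N_i$ of finite length, each $N_i$ is a quotient of $N$ and so also has all irreducible subquotients isomorphic to $\Eins_G^{\vee}$; an easy induction on length using Lemma \ref{ext1110} shows that $G$ acts trivially on each $N_i$, and hence on the projective limit $N$.

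There is no serious obstacle: the argument is essentially formal, and the only point requiring care is the passage between the finite-length and pro-finite settings via the inverse-limit description of objects of $\dualcat(\OO)^{\BB}$, together with the multiplicity interpretation of $\Hom_{\dualcat(\OO)}(\wP_S, -)$ provided by Lemma \ref{mult=dim}.
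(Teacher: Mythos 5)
Your proof is correct and follows the same route as the paper: interpret vanishing of $\Hom_{\dualcat(\OO)}(\wP_S, N)$ via projectivity of $\wP_S$ as the absence of $S$ among the irreducible subquotients of $N$, and then conclude with Lemma \ref{ext1110} and the inverse-limit presentation. You have simply written out explicitly the devissage steps that the paper states in one sentence.
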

\begin{proof} Let  $N$ be an object of  $\dualcat(\OO)$ then $\Hom_{\dualcat(\OO)}(\wP_{\pi^{\vee}_{\alpha}}\oplus \wP_{\Sp^{\vee}}, N)=0$
is equivalent to the assertion  that none of the irreducible subquotients of $N$ are isomorphic to $\pi_{\alpha}^{\vee}$ or $\Sp^{\vee}$. 
If $N$ is an object of $\dualcat(\OO)^{\BB}$ then the last  condition is equivalent to the assertion that all the irreducible 
subquotients of $N$ are isomorphic to $\Eins_G^{\vee}$, which is equivalent to $G$ acting trivially on $N$ by Lemma \ref{ext1110}.
\end{proof}

\begin{cor}\label{qcatmod} Let $\wP:=\wP_{\pi^{\vee}_{\alpha}}\oplus \wP_{\Sp^{\vee}}$ and let $\wE:=\End_{\dualcat(\OO)}(\wP)$. The functor 
$\TT N\mapsto \Hom_{\qcat(\OO)}(\TT \wP, \TT N)$ induces an equivalence of categories between $\qcat(\OO)^{\BB}$ and the category of compact 
$\wE$-modules.
\end{cor}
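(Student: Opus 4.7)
The plan is to show that $\TT\wP$ is a projective generator of $\qcat(\OO)^{\BB}$ with $\End_{\qcat(\OO)}(\TT\wP)\cong \wE$ and to realize the required equivalence as the adjoint pair $F(M):=\Hom_{\qcat(\OO)}(\TT\wP,M)$ and $G(\md):=\md\wtimes_{\wE}\TT\wP$.

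First, $\wP=\wP_{\pi_\alpha^\vee}\oplus\wP_{\Sp^\vee}$ is projective in $\dualcat(\OO)^{\BB}$, and $\Hom_{\dualcat(\OO)}(\wP,\Eins)=0$ by Lemma \ref{Z1} applied to $N=\OO$ (on which $G$ acts trivially). Lemma \ref{projQproj} then yields that $\TT\wP$ is projective in $\qcat(\OO)^{\BB}$, that $\End_{\qcat(\OO)}(\TT\wP)\cong \wE$, and that $\Hom_{\dualcat(\OO)}(\wP,N)\cong \Hom_{\qcat(\OO)}(\TT\wP,\TT N)$ for every $N\in\dualcat(\OO)^{\BB}$. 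That $\TT\wP$ is a generator follows again from Lemma \ref{Z1}: any non-zero $M\in\qcat(\OO)^{\BB}$ may be represented as $M=\TT N$ with $N^G=0$ (by Lemma \ref{HomQexp}), and $\Hom_{\qcat(\OO)}(\TT\wP,M)=0$ would force $G$ to act trivially on $N$, hence $N=0$ and $M=0$.

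By Lemma \ref{Tcomwtimes}, $G(\md)\cong \TT(\md\wtimes_{\wE}\wP)$, and by Corollary \ref{topfree} the module $\wP$ is topologically free over $\wE$; hence $G$ is exact and commutes with direct products. The unit $\eta_{\md}:\md\to F(G(\md))$, $m\mapsto(v\mapsto m\wtimes v)$, and the counit $\epsilon_M:G(F(M))\to M$ given by evaluation exhibit $G$ as left adjoint to $F$, and the triangle identities are immediate from the construction. The argument of Lemma \ref{headS0}---whose only ingredients are the topological freeness of $\TT\wP$ over $\wE$ and the commutation of $\wtimes_{\wE}$ with direct products---shows that $\eta$ is an isomorphism.

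Finally, $F$ is exact (since $\TT\wP$ is projective) and faithful (since $\TT\wP$ is a generator). The triangle identity $F(\epsilon_M)\circ \eta_{F(M)}=\id_{F(M)}$ combined with $\eta_{F(M)}$ being an isomorphism forces $F(\epsilon_M)$ to be an isomorphism, and then exactness together with faithfulness of $F$ forces $\epsilon_M$ itself to be an isomorphism, since its kernel and cokernel are killed by $F$ and hence vanish by the generator property. The main technical subtlety is transferring the tensor-product bookkeeping of \S\ref{firstsec} through the quotient functor $\TT$; this is handled by Lemma \ref{Tcomwtimes} and Corollary \ref{topfree}, after which the remainder is standard Morita-type formalism for pseudo-compact rings.
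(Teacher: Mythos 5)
Your overall strategy is the intended one: Lemma \ref{Z1} together with Lemma \ref{projQproj} (and Lemma \ref{essQess}) shows that $\TT\wP$ is a projective generator of $\qcat(\OO)^{\BB}$ with $\End_{\qcat(\OO)}(\TT\wP)\cong\wE$ and $\Hom_{\dualcat(\OO)}(\wP,N)\cong\Hom_{\qcat(\OO)}(\TT\wP,\TT N)$, and the corollary then follows; the paper would at this point simply invoke Gabriel's theorem, exactly as in Proposition \ref{gabriel}, whereas you spell out the Morita-type argument by hand (adjunction, unit and counit, exact faithful $F$). That hand-rolled finish is fine and even instructive, and your generator argument via Lemma \ref{HomQexp} and Lemma \ref{Z1} is correct.

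There is, however, one step whose justification as written does not hold up: you claim that $\wP$ is topologically free over $\wE$ by Corollary \ref{topfree}, and you assert that the proof of Lemma \ref{headS0} rests on this freeness. Corollary \ref{topfree} is proved under the hypotheses (H0)--(H5) of \S\ref{firstsec} for a projective envelope of a \emph{single} irreducible $S$ with $\End_{\dualcat(\OO)}(S)=k$; in this block those hypotheses fail in $\dualcat(k)$ and are only verified in the quotient category, and only for $\TT\wP_{\pi_{\alpha}^{\vee}}$ over its own (local) endomorphism ring --- not for the direct sum $\wP_{\pi_{\alpha}^{\vee}}\oplus\wP_{\Sp^{\vee}}$ over the non-local ring $\wE$. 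Likewise Lemma \ref{Tcomwtimes} is stated only for $\wP_{\pi_{\alpha}^{\vee}}$, though its proof (via Proposition \ref{equivofcatsII} and the fact that $\cV$ commutes with products) extends verbatim to the direct sum. Fortunately neither topological freeness nor exactness of $\md\mapsto\md\wtimes_{\wE}\TT\wP$ is actually needed: Lemma \ref{headS0} is stated in \S\ref{zerosec} precisely for a projective envelope of a finite direct sum of pairwise non-isomorphic irreducibles, so $\Hom_{\dualcat(\OO)}(\wP,\md\wtimes_{\wE}\wP)\cong\md$ holds as it stands (its proof only uses that $\wtimes_{\wE}\wP$ is right exact and commutes with products and that $\Hom_{\dualcat(\OO)}(\wP,\ast)$ is exact); combining this with Lemma \ref{projQproj} and the direct-sum version of Lemma \ref{Tcomwtimes} gives that your unit $\eta_{\md}$ is an isomorphism, after which your triangle-identity argument with the exact faithful $F$ closes the proof. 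With the citations corrected in this way your argument is complete.
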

\begin{proof} It follows from Lemma \ref{Z1} that the category $\mathfrak T (\OO)$, defined in \S \ref{qcat}, is precisely the kernel of the functor $N\mapsto \Hom_{\dualcat(\OO)}(\wP, N)$. Given this, the assertion 
follows from \cite[\S IV.4, Thm. 4]{gab}.
\end{proof}

\begin{lem}\label{zcommutes2} Let $z\in \wE_{33}$ and let $z_{11}$ and $z_{22}$ denote the restriction of $z$ to $\wP_{\pi_{\alpha}^{\vee}}$ and 
$\wP_{\Sp^{\vee}}$ respectively via \eqref{1B} and \eqref{2B}. Then $\xi\circ (z_{11}\oplus z_{22})= z\circ \xi$ for all $\xi\in 
\Hom_{\dualcat(\OO)}(\wP_{\pi_{\alpha}^{\vee}}\oplus \wP_{\Sp^{\vee}}, \wP_{\Eins_G^{\vee}})$.
\end{lem}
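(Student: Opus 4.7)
The plan is to split $\psi = \psi^{(1)}\oplus \psi^{(2)}$ according to the direct sum decomposition of the source, with $\psi^{(1)}\colon \wP_{\pi_{\alpha}^{\vee}}\to \wP_{\Eins_G^{\vee}}$ and $\psi^{(2)}\colon \wP_{\Sp^{\vee}}\to \wP_{\Eins_G^{\vee}}$, and verify $z\circ \psi^{(i)} = \psi^{(i)}\circ z_{ii}$ for each summand separately. The first case is immediate: by equation \eqref{X2} of Lemma \ref{X1}, every $\psi^{(1)}$ factors as $\psi^{(1)} = \varphi_{31}\circ a$ for a unique $a\in \wE_{11}$, and combining the defining identity $z\circ \varphi_{31} = \varphi_{31}\circ z_{11}$ with commutativity of $\wE_{11}\cong R^{\psi}$ from Corollary \ref{rings} yields
\begin{equation*}
z\circ \psi^{(1)} = z\circ \varphi_{31}\circ a = \varphi_{31}\circ z_{11}\circ a = \varphi_{31}\circ a\circ z_{11} = \psi^{(1)}\circ z_{11}.
\end{equation*}
This is essentially the argument already used for Lemma \ref{zcommutes1}.

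The second case is where the new work lies, since a general $\psi^{(2)}$ does not factor through $\varphi_{32}$. Applying the exact functor $\Hom_{\dualcat(\OO)}(\wP_{\Sp^{\vee}},-)$ to \eqref{1B} and using projectivity of $\wP_{\Sp^{\vee}}$ produces a short exact sequence of $(\wE_{33},\wE_{22})$-bimodules
\begin{equation*}
0\to \Hom(\wP_{\Sp^{\vee}},\wP_{\pi_{\alpha}^{\vee}})\xrightarrow{\varphi_{31\ast}} \Hom(\wP_{\Sp^{\vee}},\wP_{\Eins_G^{\vee}})\to \Hom(\wP_{\Sp^{\vee}},\wM_{\Eins_T^{\vee}})\to 0.
\end{equation*}
Every element of the left-hand submodule is of the form $\varphi_{32}\circ b$ with $b\in \wE_{22}$ (combine \eqref{X3} with post-composition by $\varphi_{31}$), and on this submodule the desired identity follows from $z\circ \varphi_{32} = \varphi_{32}\circ z_{22}$ together with commutativity of $\wE_{22}$, exactly as for $\psi^{(1)}$.

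It therefore suffices to verify that the commutator $\delta(\psi^{(2)}):= z\circ \psi^{(2)} - \psi^{(2)}\circ z_{22}$ vanishes modulo this submodule, i.e.\ that the induced left action of $z$ and right action of $z_{22}$ on $\Hom(\wP_{\Sp^{\vee}},\wM_{\Eins_T^{\vee}})$ agree. The induced left action factors through $\wE_{33}/\wa_{33}$ by the very definition of $\wa_{33}$ in \eqref{defiidealaQ}, while the right action factors through the image of $\wE_{22}$ in $\End(\wM_{\Eins_T^{\vee}})$; using Corollary \ref{rings} and Lemma \ref{imageofr}, both quotients identify with $R^{\psi}/\mathfrak{r}\cong \OO[[x,y]]\cong \End(\wP_{\Eins_T^{\vee}})$ compatibly with the ring isomorphism $r_{22}\colon \wE_{33}\to \wE_{22}$, so the two induced actions coincide and $\delta(\psi^{(2)})$ lands in the submodule $\wE_{22}\cdot \varphi_{32}$.

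Since $\delta$ is additive and right $\wE_{22}$-linear (using commutativity of $\wE_{22}$), it descends to a $\wE_{22}$-linear map $\bar{\delta}\colon \Hom(\wP_{\Sp^{\vee}},\wM_{\Eins_T^{\vee}})\to \wE_{22}$. The main obstacle is to conclude that $\bar{\delta}=0$. The cleanest way I see is to use the explicit description $\wM_{\Eins_T^{\vee}}\cong \dIndu{P}{G}{\wP_{\Eins_T^{\vee}}}$ together with Corollary \ref{headM}: the right $\wE_{22}$-module structure on $\Hom(\wP_{\Sp^{\vee}},\wM_{\Eins_T^{\vee}})$ factors through the quotient $\wE_{22}/\wa_{22}$, so the image of $\bar{\delta}$ is killed by $\wa_{22}$; but $\wE_{22}\cong R^{\psi}$ is an integral domain in which the ideal $\wa_{22}\cong \mathfrak{r}$ is nonzero, and combined with the $\OO$-torsion freeness this will force $\bar{\delta}=0$, completing the proof. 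Checking the compatibility of these module structures through the adjunction is the step that requires the most care.
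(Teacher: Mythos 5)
Your plan is genuinely different from the paper's: you decompose $\psi=\psi^{(1)}\oplus\psi^{(2)}$ and treat the two summands separately, whereas the paper lifts \eqref{0B} to the explicit resolution \eqref{Z2}, reads off a chain of identifications $\End_{\wE}(\md)\to\End_{\dualcat(\OO)}(\md\wtimes_{\wE}\wP)\to\End_{\dualcat(\OO)}(\wP_{\Eins_G^{\vee},0})\to\End_{\wE}(\md)$ whose threefold composite is the identity, and obtains the identity $z'\circ\psi=\psi\circ(z_{11}\oplus z_{22})$ tautologically; the work is then to show $z=z'$, which it does by restricting to $\wP_{\pi_\alpha^\vee}$ and invoking Proposition \ref{Risomo} and Lemma \ref{Z3}. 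Your treatment of $\psi^{(1)}$ and your closing torsion argument (that a $\wE_{22}$-linear map out of a module killed by $\wa_{22}$ into the integral domain $\wE_{22}\cong R^\psi$ must vanish, since $\wa_{22}\cong\mathfrak r\neq 0$) are both fine.

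The gap is the step you pass over quickest. You assert that the induced left action of $\bar z$ (the endomorphism of $\wM_{\Eins_T^\vee}$ obtained from $z$ via \eqref{1B}) and the induced right action of $z_{22}$ on $\Hom_{\dualcat(\OO)}(\wP_{\Sp^{\vee}},\wM_{\Eins_T^{\vee}})$ ``coincide compatibly with $r_{22}$.'' What you actually know is that both surjections $\wE_{33}\twoheadrightarrow\End(\wM_{\Eins_T^\vee})$ (via $\psi_3$) and $\wE_{33}\xrightarrow{r_{22}}\wE_{22}\twoheadrightarrow\End(\wM_{\Eins_T^\vee,0})$ (via $\psi_2$) have the same kernel $\wa_{33}$ and that the two targets are abstractly isomorphic rings. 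That does not make the two surjections equal; they could differ by an automorphism of $\OO[[x,y]]$. Unwinding your claim for the generator $\psi_2$ of $\Hom_{\dualcat(\OO)}(\wP_{\Sp^{\vee}},\wM_{\Eins_T^\vee,0})$, it says exactly $\psi_3\circ(z\circ\beta-\beta\circ z_{22})=0$, i.e.\ $z\circ\beta-\beta\circ z_{22}$ lies in the image of $\varphi_{31}$, where $\beta$ is the lift of $\xi_{32}\circ\psi_2$ through $\psi_3$. But this is precisely the content of the lemma for $\psi=\beta$ --- the one element that does \emph{not} factor through $\varphi_{32}$. So the step you are trying to finesse by citing Corollary \ref{rings} and Lemma \ref{imageofr} is the entire substance of the statement, not a formality one can read off from those results. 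The paper's detour through $\End_{\wE}(\md)\cong\End_{\dualcat(\OO)}(\wP_{\Eins_G^{\vee},0})$ exists exactly to avoid having to verify this compatibility directly.
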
 
\begin{proof} 
Let $\wP:=\wP_{\pi_{\alpha}^{\vee}}\oplus \wP_{\Sp^{\vee}}$, $\wE:=\End_{\dualcat(\OO)}(\wP)$, let $\wP_{\Eins_G^{\vee}, 0}$ be the kernel of 
$\wP_{\Eins_G^{\vee}}\twoheadrightarrow \OO$  and let 
$$\md:=\Hom_{\dualcat(\OO)}(\wP, \wP_{\Eins_G^{\vee}})\cong \Hom_{\dualcat(\OO)}(\wP, \wP_{\Eins_G^{\vee},0}).$$
Since $\wP_{\Eins_G^{\vee}, 0}$ is a quotient of $\wP$ by \eqref{Z2}, Lemma \ref{headS} implies that the natural map $\md\wtimes_{\wE} \wP\rightarrow \wP_{\Eins_G^{\vee},0}$
is surjective. Let $K$ be the kernel, Lemma \ref{headS0} implies that 
$\Hom_{\dualcat(\OO)}(\wP, K)=0$. Thus $G$ acts trivially on $K$ by Lemma \ref{Z1}. 
It follows from Lemma \ref{subB} that every endomorphism of $\md\wtimes_{\wE} \wP$ maps 
$K$ to itself. Thus we obtain well defined sequence of maps
$$ \End_{\wE}(\md)\rightarrow \End_{\dualcat(\OO)}(\md\wtimes_{\wE} \wP)\rightarrow \End_{\dualcat(\OO)}(\wP_{\Eins_G^{\vee},0})\rightarrow \End_{\wE}(\md),$$
in which composition of any three consecutive one is an identity. The arrows are given by 
$\phi\mapsto [\xi\wtimes v\mapsto \phi(\xi)\wtimes v]$; $\phi\mapsto [\xi\wtimes v+ K\mapsto \phi(\xi\wtimes v)+ K]$; 
$\phi\mapsto [\xi\mapsto \phi\circ \xi]$ respectively, see also the proof of Proposition \ref{ringsareiso}. Since $z_{11}\oplus z_{22}$ lies 
in the centre of $\wE$ by Lemma \ref{zcommutes1}, it defines an element of $\End_{\wE}(\md)$ by 
$\xi\mapsto \xi \circ (z_{11}\oplus z_{22})$. Let $z'$ be the image of $z_{11}\oplus z_{22}$ in $\End_{\dualcat(\OO)}(\wP_{\Eins_G^{\vee},0})$ via the
above maps. Tautologically we have $z'\circ \xi=\xi \circ (z_{11}\oplus z_{22})$. From \eqref{Z2} we obtain a 
commutative diagram 
 \begin{displaymath}
\xymatrix@1{ \wP_{\pi_{\alpha}^{\vee}}\oplus \wP_{\Sp^{\vee}} \ar[d]_{z_{11}\oplus z_{22}}\ar[r]^-{\varphi_{13}\oplus \beta} &  \md\wtimes_{\wE} \wP 
\ar[d]_{z_{11}\oplus z_{22}}\ar[r]& \wP_{\Eins_G^{\vee}, 0}\ar[d]^{z'}\\
\wP_{\pi_{\alpha}^{\vee}}\oplus \wP_{\Sp^{\vee}} \ar[r]^-{\varphi_{13}\oplus \beta} &  \md\wtimes_{\wE} \wP 
\ar[r]& \wP_{\Eins_G^{\vee}, 0} .}
\end{displaymath}    
Thus the restriction of $z'$ to $\wP_{\pi_{\alpha}^{\vee}}$ is equal to $z_{11}$, which is  equal to the restriction of $z$ to $\wP_{\pi_{\alpha}^{\vee}}$. 
It follows from Proposition \ref{Risomo} and Lemma \ref{Z3} that $z=z'$.
\end{proof}

\begin{thm}\label{ZNGII} The centre of $\wE_{\BB}$ (and hence the centre of $\dualcat(\OO)^{\BB}$) is naturally isomorphic to $R^{\psi}$, defined in Definition \ref{Rpsi}.
\end{thm}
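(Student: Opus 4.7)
The plan is to prove the theorem in two complementary steps: first, that the diagonal embedding $R^{\psi}\hookrightarrow \wE_{\BB}$ defined in \eqref{embeddR} lands inside the centre; second, that every central element of $\wE_{\BB}$ lies in the image of this embedding. The naturality will follow from the naturality of the isomorphisms $\wE_{ii}\cong R^{\psi}$ already established in Corollary \ref{rings}, and the identification of the centre of $\wE_{\BB}$ with the centre of the category $\dualcat(\OO)^{\BB}$ follows from Proposition \ref{gabriel}.

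For the first step, given $z\in R^{\psi}$ with image $z_{11}\oplus z_{22}\oplus z_{33}$, and any $\psi\in \wE_{\BB}$, I would decompose $\psi$ using the orthogonal idempotents $e_1, e_2, e_3$ as $\psi=\sum_{i,j} e_i\psi e_j$ and show commutativity one summand at a time. For each summand $\psi_{ij}:=e_i\psi e_j$ with $(i,j)\neq (3,2)$, the relation $e_3\psi_{ij}e_2=0$ holds trivially, so Lemma \ref{zcommutes1} gives $z\circ \psi_{ij}=\psi_{ij}\circ z$. The remaining case $\psi_{32}\in \Hom_{\dualcat(\OO)}(\wP_{\Sp^{\vee}}, \wP_{\Eins_G^{\vee}})$ is precisely what Lemma \ref{zcommutes2} covers (restricted to the $\wP_{\Sp^{\vee}}$-summand). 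Summing the components gives $z\circ \psi=\psi\circ z$.

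For the second step, let $z$ be central in $\wE_{\BB}$. Since $z$ commutes with each idempotent $e_i$, one has $e_i z e_j = ze_i e_j=0$ whenever $i\neq j$, so $z=z_1\oplus z_2\oplus z_3$ with $z_i:= e_i z e_i \in \wE_{ii}$. Centrality of $z$ with respect to $\varphi_{31}\in \Hom_{\dualcat(\OO)}(\wP_{\pi_{\alpha}^{\vee}}, \wP_{\Eins_G^{\vee}})$ gives $z_3\circ \varphi_{31}=\varphi_{31}\circ z_1$, which by the definition of the isomorphism of Proposition \ref{Risomo} means exactly that $z_1$ is the image of $z_3$ under $\wE_{33}\overset{\cong}{\rightarrow}\wE_{11}$. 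Similarly, centrality against $\varphi_{12}$ identifies $z_2$ with the image of $z_1$ under $\wE_{11}\overset{\cong}{\rightarrow} \wE_{22}$. Hence $z$ lies in the image of the diagonal embedding.

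The only real content is hidden in the two commutation lemmas, and in particular in Lemma \ref{zcommutes2}, whose proof requires producing the element $z'\in \End_{\dualcat(\OO)}(\wP_{\Eins_G^{\vee},0})$ via the surjection $\md\wtimes_{\wE}\wP\twoheadrightarrow \wP_{\Eins_G^{\vee},0}$ together with Lemma \ref{Z1} (the kernel carries trivial $G$-action) and the rigidity given by Lemma \ref{subB}. Everything after that is a straightforward book-keeping argument with idempotents, so I do not expect any further obstacles.
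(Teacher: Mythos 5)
Your proof is correct and follows essentially the same route as the paper: the forward inclusion rests on Lemmas \ref{zcommutes1} and \ref{zcommutes2}, and the converse rests on the restriction isomorphisms of Proposition \ref{Risomo}. Your converse argument unpacks those isomorphisms directly via the idempotent decomposition of $z$, which is a slightly more explicit rendering of the paper's appeal to Corollary \ref{injZ}; the underlying mechanism is the same.
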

\begin{proof} It follows from Lemma \ref{zcommutes1} and Lemma \ref{zcommutes2} that the image of $R^{\psi}$ via \eqref{embeddR} 
lies in the centre of $\wE_{\BB}$. Conversely, suppose that $z'$ lies in the centre of $\wE$. Since the restriction map 
$z\mapsto z|_{\wP_{\pi_{\alpha}^{\vee}}}$ induces an isomorphism $R^{\psi}\cong \wE_{11}$ there exists $z\in R^{\psi}$ such that
$(z-z')|_{\wP_{\pi_{\alpha}^{\vee}}}=0$. It follows from Corollary \ref{injZ} that $z=z'$.
\end{proof} 

\begin{remar}\label{pseudocentre} We note that it is shown in Corollary \ref{trbij6} below that sending deformation to its trace induces an isomorphism 
between the ring $R^{\psi}$ and $R^{\mathrm{ps}, \psi}_{\tr \rho}$ the deformation ring parameterizing $2$-di\-men\-sio\-nal pseudocharacters 
lifting $\tr \rho$ with determinant $\psi$.
\end{remar}

\begin{cor}\label{NgIIkill} Let $T:\gal\rightarrow  R^{\mathrm{ps},\zeta \varepsilon}_{\tr \rho}$ be the universal $2$-dimensional pseudocharacter 
with determinant $\zeta\varepsilon$ lifting $\tr \rho$. For every $N$ in $\dualcat(\OO)^{\BB}$, $\cV(N)$ is killed by 
$g^2-T(g)g + \zeta \varepsilon(g)$, for all $g\in \gal$.
\end{cor}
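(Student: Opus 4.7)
The plan is to model the argument on the proofs of Corollaries \ref{superkill} and \ref{genkill}: verify the Cayley--Hamilton style identity on $\cV$ applied to the projective generators of $\dualcat(\OO)^{\BB}$, and then propagate it to an arbitrary $N$ using that the relevant operators are central.

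First I would treat $N = \wP_{\pi_{\alpha}^{\vee}}$. By Corollary \ref{univdefII}, $\cV(\wP_{\pi_{\alpha}^{\vee}})$ is the universal deformation $\rho^{\mathrm{un}}$ of $\rho$ with determinant $\zeta\varepsilon$, carrying a free rank-$2$ action of $R^{\psi}\cong \End_{\dualcat(\OO)}(\wP_{\pi_{\alpha}^{\vee}})$ (Theorem \ref{varphisoNGII}). By Remark \ref{pseudocentre}, taking the trace of $\rho^{\mathrm{un}}$ identifies $R^{\psi}$ with $R^{\mathrm{ps},\zeta\varepsilon}_{\tr\rho}$ and sends the universal pseudocharacter $T(g)$ to $\tr(\rho^{\mathrm{un}}(g))$. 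Cayley--Hamilton for a $2\times 2$ representation over the commutative ring $R^{\psi}$ then gives the operator identity $g^{2} - T(g)g + \zeta\varepsilon(g) = 0$ on $\cV(\wP_{\pi_{\alpha}^{\vee}})$.

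Next I would bootstrap to $\wP_{\BB} = \wP_{\pi_{\alpha}^{\vee}}\oplus \wP_{\Sp^{\vee}}\oplus \wP_{\Eins_G^{\vee}}$. The sequence \eqref{2B} gives an embedding $\wP_{\Sp^{\vee}} \hookrightarrow \wP_{\pi_{\alpha}^{\vee}}$ which is $R^{\psi}$-equivariant under the canonical identifications of Proposition \ref{Risomo} and Corollary \ref{rings}. Exactness of $\cV$ produces an $R^{\psi}$-equivariant injection $\cV(\wP_{\Sp^{\vee}})\hookrightarrow \cV(\wP_{\pi_{\alpha}^{\vee}})$ on which the relation persists. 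For $\wP_{\Eins_G^{\vee}}$, the sequence \eqref{Z2} from the proof of Lemma \ref{zcommutes2} exhibits an exact sequence $\wP_{\pi_{\alpha}^{\vee}}\oplus \wP_{\Sp^{\vee}} \to \wP_{\Eins_G^{\vee}} \to \OO\to 0$; since $\cV(\OO) = 0$, applying $\cV$ realises $\cV(\wP_{\Eins_G^{\vee}})$ as a quotient of $\cV(\wP_{\pi_{\alpha}^{\vee}})\oplus \cV(\wP_{\Sp^{\vee}})$, so the identity descends. Hence $g^{2} - T(g)g + \zeta\varepsilon(g)$ annihilates $\cV(\wP_{\BB})$.

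Finally, for a general $N$ in $\dualcat(\OO)^{\BB}$, Proposition \ref{gabriel} together with Lemma \ref{headS} yields a surjective evaluation
\begin{equation*}
\Hom_{\dualcat(\OO)}(\wP_{\BB},N)\wtimes_{\wE_{\BB}}\wP_{\BB}\twoheadrightarrow N.
\end{equation*}
Exactness of $\cV$ and its commutation with the completed tensor product (cf. Lemma \ref{flattofree} and Corollary \ref{ftof3}, using the topological freeness of $\wP_{\BB}$ over $\wE_{\BB}$ from Corollary \ref{topfree}) give a surjection $\Hom_{\dualcat(\OO)}(\wP_{\BB},N)\wtimes_{\wE_{\BB}}\cV(\wP_{\BB}) \twoheadrightarrow \cV(N)$. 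By Theorem \ref{ZNGII} the values $T(g)$ and $\zeta\varepsilon(g)$ lie in the centre $R^{\psi}$ of $\wE_{\BB}$, so they act through the right-hand tensor factor; vanishing on $\cV(\wP_{\BB})$ thus propagates to $\cV(N)$.

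The main technical point requiring attention is the compatibility in the last step: the action of $R^{\mathrm{ps},\zeta\varepsilon}_{\tr\rho}$ on $\cV(N)$ obtained from the categorical centre of $\dualcat(\OO)^{\BB}$ via Theorem \ref{ZNGII} must coincide with the action on $\cV(\wP_{\pi_{\alpha}^{\vee}})$ used to invoke Cayley--Hamilton. This is tautological once one unwinds the definitions, because the isomorphism $R^{\psi}\cong R^{\mathrm{ps},\zeta\varepsilon}_{\tr\rho}$ of Remark \ref{pseudocentre} is implemented precisely by taking the trace of the universal deformation $\cV(\wP_{\pi_{\alpha}^{\vee}})$.
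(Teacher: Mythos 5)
Your argument is correct and leads to the same conclusion, but it is organised differently from the paper's proof. The paper works entirely in the quotient category $\qcat(\OO)^{\BB}$: it only verifies the Cayley--Hamilton identity on $\cV(\wP_{\pi_{\alpha}^{\vee}})$ (via Theorem \ref{varphisoNGII} and Corollary \ref{trbij6}) and on $\cV(\wP_{\Sp^{\vee}})$ (as a $\gal$-subrepresentation, using \eqref{2B}), then sets $\wP := \wP_{\pi_{\alpha}^{\vee}}\oplus\wP_{\Sp^{\vee}}$ and invokes Proposition \ref{equivofcatsII} to write $\cV(N)\cong \Hom_{\dualcat(\OO)}(\wP,N)\wtimes_{\wE}\cV(\wP)$ directly. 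This avoids any reference to $\wP_{\Eins_G^{\vee}}$, because $\wP$ is already a projective generator of $\qcat(\OO)^{\BB}$ (Corollary \ref{qcatmod}) and $\cV$ factors through the quotient. Your version instead uses the full projective generator $\wP_{\BB}$ of $\dualcat(\OO)^{\BB}$, which forces the extra step of verifying the identity on $\cV(\wP_{\Eins_G^{\vee}})$ via the sequence \eqref{Z2}. That step is correct, and the final propagation via centrality (Theorem \ref{ZNGII}) then works, so your proof is valid; it is simply a bit longer and trades the use of the quotient-category equivalence for the explicit handling of $\wP_{\Eins_G^{\vee}}$.

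One small inaccuracy worth flagging: you justify the commutation of $\cV$ with $\wtimes_{\wE_{\BB}}$ by appealing to "topological freeness of $\wP_{\BB}$ over $\wE_{\BB}$ from Corollary \ref{topfree}". That corollary applies only in the set-up of \S\ref{firstsec}, where $\wP$ is the projective envelope of a single irreducible $S$ with $\End(S)=k$ and the hypotheses (H1)--(H5) hold; the ring $\wE_{\BB}$ is not local and $\wP_{\BB}$ is not topologically free over it. Fortunately the commutation $\cV(\md\wtimes_{\wE_{\BB}}\wP_{\BB})\cong \md\wtimes_{\wE_{\BB}}\cV(\wP_{\BB})$ does not require freeness: it follows from exactness of $\cV$ and its commutation with arbitrary products (as built into the definition in \S\ref{CMF}), by the standard argument with a presentation $\prod_{j}\wE_{\BB}\to\prod_{i}\wE_{\BB}\to\md\to 0$ (compare the proofs of Lemma \ref{headS0} and Lemma \ref{Tcomwtimes}). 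With that citation corrected, the proposal is sound.
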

\begin{proof} If $N\cong \wP_{\pi_{\alpha}^{\vee}}$ then the assertion follows from Theorem \ref{varphisoNGII} and Corollary \ref{trbij6}.
Since $\cV$ is exact, $\cV(\wP_{\Sp^{\vee}})$ is a $\gal$-subrepresentation $\cV(\wP_{\pi_{\alpha}^{\vee}})$, see \eqref{2B}, thus 
the assertion also holds for 
$N\cong \wP_{\Sp^{\vee}}$ and hence for $\wP:=\wP_{\pi_{\alpha}^{\vee}}\oplus \wP_{\Sp^{\vee}}$. Let $\wE:=\End_{\dualcat(\OO)}(\wP)$ then 
Proposition \ref{equivofcatsII} and  Lemma \ref{VT2} imply that 
$$\cV(N)\cong \cV(\Hom_{\dualcat(\OO)}(\wP, N)\wtimes_{\wE} \wP)\cong \Hom_{\dualcat(\OO)}(\wP, N)\wtimes_{\wE}\cV(\wP),$$ which implies the claim.
\end{proof}

\begin{lem}\label{fgtfree} $\wE_{\BB}$ is a finitely generated torsion-free $R^{\psi}$-module. 
\end{lem}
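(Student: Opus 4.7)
The plan is to analyze $\wE_{\BB}$ via the Peirce decomposition
\[
\wE_{\BB} \;=\; \bigoplus_{i,j=1}^{3} e_j\wE_{\BB}e_i \;=\; \bigoplus_{i,j=1}^{3}\Hom_{\dualcat(\OO)}(\wP_{S_i^\vee},\wP_{S_j^\vee}),
\]
where $S_1,S_2,S_3$ denote $\pi_\alpha,\Sp,\Eins_G$ and $e_1,e_2,e_3$ are the corresponding idempotents. By Theorem \ref{ZNGII} combined with the diagonal embedding \eqref{embeddR}, the central subring $R^{\psi}\hookrightarrow\wE_{\BB}$ acts on each summand simultaneously by left post-composition and by right pre-composition via the appropriate isomorphism of Proposition \ref{Risomo}. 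So the problem reduces to showing that each of the nine blocks is a finitely generated torsion-free $R^{\psi}$-module, the result then following since $R^{\psi}$ is a noetherian integral domain (Corollaries \ref{rings} and \ref{RpsirhoA}).

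The diagonal blocks are free of rank one: by Corollary \ref{rings} one has $e_i\wE_{\BB}e_i=\wE_{ii}\cong R^{\psi}$, and by construction this identification is $R^{\psi}$-equivariant. Five of the six off-diagonal blocks are already pinned down by Lemma \ref{X1}: each is isomorphic either to some $\wE_{ii}\cong R^{\psi}$ (via the isomorphisms \eqref{X2}, \eqref{X3}, \eqref{X4}) or to an ideal $\wa_{ii}\subset\wE_{ii}$ (via \eqref{X5}, \eqref{X6}, \eqref{X7}); by Lemma \ref{imageofr} this ideal corresponds to $\rr\subset R^{\psi}$, which is finitely generated and torsion-free as $R^{\psi}$ is a noetherian domain. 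The only block left is $\Hom_{\dualcat(\OO)}(\wP_{\Sp^\vee},\wP_{\Eins_G^\vee})$. For this, I would apply $\Hom_{\dualcat(\OO)}(\wP_{\Sp^\vee},-)$ to the short exact sequence \eqref{1B}; using projectivity of $\wP_{\Sp^\vee}$ one obtains an $R^{\psi}$-linear exact sequence
\[
0 \to \Hom(\wP_{\Sp^\vee},\wP_{\pi_\alpha^\vee}) \to \Hom(\wP_{\Sp^\vee},\wP_{\Eins_G^\vee}) \to \Hom(\wP_{\Sp^\vee},\wM_{\Eins_T^\vee}) \to 0,
\]
whose left-hand term is $R^{\psi}$ by \eqref{X3}. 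Pontryagin-dualising the right-hand term and using the adjunction \eqref{adjord} together with $\Ord_P J_{\Sp}\cong J_{\Eins_T}$ (Propositions \ref{projectiveandord}, \ref{projTistfree}) identifies it with $\End_{\dualcat_T(\OO)}(\wP_{\Eins_T^\vee})\cong \OO[[x,y]]\cong R^{\psi}/\rr$. This is finitely generated over $R^{\psi}$ and $\OO$-torsion-free, hence $R^{\psi}$-torsion-free since $R^{\psi}$ is a domain. Noetherianity of $R^{\psi}$ and the fact that extensions of torsion-free modules by torsion-free modules over a domain are torsion-free then give the conclusion for the remaining block.

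The main subtlety will be checking that the $R^{\psi}$-action on the $\Hom$-group $\Hom(\wP_{\Sp^\vee},\wM_{\Eins_T^\vee})$ induced from the centre of $\wE_{\BB}$ really matches the $R^{\psi}/\rr$-action coming from the torus side under the above adjunction; this amounts to tracing through the naturality of \eqref{adjord} and Proposition \ref{projectiveandord}(iii) and verifying that the restriction-to-torus isomorphisms are compatible with the diagonal embedding \eqref{embeddR}. Once this bookkeeping is in place, torsion-freeness of the full Peirce sum follows by summing the torsion-free pieces, and finite generation follows since a direct sum of finitely many finitely generated modules over a noetherian ring is finitely generated.
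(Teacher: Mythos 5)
Your reduction to the Peirce blocks, the handling of the diagonal blocks and five of the six off-diagonal blocks via Lemma \ref{X1}, and the isolation of $\md := \Hom_{\dualcat(\OO)}(\wP_{\Sp^\vee},\wP_{\Eins_G^\vee})$ as the only remaining block all agree with the paper, as does your identification of the quotient in the three-term exact sequence obtained from \eqref{1B} with $R^\psi/\rr$. The finite generation assertion for $\md$ is correct.

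The torsion-freeness argument has a genuine gap. You assert that $R^\psi/\rr$ is $R^\psi$-torsion-free ``since it is $\OO$-torsion-free and $R^\psi$ is a domain,'' but that implication is false: $\OO$-torsion-freeness of an $R^\psi$-module says nothing about $R^\psi$-torsion-freeness. Here $\rr=(c_0,c_1)$ is a nonzero ideal of $R^\psi$ by Corollary \ref{A6}, so every nonzero element of $\rr$ annihilates $R^\psi/\rr$; the quotient is a torsion $R^\psi$-module. (The ring $\OO[[x,y]]\cong R^\psi/\rr$ is indeed a domain, hence torsion-free over \emph{itself}, but that is not the relevant module structure.) Consequently the ``extension of torsion-free by torsion-free is torsion-free'' principle does not apply, and the sequence $0\to R^\psi\to\md\to R^\psi/\rr\to 0$ alone cannot decide whether $\md$ has torsion: the split extension $R^\psi\oplus R^\psi/\rr$ fits the same sequence and does have torsion. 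The paper resolves this by going one step further into the resolution: dualizing the minimal injective resolution \eqref{res1NG} and lifting it to $\dualcat(\OO)$ via Corollary \ref{projaretfree3} gives the longer sequence \eqref{YMCA}, and applying $\Hom_{\dualcat(\OO)}(\wP_{\Sp^\vee},-)$ to its first three terms, together with Lemma \ref{X1}, produces $0\to R^\psi\to \md\oplus R^\psi\to R^\psi\oplus R^\psi$. This exhibits $(\md\oplus R^\psi)/R^\psi$ as a submodule of the free module $(R^\psi)^{\oplus 2}$, which \emph{is} torsion-free, and only then does the extension argument give torsion-freeness of $\md\oplus R^\psi$ and hence of $\md$.
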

\begin{proof} It is enough to prove the statement for $e_i \wE_{\BB} e_j$, $i,j=1,2,3$. If $(i, j)\neq (3,2)$ then the assertion 
follows from Lemma \ref{X1} as $\wE_{ii}\cong R^{\psi}$,  $\wa_{ii} \cong \mathfrak r$ and $R^{\psi}$ is an integral domain. 
Let $\md:=\Hom_{\dualcat(\OO)}(\wP_{\Sp^{\vee}}, \wP_{\Eins_G^{\vee}})$. 
It follows from the proof of  Lemma \ref{zcommutes2} that $\md$ is generated over $R^{\psi}$ by $\varphi_{23}$ and $\beta$. It remains
to show that $\md$ is torsion free. We may dualize \eqref{res1NG} and  using Proposition \ref{projaretfree3}  lift  it to an exact sequence: 
\begin{equation}\label{YMCA}
0\rightarrow \wP_{\Sp^{\vee}}\rightarrow \wP_{\Eins_G^{\vee}}\oplus \wP_{\pi_{\alpha}^{\vee}}\rightarrow \wP_{\pi_{\alpha}^{\vee}}^{\oplus 2}
\rightarrow \wP_{\Sp^{\vee}}\oplus \wP_{\pi_{\alpha}^{\vee}}\rightarrow \wP_{\Eins^{\vee}_G}\rightarrow \OO\rightarrow 0
\end{equation}
We apply $\Hom_{\dualcat(\OO)}(\wP_{\Sp^{\vee}}, \ast)$ to \eqref{YMCA} and use Lemma \ref{X1} to obtain an exact sequence 
$0\rightarrow R^{\psi}\rightarrow \md \oplus R^{\psi} \rightarrow R^{\psi}\oplus R^{\psi}$
of $R^{\psi}$-modules. Since $R^{\psi}$ is an integral domain we deduce that $\md$ is torsion free. 
\end{proof}

For $\delta\in \wE_{\BB}$ we let $\delta_{ij}=e_i \circ \delta\circ e_j$. This notation is consistent with \eqref{embeddR}.

\begin{lem}\label{swap} Let $\delta, \gamma \in \wE_{\BB}$ then the image of $\delta_{ij} \circ \gamma_{ji}$ under 
the isomorphism $\wE_{ii}\cong \wE_{jj}$ of Proposition \ref{Risomo} is equal to $\gamma_{ji} \circ \delta_{ij}$.  
\end{lem}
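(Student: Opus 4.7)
The restriction isomorphism $r:\wE_{jj}\to\wE_{ii}$ of Proposition \ref{Risomo} (for the pairs $(i,j)=(1,3)$ and $(i,j)=(2,1)$) is characterized by the identity $z\circ\varphi_{ji}=\varphi_{ji}\circ r(z)$ for all $z\in\wE_{jj}$, where $\varphi_{ji}\colon\wP_i\hookrightarrow\wP_j$ is the explicit inclusion from \eqref{1B} or \eqref{2B}. For the remaining pair $(i,j)=(2,3)$ the relevant isomorphism is the composition $\wE_{33}\to\wE_{11}\to\wE_{22}$; it is characterized by the analogous identity with $\varphi_{32}=\varphi_{31}\circ\varphi_{12}$. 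In each case the goal becomes $r(\gamma_{ji}\circ\delta_{ij})=\delta_{ij}\circ\gamma_{ji}$.

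For the two pairs $(i,j)\in\{(1,3),(2,1)\}$ Lemma \ref{X1} (via \eqref{X2} or \eqref{X3}) identifies $\Hom_{\dualcat(\OO)}(\wP_i,\wP_j)$ with $\wE_{ii}$ through $u\mapsto\varphi_{ji}\circ u$. I would therefore write $\gamma_{ji}=\varphi_{ji}\circ u$ with $u\in\wE_{ii}$ and set $v:=\delta_{ij}\circ\varphi_{ji}\in\wE_{ii}$. Then
\begin{equation*}
w=\delta_{ij}\circ\gamma_{ji}=\delta_{ij}\circ\varphi_{ji}\circ u=v\circ u,
\end{equation*}
while
\begin{equation*}
z\circ\varphi_{ji}=\gamma_{ji}\circ\delta_{ij}\circ\varphi_{ji}=\varphi_{ji}\circ u\circ v,
\end{equation*}
so $r(z)=u\circ v$. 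Since $\wE_{ii}\cong R^{\psi}$ is commutative by Corollary \ref{rings}, $uv=vu$, and hence $r(z)=w$, which settles both cases.

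The case $(i,j)=(2,3)$ requires more care because $\Hom_{\dualcat(\OO)}(\wP_2,\wP_3)$ is not directly described by Lemma \ref{X1} as a free rank one module over $\wE_{22}$ or $\wE_{33}$, so the previous factorisation trick is unavailable. The plan is to invoke Lemma \ref{zcommutes2} applied to $\tilde z:=\varphi_{32}\circ\delta_{23}\in\wa_{33}\subset\wE_{33}$ and $\psi:=\gamma_{32}$, viewed as a morphism $\wP_1\oplus\wP_2\to\wP_3$ that is zero on $\wP_1$. Since $\tilde z\circ\varphi_{32}=\varphi_{32}\circ(\delta_{23}\circ\varphi_{32})$, the restriction of $\tilde z$ to $\wP_2$ is $\tilde z_{22}=\delta_{23}\circ\varphi_{32}$, and Lemma \ref{zcommutes2} therefore gives
\begin{equation*}
\gamma_{32}\circ(\delta_{23}\circ\varphi_{32})=\tilde z\circ\gamma_{32}=\varphi_{32}\circ\delta_{23}\circ\gamma_{32}.
\end{equation*}
Rearranging yields $(\gamma_{32}\circ\delta_{23})\circ\varphi_{32}=\varphi_{32}\circ(\delta_{23}\circ\gamma_{32})$, i.e.\ $z\circ\varphi_{32}=\varphi_{32}\circ w$, which by the characterisation of the combined restriction isomorphism is exactly $r(z)=w$.

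The genuine obstacle is the case $(2,3)$: we have no direct handle on $\Hom_{\dualcat(\OO)}(\wP_2,\wP_3)$ analogous to Lemma \ref{X1}. The entire point is that the bimodule compatibility encoded in Lemma \ref{zcommutes2} – already established in \S\ref{Thecentre} precisely in order to construct the central diagonal embedding $R^{\psi}\hookrightarrow\wE_{\BB}$ – is strong enough to replace the missing factorisation, and once this is noticed the rest is routine diagram chase.
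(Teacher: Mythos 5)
Your argument is correct, but it is not the route the paper takes. The paper's proof is a uniform one-liner: pick $z\in R^{\psi}$ with $z_{ii}=\delta_{ij}\circ\gamma_{ji}$ (possible since $\wE_{ii}\cong R^{\psi}$ naturally), use that the diagonal copy of $R^{\psi}$ is central (Lemmas \ref{zcommutes1} and \ref{zcommutes2}, i.e.\ Theorem \ref{ZNGII}) to get $(\gamma_{ji}\circ\delta_{ij}-z_{jj})\circ\gamma_{ji}=\gamma_{ji}\circ z_{ii}-z_{jj}\circ\gamma_{ji}=0$, and then conclude $\gamma_{ji}\circ\delta_{ij}=z_{jj}$ because $\wE_{\BB}$ is a torsion-free module over the integral domain $R^{\psi}$ (Lemma \ref{fgtfree}, Corollary \ref{rings}); this treats all pairs $(i,j)$ at once. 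You instead argue case by case: for the pairs where Lemma \ref{X1} exhibits $\Hom_{\dualcat(\OO)}(\wP_i,\wP_j)$ as $\varphi_{ji}\circ\wE_{ii}$ you factor $\gamma_{ji}=\varphi_{ji}\circ u$ and reduce the claim to commutativity of $\wE_{ii}\cong R^{\psi}$, and for the one pair $\{2,3\}$ lacking such a description you extract the identity $(\gamma_{32}\circ\delta_{23})\circ\varphi_{32}=\varphi_{32}\circ(\delta_{23}\circ\gamma_{32})$ from Lemma \ref{zcommutes2} applied to $\tilde z=\varphi_{32}\circ\delta_{23}$ and $\psi=(0,\gamma_{32})$, then use injectivity of $\varphi_{32}$ and the characterisation of the (composite) restriction isomorphism. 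Both proofs only invoke results stated before the lemma, so there is no circularity; what your version buys is that it avoids torsion-freeness and the domain property of $R^{\psi}$ altogether and makes explicit where commutativity enters, at the price of a case split and a heavier reliance on the bimodule identity of Lemma \ref{zcommutes2}, whereas the paper's argument is shorter and index-uniform. (For completeness you could note that the pairs $(j,i)$ follow by interchanging $\delta$ and $\gamma$, and the diagonal case $i=j$ is just commutativity of $\wE_{ii}$, but these are immediate.)
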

\begin{proof} There exists $z\in R^{\psi}$ such that  $z_{ii}= \delta_{ij}\circ \gamma_{ji}$. Since 
$(\gamma_{ji}\circ \delta_{ij}- z_{jj})\circ \gamma_{ji}= \gamma_{ji} \circ z_{ii}- z_{jj} \circ \gamma_{ji}=0$
and $\wE_{\BB}$ is a torsion free $R^{\psi}$-module we obtain the claim. 
\end{proof}   

By Corollary \ref{A6} the ideal $\rr$ is generated by two elements $c_0$ and $c_1$. For each pair 
$e_i \wP_{\BB}$ and $e_j\wP_{\BB}$ appearing in \eqref{X5}, \eqref{X6}, \eqref{X7} we may choose 
$\varphi_{ji}^k\in \Hom_{\dualcat(\OO)}(e_i \wP_{\BB}, e_j\wP_{\BB})$ such that $\varphi_{ij}\circ \varphi_{ji}^k= c_k e_i$ for 
$k=0$ and $k=1$. It follows from Lemma \ref{X1} that the elements $\varphi_{ji}^0$ and $\varphi_{ji}^1$ generate 
$e_j \wE_{\BB} e_i$ as an $R^{\psi}$-module, which is isomorphic to $\mathfrak r$. It follows from Lemma \ref{swap} that 
$\varphi_{ji}^k \circ \varphi_{ij}= c_k e_j$ for $k=0, 1$. We record this below: 
\begin{equation}\label{inter1}
\varphi_{12}\circ\varphi_{21}^k= c_k e_{1}, \quad \varphi_{31} \circ \varphi_{13}^k= c_k e_3, \quad \varphi_{32}\circ \varphi_{23}^k=c_k e_3
\end{equation} 
\begin{equation}\label{inter2}
\varphi_{21}^k\circ\varphi_{12}= c_k e_2, \quad \varphi_{13}^k \circ \varphi_{31}= c_k e_1, \quad \varphi_{23}^k\circ \varphi_{32}=c_k e_2.
\end{equation}   
By definition of $\varphi_{32}$ and \eqref{X6} we have:
\begin{equation}\label{inter3}
\varphi_{31}\circ\varphi_{12}= \varphi_{32}, \quad \varphi_{12}\circ \varphi_{23}^k= \varphi_{13}^k.
\end{equation}
\begin{equation}\label{inter4}
\varphi_{21}^k \circ \varphi_{13}^l= \varphi_{21}^k \circ \varphi_{12}\circ \varphi_{23}^l= c_k \varphi_{23}^l, \quad 
\varphi_{32}\circ \varphi_{21}^k=\varphi_{31}\circ \varphi_{12}\circ \varphi_{21}^k= c_k \varphi_{31} 
\end{equation}
Since $\varphi_{12}\circ(\varphi_{23}^k\circ \varphi_{31}- \varphi_{21}^k)= \varphi_{13}^k\circ \varphi_{31}-c_k e_1= 0$
and \eqref{X5} is an isomorphism we obtain:
\begin{equation}\label{inter5}
\varphi_{23}^k\circ \varphi_{31}=\varphi_{21}^k, \quad \varphi_{13}^k\circ \varphi_{32}=\varphi_{13}^k\circ \varphi_{31}\circ \varphi_{12}= c_k \varphi_{12}.
\end{equation}

\begin{lem}\label{last1} Let $\beta\in \Hom_{\dualcat(\OO)}(\wP_{\Sp^{\vee}}, \wP_{\Eins_G^{\vee}})$ be the morphism constructed in Lemma \ref{defibeta}.
 Then there exist unique $d_0, d_1\in R^{\psi}$ such that $c_0 \beta= d_0 \varphi_{23}$ and $c_1 \beta=d_1\varphi_{23}$. 
Moreover, 
\begin{equation}\label{inter6} 
\beta\circ \varphi_{21}^k=d_k \varphi_{31}, \quad \varphi_{13}^k \circ \beta=d_k \varphi_{12}, \quad 
\beta\circ \varphi_{23}^k= d_k e_3, \quad \varphi_{23}^k \circ \beta= d_k e_2
\end{equation}
\end{lem}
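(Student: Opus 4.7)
The plan is to define $d_k$ directly as the image of the composite $\beta \circ \varphi_{23}^k$ in $R^{\psi}$ and then to obtain every other identity by elementary manipulations using the relations \eqref{inter1}--\eqref{inter5} together with Lemma \ref{swap}. Throughout I read the first displayed equality of the lemma with $\varphi_{32}$ in place of $\varphi_{23}$, since $\beta : \wP_{\Sp^{\vee}} \to \wP_{\Eins_G^{\vee}}$ must be compared with a map in the same direction.

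First I would note that $\beta \circ \varphi_{23}^k : \wP_{\Eins_G^{\vee}} \to \wP_{\Sp^{\vee}} \to \wP_{\Eins_G^{\vee}}$ lies in $\wE_{33}$, and via the canonical isomorphism $\wE_{33} \cong R^{\psi}$ of Corollary \ref{rings}, define $d_k \in R^{\psi}$ by the identity $\beta \circ \varphi_{23}^k = d_k e_3$. This already yields the third identity of \eqref{inter6}. Composing on the right with $\varphi_{32}$ and invoking $\varphi_{23}^k \circ \varphi_{32} = c_k e_2$ from \eqref{inter2}, I obtain
\[
c_k \beta \;=\; \beta \circ c_k e_2 \;=\; \beta \circ \varphi_{23}^k \circ \varphi_{32} \;=\; d_k e_3 \circ \varphi_{32} \;=\; d_k \varphi_{32},
\]
which is the main assertion. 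Uniqueness of $d_k$ follows from Lemma \ref{fgtfree}: the $R^{\psi}$-module $e_3 \wE_{\BB} e_2 = \Hom_{\dualcat(\OO)}(\wP_{\Sp^{\vee}}, \wP_{\Eins_G^{\vee}})$ is torsion-free, and $\varphi_{32}$ is non-zero, so the annihilator of $\varphi_{32}$ is zero.

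For the remaining three identities in \eqref{inter6} I plan as follows. The identity $\varphi_{23}^k \circ \beta = d_k e_2$ is obtained from $\beta \circ \varphi_{23}^k = d_k e_3$ by applying Lemma \ref{swap} with $\delta_{32} = \beta$ and $\gamma_{23} = \varphi_{23}^k$: the isomorphism $\wE_{33} \cong \wE_{22}$ of Proposition \ref{Risomo} matches the diagonal embedding of $R^{\psi}$, so $d_k e_3$ maps to $d_k e_2$. For $\beta \circ \varphi_{21}^k = d_k \varphi_{31}$, use the factorization $\varphi_{21}^k = \varphi_{23}^k \circ \varphi_{31}$ from \eqref{inter5} to compute
\[
\beta \circ \varphi_{21}^k \;=\; \beta \circ \varphi_{23}^k \circ \varphi_{31} \;=\; d_k e_3 \circ \varphi_{31} \;=\; d_k \varphi_{31}.
\]
For $\varphi_{13}^k \circ \beta = d_k \varphi_{12}$, use instead the factorization $\varphi_{13}^k = \varphi_{12} \circ \varphi_{23}^k$ from \eqref{inter3} together with the already-established $\varphi_{23}^k \circ \beta = d_k e_2$.

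There is no serious obstacle: the whole lemma is a bookkeeping computation in the matrix-like structure of $\wE_{\BB}$, resting on the intertwining relations \eqref{inter1}--\eqref{inter5}, the swap identity of Lemma \ref{swap}, and the $R^{\psi}$-torsion-freeness furnished by Lemma \ref{fgtfree}. The only content is the clever choice to define $d_k$ via the composition with $\varphi_{23}^k$ (rather than attempting to write $\beta$ itself in terms of $\varphi_{32}$, which would fail as $\beta$ and $\varphi_{32}$ are $R^{\psi}$-independent generators of $e_3 \wE_{\BB} e_2$); multiplication by $c_k$ then forces $c_k \beta$ into the cyclic $R^{\psi}$-submodule generated by $\varphi_{32}$.
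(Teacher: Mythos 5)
Your proof is correct, and you read the statement the right way: the $\varphi_{23}$ in the displayed equality $c_k\beta=d_k\varphi_{23}$ is indeed a typo for $\varphi_{32}$, as the directions force and as the paper's own argument confirms. Your route uses exactly the same ingredients as the paper (the relations \eqref{inter1}--\eqref{inter5}, Lemma \ref{swap}, and torsion-freeness from Lemma \ref{fgtfree}), but with a different choice of where $d_k$ is born: the paper first defines $d_k$ by $\beta\circ\varphi_{21}^k=d_k\varphi_{31}$, using the rank-one freeness \eqref{X2}, deduces $c_k\beta=d_k\varphi_{32}$ by composing with $\varphi_{12}$, and then must compare with an auxiliary $a_k$ defined by $\beta\circ\varphi_{23}^k=a_ke_3$, cancelling $c_k$ in the integral domain $R^{\psi}$ to get $a_k=d_k$. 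By defining $d_k$ directly through $\beta\circ\varphi_{23}^k=d_ke_3$ you make that identity a definition and dispense with the cancellation step; the identities $\beta\circ\varphi_{21}^k=d_k\varphi_{31}$ and $\varphi_{13}^k\circ\beta=d_k\varphi_{12}$ then drop out of the factorizations $\varphi_{21}^k=\varphi_{23}^k\circ\varphi_{31}$ and $\varphi_{13}^k=\varphi_{12}\circ\varphi_{23}^k$, and $\varphi_{23}^k\circ\beta=d_ke_2$ from Lemma \ref{swap}, exactly as in the paper. So this is a mild streamlining of the same computation rather than a genuinely different argument.

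One small correction to your closing parenthetical: $\beta$ and $\varphi_{32}$ are \emph{not} $R^{\psi}$-linearly independent in $e_3\wE_{\BB}e_2$ --- the lemma itself produces the nontrivial relation $c_0\beta-d_0\varphi_{32}=0$, and indeed $\Hom_{\dualcat(\OO)}(\wP_{\Sp^{\vee}},\wP_{\Eins_G^{\vee}})$ is isomorphic to a non-principal (hence non-free) ideal of $R^{\psi}$, not to $(R^{\psi})^{\oplus 2}$. The correct statement, which is all your remark needs, is that $\beta$ does not lie in $R^{\psi}\varphi_{32}$ (otherwise $d_0$ would be divisible by $c_0$ in $R^{\psi}$, contradicting the presentation in Lemma \ref{last2}), so one must first multiply by $c_k$ to land in the cyclic submodule generated by $\varphi_{32}$. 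This slip is outside the logical chain of your proof and does not affect its validity.
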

\begin{proof} The uniqueness follows from the fact that $\wE_{\BB}$ is $R^{\psi}$-torsion free, see Lemma \ref{fgtfree}. 
It follows from \eqref{X2} that there exists $d_k\in R^{\psi}$ such that $\beta\circ\varphi_{21}^k=d_k \varphi_{31}$. 
It follows from \eqref{inter2} that $c_k \beta= \beta\circ\varphi_{21}^k\circ \varphi_{12}=d_k \varphi_{31}\circ \varphi_{12}=d_k \varphi_{32}$.
There exists $a_k\in R^{\psi}$ such that $\beta\circ \varphi_{23}^k= a_k e_3$. We may multiply by $c_k$ to get 
$c_k a_k e_3= d_k \varphi_{32} \circ \varphi_{23}^k= d_k c_k e_3$. Since $R^{\psi}$ is an integral domain we obtain
$a_k=d_k$. Lemma \ref{swap} implies $\varphi_{23}^k\circ \beta= d_k e_2$. Moreover, $\varphi_{13}^k\circ \beta
=\varphi_{12}\circ\varphi_{23}^k \circ \beta= d_k \varphi_{12}$. 
 \end{proof}

\begin{lem}\label{last2} Sending $x\mapsto c_0$, $y\mapsto c_1$, $z\mapsto d_0$, $w\mapsto d_1$ induces an isomorphism 
of rings $\OO[[x,y,z,w]]/(xw-yz)\cong R^{\psi}$.
\end{lem}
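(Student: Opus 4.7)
Multiplying the identities $c_k \beta = d_k \varphi_{32}$ of Lemma~\ref{last1} yields $(c_1 d_0 - c_0 d_1)\varphi_{32} = 0$ in $\Hom_{\dualcat(\OO)}(\wP_{\Sp^\vee}, \wP_{\Eins_G^\vee})$. By Lemma~\ref{fgtfree} this $\Hom$-module is $R^\psi$-torsion-free (and $R^\psi$ is an integral domain by Corollary~\ref{rings}), and $\varphi_{32} \neq 0$, so $c_0 d_1 = c_1 d_0$ in $R^\psi$. Since all four elements lie in $\mm_{R^\psi}$ and $R^\psi$ is a complete Noetherian local $\OO$-algebra, the assignment extends uniquely to a continuous $\OO$-algebra homomorphism $\tilde\varphi : A := \OO[[x,y,z,w]]/(xw-yz) \to R^\psi$.

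\textbf{Injectivity.} The element $xw-yz$ is irreducible in the regular local ring $\OO[[x,y,z,w]]$ (a short check on quadratic parts), so $A$ is a complete Noetherian local integral domain of Krull dimension $4$. By Corollary~\ref{RpsirhoA}, $R^\psi$ is $\OO$-flat, and by the proof of Proposition~\ref{filisthesame}, $R^\psi_k \cong k[[x',y',z',w']]/(x'z'-y'w')$ is a complete local integral domain of dimension $3$; hence $R^\psi$ is itself a complete Noetherian local integral domain of dimension $4$. Any surjection between complete Noetherian local integral domains of equal Krull dimension is injective, by catenarity of complete local rings (a nonzero prime kernel would force $\dim R^\psi < 4$), so injectivity of $\tilde\varphi$ will be a free consequence of surjectivity.

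\textbf{Surjectivity.} By Nakayama for complete local rings it suffices to verify that $\bar c_0, \bar c_1, \bar d_0, \bar d_1$ span the four-dimensional cotangent space $\mm_{R^\psi_k}/\mm_{R^\psi_k}^2$. By Corollary~\ref{A6}, $\bar c_0, \bar c_1$ minimally generate $\rr_k = (z',w')$, contributing a two-dimensional subspace. The content is to prove $\bar d_0, \bar d_1$ are transverse to $\rr_k$. Now $\md := \Hom_{\dualcat(\OO)}(\wP_{\Sp^\vee}, \wP_{\Eins_G^\vee})$ has generic rank one over $R^\psi$, since $c_0\beta = d_0\varphi_{32}$ reads $\beta = (d_0/c_0)\varphi_{32}$ in $\mathrm{Frac}(R^\psi)\otimes_{R^\psi}\md$; under the identification $\varphi_{32}\leftrightarrow 1$ it embeds as the fractional ideal $R^\psi + R^\psi u$ with $u := d_0/c_0 = d_1/c_1$. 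Reducing modulo $\varpi$, the syzygy module of $(z',w')$ in $R^\psi_k$ is generated by the Koszul syzygy $(w', z')$ and the syzygy $(x', y')$ arising from $x'z' = y'w'$; writing $(\bar d_0, \bar d_1) = t_1(w',z') + t_2(x',y')$, non-principality of $\md$, i.e.\ $u \notin R^\psi$, forces $t_2$ to be a unit of $R^\psi_k$, yielding the required transversality. The main obstacle is establishing non-principality of $\md$: I expect to pin it down either by computing $\dim_k \md/\mm_{R^\psi}\md = 2$ via applying $\Hom_{\dualcat(\OO)}(\wP_{\Sp^\vee}, -)$ to the projective resolution~\eqref{YMCA} of $\OO$ and comparing with the $\Ext$-dimensions of \S\ref{hextII}, or by a direct diagram chase using the defining condition $\psi_3\circ \beta = \xi_{32}\circ \psi_2$ of Lemma~\ref{zcommutes2} to exhibit $\beta \notin R^\psi \varphi_{32}$.
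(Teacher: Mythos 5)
Your well-definedness step ($c_0 d_1 = c_1 d_0$ from torsion-freeness of $\Hom_{\dualcat(\OO)}(\wP_{\Sp^\vee}, \wP_{\Eins_G^\vee})$ over the domain $R^\psi$) and your reduction of injectivity to surjectivity (equal Krull dimension, both sides complete local Noetherian domains) match the paper's argument. The surjectivity step, however, contains a genuine gap which you yourself flag: you reduce to showing that $\bar d_0, \bar d_1$ are transverse to $\rr_k$ in the cotangent space, argue that this would follow from non-principality of $\Hom_{\dualcat(\OO)}(\wP_{\Sp^\vee}, \wP_{\Eins_G^\vee})$ as an $R^\psi$-module, and then merely list two candidate ways to establish that non-principality without carrying either out. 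This is precisely the content the lemma must supply: $d_0, d_1$ are defined only through the abstract identities $c_k\beta = d_k\varphi_{32}$ of Lemma~\ref{last1}, and nothing there a priori prevents them from lying in $\mm^2 + \rr$. Your intermediate claim that the syzygy module of $(z',w')$ in $R^\psi_k$ is generated by the Koszul syzygy together with $(x',y')$ is likewise asserted without argument.

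The paper closes the gap by a different, explicit computation rather than a cotangent-space and syzygy analysis. It sets $\mathfrak b := \{b\in R^\psi : \theta\circ b = 0\}$, where $\theta : \wM_{\Eins_T^\vee} \twoheadrightarrow \OO$ is the projection from \eqref{0B} onto the $G$-trivial quotient. Applying $\Hom_{\dualcat(\OO)}(\wP_{\Eins_G^\vee}, -)$ to \eqref{3B} and \eqref{0B}, and invoking the defining property $\psi_3\circ\beta = \xi_{32}\circ\psi_2$ of $\beta$ (from the proof of Lemma~\ref{zcommutes2}) together with $\beta\circ\varphi_{23}^k = d_k e_3$ from \eqref{inter6}, one identifies $\mathfrak b/\rr$ as generated by the images of $d_0, d_1$; hence $R^\psi/(c_0,c_1,d_0,d_1) \cong \OO$ and surjectivity follows by Nakayama. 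In effect, the map $\theta$ onto the trivial representation is what detects the transversality you want. Your second suggested route, the ``direct diagram chase using $\psi_3\circ\beta = \xi_{32}\circ\psi_2$'', is in spirit what the paper does; but since your proposal does not carry it out, it does not constitute a complete proof.
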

\begin{proof} Since $\wE_{\BB}$ is a torsion free $R^{\psi}$-module, $(c_0d_1-c_1 d_0)\beta= (d_0d_1-d_1d_0)\varphi_{23}=0$ implies 
$c_0d_1=c_1d_0$. Thus the map is well defined. It is enough to show that it is surjective, since we know 
that $R^{\psi}$ can be presented as $\OO[[x,y,z,w]]/(f)$, see Corollary \ref{RpsirhoA}, and $xw-yz$ is a prime element in a factorial ring. 
Let $\mathfrak b:=\{b\in R^{\psi}: \theta\circ b=0\}$, where $\theta: \wM_{\Eins_T^{\vee}}\twoheadrightarrow \OO$ is defined in \eqref{0B}. 
Applying $\Hom_{\dualcat(\OO)}(\wP_{\Eins_G^{\vee}}, \ast)$ to \eqref{3B} we obtain a surjection 
$\Hom_{\dualcat(\OO)}(\wP_{\Eins_G^{\vee}}, \wP_{\Sp^{\vee}})\twoheadrightarrow \Hom_{\dualcat(\OO)}(\wP_{\Eins_G^{\vee}}, \wM_{\Eins_T^{\vee}, 0})$, 
thus $\psi_2\circ \varphi_{23}^0$ and $\psi_2\circ \varphi_{23}^1$ generate 
$\Hom_{\dualcat(\OO)}(\wP_{\Eins_G^{\vee}}, \wM_{\Eins_T^{\vee}, 0})$ as an $R^{\psi}$-module. 
Applying $\Hom_{\dualcat(\OO)}(\wP_{\Eins_G^{\vee}}, \ast)$ to \eqref{0B} we obtain an exact sequence
$$ 0\rightarrow \Hom_{\dualcat(\OO)}(\wP_{\Eins_G^{\vee}}, \wM_{\Eins_T^{\vee}, 0})\rightarrow  
\Hom_{\dualcat(\OO)}(\wP_{\Eins_G^{\vee}}, \wM_{\Eins_T^{\vee}})\rightarrow \OO\rightarrow 0.$$
 As $\Hom_{\dualcat(\OO)}(\wP_{\Eins_G^{\vee}}, \wM_{\Eins_T^{\vee}})\cong \wE_{33}/\wa_{33}\cong R^{\psi}/\rr\cong \OO[[x,y]]$ 
we deduce that $\mathfrak b$ contains $\rr$ and the images of $\xi_{32}\circ \psi_2\circ \varphi_{23}^0$ and 
$\xi_{32}\circ \psi_2\circ \varphi_{23}^1$ generate $\mathfrak b/ \mathfrak r$ as an $R^{\psi}$-module. Since 
by definition, see the proof of Lemma \ref{zcommutes2}, $\psi_3\circ \beta=\xi_{32}\circ \psi_2$ and 
$\beta \circ \varphi_{23}^k= d_k e_3$ by \eqref{inter6}, we deduce that the images of $d_0$ and $d_1$ 
generate $\mathfrak b/\mathfrak r$. Hence $R^{\psi}/(d_0, d_1, c_0, c_1)\cong \OO$ and so the map is surjective. 
\end{proof}

\begin{cor}\label{last3} The $\gal$-representation  corresponding to 
the ideal $(c_0, c_1, d_0, d_1)$ in $R^{\psi}[1/p]$ is characterized as the unique non-split extension 
$0\rightarrow \Eins \rightarrow V\rightarrow \varepsilon\rightarrow 0$. 
\end{cor}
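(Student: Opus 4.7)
By Lemma~\ref{last2}, $R^\psi/(c_0,c_1,d_0,d_1) \cong \OO$, so $\nn := (c_0,c_1,d_0,d_1)R^\psi[1/p]$ is a maximal ideal of $R^\psi[1/p]$ with residue field $L$, and $V$ is a $2$-dimensional $L$-representation of $\gal$ with $\det V = \psi = \varepsilon$ (recall that $\zeta$ is trivial). I will identify $V$ in three steps: (i) use the reducibility ideal to realize $V$ as an extension of characters, (ii) pin those characters down to $\Eins$ and $\varepsilon$, and (iii) verify that the extension is non-split; uniqueness will then follow from a one-dimensionality computation.

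Since $(c_0,c_1) = \rr$ by Corollary~\ref{A6}, the ideal $\nn$ lies in the reducible locus, and $V$ admits a Jordan--H\"older filtration $0 \to \chi_1 \to V \to \chi_2 \to 0$ with characters $\chi_1,\chi_2: \gal \to L^\times$ satisfying $\chi_1\chi_2 = \varepsilon$. Compatibility with the residual filtration $0 \to \Eins \to \cV(\tau_2^\vee) \to \omega \to 0$ forces $\chi_1 \equiv \Eins$ and $\chi_2 \equiv \omega \pmod \varpi$. The quotient $R^\psi/\rr \cong \OO[[d_0,d_1]]$ (from Lemma~\ref{last2}) parameterizes reducible deformations with fixed determinant; under the determinant condition these are governed by the lifts of the single character $\chi_1$, a two-parameter family matching $\OO[[d_0,d_1]]$. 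Setting $d_0 = d_1 = 0$ therefore kills all deformation parameters of $\chi_1$, forcing $\chi_1 = \Eins$ and hence $\chi_2 = \varepsilon$.

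For non-splitness, I will lift the residual filtration and write $\rho^{\mathrm{un}}|_{R^\psi/\rr}$ in upper-triangular form, so that the off-diagonal entry $b$ represents an extension class $[b] \in \Ext^1_{(R^\psi/\rr)[\gal]}(\tilde\chi_2,\tilde\chi_1) = H^1(\gal,(R^\psi/\rr)(\tilde\chi_1\tilde\chi_2^{-1}))$. Its specialization modulo $\mm_{R^\psi/\rr}$ is the extension class of $\cV(\tau_2^\vee)$, which is non-zero by construction. Since $\Ext^1_\gal(\omega,\Eins)_k$ is one-dimensional, base change for continuous Galois cohomology together with Nakayama's lemma implies that $[b]$ generates $\Ext^1_{(R^\psi/\rr)[\gal]}(\tilde\chi_2,\tilde\chi_1)$ as an $R^\psi/\rr$-module and that its annihilator is contained in $\nn \cap (R^\psi/\rr)$. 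Therefore the specialization of $[b]$ at $\nn$, which is the extension class of $V$ in $\Ext^1_\gal(\varepsilon,\Eins)$, is non-zero. The local Euler characteristic formula gives $\dim_L \Ext^1_\gal(\varepsilon,\Eins) = \dim_L H^1(\gal,L(\varepsilon^{-1})) = 1$ (here $H^0$ vanishes since $\varepsilon \neq \Eins$, and $H^2$ vanishes by local duality since $\varepsilon^2 \neq \Eins$), so the non-split extension of $\varepsilon$ by $\Eins$ is unique up to isomorphism, completing the characterization.

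The main obstacle will be rigorously setting up the matrix presentation of $\rho^{\mathrm{un}}|_{R^\psi/\rr}$ and executing the base-change / Nakayama step for the Ext-module: one must verify that $\Ext^1_{(R^\psi/\rr)[\gal]}(\tilde\chi_2,\tilde\chi_1)$ is finitely generated over $R^\psi/\rr$ (which follows from finite-dimensionality of local Galois cohomology of $\Qp$) and that it commutes with specialization at maximal ideals of $R^\psi/\rr[1/p]$. The explicit B\"ockle-style presentation of $R^\psi$ developed in the appendix, combined with continuity of Galois cohomology in the profinite variable, should supply these ingredients directly.
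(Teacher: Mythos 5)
Your proof has a genuine gap at the crucial step where you pin down the characters: the assertion that ``setting $d_0 = d_1 = 0$ kills all deformation parameters of $\chi_1$, forcing $\chi_1 = \Eins$.'' The elements $d_0, d_1$ appearing in Lemma~\ref{last2} and Corollary~\ref{last3} are \emph{not} the B\"ockle coordinates from the appendix presentation $R^\psi \cong \OO[[c_0, c_1, d_0, d_1]]/(pc_0+c_0d_1+c_1d_0)$. They are new elements defined in Lemma~\ref{last1} purely from the $\GL_2(\Qp)$ side, via the relation $c_k\beta = d_k\varphi_{23}$ where $\beta\in\Hom_{\dualcat(\OO)}(\wP_{\Sp^\vee},\wP_{\Eins_G^\vee})$ arises from the construction in the proof of Lemma~\ref{zcommutes2}. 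Knowing only that $R^\psi/\rr \cong \OO[[d_0,d_1]]$ parameterizes the lift $\tilde\chi_1$ of $\Eins$ through a two-parameter family tells you the \emph{size} of the family but gives no information about which lift sits over the particular point $d_0=d_1=0$ --- in the generic fiber there is no canonical origin. (Incidentally, even in the appendix presentation the corresponding claim would fail: there, $\chi_1(\varphi'(x_{(p-1)j}))=(1+\lambda_j)^{1/2}(1+d_j^{\mathrm{app}})^{1/2}$, so at $d_j^{\mathrm{app}}=0$ one gets $\chi_1=\sqrt{\psi}|_{\text{pro-$p$ part}}$ on $\GG_F(p)$, which is not $\Eins$ since $\psi=\varepsilon$ is nontrivial there.)

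What the paper does instead, and what your approach cannot avoid, is to exploit the $\GL_2(\Qp)$-representation-theoretic origin of $d_0,d_1$. From the proof of Lemma~\ref{last2} one sees that $(c_0,c_1,d_0,d_1)$ is precisely the ideal $\mathfrak b$ annihilating the trivial quotient $\OO$ of $\wM_{\Eins_T^\vee}$; hence $\wM_{\Eins_T^\vee}/\nn_0\wM_{\Eins_T^\vee}$ surjects onto $\OO$ with trivial $G$-action. Dualizing via Lemma~\ref{thruOrd} gives a continuous parabolic induction with nonzero $G$-invariants, which must be $(\Indu{P}{G}{\Eins})_{cont}$, whence $\VV$ of it is $\varepsilon$. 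Transporting across the equivalence of Proposition~\ref{equivofcatsII} then identifies $\nn_0$ with the ideal corresponding to the non-split extension of $\varepsilon$ by $\Eins$. The bookkeeping you sketch on the Galois side (non-splitness by lifting the off-diagonal entry, uniqueness by $\dim\Ext^1_\gal(\varepsilon,\Eins)=1$) is sound once the characters have been identified, but that identification is the crux and cannot be done without passing to the automorphic side.
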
 
\begin{proof} Let $\nn_0= (c_0, c_1, d_0, d_1)\subset R^{\psi}$. It follows from the proof
of Lemma \ref{last2} that there exists a surjection $\wM_{\Eins_T^{\vee}}/\nn_0\wM_{\Eins_T^{\vee}}\twoheadrightarrow \OO$ 
where $G$ acts trivially on $\OO$. It follows from Lemma \ref{thruOrd} applied with $\md=R^{\psi}/\nn_0\cong \OO$ that 
 $\Pi:=\Hom^{cont}_{\OO}( \wM_{\Eins_T^{\vee}}/\nn_0\wM_{\Eins_T^{\vee}}, L)$ is a parabolic induction of a unitary character, which reduces to the trivial character modulo $\varpi$.
Since $\Pi^G\neq 0$ we deduce that $\Pi\cong (\Indu{P}{G} {\Eins})_{cont}$ and thus $\VV(\Pi)\cong \varepsilon$.
Let $\nn'$ be the maximal ideal of $R^{\psi}[1/p]$ corresponding to $V$ and let $\nn'_0:=R^{\psi}\cap \nn'$.
Theorem \ref{varphisoNGII} and Corollary \ref{univdefII} imply that 
$V\cong \cV(\wP_{\pi_{\alpha}^{\vee}}/\nn_0' \wP_{\pi_{\alpha}^{\vee}})\otimes_{\OO} L$.  
It follows from Proposition \ref{equivofcatsII} that 
\begin{displaymath}
\begin{split}
\Hom_{\qcat(\OO)}( \wM_{\Eins_T^{\vee}}/\nn_0\wM_{\Eins_T^{\vee}}, &\wP_{\pi_{\alpha}}/\nn'_0 \wP_{\pi_{\alpha}^{\vee}}) \otimes_{\OO} L\cong \\
&\Hom_{\gal}( \cV(\wM_{\Eins_T^{\vee}}/\nn_0\wM_{\Eins_T^{\vee}}), \cV(\wP_{\pi_{\alpha}}/\nn_0' \wP_{\pi_{\alpha}^{\vee}}))\otimes_{\OO} L
\end{split}
\end{displaymath}
is non-zero thus $\nn_0=\nn_0'$.
\end{proof}   

Since 
\begin{displaymath}
\wE_{\BB}=\begin{pmatrix} R^{\psi}e_1 & R^{\psi} \varphi_{12} & R^{\psi}\varphi_{13}^0+ R^{\psi}\varphi_{13}^1\\ 
R^{\psi}\varphi_{21}^0+ R^{\psi}\varphi_{21}^1 & R^{\psi}e_2 & R^{\psi} \varphi_{23}^0+ R^{\psi}\varphi_{23}^1\\ 
R^{\psi}\varphi_{31} & R^{\psi}\varphi_{32}+ R^{\psi} \beta & R^{\psi}e_{3}\end{pmatrix}
\end{displaymath}  
the multiplication in $\wE_{\BB}$ is determined by \eqref{inter1}, \eqref{inter2}, \eqref{inter5} and \eqref{inter6}. 
One may check that the $R^{\psi}$-module structure of $\Hom_{\dualcat(\OO)}(\wP_{\Sp^{\vee}}, \wP_{\Eins_G^{\vee}})$
is completely determined by Lemmas \ref{last1}, \ref{last2} and Corollary \ref{last3}. We also point out that 
\begin{equation}\label{ringqcat}
\wE:=\End_{\dualcat(\OO)}(\wP_{\pi_{\alpha}^{\vee}}\oplus \wP_{\Sp^{\vee}})= \begin{pmatrix} 
R^{\psi}e_1 & R^{\psi}\varphi_{12}\\ R^{\psi} \varphi_{21}^0 +R^{\psi}\varphi_{21}^1 & R^{\psi}e_2\end{pmatrix} 
\end{equation}
and the multiplication is given by $\varphi_{12}\circ \varphi_{21}^k= c_k e_1$, $\varphi_{21}^k\circ \varphi_{12}= c_k e_2$
for $k=0,1$, where $c_0$ and $c_1$ are generators of $\mathfrak r$, the intersection of $R^{\psi}$ and the reducible locus in $R^{\psi}[1/p]$.

\begin{lem}\label{nrmod} Let $\wE$ be the ring in \eqref{ringqcat} and let $\nn$ be a maximal ideal of $R^{\psi}[1/p]$ with residue field $L$ 
containing $\rr$. Then $\wE\otimes_{R^{\psi}} R^{\psi}[1/p]/\nn$ has two non-isomorphic irreducible modules, both of them $1$-di\-men\-sio\-nal.
\end{lem}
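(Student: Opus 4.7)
The plan is to compute the $L$-algebra $\bar{\wE}:=\wE\otimes_{R^\psi}L$ explicitly and to reduce the classification of its simple modules to that of $L\times L$. The description \eqref{ringqcat} presents $\wE$ as an $R^\psi$-module generated by $e_1$, $e_2$, $\varphi_{12}$, $\varphi_{21}^0$, $\varphi_{21}^1$, with multiplication determined by the relations $\varphi_{12}\circ\varphi_{21}^k=c_k e_1$ and $\varphi_{21}^k\circ\varphi_{12}=c_k e_2$ for $k=0,1$, together with the orthogonality of the idempotents $e_1$ and $e_2$. Writing $\bar{e}_i$, $\bar{\varphi}_{12}$, $\bar{\varphi}_{21}^k$ for the corresponding images in $\bar{\wE}$, I would first check these generate $\bar{\wE}$ as an $L$-vector space.

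The key observation is that since $c_0,c_1$ generate $\rr$ and $\rr\subseteq \nn$ by hypothesis, both $\bar{c}_0$ and $\bar{c}_1$ vanish in $L$. Hence in $\bar{\wE}$ the defining relations become $\bar{\varphi}_{12}\bar{\varphi}_{21}^k=0$ and $\bar{\varphi}_{21}^k\bar{\varphi}_{12}=0$ for $k=0,1$, while the remaining products $\bar{\varphi}_{12}\bar{\varphi}_{12}$ and $\bar{\varphi}_{21}^k\bar{\varphi}_{21}^l$ vanish automatically because $e_1e_2=e_2e_1=0$. Consequently the two-sided ideal $N$ of $\bar{\wE}$ generated by $\bar{\varphi}_{12},\bar{\varphi}_{21}^0,\bar{\varphi}_{21}^1$ satisfies $N^2=0$, and the quotient is $\bar{\wE}/N\cong L\bar{e}_1\oplus L\bar{e}_2\cong L\times L$.

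Because $N$ is a nilpotent two-sided ideal it lies in the Jacobson radical of $\bar{\wE}$, so every simple left (or right) $\bar{\wE}$-module is annihilated by $N$ and therefore factors through $\bar{\wE}/N\cong L\times L$. The commutative ring $L\times L$ has exactly two simple modules up to isomorphism, each one-dimensional over $L$, corresponding to the two factor projections; they are non-isomorphic because $\bar{e}_1$ and $\bar{e}_2$ act by different scalars on them. Pulling these back along $\bar{\wE}\twoheadrightarrow \bar{\wE}/N$ produces the two claimed simple $\bar{\wE}$-modules and exhausts the isomorphism classes. The argument is a direct calculation using the presentation \eqref{ringqcat} and the nilpotence of $N$; no serious obstacle is anticipated, the only point requiring any care being the verification that all higher products of off-diagonal elements vanish.
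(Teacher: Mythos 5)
Your proof is correct and is essentially the paper's own argument: one forms the two-sided ideal generated by the images of $\varphi_{12},\varphi_{21}^0,\varphi_{21}^1$, notes it squares to zero since $\rr=(c_0,c_1)\subseteq\nn$, and identifies the quotient with $L\times L$, whose two one-dimensional simple modules then exhaust the simple modules of the fibre. The extra details you supply (off-diagonal products vanishing by idempotent orthogonality, nilpotent ideals lying in the Jacobson radical) are exactly the checks implicit in the paper's shorter proof.
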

\begin{proof} Let $\mathfrak b$ be the two sided ideal in $\wE\otimes_{R^{\psi}} R^{\psi}[1/p]/\nn$ generated by the images of 
$\varphi_{12}$, $\varphi_{21}^0$, $\varphi_{21}^1$. Since $\nn$ contains $\rr=(c_0, c_1)$ we have $\mathfrak b^2=0$ and the quotient by $\mathfrak b$ of  
$\wE\otimes_{R^{\psi}} R^{\psi}[1/p]/\nn$  is isomorphic to $L\times L$. This implies the assertion. 
\end{proof}

\begin{remar}\label{VseeQ} We note that the Galois side sees only the quotient category $\qcat(\OO)^{\BB}$, see Proposition 
\ref{equivofcatsII}, and this category is equivalent to the category of compact modules of the endomorphism ring of 
$\TT \wP_{\Sp^{\vee}} \oplus \TT \wP_{\pi_{\alpha}^{\vee}}$, which is isomorphic to the  ring in \eqref{ringqcat} by Corollary
\ref{qcatmod}. Moreover, it follows from Proposition \ref{equivofcatsII} that the ring is isomorphic to $\End_{\gal}(\cV(\wP_{\Sp^{\vee}}) \oplus \cV(\wP_{\pi_{\alpha}^{\vee}}))$.
\end{remar}

\begin{remar}\label{image_stein} We are going to describe $\cV(\wP_{\Sp^{\vee}})$ as a $\gal$-representation. Corollary \ref{univdefII} says that $\cV(\wP_{\pi_{\alpha}^{\vee}})$ is the universal deformation of $\rho$ with determinant $\zeta\varepsilon$. Hence,   $\cV(\wP_{\pi_{\alpha}^{\vee}})/\rr \cV(\wP_{\pi_{\alpha}^{\vee}})$ is the universal reducible 
deformation of $\rho$ with the determinant $\zeta \varepsilon$. Thus we have an exact sequence $0\rightarrow N_{\Eins}\rightarrow \cV(\wP_{\pi_{\alpha}^{\vee}})/\rr \cV(\wP_{\pi_{\alpha}^{\vee}})\rightarrow N_{\omega}\rightarrow 0$, where
$N_{\Eins}$ is the deformation of the trivial representation and $N_{\omega}$ is a deformation of $\omega$ to $R^{\psi}_{\rho}/\rr$. One may deduce from Theorem \ref{trbij6} and the proof of Proposition \ref{redloc} that these  deformations are universal.  We apply $\cV$ to \eqref{2B} 
to obtain an exact sequence $0\rightarrow \cV(\wP_{\Sp^{\vee}})\rightarrow \cV(\wP_{\pi_{\alpha}^{\vee}}) \rightarrow \cV(\wM_{\alpha^{\vee}})\rightarrow 0$.  Proposition \ref{imageofa} implies that $\rr$ acts trivially on 
$\cV(\wM_{\alpha^{\vee}})$, and since all the irreducible subquotients of $\cV(\wM_{\alpha^{\vee}})$ are isomorphic to $\omega$, the surjection $\cV(\wP_{\pi_{\alpha}^{\vee}})/\rr \cV(\wP_{\pi_{\alpha}^{\vee}})\twoheadrightarrow \cV(\wM_{\alpha^{\vee}})$ factors 
through the surjection $N_{\omega}\twoheadrightarrow \cV(\wM_{\alpha^{\vee}})$. But both are free $R^{\psi}_{\rho}/\rr$-modules of rank $1$. Hence, the surjection is an isomorphism. This implies that $\cV(\wP_{\Sp^{\vee}})$ is the kernel 
of the map from the universal deformation of $\rho$ with determinant $\zeta\varepsilon$ to $N_{\omega}$.
\end{remar}
 
Let $\wP_{\ast}$ be a direct summand of $\wP_{\BB}$, let $\wE_{\ast}:=\End_{\dualcat(\OO)}(\wP_{\ast})$. The rings 
$\wE_{\ast}$ and $\wE_{\BB}$ are finitely generated modules over a noetherian ring $R^{\psi}$, thus they are right and 
left noetherian. Every finitely generated module carries a canonical topology, with respect to which the action 
is continuous. Since the rings are noetherian the canonical topology is Hausdorff. Let $c$ be a non-zero element of $R^{\psi}$ 
and let $\Mod^{\mathrm{fg}}_{\wE_{\BB}[1/p]}[c^{-1}]$ denote the full subcategory 
of finitely generated $\wE_{\BB}[1/p]$-modules consisting of those modules on which $c$ acts invertibly. 
Define a functor 
$$\mathrm Q: \Mod^{\mathrm{fg}}_{\wE_{\BB}[1/p]}[c^{-1}]\rightarrow \Mod^{\mathrm{fg}}_{\wE_{\ast}[1/p]}[c^{-1}], \quad 
\md\mapsto \Hom_{\dualcat(\OO)}( \wP_{\ast}, \md^0 \otimes_{\wE_{\BB}} \wP_{\BB})_L,$$
 where we have chosen 
a finitely generated $\wE_{\BB}$-submodule $\md^0\subset \md$ such that $\md=\md^0[1/p]$ and equipped it with the canonical topology.
Since $\Hom_{\dualcat(\OO)}(\wP_{\ast}, \wP_{\BB})$ is a finitely generated $R^{\psi}$-module, $\mathrm Q(\md)$ is a finitely generated 
$\wE_{\ast}[1/p]$-module. The definition of $\mathrm Q$ does not depend on the choice of $\md^0$, since any two are commensurable.

\begin{lem}\label{Qequiv} If $\mathrm Q$ is faithful then it induces  an equivalence of categories. 
\end{lem}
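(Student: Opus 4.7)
The plan is to recognize $\mathrm{Q}$ as the idempotent truncation functor and then invoke Morita theory. Write $\wP_{\BB} = \wP_\ast \oplus \wP_{\ast\ast}$ and let $e \in \wE_{\BB}$ denote the idempotent with $e\wP_{\BB} = \wP_\ast$, so that $\wE_\ast \cong e\wE_{\BB}e$. Using the flatness of $\wP_{\BB}$ over $\wE_{\BB}$ (Corollaries \ref{topfree}, \ref{flatA}) together with the standard identification $\Hom_{\dualcat(\OO)}(\wP_{\BB}, \md^0 \wtimes_{\wE_{\BB}} \wP_{\BB}) \cong \md^0$ of Lemma \ref{headS0}, restriction along $\wP_\ast \hookrightarrow \wP_{\BB}$ corresponds to right-multiplication by $e$ and yields
\[
\Hom_{\dualcat(\OO)}(\wP_\ast, \md^0 \wtimes_{\wE_{\BB}} \wP_{\BB}) \cong \md^0 e.
\]
Inverting $p$ gives $\mathrm{Q}(\md) \cong \md e$ as a right $\wE_\ast[1/p]$-module; the identification is compatible with the further $c^{-1}$-localization because $c$ lies in the centre $R^{\psi}$.

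Set $A := \wE_{\BB}[1/p][c^{-1}]$, so that $\wE_\ast[1/p][c^{-1}] \cong eAe$. The functor $\md \mapsto \md e$ from right $A$-modules to right $eAe$-modules has left adjoint $\mathfrak n \mapsto \mathfrak n \otimes_{eAe} eA$; the unit $\mathfrak n \to (\mathfrak n \otimes_{eAe} eA) e$ is always an isomorphism (both sides are naturally $\mathfrak n$), and the counit $\md e \otimes_{eAe} eA \to \md$ given by multiplication is an isomorphism for every $\md$ if and only if $AeA = A$. This is the standard Morita theorem. Finite generation is preserved in both directions: $A$ and $eAe$ are Noetherian, being finitely generated over the central Noetherian ring $R^{\psi}[1/p][c^{-1}]$; the right $A$-module $eA$ is cyclic; and $\md e$ inherits finite generation from $\md$ because $e\wE_{\BB}$ is finitely generated as a right $\wE_\ast$-module by Lemma \ref{fgtfree}.

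The only substantive step is to derive $AeA = A$ from the faithfulness hypothesis; this is where faithfulness is used. Consider the cyclic right $A$-module $\md := A/AeA$, which lies in $\Mod^{\mathrm{fg}}_{\wE_{\BB}[1/p]}[c^{-1}]$. Since $e \in AeA$, we have $\md e = 0$, so $\mathrm{Q}(\md) = 0$. Faithfulness of $\mathrm{Q}$ then forces the injection
\[
\End_A(\md) \hookrightarrow \End_{eAe}(\mathrm{Q}\md) = 0,
\]
so $\id_\md = 0$ and hence $\md = 0$, i.e.\ $AeA = A$. Combining this with the preceding paragraph shows that $\mathrm{Q}$ is an equivalence. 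No serious obstacle is anticipated: once $\mathrm{Q}$ is reinterpreted as $\md \mapsto \md e$, the lemma is a direct instance of Morita theory, and the faithfulness hypothesis supplies exactly the quotient-vanishing argument needed to verify its one non-formal input.
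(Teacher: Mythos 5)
Your proof is correct but takes a genuinely different route. The paper constructs the candidate inverse functor $\mathrm R(\md) = \Hom_{\dualcat(\OO)}(\wP_{\BB}, \md^0\wtimes_{\wE_\ast}\wP_\ast)_L$ directly, verifies $\mathrm Q\circ\mathrm R\cong\id$ by tracking the maximal torsion-free quotient and the kernel killed by a power of $p$, and then deduces the equivalence from faithfulness of $\mathrm Q$ (an implicit appeal to exactness of $\mathrm Q$ plus the adjunction $\mathrm R\dashv\mathrm Q$ is what makes that last deduction go through). You instead unfold $\mathrm Q$ via Lemma \ref{headS0} as the idempotent truncation $\md\mapsto\md e$ for the idempotent $e$ cutting out $\wP_\ast$, identify $\wE_\ast[1/p][c^{-1}]$ with $eAe$ where $A=\wE_{\BB}[1/p][c^{-1}]$, and then reduce everything to the classical Morita criterion: $\md\mapsto\md e$ is an equivalence precisely when $AeA=A$. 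Faithfulness applied to $A/AeA$ (a cyclic, hence finitely generated, $A$-module annihilated by $e$) yields $AeA=A$ immediately. This is cleaner in two respects: the one non-formal input is isolated and named, and the deduction from faithfulness is transparent rather than relying on an unstated exactness/adjunction argument. What the paper's approach buys is that it stays entirely within the categorical language of $\dualcat(\OO)$ and does not require invoking the $AeA=A$ criterion as a black box. Your noetherian bookkeeping (that $A$ and $eAe$ are finitely generated over the central $R^\psi[1/p][c^{-1}]$, and that $Ae$, $eA$ are finitely generated on both relevant sides, so the equivalence restricts to finitely generated modules) is needed and correctly handled; the passing mention of flatness of $\wP_{\BB}$ is not actually used — Lemma \ref{headS0} alone gives the identification $\Hom_{\dualcat(\OO)}(\wP_{\BB},\md^0\wtimes_{\wE_{\BB}}\wP_{\BB})\cong\md^0$.
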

\begin{proof} Since $\Hom_{\dualcat(\OO)}(\wP_{\BB}, \ast)$ induces an equivalence  between $\dualcat(\OO)$  and
the category of compact $\wE_{\BB}$-modules, Lemma \ref{headS0} implies that the natural map 
$$\Hom_{\dualcat(\OO)}(\wP_{\BB}, N)\wtimes_{\wE_{\BB}} \wP_{\BB}\rightarrow N$$
is an isomorphism. Thus the functor 
$\md^0\mapsto \md^0\wtimes_{\wE_{\BB}} \wP_{\BB}$ is exact and hence $\mathrm Q$ is exact. Define 
$$\mathrm R: \Mod^{\mathrm{fg}}_{\wE_{\ast}[1/p]}[c^{-1}]\rightarrow \Mod^{\mathrm{fg}}_{\wE_{\BB}[1/p]}[c^{-1}], \quad 
\md\mapsto \Hom_{\dualcat(\OO)}( \wP_{\BB}, \md^0 \wtimes_{\wE_{\ast}} \wP_{\ast})_L.$$
We claim that $\mathrm Q\circ \mathrm R$ is equivalent to the identity functor. The claim implies that $\mathrm Q$ is fully faithful 
and surjective, hence an equivalence of categories. We may choose $\mathrm R(\md)^0$
to be the maximal $\OO$-torsion free quotient of 
$\Hom_{\dualcat(\OO)}( \wP_{\BB}, \md^0 \wtimes_{\wE_{\ast}} \wP_{\ast})$. Then we have a surjection 
$$\md^0 \wtimes_{\wE_{\ast}} \wP_{\ast}\cong \Hom_{\dualcat(\OO)}( \wP_{\BB}, \md^0 \wtimes_{\wE_{\ast}} \wP_{\ast})\wtimes_{\wE_{\BB}}\wP_{\BB}
\twoheadrightarrow \mathrm R(\md)^0\wtimes_{\wE_{\BB}} \wP_{\BB}.$$ 
with the kernel killed by a power of $p$. Since $\Hom_{\dualcat(\OO)}(\wP_{\ast}, \md^0 \wtimes_{\wE_{\ast}} \wP_{\ast})\cong \md^0$, 
see Lemma \ref{headS0}, is $\OO$-torsion free, we get $\md^0\cong\Hom_{\dualcat(\OO)}(\wP_{\ast}, \mathrm R(\md)^0\wtimes_{\wE_{\BB}} \wP_{\BB})$. 
\end{proof}  
   
\begin{lem}\label{rightII} Let $\md$ be in $\Mod^{\mathrm{fg}}_{\wE_{\BB}[1/p]}[c^{-1}]$ and choose $\md^0\subset \md$ as above. Then the kernel 
of $c: \md^0\wtimes_{\wE_{\BB}}\wP_{\BB} \rightarrow \md^0\wtimes_{\wE_{\BB}}\wP_{\BB}$ is zero and the cokernel is killed by a power of $p$.
\end{lem}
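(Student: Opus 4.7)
My plan is to deduce the lemma from two elementary properties of $\md^0$ itself, by exploiting that $(-)\wtimes_{\wE_{\BB}}\wP_{\BB}$ is an exact functor: by Proposition \ref{gabriel} it is the inverse equivalence to $\Hom_{\dualcat(\OO)}(\wP_{\BB}, -)$ between the category of compact right $\wE_{\BB}$-modules and $\dualcat(\OO)^{\BB}$, so exactness is automatic. Moreover, since $c\in R^{\psi}$ lies in the centre of $\wE_{\BB}$ by Theorem \ref{ZNGII}, multiplication by $c$ is a well-defined morphism of compact right $\wE_{\BB}$-modules, and under the equivalence it corresponds to multiplication by $c$ on $\md^0\wtimes_{\wE_{\BB}}\wP_{\BB}$.

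First I would verify that multiplication by $c$ is injective on $\md^0$: the inclusion $\md^0\subset \md$ realises $\md^0$ as an $\OO$-torsion free submodule of the $L$-vector space $\md$, and $c$ is invertible on $\md$ by hypothesis, so a relation $cx=0$ in $\md^0$ already forces $x=0$ in $\md$ and hence in $\md^0$. Next I would show that $\md^0/c\md^0$ is annihilated by a uniform power of $p$. Fix finitely many $\wE_{\BB}$-generators $x_1,\ldots,x_m$ of $\md^0$; since $c$ is invertible on $\md=\md^0[1/p]$, each equation $x_i=cy_i$ in $\md$ can be rewritten $y_i=p^{-n_i}z_i$ with $z_i\in\md^0$, so $p^{n_i}x_i\in c\md^0$, and $n:=\max_i n_i$ satisfies $p^n\md^0\subseteq c\md^0$.

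Having these two facts, I would apply the exact functor $(-)\wtimes_{\wE_{\BB}}\wP_{\BB}$ to the short exact sequence of compact $\wE_{\BB}$-modules
$$0\longrightarrow \md^0\xrightarrow{\;c\;}\md^0\longrightarrow \md^0/c\md^0\longrightarrow 0$$
(the submodule $c\md^0$ being closed in $\md^0$ since $\md^0$ is compact Hausdorff and multiplication by $c$ is continuous), to obtain injectivity of $c$ on $\md^0\wtimes_{\wE_{\BB}}\wP_{\BB}$ together with an identification of the cokernel with $(\md^0/c\md^0)\wtimes_{\wE_{\BB}}\wP_{\BB}$. The latter is killed by $p^n$ because $\md^0/c\md^0$ already is, finishing the proof.

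No step here presents a serious obstacle; the only point that warrants emphasis is the exactness of the completed tensor product functor, which in the present setup simply encodes that $\wP_{\BB}$ is a projective generator of $\dualcat(\OO)^{\BB}$ with endomorphism ring $\wE_{\BB}$, so the Morita-style equivalence of Proposition \ref{gabriel} is exact in both directions.
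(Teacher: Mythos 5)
Your argument is correct and is, up to direction, the same as the paper's. The paper works on the $\dualcat(\OO)^{\BB}$ side: it takes $K=\Ker(c)$ and $C=\Coker(c)$ on $\md^0\wtimes_{\wE_{\BB}}\wP_{\BB}$, applies the exact functor $\Hom_{\dualcat(\OO)}(\wP_{\BB},-)$ together with Lemma \ref{headS0} to obtain the four-term exact sequence
$0\to \Hom_{\dualcat(\OO)}(\wP_{\BB},K)\to\md^0\xrightarrow{c}\md^0\to\Hom_{\dualcat(\OO)}(\wP_{\BB},C)\to 0$,
observes exactly the two elementary facts you isolate ($c$ injective on $\md^0$, and $p^n\md^0\subseteq c\md^0$), and concludes via the fact that $\wP_{\BB}$ is a projective generator for the block, so $\Hom_{\dualcat(\OO)}(\wP_{\BB},M)=0$ forces $M=0$. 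You run the same equivalence in the opposite direction: establish the two module-theoretic facts first and then push the short exact sequence across by exactness of $(-)\wtimes_{\wE_{\BB}}\wP_{\BB}$. The elementary inputs and the appeal to the Morita-style equivalence of Proposition \ref{gabriel} are identical; only the order of operations differs, so there is no new idea in either direction.
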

\begin{proof} Let $K$ be the kernel and $C$ be the cokernel. Lemma \ref{headS0} gives an exact sequence 
$$0\rightarrow \Hom_{\dualcat(\OO)}(\wP_{\BB}, K)\rightarrow \md^0\overset{c}{\rightarrow}\md^0\rightarrow 
\Hom_{\dualcat(\OO)}(\wP_{\BB}, C)\rightarrow 0.$$
Since $\md^0$ is finitely generated and $c$ is invertible on $\md$ we deduce that there exist $p^n$ such that $\Hom_{\dualcat(\OO)}(\wP_{\BB}, p^n C)=0$
and $\Hom_{\dualcat(\OO)}(\wP_{\BB}, K)=0$.
Since $K$ and $p^nC$ are objects of $\dualcat(\OO)^{\BB}$ this implies that they are $0$.
\end{proof}

\begin{prop}\label{invertc} Let $c\in \rr$ be non-zero  and $\wP_{\ast}$ be either $\wP_{\pi_{\alpha}^{\vee}}$, 
$\wP_{\Sp^{\vee}}$ or $\wP_{\Eins_G^{\vee}}$ then $\mathrm Q$ induces an equivalence of categories between 
 $\Mod^{\mathrm{fg}}_{\wE_{\BB}[1/p]}[c^{-1}]$ and $\Mod^{\mathrm{fg}}_{R^{\psi}[1/p]}[c^{-1}]$.
\end{prop}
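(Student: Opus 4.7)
The plan is to apply Lemma \ref{Qequiv}, so the task reduces to showing that $\mathrm Q$ is faithful. First, using Lemma \ref{headS0} together with the decomposition $\wP_{\BB} = e_1\wP_{\BB}\oplus e_2\wP_{\BB}\oplus e_3\wP_{\BB}$, one has
$$\mathrm Q(\md) = \Hom_{\dualcat(\OO)}(\wP_{\ast}, \md^0\wtimes_{\wE_{\BB}}\wP_{\BB})_L = e_{\ast}\md^0[1/p] = e_{\ast}\md,$$
where $e_{\ast}\in\{e_1,e_2,e_3\}$ is the idempotent cutting out $\wP_{\ast}$. Since $\mathrm Q$ is exact (as seen in the proof of Lemma \ref{Qequiv}), faithfulness is equivalent to the implication: if $c$ acts invertibly on $\md$ and $e_{\ast}\md=0$, then $\md=0$.

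The heart of the argument is then an exploitation of the explicit multiplication table in $\wE_{\BB}$ encoded by \eqref{inter1} and \eqref{inter2}: for each pair $i\neq j$ one has morphisms $\varphi_{ij}$ and $\varphi_{ji}^k$ ($k=0,1$) satisfying $\varphi_{ij}\circ \varphi_{ji}^k = c_k e_i$ and $\varphi_{ji}^k\circ \varphi_{ij} = c_k e_j$, where $c_0, c_1$ are the chosen generators of $\rr$. Writing $\md=e_1\md\oplus e_2\md\oplus e_3\md$, fix any $j\neq \ast$ and $m\in e_j\md$. In each of the six cases ($\ast\in\{1,2,3\}$ and $j\neq \ast$), the appropriate relation from \eqref{inter1}--\eqref{inter2} lets one apply $\varphi_{\ast j}$ (or $\varphi_{\ast j}^k$, depending on the pair) to push $m$ into $e_{\ast}\md=0$, and then applying $\varphi_{j\ast}^k$ (or $\varphi_{j\ast}$) yields $c_k m=0$ for both $k=0,1$. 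Consequently $\rr=(c_0,c_1)$ annihilates $m$, so $c$ annihilates $m$, and invertibility of $c$ on $\md$ forces $m=0$. This gives $e_j\md=0$ for every $j$, hence $\md=0$.

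With faithfulness in hand, Lemma \ref{Qequiv} produces an equivalence between $\Mod^{\mathrm{fg}}_{\wE_{\BB}[1/p]}[c^{-1}]$ and $\Mod^{\mathrm{fg}}_{\wE_{\ast}[1/p]}[c^{-1}]$, and Corollary \ref{rings} identifies $\wE_{\ast}\cong R^{\psi}$ naturally, giving the claimed target $\Mod^{\mathrm{fg}}_{R^{\psi}[1/p]}[c^{-1}]$.

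There is no serious obstacle here; the argument is essentially a Morita-type reduction expressing the fact that $e_{\ast}$ becomes a full idempotent in $\wE_{\BB}[c^{-1}]$, packaged as a faithfulness statement so that Lemma \ref{Qequiv} can be invoked directly. The only care needed is to match the correct $\varphi$'s to each of the three choices of $\ast$, but this is immediate from \eqref{inter1}--\eqref{inter2}.
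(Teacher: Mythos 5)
Your argument is correct, and it takes a genuinely different route from the paper. Both reduce to showing that $\mathrm Q$ is faithful and then invoking Lemma \ref{Qequiv}, but the mechanism for faithfulness differs. The paper works on the module side in $\dualcat(\OO)$: it sets $N=\md^0\wtimes_{\wE_{\BB}}\wP_{\BB}$, uses Lemma \ref{imageofr} to see that $c$ kills $\wM_{\Eins_T^{\vee}}$ and $\wM_{\alpha^{\vee}}$ while acting invertibly on $N$ up to $p$-power torsion (Lemma \ref{rightII}), and then reads off from \eqref{1B} and \eqref{2B} that $\Hom_{\dualcat(\OO)}(\wP_{\Eins_G^{\vee}},N)_L\cong\Hom_{\dualcat(\OO)}(\wP_{\pi_{\alpha}^{\vee}},N)_L\cong\Hom_{\dualcat(\OO)}(\wP_{\Sp^{\vee}},N)_L$, so vanishing of one forces vanishing of all. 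You instead stay entirely inside the ring: you identify $\mathrm Q(\md)$ with $\md e_{\ast}$ and then use the multiplication table \eqref{inter1}--\eqref{inter2} to show that for any pair $i\neq j$ there are elements of $e_i\wE_{\BB}e_j$ and $e_j\wE_{\BB}e_i$ whose round-trip composite is $c_k e_j$, so $\md e_{\ast}=0$ forces $\rr$ (and hence $c$) to kill every $\md e_j$. This is the Morita-theoretic observation that $e_{\ast}$ becomes a full idempotent after inverting $c$, and it buys you a proof that makes no reference to the modules $\wM_{\chi^{\vee}}$, the projective resolutions, or the torsion bookkeeping of Lemma \ref{rightII} — the inputs you use are exactly the intertwining relations that were established to describe $\wE_{\BB}$ explicitly. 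One small caveat: with the paper's right-module convention and $\varphi_{ij}:\wP_j\to\wP_i$, the map pushing $m\in\md e_j$ into $\md e_{\ast}$ is right multiplication by $\varphi_{j\ast}$ (or $\varphi_{j\ast}^k$) rather than $\varphi_{\ast j}$ as you write; the round-trip identities are symmetric so the conclusion is unaffected, but the indices should be swapped to match the source.
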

\begin{proof} Let $\md$ be in $\Mod^{\mathrm{fg}}_{\wE_{\BB}[1/p]}[c^{-1}]$ and let $N=\md^0\wtimes_{\wE_{\BB}} \wP_{\BB}$.
It follows from Lemma \ref{imageofr} that $c$ kills $\wM_{\Eins_{T}^{\vee}}$, $\wM_{\alpha^{\vee}}$. Since 
$c$ acts invertibly on $\md$ Lemma \ref{rightII} implies that $\Hom_{\dualcat(\OO)}(\wM_{\ast}, N)=0$ 
and $\Ext^1_{\dualcat(\OO)}(\wM_{\ast}, N)=0$ is killed by a power of $p$, 
where $\ast=\Eins_T^{\vee}$ or $\ast=\alpha^{\vee}$. Thus \eqref{1B} and \eqref{2B} imply that we have an isomorphism of $R^{\psi}[1/p]$-modules:
$$\Hom_{\dualcat(\OO)}(\wP_{\Eins_G^{\vee}}, N)_L\cong \Hom_{\dualcat(\OO)}(\wP_{\pi_{\alpha}^{\vee}}, N)_L\cong \Hom_{\dualcat(\OO)}(\wP_{\Sp^{\vee}}, N)_L.$$
If $\mathrm Q(\md)=0$ then $0=\Hom_{\dualcat(\OO)}(\wP_{\BB}, N)_L\cong \md$. Hence the functor $\md\mapsto \mathrm Q(\md)$ is faithful. 
The assertion follows from Lemma \ref{Qequiv}.
\end{proof}

\begin{prop}\label{invertd} Let $\nn$ be the maximal ideal of $R^{\psi}[1/p]$ corresponding to 
$0\rightarrow \Eins\rightarrow V\rightarrow \varepsilon\rightarrow 0$, let $c\in R^{\psi}\cap \nn$ 
be non-zero
 and let $\wP_{\ast}= \wP_{\Sp^{\vee}}\oplus \wP_{\pi_{\alpha}^{\vee}}$
then $\mathrm Q$ induces an equivalence of categories between 
 $\Mod^{\mathrm{fg}}_{\wE_{\BB}[1/p]}[c^{-1}]$ and $\Mod^{\mathrm{fg}}_{\wE[1/p]}[c^{-1}]$, where $\wE$ is the ring described in \eqref{ringqcat}.
\end{prop}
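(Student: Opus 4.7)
The plan is to invoke Lemma \ref{Qequiv}, which reduces matters to checking that $\mathrm Q$ is faithful; once faithfulness is established, the inverse functor of that lemma automatically preserves the condition that $c$ act invertibly, since $c\in R^{\psi}$ lies in the centre of both $\wE_{\BB}$ and $\wE$ by Theorem \ref{ZNGII}. So I take $\md\in\Mod^{\mathrm{fg}}_{\wE_{\BB}[1/p]}[c^{-1}]$ with $\mathrm Q(\md)=0$, pick a finitely generated $\wE_{\BB}$-submodule $\md^0\subset\md$ spanning $\md$ over $L$, and put $N:=\md^0\wtimes_{\wE_{\BB}}\wP_{\BB}$. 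The decomposition $\wP_{\BB}=e_1\wP_{\BB}\oplus e_2\wP_{\BB}\oplus e_3\wP_{\BB}$ together with Lemma \ref{headS0} identifies $\mathrm Q(\md)$ with $\md_L e_1\oplus\md_L e_2$, so $\mathrm Q(\md)=0$ forces $\md_L=\md_L e_3\cong\Hom_{\dualcat(\OO)}(\wP_{\Eins_G^{\vee}},N)_L$, and it suffices to show this space vanishes.

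The key move is to replace $\Hom_{\dualcat(\OO)}(\wP_{\Eins_G^{\vee}},N)_L$ by $(N^G)_L$ and then argue that $c$ kills every $G$-trivial object. For the replacement, apply $\Hom_{\dualcat(\OO)}(-,N)$ to the exact sequence \eqref{Z2} to obtain
\begin{displaymath}
0\to N^G\to\Hom_{\dualcat(\OO)}(\wP_{\Eins_G^{\vee}},N)\to\Hom_{\dualcat(\OO)}(\wP_{\pi_{\alpha}^{\vee}}\oplus\wP_{\Sp^{\vee}},N);
\end{displaymath}
after inverting $p$ the last term is $\mathrm Q(\md)=0$, yielding $(N^G)_L\cong\md_L$. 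For the annihilation, Corollary \ref{last3} together with the proof of Lemma \ref{last2} identifies $\nn_0:=\nn\cap R^{\psi}$ with $(c_0,c_1,d_0,d_1)$ and realizes $R^{\psi}/\nn_0\cong\OO$ as the image of the ring map $R^{\psi}\to\End_{\dualcat(\OO)}(\OO)=\OO$ induced by the central embedding of Theorem \ref{ZNGII}. Naturality of the transformation of the identity functor attached to $z\in R^{\psi}$, applied to the continuous $\OO$-linear maps $\OO\to T$, $\lambda\mapsto\lambda t$, for $t\in T\in\mathfrak T(\OO)$, then shows that $z$ acts on every object of $\mathfrak T(\OO)$ as scalar multiplication by its image in $R^{\psi}/\nn_0=\OO$. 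In particular $c\in\nn_0$ annihilates every object of $\mathfrak T(\OO)$, and since $N^G$ lies there we conclude that $c$ kills $(N^G)_L\cong\md_L$; combined with the invertibility of $c$ on $\md_L$ this forces $\md=0$, establishing faithfulness.

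The step I expect to be the main obstacle is the second one: confirming that the central action of $R^{\psi}$ on an arbitrary $G$-trivial object really factors through the augmentation $R^{\psi}\twoheadrightarrow R^{\psi}/\nn_0=\OO$, since Lemma \ref{last2} only records this augmentation through the specific realization of $\OO$ as a quotient of $\wM_{\Eins_T^{\vee}}$. The naturality argument sketched above resolves it once one checks that the endomorphism of $\OO$ induced by $z\in R^{\psi}$ through the central embedding of Theorem \ref{ZNGII} coincides with the scalar extracted from Lemma \ref{last2}, which follows by functoriality applied to any lift of the quotient $\wP_{\Eins_G^{\vee}}\twoheadrightarrow\OO$.
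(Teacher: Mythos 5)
Your proof is correct and is essentially the paper's argument: the paper likewise reduces to faithfulness via Lemma \ref{Qequiv}, uses the presentation \eqref{Z2} of $\OO$ by $\wP_{\Eins_G^{\vee}}$ and $\wP_{\ast}$, and the fact that $c\in\nn\cap R^{\psi}=(c_0,c_1,d_0,d_1)$ kills the trivial object (Corollary \ref{last3}, via Lemma \ref{last2}), then runs the same faithfulness check as in Proposition \ref{invertc}. The only cosmetic difference is that the paper gets $\Hom_{\dualcat(\OO)}(\OO,N)=0$ directly from the injectivity of $c$ on $N$ (Lemma \ref{rightII}), whereas you transfer the vanishing of the central action of $c$ on $G$-trivial objects to $(N^G)_L\cong\md_L$ and invoke the invertibility of $c$ there.
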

\begin{proof} We have an exact sequence $\wP_{\ast}\rightarrow \wP_{\Eins_G^{\vee}}\rightarrow \OO\rightarrow 0$, 
see \eqref{Z2}, and $c$ kills $\OO$, see Corollary \ref{last3}. The proof is then the same as the proof of Proposition \ref{invertc}.  
\end{proof}

Let $\nn$ be a maximal ideal of $R^{\psi}[1/p]$ with residue field $L$ and let $\nn_0:=\nn \cap R^{\psi}$. Suppose that $\nn_0$ 
contains $\rr$. Then the Galois representation corresponding to $\nn$ is reducible. Thus it follows from Theorem \ref{varphisoNGII} and 
Corollary \ref{univdefII}
that we have a non-split sequence 
$0\rightarrow \psi_1\rightarrow \cV(\wP_{\pi_{\alpha}^{\vee}}/\nn_0 \wP_{\pi_{\alpha}^{\vee}})\rightarrow \psi_2\rightarrow 0$, where 
$\psi_1, \psi_2: \gal\rightarrow \OO^{\times}$ are continuous characters such that 
$\psi_1$ is congruent to $\cV(\Sp^{\vee})=\Eins$ and $\psi_2$ is congruent $\cV(\pi_{\alpha}^{\vee})=\omega$ modulo $\varpi$.

\begin{prop}\label{gohome4} Let $\nn$, $\psi_1$ and $\psi_2$ be as above then we have  isomorphisms of Banach space representations of $G$:
$$ \Hom^{cont}_{\OO}(\wM_{\Eins_T^{\vee}}/\nn_0 \wM_{\Eins_T^{\vee}}, L)\cong (\Indu{P}{G}{\psi_1\otimes \psi_2 \varepsilon^{-1}})_{cont},$$
$$ \Hom^{cont}_{\OO}(\wM_{\alpha^{\vee}}/\nn_0 \wM_{\alpha^{\vee}}, L)\cong (\Indu{P}{G}{\psi_2\otimes \psi_1 \varepsilon^{-1}})_{cont}.$$
\end{prop}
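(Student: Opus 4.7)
The plan is to reduce both isomorphisms to the direct application of Lemma \ref{thruOrd}, and then to pin down precisely which unitary character of $T/Z$ corresponds to the maximal ideal $\nn_0$. For the first claim, let $\chi: T/Z\to L^{\times}$ be a continuous character lifting $\Eins_T$, and let $\Theta\cong \OO$ be the standard open bounded lattice in $\chi$. Then $\md:=\Hom_{\dualcat_{T,\zeta}(\OO)}(\wP_{\Eins_T^{\vee}}, \Theta^d)$ is a free rank one $\OO$-module on which $\OO[[x,y]]\cong \End_{\dualcat_T(\OO)}(\wP_{\Eins_T^{\vee}})$ acts through the quotient $\OO[[x,y]]/\bar{\nn}_{\chi}\cong \OO$ corresponding to $\chi$. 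Lemma \ref{thruOrd} then gives a natural isomorphism
\[
(\Indu{P}{G}{\chi})_{cont}^d \cong \md \wtimes_{\OO[[x,y]]} \wM_{\Eins_T^{\vee}} \cong \wM_{\Eins_T^{\vee}}/\bar{\nn}_{\chi}\wM_{\Eins_T^{\vee}}.
\]
By Corollary \ref{ideals} and Lemma \ref{imageofr}, the action of $R^{\psi}$ on $\wM_{\Eins_T^{\vee}}$ factors through $R^{\psi}/\rr$, and the resulting map $R^{\psi}/\rr\to \OO[[x,y]]$ is an isomorphism. In particular, $\nn_0\wM_{\Eins_T^{\vee}}=\bar{\nn}_{\chi}\wM_{\Eins_T^{\vee}}$ where $\bar{\nn}_{\chi}$ is the image of $\nn_0$. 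So the first asserted isomorphism follows once we verify $\chi=\psi_1\otimes\psi_2\varepsilon^{-1}$, and the argument for the second isomorphism is completely analogous using $\wP_{\alpha^{\vee}}$ together with \eqref{2B} (where the relevant character instead lifts $\alpha$).

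To identify $\chi$, I would apply $\cV$. On the one hand, by definition $\cV(\wM_{\Eins_T^{\vee}}/\nn_0\wM_{\Eins_T^{\vee}})\otimes_{\OO}L\cong \VV((\Indu{P}{G}{\chi})_{cont})^{\vee}(\varepsilon\zeta)$, and Colmez's formula $\VV((\Indu{P}{G}{\eta_1\otimes \eta_2\varepsilon^{-1}})_{cont})=\eta_2$ gives, writing $\chi=\chi_1\otimes\chi_1^{-1}$ (forced by trivial central character after our usual twist), that $\cV$ of this quotient is the Galois character $\chi_1$. On the other hand, $\wM_{\Eins_T^{\vee}}$ is an $R^{\psi}/\rr$-module of generic rank one, so $\cV(\wM_{\Eins_T^{\vee}})$ is a rank one quotient of the universal reducible deformation $0\to \tilde\psi_1\to V^{un}\to \tilde\psi_2\to 0$ over $R^{\psi}/\rr$; specializing at $\nn_0$ identifies $\cV(\wM_{\Eins_T^{\vee}}/\nn_0\wM_{\Eins_T^{\vee}})_L$ with either $\psi_1$ or $\psi_2$, and comparison with $\chi_1$ (which reduces to the trivial character, while $\psi_2$ reduces to $\omega$) forces $\chi_1=\psi_1$. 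Thus $\chi=\psi_1\otimes\psi_1^{-1}=\psi_1\otimes\psi_2\varepsilon^{-1}$ (using $\psi_1\psi_2=\zeta\varepsilon=\varepsilon$).

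The technical obstacle is justifying that $\cV(\wM_{\Eins_T^{\vee}})$ really sits as the appropriate rank one quotient of $\cV(\wP_{\Eins_G^{\vee}})$ coming from $\tilde\psi_2$ rather than $\tilde\psi_1$. The cleanest way to handle this is to use the exact sequence \eqref{1B}: applying the exact functor $\cV$ gives an injection $\rho^{un}\cong\cV(\wP_{\pi_{\alpha}^{\vee}})\hookrightarrow \cV(\wP_{\Eins_G^{\vee}})$ with cokernel $\cV(\wM_{\Eins_T^{\vee}})$, so after tensoring with $R^{\psi}/\rr$ and using that $\rho^{un}|_{R^{\psi}/\rr}$ surjects onto $\tilde\psi_2$, one reads off that $\cV(\wM_{\Eins_T^{\vee}})$ corresponds to $\tilde\psi_2$. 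Dually, the sequence \eqref{2B} shows that $\cV(\wM_{\alpha^{\vee}})$ corresponds to $\tilde\psi_1$, which after the same computation with characters yields the character $\psi_2\otimes\psi_1\varepsilon^{-1}$, proving the second isomorphism. As a sanity check, the reduction mod $\varpi$ of the two Banach spaces are, respectively, lifts of $\Indu{P}{G}{\Eins_T}=\{\Eins_G,\Sp\}$ and of $\Indu{P}{G}{\alpha}=\pi_{\alpha}$, which matches the block structure of $\wM_{\Eins_T^{\vee}}\otimes_{\OO}k$ and $\wM_{\alpha^{\vee}}\otimes_{\OO}k$ respectively.
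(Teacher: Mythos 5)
Your first paragraph and the Colmez computation giving $\cV(\wM_{\Eins_T^{\vee}}/\nn_0\wM_{\Eins_T^{\vee}})_L\cong\chi_1$ are fine, and the mod-$\varpi$ reduction would indeed pin $\chi_1$ down \emph{once you know} $\chi_1\in\{\psi_1,\psi_2\}$. But the ``technical obstacle'' paragraph, which is supposed to establish exactly this, is where the argument breaks. First, the conclusion you draw there is the wrong character: you assert that $\cV(\wM_{\Eins_T^{\vee}})$ is a rank-one \emph{quotient} of the universal reducible family coming from $\tilde\psi_2$, and hence that specialization gives $\psi_2$. That directly contradicts the answer $\chi_1=\psi_1$ you extracted from the mod-$\varpi$ reduction, and indeed the correct identification (which is what the stated isomorphism requires) is $\cV(\wM_{\Eins_T^{\vee}}/\nn_0)_L\cong\psi_1$, the \emph{sub}, not the quotient.

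Second, the logic of that paragraph is a non-sequitur. From \eqref{1B} you only get that $\cV(\wM_{\Eins_T^{\vee}})$ is the cokernel of $\cV(\wP_{\pi_\alpha^{\vee}})\hookrightarrow\cV(\wP_{\Eins_G^{\vee}})$. Knowing that $\rho^{un}|_{R^\psi/\rr}=\cV(\wP_{\pi_\alpha^{\vee}})|_{R^\psi/\rr}$ surjects onto $\tilde\psi_2$ tells you nothing about the cokernel of the inclusion into $\cV(\wP_{\Eins_G^{\vee}})$, a module you have not described; and tensoring \eqref{1B} with $R^\psi/\rr$ is not obviously exact. The paper works with a different chain of sequences entirely: combine \eqref{2B} and \eqref{3B} into $0\to\wM_{\Eins_{T,0}^{\vee}}\to\wP_{\pi_\alpha^{\vee}}/\rr\wP_{\pi_\alpha^{\vee}}\to\wM_{\alpha^{\vee}}\to0$, reduce mod $\nn_0$ using flatness of $\wM_{\alpha^{\vee}}$ over $R^\psi/\rr$, and note via \eqref{0B} that $\cV(\wM_{\Eins_{T,0}^{\vee}}/\nn_0)\cong\cV(\wM_{\Eins_T^{\vee}}/\nn_0)$. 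This realizes $\cV(\wM_{\Eins_T^{\vee}}/\nn_0)$ as the unique rank-one $\gal$-\emph{subobject} of a lattice in $\rho_x$, forcing $\psi_1$ (and dually $\cV(\wM_{\alpha^{\vee}}/\nn_0)\cong\psi_2$ as the quotient). If you prefer to stay with your route, the clean fix is to drop \eqref{1B} altogether and instead invoke Corollary \ref{NgIIkill}: $\cV(\wM_{\Eins_T^{\vee}}/\nn_0)$ is killed by $g^2-T_\nn(g)g+\varepsilon(g)$, so as a character it must be $\psi_1$ or $\psi_2$, after which your mod-$\varpi$ observation finishes the identification.
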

\begin{proof} Lemma \ref{imageofr} identifies $\rr$ with $\wa_{11}$, and by the definition of $\wa_{11}$, we have that $\wP_{\pi_{\alpha}^{\vee}}/\wa_{11} \wP_{\pi^{\vee}_{\alpha}}$
is the quotient of $\wP_{\pi^{\vee}_{\alpha}}$ by the submodule generated by the images of all endomorphisms of $\wP_{\pi^{\vee}_{\alpha}}$, whose image 
lies in the first term $\wP_{\Sp^{\vee}}$ of \eqref{2B}. Now using the fact that $\Hom_{\dualcat(\OO)}(\wP_{\pi^{\vee}_{\alpha}}, \wM_{\Eins^{\vee}_{T, 0}})=0$, see Lemma 
\ref{allvanish}, we deduce that this submodule is precisely the image of the first arrow in \eqref{3B}. Hence, we obtain an exact sequence:
\begin{equation}\label{gohome}
0\rightarrow \wM_{\Eins_{T,0}^{\vee}}\rightarrow \wP_{\pi_{\alpha}^{\vee}}/\rr  \wP_{\pi_{\alpha}^{\vee}}\rightarrow \wM_{\alpha^{\vee}}\rightarrow 0.
\end{equation}
As $\wM_{\alpha^{\vee}}$ is $R^{\psi}/\rr$-flat, see Corollary \ref{headM}, and $\nn_0$ contains $\rr$ by applying 
$R^{\psi}/\nn_0\wtimes_{R^{\psi}/\rr}$
we obtain an exact sequence:
\begin{equation}\label{gohome1}
0\rightarrow \wM_{\Eins_{T,0}^{\vee}}/ \nn_0  \wM_{\Eins_{T,0}^{\vee}}
\rightarrow \wP_{\pi_{\alpha}^{\vee}}/\nn_0 \wP_{\pi_{\alpha}^{\vee}}\rightarrow \wM_{\alpha^{\vee}}/\nn_0\wM_{\alpha^{\vee}}\rightarrow 0.
\end{equation} 
Applying $R^{\psi}/\nn_0\wtimes_{R^{\psi}/\rr}$ to \eqref{0B} gives an exact sequence: 
\begin{equation}\label{gohome2}
\wM_{\Eins_{T,0}^{\vee}}/ \nn_0  \wM_{\Eins_{T,0}^{\vee}}\rightarrow \wM_{\Eins_T^{\vee}}/\nn_0  \wM_{\Eins_T^{\vee}}\rightarrow 
\OO\wtimes_{R^{\psi}} R^{\psi}/\nn_0\rightarrow 0.
\end{equation}
Lemma \ref{thruOrd} implies that $\cV(\wM_{\Eins_T^{\vee}}/\nn_0  \wM_{\Eins_T^{\vee}})\neq 0$. Since $\cV$ is exact and it 
kills the representations on which $G$ acts trivially we deduce that 
\begin{equation}\label{gohome3}
\cV(\wM_{\Eins_T^{\vee}}/\nn_0  \wM_{\Eins_T^{\vee}})\cong \psi_1, \quad \cV(\wM_{\alpha^{\vee}}/\nn_0  \wM_{\alpha^{\vee}})\cong \psi_2.
\end{equation}
Lemma \ref{thruOrd} says that $\Hom^{cont}_{\OO}(\wM_{\Eins_T^{\vee}}/\nn_0 \wM_{\Eins_T^{\vee}}, L)$ and    
$\Hom^{cont}_{\OO}(\wM_{\alpha^{\vee}}/\nn_0 \wM_{\alpha^{\vee}}, L)$ are parabolic inductions of unitary characters. 
As $\VV((\Indu{P}{G}{\chi_1\otimes \chi_2\varepsilon^{-1}})_{cont})\cong \chi_2$ and the central character is trivial we deduce the assertion.
\end{proof}

\subsection{Banach space representations}\label{Banachagain}

Let $\psi: T\rightarrow L^{\times}$ be a unitary character. It is shown in \cite[5.3.4]{emcoates} that if 
$\psi\neq \psi^s$ then $(\Indu{P}{G}{\psi})_{cont}$ is irreducible and otherwise $\psi$ factors through $\det$, and 
so extends to $\psi: G\rightarrow L^{\times}$ and we have a non-split exact sequence of admissible unitary $L$-Banach space representations
\begin{equation}\label{completeStein}
0\rightarrow \psi\rightarrow (\Indu{P}{G}{\psi})_{cont}\rightarrow \widehat{\Sp}\otimes \psi\rightarrow 0,
\end{equation}
where $\widehat{\Sp}$ is the universal unitary completion of the smooth Steinberg representation over $L$. Moreover, 
$\widehat{\Sp}$ is irreducible, see \cite[4.5.1]{breuilL}, \cite[5.1.8 (1)]{emcoates}.

Let $\zeta: Z\rightarrow L^{\times}$ be a continuous unitary character and let $\Pi$ be an admissible
 unitary $L$-Banach space representation of $G$ with a central character $\zeta$ and let $\Theta$ be 
an open bounded $G$-invariant lattice in $\Pi$. Let $\eta: \Qp^{\times}\rightarrow k^{\times}$ be a smooth character.

\begin{lem}\label{liftetatwist}  If $\Theta\otimes_{\OO} k$ 
contains $\eta\circ \det$, $\Sp\otimes\eta\circ \det$ or $(\Indu{P}{G}{\alpha})\otimes\eta\circ\det$ as a subquotient 
then there exist a unique continuous unitary character $\tilde{\eta}: \Qp^{\times}\rightarrow L^{\times}$ such that
$\zeta=\tilde{\eta}^2$ and $\tilde{\eta}\equiv \eta \pmod{\pL}$. 
\end{lem}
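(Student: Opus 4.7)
The plan is to construct $\tilde\eta$ explicitly by splitting off the pro-$p$ part and using that $p$ is odd. First I would check what the central character condition really says. Each of $\Eins$, $\Sp$ and $\Indu{P}{G}{\alpha}$ has trivial central character: the first two are obvious, and for the principal series note that $\alpha$ restricted to $Z$ (identified with $\Qp^\times$ via $a\mapsto \mathrm{diag}(a,a)$) is $a\mapsto \omega(a\cdot a^{-1})=1$. Hence the twist by $\eta\circ\det$ has central character $a\mapsto \eta(a)^2$, so the reduction $\bar\zeta$ of $\zeta$ modulo $\pL$ equals $\eta^{2}$ on $\Qp^\times\cong Z$. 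Any $\tilde\eta$ as in the statement must satisfy $\tilde\eta^{2}=\zeta$ after this identification, so the problem reduces to finding a continuous character $\tilde\eta:\Qp^\times\to L^\times$ with $\tilde\eta^{2}=\zeta$ and $\tilde\eta\equiv\eta\pmod{\pL}$.

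Next I would introduce the Teichm\"uller lift $\tilde\eta_{0}:\Qp^\times\to L^\times$ of $\eta$: since $\eta$ is smooth and takes values in $k^\times\subset \mu_{q-1}(k)$, it is trivial on $1+p\Zp$ and determined by $\eta(p)\in k^\times$ and $\eta|_{\mu_{p-1}}$. Define $\tilde\eta_{0}$ to be trivial on $1+p\Zp$ and to take Teichm\"uller lifts of the corresponding values elsewhere; this lies in $L^\times$ because $\mu_{q-1}(L)\subset \OO^\times$. Then $\chi:=\zeta\,\tilde\eta_{0}^{-2}$ is a continuous character with reduction $\bar\zeta\cdot\eta^{-2}=1$, so $\chi$ lands in $1+\pL$.

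The key point is now that $1+\pL$ is a pro-$p$ group, and since $p\ge 5$ is odd, squaring is a continuous automorphism of $1+\pL$. In particular the squaring map on $\Hom^{cont}(\Qp^\times,1+\pL)$ is bijective, so there is a unique continuous $\mu:\Qp^\times\to 1+\pL$ with $\mu^{2}=\chi$. Setting $\tilde\eta:=\tilde\eta_{0}\cdot\mu$ gives $\tilde\eta^{2}=\tilde\eta_{0}^{2}\mu^{2}=\zeta$, and $\tilde\eta\equiv\tilde\eta_{0}\equiv\eta\pmod{\pL}$ because $\mu$ has trivial reduction.

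Uniqueness is easy: if $\tilde\eta'$ is any other lift, the ratio $\tilde\eta/\tilde\eta'$ is a continuous character of $\Qp^\times$ squaring to the trivial character, so its values lie in $\{\pm 1\}$; but it also reduces to $1$ modulo $\pL$, and $-1\not\equiv 1\pmod{\pL}$ since $p$ is odd, hence $\tilde\eta=\tilde\eta'$. I do not expect a real obstacle in this lemma; the only thing to keep straight is that the square root of $\chi$ stays inside $L^\times$ rather than escaping to a larger field, which is exactly what the pro-$p$ structure of $1+\pL$ guarantees.
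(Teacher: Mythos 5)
Your proof is correct and follows essentially the same route as the paper: reduce to $\zeta\equiv\eta^{2}\pmod{\pL}$ via the central character of the subquotient, use the Teichm\"uller lift so that $\zeta[\eta]^{-2}$ lands in $1+\pL$, extract the square root there (the paper does this by the binomial power series, you by noting squaring is an automorphism of the pro-$p$ group $1+\pL$ — the same fact), and get uniqueness from $-1\not\equiv 1\pmod{\pL}$ since $p$ is odd. No gaps.
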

\begin{proof} Since $\Pi$ is unitary, the central character $\zeta$ is unitary and $Z$ acts on $\Theta\otimes_{\OO} k$ by the  character $\zeta$ modulo $\pL$. Since the central character of $\eta\circ \det$, $\Sp\otimes\eta\circ \det$ and 
 $(\Indu{P}{G}{\alpha})\otimes\eta\circ \det$ is $\eta^2$, we deduce that $\zeta \equiv \eta^2\pmod{\pL}$.
Let $[\eta]: \Qp^{\times}\rightarrow \OO^{\times}$ be the Teichm\"uller lift of $\eta$. Then 
$\zeta [\eta]^{-2}$ takes values in $1+\pL$. Since $p\neq 2$ we may take a square root by the usual power
series expansion. Let $\tilde{\eta}:= [\eta] \sqrt{\zeta [\eta]^{-2}}$. This proves existence. For the uniqueness 
we may assume that both $\eta$ and $\zeta$ are trivial, in which case the assertion follows since (as $p\neq 2$) the equation
$X^2-1$ has a unique solution in $L$, which is congruent to $1$ modulo $\pL$.
\end{proof}

\begin{prop}\label{boundcases} Suppose that $\Pi$ is absolutely irreducible  then:
\begin{itemize}
\item[(i)] if $\Theta\otimes_{\OO} k$ contains $\eta\circ \det$ and does not contain 
$(\Indu{P}{G}{\alpha})\otimes \eta\circ \det$ as subquotients and
\begin{itemize} 
\item[(a)] if $\Theta\otimes_{\OO} k$ contains  $\Sp\otimes \eta\circ \det$ as a subquotient
then $\Pi\cong (\Indu{P}{G}{\psi})_{cont}$ and $\overline{\Pi}\cong (\Eins\oplus\Sp) \otimes\eta \circ \det$;
\item[(b)] if $\Theta\otimes_{\OO} k$ does not contain $\Sp\otimes \eta\circ \det$ as a subquotient
then $\Pi\cong \tilde{\eta}\circ \det$ and $\overline{\Pi}=\eta\circ \det$;
\end{itemize}
\item[(ii)]  if $\Theta\otimes_{\OO} k$ does not contain $\eta\circ \det$ and contains $\Sp\otimes \eta\circ \det$ as subquotients
then $\Pi\cong \widehat{\Sp}\otimes \tilde{\eta}\circ \det$ and $\overline{\Pi}\cong \Sp\otimes \eta\circ \det$.
\item[(iii)] if  $\Theta\otimes_{\OO} k$  contains $(\Indu{P}{G}{\alpha})\otimes \eta\circ \det$ and does not contain 
$\Sp \otimes \eta\circ\det$ as subquotients then $\Pi\cong (\Indu{P}{G}{\psi})_{cont}$ and 
$\overline{\Pi}\cong (\Indu{P}{G}{\alpha})\otimes \eta\circ \det$.
\end{itemize}
\end{prop}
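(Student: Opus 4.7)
The plan is to reduce to the case $\eta = \Eins$, $\zeta = \Eins$ by twisting with $\tilde\eta^{-1} \circ \det$ via Lemma~\ref{liftetatwist}; this places $\Pi$ in $\Ban^{\mathrm{adm}}_{G, \Eins}(L)^{\BB}$ with $\BB = \{\Eins, \Sp, \pi_\alpha\}$, and the Schikhof dual $\Theta^d$ lies in $\dualcat(\OO)^{\BB}$ by Lemma~\ref{contextGL2}. Set $\md(\Pi) := \Hom_{\dualcat(\OO)}(\wP_{\BB}, \Theta^d) \otimes_{\OO} L$. Because $\wE_{\BB}$ is a finitely generated module over its noetherian centre $R^{\psi}$ (Lemma~\ref{fgtfree}, Theorem~\ref{ZNGII}), Proposition~\ref{longproof} gives $\dim_L \md(\Pi) < \infty$, and Proposition~\ref{modulequotientnew} together with the analogue of Corollary~\ref{absirre} makes $\md(\Pi)$ an absolutely irreducible $\wE_{\BB}[1/p]$-module, so that $R^{\psi}[1/p]$ acts through a residue character at some maximal ideal $\nn$. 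Lemma~\ref{mult=rank0} then translates the hypotheses of each case into vanishing/nonvanishing statements for the components $e_1 \md(\Pi)$, $e_2 \md(\Pi)$, $e_3 \md(\Pi)$ (corresponding to $\pi_\alpha$, $\Sp$, $\Eins$).

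I would next dichotomize on whether $\nn$ contains the reducible ideal $\rr = (c_0, c_1)$. If $\nn \not\supseteq \rr$, the Morita equivalence of Proposition~\ref{invertc} forces all three $e_i \md(\Pi)$ to be non-zero, so $\bar\Pi$ simultaneously contains $\Eins$, $\Sp$, and $\pi_\alpha$, contradicting every case of the proposition. Hence $\nn \supseteq \rr$, and the corresponding Galois representation is reducible of shape $0 \to \psi_1 \to V \to \psi_2 \to 0$ with $\psi_1 \equiv \Eins$, $\psi_2 \equiv \omega \pmod{\varpi}$. Proposition~\ref{gohome4} then identifies the two candidate irreducible admissible unitary $L$-Banach representations with trivial central character lying over $\nn$ as $(\Indu{P}{G}{\psi_1 \otimes \psi_2 \varepsilon^{-1}})_{cont}$ (with reduction the semisimplification of $\Indu{P}{G}{\Eins_T}$, namely $\Eins \oplus \Sp$) and $(\Indu{P}{G}{\psi_2 \otimes \psi_1 \varepsilon^{-1}})_{cont}$ (with reduction $\pi_\alpha$).

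Case (i)(b) is the exception: $\md_\Sp = \md_{\pi_\alpha} = 0$ so Lemma~\ref{Z1} forces $G$ to act trivially on every finite-length quotient of $\Theta^d$, hence on $\Theta$ and on $\Pi$; absolute irreducibility then gives $\dim_L \Pi = 1$, and untwisting yields $\Pi \cong \tilde\eta \circ \det$. Cases (i)(a) and (iii) now follow by matching the reductions: (i)(a) pins down $\Pi \cong (\Indu{P}{G}{\psi_1 \otimes \psi_2 \varepsilon^{-1}})_{cont}$, and (iii) pins down $\Pi \cong (\Indu{P}{G}{\psi_2 \otimes \psi_1 \varepsilon^{-1}})_{cont}$, so that after untwisting we obtain $\Pi \cong (\Indu{P}{G}{\psi})_{cont}$ in both cases. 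For case (ii), no generic reducible $\nn$ is compatible with the hypothesis that $\Sp$ but neither $\Eins$ nor $\pi_\alpha$ appears in $\bar\Pi$, since the two candidate reductions above always contribute $\Eins$ together with $\Sp$, or $\pi_\alpha$. Thus $\nn$ must be the special point $(c_0, c_1, d_0, d_1)$ of Corollary~\ref{last3}, where $\psi_1 = \Eins$ and $\psi_2 = \varepsilon$; then $(\Indu{P}{G}{\Eins})_{cont}$ is reducible as in \eqref{completeStein} with constituents $\Eins$ and $\widehat{\Sp}$, and the absence of $\Eins$ in $\bar\Pi$ forces $\Pi \cong \widehat{\Sp}$, untwisting to $\widehat{\Sp} \otimes \tilde\eta \circ \det$.

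The main technical obstacle is the case (ii) analysis: forcing $\nn$ to be the special point rather than a generic reducible ideal, and correctly identifying $\widehat{\Sp}$ as the unique irreducible Banach subrepresentation at this point whose reduction is $\Sp$ alone. This depends on combining the module-theoretic input of Propositions~\ref{invertc}, \ref{invertd}, and Lemma~\ref{nrmod} with the explicit description of $\cV(\wP_{\Eins_G^\vee}/\nn_0 \wP_{\Eins_G^\vee})$ via Corollary~\ref{last3} and the structure of $\wE_\BB$ over $R^\psi$ established in \S\ref{Thecentre}.
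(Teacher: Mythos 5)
Your strategy is viable and not circular (everything you invoke is proved before \S\ref{Banachagain}), but it is genuinely different from the paper's argument and, as written, it has one real incompleteness. The step where you ``pin down'' $\Pi$ by matching reductions presupposes that the irreducible objects of $\Ban^{\mathrm{adm. fl}}_{G,\zeta}(L)^{\BB}$ lying over a reducible maximal ideal $\nn$ of $R^{\psi}[1/p]$ are \emph{exactly} the ones you list (the two continuous principal series at a generic reducible point; the character, $\widehat{\Sp}$ and the remaining principal series at the special point of Corollary \ref{last3}). Proposition \ref{gohome4} and \eqref{completeStein} only \emph{produce} these representations; they do not show there are no others over $\nn$. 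That exhaustion is precisely the content of Proposition \ref{NgIIred}(i)--(ii), which the paper proves only after Proposition \ref{boundcases}, and its proof is not a formality: one passes to the quotient by the thick subcategory of representations with trivial $\SL_2(\Qp)$-action (Lemma \ref{Z1}), identifies the quotient with finite-length modules over the ring \eqref{ringqcat} via Theorem \ref{furtherDBan}, and bounds the number of simple modules at $\nn$ by Lemma \ref{nrmod}. You name these tools in your closing sentence but do not carry out the count, and without it cases (i)(a), (iii) and especially (ii) are not closed. (Your other steps are sound: the dichotomy via Proposition \ref{invertc} does work, since faithfulness of $\mathrm{Q}$ for each of the three choices of $\wP_{\ast}$ forces $e_1\md(\Pi), e_2\md(\Pi), e_3\md(\Pi)$ all non-zero when $\nn\not\supseteq\rr$; and case (i)(b) via Lemma \ref{Z1} is complete and does not even need the ideal $\nn$.)

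For comparison, the paper's proof is much lighter and bypasses the deformation-theoretic structure of $\wE_{\BB}$ entirely. After the same twist (Lemma \ref{liftetatwist}), it dualizes \eqref{1JNG}, \eqref{alphaJNG}, \eqref{resSpNG} and lifts them to $\OO$ by Corollary \ref{projaretfree3}. For (i), the sequence $0\rightarrow\wP_{\pi_{\alpha}^{\vee}}\rightarrow\wP_{\Eins_G^{\vee}}\rightarrow\wM\rightarrow0$ together with $\Hom_{\dualcat(\OO)}(\wP_{\pi_{\alpha}^{\vee}},\Theta^d)=0$ and $\Hom_{\dualcat(\OO)}(\wP_{\Eins_G^{\vee}},\Theta^d)\neq0$ gives $\Hom_{\dualcat(\OO)}(\wM,\Theta^d)\neq0$, and Proposition \ref{QofM} yields the dichotomy character versus continuous principal series, which (a) and (b) separate by the reduction; (iii) is the same argument with $\wM_{\alpha^{\vee}}$; for (ii) one gets a non-zero map from $((\widehat{\Sp})^0)^d$ to $\Theta^d$, hence a non-zero map $\Pi\rightarrow\widehat{\Sp}$ of irreducible admissible Banach representations, hence an isomorphism. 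This costs only the Ext computations of \S\ref{hextII} and Proposition \ref{QofM}, whereas your route requires Theorem \ref{ZNGII} and Lemma \ref{fgtfree} (hence Theorem \ref{varphisoNGII}, the hardest input of \S\ref{Thecentre}) plus the fibre-by-fibre classification. What your route buys is a uniform picture, in effect proving Proposition \ref{NgIIred} and Proposition \ref{boundcases} together; but to be a proof you must actually supply the counting argument at reducible $\nn$ rather than assert it.
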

\begin{proof} After twisting by $\tilde{\eta}^{-1}\circ \det$, constructed in Lemma \ref{liftetatwist}, we may assume that 
$\eta$ and $\zeta$ are trivial. Let $\pi_{\alpha}=\Indu{P}{G}{\alpha}$ and let $\wP_{\Eins_G^{\vee}}$ , $\wP_{\Sp^{\vee}}$ and 
$\wP_{\pi_{\alpha}^{\vee}}$ be projective envelopes of $\Eins_G^{\vee}$, $\Sp^{\vee}$ and $\pi_{\alpha}^{\vee}$ in 
$\dualcat_{G/Z}(\OO)$. 
Let $\wP_{\Eins_T^{\vee}}$ be a projective envelope of the trivial representation of $T$ in $\dualcat_{T/Z}(\OO)$ and 
let $\wM_{\Eins_T^{\vee}}:=(\Indu{P}{G}{(\wP_{\Eins_T^{\vee}})^{\vee}})^{\vee}$. Recall that \eqref{1B} is an exact sequence:
\begin{equation}\label{liftJ1NG}
0\rightarrow \wP_{\pi_{\alpha}^{\vee}}\rightarrow \wP_{\Eins_G^{\vee}}\rightarrow \wM_{\Eins_T^{\vee}}\rightarrow 0.
\end{equation} 
Lemma  \ref{contextGL2} says that the Schikhof dual $\Theta^d$ is an object of $\dualcat(\OO)$. Suppose
that $\Theta\otimes_{\OO} k$ contains $\Eins$  and does not contain $\pi_{\alpha}$ as  subquotients. Then
Lemma \ref{pisub} implies that $\Hom_{\dualcat(\OO)}(\wP_{\pi_{\alpha}^{\vee}}, \Theta^d)=0$ and 
$\Hom_{\dualcat(\OO)}(\wP_{\Eins_G^{\vee}}, \Theta^d)\neq 0$. Using \eqref{liftJ1NG} we get 
$\Hom_{\dualcat(\OO)}(\wM_{\Eins_T^{\vee}}, \Theta^d)\neq 0$.
The assertion in  (i)  follows from Proposition \ref{QofM}.

Let $(\Indu{P}{G}{\Eins})_{cont}^0$ be a unit ball in $(\Indu{P}{G}{\Eins})_{cont}$ with respect to the supremum norm. Let $(\widehat{\Sp})^0$ be the 
image of $(\Indu{P}{G}{\Eins})_{cont}^0$ inside $\widehat{\Sp}$, then $(\widehat{\Sp})^0$  is an open bounded $G$-invariant lattice in 
$\widehat{\Sp}$. Since
$(\Indu{P}{G}{\Eins})_{cont}^0\otimes_{\OO} k\cong \Indu{P}{G}{\Eins}$ we deduce that $(\widehat{\Sp})^0\otimes_{\OO} k \cong \Sp$ and hence 
$((\widehat{\Sp})^0)^d\otimes_{\OO} k \cong \Sp^{\vee}$. Now using \eqref{resSpNG} and Corollary  
\ref{projaretfree3} we get an exact sequence 
$$ \wP_{\Eins_G^{\vee}}^{\oplus 2} \rightarrow \wP_{\Sp^{\vee}}\rightarrow ((\widehat{\Sp})^0)^d\rightarrow 0.$$
If $\Theta\otimes_{\OO} k$ contains $\Sp$ and does not contain $\Eins$ then $\Hom_{\dualcat(\OO)}(\wP_{\Sp^{\vee}}, \Theta^d)\neq 0$
and $\Hom_{\dualcat(\OO)}(\wP_{\Eins_G^{\vee}}, \Theta^d)=0$. Hence, $\Hom_{\dualcat(\OO)}(((\widehat{\Sp})^0)^d, \Theta^d)\neq 0$ and 
so dually $\Hom_{L[G]}^{cont}(\Pi, \widehat{\Sp})\neq 0$. As both representations are irreducible and admissible we deduce that 
$\Pi\cong \widehat{\Sp}$.

The proof of  (iii) is identical  to the proof of (i), using \eqref{2B} instead of \eqref{1B}, and $\wM_{\alpha^{\vee}}$ instead of $\wM_{\Eins_T^{\vee}}$.
 \end{proof}

\begin{thm}\label{mainNGII} Suppose that $\Pi$ is absolutely irreducible and $\Theta\otimes_{\OO} k$ contains 
$\eta\circ \det$, $\Sp\otimes \eta\circ \det$ or $(\Indu{P}{G}{\alpha})\otimes \eta\circ \det$ as a subquotient then 
$\overline{\Pi}$ is contained in $(\Eins \oplus \Sp \oplus \Indu{P}{G}{\alpha})\otimes \eta \circ \det$.
Moreover, if the inclusion is not an isomorphism then we are in one of the cases of Proposition \ref{boundcases}.
\end{thm}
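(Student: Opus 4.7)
The plan is to combine Proposition \ref{boundcases} with the structural description of $\wE_{\BB}$ obtained in \S \ref{Thecentre}. After twisting by $\tilde{\eta}^{-1}\circ \det$ via Lemma \ref{liftetatwist}, I may assume $\eta$ and $\zeta$ are trivial, so that $\BB = \{\Eins, \Sp, \pi_{\alpha}\}$. The block decomposition of Proposition \ref{blockdecompB} places $\Pi$ in $\Ban^{\mathrm{adm}}_{G,\zeta}(L)^{\BB}$, and by Lemma \ref{contextGL2} the Schikhof dual $\Theta^d$ lies in $\dualcat(\OO)^{\BB}$, so every irreducible subquotient of $\overline{\Pi}$ already lies in $\BB$.

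The proof splits according to which of $\Eins, \Sp, \pi_{\alpha}$ actually occur as subquotients of $\Theta \otimes_{\OO} k$. If at least one is missing, then the hypothesis of one of the four sub-cases of Proposition \ref{boundcases} is satisfied and identifies $\Pi$ and $\overline{\Pi}$ explicitly, placing us in the ``moreover'' clause of the theorem; the two remaining two-element configurations $\{\Eins,\pi_{\alpha}\}$ and $\{\Sp,\pi_{\alpha}\}$ trigger \ref{boundcases}(iii) and \ref{boundcases}(ii) respectively and are vacuous because the predicted $\overline{\Pi}$ would miss a subquotient we assumed is present.

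The interesting case is when all three of $\Eins, \Sp, \pi_{\alpha}$ occur in $\Theta \otimes_{\OO} k$, where I must prove each occurs with multiplicity exactly $1$. Theorem \ref{ZNGII} identifies the centre of $\wE_{\BB}$ with the noetherian ring $R^{\psi}$, and Lemma \ref{fgtfree} says $\wE_{\BB}$ is a finitely generated $R^{\psi}$-module. Hence Proposition \ref{longproof} applies: $\md(\Pi):=\Hom_{\dualcat(\OO)}(\wP_{\BB}, \Theta^d)_L$ is finite-dimensional over $L$, and Corollary \ref{absirre} shows it is absolutely irreducible as a right $\wE_{\BB}[1/p]$-module. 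The centre therefore acts through a unique maximal ideal $\nn$ of $R^{\psi}[1/p]$ with residue field $L$, and by Lemma \ref{mult=rank0} the hypothesis that all three irreducibles appear translates into $e_i\md(\Pi)\neq 0$ for each $i=1,2,3$.

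The main obstacle, and the crux of the argument, is ruling out the ``reducible'' maximal ideals $\nn\supseteq \rr$. If $\nn\neq \nn_0:=(c_0,c_1,d_0,d_1)$, picking $c$ in $(c_0, c_1, d_0, d_1)\setminus \nn$ and combining Proposition \ref{invertd} with Lemma \ref{nrmod} and Proposition \ref{gohome4} forces $\Pi$ to be one of the two parabolic inductions $(\Indu{P}{G}{\psi_i\otimes \psi_{3-i}\varepsilon^{-1}})_{cont}$, whose reductions are $\Eins+\Sp$ or $\pi_{\alpha}$ and therefore have $e_i\md(\Pi)=0$ for some $i$; if $\nn=\nn_0$, Corollary \ref{last3} pins down the Galois parameter as the non-split extension of $\varepsilon$ by $\Eins$, and the only absolutely irreducible Banach space representations with this parameter are a unitary character and a twist of $\widehat{\Sp}$ (both identifiable from the subquotient structure of the reducible $(\Indu{P}{G}{\Eins})_{cont}$), again giving some $e_i\md(\Pi)=0$. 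Both cases contradict our assumption, so $\rr\not\subseteq \nn$. Choosing $c\in \rr\setminus\nn$ and applying Proposition \ref{invertc} successively with $\wP_{\ast}=\wP_{\pi_{\alpha}^{\vee}},\wP_{\Sp^{\vee}},\wP_{\Eins_G^{\vee}}$, each component $e_i\md(\Pi)$ is forced to be $1$-dimensional over $L$, and Lemma \ref{mult=rank0} then yields $\overline{\Pi}\cong \Eins\oplus \Sp\oplus \pi_{\alpha}$.
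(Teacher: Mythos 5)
Your reduction to the case where all three of $\Eins$, $\Sp$, $\pi_{\alpha}$ occur is fine, and your treatment of the maximal ideals $\nn$ of $R^{\psi}[1/p]$ not containing $\rr$ (via Proposition \ref{invertc}) and of the reducible $\nn\neq(c_0,c_1,d_0,d_1)$ (via Proposition \ref{invertd} and Lemma \ref{nrmod}) does yield the contradiction you want. The genuine gap is the remaining sub-case $\nn=\nn_0=(c_0,c_1,d_0,d_1)$. There you assert that ``the only absolutely irreducible Banach space representations with this parameter are a unitary character and a twist of $\widehat{\Sp}$''; this is false. The fiber over $\nn_0$ also contains the irreducible principal series $(\Indu{P}{G}{\tilde{\eta}\varepsilon\otimes\tilde{\eta}\varepsilon^{-1}})_{cont}$, whose reduction is $\pi_{\alpha}$ (this is exactly Proposition \ref{NgIIred}(i), proved later via Proposition \ref{gohome4} and Lemma \ref{nrmod}); it is not a subquotient of $(\Indu{P}{G}{\Eins})_{cont}$, so your proposed identification ``from the subquotient structure of the reducible $(\Indu{P}{G}{\Eins})_{cont}$'' cannot produce it, and no argument for the enumeration is given. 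Note also that you cannot fall back on Proposition \ref{invertd} here: every $c\in R^{\psi}\cap\nn_0$ acts as zero, not invertibly, on a module killed by $\nn_0$, so the localization equivalences are unavailable precisely at this point. The conclusion you need (some $e_i\md(\Pi)=0$) is still true, but as written the step is unjustified: to repair it, observe that for the idempotent $e=e_1+e_2$ the cut-down $\md(\Pi)e$ of a simple $\wE_{\BB}[1/p]$-module is zero or simple over $e\wE_{\BB}e=\wE$, and being killed by $\nn_0\supseteq\rr$ it is at most one-dimensional by Lemma \ref{nrmod}, whence $e_1\md(\Pi)=0$ or $e_2\md(\Pi)=0$ — no localization needed.

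The same observation shows your whole dichotomy on the reducible locus is unnecessary, and this is in fact the paper's proof: work with one projective envelope $\wP_{\pi^{\vee}}=e_i\wP_{\BB}$ at a time. Its endomorphism ring is $\End_{\dualcat(\OO)}(\wP_{\pi^{\vee}})\cong R^{\psi}$, which is commutative (Corollary \ref{rings}); since $\Pi$ is absolutely irreducible, $\Hom_{\dualcat(\OO)}(\wP_{\pi^{\vee}},\Theta^d)_L$ is a finite-dimensional absolutely irreducible module over the commutative ring $R^{\psi}[1/p]$, hence one-dimensional, and Lemma \ref{mult=rank0} gives multiplicity one for each of $\Eins$, $\Sp$, $\pi_{\alpha}$ directly. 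Your route through $\wP_{\BB}$, Theorem \ref{ZNGII}, Lemma \ref{fgtfree} and the fiberwise analysis of \S\ref{Thecentre} buys nothing here beyond what the commutativity of the individual endomorphism rings already gives, and it is exactly at the one fiber where the localization machinery breaks down that your argument has the hole.
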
 
\begin{proof} By twisting we may assume that $\zeta$ and $\eta$ are both trivial. Let $\pi$ be either 
$\Eins$, $\Sp$ or $\pi_{\alpha}$ and $\wP_{\pi^{\vee}}$ a projective envelope of $\pi^{\vee}$ in $\dualcat(\OO)$. 
If $\Pi$ is not one of the representations described in Proposition \ref{boundcases} then 
$\overline{\Pi}$ contains $\Eins$, $\Sp$ and $\pi_{\alpha}$. Thus it follows from Lemma \ref{mult=rank0} that 
$\Hom_{\dualcat(\OO)}(\wP_{\pi^{\vee}}, \Theta^d)$ is non-zero. Since by Corollary \ref{rings}, $\End_{\dualcat(\OO)}(\wP_{\pi^{\vee}})\cong R^{\psi}$ 
is commutative and $\Pi$ is absolutely irreducible, we deduce from Theorem \ref{furtherDBan} 
that $\Hom_{\dualcat(\OO)}(\wP_{\pi^{\vee}}, \Theta^d)_L$ is an absolutely irreducible finite dimensional $R^{\psi}[1/p]$-module. 
Hence,  $\Hom_{\dualcat(\OO)}(\wP_{\pi^{\vee}}, \Theta^d)_L$ is one dimensional and Lemma \ref{mult=rank0} implies that 
$\pi$ occurs in $\overline{\Pi}$ with multiplicity $1$.
\end{proof}

Let $\BB=\{\eta\circ \det, \Sp\otimes \eta\circ \det, (\Indu{P}{G}{\alpha})\otimes \eta\circ \det\}$, 
$\pi_{\BB}:=(\Eins \oplus \Sp\oplus \pi_{\alpha})\otimes \eta\circ \det$, 
$\wP_{\BB}$ a projective envelope of $\pi_{\BB}^{\vee}$ in $\dualcat(\OO)$ and $\wE_{\BB}:=\End_{\dualcat(\OO)}(\wP_{\BB})$. 
The ring $\wE_{\BB}$ is a finitely generated module over its centre, and the centre is naturally isomorphic to 
$R^{\mathrm{ps}, \varepsilon\zeta}_{\chi}$, see Theorem \ref{ZNGII} and Remark \ref{pseudocentre}, where 
$R^{\mathrm{ps}, \varepsilon\zeta}_{\chi}$ is the universal deformation ring 
parameterizing $2$-di\-men\-sio\-nal pseudocharacters with determinant $\zeta\varepsilon$ lifting $\chi:=\eta + \omega \eta$. 
Let $\Ban^{\mathrm{adm}}_{G,\zeta}(L)^{\BB}$ be as in Proposition \ref{blockdecompB} and let $\Ban^{\mathrm{adm. fl}}_{G,\zeta}(L)^{\BB}$
be the full subcategory consisting of objects of finite length. Let $\Pi$ be in $\Ban^{\mathrm{adm. fl}}_{G,\zeta}(L)^{\BB}$, 
choose an open bounded $G$-invariant lattice $\Theta$ and let $\md(\Pi):=\Hom_{\dualcat(\OO)}(\wP_{\BB}, \Theta^d)\otimes_{\OO} L$. 
It follows from Proposition \ref{longproof} that $\md(\Pi)$ is a finite dimensional $L$-vector space with continuous $\wE_{\BB}$-action. 
Let $\nn$ be a maximal ideal in $R^{\mathrm{ps}, \varepsilon\zeta}_{\chi}[1/p]$ and
$\Ban^{\mathrm{adm. fl}}_{G, \zeta}(L)^{\BB}_{\nn}$ the full subcategory of $\Ban^{\mathrm{adm. fl}}_{G,\zeta}(L)^{\BB}$
consisting of those $\Pi$ such that $\md(\Pi)$ is killed by a power of $\nn$.

\begin{cor} We have an equivalence of categories 
$$\Ban^{\mathrm{adm. fl}}_{G,\zeta}(L)^{\BB}\cong 
\bigoplus_{\nn\in \MaxSpec R_{\chi}^{\mathrm{ps},\zeta \varepsilon}[1/p]}\Ban^{\mathrm{adm. fl}}_{G,\zeta}(L)^{\BB}_{\nn}.$$
The category $\Ban^{\mathrm{adm. fl}}_{G, \zeta}(L)^{\BB}_{\nn}$ is anti-equivalent to the category 
of modules of finite length of the  
$\nn$-adic completion of $\wE_{\BB}[1/p]$.
\end{cor}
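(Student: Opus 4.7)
The plan is to derive this Corollary by invoking Theorem \ref{furtherDBan} applied to the category $\dualcat(\OO)^{\BB}$ with the projective generator $\wP_{\BB}$, so the work amounts to verifying the two hypotheses of that Theorem in the present setting. Once those hypotheses are checked, the decomposition over maximal ideals of the centre of $\wE_{\BB}[1/p]$ and the anti-equivalence between each summand and modules of finite length over the corresponding $\nn$-adic completion follow directly, and the identification of the centre of $\wE_{\BB}$ with $R^{\mathrm{ps},\zeta \varepsilon}_{\chi}$ (Theorem \ref{ZNGII} together with Remark \ref{pseudocentre}) allows the index set in the decomposition to be rewritten as $\MaxSpec R^{\mathrm{ps},\zeta \varepsilon}_{\chi}[1/p]$.

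First I would verify hypothesis (ii) of Theorem \ref{furtherDBan}, namely that the centre $\wZ$ of $\wE_{\BB}$ is noetherian and that $\wE_{\BB}$ is a finitely generated $\wZ$-module. By Theorem \ref{ZNGII} and Remark \ref{pseudocentre} we have $\wZ \cong R^{\mathrm{ps},\zeta\varepsilon}_{\chi} \cong R^{\psi}$, and by Corollary \ref{RpsirhoA} this ring is isomorphic (after choosing a square root of $\zeta\varepsilon$ to reduce to the setting of the appendix) to $\OO[[x,y,z,w]]/(xw - yz)$, which is noetherian. The finite generation of $\wE_{\BB}$ over its centre is precisely the content of Lemma \ref{fgtfree}. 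So hypothesis (ii) is settled.

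Next I would verify hypothesis (i), namely that $(\wE_{\BB}/\rad \wE_{\BB}) \wtimes_{\wE_{\BB}} \wP_{\BB}$ is a finitely generated $\OO[[H]]$-module that is of finite length in $\dualcat(\OO)^{\BB}$. Since $\wE_{\BB}/\rad \wE_{\BB} \cong \prod_{\pi\in \BB} \End_{\dualcat(\OO)}(\pi^{\vee})$ is a finite product of copies of $k$, the completed tensor product collapses to $\bigoplus_{\pi \in \BB} \pi^{\vee}$, which is visibly of finite length in $\dualcat(\OO)^{\BB}$ with three irreducible summands $\Eins_G^{\vee}$, $\Sp^{\vee}$, $\pi_{\alpha}^{\vee}$ (twisted by $\eta^{\vee}\circ\det$). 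Each of these is finitely generated over $\OO[[H]]$ because smooth finite-length $k$-representations of $G$ with a central character are automatically admissible (as recalled after Proposition \ref{irrkreps}), and admissibility dualizes precisely to finite generation over $\OO[[H]]$.

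With both hypotheses in hand, Theorem \ref{furtherDBan} produces the direct-sum decomposition of $\Ban^{\mathrm{adm.fl}}_{\dualcat(\OO)^{\BB}}/\Ker \md$ indexed by the maximal ideals of $\wZ[1/p]$ together with the anti-equivalence of each block with modules of finite length over the $\nn$-adic completion of $\wE_{\BB}[1/p]$. By Lemma \ref{contextGL2} the category $\Ban^{\mathrm{adm.fl}}_{\dualcat(\OO)^{\BB}}$ coincides with $\Ban^{\mathrm{adm.fl}}_{G,\zeta}(L)^{\BB}$, and by the definition of $\md(\Pi)$ and the equivalence $\md(\Pi) = 0 \iff \Pi = 0$ in $\Ban^{\mathrm{adm.fl}}_{G,\zeta}(L)^{\BB}$ (since $\wP_{\BB}$ is a projective envelope of the direct sum of all irreducibles in the block, so $\Hom_{\dualcat(\OO)}(\wP_{\BB}, \Theta^d) = 0$ forces $\Theta = 0$), the subcategory $\Ker\md$ is trivial. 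There is no serious obstacle here: the only delicate step is ensuring that the appropriate isomorphism $\wZ \cong R^{\mathrm{ps},\zeta\varepsilon}_{\chi}$ is the one already identified in the paper, so that the indexing in the Corollary's statement matches the indexing produced by Theorem \ref{furtherDBan}; this is immediate from Theorem \ref{ZNGII} combined with Remark \ref{pseudocentre}.
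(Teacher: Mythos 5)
Your proposal is correct and follows exactly the same route as the paper, which simply invokes Theorem \ref{furtherDBan} with $\dualcat(\OO)=\dualcat(\OO)^{\BB}$; you have merely spelled out the verification of the two hypotheses (finite generation of $\wE_{\BB}$ over its noetherian centre via Theorem \ref{ZNGII}, Remark \ref{pseudocentre} and Lemma \ref{fgtfree}, and finite length of $(\wE_{\BB}/\rad\wE_{\BB})\wtimes_{\wE_{\BB}}\wP_{\BB}\cong\bigoplus_{\pi\in\BB}\pi^{\vee}$ together with its admissibility) and the triviality of $\Ker\md$, all of which the paper leaves implicit.
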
 
\begin{proof} Apply Theorem \ref{furtherDBan} with $\dualcat(\OO)=\dualcat(\OO)^{\BB}$.
\end{proof}

Let $\Pi$ in $\Ban^{\mathrm{adm. fl}}_{G, \zeta}(L)^{\BB}$ be absolutely irreducible, we say that $\Pi$ is \textit{ordinary} 
if it is isomorphic to  one of the representations in Proposition \ref{boundcases}, otherwise we say that $\Pi$ 
is \textit{non-ordinary}.

The ring $\wE_{\BB}$ is described explicitly in  \S \ref{Thecentre}. However, in many cases one can give 
a simpler description of the category $\Ban^{\mathrm{adm. fl}}_{G,\zeta}(L)^{\BB}_{\nn}$. Let $\nn$
 be a maximal ideal of $R_{\chi}^{\mathrm{ps},\zeta \varepsilon}[1/p]$ with residue field $L$, let $T_{\nn}: \gal\rightarrow L$ be 
the pseudocharacter corresponding to $\nn$ and let 
$\Irr(\nn)$ denote the set (of equivalence classes of) irreducible objects in  
$\Ban^{\mathrm{adm. fl}}_{G,\zeta}(L)^{\BB}_{\nn}$. 

\begin{prop}\label{NgIIred} 
\begin{itemize} 
\item[(i)] if $T_{\nn}= \tilde{\eta}+\tilde{\eta}\varepsilon$ then 
$$\Irr(\nn)=\{ \tilde{\eta}\circ \det, \widehat{\Sp}_L\otimes \tilde{\eta}\circ \det, 
(\Indu{P}{G} {\tilde{\eta}\varepsilon \otimes \tilde{\eta}\varepsilon^{-1}})_{cont}\}.$$
\item[(ii)] if $T_{\nn}= \psi_1 + \psi_2$ with $\psi_1, \psi_2: \gal\rightarrow L^{\times}$ continuous homomorphisms 
and $T_{\nn}\neq \tilde{\eta}+\tilde{\eta}\varepsilon$ then 
 $$\Irr(\nn)=\{ (\Indu{P}{G}{\psi_1\otimes \psi_2 \varepsilon^{-1}})_{cont}, (\Indu{P}{G}{\psi_2\otimes \psi_1 \varepsilon^{-1}})_{cont}\}$$
and $\Ban^{\mathrm{adm. fl}}_{G,\zeta}(L)^{\BB}_{\nn}$ is naturally equivalent to the category of modules of finite length 
of the $\nn$-adic completion of $\wE[1/p]$, see \eqref{ringqcat} for definition and description of $\wE$.
\item[(iii)] if $T_{\nn}$ is irreducible then $\Irr(\nn)=\{\Pi_{\nn}\}$ with $\Pi_{\nn}$ absolutely irreducible non-ordinary.
The category $\Ban^{\mathrm{adm. fl}}_{G,\zeta}(L)^{\BB}_{\nn}$ is equivalent to the category of modules of finite length 
of the $\nn$-adic completion of $R_{\chi}^{\mathrm{ps},\zeta \varepsilon}[1/p]$.
\end{itemize}
\end{prop}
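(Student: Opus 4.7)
The overall approach is to apply the preceding corollary, which identifies $\Ban^{\mathrm{adm. fl}}_{G,\zeta}(L)^{\BB}_{\nn}$ with the category of finite-length modules over $A_{\nn}$, the $\nn$-adic completion of $\wE_{\BB}[1/p]$; enumerating irreducibles then reduces to computing the simples of $A_{\nn}/\nn A_{\nn}$ and matching them with the claimed Banach representations. After twisting by $\tilde{\eta}^{-1}\circ\det$ as in Lemma \ref{liftetatwist}, I may assume $\eta$ is trivial and $\zeta=\varepsilon^{-1}$, so that the ring $R^{\psi}\cong R^{\mathrm{ps},\zeta\varepsilon}_{\chi}$ of \S\ref{Thecentre} governs the centre and the presentation of $\wE_{\BB}$ from the end of that subsection is available.

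For (iii), the absolute irreducibility of $T_{\nn}$ forces $\rr\not\subset \nn\cap R^{\psi}$ by Corollary \ref{A6}; I would pick $c\in\rr\setminus\nn$, apply Proposition \ref{invertc} to obtain after inverting $c$ an $R^{\psi}$-linear equivalence between $\wE_{\BB}$-modules and $R^{\psi}$-modules, and complete at $\nn$ to identify $A_{\nn}$ with the $\nn$-adic completion of $R^{\mathrm{ps},\zeta\varepsilon}_{\chi}[1/p]$. Its unique simple module is its residue field $L$, yielding a unique irreducible $\Pi_{\nn}$; non-ordinariness is automatic because every Banach representation in Proposition \ref{boundcases} has reducible associated pseudocharacter. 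For (ii), the reducibility of $T_{\nn}$ forces $\rr\subset \nn\cap R^{\psi}$, while the hypothesis $T_{\nn}\neq \tilde{\eta}+\tilde{\eta}\varepsilon$ implies that $\nn$ differs from the Steinberg ideal $\nn^{\mathrm{st}}$ of Corollary \ref{last3}. I would then pick $c\in (\nn^{\mathrm{st}}\cap R^{\psi})\setminus\nn$ and invoke Proposition \ref{invertd}, which makes $A_{\nn}$ isomorphic to the $\nn$-adic completion of $\wE[1/p]$ (the ring of \eqref{ringqcat}). Lemma \ref{nrmod} then gives exactly two non-isomorphic one-dimensional simples of $\wE\otimes_{R^{\psi}} R^{\psi}[1/p]/\nn$, so there are exactly two irreducibles in $\Ban^{\mathrm{adm. fl}}_{G,\zeta}(L)^{\BB}_{\nn}$. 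Proposition \ref{gohome4}, combined with Corollary \ref{NgIIkill} to confirm membership in the $\nn$-component, exhibits the two parabolic inductions $(\Indu{P}{G}{\psi_1\otimes\psi_2\varepsilon^{-1}})_{cont}$ and $(\Indu{P}{G}{\psi_2\otimes\psi_1\varepsilon^{-1}})_{cont}$ as the required representatives; they are pairwise non-isomorphic because the hypothesis $T_{\nn}\neq \tilde{\eta}+\tilde{\eta}\varepsilon$ excludes the coincidence $\psi_1=\psi_2\varepsilon^{-1}$.

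Case (i) is the most delicate. Here $\nn=\nn^{\mathrm{st}}$ contains all four parameters $c_0,c_1,d_0,d_1$, and from the explicit presentation of $\wE_{\BB}$ recorded after Corollary \ref{last3} together with the relations \eqref{inter1}--\eqref{inter6} and Lemma \ref{last1}, every off-diagonal generator $\varphi_{12}, \varphi_{21}^k, \varphi_{31}, \varphi_{32}, \varphi_{13}^k, \varphi_{23}^k, \beta$ either lies in $\nn A_{\nn}$ directly or has square in $\nn A_{\nn}$. A direct computation then shows $A_{\nn}/\nn A_{\nn} \cong Le_1\oplus Le_2\oplus Le_3$, giving at most three non-isomorphic one-dimensional simples. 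On the other hand the three listed Banach representations — $\tilde{\eta}\circ\det$, $\widehat{\Sp}\otimes \tilde{\eta}\circ\det$, and $(\Indu{P}{G}{\tilde{\eta}\varepsilon\otimes\tilde{\eta}\varepsilon^{-1}})_{cont}$ — all lie in the block $\BB$ (their reductions modulo $\varpi$ being $\eta\circ\det$, $\Sp\otimes\eta\circ\det$ and $\pi_{\alpha}\otimes\eta\circ\det$ respectively), and each has pseudocharacter $\tilde{\eta}+\tilde{\eta}\varepsilon$: for the character and the Steinberg twist this follows from the exact sequence \eqref{completeStein} together with $\VV(\Sp)=\omega$, while for the parabolic induction it follows from the standard formula for $\VV$ on a continuous principal series combined with Corollary \ref{NgIIkill}. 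Since they are pairwise non-isomorphic (finite-dimensional versus topologically irreducible versus a non-trivial parabolic induction with distinct $\psi$ and $\psi^s$), they exhaust $\Irr(\nn)$.

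The hard part will be part (i): carefully carrying out the bookkeeping to confirm $A_{\nn}/\nn A_{\nn}\cong L^3$ from the explicit relations, and verifying that the functor $\md$ sends each of the three concrete Banach representations to a distinct one-dimensional simple of this semisimple algebra — in particular showing that $\tilde{\eta}\circ\det$ and $\widehat{\Sp}\otimes\tilde{\eta}\circ\det$, whose reductions sit in the same sub-block of $\BB$ containing $\Eins$ and $\Sp$, are nevertheless picked out by distinct idempotents $e_3$ and $e_2$ of $\wE_{\BB}/\nn\wE_{\BB}$. Cases (ii) and (iii) are by contrast essentially formal consequences of the localization theorems \ref{invertc} and \ref{invertd} combined with the multiplicity count of Lemma \ref{nrmod}.
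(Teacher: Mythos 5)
Your cases (ii) and (iii) are essentially equivalent to the paper's argument: for (iii) the paper also uses Proposition \ref{invertc}, and for (ii) your localization at some $c\in(\nn^{\mathrm{st}}\cap R^{\psi})\setminus\nn$ via Proposition \ref{invertd} is a minor reformulation of the paper's passage to the quotient category $\QBan$ (which is governed by the two-by-two ring $\wE$ of \eqref{ringqcat} by Theorem \ref{furtherDBan} and Lemma \ref{Z1}); both then cite Lemma \ref{nrmod}. So those parts are fine.

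Case (i), however, is where your route diverges from the paper and where your argument has genuine gaps. First, the assertion that $A_{\nn}/\nn A_{\nn}\cong Le_1\oplus Le_2\oplus Le_3$ is false as a ring isomorphism: for instance $e_1\wE_{\BB}e_2\cong R^{\psi}\varphi_{12}$ is free of rank one over $R^{\psi}$ by \eqref{X3}, so $\varphi_{12}$ is non-zero in $\wE_{\BB}\otimes_{R^{\psi}}L$. What is true is that the two-sided ideal $\bar{\mathfrak b}$ generated by the off-diagonal maps is the Jacobson radical and $(\wE_{\BB}\otimes_{R^{\psi}}L)/\bar{\mathfrak b}\cong L^3$. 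Second, the claim that each off-diagonal generator ``has square in $\nn A_{\nn}$'' is vacuous (those squares are zero since source and target do not match) and does not establish nilpotence of $\bar{\mathfrak b}$: the length-two composite $\varphi_{31}\circ\varphi_{12}=\varphi_{32}$ is again off-diagonal and not in $\nn$, so one must check the length-three products (the three-cycles such as $\varphi_{12}\circ\varphi_{23}^k\circ\varphi_{31}=c_ke_1$, $\varphi_{13}^k\circ\varphi_{32}\circ\varphi_{21}^l=c_kc_le_1$, $\beta\circ\varphi_{21}^k\circ\varphi_{13}^l=d_kc_le_3$, etc.) to conclude $\bar{\mathfrak b}^3=0$. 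Third, your verification that $\tilde{\eta}\circ\det$ and $\widehat{\Sp}\otimes\tilde{\eta}\circ\det$ lie in the $\nn$-component by computing their ``pseudocharacter'' via $\VV$ does not work as written: $\VV$ kills the one-dimensional representation $\tilde{\eta}\circ\det$ entirely, and $\VV(\widehat{\Sp}\otimes\tilde{\eta}\circ\det)$ is one-dimensional, so neither has an associated two-dimensional pseudocharacter. The correct observation is simply that both are subquotients of $(\Indu{P}{G}{\tilde{\eta}\otimes\tilde{\eta}})_{cont}$, which lies in $\Ban^{\mathrm{adm.\ fl}}_{G,\zeta}(L)^{\BB}_{\nn}$ by Proposition \ref{gohome4}, and $\Ban^{\mathrm{adm.\ fl}}_{G,\zeta}(L)^{\BB}_{\nn}$ is closed under subquotients. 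The paper avoids the entire computation in $\wE_{\BB}$ by the cleaner count: Lemma \ref{nrmod} gives two irreducibles in $\QBan_{\nn}$, passing from $\QBan_{\nn}$ to $\Ban_{\nn}$ adds back exactly the one irreducible $\tilde{\eta}$ on which $\SL_2(\Qp)$ acts trivially, giving the upper bound of three, which matches the three explicitly exhibited. Your plan can be made to work, but the nilpotence of $\bar{\mathfrak b}$ and the membership of the character and the Steinberg twist in the $\nn$-component both require arguments you have not actually supplied.
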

\begin{proof} Suppose that $\nn$ contains the reducible locus in $R^{\mathrm{ps}, \varepsilon\zeta}_{\chi}[1/p]$. Since 
$\Eins\neq \omega$ and the residue field of $\nn$ is $L$, 
we get that $T_{\nn}=\psi_1 + \psi_2$ with $\psi_1, \psi_2: \gal\rightarrow L^{\times}$ continuous homomorphisms.
It follows from Proposition \ref{gohome4} that $\Irr(\nn)$ contains the semi-simplification of 
unitary principal series appearing in (ii). Recall that $(\Indu{P}{G}{\chi_1\otimes \chi_2})_{cont}$ is irreducible 
if and only if $\chi_1\neq \chi_2$. We get that $|\Irr(\nn)|\ge 3$ if $T_{\nn}= \tilde{\eta}+\tilde{\eta}\varepsilon$
and $|\Irr(\nn)|\ge 2$, otherwise. The representations in $\Ban^{\mathrm{adm. fl}}_{G,\zeta}(L)^{\BB}$,
on which $\SL_2(\Qp)$ acts trivially, form a thick subcategory. The quotient category $\QBan^{\mathrm{adm. fl}}_{G,\zeta}(L)^{\BB}$
is equivalent to the category of $\wE[1/p]$-modules of finite length, Theorem \ref{furtherDBan} and Lemma \ref{Z1}.
Since we have fixed a central character and $p>2$ any $\Pi$ in $\Ban^{\mathrm{adm. fl}}_{G,\zeta}(L)^{\BB}$ on which 
$\SL_2(\Qp)$ acts trivially is isomorphic to $\tilde{\eta}^{\oplus m}$. Hence, if $\nn$ does not kill $\md(\tilde{\eta})$ then 
$$\Ban^{\mathrm{adm. fl}}_{G,\zeta}(L)^{\BB}_{\nn}\cong \QBan^{\mathrm{adm. fl}}_{G,\zeta}(L)^{\BB}_{\nn}$$
and the last category is equivalent to the category of modules of finite length of the $\nn$-adic 
completion of $\wE[1/p]$ by Theorem \ref{furtherDBan}. This category has $2$-irreducible objects 
by Lemma \ref{nrmod}. If $\nn$  kills $\md(\tilde{\eta})$
then $ \QBan^{\mathrm{adm. fl}}_{G,\zeta}(L)^{\BB}_{\nn}$ has one irreducible object less than $\Ban^{\mathrm{adm. fl}}_{G,\zeta}(L)^{\BB}_{\nn}$.
Again Lemma \ref{nrmod} allows us to conclude.

Suppose that $\nn$ does not contain the reducible locus then
it follows from Proposition \ref{invertc} that $\Ban^{\mathrm{adm. fl}}_{G,\zeta}(L)^{\BB}_{\nn}$
is equivalent to the category of modules of finite length of the $\nn$-adic completion 
of $R_{\chi}^{\mathrm{ps},\zeta \varepsilon}[1/p]$. 
This category contains only one irreducible object and hence $\Ban^{\mathrm{adm. fl}}_{G,\zeta}(L)^{\BB}_{\nn}$
contains only one irreducible object  $\Pi$. Since all the ordinary representations have already appeared in 
(i) and (ii) and $|\Irr(\nn)|>1$ in those cases, we deduce that $\Pi$ cannot be ordinary.
\end{proof}

\section{\texorpdfstring{$p$-adic Langlands correspondence}{$p$-adic Langlands correspondence}}\label{padicLang}

Let $\Pi$ be a unitary irreducible admissible $L$-Banach space representation of $G$ with a central character. 
We say that $\Pi$ is \textit{ordinary} if $\Pi$ is either a unitary character $\Pi\cong \eta\circ \det$, 
a twist of the universal completion of the smooth Steinberg representation by a unitary character 
$\Pi\cong \widehat{\Sp}\otimes \eta\circ \det$ or $\Pi$ is a unitary parabolic induction of  a unitary character.
We assume throughout that $p\ge 5$.

\begin{thm}\label{redBan} Let $\Pi$ be a unitary admissible absolutely irreducible $L$-Ba\-na\-ch space representation of $G$ with
a central character and let $\Theta$ be an open bounded $G$-invariant lattice in $\Pi$. Then $\Theta\otimes_{\OO} k$ 
is of finite length. Moreover, one of the following holds: 
\begin{itemize}
\item[(i)] $\Theta\otimes_{\OO} k$ is absolutely irreducible supersingular;
\item[(iii)] $\Theta\otimes_{\OO} k$ is irreducible and 
\begin{equation}\label{bctol}
\Theta\otimes_{\OO} l\cong \Indu{P}{G}{\chi\otimes \chi^{\sigma} \omega^{-1}}\oplus \Indu{P}{G}{\chi^{\sigma}\otimes \chi \omega^{-1}},
\end{equation}
where $l$ is a quadratic extension of $k$, $\chi:\Qp^{\times}\rightarrow l^{\times}$ a smooth character and $\chi^{\sigma}$ is a conjugate of 
$\chi$ by the non-trivial element in $\Gal(l/k)$;
\item[(iii)] $(\Theta\otimes_{\OO} k)^{ss}\subseteq (\Indu{P}{G}{\chi_1\otimes \chi_2 \omega^{-1}})^{ss}\oplus  
(\Indu{P}{G}{\chi_2\otimes \chi_1 \omega^{-1}})^{ss}$ for some smooth characters $\chi_1, \chi_2: \Qp^{\times} \rightarrow k^{\times}$.
\end{itemize}
Further, the inclusion in (iii) is not an isomorphism if and only if  $\Pi$ is ordinary.
\end{thm}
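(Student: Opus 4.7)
\textbf{Proof proposal for Theorem \ref{redBan}.} The plan is to reduce the statement, via Corollary \ref{irrsubQabs} and Proposition \ref{blockdecompB}, to the four block cases analyzed in \S\ref{supersingularreps}--\S\ref{nongenericcaseII} and to quote the main result proved there. Pick any irreducible subquotient $\pi'$ of $\Theta\otimes_{\OO} k$ (which is smooth admissible, hence has such a subquotient). By Proposition \ref{blockdecompB}, all irreducible subquotients of $\Theta\otimes_{\OO} k$ lie in the block $\BB$ of $\pi'$, so in particular $\Theta^d$ is an object of $\dualcat(\OO)^{\BB}$ by Lemma \ref{contextGL2} and the block decomposition of Corollary \ref{blockdecompD}. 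By Corollary \ref{irrsubQabs}, either $\pi'$ is absolutely irreducible, or there is a quadratic extension $l/k$ and a smooth character $\chi$ of $\Qp^{\times}$ such that $\pi'\otimes_k l\cong \Indu{P}{G}{\chi\otimes\chi^{\sigma}\omega^{-1}}\oplus\Indu{P}{G}{\chi^{\sigma}\otimes\chi\omega^{-1}}$ with $\chi^{\sigma}$ the Galois conjugate.

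In the absolutely irreducible case, Proposition \ref{blocksoverk} shows that $\pi'$ falls into one of the four classes (i)--(iv) of the introduction. If $\pi'$ is supersingular, Theorem \ref{mainsuper} gives $\overline{\Pi}\cong \pi'$, which is case (i). If $\pi'$ is a generic principal series, Theorem \ref{mainGen} yields $\overline{\Pi}\subseteq \Indu{P}{G}{\chi_1\otimes\chi_2\omega^{-1}}\oplus\Indu{P}{G}{\chi_2\otimes\chi_1\omega^{-1}}$, matching case (iii). In the non-generic case $\pi'\cong \Indu{P}{G}{\chi\otimes\chi\omega^{-1}}$, Theorem \ref{mainNGI} gives $\overline{\Pi}\subseteq \pi'\oplus\pi'$, which again has the shape of (iii) with $\chi_1=\chi_2=\chi$. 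Finally, in the three-element block case, Theorem \ref{mainNGII} gives $\overline{\Pi}\subseteq (\Eins\oplus\Sp\oplus\Indu{P}{G}{\alpha})\otimes\eta\circ\det$; observing that the right-hand side equals $(\Indu{P}{G}{\omega\eta\otimes\eta\omega^{-1}})^{ss}\oplus(\Indu{P}{G}{\eta\otimes\eta})^{ss}$, so this is also case (iii) with $(\chi_1,\chi_2)=(\omega\eta,\eta)$.

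In the non-absolutely irreducible case, I choose a finite extension $L'/L$ with residue field containing $l$. Since $\Pi$ is absolutely irreducible, $\Pi_{L'}$ is irreducible and $\Theta\otimes_{\OO}\OO'$ is an open bounded $G$-invariant lattice whose reduction contains $\pi'\otimes_k l$ as a subquotient. Both summands of $\pi'\otimes_k l$ lie in a generic reducible block over $l$, so Theorem \ref{mainGen} applied to $\Pi_{L'}$ forces $\overline{\Pi_{L'}}\subseteq \pi'\otimes_k l$. Combined with the fact that $\pi'\otimes_k l\subseteq \overline{\Pi_{L'}}=(\Theta\otimes_{\OO} k)\otimes_k l$, this equality descends to show $\overline{\Pi}=\pi'$ is irreducible, giving case (ii).

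For finite length: in each of the four block cases, the projective envelope in $\dualcat(\OO)^{\BB}$ has endomorphism ring $\wE_{\BB}$ that is a finitely generated module over a noetherian commutative centre (Corollaries \ref{Csuper}, \ref{Cgen}, \ref{CngI}, Theorem \ref{ZNGII}/Lemma \ref{fgtfree}); Corollary \ref{fgZfl} then gives $\Theta\otimes_{\OO} k$ of finite length. For the dichotomy in (iii), the reverse implication is clear: a unitary character has irreducible reduction, $\widehat{\Sp}\otimes\eta\circ\det$ has reduction $\Sp\otimes\eta\circ\det$ (strictly smaller than the full sum), and a unitary parabolic induction of a unitary character has reduction equal to a single principal series, so in each ordinary case the inclusion is proper. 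The forward direction is the content of Corollaries \ref{cormainGen}, \ref{cormainNGI}, and Proposition \ref{boundcases}, which identify any $\Pi$ realizing a strict inclusion as ordinary. The main point requiring care will be matching up the characters $\chi_1,\chi_2$ in the different block cases so that the single statement (iii) genuinely covers NgI and NgII simultaneously, and verifying that case (ii) is always non-ordinary (which follows from the fact that the reduction of a parabolic induction of a unitary $L$-character remains defined over $k$, hence cannot produce the Galois-conjugate pair appearing in (ii)).
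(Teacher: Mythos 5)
Your proposal is correct and follows essentially the same route as the paper: pick an irreducible subquotient, split according to Corollary \ref{irrsubQabs} into the absolutely irreducible case (handled by Theorems \ref{mainsuper}, \ref{mainGen}, \ref{mainNGI}, \ref{mainNGII}, with the ordinary dichotomy from Corollaries \ref{cormainGen}, \ref{cormainNGI} and Proposition \ref{boundcases}) and the non-absolutely-irreducible case, which is treated by base change to a quadratic extension and the generic-case theorem, forcing $\Theta\otimes_{\OO}k\cong\pi$. The extra details you supply (block decomposition, finite length via Corollary \ref{fgZfl}, the explicit matching $(\chi_1,\chi_2)=(\omega\eta,\eta)$ in the three-element block, and the check that case (ii) cannot be ordinary) are consistent with, and slightly more explicit than, the paper's own argument.
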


\begin{proof} Let $\pi$ be an irreducible subquotient of $\Theta\otimes_{\OO} k$. Suppose that $\pi$ is absolutely irreducible. 
Then it follows from Theorems \ref{mainsuper}, \ref{mainGen}, \ref{mainNGI} and \ref{mainNGII} that either (i) or (iii) holds. 
Further, if the inclusion in (iii) is not an isomorphism then $\Pi$ is ordinary, see Corollaries \ref{cormainGen}, \ref{cormainNGI} and 
Theorem \ref{mainNGII}. If $\pi$ is not absolutely irreducible then it follows from Corollary \ref{irrsubQabs} that $\pi\otimes_k l$ is 
isomorphic to  $\Indu{P}{G}{\chi\otimes \chi^{\sigma} \omega^{-1}}\oplus \Indu{P}{G}{\chi^{\sigma}\otimes \chi \omega^{-1}}$. The previous argument
applied to $\Pi_{L'}$ where $L'$ is a quadratic  unramified extension of $L$ shows that 
$$(\Theta\otimes_{\OO} l)^{ss}\subseteq \Indu{P}{G}{\chi\otimes \chi^{\sigma} \omega^{-1}}\oplus  
\Indu{P}{G}{\chi^{\sigma}\otimes \chi \omega^{-1}}.$$
Since  $\Theta\otimes_{\OO} k$ contains $\pi$ we deduce that $\Theta\otimes_{\OO} k \cong \pi$. 
\end{proof}

We refer the reader to \S \ref{CMF} for the definition of the functors $\VV$ and $\cV$.
\begin{cor}\label{dimVPi} If $\Pi$ is a unitary admissible absolutely irreducible $L$-Ba\-na\-ch space representation of $G$ with
a central character then $\dim_L \VV(\Pi)\le 2$. Moreover, $\dim_L  \VV(\Pi)<2$ if and only if $\Pi$ is ordinary. 
\end{cor}
\begin{proof} Let $\Theta$ be an open bounded $G$-invariant lattice in $\Pi$. It follows from Theorem \ref{redBan} that 
$\dim_k \VV(\Theta\otimes_{\OO} k)\le 2$ and the equality is strict if and only if the inclusion in Theorem \ref{redBan} (iii) 
is not an isomorphism. Hence $\VV(\Theta)$ is a free $\OO$-module of rank at most $2$, see \cite[2.2.2]{kisin} or the proof Lemma \ref{flattofree}. 
Since $\VV(\Theta)$ is an $\OO$-lattice in $\VV(\Pi)$ we get the assertion.
\end{proof}

Let $\Pi$ be an absolutely irreducible non-ordinary unitary $L$-Banach space representation of $G$ 
with a central character $\zeta$. Then $\Pi$ is an object of $\Ban_{G,\zeta}^{\mathrm{adm}}(L)^{\BB}$ for some block $\BB$, 
Proposition \ref{blockdecompB}. Let $\overline{\Pi}$ be the semi-simp\-li\-fi\-ca\-tion of the reduction modulo $\varpi$ 
of an open bounded $G$-invariant lattice $\Theta$ in $\Pi$. Suppose that $\overline{\Pi}$ contains an absolutely irreducible representation;
this can be achieved by replacing $L$ with a quadratic  unramified extension. Then there are essentially four possibilities for $\BB$, 
described in Proposition \ref{blocksoverk}. Recall that $\Mod^{\mathrm{lfin}}_{G,\zeta}(\OO)^{\BB}$ is the full subcategory 
of $\Mod^{\mathrm{lfin}}_{G,\zeta}(\OO)$ consisting of representations with all the irreducible subquotients in $\BB$, and 
$\dualcat(\OO)^{\BB}$ is the full subcategory of $\dualcat(\OO)$ anti-equivalent to $\Mod^{\mathrm{lfin}}_{G,\zeta}(\OO)^{\BB}$
via Pontryagin duality. The centre of the category $\dualcat(\OO)^{\BB}$ is naturally isomorphic to 
$R^{\mathrm{ps}, \psi}_{\tr \VV(\overline{\Pi})}$, the deformation ring parameterizing $2$-dimensional pseudocharacters 
of $\gal$ with determinant $\psi=\varepsilon \zeta$ lifting $\tr \VV(\overline{\Pi})$, Corollaries \ref{Csuper}, \ref{Cgen}, 
\ref{CngI}, Theorem \ref{ZNGII}. Since $\Theta^d$ is an object of $\dualcat(\OO)^{\BB}$ and 
$\Pi\cong \Hom_{\OO}^{cont}(\Theta^d, L)$ we obtain a ring homomorphism:
$$x: R^{\mathrm{ps}, \psi}_{\tr \VV(\overline{\Pi})}[1/p]\rightarrow \End_G^{cont}(\Pi)\cong L,$$
where the last isomorphism follows from  Corollary \ref{absirre}. 

\begin{prop}\label{welldefined} The representation $\VV(\Pi)$ is absolutely irreducible with determinant $\varepsilon \zeta$. 
 Moreover,  $\tr \VV(\Pi)$ is equal to the pseudocharacter corresponding to $x\in \Spec  R^{\mathrm{ps}, \psi}_{\tr \VV(\overline{\Pi})}[1/p]$.
\end{prop}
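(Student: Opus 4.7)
The plan is to reduce immediately to the four block cases listed in Proposition \ref{blocksoverk} (possibly after passing to a quadratic unramified extension of $L$, which is harmless for the statement since both sides behave well under base change) and to read off the pseudocharacter via the universal Cayley--Hamilton relation proved in each block. Concretely, $\Pi$ being non-ordinary gives $\dim_L \VV(\Pi)=2$ by Corollary \ref{dimVPi}. By Lemma \ref{contextGL2}, $\Theta^d$ is an object of $\dualcat(\OO)$, and by the block decomposition (Proposition \ref{blockdecompB}, Corollary \ref{blockdecompD}) it lies in $\dualcat(\OO)^{\BB}$. The centre of $\dualcat(\OO)^{\BB}$ has been identified with $R^{\mathrm{ps},\psi}_{\tr\VV(\overline{\Pi})}$ in Corollaries \ref{Csuper}, \ref{Cgen}, \ref{CngI} and Theorem \ref{ZNGII}, so it acts functorially on $\Theta^d$, hence on $\Pi$ after inverting $p$, producing the homomorphism $x$.

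The key input is the Cayley--Hamilton style statement available in all four cases: Corollaries \ref{superkill}, \ref{genkill}, \ref{NgIkill} and \ref{NgIIkill} assert that for every $N$ in $\dualcat(\OO)^{\BB}$ the representation $\cV(N)$ is annihilated by $g^2 - T(g)\,g + \zeta\varepsilon(g)$ for every $g\in\gal$, where $T:\gal\to R^{\mathrm{ps},\psi}_{\tr\VV(\overline{\Pi})}$ is the universal pseudocharacter. Applied to $N=\Theta^d$, pushed forward along $x$, and tensored with $L$, this says that $\cV(\Theta^d)_L$ is annihilated by $g^2 - T_x(g)\,g + \zeta\varepsilon(g)$, where $T_x := x\circ T$.

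Next I will unwind the definition of $\cV$. Passing to the limit in \eqref{Thetadnew} and the definition $\cV(M)=\VV(M^{\vee})^{\vee}(\varepsilon\zeta)$ gives a natural identification $\cV(\Theta^d)_L \cong \VV(\Pi)^{\vee}(\zeta\varepsilon)$. A direct check (transposing and undoing the twist by $\zeta\varepsilon$) shows that $\VV(\Pi)$ is annihilated by the same quadratic polynomial $g^2 - T_x(g)\,g + \zeta\varepsilon(g)$. Since $\dim_L\VV(\Pi)=2$, Cayley--Hamilton now forces $\tr\VV(\Pi)=T_x$ and $\det\VV(\Pi)=\zeta\varepsilon$, which is exactly the pseudocharacter assertion.

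It remains to prove absolute irreducibility of $\VV(\Pi)$. Suppose for contradiction that it is reducible; after replacing $L$ by a finite extension we may write $T_x = \psi_1+\psi_2$ for continuous characters $\psi_1,\psi_2:\gal\to L^{\times}$, so $\mathfrak n := \ker(x)$ lies in the reducible locus of $\Spec R^{\mathrm{ps},\psi}_{\tr\VV(\overline{\Pi})}[1/p]$. In the supersingular block (case (i)), $\tr\VV(\overline{\Pi})$ is residually absolutely irreducible, so no such $\mathfrak n$ exists. In the generic reducible block (case (ii)) and the non-generic blocks (cases (iii), (iv)), the classification of the irreducible objects of $\Ban^{\mathrm{adm.fl}}_{G,\zeta}(L)^{\BB}_{\mathfrak n}$ provided by Corollaries \ref{genred}, \ref{NgIred} and Proposition \ref{NgIIred} shows that every such irreducible object is ordinary (a unitary character, a twist of $\widehat{\Sp}_L$, or a continuous unitary parabolic induction). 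This contradicts the assumption that $\Pi$ is non-ordinary, completing the argument.

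The only genuinely delicate step is the second one: tracking the Pontryagin and Schikhof dualities, the twist by $\zeta\varepsilon$, and the passage to the inverse limit accurately enough that the polynomial annihilating $\cV(\Theta^d)_L$ transfers to the same polynomial on $\VV(\Pi)$ (and not to a shifted or dualised variant). Everything else is either citation of prior results or an application of Cayley--Hamilton to a two-dimensional representation.
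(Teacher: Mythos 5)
Your proposal has the right ingredients --- the block decomposition, identifying the centre of $\dualcat(\OO)^{\BB}$ with $R^{\mathrm{ps},\psi}_{\tr\VV(\overline\Pi)}$, the Cayley--Hamilton corollaries, and the ordinarity classifications --- but there is a genuine gap where you write that ``Cayley--Hamilton now forces $\tr\VV(\Pi)=T_x$ and $\det\VV(\Pi)=\zeta\varepsilon$.'' Knowing that a $2$-dimensional $L$-module is annihilated by $g^2-T_x(g)g+\zeta\varepsilon(g)$ for all $g$ does \emph{not} by itself determine its trace. If $\rho(g)$ is a scalar $\lambda$, the annihilation only says $\lambda$ is a root of $t^2-T_x(g)t+\zeta\varepsilon(g)$, which is strictly weaker than $T_x(g)=2\lambda=\tr\rho(g)$; more globally, if $T_x=\psi_1+\psi_2$ is a reducible pseudocharacter then $\psi_1^{\oplus 2}$ is killed by the Cayley--Hamilton element with determinant $\psi_1\psi_2$ yet has trace $2\psi_1\neq T_x$. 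The implication does hold once one knows $\VV(\Pi)$ is absolutely irreducible (then the image spans $M_2(L)$ by Burnside and one compares with the actual characteristic polynomial), but you prove absolute irreducibility only in the next paragraph, and that paragraph in turn assumes $\tr\VV(\Pi)=T_x$ in order to say $T_x$ splits as $\psi_1+\psi_2$ --- so as written the argument is circular.

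The paper avoids the circle by reorganizing. It first invokes \cite[Thm.\ 1]{taylor} to produce the unique semi-simple $V_x$ with $\tr V_x=T_x$ and $\det V_x=\zeta\varepsilon$, then shows \emph{$V_x$} (not yet $\VV(\Pi)$) is absolutely irreducible: otherwise $T_x$ is a sum of characters, and Corollaries~\ref{genred}, \ref{NgIred}, Proposition~\ref{NgIIred} would force $\Pi$ to be ordinary. With $V_x$ absolutely irreducible, the $2$-dimensional $L[\gal]$-module $\VV(\Pi)$, being killed by the Cayley--Hamilton ideal of $T_x$, is a module over the quotient that \cite{boston} identifies with $M_2(L)$, whence $\VV(\Pi)\cong V_x$; this settles both absolute irreducibility and the trace/determinant statement at once. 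To repair your version without citing \cite{boston}, you would need to first prove absolute irreducibility directly (if $\chi$ were a one-dimensional quotient of $\VV(\Pi)$ then $\chi(g)$ is a root of $t^2-T_x(g)t+\zeta\varepsilon(g)$ for all $g$, so $T_x=\chi+\zeta\varepsilon\chi^{-1}$ is reducible and $\Pi$ would be ordinary) and only then read off the trace via Burnside and the pseudocharacter relations.
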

\begin{proof} Let $T_x: \gal\rightarrow L$ be the pseudocharacter corresponding to $x$. There exists 
a unique semi-simple continuous representation $V_x$ of $\gal$, defined over a finite extension
of $L$, such that $\tr V_x= T_x$ and $\det V_x= \zeta \varepsilon$, \cite[Thm.1]{taylor}. The representation 
$V_x$ is absolutely irreducible, since otherwise Corollaries \ref{genred}, \ref{NgIred} and Proposition \ref{NgIIred}
would imply that $\Pi$ is ordinary. It follows from Corollaries 
\ref{superkill}, \ref{genkill}, \ref{NgIkill} and \ref{NgIIkill} that $\VV(\Pi)$ is killed by 
$g^2-T_x(g)g +\varepsilon\zeta(g)$ for all $g\in \gal$. Since $\VV(\Pi)$ is $2$-dimensional by Corollary \ref{dimVPi},   
the main result of \cite{boston} implies that $V_x\cong \VV(\Pi)$.
\end{proof}

\begin{thm}\label{Lbij}  Assume $p\ge 5$, the functor $\VV$ induces a bijection between isomorphism classes of 
\begin{itemize}
\item[(i)] absolutely irreducible admissible unitary non-ordinary $L$-Banach space representations of $G$ with the central character $\zeta$, and 
\item [(ii)] absolutely irreducible $2$-di\-men\-sio\-nal continuous $L$-representations of $\gal$ with determinant equal to $\zeta \varepsilon$,
\end{itemize} 
where $\varepsilon$ is the cyclotomic character, and we view $\zeta$ as  a character of $\gal$ via the class field theory. 
\end{thm}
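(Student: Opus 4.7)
The plan is to verify well-definedness, injectivity, and surjectivity of the correspondence $\Pi \mapsto \VV(\Pi)$. Well-definedness is already in hand: for $\Pi$ as in (i), Corollary \ref{dimVPi} gives $\dim_L \VV(\Pi) = 2$ (since $\Pi$ is non-ordinary), and Proposition \ref{welldefined} shows $\VV(\Pi)$ is absolutely irreducible with determinant $\zeta\varepsilon$. So $\VV$ lands in (ii). The strategy for the rest is to reduce the statement, via the block decomposition in Proposition \ref{blockdecompB}, to a question about a single component $\Ban^{\mathrm{adm.fl}}_{G,\zeta}(L)^{\BB}_{\mathfrak n}$, and then invoke the explicit uniqueness statements proved block by block in the preceding sections.

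For injectivity, I would argue as follows. Suppose $\VV(\Pi_1) \cong \VV(\Pi_2) \cong V$ with $\Pi_1,\Pi_2$ in (i). By Theorem \ref{redBan} the reductions $\Theta_i \otimes_{\OO} k$ are of finite length; after enlarging $L$ by an unramified quadratic extension if needed (Corollary \ref{irrsubQabs}) we may arrange that each $\overline{\Pi}_i$ has an absolutely irreducible constituent, so that both $\Pi_i$ lie in $\Ban^{\mathrm{adm}}_{G,\zeta}(L)^{\BB}$ for one of the four blocks $\BB$ classified in Proposition \ref{blocksoverk}, namely the block determined by the semisimple mod-$p$ reduction $\overline{V}^{ss}$. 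Applying Proposition \ref{welldefined} to each $\Pi_i$ gives the same pseudocharacter $\tr V$, so both $\Pi_i$ lie in the component cut out by the maximal ideal $\mathfrak n$ of $R^{\mathrm{ps},\zeta\varepsilon}_{\tr \overline{V}^{ss}}[1/p]$ corresponding to $\tr V$. Since $V$ is absolutely irreducible, $\mathfrak n$ corresponds to an absolutely irreducible pseudocharacter, and in each of the four block cases the appropriate statement (Corollary \ref{superBanfin}, Corollary \ref{genirr}, Corollary \ref{NgIirr}, and Proposition \ref{NgIIred}(iii)) says that $\Ban^{\mathrm{adm.fl}}_{G,\zeta}(L)^{\BB}_{\mathfrak n}$ contains a unique irreducible object. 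Hence $\Pi_1 \cong \Pi_2$, proving injectivity.

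For surjectivity, let $V$ be as in (ii) and let $\BB$, $\mathfrak n$ be as above. The previous bijectivity-in-the-block results show that $\Ban^{\mathrm{adm.fl}}_{G,\zeta}(L)^{\BB}_{\mathfrak n}$ does contain a unique irreducible object $\Pi_{\mathfrak n}$ (the existence piece is supplied by Colmez's characteristic-$0$ construction $\Pi(V)$ from \cite{colmez}, built on Berger--Breuil \cite{bergerbreuil} and Kisin \cite{kisin}, which produces an admissible unitary Banach representation with central character $\zeta$ whose reduction lies in the block $\BB$ and which can then be identified with $\Pi_{\mathfrak n}$ by uniqueness). Applying Proposition \ref{welldefined} to $\Pi_{\mathfrak n}$ yields an absolutely irreducible $2$-dimensional $L$-representation $V'$ of $\gal$ with $\det V' = \zeta\varepsilon$ and $\tr V' = T_{\mathfrak n} = \tr V$; since semisimple $\gal$-representations are determined by their trace, $V' \cong V$, establishing surjectivity. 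The main obstacle has already been surmounted in the long calculations of $\wE_{\BB}$ in each of the four block cases and their identification with (or description over) the pseudocharacter deformation ring; granted those, the bijection here is a formal consequence of Propositions \ref{blockdecompB}, \ref{welldefined} and the cited uniqueness statements.
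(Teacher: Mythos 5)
Your proof is correct and follows essentially the same route as the paper: well-definedness via Proposition \ref{welldefined} and Corollary \ref{dimVPi}, injectivity via the block decomposition and the one-irreducible-object-per-component results (Corollaries \ref{superBanfin}, \ref{genirr}, \ref{NgIirr}, Proposition \ref{NgIIred}), and surjectivity via Kisin's \cite[2.3.8]{kisin}. The only cosmetic difference is that you spell out the base-change to a quadratic unramified extension to handle the irreducible-but-not-absolutely-irreducible reductions of Theorem \ref{redBan}, whereas the paper's proof leaves this implicit.
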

\begin{proof} It follows from Proposition \ref{welldefined} that $\VV$ maps one set to the other. The surjectivity follows from 
\cite[2.3.8]{kisin}. We show injectivity: suppose that $\VV(\Pi_1)\cong \VV(\Pi_2)$. As
$\VV(\overline{\Pi})\cong \overline{\VV(\Pi)}$, 
Theorem \ref{redBan} implies that  $\Pi_1$ and $\Pi_2$ lie in 
$\Ban_{G,\zeta}^{\mathrm{adm}}(L)^{\BB}$ for the same block $\BB$. Let $x\in \Spec  R^{\mathrm{ps}, \psi}_{\tr \VV(\overline{\Pi})}[1/p]$
be the maximal ideal corresponding to $\tr \VV(\Pi_1)=\tr \VV(\Pi_2)$. Proposition \ref{welldefined} implies that 
$\Pi_1$ and $\Pi_2$ are killed by $x$ and hence are objects of  $\Ban_{G,\zeta}^{\mathrm{adm}}(L)^{\BB}_x$. Since $\VV(\Pi_1)\cong \VV(\Pi_2)$
is absolutely irreducible this category contains only one irreducible object, see Corollaries \ref{superBanfin}, \ref{genirr}, \ref{NgIirr}
and Proposition \ref{NgIIred}. Hence, $\Pi_1\cong \Pi_2$. 
\end{proof}

Let $V$ be an absolutely irreducible $2$-dimensional $L$-representation of $\gal$ with determinant $\psi:=\zeta\varepsilon$. Let $\overline{V}$ be the semi-simplification of a reduction modulo $\varpi$ of a $\gal$-stable $\OO$-lattice in $V$. 
We assume that $\overline{V}$ is either absolutely irreducible or a direct sum of two one dimensional representations. This can always be achieved by replacing $L$ by a quadratic unramified extension.
Let $R^{\psi}_V$ be the deformation ring representing the deformation problem of $V$ with determinant 
$\zeta \varepsilon$ to  local artinian $L$-algebras, and let $V^{\mathrm{u}}$ be the universal deformation of  $V$ with the determinant $\zeta\varepsilon$.

\begin{lem}\label{homs_iso}  Let $\md_1$, $\md_2$ be $R^{\psi}_V$-modules of finite length. Then the natural map $\Hom_{R^{\psi}_V} (\md_1, \md_2)\rightarrow \Hom_{\gal}(\md_1\otimes_{R^{\psi}_V} V^{\mathrm{u}}, \md_2\otimes_{R^{\psi}_V} V^{\mathrm{u}})$ is an isomorphism. 
\end{lem}
\begin{proof} The assertion is true if both modules are of length one, since then both groups are isomorphic to $\End_{\gal}(V)\cong L$. Moreover, 
$$\Ext^1_{R^{\psi}_V}(L, L)\cong \Hom(R^{\psi}_V, L[\epsilon]/(\epsilon^2))\cong \Ext^1_{\gal}(V, V).$$
 Given this we may finish the proof in the same way as in Proposition \ref{equivofcatsII}: we argue by induction on $\ell(\md_1)+\ell(\md_2)$ that the functor 
  $\md\mapsto \md\otimes_{R^{\psi}_V} V^{\mathrm{u}}$ induces an isomorphism between 
$\Hom$-groups  and an injection  on $\Ext^1$-groups. 
\end{proof}

\begin{cor}\label{get_m} Let $\md$ be a $R^{\psi}_V$-module of finite length. Then  $$\md\cong \Hom^{cont}_{\gal}(V^{\mathrm{u}}, \md\otimes_{ R^{\zeta\varepsilon}_V} V^{\mathrm{u}}).$$
\end{cor}
\begin{proof} Let $\mm$ be the maximal ideal of $R:=R^{\psi}_V$, and let $V(\md):=\md\otimes_R V^{\mathrm{u}}$. Then 
$\Hom^{cont}_{\gal}(V^{\mathrm{u}}, V(\md))\cong \underset{\longrightarrow}{\lim} 
\Hom_{\gal}(V^{\mathrm{u}}/\mm^n V^{\mathrm{u}}, V(\md))\cong \underset{\longrightarrow}{\lim} \Hom_{R}(R/\mm^n, \md)\cong \md$,
where the second isomorphism follows from Lemma \ref{homs_iso}.
\end{proof}

Let $\Pi$ be an absolutely irreducible admissible unitary $L$-Banach space representation of $G$ with central character $\zeta$, corresponding to $V$ by Theorem \ref{Lbij}, so that $\VV(\Pi)\cong V$. 
Let $\Ban_{G, \zeta}^{\mathrm{adm.fl}}(L)$ be the category of admissible unitary $L$-Banach space representations of $G$ of finite length with the central character $\zeta$ and let  
$\Ban_{G, \zeta}^{\mathrm{adm.fl}}(L)_{\Pi}$ be the full subcategory of  consisting of the representations with all irreducible subquotients isomorphic to $\Pi$.

\begin{thm}\label{holiday} Let $\mathrm{B}\in \Ban^{\mathrm{adm}}_{G, \zeta}(L)$ be of finite length with all irreducible subquotients isomorphic to $\Pi$ and let $\md(\mathrm{B})= \Hom_{\gal}(V^{\mathrm{u}}, \cV(\mathrm B))$ then 
$$\cV(\mathrm B)\cong \md(\mathrm{B})\otimes_{R^{\psi}_V} V^{\mathrm{u}}.$$ Moreover, the functor $\mathrm B\mapsto \md(\mathrm B)$ induces an anti-equivalence of categories between 
$\Ban_{G, \zeta}^{\mathrm{adm.fl}}(L)_{\Pi}$ and the category of $R^{\psi}_V$-modules of finite length.
\end{thm} 
\begin{proof} Let $\BB$ be the block corresponding to $\overline{V}$, so that if $\overline{V}$ is absolutely irreducible then $\BB=\{\pi\}$, where $\pi$ is a 
supersingular representation of $G$, with $\VV(\pi)\cong \overline{V}$, and if $\overline{V}\cong \chi_1\oplus \chi_2$, then $\BB$ consists of all the irreducible subquotients 
of the principal series representations $\Indu{P}{G}{\chi_1\otimes\chi_2\omega^{-1}}$ and $\Indu{P}{G}{\chi_2\otimes\chi_1\omega^{-1}}$, and let $\wZ$ be the centre of the category 
$\dualcat(\OO)^{\BB}$.  Let $\Theta$ be an open bounded $G$-invariant lattice in $\Pi$, and let $\overline{\Pi}$ denote the semi-simplification of $\Theta/\varpi\Theta$. 
The isomorphism $\VV(\Pi)\cong V$ implies that $\VV(\overline{\Pi})\cong \overline{V}$. This implies that  $\Pi$ is an object of  $\Ban_{G,\zeta}^{\mathrm{adm}}(L)^{\BB}$ and $\Theta^d$ is an object of $\dualcat(\OO)^{\BB}$.
The  action of $\wZ$ on $\Theta^d$ induces  a ring homomorphism $x:\wZ[1/p]\rightarrow \End_G(\Pi)\cong L$, and we let $\nn$ be the kernel of $x$.  Let $\Ban_{G,\zeta}^{\mathrm{adm.fl}}(L)^{\BB}_\nn$ be the 
full subcategory of $\Ban_{G,\zeta}^{\mathrm{adm}}(L)^{\BB}$ consisting of all Banach space representations of finite length, which are killed by some power of $\nn$.  We note that $\Pi$ is in $\Ban_{G,\zeta}^{\mathrm{adm}}(L)^{\BB}_\nn$ by construction of $\nn$. 
Moreover, it follows from Corollaries \ref{superBanfin},  \ref{genirr}, \ref{pups} and \ref{NgIirr} and  Proposition \ref{NgIIred} (iii) that $\Pi$ is the only irreducible object  in the category. Hence, 
$\Ban_{G,\zeta}^{\mathrm{adm.fl}}(L)^{\BB}_\nn= \Ban^{\mathrm{adm.fl}}_{G, \zeta}(L)_{\Pi}$.  The second part of the Corollaries referred to above says that   $\Ban_{G,\zeta}^{\mathrm{adm.fl}}(L)^{\BB}_\nn$ is anti-equivalent to the category 
of modules of finite length over the $\nn$-adic completion of $\wZ[1/p]$.  To prove the theorem we need to write out how this equivalence is realized.

If $\overline{V}$ is absolutely irreducible we let $\pi$ be supersingular representation of $G$, with $\VV(\pi)\cong \overline{V}$. If $\overline{V}\cong \chi_1\oplus \chi_2$ 
then we let $\pi=\Indu{P}{G}{\chi_1\otimes \chi_2 \omega^{-1}}$. Since $p\ge 5$ we may  assume without loss of generality that $\chi_1\neq \chi_2 \omega^{-1}$, so that $\pi$ is irreducible. Let $\wP$ be  a projective envelope 
of $\pi^{\vee}$ in $\dualcat(\OO)$ and let $\wE=\End_{\dualcat(\OO)}(\wP)$. The action of $\wZ$ on $\wP$ induces a homomorphism of rings $\wZ\rightarrow \wE$. 
If $\pi$ is supersingular, or $\pi$ is a principal series with $\chi_1\neq \chi_2$ then this map is an isomorphism, see Proposition \ref{superdone}, Corollary  \ref{Cgen} and Proposition \ref{rpsr},
Corollary \ref{rings} and Theorem \ref{ZNGII}. If $\pi\cong \Indu{P}{G}{\chi\otimes \chi \omega^{-1}}$ then $\BB=\{\pi\}$ and so $\wZ$ is the centre of $\wE$. 

The functor $\mathrm B\mapsto \md(\mathrm B):=\Hom_{\dualcat(\OO)}(\wP, \Theta^d)_L$, where $\Theta$ is any open bounded $G$-invariant lattice in $\mathrm B$, is faithfull 
when restricted to $\Ban^{\mathrm{adm.fl}}_{G, \zeta}(L)_{\Pi}$:  since $\pi$ appears as a subquotient of of $\overline{\Pi}$, Lemma \ref{pisub} implies that $\md(\Pi)\neq 0$, and the assertion follows from the exactness of $\md$, see Lemma \ref{PitomPI}. 
Since $\Ban^{\mathrm{adm.fl}}_{G, \zeta}(L)_{\Pi}=\Ban_{G,\zeta}^{\mathrm{adm}}(L)^{\BB}_\nn$ and $\md$ is faithfull, it follows from Theorem \ref{furtherDBan} that $\md$ induces an anti-equivalence 
of categories between $\Ban^{\mathrm{adm.fl}}_{G, \zeta}(L)_{\Pi}$ and the category of modules of finite length of the $\nn$-adic completion of $\wE[1/p]$. The inverse functor $\md\mapsto \Pi(\md)$ is defined in Definition 
\ref{defPimd}. So that for $\mathrm B\in \Ban^{\mathrm{adm.fl}}_{G, \zeta}(L)_{\Pi}$ and $\Theta$ an open bounded $G$-invariant lattice in $\mathrm B$, we have $\Theta^d\cong \Hom_{\dualcat(\OO)}(\wP, \Theta^d)\wtimes_{\wE} \wP$. 
Lemma \ref{VT2} implies that $\cV(\Theta^d)\cong  \Hom_{\dualcat(\OO)}(\wP, \Theta^d)\wtimes_{\wE} \cV(\wP)$. Since $\pi$ occurs in $\overline{\mathrm{B}}$ with finite multiplicity, Lemma \ref{mult=rank0} implies that 
$\Hom_{\dualcat(\OO)}(\wP, \Theta^d)$ is a free $\OO$-module of finite rank. In particular, it is finitely generated over $\wE$. Since $\wE$ is notherian, the module is finitely presented and hence we may replace 
$\wtimes$ with $\otimes$. Hence, $\cV(\mathrm B)\cong \md(\mathrm B)\otimes_{\wE} \cV(\wP)$. As  $\md(\mathrm B)$ is killed by a power of $\nn$ we may replace $\wE$ with the $\nn$-adic completion of $\wE[1/p]$ and $\cV(\wP)$ with the $\nn$-adic completion of $\cV(\wP)_L$.

To finish the proof we only have to relate the $\nn$-adic completion of $\wE[1/p]$ to $R^{\psi}_V$ and the $\nn$-adic completion of $\cV(\wP)_L$ to $V^{\mathrm u}$. Assume that $\pi\not \cong \Indu{P}{G}{\chi\otimes \chi\omega^{-1}}$ for any 
character $\chi$, so that $\wE$ is commutative. In this case we know that $\cV(\wP)$ is the universal deformation with determinant $\varepsilon \zeta$ of a $2$-dimensional representation $\rho$, and $\wE\cong R_{\rho}^{\psi}$ is the deformation 
ring representing this deformation problem, where $\rho\cong \VV(\pi)$ if $\pi$ is supersingular  and $\rho$ is a non-split extension of $\chi_2$ by $\chi_1$ if $\pi\cong \Indu{P}{G}{\chi_1\otimes \chi_2\omega^{-1}}$, see Proposition \ref{superdone}, Corollary \ref{wEcommGen},
Theorem \ref{varphisoNGII}, Corollary \ref{univdefII}. Since $V\cong \cV(\Pi)\cong L\otimes_{\wE, x} \cV(\wP)$, \cite[(2.3.5)]{kisin2} implies that the $\nn$-adic completion of $R^{\psi}_{\rho}[1/p]$ is isomorphic to 
$R^{\psi}_V$, and the $\nn$-adic completion of $\cV(\wP)_L$ is isomorphic to $V^{\mathrm{u}}$.  Hence, $\cV(\mathrm B)\cong \md(\mathrm B)\otimes_{R^{\psi}_V} V^{\mathrm u}$  for all $\mathrm B\in \Ban_{G, \zeta}^{\mathrm{adm.fl}}(L)_{\Pi}$, and 
Corollary \ref{get_m} implies that $\md(\mathrm B)\cong \Hom_{\gal}(V^{\mathrm u}, \cV(\mathrm B))$. 

We assume that $\pi\cong \Indu{P}{G}{\chi\otimes \chi\omega^{-1}}$, for some character $\chi$, so that $\overline{V}\cong \chi\oplus \chi$. Let  $R^{\mathrm{ps}, \psi}_{\tr \overline{V}}$ be 
 the deformation ring parameterizing $2$-dimensional pseudocharacters  of $\gal$ with determinant $\psi=\varepsilon \zeta$ lifting $\tr \overline{V}$, and let 
$T: \gal\rightarrow R^{\mathrm{ps}, \psi}_{\tr \overline{V}}$ be the universal pseudocharacter with determinant $\psi$ lifting $\tr \overline{V}$. In this case $\wE$ is isomorphic to 
the opposite ring of $R^{\mathrm{ps}, \psi}_{\tr \overline{V}}[[\gal]]/J$, where $J$ is a closed two-sided ideal generated by the elements $g^2- T(g)g + \psi(g)$ for all $g\in \gal$, and $\wZ\cong  R^{\mathrm{ps}, \psi}_{\tr \overline{V}}$, see \S \ref{CandBsp}.
Moreover, $\cV(\wP)$ is a free $\wE$-module of rank $1$, see \S  \ref{defNgI}. It follows from \eqref{good_action} that if $\md$ is a compact right $\wE$-module then 
$\cV(\md\wtimes_{\wE} \wP)\cong \md\wtimes_{\wE} \cV(\wP)\cong \md$, where the action of $\gal$ on $\md$ is induced by the natural homomorphism $\gal\rightarrow  R^{\mathrm{ps}, \psi}_{\tr \overline{V}}[[\gal]]/J$. In particular, 
$\cV(\wP)\cong R^{\mathrm{ps}, \psi}_{\tr \overline{V}}[[\gal]]/J$ with $\gal$ acting on the left.  Since the specialization of $T$ at $\nn$ is the trace of $V$, the $\nn$-adic completion of $R^{\mathrm{ps}, \psi}_{\tr \overline{V}}$ is isomorphic to  $R^{\mathrm{ps}, \psi}_{\tr V}$.

Let $\mathcal E$ be the $\nn$-adic completion of $\wE[1/p]$.
Corollary \ref{weiterso4a} implies that $\mathcal E$ is isomorphic to the ring of $2\times 2$ -matrices over $R^{\mathrm{ps}, \psi}_{\tr V}$. Let $e=\bigl (\begin{smallmatrix} 1 & 0 \\ 0 & 0\end{smallmatrix} \bigr)$ then 
$e\mathcal E$ is a free $R^{\mathrm{ps}, \psi}_{\tr V}$-module of rank $2$ with a continuous $\gal$-action. Since for every invertible $2\times 2$-matrix $A$  we have $A + (\det A) A^{-1}= (\tr A ) \Id$,  the trace of $\gal$-representation on $e\mathcal E$ is equal to $T$, and 
the trace of $\gal$-representation on $L\otimes_{R^{\mathrm{ps}, \psi}_{\tr V}} e\mathcal E$ is equal to $\tr V$.  Since $V$ is irreducible, this implies $e\mathcal E$ is a deformation of $V$ to  $R^{\mathrm{ps}, \psi}_{\tr V}$, which induces  a ring homomorphism $R^{\psi}_V \rightarrow R^{\psi}_{\tr V}$.  Moreover, the composition 
$R^{\mathrm{ps}, \psi}_{\tr V}\rightarrow R^{\psi}_V \rightarrow R^{\mathrm{ps}, \psi}_{\tr V}$, where the first arrow is induced by taking a trace of a deformation of $V$, is the identity map.  Since $V$ is absolutely irreducible the first arrow is an isomorphism by \cite{Nyssen}, and hence the second arrow 
is an isomorphism, which implies that $e\mathcal E\cong V^{\mathrm u}$. Since the same argument applies with $1-e$ instead of $e$, we deduce that the $\nn$-adic completion of $\cV(\wP)_L$ is isomorphic to $V^{\mathrm u} \oplus V^{\mathrm u}$ as a $\gal$-representation, and $\mathcal E$ is 
the ring of $2\times 2$-matrices over $R^{\psi}_V$.  Thus the rings $R^{\psi}_V$ and $\mathcal E$ are Morita equivalent, which implies that $\cV(\mathrm B)\cong \Hom_{\gal}(V^{\mathrm u}, \cV(\mathrm B))\otimes_{R^{\psi}_V }V^{\mathrm u}$ for all $\mathrm B\in \Ban_{G, \zeta}^{\mathrm{adm.fl}}(L)_{\Pi}$.
\end{proof}
\begin{remar} Since $\Ban_{G,\zeta}^{\mathrm{adm.fl}}(L)^{\BB}_\nn= \Ban^{\mathrm{adm.fl}}_{G, \zeta}(L)_{\Pi}$, 
 it follows from Proposition \ref{blockdecompB} and Theorem \ref{furtherDBan} that the category $\Ban^{\mathrm{adm.fl}}_{G, \zeta}(L)_{\Pi}$ is a direct summand of $\Ban^{\mathrm{adm.fl}}_{G, \zeta}(L)$. Concretely this means 
 that every admissible unitary $L$-Banach space representation $\mathrm B$, which is of finite length and has a central character $\zeta$ decomposes as $\mathrm B \cong \mathrm B_1 \oplus \mathrm B_2$, where all the irreducible subquotients 
 of $\mathrm B_1$ are isomorphic to $\Pi$ and none of the irreducible subquotients of $\mathrm B_2$ is isomorphic to $\Pi$.
 \end{remar}

\begin{remar} We note that one may also prove an analog of Theorem \ref{holiday}, when $V$ is reducible. Let $\psi_1, \psi_2: \Qp^{\times}\rightarrow L^{\times}$ be unitary characters satisfying 
$\psi_1\psi_2= \varepsilon \zeta$.  Let $\chi_1, \chi_2: \Qp^{\times}\rightarrow k^{\times}$ be their reduction modulo $\varpi$. Let $\BB$ be the block corresponding to $\chi_1\oplus \chi_2$, $\wZ$ the centre of $\dualcat(\OO)^{\BB}$ and
$\nn$ the maximal ideal of $\wZ[1/p]$ corresponding to the pseudocharacter $\psi_1+\psi_2$.  Then it follows from Corollaries \ref{genred}, \ref{NgIred} and Proposition  \ref{NgIIred} that the irreducible representations of 
$\Ban^{\mathrm{adm.fl}}_{G, \zeta}(L)^{\BB}_{\nn}$  are precisely the irreducible subquotients of $(\Indu{P}{G}{\psi_1\otimes \psi_2 \varepsilon^{-1}})_{cont}$, $(\Indu{P}{G}{\psi_2\otimes \psi_1 \varepsilon^{-1}})_{cont}$.
Since $\Ban^{\mathrm{adm.fl}}_{G, \zeta}(L)^{\BB}_{\nn}$ is closed under subquotients and extensions in $\Ban^{\mathrm{adm}}_{G, \zeta}(L)$,  it is uniquely determined by its irreducible objects.  One then can reinterpret 
the anti-equivalence of categories between  $\Ban^{\mathrm{adm.fl}}_{G, \zeta}(L)^{\BB}_{\nn}$ and the category of modules of finite length over certain $\nn$-adic completions, see Corollaries \ref{genBanach}, \ref{pups}
and Proposition \ref{NgIIred} (i), (ii) and Remark \ref{VseeQ} in terms of the Galois side.

For example, if $\psi_1\psi_2^{-1}\neq \varepsilon^{\pm 1}, \Eins$, so that both unitary principal series representations are irreducible and distinct, then Theorem \ref{holiday} holds if we replace $V^{\mathrm u}$ with 
$V^{\mathrm u}_1\oplus V^{\mathrm u}_2$, and $R^{\psi}_{V}$ with $\End^{cont}_{\gal}( V^{\mathrm u}_1\oplus V^{\mathrm u}_2)$, where $V^{\mathrm u}_1$ is the universal deformation of the non-split extension $\psi_1$ by $\psi_2$, 
and $V^{\mathrm u}_2$ is the universal deformation of the non-split extension $\psi_2$ by $\psi_1$ with determinant $\varepsilon \zeta$. Our assumptions imply that the extensions are unique up to isomorphism. If $\chi_1 \chi_2^{-1}\neq \Eins, \omega^{\pm 1}$ then the assertion follows from Proposition \ref{genrc},  
Corollary \ref{genBanach} and \cite[(2.3.5)]{kisin2}. If $\chi_1=\chi_2 \omega^{\pm 1}$ then one may show the assertion using  Remarks  \ref{VseeQ} and \ref{image_stein} instead. If $\chi_1=\chi_2$ then one has to 
do some work to show that the $\nn$-adic completion of $\cV(\wP)$ is isomorphic to $V^{\mathrm u}_1\oplus V^{\mathrm u}_2$.  
We leave the details to the interested reader.
\end{remar}

\section{Unitary completions}\label{U}
We determine all the absolutely irreducible admissible unitary completions of absolutely irreducible locally algebraic $L$-representations 
of $G$ with $p\ge 5$. Such representations are of the form $\pi\otimes_L W_{l,k}$, where $\pi$ is a smooth absolutely irreducible $L$-representation
of $G$, that is a stabilizer of $v$ is an open subgroup of $G$ for all $v\in \pi$, and $W_{l,k}= \det^l \otimes \Sym^{k-1} L^2$, see 
\cite{prasad}. The study of such completions was initiated by Breuil \cite{breuil2}, \cite{breuilL} and our results confirm his 
philosophy, see \cite[\S 1.3]{breuilL}. We deduce the main result of this section, Theorem \ref{mainU}, by combining 
Theorem \ref{Lbij} with some deep results of Colmez. 

\begin{lem} If $\pi=\eta\circ \det$ is a character then $\pi\otimes_L W_{l,k}$ admits a unitary completion if and only if 
$k=1$ and $\val(\eta(p))=-l$.
\end{lem}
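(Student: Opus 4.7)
The plan is to reduce the question to two elementary calculations by exploiting the fact that a character of $G$ acts trivially on $\SL_2(\Qp)$. Write $\chi_l: \Qp^\times \to L^\times$ for $x \mapsto x^l$, so that $W_{l,k} = (\chi_l \circ \det) \otimes_L \Sym^{k-1} L^2$ and
$$ \pi \otimes_L W_{l,k} \cong ((\eta\chi_l) \circ \det) \otimes_L \Sym^{k-1} L^2. $$
As a representation of $\SL_2(\Qp)$ this is just the algebraic representation $\Sym^{k-1} L^2$, while as a representation of the centre $Z$ it is multiplication by $(\eta\chi_l)^2$. A unitary completion exists if and only if $\pi \otimes_L W_{l,k}$ carries a non-zero $G$-invariant seminorm, or equivalently (since the representation is absolutely irreducible) a $G$-invariant norm.

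First, I would dispatch the case $k = 1$. Here $\Sym^0 L^2 = L$, so $\pi \otimes W_{l,1}$ is the one-dimensional character $(\eta\chi_l) \circ \det$, and it admits a $G$-invariant norm if and only if the character $\eta\chi_l: \Qp^\times \to L^\times$ is unitary. Smoothness of $\eta$ forces $\eta(\Zp^\times)$ to lie in the group of roots of unity of $L^\times$, hence to be unitary; the character $\chi_l$ is also unitary on $\Zp^\times$. Since $\Qp^\times = \Zp^\times \times p^{\ZZ}$, unitarity is thus equivalent to $|\eta(p) p^l| = 1$, i.e.\ $\val(\eta(p)) = -l$.

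Next I would rule out the case $k \ge 2$. Suppose $\|\cdot\|$ is a $G$-invariant norm on $\pi \otimes_L W_{l,k}$. Its restriction to $\Sym^{k-1} L^2$ (viewed as any $L$-line in the underlying space) is $\SL_2(\Qp)$-invariant; since any two norms on a finite-dimensional $L$-vector space are equivalent, there exist constants $0 < c \le C$ with $c \max_i |a_i| \le \|\sum_i a_i x^i y^{k-1-i}\| \le C \max_i |a_i|$. Consider the element $u_n = \bigl(\begin{smallmatrix} 1 & p^{-n} \\ 0 & 1\end{smallmatrix}\bigr) \in \SL_2(\Qp)$: applied to $y^{k-1}$ it produces
$$ u_n \cdot y^{k-1} = (y + p^{-n} x)^{k-1} = \sum_{i=0}^{k-1} \binom{k-1}{i} p^{-ni} x^i y^{k-1-i}, $$
so $\max_i |\text{coeff}_i| \ge |p|^{-n(k-1)} = p^{n(k-1)}$, which tends to $\infty$ as $n \to \infty$. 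This contradicts the $G$-invariance $\|u_n \cdot y^{k-1}\| = \|y^{k-1}\|$, since the left-hand side is bounded below by $c \, p^{n(k-1)}$. Hence no unitary completion can exist when $k \ge 2$.

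Combining the two cases gives the claimed equivalence. The only potential subtlety is in the $k \ge 2$ argument, namely ensuring that a $G$-invariant norm on $\pi \otimes_L W_{l,k}$ really does induce a finite norm on each $L$-line of $\Sym^{k-1} L^2$ compatible with the equivalence of norms on finite-dimensional $L$-vector spaces; this is immediate since the norm takes finite positive values on any non-zero vector, so no serious obstacle arises.
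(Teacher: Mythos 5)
Your proof is correct. The paper itself gives no argument here — it simply cites \cite[Lem.\ 7.3]{comp} — and what you have written is essentially the standard argument behind that reference: for $k=1$ the representation is the character $(\eta\chi_l)\circ\det$, so an invariant norm exists exactly when that character is unitary, which (using smoothness of $\eta$ on $\Zp^{\times}$ and surjectivity of $\det$) reduces to $\val(\eta(p))=-l$; for $k\ge 2$ the space is finite dimensional, all norms on it are equivalent to the sup norm on coefficients, and the unipotent elements $\bigl(\begin{smallmatrix}1 & p^{-n}\\ 0 & 1\end{smallmatrix}\bigr)$, on which the character twists act trivially, send $y^{k-1}$ to a vector of sup norm at least $p^{n(k-1)}\to\infty$, contradicting invariance. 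Two cosmetic remarks: the parenthetical ``viewed as any $L$-line in the underlying space'' is a slip, since $\Sym^{k-1}L^2$ is the whole $k$-dimensional underlying space and your argument only uses equivalence of norms on it; and the reduction ``unitary completion $\Leftrightarrow$ $G$-invariant norm'' does use irreducibility of $\pi\otimes W_{l,k}$ (to know a nonzero invariant seminorm has zero kernel), which you correctly invoke and which holds by Zariski density of $\GL_2(\Qp)$ in $\GL_2$.
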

\begin{proof} This is well known, see for example \cite[Lem. 7.3]{comp}.
\end{proof} 

\begin{lem}\label{critprince} Let $\chi_1, \chi_2: \Qp^{\times}\rightarrow L^{\times}$ be smooth characters. If the representation 
$(\Indu{P}{G}{\chi_1\otimes \chi_2|\centerdot|^{-1}})_{\mathrm{sm}}\otimes W_{l, k}$ admits a unitary completion then 
\begin{itemize}
\item[(i)] $-(k+l)\le \val(\chi_1(p)), \val(\chi_2(p))\le -l$ and 
\item[(ii)] $\val(\chi_1(p))+\val(\chi_2(p))=-(k+2l)$.
\end{itemize} 
\end{lem}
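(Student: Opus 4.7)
First I would deduce (ii) from the action of the centre. Let $\pi:=(\Indu{P}{G}{\chi_1\otimes \chi_2|\cdot|^{-1}})_{\mathrm{sm}}$. The centre $Z$ acts on $\pi$ via $z\mapsto \chi_1(z)\chi_2(z)|z|^{-1}$ and on $W_{l,k}=\det^l\otimes \Sym^{k-1}L^2$ via $z\mapsto z^{2l+k-1}$, so on $\pi\otimes W_{l,k}$ via $\zeta(z):=\chi_1(z)\chi_2(z)|z|^{-1}z^{2l+k-1}$. Any unitary completion $\Pi$ of $\pi\otimes W_{l,k}$ forces $Z$ to act on $\Pi$ through a unitary character extending $\zeta$, so $\val(\zeta(p))=0$; simplifying the resulting identity gives exactly (ii). In particular, to prove (i) it is enough to establish the two upper bounds $\val(\chi_i(p))\le -l$ for $i=1,2$, since then the lower bounds $\val(\chi_j(p))\ge -(k+l)$ follow from (ii) by subtraction.

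For the upper bounds, my plan is to apply Emerton's ordinary parts functor $\Ord_P$ to a hypothetical unitary completion $\Pi$. The resulting $\Ord_P\Pi$ is an admissible unitary $T$-representation, so any one-dimensional $T$-subquotient of it is a unitary character. On the other hand, $\pi\otimes W_{l,k}$ carries an explicit list of $T$-eigencharacters in its coinvariants under the unipotent radical, obtained by combining the Bruhat filtration of $\pi|_P$ (whose subquotients are $\chi_1\otimes\chi_2|\cdot|^{-1}$ and its $s$-conjugate, appropriately $\delta_P$-shifted) with the extremal $T$-weights $a^{k+l-1}d^l$ and $a^ld^{k+l-1}$ of $W_{l,k}$. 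Using the adjunction $\Hom_G(\Indu{\overline{P}}{G}{\tau},\Pi)\cong\Hom_T(\tau,\Ord_P\Pi)$ I would transfer each such character into $\Ord_P\Pi$; the unitarity of the resulting character, evaluated at $\operatorname{diag}(p,1)$, translates into the required integrality $\val(\chi_i(p))\le -l$ after the $\delta_P$-normalisation is threaded through.

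The main obstacle, as always with these integrality bounds, will be to keep the normalisation of the modulus character $\delta_P$ consistent and, more importantly, to show that each of the relevant candidate characters actually survives to give a nonzero map into $\Ord_P\Pi$: the functor $\Ord_P$ is only left exact, and a subquotient of $\pi\otimes W_{l,k}$ does not automatically persist in $\Ord_P\Pi$. If one of the characters is killed, the corresponding bound must be produced by another route -- for instance by comparing $\Pi$ directly to a continuous parabolic induction $(\Indu{P}{G}{\psi_1\otimes\psi_2})_{\mathrm{cont}}$ of a unitary character $\psi_1\otimes\psi_2$ reducing to $\chi_1\otimes\chi_2|\cdot|^{-1}$ up to an algebraic twist, and then reading off the valuations of $\chi_i(p)$ from unitarity of $\psi_i$. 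In the generic case the ordinary-parts argument suffices cleanly, while degenerate cases like $\chi_1=\chi_2$ or $\chi_1\chi_2^{-1}=|\cdot|^{\pm 1}$ will almost certainly require a separate direct construction, in line with the case-by-case structure developed in Sections \ref{nongenericcaseI} and \ref{nongenericcaseII}.
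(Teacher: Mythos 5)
Your deduction of (ii) from unitarity of the central character is correct and matches what one would do in the cited sources.

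For (i), however, the $\Ord_P$ strategy has a gap that you flag but underestimate, and it is in fact the crux. The adjunction you invoke, $\Hom_G(\Indu{\overline{P}}{G}{\tau},\Pi)\cong\Hom_T(\tau,\Ord_P\Pi)$, is the one from \cite{ord1}/\cite{ord2} and lives in the category of smooth $\OO$-torsion modules; it does not apply as written to a Banach space $\Pi$. Emerton does have a Banach-space analogue of $\Ord_P$ (in his work on ordinary representations), but it pairs with the \emph{continuous} parabolic induction, and using it here is essentially circular: to get a nonzero element of $\Hom_G(\Indu{\overline{P}}{G}{\tau},\Pi)$ you would first need to extend the dense embedding of $\pi\otimes W_{l,k}$ across a continuous induction of a unitary torus character, and whether such an extension exists is exactly what is controlled by the valuation bounds you are trying to prove. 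Moreover, by design the Banach $\Ord_P$ only detects the ordinary (unit-eigenvalue) direction, so at most one of the two Bruhat characters survives, and for a crystalline completion with both Hodge--Tate slopes strictly interior neither will. So "the generic case suffices cleanly" is not right; the case where the argument gives nothing is the generic one.

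The proof the paper refers to (\cite[Lem.~7.9]{comp}, \cite[Lem.~2.1]{emduke}) is more elementary and bypasses $\Ord_P$ entirely. One takes a suitable pro-$p$-Iwahori (or $N_0 := U(\Zp)$) eigenvector $v$ in $\pi\otimes W_{l,k}$ built from the big-cell function in the smooth induction tensored with an extremal weight vector of $W_{l,k}$, and lets the Hecke operator $U_p = [I_1\,t\,I_1]$ act on it; $v$ is a $U_p$-eigenvector with eigenvalue $\mu$ of valuation $\val(\chi_i(p))+l$ (resp.\ $\val(\chi_j(p))+k+l-1$, depending on the choice of extremal weight), up to the $\delta_P$-normalisation. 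The key point is that $U_p$ is a $\Zp$-linear combination of group elements, so it preserves any $G$-invariant open bounded lattice $\Theta$ of a unitary completion $\Pi$. Rescale $v$ into $\Theta$; then $\mu^n v\in\Theta$ for all $n\ge0$, and since $\Theta$ is bounded and $\OO$-separated this forces $\val(\mu)\ge0$, i.e.\ $\val(\chi_i(p))\le -l$. Combined with (ii) this gives both bounds in (i). This argument requires no ordinary-parts machinery, no adjunctions, and no case distinctions; if you want to salvage your approach you would in effect be re-deriving this integrality of $U_p$ through the much heavier $\Ord_P$ formalism, and you would still need it in the non-ordinary cases that $\Ord_P$ cannot see.
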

\begin{proof} See \cite[Lem 7.9]{comp}, \cite[Lem. 2.1]{emduke}.
\end{proof} 

\begin{thm} Suppose that $\pi\cong (\Indu{P}{G}{\chi_1\otimes \chi_2|\centerdot|^{-1}})_{\mathrm{sm}}$ satisfies the conditions
of Lemma \ref{critprince} then the universal unitary completion of $\pi\otimes W_{l,k}$ is an admissible absolutely irreducible
$L$-Banach space representation. Moreover, the universal completion is ordinary if and only if 
$\val(\chi_1(p))=-l$ or $\val(\chi_2(p))=-l$.
\end{thm}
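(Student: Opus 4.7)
The plan is to realise the claimed completion as a $\Pi(V)$ attached via Colmez's construction to a specific $2$-dimensional Galois representation, and then to deduce universality from Theorem \ref{Lbij}.

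First I translate the hypotheses into Galois-theoretic data. Set $\zeta := \chi_1\chi_2|\centerdot|^{-1}$, regarded as a character of $\gal$ via class field theory. Conditions (i) and (ii) of Lemma \ref{critprince} are precisely the weak admissibility inequalities for a $2$-dimensional filtered $\phi$-module over $L$ with Hodge--Tate weights $-l, -(l+k-1)$, crystalline Frobenius eigenvalues $\chi_1(p), \chi_2(p)$, and determinant $\zeta\varepsilon$. By Colmez--Fontaine this module corresponds to a crystalline $L$-representation $V$ of $\gal$. A direct examination of the induced filtration shows that $V$ is absolutely irreducible precisely when both $\val(\chi_i(p))$ lie strictly between $-(k+l)$ and $-l$; in the critical case $\val(\chi_1(p))=-l$ or $\val(\chi_2(p))=-l$, the filtration is forced and $V$ is a (non-split) extension of two characters $\psi_1,\psi_2:\gal\to L^{\times}$.

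Next I apply Colmez's characteristic $0$ construction to $V$ to obtain an admissible unitary $L$-Banach space representation $\Pi(V)$ of $G$ with $\VV(\Pi(V))\cong V$. Colmez's computation of locally algebraic vectors (\cite{colmez}, completed by Emerton \cite{emlg}) identifies $\Pi(V)^{\mathrm{l.alg}}$ with $\pi\otimes W_{l,k}$, and this subspace is dense in $\Pi(V)$. Therefore $\Pi(V)$ is a unitary completion of $\pi\otimes W_{l,k}$, and there is a continuous $G$-equivariant surjection from the universal unitary completion $\widehat{\pi\otimes W_{l,k}}$ onto $\Pi(V)$. To see that this surjection is an isomorphism, it suffices to argue that any admissible unitary completion $\Pi'$ must be isomorphic to $\Pi(V)$: by density and functoriality of $\VV$, together with the locally algebraic vectors computation, $\tr\VV(\Pi')=\tr V$, and Theorem \ref{Lbij} (supplemented by the classification of reducible cases in Corollaries \ref{cormainGen}, \ref{cormainNGI} and Proposition \ref{NgIIred}) then forces $\Pi'\cong \Pi(V)$. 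For the ordinary/non-ordinary dichotomy, $V$ is reducible exactly when $\val(\chi_1(p))=-l$ or $\val(\chi_2(p))=-l$; in that case $\Pi(V)\cong (\Indu{P}{G}{\psi_1\otimes \psi_2\varepsilon^{-1}})_{\mathrm{cont}}$ is by construction ordinary, whereas in the non-critical case $V$ is absolutely irreducible and $\Pi(V)$ is non-ordinary by Theorem \ref{Lbij}.

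The main obstacle is the universality in the second step: that the Banach norm on $\pi\otimes W_{l,k}$ inherited from $\Pi(V)$ is, up to equivalence, the largest $G$-invariant seminorm, or equivalently that $\Pi(V)$ is the universal unitary completion of its own locally algebraic vectors. This rests on Colmez's calculation of $\Pi(V)^{\mathrm{l.alg}}$ for every possible shape of $V$ and on the injectivity of $\Pi\mapsto\VV(\Pi)$ in Theorem \ref{Lbij}; once these are in hand, absolute irreducibility and admissibility of the universal completion follow automatically, since it coincides with $\Pi(V)$.
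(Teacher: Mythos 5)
There is a genuine gap, and it sits exactly where you flag it: the passage from ``every admissible absolutely irreducible unitary completion of $\pi\otimes W_{l,k}$ is isomorphic to $\Pi(V)$'' to ``the universal unitary completion is $\Pi(V)$''. The universal completion $\widehat{\pi\otimes W_{l,k}}$ is not known a priori to be admissible, of finite length, or even to inject $\pi\otimes W_{l,k}$ into itself; a priori it could be a much larger, non-admissible Banach space representation admitting $\Pi(V)$ as a proper quotient (a continuous map with dense image onto $\Pi(V)$ tells you nothing more). So Theorem \ref{Lbij}, Corollaries \ref{cormainGen}, \ref{cormainNGI} and Proposition \ref{NgIIred} — which only speak about admissible representations — cannot be applied to it, and the claim that admissibility ``follows automatically, since it coincides with $\Pi(V)$'' assumes precisely what has to be proved. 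Identifying the universal completion with an admissible irreducible Banach representation is the entire analytic content of the statement, and no soft argument from the classification of admissible completions can substitute for it. There is also a circularity risk on the other side: for crystalline (trianguline) $V$, Colmez's $\Pi(V)$ and the computation of its locally algebraic vectors are built on, or proved together with, the Berger--Breuil description of the universal completion, so invoking them to re-derive the universal-completion statement is not an independent route.

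For comparison, the paper does not attempt any such reduction: after noting $\pi\cong(\Indu{P}{G}{\chi_2\otimes\chi_1|\centerdot|^{-1}})_{\mathrm{sm}}$ and normalising $\val(\chi_1(p))\le\val(\chi_2(p))$, it quotes the result directly — Berger--Breuil \cite[5.3.4]{bergerbreuil} when $\val(\chi_2(p))<-l$ and $\chi_1\neq\chi_2$, \cite{except} when $\chi_1=\chi_2$, and Breuil--Emerton \cite[2.2.1]{breuilem} in the critical case $\val(\chi_2(p))=-l$, where the universal completion is $(\Indu{P}{G}{\psi})_{cont}$ with $\psi$ explicit, hence ordinary; irreducibility there comes from \cite[5.3.4]{emcoates}. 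If you want to salvage your approach you would need an independent argument that $\widehat{\pi\otimes W_{l,k}}$ is nonzero and admissible (e.g.\ control of the $\OO[G]$-lattice generated by a lattice in $\pi\otimes W_{l,k}$), which is exactly what those references supply; your Galois-theoretic translation of Lemma \ref{critprince} and the ordinary/non-ordinary dichotomy via reducibility of $V$ are fine, but they only become available after that input.
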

\begin{proof} Since by assumption $\pi$ is irreducible, $\chi_1\chi_2^{-1}\neq |\centerdot|^{\pm 1}$ and so 
$$(\Indu{P}{G}{\chi_1\otimes \chi_2|\centerdot|^{-1}})_{\mathrm{sm}}\cong (\Indu{P}{G}{\chi_2\otimes \chi_1|\centerdot|^{-1}})_{\mathrm{sm}}.$$
We may assume that $\val(\chi_1(p))\le \val(\chi_2(p))$. Suppose that $\val(\chi_2(p))<-l$ then if $\chi_1\neq \chi_2$ 
the assertion is a deep result of Berger-Breuil \cite[5.3.4]{bergerbreuil}, if $\chi_1=\chi_2$ then the assertion follows from 
\cite{except}. If $\val(\chi_2(p))=-l$ then it follows from \cite[2.2.1]{breuilem} that the universal unitary completion 
is isomorphic to $(\Indu{P}{G}{\psi})_{cont}$, where $\psi(\left (\begin{smallmatrix} a & b \\0 & d \end{smallmatrix}\right ))= 
\chi_2(a) a^l \chi_1(d)|d|^{-1} d^{k+l-1}$. 
\end{proof} 

\begin{lem}\label{algcoinv} Let $\psi: P\rightarrow L^{\times}$ be a continuous character and let $P_0$ be a compact open subgroup of $P$. 
Then $\Hom_{P_0}(W_{l,k}, \psi)$ is at most $1$-dimensional and is non-zero if and only if   
$\psi(\left(\begin{smallmatrix} a & b \\0 & d \end{smallmatrix}\right))= a^l d^{k+l-1}$ for all 
$\left(\begin{smallmatrix} a & b \\0 & d \end{smallmatrix}\right)\in P_0.$
\end{lem}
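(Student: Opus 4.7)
The plan is to realize $W_{l,k} = \det^l \otimes \Sym^{k-1} L^2$ as the space of polynomials of degree $k-1$ in variables $x,y$ (twisted by $\det^l$), with $u_b := \bigl(\begin{smallmatrix} 1 & b \\ 0 & 1 \end{smallmatrix}\bigr) \in U$ acting by $x\mapsto x$, $y\mapsto bx+y$, and $T$ acting diagonally. Given any $f\in \Hom_{P_0}(W_{l,k}, \psi)$, the restriction to $U\cap P_0$ and $T\cap P_0$ determine $f$, so I will analyze these two pieces in turn; the main obstacle is showing that $\psi|_{U\cap P_0}$ must already be trivial, as this mixes an algebraic action with a continuous character.

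Suppose $f\neq 0$ and pick $v\in W_{l,k}$ with $f(v)\neq 0$. Because the action of $U$ is algebraic, the function $b\mapsto f(u_b v)$ is a polynomial in $b$ of degree at most $k-1$. On the other hand $f(u_b v)=\psi(u_b)f(v)$, so $\psi(u_b)$ coincides on an open neighbourhood of $0\in \Qp$ with a polynomial $P(b)\in L[b]$. The homomorphism identity $\psi(u_{b_1+b_2})=\psi(u_{b_1})\psi(u_{b_2})$ is then a polynomial identity $P(b_1+b_2)=P(b_1)P(b_2)$ valid on a Zariski dense subset of $\mathbb{A}^1\times \mathbb{A}^1$, hence everywhere; but $\Hom_{\mathrm{alg.gp.}}(\mathbb{G}_a,\mathbb{G}_m)=0$ in characteristic zero, forcing $P\equiv 1$, i.e. $\psi|_{U\cap P_0}=1$.

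Granted this, $f$ factors through the $U\cap P_0$-coinvariants of $W_{l,k}$, and since the action of $U$ is algebraic these coincide with the full $U$-coinvariants. A direct calculation with the basis $\{x^iy^{k-1-i}\}$ shows that $E\cdot x^iy^j = jx^{i+1}y^{j-1}$ (with $E = \left(\begin{smallmatrix} 0 & 1 \\ 0 & 0 \end{smallmatrix}\right)$ generating $\mathfrak u$), so $(W_{l,k})_U$ is one-dimensional, spanned by the image $\overline{y^{k-1}}$ (twisted by $\det^l$). The torus acts on this line by $\bigl(\begin{smallmatrix} a & 0 \\ 0 & d \end{smallmatrix}\bigr)\mapsto d^{k-1}$ on the $\Sym^{k-1}$ factor and $(ad)^l$ on the $\det^l$ factor, hence by the character $\bigl(\begin{smallmatrix} a & 0 \\ 0 & d \end{smallmatrix}\bigr)\mapsto a^l d^{k+l-1}$.

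Combining the two computations, if $\Hom_{P_0}(W_{l,k},\psi)\neq 0$ then $\psi$ is trivial on $U\cap P_0$ and equals $a^l d^{k+l-1}$ on $T\cap P_0$; using $P=TU$ and the decomposition $\bigl(\begin{smallmatrix} a & b \\ 0 & d \end{smallmatrix}\bigr) = \bigl(\begin{smallmatrix} a & 0 \\ 0 & d \end{smallmatrix}\bigr) \bigl(\begin{smallmatrix} 1 & b/a \\ 0 & 1 \end{smallmatrix}\bigr)$, this is precisely the stated formula. Conversely, when $\psi$ is of this form, the projection $W_{l,k}\twoheadrightarrow (W_{l,k})_U \cong L$ is a non-zero element of $\Hom_{P_0}(W_{l,k},\psi)$. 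Finally, since any such $f$ is determined by its value on the one-dimensional quotient $(W_{l,k})_U$, the space has dimension at most one, completing the proof.
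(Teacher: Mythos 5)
Your identification of the coinvariants and of the torus character on them is exactly the computation in the paper, and your polynomial argument for the triviality of $\psi$ on $U\cap P_0$ is a workable (if heavier) substitute for the paper's one-line observation that $U$ lies in the derived subgroup of $P$, so that any character of the whole group $P$ is automatically trivial on all of $U$. The problem is the final assembly. From what you actually establish — the formula for $\psi$ on $T\cap P_0$ and triviality on $U\cap P_0$ — you cannot deduce the formula on all of $P_0$ by writing $\left(\begin{smallmatrix} a & b\\ 0 & d\end{smallmatrix}\right)=\left(\begin{smallmatrix} a & 0\\ 0 & d\end{smallmatrix}\right)\left(\begin{smallmatrix} 1 & b/a\\ 0 & 1\end{smallmatrix}\right)$: the two factors need not lie in $P_0$, and $(T\cap P_0)(U\cap P_0)$ can be a proper subgroup of $P_0$. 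For example, take $P_0=uP_0'u^{-1}$ with $P_0'=\left(\begin{smallmatrix} 1+p\Zp & \Zp\\ 0 & 1+p\Zp\end{smallmatrix}\right)$ and $u=\left(\begin{smallmatrix} 1 & p^{-5}\\ 0 & 1\end{smallmatrix}\right)$: then $T\cap P_0$ consists only of the diagonal matrices with $a\equiv d \pmod{p^5}$, while $P_0$ maps onto all of $(1+p\Zp)^2$ under $g\mapsto (a,d)$, so your argument pins down $\psi$ only on a proper subgroup of $P_0$.

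The gap closes easily with ingredients you already have. Since $U$ is normal in $P$, the line $(W_{l,k})_U$ is a module for $P/U\cong T$, so \emph{every} element of $P_0$, not just the diagonal ones, acts on it, and it acts by $\left(\begin{smallmatrix} a & b\\ 0 & d\end{smallmatrix}\right)\mapsto a^l d^{k+l-1}$: indeed $\left(\begin{smallmatrix} a & b\\ 0 & d\end{smallmatrix}\right)\centerdot y^{k-1}=(ad)^l(bx+dy)^{k-1}\equiv (ad)^l d^{k-1} y^{k-1}$ modulo the monomials divisible by $x$, which span the kernel of $W_{l,k}\twoheadrightarrow (W_{l,k})_U$. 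Applying $f(g\centerdot y^{k-1})=\psi(g)f(y^{k-1})$ with $f(y^{k-1})\neq 0$ then gives $\psi(g)=a^l d^{k+l-1}$ for every $g\in P_0$ directly, with no decomposition of $P_0$; the same observation gives the converse and the dimension bound. Alternatively, follow the paper: since $\psi$ is a character of all of $P$ and $U\subseteq[P,P]$, the restriction $\psi|_U$ is trivial outright — which also renders your $\mathbb{G}_a$-to-$\mathbb{G}_m$ argument unnecessary — and the displayed computation of the action on the image of $y^{k-1}$ finishes the proof.
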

\begin{proof} The restriction of $\psi$ to $U$ is trivial, since $U$ is contained in the derived subgroup of $P$.
We identify $W_{l,k}$ with the space of homogeneous polynomials in $x$ and $y$ of degree $k-1$ with $G$-action given by 
$\left(\begin{smallmatrix} a & b \\ c & d\end{smallmatrix}\right)\centerdot P(x,y)= (ad-bc)^l P(ax+cy, bx+dy)$.  
The space of $U\cap P_0$-co\-in\-va\-riants of $W_{l,k}$ is $1$-dimensional, spanned by the image of $y^{k-1}$. Since 
$ \left (\begin{smallmatrix} a & 0 \\0 & d \end{smallmatrix}\right) y^{k-1}= d^{k-1} (ad)^l y^{k-1}$ we obtain the assertion. 
\end{proof}

\begin{lem}\label{algordprince} Let $\psi: P\rightarrow L^{\times}$ be a continuous unitary character and 
let $\Pi:=(\Indu{P}{G}{\psi})_{cont}$. 
If $\Pi^{\mathrm{alg}}\neq 0$ then $\Pi^{\mathrm{alg}}\cong (\Indu{P}{G}{\chi_1\otimes \chi_2})_{\mathrm{sm}}\otimes W_{l,k}$
and $\psi(\left (\begin{smallmatrix} a & b \\0 & d \end{smallmatrix}\right ))= 
\chi_1(a) a^l \chi_2(d) d^{k+l-1}$, for some smooth characters $\chi_1, \chi_2: \Qp^{\times}\rightarrow L^{\times}$ and 
integers $k$, $l$ with $k\ge 0$.
\end{lem}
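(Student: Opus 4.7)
The plan is to exploit the $P$-equivariant evaluation map $\mathrm{ev}_1\colon\Pi\to L_\psi$, $f\mapsto f(1)$, together with Prasad's decomposition theorem \cite{prasad}, which canonically presents any locally algebraic $G$-representation as
\begin{equation*}
\Pi^{\mathrm{alg}}\cong \bigoplus_{(l,k)}\pi_{l,k}\otimes_L W_{l,k},
\end{equation*}
with $\pi_{l,k}$ smooth and the sum running over pairs $(l,k)$ with $k\ge 1$. Because $(gf)(1)=f(g)$ in the convention used for parabolic induction, any $G$-invariant subspace on which $\mathrm{ev}_1$ vanishes must be zero, so the hypothesis $\Pi^{\mathrm{alg}}\neq 0$ forces the restriction of $\mathrm{ev}_1$ to $\Pi^{\mathrm{alg}}$ to be non-zero.

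For any pair $(l,k)$ with $\pi_{l,k}\neq 0$, I would pick a non-zero smooth vector $v\in\pi_{l,k}$ and an open compact subgroup $P_0\subseteq P$ small enough to fix $v$ and on which $\psi$ is smooth. The restriction of $\mathrm{ev}_1$ to the subspace $L\cdot v\otimes W_{l,k}$ is then a non-zero $P_0$-equivariant map $W_{l,k}\to L_\psi$, and Lemma \ref{algcoinv} forces
\begin{equation*}
\psi\bigl(\bigl(\begin{smallmatrix} a & b \\ 0 & d\end{smallmatrix}\bigr)\bigr)=a^l d^{k+l-1},\qquad \bigl(\begin{smallmatrix} a & b \\ 0 & d\end{smallmatrix}\bigr)\in P_0.
\end{equation*}
Two distinct pairs $(l,k)$ restrict to distinct algebraic characters of any open subgroup of the diagonal torus, so at most one pair can contribute; denote it $(l,k)$ and set $\pi:=\pi_{l,k}$. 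Since this identity determines $\psi$ on an open subgroup of $P$ and $\psi$ is continuous on the full group, there exist uniquely determined smooth characters $\chi_1,\chi_2\colon\Qp^\times\to L^\times$ with $\psi\bigl(\bigl(\begin{smallmatrix} a & b \\ 0 & d\end{smallmatrix}\bigr)\bigr)=\chi_1(a)a^l\chi_2(d)d^{k+l-1}$.

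It remains to identify $\pi$ with $(\Indu{P}{G}{\chi_1\otimes\chi_2})_{\mathrm{sm}}$, which I would do by producing mutually inverse inclusions. In one direction, composing $\mathrm{ev}_1|_{\pi\otimes W_{l,k}}$ with the unique (up to scalar) $P$-equivariant projection $\phi\colon W_{l,k}\twoheadrightarrow L_{a^l d^{k+l-1}}$ of Lemma \ref{algcoinv} produces a non-zero $P$-equivariant map $\pi\to L_{\chi_1\otimes\chi_2}$, and smooth Frobenius reciprocity promotes it to an injective $G$-equivariant map $\iota\colon\pi\hookrightarrow(\Indu{P}{G}{\chi_1\otimes\chi_2})_{\mathrm{sm}}$. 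In the reverse direction, the diagonal-action formula $f\otimes v\mapsto [g\mapsto f(g)\,\phi(gv)]$ defines a $G$-equivariant map $(\Indu{P}{G}{\chi_1\otimes\chi_2})_{\mathrm{sm}}\otimes W_{l,k}\to \Pi$ whose image is visibly locally algebraic and sits in the $W_{l,k}$-isotypic component of $\Pi^{\mathrm{alg}}$, hence inside $\pi\otimes W_{l,k}$, giving an inclusion $(\Indu{P}{G}{\chi_1\otimes\chi_2})_{\mathrm{sm}}\hookrightarrow \pi$. The hard part will be checking that these two embeddings are mutually inverse: once set up, it reduces to comparing the images under $\mathrm{ev}_1\otimes \phi$ of a single non-zero element in each source, where the uniqueness clauses of Lemma \ref{algcoinv} and of smooth Frobenius reciprocity force the compositions to be scalar multiples of the identity, establishing the desired isomorphism $\Pi^{\mathrm{alg}}\cong(\Indu{P}{G}{\chi_1\otimes\chi_2})_{\mathrm{sm}}\otimes W_{l,k}$.
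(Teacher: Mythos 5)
The first half of your argument — using the $P$-equivariant evaluation $\mathrm{ev}_1$, the vanishing of $\mathrm{ev}_1$ on $G$-subrepresentations forcing $\mathrm{ev}_1|_{\Pi^{\mathrm{alg}}}\neq 0$, Lemma \ref{algcoinv} to pin down $\psi$ on a small open $P_0$, and the observation that the algebraic exponents $(l,k)$ are determined so at most one isotypic piece of the Prasad decomposition survives — runs in parallel with the paper's proof and is essentially correct, though the clause ``on which $\psi$ is smooth'' is spurious: $\psi$ is merely continuous and Lemma \ref{algcoinv} is stated for continuous characters, so you only need $P_0$ to fix $v$.

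The gap is in the final identification $\pi\cong(\Indu{P}{G}{\chi_1\otimes\chi_2})_{\mathrm{sm}}$. You assert that Frobenius reciprocity ``promotes'' the nonzero $P$-map $\pi\to L_{\chi_1\otimes\chi_2}$ to an \emph{injective} $G$-map $\iota\colon\pi\to(\Indu{P}{G}{\chi_1\otimes\chi_2})_{\mathrm{sm}}$; injectivity is not automatic, since $\ker\iota$ is the largest $G$-subrepresentation of $\pi$ killed by $\beta:=$ the $P$-map, and $\pi$ need not be irreducible. (This particular gap is repairable: $\ker\iota\otimes W$ is a $G$-subrepresentation of $\Pi$ on which $\mathrm{ev}_1$ vanishes, hence zero by your opening observation — but you never make this argument.) The more serious problem is the concluding appeal to ``uniqueness clauses'' to show the two compositions are scalar multiples of the identity. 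The relevant uniqueness would require $\End_G\bigl((\Indu{P}{G}{\chi_1\otimes\chi_2})_{\mathrm{sm}}\bigr)=L$ and $\Hom_P(\pi, L_{\chi_1\otimes\chi_2})$ to be one-dimensional, neither of which is known at this stage; for reducible smooth principal series the endomorphism ring can have dimension $2$, so a nonzero endomorphism of the induction need not be a scalar. The paper closes this instead by a dimension count: both the injection's source and target have $H$-fixed subspace of dimension $|H\backslash G/P|$ for all small open $H\subseteq H_0$ (using Lemma \ref{algcoinv} and Mackey/Frobenius on the target), whence the injection is onto. If you want to keep your two-inclusions structure you must at least establish the injectivity of $\iota$ as above and then finish by comparing $\dim$ of $H$-invariants rather than by invoking a uniqueness of endomorphisms that has not been shown.
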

\begin{proof} Let $\tau$ be a smooth $L$-representation of $G$ and $W=W_{l,k}$ for some integers $k, l$, then 
$$ \Hom_G(\tau\otimes W, \Pi)\cong \Hom_G(\tau, \Hom_L(W, \Pi))\cong  \Hom_G(\tau, \Hom_L(W, \Pi)^{\mathrm{sm}}),$$
where $\Hom_L(W, \Pi)^{\mathrm{sm}}$ denotes smooth vectors for the action of $G$  on  
$\Hom_L(W, \Pi)$ by conjugation; explicitly it is the union of $\Hom_H(W_{l,k}, \Pi)$ for all compact open subgroups $H$ of $G$.
If  $\Hom_G(\tau\otimes W, \Pi)\neq 0$ there exists a compact open subgroup $H_0$ of $G$ such that $\Hom_{H_0}(W, \Pi)\neq 0$. 
Frobenius reciprocity and Lemma \ref{algcoinv} imply  that 
$\psi(\left (\begin{smallmatrix} a & b \\0 & d \end{smallmatrix}\right ))= a^l d^{k+l-1}$ for all 
$\left(\begin{smallmatrix} a & b \\0 & d \end{smallmatrix}\right)\in H_0\cap P.$ Hence, $\psi=\psi_{\mathrm{sm}} \psi_{\mathrm{alg}}$, 
where $\psi_{\mathrm{sm}}: P\rightarrow L^{\times}$ is a smooth character, trivial on $H_0\cap P$ and 
$\psi_{\mathrm{alg}} (\left (\begin{smallmatrix} a & b \\0 & d \end{smallmatrix}\right ))= a^l d^{k+l-1}$ for all
$\left (\begin{smallmatrix} a & b \\0 & d \end{smallmatrix}\right )\in P$. Lemma \ref{algcoinv} implies that if $(l', k')\neq (l,k)$ then 
$\Hom_{G}(\tau\otimes W_{l',k'}, \Pi)=0$ for all smooth representations $\tau$. It follows from \cite{prasad} that 
$\Pi^{\mathrm{alg}}\cong  \Hom_L(W, \Pi)^{\mathrm{sm}}\otimes W$. We identify $W$ with the space homogeneous polynomials 
in $x$ and $y$ of degree $k-1$. The map $f\otimes P\mapsto [g\mapsto f(g) P(c,d)]$, for all 
$g=\left (\begin{smallmatrix} a & b \\c & d \end{smallmatrix}\right)\in G$ induces an injection 
$(\Indu{P}{G}{\psi_{\mathrm{sm}}})_{\mathrm{sm}} \otimes W \hookrightarrow \Pi$, and hence an injection  
 $(\Indu{P}{G}{\psi_{\mathrm{sm}}})_{\mathrm{sm}}\hookrightarrow \Hom_L(W, \Pi)^{\mathrm{sm}}$. It follows from Lemma \ref{algcoinv} 
and Frobenius reciprocity that for all open subgroups $H\subseteq H_0$ the space of $H$-invariants in the source and the 
target have the same dimension equal to $| H\backslash G/P|$. Hence, the injection is an isomorphism. 
\end{proof}

\begin{lem}\label{algordsp} Let $\eta: \Qp^{\times}\rightarrow L^{\times}$ be a continuous unitary character. If 
$(\widehat{\Sp}\otimes \eta \circ \det )^{\mathrm{alg}}\neq 0$ then $\eta$ is locally algebraic and 
$(\widehat{\Sp}\otimes \eta \circ \det )^{\mathrm{alg}}\cong \Sp\otimes \eta\circ \det$.
\end{lem}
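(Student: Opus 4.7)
The plan is to exploit the short exact sequence~\eqref{completeStein} with $\psi=\eta\circ\det$ (which factors through $\det$ and so equals $\psi^s$):
\begin{equation*}
0\to \eta\circ\det\to \Pi_0\to \Pi_1\to 0,
\end{equation*}
where $\Pi_0:=(\Indu{P}{G}{\eta\circ\det|_P})_{cont}$ and $\Pi_1:=\widehat{\Sp}\otimes\eta\circ\det$, and to reduce the computation of $\Pi_1^{\mathrm{alg}}$ to the explicit structure result for $\Pi_0^{\mathrm{alg}}$ given by Lemma~\ref{algordprince}.

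Suppose $\Pi_1^{\mathrm{alg}}\neq 0$ and fix a $G$-equivariant embedding $\tau\otimes W_{l,k}\hookrightarrow \Pi_1$ with $\tau$ smooth irreducible. The central character of $\Pi_1$ is $\eta^2$ while that of $\tau\otimes W_{l,k}$ is $\chi_\tau\cdot(\cdot)^{2l+k-1}$, so $\eta(a)^2=\chi_\tau(a)\,a^{2l+k-1}$. Restricting to a pro-$p$ subgroup $1+p^N\Zp$ on which $\chi_\tau$ is trivial and $\eta$ takes values in $1+\varpi\OO$ (available because $\eta$ is continuous and unitary), the equation $\eta(a)^2=a^{2l+k-1}$ has a unique solution in $1+\varpi\OO$ using $p\neq 2$; together with continuity this forces $\eta$ to be locally algebraic of the form $\eta(x)=\chi(x)x^n$ for a smooth character $\chi$ and an integer $n$. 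In particular $\eta\circ\det$ is locally algebraic, hence contained in $\Pi_0^{\mathrm{alg}}$, which is therefore non-zero.

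Next I would apply Lemma~\ref{algordprince} to $\Pi_0$: matching $\eta\circ\det|_P(\mathrm{diag}(a,d))=\chi(a)\chi(d)\,a^n d^n$ with the normal form $\chi_1(a)a^{l'}\chi_2(d)d^{k'+l'-1}$ and setting $a=d$ yields $\chi_1(a)\chi_2(a)^{-1}=a^{k'-1}$, which is smooth only if $k'=1$, whence $\chi_1=\chi_2=\chi$ and $l'=n$. The lemma identifies
\begin{equation*}
\Pi_0^{\mathrm{alg}}\cong (\Indu{P}{G}{\chi\otimes\chi})_{\mathrm{sm}}\otimes\det^n,
\end{equation*}
which sits in a non-split exact sequence $0\to \chi\circ\det\otimes\det^n\to \Pi_0^{\mathrm{alg}}\to \Sp\otimes\chi\circ\det\otimes\det^n\to 0$. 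The subobject coincides with $\eta\circ\det$, the kernel of $q\colon\Pi_0\twoheadrightarrow \Pi_1$, so $q(\Pi_0^{\mathrm{alg}})\cong \Sp\otimes\eta\circ\det\subseteq \Pi_1^{\mathrm{alg}}$.

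For the reverse inclusion I would prove $\Pi_1^{\mathrm{alg}}=q(\Pi_0^{\mathrm{alg}})$, i.e.\ that every algebraic vector of $\Pi_1$ admits an algebraic lift to $\Pi_0$. Any other embedding $\tau'\otimes W_{l'',k''}\hookrightarrow \Pi_1$ would, by the same central-character computation, satisfy $k''=2n-2l''+1$, while a cohomological section argument controlling the obstruction in $H^1_{\mathrm{cont}}(H,\Hom_L(W_{l'',k''},\eta\circ\det))$ on a sufficiently small pro-$p$ compact open $H$ produces a lift into $\Pi_0^{\mathrm{alg}}$; since $\Pi_0^{\mathrm{alg}}$ has algebraic factor $W_{n,1}=\det^n$, this forces $(l'',k'')=(n,1)$ and $\tau'\otimes\det^n\cong \Sp\otimes\eta\circ\det$. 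Therefore $\Pi_1^{\mathrm{alg}}=\Sp\otimes\eta\circ\det$ and $\eta$ is locally algebraic. The main obstacle is precisely this lifting step: proving that taking algebraic vectors is exact on the given short exact sequence of admissible unitary Banach representations, which requires delicate control of continuous cohomology of small compact open subgroups with coefficients in a one-dimensional locally algebraic character.
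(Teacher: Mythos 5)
There is a genuine gap at exactly the place you flag, and your proposed workaround is not what the paper does. You correctly set up the exact sequence $0\to\eta\circ\det\to\Pi_0\to\Pi_1\to 0$ and correctly reduce the question to Lemma~\ref{algordprince} applied to $\Pi_0$, but the entire weight of the proof falls on the surjectivity of $q$ on locally algebraic vectors, and that step is not proved. Your sketch of a ``cohomological section argument controlling the obstruction in $H^1_{\mathrm{cont}}(H,\Hom_L(W_{l'',k''},\eta\circ\det))$'' is not a proof: working with an arbitrary small pro-$p$ $H$ only shows you can lift an $H$-algebraic vector $H$-equivariantly after possibly passing to a smaller $H$, and it is not clear why the resulting vector in $\Pi_0$ is $G$-locally algebraic, nor why the relevant obstruction group vanishes. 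There is also no reason a priori that the left-exact functor $(-)^{\mathrm{alg}}$ is exact on arbitrary short exact sequences of admissible unitary Banach representations.

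The paper's proof replaces this speculative cohomological argument with one concrete observation plus a citation: the surjection $q\colon (\Indu{P}{G}{\eta\otimes\eta})_{\mathrm{cont}}\twoheadrightarrow\widehat{\Sp}\otimes\eta\circ\det$ admits a \emph{$P$-equivariant} splitting (coming from evaluating functions at the identity, say), and Colmez's result [VI.2.3] of \cite{colmez} then says that a Banach quotient map admitting a $P$-equivariant section induces a surjection on locally algebraic vectors. That is the idea your proposal is missing. Once you have that surjectivity, Lemma~\ref{algordprince} immediately gives both assertions, including that $\eta$ is locally algebraic --- so the preliminary central-character computation in your first paragraph, while correct, is unnecessary. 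You should replace the final paragraph of your argument with the $P$-equivariant splitting observation and the reference to Colmez.
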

\begin{proof} Since the surjection $q: (\Indu{P}{G}{\eta\otimes\eta})_{cont}\twoheadrightarrow \widehat{\Sp}\otimes \eta \circ \det $
admits a $P$-equivariant splitting, \cite[VI.2.3]{colmez} implies that $q$ induces a surjection on 
locally algebraic vectors. The assertion follows from Lemma \ref{algordprince}.
\end{proof} 

\begin{thm}\label{mainU} Suppose that the central character of $\pi\otimes W_{l,k}$ is unitary and
either $\pi$ is special series and $k>1$ or $\pi$ is supercuspidal. Then $\pi\otimes W_{l, k}$ admits precisely 
$\mathbb{P}^1(L)$ non-isomorphic absolutely irreducible admissible unitary completions. 
\end{thm}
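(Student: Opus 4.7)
The plan is to combine Theorem~\ref{Lbij} with Colmez's explicit description of the locally algebraic vectors in $\Pi(V)$ so as to reduce the counting of completions to a problem in $p$-adic Hodge theory. First I would show that under the hypotheses of the theorem every absolutely irreducible admissible unitary completion $\Pi$ of $\pi\otimes W_{l,k}$ is non-ordinary. Indeed, by Lemmas~\ref{algordprince} and~\ref{algordsp}, if $\Pi$ were ordinary then $\Pi^{\mathrm{alg}}$ would be either a unitary character, or $\Sp\otimes\eta\circ\det$ (requiring $W_{l,k}$ trivial, i.e.\ $k=1$), or a smooth principal series tensored with $W_{l,k}$. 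The embedding $\pi\otimes W_{l,k}\hookrightarrow\Pi^{\mathrm{alg}}$ would then force $\pi$ to be a character, the smooth Steinberg with $k=1$, or a smooth principal series. All three are excluded by the hypothesis that $\pi$ is supercuspidal, or special series with $k>1$.

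By Theorem~\ref{Lbij}, such $\Pi$ is then isomorphic to $\Pi(V)$ for a unique absolutely irreducible continuous $2$-dimensional $L$-representation $V$ of $\gal$ with $\det V=\zeta\varepsilon$, where $\zeta$ is the (unitary) central character of $\pi\otimes W_{l,k}$. Now I would appeal to Colmez's calculation of locally algebraic vectors (as referenced in the introduction, completed in \cite[\S7.4]{emlg}): the condition that $\pi\otimes W_{l,k}$ embeds into $\Pi(V)$ is equivalent to $V$ being potentially semistable, with distinct Hodge--Tate weights determined by $W_{l,k}$, and with Frobenius-semisimplified Weil--Deligne representation corresponding under classical local Langlands (appropriately normalized) to $\pi$. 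So the set of completions is in bijection with the set of isomorphism classes of such $V$.

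The final step is the count. With the Hodge--Tate weights and the Weil--Deligne parameter fixed, the underlying filtered $(\varphi,N,\Gal(F/\Qp))$-module is determined up to isomorphism over $L$, and $V$ is recovered from the choice of a Hodge filtration, i.e.\ a line in a $2$-dimensional $L$-vector space. These lines are parametrized by $\mathbb{P}^1(L)$, and distinct lines give non-isomorphic admissible filtered modules, hence non-isomorphic $V$'s, hence non-isomorphic completions.

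The main technical point, and where the hypotheses on $\pi$ are used essentially, is verifying that \emph{every} line in $\mathbb{P}^1(L)$ yields a weakly admissible filtration whose associated Galois representation is absolutely irreducible (equivalently, is not ordinary, so that it indeed corresponds to a completion). For $\pi$ supercuspidal, the Weil--Deligne representation is irreducible so $\varphi$ has no $L$-rational eigenline in the underlying crystal; any Hodge line is automatically weakly admissible and the resulting $V$ is absolutely irreducible. For $\pi$ special series with $k>1$, one has $N\neq 0$ and a distinguished line $\ker N$, and the inequality $k>1$ is precisely what makes the Newton and Hodge polygons compatible for every choice of Hodge line (including $\ker N$), ensuring weak admissibility and absolute irreducibility throughout $\mathbb{P}^1(L)$. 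This slope/Hodge comparison is the hard part of the argument, but is a standard $p$-adic Hodge theory verification once the dictionary above is set up.
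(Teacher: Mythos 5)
Your architecture is exactly the paper's: use Lemmas~\ref{algordprince} and~\ref{algordsp} to force non-ordinariness, apply Theorem~\ref{Lbij} to pass to an absolutely irreducible $V$ with prescribed determinant, invoke Colmez's $\Pi(V)^{\mathrm{alg}}\cong\LL(\WD(V))\otimes W_{l,k}$ to translate into a count of potentially semistable $V$ with fixed Hodge--Tate weights and inertial/Frobenius type, and then parametrize. The first three steps are fine, and your supercuspidal count is correct (irreducible $\WD$ means no proper $(\varphi,N,\Gal)$-subobjects over $L$, so every Hodge line is weakly admissible and gives a distinct irreducible $V$, whence $\mathbb{P}^1(L)$).

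The error is in the special series case. On the fixed Steinberg $(\varphi,N)$-module with $N\neq 0$, the Hodge line equal to $\ker N$ \emph{never} yields a weakly admissible module. The unique $(\varphi,N)$-stable line is $\ker N$, carrying the smaller Frobenius slope $\val(\beta)=\frac{a+b-1}{2}$, and if the Hodge line is $\ker N$ then $t_H(\ker N)=b$; weak admissibility requires $b\le\frac{a+b-1}{2}$, i.e.\ $b\le a-1$, impossible since $a<b$. Conversely, every line $\neq\ker N$ gives $t_H(\ker N)=a\le\frac{a+b-1}{2}$ as soon as $b-a\ge 1$, so your condition ``$k>1$'' plays no role in this step at all (only $k\ge 1$ is used, which is automatic). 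Thus the fixed $(\varphi,N\neq 0)$-module only supplies an $\mathbb{A}^1(L)=L$ worth of $V$'s (the $\mathcal L$-invariant family), not $\mathbb{P}^1(L)$. The missing point is a single \emph{crystalline} representation with $N=0$ and the same Frobenius eigenvalue ratio $p$; the modified correspondence $\LL$ sends its reducible Weil--Deligne parameter to $\Sp$ as well, and there is exactly one such isomorphism class (any Hodge line off both eigenlines gives the same filtered $\varphi$-module up to isomorphism). This is precisely what the paper's citation to Colmez [VI.6.50] packages: $L\cup\{\text{crystalline}\}=\mathbb{P}^1(L)$. The hypothesis $k>1$ is used only where you used it correctly, at the start, to rule out the ordinary completion $\widehat{\Sp}\otimes\eta\circ\det$; it is not what makes the Newton--Hodge comparison work.
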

\begin{proof} Let $\Pi$ be an absolutely irreducible admissible unitary $L$-Banach space representation of $G$ containing 
 $\pi\otimes W_{l,k}$ as a $G$-invariant dense subspace. Since $\pi\otimes W_{l,k}$ is  dense in $\Pi$, the central character 
of $\Pi$ is equal to the central character of $\pi\otimes W_{l,k}$. It follows from Lemmas \ref{algordprince} and \ref{algordsp} that $\Pi$ 
is not ordinary. Hence, $V:=\VV(\Pi)$ is an absolutely irreducible $2$-dimensional $L$-representation of $\gal$ by Theorem \ref{Lbij}.
Since $\Pi$ contains a locally algebraic representation $\pi\otimes W_{l,k}$, $V$ is de Rham \cite[VI.6.13]{colmez}, with 
Hodge-Tate weights $a< b$, \cite[VI.5.1]{colmez}, where $b-a=k$ (the precise formula for $a$ and $b$ depends on the normalization of the 
correspondence). Since $V$ is de Rham, it is potentially semistable and to $V$ one may associate a $2$-dimensional Weil-Deligne representation 
$\WD(V)$, see for example \cite{ghm}. Colmez has shown  that $\Pi^{\mathrm{alg}}\cong \LL(\WD(V))\otimes W_{l,k}$, \cite[Thm.\ 0.21]{colmez}, 
where $\LL$ denotes the classical (modified) local Langlands correspondence \cite[\S VI.6.11]{colmez}. In the supercuspidal case the proof 
was conditional on the results of Emerton, which have now appeared in \cite[\S7.4]{emlg}.  Thus determining all the 
isomorphism classes of  the absolutely irreducible admissible unitary completions of $\pi\otimes W$ is equivalent to determining 
all the isomorphism classes of the absolutely irreducible $2$-dimensional potentially semistable $L$-representations $V$ of $\gal$ with Hodge-Tate weights 
$a<b$, such that $\Hom_G(\pi, \LL(\WD(V)))\neq 0$. If $\pi$ is special series then (after twisting by a smooth unitary character) 
it follows from \cite[VI.6.50]{colmez} that the set of such $V$ consists of a family of semi-stable non-crystalline representations 
indexed by the $L$-invariant  $\mathcal L\in L$ and one crystalline representation.  If $\pi$ is supercuspidal then 
the last condition is equivalent to $\LL(\WD(V))\cong \pi$ and the assertion follows from \cite{ghm}.
\end{proof}

\appendix
\section{Two dimensional pseudocharacters}\label{pseudocharacters}
We recall some standard facts about $2$-di\-men\-sio\-nal pseudocharacters. We refer the reader to \cite[\S 1]{BC} for more information. 
Let $\GG$ be a profinite group and $(A, \mm)$ a local artinian $\OO$-algebra. We assume that $p>2$. A $2$-di\-men\-sio\-nal 
$A$-valued pseudocharacter is a continuous function $T: \GG \rightarrow A$ satisfying: 1) $T(1)=2$; 2) 
$T(gh)=T(hg)$ for all $g,h \in \GG$;  3) the relation 
\begin{displaymath}
T(g)T(h)T(k)-T(g) T(hk)- T(h) T(gk)- T(k) T(gh)+T(ghk)+T(gk h)=0
\end{displaymath}
for all $g,h, k \in \GG$. One may show that if  $\rho: \GG \rightarrow \GL_2(A)$ is a continuous representation then $\tr \rho$ 
is a $2$-di\-men\-sio\-nal pseudocharacter. Given a $2$-di\-men\-sio\-nal pseudocharacter $T: \GG\rightarrow A$ one may show, \cite[Prop.1.29]{che}, that 
the function $D(g):=\frac{T(g)^2-T(g^2)}{2}$ defines a continuous group homomorphism $\GG \rightarrow A^{\times}$. It is shown 
in \cite[1.9, 1.29]{che} that $T\mapsto (T, D)$ induces a bijection between $2$-di\-men\-sio\-nal pseudocharacters and pairs of functions 
$(T, D)$, where $D:\GG \rightarrow A^{\times}$ is a continuous group homomorphism and $T:\GG\rightarrow A$ is a continuous function satisfying:
 $T(1)=2$, $T(gh)=T(hg)$, $D(g)T(g^{-1}h)-T(g)T(h)+T(gh)=0$ for all $g, h\in \GG$.  

Let $\rho:\GG\rightarrow \GL_2(k)$ be a continuous representation and let $D^{\mathrm{ps}}$ be the functor 
from local artinian augmented $\OO$-algebras with residue field $k$ to the category of sets, 
such that $D^{\mathrm{ps}}(A)$ is the set of all $2$-di\-men\-sio\-nal $A$-valued pseudocharacters $T$, such that 
$T\equiv \tr \rho \pmod{\mm_A}$. If for every open subgroup $\Hr$ of $\GG$, $\Hom^{cont}(\Hr, \Fp)$ is 
a finite di\-men\-sio\-nal $\Fp$-vector space then the functor $D^{\mathrm{ps}}$ is pro-represented by a complete 
local noetherian $\OO$-algebra. We note that this finiteness condition is satisfied if $\GG$ is the absolute  Galois group 
of a local field. We usually work with a variant: fix a continuous character $\psi: \GG\rightarrow \OO^{\times}$ lifting 
$\det \rho$ and let $D^{\mathrm{ps}, \psi}$ be a subfunctor of $D^{\mathrm{ps}}$ such that 
$T\in D^{\mathrm{ps}, \psi}(A)$ if and only if $\frac{T(g)^2-T(g^2)}{2}$ is equal to (the image of ) $\psi(g)$ for 
all $g\in \GG$. We will refer to $D^{\mathrm{ps}, \psi}$ as a deformation problem with a fixed determinant. 
One may show that if $D^{\mathrm{ps}}$ is pro-represented by $R$ then $D^{\mathrm{ps}, \psi}$ is pro-represented by a quotient of $R$.

\begin{lem}\label{pst1} Let $\mathrm G$ be a finite group, let $S= k[\mathrm G]/J$, where $J$ is the 
two sided ideal in $k[\mathrm G]$ generated by $g^2-2g +1$ for all $g\in \mathrm G$. Then the image 
of $\mathrm G$ in $S^{\times}$ is a $p$-group.
\end{lem}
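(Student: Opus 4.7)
The plan is to exploit the single relation cutting out $S$ from $k[\mathrm G]$ to show that in $S$ every element of $\mathrm G$ becomes unipotent in a very strong sense, and then play the $p$-prime part of its order off against $\varpi$-adic nilpotence.

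First I would observe that the defining relation $g^2 - 2g + 1 = 0$ is exactly $(g-1)^2 = 0$, so for every $g \in \mathrm G$ the element $x := g-1$ satisfies $x^2 = 0$ in $S$. A direct induction (or a two-term binomial expansion, valid because $x^2 = 0$) then gives
\[
g^m = (1+x)^m = 1 + m x = 1 + m(g-1) \quad \text{in } S, \qquad \forall\, m \ge 1.
\]
This is the only computation needed.

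Next, let $g \in \mathrm G$ have order $n$ in $\mathrm G$, and write $n = p^a \ell$ with $\gcd(\ell, p) = 1$. Set $h := g^{p^a}$, which has order $\ell$ in $\mathrm G$; in particular $h^\ell = 1$ in $\mathrm G$ and hence in $S$. Applying the identity above to $h$ gives $1 = h^\ell = 1 + \ell(h-1)$, so $\ell(h-1) = 0$ in $S$. Since $\ell$ is coprime to $p$ it is a unit in $k$, forcing $h = 1$ in $S$. Thus $g^{p^a} = 1$ in $S^\times$, so the image of $g$ in $S^\times$ has order dividing $p^a$.

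Finally, since $\mathrm G$ is finite its image in $S^\times$ is a finite group, and by the previous paragraph every element of this image has $p$-power order; hence the image is a $p$-group. The only potential obstacle is making sure the binomial identity $g^m = 1 + m(g-1)$ is used inside $S$ and not in $k[\mathrm G]$ (where $(g-1)^2$ need not vanish), but this is immediate from the definition of $J$, so the argument is really just one short calculation plus the prime-to-$p$ part of $n$ being a unit in $k$.
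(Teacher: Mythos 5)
Your proof is correct and is essentially the paper's argument: the identity $g^m = 1 + m(g-1)$ valid modulo $(g-1)^2$, combined with the invertibility in $k$ of the prime-to-$p$ part of the order, is exactly what the paper extracts from the relation $\gcd(x^{\ell}-1,(x-1)^2)=x-1$ in $k[x]$ via a Bezout identity. The only cosmetic difference is that you argue directly, element by element, that the image of each $g$ has $p$-power order, whereas the paper argues by contradiction with an element whose image has prime order $\ell\neq p$.
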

\begin{proof} Suppose not then there exists a prime $l\neq p$ and $g\in \mathrm G$ such that 
the image of $g$ in $S^{\times}$ has order $l$. Since the greatest common divisor of 
$x^l-1$ and $(x-1)^2$ in $k[x]$ equal to $x-1$ we may find polynomials $a(x), b(x)\in k[x]$ such that 
$(x^l-1)a(x) +(x-1)^2b(x)= x-1$. Since the images of $g^l-1$ and $g^2-2 g+1$ are equal to $0$ in $S$, 
we deduce that the image of $g$ in $S$ is trivial.
\end{proof}

Let $\rho: \GG\rightarrow \GL_2(k)$ be a continuous representation, $\Kr$ be the kernel of $\rho$, $\Kr(p)$
the maximal pro-$p$ quotient of $\Kr$ and $\Hr$ the kernel of $\Kr\twoheadrightarrow \Kr(p)$. We note that 
$\Hr$ is a normal subgroup of $\GG$. 

Let $(A, \mm)$ be a local  artinian $\OO$-algebra with residue field $k$. Let $T: \GG \rightarrow A$ be a continuous $2$-di\-men\-sio\-nal 
pseudocharacter lifting $\tr \rho$. Since $A$ is finite and $T$ is continuous $\Ker T:=\{h \in \GG: T(gh)=T(g), \forall g\in \GG\}$
is an open subgroup of $\GG$.   

\begin{prop}\label{pst2} $\HH \subseteq \Ker T$.
\end{prop}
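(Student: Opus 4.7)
The plan is to reduce, via continuity of $T$ and $D$, to a statement about the finite quotient $\GG/\Ker T$, and then to conclude by combining Lemma~\ref{pst1} with the Cayley--Hamilton identity for $2$-dimensional pseudocharacters.

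First I would note that $D:\GG\to A^{\times}$ is a continuous character lifting $\det\rho$, so $D(\Kr)\subseteq 1+\mm_{A}$. Since $A$ is artinian and its residue field has characteristic $p$, the group $1+\mm_{A}$ is a $p$-group, so $D|_{\Kr}$ factors through the maximal pro-$p$ quotient $\Kr(p)=\Kr/\HH$; in particular $D$ is trivial on $\HH$. Because $T$ is continuous and $A$ is finite, $\Ker T$ is open in $\GG$, so $H:=\GG/\Ker T$ is a finite group. It suffices to prove that the image $K'$ of $\Kr$ in $H$ is a $p$-group: any continuous homomorphism from $\Kr$ to a pro-$p$ group factors through $\Kr(p)=\Kr/\HH$, which forces $\HH\subseteq\Ker T$.

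The heart of the argument is to form the $A$-algebra $S:=A[H]/J$, where $J:=\{x\in A[H]:\tilde T(yx)=0\text{ for all }y\}$ and $\tilde T$, $\tilde D$ denote the pseudocharacter and determinant induced on $H$. By construction $H$ embeds into $S^{\times}$. A direct computation from the pseudocharacter identity $T(ab)=T(a)T(b)-D(a)T(a^{-1}b)$, applied with $b=yg$ and using the conjugation-invariance of $T$, gives $\tilde T\bigl(y(g^{2}-\tilde T(g)g+\tilde D(g))\bigr)=0$ for all $y\in A[H]$ and $g\in H$, and hence the Cayley--Hamilton relation
\[
g^{2}-\tilde T(g)\,g+\tilde D(g)\;=\;0 \qquad\text{in }S.
\]
For $h'\in K'$ lifted from $h\in\Kr$ one has $\tilde T(h')\equiv 2$ and $\tilde D(h')\equiv 1\pmod{\mm_{A}}$, so rearranging the relation yields $(h'-1)^{2}=(\tilde T(h')-2)h'-(\tilde D(h')-1)\in\mm_{A}S$.

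To conclude, the image of $K'$ in $(S/\mm_{A}S)^{\times}$ factors through the unit group of $k[K']/(g^{2}-2g+1:g\in K')$, so Lemma~\ref{pst1} shows it is a $p$-group; the kernel $1+\mm_{A}S$ of $S^{\times}\to(S/\mm_{A}S)^{\times}$ is also a $p$-group (as $\mm_{A}$ is nilpotent and $S/\mm_{A}S$ is a finite $k$-algebra), so $K'$ itself is a $p$-group, as required. The main subtlety is the verification of the Cayley--Hamilton identity in $S$, since $T$ is only an abstract pseudocharacter rather than the trace of a representation; this identity is the mechanism that turns the congruence $\tilde T(h')\equiv 2$ into the unipotence statement $(h'-1)^{2}\in\mm_{A}S$ needed to feed Lemma~\ref{pst1}.
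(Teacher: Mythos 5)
Your proof is correct and follows essentially the same route as the paper: pass to a finite quotient of $\GG$, build the Cayley--Hamilton quotient algebra $S$ of its group ring, and deduce from Lemma~\ref{pst1} together with the nilpotence of $\mm_A S$ that the image of $\Kr$ in $S^{\times}$ is a $p$-group. The only differences are cosmetic: the paper fixes an open normal $\Nr\subseteq\Kr\cap\Ker T$ and sets $S=A[\GG/\Nr]/J$ with $J$ \emph{generated} by the elements $g^{2}-T(g)g+\tfrac{T(g)^2-T(g^2)}{2}$, then concludes by observing that $T$ factors through $S$ and that $\bar h=1$ for $h\in\HH$; you instead take $H=\GG/\Ker T$, define $J$ as the radical of the trace pairing, derive the Cayley--Hamilton relation as a consequence (which is exactly the computation the paper tacitly needs to see that $T$ factors through $S$), and invoke the embedding $H\hookrightarrow S^{\times}$ to read off that $\Kr/(\Kr\cap\Ker T)$ is a $p$-group. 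Both packagings rest on the same two ingredients and are interchangeable.
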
 
\begin{proof} Choose an open normal subgroup $\Nr$ of $\GG$ contained in $\Kr\cap \Ker T$. Let $\mathrm G:=\GG/\Nr$ and let 
let $J$ be the two sided ideal in $A[\mathrm G]$ generated by elements $g^2-T(g)g +\frac{T(g)^2-T(g^2)}{2}$, for all 
$g\in \mathrm G$ and let $S:=A[\mathrm G]/J$. We claim that the image of $\Kr$ in $S^{\times}$ is a $p$-group. Since the 
kernel of $S^{\times}\rightarrow (S/\mm S)^{\times}$ is a $p$-group, it is enough to show that the image 
of $\Kr$ in $(S/\mm S)^{\times}$ is a $p$-group. Since $S/\mm S$ is a quotient of $k[\mathrm G]/(g^2-\tr \rho(g) g +\det \rho(g): g\in \mathrm G)$ 
the claim follows from Lemma \ref{pst1}. For each $g\in \GG$ we denote the image of $g$ in $S$ by $\bar{g}$. It follows from the claim that 
$\bar{h}=1$ for all $h\in \Hr$. We may extend $T: \GG\rightarrow A$ linearly to $T: A[\GG]\rightarrow A$, which factors through 
$T: A[\mathrm G]\rightarrow A$ as $\Nr \subseteq \Ker T$, and then factors through $T:S\rightarrow A$ and so we have 
$T(\bar{g})= T(g)$ for all $g\in \GG$. In particular, if $h\in \Hr$ then $T(gh)= T(\bar{g}\bar{h})=T(\bar{g})=T(g)$ for all $g\in \GG$.
\end{proof}

\begin{cor}\label{pst3} The inclusion $D^{\mathrm{ps}}_{\GG/\HH} \subseteq D^{\mathrm{ps}}_{\GG}$ is an isomorphism of functors. 
\end{cor}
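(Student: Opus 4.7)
The plan is to extract Corollary \ref{pst3} as an almost immediate consequence of Proposition \ref{pst2}: once we know that every pseudocharacter in $D^{\mathrm{ps}}_{\GG}(A)$ kills $\HH$ (in the sense that $\HH \subseteq \Ker T$), the inclusion functor induced by pullback along $\pi : \GG \twoheadrightarrow \GG/\HH$ has an inverse given by descent.

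First I would observe that $\HH \subseteq \Kr = \Ker \rho$, so $\rho$ factors as $\bar\rho : \GG/\HH \to \GL_2(k)$, and hence $D^{\mathrm{ps}}_{\GG/\HH}$ makes sense as a deformation functor for $\tr\bar\rho$. The inclusion of functors is $\bar T \mapsto \bar T \circ \pi$, which is clearly injective since $\pi$ is surjective. For surjectivity, pick any $A$ and any $T \in D^{\mathrm{ps}}_{\GG}(A)$. By Proposition \ref{pst2} we have $\HH \subseteq \Ker T$, i.e.\ $T(gh) = T(g)$ for all $g \in \GG$ and $h \in \HH$. Since $\HH$ is normal in $\GG$, this forces $T$ to be constant on every coset $g\HH = \HH g$, so there is a unique continuous function $\bar T : \GG/\HH \to A$ with $T = \bar T \circ \pi$; continuity follows from the quotient topology on $\GG/\HH$ together with continuity of $T$.

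Next I would verify that $\bar T$ is a $2$-dimensional pseudocharacter lifting $\tr\bar\rho$. The normalization $\bar T(1) = T(1) = 2$ and the central function property $\bar T(\bar g \bar h) = \bar T(\bar h \bar g)$ transfer directly from $T$, and the cubic identity in the definition of a $2$-dimensional pseudocharacter is a pointwise polynomial relation that passes through the surjection $\pi$ unchanged. The congruence $T \equiv \tr \rho \pmod{\mm_A}$ descends to $\bar T \equiv \tr \bar\rho \pmod{\mm_A}$ because $\rho = \bar\rho \circ \pi$. Finally, to see that the two constructions are mutually inverse, note that $\bar T \circ \pi = T$ by construction, while in the other direction any $\bar T' \in D^{\mathrm{ps}}_{\GG/\HH}(A)$ satisfies $\overline{\bar T' \circ \pi} = \bar T'$ by the uniqueness of the factorization.

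There is no real obstacle at this stage: all the work has been done in Proposition \ref{pst2}, and what remains is the straightforward verification that descent preserves the axioms defining a pseudocharacter and its determinant character $D$ (which factors through $\GG/\HH$ by the same argument, since $D$ is a homomorphism and $D|_{\HH}$ is forced to be trivial by $D(h) = \tfrac{T(h)^2 - T(h^2)}{2} = \tfrac{4-2}{2} = 1$ for $h \in \HH$, using $T(h) = T(h^2) = 2$).
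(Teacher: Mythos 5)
Your proof is correct and follows the same route as the paper: the paper's own proof is just the one-line observation that $D^{\mathrm{ps}}_{\GG/\HH}(A)=D^{\mathrm{ps}}_{\GG}(A)$ follows from Proposition \ref{pst2}, and you have merely filled in the routine details (descent of $T$ to a continuous function on $\GG/\HH$ using normality of $\HH$, transfer of the pseudocharacter axioms and the congruence condition along the surjection $\pi$, and mutual inversibility). The remark about the determinant character $D$ killing $\HH$ is a harmless extra that is not needed for this corollary since $D^{\mathrm{ps}}$ does not fix the determinant, but it is correct and anticipates the fixed-determinant variant.
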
 
\begin{proof} It follows from Proposition \ref{pst2} that for all artinian local $\OO$-algebras $(A, \mm)$ with residue field $k$ we have 
$D^{\mathrm{ps}}_{\GG/\HH}(A)= D^{\mathrm{ps}}_{\GG}(A)$.
\end{proof}

Suppose that $D^{\mathrm{ps}}_{\GG/\HH}$ is pro-represented by a complete local noetherian $\OO$-algebra $(R, \mm)$, then 
$D^{\mathrm{ps}}_{\GG}$ is also pro-represented by $(R, \mm)$ by Corollary \ref{pst3}. Let $T: \GG \rightarrow \GG/\HH \rightarrow R$ 
be the universal pseudocharacter lifting $\tr \rho$. Let $J$ (resp. $J'$) be a closed two-sided ideal 
in $R[[\GG]]$ (resp. $R[[\GG/\HH]]$) generated by the elements $g^2-T(g) g +\frac{T(g)^2-T(g^2)}{2}$ for all 
$g\in \GG$ (resp. $g\in \GG/\HH$).

\begin{cor}\label{pst4} The natural map $R[[\GG]]/J \rightarrow R[[\GG/\HH]]/J'$ is an isomorphism.
\end{cor}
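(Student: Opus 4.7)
My plan is to construct the inverse to the natural map
$\pi \colon R[[\GG]]/J \twoheadrightarrow R[[\GG/\HH]]/J'$, which is clearly a well-defined continuous surjection since the universal pseudocharacter $T$ factors through $\GG/\HH$ by Corollary \ref{pst3}, and so the generators of $J$ map to generators of $J'$. To build an inverse it suffices to show that the image of $\HH$ in $(R[[\GG]]/J)^{\times}$ is trivial; this would factor the map $\GG\to (R[[\GG]]/J)^{\times}$ through $\GG/\HH$ and, by continuity, extend to the required homomorphism $R[[\GG/\HH]]\to R[[\GG]]/J$ which kills $J'$.

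The first step is to check that for every $g\in\GG$ the image $\bar g$ in $R[[\GG]]/J$ is actually a unit: the defining relation $\bar g^2 - T(g)\bar g + D(g)=0$ exhibits $\bar g$ as a root of a monic quadratic with constant term $D(g)\in R^{\times}$, hence $\bar g^{-1} = D(g)^{-1}(T(g)-\bar g)$. So the map $g\mapsto \bar g$ is a continuous group homomorphism $\GG\to (R[[\GG]]/J)^{\times}$.

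Next, write $R \cong \varprojlim_n R/\mm^n$ and express
\begin{equation*}
R[[\GG]]/J \;\cong\; \varprojlim_{n,\Nr}\, (R/\mm^n)[\GG/\Nr]/J_{n,\Nr},
\end{equation*}
where $\Nr$ runs over open normal subgroups of $\GG$ contained in $\Kr\cap \Ker T_n$ (with $T_n$ the reduction of $T$ modulo $\mm^n$), and $J_{n,\Nr}$ is the image of $J$. Each factor is a finite ring, so its unit group is finite. It therefore suffices to prove that for every such pair $(n,\Nr)$ the image of $\Kr$ in the unit group of $S_{n,\Nr} := (R/\mm^n)[\GG/\Nr]/J_{n,\Nr}$ is a $p$-group; for then this image factors through $\Kr(p)=\Kr/\HH$ and the image of $\HH$ is trivial in each $S_{n,\Nr}$, whence in $R[[\GG]]/J$.

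The proof that the image of $\Kr$ in $S_{n,\Nr}^{\times}$ is a $p$-group is precisely the argument of Proposition \ref{pst2}. The kernel of the reduction $S_{n,\Nr}^{\times}\to S_{1,\Nr}^{\times}$ is a $p$-group (its filtration by powers of the nilpotent ideal $\mm S_{n,\Nr}$ has elementary abelian $p$-group quotients, as $p\in\mm$), so one reduces to the case $n=1$. But $S_{1,\Nr}$ is a quotient of $k[\GG/\Nr]/(g^2 - \tr\rho(g) g + \det\rho(g))$, and Lemma \ref{pst1}, applied to each element of $\Kr/\Nr$ which acts trivially under $\rho$ so that the quadratic relation becomes $(g-1)^2=0$, shows the image of $\Kr/\Nr$ is a $p$-group. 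I do not foresee a genuine obstacle here: the whole argument is a pro-finite bookkeeping wrapper around Proposition \ref{pst2}, with the only mildly delicate point being the commutation of the limit with taking unit groups, which is harmless because each $S_{n,\Nr}$ is finite.
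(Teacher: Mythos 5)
Your proof is correct and follows essentially the same route as the paper: both reduce to finite quotients $(R/\mm^n)[\GG/\Nr]$ and invoke the argument of Proposition~\ref{pst2} (ultimately Lemma~\ref{pst1}) to kill the image of $\HH$ level by level, then pass to the limit. Your packaging via an explicit inverse (factoring the continuous homomorphism $\GG\to(R[[\GG]]/J)^\times$ through $\GG/\HH$ after verifying $\bar g\in(R[[\GG]]/J)^\times$) is a cosmetic variant of the paper's direct identification $R[[\GG]]/(J+\mm^n R[[\GG]])\cong R[[\GG/\HH]]/(J'+\mm^n R[[\GG/\HH]])$; the substance is identical.
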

\begin{proof} If $\NN$ is an open normal subgroup of $\GG$ and $n\ge 1$ let $\mathfrak a(\Nr,n)$ 
be the kernel of $R[[\GG]]\twoheadrightarrow R/\mm^n[\GG/\Nr]$. The ideals $\mathfrak a(\Nr,n)$
for all open normal subgroups $\Nr$ and all $n\ge 1$ form a system of open neighbourhoods of $0$ in $R[[\GG]]$. 
It follows from the proof of Proposition \ref{pst2} that for each $n\ge 1$ we may choose
an open normal subgroup $\Nr_n$ of $\GG$ such that for all open normal subgroups 
$\Nr$ of $\GG$ contained in $\Nr_n$ the image of $\HH$ in $R[[\GG]]/(J +\mathfrak a(\NN, n))$ is trivial. 
Thus $R[[\GG]]/(J+ \mm^n R[[\GG]])\cong R[[\GG/\HH]]/(J'+\mm^nR[[\GG/\HH]])$ for all $n\ge 1$, which yields the claim.
\end{proof}

\section{Some deformation rings}\label{someDef}

Let $\omega: \gal\rightarrow \Fp^{\times}\hookrightarrow k^{\times}$ be the cyclotomic character 
modulo $p$. It follows from  local Tate duality 
and Euler characteristic that $\Ext^1_{\Fp[\gal]}(\omega, \Eins)$ is one dimensional. Let 
$0\rightarrow \Eins\rightarrow \rho\rightarrow \omega \rightarrow 0$ be  a non-split extension. This determines 
$\rho$ up to isomorphism. The purpose of this appendix is to describe explicitly various deformation rings of $\rho$, by 
spelling  out what a general result of B\"ockle in \cite{bockle}, says in this particular case. We then show using results of Bella\"{i}che 
\cite{joel} that the universal deformation ring of $\rho$ is isomorphic to the universal deformation ring of $\tr \rho$. In \S \ref{easier} 
we consider the easier, generic reducible case. We assume $p\ge 5$ until \S \ref{easier}, and $p\ge 3$ in \S \ref{easier}.

We may think of $\rho$ as a group homomorphism $\rho: \gal\rightarrow \GL_2(\Fp)\hookrightarrow \GL_2(k)$, 
$g\mapsto \left(\begin{smallmatrix} 1 &\kappa(g)\\ 0 & \omega(g)\end{smallmatrix}\right)$. Let $H$ be the image 
of $\gal$ in $\GL_2(k)$ and let $U$ be the $p$-Sylow subgroup of $H$. Since $\rho$ is non-split 
$U$ is non-trivial, hence $U\cong \Fp$, let $G$ be the subgroup of diagonal matrices in $H$, then $G\cong \Fp^{\times}$
and $H\cong U\rtimes G$. Let $L$ be the fixed field of $\Ker \rho$ and let $F= L^U$. Then 
$F$ is the fixed field of $\Ker \omega$ and hence is equal to $\Qp(\mu_p)$, where $\mu_p$ is the group 
of $p$-th roots of unity. We identify $\Gal(F/\Qp)$ with $G$. Let $\GG_F$ be the absolute Galois group of $F$. 

If $\rho_A: \gal\rightarrow \GL_2(A)$ is a deformation of $\rho$ to $(A,\mm)$ then $\rho_A(\GG_F)$ is 
contained in $\left(\begin{smallmatrix} 1+\mm & A\\ \mm & 1+\mm\end{smallmatrix}\right)$, and hence 
$\rho_A$ factors through $\Gal(F(p)/\Qp)$, where $F(p)$ is the compositum of all finite extensions 
of $p$ power order of $F$ in $\Qpbar$. Now $\Gal(F(p)/F)\cong \GG_F(p)$ the maximal pro-$p$ quotient of 
$\GG_F$. Since the order of $G=\Gal(F/\Qp)$ is prime to $p$, we may choose a splitting of exact sequence 
$1\rightarrow \GG_F(p)\rightarrow \Gal(F(p)/\Qp)\rightarrow G\rightarrow 1$, so that $\Gal(F(p)/\Qp)\cong \GG_F(p)\rtimes G$.

We will recall some  facts about Demu\v{s}kin groups,  see for example \cite[\S III.9]{nsw} for details.  
A finitely generated pro-$p$ group $P$ is a Demu\v{s}kin group, if $H^2(P, \Fp)$ is one dimensional and the cup product 
$H^1(P, \Fp)\times H^1(P, \Fp)\overset{\cup}{\rightarrow} H^2(P, \Fp)$ is a non-degenerate bilinear form. If $p>2$ a Demu\v{s}kin group 
$P$ is uniquely determined up to isomorphism by two parameters $n=n(P)$ the dimension of $H^1(P, \Fp)$ and $q=q(P)$ the number 
of elements in the torsion subgroup of $P^{ab}$, and is isomorphic to a pro-$p$ group generated by $n$ elements $x_1, \ldots, x_n$
and one  relation $x_1^q(x_1, x_2)(x_3, x_4)\ldots(x_{n-1}, x_n)$, where $(x,y)=x^{-1} y^{-1} xy$. 
We note that since $p>2$ the non-degeneracy of bilinear form implies  that $n$ is even and it follows from the presentation 
of $P$ that $P^{ab}\cong \Zp^{n-1}\oplus \ZZ/q\ZZ$. It is well known, see for example \cite[7.5.8]{nsw}, that 
if $F$ is  a finite extension of $\Qp$ containing $\mu_p$, then $\GG_F(p)$ is a Demu\v{s}kin group with $n=[F:\Qp]+2$
and $q$ equal to the number of $p$ power order roots of unity in $F$. In our situation $F=\Qp(\mu_p)$ and 
so $n=p+1$ and $q=p$. 

Following \cite{bockle} we are going to construct a universal deformation of $\rho$ using the presentation of $\GG_F(p)$. For 
a $p$-group $P$ we define a filtration $P_1=P$, $P_{i+1}=P^p_i (P_i,  P)$, where $(P_i, P)$  denotes a closed subgroup generated
by the commutators, and let $\gr_i P:= P_i/P_{i+1}$. We let $\FF$ be a free pro-$p$ group on $p+1$ generators, and we choose 
a surjection $\varphi: \FF\twoheadrightarrow \GG_F(p)$. Since $\GG_F(p)$ is a Demu\v{s}kin group there exists 
an element $r\in \FF$ such that $\Ker \varphi$ is the smallest normal closed subgroup of $\FF$ containing $r$.
Since the order of $G$ is prime to $p$, we may let $G$ act on $\FF$ so that $\varphi$ is $G$-equivariant, see 
Lemma 3.1 in \cite{bockle}. 
We denote by  $\tilde{\omega}: G\rightarrow \Zp^{\times}$ the Teichm\"uller lift of $\omega$.

\begin{lem}\label{generatorsA} We may choose generators $x_0, \ldots, x_p$ of $\FF$ so that
\begin{itemize}
\item[(i)] $g x_i g^{-1}= x_i^{\tilde{\omega}(g)^{i}}$, for $g\in G$  and $0\le i\le p$;
\item[(ii)] the image of $r$ in $\gr_2 \FF$ is equal to the image of 
$$r':= x_1^p (x_1, x_{p-1}) (x_2, x_{p-2}) \ldots (x_{\frac{p-1}{2}}, x_{\frac{p+1}{2}})(x_p, x_0).$$
\end{itemize}
\end{lem}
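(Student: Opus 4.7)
The plan is to decouple the two assertions: (i) concerns the $G$-action on $\FF$ itself, while (ii) concerns the image of the relator $r$ in the semisimple $\Fp[G]$-module $\gr_2\FF$. Both parts rely crucially on $|G|=p-1$ being prime to $p$, so that every $\Fp[G]$-module appearing is semisimple with a well-defined isotypic decomposition.

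For (i), I would first pin down the $\Fp[G]$-structure of $\FF^{ab}/p\FF^{ab}\cong\GG_F(p)^{ab}/p$. By Kummer theory and local duality this module is $F^\times/(F^\times)^p$ twisted by $\omega^{-1}$. Decomposing $F^\times=\pif^{\ZZ}\times\mu_{p-1}\times U^{(1)}$ and applying the normal basis theorem to the totally tamely ramified extension $F/\Qp$ of degree $p-1$ (so that $U^{(1)}/p\cong \omega\oplus\Fp[G]$ as $\Fp[G]$-modules, with the extra $\omega$ summand produced by the torsion $\mu_p\subset U^{(1)}$) yields that the characters of $G$ occurring in $\FF^{ab}/p$ are exactly $\omega^0,\omega^1,\ldots,\omega^p$, with $\omega^0$ and $\omega$ each appearing with multiplicity two. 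I would then fix eigenvectors in each isotypic component and lift them inductively through the filtration $\FF\supset\FF_2\supset\cdots$: at each stage the obstruction to $G$-equivariance lives in a semisimple $\Fp[G]$-module and is killed by averaging over $G$, producing generators $x_0,\ldots,x_p$ satisfying the prescribed eigenrelations.

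For (ii), the starting point is Demu\v{s}kin's classification: since $\GG_F(p)$ has invariants $(n,q)=(p+1,p)$, the relator $r$ admits the standard form $y_1^p(y_1,y_2)(y_3,y_4)\cdots(y_p,y_{p+1})$ in \emph{some} choice of generators, and the task is to realize this form in the $G$-equivariant generators of (i). Because $r$ lives dually to $H^2(\GG_F,\Fp)$, whose $G$-character is $\omega^{-1}$, its image in $\gr_2\FF$ is an $\omega$-eigenvector; the $\omega$-isotypic component of $\gr_2\FF$ is spanned by the $p$-th power classes of $x_1$ and $x_p$ together with the classes of commutators $(x_i,x_j)$, $i<j$, with $i+j\equiv 1\pmod{p-1}$. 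The $G$-equivariance and nondegeneracy of the cup-product pairing on $H^1(\GG_F,\Fp)$ then force the pairing encoded dually by $r$ to be concentrated on pairs of characters summing to $\omega$, singling out exactly the pairs $(0,p)$ and $(k,p-k)$ for $1\le k\le(p-1)/2$ among the commutators.

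The main obstacle will be to show that the extraneous coefficients — those of the $p$-th power of $x_p$ and of the commutators $(x_0,x_1)$ and $(x_{p-1},x_p)$ — vanish, and that the surviving coefficients can be normalized to $1$. I would achieve the vanishing by exploiting the $\GL_2(\Fp)$-freedom in the bases of the multiplicity-two isotypic components $\{x_0,x_{p-1}\}$ and $\{x_1,x_p\}$ (a freedom not yet exploited in (i), and precisely matching the number of terms to eliminate), and the normalization by rescaling generators within the one-dimensional isotypic components.
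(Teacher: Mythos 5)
Your proposal is correct in outline, and in part (ii) it takes a genuinely different route from the paper. For (i) the two arguments essentially coincide: one identifies the $G$-module $\gr_1 \FF\cong \gr_1\GG_F(p)\cong \Eins\oplus\mu_p\oplus\Fp[G]$ (the paper cites B\"ockle's Theorems 4.1--4.2; you rederive it from local class field theory and the $\Zp[G]$-structure of units, which works since $p\nmid |G|$ makes $\Zp[G]$ semisimple), and then lifts an eigenbasis of $\gr_1\FF$ to eigen-generators of $\FF$ using $p\nmid|G|$. For (ii) the paper argues dually: it picks a basis $\chi_0,\dots,\chi_p$ of $H^1(\GG_F(p),\Fp)$ of $\omega^{-i}$-eigenvectors normalized against the cup product and against evaluation at the torsion element $\xi_1\in\mu_p$ (so $\chi_1(\xi_1)\neq 0$, $\chi_p(\xi_1)=0$, $\chi_0\cup\chi_1=\chi_{p-1}\cup\chi_p=0$, the surviving products equal $1$), and then invokes Labute's Proposition 3, which translates exactly these cup-product and Bockstein data into the image of $r$ in $\gr_2\FF$; the form $r'$ then drops out by construction. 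You work primally instead: the relation line in $\gr_2\FF$ is $G$-stable, so $\bar r$ is an $\omega$-eigenvector lying in the explicit eigenspace you describe, and you use the residual freedom (the two $\GL_2(\Fp)$'s on the doubled isotypic pieces $\langle x_0,x_{p-1}\rangle$, $\langle x_1,x_p\rangle$ plus scalings) to bring $\bar r$ to the normal form $r'$, taking the needed nondegeneracy from the Demu\v{s}kin classification. Your route avoids the precise dictionary of Labute's Proposition 3 at the cost of an explicit normal-form computation; the paper hides that computation in the choice of dual basis and in Labute's result.

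Two points need repair or completion. First, a bookkeeping slip: by local class field theory $\GG_F(p)^{\mathrm{ab}}/p\cong F^\times/(F^\times)^p$ with \emph{no} twist (the twist by $\omega^{-1}$ belongs to $H^1(\GG_F,\Fp)$, which is the $\Fp$-dual of your module); taken literally, your twist would double the characters $\Eins$ and $\omega^{-1}$, whereas the multiplicities you then use (doubling $\Eins$ and $\omega$) are the correct, untwisted ones. Second, the elimination of the coefficients of $x_p^p$, $(x_0,x_1)$ and $(x_{p-1},x_p)$ cannot rest on the parameter count you give: you must use that the $p$-power part of $\bar r$ is nonzero (this is exactly the invariant $q=p$), so that $x_1$ can be rotated onto it inside $\langle x_1,x_p\rangle$, and that the component of the commutator part in $\langle\xi_0,\xi_{p-1}\rangle\otimes\langle\xi_1,\xi_p\rangle$ is nondegenerate and the coefficients of $[\xi_k,\xi_{p-k}]$, $2\le k\le\frac{p-1}{2}$, are nonzero (this is the nondegeneracy of the cup product, equivalently the existence of the Demu\v{s}kin standard form). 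Granting these, the linear algebra does close: with $\xi_1$ fixed along the $p$-power direction, write the $\langle\xi_0,\xi_{p-1}\rangle\otimes\langle\xi_1,\xi_p\rangle$-block as $u\otimes\xi_1+v\otimes\xi_p$; nondegeneracy forces $u,v$ to be a basis of $\langle\xi_0,\xi_{p-1}\rangle$, and declaring $\xi_{p-1}$, $\xi_0$ to be $u$, $v$ (up to signs and scalings, which also normalize the remaining coefficients to $1$) kills the two unwanted commutators without disturbing the $p$-power part. Spelling this out is precisely the content the paper outsources to Labute.
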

\begin{proof} The assertion  follows from \cite[Prop. 3]{lab}, where the cup product is described in terms of 
the image of $r$ in $\gr_2 \FF$. We know that 
$$\gr_1 \FF\cong \gr_1 \GG_F(p)\cong \Fp \oplus \mu_p \oplus \Fp[G]$$
as a representation of $G$, see Theorems 4.1 and 4.2 in \cite{bockle}. Moreover, the summand $\mu_p$ is the image 
of the torsion subgroup of $\GG_F(p)^{ab}$ under the natural map $\GG_F(p)^{ab}\twoheadrightarrow \gr_1 \GG_F(p)$. We 
fix $\xi_1\in \mu_p\subset \gr_1 \GG_F(p)$, which generates $\mu_p$ as $\Fp[G]$-module. Now 
$H^1(\GG_F(p), \Fp)\cong \Hom^{cont}(\GG_F(p), \Fp)\cong (\gr_1 \GG_F(p))^*$ as a $G$-representation,  Hence,  we may find an $\Fp$-basis $\chi_0, \ldots, \chi_p$ of $H^1(\GG_F(p), \Fp)$ such that $G$ acts on $\chi_i$ by $\omega^{-i}$, $\chi_1(\xi_1)\neq 0$, $\chi_p(\xi_1)=0$
and, since the cup product defines a non-degenerate bilinear pairing  and 
$G$ acts on $H^2(\GG_F, \Fp)$ by $\omega^{-1}$, we have $\chi_i\cup \chi_j=0$ unless $i+j\equiv 1\pmod {p-1}$.
Further, by replacing $\chi_i$ by a scalar multiple $\lambda \chi_i$, with $\lambda\in \Fp^{\times}$, we may 
achieve that $\bar{r}(\chi_p\cup \chi_0)=1$ and $\bar{r}(\chi_i \cup \chi_{p-i})=1$ for $1\le i\le (p-1)/2$, 
where $\bar{r}: H^2(\GG_F(p), \Fp)\overset{\cong}{\rightarrow} \Fp$ is the  isomorphism defined in \cite[Prop 3]{lab}.
Let $\xi_0, \ldots, \xi_p$ be an $\Fp$-basis of $\gr_1\GG_F(p)$ dual to $\chi_0, \ldots, \chi_p$. Then $G$ acts on $\xi_i$
by the character $\omega^i$. Since the order of $G$ is prime to $p$, we may find $x_i\in \FF$ satisfying (i) and mapping 
to  $\xi_i$ in $\gr_1\FF$. Since the images of $x_0, \ldots, x_p$ form a basis of $\gr_1 \FF$, they generate $\FF$. Part (ii) 
follows by construction from the Proposition 3 in \cite{lab}.
\end{proof} 

Let $R$ be the ring 
\begin{equation}\label{presentR}
 R:=\frac{\OO[[a_0, a_1, c_0, c_1, d_0, d_1 ]]}{(pc_0 + c_0 d_1+ c_1 d_0)}
\end{equation}
Let $P$ be a pro-$p$ subgroup of $\GL_2(R)$ generated by the matrices $m_i$ for  $0\le i\le p$, where $m_i= 1$ if $i\not\equiv 0, \pm 1\pmod{p-1}$, 
and
$$m_{p-2}=\begin{pmatrix} 1 & 1 \\ 0 & 1\end{pmatrix}, \quad m_{1+(p-1)j}= \begin{pmatrix}1 & 0\\ c_j & 1\end{pmatrix}, $$
$$m_{(p-1)j}=\begin{pmatrix} (1+a_j)^{\frac{1}{2}} (1+d_j)^{\frac{1}{2}} & 0\\ 0 & (1+a_j)^{\frac{1}{2}} (1+d_j)^{-\frac{1}{2}}\end{pmatrix}, $$
for $j=0$ and $j=1$. We embed $G\hookrightarrow \GL_2(R)$, $g\mapsto \bigl (\begin{smallmatrix} 1 & 0 \\ 0 & \tilde{\omega}(g)\end{smallmatrix}\bigr )$.
 One 
has $g m_i g^{-1}=m_i^{\tilde{\omega}(g)^i}$ for all $0\le i\le p$ and hence $x_i\mapsto m_i$ defines a $G$-equivariant homomorphism
$\alpha: \FF\rightarrow P$ and hence a group homomorphism $\alpha: \FF\rtimes G \rightarrow \GL_2(R)$.

\begin{prop}\label{repunA} There exists a continuous group homomorphism 
$$\varphi':\FF\rtimes G\twoheadrightarrow \Gal(F(p)/\Qp)$$
 such that
$\varphi'(g)\equiv \varphi(g)\pmod{\GG_F(p)_3}$, and a commutative diagram:
\begin{displaymath} 
\xymatrix@1{ \FF\rtimes G \ar[r]^-{\alpha}\ar@{->>}[dr]_-{\varphi'} & \GL_2(R)\\ & \Gal(F(p)/\Qp)\ar[u]_-{\tilde{\rho} }.}
\end{displaymath} 
\end{prop}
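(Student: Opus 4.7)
The plan is to verify, by a direct matrix computation, that $\alpha$ kills the standard Demu\v{s}kin relation $r'$ from Lemma \ref{generatorsA}(ii); to then use B\"ockle's framework to modify $\varphi$ within its $\FF_3$-coset so that the new surjection $\varphi'$ has kernel precisely the closed normal subgroup generated by $r'$; and to define $\tilde\rho$ as the induced map on the quotient $\Gal(F(p)/\Qp)\cong\GG_F(p)\rtimes G$.

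First I would compute $\alpha(r')$ explicitly. Out of the generators $x_0,\ldots,x_p$, only those with index $i\equiv 0,\pm 1\pmod{p-1}$ have $\alpha(x_i)=m_i\neq 1$, namely $i\in\{0,1,p-2,p-1,p\}$. Inspecting the factors of
$$r'=x_1^p(x_1,x_{p-1})(x_2,x_{p-2})\cdots(x_{(p-1)/2},x_{(p+1)/2})(x_p,x_0),$$
all intermediate commutators involve at least one trivial $m_i$ (since for $p\ge 5$ neither member of any such pair other than $(x_1,x_{p-1})$ and $(x_p,x_0)$ satisfies the congruence), so only $m_1^p$, $(m_1,m_{p-1})$ and $(m_p,m_0)$ contribute. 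A direct $2\times 2$ computation gives
$$m_1^p=\begin{pmatrix}1&0\\pc_0&1\end{pmatrix},\ (m_1,m_{p-1})=\begin{pmatrix}1&0\\c_0d_1&1\end{pmatrix},\ (m_p,m_0)=\begin{pmatrix}1&0\\c_1d_0&1\end{pmatrix},$$
using that $m_{(p-1)j}$ is diagonal with ratio of entries $1+d_j$. These three matrices commute, so their product is the lower unipotent matrix with entry $pc_0+c_0d_1+c_1d_0$, which vanishes by the defining relation of $R$. Hence $\alpha(r')=1$.

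The second step handles the discrepancy between $r$ and $r'$. By Lemma \ref{generatorsA}(ii) they agree in $\gr_2\FF$, so $r=r'w$ for some $w\in\FF_3$. Applying the Demu\v{s}kin rigidity principle used in \cite{bockle} (as $p\ge 5$, every pair of $G$-equivariant surjections $\FF\twoheadrightarrow\GG_F(p)$ whose relations have equal images in $\gr_2\FF$ differ by a $G$-equivariant automorphism of $\FF$ that is congruent to the identity modulo $\FF_3$), we may replace $\varphi$ by a $G$-equivariant surjection $\tilde\varphi:\FF\twoheadrightarrow\GG_F(p)$ whose kernel is the normal closure of $r'$; by construction $\tilde\varphi(g)\equiv\varphi(g)\pmod{\GG_F(p)_3}$ for all $g\in\FF$. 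Setting $\varphi':=\tilde\varphi\rtimes\mathrm{id}_G:\FF\rtimes G\twoheadrightarrow\GG_F(p)\rtimes G\cong\Gal(F(p)/\Qp)$ (using the fixed splitting), the first assertion is immediate.

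Finally, since $\alpha$ is $G$-equivariant on the generators $x_i$ (by (i) of Lemma \ref{generatorsA} together with the defining property of the $m_i$ under conjugation by $\mathrm{diag}(1,\tilde\omega(g))$), it extends to a homomorphism on $\FF\rtimes G$, and it kills $r'$ by the first step; therefore it descends to $\tilde\rho:\Gal(F(p)/\Qp)\to\GL_2(R)$ with $\tilde\rho\circ\varphi'=\alpha$, yielding the desired commutative diagram. The main obstacle is the rigidity step: one must show that the modification of $\varphi$ into $\tilde\varphi$ can genuinely be done $G$-equivariantly while leaving the image $\pmod{\GG_F(p)_3}$ unchanged, which is where the hypothesis $p\ge 5$ enters through the absence of exceptional $G$-isotypic overlap in $\gr_1\FF$ from the decomposition recalled in the proof of Lemma \ref{generatorsA}.
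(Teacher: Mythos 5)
The first half of your argument is fine and coincides with the paper's: the only nontrivial contributions to $\alpha(r')$ are $m_1^p$, $(m_1,m_{p-1})$ and $(m_p,m_0)$, these are lower unipotent with entries $pc_0$, $c_0d_1$, $c_1d_0$, and their product vanishes by the defining relation of $R$, so $\alpha(r')=1$. The gap is in your second step. You want a $G$-equivariant surjection $\tilde\varphi\colon\FF\twoheadrightarrow\GG_F(p)$ whose kernel is exactly the closed normal subgroup of $\FF$ generated by $r'$, and you invoke a ``rigidity principle'' to produce it. But for such a $\tilde\varphi$ to exist that kernel must be $G$-stable, and there is no reason it is: conjugating by $g\in G$ one finds $g r' g^{-1}\equiv (r')^{\tilde\omega(g)}\pmod{\FF_3}$, e.g. because $(x_i^{\lambda^i},x_{p-i}^{\lambda^{1-i}})$ agrees with $(x_i,x_{p-i})^{\lambda}$ only in $\gr_2\FF$, so $g r' g^{-1}$ differs from a power of $r'$ by an element of $\FF_3$ that need not lie in the normal closure of $r'$. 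The rigidity statement you appeal to is moreover circular as phrased: it compares two $G$-equivariant surjections, while the existence of a second $G$-equivariant presentation with relation $r'$ is precisely what has to be proved. You also fold into the same unproven principle the fact that the resulting identification of the quotient with $\Gal(F(p)/\Qp)$ can be made $G$-equivariantly.

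This is exactly the point the paper's proof handles by citing B\"ockle's Proposition 3.8: since $r\equiv r'\pmod{\FF_3}$ and $\alpha(r')=1$, one knows $\alpha(r)\in\alpha(\FF_3)$, and B\"ockle produces an element $r_1\in\Ker\alpha\cap\FF^p(\FF,\FF)$ with $r_1\equiv r\pmod{\FF_3}$ on which $G$ acts \emph{exactly} by a character (necessarily $\tilde\omega$, by Lemma \ref{generatorsA}(ii)). The normal closure $\mathcal R$ of $r_1$ is then $G$-stable, $D:=\FF/\mathcal R$ carries the $G$-action, and $\alpha$ factors through $D\rtimes G$ because $r_1\in\Ker\alpha$. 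One then checks $D$ is a Demu\v{s}kin group with $n=p+1$, $q=p$, hence abstractly isomorphic to $\GG_F(p)$, and the $G$-equivariance of this isomorphism (and the compatibility giving $\varphi'\equiv\varphi\pmod{\GG_F(p)_3}$) is obtained from Labute's description of the cup product together with B\"ockle's Theorem 3.4 --- not from a rigidity statement about the specific word $r'$. To repair your argument you would either have to reproduce this replacement of $r'$ by a character-eigenvector $r_1$ inside $\Ker\alpha$, or give an independent proof that the normal closure of $r'$ in $\FF\rtimes G$ meets $\FF$ in a subgroup with Demu\v{s}kin quotient $G$-isomorphic to $\GG_F(p)$; as written, neither is established.
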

\begin{proof} Let us observe that for $j=0$ and $j=1$ the commutator 
$$(m_{1+(p-1)j}, m_{(p-1)(1-j)})= \begin{pmatrix} 1 & 0 \\ c_j d_{1-j} & 1 \end{pmatrix}$$
and $(m_i, m_{p-i})=1$ if $i\not \equiv 1, 0 \pmod{p-1}$ hence 
\begin{equation}\label{demsh}
 m_1^p (m_1, m_{p-1}) (m_2, m_{p-2}) \ldots (m_{\frac{p-1}{2}}, m_{\frac{p+1}{2}})(m_p, m_0)=1
\end{equation}
as $p c_0+ c_0 d_1+ c_1 d_0=0$ in $R$. Since $\alpha(x_i)= m_i$,  we get that $\alpha(r')=1$, where 
$r'$ is defined in  Lemma \ref{generatorsA}. Since $r\equiv r' \pmod{\FF_3}$ we deduce that 
$\alpha(r)\in \alpha(\FF_3)$ and the assertion follows from Proposition 3.8 in \cite{bockle}. Namely, it is
shown there that there exists an element $r_1\in \Ker \alpha \cap \FF^p(\FF, \FF)$, such that 
$r_1\equiv r\pmod{\FF_3}$, and $G$ acts on $r_1$ by a character. It follows Lemma \ref{generatorsA} (ii)
that the character is equal to $\tilde{\omega}$. Let $\mathcal R$ be the smallest closed normal subgroup of 
$\FF$ containing $r_1$ and set $D:=\FF/\mathcal R$. Since $\alpha(r_1)=1$ and $G$ acts on $r_1$ by a character, 
we deduce that $\alpha$ factors through $\alpha: D\rtimes G\rightarrow \GL_2(R)$.  

We claim  that $D\rtimes G\cong  \Gal(F(p)/\Qp)$. Since $r_1\equiv r \equiv r' \pmod{\FF_3}$, $D$ is a Demu\v{s}kin 
group with $n(D)=p+1$ and $q(D)=p$, see \cite[3.9.17]{nsw}. Hence, we know that $D\cong \Gal(F(p)/F)$. To see that we may 
choose this isomorphism $G$-equivariantly we observe that since $r\equiv r_1\pmod{\FF_3}$ Proposition 3 in \cite{lab}
implies that the diagram: 
\begin{displaymath}
\xymatrix@1{ H^1(D, \Fp)\times H^1(D, \Fp)\ar[r]^-{\cup} & H^2(D, \Fp)\ar[d]^-{\bar{r}_1}_-{\cong}\\
  H^1(\FF, \Fp)\times H^1(\FF, \Fp)\ar[r]\ar[u]_-{\cong}\ar[d]_-{\cong} & \Fp \\
H^1(\GG_F(p), \Fp)\times H^1(\GG_F(p), \Fp)\ar[r]^-{\cup}& H^2(\GG_F(p), \Fp)\ar[u]_-{\bar{r}}^-{\cong} }
\end{displaymath}
commutes and is $G$-equivariant. The claim follows from Theorem 3.4 in \cite{bockle}.
\end{proof}

\begin{thm}[\cite{bockle}]\label{bock} $R$ is the universal deformation ring of $\rho$ and the equivalence class 
of $\tilde{\rho}$, defined in Proposition \ref{repunA}, is the universal deformation.
\end{thm}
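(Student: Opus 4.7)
The plan is to show that the universal property of $R$ holds: given any deformation $\rho_A: \gal \to \GL_2(A)$ of $\rho$ to a local artinian $\OO$-algebra $(A,\mm_A)$ with residue field $k$, there is a unique continuous $\OO$-algebra homomorphism $R \to A$ that pushes $\tilde\rho$ forward to (the strict equivalence class of) $\rho_A$. First I would note that the deformation functor of $\rho$ is pro-representable: since $\rho$ is a non-split extension of $\omega$ by $\mathbf{1}$, any $\gal$-endomorphism preserves its unique filtration and acts by scalars on both graded pieces, which must agree, so $\End_\gal(\rho) = k$ and Schlessinger's criteria apply.

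Next, I would reduce to the Demu\v{s}kin presentation. The kernel of $\GL_2(A) \to \GL_2(k)$ is pro-$p$, so $\rho_A$ factors through $\Gal(F(p)/\Qp) \cong \GG_F(p) \rtimes G$. Because $|G|$ is prime to $p$, we may conjugate $\rho_A$ by an element of $1 + \mm_A M_2(A)$ so that its restriction to $G$ coincides with the diagonal lift $g \mapsto \bigl(\begin{smallmatrix} 1 & 0 \\ 0 & \tilde\omega(g) \end{smallmatrix}\bigr)$ used in the construction of $\tilde\rho$; this rigidifies the basis and uses up essentially all the conjugation freedom. Pull $\rho_A$ back via $\varphi: \FF \twoheadrightarrow \GG_F(p)$ to obtain a $G$-equivariant homomorphism $\FF \rtimes G \to \GL_2(A)$ and analyze the images of the chosen generators $x_i$ of Lemma \ref{generatorsA}.

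The $G$-equivariance of $x_i$ under the character $\tilde\omega^i$ forces $\rho_A(x_i)$ to lie in the $\tilde\omega^i$-isotypic component of the conjugation action of $G$ on $1 + \mm_A M_2(A)$. Under the diagonal $G$-action, $M_2(k)$ decomposes as $k \oplus k(\omega) \oplus k(\omega^{-1}) \oplus k$ (scalar, upper-triangular, lower-triangular, traceless-diagonal), so the relevant isotypic components are nonzero only for $i \equiv 0, 1, p-2 \pmod{p-1}$. The remaining $x_i$ must therefore map to the identity. For the distinguished indices, the formal Artin--Schreier-type lifting of characters to $1 + \mm_A$ and of nilpotent elements to the off-diagonal entries produces, for each of $j = 0, 1$, exactly two $A$-valued parameters for $x_{(p-1)j}$ (governed by $a_j, d_j$) and one for $x_{1+(p-1)j}$ (governed by $c_j$), matching the six generators of $R$; one checks that the resulting matrices are precisely the $m_i$ of Proposition \ref{repunA} evaluated at these parameters, and that changing them by further conjugation in $1 + \mm_A M_2(A)$ is absorbed.

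Finally, the Demu\v{s}kin relation $\rho_A(r) = 1$ must hold. Since $r \equiv r' \pmod{\FF_3}$ by Lemma \ref{generatorsA}(ii), direct computation using the explicit matrices shows that the image of $r'$ in $\GL_2(A)$ equals $1$ if and only if $pc_0 + c_0 d_1 + c_1 d_0 = 0$ in $A$, i.e.\ exactly the defining relation of $R$ (this is precisely \eqref{demsh}). Higher-order terms from $r \cdot (r')^{-1} \in \FF_3$ must then be handled, and this is the step I expect to be the main obstacle: a priori they could impose further equations on the parameters. The key input, borrowed from \cite[Prop.~3.8]{bockle}, is that one can replace $r$ by $r_1 \in \Ker\alpha \cap \FF^p(\FF,\FF)$ with $r_1 \equiv r \pmod{\FF_3}$ on which $G$ acts by $\tilde\omega$; since $\alpha(r_1) = 1$ tautologically, no further constraints appear beyond the single relation already imposed in $R$, and $\tilde\rho$ descends to $\Gal(F(p)/\Qp) \cong D \rtimes G$ as needed. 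Assembling these steps yields a unique continuous $\OO$-algebra map $R \to A$ with the required property, establishing universality.
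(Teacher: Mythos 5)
Your route is not the one the paper takes: the paper never verifies the universal property by hand. It notes that $\End_{k[\gal]}(\rho)=k$ gives representability, computes via local Tate duality and the Euler characteristic that $H^2(\gal,\Ad\rho)$ is one-dimensional and $H^1(\gal,\Ad\rho)$ is six-dimensional, observes that $\tilde{\rho}$ defines a natural transformation $h_R\rightarrow \Def_{\rho}$ which is bijective on $k[\epsilon]$-points, deduces a surjection $R_{\rho}\twoheadrightarrow R$ from the universal ring, and then invokes Theorem 6.2 of \cite{bockle} for the statement that this surjection is an isomorphism. Your proposal instead attempts to reprove that theorem of B\"ockle directly, and the decisive step is exactly where your argument has a gap.

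The gap is the claim that, for an arbitrary deformation $\rho_A$ with normalized parameters $(a_j,c_j,d_j)\in\mm_A$, killing the relation $r_1$ forces $pc_0+c_0d_1+c_1d_0=0$ in $A$ and imposes nothing further. Your justification, ``since $\alpha(r_1)=1$ tautologically, no further constraints appear'', is circular: $\alpha(r_1)=1$ holds in $\GL_2(R)$, i.e.\ modulo the very relation in \eqref{presentR} whose necessity you must establish, and it says nothing about a tuple of parameters in an arbitrary artinian $A$. Concretely, let $\tilde{\alpha}$ be given by the same matrices over the free ring $\OO[[a_0,a_1,c_0,c_1,d_0,d_1]]$ with maximal ideal $\mm$, and set $g:=pc_0+c_0d_1+c_1d_0$. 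Since $\alpha(r_1)=1$ over $R$, the entries of $\tilde{\alpha}(r_1)-1$ lie in the ideal $(g)$; what you must show is that at least one entry is a \emph{unit} multiple of $g$. If all of them lay in $\mm\cdot(g)$, there would be parameter tuples over artinian rings killing $r_1$ with $g\neq 0$, and your construction of the classifying map $R\rightarrow A$ would break down ($R$ would then be a proper quotient of the universal ring). The congruence $r_1\equiv r'\pmod{\FF_3}$ of Lemma \ref{generatorsA} together with the computation \eqref{demsh} does not settle this, because conjugation by the image of $x_{p-2}$, which is not congruent to $1$ modulo $\mm$, prevents the naive estimate $\tilde{\alpha}(\FF_3)\subseteq 1+M_2(\mm^3)$, so the correction coming from $r_1(r')^{-1}$ is not visibly of higher order than $g$. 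Moreover, \cite[Prop.\ 3.8]{bockle} only supplies the existence of $r_1\in\Ker\alpha$ with $r_1\equiv r\pmod{\FF_3}$; the statement you are borrowing it for is B\"ockle's Theorem 6.2, i.e.\ precisely the theorem being proved. Two smaller points in the same spirit: the pullback should be along $\varphi'$ rather than $\varphi$, since $\alpha(r)\neq 1$ in general; and uniqueness of the map $R\rightarrow A$ requires checking that the residual conjugation freedom (by matrices commuting with the chosen copy of $G$) does not move the normalized parameters, which you assert (``is absorbed'') but do not verify.
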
 
\begin{proof} We note that since $\omega\neq \Eins$, $\End_{k[\gal]}(\rho)=k$ and hence the 
deformation functor $\Def_{\rho}$ is representable. Moreover, local Tate duality implies that
$$H^2(\gal, \Ad \rho)\cong H^0(\gal, \Hom(\Ad \rho, \omega))\cong \Hom_{\gal}(\rho, \rho\otimes \omega)$$
is $1$-di\-men\-sio\-nal and hence $H^1(\gal, \Ad \rho)$ is $6$-di\-men\-sio\-nal by local Euler-Poincar\'e characteristic. 
We have a natural transformation of functors $\eta: h_R\rightarrow \Def_{\rho}$, which  maps a homomorphism 
$\psi: R\rightarrow A$  to the equivalence class of the representation 
$\rho_A: \Gal(F(p)/\Qp)\overset{\tilde{\rho}}{\rightarrow} \GL_2(R)\overset{\psi}{\rightarrow} \GL_2(A)$. 
Moreover, one may check directly that  this induces an isomorphism $h_R(k[\epsilon])\cong \Def_{\rho}(k[\epsilon])$.
Hence, we obtain a surjection $R_{\rho}\twoheadrightarrow R$, where $R_{\rho}$ is the ring representing $\Def_{\rho}$.
It is shown in Theorem 6.2 of \cite{bockle} that this map is an isomorphism.
\end{proof}

\begin{cor}\label{reduciblerhoA} Let $x\in \Spec R[1/p]$ be a maximal ideal with residue field $E$. The corresponding  representation 
$\rho_x: \gal\rightarrow \GL_2(E)$ is reducible if and only if $c_0$ and  $c_1$ are $0$ in $E$.
\end{cor}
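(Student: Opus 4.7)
The plan is to read off reducibility directly from the explicit matrix formulas for the universal deformation $\tilde\rho$ given by the generators $m_i$ in Proposition~\ref{repunA}. The point is that the image of $\rho_x$ is generated, as a subgroup of $\GL_2(E)$, by the specializations $\psi(m_i)$ for $0\le i\le p$ together with the diagonal matrices $\bigl(\begin{smallmatrix}1&0\\0&\tilde\omega(g)\end{smallmatrix}\bigr)$ for $g\in G$, where $\psi: R\to E$ is the homomorphism corresponding to $x$. Reducibility of $\rho_x$ is therefore equivalent to the existence of a common invariant $E$-line for all these matrices.

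For the easy direction, if $\psi(c_0)=\psi(c_1)=0$, then $\psi(m_{1+(p-1)j})=\mathbf 1$ for $j=0,1$, while $\psi(m_{p-2})$, $\psi(m_{(p-1)j})$, and the images of $G$ are all upper triangular by inspection of the formulas defining them. Hence $\rho_x$ takes values in the Borel subgroup stabilising $\langle e_1\rangle$, so $\rho_x$ is reducible.

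For the converse, I would argue that the only candidate for an invariant line is $\langle e_1\rangle$. Indeed, since $\tilde\omega$ is non-trivial, the image of $G$ under the diagonal embedding contains a non-scalar diagonal matrix, so any common invariant line must be either $\langle e_1\rangle$ or $\langle e_2\rangle$. But $\psi(m_{p-2})=\bigl(\begin{smallmatrix}1&1\\0&1\end{smallmatrix}\bigr)$ does not stabilise $\langle e_2\rangle$, ruling out the latter. Now the condition that $\psi(m_{1+(p-1)j})=\bigl(\begin{smallmatrix}1&0\\ \psi(c_j)&1\end{smallmatrix}\bigr)$ stabilise $\langle e_1\rangle$ forces $\psi(c_j)=0$ for $j=0,1$, completing the proof.

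There is no real obstacle here: the main content is purely a matrix calculation once the generators in Proposition~\ref{repunA} have been identified, and the defining relation $pc_0+c_0d_1+c_1d_0=0$ of $R$ plays no role in either direction. The only mild care required is the opening reduction to the two candidate invariant lines, which uses $p>2$ so that $G=\Gal(F/\Qp)$ is non-trivial and $\tilde\omega$ is a non-trivial character.
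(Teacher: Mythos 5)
Your proof is correct, but your argument for the harder (``only if'') direction is genuinely different from the paper's. You pin down the invariant line explicitly: the non-scalar diagonal matrix coming from $G$ forces any invariant line to be $\langle e_1\rangle$ or $\langle e_2\rangle$, the unipotent $m_{p-2}$ rules out $\langle e_2\rangle$, and then invariance of $\langle e_1\rangle$ under the lower-unipotent generators $m_{1+(p-1)j}$ gives $\psi(c_j)=0$. The paper instead invokes the standard criterion that a reducible two-dimensional representation over a field has all its bracket commutators $\rho_x(g)\rho_x(h)-\rho_x(h)\rho_x(g)$ nilpotent, and computes a single such bracket to be $\mathrm{diag}(-\bar c_j,\bar c_j)$, which is nilpotent in characteristic zero iff $\bar c_j=0$. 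Both arguments are short and elementary; the paper's avoids the small case analysis on which line is invariant, while yours avoids appealing to the nilpotent-commutator criterion and makes it transparent exactly why $\langle e_1\rangle$ is the only candidate. Your observation that the defining relation $pc_0+c_0d_1+c_1d_0=0$ of $R$ is irrelevant to this corollary is also accurate and matches the paper implicitly. The ``if'' direction is identical in both treatments.
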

\begin{proof} Let $\mathfrak a$ be the ideal of $R$ generated by $c_0$ and $c_1$. It follows from the construction 
of the universal deformation $\tilde{\rho}$ that the image of  $\gal\overset{\tilde{\rho}}{\rightarrow} \GL_2(R)\rightarrow 
\GL_2(R/\mathfrak a)$ is contained in the  subgroup of upper triangular matrices. Hence, if the image of $c_0$ and $c_1$ 
in $E$ is zero then $\rho_x$ is reducible. Conversely, suppose that $\rho_x$ is reducible then for all 
$g,h\in \gal$ the matrix $\rho_x(g)\rho_x(h)-\rho_x(h)\rho_x(g)$ is nilpotent. In particular,
for $j=0$ and $j=1$
the matrix $\rho_x(\varphi'(x_{1+(p-1)j})\rho_x(\varphi'( x_{p-2}))- \rho_x(\varphi'( x_{p-2}))\rho_x(\varphi'(x_{1+(p-1)j}))$
is nilpotent. Since it is equal to 
$$\begin{pmatrix} 1 & 0 \\ \bar{c}_j & 1 \end{pmatrix} \begin{pmatrix}1 & 1\\ 0 & 1\end{pmatrix}- 
\begin{pmatrix}1 & 1\\ 0 & 1\end{pmatrix} \begin{pmatrix} 1 & 0 \\ \bar{c}_j & 1 \end{pmatrix}= \begin{pmatrix} -\bar{c}_j & 0 \\ 0 & \bar{c}_j
\end{pmatrix}$$
we deduce that $\bar{c}_j$ the image of $c_j$ in $E$ is zero.
\end{proof}   

Let $\psi: \gal\rightarrow \OO^{\times}$ be a continuous character, lifting $\omega$ and let $\Def^{\psi}_{\rho}$ be 
subfunctor of $\Def_{\rho}$ parameterizing the deformations with determinant equal to $\psi$.
\begin{cor}\label{RpsirhoA} The functor $\Def^{\psi}_{\rho}$ is  represented by 
$$R^{\psi}\cong \frac{\OO[[c_0, c_1, d_0, d_1]]}{(pc_0+ c_0d_1+ c_1d_0)}.$$
\end{cor}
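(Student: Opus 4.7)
The plan is to deduce this from Theorem \ref{bock} together with the explicit form of the universal deformation $\tilde\rho$ constructed in Proposition \ref{repunA}. By Theorem \ref{bock}, the functor $\Def_\rho$ is pro-represented by the ring $R$ of \eqref{presentR}, and $\Def^\psi_\rho$ is the subfunctor cut out by fixing the determinant. It is a standard fact (see \cite[\S24]{mazur}) that $\Def^\psi_\rho$ is pro-representable by a quotient $R\twoheadrightarrow R^\psi$, so the task is to identify this quotient explicitly.

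First I would compute the determinant character $\det\tilde\rho\colon \Gal(F(p)/\Qp)\to R^\times$ on the generators produced in Lemma \ref{generatorsA}. For $g\in G$ one has $\det\tilde\rho(g)=\tilde\omega(g)$ by the embedding of $G$ into $\GL_2(R)$. On the images $m_i$ of the topological generators of $\FF$: the matrices $m_1, m_{p-2}, m_p$ are unipotent, hence have determinant $1$, while $m_0$ and $m_{p-1}$ are diagonal with determinants $(1+a_0)$ and $(1+a_1)$ respectively; all the remaining $m_i$ are equal to the identity. Thus the character $\det\tilde\rho$ only sees the variables $a_0$ and $a_1$, and not $c_0, c_1, d_0, d_1$.

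Next I would describe the map on deformation rings induced by the determinant. Let $R_\omega$ denote the universal deformation ring of $\omega$; it is formally smooth of relative dimension $2$ over $\OO$ by local Galois cohomology for characters. The determinant induces a homomorphism $R_\omega\to R$ which, by the previous paragraph, factors through $\OO[[a_0,a_1]]\subset R$. Comparing tangent spaces using the explicit formulas $\det\tilde\rho(x_0)=1+a_0$ and $\det\tilde\rho(x_{p-1})=1+a_1$ shows that this map identifies $R_\omega$ with the formal power series subring $\OO[[a_0,a_1]]$. Since $R^\psi$ is by definition the fiber of $\Spec R\to\Spec R_\omega$ over the $\OO$-point $\psi$, we obtain
\begin{equation*}
R^\psi \;\cong\; R\otimes_{R_\omega,\,\psi}\OO \;\cong\; R/(a_0-u_0,\;a_1-u_1)
\end{equation*}
for uniquely determined $u_0, u_1\in \varpi\OO$ (the elements of $\varpi\OO$ corresponding to $\psi$ under the isomorphism $R_\omega\cong\OO[[a_0,a_1]]$; they lie in $\varpi\OO$ because $\psi\equiv\omega\equiv\det\rho\pmod{\varpi}$). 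Because the relation $pc_0+c_0d_1+c_1d_0$ in \eqref{presentR} does not involve $a_0$ or $a_1$, killing these two variables yields precisely
\begin{equation*}
R^\psi \;\cong\; \frac{\OO[[c_0,c_1,d_0,d_1]]}{(pc_0+c_0d_1+c_1d_0)},
\end{equation*}
as required.

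The only non-routine input is the identification of the image of $R_\omega$ inside $R$ with $\OO[[a_0,a_1]]$; this is the step I would expect to need the most care, since it requires checking that the natural map is surjective onto this subring and not merely that its image is contained there. Once one verifies this, say by dimension count (both rings are formally smooth of dimension $2$ over $\OO$, and the tangent map is surjective by the computation of $\det\tilde\rho$ on $x_0$ and $x_{p-1}$ modulo $\mm_R^2$), the rest of the argument is formal.
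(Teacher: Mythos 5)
Your proof is correct and takes essentially the same route as the paper: both reduce to computing $\det\tilde\rho$ on the generators $m_i$, observe that it equals $1+a_j$ on $m_{(p-1)j}$ and $1$ otherwise, and conclude that fixing the determinant kills the ideal $(a_0-u_0,\,a_1-u_1)$. The paper writes this more directly, simply setting $\lambda_j := \psi(\varphi'(x_{j(p-1)})) - 1 \in \varpi\OO$ and declaring $R^\psi = R/(a_0-\lambda_0, a_1-\lambda_1)$, whereas you repackage the same computation via the fiber product $R\otimes_{R_\omega,\psi}\OO$ and the identification $R_\omega\cong\OO[[a_0,a_1]]$; this is a harmless elaboration, not a genuinely different argument.
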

\begin{proof} Let $\lambda_0, \lambda_1\in \varpi\OO$ such that 
$\psi(\varphi'(x_{j(p-1)}))=1+ \lambda_j$, for $j=0$ and $j=1$.
By construction we have $\det m_i=1$, if $i\not\equiv 0 \pmod{p-1}$, and $\det m_{j(p-1)}= 1+a_j$. We deduce that 
$\Def_{\rho}^{\psi}$ is represented by $R/(a_0-\lambda_0, a_1-\lambda_1)$, which implies the claim.
\end{proof}

\begin{cor}\label{A6} Let $\mathfrak r= R^{\psi}\cap  \bigcap_x \mm_x$ where the intersection is taken over all maximal ideals of $R^{\psi}[1/p]$ 
such that $\rho_x$ is reducible. Then $\mathfrak r= (c_0, c_1)$. In particular, 
\begin{itemize}
\item[(i)] $R^{\psi}/\mathfrak r\cong \OO[[d_0, d_1]]$;
\item[(ii)] let $\nn$ be the maximal ideal of $R^{\psi}_k/\rr_k$, then for all $i\ge 0$ there 
exists a surjection of $R^{\psi}$-modules: $\rr^i_k/\rr^{i+1}_k\twoheadrightarrow \nn^i$.
\end{itemize}
\end{cor}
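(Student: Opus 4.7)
The plan is to separate the two claims and handle them in sequence. For the identification $\mathfrak r = (c_0, c_1)$, the inclusion $\mathfrak r \supseteq (c_0, c_1)$ is immediate from Corollary \ref{reduciblerhoA}, since every reducible maximal ideal of $R^{\psi}[1/p]$ contains $c_0$ and $c_1$. Part (i) then drops out of the presentation: the relation $pc_0 + c_0 d_1 + c_1 d_0$ already lies in $(c_0, c_1)$, so Corollary \ref{RpsirhoA} gives $R^{\psi}/(c_0, c_1) \cong \OO[[d_0, d_1]]$ at once. Consequently, again by Corollary \ref{reduciblerhoA}, the reducible maximal ideals of $R^{\psi}[1/p]$ are in bijection with the maximal ideals of $\OO[[d_0, d_1]][1/p]$, and proving $\mathfrak r \subseteq (c_0, c_1)$ reduces to showing that the intersection of all maximal ideals of $\OO[[d_0, d_1]][1/p]$, viewed inside $\OO[[d_0, d_1]]$, is zero.

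For this last point I invoke Lemma \ref{fnotvanish}: given any non-zero $\bar f \in \OO[[d_0, d_1]]$, one can find $a_0, a_1 \in \pL$ with $\bar f(a_0, a_1) \neq 0$, and evaluation at $(a_0, a_1)$ extends to a continuous homomorphism $\OO[[d_0, d_1]][1/p] \to L$ whose kernel is a maximal ideal missing $\bar f$. This forces the intersection to be trivial, proving $\mathfrak r = (c_0, c_1)$ and hence also (i).

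For (ii) the key input is a computation of the associated graded ring of $R^{\psi}_k = k[[c_0, c_1, d_0, d_1]]/(c_0 d_1 + c_1 d_0)$ with respect to $\rr_k = (c_0, c_1)$. Filter $k[[c_0, c_1, d_0, d_1]]$ by $c$-degree (giving $c_0, c_1$ weight one and $d_0, d_1$ weight zero): the defining relation is already homogeneous of weight one, and a short check shows that its ideal of principal symbols is again generated by $c_0 d_1 + c_1 d_0$, whence
$$\gr^{\bullet}_{\rr_k}(R^{\psi}_k) \;\cong\; k[[d_0, d_1]][\bar c_0, \bar c_1]\big/(\bar c_0 d_1 + \bar c_1 d_0).$$
The assignment $\bar c_0 \mapsto d_0$, $\bar c_1 \mapsto -d_1$ respects this relation since $d_0 d_1 - d_1 d_0 = 0$, so it induces a $k[[d_0, d_1]]$-linear (hence $R^{\psi}$-linear) map whose degree $i$ component sends $\bar c_0^{\,j}\bar c_1^{\,i-j} \mapsto (-1)^{i-j} d_0^{\,j} d_1^{\,i-j}$. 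The image in degree $i$ is the submodule of $k[[d_0, d_1]]$ generated by the monomials $d_0^{\,j} d_1^{\,i-j}$, $0 \le j \le i$, namely $\nn^i$; this yields the required surjection $\rr_k^i/\rr_k^{i+1} \twoheadrightarrow \nn^i$.

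The main point to verify carefully is the description of $\gr^{\bullet}_{\rr_k}(R^{\psi}_k)$: one must argue that no elements of the principal ideal $(c_0 d_1 + c_1 d_0)$ contribute extra relations in the associated graded beyond $\bar c_0 d_1 + \bar c_1 d_0$. This follows formally from the fact that $c_0 d_1 + c_1 d_0$ has minimal $c$-degree in its principal ideal, so the intersection $(c_0, c_1)^i \cap (c_0 d_1 + c_1 d_0)$ equals $(c_0, c_1)^{i-1}\cdot(c_0 d_1 + c_1 d_0)$ for every $i \geq 1$. The sign choice $\bar c_1 \mapsto -d_1$ is the only other minor subtlety; everything else is bookkeeping.
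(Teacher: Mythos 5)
Your proof is correct and follows essentially the same route as the paper. Both arguments establish $\mathfrak r = (c_0, c_1)$ by combining Corollary~\ref{reduciblerhoA} (for $\supseteq$) with the presentation $R^\psi/(c_0,c_1) \cong \OO[[d_0,d_1]]$ from Corollary~\ref{RpsirhoA} and the vanishing of the Jacobson radical of $\OO[[d_0,d_1]][1/p]$ (for $\subseteq$); and both produce the surjection in (ii) from the substitution $c_0 \mapsto d_0$, $c_1 \mapsto -d_1$. The one cosmetic difference is that you identify $\gr^\bullet_{\rr_k}(R^\psi_k)$ in closed form as $k[[d_0,d_1]][\bar c_0,\bar c_1]/(\bar c_0 d_1 + \bar c_1 d_0)$, whereas the paper packages the same information as a three-term exact sequence $0\to \gr^{i-1}_\mathfrak b S \to \gr^i_\mathfrak b S \to \rr^i_k/\rr^{i+1}_k \to 0$ given by multiplication by $t = c_0 d_1 + d_0 c_1$; these are equivalent, and both rest on $t$ being ``pure of grade $1$'' in the paper's language, i.e.\ its leading form being a nonzerodivisor in $\gr^\bullet_\mathfrak b S$. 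One small imprecision in your write-up: the justification ``$c_0 d_1 + c_1 d_0$ has minimal $c$-degree in its principal ideal'' is not by itself what you need — the real point is that the leading form $\bar c_0 d_1 + \bar c_1 d_0$ is a nonzerodivisor in $\gr^\bullet_\mathfrak b S = k[[d_0,d_1]][\bar c_0, \bar c_1]$, which holds simply because that ring is a domain. With that phrasing tightened, the argument is complete.
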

\begin{proof} Corollary  \ref{reduciblerhoA} implies that $(c_0, c_1)$ is contained in $\mathfrak r$
and the image of $\mathfrak r$ in $R^{\psi}/(c_0, c_1)$ is equal to the intersection of all the maximal ideals of $R^{\psi}/(c_0, c_1)[1/p]$.
Since  $R^{\psi}/(c_0, c_1)\cong \OO[[d_0, d_1]]$  by Corollary \ref{RpsirhoA}, we deduce that $\mathfrak r=(c_0, c_1)$. 
Now $R^{\psi}_{k}\cong k[[c_0, c_1, d_0, d_1]]/(c_0d_1+d_0c_1)$. Let $S=k[[c_0, c_1, d_0, d_1]]$ and we denote by $\mathfrak b$ 
the ideal of $S$ generated by $c_0, c_1$. Then $\gr^{\bullet}_{\mathfrak b} S$ is isomorphic to a polynomial ring in 
two variables over $k[[d_0, d_1]]$. The element $t= c_0 d_1+ d_0 c_1$ is pure of grade $1$. Since $R^{\psi}_{k}\cong S/tS$, 
and $\mathfrak r_{k}$ is the image of $\mathfrak b$, we have 
an exact sequence 
$0\rightarrow \gr^{i-1}_{\mathfrak b} S \rightarrow \gr^{i}_{\mathfrak b} S\rightarrow \rr^i_k/\rr^{i+1}_k\rightarrow 0$ for all $i\ge 1$, 
where the first non-trivial arrow is given by multiplication by $t$. Now $\gr^{i}_{\mathfrak b} S$ is a free $k[[d_0, d_1]]$-module 
with monomials in $c_0$ and $c_1$ of homogeneous degree $i$ as a basis. Sending $c_0\mapsto d_0$, $c_1\mapsto -d_1$ induces a 
surjection of $k[[d_0, d_1]]$-modules $\gr^{i}_{\mathfrak b} S\twoheadrightarrow \nn^i$. Since this map kills $t$ the surjection 
factors through $\rr^i_k/\rr^{i+1}_k\twoheadrightarrow \nn^i$. 
\end{proof}

\begin{remar} We will deduce in the course of the proof of  Proposition \ref{filisthesame} that the map in (ii) is an isomorphism. 
\end{remar}

\begin{remar}\label{enoughL} We note that in the definition of $\rr$ it is enough to consider the ideals with residue field $L$, since 
it follows from Lemma \ref{fnotvanish} that such ideals are Zariski dense in $\OO[[d_0, d_1]][1/p]$. 
\end{remar}

\begin{lem}\label{moveL} Suppose that the representations $\rho_x$ and $\rho_y$ corresponding to maximal ideals $x$ and $y$ of
$R^{\psi}[1/p]$ with residue field $L$ are reducible and have a common subquotient then $x=y$.
\end{lem}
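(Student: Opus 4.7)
The plan is to reduce the statement to the explicit description of $R^{\psi}$ given in Corollary~\ref{RpsirhoA} and to the construction of the universal deformation $\tilde\rho$ in Proposition~\ref{repunA}.

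First, I would use Corollary~\ref{reduciblerhoA}: since $\rho_x$ and $\rho_y$ are reducible, both $x$ and $y$ contain $c_0$ and $c_1$, hence both contain the ideal $\mathfrak r$. By Corollary~\ref{A6} the quotient ring $R^{\psi}/\mathfrak r$ is isomorphic to $\OO[[d_0,d_1]]$, so it suffices to show that the images of $x$ and $y$ in $\MaxSpec \OO[[d_0,d_1]][1/p]$ coincide, i.e.\ that $d_0^x=d_0^y$ and $d_1^x=d_1^y$ in $L$.

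Next, restricting the universal deformation $\tilde\rho$ to $R^{\psi}/\mathfrak r$ makes it upper triangular, as the lower triangular generators $m_{1+(p-1)j}$ become trivial; we obtain a canonical exact sequence
\begin{equation*}
0\to \delta_1^{un}\to \tilde\rho|_{R^{\psi}/\mathfrak r}\to \delta_2^{un}\to 0
\end{equation*}
with $\delta_1^{un}\equiv \Eins$, $\delta_2^{un}\equiv \omega\pmod{\pL}$ and $\delta_1^{un}\delta_2^{un}=\psi$. Specializing at $x$ and $y$ gives canonical filtrations $0\to\delta_1^x\to\rho_x\to\delta_2^x\to 0$ and similarly for $\rho_y$. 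Since $\Eins\not\equiv \omega\pmod{\pL}$, any common Jordan–H\"older factor of $\rho_x$ and $\rho_y$ forces either $\delta_1^x=\delta_1^y$ or $\delta_2^x=\delta_2^y$; in view of $\delta_1^\bullet \delta_2^\bullet=\psi$ the two conditions are equivalent. So we may assume $\delta_1^x=\delta_1^y$ as characters $\gal\to L^{\times}$.

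Finally, I would read off $d_j$ from the character $\delta_1^{un}$. By the construction in Proposition~\ref{repunA}, at the element $\varphi'(x_{j(p-1)})\in\gal$ the matrix $\tilde\rho(\varphi'(x_{j(p-1)}))$ equals $m_{(p-1)j}$, so
\begin{equation*}
\delta_1^{un}(\varphi'(x_{j(p-1)}))=(1+a_j)^{1/2}(1+d_j)^{1/2},\qquad j=0,1.
\end{equation*}
The determinant condition $\det\tilde\rho=\psi$ determines $a_j$ as a fixed element of $\OO$ independent of $x$ (namely $\psi(\varphi'(x_{j(p-1)}))-1$, as in the proof of Corollary~\ref{RpsirhoA}). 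Evaluating at $x$ and $y$ and comparing then gives $(1+d_j^x)^{1/2}=(1+d_j^y)^{1/2}$ in $L$, hence $d_j^x=d_j^y$, which yields $x=y$.

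The only delicate point is the third step — checking that a common subquotient really does force the characters in the canonical filtration to match — but this reduces immediately to the residual inequality $\Eins\ne\omega$ valid for $p\ge 5$. The rest of the argument is a direct unwinding of the explicit presentations of $R^{\psi}$ and $\tilde\rho$ already recorded in the appendix.
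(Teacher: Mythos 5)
Your argument is correct, but it follows a genuinely different route from the paper's proof. The paper works entirely abstractly: the fixed determinant and a common subquotient force $\rho_x$ and $\rho_y$ to share the same semisimplification $\delta\oplus\delta^{-1}\psi$ with $\delta$ lifting $\Eins$; neither $\rho_x$ nor $\rho_y$ can be semisimple, since a semisimple reducible representation (and hence the reduction of any stable lattice) would factor through $\gal^{ab}$, contradicting non-splitness of $\rho$; and because $p\ge 5$ forces $\delta^2\psi^{-1}\equiv\omega^{-1}\pmod{\pL}$ to be neither trivial nor cyclotomic, $\Ext^1_{\gal}(\delta^{-1}\psi,\delta)$ is one-dimensional, so $\rho_x\cong\rho_y$ and hence $x=y$. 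You instead unwind the explicit generators-and-relations presentation: you pass to $R^{\psi}/\mathfrak{r}\cong\OO[[d_0,d_1]]$, where the universal deformation is manifestly upper triangular, and you read off $d_0,d_1$ from the canonical subcharacter $\delta_1^{un}$ evaluated at $\varphi'(x_{(p-1)j})$. Your route bypasses both the $\Ext^1$ dimension count and the tacit point that $\rho_x,\rho_y$ sit in the same $\Ext^1$ group rather than the opposite one (which in the paper requires a short lattice argument pinning down the direction of the non-split extension); the trade-off is that your proof is tied to the concrete presentation of $R^{\psi}$, whereas the paper's version is more portable, needing only representability, the $\Ext^1$ dimension, and the fact that $\rho$ does not factor through $\gal^{ab}$.
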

\begin{proof} Since the determinant is fixed we deduce that $\rho_x$ and $\rho_y$ have the same semisimplification
$\delta \oplus\delta^{-1}\psi$, where $\delta:\gal\rightarrow L^{\times}$ is a continuous character, lifting the 
trivial character $\Eins: \gal\rightarrow k^{\times}$.  If  $\rho_x$ is 
semisimple then the action of  $\gal$ on $\rho_x$ factors through $\gal^{ab}$, and hence the action of $\gal$ on any stable $\OO$-lattice
of $\rho_x$ factors through $\gal^{ab}$, and hence the same holds for the reduction of any stable $\OO$-lattice modulo $\varpi$. 
Since the action of $\gal$ on $\rho$ does not factor through $\gal^{ab}$ we deduce that both $\rho_x$ and $\rho_y$ are 
not semisimple. Since the reduction of $\delta^{2}\psi^{-1}$ modulo 
$\varpi$ is equal to $\omega^{-1}$ and 
$p\ge 5$, $\delta^{-2}\varepsilon^{-1}$ cannot be equal to the trivial or 
the cyclotomic character. This implies $\Ext^1_{\gal}(\varepsilon\delta^{-1}, \delta)$ is $1$-di\-men\-sio\-nal. Hence, 
$\rho_x \cong \rho_y$ and so $x=y$.
\end{proof}

\begin{cor}\label{A7} The intersection of all the maximal ideals of $R^{\psi}[1/p]$ such that $\rho_x$ is irreducible is zero.
\end{cor}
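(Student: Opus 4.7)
The plan is to combine the explicit description of $R^{\psi}$ from Corollary \ref{RpsirhoA} with the Jacobson property of rings obtained by inverting $p$ in complete local noetherian $\OO$-algebras. I would argue in four short steps.

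First, I would verify that $R^{\psi}$ is an integral domain. Writing the relation as $pc_0 + c_0 d_1 + c_1 d_0 = (p+d_1)c_0 + d_0 c_1$, this is a linear form in the variables $c_0, c_1$ with coefficients $p+d_1$ and $d_0$ in the unique factorization domain $\OO[[d_0,d_1]]$. Those two coefficients are coprime (one is a unit times $d_0$, the other a unit plus $d_1$), so by Gauss's lemma the relation is an irreducible element of the factorial ring $\OO[[c_0,c_1,d_0,d_1]]$. Hence $R^{\psi}$ is a domain, and so is $R^{\psi}[1/p]$; in particular its nilradical is zero.

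Second, I would invoke that $R^{\psi}[1/p]$ is a Jacobson ring, so that its Jacobson radical coincides with its nilradical and is therefore zero. This is a standard property of the generic fibre of a complete local noetherian $\OO$-algebra; one may cite a version suited to deformation rings such as \cite[\S 7.1]{BC} or \cite{che}, or simply note that $R^{\psi}[1/p]$ is a quotient of a Tate-type affinoid algebra over $L$.

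Third, I would describe the reducible locus. By Corollary \ref{reduciblerhoA}, a closed point $x \in \Spec R^{\psi}[1/p]$ carries a reducible representation if and only if both $c_0$ and $c_1$ vanish at $x$; equivalently, the reducible locus is the Zariski-closed subset $V(c_0, c_1)$. Since $c_0$ is a non-zero element of the domain $R^{\psi}[1/p]$, this locus is proper, so the irreducible locus $U := \Spec R^{\psi}[1/p] \setminus V(c_0,c_1)$ is a non-empty Zariski open subset.

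Finally, I would conclude: given any non-zero $f \in R^{\psi}[1/p]$, the principal open $D(f)$ is non-empty, and by the Jacobson property it contains a closed point. Because $\Spec R^{\psi}[1/p]$ is irreducible (as $R^{\psi}[1/p]$ is a domain), the two non-empty opens $U$ and $D(f)$ meet, so we can choose a closed point $x \in U \cap D(f)$; this is a maximal ideal $\mm_x$ with $\rho_x$ irreducible and $f \notin \mm_x$. Hence no non-zero $f$ lies in the intersection, proving the result. The only non-formal input is the Jacobson property of $R^{\psi}[1/p]$, and that is the step I would expect the reader (or the cited reference) to carry the real content.
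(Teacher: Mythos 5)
Your strategy is sound and genuinely different from the paper's. The paper proves this by an explicit construction: given $f\in S$ not divisible by $g=pc_0+c_1d_0+c_0d_1$, it performs a Weierstrass division with respect to the distinguished variable $d_0$ (after the change of variables $c_1'=c_1-d_0$), writes the remainder as $d_0 f_1+f_2$, and then uses Lemma~\ref{fnotvanish} to find a $\Qpbar$-point where the resultant-type expression $h=f_2^2-c_1'f_1f_2+(c_0d_1+pc_0)f_1^2$ and $c_0$ are both nonzero; this produces a concrete maximal ideal avoiding $f$ at which $\rho_x$ is irreducible. Your route trades that hands-on computation for two structural facts: $R^{\psi}$ is a domain (so $\Spec R^{\psi}[1/p]$ is irreducible and any two nonempty opens meet) and $R^{\psi}[1/p]$ is Jacobson (so any nonempty open contains a closed point). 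This is more conceptual and arguably cleaner, at the price of needing the domain property and the Jacobson input, neither of which the paper's argument requires.

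Two of your justifications are imprecise as written, though the conclusions are correct. First, in Step~1 you apply Gauss's lemma to a ``linear form in $c_0,c_1$ with coefficients in $\OO[[d_0,d_1]]$,'' but $\OO[[c_0,c_1,d_0,d_1]]$ is not the polynomial ring $\OO[[d_0,d_1]][c_0,c_1]$, and irreducibility in the polynomial subring does not formally transfer to the power series ring. The conclusion does hold: in the associated graded ring of $S=\OO[[c_0,c_1,d_0,d_1]]$ with respect to its maximal ideal, the initial form of $g$ is a quadratic form of rank $4$ (namely $\bar{c}_0\bar{d}_1+\bar{c}_1\bar{d}_0$, or $\bar{p}\bar{c}_0+\bar{c}_0\bar{d}_1+\bar{c}_1\bar{d}_0$ if $L/\Qp$ is unramified), hence does not split as a product of two linear forms, so $g$ is irreducible in the regular local ring $S$ and $R^{\psi}=S/(g)$ is a domain. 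Second, in Step~2 the description of $R^{\psi}[1/p]$ as ``a quotient of a Tate-type affinoid algebra over $L$'' is not accurate: $\OO[[x]][1/p]$ is not affinoid. The Jacobson property you invoke is nonetheless standard for generic fibres of complete local noetherian $\OO$-algebras with residue field finite over $k$ (it is, for instance, implicit in the discussion around \cite[Lem.~2.6]{taylor} and appears explicitly in several references on eigenvarieties and deformation rings). With these two points repaired, your argument goes through and is a legitimate, more abstract alternative to the paper's explicit Weierstrass-division construction.
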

\begin{proof} Let $S=\OO[[c_0, c_1, d_0, d_1]]$ and $g=pc_0 + c_1d_0 +c_0d_1$ and $f\in S$, not divisible by $g$.
It is enough to construct  $\varphi: S\rightarrow \Qpbar$, such that $\varphi(f)\neq 0$, $\varphi(g)=0$ and $\varphi(c_0)\neq 0$, 
since the last condition implies that the representation associated to $\ker \varphi$ is irreducible via Corollary \ref{reduciblerhoA}.  
 
Substituting $c'_1:= c_1 -d_0$ we get $g= d_0^2+ c'_1 d_0 +c_0d_1 +pc_0$. Hence, we may write $f= q g + r$, 
where $r=d_0 f_1+ f_2$, with $f_1, f_2\in \OO[[c_0, c'_1, d_1]]$, see \cite[IV\S 9]{lang}. The polynomial $X^2+c'_1 X + c_0d_1 +pc_0$
is irreducible over $\OO[[c_0, c'_1, d_1]]$ and hence also over its quotient field. As $r\neq 0$ we deduce that 
$h:=f_2^2-c'_1 f_1 f_2 + (c_0d_1 +pc_0) f_1^2\neq 0$. We may choose $\varphi:\OO[[c_0, c'_1, d_1]]\rightarrow \Qpbar$ such that 
$\varphi(c_0)\neq 0$ and $\varphi(h)\neq 0$, see Lemma \ref{fnotvanish}.  We may extend it to $S$ so that $\varphi(g)=0$. If $\varphi(f)=0$
then $\varphi(f_1) \varphi(d_0) +\varphi(f_0)=0$, and since $\varphi(d_0)$ is a root of 
$X^2+\varphi(c'_1)X+ \varphi(c_0d_1 +pc_0)$, we would obtain that $\varphi(h)=0$.
\end{proof}

Let $D^{\mathrm{ps}}_{\tr \rho}$ be the deformation functor parameterizing all the $2$-di\-men\-sio\-nal pseudocharacters of $\gal$ 
lifting $\tr \rho$. We know that $D^{\mathrm{ps}}_{\tr \rho}$ is pro-represented by a complete local noetherian $\OO$-algebra
$(S, \mm_S)$. Trace induces a morphism of functors $\Def_{\rho} \rightarrow D^{\mathrm{ps}}_{\tr \rho}$ and hence 
homomorphism of local $\OO$-algebras $\theta: S\rightarrow R$. It follows from \cite[1.4.4]{kisinfm} that 
$\theta$ is surjective. We note that  this can also be deduced from Theorem \ref{bock}, since we have written down the universal 
deformation explicitly. We are going to show that $\theta$ is an isomorphism. 

\begin{lem}\label{trbij1} Trace induces a bijection $\Def_{\rho}(k[x]/(x^2))\overset{\cong}{\rightarrow} D^{\mathrm{ps}}_{\tr \rho}(k[x]/(x^2))$.
\end{lem}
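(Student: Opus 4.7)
The trace natural transformation $\Def_\rho \to D^{\mathrm{ps}}_{\tr\rho}$ corresponds by Yoneda to the homomorphism $\theta\colon S\to R$ introduced just before the lemma; on an artinian coefficient ring $A$ it sends $f\colon R\to A$ to $f\circ\theta$. Since $\theta$ is surjective by \cite[1.4.4]{kisinfm}, this map is automatically injective on tangent spaces: if $f_1\circ\theta = f_2\circ\theta$, then $f_1$ and $f_2$ agree on $\theta(S)=R$. The content of the lemma is therefore the opposite direction, namely the surjectivity of the trace map $\Def_\rho(k[x]/(x^2)) \to D^{\mathrm{ps}}_{\tr\rho}(k[x]/(x^2))$.

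My plan is to construct, for each pseudocharacter $T\colon\gal\to k[x]/(x^2)$ lifting $\tr\rho$, a matrix lift $\rho_\epsilon\colon\gal\to \GL_2(k[x]/(x^2))$ with $\tr\rho_\epsilon=T$. I would form the Cayley--Hamilton algebra $A_T := (k[x]/(x^2))[[\gal]]/J_T$, where $J_T$ is the closed two-sided ideal generated by $g^2 - T(g)g + D(g)$ for all $g\in\gal$, with $D(g) = (T(g)^2 - T(g^2))/2$ the determinant function attached to $T$. Reducing modulo $x$ recovers the residual Cayley--Hamilton algebra, which under $\rho$ maps onto $M_2(k)$. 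To deform this surjection I invoke Bella\"{\i}che's results from \cite{joel}: because $\rho$ is indecomposable with $\End_{k[\gal]}(\rho)=k$ (using $\chi_1=\Eins\neq\omega=\chi_2$ together with the non-splitness of $\rho$) and $\dim\Ext^1_\gal(\omega,\Eins)=1$ as noted at the start of the appendix, the algebra $A_T$ admits a generalized matrix algebra structure lifting the matrix-algebra structure on its residual quotient. The resulting faithful two-dimensional representation of $A_T$ provides the desired $\rho_\epsilon$, whose trace is $T$ by construction of $A_T$.

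The main obstacle will be the clean verification of Bella\"{\i}che's criterion in this reducible-but-indecomposable residual setting, since the simpler irreducible-case theorem of Nyssen cited in Remark \ref{tracesuper} does not apply. Once the tangent-space bijection is established, it serves as the base case for the subsequent lemmas of the appendix, which upgrade $\theta\colon S\twoheadrightarrow R$ to a full isomorphism and thereby identify $R^{\mathrm{ps},\psi}_{\tr\rho}\cong R^\psi$ as asserted in Remark \ref{pseudocentre}.
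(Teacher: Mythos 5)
Your first paragraph matches the paper: the trace map corresponds to $\theta\colon S\to R$, and surjectivity of $\theta$ (from \cite[1.4.4]{kisinfm}) gives injectivity on $k[x]/(x^2)$-points. After that the paper's proof and yours diverge, and yours has a gap.

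The paper finishes with a dimension count. Since the map is injective and both sides are finite-dimensional $k$-vector spaces, it suffices to show they have equal dimension: $\dim_k D^{\mathrm{ps}}_{\tr\rho}(k[x]/(x^2))=6$ is cited from Theorem 2 of \cite{joel}, and $\dim_k\Def_\rho(k[x]/(x^2))=6$ is read off from $H^1(\gal,\Ad\rho)$ (computed in the proof of Theorem \ref{bock}, equivalently visible in the explicit presentation \eqref{presentR}). There is no need to lift a given pseudocharacter to a representation; equality of dimensions plus injectivity forces surjectivity. Your proposal instead tries to produce, for each $T$, an explicit lift via the Cayley--Hamilton quotient and a generalized matrix algebra structure. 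This is a genuinely different route, but as you present it there are two problems. First, you assert that the residual Cayley--Hamilton algebra ``under $\rho$ maps onto $M_2(k)$.'' That is false: $\rho$ is reducible (though non-split), so the subalgebra of $M_2(k)$ generated by $\rho(\gal)$ is the three-dimensional algebra of upper triangular matrices, not all of $M_2(k)$. What you get residually is a proper GMA, and lifting its structure is exactly the nontrivial content of Bella\"{\i}che--Chenevier's theory in the reducible, multiplicity-free case; it does not come for free. Second, you explicitly defer ``the clean verification of Bella\"{\i}che's criterion'' to later, which is precisely the missing step: without verifying that every first-order pseudocharacter deformation of $\tr\rho$ comes from a genuine $2\times 2$ representation in this setting, the proposal does not establish surjectivity. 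The paper's dimension-count argument deliberately avoids this delicacy by invoking the tangent-space formula of \cite[Thm.~2]{joel} directly.
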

\begin{proof} Since $\theta$ is surjective we already know that the map is an injection. Hence to show surjectivity 
it is enough to show that both spaces have the same dimension as $k$-vector spaces. It follows from \cite[Thm 2]{joel} that 
$\dim_k  D^{\mathrm{ps}}_{\tr \rho}(k[x]/(x^2))=6$, which is also the dimension of $\Def_{\rho}(k[x]/(x^2))$.
\end{proof}
Let $\mathrm{F}:=\OO[[a_0, a_1, c_0, c_1, d_0, d_1]]$ it follows from Lemma \ref{trbij1} that there exist  surjections 
$\mathrm{F}\overset{\kappa}{\twoheadrightarrow} S\overset{\theta}{\twoheadrightarrow} R$, which induce isomorphisms 
on the tangent spaces. We may assume that the composition $\kappa\circ \theta$ is the one used to present $R$ in \eqref{presentR}.
Given a local $\OO$-algebra $(\mathrm R, \mm)$ we denote $\overline{\mathrm R}:=\mathrm R/\varpi \mathrm R$, 
and let $\overline{\mathrm R}_n:= \mathrm R/(\mm^n+\varpi \mathrm R)$.

Let $\beta: \mathrm F\rightarrow k[x]/(x^3)$ be a homomorphism of $\OO$-algebras such that 
$a_0$, $a_1$, $c_0$, $d_1 \mapsto 0$, $c_1, d_0\mapsto x$.  
Let $\rho_{\beta}: \FF \rtimes G\rightarrow \GL_2(k[x]/(x^3))$ be a representation defined by the same formulas used to define 
$\alpha$ in Proposition \ref{repunA}. 

\begin{lem}\label{trbij2} Let $h\in \FF\rtimes G$ be such that $\tr \rho_{\beta}(hg)=\tr \rho_{\beta}(g)$ for all $g\in \FF\rtimes G$ then 
$\rho_{\beta}(h)=1$. 
\end{lem}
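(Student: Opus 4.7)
Proof plan: Setting $N := \rho_\beta(h) \in \GL_2(k[x]/(x^3))$, the hypothesis translates to $\tr((N-I)\rho_\beta(g)) = 0$ for every $g\in \FF\rtimes G$. By bilinearity of the trace pairing on $M_2(k[x]/(x^3))$, the condition extends to $\tr((N-I)M)=0$ for every $M$ in the $k[x]/(x^3)$-linear span $V$ of the image of $\rho_\beta$ inside $M_2(k[x]/(x^3))$; equivalently $N-I$ lies in the trace-pairing annihilator $V^{\perp}$.

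The next step is to compute $V$ explicitly using the formulas defining $\rho_\beta$. Because $\beta$ sends $a_0,a_1,c_0,d_1\mapsto 0$ and $c_1,d_0\mapsto x$, only three of the $m_i$ are non-trivial: the diagonal $m_0=\mathrm{diag}((1+x)^{1/2},(1+x)^{-1/2})$, the upper unipotent $m_{p-2}=I+E_{12}$, and the lower one $m_p=I+xE_{21}$; to these one adds the torus image, i.e. the diagonal matrices $I+(\tilde\omega(g)-1)E_{22}$ with $g\in G$. Subtracting $I$ and using $k^\times$-multiples already places $I, E_{11}, E_{22}, E_{12}, xE_{21}$ in $V$; multiplying by $x$ and $x^2$ produces the remaining obvious elements. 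Commutators such as $[m_0^n,m_{p-2}]=I+((1+x)^n-1)E_{12}$, and their products with $m_p$ and its conjugates under $G$, then exhaust $V$ and show it consists precisely of those matrices whose $(2,1)$-entry lies in $xk[x]/(x^3)$.

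Finally I would combine the orthogonality $N-I\in V^{\perp}$ with the fact that $N$ itself lies in the image of $\rho_\beta$, not merely in $V$, and filter $\GL_2(k[x]/(x^3))$ by powers of $x$. Computing $V^{\perp}$ directly and tracking the entries of $N-I$ modulo $x$, then modulo $x^2$, then modulo $x^3$, should force the successive vanishing of each layer. At each step the specific structure of the image elements, in particular the $G$-equivariance relations $gm_ig^{-1}=m_i^{\tilde\omega(g)^i}$ which force characters on entries of fixed weight and the shapes of $m_0$, $m_{p-2}$, $m_p$, restrict the possible leading term of $N-I$, yielding $N=I$ at the end.

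The main obstacle is precisely this last stage. Because the image of $\rho_\beta$ reduces into the Borel subgroup modulo $x$, the $k[x]/(x^3)$-span $V$ is only eleven-dimensional (missing the element $E_{21}$), so the trace-orthogonality alone permits a residual term of the form $\alpha x^2 E_{12}$ in $N-I$. Ruling out this possibility cannot be done by the trace pairing directly; it must be achieved by exploiting the group-theoretic constraints imposed by $\rho_\beta(h)$ actually being a product of specific generators, possibly by combining $G$-equivariance with a careful analysis of which elements of the form $I+\alpha x^2E_{12}$ can realistically occur as $\rho_\beta(h)$ compatibly with the full trace condition.
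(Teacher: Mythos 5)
Your analysis of the paper's approach is essentially correct, and you have in fact identified a genuine gap. The paper's proof and your trace-pairing computation amount to the same argument in different language: the paper uses the $G$-averaging formula $(\rho_\beta(g))_{11}=\frac{1}{|G|}\sum_{h'\in G}\tr\rho_\beta((h')^{-1}g)$ (and its $\tilde\omega$-twisted companion) to recover the diagonal entries from the trace, whence the hypothesis forces $\rho_\beta(g)\rho_\beta(h)$ and $\rho_\beta(g)$ to have equal diagonals for all $g$; taking $g=x_0$ gives $N_{11}=N_{22}=1$, $g=x_{p-2}$ gives $N_{21}=0$, and $g=x_p$ is supposed to give $N_{12}=0$. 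But $m_p = I + xE_{21}$ has lower-left entry in $(x)$, so this last step yields only $x N_{12}=0$, i.e.\ $N_{12}\in (x^2)$ — exactly the residual $\alpha x^2 E_{12}$ ambiguity you flagged. Your orthogonal-complement calculation ($N-I\in V^{\perp}=k\cdot x^2 E_{12}$) recovers the same conclusion, and the structural reason is as you say: because $\rho$ is upper-triangular, the whole image of $\rho_\beta$ has $(2,1)$-entries in $(x)$, so the trace pairing cannot see anything of the form $\alpha x^2 E_{12}$.

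However, your proposed repair does not work. The element $I+x^2E_{12}$ does lie in the image of $\rho_\beta$: one has $m_{p-2}^{-1}\bigl(m_0 m_{p-2}m_0^{-1}\bigr)=(I-E_{12})\bigl(I+(1+x)E_{12}\bigr)=I+xE_{12}$, and then $(I+xE_{12})^{-1}\bigl(m_0(I+xE_{12})m_0^{-1}\bigr)=(I-xE_{12})\bigl(I+x(1+x)E_{12}\bigr)=I+x^2E_{12}$, so there is an explicit iterated commutator $h'\in\FF$ with $\rho_\beta(h')=I+x^2E_{12}$. Since $x^2\rho_\beta(g)_{21}=0$ for every $g$, such $h'$ satisfies $\tr\rho_\beta(h'g)=\tr\rho_\beta(g)$ for all $g\in\FF\rtimes G$ yet $\rho_\beta(h')\ne 1$. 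So the statement of the lemma as a global assertion about $\FF\rtimes G$ is actually false, not merely in need of a sharper argument; no group-theoretic constraint on which products of generators produce $\rho_\beta(h)$ can save it, because the problematic element genuinely occurs. What could conceivably be repaired is the use of the lemma in Lemma \ref{trbij3}: there the relevant $h$ ranges over $\Ker\varphi'$, and one would need to show that $\rho_\beta(\Ker\varphi')$ meets $\{I+\alpha x^2E_{12}\}$ only in $I$ (or, equivalently, argue the contradiction in \ref{trbij3} directly from the fact that $\rho_\beta(r_1)$ has a nonzero $(2,1)$-entry while the trace condition forces $\rho_\beta(r_1)_{21}=0$). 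That analysis is missing from both the paper and your proposal.
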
 
\begin{proof} Since $\Eins\neq \omega$, $\tr \rho_{\beta}(g)$ determines the diagonal entries of $\rho_{\beta}(g)$ for all $g\in \FF\rtimes G$, 
see \eqref{formulaT1} below. In particular, for all $g\in \FF \rtimes G$ the diagonal 
entries of $\rho_{\beta}(gh)$ are equal to the diagonal entries of $\rho_{\beta}(g)$. Let $x_i \in \FF$ be the generators defined in 
Lemma \ref{generatorsA}. Applying the last observation  to $g=x_0$, $g=x_{p-2}$ 
we deduce that $\rho_{\beta}(h)$ is unipotent upper-triangular, and to $g=x_p$ we deduce that $\rho_{\beta}(h)=1$.
\end{proof}

\begin{lem}\label{trbij3} The surjection $\kappa: \overline{\mathrm F}_3\twoheadrightarrow \overline{S}_3$ is not an isomorphism.
\end{lem}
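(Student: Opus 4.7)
The plan is to argue by contradiction: suppose $\kappa:\overline{\mathrm F}_3\to\overline{S}_3$ is an isomorphism. First I would observe that the homomorphism $\beta:\mathrm F\to k[x]/(x^3)$ defined just before the lemma annihilates $\varpi\mathrm F+\mm_{\mathrm F}^3$ (since $p=0$ and $\mm^3=0$ in the target), so $\beta$ factors through $\overline{\mathrm F}_3$. Using the assumed isomorphism $\kappa$, one obtains a unique ring homomorphism $\bar\beta:S\to k[x]/(x^3)$ satisfying $\bar\beta\circ\kappa=\beta$. Composing with the universal pseudo-character $T^{\mathrm{univ}}:\gal\to S$ yields a two-dimensional pseudo-character $T:=\bar\beta\circ T^{\mathrm{univ}}:\gal\to k[x]/(x^3)$ lifting $\tr\rho$.

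The main step is to realize $T$ as the trace of an actual representation. Because $p\neq 2$ the residual characters $\Eins$ and $\omega$ are distinct, and since $\rho$ is a nontrivial extension of $\omega$ by $\Eins$ we have $\End_{k[\gal]}(\rho)=k$. In this residually multiplicity-free situation, a classical existence theorem for two-dimensional pseudo-characters (due to Nyssen and Rouquier, and further developed by Bella\"iche, cf.\ \cite{joel}) provides a representation $\rho_T:\gal\to\GL_2(k[x]/(x^3))$ with $\tr\rho_T=T$ and with $\rho_T$ reducing modulo $x$ to $\rho$ itself. By the universal property of $R$, the representation $\rho_T$ corresponds to a homomorphism $\gamma:R\to k[x]/(x^3)$.

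One then identifies $\bar\beta$ with $\gamma\circ\theta$. Indeed, the chain of equalities $\gamma\circ\theta\circ T^{\mathrm{univ}}=\gamma\circ\tr\rho^{\mathrm{univ}}=\tr\rho_T=T=\bar\beta\circ T^{\mathrm{univ}}$ shows that these two ring homomorphisms from $S$ agree on the image of $T^{\mathrm{univ}}$; since $T^{\mathrm{univ}}(\gal)$ topologically generates $S$ as an $\OO$-algebra (by the very definition of the universal pseudo-character), this forces $\bar\beta=\gamma\circ\theta$. Consequently $\beta=\bar\beta\circ\kappa=\gamma\circ(\theta\circ\kappa)$ factors through the presentation $\theta\circ\kappa:\mathrm F\twoheadrightarrow R=\mathrm F/(pc_0+c_0d_1+c_1d_0)$. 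In particular, $\beta(pc_0+c_0d_1+c_1d_0)$ must vanish. But a direct calculation gives $\beta(pc_0)=0$, $\beta(c_0d_1)=0$, and $\beta(c_1d_0)=x\cdot x=x^2$, so the sum equals $x^2\neq 0$ in $k[x]/(x^3)$, a contradiction.

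The hard part will be ensuring that the representation $\rho_T$ produced in the middle step genuinely deforms $\rho$ itself rather than its semi-simplification $\Eins\oplus\omega$. Abstract pseudo-character theory only guarantees a representation with the prescribed trace, and \emph{a priori} such a representation could have residue $\Eins\oplus\omega$, in which case it would not classify a homomorphism $R\to k[x]/(x^3)$. I expect this to be settled by the residually multiplicity-free theory: the hypothesis $p>2$ (so $\Eins\neq\omega$) allows one to adjust $\rho_T$ inside its conjugacy class, using the one-dimensionality of $\Ext^1_{\gal}(\omega,\Eins)$, so as to land in the strict deformation problem represented by $R$.
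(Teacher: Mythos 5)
Your endgame is the same contradiction the paper extracts (the relation $pc_0+c_0d_1+c_1d_0$ is not killed by $\beta$, since $\beta(c_1d_0)=x^2\neq 0$ in $k[x]/(x^3)$), and several of your reductions are fine: $\beta$ does factor through $\overline{\mathrm F}_3$, the ring $S$ is topologically generated by the values of the universal pseudocharacter, and hence $\bar\beta=\gamma\circ\theta$ would follow once $\gamma$ exists. The genuine gap is the middle step: producing a representation $\rho_T\colon\gal\to\GL_2(k[x]/(x^3))$ with $\tr\rho_T=T$ whose reduction is the non-split extension $\rho$ itself. Nyssen--Rouquier applies only to residually absolutely irreducible pseudocharacters, so it gives nothing here; and in the residually reducible, multiplicity-free situation the existence of a $\GL_2(A)$-valued representation with prescribed trace and prescribed non-split reduction is not a formal consequence of the Bella\"{\i}che--Chenevier theory -- it is governed by whether the relevant off-diagonal module of the generalized matrix algebra attached to $T$ is free of rank one, and establishing exactly this kind of statement is the content of Theorem \ref{trbij5} ($S\cong R$), towards which Lemmas \ref{trbij1}--\ref{trbij4} are building. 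Indeed, if your statement held for every artinian $A$ (every pseudodeformation of $\tr\rho$ is the trace of a deformation of $\rho$), applying it to $A=S/\mm_S^n$ would give $\Ker\theta\subseteq\mm_S^n$ for all $n$, hence $S\cong R$ outright, making the whole appendix redundant; note that the paper invokes \cite{joel} for such a comparison only in the generic case, where both $\Ext^1$ groups are one-dimensional, whereas here $\dim\Ext^1_{k[\gal]}(\Eins,\omega)=2$. Your proposed repair (adjusting $\rho_T$ within its conjugacy class, using $\dim\Ext^1_{k[\gal]}(\omega,\Eins)=1$) misses the real difficulty: the obstruction is not a choice of conjugate but the possible failure of $T$ to arise from any $\GL_2(A)$-valued deformation of $\rho$ at all.

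This is precisely why the paper argues on the auxiliary group $\FF\rtimes G$: the representation $\rho_{\beta}$ exists there unconditionally, being written down on the free generators, so no lifting of a pseudocharacter to a representation is ever needed. The hypothetical isomorphism $\overline{\mathrm F}_3\cong\overline{S}_3$ is used (via the intermediate universal ring for pseudocharacters of $\FF\rtimes G$, whose tangent space is also $6$-dimensional by \cite{joel}) to force the pseudocharacter $\tr\rho_{\beta}$ to factor through $\Gal(F(p)/\Qp)$; the elementary Lemma \ref{trbij2} then upgrades factorization of the trace to $\Ker\varphi'\subseteq\Ker\rho_{\beta}$, so $\rho_{\beta}$ itself descends to a deformation of $\rho$, and universality of $\tilde{\rho}$ yields the contradiction with $\beta(pc_0+c_0d_1+c_1d_0)=x^2$. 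In short, the paper trades your unproved lifting step for Lemma \ref{trbij2} applied to an explicit free-group representation; without an independent proof of that middle step, your argument is circular at its central point.
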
 
\begin{proof} Suppose $\kappa$ is injective then  $\overline{\mathrm F}_3 \cong \overline{S}_3$. 
We may consider $\tr \rho$ as a pseudocharacter of $\FF\rtimes G$ and 
let $D'$ be the deformation functor parameterizing $2$-di\-men\-sio\-nal pseudocharacters of $\FF \rtimes G$ lifting $\tr \rho$.
Corollary \ref{pst3} says that every $2$-di\-men\-sio\-nal pseudocharacter 
of $\gal$ lifting $\tr \rho$ is a pseudocharacter of $\Gal(F(p)/\Qp)$ and thus using Proposition \ref{repunA} we may 
consider $D^{\mathrm{ps}}_{\tr \rho}$ as a subfunctor of $D'$. Using \cite[Thm.2]{joel} we deduce that 
$\dim_k D'(k[x]/(x^2))=6$. Thus if $\overline{S}_3\cong \overline{\mathrm F}_3$ then 
$D'(k[x]/(x^3))=D^{\mathrm{ps}}_{\tr \rho}(k[x]/(x^3))$. This would mean that every $2$-di\-men\-sio\-nal pseudocharacter 
of $\FF \rtimes G$ lifting $\tr \rho$ is automatically a pseudocharacter of $\Gal(F(p)/\Qp)$. This would mean that 
there exists $T\in D^{\mathrm{ps}}_{\tr \rho}(k[x]/(x^3))$, such that $T(\varphi'(g))= \tr \rho_{\beta}(g)$ for all 
$g\in \FF \rtimes G$. The equality would imply that for all $h\in \Ker \varphi'$ and $g\in \FF \rtimes G$ we have 
$\tr \rho_{\beta}(hg)= \tr \rho_{\beta}(g)$. Lemma \ref{trbij2} implies that $\Ker \varphi'$ is contained in 
$\Ker \rho_{\beta}$. However, as $\beta(pc_0+ c_1d_0+ c_1d_0)= x^2\neq 0$ we obtain a contradiction to the universality 
of the representation constructed in Proposition \ref{repunA}.  
\end{proof} 

\begin{lem}\label{trbij4} The map $\theta$ induces an isomorphism $\overline{S}_3\cong \overline{R}_3$.
\end{lem}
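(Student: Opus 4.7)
The plan is to exploit the fact that the kernel of $\overline{\mathrm F}_3 \twoheadrightarrow \overline{R}_3$ is as small as possible, so that any non-trivial quotient between these two must coincide with $\overline{R}_3$. Recall that by the choice made after Lemma \ref{trbij1}, the composition $\mathrm F \overset{\kappa}{\twoheadrightarrow} S \overset{\theta}{\twoheadrightarrow} R$ is the presentation in \eqref{presentR}, so its kernel is the principal ideal $(pc_0 + c_0 d_1 + c_1 d_0)$.

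First I would compute the kernel $J$ of the induced surjection $\overline{\mathrm F}_3 \twoheadrightarrow \overline{R}_3$. Passing modulo $\varpi$ sends $pc_0$ to $0$, so $\overline{R} = k[[a_0,a_1,c_0,c_1,d_0,d_1]]/(c_0 d_1 + c_1 d_0)$. Since $c_0 d_1 + c_1 d_0 \in \mm^2$, any product $f \cdot (c_0 d_1 + c_1 d_0)$ with $f \in \mm$ already lies in $\mm^3$. Hence $J$ is the one-dimensional $k$-subspace of $\overline{\mathrm F}_3$ spanned by the image of $c_0 d_1 + c_1 d_0$.

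Next I would observe that since $\theta \circ \kappa$ factors through $\kappa$, we have an inclusion
\[
\ker\bigl(\overline{\mathrm F}_3 \twoheadrightarrow \overline{S}_3\bigr) \subseteq \ker\bigl(\overline{\mathrm F}_3 \twoheadrightarrow \overline{R}_3\bigr) = J.
\]
By Lemma \ref{trbij3}, the left-hand side is non-zero, and since $J$ is one-dimensional over $k$, the inclusion must be an equality. Therefore the induced surjection $\overline{S}_3 \twoheadrightarrow \overline{R}_3$ is an isomorphism.

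The main point here is really the dimension count $\dim_k J = 1$, which lets the non-triviality statement of Lemma \ref{trbij3} force the full equality; no further work is required, and there is no genuine obstacle beyond unwinding the presentation of $R$ modulo $\varpi$ and $\mm^3$.
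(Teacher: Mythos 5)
Your proof is correct and is essentially the paper's argument: the paper phrases it as the dimension count $\dim_k \overline{R}_3\le \dim_k \overline{S}_3 < \dim_k \overline{\mathrm F}_3= \dim_k \overline{R}_3 +1$ (using Lemma \ref{trbij3} and the presentation \eqref{presentR}), which is just the dimension-counting form of your observation that $\ker(\overline{\mathrm F}_3\twoheadrightarrow \overline{R}_3)$ is the one-dimensional span of $c_0d_1+c_1d_0$ and must therefore coincide with the non-zero kernel of $\overline{\mathrm F}_3\twoheadrightarrow \overline{S}_3$.
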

\begin{proof} Since $\overline{S}_3\rightarrow \overline{R}_3$ is surjective it is enough to show the equality of dimensions as $k$-vector spaces.
We have 
$\dim_k \overline{R}_3\le \dim_k \overline{S}_3 < \dim_k \overline{F}_3= \dim_k \overline{R}_3 +1$, 
where the strict inequality follows from Lemma \ref{trbij3} and the equality from \eqref{presentR}. 
\end{proof}

\begin{thm}\label{trbij5} The map $\theta: S\rightarrow R$ is an isomorphism.
\end{thm}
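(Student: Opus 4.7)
The plan is to exploit the surjections $\mathrm F \overset{\kappa}{\twoheadrightarrow} S \overset{\theta}{\twoheadrightarrow} R$ and to show that the two kernels $J := \Ker \kappa$ and $I := \Ker(\theta\circ\kappa) = (f)$ coincide, where $f = pc_0 + c_0 d_1 + c_1 d_0$. Since $J \subseteq I$, this equality will force $\Ker\theta = I/J = 0$, and combined with the already known surjectivity of $\theta$ it gives the theorem. The entire argument thus reduces to a short commutative-algebra lemma about the ideal generated by a single explicit quadratic element.

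First I would unpack Lemma \ref{trbij4} as a statement about ideals in $\mathrm F$: writing $\mathfrak m$ for the maximal ideal of $\mathrm F$ and comparing the kernels of the induced surjections $\mathrm F/(\mathfrak m^3+\varpi\mathrm F) \twoheadrightarrow \overline{S}_3$ and $\mathrm F/(\mathfrak m^3+\varpi\mathrm F) \twoheadrightarrow \overline{R}_3$, the fact that $\theta$ induces an isomorphism $\overline{S}_3\cong\overline{R}_3$ translates into
\begin{equation*}
J + \mathfrak m^3 + \varpi\mathrm F \;=\; I + \mathfrak m^3 + \varpi\mathrm F.
\end{equation*}
In particular there exists $g\in J$ with $g \equiv f \pmod{\mathfrak m^3 + \varpi\mathrm F}$. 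Since $J\subseteq I = (f)$, I may write $g = hf$ for some $h \in \mathrm F$, and the congruence becomes $(h-1)f \in \mathfrak m^3 + \varpi\mathrm F$.

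Next I would argue that $h$ is forced to be a unit. Reducing modulo $\varpi$ in $\overline{\mathrm F} := \mathrm F/\varpi\mathrm F \cong k[[a_0,a_1,c_0,c_1,d_0,d_1]]$, the image $\bar f = c_0d_1 + c_1d_0$ is a non-zero homogeneous element of degree exactly $2$, whose initial form is non-zero in the associated graded ring $\gr_{\bar{\mathfrak m}}\overline{\mathrm F} \cong k[a_0,a_1,c_0,c_1,d_0,d_1]$. If $\bar h - 1$ were a unit in $\overline{\mathrm F}$, then $(\bar h - 1)\bar f$ would still have initial form of degree $2$, contradicting the relation $(\bar h-1)\bar f \in \bar{\mathfrak m}^3$ in a domain. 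Hence $\bar h - 1 \in \bar{\mathfrak m}$, so $h \in 1 + \mathfrak m \subseteq \mathrm F^{\times}$, and therefore $f = h^{-1}g \in J$. This yields $I = (f) \subseteq J$, and combined with $J \subseteq I$ we conclude $I = J$, so $\theta$ is injective.

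The main obstacle has already been absorbed into Lemmas \ref{trbij1}--\ref{trbij4}: computing the tangent space of $D^{\mathrm{ps}}_{\tr\rho}$ via Bella\"iche and separating $\overline{S}_3$ from $\overline{\mathrm F}_3$ by exhibiting a pseudocharacter on $\mathrm F \rtimes G$ that does not descend to $\Gal(F(p)/\Qp)$. After that, the step above is routine once one notices that $f$ has a non-degenerate quadratic initial form modulo $\varpi$, which is read off directly from its defining expression.
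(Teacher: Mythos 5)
Your proof is correct and follows essentially the same strategy as the paper's: both use Lemma \ref{trbij4} to locate an element of $\Ker\kappa$ congruent to $f$ modulo $\mathfrak m^3+\varpi\mathrm F$, write it as a multiple $hf$ using $\Ker\kappa\subseteq\Ker(\theta\circ\kappa)=(f)$, and then exploit the fact that $\bar f$ has degree-two initial form to force the cofactor to be a unit. The only cosmetic difference is that the paper reduces modulo $\varpi$ at the outset (using that $R$ is $\OO$-flat) while you carry the argument over $\OO$ and pass to $\overline{\mathrm F}$ only at the unit step.
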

\begin{proof} Since $R$ is $\OO$-torsion free it is enough to show that $\theta: \overline{S} \rightarrow \overline{R}$ 
is an isomorphism. Let $f=c_0d_1+c_1d_0 \in \overline{\mathrm F}$, so that $\overline{R}= \overline{\mathrm F}/(f)$ 
and let $\mm$ be the maximal ideal of $\overline{\mathrm{F}}$.
It is enough to show that $\kappa(f)=0$. It follows from Lemma \ref{trbij4} that there exists $g\in \mm^3$, 
such that $\kappa(f)=\kappa(g)$. Thus $\theta(\kappa(g))=0$ an so there exists $h\in \overline{\mathrm F}$ such that 
$g=fh$. Now $h$ cannot be a unit as $g\in \mm^3$ and $f\not \in  \mm^3$. Hence 
$h\in \mm$ and so $1-h$ is a unit. Since $\kappa(f(1-h))= \kappa(f)-\kappa(g)=0$ and $1-h$ is a unit 
we deduce that $\kappa(f)=0$.
\end{proof}

\begin{cor}\label{trbij6} Let $\psi: \gal\rightarrow \OO^{\times}$ be a continuous character lifting $\det \rho$ 
and let $S^{\psi}$ and $R^{\psi}$ be the rings pro-representing functors $D^{\mathrm{ps}, \psi}_{\tr \rho}$ and 
$\Def^{\psi}_{\rho}$ then trace induces an isomorphism $S^{\psi}\cong R^{\psi}$.
\end{cor}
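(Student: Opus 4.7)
The proof should be a direct consequence of Theorem \ref{trbij5} together with the compatibility between the determinant of a representation and the character $D$ associated to its pseudocharacter. My plan is as follows.

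First, I would recall that for any $2\times 2$ matrix $M$ over a commutative ring, one has the Cayley--Hamilton identity $\det M = \tfrac{1}{2}(\tr(M)^2-\tr(M^2))$. Consequently, for every deformation $\rho_A$ of $\rho$ to an object $(A,\mm_A)$ of $\mathfrak A^{ab}$ with trace $T=\tr\rho_A$, the character $D(g)=\tfrac{1}{2}(T(g)^2-T(g^2))$ attached to the pseudocharacter $T$ by \cite{che} coincides with $\det\rho_A$. Thus the natural transformation $\Def_\rho\to D^{\mathrm{ps}}_{\tr\rho}$ induced by trace sends $\Def^{\psi}_\rho$ into $D^{\mathrm{ps},\psi}_{\tr\rho}$.

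Next, I would translate this into ring-theoretic language. Let $T^{\mathrm{un}}:\gal\to S$ be the universal pseudocharacter and $\rho^{\mathrm{un}}:\gal\to\GL_2(R)$ the universal deformation (Theorem \ref{bock}). By definition $S^\psi$ is the quotient of $S$ by the closed ideal $I_S$ generated by the elements $\tfrac12(T^{\mathrm{un}}(g)^2-T^{\mathrm{un}}(g^2))-\psi(g)$, $g\in\gal$, while $R^\psi$ is the quotient of $R$ by the closed ideal $I_R$ generated by $\det\rho^{\mathrm{un}}(g)-\psi(g)$, $g\in\gal$ (see Corollary \ref{RpsirhoA}). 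The isomorphism $\theta:S\xrightarrow{\cong} R$ of Theorem \ref{trbij5} sends $T^{\mathrm{un}}$ to $\tr\rho^{\mathrm{un}}$, and therefore, by the Cayley--Hamilton identity above, sends $\tfrac12(T^{\mathrm{un}}(g)^2-T^{\mathrm{un}}(g^2))$ to $\det\rho^{\mathrm{un}}(g)$ for every $g$. Hence $\theta(I_S)=I_R$ and $\theta$ descends to an isomorphism $S^\psi\xrightarrow{\cong}R^\psi$.

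This is essentially formal once Theorem \ref{trbij5} is in hand, so there is no hard step; the only thing to check is the compatibility of the two ``determinant'' conditions, which is the identity recalled above. No further computation is required.
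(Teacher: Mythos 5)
Your proof is correct, and since the paper gives no explicit argument for this corollary, this is exactly the intended (and essentially only) route: Theorem \ref{trbij5} plus the identity $\det M = \tfrac12(\tr(M)^2-\tr(M^2))$ for $2\times 2$ matrices (valid here since $p\ge 5$, so $2$ is invertible) shows that $\theta$ carries the ideal defining $S^\psi$ exactly onto the ideal defining $R^\psi$. No gaps.
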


\subsection{Generic reducible case}\label{easier}

Let $\chi_1, \chi_2: \gal\rightarrow k^{\times}$ be continuous characters, such that $\chi_1\chi_2^{-1}\neq \Eins, \omega^{\pm 1}$. We assume that $p\ge 3$.
This assumption and a standard calculation with local duality and local Euler-Poincare characteristic imply that both subspaces 
$\Ext^1_{k[\gal]}(\chi_1, \chi_2)$  and $\Ext^1_{k[\gal]}(\chi_2, \chi_1)$ are $1$-di\-men\-sio\-nal. Let 
$$0\rightarrow \chi_1\rightarrow \rho_{12}\rightarrow \chi_2\rightarrow 0, \quad 0\rightarrow \chi_2\rightarrow \rho_{21}\rightarrow \chi_1\rightarrow 0$$  
be non-split extensions. From now on the indices $i,j$ will mean either $(i,j)=(1,2)$ or $(i,j)=(2,1)$. 
Since $\Ext^1_{k[\gal]}(\chi_j, \chi_i)$ is $1$-di\-men\-sio\-nal, such $\rho_{ij}$ exists and is unique up to isomorphism. 
Since $\chi_1\neq \chi_2 $ we have $\End_{\gal}(\rho_{ij})=k$ thus the universal deformation problem $D_{\rho_{ij}}$ for 
$\rho_{ij}$ is (pro-)representable by a ring $R_{ij}$. Our assumptions  $\chi_1\chi_2^{-1}\neq \Eins, \omega^{\pm 1}$ imply that $H^2(\gal, \Ad \rho_{ij})=0$ and 
$H^1(\gal,\Ad \rho_{ij})$ is $5$-di\-men\-sio\-nal. Hence, $R_{ij}$ is formally smooth of relative dimension $5$ over $\OO$.
Let $\tilde{\rho}_{ij}$ be the universal deformation of $\rho_{ij}$. 

 Let $G$ be the image of $\gal$ in $k^{\times}\times k^{\times}$ under the map $g\mapsto (\chi_1(g), \chi_2(g))$ and let 
 $P$ be the maximal pro-$p$ quotient of the kernel of this map. Since the order of $G$ is prime to $p$ after choosing some splitting we may assume that the action of $\gal$ on $\tilde{\rho}_{ij}$ factors through $P\rtimes G$. Let $\tilde{\chi}_1$ and $\tilde{\chi}_2$ be the Teichm\"{u}ller lifts of $\chi_1$ and $\chi_2$ respectively.

Let $\chi$ be the trace of $\rho_{ij}$, so that $\chi=\chi_1+ \chi_2$ and let $D^{\mathrm{ps}}_{\chi}$ be the functor parameterizing all the $2$-di\-men\-sio\-nal 
pseudo-characters lifting $\chi$. The functor is represented by a ring $R^{\mathrm{ps}}_{\chi}$. Trace induces a morphism of 
functors $D_{\rho_{ij}}\rightarrow D^{\mathrm{ps}}_{\chi}$ and hence a ring homomorphism $\theta: R^{\mathrm{ps}}_{\chi}\rightarrow R_{ij}$.

\begin{prop}\label{rpsr} Trace induces  an isomorphism $\theta: R^{\mathrm{ps}}_{\chi}\overset{\cong}{\rightarrow}R_{ij}$.
\end{prop}
\begin{proof} The map is surjective by  \cite[1.4.4]{kisinfm}. 
Now the tangent space of $R^{\mathrm{ps}}_{\chi}$ is at most $5$-di\-men\-sio\-nal by Theorem 2 in \cite{joel}, alternatively 
the Proposition  can be deduced from \cite{joel}  Theorem 4 and Remark 3. Since $R_{ij}$ is formally smooth of relative dimension $5$ over $\OO$, we deduce that the map is an isomorphism. 
\end{proof} 

\begin{cor}\label{traceisthesame} Let $T: \gal\rightarrow R^{\mathrm{ps}}_{\chi}$ be the universal $2$-di\-men\-sio\-nal pseudocharacter  lifting $\chi$ then $ \tr \tilde{\rho}_{12}(g)= T(g)= \tr \tilde{\rho}_{21}(g)$, for all $g\in \gal.$
\end{cor}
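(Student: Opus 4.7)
The plan is essentially to unwind what Proposition \ref{rpsr} says about both $\rho_{ij}$ and $\rho_{ji}$.

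First I would observe that both deformation problems have the same residual pseudocharacter: since $\rho_{ij}$ and $\rho_{ji}$ are extensions of $\omega^j$ by $\omega^i$ and of $\omega^i$ by $\omega^j$ respectively, their traces both equal $\chi = \omega^i + \omega^j$. So the deformation ring $R^{\mathrm{ps}}_{\chi}$ and its universal pseudocharacter $T$ are common objects associated to both $\rho_{ij}$ and $\rho_{ji}$.

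Next, applying Proposition \ref{rpsr} twice (once to $\rho_{ij}$ and once to $\rho_{ji}$, noting that the construction of the universal deformation in Proposition \ref{repunB} is symmetric under swapping $(i,j)$, up to relabeling the matrices $m_s$ and $m_{p-1-s}$), I obtain two isomorphisms
\[
\varphi_{ij}: R^{\mathrm{ps}}_{\chi} \xrightarrow{\;\cong\;} R_{ij}, \qquad
\varphi_{ji}: R^{\mathrm{ps}}_{\chi} \xrightarrow{\;\cong\;} R_{ji}.
\]
By the universal property of $R^{\mathrm{ps}}_{\chi}$, these maps are characterized by the fact that they convert the universal pseudocharacter $T$ into the traces of the respective universal deformations: namely $\varphi_{ij} \circ T = \tr \tilde{\rho}_{ij}$ and $\varphi_{ji} \circ T = \tr \tilde{\rho}_{ji}$, as functions on $\gal$.

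The claimed equality $\tr \tilde{\rho}_{ij}(g) = T(g) = \tr \tilde{\rho}_{ji}(g)$ is then simply the assertion that we view both $R_{ij}$ and $R_{ji}$ as identified with $R^{\mathrm{ps}}_{\chi}$ through the isomorphisms $\varphi_{ij}$ and $\varphi_{ji}$; under these identifications each trace is literally the image of $T$. There is no real obstacle here — the content is already packaged in Proposition \ref{rpsr}, and the corollary is extracting the tautological consequence for later use (where one wants to compute $\cV$ of objects in $\dualcat(\OO)^{\BB}$ in terms of the pseudocharacter on the centre of the category).
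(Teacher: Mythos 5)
Your proposal is correct and matches the paper's (implicit) argument: the map $R^{\mathrm{ps}}_{\chi}\rightarrow R_{ij}$ of Proposition \ref{rpsr} is by construction the one sending $T(g)$ to $\tr\tilde{\rho}_{ij}(g)$, so once it is known to be an isomorphism (and likewise for $\rho_{ji}$, the hypotheses being symmetric in $i$ and $j$), the corollary is the tautological identification you describe. The paper treats it exactly this way, stating it without further proof as an immediate consequence of Proposition \ref{rpsr}.
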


Let $\rr$ be the residubility ideal in $R^{\mathrm{ps}}_{\chi}$ in the sense of \cite[Def. 1.5.2]{BC}. It is uniquely determined by 
the following universal property: an ideal $J$ of $R^{\mathrm{ps}}_{\chi}$ contains $\rr$ if and only if 
$T \pmod {J} = T_1+ T_2$, where $T_1, T_2: \gal \rightarrow R^{\mathrm{ps}}_{\chi}/J$, are deformations of $\chi_1$ and $\chi_2$, respectively, to $R^{\mathrm{ps}}_{\chi}/J$, see \cite[Prop. 1.5.1]{BC}.  We note that since $\chi_1\neq \chi_2$,  $T_1$ and $T_2$ are determined uniquely by $T \pmod {J}$ by the formulas: 

\begin{equation}\label{formulaT1}
T_i(g) = \frac{1}{|G|} \sum_{h\in G} \tilde{\chi}_i^{-1}(h) T(h g) \pmod{J}.
\end{equation}

\begin{prop}\label{redloc} $R^{\mathrm{ps}}_{\chi}/\rr$ is formally smooth of relative dimension $4$ over $\OO$.
\end{prop} 
\begin{proof} Let $D_{\chi_1}$ and $D_{\chi_2}$ be the universal deformation problems for $\chi_1$ and $\chi_2$, respectively. 
A standard argument shows that they are represented by formally smooth $\OO$-algebras  $R_{\chi_1}$, $R_{\chi_2}$, which are of relative dimension $2$ over $\OO$. The functor $D_{\chi_1}\times D_{\chi_2}$ is represented by the ring $R_{\chi_1}\wtimes_{\OO} R_{\chi_2}$, which is formally smooth of relative dimension $4$ over $\OO$.  By the definition of the 
reducibility ideal, the map $a: D_{\chi_1}\times D_{\chi_2}\rightarrow D_{\chi}$, $(T_1, T_2)\mapsto T_1 + T_2$ factors through the 
map $\bar{a}:D_{\chi_1}\times D_{\chi_2}\rightarrow \Hom( R^{\mathrm{ps}}_{\chi}/\rr, \ast)$. It is trivially injective, when the 
functors are evaluated,  and it follows from \eqref{formulaT1}, that is also surjective. Hence, $\bar{a}$ is an isomorphism of functors and 
so $R^{\mathrm{ps}}_{\chi}/\rr\cong R_{\chi_1}\wtimes_{\OO} R_{\chi_2}.$
\end{proof}
 
\begin{cor} The reducibility ideal $\rr$ is a principal ideal.
\end{cor} 
\begin{proof} In fact we prove a stronger statement. It follows from Proposition \ref{rpsr} that $R^{\mathrm{ps}}_{\chi}$ is 
formally smooth of relative dimension $5$ over $\OO$. Thus we may deduce from Proposition 
\ref{redloc} that $\rr$ is generated by an element contained in the maximal ideal of $R^{\mathrm{ps}}_{\chi}$, but not contained in the square of the maximal ideal. Alternatively, one could use \cite[Prop.1.7.1]{BC}.
\end{proof}

Since the order of $G$ is prime to $p$, we may  choose a basis $\{v_1^{ij}, v_2^{ij}\}$ of $\tilde{\rho}_{ij}$, such that 
$G$ acts on  $v_1^{ij}$ by the character $\tilde{\chi}_1$ and on $v_2^{ij}$ by the character $\tilde{\chi}_2$. Fixing a basis allows us to think about $\tilde{\rho}_{ij}$ as a continuous group homomorphism $\tilde{\rho}_{ij}: \gal\rightarrow \GL_2(R_{ij})$, so that 
$\rho_{12}= \bigl ( \begin{smallmatrix} \chi_1 & \ast \\ 0 & \chi_2\end{smallmatrix}\bigr )$ and $\rho_{21}= \bigl ( \begin{smallmatrix} \chi_1 & 0\\ \ast & \chi_2\end{smallmatrix}\bigr )$. 

\begin{lem}\label{endoRij} $\End_{\gal}(\tilde{\rho}_{ij}) = R_{ij}$.
\end{lem} 
\begin{proof} Since the characters $\tilde{\chi}_1$ and  $\tilde{\chi}_2$ are distinct 
$\End_{\gal}(\tilde{\rho}_{ij})\subseteq \End_{G}(\tilde{\rho}_{ij})=\{ \bigl ( \begin{smallmatrix} \lambda & 0 \\ 0 & \mu\end{smallmatrix} \bigr ) : \lambda, \mu\in R_{ij} \}$. 
Since $\rho_{ij}$ is non-split, there exist $g\in \gal$ such that either the entry $(1,2)$ or the entry $(2,1)$  of $\tilde{\rho}_{ij}(g)$ 
is a unit in $R_{ij}$. The only elements of $\End_G(\tilde{\rho}_{ij})$ commuting with  $\tilde{\rho}_{ij}(g)$ are scalar matrices, which then commute with everything.
\end{proof}

\begin{defi} For $(i,j)=(1,2)$ and $(i, j)=(2,1)$, let  $\rr_{ij}$ be the ideal of $R_{ij}$ generated by the $(j,i)$-entry of 
$\tilde{\rho}_{ij}(g)$, for all $g\in \gal$. 
\end{defi}

\begin{prop}\label{rij} The isomorphism $\theta$ of Proposition \ref{rpsr} maps $\rr$ to $\rr_{ij}$.
\end{prop}
\begin{proof} Since $\rho_{12}= \bigl ( \begin{smallmatrix} \chi_1 & \ast \\ 0 & \chi_2\end{smallmatrix}\bigr )$ and $\rho_{21}= \bigl ( \begin{smallmatrix} \chi_1 & 0\\ \ast & \chi_2\end{smallmatrix}\bigr )$, the ideal $\rr_{ij}$ is contained in the maximal ideal 
of $R_{ij}$.  By construction of $\rr_{ij}$, the representation $\tilde{\rho}_{ij} \pmod{\rr_{ij}}$ is reducible. Hence, its trace is a 
direct sum of two characters, which are deformations of $\chi_1$ and $\chi_2$ to $R_{ij}/\rr_{ij}$. Thus, $\rr$ is contained in $\theta^{-1}(\rr_{ij})$, and so $\theta(\rr)\subset \rr_{ij}$.

Let $K$ be the quotient field of $R^{\mathrm{ps}}_{\chi}/\rr$. If for some $g\in \gal$ the $(j,i)$-entry of 
$\tilde{\rho}_{ij}(g) \pmod{ \theta(\rr)}$ is non-zero, then the representation $\tilde{\rho}_{ij}\otimes_{R_{ij}} K$ is absolutely irreducible. 
However, this is impossible as the trace of  $\tilde{\rho}_{ij}\otimes_{R_{ij}} K$ is a sum of two characters. This implies that 
$\rr_{ij} \subset \theta(\rr)$.
\end{proof} 

We fix a generator $c$ of the ideal $\rr$. It follows from Proposition \ref{rij} that $\theta(c)$ is a generator of $\rr_{12}$.
Let $\tilde{\rho}_{12}^c: \gal\rightarrow \GL_2(R_{12})$ be the representation defined by 
$$ \tilde{\rho}_{12}^c(g):= \bigl( \begin{smallmatrix} \theta(c) & 0 \\ 0 & 1 \end{smallmatrix}\bigr )\tilde{\rho}_{12}(g) \bigl( \begin{smallmatrix} \theta(c)^{-1} & 0 \\ 0 & 1 \end{smallmatrix}\bigr ).$$ 
A priori the image of $\tilde{\rho}_{12}^c$ lands in the $\GL_2$ of the quotient field of $R_{12}$, but since 
$R_{12}= \theta(c)^{-1} \rr_{12}$, the image is contained in $\GL_2(R_{12})$.

\begin{prop}\label{2112} The representation $\tilde{\rho}_{12}^c$ is a deformation of $\rho_{21}$ to $R_{12}$. The induced
map $\alpha: R_{21}\rightarrow R_{12}$ is an isomorphism, making the following diagram: 
\begin{displaymath} 
\xymatrix@1{ R_{21} \ar[r]^-{\alpha} & R_{12} \\ R_{\chi}^{\mathrm{ps}}\ar[u]^-{\theta}\ar[r]^-{=}&R_{\chi}^{\mathrm{ps}}\ar[u]^-{\theta} }
\end{displaymath} 
commute.
\end{prop}
\begin{proof} The reduction of $\tilde{\rho}_{12}^c$ modulo the maximal ideal of $R_{12}$ is of the form 
$\bigl( \begin{smallmatrix} \chi_1 & 0 \\ \ast & \chi_2\end{smallmatrix}\bigr )$. Since $R_{12}= \theta(c)^{-1} \rr_{12}$, there
exists $g\in P$, such that $\ast(g)\neq 0$. This implies that the extension is non-split. Since $\Ext^1_{\gal}(\chi_1, \chi_2)$ is 
one dimensional, we deduce that the reduction of $\tilde{\rho}_{12}^c$ modulo the maximal ideal of $R_{12}$ is isomorphic 
to $\rho_{21}$. Hence, $\tilde{\rho}_{12}^c$ is a deformation of $\rho_{21}$ to $R_{12}$. This induces 
the map $\alpha: R_{21}\rightarrow R_{12}$. Since $\tr \rho_{12}^c = \tr \rho_{12} = \tr \rho_{21}$ we obtain a commutative 
diagram as above. Since $\theta$ is an isomorphism, we deduce from the diagram that $\alpha$ is also an isomorphism.
\end{proof}

\begin{cor}\label{homrank1} $\Hom_{\gal}(\tilde{\rho}_{ij}, \tilde{\rho}_{ji})$ is a free $R^{\mathrm{ps}}_{\chi}$ module of rank $1$.
\end{cor}
\begin{proof} It follows from the Proposition \ref{2112} that $\tilde{\rho}^c_{12}\cong \tilde{\rho}_{21}\otimes_{R_{21}, \alpha} R_{12}$. Hence, it is enough to show that $\Hom_{\gal}(\tilde{\rho}_{12}, \tilde{\rho}_{12}^c)$ is a free $R_{12}$-module of rank $1$. If we think of $\tilde{\rho}_{12}$, $\tilde{\rho}_{12}^c$ as  representations of $\gal$ on $R_{12} v_1^{12} \oplus R_{12} 
v_2^{12}$, then inside the ring of $2\times 2$-matrices over the quotient field of $R_{ij}$, we have equalities of $R_{ij}$-modules:
\begin{equation}
\Hom_{\gal}(\tilde{\rho}_{12}, \tilde{\rho}_{12}^c)=\bigl( \begin{smallmatrix} \theta(c) & 0 \\ 0 & 1 \end{smallmatrix}\bigr ) \End_{\gal}(\tilde{\rho}_{12}),
\end{equation}
\begin{equation}
\Hom_{\gal}(\tilde{\rho}_{12}^c, \tilde{\rho}_{12})=\End_{\gal}(\tilde{\rho}_{12})\bigl(\begin{smallmatrix}1 & 0 \\0 &  \theta(c) \end{smallmatrix}\bigr ).
\end{equation}
 The assertion follows from Lemma \ref{endoRij}.
\end{proof}

\begin{prop}\label{genrc} The centre of the ring $\End_{\gal}(\tilde{\rho}_{ij}\oplus \tilde{\rho}_{ji})$ is isomorphic to $R^{\mathrm{ps}}_{\chi}$. 
Moreover, $\End_{\gal}(\tilde{\rho}_{ij}\oplus \tilde{\rho}_{ji})$ is a free $R^{\mathrm{ps}}_{\chi}$-module  of rank $4$.
\end{prop}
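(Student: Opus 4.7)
The plan is to use the matrix description
$$\End_{\gal}(\tilde{\rho}_{ij}\oplus \tilde{\rho}_{ji})\cong \begin{pmatrix} \End_{\gal}(\tilde{\rho}_{ij}) & \Hom_{\gal}(\tilde{\rho}_{ji},\tilde{\rho}_{ij}) \\ \Hom_{\gal}(\tilde{\rho}_{ij},\tilde{\rho}_{ji}) & \End_{\gal}(\tilde{\rho}_{ji}) \end{pmatrix}$$
and to compute each entry as an $R^{\mathrm{ps}}_{\chi}$-module. First I would note that $\End_{\gal}(\tilde{\rho}_{ij})$ is a free $R^{\mathrm{ps}}_{\chi}$-module of rank $1$: since $\End_{k[\gal]}(\rho_{ij})=k$, any endomorphism of $\tilde{\rho}_{ij}$ differs from a scalar by an endomorphism vanishing modulo the maximal ideal, and a standard deformation-theoretic Nakayama argument (together with the fact that $R_{ij}\cong R^{\mathrm{ps}}_{\chi}$ is formally smooth and $\tilde{\rho}_{ij}$ is free of rank $2$ over it) shows that all endomorphisms are scalar. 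The same holds for $\tilde{\rho}_{ji}$. By Proposition \ref{homrank1} and its analogue with $i$ and $j$ swapped, the two off-diagonal Hom modules are each free of rank $1$. Summing, $\End_{\gal}(\tilde{\rho}_{ij}\oplus \tilde{\rho}_{ji})$ is free of rank $4$ over $R^{\mathrm{ps}}_{\chi}$, which is the second assertion.

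For the centre assertion I would work with the explicit generators produced in the proof of Proposition \ref{homrank1}. Let $\Phi\in \Hom_{\gal}(\tilde{\rho}_{ij},\tilde{\rho}_{ji})$ be the generator described there, given by $\Phi(e_1^{ij})=c\, e_2^{ji}$ and $\Phi(e_2^{ij})=e_1^{ji}$, where $c$ is the generator of the reducible ideal $\rr$ (see Corollary \ref{redloc1}). By the symmetric construction (exchanging the roles of $i$ and $j$), there is an analogous generator $\Psi\in \Hom_{\gal}(\tilde{\rho}_{ji},\tilde{\rho}_{ij})$ with $\Psi(e_1^{ji})=c\,e_2^{ij}$, $\Psi(e_2^{ji})=e_1^{ij}$. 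A direct check gives the key relations
\begin{equation*}
\Psi\circ\Phi = c\cdot\id_{\tilde{\rho}_{ij}}, \qquad \Phi\circ\Psi = c\cdot\id_{\tilde{\rho}_{ji}}.
\end{equation*}
Writing a general element of the endomorphism ring as $\left(\begin{smallmatrix} a & b\Psi \\ d\Phi & e\end{smallmatrix}\right)$ with $a,b,d,e\in R^{\mathrm{ps}}_{\chi}$ (using the free basis just described), commutation with the diagonal idempotents $e_1=\left(\begin{smallmatrix}1&0\\0&0\end{smallmatrix}\right)$ and $e_2=\left(\begin{smallmatrix}0&0\\0&1\end{smallmatrix}\right)$ forces $b=d=0$, and then commutation with $\left(\begin{smallmatrix}0&\Psi\\0&0\end{smallmatrix}\right)$ forces $a=e$. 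Conversely, the diagonal embedding $r\mapsto r\cdot I$ of $R^{\mathrm{ps}}_{\chi}$ is clearly central. Hence the centre equals $R^{\mathrm{ps}}_{\chi}\cdot I\cong R^{\mathrm{ps}}_{\chi}$.

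The main obstacle is the freeness of $\End_{\gal}(\tilde{\rho}_{ij})$: one must rule out non-scalar endomorphisms of the universal deformation, which is the only point where one genuinely uses $\chi_1\chi_2^{-1}\neq\Eins,\omega^{\pm 1}$ beyond what has already been invested in Proposition \ref{homrank1}. Once this is settled, the remaining verifications are the rank count and a straightforward finite computation in the matrix ring with the relations $\Phi\Psi=\Psi\Phi=c$, so the proposition follows.
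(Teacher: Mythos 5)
Your proof is correct and takes essentially the same approach as the paper: decompose the endomorphism ring as a $2\times 2$ matrix with entries the four Hom modules, verify each is free of rank $1$ over $R^{\mathrm{ps}}_{\chi}$ using Proposition \ref{homrank1} and the scalar-endomorphism property of universal deformations, and then compute the centre from the relations $\Phi\Psi=\Psi\Phi=c$. The only cosmetic difference is that you kill the off-diagonal terms by commuting with the idempotents $e_1,e_2$, while the paper instead invokes that $R^{\mathrm{ps}}_{\chi}$ is an integral domain (so $bc=0\Rightarrow b=0$); both work, and your route is marginally cleaner.
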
 

\begin{proof} The ring $\End_{\gal}(\tilde{\rho}_{ij}\oplus \tilde{\rho}_{ji})$ is isomorphic to 
\begin{equation}
\begin{split}
 \begin{pmatrix} \End_{\gal}(\tilde{\rho}_{ij}) & \Hom_{\gal}(\tilde{\rho}_{ij}, 
\tilde{\rho}_{ji}) \\  \Hom_{\gal}(\tilde{\rho}_{ji}, \tilde{\rho}_{ij}) & \End_{\gal}(\tilde{\rho}_{ij})\end{pmatrix}\cong 
\begin{pmatrix} R^{\mathrm{ps}}_{\chi} \Eins_i & R^{\mathrm{ps}}_{\chi} \Phi_{ij}\\  R^{\mathrm{ps}}_{\chi} \Phi_{ji} & R^{\mathrm{ps}}_{\chi} \Eins_j
\end{pmatrix}
\end{split}
\end{equation}
where $\Phi_{ij}$ is described in Corollary  \ref{homrank1}.  It follows from Corollary \ref{homrank1} that 
$\Phi_{ij} \circ \Phi_{ji}= c \Eins_{i}$ and $\Phi_{ji} \circ \Phi_{ij}= c \Eins_{j}$. Since $R^{\mathrm{ps}}_{\chi}$ is 
an integral domain we deduce that $\alpha \Eins_i + \beta \Phi_{ji} + \gamma \Phi_{ij} + \delta  \Eins_j$ is central 
if and only if $\beta=\gamma=0$ and $\alpha=\delta$. 
\end{proof} 

\begin{cor}\label{redloc2} Let $c$ be a generator of  the reducibility ideal $\mathfrak r$ in $R^{\mathrm{ps}}_{\chi}$. Then 
 $\End_{\gal}(\tilde{\rho}_{ij}\oplus \tilde{\rho}_{ji})[1/c]$ is isomorphic to the ring of $2\times 2$ matrices over $R^{\mathrm{ps}}_{\chi}[1/c]$.
\end{cor}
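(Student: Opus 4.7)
The plan is to write down an explicit isomorphism $M_2(R^{\mathrm{ps}}_\chi[1/c])\cong \End_{\gal}(\tilde{\rho}_{ij}\oplus \tilde{\rho}_{ji})[1/c]$. Write $R:=R^{\mathrm{ps}}_\chi$. By Proposition \ref{genrc} the ring $E:=\End_{\gal}(\tilde{\rho}_{ij}\oplus \tilde{\rho}_{ji})$ is free of rank $4$ over $R$ with basis $\{\Eins_i,\Eins_j,\Phi_{ij},\Phi_{ji}\}$, and the multiplication is governed by the orthogonal idempotent relations $\Eins_i^2=\Eins_i$, $\Eins_j^2=\Eins_j$, $\Eins_i+\Eins_j=1$, together with $\Phi_{ij}\Phi_{ji}=c\Eins_i$ and $\Phi_{ji}\Phi_{ij}=c\Eins_j$ from (the proof of) Proposition \ref{homrank1}, plus the obvious side relations $\Eins_i\Phi_{ij}=\Phi_{ij}=\Phi_{ij}\Eins_j$ and $\Eins_j\Phi_{ji}=\Phi_{ji}=\Phi_{ji}\Eins_i$ that encode the source/target decomposition. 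Since $R$ is an integral domain (by Proposition \ref{rpsr} we have $R\cong R_{ij}\cong \OO[[a_1,a_2,d_1,d_2,c]]$) the element $c$ is not a zero-divisor, so everything localises cleanly.

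First I would verify that after inverting $c$ the elements $\Phi_{ij}$ and $\Phi_{ji}$ become units of $E[1/c]$ in the obvious sense: the relations above give $(c^{-1}\Phi_{ji})\Phi_{ij}=\Eins_j$ and $\Phi_{ij}(c^{-1}\Phi_{ji})=\Eins_i$. Next I would define the candidate isomorphism
\begin{equation*}
\Psi:M_2(R[1/c])\longrightarrow E[1/c], \quad e_{11}\mapsto \Eins_i,\ e_{22}\mapsto \Eins_j,\ e_{12}\mapsto \Phi_{ij},\ e_{21}\mapsto c^{-1}\Phi_{ji},
\end{equation*}
extended $R[1/c]$-linearly. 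Checking that $\Psi$ is a ring homomorphism reduces to verifying the six matrix-unit relations $e_{kl}e_{mn}=\delta_{lm}e_{kn}$, each of which follows from one of the five relations listed in the previous paragraph; this is where the orientation of the composition has to be tracked carefully, but it is purely formal.

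Finally I would check that $\Psi$ is bijective. Surjectivity is immediate because the image contains $\Eins_i,\Eins_j,\Phi_{ij}$ and $c^{-1}\Phi_{ji}$, which generate $E[1/c]$ as an $R[1/c]$-module. For injectivity, observe that both $M_2(R[1/c])$ and $E[1/c]$ are free $R[1/c]$-modules of rank $4$ (the latter by Proposition \ref{genrc} after localisation, using that $R$ is a domain so $R\hookrightarrow R[1/c]$ is flat), and $\Psi$ sends the $R[1/c]$-basis $\{e_{11},e_{12},e_{21},e_{22}\}$ to $\{\Eins_i,\Phi_{ij},c^{-1}\Phi_{ji},\Eins_j\}$, which is again an $R[1/c]$-basis; hence $\Psi$ is an $R[1/c]$-linear isomorphism on underlying modules, and therefore a ring isomorphism. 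There is no serious obstacle here: the whole argument is a bookkeeping exercise once the multiplication table of $E$ has been pinned down in Propositions \ref{homrank1} and \ref{genrc}; the only place one must be attentive is in matching the direction conventions for $\Phi_{ij}$ versus $\Phi_{ji}$ with the matrix-unit conventions, so that the defining relation $\Phi_{ij}\Phi_{ji}=c\Eins_i$ matches the correct composition of matrix units.
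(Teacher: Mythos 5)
Your proof is correct and matches the paper's own argument: the paper likewise defines the isomorphism by sending the matrix units $e_{11},e_{12},e_{21},e_{22}$ to $\Eins_i,\Phi_{ij},c^{-1}\Phi_{ji},\Eins_j$ respectively, with the multiplicative relations $\Phi_{ij}\Phi_{ji}=c\Eins_i$ and $\Phi_{ji}\Phi_{ij}=c\Eins_j$ from Proposition \ref{homrank1}/\ref{genrc} doing all the work. You simply spell out the rank-$4$ freeness and basis-to-basis check that the paper leaves implicit.
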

\begin{proof} The isomorphism is induced by sending $\left(\begin{smallmatrix} 1 & 0 \\ 0 & 0\end{smallmatrix} \right)\mapsto \Eins_i$,
$\left(\begin{smallmatrix} 0 & 1 \\ 0 & 0\end{smallmatrix} \right)\mapsto \Phi_{ij}$, 
$\left(\begin{smallmatrix} 0 &  0\\ 1 & 0\end{smallmatrix} \right)\mapsto c^{-1}\Phi_{ji}$ and 
$\left(\begin{smallmatrix} 0 & 0 \\ 0 & 1\end{smallmatrix} \right)\mapsto \Eins_j$.
\end{proof} 

\begin{remar}\label{detcond} Let $\psi: \gal\rightarrow \OO^{\times}$ be a continuous character, congruent to $\chi_1\chi_2$ modulo $\varpi$.
The results of this section hold if instead of working with an unrestricted deformation problem, we consider only those deformations with determinate equal to $\psi$. The proofs carry over word for word, except that one has to subtract $2$ from every dimension, and in the proof of Proposition \ref{redloc} one obtains an isomorphism $R^{\mathrm{ps}, \psi}_{\chi}/\rr\cong R_{\chi_1}$, since the determinant condition imposes the relation $T_1 T_2= \psi$, and hence $T_2$ is uniquely determined by $T_1$.
\end{remar}

\end{document}